\newtheorem{theorem}{Theorem}[section]
\newtheorem{prop}[theorem]{Proposition}
\newtheorem{corly}[theorem]{Corollary}
\newtheorem{lemma}[theorem]{Lemma}
\newtheorem{property}[theorem]{Property}
\theoremstyle{definition}
\newtheorem{definition}[theorem]{Definition}
\theoremstyle{remark}
\newtheorem{remark}[theorem]{Remark}
\newtheorem{example}[theorem]{Example}
\newcommand{\mres}{\mathbin{\vrule height 1.6ex depth 0pt width
0.13ex\vrule height 0.13ex depth 0pt width 1.3ex}}
\newcommand{\D}{\mathcal{D}}
\newcommand{\T}{\mathcal{T}} 
\newcommand{\Pf}{\mathcal{P}} 
\newcommand{\msf}{\mathsf}
\newcommand{\F}{\msf{F}}
\newcommand{\Ss}{\mathcal{S}}
\newcommand{\Hm}{\mathcal{H}}
\newcommand{\RR}{\mathbb{R}}
\newcommand{\CC}{\mathbb{C}}
\newcommand{\Rcl}{\mcl{R}}
\newcommand{\clU}{\mcl{U}}
\newcommand{\mbf}{\mathbf} 
\newcommand{\mcl}{\mathcal}
\newcommand{\ZZ}{\mathbb{Z}}
\newcommand{\NN}{\mathbb{N}}
\newcommand{\Y}{\mathcal{Y}}
\newcommand{\X}{\mathcal{X}}
\newcommand{\wrap}{\curvearrowleft \hspace{-3pt}}
\newcommand{\measRestrict}{\llcorner} 
\newcommand{\nvar}{q} 
\newcommand{\Xdim}{r} 
\newcommand{\blds}{\boldsymbol} 
\newcommand{\WdgeXdim}{{\textstyle \bigwedge} {}_{{}_{\scriptstyle \Xdim}}}
\newcommand{\partlyto}{\dashrightarrow}  
\newcommand{\datum}{3/10/'26}            
\newcommand{\isomto}{\cong}     
\newcommand{\homeomto}{\approx}   
\newcommand{\ess}{1}
\newcommand{\twoess}{2}		
\newcommand{\threeess}{\bar{\mu}}		
\newcommand{\sixess}{2 \bar{\mu}}
\newcommand{\sd}{\text{sd} \,}
\newcommand{\OneNbSpace}{\mbf{P}}
\newcommand{\bb}{\mbf{b}}
\newcommand{\deli}{\delta^{i}}
\renewcommand{\thesection}{\thechapter.\arabic{section}}
\numberwithin{equation}{section}
\numberwithin{theorem}{section}
\numberwithin{figure}{chapter}
\begin{document}

\title{Singularity of Data Analytic Operations}
       
\author[S.~P.~Ellis]{Steven P.\ Ellis}
\date{\datum}

\email{spe4ellis@aol.com}

\keywords{instability, linear regression, least absolute deviation regression, directional data, linear classification}

\thanks{\emph{2010 AMS Subject Classification.}  Primary: 62H99;
Secondary: 62J05, 62H11, and 65C60.}
        
\thanks{This research is supported in part by United States PHS grants MH46745, MH60995, and MH62185.}

\thanks{\datum}

\clearpage

\maketitle

  \begin{center}
I dedicate this book to Marjorie, Hollis, and Laura and to my brother Wayne who helped and encouraged me when, as a high school student, I was teaching myself college mathematics. 
  \end{center}

\clearpage

\tableofcontents

\clearpage

\chapter*{Preface} 
This book is aimed at two audiences. The first is a general one: Data scientists and mathematicians interested in data science. The second audience consists of a specific data scientist and mathematician interested in data science: Me. The book is a record of my work I consult and use to verify its correctness. For that reason the book tends to be overly detailed. (Plus, I like details.) Sorry. 

This is not the final draft of this book. There is a small percent of it I know needs more work. 
 
If the reader discovers mistakes, even typographical, in what follows or knows of additional or better literature worth citing -- especially literature devoted to exploring the same issues -- I would be grateful if they sent me an e-mail informing me of this. That is another reason why this is not the last draft of this document. 

Importantly, this book has not been peer reviewed. I'm fairly confident that there are no serious errors in this book but I might be mistaken. I would be grateful if some readers gave parts of it careful scrutiny. 

I guess nowadays one should acknowledge any assistance by AI's in one's writing. Perhaps behind the scenes, unbeknownst to me, my computer and/or software were using AI's but \emph{I have not consciously employ artificial intelligence anywhere in the writing of this document.}

The first draft of this book was posted on the arXiv in July, 2013. This may be the last draft posted on the arXiv. See ``sites.google.com/view/stevenpellis'' for future ones. 

\chapter{Introduction}  \label{Chptr:intro.1}
\section{Intro to the intro}  \label{S:intro.to.intro}
In this book ``data'' means empirical data, the result of counting, classifying, and/or measuring things in the real world. 
``Data analysis'' refers to procedures for extracting useful information or summaries from data (Tukey \cite[p.\ 2]{jwT62.FutureDataAnlys}). 
Descriptive statistics, statistical inference, statistical decision making, and machine learning are all forms of data analysis.  

Empirical data take the form of values of ``variables''. By definition, variables vary. Some of that variation is related to other variables that the data analyst (human or not) knows about and has values for. (Studying the relationship among variables is the main form data analysis takes.) But typically there are many other variables that the analyst has no knowledge of but which influence those recorded in the data. Variation caused by this second group of variables is called ``noise''. The data collection process is usually noisy. 

Empirical data can tell a story about the world. Data analysis is used to reveal that story. But because of noise, if one repeats the process used to produce a data set, the new data set will not be exactly the same as the first. Yet, both data sets should tell essentially the same story. I.e., findings from data analysis should be reproducible. So analyses of the two data sets should produce similar results.
 
This book is about how, typically rarely, the results of a data analysis are not reproducible because of instability inherent in the methods chosen for the analysis. 
Figure \ref{F:HeightFen} illustrates this. It shows lines fitted to two real data sets using ``Least Absolute Deviation (LAD)'' regression. The LAD line is the one that minimizes the sum of the vertical distances from the data points to the line. 
The solid lines in the figure are fitted LAD lines. As described in the figure caption, the dashed lines are the LAD lines for microscopically perturbed data in which the points indicated by the arrows are perturbed in the direction indicated by the arrows. The effect of perturbing the data on the lines is far out of proportion to the microscopic size of the perturbation. One fears that if one repeated the study that yielded these data, the results could be quite different. (As a practical matter, in panel (b) the disturbance to the line is likely too small to be important. We say the apparent \emph{severity} of the disturbance in the panel is mild. We will see that there are many data sets at which LAD is severely unstable.) This book is about instability in data analysis.  

Data analysis uses maps (we call them ``data maps'') that extract structure from noisy data. Call such a map a ``data map''. This book uses mainly topological, geometric, and measure-theoretic methods in a general way to study the behavior of data maps. (In this book we focus on maps used in working with data. However, our results apply to maps quite broadly.) Statistical theory consists mainly of probabilistic treatments of data analysis, but there is little of that in this book. 
 
     \begin{figure}
      \epsfig{file = 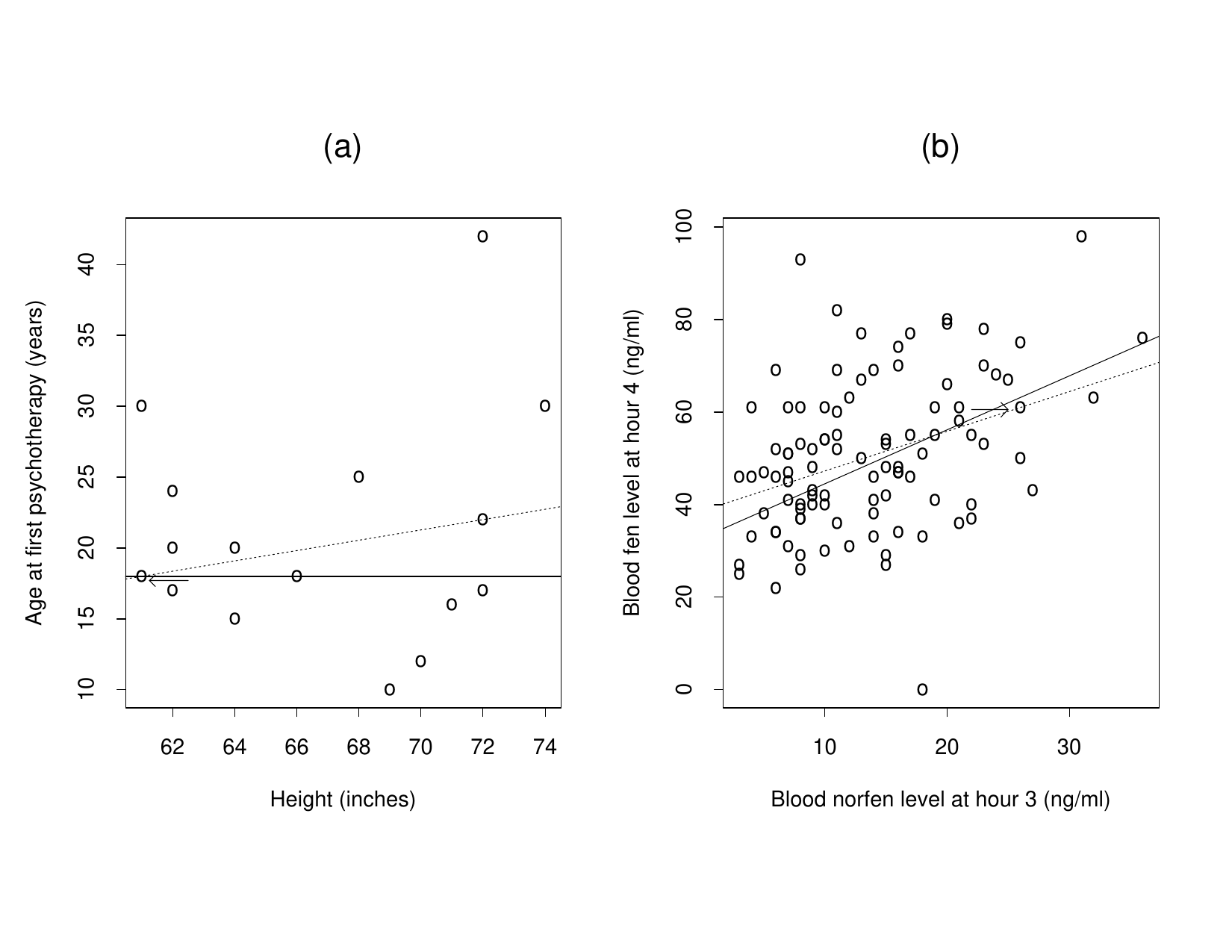, height = 4.0in , , } 
      \caption{Real data sets apparently very near singularities of LAD. Solid lines are LAD lines for data plotted. Dashed lines are LAD lines for data sets obtained by moving observations indicated by arrows a microscopic amount (1/20,000 of the interquartile range of the variables on the $x$-axes) in directions shown (from \cite{spE98}; data courtesy of the Area of Molecular Imaging and Neuropathology, John Mann chief, at the New York State Psychiatric Institute at Columbia University).}    \label{F:HeightFen}
      \end{figure}

As we will see, a likely explanation for the extreme sensitivity of LAD at the data sets in figure \ref{F:HeightFen} is that these data sets are near discontinuities of the LAD map. 
Actually, ``discontinuity'' is not the right term. Continuity of a function at a point depends partly on the value of the function at the point. We will be concerned with the instability of data maps near points at which the map may not even be defined. And even if it is defined there we will not be interested in its value, because one expects the probability of getting a a bad data set to be virtually 0. 

Instead of ``discontinuity'' we use the term ``singularity''. Denote by $\D$ the space of all possible data sets relevant to a given data analytic problem. It might be called the ``sample space''. We call it the ``data space''. Let $\F$ be the ``feature space'', the space of all possible patterns in data in $\D$ that are of interest. Let $\Phi : \D \partlyto \F$ be a data map. (The symbol ``$\partlyto$'' indicates that $\Phi$ might not be defined literally everywhere in $\D$.) A data set $x \in \D$ is a \emph{singularity} of $\Phi$ if $\lim_{x' \to x} \Phi(x')$ does not exist. Here, the limit is taken through a dense subset of $\D$ on which $\Phi$ is defined and continuous. Let $x \in \D$ be a singularity. 

An important point is that, while commonly in practice the probability of getting a singularity as a data set is 0, \emph{the probability of getting a data set \emph{near} one is positive, so getting a data set near $x$ does happen in practice.} (As in figure \ref{F:HeightFen}?) And near a singularity the data map $\Phi$ will be unstable. It rarely happens that one encounters a data set near a singularity of a data map one is using. But it does happen. We come back to this issue in section \ref{SS:singularity}. 
 
It turns out the singularity phenomenon is not a superficial flaw in a data map but depends in a deep way on the topological structure of the question a data map is designed to answer. 

Often (usually?)\ \emph{after} a analyst looks at the data they make a subjective judgment about how to proceed with its analysis. Since the judgment was made after looking at the data, that mysterious subjective judgment \emph{is part of the data map}. Thus, data analysis is often (usually?)\ non-algorithmic. Topology can say something about vaguely defined maps. Example: Fixed point theorems. This means that to the extent topological methods can say something about data analysis, the things it says are realistic. Moreover, singularity is a topological phenomenon. (It is preserved by homeomorphism.) 

In this book we focus on topological sources of singularity that are not immediately obvious, but for starters, consider a situation where it is.

  \begin{example}[College admissions] \label{Ex:college.admissions}
In the United States, two pieces of information colleges often use in deciding whether to admit or reject an applicant are the applicant's scores on the ``Scholasitc Apptitude Test (SAT)", a series of standardized tests, and the student's high school grade point average, their "GPA". For simplicity assume there is just one SAT score. Consider a hypothetical college that bases its entire admit/reject decision on those two numbers.  
This college's decision rule can be portrayed graphically as in Figure \ref{F:college.admissions}. The pairs of scores for which the college will admit the applicant form a region of the product of the possible ranges of GPA and SAT. Similarly for pairs leading to rejection. 

  \begin{figure}
      \epsfig{file = 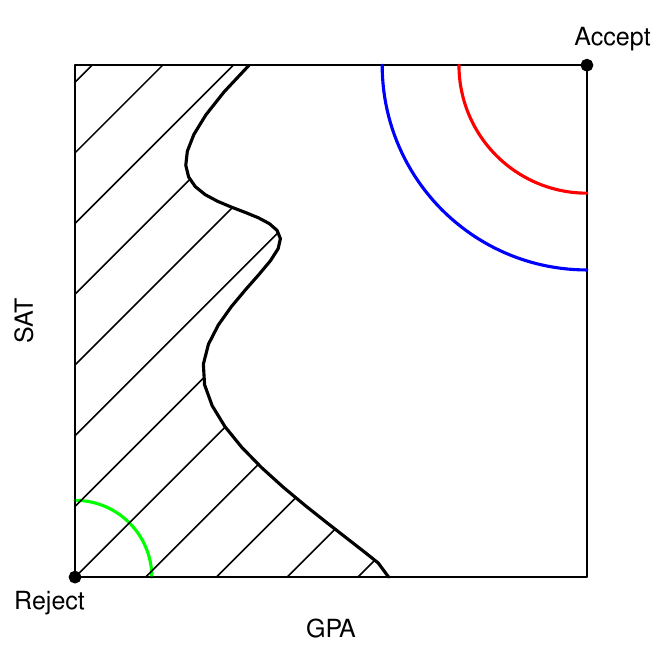, height = 5.5in, , }
      \caption{College admission decision rule. The horizontal axis is high school Grade Point Average. The vertical axis is Scholastic Aptitude Test score. A hypothetical college bases its admissions decision entirely on these two numbers. The lower left and upper right corners are ``perfect fits'' corresponding to rejection and acceptance, respectively. The black wavy curve is a possible boundary between the regions corresponding to rejection (shaded) and acceptance. The red, blue, and green curves are other possible boundaries separating the admission region from the rejection region.}  \label{F:college.admissions}
  \end{figure}

The admissions exercise is pointless if all applicants are accepted or all are rejected. It makes sense that applicants with lowest possible GPA and lowest SAT (lower left corner of the diagram) are rejected and those with highest scores (upper right corner of the diagram) are accepted. 

In data analysis there are usually data sets, in this case $(GPA, SAT)$ pairs, at which data maps of a certain type are \emph{a priori} constrained to take certain values. We call such data sets ``perfect fits'' for that class of data maps. (In fact, these constraints largely define the class of data maps.) So here the lower left and upper right corners of the plot are perfect fits. We denote set of perfect fits by $\Pf$. So in this example 
$\Pf$ consists of two points.

By virtue of the mandated action on the perfect fits, the admissions process for the hypothetical college maps a connected space, the $GPA$--$SAT$ rectangle, onto a discrete one, $\{ACCEPT, REJECT\}$. Such a map cannot be continuous. 

The college is unlikely to be so exclusive (inclusive) that it only accepts (respectively, rejects) applicants with pairs of perfectly good (bad) scores. Moreover, that policy would be extremely sensitive to random variation in the performance of the very best or very worst students. So the acceptance region will include a neighborhood of the upper right corner and exclude a neighborhood of the lower left corner of the diagram. 

In this book, our emphasis is on the \emph{set} of a data map's singularities. We call that set the ``singular set'' of the map. Thus, the boundary of the acceptance region is the singular set of the college's decision rule. Call the singular set $``\Ss$''. In the figure the boundary, singular set, is the black wavy curve.

An important feature of a singular set is its size. One way to describe the size of a singular set is by its dimension. In the figure the singular set is a curve and therefore one-dimensional. (It is often more convenient to use ``codimension'' instead of dimension. The codimension of the singular set is the dimension of the data space minus the dimension of the singular set. So a high-dimensional singular set will have low codimension.) Having established the singular set's dimension, one gains further information from the singular sets's measure. In this case, length.

But length is connected to another important feature of the singular set: Its distance to the set $\Pf$ of perfect fits. We are sure that an applicant with perfect $GPA$ and $SAT$ should be admitted. How, then, could we be unsure about an applicant whose scores are almost perfect? As we recognized before, just by luck an exceptional student might have scores a small distance away from perfect. It would be a pity to reject him/her. On the other hand, it seems reasonable that the admissions office might be unsure about a student whose scores are neither very good nor very bad. In general, singularities, especially severe ones, are less troubling the further they are from $\Pf$. 

Figure \ref{F:college.admissions} shows alternative singular sets corresponding to alternate versions of the decision rule. These are colored red, blue, and green. For each of them the distance of each point on it to (the nearest point of) $\Pf$ is constant. Let $R > 0$ be the constant. Then the length of that singular set is $\pi R/2$. In general, if $R$ is the distance from a singular set to $\Pf$, then the length of that singular set is no smaller than 
$\pi R/2$ We see that as $R$ increases so must that minimum length. So there is a tradeoff between singular set measure (bigger is worse) and distance from $\Pf$ (bigger is good). More complex examples are discussed in section \ref{SS:measure.distance.to.P}. 

A technical problem arises here. In the figure it is easy to see that the singular sets are curves. Their dimension is 1 and length is the appropriate way to measure their volume. But in general, the boundary between the rejection and acceptance region could be very pathological, not at all resembling a curve. In order to discuss this issue in general we use Hausdorff dimension and measure (appendix \ref{Chptr:Lip.Haus.meas.dim}).
 \end{example}

Another aspect of singularity, one we have already mentioned, is its severity. In the college admissions example, all the singularities have the same severity: Near a singularity a tiny change in an applicant's scores can mean the difference between admission and rejection. Figure \ref{F:HeightFen} suggests that in fitting LAD lines, severity of singularity can vary. We will see that severity can be connected to dimension and measure. 

The main theme of this book is what I call the ``Sales Pitch'': One can sometimes get global information about the singular set of a data map by examining its behavior near a  small space, sometimes a very small space, $\T \subset \Pf$, of ``test patterns'' on which the data map behaves in simple way. 

We have already seen an example of this: Knowing that the college admissions process admits students whose scores lie near the upper right corner of figure \ref{F:college.admissions} but rejects students whose scores lie near the lower left corner tells us that the process will have singularities and the singular set will have dimension at least 1. If we look out a distance of $R > 0$ from the perfect fits at diagonally opposite corners and see no singularities, then we know that the length of the singular set $\Ss$ is at least $\pi R/2$. 

In the rest of this introduction, we elaborate on the matters mentioned here. In section \ref{S:preview} pointers are given to places in the book where these issues are discussed in depth.

\section{Instability}  \label{SS:instability}
By ``conditioning'' of an operation we mean the sensitivity of the output to small changes in the input. This book applies topological, geometric, and measure-theoretic methods to study instability or ill-conditioning of data maps. A data map, $\Phi$, is stable or ``well-conditioned'' at a data set $x$ if a small change in $x \in \D$ leads to only a relatively small change in 
$\Phi(x)$. 

If this fails, $\Phi$ is unstable or ``ill-conditioned" at $x$: Small changes in $x$ can lead to relatively large changes in the output. As explained in section \ref{S:intro.to.intro}, instability at $x$ renders $\Phi$ useless in the vicinity of $x$.

Instability is a basic issue in applied
mathematics (e.g., Hadamard \cite[p.\ 38]{jH23}, Isaacson and Keller \cite[p.\ 22]{Isaacson.and.Keller}, and Tikhonov and Arsenin \cite[pp.\ 7--8]{anTvyA77.illposed}). Our work is especially relevant to numerical analysis (Higham \cite{njH02.StabNumrAlg}).
The following are also of interest:  
Blum \emph{et al} \cite[Chapters 11 and 12]{lBfCmSsS98.realcompute}, Demmel \cite{jwD88.numanal}, and Beltr\'{a}n and Pardo \cite{cBlmP07.ConditionNum}. 

Various forms of stability of data maps are discussed in 
Poggio and Smale \cite[p.\ 543]{tPsS03.learn}, Mukherjee \emph{et al} \cite{sMpNtPrR03.stability}, 
Breiman \cite{lB96.BagPrdctrs,lB96.instablty}, 
Berhane and Srinivasan \cite{iBcS04.StabltyNN}, Yu \cite{bY13.stability}; Rinaldo \emph{et al} \cite{aRaSrNlW2012.ClusteringStability};  
see also Obenchain \cite[Lemma 1, p.\ 1571]{rlO71.LinTransInvar}). 
Dreossi \emph{et al} \cite{tDsGaS-VsaS2019.RobustnessOfNets} study instability in deep neural networks.
B\"{u}hlman and Yu \cite{pBbY02.Bagging} formalize the notion of stability asymptotically in the sample size. We do not rely on such asymptotics in this book.  

\section{Singularity} \label{SS:singularity}
In the preceding section we said $\Phi$ is unstable at $x \in \D$ if small changes in $x$ can lead to relatively large changes in $\Phi$. In this book we are interested in data sets $x$ with the property that \emph{arbitrarily} small change in $x$ can lead to \emph{arbitrarily} large relative changes in $\Phi(x)$. We call such a data set a ``singularity'' of $\Phi$. 

To make that more precise, let $\D' \subset \D$ be dense in $\D$ and suppose $\Phi$ is defined and continuous on $\D'$. A data set $x \in \D$ is a ``singularity'' of $\Phi$ (with respect to $\D'$) if the limit $\lim_{x' \to x, \, x' \in \D'} \Phi(x')$ does not exist. If $\F$ is a complete metric space then $x$ is a singularity in the sense of the last paragraph. Whether $x$ is a singularity of $\Phi$ has nothing to do with $\Phi(x)$ or even if $\Phi(x)$ is defined. Practically speaking that does not matter. The chance of getting $x$ as a data set is likely 0 anyway.
 
It turns out that singularity is a deep-seated phenomenon in data analysis, making \emph{data maps} with singularities quite common. (However, for a given data map with singularities, one expects \emph{data sets} near its singularities to be rare. See section \ref{SS:examples} for a peek at the examples we look at in this book.) Our basic goal is to come up with easy to check (or at least not too hard to check) conditions under which functions -- we have in mind data maps -- have singular sets of at least a certain size and, possibly, of a certain level of severity.

Call the set of singularities of a data map its ``singular set''. A measure of the conditioning of a data map is the size of its singular set and the severity of the singularities therein. The reason singularity is important in data analysis is that \emph{data maps are unstable \emph{near} their singular sets.}   

  \begin{example}[Hypothesis testing]  \label{Ex:hypothesis.testing}
(Lehmann \cite{elL93.StatHyps}) This is the main statistical method
used in biomedical research. The ``feature space'', $\F$, consists of two points, \linebreak
$\{ACCEPT, REJECT\}$ or $\{0,1\}$. An infamous instance of singularity occurs in ``fixed level testing'', defined as follows. One has a function 
$p : \D \to [0,1]$. (The values of $p$ are called ``$p$-values''.) For convenience assume $p$ is defined everywhere on $\D$. 
Define $\Phi(x) = ACCEPT$ if $p(x) \geq 0.05$ and $\Phi(x) = REJECT$ 
if $p(x) < 0.05$. In the latter case one says that the finding is ``statistically significant (at the $\alpha = 0.05$ level)''. (The 0.05 is standard, but sometimes other ``alpha levels'' are used.) When the data space is connected the singular set is $p^{-1}(0.05)$. Its codimension needn't be greater than 1. 

Unfortunately, many medical and other journals will not publish a finding that is not statistically significant. This is a real problem in medical research and there is a large literature inveighing against the use of ``fixed level testing'' (Wasserstein \emph{et al} \cite{rlWalSnaL2019.pValues}). 

Various alternative ways of assessing strength of evidence in data have been put forward. However, the problem is inescapable because data are often used to make decisions from a discrete set of alternatives. For instance, the U.S. Federal Drug Administration (FDA) uses data, usually company provided, to decide whether to approve a drug or medical device for use in medicine. That is a binary decision, approve or disapprove. But the relevant data space is usually a connected set and the FDA is required to actually approve some drugs and
disapprove others, so topology forces there to be borderline cases
of data that are singularities of the decision process.  
  \end{example}
  
Our view in this book is that singularity is deleterious. Singularity is not an absolute evil, however. One might be willing to accept a large singular set in exchange for improvement in some other aspect of data map performance. Sometimes by allowing the singular set to be larger one can reduce the ``severity'' of the singularities. (See \cite[Theorem 2.5]{spE91.sings.plane.fit}. "Severe" has a different meaning there than in this book.) A statistical method should not be rejected just because it has a large singular set.

However, given two data maps of the same sort, \emph{everything else being equal}, one prefers the one with a small singular set. More precisely, there are aspects that one would like to be equal and while there are others one does not care about. See section \ref{SS:measure.distance.to.P}
for discussion of a data map attribute that one might want to be equal in order that the singular sets be considered comparable.

For frequently used methods how often does one get a data set near a singularity?  
In practical statistical work, it is uncommon but not terribly rare for statistical software to fail and report that some matrix is nearly singular or that an iterative procedure did not converge. Might such data be near a singularity of the statistical method being used? In such a case, the statistician has little choice but to use a different statistical method. Switching statistical methods in that fashion, however, can itself create singularity. In such a case it makes sense to report the results of the alternative analysis but mention the failure of the original method as a kind of ``diagnostic''.  

It is safe to assume that singularities cause trouble only a small fraction of the time (less than 1\%?). In section \ref{SS:dimension}, the probability of getting data near the singular set is analyzed quantitatively in a specific toy example. 
However, data analysis is not crippled by singularity. Statistical methods work! (Most of the time.)

Some data analyses are critical. Their failure can have serious consequences. Important and expensive policy decisions may be influenced by spurious results. A spurious result due to singularity may inspire useless further research. Autonomous operation of a car or truck or of a space probe may fail. Might the algorithms of financial engineering be inherently subject to singularity?

Suppose a data map $\Phi$ estimates some quantity of interest. Consider a data set $x$ close to a singularity of $\Phi$, but such that (s.t.)\ $\Phi(x)$ is defined. The estimates computed may be unreliable. One would like this unreliability to be reflected by the standard errors of the estimate. But, typically, standard errors are just approximations whose validity rests on so called ``asymptotic'' arguments, ones describing statistical behavior in the limit as the amount of data approaches infinity. But such asymptotic approximations may not work well for data sets of finite size close to a singularity. (This kind of asymptotics are not employed in this book.)

The risk of getting data near a singularity is real and the consequences can be serious. Unfortunately, it appears to be hard to remove this danger or to recognize its presence. 

   \begin{remark}[``Sales Pitch'']  \label{R:sales.pitch}
Singular sets can be complicated. In this book we show how one can sometimes get global information about the singular set of a data map by examining the map's behavior near a small space, $\T$, of ``test patterns'', on which the data map behaves in simple way. I.e.,  global conclusions are drawn from local information. That is the argument, ``the sales pitch'', for using the methods described here.
  \end{remark}

  \begin{remark}[Learning and predicting]  \label{R:learning.and.predicting}
A data operation, $\mcl{L}$, ``learns'' from a data set $x \in \D$ (when possible) an object, $\mcl{L}(x)$, in a space $\mathrm{F}$. We say that $\mcl{L}(x)$ is ``trained'' on $x$. 

Sometimes the objects in $\mathrm{F}$ are themselves maps. Let $\Y$ be the common domain and $\mcl{Z}$ the codomain of those maps.  
Write $f := \mcl{L}(x)$. Given an input $y \in \Y$, $f(y) \in \mcl{Z}$ is a ``prediction''. For example, $\Y$ might be $\RR$ with its points interpreted as SAT scores (example \ref{Ex:college.admissions}) and the points of $\mcl{Z} = \RR$ may have the interpretation as students' grade point averages (GPAs) at the end of freshman year of college (not high school GPA as in example \ref{Ex:college.admissions}). If $\D$ is the space each of whose points is a finite collection of pairs 
$(SAT \, score, freshman \, GPA)$ then by applying some regression method to a point $x$ 
in $\D$ we ``learn'' a function $f = \mcl{L}(x) : \Y \partlyto \mcl{Z}$ that can be used to compute from a student's SAT score a prediction of his/her freshman GPA. 
 
Other possible examples, besides those discussed in this book, are when the points 
of $\mcl{Z}$ are complicated robot motions (Farber \cite[Chapter 4]{mF08.TopolRobot}) or self-driving car maneuvers.

We can interpret $\Y$ as a kind of data space and the theory described in this book may have something useful to say, not just about $\mcl{L}$ (learning), but also about $f$ (predicting).

To simplify the analysis the learning operation, $\mcl{L}$, we might extract from the maps 
in $\mathrm{F}$ some geometric feature (graph, for example) and derive a data map 
$\Phi : \D \partlyto \F$, where $\F$ is the space of the derived geometric features.
  \end{remark} 

\section{Calibration}   \label{SS:calibration}  
Our method is based on what might be called ``calibration:''  A data summarization method designed to detect a certain kind of structure in data must find that structure, at least approximately, when it is present in pure, perfect, or at least strong form.
Data sets having the structure in pure, perfect, or at least strong form we will call 
``perfect fits.'' A data set is a perfect fit if there is no disputing if and how it manifests the structure of interest. For example, in fitting $k$-planes to data 
if the data lie exactly on a unique $k$-plane there is a canonical choice of plane to fit to the data: The plane on which they lie exactly. Any reasonable plane-fitting method should fit the right plane to (almost all) such data sets, at least approximately. 

Another example is division into two clusters in batches of numbers. If two subsets of a batch are separated by six times the range of values in either subset, then it is quite clear what the two clusters are. It is even clearer is the subsets are separated by 100 times the range of values in either subset, but such an extreme separation is not necessary in order to have a ``perfect fit'' for the binary clustering problem. 

Let $\Pf \subset \D$ denote the set of ``perfect fits''. $\Pf$ is the ``perfect fit space'' for the data analysis of interest. If $x \in \Pf$ then there is little if any choice in $\Phi(x)$.  
If the data map finds the correct structure at almost all data sets in 
$\Pf$, we say that it is ``calibrated'' with respect to (w.r.t.)\ $\Pf$. (Often finding approximately the correct structure sufficies.) Calibration breaks down in interesting ways when a data summarization method is regularized in order to improve its generalization properties.
 
$\Phi$ must (usually) exhibit (approximately) correct behavior on $\Pf$. A ``standard'', 
$\Sigma: \Pf \to \F$,  defines what that correct behavior is. $\Sigma$ is a rule that 
$\Phi$ should approximate on almost all of $\Pf$. One might think of a  class of data maps (or a data analytic problem) as specified 
by the quadruple $(\D, \F, \Pf, \Sigma)$.\footnote{Here we are flirting with a category-theoretic formulation (Riehl \cite{eR2014.CatThy}) of data analysis. But what are the morphisms? What are the functors?}
One can think of $\Sigma^{-1}$ as a set-valued partial specification of a ``forward problem'', $\F \to \D$. 

Often we can get important information about a data map by studying its behavior on a small subset $\T \subset \Pf$. We call $\T$ the ``test pattern space''. Sometimes $\T$ is much smaller than $\Pf$.  
This is the ``sales pitch'' (remark \ref{R:sales.pitch}).

\section{``Line-fitting''} \label{SSS:LF.plots}   
Example \ref{Ex:college.admissions} is an obvious example of how a data map, for topological reasons, may have singularities. In this section we discuss a more subtle example. 
 
We call a finite collections of points a ``point cloud''. As mentioned in section \ref{SS:calibration}, a plane-fitter is a plane-valued data map that takes a point cloud as input and assigns to it a plane, of dimension $k$, say, in such a way that if the point cloud lies exactly on a unique $k$-plane then the plane on which the cloud lies exactly is the one assigned to it. (A null set of exceptions are permitted.) 

Studying a function via its graph is a common mathematical tactic. To understand a plane-fitter better might we draw its graph? This is hard. For one thing, the spaces involved are high dimensional. So settle for ``line-fitting'': Fitting a line (1-plane) to a point cloud on the plane. Figure \ref{F:HeightFen} is an example. To simplify further consider lowest dimensional non-trivial case: Fitting a line to 3 points on a plane. The dimension of domain is 6, still high  dimensional. We will restrict attention to a 2-dimensional subspace of $\RR^{6}$. 

The line fitted to a point cloud doesn't have to pass through the origin. To simplify further just take the line through the origin parallel to the fitted line. Then the codomain is the ``real projective line'', $P^{1}$. It is homeomorphic to a circle, as such a little tricky to plot on the $y$-axis. 

After all these simplifications we can make (partial) graphs (``LF'' plots, \cite{spE.3.or.4}) of a line-fitter. Let $\Delta \subset \RR^{2}$ be the triangle (2-simplex) whose vertices are 
$(1,0)$, $(0,1)$, and $(-1,0)$. Then $Y_{1} = (x_{1}, y_{1}) \in \Delta$ 
if and only if  $y_{1} \geq 0$ and $|x_{1}| + y_{1} \leq 1$. 
Let $Y := \xi(Y_{1}) \in \D$ be the data set whose first row is just $Y_{1}$, whose second row is $Y_{2} := \bigl( 1-|x_{1}|-y_{1}, 0 \bigr)$, and whose third row is $Y_{3} := -Y_{1}-Y_{2}$. So each point of $\Delta$ corresponds to a point cloud on $\RR^{2}$. Define $\X$ to be the image of $\Delta$ under $\xi$. $\X$ is obtained by taking $\Delta$, folding it along its vertical midline, and imbedding it into $\RR^{6}$. 

One makes an LF plot of $\Phi$ as follows. Through each point $Y_{1}$ on a grid 
in $\Delta$ draw a short line segment parallel to the line $\Phi$ fits to $\xi(Y_{1})$. Figure \ref{F:LF.plot.pipeline} shows the procedure graphically.

       \begin{figure}
             \includegraphics[width=6.2in, height = 4.7in, , angle = 0]{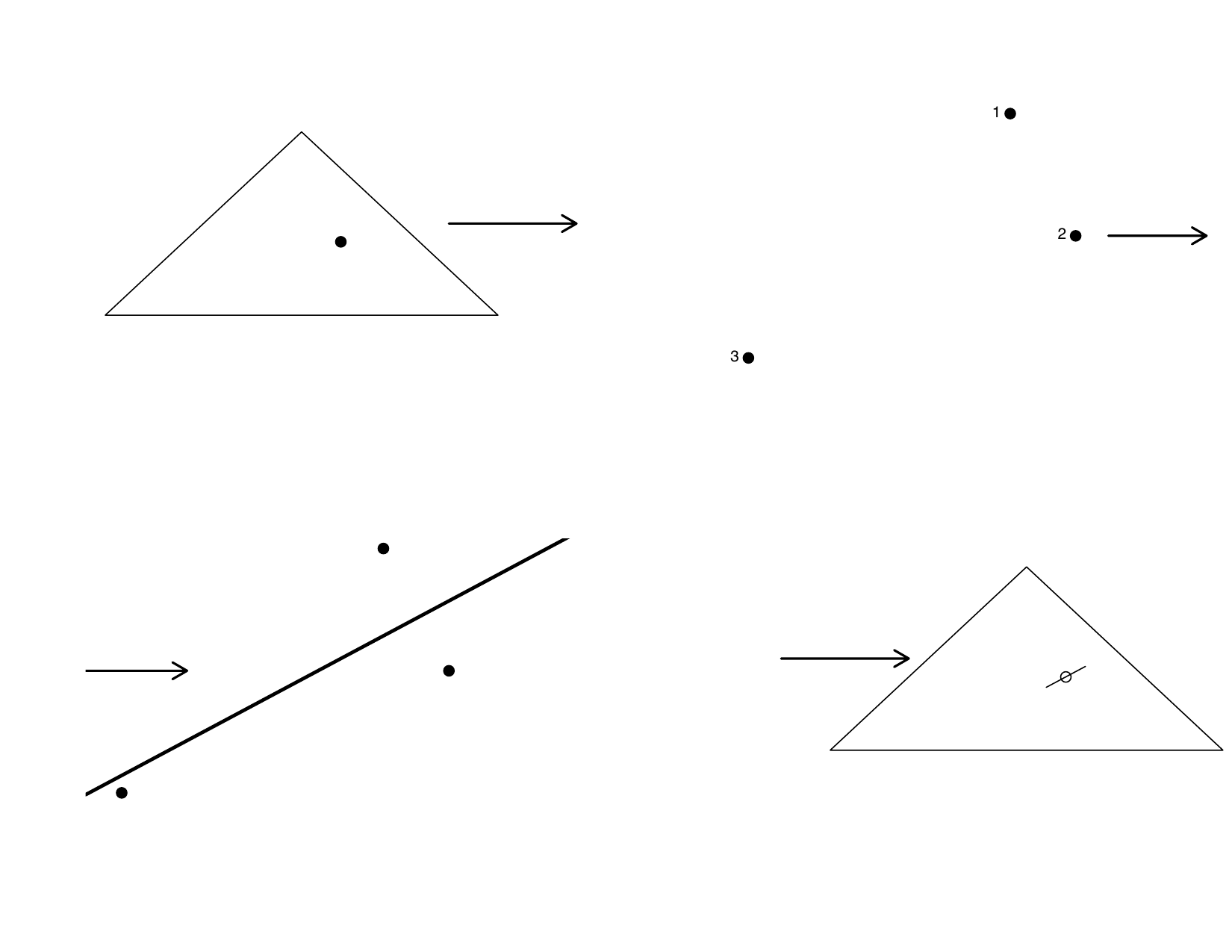}
          \caption{The procedure for making an ``LF'' plot for a line-fitting method. Upper left: Pick a point in the triangle $\Delta$. Upper right: That point encodes a data set consisting of three points on the plane. Lower left: If possible use the line-fitting method in question to fit a line to that data set. Lower right: Through the point in 
$\Delta$ that one started with draw a short line segment parallel to the fitted line. Repeat this process for each point in a grid in $\Delta$.} 
\label{F:LF.plot.pipeline}
       \end{figure}

Figure \ref{F:LS.PC.LAD.lf.plots} shows LF plots for three commonly used plane-fitting (in this case, line-fitting) methods. We have already mentioned LAD. Least Squares (linear regression) fits the line that minimizes the sum of the \emph{squares} of the \emph{vertical} distances from the points in the cloud to the line. Principal components line fitting (PC) finds the line that minimizes the sum of the squared \emph{perpendicular} distances from points in the cloud to the line.
In each case, LAD, LS, and PC, there is at least one data set at which the line-fitting method is unstable.

       \begin{figure}
             \includegraphics[width=6.2in, height = 4.7in, , angle = 0]{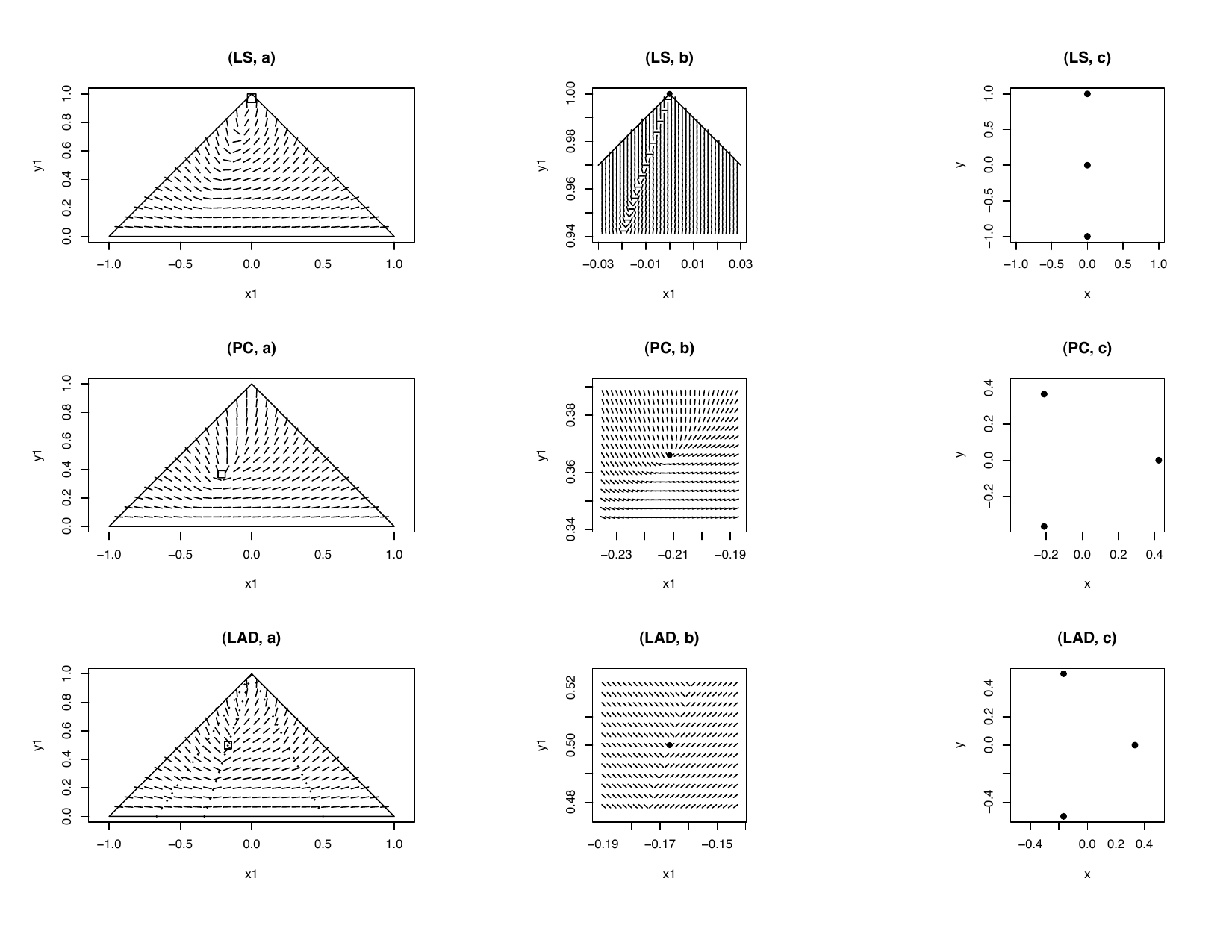}
          \caption{Fitting lines to samples of three bivariate data points. ``(a)'' panels:  LF plots for least squares (LS), principal componensts (PC), and least absolute deviation regression ($L^1$ or LAD). Small rectangles enclose singularities. (All points on dashed lines in (LAD,a), except the endpoints, are singularities of LAD.) 
``(b)'' panels:  Blow-up of rectangles in ``(a)'' panels. Singularities are
indicated by dots. ``(c)'' panels:  Scatterplots of data sets indicated  by dots in ``(b)'' panels (adapted from Ellis \cite{spE.3.or.4}).} 
\label{F:LS.PC.LAD.lf.plots}
       \end{figure}
Figure \ref{F:bound.cond.and.whorls} explains why we see instability in figure \ref{F:LS.PC.LAD.lf.plots}. Points on boundary of $\Delta$ in an LF plot correspond to ``perfect fits'': The data sets corresponding to boundary points each consists of three points lying exactly on a unique line. By definition of line-fitting, a line-fitter assigns that line to the data set. (So the perfect fits serve to ``calibrate'' the line fitter; section \ref{SS:calibration}.) 

Thus, defining a line-fitter amounts to solving a ``boundary value problem":  Try to extend the line fitter continuously over interior of the triangle. Panels (b,c,d) in figure \ref{F:bound.cond.and.whorls} show an attempt at a solution to this boundary value problem. (This is an example of the ``extension problem'' in topology, Spanier \cite[p.\ 20]{ehS66}. Similar ideas can be found in Brezis \cite[section 3, p.\ 191]{hB03.AnlysTopol}.) As we go around the triangle once, the perfect fit line goes around the projective line, $P^{1}$, exactly once as well. Thus, any line-fitter maps the boundary of $\Delta$ onto a generator of the fundamental group or 1-dimensional homology class of $P^{1}$. 
Since $\Delta$ is acyclic but $P^{1}$ is not, this means that no line-fitter can be extended continuously over the whole of $\Delta$. 
This simple example shows that algebraic topology has something to say about data analysis.\footnote{It is easy to see that a data set consisting of three copies of the same point on the plane has to be a singularity of a line-fitter. But it is also easy to see that no point in $\Delta$ encodes such an uninteresting data set. For example, none of the singularities displayed in the (c) panels in figure \ref{F:LS.PC.LAD.lf.plots} have that form.} 

The boundary of the triangle is  the ``test pattern space'', $\T$, for this problem. (See section \ref{SS:calibration}.)   

\begin{figure}
      \epsfig{file = 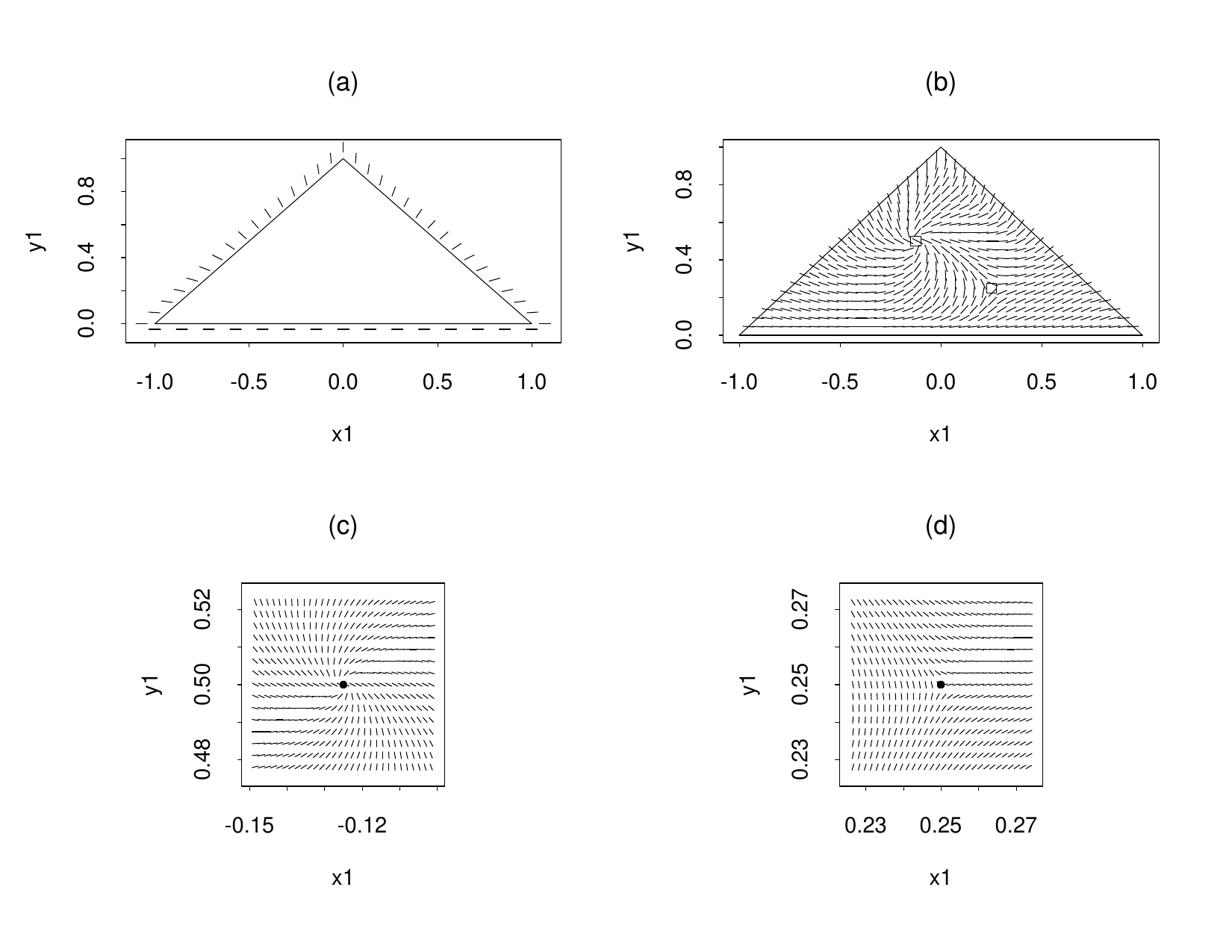, height = 3.6in, , }  
      \caption{Line-fitting is unstable. (a): Boundary conditions to be satisfied by a 
line-fitting method. The boundary of the triangle corresponds to the test pattern space, ``$\T$''. (b):  One attempt at solution to boundary value problem. Small squares enclose regions of instability. (c) and (d):  Blow-ups of squares in (b). Dots indicate data sets at which instability is infinite, i.e., singularities. (from Ellis \cite{spE.3.or.4})}    \label{F:bound.cond.and.whorls}
\end{figure} 

 \section{Examples}  \label{SS:examples}
Here are examples of classes of data maps, all but one analyzed in this book, for which the topology of the problem has implications for stability. 

\begin{enumerate}
  \item \emph{Hypothesis testing}   
  (example \ref{Ex:hypothesis.testing} above). 
     \label{Exmpl:hyp.testing} 
  \item \emph{Plane-fitting} (\cite{spE91.sings.plane.fit,spE95,spE96,spE98,spE00,spE.3.or.4}). 
We touched on this in sections \ref{SS:calibration} and \ref{SSS:LF.plots}. Plane-fitters are data maps that assign planes to point clouds in Euclidean space. Examples include linear regression and principal components plane-fitting (PC). In our analysis we ignore the offset of the fitted plane from the origin, if any, and take the feature space, $\F$, to be a Grassmann manifold. We analyzed the simplest case in the preceding section. 

The test pattern space, $\T$ is homeomorphic to a circle. Each data set in $\T$ lies exactly on a unique plane of the appropriate dimension. For example, in 
section \ref{SSS:LF.plots}, $\T \subset \RR^{6}$ is the image under $\xi$ of the boundary of the triangle used in making LF plots. \emph{Prima facie} the data space is Euclidean. However, it is more convenient to restrict attention to data sets lying on a sphere in that Euclidean space or the one-point compactification of Euclidean space.
  
An important  special case of plane-fitting is linear regression.  
Suppose the data consist of $n$ points in $\RR^{\nvar}$. (Assume $n > \nvar > 1$.) Each point has the form $(\mbf{x},y)$, 
where $\mbf{x} \in \RR^{\nvar-1}$ and $y \in \RR$. Then linear regression at a data set $x$ using a given regression method produces a function, 
$f( \cdot; x) : \RR^{\nvar-1} \to \RR$, whose graph is a $(\nvar-1)$-dimensional affine subspace of $\RR^{\nvar}$. Let $\Phi(x)$ be the plane through the origin parallel to that affine subspace. Figure \ref{F:HeightFen} shows real data sets that are apparently very close to singular set of least absolute deviation (LAD) regression.
      
One apparent difficulty in applying the theory to linear regression is that in practice one is interested in the stability of $f( \cdot; x)$ as an element of a function space, not in the stability of $\Phi(x)$, an element of a Grassmann manifold. However, we will see
that any singularity of $\Phi$ is also a singularity of $f( \cdot; x)$ (but not necessarily conversely). So our geometric point of view is actually a conservative way to study the stability of $f( \cdot; x)$. 
   \label{Exmpl:plane.fitting}
    \item \emph{Location problem on spheres} \cite{spE91.top.direct.axis}.The most basic question one can ask about a point cloud is, where is it? In Statistics this is called the ``location problem''. Data maps that offer answers to that question are called ``measures of location''. The arithmetic mean or median are measures of location for point clouds on the line. The mean and median are continuous, in fact Lipschitz, functions of the data and as such have no singularities. However, measures of location on non-Euclidean spaces typically have singularities (Eckmann \emph{et al} \cite{bEtGpjH62.genMeans}).   
    \label{Exmpl:location.on.spheres}
   \item  \emph{Linear classification} Given a point cloud whose points have labels 
 $\pm 1$ find a plane that  largely separates positive from negative points
   \label{Exmpl:lin.classification}
   \item \emph{Factor analysis} (\cite{spE.fact.anal}). 
Principal components is often lumped together with this unsupervised learning method. 
Factor analysis is a plane-fitting method with some additional structure (Johnson and Wichern \cite[Chapter 9]{raJdwW92}). As such, it has the singularity issues of a plane-fitter. However, for topological reasons the additional structure creates additional singularities . These singularities seem to lie outside the theory developed in this book. I do not pursue this topic here.
\label{Exmpl:fact.anlys}
\end{enumerate}

In this book, in addition to the \emph{classes} of examples \ref{Exmpl:hyp.testing}, \ref{Exmpl:plane.fitting},  \ref{Exmpl:location.on.spheres}, and \ref{Exmpl:lin.classification}, we also examine specific \emph{examples} of these classes in depth.   

\section{Deep neural nets}  \label{S:DNN}
An intriguing and important example we have not examined is deep neural nets (DNNs), an architecture often used in artificial intelligence. Dreossi \emph{et al} \cite{tDsGaS-VsaS2019.RobustnessOfNets} examine stability of trained DNNs. Perhaps the methods described in this book might shed some light on DNN stability. 

Let $\Phi$ be a trained deep neural net. Conventional wisdom holds that $\Phi$ is a ``black box''. (See Knight \cite{wK2017.AIdarkSecret}.) I.e., it is impossible to say how 
$\Phi$ produces output. However, recent work (see, e.g., Ip \cite{shrI2024.DNNexplainability}, Templeton \emph{et al} \cite{aT2024.Interpretability}) has been able to reveal a little about how DNN's work. 
So a little of $\Phi$'s inner workings might be understood. 
This is encouraging because the ``Sales Pitch'' (remark \ref{R:sales.pitch}) asserts that a little information about $\Phi$, \emph{viz.} $\Phi$'s behavior near a relatively low dimensional space of simple inputs, might be enough to reveal information about its singular set.

On a deeper level, let $\Phi$ be the training process of a DNN: $\Phi$ takes training data, usually a huge amount, as input and returns a trained net. (``We do not understand how robust or fragile models are to perturbations to input data distributions,'' Wing \cite{jmW.2020.TenResearchChallengesInDataScience}.) Maybe the theory in this book can be used to study the sensitivity of DNN's to the distribution of the training data.

\section{Topology} \label{S:topology}
Long a branch of pure mathematics, topology has developed an applied side. Physicists have used topology to prove that certain structures or materials must have dislocations (Chen \emph{et al} \cite{bg-gCgpArdK09.SingsInSmectics}, Smalyukh and Lavrentovich \cite{iiSodL06.LiquidCrystals}), and to study ``optical vortex knots'' (Dennis \emph{et al} \cite{mrDrpKbJkOHmjP10.OpticalKnots}).
Topological singularity is also an issue in robotics (Farber \cite[Chapter 4]{mF08.TopolRobot}) and control theory in engineering (Jonckheere  \cite{eaJ97.RobustStability}).
Ghrist \cite{rG14.ElemAppliedTopol} surveys applicable topology. Topology also has applications to social choice theory (Chichilnisky  \cite{gC79.SocialChoiceTopol} and Ghrist \cite[Section 8.7, pp.\ 170--171]{rG14.ElemAppliedTopol}.) Topology has also found application in economics (Wei \emph{et al} \cite{yWjWzW2025.TopolInEcon}).
 
Recently there has been work on the application of algebraic topology directly to \emph{data} (e.g., Niyogi \emph{et al} \cite{pNsSsW11.MoreManifHomolLearn}, 
Carlsson \cite{gC09.TopologyAndData}, 
Adler \emph{et al} \cite{rjAoBmsBeSsW10.PersHomolRndmFldsCmplxs}, 
Bubenik \emph{et al} \cite{Bube:Carl:Kim:Luo:stat:2010}, Edelsbrunner and Harer \cite{hEjlH10.CompTopol}, Ellis and Klein \cite{spEaK14.ConcurTopolfMRI}). And work has been work done applying topology to machine learning (Hensel \emph{et al}\cite{fHmMbR2020.TopolMethodsML}).

The present book, on the other hand, involves the application of algebraic topology directly to \emph{methods} of multivariate data analysis, for understanding their instability, if any. Some of the material appears in the papers 
\cite{spE91.sings.plane.fit,spE91.top.direct.axis,spE95,spE96,spE.fact.anal}. Since the aspects of a  problem that lead  to instability are rather general, I conjecture that many multivariate data analytic methods can fail catastrophically (but, mercifully, probably rarely do). In this book we develop some general theory concerning singularity then apply it to three classes of statistical methods where it occurs in a nontrivial way.

Instability can arise when the feature space, $\F$, has nontrivial topological structure, specifically nontrivial homology. One such $\F$, the projective line, came up in section \ref{SSS:LF.plots}. The projective line is the simplest Grassmann manifold (section \ref{SS:examples}). Maps into such an $\F$ are necessarily nonlinear. 
Statisticians usually analyze nonlinear statistical methods asymptotically (i.e., as the size of a data set goes to infinity). Roughly speaking this amounts to analyzing methods locally. It seems like that the results of such asymptotic analysis will breakdown near a singularity. But many methods of multivariate analysis are nonlinear and by using topological techniques one can analyze nonlinear methods \emph{globally} for fixed, finite sample sizes.

As discussed in section \ref{S:intro.to.intro}, quite often in data analysis or training deep neural nets humans tinker with or moderate the process. This introduces subjective judgment into the data analysis process. As pointed out in section \ref{S:intro.to.intro} that means that in practice \emph{data analysis is not algorithmic.} But topological methods can tell us things even about non-algorithmic maps. As long as the process behaves sensibly near perfect fits, the results presented here may be applicable. 

In addition, singularity, the topic of this book, is a topological phenomenon: It is preserved by homeomorphism.

The focus of this book is statistical data analysis, however, another example of summarizing and learning from data is animal (e.g., human) cognition. Topological laws of data analysis might apply to that (Zeeman \cite{ecZ65.ZeemanTopolBrain}; Ellis \cite{spE.ambiguity.neuro.compute}).

\section{Dimension}  \label{SS:dimension} 
How big are singular sets? A weak answer to this question is given in Ellis \cite{spE96}. A better, easier to work with way to measure ``how big'' is (Hausdorff) dimension, which we denote by ``$\dim$''. 

The co-dimension of $\Ss$ ($:= \dim \D - \dim \Ss$) is related to how fast the probability of being within $\epsilon$ of $\Ss$ decreases as $\epsilon \downarrow 0$. Specifically, 
the probability of getting a data set within $\epsilon$ of a singular set is bounded below by a multiple of $\epsilon^{\text{codim} \, \Ss}$. 
Thus,  $\text{codim} \, \Ss$ is connected to the behavior of left tail of cumulative distribution function (CDF) of distance from randomly chosen data set to $\Ss$. Formally, 
at $\epsilon \geq 0$, the CDF of the random distance to $\Ss$ is the probability that the distance is no greater than $\epsilon$. 

Data maps are unstable near their singularities. The codimension of the singular set is related to the probability of a data set being near to one. Thus, from a practical standpoint, codimension of its singular set is a useful thing to know about a data map. 

\begin{figure}
   \epsfig{file = 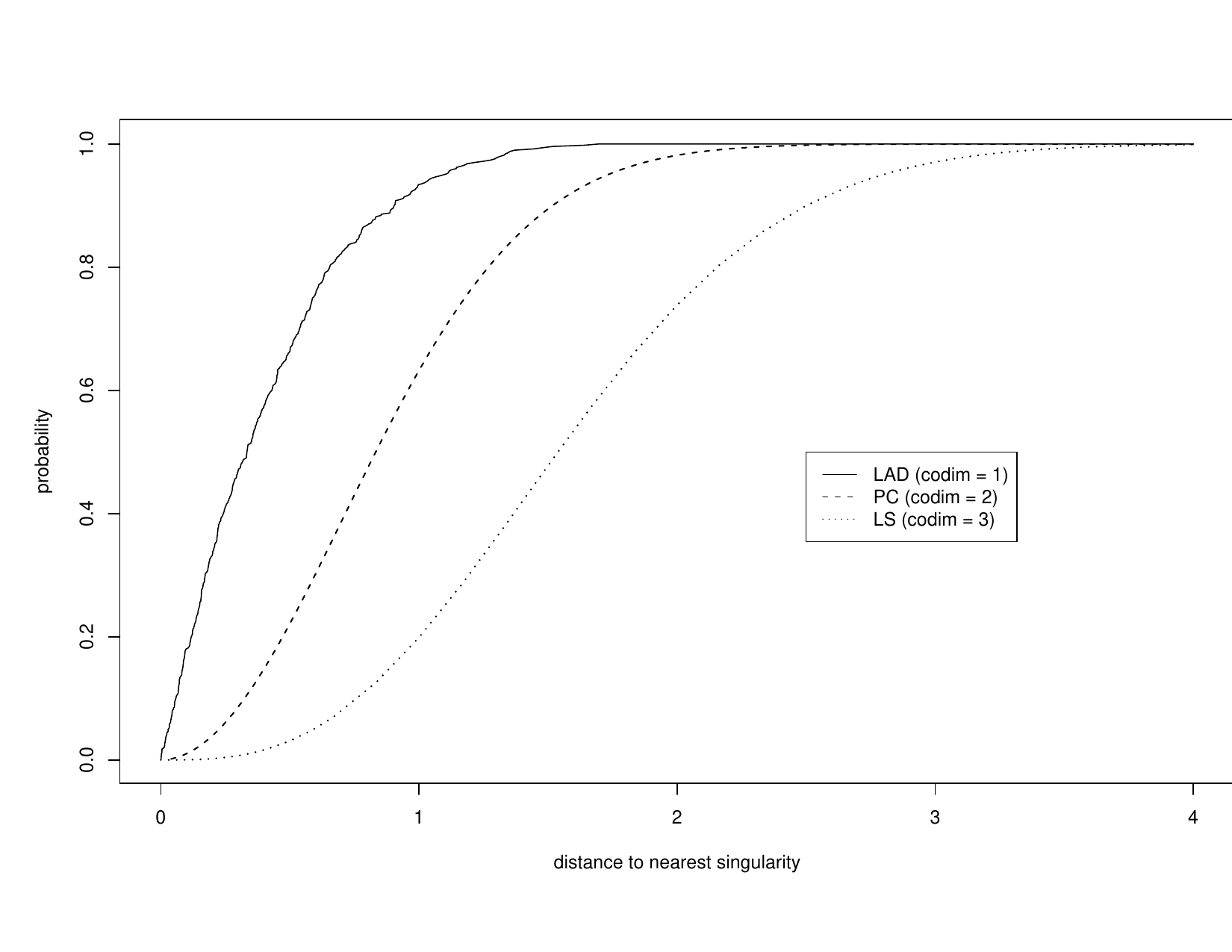, height = 3.5in, , } 
   \caption{CDF's of distances from a Gaussian random data set of four points on the plane to the singular sets of least absolute deviation linear regression (LAD), principal components line fitting (PC), and least square (LS) linear regression in fitting a line to four points on a plane. (From \cite[section 8]{spE.3.or.4})}   \label{F:sing.dists.cdfs}
\end{figure}

In section \ref{SSS:LF.plots} we examined fitting a line to three points on the plane. Richer structure emerges if we instead fit a line to \emph{four} points. Four points in the plane are specified by $2 \times 4 = 8$ coordinates. Consider the random data set generated when the 8 coordinates are independent normal random variables with mean 0 and variance 1. 
In figure \ref{F:sing.dists.cdfs} we plot the CDF of distance to $\Ss$ for fitting a line to the random four points for our usual set of three line-fitting methods: Least Absolute Deviation regression ($\text{codim} \, \Ss = 1$), Principal Component line fitting ($\text{codim} \, \Ss = 2$), and Least Squares regression ($\text{codim} \, \Ss = 3$). We observe that, asymptotically as $\epsilon \downarrow 0$, the CDFs decrease linearly, quadratically, and cubically respectively. 

\section{Measure of singular set and its distance to $\Pf$} \label{SS:measure.distance.to.P}
Hausdorff dimension is a coarse way to measure the size of a set $\Ss$. In this book 
we also derive a lower bound on the Hausdorff measure (appendix \ref{Chptr:Lip.Haus.meas.dim}) of the singular set The lower bound depends on the distance from $\Ss$ to $\Pf$.

In section \ref{SS:singularity}, we stated that there are other considerations that need to be balanced with the size of the singular set in judging a data map. Here we bring up one of them. Start with figure \ref{F:CircleSing}. It has to do with the problem of determining the ``location'' of points on a circle (``directional data'').
Given a data set consisting of a finite collection of points on a circle, a measure of location is analogous to the arithmetic mean or median of a batch of numbers: A single point which gives the location of a point cloud, in this case on a circle. For this problem, the set $\Pf$ of perfect fits is the diagonal consisting of data sets of the form 
$\{ x, x, \ldots, x \}$.

Figure \ref{F:CircleSing} illustrates two measures of location, $\Phi_{1}$ and 
$\Phi_{2}$. The small circles on the large circles in panels (a) and (b) of the figure represent observations in directional data sets. (Statisticians use ``observation'' to mean a single data point.) Panel (a) of the figure shows a singularity of $\Phi_{1}$. (b) shows a singularity of $\Phi_{2}$. 
Surprisingly, the appropriate Hausdorff measure 
of the singular set of $\Phi_{1}$ is apparently 6.6 billion times that 
of $\Phi_{2}$! 

 \begin{figure}
      \epsfig{file = 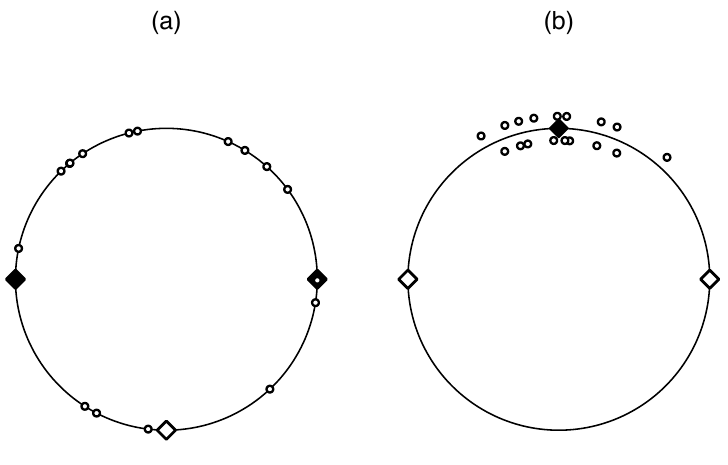, height = 3.7in, , }  
      \caption{Each of the large circles is a unit circle centered at the origin (in different coordinate systems). Panel (a): Small open circles show a directional data set consisting of 17 points. (One data point is very near (1,0). It appears as a white dot inside the black diamond there.) This artificial data set is a singularity of an augmented directional mean data map, 
$\Phi_{1}$. Small perturbations of the data are mapped by $\Phi_{1}$ arbitrarily close to (-1,0) or (1,0), the black diamonds. However, this data set is not a singularity of another augmented directional mean, $\Phi_{2}$. Oddly, $\Phi_{2}$ locates this data set very near $(0, -1)$ (white diamond).  \linebreak
Panel (b):  Another artificial directional data set of size 17 (open circles) near (0,1). (The data points have been spread radially to improve readability.) This data set is not a singularity of $\Phi_{1}$, which locates this data set very near $(0, 1)$ (black diamond). But it is a singularity of $\Phi_{2}$: Arbitrarily small perturbations of the data set can lead $\Phi_{2}$ to locate it arbitrarily close to either (-1,0) or (1,0), the white diamonds. (See appendix \ref{Chptr:F:CircleSing.data.and.calcs} 
for a listing of the data in the figure and calculations pertaining to them.)}  \label{F:CircleSing}
 \end{figure}

Figure \ref{F:CircleSing} shows why, despite the fact that $\Phi_{1}$ has a bigger singular set one might still prefer it over $\Phi_{2}$. 
The data set shown in panel (a) is a singularity of $\Phi_{1}$. Arbitrarily small perturbations of the data can cause $\Phi_{1}$ to assign locations arbitrarily close to (-1,0) or (1,0), the black diamonds. However, this data set is not a singularity of $\Phi_{2}$, which locates this data set very near  $(0, -1)$ (white diamond). On the other hand, the data set shown in panel (a) is so diffuse that none of the diamonds seem especially objectionable as a location for the data set. In fact, the notion of ``location'' of this data set makes little sense. (Still, the location assigned by $\Phi_{2}$, viz., the augmentation point $(0, -1)$, does seem like an odd place to call the location of the data set.) So the fact that this data set is a singularity of $\Phi_{1}$ is not too troubling. (See remark \ref{R:interrogation}.)

Contrast this with the situation portrayed in panel (b). It shows a directional data set that is not a singularity of $\Phi_{1}$, which locates this data set very near $(0, 1)$ (black diamond). But this data set is a singularity of $\Phi_{2}$: Arbitrarily small perturbations of the data set can lead $\Phi_{2}$ to locate it at either of the positions indicated by the white diamonds. In this case the data are not very diffuse and it seems that their location should be somewhere near $(0,1)$, i.e., near the location assigned to it by $\Phi_{1}$. The white diamonds are at positions that seem completely wrong as locations of these data. 
 
The singular set of $\Phi_{1}$, while much larger than that of $\Phi_{2}$, is situated far from the space $\Pf$, of perfect fits for the location problem (4.95 units in Euclidean distance). Panel (b) of figure \ref{F:CircleSing}, on the other hand, shows that the singular set of $\Phi_{2}$ comes undesirably close (1.30 units in Euclidean distance) to $\Pf$. 
 
Hence, it seems that in addition to size, distance to perfect fits is an important attribute of singular sets. It turns out that size and distance to $\Pf$ are at best positively associated with each other (i.e., they increase together). 

This is illustrated in figure \ref{F:oneZeroDimP}. Each panel in this figure illustrates hypothesis testing (example \ref{Ex:hypothesis.testing}). Think of the grey regions in the two panels as consisting of data sets mapped to 1. The white portions consist of data sets mapped to 0. The boundaries of the two regions, indicated with a heavy black line, are the singular sets, denoted by $\Ss'$ here.
 
\begin{figure}
      \epsfig{file = 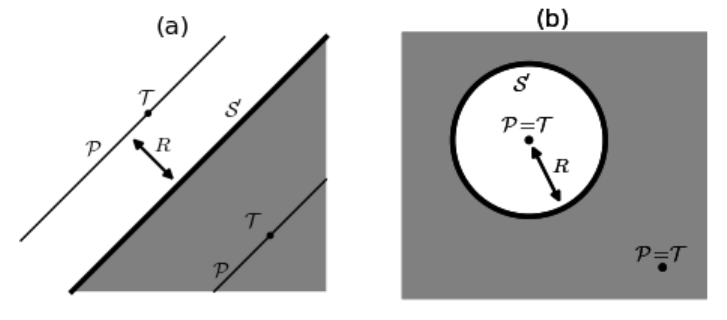, height = 2.4in, , }  
      \caption{One (a) and zero (b) dimensional $\Pf$ that is $R$ units from the singular set, 
      $\Ss'$.}    
             \label{F:oneZeroDimP}
\end{figure}

In figure \ref{F:oneZeroDimP}(a) the space $\Pf$ of ``perfect fits'' consists of two parallell lines and so is one-dimensional. The corresponding hypothesis testing method is such that the singular set is $R > 0$ units away from it at its closest. Imagine that the data space $\D$ is a large disk. As $R \downarrow 0$, the 1-dimensional volume of the singular set, denoted by $\Ss'$ here, is bounded below. Let $\Hm^{1}$ be one-dimensional Hausdorff measure. It can be thought of as measuring length, one-dimensional volume. 
 I.e., the $\Hm^{1}(\Ss')$ ``goes to 0'' like $R^{\text{codim} \, \Pf - 1} = R^{0}$.
 
In figure \ref{F:oneZeroDimP}(b) the space $\Pf$ of ``perfect fits'' consists of two points and so is zero-dimensional. The corresponding hypothesis testing method is such that, again, the singular set is $R > 0$ units away from it at its closest. 
As $R \downarrow 0$, the 1-dimensional volume of $\Ss'$ does go to 0. 
In fact $\Hm^{1}(\Ss') = 2 \pi R$. 
Thus, $\Hm^{1}(\Ss')$ again goes to 0 like $R^{\text{codim} \, \Pf - 1}$. (Panel (b) resembles example \ref{Ex:college.admissions}.)

We will see that quite generally if $R$ is the distance from $\Pf$ to $\Ss'$, then the appropriate Hausdorff measure of $\Ss'$ is no smaller than 
$\gamma R^{\text{codim} \, \Pf - 1}$, where $\gamma > 0$ depends on the geometry of the problem, but not on $(\Phi, \Ss')$. In the situation sketched in figure \ref{F:oneZeroDimP}(b), the constant is clearly $2 \pi$. In figure \ref{F:college.admissions} the constant is $\pi/2$. I have not computed $\gamma$ in nontrivial cases. 

The inequality, Hausdorff measure of $\Ss' \geq \gamma R^{\text{codim} \, \Pf - 1}$ imposes a trade-off between how small $\Ss'$ is in measure and how far it is from $\Pf$. As we have observed, fitting the data well is most important near $\Pf$. Thus, we may refer to a ``fit-instability'' tradeoff. Instability does not interfere with fit unless it is severe, the topic of the next section. 

This is another instance of the ``Sales Pitch'', remark  \ref{R:sales.pitch}, the main theme of this book: One can get global information about $\Ss'$ by looking locally, specifically within $R$ units, of $\Pf$. 
  
  \begin{remark}[Data analysis as interrogation]  \label{R:interrogation}
Here is a possible way to think of data analysis. Given a data set $x \in \D$, the statistician asks the question, ``What is the `$\F$'-ness of $x$?'', i.e., what point of $\F$ best captures the quality the statistician is looking for in the data? But for some $x$'s, the question might not make sense. 

Rarely can the statistician examine $x$ and decide for themself which $f \in \F$ best describes $x$, if any does. Instead, they usually have to delegate parts of the task to an algorithm or algorithms and the net result is a map $\Phi$ whose behavior the statistician does not completely understand. $\Phi$ can also be thought of as asking a question of $x$. A singularity of $\Phi$ can be thought of as a data set for which $\Phi$'s question does not make sense. A problem arises when $\Phi$'s question is not quite the same as the statistician's. 

For example consider the data set shown in panel ``(LS,c)'' in figure \ref{F:LS.PC.LAD.lf.plots}. It is a singularity of least squares linear regression (LS). 
LS is a regression method, so the statistician is probably asking, ``What linear function best describes the relationship between the variable plotted as abscissa and the one plotted as ordinate?''. For the panel (LS,c) data set that question does not make sense to the statistician and it does not make sense to LS either. So for this $x$ the statistician's and LS's questions are aligned. (We discussed figure \ref{F:LS.PC.LAD.lf.plots} in depth in section \ref{SSS:LF.plots}.)

Now consider panel ``(LAD,c)'' in the same figure. In this case the statistician's question is the same as before. The data set shown there is a singularity of least absolute deviation linear regression (LAD). However, this time the question ``What linear function \ldots'?'' does make sense and a plausible answer is the constant function 0. However, this data set is a singularity of LAD. The question LAD is asking does not make sense for this data set. 

\emph{So the problem with singularities is that they can occur at data sets for which the statistician's question makes sense.} (See discussion of figure \ref{F:CircleSing}.)

A premise of this book is that for data in a tight, perhaps just infinitesimal, neighborhood of $\Pf$ the statistician's question should usually make sense. (The (LS,c) data is in 
$\Pf$ but for it the statistician's question still does not make sense.) We insist that a data map $\Phi$ not have many singularities near $\Pf$, or at least not many severe ones (section \ref{SS:severity.of.sings}). But far away from $\Pf$ may be data sets for which statistician's question may not make sense. It is not so troubling if $\Phi$ has singularities there. (See discussion of figure \ref{F:CircleSing} above.)
  \end{remark}

\section{Severity}  \label{SS:severity.of.sings}
If $\Phi$ maps a neighborhood of a singularity into a small open set in $\F$ then the practical impact of singularity is small. E.g., the apparent singularity shown in panel (b) of Figure \ref{F:HeightFen} causes only a small displacement of the fitted LAD line. The displacement in panel (a) is also not large. 

To define severity in general, let $\msf{V}$ be an open cover of the codomain 
$\F$. Let $x$ be a singularity of $\Phi$. If there is no neighborhood $\clU$ of $x$ such that (s.t.)\ the closure of the image under $\Phi$ of $\clU$ lies in no set in $\msf{V}$ then $x$ is ``$\msf{V}$-severe''. If $\msf{V}$ is coarse, then $\msf{V}$-severe singularities have practical impact when data lie near them. 

The data sets plotted in the ``(c)'' column of figure \ref{F:LS.PC.LAD.lf.plots} are so called `$90^{\circ}$ singularities''. They are extremely severe. In line-fitting, every singularity of LS or PC is a $90^{\circ}$ singularity.  That is not true of LAD. In panel ``(LAD,a)'' three dashed lines descend from the apex of the triangle to the base. Except for the endpoints, every point on those lines corresponds to a singularity of LAD, but only one of them, the one displayed in ``(LAD,c)'', is $90^{\circ}$.

Severity can be connected to our results concerning Hausdorff dimension and measure of singular sets. We call that connection the ``severity trick''. 

\section{Looking ahead}  \label{S:preview}
Notation and basic definitions and results are given in chapter \ref{Chptr:basic.setup}. The relationship between the Hausdorff dimension of a set and the probability of being near it is treated in section \ref{SS:asymp.prob}. (The topic is revisitied in a more refined way in corollary \ref{C:lwr.bound.on.prob.of.being.close.to.S'}.) 

What singularity looks like when $\F$ is a metric space, but not a complete one, is discussed in remark \ref{R:.completeness.of.F}.

Instability of a map near a singularity is quantified in section \ref{SS:derivs.near.sings} and in propositions \ref{P:diam.oriented.90.degree.sing} and \ref{P:diam.of.image.of.nhbd.of.sing.of.loc.meas}. 

Possible ways of coping with singularity are considered in remark \ref{R:mitigating.sings}. 
One approach involves regularization (mentioned in section \ref{SS:calibration}) can also be used to avoid singularity, but not without problems. It is discussed in remark \ref{R:regularization.generalities}. (A specific instance of regularization is discussed in remark \ref{R:aug.mean.regularization}.)

The connection between Hausdorff measure and topology is provided by \eqref{E:cohom.and.codim}.

We first make use of algebraic topology to study singularity a general way in chapter \ref{Chptr:topology}, specifically in theorem \ref{T:Phi.star.Hr.contains.Theta.star.Hr}. It generalizes figure \ref{F:bound.cond.and.whorls}. 
It is used again in section \ref{SS:gnrl.lwr.bnd.plane.fit} and 
in the proof of theorem \ref{T:spher.loc.sing.codim.nvar+1}. 

The basic result giving a lower bound on the dimension of the singular set is given in section \ref{SS:dim.of.sing.sets}. 

Discrete $\F$, including hypothesis testing (example \ref{Ex:hypothesis.testing}; 
\ref{Ex:college.admissions} is another example) 
is analyzed in example \ref{Ex:disconnected.F} and chapter \ref{Chptr:linear.classification}. Discreteness of both $\F$ and $\D$ is discussed in remark \ref{R:discreteness}.

The ``sales pitch'' (remark \ref{R:sales.pitch}) is formalized in property \ref{Pty:agree.near.T}. Remark \ref{R:sales.pitch.in.plane.fitting} points out that plane-fitting is a domain in which the sales pitch applies well. 

The idea that one might accept a large singular set in exchange for improvement in some other aspect of data map performance (sections \ref{SS:singularity} and \ref{SS:measure.distance.to.P}) is treated in remark \ref{R:designer.sing.sets} and chapter \ref{Chptr:robst.loc.on.circle}.

Chapter \ref{Chptr:Haus.meas.of.sing.set} derives a lower bound on the Hausdorff measure of the singular set that depends on the distance from the singular set to the set $\Pf$ of perfect fits. We have already encountered this phenomenon in section \ref{SS:measure.distance.to.P}. It leads to a precise formulation of the ``sales pitch'' (remarks \ref{R:sales.pitch}, \ref{R:extended.sales.pitch}, and and property \ref{Pty:agree.near.T}). The fit-instability tradeoff discussed in section \ref{SS:measure.distance.to.P} is further discussed in remarks \ref{R:fit.instab.tradeoff} and \ref{R:designer.sing.sets}.

Severity (section \ref{SS:severity.of.sings}) is the topic of chapter \ref{Chptr:severity}. There, we develop methods that ``smooth'' away non-severe singularities leaving only severe ones. This is the basis of the ``severity trick'' (section \ref{SS:severity.of.sings}). We can often use that trick to show that sever singularities are plentiful. 
Propositions \ref{P:diam.oriented.90.degree.sing} and \ref{P:diam.of.image.of.nhbd.of.sing.of.loc.meas} give specific lower bounds on the level of instability of two classes of data maps near severe singularities. 

Chapter \ref{Chptr:sings.in.plane.fit} concerns plane-fitting, the simplest cases of which we examined in sections \ref{SSS:LF.plots} and \ref{SS:dimension}. Plane-fitting provides a striking example of the ''sales pitch'' (remarks \ref{R:sales.pitch} and \ref{R:sales.pitch.in.plane.fitting}). As suggested in section \ref{SSS:LF.plots}, we focus on the plane through the origin parallel to the graph of the function. LS is examined in some depth in section \ref{SS:lin.reg.and.LS}, LAD in section \ref{SS:LAD}, and PC in section \ref{SS:PC.plane.fitting}.

The notion of $90^{\circ}$ singularity in plane-fitting is defined in section \ref{SS:lin.combo.for.plane.fit.w/.k=nvar-1}. It does not apply to plane-fitting in general, but it does apply to linear regression (section \ref{SS:lin.reg.and.LS}), a very important statistical activity. In linear regression one computes an affine function from point clouds. The relationship between regression as a function-valued map and regression as a plane-valued map, an instance of the idea brought up in remark \ref{R:learning.and.predicting}, is discussed in section \ref{SS:functions.vs.geometry}. Severity in linear regression is the focus of section \ref{SSS:reg.coefs,near.90.degree.sings}. 

In section \ref{SS:gnrl.lwr.bnd.plane.fit}, we derive a general lower bound on the dimension of the singular sets of plane-fitting methods that should apply to \emph{any} (non-regularized; remark \ref{R:regularization.generalities}) plane-fitting method. (See remark \ref{R:general.lwr.bnd.on.plane-fitting.sing.set.dim}.)

The most basic question you can ask about a point cloud is, ``where is it?''. Statisticians use the phrase "measure of location'' to mean a data map that tells where a point cloud lies in space. Familiar examples are the arithmetic mean and the median for points clouds on the line. Those data maps are continuous, but measures of location on non-Euclidean spaces are often obligated to have singularities (Eckmann \emph{et al} \cite{bEtGpjH62.genMeans}). Chapters \ref{Chptr:spherical.location}, \ref{Chptr:aug.direct.mean}, and \ref{Chptr:robst.loc.on.circle} are devoted to measures of location on spheres, especially the circle. 

In chapter \ref{Chptr:aug.direct.mean} we examine in some depth a particular family of measures of location on spheres, the``augmented directional mean''. The data maps 
$\Phi_{1}$ and $\Phi_{2}$ discussed in section \ref{SS:measure.distance.to.P} and illustrated in figure \ref{F:CircleSing} are augmented directional means. Details concerning that plot and those data maps, including the calculation of the relative volumes of their singular sets, are presented in appendix \ref{Chptr:F:CircleSing.data.and.calcs}. 

In chapter \ref{Chptr:robst.loc.on.circle}, we examine in some depth the ``augmented directional median''. The augmented directional median can tolerate many outliers, observations far from the bulk of the data.  For this and augmented directional mean we give much attention to the volume-distance tradeoff (section \ref{SS:measure.distance.to.P}). 

Chapter \ref{Chptr:linear.classification} gives a basic account of singularity in linear classification (LC). LC describes the relationship between $k$, say, 
$\RR$-valued variables and a binary variable. From a point cloud in $\RR^{k}$, each point of which is labeled by a value of the binary variable, an LC method learns a map, $\Gamma$, that  takes (almost any) point in $\RR^{k}$ as input and returns a value of the binary variable. The hope is that, given only the $\RR^{k}$ part of a new example, the value $\Gamma$ returns is likely to be the unseen binary part of the example. 
Thus, LC is another instance of learning and prediction (remark \ref{R:learning.and.predicting}). In LC both the training and predicting steps have singularities. 

Here, as in sections \ref{SS:D.T.plane.fit} and \ref{S:exactness.of.fit} the test pattern space, $\T$, in LC is much smaller than the space $\Pf$ of perfect fits. 

In section \ref{SS.lin.discr.anal}, we study linear discriminant analysis, a common LC method. Specific examples are given there of the role of subjective judgment in data analysis, an issue brought up in section \ref{S:topology}.

There are eight appendices. Appendix \ref{Chptr:misc.proofs} resolves some technical issues that arise in the main body of the book. Appendix \ref{Chptr:F:CircleSing.data.and.calcs} lists the data used in figure \ref{F:CircleSing} and explains how the value 6.6 billion mentioned in section \ref{SS:measure.distance.to.P} was calculated. 
Appendices \ref{Chptr:Lip.Haus.meas.dim} reviews the topics of Lipschitz maps and Hausdorff dimension and measure. Appendix \ref{Chptr:basics.of.simp.comps} reviews simplicial complexes. In appendix \ref{Chptr:polyhed.approx} we develop some technicalities used in chapter \ref{Chptr:Haus.meas.of.sing.set} concerning approximating a function continuous off a compact set by one continuous off a polyhedron. Appendix \ref{Chptr:LAD.technicalities} fills in some technical gaps for the specific topic of ``least absolute deviation'' linear regression (subsection \ref{SS:LAD}). Appendix \ref{Chptr:rob.loc.circle.cones.appendix} concerns the set of perfect fits relevant to resistant measures of location on the circle, the subject of chapter \ref{Chptr:robst.loc.on.circle}. 

Just before the bibliography are acknowledgements of the help I have received through the years on this project.

\chapter{Preliminaries}   \label{Chptr:basic.setup}	
Let $\Phi$ be a data map, i.e., $\Phi$ takes data sets, $x$, in a ``data, or sample space'', $\D$, and maps them to decisions, classifications, estimates, descriptions, or features in a feature or parameter space, $\F$. (See section \ref{S:intro.to.intro}; nondeterministic data maps are discussed in section \ref{SS:nondeterministic.data.maps}.)  In this book we make the following blanket assumption.
	\begin{equation}  \label{E:D.metric.F.normal}
	  \D \text{ is a separable, pathwise connected metric space and $\F$ is normal.} 
	\end{equation}
For example, if $\D$ is a finite dimensional topological manifold (Boothby \cite[Definition (3.1), p.\ 6]{wmB75}) then it is metrizable and separable (Boothby \cite[Theorem (3.6), p.\ 9]{wmB75} and Simmons \cite[p.\ 100]{gfS63}). 

In the examples considered in this book $\F$ is a finite dimensional manifold. But it would be of interest to consider infinite dimensional $\F$. 

  \begin{remark}[Sample]  \label{R:sample}
The points of $\D$ are ``data sets'' or ``realizations''. A data set is usually a (finite) list of points, $x_{1}, x_{2}, \ldots, x_{n}$ in some space $\X$. Usually, the order of the points does not matter and the same point of $\X$ may appear more than once. Such may matter (chapters \ref{Chptr:spherical.location}, \ref{Chptr:aug.direct.mean}, and \ref{Chptr:robst.loc.on.circle}). We call such a list a ``sample''. So a sample is a multiset. 
  \end{remark}

  \begin{remark}[Basic notation] \label{R:basic.notation}
In this book we frequently use the following notation. Let $X$ be a metric space
and $r > 0$. Then we define
	\begin{equation}  \label{E:ball.defn}
		B_{r}(x) := \bigl\{ y \in X : \phi(y,x) < r \bigr\},   
		  \quad  \overline{B_{r}(x)} := \bigl\{ y \in X : \phi(y,x) \leq r \bigr\},   
	\end{equation}
where $\phi$ is the metric on $X$. Variations on this notation will also appear. For example,
    \begin{equation}     \label{E:Euc.ball.defn}
		B_{r}^{m}(x) := \bigl\{ y \in \RR^{m} : |y - x| < r \bigr\}, \quad
		 \overline{B_{r}^{m}(x)} 
		    := \bigl\{ y \in \RR^{m} : |y - x| \leq r \bigr\}
    \end{equation}
	
Another common notation is the following. Let $f : S \to Y$ and let $T \subset S$. We write
	\begin{equation}  \label{E:restriction.notation}
		f \restriction_{T} \text{ is the restriction of } f \text{ to } T.
	\end{equation}
If $f$ is not defined everywhere on $S$ we still might write $f : S \partlyto Y$.
  \end{remark}
  
Suppose $\D'$ is a dense subset of $\D$ and $\Phi : \D' \to \F$. 
Thus, $\Phi : \D \partlyto \F$. If $\clU \subset \D$ we sometimes write 
$\Phi(\clU) := \Phi ( \clU \cap \D' )$ and call that the ``image of $\clU$ under $\Phi$."

Note that if $\Ss \subset \D$ then $\Ss$ is also a separable metric space. 
\emph{(Proof:} $\Ss$ is obviously a metric space. Since $\D$ is separable, it is second countable, by Simmons \cite[Theorem C, p.\ 100]{gfS63}. Let $n = 1, 2, \ldots$ be arbitrary. Then, by Lindel\"of's theorem, Simmons \cite[Theorem A, p.\ 100]{gfS63}, we have that 
$\bigl\{ B_{1/n}(x) ; x \in \Ss \bigr\}$ has a countable subcover 
$\bigl\{ B_{1/n}(x_{ni}) ; i = 1, 2, 3, \ldots \bigr\}$. 
The set $\{ x_{ni} \in \Ss : i, n = 1, 2, \ldots \}$ is a countable dense subset of $\Ss$.)

Say that a manifold or map is ``smooth'' if it is $C^{\infty}$. $\D$ will often be a Riemannian manifold (Boothby \cite[Definition (2.6), p.\ 184]{wmB75}), in which case, for convenience, we include among its properties that it is smooth. 

A ``singularity'' of $\Phi$ (w.r.t.\ $\D'$) is a data set, $x \in \D$, at which the limit, $\lim_{x' \to x} \Phi(x')$, does not exist. (The limit is taken through the dense subset $\D'$.)  So a singularity, $x$, is like a discontinuity except that $\Phi$ does not have to be defined at $x$ and even if it is defined at $x$, the value of $f(x)$ is irrelevant. The set of all singularities is the ``singular set,'' $\Ss$, of the data map, $\Phi$. 

 \begin{remark}[Completeness and interpretation of singularity]  \label{R:.completeness.of.F}
Consider the assumption 
	\begin{equation*}  \label{E:F.is.metric}
		\text{$\F$ is a metric space.} 
	\end{equation*}
Then a natural interpretation ``$x \in \Ss$ is a singularity'' is that small changes in $x' \in \D'$ near $x$ can have relatively large effects on $\Phi(x')$. But what if $\F$ is not complete?  Suppose $x_{n} \to x \in \Ss$ and $\{ \Phi(x_{n}) \}$ is Cauchy but does not converge in $\F$. It might be possible to augment $\F$ by an object $f$ in a sensible fashion so that then 
$\Phi(x_{n}) \to f$. Assume this has been done whenever possible. However, there might be some cases in which there is no sensible object to which $\{ \Phi(x_{n}) \}$ is ``trying'' to converge. E.g., $\F$ might be a function space and $\{ \Phi(x_{n}) \}$ is ``trying'' to converge to a multi-valued ``function''. (An example is given in section \ref{SS:functions.vs.geometry}.) In that case, one might prefer to not to define a limiting value of $\{ \Phi(x_{n}) \}$. In summary, if $x \in \D$ is a singularity then either small changes in $x' \in \D'$ arbitrarily near $x$ can have relatively large effects on $\Phi(x')$ or near $x$ one can find $x' \in \D'$ such that (s.t.)\ $\Phi(x')$ is arbitrarily ``strange''. Our focus in this book is on cases in which $\F$ is complete, in fact, compact.
 \end{remark}   
 
Statisticians often analyze the variability of a data map \emph{via} a Taylor expansion. Clearly, a Taylor expansion can never capture singularity. 
(But perhaps approximation by rational functions might?)

Let $\Ss'$ be a closed superset of $\Ss$. (In chapter \ref{Chptr:severity}  we will see that the closed set $\Ss'$ can often be replaced by a closed \emph{subset} of $\Ss$.) Assume $\Phi$ is defined and continuous on $\D \setminus \Ss'$. In chapter \ref{Chptr:topology} we will see that if $\F$ has nontrivial homology and $\T \subset \D$ is rich enough that the restriction of $\Phi$ to $\T \setminus \Ss'$ probes that homology then 
$\Phi$ can thereby be forced to have singularities somewhere in $\D$, not necessarily 
in $\T$. 

The singular set depends on the dense set $\D'$:

\begin{example}   \label{Ex:bad.x'}
Let $\D = (0,1)$ and $\F = \RR$. Let $q_{1}, q_{2}, \ldots $ be the rational numbers in $\D$. Define $\Phi : \D \to \F$ as follows. If $x \in \D$ is irrational then $\Phi(x) := 0$. For $i = 1, 2, \ldots$, write $q_{i} = a/b$ in lowest terms. If $b$ is odd, then $\Phi(q_{i}) := 1$. If $b$ is even, then $\Phi(q_{i}) := 2$. 
First, let $\D' = \{ q_{1}, q_{2}, \ldots \}$. Then the singular set is $\Ss = \D$.\footnote{Any $x \in \D$ can be approximated by a fraction $a/b$ in lowest terms with $b$ even. Let $\ell = 1, 2, \ldots$. Then we can make the fraction $q := 2^{\ell} a/(2^{\ell} b + 1)$,  arbitrarily close to $a/b$ by taking $\ell$ sufficiently large. Now, $2^{\ell} a$ and $2^{\ell} b + 1$ might not be relatively prime, but it is easy to see that in lowest terms $q$ has odd denominator. Thus, arbitrarily close to $x$ one can find points at which $\Phi$ is $1$ and points at which $\Phi$ is 2.} 
On the other hand, if $\D'$ is the set of irrational numbers in $\D$, then $\Ss = \varnothing$.
\end{example}

In practice, there is usually a natural choice of $\D'$ satisfying the following.
   \begin{equation}   \label{E:D'.dense.Phi.cont.on.D'}
      \D' \text{ is dense in } \D \text{ and } \Phi \text{ is continuous on } \D' \; 
      ( \text{so }  \; \D' \cap \Ss = \varnothing).
   \end{equation}   
Under \eqref{E:D'.dense.Phi.cont.on.D'} all isolated points of $\D$ belong to $\D'$ and if $x' \in \D'$ 
is not isolated, then any neighborhood of $x'$ contains another point of $\D'$. We have the following. (``$\setminus$'' indicates set theoretic subtraction.) 
   \begin{lemma}  \label{L:extend.Phi.to.D.less.S}
      Suppose $\Phi : \D' \to \F$ is a data map with singular set $\Ss$ w.r.t.\ $\D'$.
      Suppose \eqref{E:D'.dense.Phi.cont.on.D'} holds. Then $\Ss$ has empty interior. 
      Define $\hat{\Phi}: \D \setminus \Ss \to \F$ as follows. Let
         \begin{multline*}
            \hat{\Phi}(x) = 
               \begin{cases}
                  \Phi(x), &\text{ if $x \in \D \setminus \Ss$ is isolated,} \\
                    \lim_{x' \to x; \, x' \in \D'} \Phi(x) , 
                      &\text{ if } x \in \D \setminus \Ss \text{ is not isolated.}
               \end{cases}  \\
         \end{multline*}
Then $\hat{\Phi}$ is defined and continuous on $\D \setminus \Ss$. It is the unique continuous extension of $\Phi$ to $\D \setminus \Ss$ and the singular set of $\hat{\Phi}$ w.r.t.\ $\D \setminus \Ss$ is $\Ss$.

Suppose $\tilde{\D}$ is another dense subset of $\D$ on which $\hat{\Phi}$ is defined, i.e., $\tilde{\D} \subset \D \setminus \Ss$, and let $\tilde{\Ss}$ be the singular set 
of $\hat{\Phi}$ w.r.t.\ $\tilde{\D}$. Then $\tilde{\Ss} \subset \Ss$.
   \end{lemma}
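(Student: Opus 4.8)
The plan is to prove the final assertion by contraposition: fix $x \in \D \setminus \Ss$ and show $x \notin \tilde{\Ss}$, i.e., that $\lim_{x' \to x, \, x' \in \tilde{\D}} \Phi'(x')$ exists. First I would record two preliminary observations. One, the hypothesis that $\Phi'$ is defined on $\tilde{\D}$, together with the fact that $\Phi'$ has domain exactly $\D \setminus \Ss$, forces $\tilde{\D} \subset \D \setminus \Ss$; so the already-established continuity of $\Phi'$ on $\D \setminus \Ss$ applies at and near every point of $\tilde{\D}$. Two, since $\tilde{\D}$ is dense in $\D$, we have $x \in \overline{\tilde{\D}}$, so the limit along $\tilde{\D}$ is being taken along a set that accumulates at $x$ whenever $x$ is not isolated.

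Next I would split into the two cases appearing in the statement of the lemma. If $x$ is an isolated point of $\D$, then $x \in \tilde{\D}$ (a dense set meets the open set $\{x\}$), and near $x$ the only approach within $\tilde{\D}$ is eventually constant at $x$, so $\lim_{x' \to x, \, x' \in \tilde{\D}} \Phi'(x')$ exists trivially and equals $\Phi'(x)$; hence $x \notin \tilde{\Ss}$. If $x$ is not isolated, I would invoke the part of this lemma proved above, namely that $\Phi'$ is continuous on $\D \setminus \Ss$, hence continuous at $x$: for every neighborhood $V$ of $\Phi'(x)$ in $\F$ there is a neighborhood $U$ of $x$ in $\D$ with $\Phi'\bigl(U \cap (\D \setminus \Ss)\bigr) \subset V$. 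Since $\tilde{\D} \subset \D \setminus \Ss$, this gives $\Phi'(U \cap \tilde{\D}) \subset V$, which is exactly the statement that $\lim_{x' \to x, \, x' \in \tilde{\D}} \Phi'(x') = \Phi'(x)$. So the limit exists and $x \notin \tilde{\Ss}$. Combining the two cases yields $\tilde{\Ss} \subset \Ss$.

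This is essentially a routine unwinding of the definition of singularity, so there is no serious obstacle; the points that need care are (i) making explicit that $\tilde{\D} \subset \D \setminus \Ss$, so that continuity of $\Phi'$ can be transferred to limits taken through $\tilde{\D}$, and (ii) handling isolated points separately, since there ``the limit through a dense set'' is the trivial limit. One should also note that $\F$ being merely normal is harmless: only the \emph{existence} of the limit is at issue, and in any case normality makes $\F$ Hausdorff so the limiting value $\Phi'(x)$ is unambiguous.
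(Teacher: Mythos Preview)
Your proposal is correct and follows essentially the same approach as the paper: the paper's proof of the final assertion is the one-line observation that $\Phi'$ is continuous on $\D \setminus \Ss \supset \tilde{\D}$, so any singularity of $\Phi'$ with respect to $\tilde{\D}$ must lie in $\Ss$. You have simply unwound this in more detail, making explicit the inclusion $\tilde{\D} \subset \D \setminus \Ss$ and separately handling isolated points, but the underlying idea is identical.
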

(For proof see appendix \ref{Chptr:misc.proofs}.)

Thus, we may typically assume 
   \begin{equation}  \label{E:D'.=.D.less.S}
      \D' = \D \setminus \Ss, \; \Phi \text{ is continuous on } \D', 
        \text{ and $\D'$ is dense in $\D$}.
   \end{equation} 
(Sometimes it may be convenient to allow $\Phi$ to be defined on a set larger than 
$\D'$.)

\begin{remark}[``Damned if you do and damned if you don't'']  \label{R:Scylla.and.Charybdis}
Let $\D' \subset \D$ with $\Phi$ defined on $\D'$. One hopes (1) $\D'$ is large and (2) $\Phi$ behaves well on $\D'$. 
E.g., $\D'$ might be the collection of data sets
at which $\Phi$ is defined uniquely. It may not be easy to prove that \eqref{E:D'.dense.Phi.cont.on.D'} holds for this $\D'$. One can then proceed as
follows.  Assume that \eqref{E:D'.dense.Phi.cont.on.D'} holds and on that basis use the results in this book 
to show that some generally bad things happen. Alternatively, \eqref{E:D'.dense.Phi.cont.on.D'} fails, which in itself can be bad. 
\end{remark}

An example of a $\D'$ to which the preceding remark might apply is provided by data maps defined by optimization. These are common in data analysis (e.g., remark \ref{R:regularization.generalities}). Recall our blanket assumption \eqref{E:D.metric.F.normal}.

	\begin{lemma} \label{L:data.maps.defined.by.opt}
Suppose $g : \F \times \D \to \RR$ and, given $x \in \D$, $\Phi(x)$ is defined to be the point 
$f _{0}\in \F$ s.t.\ $f = f _{0}$ minimizes $f \mapsto g(f,x)$, whenever $f _{0}$ exists uniquely.
	\begin{enumerate}
		\item Suppose $\F$ is compact and $g$ is continuous on $\F \times \D$. Let 
$\D'_{\ref{I:cmpct.opt}} \subset \D$ be the collection of data sets $x' \in \D$ s.t.\ $\Phi(x')$ is defined. Suppose $\D'_{\ref{I:cmpct.opt}} \neq \varnothing$. 
Then $\Phi$ is continuous on $\D'_{\ref{I:cmpct.opt}}$. \label{I:cmpct.opt}
		\item More generally, even if $\F$ is not compact or $g$ is not continuous, the following holds. Let $\D'_{\ref{I:unif.opt}} \subset \D$ be the collection of data sets 
$x' \in \D$ with the following property. There exists $f_{0} \in \F$ (depending on $x'$) s.t.\  for any neighborhood, $G$, of $f_{0}$ there is a neighborhood $\clU \subset \D$ of $x'$ s.t.\
			\begin{equation}  \label{E:outside.G.g.is.big.unif}
			     \text{For every } x \in \clU, \;  \inf_{f \notin G} g(f, x) > g(f_{0}, x).
			\end{equation} 
So $\Phi(x')$ exists (in particular is unique) and equals $f_{0}$. Suppose $\D'_{\ref{I:unif.opt}} \neq \varnothing$. Then $\Phi$ is continuous on $\D'_{\ref{I:unif.opt}}$.  
                 \label{I:unif.opt}
	\end{enumerate}
	\end{lemma}
(For proof see appendix \ref{Chptr:misc.proofs}.)

  \begin{remark}[Distance based methods] \label{R:dist.based.mthds}
In \cite{spEsM92.distanceScope} it is proved that, in \emph{principle}, a large collection of data maps can be thought of as defined as minimizers of a distance defined by a metric. Specifically, suppose $\Phi : \D \partlyto \F$ and  $\Phi$ imbeds 
$\Pf \subset \D$ into $\F$ in such a way that $\Phi(\D') = \Phi(\Pf)$, where $\D'$ is the subset of $\D$ on which $\Phi$ is defined and continuous. As usual, we assume 
$\D'$ is dense in $\D$. Then we can identify $\Phi(D')$ with $\Pf$ and regard $\Phi$ as mapping, a retraction (remark \ref{R:retractions}), $\D'$ into $\Pf$. Recall that by \eqref{E:D.metric.F.normal}, $\D$ is metrizable and second countable. Assume it is also locally compact. Let $\Ss := \D \setminus \D'$ (see lemma \ref{L:extend.Phi.to.D.less.S}) and suppose $\overline{\Ss}$ \emph{and 
$\overline{\Pf}$ are disjoint}. Then there exists a metric, $\rho$, on $\D$ that generates the topology on $\D$ and if $x \in \D'$ then $\Phi(x)$ is the unique $\rho$-closest point of $\Pf$ to $x$.

In chapter \ref{Chptr:topology} and beyond we will almost always assume that, if $\overline{\Ss}$ and $\overline{\Pf}$ are not disjoint, their intersection is at least small. 
  \end{remark}

  \begin{remark}[Mitigating Singularity] \label{R:mitigating.sings}
We indicate three approaches.

First, one can try to choose a data analytic method, $\Phi$, that is appropriate for the problem at hand but whose singular set lies in a region of $\mcl{D}$ of low probability. This choice can be made before seeing the data. But there may not be any such 
$\Phi$. See remark \ref{R:dnsity.contrs.as.data.spaces}. 

It is tempting to try to avoid singularities by choosing $\Phi$ on the basis of the data, but this choice must be made cautiously. To see this, suppose one starts with a family, $\blds{\Phi} = \{ \Phi_{\alpha} : \D' \to \F, \, \alpha \in A \}$, of procedures appropriate to the problem at hand. E.g., each $\Phi_{\alpha}$ is calibrated w.r.t.\ some simple, ``rich'' space of perfect fits (subsection \ref{SS:calibration}). Given data, $x$, choose some member, $\Phi_{\alpha(x)}$, whose singular set is remote from $x$. Summarize the data by $\Phi^{\ast}(x) := \Phi_{\alpha(x)}(x)$. At first glance it would appear that $\Phi^{\ast}$ has good singularity properties. However, by piecing together the procedures 
in $\blds{\Phi}$ one may inadvertently create singularities. In particular, the theory of chapter \ref{Chptr:topology} may apply to show that $\Phi^{\ast}$ has singularities. For example,
$\alpha(x)$, the index that selects the member of $\blds{\Phi}$ based on the data $x$, may be a piece-wise constant function of $x \in \mcl{D}$. $\Phi^{\ast}$ may have singularities at some of the jumps in $\alpha$. If all the methods in $\blds{\Phi}$ are calibrated w.r.t.\ the same set of perfect fits and ``standard'' $\Sigma$ then, as demonstrated in subsequent chapters, topology may force $\blds{\Phi}$ to have a large singular set no matter how it is constructed. 

A second approach to countering the singularity problem is through the use of ``diagnostics''. For example, 
suppose $\Phi$ has a closed singular set $\Ss$. (In chapter \ref{Chptr:severity} we will see that this is not an unreasonable assumption.) Let $\mcl{C}$ denote the ``cone over $\F$'' defined as follows. (See subsection \ref{SSS:conical.fibers}.)  Start with the Cartesian product, $\F \times [0, 1]$. The cone, $\mcl{C}$, is then the space we obtain by identifying the set $\F \times \{ 0 \}$ to a point, the ``vertex'', $v$. 

If $X$ is a metric space with metric $d$, $x \in X$, and $A,B \subset X$, let 
    \begin{multline}  \label{E:set.distances}
       dist(x, A) := dist_{d}(x, A) := \inf \bigl\{ d(x, y) :  y \in A \bigr\} \text{ and } \\
         dist(A,B) := dist_{d}(A,B) := \inf \bigl\{ dist(y, B) : y \in A \bigr\} \\
           =  \inf_{y,z} \bigl\{ d(y,z) :  y \in A, z \in B \bigr\} 
             =  \inf \bigl\{ dist(z, A) : z \in B \bigr\} .
    \end{multline}
(See Munroe \cite[p.\ 12]{meM71.meas.thy}.) Also recall that the ``diameter'' of $A$ is defined to be
    \begin{equation}  \label{E:diam.of.set}
          diam(A) = \sup \bigl\{ d(x,y) \geq 0: x, y \in A \bigr\} .
    \end{equation}
 (See Munroe \cite[p.\ 12]{meM71.meas.thy} .)

If $x \in \mcl{D}$, let $\rho(x) = \text{dist}(x, \Ss)$, the distance from $x$ to $\Ss$. 
($\D$ is a metric space by \eqref{E:D.metric.F.normal}.) Now define a new data analytic procedure, $\Phi^{\#}$ taking values in $\mcl{C}$, as follows. 
   \[
      \Phi^{\#}(x) =
         \begin{cases}
            \left( \Phi(x), \frac{\rho(x)}{1 + \rho(x)} \right) \in \mcl{C}, \text{ if } x \notin \Ss, \\
            v, \text{ if } x \in \Ss.
         \end{cases}
   \]  
The quantity $\delta(x) := \rho(x)/[1 + \rho(x)]$ is a ``diagnostic'' for $\Phi$ at $x$. The augmented data summary $\Phi^{\#}$ is actually a continuous function on $\mcl{D}$ that contains all the information in $\Phi$ (assuming $\Phi(x)$ is
meaningless for $x \in \Ss$). However, computing $\delta$ may be quite difficult. But  diagnostics related to the distance to the singular set might alert the data analyst that a singularity is near. See Belsley \cite{daB91}. The ``$p$-value'', example \ref{Ex:hypothesis.testing}, is an example of such a diagnostic. But in the hypothesis testing setting, a binary decision has to be made so in that context the ``$p$-value'' is not very helpful. (The ``Bayes factor'', Berger \cite[Definition 6, p.\ 146]{joB85.BergerBayes} is an alternative.) Finding a diagnostic related to distance seems impossible if the data map has a subjective component (section \ref{S:topology}). But even when $\delta$ cannot be computed, diagnostics that merely warn that a singularity is nearby are helpful. An obvious diagnostic is a measure of how much 
$\Phi(x)$ fluctuates as $x$ is perturbed, perhaps by a number of random perturbations. 

A third, and widely used, method for mitigating singularities is through regularization. But regularization also has it perils (remark \ref{R:regularization.generalities}). 
Randomization, i.e., using nondeterministic data maps (section \ref{SS:nondeterministic.data.maps}), seems likely to aggravate, not mitigate, singularity.
  \end{remark}

  \begin{remark}[Distribution shift]
Suppose a statistical model is fitted, using a data map $\Phi$, from a data set $x$ drawn from a probability distribution, $P$. Suppose one wishes to apply that model to data drawn from a slightly different distribution, $P'$. Might that model be wildly wrong in the new context? If the distance between $P$ and $P'$ is small, w.r.t.\ the Prohorov metric (Billingsley \cite[pp.\ 237--239]{pB68.ConvProbMeas})) say, then samples from $P'$ should be ``close'' to those $x$. Thus the question of the stability of the fitted model to distribution shift amounts to considering the stability of the data map $\Phi$ at $x$. 
  \end{remark}

  \begin{remark}[Duality]  \label{R:Duality}
Let $x \in \D'$. It might be that two data maps, $\Phi_{1}$ and $\Phi_{2}$, on $\D'$ are very similar (e.g., are solutions to very similar optimization problems), yet $\Phi_{1}(x)$ and $\Phi_{2}(x)$ are quite different. For $x$ may be close to the singular set, $\Ss_{1}$,  
of $\Phi_{1}$ and the singular set of $\Phi_{2}$ may be different from $\Ss_{1}$. In this case one might say that 
\emph{$x$ has a singularity near $\Phi_{1}$}. Such singularities are important in applied Statistics. 
It is hard to trust a statistical analysis in which a small change in  the model leads to a large change in the fitted point of $\F$.

More generally, consider the space $\mbf{M} = \D \times \blds{\Phi}$, 
where $\blds{\Phi}$ is a collection of data maps with the same range space, $\F$. Consider the singularities of the pairing that takes 
$(x, \Phi) \in \mbf{M}$ to $\Phi(x)$. Might this pairing be important in model selection? A difficulty here is choosing a topology for $\blds{\Phi}$. The most practical, but least interesting, choice might be the discrete topology.

This is connected to issue of reproducibility of data analysis (D'Amour \emph{et al} \cite{aD'AkHetAL2020}) and to the notion of stability discussed in Yu and Kumbier \cite{bYkK2019.VeridicalDataScience}. In fact, we can consider the joint stability 
of $( \Phi, x ) \mapsto \Phi(x)$ as a measure of the stability of the whole ``data science life cycle'' discussed in Yu and Kumbier. 
  \end{remark}

\emph{Prima facie}, the bigger $\Ss$ is, the poorer is the conditioning of $\Phi$. One way to measure the size of the singular set is by its dimension. We use Hausdorff dimension. (See appendix \ref{Chptr:Lip.Haus.meas.dim}.) A stronger notion of dimension is covering or \v{C}ech-Lebesgue dimension (Engelking \cite[Definition 1.6.7, p.\ 54]{rE78.DimThy}), which in our case (\eqref{E:D.metric.F.normal}) is the same as inductive dimension (Engelking \cite[Theorem 1.7.7, p.\ 65]{rE78.DimThy}, Hurewicz and Wallman \cite[pp. 6--7, 67]{wHhW48.DimThy}; we showed above that $\Ss$ is separable metric). Denote inductive dimension by $ind$.
Let $X$ be a separable metric space. Then, by Hurewicz and Wallman \cite[Theorem VII 2, p.\ 104]{wHhW48.DimThy},
   \begin{equation}  \label{E:n.dim.Haus.meas.>0.if.topo.dim.geq.n}
      ind \, X \geq n \text{ implies } \Hm^{n}(X) > 0 \text{ so } \dim X \geq n, 
        \quad n = 0, 1, 2, \ldots.
   \end{equation}
where ``$\Hm^{n}$'' denotes $n$-dimensional Hausdorff measure and ``$\dim$'' denotes Hausdorff dimension (appendix \ref{Chptr:Lip.Haus.meas.dim}). 

Let $X$ be a compact metric space and let $k$ be a nonnegative integer. Let $G$ be an abelian group and let $\check{H}^{k}(X; G)$ be the $k$-dimensional \v{C}ech cohomology group of $X$ with coefficients in $G$ (Munkres \cite[\S 73 ]{jrM84}; Dold \cite[Chapter VIII, chapter 6]{aD95.alg.topol}). We have:
   \begin{subequations}  \label{E:cohom.and.codim}
     \begin{gather}  
        \text{If } \check{H}^{k}(X; G) \ne 0   \label{E:k.cohom.not.0} \text{ then } \\
           \Hm^{k}(X) > 0. \; \text{ In particular, } \dim X \geq k.  
             \label{E:big.dim.small.codim} 
     \end{gather}
   \end{subequations}
This follows from Hurewicz and Wallman \cite[statement F, p.\ 137]{wHhW48.DimThy} and
\eqref{E:n.dim.Haus.meas.>0.if.topo.dim.geq.n}. (See also Dold \cite[Chapter VIII, Remark 10.4, pp.
 309--310]{aD95.alg.topol}.) Note that since $\check{H}^{k}(X; G)$ is a topological invariant, we have that $ \Hm^{k}(X) > 0$ if $\check{H}^{k}(X; G) \ne 0$ no matter for what metric on $X$ it is computed, providing it is compatible with the topology.
 
 In the interest of explicitness, we posit
     \begin{equation}  \label{E:(co)hom.of.empty.set.is.0}
      \text{The homology and cohomology of the empty set is trivial in all dimensions.}
    \end{equation}
Together, \eqref{E:Haus.measure.of.empty.set.is.0} and the preceding do not conflict with \eqref{E:cohom.and.codim}. 
 
  \begin{example}[Singular set and closure can have different dimensions]  \label{E:different.dimensions}
First, we construct a probability measure on $[0,1]$. Let $k = 1, 2, \ldots$. 
At the points $j/2^{k}$ \emph{for odd } 
$j = 1, 3, 5, \ldots, 2^{k} - 1$, put mass $2^{1 - 2k}$. That totals $2^{k-1} \times 2^{1 - 2k} = 2^{-k}$. Summing over $k$ yields 1 so the sum of all these masses is a probability measure. Note that the collection $\{ j/2^{k} : k = 1, 2, \ldots \text{ and } j = 1, 3, 5, \ldots, 2^{k} - 1 \}$ is precisely the set of \emph{dyadic rationals} in $(0,1)$, i.e., numbers of the form $j /2^{k}$ for $k = 1, 2, \ldots$ 
and $j = 1, \ldots 2^{k} - 1$, including even $j$ in that range. The dyadic rationals are dense in $[0,1]$.

Let $\Phi : [0,1] \to \F := [0,1]$ be the distribution function of this probability measure (Chung \cite[Chapter 1]{klC74.prob}). I.e., $\Phi(t)$ is the probability assigned to $[0,t]$ ($t \in [0,1]$). In particular, $\Phi$ is increasing. Let $\D' \subset [0,1]$ consist of all points in $[0,1]$ that are \emph{not} dyadic rationals. $\D'$ is also dense in $[0,1]$. \emph{Claim:} $\Phi$ is continuous on $D'$. 
Let $x \in (0,1) \cap \D'$, so $x$ is not a dyadic rational. For sufficiently large $k = 1, 2, \ldots$ we have $2^{-k} < x < 1 - 2^{-k}$. Hence, there exists $j = 2, \ldots 2^{k}-1$ 
s.t.\ $(j-1) 2^{-k} < x < j 2^{-k}$. Let $t \in (x, j 2^{-k})$ be arbitrary. In the range 
$\bigl( (j-1) 2^{-k}, t \bigr]$ there are no points $\ell 2^{-m}$ with $m \leq k$. 
Thus, $\Phi(t) - \Phi \bigl( (j-1) 2^{-k} \bigr)$ is the sum of fractions $\ell 2^{1-2m}$ for a range of odd $\ell$ and $m > k$. Therefore, a crude upper bound is
    \begin{equation}
       \Phi(t) - \Phi \bigl( (j-1) 2^{-k} \bigr) \leq \sum_{m > k}^{\infty} 2^{m-1} \times 2^{1 - 2m}
         = \sum_{m > k}^{\infty} 2^{-m} = 2^{-k}.
    \end{equation}
Since $\Phi$ is increasing, if 
$y \in \bigl( (j-1) 2^{-k}, t \bigr)$ then $\Phi(y) \in \bigl[ \Phi \bigl( (j-1) 2^{-k} \bigr), \Phi( t) \bigr]$. In particular, 
$\Phi(x) \in \bigl[ \Phi \bigl( (j-1) 2^{-k} \bigr), \Phi( t) \bigr]$. It follows that 
$\bigl| \Phi(y) - \Phi(x) \bigr| < 2^{-k}$. Hence, $\Phi$ is continuous at $x$. A simpler argument shows $\Phi$ is continuous at 0 and 1. The claim is proved. 

Let $\Ss$ be the set of all dyadic rationals in $(0,1)$. If $x \in \Ss$ then we can write $x = j 2^{-k}$ with $k = 1, 2, \ldots$ and odd $j = 1, 3, 5, \ldots, 2^{k} - 1$. We have $\Phi(x) - \Phi(x-) = $ mass at $x = 2^{1 - 2k}$, where $\Phi(x-)$ is the limit of $\Phi$ at $x$ from the left. (It exists because $\Phi$ is increasing.) Therefore, $\Phi(x+) - \Phi(x-) > 0$ ($\Phi(x+)$ is the right limit). Hence, $\Ss$ is the singular set of $\Phi$ w.r.t.\ $\D'$ and $(\Phi, \D', \Ss)$ satisfies \eqref{E:D'.=.D.less.S}. By \eqref{E:dim.of.whole.=.max.dim.of.parts}, $\dim \Ss = 0$. Of course, the closure, $\overline{\Ss}$, is just the unit interval and so has dimension 1, 
by \ref{E:Haus.dim.s-manif.=.s}.
  \end{example}
 
  \begin{remark}[Reliance on the Axiom of Choice]  \label{R:axiom.of.choice}
At various points in this book choices will need to be made. Permission to make them is granted by the Axiom of Choice. Some of us are squeamish about relying on the Axiom of Choice. The results in this book are intended to be applied to specific real data maps that might actually be used in practice. We can be comforted by the assumption that such data maps would be sufficiently nonpathological that the aforementioned choices could be made algorithmically. 
  \end{remark}

\section{Asymptotic probability of being near $\Ss$}   \label{SS:asymp.prob}
Let $\D$ be a $d$-dimensional Lipschitz manifold, where $d$ is a positive integer. I.e., each point of $\D$ has a coordinate neighborhood s.t., w.r.t.\ the metric on $\D$, the local coordinate functions of $\D$ and their inverses are Lipschitz functions (appendix \ref{Chptr:Lip.Haus.meas.dim}). (By lemma \ref{L:coord.maps.are.Lip}, part \eqref{I:phi.psi.Lip}, a Riemannian manifold, for example, is a Lipschitz manifold.) Let $\Phi : \D \partlyto \F$ be a data map with the singular set $\Ss \subset \D$. By definition of singularity, a small perturbation of a data set near $\Ss$ of a data map can wildly change the output of $\Phi$. (This is further explored in section \ref{SS:derivs.near.sings}). In this section we learn something about the probability of getting data near $\Ss$. This section is reminiscent of Blum \emph{et al} \cite[Chapter 13]{lBfCmSsS98.realcompute}. 

Let $\Rcl$ be a compact subset of $\D$. We are interested in the case 
of $\Rcl \subset \Ss$, but $\Rcl$ may be any compact subset of $\D$. For $\delta > 0$ recall that the ``$\delta$-parallel body'' of $\Rcl$ is
	\begin{equation}  \label{E:parallel.body}
	           \Rcl^{\delta} = \{ x \in \D : \text{dist}(x, \Rcl) \leq \delta \}.
	\end{equation}
(Falconer \cite[p.\ 41]{kF90}); \ref{E:set.distances} above.) I expect the following is well known. In fact, it is reminiscent of Weyl's  tube formula (Gray \cite{aG04.Tubes}).
 
  \begin{prop}  \label{P:tube.about.sing.set}
Let $\D$ be a $d$-dimensional Lipschitz manifold and let $\Rcl \subset \D$ be compact. 
Suppose the positive multiplicative constant, $\omega_{s} \in (0, \infty)$, 
in the definition of $\Hm^{s}$ is continuous in $s > 0$ (e.g., as in \eqref{E:vol.of.unit.ball}). Let $r := \dim \Rcl$ so $r \leq d$. Assume $r > 0$. Recall that 
$\text{codim} \, \Rcl := d - \dim \Rcl = d - r$. Let $\epsilon \in (0, r)$. We have the following.
     \begin{equation}  \label{E:Hm.Rdelta.geq.delta.to.codim}
        \Hm^{d}(\Rcl^{\delta}) \geq \delta^{(\text{codim} \, \Rcl) + \epsilon}, 
           \; \text{for } \delta \in (0,1) \text{ sufficiently small,}
     \end{equation}
 If $\Hm^{r}(\Rcl) < \infty$ then there is a constant, $C > 0$, depending only on $\Rcl$ and $\D$ s.t.\ 
     \begin{equation}  \label{E:Hm.Rdelta.geq.delta.to.codim.times.Hm.R}
        \Hm^{d}(\Rcl^{\delta}) 
           \geq C \, \delta^{\text{codim} \, \Rcl} \, 
              \Hm^{\dim \Rcl}(\Rcl), 
                \; \text{for } \delta \in (0,1) \text{ sufficiently small}.
     \end{equation}
 \end{prop}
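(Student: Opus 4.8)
The plan is to prove both bounds by a single‑scale packing argument, carried out intrinsically on the metric space $\mcl{D}$ and pushed into Euclidean space, through the Lipschitz charts of $\mcl{D}$, only at the point where volume is actually computed. First I would fix, once and for all, a finite cover of the compact set $\mcl{C}$ by coordinate charts whose coordinate maps and inverses are all Lipschitz with a common constant $L$; let $\lambda>0$ be a Lebesgue number for it. Since $\overline{\mcl{R}}$ is compact and lies in $\mcl{C}^{\circ}$, for all $\delta$ below a threshold depending only on $\mcl{C},\mcl{A},\mcl{D}$ one has $\mcl{R}^{\delta}\subset\mcl{C}$ and every metric ball of radius $\leq\delta$ centred in $\overline{\mcl{R}}$ sits inside one chart; only such $\delta$ are considered. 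I would also assume $\overline{\mcl{R}}\subset\mcl{A}$ — automatic when $\Ss$ meets no point of $\partial\mcl{A}$ (then $\mcl{R}$ is already closed), and otherwise obtained by running the argument for the compacta $\mcl{R}_{\eta}=\{x\in\overline{\mcl{R}}:\mathrm{dist}(x,\mcl{D}\setminus\mcl{A})\geq\eta\}$, which increase to $\mcl{R}$ as $\eta\downarrow0$, then letting $\eta\downarrow0$, at the mild cost of requiring $\dim(\mcl{A}\cap\Ss)=\dim\overline{\mcl{A}\cap\Ss}$.

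The core is a master inequality valid for every exponent $s\in(0,r]$ and every admissible $\delta$. Pick a maximal $\delta$‑separated subset $\{x_{1},\dots,x_{N}\}\subset\overline{\mcl{R}}$ (finite, by compactness). By maximality the metric balls $B_{\delta}(x_{i})$ cover $\overline{\mcl{R}}$, and since each has diameter $\leq2\delta$ the definition of Hausdorff premeasure gives $\Hm^{s}_{2\delta}(\overline{\mcl{R}})\leq N\omega_{s}\delta^{s}$, i.e. $N\geq\Hm^{s}_{2\delta}(\overline{\mcl{R}})/(\omega_{s}\delta^{s})$. On the other hand the balls $B_{\delta/2}(x_{i})$ are pairwise disjoint and, by the standing assumptions, contained in $\mcl{R}^{\delta}$; pushing each into a fixed chart shows it contains a Euclidean ball of radius $\gtrsim\delta/L$, so $\Hm^{d}(B_{\delta/2}(x_{i}))\geq c_{0}\delta^{d}$ with $c_{0}>0$ depending only on $d$, $\omega_{d}$, and $L$ (this comparison of $\Hm^{d}$ on $\mcl{D}$ with Lebesgue measure in charts is the one routine analytic input, supplied by appendix \ref{S:Lip.Haus.meas.dim} and lemma \ref{L:coord.maps.are.Lip}). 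Summing, $\Hm^{d}(\mcl{R}^{\delta})\geq Nc_{0}\delta^{d}\geq(c_{0}/\omega_{s})\,\Hm^{s}_{2\delta}(\overline{\mcl{R}})\,\delta^{d-s}$.

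Then both assertions follow by feeding the right $s$ into this. For \eqref{E:Hm.Rdelta.geq.delta.to.codim} take $s=r-\epsilon\in(0,r)$: since $\dim\overline{\mcl{R}}=r>s$ we have $\Hm^{s}(\overline{\mcl{R}})=+\infty$, and as $\Hm^{s}_{2\delta}(\overline{\mcl{R}})\uparrow\Hm^{s}(\overline{\mcl{R}})$ when $\delta\downarrow0$ it follows that $\Hm^{s}_{2\delta}(\overline{\mcl{R}})\geq\omega_{s}/c_{0}$ for all small $\delta$ (the number $\omega_{s}=\omega_{r-\epsilon}$ is a finite positive constant, continuous in $s$ by hypothesis), whence $\Hm^{d}(\mcl{R}^{\delta})\geq\delta^{d-s}=\delta^{(\mathrm{codim}\,\overline{\mcl{R}})+\epsilon}$. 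For \eqref{E:Hm.Rdelta.geq.delta.to.codim.times.Hm.R} note that $\Hm^{r}(\mcl{R})<\infty$ gives $\Hm^{r}(\overline{\mcl{R}})<\infty$ in the cases at hand, and take $s=r$: since $\Hm^{r}_{2\delta}(\overline{\mcl{R}})\uparrow\Hm^{r}(\overline{\mcl{R}})$ we have $\Hm^{r}_{2\delta}(\overline{\mcl{R}})\geq\tfrac12\Hm^{r}(\overline{\mcl{R}})$ for all small $\delta$, so $\Hm^{d}(\mcl{R}^{\delta})\geq\bigl(c_{0}/(2\omega_{r})\bigr)\delta^{d-r}\Hm^{r}(\overline{\mcl{R}})$; the constant $C=c_{0}/(2\omega_{r})$ depends only on $d$, $r$, $\omega_{\cdot}$, and $L$, hence only on $\mcl{C}$, $r$, $\mcl{D}$.

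The packing step itself is standard — it is the usual way to convert a lower bound on a Hausdorff premeasure of a set into a lower bound on the volume of its $\delta$‑neighbourhood — so I expect the real work to be bookkeeping: making $C$ genuinely independent of $\mcl{R}$ and $\mcl{A}$ (handled by fixing the chart cover on $\mcl{C}$, not on $\mcl{R}$), and controlling points of $\overline{\mcl{R}}$ near $\partial\mcl{A}$, where the disjoint balls $B_{\delta/2}(x_{i})$ could otherwise fail to lie in $\mcl{A}$ (handled by $\overline{\mcl{R}}\subset\mcl{A}$, or by the $\mcl{R}_{\eta}$‑truncation). A secondary nuisance is verifying that the Euclidean ball produced inside each chart really has radius bounded below by a fixed multiple of $\delta$ uniformly over $x_{i}\in\overline{\mcl{R}}$, which is where the Lebesgue number $\lambda$ of the fixed cover enters.
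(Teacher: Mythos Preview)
Your argument is correct and is essentially the paper's own proof: the paper likewise fixes a finite Lipschitz chart cover of $\mcl{C}$ (via Lebesgue's covering lemma) to get uniform two-sided bounds $a\eta^{d}\leq\Hm^{d}(\mcl{B}_{\eta}(x))\leq b\eta^{d}$, then uses the packing number $D(\delta,\overline{\mcl{R}})\geq N(\delta,\overline{\mcl{R}})$ (exactly your maximal $\delta$-separated set) to derive $\Hm^{d}(\mcl{R}^{\delta})\geq aD(\delta,\overline{\mcl{R}})(\delta/2)^{d}\geq(\text{const})\,\delta^{d-s}\Hm^{s}_{\delta}(\overline{\mcl{R}})$, and finishes both parts by the same choices of $s$ (taking $s=r-\epsilon/2$ rather than $r-\epsilon$ for the first, absorbing the constant via the extra $\delta^{-\epsilon/2}$). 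Your handling of the boundary issue $\overline{\mcl{R}}\not\subset\mcl{A}$ is actually more careful than the paper's, which simply asserts that the disjoint $\delta/2$-balls lie in $\mcl{R}^{\delta}$.
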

For proof see appendix \ref{Chptr:misc.proofs}. Corollary \ref{C:lwr.bound.on.prob.of.being.close.to.S'} has more to say about \eqref{E:Hm.Rdelta.geq.delta.to.codim.times.Hm.R}. 

From the proof of proposition \ref{P:tube.about.sing.set} it appears that the constant $C$ in \eqref{E:Hm.Rdelta.geq.delta.to.codim.times.Hm.R} might often be computable. In theorem \ref{T:lwr.bnd.on.Haus.meas} we get a relative lower bound on $\Hm^{\dim \Rcl}(\Rcl)$ under some circumstances. Corollary \ref{C:lwr.bound.on.prob.of.being.close.to.S'} refines this proposition.

Let $P$ be a probability distribution on $\D$, absolutely continuous w.r.t.\ $\Hm^{d}$. Suppose the density $dP/d\Hm^{d}$ is continuous and nowhere vanishing. Then the density is bounded below on any relatively compact neighborhood of $\Rcl$. Let $x$ be a random element of $\D$ with distribution $P$. It follows from \eqref{E:Hm.Rdelta.geq.delta.to.codim} that the probability that $x$ is within $\delta$ of $\Rcl$ goes to zero more slowly than $\delta^{(\text{codim} \, \Rcl) + \epsilon}$ for any $\epsilon > 0$. Corollary \ref{C:lwr.bound.on.prob.of.being.close.to.S'} again refines this. In figure \ref{F:sing.dists.cdfs} above the probability distribution of the distance to the singular sets of three plane fitting methods (chapter \ref{Chptr:sings.in.plane.fit}) is exhibited in a toy case. In each case the probability that $x$ is within $\delta$ of $\Ss$ goes to 0 like 
$\delta^{\text{codim} \, \Ss}$. 

Of course, the probability of being within $\delta$ of $\Ss$ also depends on the value of the density in the vicinity of $\Ss$. One might think that the density might be relatively small near $\Ss$ so that getting data near $\Ss$ will be quite rare. On the contrary, in remark \ref{R:dnsity.contrs.as.data.spaces} it is argued that, at least in plane-fitting (chapter \ref{Chptr:sings.in.plane.fit}), the probability density can be arbitrarily high, in relative terms, near $\Ss$.

This raises some interesting practical questions. Suppose a data set $x$ is close to the singular set. Let $x'$ be an independent data set drawn from the same distribution. What is the approximate probability that $x'$ is also close to the singular set? Answering this question seems to require a Bayesian approach Berger \cite{joB85.BergerBayes}.
Might low codimension of $\Ss$ make it more ``likely'' that $x'$ is close to $\Ss$? Let $\Delta > 0$ and let $\pi^{\Delta}$ be the probability that $x'$ is more than $\Delta$ units distant from $\Ss$.
The possibility of using $x$ to construct a confidence set for $\pi^{\Delta}$ is discussed in appendix \ref{Chptr:misc.proofs}.

\section{Derivatives of data maps near singularities}  \label{SS:derivs.near.sings}
Near a singularity a small change in the data can make a big change in the value of the data map. In light of remark \ref{R:.completeness.of.F}, \emph{assume $\F$ is a complete metric space.}
If the co-dimension of the singular set, $\Ss$, is no larger then 1, then a ``big change'' 
in $\Phi(x)$, $x \in \D$, may just be a jump. However, if $\text{codim} \, \Ss > 1$, which will often be the case 
(e.g., figure \ref{F:LS.PC.LAD.lf.plots}(LS,PC)), then something more interesting happens. 

Suppose 
    \begin{multline}  \label{E:D.and.F.are.Riem.manifs}
      \D \text{ and } \F \text{ are Riemannian manifolds with Riemannian metrics }
        \langle \cdot, \cdot \rangle_{\D} \text{ and } \langle \cdot, \cdot \rangle_{\F}, \\
          \text{ respectively (resp.)}
    \end{multline} 
Let $\| \cdot \|_{\D, x}$ be the norm corresponding to 
$\langle \cdot, \cdot \rangle_{\D,x}$, $x \in \D$. Thus,
	\[
		\| v \|_{\D,x} = \sqrt{ \langle v, v \rangle_{\D,x} }, \quad v \in T_{x} (\D), \quad x \in \D.
	\]
Define $\| \cdot \|_{\F, w}$, the norm on $\F$ at $w \in \F$ similarly. 

We will assume further that \emph{the singular set $\Ss$ is closed, hence, locally compact.} (Chapter \ref{Chptr:severity} makes this plausible.)  Let $d = \dim \D$. Let $x_{0} \in \D \setminus \Ss$ and let $\varphi : \clU \to \RR^{d}$ be a coordinate neighborhood of $x_{0}$. We may assume $\clU \subset \D \setminus \Ss$. Let $G := \varphi(\clU) \subset \RR^{d}$ 
and let $\psi : G \to \clU$ be the inverse of $\varphi$. Then $\varphi$ and $\psi$ are locally Lipschitz by lemma \ref{L:coord.maps.are.Lip}\eqref{I:phi.psi.Lip}. Let $T(\D)$ and $T(\F)$ be the tangent bundles of $\D$ and $\F$, resp. 
Let $\pi_{\D} : T(\D) \to \D$ be the projection that takes $v \in T_{x}(\D)$ to $x \in \D$. 
Let $\tilde{\clU} := \pi_{\D}^{-1} ( \clU ) \subset T(\D)$. Note the difference: $\clU \subset \D$ but $\tilde{\clU} \subset T(\D)$. Thus, the differentials $\varphi_{\ast} : \tilde{\clU} \to G \times \RR^{d}$ 
and $\psi_{\ast} : G \times \RR^{d} \to \tilde{\clU} \subset \RR^{N}$ are smooth (Boothby \cite[Exercise 6, p.\ 337]{wmB75} and Milnor and Stasheff \cite[pp.\ 8--9]{jwMjdS74}).

In accordance with \eqref{E:D'.=.D.less.S}, $\Phi$ is defined and continuous 
on $\D \setminus \Ss$. WLOG (Without Loss Of Generality) we may assume that $\Phi(\clU)$ is a subset of a coordinate neighborhood $W \subset \F$ with coordinate map, $\tau : W \to \RR^{f}$, $f$ being the dimension of $\F$. Let $\sigma : H := \tau(W) \to \F$ be the inverse of $\tau$. Let $\tilde{W} := \pi_{\F}^{-1}(W) \subset T(\F)$, where $\pi_{\F} : T(\F) \to \F$ is projection. Thus, $\tau_{\ast} : \tilde{W} \to H \times \RR^{f}$ and $\sigma_{\ast} : H \times \RR^{f} \to \tilde{W} $ are smooth.

Consider the map $\Omega : G \to \RR^{f}$ defined by $\Omega = \tau \circ \Phi \circ \psi$. So $\Omega$ is continuous on $G$. Recall (Federer \cite[Section 3.1, p.\ 209]{hF69}) that, at each $y \in G$ at which it is defined, the differential, $D \Omega(y)$, is a linear function from $\RR^{d} \to \RR^{f}$.
By Federer \cite[p.\ 211]{hF69}, $D \Omega$ is Borel measurable on the set where it is defined. 
Moreover, the set where $D\Omega$ defined is itself Borel. Define, in the obvious manner, the composite 
$D \Phi := \sigma_{\ast} \circ D \Omega \circ \varphi_{\ast}$, which takes values in $\tilde{W}$, wherever it is defined on $\tilde{\clU}$. Let $x \in \clU$. If it is defined, denote the restriction, $D\Phi \restriction_{T_{x}(\D)}$ of $D\Phi$ to the tangent space $T_{x}(\D)$ to $\D$ at $x$ 
by $\Phi_{\ast,x}$ or just by $\Phi_{\ast}$, when $x$ is understood. If $\Phi_{\ast,x}$ is defined at $x \in \clU$ then, by (Federer \cite[(1) and (2), pp.\ 209--210]{hF69}), $\Phi_{\ast,x}$ is independent of the particular coordinate neighborhoods $(\clU, \phi)$ and $(W, \tau)$. And $D \Phi$ is Borel measurable off a Borel measurable set. We assume
	\begin{equation}   \label{E:Phi.star.defined.a.e.}
		\Phi_{\ast,x} \text{ is defined for } \Hm^{d} \text{-almost all } x \in \clU.
	\end{equation}

Thus, at $\Hm^{d}$-almost all $x \in \clU$ the data map $\Phi$ induces a linear operator 
$\Phi_{\ast} := \Phi_{\ast, x}: T_{x} (\D') \to T_{\Phi(x)} (\F)$. As such, 
$\Phi_{\ast}$ has a norm at each $x \in \clU$, where it is defined, \emph{viz.}:
	\begin{equation}   \label{E:operator.norm.of.Phi.ast}
		|\Phi_{\ast,x}| 
			:= \sup \Bigl\{ \bigl\| \Phi_{\ast}(v) \bigr\|_{\F, \Phi(x)} : 
			           v \in T_{x} (\D), \; \| v \|_{\D,x} = 1 \Bigr\}.
	\end{equation}
$| \Phi_{\ast}  |$ measures how big the derivative of $\Phi$ is at each point of $\clU$. 

Let $\alpha : [0, \lambda] \to \clU$ be a continuous, piece-wise differentiable curve in $\clU$. This means that there are finitely many values $0 = \lambda_{0} < \lambda_{1} < \cdots < \lambda_{m} = \lambda$ s.t.\ $\alpha$ is differentiable on each interval $(\lambda_{i-1}, \lambda_{i})$ ($i = 1, \ldots, m$). If $\alpha$ is differentiable at $t \in (0, \lambda)$ 
let $\alpha'(t) := \alpha_{\ast}(t): \RR \to T_{\alpha(t)} (\D)$ denote the differential of $\alpha$ at $t$. By  
Boothby \cite[p.\ 185; Theorem (1.2), p.\ 107; and Theorem (1.6), p.\ 109]{wmB75} and the ``area formula'', Federer \cite[3.2.3, p.\ 243]{hF69}, the length of $\alpha$ is 
       \begin{equation}  \label{E:alpha.length.integral}
		\text{length of } \alpha = \Hm^{1} \bigl( \alpha[0,\lambda] \bigr) 
		  = \int_{0}^{\lambda} \bigl\| \alpha'(t) \bigr\|_{\D, \alpha(t)} \, dt .
	\end{equation}
A similar formula applies to other curves. 

Assume $\Phi_{\ast, \alpha(s)}$ is defined for almost all $s \in [0, \lambda]$. Then it makes sense to define 
	\begin{equation}  \label{E:avg.size.Phi.alng.curve}
		\text{average size of the derivative of $\Phi$ along $\alpha$ } 
		       = \frac{1}{\text{length of } \alpha} 
		     \int_{\alpha[0, \lambda]}  | \Phi_{\ast,x}  | \, \Hm^{1}(dx).  
	\end{equation}

Proposition \ref{P:deriv.blows.up} below concerns the average size of the derivative 
of $\Phi$ along curves $\alpha$. 
In the proof, we use the notion of geodesic convexity (Boothby \cite[p.\ 337]{wmB75}, Spivak \cite[p.\ 491]{mS79.SpivakVol1}). We discuss that notion now before resuming our tale of $\alpha$, etc. 

Let $M$ be an $m$-dimensional Riemannian manifold. 

  \begin{definition}  \label{D:geodesic.convexity}
A subset $S$ of $M$ is ``geodesically convex'' if for every $x,y \in S$ there is a unique shortest geodesic path joining $x$ and $y$ and that path lies entirely in $S$.
  \end{definition}

Let $\phi$ be the topological metric on $M$ determined by its Riemannian metric 
(Boothby \cite[Theorem (3.1), p.\ 187]{wmB75}). 
By Helgason \cite[Theorem 9.9, p.\ 53]{sH62.SymSpaces} (and the remark preceding the theorem) we have the following.

  \begin{prop}  \label{P:geod.cnvx.nbhds.exist}
For every point $x_{0}$ of $M$ there is an $r(x_{0}) > 0$ s.t.:
   \begin{enumerate}
      \item For every $t \in (0, r(x_{0})]$ the neighborhood $B_{t}(x_{0}) \subset M$ 
      (the open ball about $x_{0}$ with radius 
      $t$ w.r.t.\ $\phi$; \ref{E:ball.defn}) is a normal neighborhood of each of its points 
      (Helgason \cite[p.\ 33]{sH62.SymSpaces}, 
        Boothby \cite[Definition (6.7), p.\ 335]{wmB75}).
      		\label{I:normal.nbhd}
      \item Let $t \in (0, r(x_{0})]$. 
      Let $x_{1}$, $x_{2} \in  B_{t}(x_{0})$ and $s = \phi (x_{1}, x_{2})$. Then there exists a geodesic
      $\omega_{x_{1} x_{2}} : [0,s] \to M$ of length $s$ joining $x_{1}$ and $x_{2}$. 
      (If $x \in B_{t}(x_{0})$ define $\omega_{x x} : \{ 0 \} \to \{ x \}$.)  In fact, 
      $\omega_{x_{1} x_{2}}$ is the only curve segment in $M$ of length at most $s$  
      joining  $x_{1}$ and $x_{2}$. Moreover, 
      $\omega_{x_{1} x_{2}}[0,s] \subset B_{t}(x_{0})$, i.e. $B_{t}(x_{0})$ is geodesically convex.  \label{I:geodesically.convex}
   \end{enumerate}
  \end{prop}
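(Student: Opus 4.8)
The plan is to deduce the proposition from Helgason's Theorem 9.9 and the remark preceding it, whose content is the classical Whitehead theorem: every point $x_{0}$ of a Riemannian manifold has arbitrarily small geodesically convex normal neighborhoods. Concretely, Helgason produces a radius $r(x_{0}) > 0$ such that $\exp_{x_{0}}$ carries the open Euclidean ball of radius $r(x_{0})$ in $T_{x_{0}}(M)$ diffeomorphically onto a set which is a normal neighborhood of each of its points and within which any two points are joined by a unique minimizing geodesic that stays inside. The first step is purely a matter of translation: by the Gauss lemma, $\exp_{x_{0}}$ is a radial isometry, so the image $\exp_{x_{0}}\bigl(B_{\rho}(0)\bigr)$ of the Euclidean $\rho$-ball in $T_{x_{0}}(M)$ coincides with the metric ball $B_{\rho}(x_{0})$ (defined via $\phi$) for every $\rho \le r(x_{0})$. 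This identifies the neighborhoods in the proposition with the ones furnished by Helgason and gives part \eqref{I:normal.nbhd} for $\rho = r(x_{0})$.

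Next I would verify the two monotonicity-type points that let the conclusions pass from $\rho = r(x_{0})$ to all $\rho \in (0, r(x_{0})]$. For part \eqref{I:geodesically.convex}, fix $x_{1}, x_{2} \in B_{\rho}(x_{0})$ with $x_{1} \neq x_{2}$ and set $s = \phi(x_{1}, x_{2})$. Since $B_{r(x_{0})}(x_{0})$ is a normal neighborhood of $x_{1}$, the radial geodesic of $\exp_{x_{1}}$ from $x_{1}$ to $x_{2}$ has length $s$, lies in $B_{r(x_{0})}(x_{0})$, and is the only curve segment in $B_{r(x_{0})}(x_{0})$ of length $\le s$ joining the two points; moreover any path in $M$ of length $\le s$ is minimizing in $M$, hence a reparametrized geodesic, and (being that short) cannot leave the normal ball, so it must be this one. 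To see that $\omega_{x_{1} x_{2}}$ actually stays inside the smaller ball $B_{\rho}(x_{0})$, I would use that, for the radius $r(x_{0})$ chosen by Helgason, the function $t \mapsto \phi\bigl(x_{0}, \omega_{x_{1} x_{2}}(t)\bigr)^{2}$ is strictly geodesically convex along $\omega_{x_{1} x_{2}}$ (positivity of the Hessian of the squared distance near the center), so it attains its maximum at an endpoint and is therefore $< \rho^{2}$ throughout. The degenerate case $x_{1} = x_{2}$ is covered by the stipulated convention $\omega_{xx} : \{0\} \to \{x\}$. Part \eqref{I:normal.nbhd} for general $\rho$ then follows from \eqref{I:geodesically.convex}: for $p \in B_{\rho}(x_{0})$, the set $\exp_{p}^{-1}\bigl(B_{\rho}(x_{0})\bigr)$ is star-shaped about $0$ because each radial geodesic $\omega_{pq}$, $q \in B_{\rho}(x_{0})$, is the unique minimizing geodesic and hence lies in the convex set $B_{\rho}(x_{0})$.

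The only genuinely substantive input — that convex normal neighborhoods exist at all — is imported wholesale from Helgason, so I expect no real obstacle there. The main thing to get right is the bookkeeping: matching Helgason's exponential-map formulation to the metric-ball formulation used here \emph{via} the Gauss lemma, checking that the convexity and normal-neighborhood properties are inherited by every sub-ball $B_{\rho}(x_{0})$ with $\rho \le r(x_{0})$, and confirming the uniqueness clause in the slightly stronger form stated (uniqueness among \emph{all} curve segments in $M$ of length at most $s$, not merely among geodesics). These are routine once the Gauss-lemma identification is in place.
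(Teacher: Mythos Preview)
Your approach is correct and essentially the same as the paper's: the paper does not give an independent proof but simply cites Helgason \cite[Theorem 9.9, p.\ 53]{sH62.SymSpaces} and the remark preceding it, stating the proposition as a direct consequence. You invoke the same source and merely spell out the translation (via the Gauss lemma) and the passage to sub-balls that the paper leaves implicit.
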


 \begin{remark}[Short geodesics in geodesically convex sets] \label{R:short.geods.geod.cnvx}
Let $M$ be as above, let $x_{0} \in M$ and let $r(x_{0}) > 0$ be as in proposition \ref{P:geod.cnvx.nbhds.exist}. Let $t \in (0, r(x_{0})]$ and $x \in B_{t}(x_{0})$. Then, by proposition \ref{P:geod.cnvx.nbhds.exist}\eqref{I:normal.nbhd}, there exists a star-like neighborhood $G_{x} \subset T_{x} M$ of $0_{x} \in T_{x} M$
s.t.\ $Exp$ is a bijection of $G_{x}$ onto $B_{t}(x_{0})$. Let $y \in B_{t}(x_{0})$ and let $X_{x} \in G_{x}$ satisfy $Exp (X_{x}) = y$. 

Let $\lambda$ be a geodesic arc joining $x$ to $y$ s.t.\ the length of $\lambda$ is $\phi(x,y)$. By part \ref{I:geodesically.convex} of the proposition, $\lambda$ is unique and its image lies in $B_{t}(x_{0})$. 
Therefore, by Helgason \cite[Proposition 5.3, p.\ 30]{sH62.SymSpaces} and Boothby \cite[Lemma (5.21), p.\ 327]{wmB75}, up to an affine change of variable, $\lambda(t) = Exp \bigl[ t \lambda'(0) \bigr]$. But $Exp$ is one-to-one on $G_{x}$. Therefore, $\lambda'(0) \propto X_{x}$. 
Thus, $\lambda$ and $t \mapsto Exp (t X_{x})$ ($0 \leq t \leq1$) parametrize the same curve in $B_{t}(x_{0})$ and, by Boothby \cite[p.\ 333]{wmB75}, 
$\phi(x,y) = \text{length of } \lambda = |X_{x}|$. 
 \end{remark} 
 
Now back to $\alpha$, etc. Let $\delta$ be a topological metric on $\D$ and 
let $\delta_{\langle \cdot, \cdot \rangle}$ be the topological metric on $\D$ generated by its Riemannian metric, $\langle \cdot, \cdot \rangle_{\D}$. For $\eta > 0$, let $\mcl{B}_{\eta}(x)$ be the open ball in $\D$ about $x$ with radius $\eta_{0}$, defined using the either metric $\delta$ 
or $\delta_{\langle \cdot, \cdot \rangle}$. Let $\rho$ be the topological metric on $\F$ generated by its Riemannian metric, $\langle \cdot, \cdot \rangle_{\F}$. 

    \begin{prop}  \label{P:deriv.blows.up}
Suppose $\F$ is a manifold that is complete w.r.t.\ $\rho$. Suppose the metrics $\delta$ and $\delta_{\langle \cdot, \cdot \rangle}$ on $\D$ are locally equivalent in the sense that $\D$ is covered by open sets $\clU$ with the following property. There exists $K(\clU) \in [1, \infty)$ (depending on $\clU$) s.t.\ for every $x,y \in \clU$ we have
	\begin{equation}  \label{E:two.deltas.locally.equivalent}
	     K(\clU)^{-1} \delta(x,y) \leq\delta_{\langle \cdot, \cdot \rangle}(x,y) 
		  \leq K(\clU) \,\delta(x,y).
	\end{equation} 

Suppose $\Ss$ is locally compact with $\text{codim} \, \Ss > 1$. (In particular, 
$d := \dim \D > 1$.) Assume \eqref{E:Phi.star.defined.a.e.} holds. Then there exists a functions $C : \Ss \to (0, 1)$ and 
$\eta_{0} : \Ss \to (0, \infty)$ with the following property. Let $x \in \Ss$ and let 
$\eta \in \bigl( 0, \eta_{0}(x) \bigr)$.  Then there exists $\lambda \in (0, \infty)$ and a path 
$\alpha : [0, \lambda] \to \mcl{B}_{\eta}(x) \setminus \Ss$
s.t.\ $\alpha$ is one-to-one, and
  \begin{enumerate}
    \item $\Phi_{\ast, \alpha(s)}$ is defined for almost all $s \in [0,\lambda]$ and the average size of the derivative of $\Phi$ along $\alpha$ (as defined by \eqref{E:avg.size.Phi.alng.curve}) is greater than $C(x) / \eta$.  
    	\label{I:avg.size.bggr.C/eta}
    \item The average distance from $\alpha$ to $x$ satisfies
	    \[
		    C(x) \eta 
		       \leq \frac{1}{\text{length of } \alpha} \int_{\alpha[0, \lambda]}  
		         \delta(y,x) \, \Hm^{1}(dy) \leq \eta.
	    \]
	    \label{I:avg.dist.bggr.C.eta}
  \end{enumerate}
   \end{prop}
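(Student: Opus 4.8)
The plan is to exploit the one thing we know about $x\in\Ss$ --- that $\lim_{x'\to x,\,x'\in\D'}\Phi(x')$ fails to exist --- to produce, in every small ball about $x$, two points at which $\Phi$ takes values far apart in $\F$, and then to connect them by a curve that avoids $\Ss$, has length at most a constant times $\eta$, and spends a definite fraction of its length at distance $\gtrsim\eta$ from $x$; a large change of $\Phi$ along a short curve then forces a large average derivative, and the hypothesis $\text{codim}\,\Ss>1$ is exactly what makes the connecting curve available and keeps it short. I would begin with the \emph{oscillation bound}: because $\F$ is complete for $\rho$, nonexistence of the limit at $x$ is equivalent to the existence of $\epsilon_0=\epsilon_0(x)>0$ with $\operatorname{diam}\Phi\bigl(\mcl{B}_\eta(x)\bigr)\ge\epsilon_0$ for every $\eta>0$. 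Indeed, if the oscillation of $\Phi$ at $x$ were $0$, every sequence $x_n\to x$ in $\D'$ would give a Cauchy --- hence convergent --- sequence $\{\Phi(x_n)\}$, and interleaving two such sequences would show the limit exists. Since $\Phi(\mcl{U})=\Phi(\mcl{U}\cap\D')$ and $\D'=\D\setminus\Ss$ by \eqref{E:D'.=.D.less.S}, the two points almost realizing this diameter automatically lie in $\mcl{B}_{\eta}'(x)$.

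Next I would fix the local geometry. Using Proposition~\ref{P:geod.cnvx.nbhds.exist} I would take a normal coordinate chart $\varphi=\exp_x^{-1}\colon\mcl{U}\to\RR^d$ centred at $x$, so that $\delta_{\langle\cdot,\cdot\rangle}(y,x)=|\varphi(y)|$ on $\mcl{U}$, shrinking $\mcl{U}$ so that \eqref{E:two.deltas.locally.equivalent} holds there with a constant $K=K(\mcl{U})$, so that $\Phi(\mcl{U})$ lies in a single coordinate patch of $\F$ (as arranged before \eqref{E:Phi.star.defined.a.e.}), and so that $\varphi,\varphi^{-1}$ are bi-Lipschitz near $x$. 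I would let $\eta_0(x)$ be small enough that, for $\eta<\eta_0(x)$, one has $\varphi^{-1}\bigl(B^{d}_{\eta/K}(0)\bigr)\subseteq\mcl{B}_\eta(x)\subseteq\varphi^{-1}\bigl(B^{d}_{K\eta}(0)\bigr)$ (immediate from \eqref{E:two.deltas.locally.equivalent} and $\delta_{\langle\cdot,\cdot\rangle}(y,x)=|\varphi(y)|$). Here I would also use that $\Ss$ is locally compact, hence locally closed, so that $\mcl{B}_\eta(x)\setminus\Ss$ is open for small $\eta$, and that $\text{codim}\,\Ss>1$, i.e.\ $\dim\Ss<d-1$, so that $\Hm^{d-1}(\Ss\cap\mcl{U})=0$ and $\varphi(\Ss\cap\mcl{U})$ is a relatively closed, $\Hm^{d-1}$-null subset of $\varphi(\mcl{U})$.

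The heart of the proof is the \emph{construction of $\alpha$}. Given $\eta<\eta_0(x)$, the oscillation bound applied at scale $\eta/(2K^2)$ gives $x',x''\in\mcl{B}_{\eta/(2K^2)}(x)\setminus\Ss$ with $\rho\bigl(\Phi(x'),\Phi(x'')\bigr)\ge\epsilon_0/2$, whence $|\varphi(x')|,|\varphi(x'')|<\eta/(2K)$. In the chart I would join $\varphi(x')$ to $\varphi(x'')$ by a curve that (a) runs almost radially out to the $d$-dimensional annulus $A=\{z\in\RR^d:\eta/(4K)\le|z|\le\eta/(2K)\}$, (b) traverses $A$ over a length of order $\eta$ (forced to be at least $c_1\eta$ by design, even when $\varphi(x'),\varphi(x'')$ are nearly radially aligned), and (c) runs almost radially back. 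Since $\varphi(\Ss\cap\mcl{U})$ is $\Hm^{d-1}$-null --- so the cone over it from any fixed point is $\Hm^d$-null --- a generic choice of the two points where the curve meets $A$ together with a generic traversal inside the connected open set $A^{\circ}\setminus\varphi(\Ss)$ (possible with length still of order $\eta$) yields a curve avoiding $\varphi(\Ss)$ entirely, and a Fubini/area-formula argument against the $\Hm^d$-null set where $\Phi_\ast$ is undefined (assumption \eqref{E:Phi.star.defined.a.e.}) lets me further require $\Phi_\ast$ to be defined $\Hm^1$-a.e.\ along it; a generic such curve is also injective. Pulling back by $\varphi^{-1}$ gives $\alpha\colon[0,\lambda]\to\mcl{B}_{\eta}'(x)$ with $\operatorname{length}(\alpha)\le C_1\eta$, with $\delta(\alpha(s),x)<\eta$ for all $s$, and with the pull-back of the $A$-traversal --- a subcurve of length at least $c_1\eta$ --- lying at $\delta$-distance $\ge\eta/(4K^2)$ from $x$.

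It then remains to read off the estimates and name the obstruction. For part~\ref{I:avg.size.bggr.C/eta}: under the regularity that makes $\Phi\circ\alpha$ absolutely continuous (it suffices that $\Phi$ be locally Lipschitz off $\Ss$), the chain rule, the area formula applied to $\alpha$ (cf.\ \eqref{E:alpha.length.integral}), and $\|\Phi_\ast(v)\|_{\F}\le|\Phi_\ast|\,\|v\|_{\D}$ yield $\int_{\alpha[0,\lambda]}|\Phi_\ast|\,\Hm^1(dy)\ge\operatorname{length}(\Phi\circ\alpha)\ge\rho\bigl(\Phi(x'),\Phi(x'')\bigr)\ge\epsilon_0/2$, and dividing by $\operatorname{length}(\alpha)\le C_1\eta$ gives the bound $>C(x)/\eta$. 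For part~\ref{I:avg.dist.bggr.C.eta}: the upper bound is immediate from $\delta(\alpha(s),x)<\eta$, while $\int_{\alpha}\delta(y,x)\,\Hm^1(dy)\ge(\eta/(4K^2))\,c_1\eta$, so dividing by $\operatorname{length}(\alpha)\le C_1\eta$ gives the lower bound $C(x)\eta$. One then takes $C(x)\in(0,1)$ to be the least constant serving all three inequalities; it depends only on $\epsilon_0(x)$, on $K$, and on the chart bi-Lipschitz constants near $x$. The hard part is the construction of $\alpha$: making a single curve at once avoid $\Ss$, have length of order $\eta$, keep a definite fraction of its length bounded away from $x$ by a constant times $\eta$, and carry a well-defined $\Phi_\ast$ almost everywhere --- threading around $\Ss$, which is only $\Hm^{d-1}$-null but can be topologically intricate, without inflating the length is precisely where $\text{codim}\,\Ss>1$, the local metric equivalence \eqref{E:two.deltas.locally.equivalent}, the normal chart, and the Fubini argument must all cooperate. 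A secondary but genuine subtlety is the absolute continuity of $\Phi\circ\alpha$, which is what turns the \emph{average} derivative into the true change of $\Phi$ and which continuity together with a.e.\ differentiability alone does not supply.
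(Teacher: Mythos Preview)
Your overall strategy matches the paper's: extract a fixed oscillation $\epsilon_0(x)>0$ from completeness of $\F$, pass to a chart with bi-Lipschitz control, build a short injective curve in $\mcl{B}_\eta'(x)$ along which $\Phi$ changes by at least a fixed amount, and read off both conclusions. The use of the radial/Fubini argument to perturb line segments off $\varphi(\Ss)$ and to make $\Phi_\ast$ defined a.e.\ along them is exactly what the paper does.

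Where you diverge is in the \emph{shape} of the curve, and this is where your proposal has a soft spot. The paper does \emph{not} make the curve go out to an annulus and back. It uses just two straight segments in the chart: one from $y_1=\varphi(x_1)$ to a perturbed $y_2'$ near $\varphi(x_2)$ (this segment carries the oscillation), and a second from $y_2'$ out to a point $y_3\in B_r(0)\setminus\overline{\varphi(\mcl{B}_{\eta/2}(x))}$, i.e.\ a point at Euclidean distance $\gtrsim r\asymp\eta$ from the origin. Both segments are kept off $\varphi(\Ss)$ and carry $\Phi_\ast$ a.e.\ by the same radial-projection argument you sketched. An elementary lemma---the average of $|\xi|$ along a segment $\overline{ab}$ is at least $\tfrac18\max\{|a|,|b|\}$---then gives the lower bound in part~\ref{I:avg.dist.bggr.C.eta} without any annulus traversal. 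The point is that the curve need not \emph{return} to $x''$; it only needs to pass through the two oscillation points in order, and may then wander off to a far point.

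Your step ``a generic traversal inside the connected open set $A^\circ\setminus\varphi(\Ss)$, possible with length still of order $\eta$'' is exactly the place where this matters. Path-connectedness of the complement of a closed $\Hm^{d-1}$-null set does not, by itself, give a uniform length bound on connecting paths; and with only $\dim\varphi(\Ss)<d-1$ you do not have enough to run the obvious ``sweep a $(d-2)$-family of circular arcs'' argument, since the relevant projection to $S^{d-2}$ need not send $\varphi(\Ss)$ to a null set. So as written this is a gap. It is repairable---e.g.\ by replacing the traversal with one further perturbed straight segment, which is precisely the paper's device---but then your construction collapses to the paper's two-segment curve.

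One thing you flag that the paper glosses over: the passage from ``$\Phi_\ast$ defined a.e.\ along $\alpha$'' to $\int_\alpha|\Phi_\ast|\,\Hm^1\ge\rho(\Phi(x'),\Phi(x''))$ really does require absolute continuity of $\Phi\circ\alpha$ (mere a.e.\ differentiability plus continuity is insufficient). The paper invokes the area formula at this step without further comment; your remark that local Lipschitzness of $\Phi$ off $\Ss$ would suffice is correct and worth keeping.
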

(For proof see appendix \ref{Chptr:misc.proofs}.)

Part \eqref{I:avg.dist.bggr.C.eta} of the proposition shows that the bound in \eqref{I:avg.size.bggr.C/eta} does not depend on $\alpha$ lying arbitrarily close to $x$. 
Let $\overline{dist} := \text{average distance from } \alpha \text{ to } x$. Then \ref{I:avg.dist.bggr.C.eta} implies $1/\bigl( C(x) \eta  \bigr) \geq 1/\overline{dist}$. Multiply both sides by $C(x)^{2}$. The proposition then implies  
	\begin{equation}  \label{E:avg.deriv.avg.dist}
		\text{average size of the derivative of $\Phi$ along } \alpha
		  \geq \frac{C(x)}{\eta} 
		    \geq \frac{C(x)^2}{\text{average distance from } \alpha \text{ to } x}.
	\end{equation}
Thus, in an ``average'' sense, the derivative blows up at least as fast as the reciprocal of the distance to $x$. But this assertion with the adjective``average'' removed is false as shown in the following example. However, for statistical purposes, perhaps the ``average'' behavior is sufficient. 

  \begin{example} \label{Ex:alpha.arb.close.deriv.arb.large}
\eqref{E:avg.deriv.avg.dist} seems to say that that $|\Phi_{\ast}|$ increases as the reciprocal of the distance to the singularity, $x$. Here, however, is an example in which $|\Phi_{\ast}|$ fails to increase that fast. Let $\F = \RR$ and to start with let $\D := [0,1]$. For $t \in (0,1]$, let $f(t) := \log \Bigl[ -\log \bigl(  t/e \bigr) \Bigr]$, where $e := \exp(1)$. So $f(1) = 0$ and $f(t) \uparrow \infty$ as $t \downarrow 0$. For $n = 0, 1, 2, \ldots$, let 
    \begin{equation*}
      t_{n} = f^{-1}(n) , 
    \end{equation*}
More precisely, $t_{n} = \exp \{ 1 - e^{n} \}$. So $t_{n} \downarrow 0$ 
as $n \to \infty$. Therefore, $t_{n+1} \ll t_{n}$. 

Now let 
	\begin{equation*}
		g(t) := 
			\begin{cases}
				f(t) - f \bigl( t_{n} \bigr) = f(t) - n, 
				    &\text{if } t_{n+1} \leq t < t_{n} 
				      \text{ for some even } n \geq 0, \\
				f \bigl( t_{n+1} \bigr) - f(t) = n - f(t) + 1, 
				    &\text{if } t_{n+1} \leq t < t_{n} \text{ for some odd } n > 0.
			\end{cases}
	\end{equation*}
Then $g : (0, 1] \to [0,1]$ and $g(t)$ oscillates between 0 and 1 as $t \downarrow 0$. $g$ is continuous. So, with $\D := [0,1]$,  0 is the unique singularity of $g$. 
$g$ is also differentiable on $(0,1)$ except at $t_{0}, t_{1}, \ldots$. If $g$ is differentiable at $t \in (0,1]$, then
	\[
	     \bigl| g'(t) \bigr| = \frac{1}{ t \bigl| \log(t/e) \bigr| } = \bigl| f'(t) \bigr| = - f'(t).
	\]
Hence, $\bigl| g'(t) \bigr| \to \infty$ more slowly than $1/t$ as $t \downarrow 0$. 

Let $m < n$ be positive integers and consider the ``curve'', $\alpha = \alpha_{m,n}$, that is just the identity on $[t_{n}, t_{m}]$. 
Thus, $\alpha[t_{n}, t_{m}] = [t_{n}, t_{m}] \subset \mcl{B}_{\eta}(0) \subset \RR$, 
with $\eta := 2 t_{m}$, say. The average size, $\overline{|g'|}$, of $g'$ over $\alpha$ is
    \begin{multline*}
      \overline{|g'|} = (t_{m} - t_{n})^{-1} \int_{t_{n}}^{t_{m}} |g'(t)| \, dt \\
         = -(t_{m} - t_{n})^{-1} \sum_{k=m+1}^{n} \int_{t_{k}}^{t_{k-1}} f'(t) \, dt 
          = (t_{m} - t_{n})^{-1} \sum_{k=m+1}^{n} \bigl( f(t_{k}) - f(t_{k-1}) \bigr) \\
            = (t_{m} - t_{n})^{-1} \sum_{k=m+1}^{n} \bigl( k - (k-1)) \bigr)
             = (t_{m} - t_{n})^{-1} (n-m) .
    \end{multline*}
Now, $t_{m} >  t_{n}$ and $n-m \geq 1$. Therefore, $1 < (n-m)/(1 - t_{n}/t_{m})$. It follows that $(t_{m} - t_{n})^{-1} (n-m) > 1/t_{m}$. Let $C(0) := 1/4$. Then 
    \begin{equation*}
      \overline{|g'|} = \frac{n-m}{t_{m} - t_{n}} > \frac{1}{t_{m}} > \frac{1/4}{2 t_{m}}
        = \frac{C(0)}{\eta} ,
    \end{equation*}
as required by part \eqref{I:avg.size.bggr.C/eta} of proposition \ref{P:deriv.blows.up}.

The average distance, $\overline{dist}$, from $\alpha$ to $\Ss = \{0\}$ is 
$(t_{m} + t_{n})/2$, so $\overline{dist} \to 0$ as $m, n \to \infty$. But
    \begin{equation*}
       C(0) \eta = 2 t_{m} / 4 < (t_{m} + t_{n})/2 < 2 t_{m} = \eta .
    \end{equation*}
So part \eqref{I:avg.dist.bggr.C.eta} of the proposition holds.

Therefore, by \eqref{E:avg.deriv.avg.dist}, $\overline{|g'|}$ increases at least as fast 
as $\overline{dist}$. That is despite the fact that 
$\bigl| g'(t) \bigr| \to \infty$ more slowly than $1/t$ as $t \downarrow 0$.

But with $\D := [0,1]$ this is not quite the framework of \eqref{E:D.and.F.are.Riem.manifs} and proposition \ref{P:deriv.blows.up}. So let $d > 1$ and
let $\Phi : \D := B_{1}^{d}(0) \to [0,1]$ (see \eqref{E:Euc.ball.defn}) be defined by 
	\[
		\Phi(x) = g \bigl( |x| \bigr) .
	\]
Thus, the singular set, $\Ss$, of $\Phi$ is still the origin. By \eqref{E:operator.norm.of.Phi.ast}, 
$|\Phi_{\ast,x}| = \bigl| g' \bigl( |x| \bigr) \bigr|$, wherever it is defined. In particular, the size of the derivative of $\Phi(y)$ increases more slowly than $1/|y| = 1/dist(y, \Ss)$ as $y \to \Ss$.The preceding argument goes through with 
$\alpha(t) := (t, 0, \ldots, 0) \in B_{1}^{d}(0)$, ($t \in [t_{n}, t_{m}]$). 
	   \end{example}

Finally, we have a simple inequality that we find frequent use for.
By the (Cauchy-)Schwarz inequality (Stoll and Wong \cite[Theorem 3.1, p.\ 79]{rrSetW68.LinearAlgebra}),
	\begin{multline}  \label{E:n.c.sqrd.sum.ineq}
	    n^{2} \bigl( \max_{i} |c_{i}| \bigr)^{2} \geq
	     n \sum_{i=1}^{n} c_{i}^{2} \geq \left( \sum_{i=1}^{n} |c_{i}| \right)^{2}
	       \geq \sum_{i=1}^{n} c_{i}^{2} \geq \bigl( \max_{i} |c_{i}| \bigr)^{2}, \\
	         \quad n = 1, 2, \ldots; \; c_{1}, \ldots, c_{n} \in \RR .
	\end{multline}
   
\chapter{Topology} \label{Chptr:topology} 
This chapter uses algebraic topology to derive basic results concerning singularity. Some topological interludes are present in later chapters, too. 
\section{Homology and Singularity} \label{SS:homol.and.sing}
The following is a main theorem of the book in the sense almost everything else in this book is a consequence of it. (Theorems \ref{T:lwr.bnd.on.Haus.meas} and \ref{T:if.lin.combo.on.F.then.can.rstrct.to.bad.sings}  are the other main theorems of the book. Their usefulness derives from theorem \ref{T:Phi.star.Hr.contains.Theta.star.Hr}.) The theorem shows that the behavior of $\Phi$ near an appropriately chosen ``test pattern space'', 
$\T \subset \D$, can have implications for the global stability of $\Phi$ on $\D$. Let $\Ss$ be the singular set of $\Phi$ (chapter \ref{Chptr:basic.setup}). By \eqref{E:D'.=.D.less.S}, we assume that 
$\D' := \D \setminus \Ss$ is dense and $\Phi$ is defined and continuous on $\D'$ 
with codomain $\F$. Hence, if $\Ss' \supset \Ss$ then the data map $\Phi$ is continuous 
on $\D \setminus \Ss'$. 
Suppose $\Ss' \cap \T = \varnothing$ and let $k : \T \hookrightarrow \D \setminus \Ss'$ be inclusion. 
Then $\Theta := \Phi \circ k : \T \to \F$. Hence,  
    \begin{equation*}
      \Phi_{\ast} \bigl[ H_{r}(\D  \setminus \Ss') \bigr]
        \supset \Theta_{\ast} \bigl[ H_{r}(\T) .
    \end{equation*}

The following describes circumstances in which essentially the same thing holds even if 
$\Ss' \cap \T \neq \varnothing$ providing that $\Ss' \cap \T$ is small. Recall that for 
$s \geq 0$, 
$\Hm^{s}$ denotes $s$-dimensional Hausdorff measure. (See \eqref{E:Hs.defn},)
   \begin{theorem}  \label{T:Phi.star.Hr.contains.Theta.star.Hr}  
       IF: 
        \begin{enumerate}
           \item $\T \subset \D$ is a compact $t$-dimensional manifold 
             (in the relative topology).   \label{Hyp:T.manif}
           \item $\Ss' \subset \D$ has empty interior, $\Ss' \cap \T$ is closed, 
             and $\Phi$ is continuous on $\D \setminus \Ss'$.  
                        \label{Hyp:S.cap.T.closed}
           \item $r$ is an integer between 0 and $t$, inclusive.  \label{Hyp:r.integer}
           \item $\Hm^{t-r}(\Ss' \cap \T) = 0$, e.g., $\dim ( \Ss' \cap \T ) < t - r$ 
           (e.g., $\Ss' \cap \T = \varnothing$), so $\T \setminus \Ss'$ is dense in $\T$.  
                \label{Hyp:S.cap.T.small}
           \item The restriction of $\Phi$ to $\T \; \setminus \; \Ss'$ has a, necessarily 
           unique, continuous extension, $\Theta$, to $\T$. 
               \label{Hyp:extend}
         \end{enumerate}
      THEN:
         \begin{equation} \label{E:Phi.ast.image.includes.Theta.ast.image}
            \Phi_{\ast} \bigl[ H_{r}(\D  \setminus \Ss') \bigr]
                \supset \Theta_{\ast} \bigl[ H_{r}(\T) \bigr]
                  \text{ as subgroups of } H_{r}(\F) .
         \end{equation}
      (``$H_{r}$'' indicates $r$-dimensional singular homology, 
Munkres \cite[Chapter 4 and pp.\ 309--310]{jrM84}. If $\T$ is not orientable, use a commutative ring of characteristic 2 for coefficients. Otherwise, any commutative coefficient ring is permissible.)
   \end{theorem}

\emph{Note: In this book all rings are assumed commutative with unity element.}

Let $\Ss$ be the singular set of $\Phi$. With a view to satisfying \textbf{hypothesis \ref{Hyp:S.cap.T.closed}} of the theorem, we might define 
$\Ss' := \Ss \cup (\overline{\Ss \cap \T})$, where $\overline{E}$ indicates closure of a set 
$E \subset \D$. So $\Ss' \cap \T$ is closed.    

   \begin{remark}[Singularities on and near $\T$] \label{R:Phi.on.and.near.T}
By \eqref{E:Haus.measure.of.empty.set.is.0}, if $\Ss' \cap \T = \varnothing$ then 
$\Hm^{t-r}(\Ss' \cap \T) = 0$ no matter what $t-r$ is. 
At first blush it might seem that \textbf{hypothesis \ref{Hyp:extend}} of the theorem automatically implies \textbf{hypothesis \ref{Hyp:S.cap.T.small}}. But \textbf{hypothesis \ref{Hyp:extend}} only pertains to the behavior of the \emph{restriction} $\Phi \restriction_{\T \setminus \Ss'}$ of $\Phi$ 
to $\T \setminus \Ss'$. In the theorem that restriction is not allowed to have singularities. In section \ref{SS:approx.cont.} we discuss what can be said when \textbf{hypothesis \ref{Hyp:extend}} is dropped. \textbf{Hypothesis \ref{Hyp:S.cap.T.small}} takes into account the behavior of $\Phi$ in an arbitrary neighborhood of $\T$. 
   \end{remark}

The assumption that $\Hm^{t-r}(\Ss' \cap \T) = 0$ can be replaced by the weaker assumption $\check{H}^{t-r}(\T \cap \Ss') = 0$, where ``$\check{\;\,}$'' indicates 
\v{C}ech cohomology (Dold \cite[Chapter VIII, chapter 6]{aD95.alg.topol}). (See \eqref{E:cohom.and.codim}.)

Choosing $\T$ so that $\T \cap \Ss'$ is not too big (\textbf{hypothesis \ref{Hyp:S.cap.T.small}}) is where one tends to get into trouble in trying to apply this result. Note that since $\T$ is compact and $\D$ is a metric space (hence Hausdorff), $\Ss' \cap \T$ is closed in $\T$ if and only if it is closed in $\D$ (Simmons \cite[Theorem D, p.\ 131]{gfS63}). Note further that in \textbf{hypothesis \ref{Hyp:T.manif}}, $\T$ may be a finite set (a compact 0-dimensional manifold). In section \ref{SS:gnrl.lwr.bnd.plane.fit} we explore, in the context of plane-fitting, what happens when the assumption that $\T$ is compact is dropped.

Note further that the hypotheses of theorem \ref{T:Phi.star.Hr.contains.Theta.star.Hr} can be checked by only examining the behavior of $\Phi$ in and in the immediate vicinity of $\T$. (This is an example of the ``Sales Pitch'', remark \ref{R:sales.pitch}.)  
For theorem \ref{T:Phi.star.Hr.contains.Theta.star.Hr} we only require $\Ss' \cap \T$ to be closed. But in other settings we need $\Ss'$ itself to be closed (e.g., in corollary \ref{C:consequences.of.H.r.D.=.0}, proposition \ref{P:sing.dim.when.H.d-r.D.=.0}, and theorem \ref{T:lwr.bnd.on.Haus.meas}; see remark \ref{R:Ss.closed?}). That assumption cannot be checked by only looking in the vicinity of $\T$. Often the ``severity trick'', chapter \ref{Chptr:severity}, comes to the rescue.

  \begin{remark}[Complexity and Perfection]  \label{R:complexity}
Perfection is an absolute concept, but sometimes there are degrees of perfection of fit. This sort of thing can happen in hypothesis testing (example \ref{Ex:hypothesis.testing}, more generally example \ref{Ex:disconnected.F}), in plane-fitting (remark \ref{R:degenerate.data}) or in linear classification (remark \eqref{R:Pf.in.lin.class}).   

One might generalize the notion of perfect fits (and hence of test patterns) 
by replacing $\Pf$ by a function 
$\gamma : \D \to \RR$, specific to the class of data maps under consideration, that measures what one might call the ``complexity'' of data sets, where ``complexity'' is the opposite of ``perfection''. Such an approach might fit in with methods involving regularization (remark \ref{R:regularization.generalities}). The smaller $\gamma(x)$ 
($x \in \D'$) is the more ``perfect'' $x$ is and the more tightly constrained is $\Phi(x)$. Let
    \begin{equation} \label{E:indicator.fn.defn}
      1^{S} \text{ denote the indicator or characteristic function of a set } S.
    \end{equation}
If a space $\Pf$ of perfect fits can be specified then the corresponding 
$\gamma$ is $1^{\Pf^{c}}$, the indicator (or characteristic) function of the complement of 
$\Pf$: 
Perhaps in this looser framework one might get results like those in this chapter by applying persistent homology (Edelsbrunner and Harer \cite[Part C]{hEjlH10.CompTopol}) to $P$. In this book we do not explore that possibility. 
  \end{remark}

\begin{remark}[``Stationary'' data sets]
In settings where derivatives of $\Phi$ make sense, it is natural to wonder whether the preceding result (and other results in this book) might have something interesting to say about those derivatives. For example, can this approach lead to results similar to proposition \ref{P:deriv.blows.up}?

A data map, $\Phi$, should not be too sensitive to small changes in the data. But other extreme is also bad: $\Phi$ should be sensitive to \emph{some} changes in the data. In particular, it might be bad if $\Phi$ were stationary at some data set, $x$, i.e., if $d \Phi(x) = 0$, where $d \Phi$ is the differential of $\Phi$. This is a form of singularity that might be studied using the methods of this book. Just replace the feature space $\F$ by the tangent bundle $T \F$ with the 0-vectors removed. However, the singularities of $\Phi$ are also singularities of $d \Phi$ and it might be difficult to distinguish the two forms of singularity. 
\end{remark}

Think of $\Theta$ as the restriction, $\Sigma \restriction_{\T}$ (see \eqref{E:restriction.notation}), of the standard, 
$\Sigma$, to $\T$. (See subsection \ref{SS:calibration}.) Theorem \ref{T:Phi.star.Hr.contains.Theta.star.Hr} is only interesting if
$\Theta_{\ast} \bigl[ H_{r}(\T) \bigr] \subset H_{r} (\F)$ is  nontrivial when $r > 0$. 
As for the case $r = 0$, if $\Phi_{\ast} \bigl[ H_{0}(\D  \setminus \Ss') \bigr]$ is isomorphic to the coefficient ring, that says nothing beyond that  
$H_{0}(\D  \setminus \Ss')$ is non-empty. In summary, the theorem is only interesting when
   \begin{equation} \label{E:nontriv.r-dim.homol}
      \Theta_{\ast} \colon \tilde{H}_{r}(\T) \rightarrow \tilde{H}_{r} (\F) 
        \text{ is nontrivial.}
   \end{equation}
where ``$\, \tilde{\;} \,$'' indicates reduced homology. In applying theorem \ref{T:Phi.star.Hr.contains.Theta.star.Hr} the idea is to choose $\T$ so the hypotheses of Theorem \ref{T:Phi.star.Hr.contains.Theta.star.Hr} and \eqref{E:nontriv.r-dim.homol} are true and easy to check.

  \begin{remark}[$r = 0$]  \label{R:nontriv.Theta.star.in.dim.0}
Here we analyze the $r = 0$ case. This work is continued in example \ref{Ex:disconnected.F}. Suppose \eqref{E:nontriv.r-dim.homol} holds and so do the hypotheses of theorem \ref{T:Phi.star.Hr.contains.Theta.star.Hr}. Assume $\T$ is disconnected. Use integer coefficients for homology. By Munkres \cite[Theorem 29.4, p.\ 164]{jrM84}, 
we already know $H_{0}(\D  \setminus \Ss')$ is a nontrivial, even if $\Ss'$ is empty. We do not need \eqref{E:nontriv.r-dim.homol} and \eqref{E:Phi.ast.image.includes.Theta.ast.image} to tell us that. 

Let $\Theta$ be as in theorem \ref{T:Phi.star.Hr.contains.Theta.star.Hr} \textbf{hypothesis \ref{Hyp:extend}}. By Munkres \cite[Theorem 29.4, p.\ 164]{jrM84} again, $H_{0}(\D  \setminus \Ss')$, $H_{0}(\T)$, and $H_{0}(\F)$ are all free abelian. It follows from Munkres \cite[Theorem 4.2, p.\ 24]{jrM84} that both 
$\Phi_{\ast} \bigl[ H_{0}(\D  \setminus \Ss') \bigr]$ and $\Theta_{\ast} \bigl[ H_{0}(\T)$ are also free abelian. 

Here we show that \eqref{E:nontriv.r-dim.homol} implies 
    \begin{equation}  \label{E:H0(T).has.rank.>.1}
      \Theta_{\ast} \bigl( H_{0}(\T) \bigr) \text{ has rank } > 1 , 
    \end{equation}
which means, by \eqref{E:Phi.ast.image.includes.Theta.ast.image}, that $\Phi_{\ast} \bigl[ H_{0}(\D  \setminus \Ss') \bigr]$ has rank $> 1$, which in turn implies
    \begin{equation}  \label{E:H0(T).has.rank.>.1}
      H_{0}(\D  \setminus \Ss') \text{ is free with rank } > 1 , 
    \end{equation}
which means, by Munkres \cite[Theorem 29.4, p.\ 164]{jrM84}, yet again
that $\D \setminus \Ss'$ is not path-connected (and neither is $\F$). This is applied in example \ref{Ex:disconnected.F}.

Let $x_{0} \in \T$ and let $\gamma_{\T} : \T \to x_{0}$ be the constant map. Similarly, 
let $\gamma_{\F} : \F \to y_{0} := \Theta(x_{0})$ be constant. 
By Dold \cite[Definition 4.3, pp.\ 33--34]{aD95.alg.topol}, the following commutes with exact rows.
     \begin{equation}  \label{E:T,F.dim.0.CD}
          \begin{CD}
          	0 @>>> \tilde{H}_{0}(\T) @>{i_{\T}}>> H_{0}(\T) 
	          @>{\gamma_{\T\ast}}>> H_{0}(x_{0}) @>>> 0 \\
	         & & @V{\Theta_{\ast}}VV  @V{\Theta_{\ast}}VV 
	           @V{\Theta_{\ast}}V{\isomto}V \\
	       0 @>>> \tilde{H}_{0}(\F)  @>>{i_{\F}}> H_{0}(\F) 
	         @>>{\gamma_{\F\ast}}> H_{0}(y_{0}) @>>> 0 .
          \end{CD}
    \end{equation}
Here $i_{\T}$ and $i_{\F}$ are inclusions of groups. 
Let $\alpha_{\T} \in H_{0}(\T)$ and $\alpha_{F} \in H_{0}(\F)$ be the homology classes represented by the singular 0-simplices $\Delta_{0} \mapsto x_{0} \in \T$ and 
$\Delta_{0} \mapsto y_{0} \in \F$, respectively. Here, $\Delta_{0}$ is the 0-simplex. 
Then $\Theta_{\ast}$ maps $\alpha_{\T}$ to $\alpha_{F} \neq 0$.

Similarly, let $\xi_{\T} \in H_{0}(x_{0}) \isomto \ZZ$ 
and $\xi_{F} \in H_{0}(y_{0}) \isomto \ZZ$ be the homology classes represented by 
$\Delta_{0} \mapsto x_{0} \in \{x_{0}\}$ and 
$\Delta_{0} \mapsto y_{0} \in \{y_{0}\}$, respectively. Then $\Theta_{\ast}$ maps 
$\xi_{\T}$ to $\xi_{F} \neq 0$. Moreover, 
$\gamma_{\T\ast}(\alpha_{\T}) = \xi_{\T}$ and 
$\gamma_{\F\ast}(\alpha_{\F}) = \xi_{\F}$. 

By Munkres \cite[Corollary 23.2, p.\ 132]{jrM84}, each row in \eqref{E:T,F.dim.0.CD} splits. Thus, $\tilde{H}_{0}(\T) \oplus H_{0}(x_{0}) \isomto H_{0}(\T)$ and 
$\tilde{H}_{0}(\F) \oplus H_{0}(y_{0}) \isomto H_{0}(\F)$. 
In fact, let $j_{\T} : H_{0}(x_{0}) \to H_{0}(\T)$ be generated by $\xi_{\T} \mapsto \alpha_{\T}$. So $\gamma_{\T\ast} \circ j_{\T}$ is the identify on $H_{0}(x_{0})$. Similarly, let $j_{\F} : H_{0}(y_{0}) \to H_{0}(\F)$ be generated by $\xi_{\F} \mapsto \alpha_{\F}$. So $\gamma_{\F\ast} \circ j_{\F}$ is the identify on $H_{0}(y_{0})$. The following commutes. 
     \begin{equation}  \label{E:x0.y0.dim.0.CD}
          \begin{CD}
          	\tilde{H}_{0}(\T) \oplus H_{0}(x_{0}) @>{i_{\T} \oplus j_{\T}}>{\isomto}> 
	              H_{0}(\T) \\
	         @V{\Theta_{\ast} \oplus \Theta_{\ast}}VV  @V{\Theta_{\ast}}VV \\  
	         \tilde{H}_{0}(\F) \oplus H_{0}(y_{0}) @>{i_{\F} \oplus j_{\F}}>{\isomto}> 
	              H_{0}(\F) .
          \end{CD}
    \end{equation} 

By Munkres \cite[Theorem 29.4, p.\ 164]{jrM84} still again, we have that  
$\tilde{H}_{0}(\F) \oplus H_{0}(y_{0})$ and $H_{0}(\F)$ are free. So, by Munkres \cite[Theorem 4.2, p.\ 24]{jrM84} again, 
$(\Theta_{\ast} \oplus \Theta_{\ast}) \bigl( \tilde{H}_{0}(\T) \oplus H_{0}(x_{0}) \bigr)$ and 
$\Theta_{\ast} \bigl( H_{0}(\T) \bigr)$ are free. By \eqref{E:x0.y0.dim.0.CD}, these groups are isomorphic. But by \eqref{E:nontriv.r-dim.homol}, the first 
$\Theta_{\ast}$ in $\Theta_{\ast} \oplus \Theta_{\ast}$ is nontrivial. The second is an isomorphism. Therefore, 
$(\Theta_{\ast} \oplus \Theta_{\ast}) \bigl( \tilde{H}_{0}(\T) 
\oplus H_{0}(x_{0}) \bigr)$ and, hence, \eqref{E:H0(T).has.rank.>.1} holds.
$\Theta_{\ast} \bigl( H_{0}(\T) \bigr)$ have rank $> 1$. 
Therefore, \eqref{E:H0(T).has.rank.>.1} holds and $\D \setminus \Ss'$ is not path-connected.
  \end{remark}

   \begin{example}
For \eqref{E:nontriv.r-dim.homol} to hold it must be the case that $\tilde{H}_{r} (\F)$ is nontrivial. Situations in which 
$H_{r} (\F) \neq \{ 0 \}$ are discussed in example \ref{Ex:disconnected.F} and chapters \ref{Chptr:sings.in.plane.fit}, \ref{Chptr:spherical.location}, \ref{Chptr:aug.direct.mean}, and \ref{Chptr:robst.loc.on.circle}.

Cases in which $H_{r} (\F) = \{ 0 \}$ include;
	\begin{enumerate}
	  \item $\Phi$ is linear, e.g.\ $\Phi$ is the sample mean or, more generally, 
	  the analysis of variance (Scheff\'{e} \cite{hS59.ANOVA}).
	  \item $\Phi$ is the covariance or correlation matrix 
	    (Johnson and Wichern \cite[pp.\ 11--12]{raJdwW92}). (The space of correlation matrices is starlike with respect to the identity matrix.)
	  \item Persistent homology of point clouds. (The space finite persistence diagrams with the ``bottleneck distance'', Edelsbrunner and Harer \cite[pp.\ 180--181]{hEjlH10.CompTopol}, is contractible to the persistence diagram consisting only of the diagonal.) It is satisfying to note that persistent homology is a stable operation on point clouds 
(Chazal \emph{et al} \cite[Theorem 3.1]{fCdC-SljGfMsyO2009.StabilityOfPersistence}, there seems to be a typo in the theorem). Related case: Hierarchical clustering  (Johnson and Wichern \cite[p.\ 586]{raJdwW92}). (The space of dendograms is contractible to the trivial dendogram consisting of a single point.)
	\end{enumerate}    
   \end{example}  
   
   \begin{remark}[$\Ss'$ when $H_{r} (\F) = \{ 0 \}$]
However, $H_{r} (\F) = \{ 0 \}$ does not automatically render theorem \ref{T:Phi.star.Hr.contains.Theta.star.Hr} inconsequential. Suppose $H_{r} (\F) = \{ 0 \}$ and $\F$ can be written $\F = \F_{1} \cup \F_{2}$. 
Suppose \ref{E:nontriv.r-dim.homol} and the hypotheses of the theorem hold with $\F$ replaced 
by $\F_{1}$, $\D$ replaced by $\D_{1} := \Phi^{-1}(\F_{1})$, and $\Phi$ replaced by the restriction, 
$\Phi \restriction_{\D_{1}}$. In particular, $H_{r} (\F_{1}) \neq \{ 0 \}$. Then $\Ss'$ is non-empty. 
If data in $\D_{1}$ are common, this is of practical import.
   \end{remark}

The following gathers together some basic facts.
   \begin{corly} \label{C:consequences.of.H.r.D.=.0}
 Suppose the hypotheses of Theorem \ref{T:Phi.star.Hr.contains.Theta.star.Hr} and \eqref{E:nontriv.r-dim.homol} hold. 
      \begin{enumerate}
      \item Let $inc : \T \hookrightarrow \D$ be inclusion. 
      If $\Theta_{\ast} : H_{r}(\T) \to H_{r}(\F)$ is nontrivial and injective 
      but, in dimension $r$, $inc_{\ast}$ is not injective. 
          Then $\Ss'$ is nonempty. \label{I:S.aint.empty}
      \item Like (\ref{I:S.aint.empty}) but no longer require 
      $\Theta_{\ast} : H_{r}(\T) \to H_{r}(\F)$ to be injective, only nontrivial. If 
      $H_{r}(\D) = \{ 0 \}$ then again $\Ss'$ is nonempty. \label{I:S.still.aint.empty}
      \item  Suppose $\Ss'$ is closed in $\D$, but 
       \begin{equation}  \label{E:inclusion.induces.triviality}
          \text{inclusion $\D \setminus \Ss' \hookrightarrow \D$ 
            induces a trivial homomorphism in $r$-dimensional homology.} 
       \end{equation}
      (E.g.\ $H_{r}(\D) = \{ 0 \}$.) If $\mcl{V}$ is any open neighborhood of $\Ss'$ then the restriction 
      of $\Phi$ to       $\mcl{V} \setminus \Ss'$ induces a nontrivial homomorphism 
      in $r$-dimensional homology.
         \label{I:Phi.of.U.less.S.has.non0.r.homol}
      \end{enumerate}
   \end{corly}
   
\begin{proof} 
(\ref{I:S.aint.empty}): If $\T \cap \Ss \neq \varnothing$ we are done. So assume 
$\T \cap \Ss = \varnothing$. Then $\Theta = \Phi \restriction_{\T}$. Let 
$i : \T \hookrightarrow \D \setminus \Ss$ and $j : \D \setminus \Ss \hookrightarrow \D$ be inclusions. The following commutes:
     \begin{equation}  \label{E:gdil.zeta.CD}
          \begin{CD}
          	H_{r}(\T) @= H_{r}(\T) @>{\Theta_{\ast}}>> H_{r}(\F) \\
	        @V{inc_{\ast}}VV  @V{i_{\ast}}VV  @AA{\Phi_{\ast}}A \\
	       H_{r}(\D) @<<{j_{\ast}}< H_{r}(\D \setminus \Ss) 
	         @=H_{r}(\D \setminus \Ss) .
          \end{CD}
    \end{equation}
There exists $\alpha \in H_{r}(\T)$ s.t.\ $\alpha \neq 0$ but $inc_{\ast}(\alpha) = 0$. 
Since $\Theta_{\ast}$ is injective, we have $\Theta_{\ast}(\alpha) \neq 0$. By \eqref{E:Phi.ast.image.includes.Theta.ast.image} there exists 
$\beta \in H_{r}(\D \setminus \Ss)$ s.t.\ 
$\Phi_{\ast}(\beta) = \Theta_{\ast}(\alpha)$. Therefore 
$\beta \neq 0$. But $inc_{\ast}(\alpha) = 0$. Therefore, by commutativity of the diagram, $j_{\ast}(\beta) = 0$. The only way this can happen is if $\Ss \neq \varnothing$.

(\ref{I:S.still.aint.empty}): Since $\Theta_{\ast}$ is nontrivial, there exists 
$\alpha \in H_{r}(\T)$ 
s.t.\ $\Theta_{\ast}(\alpha) \neq 0$. Automatically, $inc_{\ast}(\alpha) = 0$. The proof now proceeds like that of (\ref{I:S.aint.empty}).

(\ref{I:Phi.of.U.less.S.has.non0.r.homol}):  Let $\mcl{V} \subset \D$ be an open neighborhood of $\Ss'$ and let 
$i: \mcl{V} \setminus \Ss' \hookrightarrow \D \setminus \Ss'$, 
$j : (\mcl{V}, \mcl{V} \setminus \Ss') \hookrightarrow (\D, \D \setminus \Ss')$, 
and $k : \mcl{V} \hookrightarrow \D$ be inclusions. Now, by \eqref{E:D.metric.F.normal}, $\D$ is a normal space. Hence, the closed sets $\D \setminus \mcl{V}$ and $\Ss'$ have disjoint (open) neighborhoods. (In this book, ``disjoint'' means ``pairwise disjoint''.) Thus, $\overline{\D \setminus \mcl{V}} \subset \D \setminus \Ss'$, where $\overline{\D \setminus \mcl{V}}$ is the closure of $\D \setminus \mcl{V}$ and $\D \setminus \Ss'$ is open. Hence, we may apply excision (Munkres \cite[Theorem 31.7, p.\ 180]{jrM84}) (with $X = \D$, $A = \D \setminus \Ss'$, and $U = \D \setminus \mcl{V}$) to conclude that $j$ induces isomorphisms of homology. The following commutes with exact rows.
   \begin{equation*}
      \begin{CD}
         H_{r+1}(\D, \D \setminus \Ss') @>{\partial_{\ast}}>> H_{r}(\D \setminus \Ss') 
           @>{0}>> H_{r}(\D) \\
         @A{j_{\ast}}A{\isomto}A  @A{i_{\ast}}AA  @AA{k_{\ast}}A   \\
         H_{r+1}(\mcl{V}, \mcl{V} \setminus \Ss') @>>{\partial_{\ast}}> 
           H_{r}(\mcl{V} \setminus \Ss') @>>> H_{r}(\mcl{V}).
      \end{CD}
   \end{equation*}
(Here, ``0'' denotes the trivial map; see \eqref{E:inclusion.induces.triviality}; ``$\isomto$'' indicates isomorphism.)
A simple diagram chase shows that $i_{\ast}$ is surjective (in dimension $r$). Consequently, by theorem \ref{E:Phi.ast.image.includes.Theta.ast.image},
    \begin{equation*} 
      \bigl( \Phi \restriction_{\mcl{V} \setminus \Ss'} \bigr)_{\ast}
        \bigl[ H_{r}(\mcl{V} \setminus \Ss') \bigr] 
          = (\Phi_{\ast} \circ i_{\ast})\bigl[ H_{r}(\mcl{V} \setminus \Ss') \bigr]  
            = \Phi_{\ast}\bigl[ H_{r}(\D \setminus \Ss') \bigr]  \supset \Theta_{\ast} 
              \bigl[ H_{r}(\T) \bigr] \neq 0,
    \end{equation*}
by \eqref{E:nontriv.r-dim.homol}. Statement \ref{I:Phi.of.U.less.S.has.non0.r.homol} follows..
    \end{proof}
\begin{proof}[Proof of Theorem \ref{T:Phi.star.Hr.contains.Theta.star.Hr}]
First, assume $t > 0$. Let $R$ be the coefficient ring. Let $o \in H_{t}(\T) = H_{t}(\T;R)$ be an orientation class. That means the following (Dold \cite[p.\ 292; 4.1, p.\ 267]{aD95.alg.topol}). First, suppose $\T$ is not orientable, so $R$, has characteristic 2. Let $o \in H_{t}(\T)$ be the element corresponding (under the isomorphism $J_{\T} : H_{t}(\T;R) \to \Gamma(\T;R)$; 
Dold \cite[(2.7), p.\ 254 and Proposition 3.3, p.\ 260]{aD95.alg.topol}) to the canonical section that takes $x \in \T$ to the element of $H_{t} \bigl( \T, \T \setminus \{ x \}; R \bigr)$ that corresponds to $1 = -1 \in R$. Next, suppose 
$\T$ is orientable, let $ \mathbb{Z}$ be the integers, and let 
$O \in \Gamma(\T) = \Gamma(\T; \mathbb{Z})$ map $\T$ into $\tilde{\T}$ be an orientation (Dold \cite[Definition 2.9, p.\ 254]{aD95.alg.topol}), let $O_{R} = O \otimes 1 \in \Gamma(\T;R)$ map $\T$ into $\tilde{\T} \otimes R$. Let $o$ correspond to $O_{R}$ under $J_{\T}^{-1}$ (Dold \cite[(2.7), p.\ 254, Proposition 3.3, p.\ 260]{aD95.alg.topol}). Either way, let ``$\smallfrown o$'' denote the homomorphism given by cap product with $o$ (Dold \cite[Chapter VIII, chapter 7, pp.\ 291--292]{aD95.alg.topol}). 

By \textbf{hypothesis \ref{Hyp:S.cap.T.small}} of the theorem 
and \eqref{E:cohom.and.codim} 
$\check{H}^{t-r}(\T \cap \Ss') = 0$, where ``$\check{\;\,}$'' indicates \v{C}ech cohomology 
(Dold \cite[Chapter VIII, chapter 6]{aD95.alg.topol}). 
Let $i : (\T, \varnothing) \hookrightarrow (\T, T \cap \Ss')$, $j : \T \setminus \Ss' \hookrightarrow \T$,
$k : \T \setminus \Ss' \hookrightarrow \D \setminus \Ss'$, 
and $\ell : \T \cap \Ss' \hookrightarrow \T$ be inclusions. 
By Dold \cite[(6.10), p.\ 284]{aD95.alg.topol} the following diagram 
is exact at $\check{H}^{t-r}(\T)$. (By Dold \cite[Proposition 1.3, p.\ 248]{aD95.alg.topol} $\T$ is an ENR.)  The indicated isomorphisms (``$\isomto$'') are due to Poincar\'{e}-Lefschetz duality  
(Dold \cite[Proposition 7.2, p.\ 292]{aD95.alg.topol}; since, by \textbf{hypothesis \ref{Hyp:S.cap.T.closed}}, $\Ss' \cap \T$ is closed and $\T$ is compact, $\Ss' \cap \T$ is compact; if $\T \cap \Ss' = \varnothing$, use Poincar\'{e} duality, Dold \cite[8.1, p.\ 298]{aD95.alg.topol}). (By \eqref{E:(co)hom.of.empty.set.is.0}, the top row of the diagram is exact even if $\T \cap \Ss' = \varnothing$.) 

As for commutativity of the diagram, the only doubtful part is the square marked ``?''. 
The commutativity of that square follows from Dold \cite[(7.6), p.\ 293]{aD95.alg.topol}.\footnote{Here are the details. Dold \cite[(7.6), p.\ 293]{aD95.alg.topol} is actually a little ambiguous. In its first appearance in the formula, $i'$ is the inclusion map 
$(M \setminus L, M \setminus K) \hookrightarrow (M \setminus \tilde{L}, M \setminus \tilde{K})$. 
In its second appearance, $i'$ is the inclusion $(M, M \setminus K) \hookrightarrow (M, M \setminus \tilde{K})$. Apply Dold \cite[(7.6), p.\ 293]{aD95.alg.topol} with $(\T, \T, \T \cap \Ss', \T, \varnothing)$ in place of $(M, K, L, \tilde{K}, \tilde{L})$, respectively. The ``$i$'' in Dold \cite[(7.6), p.\ 293]{aD95.alg.topol} is 
$i : (\T, \varnothing) \hookrightarrow (\T, T \cap \Ss')$. The first ``$i'$'' in Dold \cite[(7.6), p.\ 293]{aD95.alg.topol} is $j : \T \setminus \Ss' \hookrightarrow \T$, and the second ``$i'$'' in Dold \cite[(7.6), p.\ 293]{aD95.alg.topol} is just the identity $: \T \to \T$.} 

\small
\begin{multline} \label{E:big.CD2}
  \begin{CD}
     0 \\                 
     @| \\                
     \check{H}^{t-r}(\T \cap \Ss') @<{\check{\ell}}<< \check{H}^{t-r}(\T) 
        @<{\check{i}}<< \check{H}^{t-r}(\T, \T \cap \Ss') @>{\isomto}>{\smallfrown o}> 
            H_{r}(\T \setminus \Ss')  @=  \\
     & & @V{\isomto}V{\smallfrown o}V ?  & &  @VV{j_{\ast}}V &  \\       
     & & H_{r}(\T) @= H_{r}(\T) @= H_{r}(\T) @>>{\Theta_{\ast}}> 
  \end{CD} \\
  %
  %
  \\
  \\
    \begin{CD}
      @= H_{r}(\T \setminus \Ss')   @>{k_{\ast}}>>   H_{r}(\D \setminus \Ss')  \\   
     & & @VV{\left( \Phi \restriction_{\T \setminus \Ss'} \right)_{\ast}}V @VV\Phi_{\ast}V \\
      @>>{\Theta_{\ast}}>           H_{r}(\F) @= H_{r}(\F)             
  \end{CD} \quad.
\end{multline}
\normalsize

The proof is a diagram chase. In \eqref{E:big.CD2} let $y = \Theta_{\ast}(x) \in H_{r}(\F)$, where $x \in H_{r}(T)$. Let $z \in \check{H}^{t-r}(\T)$ be the inverse under 
$\smallfrown o$ of $x$. Since $\check{H}^{t-r}(\T \cap \Ss') = 0$, by exactness $z = \check{i}(w)$ for some 
$w \in \check{H}^{t-r}(\T, \T \cap \Ss')$. Since the square marked ``?'' commutes, $j_{\ast}(w \smallfrown o) = x$. 
Therefore,
    \begin{equation} \label{E:Phi.star.circ.k.w.cap.o.=.y}
      \Phi_{\ast} \bigl( k_{\ast}(w \smallfrown o) \bigr) 
        = \left( \Phi \restriction_{\T \setminus \Ss'} \right) _{\ast} (w \smallfrown o) 
         = \Theta_{\ast} \circ j_{\ast} (w \smallfrown o)  
           = \Theta_{\ast}(x) = y.
    \end{equation}
Note that $k_{\ast}(w \smallfrown o) \in H_{r}( \D \setminus \Ss' )$.

Now take $t = 0$. Then, by hypotheses \ref{Hyp:T.manif} and \ref{Hyp:r.integer} of the theorem, we have $r = 0$ and $\T$ is a finite collection of points. By \textbf{hypothesis \ref{Hyp:S.cap.T.small}}, 
$\Ss' \cap \T = \varnothing$. It is obvious that $\check{H}^{0}(\T) \isomto H^{0}(\T)$ since $\T$ is an open cover of itself that refines every open cover. By universal coefficients, 
Munkres \cite[Theorem 53.1, p.\ 320]{jrM84}, $H^{0}(\T) \isomto H_{0}(\T)$. 
Moreover, the square ``?'' now trivially commutes. Hence, the argument goes through as before. 
\end{proof}

  \begin{remark}  \label{R:retractions}
Suppose $\Ss' = \varnothing$, so $\D' = \D$. Suppose $\Theta : \T \to \F$ is actually a homeomorphism. Then $\Phi$ is essentially the same thing as the retraction 
$\Theta^{-1} \circ \Phi : \D \to \T$. Then \eqref{E:Phi.ast.image.includes.Theta.ast.image} follows for all $r \geq 0$. (Such a $\Phi$ can be defined as a ``minimum distance method'' w.r.t.\ some metric. See remark \ref{R:dist.based.mthds}.) This leads to a contradiction if
$H_{r}(\D) = \{  0 \} \ne H_{r}(\T)$ for some $r$.
  \end{remark}
   
  \begin{remark}[Null sets]   \label{R:null.sets}
If these results only depend on a subset of $\D$ of probability 0 (``null set'') then they are without statistical interest. While no nonempty set is a null set for every probability measure, if one could escape these results by changing $\Phi$ on a nowhere dense set then our results would be vitiated. We show that, in fact, one cannot escape our results by changing $\Phi$ on any nowhere dense set. 
Suppose $\Phi : \D \setminus \Ss' \to \F$ is a data map satisfying the hypotheses 
of theorem \ref{T:Phi.star.Hr.contains.Theta.star.Hr}. Formally, let $\X \subset \D$ have empty interior and suppose by changing $\Phi$ 
only on $\X'$ one could then extend $\Phi$ continuously to all of $\D$. Call the continuous extension $\Psi$. We show that $\Psi = \Phi$ off $\Ss'$. 
Let $x_{0} \in \D \setminus \Ss'$. Since $\X$ empty interior and $\Ss'$ is closed, every neighborhood of $x_{0}$ contains a point not in $\Ss'$ or $\X$. It follows from 
 the continuity of $\Phi$ off $\Ss'$ and the continuity of $\Psi$ that 
 $\Psi(x_{0}) = \Phi(x_{0})$. Thus, $\Psi = \Phi$ off $\Ss'$. Hence, $\Psi$ also satisfies the hypotheses of theorem \ref{T:Phi.star.Hr.contains.Theta.star.Hr} and all the results that flow out of that theorem, like corollary \ref{C:consequences.of.H.r.D.=.0}.
  \end{remark}

\begin{remark}[Regularization]  \label{R:regularization.generalities}
Regularization (Tikhonov and Arsenin \cite{anTvyA77.illposed}) works for inverse problems and depends on being able to measure how well a proposed solution fits the data. (We use the term ``regularization'' broadly to also include the method of ``quasisolutions,'' Tikhonov and Arsenin \cite[p.\ 33]{anTvyA77.illposed}. See also  Mukherjee \emph{et al} \cite[Section 6.7]{sMrRtP03.Rglrzation}. Regularization is also connected to ``complexity'' of solutions, remark \ref{R:complexity}) In our general setup (subsection \ref{SS:calibration}) the idea of a ``good fit'' of solution to data only enters in the extremely weak form of calibration. 

(Regularization is also used in statistical learning to reduce the ``complexity'' of a statistical model. If a model is too complex then fitting it from data can lead to ``overfitting''. But overfitting is a form of excessive sensitivity of the model fitting data map to the data and so is similar to what we consider here. See Mukherjee \emph{et al} \cite[Section 6.7]{sMrRtP03.Rglrzation} again.)

Regularization will also take a very general form here. Suppose for some $b \in [0,\infty]$ we have a function $\Psi : \D \times [0, b) \partlyto \F$ and for 
$\lambda \in [0,b)$ let $\Phi_{\lambda} : x \mapsto \Psi(x,\lambda)$ 
($x \in \D$) whenever defined. Suppose $\Phi_{\lambda}$ is defined and continuous 
everywhere in $\D$ for $\lambda \in [0,b)$ sufficiently large. (Assume the following monotonicity property: $0 \leq \lambda < \lambda' < b$ with $\Phi_{\lambda}$ defined and continuous on $\D$ implies the same is true for $\Phi_{\lambda'}$.) 
In general, let $\Ss'_{\lambda} \subset \D$ be s.t.\  
$\lambda \in [0,b)$ $\Phi_{\lambda}$ is defined and continuous off $\Ss'_{\lambda}$. For $\lambda$ sufficiently large, then, we may take $\Ss'_{\lambda} = \varnothing$. In this case say that $\Phi_{\lambda}$ is ``regularized''. 
(In this remark, $\lambda \geq 0$ is an independent parameter, i.e., $\lambda$ is not computed from the data $x \in \D$.)

It may be hard to compute $\Psi$. Regularization often involves minimization (lemma \ref{L:data.maps.defined.by.opt}) and $\Psi$ may have singularities if the minimization problem is not strictly convex. But if one can compute $\Psi$, then one uses the data map 
$\Phi_{\lambda}$ with $\lambda$ large enough that $\Phi_{\lambda}$ is continuous.

But using $\Phi_{\lambda}$ with large $\lambda$ can lead to problems. 
Let $\T \subset \D$ and let $\Sigma : \T \to \F$ be a continuous ``standard'' for the problem at hand (section \ref{SS:calibration}). Given $\lambda \in [0, b)$, we would like 
$\Phi_{\lambda}$ to approximate $\Sigma$ on $\T \setminus \Ss'_{\lambda}$. Suppose, for example, $H_{r}(\D) = 0$ but 
    \begin{equation} \label{E:Sigma.non0.r.homol}
      \Sigma_{\ast} \bigl[ H_{r}(\T) \bigr] \neq 0 .
    \end{equation}
(In remarks \ref{R:regularization.on.spheres} and \ref{R:aug.mean.regularization}, we discuss regularization in a context in which $H_{r}(\D) \neq 0$.)

Adding the subscript $\lambda$ wherever appropriate, suppose the hypotheses of theorem \ref{T:Phi.star.Hr.contains.Theta.star.Hr} hold for every 
$\lambda \in [0,b)$. Let $\Theta_{\lambda}$ be the unique continuous extension 
of $\Phi_{\lambda} \restriction_{\T \setminus \Ss'_{\lambda}}$ to $\T$. 
Suppose $\Theta_{0}$ approximates $\Sigma$ well enough that 
$\Theta_{0\ast} \bigl[ H_{r}(\T) \bigr] = \Sigma_{\ast} \bigl[ H_{r}(\T) \bigr] \neq 0$. Then by corollary \ref{C:consequences.of.H.r.D.=.0}\eqref{I:S.still.aint.empty},  
$\Ss_{0} \neq \varnothing$.

Choose $\lambda_{1} \in (0,b)$ s.t.\ if $\lambda > \lambda_{1}$ then 
$\Phi_{\lambda}$ is  defined and continuous everywhere on $\D$. So   
$\Phi_{\lambda}$ has no singularities if $\lambda > \lambda_{1}$, i.e., 
$\D \setminus \Ss_{\lambda} = \D$. Therefore, 
if $\lambda \in (\lambda_{1}, b)$, we must have
$\Theta_{\lambda \ast} \bigl[ H_{r}(\T) \bigr] = 0 
\neq \Sigma_{\ast} \bigl[ H_{r}(\T) \bigr] = \Theta_{0 \ast} \bigl[ H_{r}(\T) \bigr]$.
In particular, \eqref{E:nontriv.r-dim.homol} fails for $\Theta = \Theta_{\lambda}$. 
Thus, if $\Phi_{\lambda}$ is continuous, then it must be uncalibrated, i.e., it does not approximate $\Sigma$ well on $\T$. In particular, the function 
$(x, \lambda) \mapsto \Theta_{\lambda}(x)$ cannot be continuous 
in $(x, \lambda) \in \D \times [0, b)$. For otherwise, $\Theta_{\lambda}$ and $\Theta_{0}$ would be homotopic and 
\eqref{E:nontriv.r-dim.homol} would hold for $\Theta = \Theta_{\lambda}$ with $\lambda > \lambda_{1}$. Thus, as $\lambda \downarrow 0$, at some critical value $\lambda = \lambda_{0} \geq 0$ the data map $\Phi_{\lambda}$, must undergo a ``bifurcation'' (Strogatz \cite[Chapter 3]{shS1998.strogatz}). 

$\Phi_{\lambda}$ cannot be calibrated for $\lambda$ greater than the critical point 
$\lambda_{0}$. But in order for $\Phi_{\lambda}$ to be considered a solution of the data analysis problem at hand, $\Theta_{\lambda}$ must approximate $\Sigma$. One can
hope that as $\lambda \downarrow \lambda_{0}$, the map $\Phi_{\lambda}$ will at least be approximately calibrated. But if \eqref{E:nontriv.r-dim.homol} fails then the approximation of $\Theta_{\lambda}$ to $\Sigma$ cannot be very good. 
The most we can require is that for $\lambda$ only slightly bigger than 
$\lambda_{0}$ the map $\Theta_{\lambda}$ be close to $\Sigma$ on all but a small subset, $\T_{\lambda}$, of $\T$. \eqref{E:Sigma.non0.r.homol} says that $\Sigma$ wraps $\T$ around a void in $\F$. If $\lambda > \lambda_{0}$ then $\Theta_{\lambda}$ cannot do this. However, since $\Theta_{\lambda}$ is 
close to $\Sigma$ off $\T_{\lambda}$, 
$\Theta_{\lambda}$ must \emph{almost} wrap $\T \setminus \T_{\lambda}$ around the void in $\F$. But that means in the small set $\T_{\lambda}$ the map 
$\Theta_{\lambda}$ must almost completely \emph{unwrap} $\T$ from around the void in $\F$. In that region $\Theta_{\lambda}$ will not have singularities, but it will still be unstable in a quantitative sense because $\T_{\lambda}$ is small. 

Moreover, suppose $\lambda > \lambda_{0} > \lambda' \geq 0$, 
$\lambda - \lambda' > 0$ is small, and $x \in \Ss_{\lambda'}$. Then $x$ will not be a singularity of $\Phi_{\lambda}$, but $\Phi_{\lambda}$ may still be unstable near $x$ in a quantitative sense. Borrowing a term from dynamical systems (Strogatz \cite[p.\ 99]{shS1998.strogatz}), one might call this lingering instability the ``ghost'' of the singularity of $\Phi_{\lambda'}$ at $x$. Thus, $\Phi_{\lambda}$ can have unstable regions distant from $\T_{\lambda}$, too. 

A number of things that can go wrong in regularization:  A non-convex minimization problem can have unstable solutions, 
$\Phi_{\lambda}$ can be poorly calibrated, $\Phi_{\lambda}$ can be unstable on part of $\T$, or ghosts of singularities can haunt $\Phi_{\lambda}$. 
The theory described in  this chapter explains the connection among these phenomena. 

See remark \ref{R:regularization.on.spheres} for discussion of regularization in a specific context. A specific case is discussed in remark \ref{R:aug.mean.regularization}.
  \end{remark}

\section{Dimension of singular sets}  \label{SS:dim.of.sing.sets}          
(My memory of this is hazy, but I believe that I got the idea of looking at the dimension of singular sets from D. Ravenel; personal communication.) So far we have just discussed the existence of singularities. However, sometimes one can take \eqref{E:Phi.ast.image.includes.Theta.ast.image} 
and \eqref{E:nontriv.r-dim.homol} and go on to compute lower bounds on the dimension of $\Ss'$ or even the Hausdorff measure of $\Ss'$ (appendix \ref{Chptr:Lip.Haus.meas.dim}, chapter \ref{Chptr:Haus.meas.of.sing.set}). 

Continue using the notation from theorem \ref{T:Phi.star.Hr.contains.Theta.star.Hr}. Recall that ``$\dim$'' denotes Hausdorff dimension and the ``codimension'' of $\Ss'$, relative 
to $\D$, is $\text{codim} \, \Ss' := \dim \D - \dim \Ss'$. 
Suppose $d := \dim \D > t := \dim \T$.
  
Suppose one can prove
	\begin{equation}  \label{E:check.H.d-r-1.not.0}
		\check{H}^{d-r-1}(\Ss'; R) \ne 0,
	\end{equation}
where $R$ is the coefficient ring. Then, by \eqref{E:cohom.and.codim},
$\Hm^{d-r-1}(\Ss') > 0$. In particular, $\text{codim} \, \Ss' \leq r + 1$. 

Suppose we have $H_{r}(\D) = 0$. (So $r > 0$.) By Poincar\'{e} duality (Dold \cite[Proposition VIII.8.1, p.\ 298]{aD95.alg.topol},
if $\D$ is a compact manifold then $H_{r}(\D)$ is trivial implies 
$\check{H}^{d-r}(\D) = 0$. In that case an easy argument (the proof of proposition \ref{P:sing.dim.when.H.d-r.D.=.0} below) shows that \eqref{E:check.H.d-r-1.not.0} 
holds. 
It follows that it is a Euclidean neighborhood retract (END) by assumption it is an ENR (Dold \cite[Proposition and Definition IV.8.5, p.\ 81 and Proposition VIII.1.3, p.\ 248]{aD95.alg.topol}). Therefore, by Dold \cite[Proposition VIII.6.12, p.\ 285]{aD95.alg.topol}, $\check{H}^{t-r}(\T) \isomto H^{t-r}(\T)$, singular cohomology. For future reference:
    \begin{equation}  \label{E:sing.and.cech.cohoms.same}
      \text{Singular and \v{C}ech cohomologies of a compact manifold are the same.}
    \end{equation}

In the proof of theorem \ref{T:spher.loc.sing.codim.nvar+1} we arrive at \eqref{E:check.H.d-r-1.not.0} (essentially) in a case in which $H_{r}(\D) \neq 0$.
   \begin{prop}  \label{P:sing.dim.when.H.d-r.D.=.0}
Suppose the hypotheses of theorem \ref{T:Phi.star.Hr.contains.Theta.star.Hr} and \eqref{E:nontriv.r-dim.homol} hold with $0 < t < d$ and $r = 1, \ldots, t-1,$ or $t$. Suppose 
$\D$ is a compact $d$-dimensional manifold with 
$\check{H}^{d-r}(\D) \isomto H^{d-r}(\D) = \{ 0 \}$ and $\Ss'$ is closed in $\D$. 
Then $\check{H}^{d-r-1}(\Ss')$ is nontrivial and, hence, 
         \begin{equation}  \label{E:codim.S.leq.r+1}
            \Hm^{d-r-1}(\Ss') > 0. \; \text{ In particular, } 
              \text{codim} \, \Ss' \leq r + 1 .
         \end{equation}
   \end{prop}
As in theorem \ref{T:Phi.star.Hr.contains.Theta.star.Hr}, if $\T$ is not orientable, use a commutative ring of characteristic 2 for coefficients. Otherwise, any commutative coefficient ring is permissible. For the $r = 0$ case see example \ref{Ex:disconnected.F}.

By Poincar\'{e} duality (Dold \cite[Proposition VIII.8.1, p.\ 298]{aD95.alg.topol}, 
$H^{d-r}(\D) \isomto H_{r}(\D)$. In particular, the condition $H^{d-r}(\D) = \{ 0 \}$ cannot hold for $r = 0$. 

Note that in contexts where the notion of ``submanifold'' of $\D$ makes sense, $\T$ does not have to be one. By Poincar\'{e} duality again,  
if $\D$ is a compact manifold and $\check{H}^{d-r}(\D) = 0$ then $H_{r}(\D) = 0$, an impossibility unless $r > 0$. By \textbf{hypothesis \ref{Hyp:r.integer}} of theorem \ref{T:Phi.star.Hr.contains.Theta.star.Hr}, $r \leq t$. This explains the clause 
``$0 < t < d$ and $r = 1, \ldots, t$'' in the proposition.
The case $r = 0$ is treated in example \ref{Ex:disconnected.F}.
  \begin{proof}
By theorem \ref{T:Phi.star.Hr.contains.Theta.star.Hr} and \eqref{E:nontriv.r-dim.homol} $H_{r}(\D \setminus \Ss')$ is nontrivial. Therefore, by Poincar\'{e}-Lefschetz) duality (Dold \cite[Proposition VIII.7.2, p.\ 292]{aD95.alg.topol}) with $K = M = \D$, $L = \Ss'$, $n = d$, and $i = d-r$, we have 
$\check{H}^{d-r}(\D, \Ss') \isomto H_{r}(\D \setminus \Ss')$ is nontrivial. 
But the following is exact (Dold \cite[Proposition VIII.6.10, p.\ 284]{aD95.alg.topol}).
    \begin{equation} \label{E:S'.to.D,S'.to.D.exact.seq}
      0 = \check{H}^{d-r}(\D) \leftarrow \check{H}^{d-r}(\D, \Ss') 
        \leftarrow \check{H}^{d-r-1}(\Ss') .
    \end{equation}
Hence, if $\check{H}^{d-r-1}(\Ss')$ were trivial $\check{H}^{d-r}(\D)$ would be nontrivial. Contradiction. So $\check{H}^{d-r-1}(\Ss')$ is nontrivial. Now, $\Ss'$ is a closed subset of the compact manifold $\D$. Therefore $\Ss'$ is compact and we may apply \eqref{E:cohom.and.codim}. 
  \end{proof}
Notice that the proof really only requires that $\check{H}^{d-r}(\D)$ not contain a subgroup isomorphic to $\check{H}^{d-r}(\D, \Ss')$. This idea is used in example \ref{Ex:disconnected.F}. 

  \begin{remark}
Suppose \eqref{E:codim.S.leq.r+1} holds.
Since $d > t$ are integers, we have $d-r-1 \geq t-r$. Therefore, 
   \begin{equation} \label{E:general.fallback}
      \Hm^{t-r}(\Ss') > 0, \text{ so } \dim \Ss' \geq t - r. 
   \end{equation}
If \textbf{hypothesis \ref{Hyp:S.cap.T.small}} of theorem \ref{T:Phi.star.Hr.contains.Theta.star.Hr} fails then \eqref{E:general.fallback} still holds. So if one wants to bound $\dim \Ss'$ below even if \textbf{hypothesis \ref{Hyp:S.cap.T.small}} of theorem \ref{T:Phi.star.Hr.contains.Theta.star.Hr} fails then one should try to find a high dimensional test pattern space (i.e., large $t$) for which all but \textbf{hypothesis \ref{Hyp:S.cap.T.small}} of theorem \ref{T:Phi.star.Hr.contains.Theta.star.Hr} all hold. Then whether or not \textbf{hypothesis \ref{Hyp:S.cap.T.small}} holds one still has 
$\dim \Ss' \geq t - r$. In section \ref{SS:gnrl.lwr.bnd.plane.fit} this idea is carried through for ``plane fitting''.

Here is a case in which the two lower bounds, \eqref{E:codim.S.leq.r+1} and \eqref{E:general.fallback}, coincide. If $\D$ is a smooth manifold and $\T$ is a compact imbedded submanifold of $\D$, let $\hat{\T}$ be the boundary of a tubular neighborhood of $\T$ in $\D$ (section \ref{SSS:tubular.nbhd} below). So $\dim \hat{\T} = d-1$. Then with $\hat{\T}$ in place of $\T$, the bound on $\dim \Ss'$ in \eqref{E:general.fallback} is just $d - r - 1$, the same as the lower bound in \eqref{E:codim.S.leq.r+1}.
  \end{remark}

  \begin{remark}[Manifolds with boundary]  \label{R.doubling.manifs}
Proposition \ref{P:sing.dim.when.H.d-r.D.=.0} requires $\D$ to be compact manifold, which in particular have no boundary. Theorem \ref{T:lwr.bnd.on.Haus.meas} below imposes the same requirement. However, it is not hard to extend them to compact manifolds with \emph{boundary}. One just uses ``doubling'' (Munkres \cite[Definition 5.10, pp.\ 56--57]{jrM66}) to create a manifold without boundary. For example suppose $\tilde{\D} = \{ x \in \RR^{d}: |x| \leq 1 \}$ is a $d$-ball with 
$d \geq 1$ and $\T$, a $(d-1)$-sphere, is the boundary of $\tilde{\D}$. Suppose $\F$ is a $(d-1)$-sphere and $\tilde{\Phi} : \tilde{\D} \setminus \Ss' \to \F$ is continuous, where $\Ss' \subset \tilde{D}$ is closed. (Such functions have received much attention, 
Brezis \cite{hB03.AnlysTopol}.)   Suppose the restriction 
$\tilde{\Phi} \restriction_{\T}$ (see \eqref{E:restriction.notation}) has a continuous extension, $\Theta$, to $\T$ and $\Theta$ has nonzero degree 
(Munkres \cite[p.\ 116]{jrM84}). 
Paste two copies of $\tilde{\D}$ together along  $\T$ to create a new $d$-sphere, 
$\D$. $\tilde{\Phi}$ (partially) extends to $\D$ in the obvious way. Call the extension $\Phi$. One can then try to apply our results to 
$\Phi$. The conclusions have obvious interpretation for $\tilde{\Phi}$. 
  \end{remark}

  \begin{remark}[Closedness of singular set] \label{R:Ss.closed?}
Note that $\Ss'$ is a \emph{superset} of the singular set, $\Ss$, of $\Phi$. Some care is needed to insure that in applying the results of this chapter $\Ss'$ is an interesting set. If $\Ss$ is closed then we may take $\Ss' = \Ss$. But $\Ss$ need not be closed as the LAD example in figure \ref{F:LS.PC.LAD.lf.plots} shows. Another example is as follows. Suppose $\D = \RR^{2}$ and $\F = \RR$. Let 
	\[
		\Phi(x,y) =
			\begin{cases}
				x, &\text{ if } y \geq 0 \\
				0, &\text{ if } y < 0.
			\end{cases}
	\]
The singular set of this $\Phi$ is $\RR$ with the point 0 removed -- not a closed set. (See also remark \ref{R:S.measurable}.)

Of course, we can take $\Ss'$ to be the closure of $\Ss$. However, as example \ref{E:different.dimensions} shows, the closure of a set can be very different, even in dimension, than the original set. So one might try to check that $\Ss$ is closed. But requires studying the behavior of $\Phi$ far, possibly, from $\T$ or even the perfect fit set $\Pf$. That makes a mockery of the philosophy underlying this project laid out in remark \ref{R:sales.pitch}, viz. to only have to examine $\Ss$ near a small set 
$\T$ or $\Pf$. (See section \ref{SS:singularity}.) The process of proving $\Ss$ is closed may  reveal $\dim \Ss$, rendering the proposition unnecessary.

In practice, the closure, $\overline{\Ss}$ might be essentially the same as $\Ss$, differing by some sort of null set. However, that just translates the problem to that of demonstrating that $\overline{\Ss}$ might be essentially the same as $\Ss$, which again involves $\Ss$ not just in the vicinity of $\T$ or $\Pf$. 

However, in chapter \ref{Chptr:severity} we show that, under hypotheses on $\F$ that seem to often hold in practice, 
we may take $\Ss'$ to be a closed \emph{subset} of $\Ss$ consisting only of ``severe'' singularities. More precisely, in theorem \ref{T:if.lin.combo.on.F.then.can.rstrct.to.bad.sings}  
describes an operation that takes the pair, $(\Phi, \D')$, to a pair, $(\Omega, \tilde{\D})$ ($\tilde{\D} \subset \D$) and $\Omega : \tilde{D} \to \F$ 
s.t.\ $\D' \subset \tilde{\D}$,  
$\Omega \restriction_{\T \cap \D'}= \Phi \restriction_{\T \cap \D'}$ (see \eqref{E:restriction.notation}), the singular set of $\Omega$ lies in a closed subset, 
$\Ss^{\msf{V}}$, of $\Ss$ consisting of ``severe'' singularities of $\Phi$, 
and $\tilde{D} = \D \setminus \Ss^{\msf{V}}$. Now apply theorem \ref{T:Phi.star.Hr.contains.Theta.star.Hr} with $\Omega$ in place of $\Phi$. 
\end{remark}
   
  \begin{example}[Disconnected $\F$]  \label{Ex:disconnected.F} 
Apparently, the $r = 0$ version of proposition \ref{P:sing.dim.when.H.d-r.D.=.0} needs to be treated separately. So take 
    \begin{equation*}
      r = 0 .
    \end{equation*}
Suppose $\F$ has mutiple path-connected components. 
Let $\Ss' \subset \D$ be closed with empty interior. 
Suppose $\Phi : \D \setminus \Ss' \to \F$ is continuous but $\Phi(\D \setminus \Ss)$ is not a subset of just one path component of $\F$. Hypothesis testing (example \ref{Ex:hypothesis.testing}) is an example. 
Let $m = 2, 3, \ldots$. 
Pick $x_{1}, \ldots, x_{m} \in \D \setminus \Ss'$ s.t.\ $\Phi(x_{i})$ ($i=1, \ldots, m$) belong to distinct path-connected components of $\F$. Assume \eqref{E:D.metric.F.normal} holds as usual, so $\D$ is pathwise connected. Then obviously, $\Phi : \D \setminus \Ss' \to \F$ must have singularities. 

We show how this situation fits in with our results so far. 
Let $\T := \{ x_{1}, \ldots, x_{m} \}$. This choice of $\T$ is somewhat arbitrary. The data sets $x_{1}, \ldots, x_{m}$ do not really qualify as ``perfect''. (This point also comes up in section \ref{SS:calibration} and chapter \ref{Chptr:linear.classification}.) The \textbf{hypotheses} of theorem \ref{T:Phi.star.Hr.contains.Theta.star.Hr} hold. 

Since $\Ss' \cap \T = \varnothing$, we may let 
$k : \T \hookrightarrow \D \setminus \Ss'$ be inclusion and 
$\Theta := \Phi \circ k : \T \to \F$.  
By Munkres \cite[Theorem 29.4, p.\ 164]{jrM84}, $\tilde{H}_{0}(\T; \ZZ)$ is a nontrivial free group and
$\Theta_{\ast} : \tilde{H}_{0}(\T; \ZZ) \to \tilde{H}_{0}(\F; \ZZ)$ is injective. 
Thus, \eqref{E:nontriv.r-dim.homol} holds with $r = 0$. The same theorem tells us that, since $\D$ is path-connected, $H_{0}(\D; \ZZ) \isomto \ZZ$. 
By remark \ref{R:nontriv.Theta.star.in.dim.0} 
(specifically \eqref{E:H0(T).has.rank.>.1}), 
$H_{0}(\D \setminus \Ss'; \ZZ)$ is free with rank $> 1$. Thus, by Munkres \cite[Theorem 29.4, p.\ 164]{jrM84} again, $\D  \setminus \Ss$ is not pathwise connected and $\Ss \neq \varnothing$. 

Now suppose $\D$ is a compact connected $d$-dimensional manifold with $d > 0$. We make no extra assumptions about $\D$, e.g.\ concerning its (co)homology. We already know $\Ss \neq \varnothing$ and $r = 0$ by assumption. 
It follows that, if $d = 1$ then $\Hm^{d-r-1}(\Ss') > 0$ and 
$\text{codim} \, \Ss' \leq 1 = r+1$. I.e., \eqref{E:codim.S.leq.r+1} holds.

Suppose $d > 1$. We have just observed that 
$H_{0}(\D; \ZZ) \isomto \ZZ$ and $H_{0}(\D \setminus \Ss'; \ZZ)$ is free with rank 
$> 1$. It follows by universal coefficients (Munkres \cite[Theorem 55.2, p.\ 332]{jrM84} or Spanier \cite[Theorem 8, p.\ 222]{ehS66}), that 
$H_{0}(\D; \ZZ/2)$ is a vector space of dimension 1 and 
$H_{0}(\D  \setminus \Ss'; \ZZ/2)$ is a vector space of dimension $> 1$. 

Let $R := \ZZ$ if $\D$ is oriented. Otherwise, let $R := \ZZ/2$. (Thus, whether 
$R = \ZZ/2$ or $\ZZ$, $H_{0}(\D \setminus \Ss'; R)$ and $H_{0}(\D; R)$ are free modules over a principal ideal domain.) We argue as in the proof of proposition \ref{P:sing.dim.when.H.d-r.D.=.0}. By Dold \cite[Proposition VIII.6.10, p.\ 284]{aD95.alg.topol}, the following is exact. 
    \begin{equation} \label{E:r=0.S'.to.D,S'.to.D.exact.seq}
      \check{H}^{d}(\D; R) \leftarrow \check{H}^{d}(\D, \Ss'; R) 
        \leftarrow \check{H}^{d-1}(\Ss'; R) .
    \end{equation}
Recall \eqref{E:sing.and.cech.cohoms.same}.    
Then by Dold \cite[Poincar\'{e} duality 8.1, p.\ 298]{aD95.alg.topol} with $M = \D$, 
$G = R$,  and $i = n = d$, we have $\check{H}^{d}(\D; R) \isomto H_{0}(\D; R)$. Therefore, $\check{H}^{d}(\D; R)$ is free with rank/dimension 1. Arguing as in the proof of proposition \ref{P:sing.dim.when.H.d-r.D.=.0} we observe that $\check{H}^{d}(\D, \Ss'; R) \isomto H_{0}(\D \setminus \Ss'; R)$. Therefore, $\check{H}^{d}(\D, \Ss'; R)$ has rank/dimension $> 1$. 
 
It follows (by Munkres \cite[Theorem 4.2, p.\ 24]{jrM84} in the $R = \ZZ$ case; Stoll and Wong \cite[Theorem 2.1, p.\ 99]{rrSetW68.LinearAlgebra} in the $\ZZ/2$ case) from \eqref{E:r=0.S'.to.D,S'.to.D.exact.seq}, that $\check{H}^{d-1}(\Ss'; R)$ is nontrivial. Therefore, by \eqref{E:cohom.and.codim}, 
$\Hm^{d-r-1}(\Ss') = \Hm^{d-1}(\Ss') > 0$ so $\dim \Ss' \geq d-1$. Thus, \eqref{E:codim.S.leq.r+1} holds in the $r = 0$ case. 
    
    This small bound, $\text{codim} \, \Ss' \leq 1$, means that the singular set is large. That  is unfortunate because the case of disconnected $\F$ is very 
    important in practice. It encompasses choosing one of a discrete set of actions, conclusions, or decisions based on data. A very important special case is statistical hypothesis testing  (Lehmann \cite{elL93.StatHyps} and example \ref{Ex:hypothesis.testing} above), which for good or ill is the chief statistical activity in biomedical research.
      \end{example}
      
    $\check{H}^{d - r}(\D) = 0$ holds, e.g., if $\D$ is a sphere of dimension 
    $d > d - r \geq d-t > 0$. 
    In the plane fitting problem (chapter \ref{Chptr:sings.in.plane.fit}) proposition \ref{P:sing.dim.when.H.d-r.D.=.0} applies because we can restrict attention to spheres. Doing so also gives information about where the singularities lie: in the spheres. (See remark \ref{R:dnsity.contrs.as.data.spaces}.)
    
    Proposition \ref{P:sing.dim.when.H.d-r.D.=.0} does not seem to help in the directional location case (chapter \ref{Chptr:spherical.location}). For that example using theorem \ref{T:Phi.star.Hr.contains.Theta.star.Hr} to prove \eqref{E:check.H.d-r-1.not.0} requires an additional assumption, \eqref{E:assume.Phi.S'.sym}, and some work. 
    
      \begin{remark}[``Wiggling''] \label{R:wiggling}
    If $\D_{1}$ is a subspace of $\D$ that is a compact manifold in the relative topology then one might be able to compute a lower
    bound on $\dim (\D_{1} \cap \Ss')$, say, $\dim (\D_{1} \cap \Ss') \geq s_{1}$. If one can do this for an $s_{2}$-dimensional family,
    $\mathfrak{D}$, of  manifolds like $\D_{1}$, s.t.\ $\D_{1}', \D_{1}'' \in  \mathfrak{D}$ distinct implies $\dim \D_{1}' \cap \D_{1}'' < s_{1}$, 
    then one might be able to conclude that $\dim \Ss' \geq s_{1} + s_{2}$.
    
    Similarly, one can wiggle $\T$'s. Suppose one has an $s$-dimensional family, $\mathfrak{T}$, of compact, $t$-dimensional imbedded submanifolds of $\D$.
    Suppose that \textbf{hypothesis \ref{Hyp:S.cap.T.small}} of theorem \ref{T:Phi.star.Hr.contains.Theta.star.Hr} fails for every $\T \in \mathfrak{T}$. Then one needs to settle for \eqref{E:general.fallback}, i.e., $\dim (\T \cap \Ss') \geq t - r$ for every 
    $\T \in \mathfrak{T}$. Suppose for $\T, \T' \in \mathfrak{T}$ we have 
    $\dim (\T \cap \T') < t - r$. Then one may be able to conclude 
    that $\dim \Ss' \geq s + t - r$. An example of this can be found in section \ref{SS:gnrl.lwr.bnd.plane.fit}. A similar idea is used in chapter \ref{Chptr:linear.classification}.
    
    One might also play the same game for Hausdorff measure (Theorem \ref{T:lwr.bnd.on.Haus.meas}), not just Hausdorff dimension.
  \end{remark}

\section{More topology}  \label{SS:more.topol}
\subsection{Singularities in $\T$} Here we investigate further the homology groups of $\Ss'$ in the framework of proposition \ref{P:sing.dim.when.H.d-r.D.=.0}. Compute (co)homology using coefficients in a field, $F$, (of characteristic 2 if $\T$ is nonorientable). Thus, $\D$ is a compact manifold with $\check{H}^{d-r}(\D) = 0$. First, suppose \eqref{E:nontriv.r-dim.homol} and all hypotheses of 
of Theorem \ref{T:Phi.star.Hr.contains.Theta.star.Hr} hold, with the possible exception of  \ref{Hyp:S.cap.T.small}.  Suppose $\dim \Ss' < d-r-1$. Then by proposition \ref{P:sing.dim.when.H.d-r.D.=.0}, \textbf{hypothesis \ref{Hyp:S.cap.T.small}} of theorem \ref{T:Phi.star.Hr.contains.Theta.star.Hr} \emph{must} fail. In particular, 
$\dim (\Ss' \cap \T) \geq t-r$.
By \eqref{E:nontriv.r-dim.homol}, there exists $x \in H_{r}(\T)$ s.t.\ $\Theta_{\ast}(x) \in H_{r}(\F)$ is nonzero. Given $x$, under some regularity conditions, we find a  corresponding nontrivial linear map in $Hom_{F} \bigl( H_{t-r}(\Ss' \cap \T), F)$ corresponding to $x$. 

Since $\check{H}^{d-r}(\D) = 0$, $H_{r}(\D \setminus \Ss')$ must be trivial. For otherwise, by the proof of proposition \ref{P:sing.dim.when.H.d-r.D.=.0}, we have 
$\check{H}^{d-r-1}(\Ss') \ne 0$, contradicting the assumption $\dim \Ss' < d-r-1$. (See \eqref{E:cohom.and.codim}.) Therefore, by duality, $H_{r}(\D \setminus \Ss')$ is trivial. 

Refer to the commutative diagram \eqref{E:big.CD2}. Let $z \in \check{H}^{t-r}(\T)$ be the unique cohomology class s.t.\ $z \cap o = x$. If $\check{\ell}(z) \in \check{H}^{t-r}(\Ss' \cap \T)$ were trivial then the diagram chase in the proof of theorem \ref{T:Phi.star.Hr.contains.Theta.star.Hr} would show that $H_{r}(\D \setminus \Ss') \ne \{ 0 \}$, a contradiction. Therefore, $\check{\ell}(z)$ is a nontrivial class in $\check{H}^{t-r}(\Ss' \cap \T)$. 

Note that, since $\T$ is a compact manifold by assumption it is an ENR (Dold \cite[Proposition and Definition IV.8.5, p.\ 81 and Proposition VIII.1.3, p.\ 248]{aD95.alg.topol}). Therefore, by Dold \cite[Proposition VIII.6.12, p.\ 285]{aD95.alg.topol}, we have that 
$\check{H}^{t-r}(\T) \isomto H^{t-r}(\T)$, singular cohomology. In summary 
Suppose the space $\Ss' \cap \T$ is also sufficiently nice that 
$\check{H}^{t-r}(\Ss' \cap \T) \isomto H^{t-r}(\Ss' \cap \T)$. For example, this happens when the spaces are triangulable (Munkres \cite[Theorems 34.3, p.\ 194 and 73.2, p.\ 437]{jrM84}). 
E.g., suppose $\Ss' \cap \T$ is a semi-algebraic set (Bochnak \emph{et al} \cite[Theorem 9.2.1, p.\ 217]{jBmCr-rR98.RealAlgGeom}). Or suppose $\Ss' \cap \T$ is an ENR (see \eqref{E:sing.and.cech.cohoms.same}). if $\Ss' \cap \T$ is an ENR 
In any case, if $\check{H}^{t-r}(\Ss' \cap \T) \isomto H^{t-r}(\Ss' \cap \T)$ then it follows from the universal coefficients theorem for cohomology (Munkres \cite[Corollary 53.6, p.\ 326]{jrM84}) that there exists a nontrivial linear map $f \in Hom_{F} \bigl( H_{t-r}(\Ss' \cap \T), F)$ corresponding to $\ell^{\ast}(z)$. $f$ is the nontrivial linear map in $Hom_{F} \bigl( H_{t-r}(\Ss' \cap \T), F)$ corresponding to $x$. 

\subsection{Singularities bounded away from $\T$} Let $\clU \subset \D$ be an open neighborhood of $\T$. Suppose 
$\Ss' \cap \clU = \varnothing$. Moreover, assume that $\Ss'$, $\D$, and $\D \setminus \clU$ are sufficiently nice (e.g., triangulable or ENR)
that we may assume their singular and \v{C}ech cohomologies coincide. Here we show that under the hypotheses of proposition \ref{P:sing.dim.when.H.d-r.D.=.0} (so in particular $\Ss'$ is closed) inclusion $m : \Ss' \hookrightarrow \D \setminus \clU$ induces a nontrivial homomorphism in $(d-r-1)$-dimensional homology. First, note that by duality 
$H_{r}(\D)$ is trivial. Let $j: (\D, \clU) \hookrightarrow (\D, \D \setminus \Ss')$ 
and $k_{\clU} : \clU \hookrightarrow \D \setminus \Ss'$ be inclusions. Use coefficients in a field, $F$, (of characteristics 2 if $\D$ is nonorientable). Consider the following commutative diagram.  
\tiny
   \begin{equation}   \label{E:S.outside.U.comm.diag}
      \begin{CD}
          & & & & & & & & \{ 0 \}  \\
          & & & & & & & & @| \\
           \text{Hom}_{F} \bigl(H_{d-r-1}(\D \setminus \clU),F \bigr)  @<{\isomto}<< 
               H^{d-r-1}(\D \setminus \clU) 
                     @>{\isomto}>> H_{r+1}(\D, \clU) @>{\partial_{\ast}}>> H_{r}(\clU) 
                        @>>> H_{r}(\D) \\
                        @V{\text{Hom}(m_{\ast})}VV            @V{m^{\ast}}VV             
                            @V{j_{\ast}}VV             
                            @V{k_{\clU \ast}}VV        @| \\
           \text{Hom}_{F} \bigl(H_{d-r-1}(\Ss'),F \bigr) @<{\isomto}<< H^{d-r-1}(\Ss') 
             @>{\isomto}>> 
               H_{r+1}(\D, \D \setminus \Ss') 
                     @>{\partial_{\ast}}>> H_{r}(\D \setminus \Ss') @>>> H_{r}(\D)  \\
          & & & & & & & & @| \\
          & & & & & & & & \{ 0 \}
      \end{CD} 
   \end{equation}
\normalsize
The first rectangle on the left in \eqref{E:S.outside.U.comm.diag} comes from universal coefficients 
(Munkres \cite[Corollary 53.6, p.\ 326]{jrM84}). The second comes from duality 
(Dold \cite[2.8 p.\ 254 and pp.\ 292--293]{aD95.alg.topol}). The commutativity of the third rectangle from the left comes from the naturality of the exact homology sequence of a pair 
(Munkres \cite[Theorem 30.2, p.\ 169]{jrM84}). The top and bottom rows are exact 
at $H_{r}(\clU)$ and $H_{r}(\D \setminus \Ss')$, resp. by exact homology sequence of a pair again. Let 
$k : \T \hookrightarrow \D \setminus \Ss'$ be inclusion. By \eqref{E:nontriv.r-dim.homol} there exists $y \in \Theta_{\ast} \bigl[ H_{r}(\T) \bigr] \setminus \{ 0 \}$ Hence, by \eqref{E:Phi.star.circ.k.w.cap.o.=.y}, we have that $k_{\ast} : H_{r}(\T) \to H_{r}(\D \setminus \Ss')$ is nontrivial. But $k$ factors through $\clU$. Thus, 
$k_{\clU \ast} : H_{r}(\clU) \to H_{r}(\D \setminus \Ss')$ is nontrivial. A diagram chase around \eqref{E:S.outside.U.comm.diag} then shows that $\text{Hom}(m_{\ast})$ is nontrivial. Thus, $m_{\ast}$ is nontrivial, as desired.

\chapter{Measure}  \label{Chptr:Haus.meas.of.sing.set}
In this chapter we make extensive use of Hausdorff dimension and measure. These are defined and some properties of them given in appendix \ref{Chptr:Lip.Haus.meas.dim}.

We will see that in plane-fitting (chapter \ref{Chptr:sings.in.plane.fit}) important examples where the codimension of the singular set will never exceed 2, regardless of the number of data points, the number of variables involved, or the dimension of the plane we fit. This suggests that there is a lot of information about the singular set that is not captured by its dimension. Consider the situation described in proposition \ref{P:sing.dim.when.H.d-r.D.=.0}. In this chapter, we get more information about the singular set when the 
$\Hm^{d-r-1}$-essential distance (see \eqref{E:essential.dist.defn}) from $\Ss'$ to the space, 
$\Pf \subset \D$, of perfect fits is positive. 

In this chapter we will assume the following:  
	\begin{multline}  \label{E:D.is.cmpct.Riem.manif}
	      \D \text{ is a compact, connected $d$-dimensional Riemannian 
	        manifold with } d > 0 \\
	          \text{ and Riemannian metric } \langle \cdot, \cdot \rangle. 
	\end{multline} 
Define $\xi$ by:
	\begin{equation}  \label{E:xi.is.metric.on.D}
           \text{Let } \xi \text{ be the topological metric on } \D 
	      \text{ determined by the Riemannian metric on } \D.
	\end{equation}
By Boothby \cite[Corollary (7.11), p.\ 346]{wmB75},  
	\begin{equation}  \label{E:join.points.by.geod}
	      \text{Any pair } x, y \in \D \text{ can be joined by a geodesic 
	        whose length is } \xi(x,y). 
	\end{equation} 
And therefore, by Boothby \cite[Theorem (3.1), p.\ 187]{wmB75}, $\xi$ determines the manifold topology on $\D$.

Define: 
    \begin{equation}  \label{E:G.is.group.of.diffeos}
      \text{Let } G \text{  be a finite group of diffeomorphisms on } \D
    \end{equation} 
and suppose that the Riemannian metric on $\D$ is $G$-invariant. I.e.,  
    \begin{equation}  \label{E:g.preserves.Riem.met}
        \text{for every } g \in G \text{ we have } 
          g^{\ast} \bigl( \langle \cdot, \cdot \rangle \bigr) 
            = \langle \cdot, \cdot \rangle. 
    \end{equation}
(A nontrivial $G$ will be important in chapters \ref{Chptr:spherical.location}, \ref{Chptr:aug.direct.mean}, and \ref{Chptr:robst.loc.on.circle}.) Such groups are important in Statistics. Indeed, de Finetti \cite[Chapter 11]{bdF2017.deFinetti.Thy.of.Prob} takes ``exchangeability'', invariance under a finite group, as a foundational concept in Statistics. We assume 
	\begin{equation*}
		g(\T) = \T \text{ for every } g \in G.
	\end{equation*}
Define
    \begin{equation} \label{E:G(x).G(A)}
        \text{for } x \in \D, \; G(x) := \{ g(x) : g \in G \} 
          \text{ and for } \mcl{A} \subset \D, \; G(\mcl{A}) := \{ g(y) : g \in G, y \in \mcl{A} \}.
    \end{equation}
By Boothby \cite[Theorem (1.2), p.\ 107]{wmB75},
	\begin{equation}  \label{E:g.ast.:Ty.to.Tg(y)}
		g_{\ast} : T_{y} \D \to T_{g(y)} \D, \quad g \in G, y \in \D.
	\end{equation}
	
 By \eqref{E:join.points.by.geod} and corollary \ref{C:geods.on.D.G-invar}, we have
	\begin{equation}   \label{E:xi.is.G-invar}
		x, y \in \D \text{ implies } \xi \bigl[ g(x), g(y) \bigr] = \xi(x,y) 
		     \text{ for every } g \in G.
	\end{equation}
I.e., each $g \in G$ is a an isometry on $\D$. 
Note that, by Boothby \cite[Exercise 6, p.\ 337]{wmB75}, 
each $g_{\ast} : T \D \to T \D$ is a diffeomorphism.

\section{Fiber bundles over $\Pf$ in $\D$ with cone fibers} \label{S:cone.bundles.over.P}
First, we show that we may assume that for some integer $k$, 
	\begin{multline}   \label{E:D.imbedded.in.Rk}
		\D \text{ is an imbedded submanifold of } \RR^{k} \\
		  \text{ with Riemannian metric induced by inclusion.}
	\end{multline} 

For $s \geq 0$ and subsets $A, B$ of a metric space $X$ with metric $m$, define the 
$\Hm^{s}$-essential distance from $A$ to $B$ as follows. Recall \eqref{E:set.distances}. Define
             \begin{equation}  \label{E:essential.dist.defn}
                dist^{s}(A,B) := dist_{m}^{s}(A,B)
                   := \sup \left\{ R \geq 0 : \Hm^{s} \Bigl( \bigl\{ x \in A : 
                       dist_{m}( x, B ) < R \bigr\} \Bigr) = 0 \right\} , 
             \end{equation} 
where, $\Hm^{s}$ is based on $m$. Let $s > 0$. Then $dist^{s}$ \emph{is not symmetric in its arguments.} To see this consider the case where 
$\Hm^{s}(A) = 0 < \Hm^{s}(B)$. Thus, $dist^{s}(A,B) = \infty$. 
On the other hand, 
$\Hm^{s} \Bigl( \bigl\{ x \in B : dist( x, A ) < 2 \, diam(\D) \bigr\} \Bigr) 
= \Hm^{s}(B) > 0$, 
so $dist^{s}(B,A) \leq 2 \, diam(\D) < \infty$. Note that, by \eqref{E:0.dim.Haus.measure}, for $s \geq 0$, 
    \begin{equation}  \label{E:dist.s,dist.0}
      dist^{s}(A,B) \geq dist^{0}(A,B) = dist(A,B) 
        := \inf \bigl\{ m(x,y) : x \in A, y \in B \bigr\},
    \end{equation}
the ordinary distance from $A$ to $B$. 

Let $\Phi : \D \partlyto \F$ be a data map. Let $\Pf$ be the set of perfect fits for the class of data maps to which $\Phi$ belongs and let $\Ss'$ be a closed superset of the singular set of $\Phi$. Let $d := \dim \D$, $p := \dim \Pf$. 
Let $a \geq 0$ be arbitrary. ($a$ will get meaning in property \ref{Pty:agree.near.T}.) 
In this chapter we will prove \eqref{E:Hm.a.S.geq.R.d-p-1}, which says there is an unspecified constant $\gamma > 0$ not depending on $\Phi$ s.t.\ 
             \begin{equation}  \label{E:Hm.a.S.geq.R.d-p-1.early}
                \text{If } R \leq dist_{\xi}^{a}(\Ss',\Pf) \text{ then }
                  \Hm^{a}(\Ss') \geq \gamma R^{ \min(d-p-1,a) } . 
             \end{equation} 
Here, we use $\xi$, \eqref{E:xi.is.metric.on.D}, to compute Hausdorff measure 
on $\D$. 

Actually, we will not prove \eqref{E:Hm.a.S.geq.R.d-p-1.early} directly. We will prove the \eqref{E:D.imbedded.in.Rk} version. We show that that version implies \eqref{E:Hm.a.S.geq.R.d-p-1.early}. By the Whitney imbedding theorem (Boothby \cite[Theorem (4.7), p.\ 195]{wmB75}) we may assume that for some integer $k$ there is an imbedding $f : \D \to \RR^{k}$. We replace $\D$ by $f(\D)$. 
Let  $i : f(\D) \hookrightarrow \RR^{k}$ be inclusion. Put on $f(\D)$ the Riemannian metric $i^{\ast}(dot)$, where $dot$ is the Riemannian metric on $\RR^{k}$, viz., Euclidean dot product. Let $\zeta$ be the topological metric on $f(\D)$ corresponding to $i^{\ast}(dot)$. In section \ref{SS:measure.proof} we will prove there exists $\gamma < \infty$ depending 
on $f(\D)$ but not on on $\Phi$ s.t.\
             \begin{equation}  \label{E:essntl.dist from.f(S).to.f(Pf).small.early}
                \text{If } R \leq dist_{\zeta}^{a} \bigl( f(\Ss'), f(\Pf) \bigr) \text{ then }
                  \Hm^{a} \bigl[ f(\Ss') \bigr] \geq \gamma R^{ \min(d-p-1,a) } , 
             \end{equation} 
where here $\Hm^{a}$ is computed using $\zeta$.

So assume \eqref{E:essntl.dist from.f(S).to.f(Pf).small.early} holds. From it we derive \eqref{E:Hm.a.S.geq.R.d-p-1.early}. Denote by $Euc$ the Euclidean metric 
on $\RR^{k}$: $Euc(x,x') := \|x-x'\|$ ($x,x' \in \RR^{k}$) and recall, from lemma \ref{L:imbedding.is.loc.Lip}, the meaning of $i^{\ast}(Euc)$. 
By Boothby \cite[Theorem (5.5), p.\ 78]{wmB75}, $f : \D \to f(\D)$ is a diffeomorphism. In particular, $f$ and its inverse are continuously differentiable. Therefore, by 
corollary \ref{C:cont.diff.=.loc.Lip} and \eqref{E:local.Lip.is.Lip.on.compacts} in \ref{Chptr:Lip.Haus.meas.dim}, the map $f$ and its inverse are Lipschitz on $\D$ and $f(\D)$, resp.\ relative to $\xi$ and $i^{\ast}(Euc)$. By lemma \ref{L:imbedding.is.loc.Lip} (with $M=N=f(\D)$) and \eqref{E:local.Lip.is.Lip.on.compacts} again, $f$ and its inverse are then Lipschitz 
on $\D$ and $f(\D)$, resp.\ (relative to $\xi$ and $\zeta$ on $f(\D)$). Let $\lambda \in (0, \infty)$ be a Lipschitz constant for both $f$ and its inverse (relative to $\xi$ and $\zeta$). 

Suppose $x \in \Ss'$ and $dist_{\xi}(x, \Pf) < R$. Then, by \eqref{E:set.distances}, there exists $z \in \Pf$ s.t.\ $\xi(x,z) < R$. Hence, $\bigl| f(x) - f(z) \bigr| < \lambda R$. It follows that $dist_{\zeta} \bigl( f(x), f(\Pf) \bigr) < \lambda R$. 

Now suppose $dist_{\xi}^{a}(\Ss', \Pf) \geq R$. Then, by \eqref{E:essential.dist.defn}, for every $n \in \NN$, 
$\Hm^{a} \Bigl( \bigl\{ x \in \Ss' : dist_{\xi}( x, \Pf ) < R-1/n \bigr\} \Bigr) = 0$. 
Now, if $y \in \RR^{k}$ and
$dist_{\zeta} \bigl( y, f(\Pf) \bigr) < \lambda^{-1} (R-1/n)$ then 
$dist_{\xi}\bigl( f^{-1}(y), \Pf \bigr) < R-1/n$. In particular, 
    \begin{equation*}
      f^{-1} \Bigl[ \bigl\{ y \in f(\Ss') : dist_{\zeta}( y, \Pf ) 
        < \lambda^{-1} (R-1/n) \bigr\}  \Bigr]
          \subset \bigl\{ x \in \Ss' : dist_{\xi}( x, \Pf )  < R-1/n \bigr\} .
    \end{equation*}
Hence, by lemma \ref{L:loc.Lip.image.of.null.set.is.null}, 
    \begin{multline*}
      0 = \Hm^{a} \Bigl( f \Bigl[ \bigl\{ x \in \Ss' : dist_{\xi}( x, \Pf ) 
        < R-1/n \bigr\} \Bigr] \Bigr) \\
          \geq \Hm^{a} \Bigl( \bigl\{ y \in f(\Ss')) : dist_{\zeta}( y, \Pf ) 
            < \lambda^{-1} (R-1/n) \bigr\} \Bigr) . 
    \end{multline*}
Taking the union over $n$ we get 
$\Hm^{a} \Bigl( \bigl\{ y \in f(\Ss') : dist_{\zeta}( x, \Pf ) < \lambda^{-1} R \bigr\} \Bigr) = 0$. 
Therefore, $dist_{\zeta}^{a} \bigl( f(\Ss'), f(\Pf) \bigr) \geq \lambda^{-1} R$. Thus, by \eqref{E:essntl.dist from.f(S).to.f(Pf).small.early},
      $\Hm^{a} \bigl[ f(\Ss') \bigr] \geq \gamma (\lambda^{-1} R)^{ \min(d-p-1,a) }$.

Now, by \eqref{E:Lip.magnification.of.Hm},  
    \begin{equation}  \label{E:Ha.S'.Ha.f(S').ineqs} 
       \Hm^{a} \bigl[ f(\Ss') \bigr] \leq \lambda^{a} \Hm^{a}(\Ss') .
    \end{equation}
Thus,  
    \begin{equation*}
      \Hm^{a}(\Ss') \geq \lambda^{-a} \Hm^{a} \bigl[ f(\Ss') \bigr] 
        \geq ( \lambda^{-a-1} \gamma) R^{ \min(d-p-1,a)} .
    \end{equation*}
I.e., by changing $\gamma$, \eqref{E:Hm.a.S.geq.R.d-p-1.early} holds 
in the $\D$ world. This proves \eqref{E:Hm.a.S.geq.R.d-p-1.early}.

In particular, we may assume an arbitrary tangent vector to any $x \in \D$ 
has the form $(x, v)$, where $v \in \RR^{k}$ with vector space operations defined by
    \begin{equation}  \label{E:vector.ops.on.TD}
        a (x, v) + b (x, w) = (x, av + bw) \text{ and } \bigl| (x,v) \bigr| = |v|, 
          \quad a, b \in \RR; v, w \in \RR^{k}; x \in \D.
    \end{equation}
The expression $a (x, v) + b (y, w)$ with $x, y \in \D$ distinct is not defined. Define
    \begin{equation}  \label{E:pi.is.proj}
      \pi(x, v) = x, \quad (x,v) \in T \D.
    \end{equation}
By Boothby \cite[Lemma (6.1), p.\ 332]{wmB75},
    \begin{equation}  \label{E:pi.is.smooth.and.open}
      \pi \text{ is } C^{\infty} \text{ and open}.
    \end{equation}

If we replace each $h \in G$ by $h_{f} := f \circ h \circ f^{-1}$ we get a group, isomorphic to $G$, of diffeomorphisms from $f(\D)$ to $f(\D)$. It remains to show the equivalent of \eqref{E:g.preserves.Riem.met}, i.e., that those maps preserve the Riemannian metric on $f(\D)$. If that is not true we can carry out the following manuever. Let $|G|$ be the cardinality of $G$ and define 
$f_{G} : \D \to \RR^{k \times |G|}$ by
	\begin{equation*}
		f_{G} : x \mapsto \bigl( f \circ g(x), \, g \in G \bigr) 
		  \in \RR^{k \times |G|}, \quad x \in \D.
	\end{equation*}
Thus, if we order the elements of $G$, $g_{1}, \ldots, g_{m}$, where $m := |G|$, then we can write $f_{G}(x) = \bigl( f \circ g_{1}(x) , \ldots, f \circ g_{m}(x) \bigr)$. 
So, by Boothby \cite[Theorem (5.7, p.\ 79]{wmB75}, $f_{G}$ is an imbedding of $\D$ into $\RR^{k \times |G|}$. For $h \in G$, define
$h_{f_{G}} := f_{G} \circ h \circ f_{G}^{-1}$ . Thus, $h_{f_{G}} : f_{G}(\D) \to f_{G}(\D)$, just permutes coordinates. Since the Riemannian metric 
on $\RR^{k \times |G|}$ is invariant under permutation of coordinates, we find 
that $\{ h_{f_{G}}, h \in G \}$ is a group of isometries on $f_{G}(\D)$. Now replace $k$ by $k \times |G|$ and $f$ by $f_{G}$ Then \eqref{E:D.imbedded.in.Rk} holds and, in this new setting, so does \eqref{E:g.preserves.Riem.met}. 

To recapitulate, we therefore may assume that 
	\begin{multline}  \label{E:Riem.metrics.on.D.on.Rk.same}
		\text{The Riemannian metric } \langle \cdot, \cdot \rangle_{x} \text{ on } \D 
		  \text{ is the one it inherits (pulled back) from } \\  
		    \RR^{k} = T_{x} \RR^{k}. \text{ In particular, } \| (x,v) \|_{x} = |v|.
		      \text{ Write } |(x,v)| = |v|, \quad (x, v) \in T_{x} \D.
	\end{multline} 
Here $\langle \cdot, \cdot \rangle_{x}$ is the Riemannian metric on $T_{x} \D$ and 
$\| \cdot \|_{x}$ is the corresponding norm. (Or we could use Nash imbedding, Han and Hong \cite{qHj-xH06.IsometricEmbeddings}.) Thus, we may assume that the Riemannian metric on $f(\D)$ is induced by inclusion.
 
 \subsection{Metrics on $\D$ and $T\D$} \label{SS:Metrics.on.D.and.TD}
 Note that $T \D$ is path connected by piece-wise $C^{1}$ paths:  
 Two points $(x,v), (x', v') \in T \D$ can be connected by a path as follows. 
 First, join $(x,v)$ to $(x,0)$ by a line segment in $T_{x} \D$. Then let $\gamma$ be a geodesic joining $x$ to $x'$ (exists by Boothby \cite[Corollary (7.11), p.\ 346]{wmB75}) and follow $(\gamma, 0)$ from $(x,0)$ to $(x',0)$. Then by a line segment 
 join $(x',0)$ to $(x',v')$.
 
 Let $\Gamma = (\gamma, V) : [0, t] \to T \D$ be a piece-wise $C^{1}$ 
 path in $T \D$. 
 Here, $\gamma : [0,t] \to \D$ and $V : [0,t] \to \RR^{k}$ 
 with $\bigl( \gamma(t), V(t) \bigr) \in T_{\gamma(t)} \D$. Define
     \begin{equation*}
        L_{\D}(\Gamma) 
          := \int_{0}^{t} \sqrt{ \bigl| \gamma'(s) \bigr|^{2} + \bigl| V'(s) \bigr|^{2} } \, ds
            = \int_{0}^{t} \bigl| \Gamma'(s) \bigr| \, ds,
    \end{equation*}
where $| \cdot |$ denotes Euclidean norm. Define a metric, $\omega_{\D}$, on $T \D$ by
    \begin{equation} \label{E:omega.D.defn}
      \omega_{\D} \bigl( (x, v), (x', v') \bigr) := \inf_{\Gamma} L_{\D}(\Gamma),
        \quad (x,v), (x',v') \in T \D,
    \end{equation}
where the infinum is taken over all piece-wise $C^{1}$ curves, 
$\Gamma : [0,t] \to T \D$ s.t.\ $\bigl( \gamma(0), V(0) \bigr) = (x, v)$ 
and $\bigl( \gamma(t), V(t) \bigr) = (x', v')$. 

Now let $b \geq 1$ and let $(x,v), (x', v') \in T \D$. Let $\Gamma = (\gamma, V) : [0,t] \to T \D$, piecewise $C^{1}$, join $(x,v) \in T \D$ 
to $(x',v') \in T \D$. Then $(\gamma, b^{-1} V)$ joins $(x,b^{-1}v)$ to $(x',b^{-1}v')$ and 
    \begin{align*}
        L_{\D}(\gamma, b^{-1} V)
          &= \int_{0}^{t} \sqrt{ \bigl| \gamma'(s) \bigr|^{2} + b^{-2} \bigl| V'(s) \bigr|^{2} } \, ds \\
              &\leq \int_{0}^{t} \sqrt{ \bigl| \gamma'(s) \bigr|^{2} + \bigl| V'(s) \bigr|^{2} } \, ds \\
                &= L_{\D}(\gamma, V).
    \end{align*}
It follows that 
    \begin{equation} \label{E:almost.homogeneity.of.omegaD}
       \omega_{\D} \bigl( (x, b^{-1} v), (x', b^{-1} v') \bigr) 
         \leq \omega_{\D} \bigl( (x, v), (x', v') \bigr),
           \quad (x,v), (x',v') \in T \D, b \geq 1.
    \end{equation}

The tangent bundle to a differentiable manifold is itself a differentiable manifold (Boothby \cite[Lemma (6.1), p.\ 332]{wmB75}). Hence, $T \RR^{k}$ and $T \D$ have tangent bundles (Fisher and Laquer \cite{rjFhtL1999.SecondOrderTangents}). Denote them by $TT \RR^{k}$ and $TT \D$, resp. A tangent vector to $(x, u) \in T \RR^{k}$ has the form $(x, u, v, w) \in TT \RR^{k} = \RR^{4k}$, where $(x,v) \in T_{x} \RR^{2k}$ and $w \in \RR^{k}$ can be thought of as tangent to $u$. Put on $TT \RR^{k}$ the bilinear form, 
    \begin{multline} \label{E:Riem.metric.on.T.R^k}
      \bigl\langle (x, u, v, w), (x, u, v', w') \bigr\rangle := v \cdot v' + w \cdot w', \\
        (x, u, v, w), (x, u, v', w') \in T_{(x,u)} T \D . 
    \end{multline} 
This form is obviously symmetric, positive definite, and $C^{\infty}$ so turns $T \D$ into a Riemannian manifold (Boothby \cite[Definition (2.6), p.\ 184]{wmB75}). By \eqref{E:D.imbedded.in.Rk}, $\D$ is an imbedded submanifold of $\RR^{k}$. Let $inc : \D \to \RR^{k}$ be the (smooth) imbedding. 

By (Boothby \cite[Exercise 6, p.\ 337]{wmB75}), $inc_{\ast} : T \D \to T \RR^{k}$ is smooth. 
By Boothby \cite[Theorem (5.5), p.\ 201]{wmB75}, 
$(inc_{\ast})^{\ast} \bigl( \langle \cdot \rangle \bigr)$, where $\langle \cdot \rangle$ is defined in \eqref{E:Riem.metric.on.T.R^k}, is a $C^{\infty}$ bilnear form on $\D$. It is also obviously symmetric and positive semidefinite. In fact, $(inc_{\ast})^{\ast} \bigl( \langle \cdot \rangle \bigr)$ is actually positive definite on $T \D$ making $T \D$ a Riemannian manifold. I.e., 
    \begin{equation} \label{E:Riem.metric.on.TD}
      \eqref{E:Riem.metric.on.T.R^k} \text{ defines a Riemannian metric on } T \D. 
    \end{equation}
See appendix \ref{Chptr:misc.proofs} for the rest of the proof of \eqref{E:Riem.metric.on.TD}.

If $\Gamma = (\gamma, V) : [0, t] \to T \D$ is a piece-wise $C^{1}$ path in $T \D$, where 
$\gamma : [0,t] \to \D$ and $V : [0,t] \to \RR^{k}$ 
 with $\bigl( \gamma(t), V(t) \bigr) \in T_{\gamma(t)} \D$ and $s \in (0,t)$, then the tangent vector to $\Gamma$ at $\Gamma(s)$ is  
$\bigl( \gamma(s), V(s), \gamma'(s), V'(s) \bigr) \in T_{\bigl(\gamma(s), V(s)\bigr)}TD$. 
Thus, $\omega_{\D}$ is just the topological metric on $T \D$ corresponding to this Riemannian metric and, by Boothby \cite[Corollary (7.5), p.\ 342]{wmB75}, a path in $TD$ of minimal $L_{\D}$-length is a geodesic in $T \D$. 
  
 For $b \geq 1$, let 
    \begin{equation*}
       T^{b} \D := \bigl\{ (x,v) \in T \D : 
         |v| \leq b \bigr\} \subset \D \times \overline{B_{b}^{k}(0)}.
    \end{equation*}
(See \eqref{E:Euc.ball.defn}.) 
 Since $\D \subset \RR^{k}$ is compact, by \eqref{E:D.is.cmpct.Riem.manif}, we have that $T^{b} \D$ is a compact subset of $T \D$. Let $g \in G$. Then, by \eqref{E:g.preserves.Riem.met}, we have $g_{\ast} : T^{b} \D \to T^{b} \D$. Recall the definition of ``bi-Lipschitz'', \eqref{E:bi-Lipschitz.defn}. 
\emph{Claim:} There exists $K < \infty$ s.t.\ for every $b \geq 1$ we have
    \begin{equation} \label{E:g*.Lip.const.Kb.omegaD}
        g_{\ast} \text{ is bi-Lipschitz on } T^{b} \D \text{ w.r.t.\ } \omega_{\D} 
          \text{ with Lipschitz constant } K b.
    \end{equation}
Now, $g_{\ast} : T \D \to T \D$ is $C^{\infty}$ (Boothby \cite[Exercise 6, p.\ 337]{wmB75}). Therefore, by corollary \ref{C:cont.diff.=.loc.Lip} there exists $K < \infty$ s.t.\ $g_{\ast}$ is bi-Lipschitz on $T^{1} \D$ with Lipschitz constant $K < \infty$, uniformly in $g \in G$.
 
Let $(x,v), (x',v') \in T^{b} \D$ so $(x, b^{-1} v), (x', b^{-1} v'), g_{\ast}(x, b^{-1}v), g_{\ast}(x',b^{-1} v') \in T^{1} \D$. Let $\epsilon > 0$ be arbitrary  and let $\Gamma = (\gamma, V) : [0,t] \to T \D$ be a piecewise $C^{1}$ path joining 
 $g_{\ast}(x, b^{-1}v)$ to $g_{\ast}(x',b^{-1} v')$ s.t.\ 
    \begin{equation*}
        L_{\D}(\Gamma) \leq \omega_{\D} \bigl[ g_{\ast}(x, b^{-1}v), g_{\ast}(x',b^{-1} v') \bigr] + \epsilon.
    \end{equation*}
Thus, $(\gamma, bV)$ joins $g_{\ast}(x, v)$ to $g_{\ast}(x', v')$ Then, since $b \geq 1$ and using \eqref{E:almost.homogeneity.of.omegaD},
    \begin{align*}
        \omega_{\D} \bigl[ g_{\ast}(x,v), g_{\ast}(x',v') \bigr] 
          &\leq L_{\D} \bigl[ (\gamma, b V) \bigr]
         = \int_{0}^{t} \sqrt{ \bigl| \gamma'(s) 
           \bigr|^{2} + b^{2} \bigl| V'(s) \bigr|^{2} } \, ds \\
           &\leq b \int_{0}^{t} \sqrt{ \bigl| \gamma'(s) 
             \bigr|^{2} + \bigl| V'(s) \bigr|^{2} } \, ds \\
             &= b L_{\D} (\Gamma) 
               \leq b \, \omega_{\D} \bigl[ g_{\ast}(x, b^{-1}v), g_{\ast}(x',b^{-1} v') \bigr] 
                 + b \epsilon \\
                 &\leq b K \omega_{\D} \bigl[ (x, b^{-1}v), (x',b^{-1} v') \bigr] + b \epsilon \\
                   &\leq b K \omega_{\D} \bigl[ (x, v), (x',v') \bigr] + b \epsilon.
    \end{align*}
Letting $\epsilon \downarrow 0$, we see that $g_{\ast}$ has Lipschitz constant $b K$ on $T^{b} \D$. This completes the proof of the claim \eqref{E:g*.Lip.const.Kb.omegaD}. (See \eqref{E:g*.Lip.const.Kb.sqrd.xi+}.)

Recall the metric $\xi$ defined in \eqref{E:xi.is.metric.on.D}. Define another metric on $T \D$ by
          \begin{equation}  \label{E:xi+.from.2.metrics}
        		\xi_{+} \bigl( (x, v), (x', v') \bigr) := \sqrt{ \xi(x, x')^{2} + |v - v'|^{2} }, 
        			\quad (x,v), (x',v') \in T \D.
        	\end{equation} 
Note that both the maps $\pi : (x,v) \mapsto x$ and 
$v \mapsto \bigl| (x,v) \bigr| :=  |v|$, with $(x,v) \in T \D$, are continuous w.r.t.\ the metric $\xi_{+}$. Identify $x$ and $(x, 0)$ ($x \in \D$). Then we have 
   \begin{multline}  \label{E:xi+.xi.and.|.|}
      \xi_{+}(x', x'') = \xi(x', x'') \text{ and }
        \xi_{+} \bigl[ (x',v), (x',0) \bigr] = |v| \\
           \text{ for every } x'' \in \Pf \text{ and } (x',v) \in T \D.
   \end{multline}

Now, by \eqref{E:n.c.sqrd.sum.ineq} we have 
    \begin{equation*}
      \sqrt{2} \sqrt{ \bigl| \gamma'(s) \bigr|^{2} + \bigl| V'(s) \bigr|^{2} } 
        \geq \bigl| \gamma'(s) \bigr| + \bigl| V'(s) \bigr|.
    \end{equation*}
Moreover, a path $V(s)$ ($s \in [0,t]$) joining $v \in \RR^{k}$ to $v' \in \RR^{k}$ can be no shorter than a linear path. Thus,
    \begin{equation*}
        \int_{0}^{t} \bigl| V'(s) \bigr| \, ds \geq |v - v'|.
    \end{equation*}
Therefore, by the triangle inequality,
    \begin{align} \label{E:sqrt2.omegaD.geq.xi+}
     \sqrt{2} \, \omega_{\D} \bigl( (x, v), (x', v') \bigr)
       &= \sqrt{2} \, \inf_{\gamma,V} \int_{0}^{t} \sqrt{ \bigl| \gamma'(s) \bigr|^{2} 
         + \bigl| V'(s) \bigr|^{2} } \, ds  \notag \\
       &\geq \text{(length of } \gamma) + |v - v'| \\
       &\geq \xi(x, x') + |v-v'| \geq \sqrt{ \xi(x, x')^{2} + |v - v'|^{2} } \notag \\
       &= \xi_{+} \bigl( (x, v), (x', v') \bigr). \notag
    \end{align}

\emph{Claim:} We have the following partial converse to \eqref{E:sqrt2.omegaD.geq.xi+}.
There exists $K < \infty$ s.t.\ 
    \begin{equation} \label{E:omegaD.leq.Kb.xi+}
        \omega_{D} \bigl[ (x,v), (x',v') \bigr] \leq K b \, \xi_{+} \bigl[ (x,v), (x',v') \bigr],
          \quad b \geq 1; (x,v), (x',v') \in T^{b} \D.
    \end{equation}
\eqref{E:sqrt2.omegaD.geq.xi+} and \eqref{E:omegaD.leq.Kb.xi+} imply that $\omega_{\D}$ and $\xi_{+}$ determine the same topology on $T \D$ and, in fact,
    \begin{multline} \label{E:omegaD.xi+.Lip.property}
      \text{ The identity map on $T \D$ is Lipschitz when viewed as a map from } \\
        \bigl( T \D, \omega_{\D} \bigr) \text{ to } \bigl( T \D, \xi_{+} \bigr)
          \text{ and locally Lipschitz in the other direction }
    \end{multline}

The proof of \eqref{E:omegaD.leq.Kb.xi+} is very similar to that of \eqref{E:g*.Lip.const.Kb.omegaD}. Let $M := \D \times \RR^{k}$ and 
$N := T \D$ so $N \subset M$. Let $f : N \to M$ be inclusion. Then $\xi_{+}$ comes from the Riemannian metric on $M$ 
and $\omega_{\D}$ comes from the Riemannian metric on $N$. Then by lemma \ref{L:imbedding.is.loc.Lip}, there exists $K < \infty$ 
s.t.\ on $T^{1} \D$ we have $\omega_{D} \leq K \xi_{+}$. Now let $b \geq 1$ 
and let $(x,v), (x',v') \in T^{b} \D$. Let $\epsilon > 0$ be arbitrary 
and let $\Gamma = (\gamma, V) : [0,t] \to T \D$ be a path joining 
$(x, b^{-1}v), (x',b^{-1} v') \in T^{1} \D$ s.t.\ 
    \begin{equation*}
        L_{\D}(\Gamma) \leq \omega_{\D} \bigl[ (x, b^{-1}v), (x',b^{-1} v') \bigr] + \epsilon.
    \end{equation*}
Notice
    \begin{multline*}
        b \xi_{+} \bigl[ (x, b^{-1}v), (x',b^{-1} v') \bigr] 
          = \sqrt{ b^{2} \xi(x, x')^{2} + |v-v'|^{2} } \\
            \leq b \sqrt{ \xi(x, x')^{2} + |v-v'|^{2} } 
              = b \xi_{+} \bigl[ (x, v), (x',v') \bigr].
    \end{multline*}
I.e.,
    \begin{equation*}
     \xi_{+} \bigl[ (x, b^{-1}v), (x',b^{-1} v') \bigr]  \leq \xi_{+} \bigl[ (x, v), (x',v') \bigr]
      \text{ for } b \geq 1.
    \end{equation*}

We have
    \begin{align*}
    \omega_{\D} \bigl[ (x,v), (x',v') \bigr] &\leq L_{\D} \bigl[ (\gamma, b V) \bigr]
     = \int_{0}^{t} \sqrt{ \bigl| \gamma'(s) \bigr|^{2} + b^{2} \bigl| V'(s) \bigr|^{2} } \, ds \\
       &\leq b \int_{0}^{t} \sqrt{ \bigl| \gamma'(s) \bigr|^{2} + \bigl| V'(s) \bigr|^{2} } \, ds \\
         &= b L_{\D} (\Gamma) 
           \leq b \omega_{\D} \bigl[ (x, b^{-1}v), (x',b^{-1} v') \bigr] + b \epsilon \\
             &\leq b K \xi_{+} \bigl[ (x, b^{-1}v), (x',b^{-1} v') \bigr] + b \epsilon \\
               &\leq b K \xi_{+} \bigl[ (x,v), (x',v') \bigr] + b \epsilon.
    \end{align*}
Letting $\epsilon \downarrow 0$ the proof of the claim \eqref{E:omegaD.leq.Kb.xi+} is completed.

See appendix \ref{Chptr:misc.proofs} for the proof of the following.
  \begin{lemma} \label{L:xi+.generates.manif.topol}    
$\xi_{+}$, and hence $\omega_{\D}$, generates the standard topology 
on $T \D$.  
  \end{lemma}
  
Recall the definition of ``bi-Lipschitz'', \eqref{E:bi-Lipschitz.defn}. Combining \eqref{E:g*.Lip.const.Kb.omegaD}, \eqref{E:sqrt2.omegaD.geq.xi+},  \eqref{E:omegaD.leq.Kb.xi+} and the fact that $G$ is a group, we get there exists 
$K < \infty$ s.t.\ 
    \begin{equation} \label{E:g*.Lip.const.Kb.sqrd.xi+}
        \text{For every } b \geq 1,  \, g_{\ast} \text{ is bi-Lipschitz on } T^{b} \D 
          \text{ w.r.t.\ } \xi_{+} \text{ with Lipschitz constant } K b^{2}.
    \end{equation}
Recall, \eqref{E:vector.ops.on.TD}, how scalar multiplication works on $T \D$. It is easy to see
  \begin{lemma} \label{L:scalar.mult.xi+.cont}
 Let $\X \subset \D$ and let $T \D \restriction_{\X} := \bigl\{ (x,v) \in T \D : x \in \X \bigr\}$ be the restriction of $T \D$ to $\X$ 
(Milnor and Stasheff \cite[p.\ 25]{jwMjdS74}). Let 
$r : \X \to \RR$ be continuous. Then the map $f : (x,v) \mapsto r(x) (x,v) = \bigl(x, r(x) v \bigr)$ is continuous w.r.t.\ $\xi_{+}$. Hence. if $r$ is nowhere vanishing $f$ is a heomomorphism of $T \D \restriction_{\X}$ onto itself.
  \end{lemma}
  
 Let $Exp$ be the exponential map on $T \D$ (Boothby \cite[Definition (6.3), p.\ 333]{wmB75}). Since $\D$ is compact (by \eqref{E:D.is.cmpct.Riem.manif}), by Hopf-Rinow (Boothby \cite[Theorem (7.7), p.\ 343]{wmB75}), $Exp$ is defined on all of the tangent bundle $T \D$. We have the following. See appendix \ref{Chptr:misc.proofs} for the proof.

  \begin{lemma} \label{L:Exp.loc.Lip}
    $Exp$ is locally Lipschitz on $T \D$ w.r.t.\ $\xi_{+}$ and $\xi$, hence, 
      by \eqref{E:sqrt2.omegaD.geq.xi+}, \emph{a fortiori} w.r.t.\ $\omega_{\D}$ 
      and $\xi$.
  \end{lemma}

\subsection{Tubular neighborhood}  \label{SSS:tubular.nbhd}
First, we consider the case in which the ``perfect fit space'', $\Pf$, is a submanifold 
of $\D$. 
Assume $p := \dim \Pf < d := \dim \D$. ($p=0$ is possible.) In this book we employ the following notation. If $w$ is a vector and $W$ is a linear subspace of an inner product space $V$ (with inner product $\langle \cdot, \cdot \rangle$), then 
    \begin{equation} \label{E:superscript.perp.notation}
      w^{\perp} := \bigl\{ v \in V : \langle v, w \rangle = 0 \bigr\} \text{ and }
        W^{\perp} := \bigcap_{w \in W} w^{\perp} 
          = \{ v \in V : v \perp W \} .
    \end{equation}
Thus, $W^{\perp}$ is the orthogonal complement of $W$ in $V$. 

If $x' \in \Pf$, let $T_{x'}\D$ ($T_{x'} \Pf$) be the tangent space to $\D$ (resp.\ $\Pf$) at $x'$ and let 
$(T_{x'} \Pf)^{\perp} = \bigl\{ (x',v) \in T_{x'} \D : v \perp T_{x'} \Pf \bigr\}$ denote the subspace of $T_{x'} \D$ normal to $\Pf$. 
Thus, $\dim \bigl[ (T_{x'} \Pf)^{\perp} \bigr] = d-p$. Let 
	\[
		N = N(\Pf, \D) = \bigcup_{x' \in \Pf} (T_{x'} \Pf)^{\perp}
		         \subset \Pf \times \RR^{k}
	\]
be the (total space of the) normal bundle of $\Pf$ in $\D$ (Milnor and Stasheff \cite[ p.\ 29]{jwMjdS74}). 
(In this chapter we generally use $\mcl{CALLIGRAPHIC}$ symbols for subsets of $\D$ and $ORDINARY$ symbols for subsets of $T \D$, among other things. I admit that sometimes the two kinds of symbols and look quite similar. Sorry.) The ``zero section'' of $N$ is the set of points $\bigl\{ (x', 0) \in N: x' \in \Pf \bigr\}$.

We use the tubular neighborhood theorem stated, e.g., in Milnor and Stasheff \cite[Theorem 11.1, p.\ 115]{jwMjdS74} or Guillemin and Pollack \cite[Exercise 16, p.\ 76]{vGaP74.DiffTopol}; see also Spivak \cite[Theorem 20, p.\ 467]{mS79.SpivakVol1}. Unfortunately, these versions do not give us exactly what we want. So we assemble our own version. (See appendix \ref{Chptr:misc.proofs} for proof.) Define ``$dist$'' in $\D$ (see \eqref{E:set.distances}) using $\xi$.

  \begin{prop}[Tubular Neighborhood Theorem]  \label{P:tubular.nbhd.thm}
Let $\Pf$ be a smooth imbedded submanifold of $\D$ (with $p < d$). $\Pf$ does not have to be compact. The total space of the normal bundle $N(\Pf, \D)$ is an immersed $d$-dimensional differentiable submanifold of $T \D$.
If $\epsilon : \Pf \to (0, \infty]$ is a positive function, define 
    \begin{equation}  \label{E:tubular.N.eps.defn}
		\hat{N}^{\epsilon} 
		     := \Bigl\{ (x',v) \in N(\Pf, \D) : |v| < \epsilon(x') \Bigr\}.    
    \end{equation}
There exists a positive smooth function $\epsilon_{\Pf} : \Pf \to (0, \infty)$ s.t. 
$\hat{N}^{\epsilon_{\Pf}}$ is an immersed submanifold of $T \D$. Moreover, $Exp$ is defined on $\hat{N}^{\epsilon_{\Pf}}$ and maps it diffeomorphically onto a neighborhood $\mcl{C}$ of $\Pf$. (Call $\mcl{C}$ a ``tubular neighborhood of $\Pf$.)

Let $\alpha := (Exp \restriction_{\hat{N}^{\epsilon_{\Pf}}})^{-1}$, so $\alpha$ is a diffeomorphism of $\mcl{C}$ onto 
$\hat{N}^{\epsilon_{\Pf}}$. If $(x', v) \in \hat{N}^{\epsilon_{\Pf}}$, then we may assume
	\begin{equation} \label{E:alpha.dist.to.P} 
			\text{the $\xi$-closest point of $\Pf$ to } Exp(x',v) 
			        \text{ is } x' \text{ and } \text{dist} \bigl[ Exp(x',v), \Pf \bigr] = |v|.
	\end{equation}
  \end{prop}
  
As usual, let $G$ be a finite group of diffeomorphisms mapping $\D$ into itself. If $(x', v)$ is \emph{any} vector in $T \D$ and $g \in G$ then  
	\begin{equation}  \label{E:gExp=Expg.star}
		g \circ Exp(x',v) = Exp \circ g_{\ast}(x',v)  
		         \text{ and } \bigl| g_{\ast}(x',v) \bigr| = |v|,
	\end{equation}
where $g_{\ast} : T \D \to T \D$ is the differential of $g$. (For proof see appendix \ref{Chptr:misc.proofs}.) 

Suppose that 
    \begin{equation}  \label{E:P.is.G-invar}
      \Pf \text{ is $G$-invariant. I.e., } g(\Pf) = \Pf \text{ for every } g \in G.
    \end{equation} 
Since the Riemannian metric on $\D$ is $G$-invariant, by \eqref{E:g.preserves.Riem.met}, if $(x', v) \in N$ and $g \in G$ then $g_{\ast}(x', v) \perp T_{g(x')} \Pf$ and, by \eqref{E:gExp=Expg.star} and \eqref{E:xi.is.G-invar}, 
	\begin{multline}   \label{E:tubular.nbhd.of.g.C}
		\text{For every } g \in G \text{ we have } g(\mcl{C})
		        \text{ is a tubular neighborhood of } \Pf \\
		   \text{ and } \alpha \circ g^{-1} : g(\mcl{C}) \to \hat{N}^{\epsilon_{\Pf}} 
		           \text{ is a diffeomorphism.}
	\end{multline} 
Let $(x', v) \in N$ and $g \in G$. Write $g_{\ast}(x',v) = \bigl( g(x'), w \bigr)$, 
so $\bigl( g(x'), w \bigr) \perp T_{g(x')} \, \Pf$. Since $g_{\ast}$ is linear, if $t \in \RR$ then, 
$g_{\ast}(x', tv) = \bigl( g(x'),  t w \bigr)$. (See \eqref{E:vector.ops.on.TD}.)

Since $Exp(x',0) = x' \in \Pf$, we have 
	\begin{equation} \label{E:alpha.(x',0).=.x'}
		\alpha(x') = (x', 0) \text{ for every } x' \in \Pf.
	\end{equation}
We identify $\Pf$ and $\Pf \times \{ 0 \} \subset \hat{N}^{\epsilon_{\Pf}}$, 
so $\alpha \restriction_{\Pf}$ is the identity on $\Pf$. 
Let $\pi : \hat{N}^{\epsilon_{\Pf}} \to \Pf$ be the projection map 
in $\hat{N}^{\epsilon_{\Pf}}$. 
I.e., $\pi(x,v) = x$ if $(x,v) \in \hat{N}^{\epsilon_{\Pf}}$. Thus,
	\begin{equation*}
		\alpha : \mcl{C} \to \hat{N}^{\epsilon_{\Pf}} 
		  \text{ and } \pi : \hat{N}^{\epsilon_{\Pf}} \to \Pf.
	\end{equation*}
	
$G$ acts on $T \D$ by  
	\begin{equation}  \label{E:G.acts.on.TD}
		g(x', v) := g_{\ast}(x', v) \in T_{g(x)} \D    
		          \quad \bigl( (x',v) \in T \D, \, g \in G \bigr). 
	\end{equation}
Therefore, if $(x', v) \in \hat{N}^{\epsilon_{\Pf}}$ 
and $g_{\ast}(x', v) \in \hat{N}^{\epsilon_{\Pf}}$ (in particular $g(x') \in \Pf$), we have
	\begin{equation}  \label{E:pi.is.G.equivar}
		\pi \circ g_{\ast}(x', v) = g \circ \pi(x',v).
	\end{equation}
  
By \eqref{E:P.is.G-invar} and \eqref{E:gExp=Expg.star}, 
	\begin{equation*}
	  g(\mcl{C}) = g \bigl( Exp(\hat{N}^{\epsilon_{\Pf}}) \bigr) 
	    = Exp(\hat{N}^{\epsilon_{\Pf} \circ g^{-1}}) \quad (g \in G). 
	\end{equation*}
Thus, if $\epsilon_{\Pf}$ is not $G$-invariant ($G$-invariant means 
$\epsilon_{\Pf} \circ g = \epsilon_{\Pf}$ for every $g \in G$), 
then we might replace $\epsilon_{\Pf}$ by $\min \{ \epsilon_{\Pf} \circ g, \, g \in G \}$. Then \eqref{E:alpha.dist.to.P} still holds and   
	    \begin{equation}  \label{E:tube.nbhd.G-invar}
		g(\mcl{C}) = \mcl{C} \text{ and } 
		  g_{\ast} (y,v) \in (T_{g(y)} \Pf)^{\perp}
		    \text{ for every } y \in \Pf, (y,v) \in (T_{y} \Pf)^{\perp}, 
		      \text{ and } g \in G.
	    \end{equation} 

However, $\min \{ \epsilon_{\Pf} \circ g, \, g \in G \}$ usually will not be smooth. However, we have the following. See appendix \ref{Chptr:misc.proofs} for the proof.
    \begin{multline}  \label{E:smooth.<.min}
        \text{There is a smooth positive function } \epsilon : \Pf \to (0, \infty) \text{ s.t.\ } \\
          \epsilon(x) \leq \min \{ \epsilon_{\Pf} \circ g(x), \, g \in G \} \text{ for every }
            x \in \Pf \text{ and } \epsilon \circ g = \epsilon, \; g \in G.
    \end{multline}

Now we move on to consider more general $\Pf$ and subspaces of $T_{x} \D$ ($x \in \Pf$) more complex than linear subspaces.

\subsection{Conical fibers} \label{SSS:conical.fibers} 
If the group $G$ is trivial (as in chapter \ref{Chptr:sings.in.plane.fit}) or if $\Pf = \T$ then we might get away with having $\Pf$ being a manifold. However, if $G$ is nontrivial and $\Pf \neq \T$ then $\Pf$ may not be a manifold and we cannot use the Tubular Neighborhood Theorem stated in the last section. (This is the situation in chapter \ref{Chptr:robst.loc.on.circle}.) So more generally, we assume that 
    \begin{equation}  \label{E:Pf.loc.compct.strat.space}
        \Pf \text{ is a locally compact stratified space. }
    \end{equation}

In this book ``stratified space" will mean a non-empty subspace, $X$, of a Riemannian manifold $M$ expressed as the \emph{disjoint} union of finitely many ``strata'', i.e., connected imbedded smooth submanifolds, $Y$, of $M$. Call the list of strata a ``stratification'' of $X$. More precisely the stratified space consists of $X$, $M$, and the stratification. However, we will just refer to $X$ as a stratified space with the understanding that there is a stratification. If $p = 0, 1, \ldots$ is the highest dimension of any stratum in $X$, then we call $p$ the ``dimension of $X$'' and write $\dim X = p$. (In fact, by  \eqref{E:Haus.dim.s-manif.=.s} and \eqref{E:dim.of.whole.=.max.dim.of.parts}, $p$ is just the Hausdorff dimension of $X$.) 

Since a stratified space is the union of finitely many connected manifolds we have
    \begin{equation}  \label{E:0.dim.stratified.spaces}
      \text{if } X \text{ is a stratified space of dimension 0 then $X$ is finite.}
    \end{equation}

If $K$ is a compact subset of $X$ and $Y$ is a stratum it may not be the case that 
$K \cap Y$ is compact. However, we sometimes assume the following tameness property. Call a coordinate neighborhood, $(U, \varphi)$, of $Y$ ``bi-Lipschitz'' if its coordinate map 
$\varphi : U \to \RR^{J}$ (for some $J$; $\varphi$ is smooth) is bi-Lipschitz (w.r.t.\ the topological metric on $Y$ induced by the Riemannian metric on $M$; $\varphi$ is smooth in both directions, of course; see definition of ``bi-Lipschitz:'' \eqref{E:bi-Lipschitz.defn}). Say that $(U, \varphi)$ has ``bounded convex parameter space" if $\varphi(U) \subset \RR^{J}$ is bounded and convex. Say that $(U, \varphi)$ is ``tractable'' if it is bi-Lipschitz and has bounded convex parameter space. If $\dim Y = 0$, then a coordinate neighborhood $(U, \varphi)$ is tractable if $U$ consists of a single point. (So $\varphi$ is constant.) Say that $U \subset Y$ is tractable 
if there exists $\varphi$ s.t.\ $(U, \varphi)$ is a tractable coordinate neighborhood. Then we sometimes assume the following tameness property.
    \begin{multline}  \label{E:tameness.of.stratified.space}
        \text{If } K \subset X \text{ is compact and } Y \text{ is any stratum of } X \\ 
          \text{ then } K \cap Y  
              \text{ is covered by finitely many tractable coordinate neighborhoods of } Y. 
    \end{multline}
(I don't know how this condition is related to the Whitney conditions, Mather \cite{jM2012.MatherStratSpaces}, on stratified spaces.) Since a single point is compact, it follows that 
    \begin{multline*} 
      \text{If $X$ satisfies \eqref{E:tameness.of.stratified.space} } \\
        \text{ then each stratum $Y$ is covered by tractable coordinate neighborhoods.}
    \end{multline*}

Note that if $X$ satisfies \eqref{E:tameness.of.stratified.space}, it automatically satisfies this seemingly stronger version:
    \begin{multline}  \label{E:tameness.of.stratified.space.small}
        \text{If } K \subset X \text{ is compact and } Y \text{ is a stratum of } X  
          \text{ of positive dimension then } K \cap Y \\           
              \text{ is covered by finitely many tractable coordinate neighborhoods of } Y \\
                \text{ of aribitrarily small diameter}. 
    \end{multline}
To prove this it suffices to show that any tractable neighborhood in a $J$-dimensional manifold $Y$ is covered by finitely many small tractable neighborhoods. Let $(U, \varphi)$ be a tractable coordinate neighborhood of $Y$. Let $\epsilon > 0$ be given. Let $K < \infty$ be a Lipschitz constant 
for $\varphi^{-1}$ and let $\delta < \epsilon/K$. 
Let $V := \varphi(U) \subset \RR^{J}$. 
Since $(U, \varphi)$ is tractable, $V$ is convex and bounded. Hence, it can be covered by finitely many balls of radius $\delta$. Let $B$ be such a ball, so $B \cap V \neq \varnothing$. Since $V$ is convex so is $B \cap V$. Let $U_{B} := \varphi^{-1}(B \cap V)$. Let $\varphi_{B}$ be the restriction $\varphi \restriction_{U_{B}}$. Then $(U_{B}, \varphi_{B})$ is a tractable coordinate neighborhood. By choice of $\delta$, $diam \, U_{B} < \epsilon$. This proves \eqref{E:tameness.of.stratified.space.small}.

Our version of ``stratified space'' differs from others'. (See Pflaum \cite{mjP.StratSpaces} and Banagl \cite{mB07.StratSpaces}.) Note that a smooth manifold is a stratified space with just one stratum. By corollary \ref{C:cont.diff.=.loc.Lip}, it has property \eqref{E:tameness.of.stratified.space}. 

Note that if $X$ is compact, then we may take $K=X$ in \eqref{E:tameness.of.stratified.space}. A  smooth manifold is ``finitely tractable'' if it is covered by finitely many tractable neighborhoods. 

  \begin{lemma}  \label{L:strat.properties} 
    \begin{enumerate}
\item A smooth manifold is covered by tractable neighborhoods. 
    \label{I:manif.covered.by.tract.nbhds}
\item Any single stratum stratified space satisfies \eqref{E:tameness.of.stratified.space}. 
    \label{I:single.stratum.space.is.tame}
\item A compact smooth manifold is finitely tractable. 
    \label{I:cmpct.manif.finitely.tractble}
\item If every stratum of $X$ is finitely tractable then $X$ satisfies \eqref{E:tameness.of.stratified.space}. 
    \label{I:all.strata.finitely.trctble.means.tame}
\item Let $X$ have a $p$-dimensional stratification. Then \emph{any} stratification of $X$ is \linebreak $p$-dimensional. \label{I:dim.of.strat.space.is.same.for.every.stratn}
\item A compact $p$-dimensional stratified space whose $\Hm^{p}$-dimensional measure is infinite cannot satisfy \eqref{E:tameness.of.stratified.space}. 
   \label{I:cmpct.strat.space.with.infinite.measure.is.not.tame}
    \end{enumerate}
  \end{lemma}
  \begin{proof}[Proof of lemma \ref{L:strat.properties}]
 We prove \ref{I:manif.covered.by.tract.nbhds}. Suppose $X$ is a $m$-dimensional smooth manifold, not necessarily compact. We show that $X$ is covered by tractable neighborhoods. For $x \in X$, 
let $(\clU_{x}, \varphi_{x})$ be a coordinate neighborhood containing $x$ and let $y := \varphi_{x}(x) \in \RR^{m}$. Let $B_{r}(y)$ be an open ball about $y$ of radius $r > 0$ s.t.\ $B_{r}(y)\subset \varphi(\clU_{x})$. Thus, the closure, $\overline{B_{r/2}(y)}$, is a compact subset of $\varphi(\clU_{x})$. Let $\mcl{V}_{x} = \varphi^{-1} \bigl( B_{r/2}(y) \bigr)$ (so $x \in \mcl{V}_{x}$) and 
let $\phi_{x} : \mcl{V}_{x} \to \RR^{m}$ be the restriction $\varphi \restriction_{\mcl{V}_{x}}$. Then by corollary \ref{C:cont.diff.=.loc.Lip}, 
$\phi_{x}$ and its inverse are Lipschitz. $X$ is covered by the coordinate neighborhoods, $(\mcl{V}_{x}, \phi_{x})$, each parametrized by a bounded convex set. 

Suppose $X$ has just one stratum, itself. Thus, $X$ is a smooth manifold. By part \eqref{I:manif.covered.by.tract.nbhds}, $X$ is covered by tractable neighborhoods. Hence, any compact subset of $X$ is covered by finitely many tractable neighborhoods. If follows that $X$ satisfies \eqref{E:tameness.of.stratified.space}. This proves part \ref{I:single.stratum.space.is.tame}.

Statements \ref{I:all.strata.finitely.trctble.means.tame} and, given \ref{I:manif.covered.by.tract.nbhds}, \ref{I:cmpct.manif.finitely.tractble} are obvious. Part \ref{I:dim.of.strat.space.is.same.for.every.stratn} is immediate from \eqref{E:dim.of.whole.=.max.dim.of.parts} and \eqref{E:Haus.dim.s-manif.=.s}. 

We prove part \ref{I:cmpct.strat.space.with.infinite.measure.is.not.tame}. Let $X$ be a compact $p$-dimensional stratified space whose $\Hm^{p}$-dimensional measure is infinite. Suppose $X$ satisfies \eqref{E:tameness.of.stratified.space}. $X$ is the union of finitely many strata. Since $X$ is compact and satisfies \eqref{E:tameness.of.stratified.space}, each stratum is finitely tractable. Thus, $X$ is covered by finitely many tractable neighborhoods 
$(U_{1}, \varphi_{1}), \ldots, (U_{m}, \varphi_{m})$ of various dimensions. Let $i = 1, \ldots, m$. Then $V_{i} := \varphi_{i}(U_{i})$ is convex and bounded. Hence, by \eqref{E:when.Haus.meas.=.Leb.meas},
$\Hm^{p}(V_{i}) < \infty$. (If $\dim U_{i} < p$ then, by \eqref{E:Haus.dim.s-manif.=.s} and definition of ``$\dim$'' in appendix \ref{Chptr:Lip.Haus.meas.dim}, we have $\Hm^{p}(V_{i}) = 0$.) Since $(U_{i}, \varphi_{i})$ is tractable, $\varphi_{i}^{-1}$ is Lipschitz. Therefore, by \eqref{E:Lip.magnification.of.Hm}, 
$\Hm^{p}(U_{i}) = \Hm^{p}\bigl[ \varphi_{i}^{-1}(V_{i}) \bigr]$ is also finite. But, by \eqref{E:Hm.is.mono.and.count.subadditive},
    \begin{equation*}
      \infty = \Hm^{p}(X) \leq \sum_{i=1}^{m} \Hm^{p}(U_{i}) < \infty,
    \end{equation*}
a contradiction that proves part \ref{I:cmpct.strat.space.with.infinite.measure.is.not.tame}.
  \end{proof} 

  \begin{example}[Un-tame stratification] \label{Ex.Untame.strat}
We present two similar examples showing that condition \eqref{E:tameness.of.stratified.space} has some teeth. First, consider the ``topologist's sine curve'' (Munkres \cite[Exercise 1, p.\ 168]{jrM84}), i.e., the space $X \subset \RR^{2}$ that is the union of the closed segment ${0} \times [-1,1]$ and the curve 
$S := \bigl\{ (t, \sin 1/t) : t \in (0,1] \bigr\}$. 
So $X$ is compact. It can be broken down into a finite union of disjoint manifolds as well: 
    \begin{equation*}
      X = \{ (0, -1) \} \cup \bigl( \{0\} \times (-1,1) \bigr) \cup \{ (0,1) \} 
        \cup \{ (t, \sin 1/t) : t \in (0,1) \bigr\} \cup \{ (1, \sin 1) \}.
    \end{equation*}
Thus, $X$ is stratified. (This stratification satisfies the ``condition of the frontier'', Mather \cite[Section 5]{jM2012.MatherStratSpaces}.) This stratification has dimension 1. Therefore, by part \ref{I:dim.of.strat.space.is.same.for.every.stratn} of lemma \ref{L:strat.properties}, any stratification of $X$ has dimension 1. But $X$ is compact with infinite $\Hm^{1}$-measure. Therefore, by part \ref{I:cmpct.strat.space.with.infinite.measure.is.not.tame} of lemma \ref{L:strat.properties}, no stratification of $X$ can satisfy \eqref{E:tameness.of.stratified.space}.

The topologist's sine curve is not locally connected 
(Simmons \cite[p.\ 151]{gfS63}). So consider instead the space
     \begin{equation*}
      X := \bigl\{ (0,0) \bigr\} 
        \cup \Bigl\{ \bigl( t, \sqrt{t} \sin (1/t) \bigr) : 0 < t < 1 \Bigr\} 
          \cup \bigl\{ (1, \sin1 ) \bigr\} .
    \end{equation*}
$X$ is compact and, since 
$\Bigl[ (-t, t) \times \bigl( -  (1+\epsilon) \sqrt{t}, (1+\epsilon) \sqrt{t} \bigr) \Bigr] \cap X$, 
with $\epsilon > 0$ arbitrary, is a neighborhood of $(0,0)$ in $X$ for every $t \in (0,1)$, we see that $X$ is locally connected. 

The length, i.e. $\Hm^{1}$-measure, of $X$ is greater than the length of the arc  
$\bigl\{ (t, \sqrt{t}) : t \in (0,1) \bigr\}$ which is
    \begin{equation*}
     \int_{0}^{1} \sqrt{ 1 + \frac{1}{4 t^{2}} } \, dt 
       = \int_{0}^{1} t^{-1} \sqrt{ t^{2} + \frac{1}{4} } \, dt 
         > \frac{1}{2} \int_{0}^{1} t^{-1} \, dt = \infty .
    \end{equation*}
Hence, by part \ref{I:cmpct.strat.space.with.infinite.measure.is.not.tame} of lemma \ref{L:strat.properties}, $X$ cannot satisfy \eqref{E:tameness.of.stratified.space}.
  \end{example}
  
On the other hand, here are``nice'' stratifications that do satisfy \eqref{E:tameness.of.stratified.space}:

 \begin{example}[Tame stratification]  \label{Ex:tame.stratification}
Recall the definition of a cell, its faces, and a cell complex 
Munkres \cite[pp.\ 71--74]{jrM66}. A cell is a stratified space. The stratification consists of the interiors of the faces of the cell, where the interior of a vertex is the vertex itself. The strata are themselves bounded and convex. Hence, the strata are all finitely tractable. Therefore, by part \ref{I:all.strata.finitely.trctble.means.tame} of lemma \ref{L:strat.properties},
    \begin{multline} \label{E:cell.is.tame.strat.space}
      \text{A cell is a stratified space satisfying 
        \eqref{E:tameness.of.stratified.space}.} \\
      \text{ More generally, a finite cell complex is a stratified space satisfying 
        \eqref{E:tameness.of.stratified.space}.}
    \end{multline}

It follows from part \ref{I:manif.covered.by.tract.nbhds} of lemma \ref{L:strat.properties} that a smooth manifold, regarded as a stratified space with just one stratum, satisfies \eqref{E:tameness.of.stratified.space}. In particular, the assumption in part \ref{I:cmpct.strat.space.with.infinite.measure.is.not.tame} of lemma \ref{L:strat.properties} that $X$ is compact cannot be dropped: $\RR$ is a stratified space (with one stratum) of dimension 1 having infinite $\Hm^{1}$-measure, but also satisfying \eqref{E:tameness.of.stratified.space}.
 \end{example}

Recall \eqref{E:Pf.loc.compct.strat.space} and \eqref{E:D.is.cmpct.Riem.manif}. \emph{Claim:} $\Pf$ is a Borel measurable subset of $\D$. Let $\Rcl_{1}, \ldots, \Rcl_{\ell}$, of $\D$ be the strata of $\Pf$ as a stratified subspace of $\D$ so $\Pf = \bigcup_{i=1}^{\ell} \Rcl_{i}$. Thus, it suffices to show 
    \begin{equation}  \label{E:imbedded.submanif.is.Borel}
      \text{An imbedded submanifold of $\D$ is a Borel measurable subset of $\D$.}
    \end{equation}
Let $\Rcl$ be an imbedded submanifold of $\D$. Being a smooth manifold, $\Rcl$ is second countable (Boothby \cite[Definition (3.1), p.\ 6]{wmB75} or Munkres \cite[Definition 1.1, p.\ 3]{jrM66}). Hence it satisfies Lindel\"of's theorem (Simmons \cite[Theorem A, p.\ 100]{gfS63}). $\Rcl$ is also locally compact (Boothby \cite[Theorem (3.6), p.\ 9]{wmB75}). Therefore, by Ash 
\cite[Theorem A5.15, p.\ 387]{rbA72}, $\Rcl$ can be expressed as the union of countably many compact subsets. A compact subset of $\Rcl$ in the relative topology is compact in $\D$, hence Borel. Thus, $\Rcl$ is a countable union of Borel sets. That means it is Borel. This proves \eqref{E:imbedded.submanif.is.Borel}. The claim that 
$\Pf$ is a Borel, follows. Let $\mcl{Q} \subset \Pf$. By the theorem of A.H. Stone (Milnor and Stasheff \cite[p.\ 66]{jwMjdS74}), $\mcl{Q}$ is paracompact. (See also Hocking and Young \cite[Section 2-11, pp.\ 77--80]{jgHgsY61.Topology}.) Recall that a topological space $\X$ is paracompact if every open cover has an open refinement (i.e., every set in the refinement is a subset of a set in the oringinal cover) that is locally finite. An open cover of $\X$ is locally finite if every point of the space has a neighborhood that intersects only finitely many sets in the cover. So every compact space is paracompact. 

Moreover, $\mcl{Q}$ is second countable because $\D$, as a differentiable manifold (by \eqref{E:D.is.cmpct.Riem.manif}), is, (Boothby \cite[Definition (3.1), p.\ 6]{wmB75} or Munkres \cite[Definition 1.1, p.\ 3]{jrM66}). Hence, by Lindel\"of's theorem (Simmons \cite[Theorem A, p.\ 100]{gfS63}),
    \begin{multline}  \label{E:Pf.has.countable.loc.finite.refinement}
        \text{Any open cover of a subset, } \mcl{Q}, \text{ of } \Pf \\
          \text{ has a countable, locally finite refinement that also covers } \mcl{Q}.
    \end{multline}
E.g., every open cover of $\Pf$ has a countable locally finite refinement 
$\{ \mcl{V}_{i}, i = 1, 2, \ldots \}$ covering $\Pf$: Each $\mcl{V}_{i}$ is a subset of a set in the original cover and every point of the space has a neighborhood that intersects only finitely many sets $\mcl{V}_{i}$.  
\emph{Proof:} Let $\{ \clU_{\alpha}, \alpha \in A \}$ be an open cover of $\Pf$. By paracompactness, there is a locally finite open refinement 
$\{ \mcl{V}_{\beta}, \beta \in B \}$ covering $\Pf$. By Lindel\"of's theorem, 
Simmons \cite[Theorem A, p.\ 100]{gfS63}, 
$\{ \mcl{V}_{\beta} \}$ has a countable subcover.

Let $g \in G$. We assume as usual that the restriction, $g \restriction_{\T}$, of $g$ to $\T$ is a diffeomorphism of $\T$ onto itself. Further, we assume 
	\begin{multline} \label{E:g(Pf)=Pf}
	  g(\Pf) = \Pf. \text{  In fact, if } \Rcl \text{ is a stratum of } \Pf  \\
	    \text{ then } g(\Rcl) \text{ lies entirely inside some other stratum of } \Pf. 
	      \quad (g \in G)
	\end{multline}
Since $G$ is a group, this means that $g(\Rcl)$ is a stratum of $\Pf$ for every $g \in G$. By our definition of stratified space, each $\Rcl$ is an imbedded manifold of $\D$. Since, by \eqref{E:G.is.group.of.diffeos}, each $g: \D \to \D$ is a diffeomorphism, it follows that $g \restriction_{\Rcl}$ is a diffeomorphism of $\Rcl$ onto the stratum $g(\Rcl)$.

Define 
	\begin{equation}  \label{E:R.bar.is.closure.in.P}
		\text{Let } \overline{\Rcl} \text{ denote the closure of } 
		  \Rcl \text{ \emph{in} } \Pf.
	\end{equation} 
I.e., closure in $\Pf$, not $\D$. So $\overline{\Rcl} \subset \Pf$. 

Let $y \in \Pf$. By a ``cone'' at $y$ we will mean a subset $C[y] \subset T_{y} \D$ of dimension $d - p > 0$ ($d := \dim \D$, $p := \dim \Pf$) s.t.\ 
    \begin{equation} \label{E:tw.in.cone}
     \text{ if } w \in C[y] \text{ there exists } \epsilon = \epsilon(w) > 0 \text{ s.t.\ if } 
       t \in [0,1+\epsilon) \text{ then } t w \in C[y].
    \end{equation} 
In particular, the zero vector in $T_{y} \D$ is always in $C[y]$, but $0$ is not the only point in $C[y]$, since $\dim C[y] = d-p > 0$. Simple example: $p = 0$ and $C[y]$ is a ball. Suppose we have chosen a cone $C[y]$ for every $y \in \Pf$. 
Let $\mcl{E} \subset \Pf$ and $I \subset [0, \infty)$. Define 
	\begin{multline}  \label{E:boldF.[E].defns}
	      C[\mcl{E}] := \bigl\{ \bigl( y, v \bigr) \in T \D : (y, v) \in C[y], 
	        \, y \in \mcl{E} \bigr\}
	        = \bigcup_{y \in \mcl{E}} C[y], \\
		  \mbf{F}_{1}[\mcl{E}] := \Bigl\{ \bigl( y, |v|^{-1} v \bigr) \in T \D : 
		    (y,v) \in C[\mcl{E}], v \neq 0 \Bigr\}, \\
		    \text{ and } \mbf{F}_{I}[\mcl{E}] :=  \bigl\{ (y, sv) \in T \D : 
		      \bigl( y, v \bigr) \in \mbf{F}_{1}[\mcl{E}], 
		      \, s \in I \bigr\}.
	\end{multline}
$C[\mcl{E}]$ and $\mbf{F}_{I}[\mcl{E}]$ inherit topology from $T \D$, which, by \eqref{E:D.imbedded.in.Rk} and \eqref{E:Riem.metrics.on.D.on.Rk.same}, inherits topology from $\RR^{2k}$. If $y \in \Pf$, 
write $\mbf{F}_{\centerdot}[y] := \mbf{F}_{\centerdot}\bigl[ \{y\} \bigr]$. 
Here, ``$\centerdot$'' stands either for ``1'' or for an interval ``$I$''. 

We assume that $C[\Pf]$ is $G$-invariant in the sense that for every $g \in G$ 
and $(y,v) \in C[y]$ we have 
$g_{\ast}(y,v) \in C \bigl[ g(y) \bigr]$. A simple argument using \eqref{E:gExp=Expg.star} then shows
	\begin{multline}  \label{E:Gammas.G.invar}
		\text{If } g \in G \text{ and } \mcl{E} \subset \Pf, \text{ then } 
		  g_{\ast} \bigl( \mbf{F}_{1}[\mcl{E}]  \bigr) 
		  = \mbf{F}_{1} \bigl[ g(\mcl{E}) \bigr] \\
		    \text{ and for every } I \subset [0, \infty) \text{ we have } 
		      g_{\ast} \bigl( \mbf{F}_{I}[\mcl{E}]  \bigr) 
		        = \mbf{F}_{I} \bigl[ g(\mcl{E}) \bigr].
	\end{multline}

Here is another way of thinking about cones (see Pflaum \cite[Section 1.1, p.\ 17]{mjP.StratSpaces}). If $\msf{L}$ is a stratified space, define the (open) cone, 
$\msf{CL}$, over $\msf{L}$ to be the quotient space 
$\bigl( [0, 1) \times \msf{L} \bigr)/ \bigl( \{0\} \times \msf{L} \bigr)$. 
Points of $\msf{CL}$ are equivalence classes $\bigl[ (t, x) \bigr]$, 
($0 \leq t < 1, x \in \msf{L}$). The ``cusp'' or ``vertex'' of $\msf{CL}$ is 
the point 
    \begin{equation}  \label{E:cusp}
      \msf{o} := \bigl[ (0,x) \bigr],
    \end{equation} 
where $x \in \msf{L}$ is arbitrary. $\msf{L}$ is the ``link'' of $\msf{CL}$. In order to make $\msf{CL}$ more like $C[y]$ and to put a metric on it we identify $\msf{CL}$ with the following set. Pick $J$ large enough that 
$\msf{L}$ can be imbedded smoothly into $\RR^{J}$. (I.e., the imbedding is smooth on each stratum of $\msf{L}$. In particular, each stratum, $\Rcl$, of $\msf{L}$ is an imbedded submanifold of $\RR^{J}$.) 

Identify $\msf{L}$ with its image in $\RR^{J}$. So the points of $\msf{L}$ are real 
$J$-vectors and $(t,x) \mapsto t(1,x) = (t, tx)$ defines a continuous map 
$[0, 1) \times \msf{L} \to \RR^{J+1}$. 
Then by Munkres \cite[p.\ 112]{jrM84} the map $f : \msf{CL} \to \RR^{J+1}$ defined 
by $f: \bigl[ (t,x) \bigr] \mapsto t(1,x)$ (the scalar $t$ times the vector $(1,x)$) is continuous. It is clearly injective. Identify $\msf{CL}$ with $f(\msf{CL})$:
	\begin{equation}  \label{E:CL.in.Euc.space}
		\msf{L} \subset \RR^{J} \text{ and }
		  \msf{CL} := \bigcup_{0 \leq s < 1} s \cdot \bigl( \{ 1 \} \times \msf{L} \bigr) 
		    \subset \RR^{J+1}.
	\end{equation}
(Here, ``$\cdot$'' indicates scalar multiplication.)  
In particular, 
    \begin{equation} \label{E:vertex=0}
       \text{The vertex, } \msf{o}, \text{ of } \msf{CL} \text{ is identified with the origin, 0.} 
    \end{equation}
Since, as a stratified space, $\msf{L}$ is non-empty, there's more to $\msf{CL}$ than just the vertex. For $s \in [0,1]$ define 
    \begin{equation}  \label{E:scalar.mult.on.cones}
        s \bigl[ (t,x) \bigr] = \bigl[ (st, x) \bigr] \in \msf{CL}. 
    \end{equation}
This definition is compatible with the identification: 
$s f \bigl[ (t,x) \bigr] = f \bigl[ (st,x) \bigr] = f \Bigl( s \bigl[ (t,x) \bigr] \Bigr)$.

Our work will require several metrics. Here is another:
    \begin{equation} \label{E:lambda.metric.defn}
          \text{Put on } \msf{CL} \text{ the metric, } \lambda, 
            \text{ it inherits from } \RR^{J+1}.
    \end{equation} 
Notice that $\lambda$ restricted to $\bigl\{ (1,z)  \in \msf{CL} : z \in \msf{L} \bigr\}$ is just the restriction to $\msf{L}$ of the Euclidean distance on $\RR^{J}$. 
Note that, if $\msf{L}$ is compact, there exists $K < \infty$ s.t.\
	\begin{multline}  \label{E:dist.bnd.on.CL}
		\lambda \Bigl( \bigl[ (s, x) \bigr], \bigl[ (t, y) \bigr] \Bigr) 
		= \bigl| (s, sx) - (t, ty) \bigr|  
		 \leq |s-t| + s|x-y| + |s-t| |y| \\
		   \leq K \bigl( |s-t| + |x-y| \bigr), \quad s,t \in [0,1); x, y \in \msf{L}.
	\end{multline}
Moreover, in general, by \eqref{E:n.c.sqrd.sum.ineq},
    \begin{equation}  \label{E:lambda.revrs.triangle}
        \sqrt{2} \lambda \Bigl( \bigl[ (s, x) \bigr], \bigl[ (t, y) \bigr] \Bigr) 
          \geq |s-t| + |sx - ty|,  \quad s,t \in [0,1); x, y \in \msf{L}.
    \end{equation}
Put on $\Pf \times \msf{CL}$ the metric, 
	\begin{multline}  \label{E:metric.on.Pf.x.CL}
	  (\xi \times \lambda) \Bigl[ \bigl(y, \bigl[(s, z)\bigr] \bigr) , 
	    \bigl(y', \bigl[(s', z')\bigr] \bigr) \Bigr]
	  := \xi(y, y') + \lambda \Bigl( \bigl[ (s, z) \bigr], \bigl[ (s', z') \bigr] \Bigr), \\   
		\quad y, y' \in \Pf, \, [s, z], [s', z'] \in \msf{CL},
	\end{multline}
where $\xi$ is the topological metric on $\D$ (see \eqref{E:xi.is.metric.on.D}) 
and $\lambda$ is the metric on $\msf{CL}$. 
In light of all this, we sometimes write 
$\bigl(y, (s, sz) \bigr)$ or even $(y, s, sz)$ instead 
of $\bigl(y, \bigl[(s, z)\bigr] \bigr)$. (Here, $y \in \Pf$, $s \in [0,1)$, 
$z \in \msf{L}$.)

In part \ref{I:local.triv} in the following we connect the two ways of viewing cones. Recall (see lemma \ref{L:scalar.mult.xi+.cont}) the definition of the restriction of a fiber bundle, say $T \D$, to a subspace, say $\Pf$. 
    \begin{definition} \label{D:fibering.by.cones}
Let $\Pf \subset \D$ be a stratified space of dimension $p < d := \dim \D$. Let $G$ be a finite group of diffeomorphisms on $\D$. Suppose \eqref{E:g.preserves.Riem.met} holds  and $\Pf$ is $G$-invariant: $g(\Pf) = \Pf$ for every $g \in G$. Say that an (open) neighborhood of $\Pf$ in the restriction $T \D \restriction_{\Pf}$
is ``fibered over $\Pf$ with, as fibers, open cones $C[y] \subset T_{y} \D$ 
($y\in \Pf$)" if the following holds. Let 
	\begin{equation*}
		C[\Pf]  = \bigcup_{y \in \Pf} C[y] . 
	\end{equation*} 
Put on $C[\Pf]$ the topology it inherits from $T \D$. 
Recall the definition \eqref{E:pi.is.proj} of $\pi : T \D \to \D$. 
Let $\pi_{C} := \pi \restriction_{C[\Pf]} : C[\Pf] \to \Pf$, the restriction of $\pi$ to $C[\Pf]$, be the bundle projection 
    \begin{equation*}
      \pi_{C}(y,w) :=  y \text{ for } (y, w) \in C[\Pf] . 
    \end{equation*} 
  \begin{enumerate}
	\item (Local Triviality) \label{I:local.triv}
	$C[\Pf]$ is ``locally trivial'' in the following sense. Let $x \in \Pf$. 
	Then $x$ has an open neighborhood, $\mcl{V}$, in $\Pf$ that can be written 
	$\mcl{V} = \bigcup_{i=1}^{n} \mcl{A}_{\mcl{V},i}$ for some 
	$n = n_{\mcl{V}} = 1, 2, \ldots$, where each $\mcl{A}_{\mcl{V},i}$ is \emph{closed} 
	in $\mcl{V}$ but $\mcl{A}_{\mcl{V},i} \cap \Rcl$ 	is \emph{open} in $\Rcl$ 
	for every $i$ and every stratum $\Rcl$ of $\Pf$. (The $\mcl{A}_{\mcl{V},i}$'s might not be disjoint.) In the following let $\mcl{V}$ and $i = 1, \ldots, n_{\mcl{V}}$ be arbitrary but fixed and drop 
$\mcl{V}$ from subscripts.
   \begin{enumerate} 
        \item There is a compact stratified space $\msf{L}_{i}$ (a link) 
of dimension $d-p-1$ (so $\dim \msf{CL}_{i} = d-p$) and a continuous \emph{injection}, 
$h_{i} : \mcl{A}_{i} \times \msf{CL}_{i} \to \pi_{C}^{-1}(\mcl{A}_{i}) 
= C[\mcl{A}_{i}] \subset C[\Pf]$ mapping 
$\mcl{A}_{i} \times \msf{CL}_{i}$ homomorphically onto its image. But $h_{i}$ does not have to be surjective.
  \label{I:L.d-p-1}
  \item The collection $(\mcl{A}_{j}, \msf{L}_{j}, h_{j})$, $j = 1, \ldots, n)$ satisfies the following.
    \begin{multline}  \label{E:compatibility.across.As}
      \text{If } h_{ \ell} \bigl( x, s \, (1,z) \bigr) = h_{ m} \bigl( x, s' \, (1,z') \bigr)
        \text{ then } s = s', \\
          x \in \mcl{A}_{ \ell} \cap \mcl{A}_{ m}; \; s, s' \in [0,1) ; 
            \; z \in  \msf{L}_{\ell}, \; z' \in  \msf{L}_{m} ; \; 
              \ell, m = 1, \ldots, n .
    \end{multline}
However, it is \emph{not} required that $\msf{L}_{\ell},$ and $\msf{L}_{m}$ be homeomorphic. \label{I:L's.are.compatible}
	\item $\mcl{A}_{i}$ is a stratified space with strata $\mcl{A}_{i} \cap \Rcl$ for all strata 
	$\Rcl$ of $\Pf$ for which the intersection is non-empty. Moreover, 
	$\mcl{A}_{i}$ satisfies \eqref{E:tameness.of.stratified.space}. $\msf{L}_{i}$ also satisfies \eqref{E:tameness.of.stratified.space}. Put on $\mcl{A}_{i} \times \msf{CL}_{i}$ the restriction to $\mcl{A}_{i} \times \msf{CL}_{i}$ of the metric $\xi \times \lambda$ defined in \eqref{E:metric.on.Pf.x.CL} with $\msf{CL}_{i}$ in place of $\msf{CL}$. Denote that restriction by $\xi \times \lambda_{i} = \xi \times \lambda_{i}$. \label{I:L.A.tameness}
	\item We have 
		\begin{equation}   \label{E:pi.-1.V.=.union.of.As}
			\bigcup_{j=1}^{n} h_{j}(\mcl{A}_{j} 
			  \times \msf{CL}_{j}) = \pi_{C}^{-1}( \mcl{V} ) = C[\mcl{V}].
		\end{equation}
		\label{I:CV.from.h.Ai}
	\item $h_{i}$ and its inverse 
	$h_{i}^{-1} : h_ {i}(\mcl{A}_{i} \times \msf{CL}_{i}) \to \mcl{A}_{i} 
	\times \msf{CL}_{i}$ are Lipschitz. (Use the metrics $\xi_{+}$ defined in \eqref{E:xi+.from.2.metrics} 
	on $h_ {i}(\mcl{A}_{i} \times \msf{CL}_{i}) \subset C[\Pf] 
	\subset \T \D \restriction_{\Pf}$ and $\xi \times \lambda_{i}$ 
	on $\mcl{A}_{i} \times \msf{CL}_{i}$.) Thus, $h_{i}$ is bi-Lipschitz 
	(\eqref{E:bi-Lipschitz.defn}). \label{I:h.hi.invrs.Lip}
	\item $\pi_{C} \circ h_{i}(y, w) = y = (y,0)$, whenever 
	  $(y, w) \in  \mcl{A}_{i} \times \msf{CL}_{i}$.   \label{I:pi.h.(y,w)=y}
	\item $h_{i}$ is homogeneous:
	   \begin{multline} \label{E:homogeneity.of.hi}
		h_{i} \Bigl( y, \bigl[ (s t, z) \bigr] \Bigr) 
		  = h_{i} \Bigl( y, s \bigl[ (t, z) \bigr] \Bigr)
		    = s \, h_{i} \Bigl( y, \bigl[ (t, z) \bigr] \Bigr) \\
		      y \in \Pf, s \in [0,1],  t \in [0,1), z \in \msf{L}_{i}.
	   \end{multline}
(See \eqref{E:vector.ops.on.TD} and \eqref{E:scalar.mult.on.cones}.) In particular, $h_{i}(y, \msf{o}) = y$ ($y$ identified with $(y,0)$ as usual), 
where $\msf{o}$ is the vertex of $\msf{CL}_{i}$. (See \eqref{E:vertex=0} and \eqref{E:cusp}.)
	  \label{I:homogeneity.of.hi}
   \end{enumerate}  \label{I:local.triv}
   \item $\mcl{C} := Exp \bigl( C[\Pf] \bigr)$ is an open neighborhood of $\Pf$ in $\D$
   and $Exp \restriction_{C[\Pf]} : C[\Pf] \to \mcl{C}$ is a homeomorphism. Thus, $C[\Pf]$ is an open neighborhood of $\Pf$ in $T \D \restriction_{\Pf}$. Let $\alpha := 
   \bigl( Exp \restriction_{C[\Pf]} \bigr)^{-1}: \mcl{C} \to C[\Pf]$.
   Thus, $(y,w) = \alpha \bigl[ Exp_{y}(w) \bigr]$ ($(y,w) \in C[\Pf]$). 
   In particular, $\alpha(y) = (y,0)$ ($y \in \Pf$). 
   If $\mcl{K} \subset \Pf$ is compact, then $Exp$ is Lipschitz on $C[\mcl{K}]$ (see \eqref{E:boldF.[E].defns}) and $\alpha$ is Lipschitz on 
   $Exp \bigl( C[\mcl{K}] \bigr) \subset \mcl{C}$ w.r.t.\ $\xi_{+}$ (equation \eqref{E:xi+.from.2.metrics}) and $\xi$ (see \eqref{E:xi.is.metric.on.D}). 
   \label{I:Exp.alpha.homeom}  
    \item $\mcl{C}$ and $C[\Pf]$ are $G$-invariant: 
	    \begin{equation}  \label{E:C.C[P].G.invar}
		g(\mcl{C}) = \mcl{C} \text{ and } g_{\ast} \bigl( C [y] \bigr)  
		  = C \bigl[ g(y) \bigr] 
		    \text{ for every } y \in \Pf \text{ and } g \in G.
	    \end{equation} 
     \label{I:C.C[P].G.invar}
  \end{enumerate}  
    \end{definition}

Compatibility of $h$'s for overlapping $\mcl{A}$'s, beyond that posited in \eqref{E:compatibility.across.As}, is established in remark \ref{R:local.compatibility}. In remark \ref{R:extending.hi}, it shown that the maps $h_{i}$ can be extended while retaining the same properties. 

In appendix \ref{Chptr:rob.loc.circle.cones.appendix} we construct a cone bundle as in the definition over a space, though not a manifold, is still rather simple. I found the task annoying difficult. It wasn't the construction so much as verifying it had the right properties. It would be nice to have some theoretical tools to make such work easier. Those tools may already exist, at least partly. Pflaum \cite[Corollary 3.9.3, p.\ 143]{mjP.StratSpaces} shows that ``every Whitney stratified space is locally trivial with cones as typical fibers.'' Section 3.10 ibid discusses ``cone spaces''. See also Banagl \cite[Proposition 6.2.5, p.\ 130]{mB07.StratSpaces}. 
I found these works helpful as I was formulating the preceding definition.

For the remainder of this chapter we assume
    \begin{equation} \label{E:there.are.cone.fibers.over.Pf} 
      \Pf \text{ has a neighborhood in } T \D \restriction_{\Pf} 
      \text{ fibered over } \Pf \text{ with open cone fibers }. 
    \end{equation}  
 
Note that in part \ref{I:L.d-p-1}, $\dim \msf{CL}_{i}$ is constant, $d-p$, in $i$. 
That, together with part \ref{I:Exp.alpha.homeom}, viz., ``$\mcl{C} := Exp \bigl( C[\Pf] \bigr)$ is an (open) neighborhood of $\Pf$'', constrains the stratification of $\Pf$. 
In general, the cones $Exp(C[y])$, $y \in \Pf$, does not constitute a foliation 
(Lawson \cite{hbL74.Foliations}) of the neighborhood $Exp(\clU)$ because the fibers $Exp(C[y])$ might not be manifolds (example \ref{Ex:chevron.cross}). 

  \begin{example}[Dimension 0] \label{E:cones.over.0.dim.Pf}
Suppose $\dim \Pf = 0$. Hence, by \eqref{E:0.dim.stratified.spaces}, we know that $\Pf$ must be finite, say $\Pf = \{ y_{1}, \ldots, y_{m} \}$. 
Let $\delta > 0$ satisfy $2 \delta < \min \bigl\{ |y_{i} - y_{j}| : i \neq j \bigl\}$ In part \ref{I:local.triv} of the definition take 
the $\mcl{V}$'s to be the individual points $y_{i}$. Let $i = 1, \ldots, m$, let 
$n = 1$, and let $\mcl{A}_{1} = \mcl{V} = \{ y_{i} \}$. 
Take $\msf{L}_{1} := S^{d-1}$, the unit $(d-1)$-sphere. (By lemma \ref{L:strat.properties} part \ref{I:cmpct.manif.finitely.tractble}, it satisfies \eqref{E:tameness.of.stratified.space}.) Define 
$h_{1} : \{ y_{i} \} \times \msf{CL}_{i} \to C[y_{i}]$ 
by $h_{1}\bigl( y_{i}, (t, x) \bigr) = \bigl( y_{i}, t \delta x \bigr)$ 
($t \in [0,1)$, $x \in \msf{L}_{1} = S^{d-1}$). Thus, $C[y_{i}]$ is just the open ball 
in $T_{y_{i}} \D$ with center 0 and radius $\delta$. 
  \end{example}

Let $\mcl{A}_{i}$, $\msf{L}_{i}$, and $h_{i} : \mcl{A}_{i} \times \msf{L}_{i} \to C[\mcl{A}_{i}]$ be as in part \ref{I:local.triv} of definition \ref{D:fibering.by.cones}. Recall \eqref{E:Riem.metrics.on.D.on.Rk.same}. \emph{Claim:} 
    \begin{equation} \label{E:hi.is.bounded}
        \bigl| h_{i} \bigr| \text{ is bounded on } \mcl{A}_{i} \times \msf{CL}_{i}.
    \end{equation} 
(See \eqref{E:Riem.metrics.on.D.on.Rk.same}.) For let $y \in \mcl{A}_{i}$, 
$z \in \msf{L}_{i}$, and $s \in [0,1)$. Since $\msf{L}_{i}$ is compact there exists 
$K' < \infty$ s.t.\ $\bigl| (1,w) \bigr| \leq K'$ for every $w \in \msf{L}_{i}$. Recall the metric definitions \eqref{E:xi+.from.2.metrics}, \eqref{E:metric.on.Pf.x.CL}, and \eqref{E:metric.on.Pf.x.CL}. Then by part \ref{I:h.hi.invrs.Lip} 
of the definition there exists $K < \infty$ s.t., by \eqref{E:vector.ops.on.TD},
compactness of $\msf{L}_{i}$, and \eqref{E:homogeneity.of.hi},
    \begin{align*}
      \Bigl| h_{i} \bigl( y, s(1,z) \bigr) \Bigr| 
          &= \xi_{+} \Bigl[ h_{i} \bigl( y, s(1,z) \bigr), h_{i} (y, \msf{o}) \Bigr] \\ 
          &\leq K (\xi \times \lambda_{i}) \Bigr[ \bigl( y, s(1,z) \bigr), (y, \msf{o}) \Bigr] \\
          &= K \bigl| s(1,z) \bigr| \leq s K K' \leq K K' < \infty.
    \end{align*}
This proves the claim.

Let $\mcl{K} \subset \Pf$ be compact. We have the following corollary of \eqref{E:hi.is.bounded}.
    \begin{equation} \label{E:C[K].rel.compact}
        \text{There exists } M < \infty \text{ s.t.\ if } (y,v) \in C[\mcl{K}] \text{ then } |v| < M. 
    \end{equation}
(This is strengthened in \eqref{E:|(y,v)|.bounded}.) Thus, $C[\mcl{K}]$ is relatively compact in $T \D \restriction_{\Pf}$. To prove \eqref{E:C[K].rel.compact}, first observe that by compactness, there are finitely many sets $\mcl{V}$ as in part \ref{I:local.triv} of definition \ref{D:fibering.by.cones} 
that cover $\mcl{K}$. That means there are finitely many sets $\mcl{A}_{i}$ as in part \ref{I:local.triv} of definition that cover $\mcl{K}$. By \eqref{E:pi.-1.V.=.union.of.As}, $C[\mcl{K}]$ lies in the union of the images of the form $h_{i} \bigl[ \mcl{A}_{i} \times \msf{CL}_{i} \bigr]$. But, by \eqref{E:hi.is.bounded} and \eqref{E:homogeneity.of.hi} again, the vectors in $h_ {i}(\mcl{A}_{i} \times \msf{CL}_{i})$ are bounded in length. The claim follows.

Actually, the requirement in definition \ref{D:fibering.by.cones}\eqref{I:Exp.alpha.homeom} that $Exp$ be Lipschitz on $C[\mcl{K}]$ for any $\mcl{K} \subset \Pf$ compact is unnecessary. This is a consequence of lemma \ref{L:Exp.loc.Lip}, \eqref{E:local.Lip.is.Lip.on.compacts}, and \eqref{E:C[K].rel.compact}.

By definition \ref{D:fibering.by.cones}(\ref{I:L.d-p-1}) $h_{i}$ is an injection. By part \eqref{I:homogeneity.of.hi}, we have $\bigl| h_{i}(y, \msf{o}) \bigr| = 0$. Therefore,
    \begin{equation}  \label{E:|h|=0.condition}
      \text{For } y \in \Pf, t \in [0,1), z \in \msf{L}_{i} \text{ we have }
        \left| h_{i} \Bigl( y, \bigl[ (t, z) \bigr] \Bigr) \right| = 0 \text{ if and only if } t = 0 . 
    \end{equation}
    
A fact in the opposite direction to \eqref{E:C[K].rel.compact} is the following. Let $\mcl{V}$, $n$, $\mcl{A}_{i}$, $\msf{L}_{i}$, and $h_{i}$ ($i = 1, \ldots,n$) be as in part \ref{I:local.triv} of definition \ref{D:fibering.by.cones}. Thus, $h_{i} : \mcl{A}_{i} \times \msf{CL}_{i} \to C[\mcl{A}_{i}]$. Recall \eqref{E:vector.ops.on.TD}. Then
    \begin{equation} \label{E:unif.lwr.bound.on(y,v)|>0.for.y.in.Ai}
        \text{There exists } b_{i} > 0 \text{ s.t.\ for every }  
          (y, z) \in \mcl{A}_{i} \times \msf{L}_{i} 
          \text{ we have } \left| h_{i} \left( y, \tfrac{1}{2}, \tfrac{1}{2} z \right) \right| > b_{i}
    \end{equation}
In particular, for every $y \in \mcl{A}_{i}$ there exists $v \in \RR^{k}$ with 
$(y, v) \in C[y]$ s.t.\  $|v| > b_{i}$. To see this suppose \eqref{E:unif.lwr.bound.on(y,v)|>0.for.y.in.Ai} is false. Then there exists a sequence 
$\bigl\{ (y_{m}, z_{m} ) \bigr\} \subset \mcl{A}_{i} \times \msf{L}_{i}$ s.t.\ 
$\left| h_{i} \left( y_{m}, \tfrac{1}{2}, \tfrac{1}{2} z_{m} \right) \right| \to 0$. 
Let $(y_{m}, v_{m}) := h_{i} \left( y_{m}, \tfrac{1}{2}, \tfrac{1}{2} z_{m} \right) 
\in C[y_{m}]$. 
Thus, $v_{m} \to 0$. Recall that, by part \ref{I:h.hi.invrs.Lip} of definition \ref{D:fibering.by.cones}, $h_{i}^{-1}$ is Lipschitz. Therefore, there exists $K < \infty$ s.t.\
    \begin{equation*}
        \tfrac{1}{2} \leq \left| \tfrac{1}{2}(1, z_{m}) \right| 
          = (\xi \times \lambda_{i}) \bigl[ h_{i}^{-1}(y_{m}, v_{m}), 
            h_{i}^{-1}(y_{m}, 0) \bigr] \leq K \xi_{+} \bigl[ (y_{m}, v_{m}), 
              (y_{m}, 0) \bigr] = K |v_{m}| \to 0.
    \end{equation*}
Contradiction.

Observe that \eqref{E:homogeneity.of.hi} implies that 
		\begin{align} \label{E:homogeneity.of.hi.plus}
			h_{i} \Bigl( y, \bigl[ (s t, z) \bigr] \Bigr) 
			  = s \, h_{i} \Bigl( y, \bigl[ (t, z) \bigr] \Bigr), 
			    \quad &y \in \mcl{A}_{i}, z \in \msf{L}_{i}, \text{ and } \\
		&s \in  
                    \begin{cases}
                      [0, \infty), &\text{ if } t = 0 , \\
                      [0,t^{-1}), &\text{ if } t \in (0,1) .
                    \end{cases} \notag
		\end{align}
To see this, let $y \in  \mcl{A}_{i}$ $z \in \msf{L}_{i}$. If $t = 0$ and 
$s \in [0,\infty)$ then by \eqref{E:vector.ops.on.TD}, 
$h_{i} \Bigl( y, \bigl[ (s t, z) \bigr] \Bigr) = s \, h_{i} \Bigl( y, \bigl[ (t, z) \bigr] \Bigr)$ is equivalent to $0 = 0$.  
So assume $t \in (0,1)$ and $s \in [0,t^{-1})$. If $s \in [0,1]$, then
$h_{i} \Bigl( y, \bigl[ (s t, z) \bigr] \Bigr) = s \, h_{i} \Bigl( y, \bigl[ (t, z) \bigr] \Bigr)$
is just \eqref{E:homogeneity.of.hi}. So assume $s \in (1,t^{-1})$.
Then $s^{-1} \in (t, 1)$. Hence, by \eqref{E:homogeneity.of.hi}, 
	\begin{equation*}
		s h_{i} \Bigl( y, \bigl[ (t, z) \bigr] \Bigr) 
		  = s h_{i} \Bigl( y, \bigl[ (s s^{-1} t, z) \bigr] \Bigr) 
		    = s s^{-1} \, h_{i} \Bigl( y, \bigl[ (s t, z) \bigr] \Bigr).
	\end{equation*}
(See remark \ref{R:extending.hi}.)
    	
 \begin{example} \label{Ex:chevron.cross}
Figure \ref{F:ConeBundle} shows an example of a neighborhood of a stratified space fibered over $\Pf$ by cones. In that example, the union of the crossed black lines represent $\Pf$. The point where the lines intersect represent $\T$. 
Interpret ``left part", ``right part", ``top part", and ``bottom part" of $\Pf \setminus \T$ in the obvious way as open line segments that do not include $\T$. Stratify $\Pf$ into $\T$ plus the left, right, top, and bottom parts of $\Pf \setminus \T$. The red shapes are a sample of the fibers. Except at the origin the fibers are chevrons or line segments. At the origin the fiber is an ``X''. Thus, at the origin the fiber is not a manifold. It also illustrates the point that the \emph{fibers over $\Pf$ do not have to be homeomorphic.}

Let $x \in \Pf$. Suppose $x \neq \T$ and, for concreteness, suppose $x$ belongs to the right part of $\Pf \setminus \T$. Then take $\mcl{V}$ to be the right part. Let $n = 1$, 
$\mcl{A}_{1} := \mcl{V} =$ open right part, and $\msf{L}_{1} = $ two point space. 
The homeomorphism 
$h_{1} : \mcl{A}_{1} \times \msf{CL}_{1} \to \pi_{C}^{-1}(\mcl{A}_{1})$ is obvious and obviously bi-Lipschitz.

Now suppose $x = \T$. Take $\mcl{V} := \Pf$ and $n = 4$. Take $\mcl{A}_{1}$ to be the ``closed right part'' of $\Pf$, i.e., $\text{(right part)} \cup \T$. Let $\mcl{A}_{2}$ to be the ``closed top part'' of $\Pf$. Define $\mcl{A}_{3}, \mcl{A}_{4}$ in the obvious similar way. 
(So $\mcl{A}_{i}$'s are not disjoint.) For $i = 1, \ldots, 4$ let $\msf{L}_{i}$ be the two-point space as before. Note that for $i = 1, \ldots, 4$ we have 
that $h_{i} \bigl( \T \times \msf{CL}_{i} \bigr)$ is only a proper subset of the cross, 
$C[\T]$. For example, $h_{1} \bigl( \T \times \msf{CL}_{1} \bigr)$ is a chevron pointing to the left. Now, $\mcl{A}_{i}, \msf{L}_{i}$ ($i = 1, \ldots, 4$) suffice, but one might 
also let $n = 5$, $\mcl{A}_{5} := \T$, and let $\msf{L}_{5}$ be the four-point space. Thus, the links $\msf{L}_{i}$ do not have to be homeomorphic.
 \end{example}
 
  \begin{figure}
      \epsfig{file = 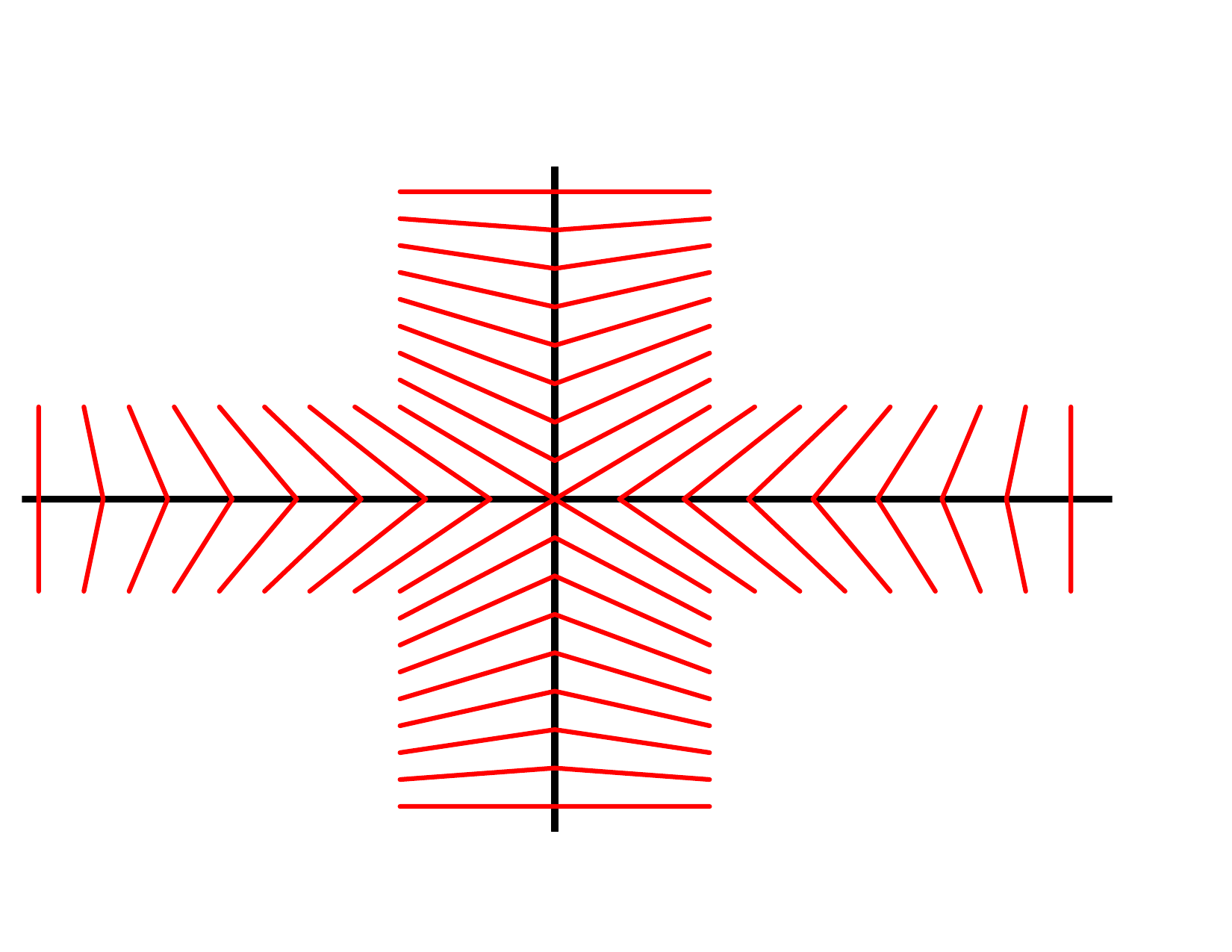, height = 5.8in, , }  
      \caption{Black lines form a stratified space sitting in the plane. Red chevrons and cross are cones, the full collection of which fiber a neighborhood of the stratified space. 
(See example \ref{Ex:chevron.cross}). }    
             \label{F:ConeBundle}
  \end{figure}

  \begin{remark}[Compatibility of overlapping $\mcl{A}_{i}$'s]  \label{R:local.compatibility}
\eqref{E:compatibility.across.As} (extended in \eqref{E:extended.compatibility.across.As} below) establishes some compatibility on overlaps of $\mcl{A}_{i}$'s. Here we develop that idea further. Suppose $\mcl{V}$, $\mcl{V}'$ are neighborhoods as in the definition. Suppose $(\mcl{A}_{i}, \msf{L}_{i}, h_{i})$ 
and $(\mcl{A}'_{j}, \msf{L}'_{j}, h'_{j})$  ``belong'' to  $\mcl{V}$, $\mcl{V}'$, resp.\ and suppose
$X := h_{i}(\mcl{A}_{i} \times \msf{CL}_{i}) \cap h'_{j}(\mcl{A}'_{j} \times \msf{CL}'_{j}) 
\neq \varnothing$, 
so $X \subset C[\mcl{A}_{i} \cap \mcl{A}'_{j}]$. Similar to \eqref{E:vector.ops.on.TD}, for $\bigl( y, (1,z) \bigr) \in \mcl{A}_{i} \times \msf{CL}_{i}$ and $s \in [0,1]$, define 
    \begin{equation*}
      s \bigl( y, (1,z) \bigr) = \bigl( y, s(1,z) \bigr).
    \end{equation*} 
Similarly for $\mcl{A}'_{i} \times \msf{CL}'_{i}$. 
Define $f := h_{i}^{-1} \circ h'_{j} \; : \; (h'_{j})^{-1}(X) \to h_{i}^{-1}(X)$. Let $(y, v) \in X$. 
Then, there exists $t, t' \in \RR$, $z \in \msf{CL}_{i}$, and $z' \in \msf{CL}_{j}$ s.t.\ 
$h_{i} \bigl[ \bigl( y, t(1,z) \bigr) \bigr] = (y,v) = h'_{j} \bigl[ \bigl( y', t'(1,z') \bigr) \bigr]$. 
Let $s \in [0,1]$. By \eqref{E:homogeneity.of.hi}, 
       \begin{multline*}
          s f \bigl[ \bigl( y', t'(1,z') \bigr) \bigr]  
            = s (h_{i}^{-1} \circ h'_{j}) \bigl[ \bigl( y', t'(1,z') \bigr) \bigr] 
                = s h_{i}^{-1} \bigl[ (y,v) \bigr] = s \bigl( y, t(1,z) \bigr) \\
                  = \bigl( y, st(1,z) \bigr) = h_{i}^{-1} (y, sv) 
                  = h_{i}^{-1} \circ h_{j} \bigl[ \bigl( y', st'(1,z') \bigr) \bigr] \\
                    = f \bigl[ \bigl( y', st'(1,z') \bigr) \bigr]
                    = f \bigl[ s \bigl( y', t'(1,z') \bigr) \bigr] .
       \end{multline*}
So overlap implies a compatibility between $h_{i}$ and $h'_{j}$.
  \end{remark}
 
 \begin{remark}  \label{R:G.acts.on.V.etc}
The group $G$ operates on the machinery in part \ref{I:local.triv} of the definition. Specifically, let $g \in G$. 
Then, if $x \in \Pf$ and $\mcl{V}, n, \mcl{A}_{i}, \msf{L}_{i}, h_{i}$ ($i = 1, \ldots, n$) are as in the definition, 
with $x \in \mcl{V}$, then $g(\mcl{V})$ is a neighborhood of $g(x)$. 
Now, $g$ is a homeomorphism that, by \eqref{E:g(Pf)=Pf}, maps strata onto strata. 

I am embarrassed to admit it, but I need to note the following for repeated use. Let $X$ and $Y$ be nonempty sets and let $g : X \to Y$ be a bijection. 
Let $\{ \mcl{A}_{\alpha}, \alpha \in I \}$  be a family of subsets of $X$. 
	\begin{equation}  \label{E:g.commutes.w/.set.ops}
		g \left( \bigcup_{\alpha} \mcl{A}_{\alpha} \right) 
		  = \bigcup_{\alpha} g(\mcl{A}_{\alpha}), \quad
		    g \left( \bigcap_{\alpha} \mcl{A}_{\alpha} \right) 
		      = \bigcap_{\alpha} g(\mcl{A}_{\alpha}), 
		         \text{ and }g( \mcl{A}_{\alpha}^{c} ) = g(\mcl{A}_{\alpha})^{c},
	\end{equation}
where ``${}^{c}$'' indicates set complementation w.r.t.\ $X$ or $Y$, whichever is appropriate. Note that the first equality holds if $g$ is an arbitrary map.

Therefore, by \eqref{E:g.commutes.w/.set.ops}, 
$g(\mcl{V}) = \bigcup_{i=1}^{n} g(\mcl{A}_{i})$ and, for every $i$, 
$g(\mcl{A}_{i})$ is \emph{closed} in $g(\mcl{V})$ but \emph{open} in every stratum $\Rcl$. 

Let $g \in G$ and let $\mcl{Q} \subset \Pf$. Then, by \eqref{E:C.C[P].G.invar}, 
$g_{\ast} \bigl( C[ \mcl{Q} ] \bigr) \subset C \bigl[ g(\mcl{Q}) \bigr]$. 
Replacing $g$ by $g^{-1}$ and $\mcl{Q}$ by $g(\mcl{Q})$ we get 
$g_{\ast}^{-1} \bigl( C[ g(\mcl{Q}) ] \bigr) \subset C [\mcl{Q}]$. Combining the two yields
    \begin{equation} \label{E:g.star.C[U]=C[g(U)]}
      g_{\ast} \bigl( C[\mcl{Q}] \bigr) = C \bigl[ g(\mcl{Q}) \bigr] ,
        \qquad g \in G, \; \mcl{Q} \subset \Pf .
    \end{equation}

Let $i = 1, \ldots, n$; $y \in g(\mcl{A}_{i})$; $t \in [0, 1)$; and $z \in \msf{L}_{i}$. Thus, 
$\Bigl( g^{-1}(y), \bigl[ (t, z) \bigr] \Bigr) \in \mcl{A}_{i} \times \msf{CL}_{i}$. 
Let $h :  \mcl{A}_{i} \times \msf{CL}_{i} \to C [ \mcl{A}_{i} ]$ 
satisfy $\pi \circ h \Bigl( x, \bigl[ (t, z) \bigr] \Bigr) = x$ for $x \in \mcl{A}_{i}$. E.g., by definition \ref{D:fibering.by.cones} part \ref{I:pi.h.(y,w)=y}, $h = h_{i}$ has this property.
Hence, 
    \begin{equation*}
      h \Bigl( g^{-1}(y), \bigl[ (t, z) \bigr] \Bigr) \in C\bigl[ g^{-1}(y) \bigr] 
        \subset C [ \mcl{A}_{i} ] .
    \end{equation*}
Therefore, by \eqref{E:C.C[P].G.invar}, we have 
$g_{\ast} \left[ h \Bigl( g^{-1}(y), \bigl[ (t, z) \bigr] \Bigr) \right] \in C[y] 
\subset C \bigl[ g(\mcl{A}_{i}) \bigr]$. 
Define 
    \begin{multline} \label{E:grp.action.on.h}
      gh \Bigl( y, \bigl[ (t, z) \bigr] \Bigr) 
        := g_{\ast} \left[ h \Bigl( g^{-1}(y), \bigl[ (t, z) \bigr] \Bigr) \right] \in C[y] 
          \subset C \bigl[ g(\mcl{A}_{i}) \bigr], \\
            y \in g(\mcl{A}_{i}), \; t \in [0, 1), \text{ and } z \in \msf{L}_{i}.
    \end{multline}
Thus, $gh : g(\mcl{A}_{i}) \times \msf{CL}_{i} \to C \bigl[ g(\mcl{A}_{i}) \bigr]$. Conversely, 
let $(y,v) \in C \bigl[ g(\mcl{A}_{i}) \bigr]$. By \eqref{E:g.star.C[U]=C[g(U)]}, there exists $(x,w) \in C[\mcl{A}_{i}]$ s.t.\ $(y,v) = g_{\ast}(x,w)$. There exist $t \in [0, 1)$ 
and $z \in \msf{L}_{i}$ s.t.\ 
$(x,w) = h_{i} \Bigl( x, \bigl[ (t,z) \bigr] \Bigr) = h_{i} \Bigl( g^{-1}(y) \bigl[ (t,z) \bigr] \Bigr)$. Hence,
    \begin{equation*}
       g h_{i} \Bigl( y, \bigl[ (t,z) \bigr] \Bigr) 
         = g_{\ast} \left[ h_{i} \Bigl( g^{-1}(y) \bigl[ (t,z) \bigr] \Bigr) \right] = g_{\ast}(x,w)
           = (y,v) .
    \end{equation*}
 Thus, $gh : g(\mcl{A}_{i}) \times \msf{CL}_{i} \to C \bigl[ g(\mcl{A}_{i}) \bigr]$ is a bijection.

This is a group action. By \eqref{E:grp.action.on.h}, 
$\pi \circ gh \Bigl( y, \bigl[ (t, z) \bigr] \Bigr) = y$,  so $gh$ has the properties of an ``$h$''. 
Let $g_{1}, g_{2} \in G$. Let $y \in (g_{2} \circ g_{1})(\mcl{A}_{i})$, $t \in [0,1]$, 
and $z \in \msf{L}_{i}$. Then 
	\begin{align*}
		g_{2}( g_{1} h ) \Bigl( y, \bigl[ (t, z) \bigr] \Bigr) 
		  &= g_{2\ast} \left[ (g_{1} h) \Bigl( g_{2}^{-1}(y), \bigl[ (t, z) \bigr] \Bigr) \right] \\ 
		  &= g_{2\ast} \left[ g_{1\ast} \Bigl( h \Bigl( g_{1}^{-1} \circ g_{2}^{-1}(y), 
		    \bigl[ (t, z) \bigr] \Bigr) \Bigr) \right] \\
		  &= (g_{2} \circ g_{1})_{\ast} \left[ h \Bigl( (g_{2} \circ g_{1})^{-1}(y), 
		     \bigl[ (t, z) \bigr] \Bigr) \right] \\
		  &= (g_{2} \circ g_{1}) h \Bigl( y, \bigl[ (t, z) \bigr] \Bigr).
	\end{align*}
We show that $gh_{i}$ has the properties of an $h_{i}$. $g$ is a diffeomorphism, so injective. Therefore, by functoriality, $g_{\ast}$ is injective. Since, by part \ref{I:L.d-p-1} of definition \ref{D:fibering.by.cones}, $h_{i}$ is injective, so is $gh_{i}$. Moreover, by Boothby \cite[Exercise 6, p.\ 337]{wmB75}, $g_{\ast} : T \D \to T \D$ is a diffeomorphism, hence, a homeomorphism. Since $h_{i}$ is a homeomorphism onto its image, so is $gh_{i}$. Homogeneity, \eqref{E:homogeneity.of.hi}, of $gh_{i}$ follows from that 
of $h_{i}$ and linearity of $g_{\ast}$.

$y$ must belong to some $\mcl{V}'$. So $C[y] \subset C[\mcl{V}']$. Therefore, by \eqref{E:pi.-1.V.=.union.of.As}, there is some $\mcl{A}_{j} \subset \mcl{V}'$ s.t.\ 
$gh_{i} \Bigl( y, \bigl[ (t, z) \bigr] \Bigr) \in C[y] \subset C[\mcl{A}_{j}]$. Therefore, 
$h_{j}^{-1} \left( gh_{i} \Bigl( y, \bigl[ (t, z) \bigr] \Bigr) \right) \in \mcl{A}_{j} \times \msf{L}_{j}$. 
($\mcl{A}_{j}$, $h_{j}$, and $\msf{L}_{j}$ ``belong'' to $\mcl{V}'$. So they depend not just on $j$ but also on $\mcl{V}'$.)

We prove that the $gh_{i}$'s satisfy property \eqref{E:pi.-1.V.=.union.of.As}. By \eqref{E:g.commutes.w/.set.ops} and \eqref{E:g.star.C[U]=C[g(U)]},
    \begin{align*}
        \bigcup_{j=1}^{n} gh_{j} \bigl[ g(\mcl{A}_{j}) \times \msf{CL}_{j} \bigr] 
           & = \bigcup_{j=1}^{n} g_{\ast} \bigl[ h_{j} ( \mcl{A}_{j} \times \msf{CL}_{j} ) \bigr] \\ 
           & = g_{\ast} \left[ \bigcup_{j=1}^{n} h_{j} ( \mcl{A}_{j} \times \msf{CL}_{j} ) \right] \\
           & = g_{\ast} \bigl( C[ \mcl{V} ] \bigr) = C \bigl[ g(\mcl{V}) \bigr].
    \end{align*}

Next, we show that $g h_{i}$ and $(g h_{i})^{-1}$ are Lipschitz w.r.t.\ $\xi \times \lambda_{i}$ 
and $\xi_{+}$. (See \eqref{E:metric.on.Pf.x.CL}, \eqref{E:lambda.metric.defn}. and \eqref{E:xi+.from.2.metrics}.)  By part \ref{I:h.hi.invrs.Lip} of definition \ref{D:fibering.by.cones}, $h_{i}$ has a Lipschitz constant, $K_{1} < \infty$. By \eqref{E:hi.is.bounded}, in conjunction with \eqref{E:homogeneity.of.hi}, and \eqref{E:g*.Lip.const.Kb.sqrd.xi+}, we have that $g_{\ast}$ is Lipschitz 
on $h_{i} ( \mcl{A}_{i} \times \msf{CL}_{i} )$ w.r.t.\ $\xi_{+}$. Let $K_{2} < \infty$ be a corresponding Lipschitz constant. Thus, by \eqref{E:metric.on.Pf.x.CL} and 
\eqref{E:xi.is.G-invar}, if $x, y \in g(\mcl{A}_{i})$; $s,t \in [0,1)$, and $w, z \in \msf{L}_{i}$,
	\begin{align*}
		  \xi_{+} \bigl[ g h_{i} (x, s, sw), g h_{i}(y,t,tz) \bigr] 
		    &=  \xi_{+} \Bigl( g_{\ast} h_{i} \bigl( g^{-1}(x), s, sw \bigr), 
		      g_{\ast} h_{i} \bigl( g^{-1}(y), t, tz \bigr) \Bigr) \\
		    &\leq K_{2} \xi_{+} \Bigl( h_{i} \bigl( g^{-1}(x), s, sw \bigr), 
		      h_{i} \bigl( g^{-1}(y), t, tz \bigr) \Bigr) \\    
		    &\leq K_{1} K_{2} (\xi \times \lambda_{i}) \Bigl[ \bigl( g^{-1}(x), s, sw \bigr), 
		      \bigl( g^{-1}(y), t, tz \bigr) \Bigr] \\
		    &= K_{1} K_{2} \Bigl[ \xi \bigl( g^{-1}(x), g^{-1}(y) \bigl) 
		      + \bigl| (s, sw) - (t, tz) \bigr| \Bigr] \\
		    &= K_{1} K_{2} \Bigl[ \xi (x, y) + \bigl| (s, sw) - (t, tz) \bigr| \Bigr] \\
		    &= K_{1} K_{2} (\xi \times \lambda_{i}) \bigl[ (x, s, sw), (y, t, tz) \bigr].
	  \end{align*}
This concludes the proof that $g h_{i}$ is Lipschitz w.r.t.\ $\xi_{+}$ and 
$\xi \times \lambda_{i}$.

Similarly, let $K < \infty$ be a Lipschitz constant for $h_{i}^{-1}$. Then, by \eqref{E:metric.on.Pf.x.CL}, \eqref{E:lambda.metric.defn}, and \eqref{E:xi.is.G-invar}, \eqref{E:hi.is.bounded}, and \eqref{E:g*.Lip.const.Kb.sqrd.xi+}, for $x$, $y$, etc. as before, 
	\begin{align*}
		(\xi \times \lambda_{i}) \bigl[ (x, s, sw), (y, t, tw) \bigr] 
		 &= \xi(x, y) + \bigl| (s,sw) - (t,tw) \bigr| \\
		 &= \xi \bigl[ g^{-1}(x), g^{-1}(y) \bigr] + \bigl| (s,sw) - (t,tw) \bigr|  \\
		 &= (\xi \times \lambda_{i}) \Bigl[ \bigl( g^{-1}(x), s, sw \bigr), 
		   \bigl( g^{-1}(y), t, tz) \bigr) \Bigr] \\
		 &\leq K \xi_{+} \Bigl[ h_{i} \bigl( g^{-1}(x), s, sw \bigr), 
		   h_{i} \bigl( g^{-1}(y), t, tz) \bigr) \Bigr] \\
		 &\leq K_{2} K \xi_{+} 
		     \Bigl[ g_{\ast} h_{i} \bigl( g^{-1}(x), s, sw \bigr), 
		       g_{\ast} h_{i} \bigl( g^{-1}(y), t, tz) \bigr) \Bigr] \\
	         &= K_{2} K \xi_{+} \bigl[ g h_{i}(x, s, sw), g h_{i}(y, t, tw) \bigr].
	\end{align*}
 
 Thus, we get a new local trivialization of $C[\Pf]$ of the form 
 $\Bigl\{ \bigl( g(\mcl{V}), g(\mcl{A}_{i}), gh_{i}, \msf{L}_{i} \bigr) \Bigr\}$. 
 \end{remark}
 
 \begin{remark}[Extending $h_{i}$] \label{R:extending.hi} 
Let $\mcl{V}$, $n$, $\mcl{A}_{i}$, $\msf{L}_{i}$, and $h_{i}$ 
($i = 1, \ldots,n$) be as in part \ref{I:local.triv} of definition \ref{D:fibering.by.cones}. Thus, $h_{i} : \mcl{A}_{i} \times \msf{CL}_{i} \to C[\Pf]$. By \eqref{E:CL.in.Euc.space}, 
$\msf{CL}_{i}$ is identified with 
$[0,1) \cdot \bigl( \{1\} \times \msf{L}_{i} \bigr) := \bigl\{ (s, sz) \in \RR^{J+1} : 0 \leq s < 1 \text{ and } z \in \msf{L}_{i} \bigr\}$. We can extend $h_{i}$ to 
$\mcl{A}_{i} \times \Bigl[ \RR \cdot \bigl( \{1\} \times \msf{L}_{i} \bigr) \Bigr]$ as follows. 
Let $y \in \mcl{A}_{i}$, $r \geq 0$, and $z \in \msf{L}_{i}$. Let $s \in (0,1)$. 
Define 
    \begin{equation}  \label{E:hji(y,r,rz).defn}
      h_{i}(y, r, rz) := (r/s) h_{i}(y, s, sz) . 
    \end{equation}
(Does this usefully make sense if $r < 0$?) $h_{i}(y, r, rz)$ is well-defined. For let $s' \in (0, 1)$. WLOG $s' \leq s$. Then, by \eqref{E:homogeneity.of.hi},  
    \begin{equation*}
      \frac{r}{s'} h_{i}(y, s', s'z) = \frac{r}{s'} h_{i} \left( y, \frac{s'}{s}s, 
        \frac{s'}{s}sz \right) = \frac{r}{s'} \frac{s'}{s} h_{i}(y, s, sz).
    \end{equation*} 
Moreover, this definition of $h_{i}$ is compatible with the original: If $r \in [0,1)$, 
take $s \in (r,1)$. 
The extended $h_{i}$ obviously has property \ref{I:pi.h.(y,w)=y} of definition \ref{D:fibering.by.cones}, but there is no guarantee that $h_{i}(y, r, rz) \in C[y]$. 

We show that \eqref{E:compatibility.across.As} remains valid in this extended setup. Suppose that for some $\mcl{V}$ and $x \in \mcl{A}_{ \ell} \cap \mcl{A}_{ m}$; $s, s' \in [0,\infty)$; 
$z \in  \msf{L}_{\ell}, \; z' \in  \msf{L}_{m} $; $\ell, m = 1, \ldots, n_{\mcl{V}}$ belonging 
to $\mcl{V}$ we have $h_{ \ell} \bigl( x, s \, (1,z) \bigr) = h_{ m} \bigl( x, s' \, (1,z') \bigr)$. Then for some $t \in (0,1)$, 
    \begin{equation*}
      (s/t) h_{\ell}(y, t, tz) = (s'/t) h_{m}(y, t, tz') .
    \end{equation*}
WLOG $s \leq s'$. So
    \begin{equation*}
      h_{\ell} \bigl( y, (s/s')t, (s/s')tz \bigr) = (s/s') h_{\ell}(y, t, tz) = h_{m}(y, t, tz') .
    \end{equation*}
Thus, by \eqref{E:compatibility.across.As}, $(s/s')t = t$. Since $t > 0$, $s = s'$ as desired. Therefore, \eqref{E:compatibility.across.As} becomes 
   \begin{multline}  \label{E:extended.compatibility.across.As}
      \text{If } h_{ \ell} \bigl( x, s \, (1,z) \bigr) = h_{ m} \bigl( x, s' \, (1,z') \bigr)
        \text{ then } s = s', \\
          x \in \mcl{A}_{ \ell} \cap \mcl{A}_{ m}; \; s, s' \in [0,\infty) ; 
            \; z \in  \msf{L}_{\ell}, \; z' \in  \msf{L}_{m} ; \; 
              \ell, m = 1, \ldots, n .
    \end{multline}

Recall \eqref{E:pi.is.proj}. It is immediate from \eqref{E:hji(y,r,rz).defn} and \eqref{E:vector.ops.on.TD} that an extended version of part \ref{I:pi.h.(y,w)=y} of definition \ref{D:fibering.by.cones} holds. Namely, 
    \begin{equation}  \label{E:pi.circ.hi.extended}
      \pi \bigl[ h_{i}(y, r, rz) \bigr] = y ,
        \quad (y \in \mcl{A}_{i}, \, r \geq 0, \text{ and } z \in \msf{L}_{i}) . 
    \end{equation}

The extended $h_{i}$ is obviously continuous. It is also homogeneous (see \eqref{E:homogeneity.of.hi.plus}) in an extended sense:
	   \begin{multline} \label{E:extended.homogeneity.of.hi}
		h_{i} \bigl[ y, (st, st z) \bigr]
		  = \frac{st}{r} h_{i} \bigl[ y, (r, r z) \bigr] 
		    = s \frac{t}{r} h_{i} \bigl[ y, (r, r z) \bigr]
		      = s h_{i} \bigl[ y, (t, t z) \bigr] , \\
		      y \in \Pf, s, t \in [0,\infty), r \in (0,1), z \in \msf{L}_{i}.
	   \end{multline}

\emph{Claim:} $h_{i}$ thus extended is one-to-one on
    \begin{equation*}
        \mcl{A}_{i} \times  \Bigl[ [0,\infty) \bigl( \{1\} \times \msf{L}_{i} \bigr) \Bigr] 
          := \bigl\{ (y, r, rz) \in \mcl{A}_{i} \times \RR^{J+1} : r \geq 0 
            \text{ and } z \in \msf{L}_{i} \bigr\}. 
    \end{equation*}
By part \ref{I:L.d-p-1} of definition \ref{D:fibering.by.cones}, to see this it suffices to show the following. Let $y, y'; \in \mcl{A}_{i}$; $r,r' \geq 0$; $z,z' \in \msf{L}_{i}$. 
We need to show that 
    \begin{equation} \label{E:hi's.equal.means.primes.equal}
      \text{\emph{if} } h_{i}(y, r, rz) = h_{i}(y', r', r'z') 
        \text{ \emph{then} } y' = y, \, r' = r, \text{ and } z' = z .
    \end{equation}
Suppose $h_{i}(y, r, rz) = h_{i}(y', r', r'z')$. If $r, r' \in [0,1)$ then \eqref{E:hi's.equal.means.primes.equal} is just part \ref{I:L.d-p-1} of definition \ref{D:fibering.by.cones}. So assume at least one of $r, r' \geq 1$, WLOG  
$r \geq 1$. Let $s \in (0,1)$. Since $s \in (0,1)$, by definition \ref{D:fibering.by.cones} part \ref{I:L.d-p-1} and \eqref{E:CL.in.Euc.space}, we may write $(y,v) := h_{i}(y, s, sz) \in C[\mcl{A}_{i}]$, $(y',v') := h_{i}(y', s, sz') \in C[\mcl{A}_{i}]$. Then, by \eqref{E:hji(y,r,rz).defn}, $h_{i}(y, r, rz) = (r/s)(y,v) = \bigl( y, (r/s) v \bigr)$ and 
$h_{i}(y', r', r'z') = (r'/s)(y',v') = \bigl( y', (r'/s) v' \bigr)$. 
Since $h_{i}(y, r, rz) = h_{i}(y', r', r'z')$ by assumption, it follows that 
    \begin{equation*}
      \bigl( y, (r/s) v \bigr) = \bigl( y', (r'/s) v' \bigr) . 
        \text{ So } y = y' \text{ and }(r/s) v = (r'/s) v' .
    \end{equation*}
Suppose $r' = 0$. Then $(r'/s) v' = 0$, which means $(r/s) v = 0$. But $r \geq 1 > 0$ 
and $s > 0$. Therefore, by \eqref{E:|h|=0.condition}, $(r/s) v \neq 0$. Contradiction. Therefore, $r' > 0$. Hence, we may assume $s \in \bigl( 0, \min \{ r, r', 1 \} \bigr)$.
Now, $s/r'$ and $s/r$ are positive and less than 1. We already know that 
$y' = y$ so we are assuming $h_{i}(y, r, rz) = h_{i}(y, r', r'z')$. Hence, by \eqref{E:hji(y,r,rz).defn}, 
    \begin{multline*}
        h_{i} \bigl (y, s^{2}/r', (s^{2}/r')z \bigr) = \frac{s^{2}}{rr'} \frac{r}{s} h_{i}(y, s, sz) \\
          = \frac{s^{2}}{rr'} h_{i}(y, r, rz) 
            = \frac{s^{2}}{rr'} h_{i}(y, r', r'z') \\
            =  \frac{s^{2}}{rr'} \frac{r'}{s} h_{i}(y, s, sz') 
              = h_{i} \bigl( y, s^{2}/r, (s^{2}/r)z \bigr) . 
    \end{multline*} 
But $s^{2}/r', s^{2}/r \in [0,1)$. Thus, $h_{i} \bigl( y, s^{2}/r', (s^{2}/r') z \bigr), h_{i} \bigl( y, s^{2}/r, (s^{2}/r) z' \bigr) \in C[y]$. Hence, by part \ref{I:L.d-p-1} of the definition, we have $\bigl( s^{2}/r', (s^{2}/r') z \bigr) = \bigl( s^{2}/r, (s^{2}/r) z' \bigr)$. I.e., $r' = r$ and $z' = z$ as desired. This proves the claim that $h_{i}$ is one-to-one 
on $\mcl{A}_{i} \times  \Bigl[ [0,\infty) \bigl( \{1\} \times \msf{L}_{i} \bigr) \Bigr]$. 
So $(y, r, rz) \neq (y', r', r'z')$ implies $h_{i}(y, r, rz) \neq h_{i}(y', r', r'z')$, but if $r \geq 1$ or $r' \geq 1$ it still might be the case that  $Exp \circ h_{i}(y, r, rz) = Exp \circ h_{i}(y', r', r'z')$. See part \ref{I:Exp.alpha.homeom} of definition \ref{D:fibering.by.cones}.

Next, we show that 
    \begin{equation} \label{E:h(y,(1,z)).notin.C[P]} 
      \text{If } y \in \mcl{A}_{i}, \, r \geq 1, \text{ and } z \in \msf{L}_{i} 
        \text{ then } h_{i} \bigl( y, r(1,z) \bigr) \notin C[\Pf]. 
    \end{equation}
For suppose $h_{i} \bigl( y, (r,z) \bigr) \in C[\Pf]$ with $r \geq 1$. Then, by \eqref{E:pi.-1.V.=.union.of.As}, and \eqref{E:CL.in.Euc.space}, there exists $j$, $s \in [0,1)$, 
and $z' \in \msf{CL}_{j}$ s.t.\ $h_{i} \bigl( y, r (1,z) \bigr) = h_{j} \bigl( y, s(1,z') \bigr)$. By \eqref{E:extended.compatibility.across.As}, this means $1 > s = r \geq 1$, contradiction. 
This proves \eqref{E:h(y,(1,z)).notin.C[P]}. 

By \eqref{E:hi's.equal.means.primes.equal}, the inverse of the extended $h_{i}$ exists on the image of $h_{i}$. Following the pattern of \eqref{E:vector.ops.on.TD}, for $y \in \mcl{A}_{i}$, $z \in \msf{L}_{i}$, and $r, r' \geq 0$ define 
$r \bigl( y, r'(1, z) \bigr) := \bigl( y, rr'(1, z) \bigr)$. 
Write $(y, v) := h_{i}\bigl( y, r'(1, z) \bigr)$. 
Then, by \eqref{E:extended.homogeneity.of.hi}, 
	\begin{multline}  \label{E:homogen.of.extndd.h.invrs}
		h_{i}^{-1} \bigl[ r(y, v) \bigr] = h_{i}^{-1} \Bigl( r h_{i}\bigl( y, r'(1, z) \bigr) \Bigr) 
		  = h_{i}^{-1} \Bigl( h_{i} \bigl( y, r r'(1, z) \bigr) \Bigr) \\
		    = \bigl( y, r r'(1, z) \bigr) = r \bigl( y, r'(1, z) \bigr)
		      = r h_{i}^{-1} (y, v) , \quad r, r' \geq 0 .
	\end{multline}
 So the extended $h_{i}^{-1}$ is also homogeneous.
 
Let $t \in [1, \infty)$ be fixed. We show that $h_{i}$, as extended, is Lipschitz on 
 $\mcl{A}_{i} \times \Bigl[ [0,t] \cdot \bigl( \{1\} \times \msf{L}_{i} \bigr) \Bigr]$. 
$t \geq 1$ and, by defintion \ref{D:fibering.by.cones} part \ref{I:L.d-p-1}, $\msf{L}_{i}$ is compact. Hence, by example \ref{Ex:ratnl.fns.loc.Lip} the map 
$s (1,z) \mapsto \tfrac{s}{2t} (1,z)$ 
($s \in [0,t]$, $z \in \msf{L}_{i}$) is Lipschitz. 
By definition \ref{D:fibering.by.cones} part \ref{I:h.hi.invrs.Lip}, $h_{i}$ (unextended) is Lipschitz. Hence, by \eqref{E:comp.of.Lips.is.Lip}, the map 
 $\bigl( y, s(1, z) \bigr) \mapsto 2 t \, h_{i} \left(y, \tfrac{s}{2t} (1,z) \right) 
 = h_{i} \bigl(y, s(1,z) \bigr)$ 
 ($y \in \mcl{A}_{i}$, $s \in [0,t]$, $z \in \msf{L}_{i}$) is Lipschitz. 
 
 By part \ref{I:L.d-p-1} of definition \ref{D:fibering.by.cones}, $\msf{L}_{i}$ is compact. Hence, there exists $L < \infty$ s.t.\ $|z| \leq L$ for every $z \in \msf{L}_{i}$. Let $K_{t} < \infty$ be a Lipschitz constant for $h_{i}\bigl( y, s(1, z) \bigr)$ with $y \in \mcl{A}_{i}$, $z \in \msf{L}_{i}$, and $s \in [0,t]$. Recall \eqref{E:vector.ops.on.TD}, \eqref{E:xi+.from.2.metrics},   \eqref{E:metric.on.Pf.x.CL}, and \eqref{E:lambda.metric.defn}. Then
    \begin{multline*}
      \Bigl| h_{i} \bigl( y, s(1,z) \bigr) \Bigr| 
        = \xi_{+} \bigl[ h_{i}(y, s(1,z)) , h_{i}(y, 0 \cdot (1,z)) \bigr] 
          \leq K_{t} (\xi \times \lambda) \Big( \bigl( y, s(1,z) \bigr), 
            \bigl( y, 0 \cdot (1,z) \bigr) \Bigr) \\ 
              = K_{t} \lambda \bigl( s(1,z) \bigr), 0 \cdot (1,z) \bigr) 
                = K_{t} s \sqrt{1 + |z|^{2}} \leq t K_{t} \sqrt{1 + L^{2}} < \infty . 
    \end{multline*}
Therefore,
    \begin{equation}  \label{E:extended.hi.is.bounded}
      \text{ For every $t \in [0, \infty)$ we have } \Bigl| h_{i} \bigl( y, s(1,z) \bigr) \Bigr| 
        \text{ is bounded in } y \in \mcl{A}_{i} , \; s \in [0,t] , \; z \in \msf{L}_{i} . 
    \end{equation}
This improves upon \eqref{E:hi.is.bounded}. 

Let $b := b_{i} > 0$ be as in \eqref{E:unif.lwr.bound.on(y,v)|>0.for.y.in.Ai}. 
Let $c \in [b, \infty)$ be fixed. We show that $h_{i}^{-1}$ is Lipschitz 
on $\mbf{F}_{[0, c]}[\mcl{A}_{i}]$. (See \eqref{E:boldF.[E].defns}.) 
Let $(y,v) \in \mbf{F}_{[0, c]}[\mcl{A}_{i}]$. Then there exists 
$w \in \RR^{k} \setminus \{0\}$ (see \eqref{E:D.imbedded.in.Rk}) and $s \in [0,c]$ 
s.t.\ $(y,w)  \in C[y]$ and $v = s |w|^{-1} w$. 
There exists $z \in \msf{L}_{i}$ and 
$t > 0$ s.t.\ $(y,w) = t h_{i} \bigl( (y, 1/2, (1/2) z \bigr) $. $|w|^{-1} w$ is unchanged if we take $t = 1$, so assume $(y,w) = h_{i} \bigl( y, 1/2, (1/2) z \bigr) $. 
Recall \eqref{E:vector.ops.on.TD}. Then, 
by \eqref{E:unif.lwr.bound.on(y,v)|>0.for.y.in.Ai}, $|w| > b := b_{i}$ so $b^{-1} > |w|^{-1}$.  Thus, 
$\tfrac{b}{c} s |w|^{-1} \leq \tfrac{b}{c} \tfrac{s}{b} = \tfrac{s}{c}  \leq 1$.
But $\tfrac{b}{c} v = \tfrac{b}{c} s |w|^{-1} w$. 
Therefore, by \eqref{E:tw.in.cone} and the fact that $(y,w)  \in C[y]$, we have 
$\bigl(  y, \tfrac{b}{c} v \bigr)  = \bigl(  y, \tfrac{b}{c} s |w|^{-1} w \bigr)  \in C[y]$.  
The map $(y, v) \mapsto \bigl(  y, \tfrac{b}{c} v \bigr) $ is Lipschitz. Now, by \eqref{E:homogen.of.extndd.h.invrs}, 
$h_{i}^{-1}(y,v) = \tfrac{c}{b} h_{i}^{-1} \bigl(  y, \tfrac{b}{c} v \bigr) $. 
By part \ref{I:h.hi.invrs.Lip} of definition \ref{D:fibering.by.cones}, this is Lipschitz in $v$. This completes the proof that $h_{i}^{-1}$ is Lipschitz on $\mbf{F}_{[0, c]}[\mcl{A}_{i}]$.
 \end{remark}

Note that in definition \ref{D:fibering.by.cones}\eqref{I:Exp.alpha.homeom}, 
we have $\alpha = \bigl( Exp \restriction_{C[\Pf]} \bigr)^{-1}$. Therefore, by \eqref{E:gExp=Expg.star},
	\begin{equation} \label{E:alpha.is.G-equivar} 
		   \alpha \circ g = g_{\ast} \circ \alpha, \quad g \in G.
	\end{equation}
(Note that $g_{\ast}(T_{y} \D) = T_{g(y)} \D$.) 

  \begin{remark} \label{R:pi.is.cont}
As mentioned earlier, $\pi_{C} : C[\Pf] \to \Pf$ is just the restriction of the projection 
$T \D \to \D$. Therefore, by \eqref{E:xi+.from.2.metrics}, it is Lipschitz w.r.t.\ the metrics 
$\xi_{+}$ and $\xi$. 
  \end{remark}  

  \begin{example}[When $\Pf$ is a manifold]  \label{Ex:TubeNbhdSpecialCaseOfCones}
Suppose $\Pf$ is an imbedded submanifold of $\D$ without boundary, i.e., a stratified space with just one stratum, $\Rcl := \Pf$. We show that $\Pf$ has a neighborhood in the restriction $T \D \restriction_{\Pf}$ that is fibered by cones as in definition \ref{D:fibering.by.cones}. 

Let $\epsilon_{\Pf}$ be as in the Tubular Neighborhood Theorem, proposition \ref{P:tubular.nbhd.thm}. Let 
    \begin{equation*}
      \epsilon : \Pf \to (0, \infty) 
        \text{ be a positive smooth $G$-invariant function as in \eqref{E:smooth.<.min} }
    \end{equation*} 
and let $\hat{N}^{\epsilon}$ be the open neighborhood of $\Pf$ in the normal bundle of $\Pf$ as in proposition \ref{P:tubular.nbhd.thm}. Then the Tubular Neighborhood Theorem continues to hold 
with $\epsilon_{\Pf}$ replaced by $\epsilon$. We show that $\hat{N}^{\epsilon}$ is fibered with cones as fibers in the sense of definition \ref{D:fibering.by.cones}.

Let $y \in \Pf$. Since $\Pf$ is an imbedded submanifold of $\D$, by Boothby \cite[Theorem (5.5), p.\ 78]{wmB75}, $y$ has a preferred coordinate neighborhood $(\clU, \varphi)$ in $\D$ so that if $W := \varphi(\clU)$ and $\mcl{V} := \clU \cap \Pf$, 
then $\varphi(y) = 0$, $W \subset \RR^{d}$ is a bounded cube, the restriction 
$\varphi \restriction_{\mcl{V}}$ maps $\mcl{V}$ onto $W \cap \RR^{p}$ (the ``waist'' of $W$, another bounded cubed), and $(\mcl{V}, \varphi \restriction_{\mcl{V}})$ is a coordinate neighborhood of $y$ in $\Pf$. Any compact subset of $\Pf$ is covered by finitely many such coordinate neighborhoods. 
Let $\psi := \varphi^{-1} : W \to \clU$ parametrize $\clU$. 
We may assume that $\overline{\clU}$, the closure of $\clU$ in $M$ is compact and $\varphi$ is the restriction of a coordinate map about $y$ on an open neighborhood of $\overline{\clU}$. \emph{A fortiori,} $\overline{\mcl{V}}$ is compact. Hence, by corollary \ref{C:cont.diff.=.loc.Lip} and \eqref{E:local.Lip.is.Lip.on.compacts}, 
$\varphi$ and $\varphi \restriction_{\mcl{V}}$ are bi-Lipschitz. For the same reason, 
$\epsilon(\cdot)$ is bounded below on $\mcl{V}$.

Let $w_{1}, \ldots, w_{d}$ be the coordinates on $\RR^{d}$.  Recall \eqref{E:superscript.perp.notation}. Pick $a_{ij}$ 
($i = 1, \ldots, d-p; \; j = 1, \ldots, d$) 
s.t.\ $X_{y,i} := \sum_{j = 1}^{d} a_{ij} \psi_{\ast} (\partial/\partial w_{j} 
\restriction_{w = \varphi(y)} ) \in T_{y} \D$ 
($i = 1, \ldots, d-p$) is an orthonormal basis of $(T_{y} \Pf)^{\perp}$. Here, $y \in \mcl{V} \subset \Pf$ is our given point.
Now let $x$ vary in $\mcl{V}$ but, using the same constant $a_{ij}$'s, define 
$X_{x,i} := \sum_{j = 1}^{d} a_{ij} \psi_{\ast} (\partial/\partial w_{j} \restriction_{w = \varphi(x)} ) \in T_{x} \D$ 
($i = 1, \ldots, d-p$). So $X_{1}, \ldots, X_{d-p}$ are smooth vector fields on $\mcl{V}$. Now, if $x \in \mcl{V} \setminus \{y\}$ it might not be the case that $X_{x,1}, \ldots, X_{x,d-p}$ is an orthonormal basis of $(T_{x} \Pf)^{\perp}$. Let $Y_{x,i}$ be the orthogonal projection of $X_{x,i}$ onto $(T_{x} \Pf)^{\perp}$ so $Y_{y,i} = X_{y,i}$ 
($i = 1, \ldots, d-p$) 
and $X_{1}, \ldots, X_{d-p}$ are smooth vector fields on $\mcl{V}$. 
Since $Y_{y,i} = X_{y,i}$ and $X_{y,i}$ ($i = 1, \ldots, d-p$) is an orthonormal basis 
of $(T_{y} \Pf)^{\perp}$, by making 
$\mcl{V}$ smaller if necessary, we may assume that for every $x \in \mcl{V}$ the tangent vectors $Y_{x,1}, \ldots, Y_{x,d-p}$ are linearly independent vectors 
in $(T_{x} \Pf)^{\perp}$. Finally, apply Gram-Schmidt (Stoll and Wong 
\cite[Theorem 2.2, p.\ 73]{rrSetW68.LinearAlgebra}) to $Y_{x,1}, \ldots, Y_{x,d-p}$ to get smooth orthonormal vector fields 
$V_{x,1}, \ldots, V_{x,p-d} \in (T_{x} \Pf)^{\perp}$ ($x \in \mcl{V}$). Thus, the $V_{x,i}$'s and $X_{x,i}$'s agree at $x = y$. 
Write $V_{x,i} = (x, v_{x,i})$, where $v_{x,i} \in \RR^{k}$. (See \eqref{E:D.imbedded.in.Rk}.)
Then the $v_{x,i}$'s are orthonormal. Moreover, by corollary \ref{C:cont.diff.=.loc.Lip} again, relative compactness of $\mcl{V}$, and \eqref{E:local.Lip.is.Lip.on.compacts}, 
$v_{x,1}, \ldots, v_{x,p-d}$ are Lipschitz in $x \in \mcl{V}$. Similarly, by \eqref{E:smooth.<.min}, 
    \begin{equation}  \label{E:eps.on.manif.is.Lip.bnd.below}
      \epsilon \text{ is Lipschitz and bounded below on } \mcl{V} .
    \end{equation}

Define: 
    \begin{equation}  \label{E:C[x].for.manif}
      C[x] \text{ is the collection of all } X \in T_{x} \D (x \in \mcl{V}) \text{ s.t.\ }
        X \perp T_{x} \Pf, \text{ and } |X| < \epsilon(x) .
    \end{equation}

Let $\msf{L}_{1} := S^{d-p-1}$, the unit $(d-p-1)$-sphere in $\RR^{d-p}$ centered at 0. 
$\msf{L}_{1}$ is a manifold, i.e., a stratified space with only one stratum. By part \ref{I:cmpct.manif.finitely.tractble} of lemma \ref{L:strat.properties}, $\msf{L}_{1}$ satisfies \eqref{E:tameness.of.stratified.space}.
For $z = (z_{1}, \ldots, z_{d-p}) \in \msf{L}_{1}$ and $x \in \mcl{V}$, 
let $z \cdot v_{x} := \sum_{j=1}^{d-p} z_{j} v_{x,j}  \in \RR^{k}$. 
Thus, $(x, z \cdot v_{x}) \perp T_{x} \Pf$. Recall \eqref{E:vector.ops.on.TD}. 
If $x \in \mcl{V}$, $z \in \msf{L}_{1}$, and $s \in \bigl[ 0, \epsilon(x) \bigr)$ then 
    \begin{equation*}
      (x, s \, z \cdot v_{x}) = \sum_{j=1}^{d-p} z_{j} V_{x,j} \in T_{x} \Pf . 
    \end{equation*}
Clearly, 
    \begin{equation}  \label{E:C[x].on.manif}
      C[x] = \bigl\{ (x, s \epsilon(x) \, z \cdot v_{x}) : 0 \leq s < 1 
        \text{ and } z \in \msf{L}_{1} \bigr\} .
    \end{equation}
 
Let $\mcl{A}_{1} := \mcl{V}$, so $\mcl{A}_{1}$ is open in $\Rcl := \Pf$ and closed 
in $\mcl{V}$. Since $\Pf$ has only one stratum, itself, 
so does $\mcl{A}_{1}$, viz. $\mcl{A}_{1} \cap \Pf = \mcl{A}_{1}$ as required by part \ref{I:local.triv} of definition \ref{D:fibering.by.cones}. 
Thus, by part \ref{I:single.stratum.space.is.tame} of lemma \ref{L:strat.properties}, 
$\mcl{A}_{1}$ satisfies \eqref{E:tameness.of.stratified.space}. Hence, part \eqref{I:L.A.tameness} of definition \ref{D:fibering.by.cones} holds for $\mcl{A}_{1}$ and 
$\msf{L}_{1}$.     

Let $w = s(1,z) = \bigl[ (s,z) \bigr] \in \msf{CL}_{1}$, where $s \in [0,1)$, 
$z \in \msf{L}_{1}$ (see \eqref{E:CL.in.Euc.space}) and let $x \in \mcl{A}_{1}$. Define
    \begin{equation*}
      h_{1}(x, w) := \bigl( x, s \epsilon(x) \, z \cdot v_{x} \bigr) \in C[\mcl{A}_{1}] 
        \subset (T_{x} \D)^{\perp} .
    \end{equation*}
Thus, $h_{1}$ is an injection, in fact by \eqref{E:C[x].on.manif}, a bijection and \eqref{E:pi.-1.V.=.union.of.As} holds with $n=1$. Moreover, by \eqref{E:eps.on.manif.is.Lip.bnd.below}, example \ref{Ex:ratnl.fns.loc.Lip}, \eqref{E:comp.of.Lips.is.Lip}, and the fact that 
$v_{x,1}, \ldots, v_{x,p-d}$ are Lipschitz in $x \in \mcl{V}$, the map 
$\bigl( x, s(1,z) \bigr) \mapsto \bigl( x, s \epsilon(x) \, z \cdot v_{x} \bigr)$
is Lipschitz. 

Projection onto the first factor of $h_{1}(x, w)$ is a Lipschitz map yielding $x$. Projection onto the second factor is a Lipschitz map yielding 
$s \epsilon(x) \, z \cdot v_{x}$. By \eqref{E:eps.on.manif.is.Lip.bnd.below}, $\epsilon$ is Lipschitz and bounded below on $\mcl{A}_{1} := \mcl{V}$. 
In addition, $|s| \leq 1$, $\epsilon$ is continuous on $\mcl{A}_{1}$ and $\mcl{A}_{1}$ is relatively compact so $\epsilon$ is bounded on $\mcl{A}_{1}$, and $|v_{x}| = 1$. 
Therefore, $s \epsilon(x) \, z \cdot v_{x}$ is bounded on $\mcl{A}_{1}$. 
Hence, by example \ref{Ex:ratnl.fns.loc.Lip} again, 
$s \epsilon(x) \, z \cdot v_{x} \mapsto \epsilon(x)^{-1} 
\bigl( s \epsilon(x) \, z \cdot v_{x} \bigr) = s \, z \cdot v_{x}$ is Lipschitz in $x \in \mcl{A}_{1}$. 
Since $v_{x,1}, \ldots, v_{x,p-d}$ are orthonormal and Lipschitz in $x \in \mcl{A}_{1}$, the map $s \, z \cdot v_{x} \mapsto sz$ is Lipschitz. Finally, $s = |sz|$ is Lipschitz in $sz$, by\eqref{E:norm.is.Lip}. Putting all this together, it follows from \eqref{E:comp.of.Lips.is.Lip} that $h_{1}^{-1} : \bigl( x, s \epsilon(x) \, z \cdot v_{x} \bigr) \mapsto \bigl(x, s(1,z) \bigr) \in \mcl{A}_{1} \times \msf{L}_{1}$ is Lipschitz. Thus,  
$h_{1}$ is bi-Lipschitz. Hence, parts \ref{I:L.d-p-1}, \ref{I:CV.from.h.Ai}, and \ref{I:h.hi.invrs.Lip} of definition \ref{D:fibering.by.cones} hold. Part \ref{I:pi.h.(y,w)=y} holds trivially.

By \eqref{E:vector.ops.on.TD}, $h_{1}$ satisfies property \eqref{I:homogeneity.of.hi} of definition \ref{D:fibering.by.cones}. Thus, $\Pf$ satisfies part \ref{I:local.triv} of definition \ref{D:fibering.by.cones}. 

We prove that property \ref{I:Exp.alpha.homeom} holds. 
That $Exp \bigl( C[\Pf] \bigr) = Exp \bigl( \hat{N}^{\epsilon} \bigr)$ is open is immediate from \eqref{E:C[x].for.manif} and \eqref{E:tubular.N.eps.defn}.
By \ref{P:tubular.nbhd.thm}, $Exp^{-1}$ exists and both $Exp$ and $Exp^{-1}$ are diffeomorphisms on $C[\Pf]  = \hat{N}^{\epsilon}$. Let $r \in (0,1)$. Replace $\epsilon$ by $r \epsilon$ in the definition of $C[\Pf]$. Let $\mcl{K} \subset \Pf$ be compact. Then $C[\mcl{K}]$ is relatively compact in $T \D$. Then the Lipschitz property follows from from proposition \ref{P:tubular.nbhd.thm}, corollary \ref{C:cont.diff.=.loc.Lip}, and \eqref{E:local.Lip.is.Lip.on.compacts}. 

Finally, we prove \eqref{E:C.C[P].G.invar}. 
Let $(y, v) \in C[y]$ and let $g \in G$. By assumption in definition \ref{D:fibering.by.cones}, $g(y) \in \Pf$. By \eqref{E:tube.nbhd.G-invar}, 
$g_{\ast}(y,v) = \bigl( g(y), w \bigr) \in (T_{g(y)} \Pf)^{\perp}$ for some $w \in \RR^{k}$. By \eqref{E:g.preserves.Riem.met} and \eqref{E:Riem.metrics.on.D.on.Rk.same}, we have $|v| = \| (y,v) \|_{y} = \| g_{\ast}(y,v) \|_{g(y)} = |w|$. Therefore, since $\epsilon$ is $G$-invariant, $\epsilon \bigl[ g(y) \bigr] = \epsilon(y) > |w|$, 
so $g_{\ast}(y,v) = \bigl( g(y), w \bigr) \in C \bigl[ g(y) \bigr]$. 
I.e., $g_{\ast} \bigl( C [y] \bigr) \subset C[y]$. Since $g \in G$ is arbitrary, we must also have 
$g^{-1}_{\ast} \Bigl( C \bigl[ g(y) \bigr] \Bigr) \subset C [y]$. 
I.e., $g_{\ast} \bigl( C [y] \bigr) = C[y]$. 

Now suppose $x \in \mcl{C}$. Then there exists $(y,v) \in C[\Pf]$ s.t.\ $x = Exp(y,v)$. By \eqref{E:gExp=Expg.star}, $g(x) = g \circ Exp(y,v) = Exp \circ g_{\ast}(y,v)$. But we have just seen that $g_{\ast}(y,v) \in C[\Pf]$. Thus, $g(x) \in Exp \bigl( C[\Pf] \bigr) = \mcl{C}$, i.e.\ $g(\mcl{C}) \subset \mcl{C}$. By the same token, $g^{-1}(x) \in \mcl{C}$, 
i.e.\ $\mcl{C} \subset g(\mcl{C})$ and \eqref{E:C.C[P].G.invar} is proved.
  \end{example}
  
Another example, used in chapter \ref{Chptr:robst.loc.on.circle}, is developed in appendix \ref{Chptr:rob.loc.circle.cones.appendix}.

See appendix \ref{Chptr:misc.proofs} for the proof of the following. You may wish to review definition \eqref{E:boldF.[E].defns}. Recall that a cover of $\Pf$ is ``locally finite'' if every point of $\Pf$ has a neighborhood that intersects only finitely many sets in the cover
     \begin{lemma} \label{L:2eps.P.exists} 
Suppose a neighborhood of $\Pf$ in $T \D \restriction_{\Pf}$ is fibered
over $\Pf$ with cone fibers $C[x]$, $x \in \Pf$. Let $\mcl{E}_{1}, \mcl{E}_{2}, \ldots$ be a locally finite open cover of $\Pf$. Let $t_{1}, t_{2}, \ldots \in (0, \infty]$. Then there exists a continuous function $\epsilon : \Pf \to (0, \infty)$ s.t.\ $\epsilon$ has a $C^{\infty}$ extension to $\mcl{C}$  
and for every $x \in \Pf$ we have 
$\mbf{F}_{\bigl[0, 2 \, \epsilon(x) \bigr]}[x] \subset C[x]$ and, hence,
$Exp \Bigl( \mbf{F}_{\bigl[0, 2 \, \epsilon(x) \bigr]}[x] \Bigr) \subset \mcl{C}$. 
Moreover, for every $i = 1, 2, \ldots$, if $x \in \mcl{E}_{i}$ 
then $2 \, \epsilon(x) < t_{i}$. We may assume $\epsilon$ is $G$-invariant: 
$\epsilon \circ g = \epsilon$ for every $g \in G$. 
    \end{lemma}
Here, $Exp \Bigl( \mbf{F}_{\bigl[0, 2 \, \epsilon(x) \bigr]}[x] \Bigr) 
  := \Bigl\{ Exp(x,v) : (x,v) \in \mbf{F}_{\bigl[0, 2 \, \epsilon(x) \bigr]}[x] \Bigr\}$.
From now on, use the symbol $\epsilon_{\Pf}$ to refer to some fixed function $\epsilon$ as in the lemma corresponding 
to the trivial cover $\{ \mcl{E}_{1} \} := \{ \Pf \}$ and $t_{1} = + \infty$. Then
	\begin{equation} \label{E:eps.P.properties}
	         \mbf{F}_{\bigl[0, 2 \, \epsilon_{\Pf}(x) \bigr]}[x] \subset C[x]
		  \text{ and } \epsilon_{\Pf} \circ g = \epsilon_{\Pf} \text{ for every } x \in \Pf 
		    \text{ and } g \in G.
	\end{equation}
Note that, since $\epsilon_{\Pf}$ has a $C^{\infty}$ extension to an open neighborhood of $\Pf$, we have that $\epsilon_{\Pf}$ is smooth on each stratum of $\Pf$.

  \begin{example}[Locally finite cover]  \label{Ex:nested.compacts}
Let $\X$ be a (nonempty) locally compact second countable Hausdorff space. There exists a sequence $\mcl{K}_{0} = \varnothing, \mcl{K}_{1}, \mcl{K}_{2}, \ldots$, possibly finite, of compact subsets of $\X$ whose union is $\X$ and satisfying 
$\mcl{K}_{i-1} \subset \mcl{K}_{i}^{\circ}$ ($i = 1, 2, \ldots$; 
Ash \cite[Theorem A5.15, p.\ 387]{rbA72}; $\mcl{K}_{i}^{\circ}$ is the interior 
of $\mcl{K}_{i}$ in $\X$). Since $\varnothing^{\circ} = \varnothing$, it is possible 
for $\mcl{K}_{1} = \varnothing$. But, by droping all empty $\mcl{K}_{i}$s with positive $i$ and relabeling we may assume $\mcl{K}_{1} \neq \varnothing$. 

Let $x \in \X$ and let $j = i > 0$ be the smallest $j$ s.t.\ $x \in \mcl{K}_{j}$. I.e., $x \in \mcl{K}_{i} \setminus \mcl{K}_{i-1}$. Notice that $i$ is uniquely determined by $x$. 
Now, $\mcl{E}_{i} := \mcl{K}_{i+1}^{\circ} \setminus \mcl{K}_{i-1}$ is a relatively compact  open set containing 
$\mcl{K}_{i} \setminus \mcl{K}_{i-1}$. Thus, the collection $\{ \mcl{E}_{i} \}$ is an open, relatively compact cover of $\X$. Clearly, it is locally finite. Some of the sets $\mcl{E}_{i}$ may be empty. 
If $\mcl{E}_{i}$ is empty then $\mcl{K}_{i+1}$ is both open and closed. 
Since $\mcl{K}_{i+1} \neq \varnothing$, then it is  
a union of connected components of $\X$, a finite union if $\X$ is locally connected.
So $\mcl{K}_{i+1} = \X$ is possible. Eliminate empty $\mcl{E}_{i}$s. 

If $\X$ is compact then eventually $\mcl{K}_{i} = \X$, in which case either the sequence 
$\mcl{K}_{0} = \varnothing, \mcl{K}_{1}, \mcl{K}_{2}, \ldots$ terminates 
or $\mcl{K}_{i} = \X$ from some point on (possible since $\X^{\circ} = \X$ relative to itself).
  \end{example}

Say that the fibering over $\Pf$ has ``relatively compact trivialization'' if all the sets $\mcl{A}_{i}$ are relatively compact in $\Pf$. (Perhaps a fibering by cones can always be made to have a relatively compact trivialization. I don't know.) 

The following is an application of lemma \ref{L:2eps.P.exists}. For proof see appendix \ref{Chptr:misc.proofs}.
  \begin{lemma}  \label{L:rescaling.C[]}
Suppose the fibering over $\Pf$ has relatively compact trivialization. 
Let $\{ \mcl{E}_{j}, j = 1, 2, \ldots \}$ be a locally finite open covering of $\Pf$. For each $j$, let $t_{j} \in (0, \infty]$. Then we may assume that 
    \begin{equation}  \label{E:v.short}
      (y, v) \in C[\mcl{E}_{j}] \text{ implies that } |v| < t_{j}. 
    \end{equation}
I.e., if $\bigl\{ C[y] \subset T_{y} \D : y \in \Pf \bigr\}$ has the properties listed in definition \ref{D:fibering.by.cones} plus relatively compact trivialization, then we can replace it by another fibering having those properties but for which in addition \eqref{E:v.short} holds.
  \end{lemma} 
In particular, suppose $\Pf$ has relatively compact trivialization. 
Then, taking $\mcl{E}_{1} := \Pf$ and $t_{1} > 0$ arbitrary, we see that 
	\begin{multline}  \label{E:|(y,v)|.bounded}
	    \text{If } \Pf \text{ has relatively compact trivialization then } \\
	    \text{we may assume } \bigl\{ |v| : (y, v) \in C[\Pf] \bigr\} \text{ is bounded.}
	\end{multline}
(See \eqref{E:C[K].rel.compact}.)

Another application of lemma \ref{L:rescaling.C[]} is the following. (See \eqref{E:xi.is.metric.on.D}.)
	\begin{multline} \label{E:|(y,v)=xi(Exp(y,v),y)}
	    \text{If } \Pf \text{ has relatively compact trivialization then } \\	
		\text{we may assume that if } (y,v) \in C[\Pf] 
		  \text{ then } |v| = \xi \bigl[ y, Exp(y,v) \bigr],
	\end{multline}
the distance from $y$ to $Exp(y,v)$. We prove this as follows. If $x \in \D$ and $r > 0$, let $\mathscr{B}_{r}(x)$ be the open ball in $\D$ about $x$ with radius $r$ 
w.r.t.\ $\xi$. Let $B_{x,r}(0)$ be the open ball about 0 in $T_{x} \D \subset \RR^{k}$ 
with radius $r$. By proposition \ref{P:geod.cnvx.nbhds.exist}, 
for every $y \in \Pf$ there exists 
$\delta_{y} \in (0,1]$ s.t.\ $\mathscr{B}_{\delta_{y}}(y)$ is geodesically convex and a normal neighborhood of each of its points. (See Boothby \cite[p.\ 335]{wmB75}.) Therefore, by 
remark \ref{R:short.geods.geod.cnvx}, 
$Exp \bigl[ B_{x,\delta_{y}}(0) \bigr] = \mathscr{B}_{r}(x)$. 
By \eqref{E:Pf.has.countable.loc.finite.refinement}, 
the cover $\bigl\{ \mathscr{B}_{\delta_{y}/2}(y), y \in \Pf \bigr\}$ has a countable locally finite refinement $\{ \mcl{E}_{i}, i = 1, 2, \ldots \}$ covering $\Pf$.

Let $i = 1, 2, \ldots$. For some $y' \in \Pf$, we have 
$\mcl{E}_{i} \subset \mathscr{B}_{\delta_{y'}/2}(y')$. Let
$s_{i} := \tfrac{1}{4} \sup \{ \delta_{y'} > 0 : y' \in \Pf \text{ and } 
\mcl{E}_{i} \subset \mathscr{B}_{\delta_{y'}/2}(y') \} \leq 1/4$.
There exists $y_{i} \in \Pf$ s.t.\ $\mcl{E}_{i} \subset \mathscr{B}_{\delta_{y_{i}}/2}(y_{i})$ and 
$\delta_{y_{i}} \geq \tfrac{2}{3} \sup \{ \delta_{y'} > 0 : y' \in \Pf \text{ and } 
\mcl{E}_{i} \subset \mathscr{B}_{\delta_{y'}/2}(y') \}$. 
Thus $\delta_{y_{i}}/2 > s_{i}$. 

Suppose $(y, v) \in C[\Pf]$ with $y \in \mcl{E}_{i} \subset \mathscr{B}_{\delta_{y_{i}}/2}(y_{i})$. Then, by lemma \ref{L:rescaling.C[]}, we may assume that $|v| < s_{i}$. 
That means the length of the shortest geodesic from $y$ to $Exp(y,v)$ is less than 
$s_{i} < \delta_{y_{i}}/2$. Therefore, $Exp(y,v) \in \mathscr{B}_{\delta_{y_{i}}/2}(y) \subset \mathscr{B}_{\delta_{y_{i}}}(y_{i})$. 
Since $\mathscr{B}_{\delta_{y_{i}}}(y_{i})$ is geodesically convex and a normal neighborhood of $y$, by remark \ref{R:short.geods.geod.cnvx}, the arc $t \mapsto Exp_{y}(t v)$ ($t \in [0,1]$) is a shortest geodesic connecting $y$ to $Exp_{y}(v)$. Hence, $\xi \bigl[ y, Exp_{y}(v) \bigr] = |v|$. I.e., \eqref{E:|(y,v)=xi(Exp(y,v),y)} holds.

Unlike in the Tubular Neighborhood Theorem case,  
\eqref{E:alpha.dist.to.P}, if $(y, v) \in C[y]$ in general we do not have 
$\text{dist} \bigl[ \alpha^{-1}(y,v), \Pf \bigr] = |v|$ if $(y, v) \in C[\Pf]$. (See figure \ref{F:ConeBundle} for an example.) However, trivially, we have
	\begin{equation}  \label{E:dist.Exp.v.to.P.bnd}
	  \text{dist} \bigl[ Exp(y,v), \Pf \bigr] \leq |v| = \xi \bigl[ Exp(y,v), y \bigr] ,
	     \quad (y, v) \in C[\Pf] .
	\end{equation}

\section{Main theorem} \label{SS:main.haus.theorem}
The following property (not proposition!) gives a precise interpretation of the ``sales pitch'', remark \ref{R:sales.pitch}. Also see remark \ref{R:extended.sales.pitch}.

   \begin{property}  \label{Pty:agree.near.T} 
Let $\T \subset \D$. The quintuplet $(\Phi, \Ss', G, \T, a)$ satisfies the following. 
  \begin{enumerate}
	\item $\Ss' \subset \D$ is closed with empty interior and 
	$\Phi : \D' := \D \setminus \Ss' \to \F$ is continuous. 
For every $x \in \D \setminus \Ss'$ and $g \in G$ we have $\Phi \circ g(x) = \Phi(x)$ (``$\Phi$ is $G$-invariant''). For every $g \in G$ we have $g(\Ss') = \Ss'$. (``$\Ss'$ is $G$-invariant").  \label{I:Phi.is.nice}
	\item Let $\Psi : \D \setminus \tilde{\Ss} \to \F$ be \emph{any} continuous $G$-invariant data map, where $\tilde{\Ss} \subset \D$ is closed with empty interior and $G$-invariant, 
	$\tilde{\Ss} \cap \T = \Ss' \cap \T$, and the restrictions 
$\Psi \restriction_{\T \setminus \Ss'}$ and $\Phi \restriction_{\T \setminus \Ss'}$ are equal. 
\emph{Then} $\Hm^{a} (\tilde{\Ss}) > 0$. In particular, with $\Psi = \Phi$, we have 
$\Hm^{a} (\Ss') > 0$ so $\dim \Ss' \geq a$. \label{E:Psi.is.nice}
  \end{enumerate}
   \end{property}

Property \ref{Pty:agree.near.T} is a precise formulation of the notion that only a data map's behavior in the immediate vicinity of (in an ``infinitesimal neighborhood of'') $\T$ matters. See remark \ref{R:sales.pitch}. 

Suppose $\Phi$ is a data map partly defined on some space $\D$
and, for some $\Ss' \subset \D$, satisfies \eqref{I:Phi.is.nice} in property \ref{Pty:agree.near.T}. Let $\T \subset \D$ and suppose 
$\Ss' \cap \T \neq \varnothing$. Then $\Phi$ has property \ref{Pty:agree.near.T} with $a = 0$. (See \eqref{E:0.dim.Haus.measure} and \eqref{E:Haus.dim.defn}.)

   \begin{remark}[Immediate consequences of property \ref{Pty:agree.near.T}]  \label{R:consequences.of.property}
Suppose $(\Phi, \Ss', G, \T, a)$ has property \ref{Pty:agree.near.T} and $(\Psi, \tilde{\Ss})$ is as in the statement of the property. 
Then $(\Psi, \tilde{\Ss}, G, \T, a)$ also has property \ref{Pty:agree.near.T}. 
If $(\Phi, \Ss', G, \T, a)$ satisfies property \ref{Pty:agree.near.T} then we may modify $\Phi$ in various ways, \emph{perhaps nonalgorithmically}, to get a data map continuous off a closed set $\tilde{\Ss}$ and still know that 
$\dim \tilde{\Ss} \geq a$, providing $\tilde{\Ss} \cap \T = \Ss' \cap \T$ and we do not modify 
$\Phi$ on $\T \setminus \Ss'$. If theorem \ref{T:lwr.bnd.on.Haus.meas} below applies to 
$\Ss'$ then it will also apply to $\tilde{\Ss}$. 

If $(\Phi, \Ss', G, \T, a)$ satisfies property \ref{Pty:agree.near.T} then $\dim \Ss' \geq a$. Hence, by appendix \ref{Chptr:Lip.Haus.meas.dim}, if we also have 
$\Hm^{a}( \Ss' ) < \infty$, then $\dim \Ss' = a$. 
   \end{remark}

  \begin{example}  \label{Ex:basic.property.case}
Suppose $\Phi : \D \partlyto \F$ is a $G$-invariant data map and the hypotheses of proposition \ref{P:sing.dim.when.H.d-r.D.=.0} hold. Then $\Phi$ satisfies property \ref{Pty:agree.near.T} with $a = d-r-1$ and any finite group $G$ s.t.\ $\Phi$ and 
$\Ss^{\msf{V}}$ are $G$-invariant. 

It follows that principal component plane-fitting has property \ref{Pty:agree.near.T}. In \cite[pp.\ 495--496]{spE95} this idea is applied to immediately show that the singular set of the plane-fitting method proposed in Friedman \cite{jHF87.ProjPursuit} has co-dimension no bigger then 2.
  \end{example}

  \begin{remark}[Stronger version of property \ref{Pty:agree.near.T}?]  \label{R:stronger.property}
In proving the main theorem of this chapter (theorem \ref{T:lwr.bnd.on.Haus.meas}) we will find ourselves focussing on integral $a$. A plausible stronger version of property \ref{Pty:agree.near.T} is obtained by replacing 
$\Hm^{a} (\tilde{\Ss}) > 0$ by $\check{H}^{a}(\tilde{\Ss}) \neq \{0\}$. (See \eqref{E:cohom.and.codim}.) In the proof of proposition \ref{P:sing.dim.when.H.d-r.D.=.0} we see that under the conditions described in remark \ref{Ex:basic.property.case}  
$(\Phi, \Ss', G, \T, a)$ has this property when $a = d-r-1$. Implications of this are discussed in remark \ref{R:topological.bounds.on.vol}.
  \end{remark}

For the rest of this chapter we assume nothing about $\Ss'$ except that it is closed. 
In chapter \ref{Chptr:severity}, we show that we may sometimes replace $\Ss'$ by the singular set, $\Ss$, of $\Phi$, which need not be closed. By \eqref{E:D.is.cmpct.Riem.manif}, we assume $\D$ is a $C^{\infty}$ manifold. (See chapter \ref{Chptr:basic.setup}.) The definition of ``triangulation'' is given in \eqref{E:triangulation.defn}. For the definition of ``$C^{\infty}$ triangulation'' see Munkres \cite[Definition 8.3, pp.\ 80--81]{jrM66}. 

  \begin{lemma}  \label{L:biLip.triangulation}
If $\D$ is a compact Riemannian $C^{\infty}$ manifold then it has a $C^{\infty}$ triangulation. Any such is ``bi-Lipschitz'' (both the triangulation 
and its inverse are Lipschitz; \eqref{E:bi-Lipschitz.defn}). The simplicial complex triangulating $\D$ is finite. In particular, if $\xi$ is the topological metric on $\D$ induced by the Riemannian metric (\eqref{E:xi.is.metric.on.D}), then there is a finite simplicial complex, $P$, with $|P| \subset \RR^{N}$ for some $N = 1, 2, \ldots$, and a homeomorphism, 
$f : |P| \to \D$ s.t.\ both $f$ and $f^{-1}$ are Lipschitz relative to $\xi$ and the metric $|P|$ inherits from $\RR^{N}$.
  \end{lemma}

See appendix \ref{Chptr:misc.proofs} for proof. The lemma does not say that the triangulation $f : |P| \to \D$ has to be $G$-invariant if $\D$ is. (By ``$G$-invariant'' we mean that $\{ f^{-1} \circ g \circ f : |P| \to |P| : g \in G \}$ is a group of simplicial homeomorphisms on $P$, definition \ref{D:simplicial.homeom}. 
Lemma \ref{L:Cart.pwrs.of.simp.cmplx.can.be.perm.invar.cmplx} provides an example of a triangulation invariant under a group action.

In this chapter we make heavy use of Hausdorff dimension and measure (appendix \ref{Chptr:Lip.Haus.meas.dim}). Use $\xi$, \eqref{E:xi.is.metric.on.D}, to compute Hausdorff measure on $\D$.

It is reasonable to suppose that in statistical data analysis $\Ss'$ will have some regularity properties, but that cannot be said of the singular sets of information processing by living organisms. Still, the generality of the following allows application to such organic data maps.. Recall the definition of a simplicial homeomorphism (definition \ref{D:simplicial.homeom}).

In conformity with chapter \ref{Chptr:basic.setup}, since $\D$ is Riemannian, we assume it is $C^{\infty}$. 
   \begin{theorem}   \label{T:lwr.bnd.on.Haus.meas}
Let $\D$ be a compact Riemannian manifold (Boothby \cite[Definition (2.6), p.\ 184]{wmB75}) 
of dimension $d$. Suppose $G$ is a finite group of diffeomorphisms on $\D$ and the Riemannian metric, $\langle \cdot, \cdot \rangle$, on $\D$ is $G$-invariant. I.e., for every $g \in G$ we have 
$g^{\ast} \bigl( \langle \cdot, \cdot \rangle \bigr) = \langle \cdot, \cdot \rangle$. Put on $\D$ the topological metric, $\xi$, corresponding to $\langle \cdot, \cdot \rangle$. (See \eqref{E:xi.is.metric.on.D}) Use $\xi$ to compute Hausdorff measure on $\D$. Suppose there is a simplicial complex, $P$ with the following property. $\D$ has a bi-Lipschitz triangulation 
$f : |P| \to \D$ s.t.\ $\{ f^{-1} \circ g \circ f : |P| \to |P| : g \in G \}$ is a group of simplicial homeomorphisms on $P$. 

Let $\T \subset \Pf \subset \D$ ($\T = \Pf$ is possible) with both $\T$ and $\Pf$ $G$-invariant. Suppose $\T \subset \D$ is a compact, smooth, imbedded submanifold 
of $\D$. Let $t \in [0, d)$ be the dimension of $\T$. Suppose $\Pf$ is a stratified space having a neighborhood in $T \D \restriction_{\Pf}$ fibered over $\Pf$ by cones as described in subsection \ref{SSS:conical.fibers}. Let $p < d := \dim \D$ be the maximum dimension of strata of $\Pf$. So $t \leq p$.  

 Let $\Phi : \D \partlyto \F$ be a data map and let $\Ss' \subset \D$ be closed 
 with $\Phi$ defined and continuous on $\D \setminus \Ss'$. Let $a \in [0, d)$ and suppose 
 $(\Phi, \Ss', G, \T, a)$ satisfies property \ref{Pty:agree.near.T}.     
Then there is a $\gamma > 0$, depending only on $\D$,  
$C[\Pf]$, $a$, and $\F$, with the following property. Suppose $R > 0$ and 
$dist^{a}(\Ss', \Pf) \geq R$, i.e.,
             \begin{equation}  \label{E:essntl.dist from.S.to.Pf.small}
                \Hm^{a} \Bigl( \bigl\{ x \in \Ss' : dist( x, \Pf ) < R \bigr\} \Bigr) = 0. 
             \end{equation} 
(See \eqref{E:essential.dist.defn}.) Then  
             \begin{equation}  \label{E:Hm.a.S.geq.R.d-p-1}
                \Hm^{a}(\Ss') \geq \gamma R^{ \min(d-p-1,a) }.
             \end{equation}  
If $\Hm^{a}(\Ss') < \infty$ then $a$ is an integer.
   \end{theorem}

In appendix \ref{Chptr:F:CircleSing.data.and.calcs}, the volumes of the singular sets of two data maps are computed and compared. However, in that calculation no use is made of \eqref{E:Hm.a.S.geq.R.d-p-1}. 

The problem of computing lower bounds for $\gamma$ in \eqref{E:Hm.a.S.geq.R.d-p-1} is discussed in remark \ref{R:recipe.for.gamma}. 

By property \ref{Pty:agree.near.T}, $\dim \Ss' \geq a$. If the inequality is strict, i.e. 
$\dim \Ss' > a$, then, by \eqref{E:Haus.dim.defn}, $\Hm^{a}(\Ss') = \infty$, rendering the inequality \eqref{E:Hm.a.S.geq.R.d-p-1} trivial. However, at least in the data maps examined in chapters \ref{Chptr:sings.in.plane.fit} through \ref{Chptr:linear.classification}, it is usually the case that $\Hm^{a}(\Ss') = a$. 

\begin{remark}[``Fit-instability'' tradeoff]  \label{R:fit.instab.tradeoff}
The inequality \eqref{E:Hm.a.S.geq.R.d-p-1} describes a fit-instability tradeoff akin to the famous a variance-bias tradeoff 
(Hastie \emph{et al} \cite[Section 2.9]{tHrTjF01.statlearn}). All else being equal, one prefers a small singular set. All else being equal, one wishes the singularities, at least the severe ones (chapter \ref{Chptr:severity}), to be far from $\Pf$. 

The first desideratum is clear, but why is the second desirable? At a data set $x \in \Pf$ there is a unique point in $\F$ to which $x$ should be mapped by the data map $\Phi$. If $x$ is merely near $\Pf$ then $\Phi(x)$ is not determined \emph{a priori}. However, such an $x$ will strongly exhibit the structure $\Phi$ is meant to detect. So $\Phi$ should recognize that fact and provide a reasonable fit to such a data set. In particular, $x$ should not be a severe singularity of $\Phi$. 

The inequality \eqref{E:Hm.a.S.geq.R.d-p-1}, 
$\Hm^{a}(\Ss') \geq \gamma R^{ \min(d-p-1,a) }$, puts a limit on the degree towhich both desiderata can be achieved. It describes a tradeoff between them. 
\end{remark}

  \begin{remark}[Extended ``sales pitch''] \label{R:extended.sales.pitch}
As noted above Property \ref{Pty:agree.near.T} makes precise the ``sales pitch'', remark \ref{R:sales.pitch}. The theorem suggests an extension. Let $\epsilon > 0$. Suppose one is willing and able to examine, $\epsilon$ units away from $\Pf$,  the behavior of a data map $\Phi$. If one finds that the collection of singularities of $\Phi$ within that distance of $\Pf$ has $\Hm^{a}$ measure 0, then, in theory, one can bound below $\Hm^{a}(\Ss')$. A difficulty is that $\Pf \supset \T$ can be much bigger than $\T$ so verifying \eqref{E:essntl.dist from.S.to.Pf.small} can be much harder than verifying 
the hypotheses of theorem \ref{T:Phi.star.Hr.contains.Theta.star.Hr} 
  \end{remark}

Might $\min(d-p-1,a)$ typically -- always? -- be $d-p-1$? In example \ref{Ex:basic.property.case} and theorem \ref{T:spher.loc.sing.codim.nvar+1} below, for some $r \leq \dim \T$, we have 
$a = d-r-1 \geq d - t - 1$. But $\T \subset \Pf$ so $t \leq p$. Thus, $a \geq d - p - 1$.

Suppose $\min(d-p-1,a) = d-p-1$. Notice that for $R \in (0,1)$ the quantity $R^{d-p-1}$ increases in $p$. Therefore, the larger $p < d$ is the larger is the bound \eqref{E:Hm.a.S.geq.R.d-p-1}. See proposition \ref{P:aug.direct.mean.beats.robst.loc} for an implication of this.

Note that the symbols $\Pf$ and $P$, which differ only by font, have distinct meanings. 
Note further that \eqref{E:essntl.dist from.S.to.Pf.small} holds, 
e.g., if $\dim \bigl\{ x \in \Ss' : dist( x, \Pf ) < R \bigr\} < a$. By \eqref{E:|P|.compact}, we have
    \begin{equation}  \label{E:complex.P.is.finite}
         \text{The simplicial complex } P \text{ is finite.}
    \end{equation}
The discussion of Figure \ref{F:oneZeroDimP} in section \ref{SS:measure.distance.to.P} shows that the exponent $d-p-1$ in \eqref{E:Hm.a.S.geq.R.d-p-1} is not a surprise. 
\eqref{E:aug.mean.sing.vol.asymp}, with $p := \nvar$ and $\Pf := \T$, also provides an example where we get the volume of singular sets dropping off like distance to $\Pf$ raised to the $d-p-1$ power. However, because the constant $\gamma$ in \eqref{E:Hm.a.S.geq.R.d-p-1} is unknown these examples do not prove that the inequality is tight. (However, in the situation sketched in figure \ref{F:oneZeroDimP}(b), by the isoperimetric inequality, Osserman {\cite{rO1978.IsoperimetricInequality}, the constant is clearly $2 \pi$.)

Proposition \ref{P:aug.direct.mean.beats.robst.loc} provides a nontrivial application of theorem \ref{T:lwr.bnd.on.Haus.meas}.

Combining the theorem with \eqref{E:Hm.Rdelta.geq.delta.to.codim.times.Hm.R} and recalling that $\D$ is a compact smooth manifold -- see \eqref{E:D.is.cmpct.Riem.manif} -- (so we can take $\Rcl := \Ss'$ in \eqref{E:Hm.Rdelta.geq.delta.to.codim.times.Hm.R}) we get the following. (See \eqref{E:parallel.body} for definition of $(\Ss')^{\delta}$.)  
   \begin{corly}  \label{C:lwr.bound.on.prob.of.being.close.to.S'}
Let $x$ be a random element of $\D$ whose distribution is absolutely continuous with continuous strictly positive density $h$ w.r.t.\ $\Hm^{d}$. Let $R := dist^{a}(\Ss', \Pf)$.       Under the hypotheses of theorem \ref{T:lwr.bnd.on.Haus.meas}, and proposition \ref{P:tube.about.sing.set}, if $\Hm^{a}(\Ss') < \infty$ (so $\dim \Ss' = a$) there is a constant $C > 0$ depending only on $\D$, $\T$, $C[\Pf]$, $a$, and $\F$ 
      s.t.\ for all sufficiently small $\delta \in (0,1)$ 
	\begin{multline}  \label{E:Prob.delta.R}
	     \text{Prob} \, \bigl\{ x \in (\Ss')^{\delta} \bigr\} 
	       \geq (\inf h) \, \Hm^{d} \bigl[ (\Ss')^{\delta} \bigr]  
	         \geq (\inf h) \, \delta^{d-\dim \Ss'} \, \Hm^{\dim \Ss'} 
	           \bigl[ (\Ss')^{\delta} \bigr]  \\ 
	             \geq (\inf h) \, \delta^{d-a} \, \Hm^{a} 
	               \bigl[ (\Ss')^{\delta} \bigr] 
	             \geq C (\inf h) \, \delta^{d-a} \, R^{\min(d-p-1,a)}.
	  \end{multline}. 
   \end{corly}
(Since $\D$ is compact in theorem \ref{T:lwr.bnd.on.Haus.meas} and $h$ is continuous and strictly positive, $\inf h > 0$.) This result reveals a tension. As we will see in chapter \ref{Chptr:severity}, we often may assume $\Ss'$ consists of ``severe'' singularities of $\Phi$. If that is the case, if $R$ is small we have $\Phi$ falling apart near perfect fits. (See figure \ref{F:CircleSing}(b) and section \ref{SS:measure.distance.to.P}.) This is highly undesirable. Therefore, we want $R$ to be big. On the other hand  we want $\text{Prob} \, \bigl\{ x \in (\Ss')^{\delta} \bigr\}$ to be small. That favors small $R$. However, it may be hard to check that $\Hm^{a}(\Ss') < \infty$.

\section{Proof of theorem 4.2.6} \label{SS:measure.proof}
\subsection{Idea of proof} \label{SSS:idea.of.proof} 
The proof of the theorem is rather long so to orient the reader we give the gist of it here. 
If $\Hm^{a}(\Ss') = \infty$ then \eqref{E:Hm.a.S.geq.R.d-p-1} holds trivially. So assume 
    \begin{equation*}
      \Hm^{a}(\Ss') < \infty .
    \end{equation*} 
In particular, $\dim \Ss' \leq a$. But by property \ref{Pty:agree.near.T}, $\dim \Ss' \geq a$. Thus, $\dim \Ss' = a$. 
We may identify $\D$ and $|P|$. (Remember, $P$ is the simplicial complex. $\Pf$ is the set of perfect fits.) Notice that $R \leq diam(\D)$. 
Suppose $R \geq R_{0} > 0$,  where $R_{0}$ does not depend on $(\Phi, \Ss')$. For simplicty, assume $R_{0} =1$. 

We will observe the following convention in this proof.
    \begin{multline}  \label{E:use.G.invar.version.of.thm}
        \text{Whenever we invoke theorem \ref{T:polyApproxThm} it will be understood that we are using the } \\
           G\text{-invariant version described in proposition \ref{P:Phi.invar.Phi.tilde.invar}. }
    \end{multline}
Perhaps after subdividing $P$ (definition \ref{D:subdivision}), there is a subcomplex 
$Q$ of $P$ s.t.\ 
$|Q| \cap \T = \varnothing$ and $\Hm^{a} \bigl( \Ss' \setminus |Q| \bigr) = 0$. We use theorem \ref{T:polyApproxThm} to approximate $\Phi$ by a $G$-invariant data map, $\tilde{\Phi}$, continuous off a compact set $\tilde{\Ss}$ with the following properties. (i) $\tilde{\Ss} \cap |Q|$ is empty or the underlying space of a subcomplex of $Q$ of dimension no greater than $a$. (ii) Since $\Ss'$ has zero $\Hm^{a}$ measure near $\Pf$, so does $\tilde{\Ss}$. Specifically, $\Hm^{a} \bigl( \tilde{\Ss} \setminus |Q| \bigr) = 0$. 
(iii) There exists $\tilde{K} < \infty$ depending only on $a$ and $Q$ such that
	\begin{equation}  \label{E:polyhedral.volume.mag.factor}
		\Hm^{a} ( \tilde{\Ss} ) = \Hm^{a} \bigl( \tilde{\Ss} \cap |Q| ) 
		  \leq \tilde{K} \Hm^{a} (\Ss').
	\end{equation}
And (iv) $\tilde{\Ss} \cap \T = \Ss' \cap \T$ and the restrictions are equal: 
$\tilde{\Phi} \restriction_{\T \setminus \Ss'} = \Phi \restriction_{\T \setminus \Ss'}$. 

Therefore, by property \ref{Pty:agree.near.T} of $(\Phi, \Ss', G, \T, a)$, we have that 
$a = \dim \Ss' \geq \dim \bigl( \tilde{\Ss} \cap |Q| ) = \dim \tilde{\Ss} \geq a$. (Thus, 
$\dim \bigl( \tilde{\Ss} \cap |Q| \bigr) = a$. But $\tilde{\Ss} \cap |Q$ is the underlying space of a subcomplex of $Q$ so $\dim \bigl( \tilde{\Ss} \cap |Q| \bigr)$ is an integer. Therefore, $a$ is an integer.) That means $\tilde{\Ss}$ includes at least one simplex, $\sigma$, s.t.\ 
$\dim \sigma = a$. Since $P$ is finite, 
$m := \min \bigl\{ \Hm^{a}(\tau) : \tau \in P, \, \dim \tau = a \bigr\} > 0$. Thus, $\Hm^{a} \bigl( \tilde{\Ss} \cap |Q| ) \geq m > 0$. Therefore, by \eqref{E:polyhedral.volume.mag.factor}, $\Hm^{a} (\Ss') \geq \tilde{K}^{-1} \Hm^{a} ( \tilde{\Ss} ) \geq m/\tilde{K}$. 
Thus, if $R \in [1, diam(\D)]$, \eqref{E:Hm.a.S.geq.R.d-p-1} holds 
with 
    \begin{equation*}
      \gamma = \gamma_{1} := m / \bigl[ \tilde{K} diam(\D)^{\min(d-p-1, a)} \bigr] .
    \end{equation*}

Suppose $R < 1$. For simplicity, imagine $\Pf = \RR^{p} \subset \RR^{d}$ and, for $y \in \Pf$, the fiber, $C[y]$, is just the product $\{y\} \times B^{d-p}_{r}$, where $r > 1$ and $B^{d-p}_{r}$ is the open ball of radius $r$ in $\Pf^{\perp} \cong \RR^{d-p}$, the  orthogonal complement of $\Pf$. (See \eqref{E:ball.defn} and \eqref{E:superscript.perp.notation}.) 
Then $\Hm^{a} \bigl[ \Pf \times B_{R}^{d-p}(0) \bigr] = +\infty$. 

In this context, if $y \in \Pf$ and $v \in \Pf^{\perp}$ then $Exp(y,v) = y+v$ and the inverse, $\alpha$, is orthogonal projection onto $\Pf$. As in definition \ref{D:fibering.by.cones} part \eqref{I:Exp.alpha.homeom}, let 
$\mcl{C} := Exp \bigl( C[\Pf] \bigr)$. We apply a piecewise linear operation to pull 
$\alpha(\Ss' \cap \mcl{C})$ away from $\Pf \times \{0\}$ along the fibers of $C[\Pf]$. The effect, after exponentiating, is to pull $\Ss'$ away from $\Pf$. 
And we make the corresponding changes in $\Phi$. We call this operation ``dilation'' (section \ref{SS:Dilation}). This gives rise to a new $\Phi$, called $\Phi_{dilate}$ with its singular set $\Ss_{dilate}$. Perform the dilation operation to a degree sufficient to make 
$dist^{a}(\Ss_{dilate}, \Pf) \geq R_{0} = 1$. Then we know from before 
that $\gamma_{1} \leq \Hm^{a}(\Ss_{dilate}')$. Suppose we can find a factor 
$F < \infty$ s.t.\ 
$\gamma_{1} \leq \Hm^{a} (\Ss_{dilate}') \leq F \Hm^{a} (\Ss')$.
Then $\Hm^{a} (\Ss') \geq \gamma_{1}/F$. 

The dilation operation turns out to be Lipschitz with Lipschitz constant of the form  
to $K_{dil}/R$. Therefore, we can apply \eqref{E:Lip.magnification.of.Hm} and find that we may take $F = K_{dil}^{a} R^{-a}$. 
 
If $a \leq d-p-1$ then $F \propto R^{-a}$ can be the best one can do.
To see this, suppose $\dim \Ss' = a \leq d-p-1$. Recall that $a \geq 0$ is an integer. Although it might not correspond to any actual $\Phi$, one way we could have 
$\dim \Ss' \leq d-p-1$ is if there were and an $a$-sphere $S$ with radius $R$ and centered at the origin s.t.\ $\{y\} \times S \subset \Ss'$ for some $y \in \Pf$. (Do not confuse $S$ 
with $\Ss'$.) Dilation expands $\{y\} \times S \subset \Ss'$ in all directions equally so dilation expands $\Hm^{a}(S)$ by a factor proportional to $R^{-a}$. (Remember $R =$ diameter of $S$, which is expanded during dilation.) This yields \eqref{E:Hm.a.S.geq.R.d-p-1} in this special case.

But what if $d-p-1 < a$? In that case, if $F \propto R^{-a}$ we wind up with \eqref{E:Hm.a.S.geq.R.d-p-1} but with the wrong power of $R$. If $a > d-p-1$, 
then an $a$-sphere cannot fit into $\Pf^{\perp}$. But we might have instead 
$\Ss' = \mcl{E} \times S$, where $\mcl{E}$ is an at least 
$\bigl[ a - (d-p-1) \bigr]$-dimensional subset of $\Pf$ and $S$ is the $(d-p-1)$-sphere 
$S \subset \RR^{d-p} = \Pf^{\perp}$ with radius $R$ and centered at the origin. (This makes 
$\dim \Ss' \geq \bigl[ a - (d-p-1) \bigr] + (d-p-1 = a$, consistent with property \ref{Pty:agree.near.T}.) During dilation $\mcl{E}$ does not expand. $\Ss'$ only expands, and in $d-p-1$ directions at once. Thus, $\Hm^{a}(\Ss')$ expands only by a factor, 
$F = K_{dil}^{d-p-1} R^{-(d-p-1)}$ giving \eqref{E:Hm.a.S.geq.R.d-p-1} in this example with 
    \begin{equation*}
      \gamma = \gamma_{1} K_{dil}^{-(d-p-1)} . 
    \end{equation*}
We may assume $K_{dil} \geq 1$ in which case we get a $\gamma$ that works whether or not $a \leq d-p-1$. 

To prove \eqref{E:Hm.a.S.geq.R.d-p-1} in general, the bound \eqref{E:Lip.magnification.of.Hm} is too crude, as we just saw. Instead, we compute $F$ using the ``area formula''.  

Actually, one must be careful here. The area formula requires that $\Ss'$ be ``countably rectifiable'', which a general closed set $\Ss'$ might not be (Federer \cite[3.3.19, pp.\ 302--306]{hF69}). However, a polyhedron (appendix \ref{Chptr:basics.of.simp.comps}) \emph{is} countably rectifiable. So once again we use theorem \ref{T:polyApproxThm} to replace $\Ss'$ by a set that is $\Hm^{a}$-almost everywhere the underlying space of a subcomplex of $P$ (subsection \ref{SS:Small.R.rect}). (Subdividing the complex $P$ may be necessary first.)

Moreover, in general $\Pf$ can, alas, be a stratified space with curved strata. The same is true of the ``links'' $\msf{L}_{i}$ that define the cone fibers (definition \ref{D:fibering.by.cones}, part \ref{I:local.triv}). On top of that there are reasons to consider a subset of $C[\Pf]$ in which the fibers are tapered to 0 off a compact neighborhood 
of $\T$. This complicates the calculations (a lot!), but the same idea applies. We proceed in stages by change of variables and approximation until we reduce the problem to analyzing the behavior of a vector-valued rational function on a bounded convex set.

\subsection{A neighborhood of $\T$}   \label{SS:neighborhood.of.T} 
By assumption, $\T \subset \D$ is a compact $t$-dimensional manifold. If $\T$ is an open subset of $\Pf$ (e.g., $\Pf = \T$ or $\dim \Pf = 0$; see example \ref{E:cones.over.0.dim.Pf}) define $\mcl{W} := \clU := \T$. 

Suppose $\T$ is not open in $\Pf$. If $\mcl{A} \subset X \subset \D$, 
let $clos_{\X}(\mcl{A})$ denote closure of $\mcl{A}$ relative to $\X$. 
With $\X = \Pf$, we have $clos_{\Pf}(\mcl{A}) = clos_{\D}(\mcl{A}) \cap \Pf$:
    \begin{multline}  \label{E:Pf.and.D.closures}
      clos_{\Pf}(\mcl{A}) 
        = \bigcap_{\mcl{A} \subset \mcl{C} \subset \Pf; \; \mcl{C} \text{ is closed in } \Pf} \mcl{C}
        = \bigcap_{\mcl{A} \subset \mcl{F}; \; \mcl{F} \text{ is closed in } \D} (\mcl{F} \cap \Pf) \\
        = \left( \bigcap_{\mcl{A} \subset \mcl{F}; \; \mcl{F} \text{ is closed in } \D} \mcl{F} \right) 
           \cap \Pf
       = clos_{\D} (\mcl{A}) \cap \Pf. 
    \end{multline}

By \eqref{E:Pf.loc.compct.strat.space}, $\Pf$ is locally compact. Since $\T$ is compact it follows that there exists an open neighborhood, 
$\clU_{1}' \subset \Pf$ of $\T$ s.t.\ $clos_{\Pf}(\clU_{1}')$ is compact. Choose positive $\delta$ less than 
$\text{dist}(\Pf \setminus \clU_{1}', \T) := \inf \bigl\{ \xi(x,y) : 
x \in \Pf \setminus \clU_{1}', \, y \in \T \bigr\} > 0$. (See \eqref{E:set.distances}.)
Let $\clU_{\D} := \bigl\{ x \in \D : \text{dist}(x, \T) < 2\delta/3 \bigr\}$. 
So in general $\clU_{\D} \nsubseteq \Pf$ and, making $\delta$ smaller if necessary, 
$clos_{\D}(\clU_{\D})$ is compact. Now, $\clU_{\D} \cap \Pf \subset \clU_{1}'$. 
Therefore, 
    \begin{equation}  \label{E:closure.of.U.D.is.compact}
      clos_{\D}(\clU_{\D}) \text{ and } 
        clos_{\Pf}(\clU_{\D} \cap \Pf) \subset clos_{\Pf}(\clU_{1}') \text{ are compact.}
    \end{equation} 
Since both $\xi$ and $\T$ are $G$-invariant, we have, that $\clU_{\D}$ is $G$-invariant. 
Since $\D$ is metric, it is normal (Simmons \cite[p.\ 133]{gfS63}) and since $\T$ compact, 
$\T$ has a neighborhood, $\mcl{W}_{\D}$, in $\D$ s.t.\ 
$clos_{\D}(\mcl{W}_{\D}) \subset \clU_{\D}$. Therefore, letting
$\mcl{W} := \mcl{W}_{\D} \cap \Pf \subset \clU := \clU_{\D} \cap \Pf \subset \clU_{1}'$, we have
	\begin{equation}  \label{E:W.U.T.subset}
	     clos_{\Pf}(\clU) \text{ is compact and }
		\T \subset \mcl{W} \subset clos_{\Pf}(\mcl{W}) \subset \clU
		         \subset clos_{\Pf}(\clU) \subset \Pf.
	\end{equation}
Replacing $\mcl{W}_{\D}$ by $\bigcap_{g \in G} g(\mcl{W}_{\D})$, we have, by \eqref{E:g.commutes.w/.set.ops}, that $\mcl{W}_{\D}$ is $G$-invariant, too. 
Moreover, $\Pf$ is also $G$ invariant, by assumption. It follows that $\clU$ and $\mcl{W}$ are $G$-invariant. 

Since $clos_{\Pf}(\clU)$ is compact it is closed in $\D$. Therefore, 
$clos_{\D}(\clU) \subset clos_{\Pf}(\clU) \subset \Pf$. Thus, by \eqref{E:Pf.and.D.closures}, $clos_{\Pf}(\clU) = clos_{\D}(\clU)$. Define 
    \begin{equation}  \label{E:overline.U.defn}
        \overline{\clU} := clos_{\Pf}(\clU) = clos_{\D}(\clU). \; 
          \overline{\clU} \text{ is compact.}
    \end{equation}
In fact, from now on define 
    \begin{equation*}
      \overline{\X} := clos_{\D}(\X), \text{ for any } \X \subset \D .
    \end{equation*}

Since $\overline{\clU}$ is compact, there exists $\epsilon > 0$, constant s.t.\ 
$0 < \epsilon < \epsilon_{\Pf}(x')$ for every 
$x' \in \overline{\clU}$. ($\epsilon_{\Pf}$ is defined following lemma \ref{L:2eps.P.exists}.) 

\emph{Claim:} We may take  
	\begin{equation}  \label{E:eps.=.2}
		\epsilon = 2 . \text{ Thus, } \mbf{F}_{[0,4]}[\overline{\clU}] 
		  \subset C[\overline{\clU}] .
	\end{equation}
(See \eqref{E:boldF.[E].defns}.) This is accomplished by replacing 
$\langle \cdot, \cdot \rangle$ by 
$\langle \cdot, \cdot \rangle_{new} := 4 \epsilon^{-2} \langle \cdot, \cdot \rangle$. 
Obviously, if $(y,v) \in \mbf{F}_{[0,2 \epsilon]}[\overline{\clU}]$ w.r.t.\ $\langle \cdot, \cdot \rangle$, then $(y,v)$ is in $\mbf{F}_{[0,4]}[\overline{\clU}]$ w.r.t.\ 
$\langle \cdot, \cdot \rangle_{new}$, and \emph{vice versa}. 

If $\nabla$ is the Riemannian connection on $\D$ w.r.t.\ $\langle \cdot, \cdot \rangle$ then, by Boothby \cite[Definition (3.1), p.\ 313]{wmB75}, $\nabla$ is also the Riemannian connection on $\D$ w.r.t.\ $\langle \cdot, \cdot \rangle_{new}$. Therefore, geodesics are the same for the two Riemannian metrics (Boothby \cite[Definition (5.1), p.\ 326]{wmB75}). Moreover, the lengths of a geodesic w.r.t.\ $\langle \cdot, \cdot \rangle_{new}$ is $2/\epsilon$ times that for $\langle \cdot, \cdot \rangle$ (Boothby \cite[p.\ 185]{wmB75}). Hence, recalling \eqref{E:essential.dist.defn}, we have that $dist^{a}$ w.r.t.\ 
$\langle \cdot, \cdot \rangle_{new}$, call it $dist^{a,new}$, is $2/\epsilon$ times that 
for $\langle \cdot, \cdot \rangle$. 

Hence, if $dist^{a, new}(\Ss', \Pf) \geq R_{new}$, then 
$dist^{a}(\Ss', \Pf) \geq \epsilon R_{new}/2$. 
Similarly, by \eqref{E:Hs.defn}, Hausdorff measure, $\Hm^{a}$ w.r.t.\ 
$\langle \cdot, \cdot \rangle_{new}$, call it 
$\Hm_{new}^{a}$, is $(2/\epsilon)^{a}$ times that w.r.t.\ $\langle \cdot, \cdot \rangle$. 
Therefore, expressed in the ``new'' framework, \eqref{E:Hm.a.S.geq.R.d-p-1} looks like this:
             \begin{equation*}  
                (\epsilon/2)^{a} \Hm_{new}^{a}(\Ss') = \Hm^{a}(\Ss') 
                  \geq \gamma \; (\epsilon R_{new}/2)^{ \min(d-p-1,a) } .
             \end{equation*}
This can be written:
             \begin{equation*}  
                \Hm_{new}^{a}(\Ss') 
                  \geq \bigl[ (\epsilon/2)^{ \min(d-p-1,a) - a } \, \gamma \bigr] 
                    R_{new}^{ \min(d-p-1,a) }.
             \end{equation*}
Hence, replacing $\langle \cdot, \cdot \rangle$ by $\langle \cdot, \cdot \rangle_{new}$ does not qualitatively change \eqref{E:Hm.a.S.geq.R.d-p-1}. Only an adjustment in $\gamma$ is needed. This proves the claim \eqref{E:eps.=.2}.

Since $0 < \epsilon < \epsilon_{\Pf}$ on $\clU$, we have
	\begin{equation}  \label{E:eps.P.geq.2.on.U}
		\epsilon_{\Pf} \restriction_{\clU} > 2.
	\end{equation}

By Spivak \cite[Theorem 15, p.\ 68]{mS79.SpivakVol1} using the open cover of $\D$, 
$\bigl\{ \clU_{\D} , clos_{\D}(\mcl{W}_{\D})^{c} \bigr\}$ (complement taken relative 
to $\D$), there exists
	\begin{equation}  \label{E:rho.in.[0,1]}
		\rho : \D \to [0,1]
	\end{equation}
that is smooth and satisfies
	\begin{multline}  \label{E:rho.on.W.off.U}
		\rho \equiv 1 \text{ on } clos_{\D}(\mcl{W}_{\D}) \text{ and } 
		          \rho \equiv 0 \text{ everywhere on } \D \setminus \clU_{\D}. \\
		   \text{ In particular, the restriction } \rho \restriction_{\Pf} = 0 
		     \text{ off } \clU. 
	\end{multline}
(If $\T$ is open in $\Pf$, just take $\rho$ to be the indicator or characteristic function (see \eqref{E:indicator.fn.defn}) of $\T$.) In the interest of brevity, in a context dependent fashion we will often use $\rho$ to denote the restriction $\rho \restriction_{\Pf}$ of $\rho$ to $\Pf$. 

By \eqref{E:closure.of.U.D.is.compact}, $\clU_{\D}$ has a relatively compact neighborhood. 
$\mcl{V}_{\D}$. By corollary \ref{C:cont.diff.=.loc.Lip}, $\rho$ is locally Lipschitz on $\D$. Recall \eqref{E:D.imbedded.in.Rk}. Hence, by \eqref{E:closure.of.U.D.is.compact} and lemma \ref{L:imbedding.is.loc.Lip}, it is Lipschitz on $\mcl{V}_{\D}$ w.r.t.\ both $\xi$ and the Euclidean distance on $\RR^{k}$. Since $\rho$ is 0 off $\clU_{\D}$ it follows that
    \begin{equation}  \label{E:rho.is.Lip.on.D}
      \rho \text{ is Lipschitz on } \D \text{ w.r.t.\ both $\xi$ and the Euclidean distance on } 
        \RR^{k} . 
    \end{equation}

By \eqref{E:eps.=.2} and the fact that 
if $y \in \Pf \setminus \clU$ then $\bigl( y, \rho(y) v \bigr) = (y,0) \in \Pf$, we have, 
	\begin{equation*}
		\bigl( y, 2 \rho(y) v \bigr) \in C[\Pf] \text{ for every } (y,v) \in \mbf{F}_{1}[\Pf]. 
	\end{equation*}

Because $\clU_{\D}$, and $\mcl{W}_{\D}$ are both $G$-invariant, we may assume
	\begin{equation}  \label{E:rho.0.off.U.rho.g-invar}
		      \rho \circ g = \rho \text{ for every } g \in G.
	\end{equation}
(If $\rho$ is not initially $G$-invariant, replace it by $|G|^{-1} \sum_{g \in G} \rho \circ g$, where $|G|$ is the cardinality of $G$.) 

We may assume
	\begin{equation}  \label{E:rho.strictly.poz.on.U}
		\rho \text{ is strictly positive on } \clU_{\D}.
	\end{equation}
If this is initially false, then just replace $\clU_{\D}$ 
by $\{ x \in \D : \rho(x) > 0 \} \subset \clU_{\D}$. \eqref{E:rho.on.W.off.U} and \eqref{E:rho.0.off.U.rho.g-invar} continue to hold.

Let 
	\begin{equation}  \label{E:Cs.defn}
	      C_{s} := C(s) 
	           := \bigl\{ (x', v) \in C[\Pf]  : |v| < \rho(x') s \bigr\}, \quad s \in [0,\twoess].
	\end{equation}
Thus, if $\dim \Pf = 0$, so we adopt the framework of example \ref{E:cones.over.0.dim.Pf}, we see that $C_{s}$ is just the union $\bigcup_{y \in \T} \; \{y\} \times B_{s}(0)$, where 
$B_{s}(0)$ is the ball about 0 in $\RR^{d}$ with radius $s$. (See \eqref{E:ball.defn}.) Recall \eqref{E:pi.is.proj}. By \eqref{E:rho.on.W.off.U}, 
    \begin{equation*}
      \pi(C_{s}) \subset \clU .
    \end{equation*}

Let $(x,v) \in T \D$. By \eqref{E:pi.is.proj}, $\pi(x,v) = x$. Let $\varkappa$ be the other projection: 
    \begin{equation}  \label{E:varkappa.is.other.proj}
      \varkappa(x,v) = v , \quad (x,v) \in T \D .
    \end{equation} 
By \eqref{E:pi.is.smooth.and.open}, $\pi$ is continuous. Now, by \eqref{E:omega.D.defn}, \eqref{E:xi+.from.2.metrics}, and  \eqref{E:omegaD.xi+.Lip.property}, 
$T \D \subset \D \times \RR^{k} $ has the relative topology it inherits from the product topology on $\D \times \RR^{k}$. Therefore, $\varkappa$ is also continuous. In fact, 
$\varkappa$ is clearly Lipschitz relative to $\xi_{+}$ and the Euclidean metric 
on $\RR^{k}$.

Since $\varkappa$, $\rho$, and  $| \cdot | : (x,v) \mapsto |v|$ are continuous, we see that 
for any $s > 0$, the set 
    \begin{equation}  \label{E:Cs.relatively.open}
      C_{s} \subset C[\Pf] \text{ is open in the relative topology.}  
    \end{equation}
So, by \eqref{E:rho.in.[0,1]}, \eqref{E:boldF.[E].defns}, \eqref{E:W.U.T.subset}, \eqref{E:rho.on.W.off.U}, and \eqref{E:eps.=.2}, 
	\begin{equation}  \label{E:C.sub.s.in.Bold.F.[0,s)}
	  C_{s} \subset \mbf{F}_{[0,s)}[\clU]  \text{ and }
		\T \subset C_{s} \subset \mbf{F}_{[0,s]}[\clU]
		  \subset \mbf{F}_{[0,\twoess]}[\clU] \subset C[\clU],
		    \quad s \in [0,\twoess] .
	\end{equation}
 
Recall definition \ref{D:fibering.by.cones} part \eqref{I:Exp.alpha.homeom}. 
For $s \in (0, \twoess]$ let 
	\begin{equation}   \label{E:Bs.defn}
		\mcl{B}_{s} := Exp (C_{s}) = \alpha^{-1} ( C_{s} ) \subset \mcl{C} \subset \D.
	\end{equation}
Therefore, by \eqref{E:Cs.relatively.open} and definition \ref{D:fibering.by.cones}, part \ref{I:Exp.alpha.homeom}, we have 
	\begin{equation}  \label{E:Bs.contains.nbhd.of.T}
           \mcl{B}_{s} \text{ is an open neighborhood of } \T.
	\end{equation}
By \eqref{E:C.C[P].G.invar}, \eqref{E:rho.0.off.U.rho.g-invar}, \eqref{E:gExp=Expg.star}, and the fact that $G$ is a group of isometries, we have
	\begin{equation}  \label{E:G.invariance.of.BS.CS}
		g_{\ast}(C_{s}) = C_{s} \text{ and } g(\mcl{B}_{s}) = \mcl{B}_{s}, 
		  \quad g \in G, s \in (0, \twoess].
	\end{equation}
By \eqref{E:C.sub.s.in.Bold.F.[0,s)} (and definition \ref{D:fibering.by.cones} part \eqref{I:Exp.alpha.homeom}), we have 
	\begin{equation} \label{E:B2.subset.C}
		\mcl{B}_{\twoess} \subset \overline{\mcl{B}}_{\twoess} \subset \mcl{C}.
	\end{equation}

Recall from definition \ref{D:fibering.by.cones} that $\pi_{C} : C[\Pf] \to \Pf$ is projection onto the first factor and 
$\alpha := \bigl( Exp \restriction_{C[\Pf]} \bigr)^{-1}: \mcl{C} \to C[\Pf]$. Since $0 \leq \rho \leq 1$, by \eqref{E:rho.in.[0,1]}, by \eqref{E:dist.Exp.v.to.P.bnd}, we have 
	\begin{equation}  \label{E:Bs.w/in.s.of.P}
		x \in \mcl{B}_{s} \text{ implies } dist(x, \Pf) 
		  < \rho \bigl[\pi_{C} \circ \alpha(x) \bigr] s \leq s, 
		    \quad (s \in (0, \twoess]),
	\end{equation} 
where $dist$ is calculated using $\xi$. Let $y$ be a boundary point of $\mcl{B}_{s}$. Then letting $x \to y$ through $\mcl{B}_{s}$ we get
    \begin{equation} \label{E:dist.outside.of.Bs.to.P}
        dist(\D \setminus \mcl{B}_{s}, \Pf) 
          \leq \lim_{x \to y; \, x \in \mcl{B}_{s}} dist(x, \Pf) \leq s.
    \end{equation}

Similarly, let $s \in (0,1)$, $y \in \T$, and let $v \in \RR^{k}$ satisfy $|v| \in (s,2)$. Then, by \eqref{E:Cs.defn}, we have $(y,v) \in T \D \restriction_{\Pf} \setminus C_{s}$. 
Let $x := Exp(y,v)$. Then, because $y \in \T$, by \eqref{E:dist.Exp.v.to.P.bnd}, 
$dist(x, \T) \leq |v|$. Letting $|v| \downarrow s$ it follows that 
    \begin{equation} \label{E:Bs.w/in.s.of.T}
        dist(\D \setminus \mcl{B}_{s}, \T) \leq \lim_{|v| \downarrow s} dist(x, \T) = s.
    \end{equation}

By \eqref{E:C.sub.s.in.Bold.F.[0,s)} and part \ref{I:Exp.alpha.homeom} of definition \ref{D:fibering.by.cones}, we have
	\begin{equation}   \label{E:alpha.Bs.=.Cs}
		\alpha(\mcl{B}_{s}) = C_{s}, \quad s \in (0,\twoess].
	\end{equation}

\subsection{\eqref{E:Hm.a.S.geq.R.d-p-1} with large $R$}  \label{SS:R.geq.R0} 
Let $R_{0} \in (0,\ess)$ be a constant satisfying 
	\begin{equation} \label{E:outside.B.toT.>.R0}
		dist(\D \setminus \mcl{B}_{1}, \T) > R_{0}. 
	\end{equation}
(Note it is the distance from $\D \setminus \mcl{B}_{1}$ to $\T$, not $\Pf$, that is being bounded here. See remark \ref{R:local.level.theorem}.) For example,
 	\begin{equation}  \label{E:R0.choice}
	   \gamma \leq \gamma(R_{0}) , \text{ where, for example, }
	     R_{0} := \tfrac{1}{2} \min \bigl\{ \ess, dist( \D \setminus \mcl{B}_{1}, \T) \bigr\} .
	\end{equation}
which depends only on $\D$, $\T$, $C[\Pf]$, $a$, and $\F$. (By compactness of $\T$, $R_{0} > 0$.)  But sometimes we allow $R_{0}$ to depend on data map(s). By \eqref{E:Bs.contains.nbhd.of.T}, $\mcl{B}_{1}$ is an open neighborhood of $\T$ and $\T$ is compact by assumption so the distance is positive. Moreover, it is ordinary ``$dist$'' that is intended here (see \eqref{E:set.distances}), not essential distance \eqref{E:essential.dist.defn}.)
Suppose 
	\begin{equation}   \label{E:Hma.S'.cap.B.eta.R0.0}
		\Hm^{a} (\Ss' \cap \mcl{B}_{R_{0}}) = 0.
	  \end{equation}
Note that, by \eqref{E:Bs.w/in.s.of.P}, the following holds.
    \begin{equation} \label{E:R.R0.BR0}
        \text{If \eqref{E:essntl.dist from.S.to.Pf.small} holds for } R \geq R_{0}, 
          \text{ then \eqref{E:Hma.S'.cap.B.eta.R0.0} holds.}
    \end{equation}

By assumption and \eqref{E:complex.P.is.finite}, $\D$ has a $G$-invariant bi-Lipschitz triangulation $f : |P| \to \D$, where 
$P$ is a finite simplicial complex and $\{ f^{-1} \circ g \circ f : |P| \to |P| : g \in G \}$ is a finite group of simplicial homeomorphisms on $P$. By \eqref{E:Hma.S'.cap.B.eta.R0.0},  lemma \ref{L:loc.Lip.image.of.null.set.is.null}, and \eqref{E:g.commutes.w/.set.ops}, we have 
    \begin{equation}  \label{E:Hma(finvers(S').cap.finvers(BR0))=0}
      \Hm^{a} \bigl[ f^{-1}(\Ss') \cap f^{-1}(\mcl{B}_{R_{0}}) \bigl] = 0 .
    \end{equation}  

Temporarily identify $\Phi$ with $\Phi \circ f$, $\D$ with $|P|$, $\Pf$ with $f^{-1}(\Pf)$, $\Ss'$ with $f^{-1}(\Ss')$, $\mcl{B}_{R_{0}}$ with $f^{-1}(\mcl{B}_{R_{0}})$, and $\gamma$ with $\gamma'$. We may also assume 
    \begin{equation*}
      G \text{ is a finite group of simplicial homeomorphisms on } |P| .
    \end{equation*} 
It remains true that $g(\mcl{B}_{R_{0}}) = \mcl{B}_{R_{0}}$ as in \eqref{E:G.invariance.of.BS.CS}.

\emph{Claim:} We may assume 
	\begin{multline}  \label{E:Q.stays.away.from.T}
		P \text{ has a subcomplex, } Q, \text{ s.t.\ }
		( \D \setminus \mcl{B}_{R_{0}} ) \subset |Q| \\
		 \text{ and } |Q| \text{ does not intersect any simplex } \\
		 \text{ that in turn intersects a simplex intersecting } \T. 
	\end{multline}
To show this, recall \eqref{E:diam.of.set}. By proposition \ref{P:Phi.invar.Phi.tilde.invar}, we may assume $G$ is a group of simplicial is homeomorphisms of $P$ onto itself and  
	\begin{equation}  \label{E:diam.simps.<.R0/3}
		\text{All simplices in } P \text{ have diameter } < R_{0}/3.
	\end{equation} 

Let 
    \begin{multline}  \label{E:Q.defn}
      Q \text{ be the subcomplex of } P 
        \text{ consisting of all simplices that intersect } \\
          \D \setminus \mcl{B}_{R_{0}} \text{ and all faces of such simplices.}
    \end{multline} 
Let $\sigma \in P$ have nonempty intersection with $|Q|$. Then there exists $\tau \in Q$ s.t.\ 
$\tau \setminus \mcl{B}_{R_{0}} \neq \varnothing$ and $\sigma \cap \tau \ne \varnothing$. Suppose $\zeta \in P$ has nonempty intersection with $\T$ and with $\sigma$. 
Let $x_{1} \in \tau \setminus \mcl{B}_{R_{0}}$, $x_{2} \in  \sigma \cap \tau$, 
$x_{3} \in \sigma \cap \zeta$, and $x_{4} \in \zeta \cap \T$. Then, by \eqref{E:outside.B.toT.>.R0} and identifying $x_{i}$ with $f^{-1}(x_{i})$, we have
	\[
		R_{0} < \xi(x_{1}, x_{4}) 
		  \leq \xi(x_{1}, x_{2}) + \xi(x_{2}, x_{3}) + \xi(x_{3}, x_{4}). 
	\]
Now, $x_{1}, x_{2} \in \tau$, $x_{2}, x_{3} \in \sigma$, and $x_{3}, x_{4} \in \zeta$. Therefore, by \eqref{E:diam.simps.<.R0/3} 
we have $R_{0} < 3 \cdot \tfrac{1}{3} R_{0} = R_{0}$. 
Contradiction. Thus, one of the intersections, $\sigma \cap |Q|$, $\zeta \cap \T$, or $\sigma \cap \zeta$ must be empty. This proves the claim \eqref{E:Q.stays.away.from.T}. As a corollary we have,
	\begin{equation}  \label{E:no.simp.touch'n.T.is.face.of.simp.touch'n.Q}
		\text{No simplex intersecting $\T$ is a face of any simplex intersecting } |Q|.
	\end{equation}

Recall definition \ref{D:simplicial.homeom}. Now, $G$ is a finite group of simplicial homeomorphisms on $|P|$. \emph{Claim:} For every $g \in G$ the restriction 
$g \restriction_{|Q|}$ is a simplicial homeomorphism of $Q$ onto itself. Since $g$ is a simplicial homeomorphism of $P$ onto itself, we have that $g : |Q| \to |P|$ is simplicial (definition \ref{D:simplicial.map}). Let $\sigma \in Q$. There exists $\tau \in P$ s.t.\ 
$\sigma$ is a face of $\tau$ and $\tau$ intersects $\D \setminus \mcl{B}_{R_{0}}$. Then $g(\sigma)$ is a face of $g(\tau)$ and, by \eqref{E:G.invariance.of.BS.CS} and \eqref{E:g.commutes.w/.set.ops}, 
$g(\tau) \cap (\D \setminus \mcl{B}_{R_{0}}) = g \bigl( \tau \cap \D 
\setminus \mcl{B}_{R_{0}} \bigr)\neq \varnothing$. Hence $g(\sigma) \in Q$. Since the same is true with $g^{-1}$ in place of $g$, the claim follows.

By property \ref{Pty:agree.near.T} (and \eqref{E:Lip.magnification.of.Hm}), 
    \begin{equation} \label{E:Hma(S')>0}
      \Hm^{a}(\Ss') > 0 \text{ so } \dim \Ss' \geq a .
    \end{equation} 
If $\Hm^{a}(\Ss') = + \infty$ then
\eqref{E:Hm.a.S.geq.R.d-p-1} holds trivially. So we may assume
	\begin{equation}  \label{E:Hm.a.S.finite}
		\Hm^{a}(\Ss') < \infty.
	\end{equation}
But by the definition, \eqref{E:Haus.dim.defn},  of Hausdorff dimension, 
if $\dim \Ss' > a$ then $\Hm^{a}(\Ss') = + \infty$. In summary, we may assume
	\begin{equation}  \label{E:dim.S'.=.a}
		\Hm^{a}(\Ss') < \infty \text{ and } \dim \Ss' = a.
	\end{equation}

By \eqref{E:dim.S'.=.a}, specifically $\dim \Ss' = a$, we may apply proposition \ref{P:Phi.invar.Phi.tilde.invar} to infer the existence of a $G$-invariant continuous map 
$\tilde{\Phi} : |P| \setminus \tilde{\Ss} \to \F$, related to $(\Phi, \Ss')$ as described in theorem \ref{T:polyApproxThm}. In particular, $\tilde{\Ss}$ is closed and $G$-invariant. Moreover, by part \ref{I:S.tilde.subcomp} of theorem \ref{T:polyApproxThm}, 
$\tilde{\Ss} \cap |Q|$ is empty or the underlying space of a subcomplex of $Q$ of dimension no greater than $a$.

\emph{Claim:} 
    \begin{equation}  \label{E:S.tilde.has.empty.intrr}
      \tilde{\Ss} \text{ has empty interior.}
    \end{equation} 
We have been blurring the distinction between $\D$ and $P$, but $\tilde{\Ss}$ has been constructed in the $P$-world. By \eqref{E:Haus.dim.of.cmplx}, 
$\dim \tilde{\Ss} \leq a$. Since the triangulation 
$f : |P| \to \D$ is Lipschitz, by lemma \ref{L:loc.Lip.image.of.null.set.is.null}, 
$\dim f(\tilde{\Ss}) \leq a$. Since $f$ is a homeomorphism (see \eqref{E:triangulation.defn}) $\tilde{\Ss}$ non-empty interior if and only if 
$f(\tilde{\Ss})$ does. Suppose $\tilde{\Ss}$ has non-empty interior. Then there exists an open set $H \subset f(\tilde{\Ss})$. By Boothby \cite[Example (1.6), p.\ 56]{wmB75}, $H$ is a $d$-manifold. Therefore, by \eqref{E:Haus.dim.s-manif.=.s}, 
$a \geq \dim f(\tilde{\Ss}) \geq \dim H = d > a$, a contradiction proving the claim \eqref{E:S.tilde.has.empty.intrr}. 

Now by \eqref{E:Q.stays.away.from.T}, 
if $\tau \in P$, $\tau \cap \T \neq \varnothing$, and $\rho \in Q$,  
then $\tau \cap \rho = \varnothing$. Therefore, tivially,  
by part \ref{I:no.change.off.nbhd.of.S.in.Q} of theorem \ref{T:polyApproxThm}, 
$\tilde{\Ss} \cap \tau = \Ss' \cap \tau$ 
and $\tilde{\Phi} \restriction_{\tau} = \Phi \restriction_{\tau}$. It follow that 
$\tilde{\Ss} \cap \T = \Ss' \cap \T$ and $\tilde{\Phi} \restriction_{\T \setminus \tilde{\Ss}} 
= \Phi \restriction_{\T \setminus \Ss'}$. 
By assumption, $(\Phi, \Ss', G, \T, a)$ satisfies property \ref{Pty:agree.near.T}.  
Now, $(\tilde{\Phi}, \tilde{\Ss})$ are $G$-invariant and $\tilde{\Ss}$ has empty interior. Therefore, by property \ref{Pty:agree.near.T} of $(\Phi, \Ss', G, \T, a)$, 
    \begin{equation}  \label{E:Hma(S.tilde)>0}
      \Hm^{a} (\tilde{\Ss}) > 0. \text{ In particular, } \dim \tilde{\Ss} \geq a .
    \end{equation}
 
Let $y \in |P| \setminus |Q|$ and let $\rho \in P$ be the unique simplex s.t.\ 
$y \in \text{Int} \, \rho$. (See \eqref{E:x.in.exctly.1.simplex.intrr}.) Then $\rho \notin Q$. Hence, $(\text{Int} \, \rho) \cap |Q| = \varnothing$. By \eqref{E:Q.stays.away.from.T}, this means $\text{Int} \, \rho \subset \mcl{B}_{R_{0}}$. Hence, 
$\Hm^{a} \bigl[ \Ss' \cap (\text{Int} \, \rho) \bigr] = 0$. It also follows that, 
    \begin{equation}  \label{E:|P|.less.|Q|.subset.BR0}
      |P| \setminus |Q| \subset \bigcup_{(\text{Int} \, \rho) \cap |Q| = \varnothing} 
        \text{Int} \, \rho \; \subset \; \mcl{B}_{R_{0}} .
    \end{equation}
Therefore, 
    \begin{equation}  \label{E:S.tilde-|Q|.exprssn}
       \tilde{\Ss} \setminus |Q| \subset \bigcup_{(\text{Int} \, \rho) \cap |Q| = \varnothing} 
        \tilde{\Ss} \cap (\text{Int} \, \rho) 
          \; \subset \; \tilde{\Ss} \cap \mcl{B}_{R_{0}} . 
    \end{equation} 
Now, by \eqref{E:Hma.S'.cap.B.eta.R0.0}, 
$\text{Int} \, \rho \subset \mcl{B}_{R_{0}}$ implies 
$\Hm^{a} \bigl[ \Ss' \cap (\text{Int} \, \rho) \bigr] = 0$. Therefore, by part \ref{I:no.change.off.Q} of theorem \ref{T:polyApproxThm}, we have 
$\Hm^{a} \bigl[ \tilde{\Ss} \cap (\text{Int} \, \rho) \bigr] = 0$ as well. 
Hence, by \eqref{E:S.tilde-|Q|.exprssn}, 
    \begin{equation}  \label{E:Hma(S.tilde-|Q|)=0}
      \Hm^{a} \bigl( \tilde{\Ss} \setminus |Q| \bigr) = 0 .
    \end{equation}

It then follows from \eqref{E:Hma(S.tilde)>0} that
	\begin{equation} \label{E:a.leq.dim.S.tilde.cap.Q}
	    0 < \Hm^{a} (\tilde{\Ss}) = \Hm^{a} \bigl( \tilde{\Ss} \setminus |Q| \bigr) 
	      + \Hm^{a} \bigl( \tilde{\Ss} \cap |Q| \bigr) 
	         = \Hm^{a} \bigl( \tilde{\Ss} \cap |Q| \bigr).
	  \end{equation}
Thus, $\dim (\tilde{\Ss} \cap |Q|) \geq a$. But by theorem \ref{T:polyApproxThm}, part \ref{I:S.tilde.subcomp} and \eqref{E:Haus.dim.of.cmplx}, 
we have that $\dim (\tilde{\Ss} \cap |Q|)$ is an integer $\leq a$. Therefore, by \eqref{E:Hma(S.tilde)>0}, we have  
	\begin{equation}  \label{E:a.is.integer}
		\dim (\tilde{\Ss} \cap |Q|) = a \text{ and } a \text{ is an integer,}
	\end{equation}  
as asserted in the theorem.\footnote{Actually, we have only proved \eqref{E:a.is.integer} assuming \eqref{E:Hma.S'.cap.B.eta.R0.0} holds. Normally, we want $R_{0}$ be be independent of $(\Phi, \Ss')$, but for the purposes of proving \eqref{E:a.is.integer}, take $R_{0} > 0$ so small that for some $R$ 
as in \eqref{E:essntl.dist from.S.to.Pf.small}, we have $R_{0} \leq R$ so, by \eqref{E:R.R0.BR0}, \eqref{E:Hma.S'.cap.B.eta.R0.0} \emph{does} hold.} 
(To prove \eqref{E:a.is.integer} we relied on theorem \ref{T:polyApproxThm}, which required 
$\dim \bigl( \Ss' \cap |Q| \bigr) \leq a$, which is a consequence of \eqref{E:dim.S'.=.a}, which is a consequence of \eqref{E:Hm.a.S.finite}.)

Now remember that $\D$ and $|P|$ are distinct. Let 
    \begin{equation}  \label{E:triangulation.Lip.const}
      K_{f^{-1}} \text{ be the Lipschitz constant of the inverse triangulation } f^{-1} : \D \to |P| .
    \end{equation}
By \eqref{E:polyhedral.volume.magnification.factor}, \eqref{E:Hma(S.tilde-|Q|)=0}, 
and \eqref{E:Lip.magnification.of.Hm}, there exists $K(R_{0}) := K \in (0, \infty)$ s.t.\ 
    \begin{align}  \label{E:Ha.S.tilde.leq.mult.Ha.S'}
      \Hm^{a}(\tilde{\Ss}) &= \Hm^{a}(\tilde{\Ss} \setminus |Q|) 
          + \Hm^{a}(\tilde{\Ss} \cap |Q|) \notag \\ 
        &= \Hm^{a}(\tilde{\Ss} \cap |Q|) \notag \\
        &\leq K(R_{0}) \Hm^{a} \bigl[ f^{-1}(\Ss') \cap |Q| \bigr] \\
        &\leq K(R_{0}) \Hm^{a} \bigl[ f^{-1}(\Ss') \bigr] \notag \\
        &\leq K(R_{0}) K_{f^{-1}}^{a} \Hm^{a} (\Ss') \notag . 
    \end{align} 

Since $\dim (\tilde{\Ss} \cap |Q|) = a$, we have that $\tilde{\Ss} \cap |Q|$ contains at least one simplex of dimension $a$. Let
     \begin{equation}  \label{E:Vol.a(R0).defn}
      Vol_{a}(R_{0}) > 0 \text{ be the  volume of the smallest $a$-simplex in } P .
    \end{equation}
(By \eqref{E:complex.P.is.finite}, $P$ is finite. Recall that, by \eqref{E:diam.simps.<.R0/3}, this volume depends on $R_{0}$. In remark \ref{R:topological.bounds.on.vol}, another choice of ${E:Vol.a(R0).defn}$ is suggested.) Then, by \eqref{E:Ha.S.tilde.leq.mult.Ha.S'}, 
    \begin{equation}  \label{E:Vol.a(R0).bnd.on.Hm.a(S')}
       \Hm^{a} (\Ss') \geq K(R_{0})^{-1} K_{f^{-1}}^{-a} \Hm^{a}(\tilde{\Ss}) 
         \geq K(R_{0})^{-1} K_{f^{-1}}^{-a} Vol_{a}(R_{0} .
    \end{equation}
    
Define,  
	\begin{equation}  \label{E:gamma.R0.defn}
		\gamma(R_{0}) 
		  := K_{f^{-1}}^{-a} K(R_{0})^{-1} \, Vol_{a}(R_{0}) 
		    \cdot (diam(\D))^{-\min(d-p-1, a)} . 
	\end{equation}
Then, if $\gamma \leq \gamma(R_{0})$ and $R_{0}$ satisfies \eqref{E:outside.B.toT.>.R0} and depends only on $\bigl(\D, C[\Pf], \T, a, \F \bigr)$, e.g. as in \eqref{E:R0.choice}, then, by \eqref{E:Vol.a(R0).bnd.on.Hm.a(S')}, \eqref{E:Hm.a.S.geq.R.d-p-1} holds for 
$R \geq R_{0}$.

\subsection{Small $R$: Rectifiability}   \label{SS:Small.R.rect}
The argument given in the last section 
yields a value, $\gamma(R_{0})$, of $\gamma$ in \eqref{E:Hm.a.S.geq.R.d-p-1} that depends on $R_{0}$ satisfying \eqref{E:Hma.S'.cap.B.eta.R0.0}. But we want a value of $\gamma$ that does not depend on $R \geq R_{0}$  and, hence, on $R$ in \eqref{E:essntl.dist from.S.to.Pf.small}. The argument in the last section depends on finding a subcomplex, $Q$, of $P$ satisfying \eqref{E:Q.stays.away.from.T}. As $R_{0} \downarrow 0$, constructing such a $Q$ requires ever finer subdivisions of $P$. As the subdivisions get finer the number 
$Vol_{a}(R_{0}) > 0$ gets smaller, hence, $\gamma(R_{0})$ gets smaller. So a different argument is needed to get a $\gamma$ independent of $R$. We will use "dilation" to reduce the general case to that discussed in the last section. In order to carry out that operation, we need to replace $\Ss'$ by a set whose behavior under dilation is easier to study.

\emph{Suppose the theorem is false.} Then there exist sequences, 
$\{ \Phi_{m}, \; m = 1, 2, \ldots \}$ and 
$\{ R_{m} \}$ of $G$-invariant data maps and positive numbers, resp., having the following properties.  
    \begin{property}  \label{Pty:four.properties}
          \begin{enumerate}
          \item For each $m$ there exists a $G$-invariant closed set 
          $\Ss_{m}' \subset \D$ and continuous map 
             $\Phi_{m} : \D \setminus \Ss_{m}' \to \F$.
          			\label{I:sing.set.in.Sm'}
          \item Property \ref{Pty:agree.near.T} holds with $\Phi_{m}$ in place of $\Phi$ 
          and $\Ss_{m}'$ 
             in place of $\Ss'$. \label{I:pty:agree.near.T.holds.forPhi.m}
          \item For each $m$ we have $dist^{a}(\Ss_{m}', \Pf) \geq R_{m} > 0$. (
          See \eqref{E:essential.dist.defn} for definition of $dist^{a}$.)  
                              \label{I:Rm.away.from.Pf}
          \item $\infty > R_{m}^{ -\min( d-p-1, a) } \, \Hm^{a}(\Ss_{m}') \to 0$. 
             \label{I:Hm.Sm.to.0.fast}
       \end{enumerate}
    \end{property}
By property \ref{I:pty:agree.near.T.holds.forPhi.m}, we have $\Hm^{a}(\Ss_{m}') > 0$ for every $m$. Now, by \eqref{E:D.is.cmpct.Riem.manif}, $\D$ is compact with positive dimension. Therefore, the same is true of $|P|$. 
But $0 < R_{m} \leq dist^{a}(\Ss_{m}', \Pf) \leq diam \bigl( |P| \bigr)$. Hence, by properties \ref{I:Rm.away.from.Pf} and \ref{I:Hm.Sm.to.0.fast}, we have that 
    \begin{equation}  \label{E:Rm.bounded}
        \{ R_{m} \} \text{ is bounded in $m$ so } \Hm^{a}(\Ss_{m}') \to 0 \text{ as } m \to \infty.
    \end{equation}
    
Suppose there exists $R_{0} > 0$, possibly depending on the whole sequence 
$\{ \Phi_{m}, \Ss_{m}' \}$, s.t.\ \eqref{E:outside.B.toT.>.R0} holds and, for an infinite collection, $M$, of $m$'s, we have $R_{m} > R_{0}$.  
Let $x \in \Ss_{m}'$ and suppose $x \in \mcl{B}_{R_{0}}$. 
By \eqref{E:Bs.w/in.s.of.P} and part \ref{I:Rm.away.from.Pf} of property \ref{Pty:four.properties},  
    \begin{equation*}
         dist(x, \Pf) \leq R_{0} < R_{m} \leq dist^{a}(\Ss_{m}', \Pf) , \quad m \in M .
    \end{equation*}
Therefore, by \eqref{E:essential.dist.defn}, if $m \in M$ we have that \eqref{E:Hma.S'.cap.B.eta.R0.0} holds 
with $\Ss'$ replaced $\Ss_{m}'$. But, by \eqref{E:Q.defn}, 
$|P| \setminus \mcl{B}_{R_{0}} \subset |Q|$. Thus, 
$\Hm^{a}(\tilde{\Ss}) = \Hm^{a} \bigl( \tilde{\Ss} \cap |Q| \bigr)$.  
Hence, by \eqref{E:polyhedral.volume.magnification.factor}, 
$\bigl\{ \Hm^{a}(\Ss_{m}') : m \in M \bigr\}$ is bounded away from 0. This contradicts \eqref{E:Rm.bounded}. Therefore, lopping off the beginning of the sequence if necessary, WLOG we may assume 
	\begin{multline}  \label{E:Rm.=.dist.Sm.P}
		 1 > dist^{a}(\Ss_{m}', \Pf) \to 0, \text{ and, hence, } R_{m} \to 0, 
		   \text{ as } m \to \infty. \\
		     \text{ In particular, } R_{m} \in (0,1) \text{ for all } m = 1, 2, \ldots.
	\end{multline}

Suppose for some $m$ we have $\Hm^{a} (\Ss_{m}') = 0$. Then, by \eqref{E:essential.dist.defn}, $dist^{a}(\Ss_{m}', \Pf) = D$. But, by assumption, $0 \leq t < d$. Therefore, $diam \, \D > 0$. So, by \eqref{E:Rm.=.dist.Sm.P}, eventually 
$\Hm^{a} (\Ss_{m}') > 0$. Hence, by \eqref{E:Rm.bounded}, we may also assume
    \begin{equation} \label{E:Ha.Sm'.finite}
        0 < \Hm^{a} (\Ss_{m}') < \infty \text{ for all } m.
    \end{equation}
Therefore, by \eqref{E:Haus.dim.defn}, we may assume that 
	\begin{equation} \label{E:for.all.m.dim.Sm.=.a}
		\dim \Ss_{m}' = a \text{ for all } m.
	\end{equation}
 
Recall the definition of countable rectifiability (Hardt and Simon \cite[Definition 2.1', p.\ 20]{rHlS86.GMT}). \emph{Claim:} 
     \begin{multline} \label{E:Sm'.countably.rect}
	   \text{We may assume that } a \text{ is an integer and } \\
	     \text{ for every } m, \; \Ss_{m}' \text{ is countably $a$-rectifiable and has finite }
	        \Hm^{a}\text{-measure. } \\
	 \text{ Moreover, $\bigl\{ (\Phi_{m}, \Ss_{m}', R_{m}) \big\}$ has properties 
	     \ref{I:sing.set.in.Sm'} through \ref{I:Hm.Sm.to.0.fast}.}
     \end{multline}

Let $m = 1,2, \ldots$ be arbitrary, but fixed. Reasoning as above, by \eqref{E:Bs.w/in.s.of.P} and part \ref{I:Rm.away.from.Pf} of property \ref{Pty:four.properties},
we have that \eqref{E:Hma.S'.cap.B.eta.R0.0} holds with $\Ss'$ replaced $\Ss_{m}'$ and $R_{m}$ in place of $R_{0}$:
	\begin{equation}   \label{E:Hma.S'.cap.B.Rm.0}
		\Hm^{a} (\Ss_{m}'  \cap \mcl{B}_{R_{m}}) = 0.
	  \end{equation}

By \eqref{E:Bs.contains.nbhd.of.T} and compactness of $\T$, there exists 
$r = r_{m} \in (0, R_{m}]$ s.t.\  
	\begin{equation} \label{E:outside.B.Rm.toT.>.r}
		dist(\D \setminus \mcl{B}_{R_{m}/2}, \T) > r_{m}. 
	\end{equation}
	
By assumption and \eqref{E:complex.P.is.finite}, $\D$ has a bi-Lipschitz triangulation 
$f : |P| \to \D$, where $P$ is a finite simplicial complex. WLOG and by assumption we may temporarily identify $\D$ with $|P|$ and assume $G$ is a group of simplicial homeomorphisms on $P$. We argue in a similar way to the proof of \eqref{E:Q.stays.away.from.T} with $R_{m}/2$ in place of $R_{0}$. If necessary, replace $P$ by a $G$-invariant subdivision $P_{m} := P'$ of the sort described in theorem \ref{T:polyApproxThm}, part \ref{I:arb.fine.subdivision} (recall \eqref{E:use.G.invar.version.of.thm}) so that we may assume 
	\begin{equation}  \label{E:all.simps.have.small.diam}
		\text{All simplices in } P_{m} \text{ have diameter } < r/8 \leq R_{m}/8.
	\end{equation}
Let $Q_{m}$ be the subcomplex of $P_{m}$ consisting of all simplices that intersect 
$\D \setminus \mcl{B}_{R_{m}/2}$ and all faces of all such simplices. Usually we identify $P$ and $\D$. We have  
    \begin{equation*}
      \D \setminus \mcl{B}_{R_{m}/2} \subset |Q_{m}| 
        \text{ so } P \setminus |Q_{m}| \subset \mcl{B}_{R_{m}/2} . 
    \end{equation*}
We prove the following analogue of \eqref{E:Q.stays.away.from.T}.
	\begin{multline}  \label{E:for.each.m.Q.aint.close.to.T}
		|Q_{m}| \text{ does not intersect any simplex} \\ 
		  \text{that in turn intersects a simplex intersecting } \T.
	\end{multline}
The proof is similar to that of \eqref{E:Q.stays.away.from.T}. 
Let $\sigma \in P_{m}$ have nonempty intersection with $|Q_{m}|$. Then there exists 
$\tau \in Q_{m}$ s.t.\ $\tau \setminus \mcl{B}_{R_{m}/2} \neq \varnothing$ 
and $\sigma \cap \tau \ne \varnothing$. Suppose $\zeta \in P$ has nonempty intersection with $\T$ and with $\sigma$. 
Let $x_{1} \in \tau \setminus \mcl{B}_{R_{m}/2}$, 
$x_{2} \in  \sigma \cap \tau$, $x_{3} \in \sigma \cap \zeta$, and $x_{4} \in \zeta \cap \T$.
Then, using \eqref{E:outside.B.Rm.toT.>.r} and \eqref{E:all.simps.have.small.diam}, we arrive at
    \begin{equation*}
    	r < \xi(x_{1}, x_{4}) \leq \xi(x_{1}, x_{2}) + \xi(x_{2}, x_{3}) + \xi(x_{3}, x_{4}) 
	  < \frac{3}{8} r.
    \end{equation*}
Contradiction. This proves \eqref{E:for.each.m.Q.aint.close.to.T}.

Let $(\tilde{\Phi}_{m}, \tilde{\Ss}_{m})$ be to $(\Phi_{m}, \Ss_{m}')$ as 
$(\tilde{\Phi}, \tilde{\Ss})$ is to $(\Phi, \Ss')$ in proposition \ref{P:Phi.invar.Phi.tilde.invar}. In particular, theorem \ref{T:polyApproxThm} and \eqref{E:for.all.m.dim.Sm.=.a} tell us  
    \begin{equation}  \label{E:dim.S.tilde.leq.a.tildes.equivar}
      \tilde{\Ss}_{m} \text{ is closed, } \dim \tilde{\Ss}_{m} \leq \dim \Ss_{m}' = a , 
        \text{ and } (\tilde{\Phi}_{m}, \tilde{\Ss}_{m}) \text{ is $G$-invariant.} 
    \end{equation}
We prove the following. 
	\begin{subequations} \label{E:S.tilde.properties}
		\begin{gather}  
			\Hm^{a}(\tilde{\Ss}_{m}) < \infty. \label{E:S.tilde.properties1} \\
			a \text{ is an integer and } \tilde{\Ss}_{m} \text{ is countably } 
			  a\text{-rectifiable.}  \label{E:S.tilde.properties2} \\
			\begin{split}
			\text{For some sequence } \tilde{R}_{m} \to 0 \text{ we have that }
			        \bigl\{ (\tilde{\Phi}_{m}, \tilde{\Ss}_{m}, \tilde{R}_{m}) \bigr\} \\ 
			        \text{ enjoys property \ref{Pty:four.properties}.} 
			\end{split} \label{E:S.tilde.properties3} 
		\end{gather}
	\end{subequations}
Proving these three statements will prove the claim \eqref{E:Sm'.countably.rect}. 
(\eqref{E:S.tilde.properties1}, $\Hm^{a}(\tilde{\Ss}_{m}) < \infty$ can be proved just as \eqref{E:Ha.Sm'.finite} is, but we prove it independently below.)

It is immediate from theorem \ref{T:polyApproxThm} that $(\tilde{\Phi}_{m}, \tilde{\Ss}_{m})$ satisfies part \ref{I:sing.set.in.Sm'} of property \ref{Pty:four.properties}. \emph{Claim:} 
	\begin{equation}   \label{E:R.tilde.m.geq.Rm/2}
		+ \infty > diam(\D) \geq \tilde{R}_{m} := dist^{a}(\tilde{\Ss}_{m}, \Pf) 
		  \geq R_{m}/2 > 0.
	\end{equation}
(In particular, $(\tilde{\Phi}_{m}, \tilde{\Ss}_{m},\tilde{R}_{m})$ satisfies part \ref{I:Rm.away.from.Pf} of property \ref{Pty:four.properties}.) 
Obviously, since part \ref{I:Rm.away.from.Pf} of property \ref{Pty:four.properties} tells us that $R_{m}/2 > 0$, only $dist^{a}(\tilde{\Ss}_{m}, \Pf) \geq R_{m}/2$ requires proof. To prove \eqref{E:R.tilde.m.geq.Rm/2}, note that by \eqref{E:essential.dist.defn} and \eqref{E:x.in.exctly.1.simplex.intrr}, it suffices to show that 
	\begin{equation} \label{E:Ha.tau.near.P.0}
		\Hm^{a} \bigl[ \tilde{\Ss}_{m} \cap (\text{Int} \, \tau) \bigr] = 0
		        \text{ for every }  \tau \in P_{m} \text{ s.t.\ } dist( \tau, \Pf ) \leq R_{m}/2.
	\end{equation}

Let $\tau$ be an arbitrary simplex in $P_{m}$ s.t.\ $dist( \tau, \Pf ) \leq R_{m}/2$. So there exists $y \in \tau$ and $z \in \Pf$ s.t.\ $\xi(y, z) < \tfrac{5}{8} R_{m}$. Let $\sigma$ be an arbitrary simplex in $P_{m}$ having $\tau$ as a face. $\sigma = \tau$ is a possibility. Let $x \in \sigma$. Then, by \eqref{E:all.simps.have.small.diam}, we have 
	\begin{equation}  \label{E:all.tau.closer.than.3/4.Rm}
		dist(x, \Pf) \leq \xi(x, y) + \xi(y, z)  < \tfrac{1}{8} R_{m} + \tfrac{5}{8} R_{m} 
		  < R_{m}.
	\end{equation}
Therefore, by part \ref{I:Rm.away.from.Pf} of property \ref{Pty:four.properties} 
 we have  $\Hm^{a} \bigl( \Ss_{m}' \cap \sigma \bigr) = 0$. In particular, \linebreak
 $\Hm^{a} \bigl( \Ss_{m}' \cap \tau \bigr) = 0$. 
 
Suppose $\tau \notin Q_{m}$. Then, since $\Hm^{a} \bigl( \Ss_{m}' \cap \tau \bigr) = 0$, by
part \ref{I:no.change.off.Q} of theorem \ref{T:polyApproxThm}, \eqref{E:Ha.tau.near.P.0} holds for $\tau$.
Next, suppose $\tau \in Q_{m}$, $dist( \tau, \Pf ) \leq R_{m}/2$, but $\Hm^{a} \bigl[ \tilde{\Ss}_{m} \cap (\text{Int} \, \tau) \bigr] > 0$. Then, by part \ref{I:Ha.tau.=0.if.Ha.all.nbrs.0} of theorem \ref{T:polyApproxThm}, there exists $\sigma \in Q_{m}$ s.t.\ $\tau$ is a face of $\sigma$ and 
$\Hm^{a} \bigl[ \Ss_{m}' \cap (\text{Int} \, \sigma) \bigr] > 0$. But we just proved that $\Hm^{a} \bigl( \Ss_{m}' \cap \sigma \bigr) = 0$. This contradiction completes that proof of \eqref{E:Ha.tau.near.P.0} and the claim \eqref{E:R.tilde.m.geq.Rm/2}. 

For $a \geq 0$, let 
    \begin{equation}  \label{E:integer.part.floor}
      \lfloor a \rfloor \text{ denote the integer part of $a$, the largest integer } \leq a .
    \end{equation}
By part \ref{I:S.tilde.subcomp} of theorem \ref{T:polyApproxThm}, we have,
	\begin{multline} \label{E:for.all.m.dim.Sm.tilde.is.complex}
		\text{For all } m, \, \tilde{\Ss}_{m} \cap |Q_{m}| 
		  \text{ is either empty or the underlying space} \\
		    \text{ of a subcomplex of the } \lfloor a \rfloor\text{-skeleton of } Q_{m}. 
	\end{multline}
Let $x \in \tilde{\Ss}_{m} \cap \mcl{B}_{R_{m}/2}$. Then, by \eqref{E:dist.outside.of.Bs.to.P}, we have $dist(x, \Pf) < R_{m}/2$. 
Since, by definition of $Q$, $P \setminus |Q_{m}| \subset \mcl{B}_{R_{m}/2}$, we have, by 
\eqref{E:R.tilde.m.geq.Rm/2},  
	\begin{equation}  \label{E:Ha.Sm.tilde.less.Qm.=.0}
		\Hm^{a}(\tilde{\Ss}_{m} \setminus |Q_{m}|) = 0.
	\end{equation} 
Therefore, by \eqref{E:polyhedral.volume.magnification.factor} and \eqref{E:Ha.Sm'.finite}, 
$\Hm^{a}(\tilde{\Ss}_{m}) < \infty$, \emph{proving \eqref{E:S.tilde.properties1}}.

Arguing as in the proof of \eqref{E:S.tilde.has.empty.intrr}, it follows from this that 
$\tilde{\Ss}_{m}$ has empty interior. 

We prove that part \ref{Pty:agree.near.T} of property \ref{Pty:four.properties} applies 
to $(\tilde{\Phi}_{m}, \tilde{\Ss}_{m})$. I.e., we prove part \ref{I:pty:agree.near.T.holds.forPhi.m} 
of $\{ \Phi_{m}, \Ss_{m}' \}$ but for $(\tilde{\Phi}_{m}, \tilde{\Ss}_{m})$. 
Let $\tau \in P_{m}$ and suppose $\tau \cap \T \neq \varnothing$. Then by  \eqref{E:for.each.m.Q.aint.close.to.T} and, trivially, part \ref{I:no.change.off.nbhd.of.S.in.Q} of theorem \ref{T:polyApproxThm}, we have 
$\tilde{\Ss}_{m} \cap \tau = \Ss_{m}' \cap \tau$ and $\tilde{\Phi}_{m}$ and $\Phi_{m}$ agree on $\tau \setminus \Ss_{m}'$. It follows that $\tilde{\Ss}_{m} \cap \T = \Ss_{m}' \cap \T$ 
and $\tilde{\Phi}_{m}$ and $\Phi_{m}$ agree on $\T \setminus \Ss_{m}'$. From proposition \ref{P:Phi.invar.Phi.tilde.invar} we have that $\tilde{\Phi}_{m}$ 
and $\tilde{\Ss}_{m}$ are $G$-invariant. By \eqref{E:dim.S.tilde.leq.a.tildes.equivar}, 
$\tilde{\Ss}_{m}$ is closed. Therefore, by remark \ref{R:consequences.of.property}, 
$(\tilde{\Phi}_{m}, \tilde{\Ss}_{m}, G, \T, a)$ inherits property \ref{Pty:agree.near.T} 
from $(\Phi_{m}, \Ss_{m}', G, \T, a)$. Thus, 
$\bigl\{ (\tilde{\Phi}_{m}, \tilde{\Ss}_{m}, \tilde{R}_{m}) \bigr\}$ satisfies part \ref{I:pty:agree.near.T.holds.forPhi.m} of property \ref{Pty:four.properties}. 
In particular, $\Hm^{a} (\tilde{\Ss}_{m}) > 0$. In addition, since $|P_{m}|$ is bounded we see that $\{ \tilde{R}_{m} \}$ is bounded. 

Since $(\tilde{\Phi}_{m}, \tilde{\Ss}_{m}, G, \T, a)$ has property \ref{Pty:agree.near.T}, we have  
$\dim (\tilde{\Ss}_{m}) \geq a$. But by theorem \ref{T:polyApproxThm} part \ref{I:dim.Stilde.no.bggr.thn.dim.S} and \eqref{E:for.all.m.dim.Sm.=.a} we have 
$\dim (\tilde{\Ss}_{m}) \leq \dim (\Ss'_{m}) = a$. I.e., $\dim (\tilde{\Ss}_{m}) = a$. But \eqref{E:Ha.Sm.tilde.less.Qm.=.0} and part \ref{I:S.tilde.subcomp} of theorem \ref{T:polyApproxThm} (and \eqref{E:dim.of.whole.=.max.dim.of.parts}), $\dim (\tilde{\Ss}_{m})$ is an integer. I.e., $a$ is an integer.

\eqref{E:Ha.Sm.tilde.less.Qm.=.0} together with \eqref{E:for.all.m.dim.Sm.tilde.is.complex} further imply that, except for a  $\Hm^{a}$-null set, 
$\tilde{\Ss}_{m}$ is the polytope of a simplicial complex of dimension no greater than $a$ and, hence, is $a$-rectifiable, finitely, hence, countably. We just saw that $a$ is an integer. \emph{This proves statement \eqref{E:S.tilde.properties2}.} 

By \eqref{E:R.tilde.m.geq.Rm/2} and \eqref{E:Ha.Sm.tilde.less.Qm.=.0}, we have, 
by part \ref{I:arb.fine.subdivision} of theorem \ref{T:polyApproxThm} 
that there is a constant $K < \infty$ depending only on $a$ and $P$ s.t.\
	\begin{align}  \label{E:Rm.power.Ha(Sm.tilde).to.0}
		\tilde{R}_{m}^{ -\min( d-p-1, a) } \Hm^{a}(\tilde{\Ss}_{m}) 
		     &\leq 2^{ \min( d-p-1, a) } 
		        R_{m}^{ -\min( d-p-1, a) } \Hm^{a}(\tilde{\Ss}_{m}) \notag \\
		    &= 2^{ \min( d-p-1, a) } R_{m}^{ -\min( d-p-1, a) } 
		      \Hm^{a} \bigl( \tilde{\Ss}_{m} \cap |Q_{m}| ) \\
		    &\leq 2^{ \min( d-p-1, a) } K R_{m}^{ -\min( d-p-1, a) } \
		      \Hm^{a}(\Ss_{m}'  \cap |Q_{m}| ) \notag \\
		    &\leq 2^{ \min( d-p-1, a) } K R_{m}^{ -\min( d-p-1, a) } 
		      \Hm^{a}(\Ss_{m}') \to 0 \text{ as } m \to \infty. \notag 
	\end{align}
Therefore, $\bigl\{ (\tilde{\Phi}_{m}, \tilde{\Ss}_{m}, \tilde{R}_{m}) \bigr\}$ satisfies part \ref{I:Hm.Sm.to.0.fast} of property \ref{Pty:four.properties}. 
We have already established that it satisfies parts \ref{I:sing.set.in.Sm'}, \ref{I:pty:agree.near.T.holds.forPhi.m}, and \ref{I:Rm.away.from.Pf} of property \ref{Pty:four.properties}. Thus, \emph{\eqref{E:S.tilde.properties3} is proved.} We already proved \eqref{E:S.tilde.properties1} and \eqref{E:S.tilde.properties2}. \emph{This means we have proved the claim \eqref{E:Sm'.countably.rect}.}

\subsection{Small $R$: ``Dilation''}  \label{SS:Dilation}
Let $\{ \Phi_{m}, \Ss_{m}', R_{m} \}$ be as in section \ref{SS:Small.R.rect} and assume \eqref{E:Sm'.countably.rect} holds. Next, we ``dilate'' $\Ss_{m}'$ so that 
$\Hm^{a}$-almost none of it lies in $\mcl{B}_{\ess} := Exp (C_{\ess})$. (See \eqref{E:Bs.defn} and \eqref{E:Cs.defn}.) Let $R \in (0,\ess)$. We define a map 
$f_{dilate, R} : \D \to \D$. Recall that, by \eqref{E:eps.P.geq.2.on.U}, 
we have $\epsilon_{\Pf} \restriction_{\clU} \geq 2$. Therefore, 
if $(x,v) \in C_{1}$ then, by \eqref{E:rho.in.[0,1]} and \eqref{E:Cs.defn},   
for every $r \in [0,2]$ we have $(x, r v) \in C_{2} \subset C[\overline{\clU}]$. (See \eqref{E:C.sub.s.in.Bold.F.[0,s)} and \eqref{E:overline.U.defn}.) Let 
	\begin{equation} \label{E:AB.dilate.defn}
		A_{dilate,R} := \frac{\threeess}{\sixess-R} \in (1/2, 1)  
		  \text{ and }
		    B_{dilate,R} := 2(1 - A_{dilate,R}) 
		      = \twoess \frac{\threeess-R}{\sixess - R} \in (0,1),
	\end{equation}  
where $\threeess \in [1, \infty)$ is a constant, not depending on $R$ or $\{ \Phi_{m}, \Ss_{m}', R_{m} \}$, that is described in \eqref{E:threeess.appears}. In particular,
    \begin{equation}   \label{E:R<threeess}
      R < 1 \leq \threeess.
    \end{equation}

Write $A := A(R) := A_{dilate,R}$ and $B := B(R) := B_{dilate,R}$. Then
	\begin{align}  \label{E:R.<.threeess/3.A.B.ineqs}
		\text{If } 0 < R < \threeess/3& \text{ then: } \notag \\
		\begin{split}
			&A \in (1/2, 3/5), \\
			&B \in (4/5, 1), \\
			&A/B \in (1/2, 3/4), \\
			&A + B \in (7/5, 3/2), \text{ and } \\
			&2A+B = 2.
		\end{split}
	\end{align} 

Define $F = F_{R} : C[\Pf] \to C[\Pf]$ as follows:
	\begin{multline} \label{E:F.on.C[Pf].defn}
		F_{R}(y,v) \\ :=
			\begin{cases}
			      \bigl( y,(\threeess/R) v \bigr), &\text{ if } |v| < R \rho(y) /\threeess, \\
				\Bigl( y, \bigl[ A_{dilate,R} |v| 
				          + B_{dilate,R} \, \rho(y) \bigr] |v|^{-1} v \Bigr), 
				     &\text{ if } R \rho(y)/\threeess \leq |v| < 2 \rho(y), \\
			     (y,v), &\text{ if } |v| \geq 2 \rho(y) , 
			\end{cases} \\
		  \quad (y, v) \in C[\Pf]. 
	\end{multline}
Figure \ref{F:F.plot} shows the graph of the length of an example of an $F$. 
       \begin{figure}
             \includegraphics[width=6in, height = 5in, , angle = 0]{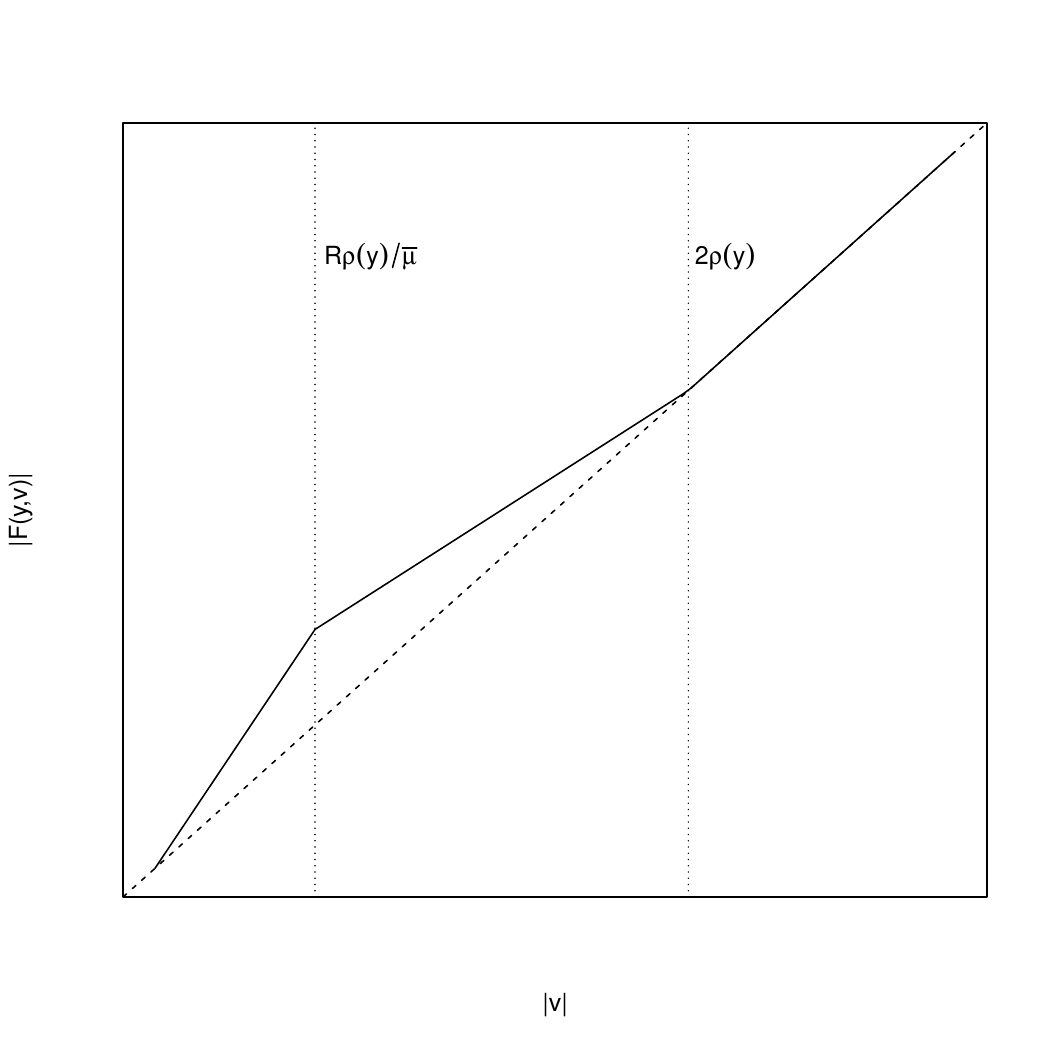}
          \caption{Graph of $|F|$ versus $|v|$ for a hypothetical $R$, $\bar{\mu}$, 
          and $\rho(y)$. Dashed line is the identity $ordinate = abscissa$ (``$y=x$'').}
\label{F:F.plot}
       \end{figure} 

Note that, by \eqref{E:Cs.defn}, $|v| < R \rho(y) /\threeess$ is equivalent to 
$(y,v) \in C_{R/\threeess}$ and $|v| < 2 \rho(y)$ is equivalent to 
$(y,v) \in C_{2}$. The condition $R \rho(y)/\threeess \leq |v| < 2 \rho(y)$ is never satisfied if $\rho(y) = 0$ so $|v|^{-1}$ makes sense if that condition is satisfied. On the other hand, by \eqref{E:rho.on.W.off.U}, 
    \begin{equation}  \label{E:"f.dil(y,v)=(y,v)".if.y.notin.U}
      \text{If $y \in \Pf \setminus \clU$ then $|v| \geq 2 \rho(y) = 0$ so 
        $F(y,v) = (y,v)$ if } 
          y \in \Pf \setminus \clU .
    \end{equation}

Now we define a map $f_{dilate, R} : \D \to \D$. (Do not confuse $f_{dilate, R}$ and its nicknames with the triangulation $f : |P| \to \D$.)
    \begin{equation}  \label{E:when.f.dilate.is.identity}
      \text{For } x \in \D \setminus \mcl{C} \text{ define } f_{dilate, R}(x) := x .
    \end{equation}
If $x \in \mcl{C}$ define $f_{dilate, R}(x) : \D \to \D$ as follows. Recall $\alpha : \mcl{C} \to C[\Pf]$ (definition \ref{D:fibering.by.cones} part \ref{I:Exp.alpha.homeom}). Recall also definition \ref{D:fibering.by.cones}. 
    \begin{equation}  \label{E:f.in.terms.of.F}
      f_{dilate, R}(x) = Exp \circ F \circ \alpha ,  \quad x \in \mcl{C} .
    \end{equation}
By \eqref{E:Cs.defn}, 
$C[\Pf] \setminus C_{2} = \bigl\{ (y,v) \in C[\Pf] : |v| \geq 2 \rho(y) \bigr\}$.

We have
    \begin{equation}  \label{E:F.from.f.dil}
      F := \alpha \circ f_{dilate, R} \circ \alpha^{-1} \text{ on } C[\Pf]. 
    \end{equation}

Let $y \in \Pf$. Supppose $\rho(y) > 0$. By \eqref{E:rho.strictly.poz.on.U} and  \eqref{E:rho.on.W.off.U}, this is equivalent to $y \in \clU$. By \eqref{E:C.sub.s.in.Bold.F.[0,s)}, this happens if $y \in \T$. In any case, $\rho(y) > 0$ means $0 < R \rho(y)/\threeess$. Therefore, by \eqref{E:F.on.C[Pf].defn}, we have $F(y,0) = (y,0)$. 
    \begin{equation}  \label{E:f.dil.ident.on.U}
      f_{dilate, R} \text{ is the identity on } \clU.
    \end{equation} 
(See \eqref{E:fdil.is.identity.on.Pf} and \eqref{E:f.dil.ident.on.U} below.) 

Let $(y,v) \in C[\Pf]$. Suppose $|v| = 2 \rho(y)$. Then, by \eqref{E:Cs.defn}, $(y,v)$ is a boundary point of $C_{2}$. By \eqref{E:R.<.threeess/3.A.B.ineqs},  the limit of $F(y', w)$ as $y' \to y$ through $\Pf$ and $w \to v$ with $|w| < 2 \rho(y)$ is 
$(y,v) = F(y,v)$. Because, by \eqref{E:R.<.threeess/3.A.B.ineqs}, the vector part of $F(y',w)$ converges to
    \begin{multline*}
      \bigl[A_{dilate,R} |v| + B_{dilate,R} \, \rho(y) \bigr] |v|^{-1} v
        =  \bigl[ 2 A_{dilate,R} \, \rho(y) + B_{dilate,R} \, \rho(y) \bigr] |v|^{-1} v \\
          = \bigl[ 2 A_{dilate,R} + B_{dilate,R} \bigr] \rho(y) |v|^{-1} v
            = 2 \rho(y) |v|^{-1} v = 2 \rho(y) \bigl( 2 \rho(y) \bigr)^{-1} v = v . 
    \end{multline*}
So $F$ is continuous at $(y, v)$ with $|v| = 2 \rho(y)$. 
 
Now suppose $|v| = R \rho(y)/\threeess$. Then the limit of $F(y', w)$ as $y' \to y$ 
through $\Pf$ and $w \to v$ with $|w| < 2 \rho(y)$ is $(R \rho(y) /\threeess) v$. Recall \eqref{E:varkappa.is.other.proj}. 
By \eqref{E:F.on.C[Pf].defn} and \eqref{E:AB.dilate.defn}, 
    \begin{align*}
      \varkappa \circ F (y,v) = \bigl[ A |v| + B \rho(y) \bigr] |v|^{-1} v
        &= \left( \frac{ A R \rho(y) + B \rho(y) \threeess}{\threeess} \right) |v|^{-1} v \\
        &= \left( \frac{ R \threeess \rho(y) 
            + 2(\threeess-R) \threeess \rho(y)}{\threeess(2 \threeess - R)} \right) 
              |v|^{-1} v  \\
        &= \left( \frac{ 2 \threeess^{2} \rho(y) - \threeess R \rho(y) }
          {\threeess(2 \threeess - R)} \right) |v|^{-1} v \\
        &= \rho(y) |v|^{-1} v = (R \rho(y) /\threeess) v. 
    \end{align*}
Thus, if $|v| = R \rho(y)/\threeess$ then, by \eqref{E:Cs.defn}, $F(y,v)$ is a boundary point of $C_{1}$. 

Let $(y,v) \in C[\Pf]$. We conclude:
    \begin{enumerate}  
        \item $F$ is continuous on $C[\Pf]$.  \label{I:F.is.cont}
        \item $\bigl| F(y,v) \bigr|$ is strictly increasing in $|v|$. (See \eqref{E:Riem.metrics.on.D.on.Rk.same}.)  \label{I:|F|.increasing.in.|v|}
        \item Consequently, by \eqref{E:R.<.threeess/3.A.B.ineqs}, 
          \label{I:C1,C2,C}
    \begin{align}  \label{E:F.on.parts}
       F(y,v) \in C_{1} &\text{ if and only if } |v| < R \rho(y) /\threeess, \notag \\
       F(y,v) \in C_{2} \setminus C_{1} &\text{ if and only if }  
           R \rho(y)/\threeess \leq |v| < 2 \rho(y), \\
       F(y,v) \in C[\Pf] \setminus C_{2} &\text{ if and only if } (y,v) \in C[\Pf] 
          \text{ and } |v| \geq 2 \rho(y). \notag 
    \end{align}
    \end{enumerate}  
It follows from \eqref{E:F.on.parts} and \eqref{E:C.sub.s.in.Bold.F.[0,s)} that  
    \begin{equation}  \label{E:F(C[P])=C[P]}
      F \bigl( C[\Pf] \bigr) = C[\Pf] .
    \end{equation}

\emph{Claim:}
    \begin{equation}  \label{E:F.is.injective}
      F \text{ is injective.}
    \end{equation} 
The only case where this is not obvious is on $C_{2} \setminus C_{1}$. So let $(y, v), (y',v') \in C_{2} \setminus C_{1}$ and suppose $F(y, v) = F(y',v')$. Obviously $y = y'$. By property 
\ref{I:|F|.increasing.in.|v|} of $F$ above, we must have $|v| = |v'|$. Then, referring to \eqref{E:F.on.C[Pf].defn}, 
    \begin{align*}
      \bigl[A_{dilate,R} |v| + B_{dilate,R} \, \rho(y') \bigr] v
        &= |v| \bigl[ A_{dilate,R} |v| + B_{dilate,R} \, \rho(y') \bigr] |v|^{-1} v  \\
        &= |v| \bigl[ A_{dilate,R} |v'| + B_{dilate,R} \, \rho(y') \bigr] |v'|^{-1} v'  \\
        &= \bigl[ A_{dilate,R} |v| + B_{dilate,R} \, \rho(y') \bigr] v' .
    \end{align*}
By \eqref{E:AB.dilate.defn}, $A_{dilate,R} |v| + B_{dilate,R} \, \rho(y') \neq 0$. So $v = v'$. This proves the claim \eqref{E:F.is.injective}. 

By \eqref{E:when.f.dilate.is.identity}, \eqref{E:F.is.injective}, part \ref{I:Exp.alpha.homeom} of definition \ref{D:fibering.by.cones}, and \eqref{E:f.in.terms.of.F},
    \begin{equation}  \label{E:fdil.is.bijective}
      f_{dilate, R} : \D \to \D \text{ is bijective} .
    \end{equation}

Let $(y,v) \in C[\clU]$ and $(y,w) = F(y,v)$. 
By \eqref{E:F.on.parts}, we have 
$(y,v) \in C_{\twoess} \setminus C_{R/\threeess}$ if and only if 
$(y,w) \in C_{\twoess} \setminus C_{1}$ and in that case we have, 
    \begin{multline}  \label{E:F.F.invrs}
      F(y,v) = \bigl( y, A v + B \rho(y) |v|^{-1} v \bigr) 
        = \bigl( y, (A + B \rho(y) |v|^{-1}) v \bigr) \\
      \text{ and }
        F^{-1}(y,w) = \bigl( y, A^{-1} w - A^{-1} B \, \rho(y) |w|^{-1} w \bigr)
           = \bigl( y, (A^{-1} - A^{-1} B \, \rho(y) |w|^{-1} ) w \bigr).
    \end{multline}

Let $y \in \Pf$. Supppose $\rho(y) = 0$. By \eqref{E:rho.strictly.poz.on.U} and  \eqref{E:rho.on.W.off.U}, this is equivalent to $y \notin \clU$. E.g., by \eqref{E:C.sub.s.in.Bold.F.[0,s)}, this happens if $y \in \T$. In any case, $\rho(y) = 0$ means $0 \geq 2 \rho(y)$. Therefore, by \eqref{E:F.on.C[Pf].defn}, we have 
$F(y,0) = (y,0)$. Combining this with \eqref{E:f.dil.ident.on.U}, we get 
$\alpha \circ f_{dilate, R} \circ \alpha^{-1}(y,v) = F(y,0) = (y,0)$. By part \ref{I:Exp.alpha.homeom} of definition \ref{D:fibering.by.cones}, we have $\alpha(y) = y$ and $\alpha^{-1}(y,0) = y$. Therefore, recalling item \ref{I:C1,C2,C} and \eqref{E:fdil.is.bijective} above,  
    \begin{align} \label{E:fdil.is.identity.on.Pf}
        &f_{dilate, R} \text{ is the identity on } \Pf. 
          \text{ In particular, it is the identity on } \T. 
           \notag \\
          &\qquad \text{Moreover, } f_{dilate, R}( \mcl{C} ) = \mcl{C} . \\
        &\text{ The same is true of }  f_{dilate, R}^{-1} \notag .
    \end{align}

Recall the definition of $\pi_{C}$ from definition \ref{D:fibering.by.cones}. 
Let $x \in \mcl{C}$ and let $(y,v) := \alpha(x)$. 
Thus, $y = \pi_{C} \circ \alpha(x)$.  Suppose $y \in \Pf \setminus \clU$. Then, by \eqref{E:rho.on.W.off.U}, $\rho(y) = 0$. Therefore, $|v| \geq 0 = 2 \rho(y)$. Hence, by \eqref{E:f.in.terms.of.F} and \eqref{E:F.on.C[Pf].defn}, 
    \begin{equation}  \label{E:over.Pf-U.fdil.is.identity}
      \text{If $x \in \mcl{C}$ with } 
        \pi_{C} \circ \alpha(x) \in \Pf \setminus \clU \text{ then } 
          f_{dilate, R}(x) = x .
    \end{equation}

Recall \eqref{E:F.from.f.dil}. We call $f_{dilate, R}$ a ``dilation'' because, we \emph{claim}: 
    \begin{multline}  \label{E:f.dil.is.dilation}
        \alpha \circ f_{dilate, R} \circ \alpha^{-1}(y,v) = (y, rv) 
          \text{ with } r = r(y,v) \geq 1. \\
            \text{ Moreover, if } |v| \leq 2 \rho(y) \text{ then } r \leq 2 \rho(y)/|v| . 
              \text{ If } |v| \geq 2 \rho(y) \text{ then } r = 1 , \\
                \quad (y, v) \in C[\Pf] . \\
           \text{ And for some } v, r(y,v) > 1 . 
    \end{multline}
Figure \ref{F:F.plot} shows this in an example. To see this, first observe that, by \eqref{E:R<threeess}, 
$R < \threeess$. So by \eqref{E:F.on.C[Pf].defn}, if $|v| < R \rho(y) /\threeess$, 
then $r(y,v) = \threeess/R >1$. If $|v| \geq 2 \rho(y)$ then $r(y,v) = 1$. Otherwise, it suffices to show $r(y,v) \in \bigl[1, 2 \rho(y) \bigr]$ 
if $R \rho(y)/\threeess \leq |v| < 2 \rho(y)$. 
Write $r[t] := \bigl( A t + B \, \rho(y) \bigr)t^{-1}$ ($t > 0$), consistent with $r(y,v)$ with $|v|$ in this range. By \eqref{E:AB.dilate.defn}, $r[t] = A + 2(1-A)  \, \rho(y) t^{-1}$. In the range 
$R \rho(y)/\threeess \leq t < 2 \rho(y)$ we have that
$r[t]$ is minimized at $t = 2 \rho(y)$. We have
    \begin{equation*}
      r \bigl[ 2 \rho(y) \bigr] = A + 2(1-A)  \, \rho(y) \frac{1}{2 \rho(y)} 
        = A + (1-A) = 1.
    \end{equation*}
At the same time for $t \in \bigr[ R \rho(y)/\threeess, 2 \rho(y) \bigl]$ we have
$t \, r[t] = At + 2(1-A)  \, \rho(y) \leq 2 A \rho(y)+ 2(1-A)  \, \rho(y) 
= 2 \rho(y)$. This completes the proof of \eqref{E:f.dil.is.dilation}.
 
By item \ref{I:F.is.cont} above, $F$ is continuous on $C[\Pf]$. But, by \eqref{E:f.in.terms.of.F}, 
and part \ref{I:Exp.alpha.homeom} of definition \ref{D:fibering.by.cones},  
$f_{dilate, R}$ is continuous on $\mcl{C}$. By \eqref{E:when.f.dilate.is.identity}, 
$f_{dilate, R}$ is the identity on $\D \setminus \mcl{C}$ and, by 
\eqref{E:F.from.f.dil}, \eqref{E:F.on.C[Pf].defn}, \eqref{E:f.in.terms.of.F}, and \eqref{E:Cs.defn}, $F$ is the identity on $C[\Pf] \setminus C_{2}$. It follows that 
    \begin{equation} \label{E:f.dilate.cont.onD}
      f_{dilate, R} \text{ is continuous on } \D .
    \end{equation} 

We \emph{claim} 
    \begin{equation}  \label{E:f.dil.is.G.equivar}
      f_{dilate,R} \text{ and $f_{dilate,R}^{-1}$ are $G$-equivariant} .
    \end{equation}
I.e., $f_{dilate,R}$ commutes with $g \in G$. By \eqref{E:fdil.is.identity.on.Pf}, this is trivial 
on $\D \setminus \mcl{B}_{\twoess}$, since, by \eqref{E:G.invariance.of.BS.CS}, we have that $\mcl{B}_{\twoess}$ is $G$-invariant. So suppose $x \in \mcl{B}_{\twoess}$ and 
let $(y, v) := \alpha(x) \in C_{2}$. Since, by \eqref{E:Bs.defn}, the restriction 
$\alpha \restriction_{\mcl{B}_{2}}$ is a bijection onto $C_{2}$, it suffices to show 
	\begin{equation}  \label{E:fg=gf}
		\alpha \circ f_{dilate,R} \circ g \circ \alpha^{-1}(y,v) 
		  = \alpha \circ g \circ f_{dilate,R} \circ \alpha^{-1}(y,v) ,
		    \quad (y,v) \in C_{2} .
	\end{equation}
By \eqref{E:G.invariance.of.BS.CS}, 
$g_{\ast}(y,v) = \bigl( g(y), v' \bigr) \in C_{2}$ for some $v' \in \RR^{k}$. 
By \eqref{E:f.dil.is.dilation}, 
$\alpha \circ f_{dilate,R} \circ \alpha^{-1} \bigl( g(y), v' \bigr) = \bigl( g(y), r(g(y),v') \, v' \bigr)$ for some $r(g(y),v') \geq 1$. Similarly, 
$\alpha \, \circ \, f_{dilate,R} \, \circ \, \alpha^{-1} ( y, v ) = \bigl( y, r(y,v) v \bigr)$ for 
some $r(y,v) \geq  1$. By \eqref{E:gExp=Expg.star}, 
	\begin{align*}
		\alpha \circ f_{dilate,R} \circ g \circ \alpha^{-1}(y,v) 
		     &= \alpha \circ f_{dilate,R} \circ \alpha^{-1} 
		       \circ (\alpha \circ g \circ \alpha^{-1})(y,v) \\
		     &= \alpha \circ f_{dilate,R} \circ \alpha^{-1} \circ g_{\ast}(y,v) \\
		     &= \bigl( g(y), r(g(y),v') \, v' \bigr) . 
	\end{align*}
Now, by \eqref{E:F.on.C[Pf].defn}, $r(y,v)$ only depends on $\rho(y)$ and $|v|$. By \eqref{E:rho.0.off.U.rho.g-invar}, $\rho \bigl( g(y) \bigr) = \rho(y)$. By\eqref{E:Riem.metrics.on.D.on.Rk.same} and the fact that
$g^{\ast} \bigl( \langle \cdot, \cdot \rangle \bigr) = \langle \cdot, \cdot \rangle$ (by assumption), 
    \begin{equation*}
      |v'| = \bigl| (g(y), v') \bigr| =  \bigl| g_{\ast}(y,v) \bigr| = \bigl| (y,v) \bigr| = |v| .
    \end{equation*} 
Therefore, $r \bigl( g(y),v' \bigr) = r(y,v)$. Thus, by \eqref{E:alpha.is.G-equivar} again and definition of $v'$, 
	\begin{align*}
		\alpha \circ f_{dilate,R} \circ g \circ \alpha^{-1}(y,v) &= \bigl( g(y), r(|v|) v' \bigr) 
		  = r(|v|) \bigl( g(y), v' \bigr) \\
		  &= r(|v|) g_{\ast}(y, v) = g_{\ast} \bigl( y, r(|v|) v \bigr) \\
		  &= g_{\ast} \circ \alpha \circ f_{dilate,R} \circ \alpha^{-1} (y, v) \\
		  &= \alpha \circ g \circ f_{dilate,R} \circ \alpha^{-1} (y, v),
	\end{align*}
(recall \eqref{E:vector.ops.on.TD}) which is just \eqref{E:fg=gf}. Equivariance of 
$f_{dilate,R}^{-1}$ follows immediately from that of $f_{dilate,R}$. This proves the claim.

Note that, by \eqref{E:f.in.terms.of.F}, \eqref{E:Bs.defn}, and \eqref{E:F.on.parts} of $F$, 
	\begin{multline}  \label{E:f.dil.on.B's}
		f_{dilate,R} \text{ maps } \mcl{B}_{R/\threeess} 
		  \subset \mcl{B}_{\ess} \text{ injectively onto } \mcl{B}_{\ess} \\
		    \text{ and maps } \mcl{B}_{\twoess} \setminus \mcl{B}_{R/\threeess} 
		      \text{ injectively onto } 
		      \mcl{B}_{\twoess} \setminus \mcl{B}_{\ess} \\
		       \text{ and } \mcl{C} \setminus \mcl{B}_{\twoess} 
		         \text{ injectively onto } \mcl{C} \setminus \mcl{B}_{\twoess}.
	\end{multline} 
In particular, by \eqref{E:F.is.injective}, we have that $f_{dilate,R}$ maps $\mcl{C}$ injectively onto $\mcl{C}$. It follows that $f_{dilate,R}$ is a bijection of $\D$ onto itself. 
So $f_{dilate,R}$ has an inverse and $f_{dilate,R}^{-1}$ maps $\mcl{C}$ injectively 
onto $\mcl{C}$. Since $f_{dilate,R}$ is $G$-equivariant, to so is $f_{dilate,R}^{-1}$.

Recall \eqref{E:fdil.is.bijective}. See appendix \ref{Chptr:misc.proofs} for the proof of the following. 
  \begin{lemma}  \label{L:f.dil.Lipschitz}
For $R \in (0, \threeess/3)$ (see \eqref{E:R.<.threeess/3.A.B.ineqs}) fixed, $f_{dilate,R}$ and its inverse are Lipschitz on $\D$ w.r.t.\ $\xi$. The Lipschitz constant for $f_{dilate,R}$ is inversely proportional to $R$ in the sense that there exists 
$K < \infty$ depending only on $\D$ and $C[\Pf]$ s.t.\ $K/R$ is a Lipschitz constant for  $f_{dilate,R}$ for every $R \in (0, \threeess/3)$. 
  \end{lemma}
In particular, $f_{dilate, R}^{-1}$ is continuous, so 
    \begin{equation}  \label{E:fdil.is.homeom}
      f_{dilate, R} \text{ is a homeomorphism. }
    \end{equation}
Therefore, by \eqref{E:g.commutes.w/.set.ops}, if we let 
    \begin{equation}  \label{E:Phi.dilate.defn}
      \Phi_{R_{m}, dilate} := \Phi_{R_{m}} \circ f_{dilate,R_{m}}^{-1}
    \end{equation} 
then $\Phi_{R_{m}, dilate}$ is continuous off 
	\begin{equation}   \label{E:S.dilate.defn}
		\Ss_{m,dilate}' := f_{dilate,R_{m}} (\Ss_{m}').
	\end{equation}
Thus, $\Phi_{R_{m}, dilate}$ is continuous 
on $\D_{m,dilate}' := \D \setminus \Ss_{m,dilate}'$. 

Since $f_{dilate,R_{m}}$ is Lipschitz, by lemma \ref{L:f.dil.Lipschitz}, we have, by \eqref{E:Sm'.countably.rect} 
and Hardt and Simon \cite[Definition 2.1, p.\ 20]{rHlS86.GMT} that 
    \begin{equation*}  \label{E:Sm.dil.is.countably.rect}
      \Ss_{m,dilate}' \text{ is countably $a$-rectifiable.}
    \end{equation*} 

\emph{Claim:}
    \begin{equation*}  \label{E:Phi.dil.etc.has.prpty}
      (\Phi_{R_{m}, dilate}, \Ss_{m,dilate}', G, \T, a) 
        \text{ satisfies property \ref{Pty:agree.near.T}.}
    \end{equation*}
Via part \ref{I:pty:agree.near.T.holds.forPhi.m} of property \ref{Pty:four.properties}, 
of $\{ \Phi_{m}, R_{m} \}$ in subsection \ref{SS:Small.R.rect} 
we know that $(\Phi_{m}, \Ss_{m}', G, \T, a)$ satisfies property \ref{Pty:agree.near.T}. Thus, 
$\Ss_{m}'$ is closed with empty interior. But $f_{dilate, R}$ is a homeomorphism. Therefore, 
$\Ss_{m,dilate}'$ is also closed with empty interior. 
By \eqref{E:f.dil.is.G.equivar}, we have that $\Phi_{R_{m}, dilate}$ and 
$\Ss_{m,dilate}'$ are $G$-invariant. We know that $\Phi_{R_{m}, dilate}$ is continuous 
on $\D_{m,dilate}' := \D \setminus \Ss_{m,dilate}'$. 

By \eqref{E:fdil.is.identity.on.Pf}, \eqref{E:g.commutes.w/.set.ops}, and \eqref{E:S.dilate.defn}, 
    \begin{multline*}
      \Ss_{m}' \cap \T =  f_{dilate,R_{m}}(\Ss_{m}' \cap \T) 
        = f_{dilate,R_{m}}(\Ss_{m}') \cap f_{dilate,R_{m}}(\T) \\ 
          = f_{dilate,R_{m}}(\Ss_{m}') \cap \T = \Ss_{m,dilate}' \cap \T .
    \end{multline*}
I.e. $\Ss_{m}' \cap \T = \Ss_{m,dilate}' \cap \T$. Similarly, by \eqref{E:fdil.is.identity.on.Pf}, 
if $x \in \T$ then $\Phi_{R_{m}, dilate}(x) = \Phi_{R_{m}} \circ f_{dilate,R_{m}}^{-1}(x) 
= \Phi_{R_{m}}(x)$. I.e., $\Phi_{R_{m}, dilate}$ and $\Phi_{R_{m}}(x)$ agree on $\T$. 
We conclude that \linebreak 
$(\Phi_{R_{m}, dilate}, \Ss_{m,dilate}', G, \T, a)$ inherits property \ref{Pty:agree.near.T} from $(\Phi_{m}', \Ss_{m}', G, \T, a)$. This proves claim that 
$(\Phi_{R_{m}, dilate}, \Ss_{m,dilate}', G, \T, a)$ satisfies property \ref{Pty:agree.near.T}.

Hence, $\Hm^{a}(\Ss_{m,dilate}') > 0$. In particular, by \eqref{E:Haus.dim.defn}, 
$\dim \Ss_{m,dilate}' \geq a$. (Alternatively, proceed as follows. 
Let $K < \infty$ be a Lipschitz constant for $f_{dilate, R_{m}}^{-1}$. By part \ref{I:pty:agree.near.T.holds.forPhi.m} of property \ref{Pty:four.properties}, and thence property \ref{Pty:agree.near.T}, we have $\Hm^{a}(\Ss_{m}') > 0$. Therefore, by \eqref{E:Lip.magnification.of.Hm}, 
$0 < \Hm^{a}(\Ss_{m}') = \Hm^{a} \bigl[  f_{dilate, R}^{-1}(\Ss_{m,dilate}') \bigr] 
\leq K^{a} \Hm^{a} (\Ss_{m,dilate}')$.)

On the other hand, by \eqref{E:for.all.m.dim.Sm.=.a} and lemma \ref{L:loc.Lip.image.of.null.set.is.null}, we have 
$\dim \Ss_{m,dilate}' \leq \dim \Ss_{m}' = a$. I.e.,
	\begin{equation*}
		\dim \Ss_{m,dilate}' = a.
	\end{equation*}

Recall \eqref{E:S.dilate.defn}. By \eqref{E:Hma.S'.cap.B.Rm.0} and the fact (see \eqref{E:R<threeess}) that 
$\threeess \geq 1$, we have  
    \begin{equation}  \label{E:Hma.S'.meet.B.R/threeess.0}
        \Hm^{a} (\Ss_{m}'  \cap \mcl{B}_{R_{m}/\threeess}) = 0.
    \end{equation}
By \eqref{E:f.dil.on.B's} and \eqref{E:g.commutes.w/.set.ops}, we have that 
$f_{dilate,R_{m}} : (\mcl{B}_{R_{m}/\threeess})^{c} \to (\mcl{B}_{\ess})^{c}$. 
(``${}^{c}$'' indicates complementation w.r.t.\ $\D$). By lemma \ref{L:f.dil.Lipschitz}, $f_{dilate,R_{m}}$ is Lipschitz. That means, first, by \eqref{E:Ha.Sm'.finite} and \eqref{E:Lip.magnification.of.Hm}, $\Hm^{a} (\Ss_{m, dilate}') < \infty$. Second, by \eqref{E:Hma.S'.meet.B.R/threeess.0} and lemma \ref{L:loc.Lip.image.of.null.set.is.null}, 
$\Hm^{a}( \Ss_{m, dilate}' \cap \mcl{B}_{\ess} ) = 0$. Summing up:
	\begin{equation}  \label{E:Ha.and.S.dilate} 
		\Hm^{a} (\Ss_{m, dilate}') < \infty 
		  \text{ and } \Hm^{a}( \Ss_{m, dilate}' \cap \mcl{B}_{\ess} ) = 0.
	\end{equation}

Let $R_{0}$ be as in \eqref{E:R0.choice}. Then, by \eqref{E:Bs.w/in.s.of.T}, $R_{0} \leq 1/2 < 1$. Thus, \eqref{E:outside.B.toT.>.R0} and \eqref{E:Hma.S'.cap.B.eta.R0.0} hold with $\Ss' = \Ss_{m, dilate}'$. 
Therefore, from subsection \ref{SS:R.geq.R0},
	\begin{equation}  \label{E:lwr.bnd.on.Ha.S.dilate}
		\Hm^{a}( \Ss_{m, dilate}') \geq \Omega 
		  := \gamma(R_{0}) R_{0}^{\min( d-p-1, a)} > 0.
	\end{equation}
($\gamma(R_{0}) > 0$ is defined in \eqref{E:gamma.R0.defn}.) Note that $\Omega$ does not depend on $(\Phi_{m}, \Ss_{m}')$.

Now, by \eqref{E:f.dil.on.B's}, $f_{dilate, R_{m}}$ maps 
$\mcl{B}_{\twoess}$ injectively onto $\mcl{B}_{\twoess}$ and 
$\mcl{C} \setminus \mcl{B}_{\twoess}$ injectively 
onto $\mcl{C} \setminus \mcl{B}_{\twoess}$. Therefore, by \eqref{E:g.commutes.w/.set.ops}
 \eqref{E:S.dilate.defn} and the fact that $f_{dilate,R_{m}}$ is the identity 
off $\mcl{B}_{2}$, we have 
	\begin{multline}  \label{E:S.dilate.S.agree.off.B.twoess}
	  \Ss_{m,dilate}' \setminus \mcl{B}_{\twoess} 
	    = f_{dilate,R_{m}} (\Ss_{m}') \setminus \mcl{B}_{\twoess} 
	      = f_{dilate,R_{m}} (\Ss_{m}') \setminus f_{dilate,R_{m}} (\mcl{B}_{\twoess}) \\
	        = f_{dilate,R_{m}} (\Ss_{m}' \setminus \mcl{B}_{\twoess}) 
		  = \Ss_{m}'  \setminus \mcl{B}_{\twoess}.
	\end{multline}
Hence, by \eqref{E:Ha.Sm'.finite}, 
	\begin{equation*}
		\Hm^{a}(\Ss_{m,dilate}' \setminus \mcl{B}_{\twoess}) 
		  =  \Hm^{a}(\Ss_{m}' \setminus \mcl{B}_{\twoess})
	            \leq  \Hm^{a}(\Ss_{m}') < \infty .
	\end{equation*}
But by \eqref{E:Rm.bounded}, eventually
	\begin{equation} \label{E:few.dilate.sings.outside.B.2S}
		\Hm^{a}(\Ss_{m,dilate}' \setminus \mcl{B}_{\twoess}) 
		  =  \Hm^{a}(\Ss_{m}' \setminus \mcl{B}_{\twoess}) < \Omega/2.
	\end{equation} 
Thus, by \eqref{E:lwr.bnd.on.Ha.S.dilate} and \eqref{E:few.dilate.sings.outside.B.2S}, we have 
	\begin{multline} \label{E:S.dllate.inside.outside.B}
		\Omega \leq \Hm^{a}(\Ss_{m,dilate}') 
		  \leq \Hm^{a}(\Ss_{m,dilate}'  \setminus \mcl{B}_{\twoess})
		    + \Hm^{a}( \Ss_{m,dilate}'  \cap \mcl{B}_{\twoess}) \\
		      < \Omega/2 + \Hm^{a}( \Ss_{m,dilate}'  \cap \mcl{B}_{\twoess}).
	\end{multline}
	
	The following is proved in the next subsection.
  \begin{lemma}  \label{L:dilate.R.-(d-p-1)}
    Let $\{ \Phi_{m}, \Ss_{m}', R_{m} \}$ have properties \ref{I:sing.set.in.Sm'} through \ref{I:Hm.Sm.to.0.fast} in section 
    \ref{SS:Small.R.rect}. There is a constant $K_{3} < \infty$ depending only 
    on $\D$, $C[\Pf]$, $\T$, and $a$ s.t.\, 
    if $0 < R_{m} < \threeess/3$, then 
	\begin{equation}  \label{E:dilate.R.-(d-p-1)}
		\Hm^{a}(\Ss_{m,dilate}' \cap \mcl{B}_{\twoess}) 
		   \leq K_{3} R_{m}^{-\min( d-p-1, a)} \Hm^{a}(\Ss_{m}').
	\end{equation}
  \end{lemma}
Therefore, by \eqref{E:S.dllate.inside.outside.B}, \eqref{E:dilate.R.-(d-p-1)}, and part \ref{I:Hm.Sm.to.0.fast} of property \ref{Pty:four.properties}, 
	\begin{equation*}
	      0 < \Omega/2 < \Hm^{a}(\Ss_{m,dilate} \cap \mcl{B}_{\twoess})  
	          \leq K_{3} R_{m}^{-\min( d-p-1, a)} \, \Hm^{a} ( \Ss_{m}'  ) \to 0 
	            \text{ as } m \to \infty .
	\end{equation*}
Contradiction. Thus, a sequence  of $\{ \Phi_{m} \}$ having properties \ref{I:sing.set.in.Sm'}, \ref{I:pty:agree.near.T.holds.forPhi.m}, \ref{I:Rm.away.from.Pf}, and \ref{I:Hm.Sm.to.0.fast} of subsection \ref{SS:Small.R.rect} cannot exist. 
Therefore, to complete the proof of the theorem, we ``just'' need to prove lemma \ref{L:dilate.R.-(d-p-1)}!
  
\subsection{Proof of lemma \ref{L:dilate.R.-(d-p-1)}} \label{SS:Pf.of.big.lemma}
First, suppose $a = 0$. By \eqref{E:0.dim.Haus.measure}, $\Hm^{a}$ is just cardinality. Since $f_{dilate, R_{m}}$ is a bijection (see \eqref{E:f.dil.on.B's}), by \eqref{E:S.dilate.defn}, we have 
    \begin{equation*}
         \Hm^{0}(\Ss_{m,dilate}' \cap \mcl{B}_{\twoess}) 
           = \Hm^{0}(\Ss_{m}' \cap \mcl{B}_{\twoess})
             \leq R_{m}^{-0} \Hm^{a}(\Ss_{m}') .
    \end{equation*}
I.e., \eqref{E:dilate.R.-(d-p-1)} holds. So from now on we assume $a > 0$. Since $a$ is an integer, by \eqref{E:Sm'.countably.rect}, we thus assume, 
    \begin{equation*}
      a \geq 1 .
    \end{equation*}

Now suppose $0 < a \leq d-p-1$. By \eqref{E:Rm.=.dist.Sm.P} and lemma \ref{L:f.dil.Lipschitz} there exists $K_{3} < \infty$ depending only on $\D$, $C[\Pf]$, $\T$, and $a$ (in particular, $K_{3}$ does not depend on $m$) s.t.\ $K_{3}^{1/a} / R_{m}$ is a Lipschitz constant for $f_{dilate,R_{m}}$. Therefore, by \eqref{E:g.commutes.w/.set.ops}, \eqref{E:S.dilate.defn}, \eqref{E:f.dil.on.B's}, and \eqref{E:Lip.magnification.of.Hm}, 
	\begin{align*}  \label{E:dilate.R.-(d-p-1)}
		\Hm^{a}(\Ss_{m,dilate}' \cap \mcl{B}_{\twoess}) 
		  &= \Hm^{a}\bigl[ f_{dilate, R_{m}} (\Ss_{m}' \cap \mcl{B}_{\twoess}) \bigr] \\
		  &\leq K_{3} R_{m}^{-a} \Hm^{a}(\Ss_{m}' \cap \mcl{B}_{\twoess}) \\
		  &\leq K_{3} R_{m}^{-a} \Hm^{a}(\Ss_{m}') \\
		  &= K_{3} R_{m}^{-\min( d-p-1, a)} \Hm^{a}(\Ss_{m}') .
	\end{align*} 
I.e., \eqref{E:dilate.R.-(d-p-1)} again holds. 

Therefore, from now on we may assume $a > d-p-1$. By assumption $a < d$. Moreover, by \eqref{E:Sm'.countably.rect}, $a$ is an integer. Therefore, $a \leq d-1$. Hence, if $p = 0$, then $a \leq d-p-1$. In summary, we may assume,
    \begin{equation} \label{E:p.poz.a.big}
        p > 0 \text{ and } a > d-p-1.
    \end{equation}
Thus, $-\min( d-p-1, a) = p-d+1$. \eqref{E:dilate.R.-(d-p-1)} holds if an only if it does with $R_{m}$ replaced by $R_{m}/2$. Making this substitution means that part \ref{I:Rm.away.from.Pf} of property \ref{Pty:four.properties} still holds with 
    \begin{equation}  \label{E:dist.>.Rm}
      dist^{a}(\Ss_{m}', \Pf) > R_{m} > 0 ,
    \end{equation}
i.e., with two strict inequalities.

Recall \eqref{E:boldF.[E].defns} and \eqref{E:overline.U.defn}. Let 
    \begin{equation}  \label{E:Ym.defn}
      \mcl{Y} := \mcl{Y}_{m} := Exp \bigl( \mbf{F}_{[0,R_{m}]}[\overline{\clU}] \bigr)
    \end{equation}
Let $x \in \mcl{Y}$. There exists $(y, v) \in \mbf{F}_{[0,R_{m}]}$ s.t.\ $Exp(y,v) = x$. Then, by \eqref{E:dist.Exp.v.to.P.bnd}, 
$R_{m} \geq |v| \geq \text{dist}(x, \Pf)$. Hence, by \eqref{E:dist.>.Rm}, we have
	\begin{equation} \label{E:Ha.Sm.cap.Ym.0}
	    \Hm^{a} ( \Ss_{m}' \cap \mcl{Y}_{m} ) = 0. 
	\end{equation}
Therefore, since $f_{dilate, R_{m}} : \D \to \D$ is a bijection, by \eqref{E:S.dilate.defn}, \eqref{E:g.commutes.w/.set.ops}, lemma \ref{L:loc.Lip.image.of.null.set.is.null}, and lemma \ref{L:f.dil.Lipschitz}, we have 
	\begin{equation*}
	    \Hm^{a} \bigl[ \Ss_{m,dilate}' \cap f_{dilate, R_{m}} (\mcl{Y}_{m}) \bigr] 
	      = \Hm^{a} \bigl[ f_{dilate, R_{m}} ( \Ss_{m}' \cap \mcl{Y}_{m} ) \bigr] = 0. 
	\end{equation*}
Hence, by \eqref{E:f.dil.on.B's}, \eqref{E:g.commutes.w/.set.ops}, and the fact that $f_{dilate, R_{m}}$ is a bijection,
	\begin{multline*}
		\Hm^{a} ( \Ss_{m,dilate}' \cap \mcl{B}_{\twoess} ) 
		  \leq \Hm^{a} \Bigl[ \Ss_{m,dilate}' \cap \bigl( \mcl{B}_{\twoess} 
		    \setminus f_{dilate, R_{m}} (\mcl{Y}_{m}) \bigr) \Bigr] 
		    	+ \Hm^{a} \bigl[ \Ss_{m,dilate}' 
			  \cap f_{dilate, R_{m}} (\mcl{Y}_{m}) \bigr] \\
		      = \Hm^{a} \Bigl[ \Ss_{m,dilate}' \cap \bigl( \mcl{B}_{\twoess} 
		        \setminus f_{dilate, R_{m}} (\mcl{Y}_{m}) \bigr) \Bigr]
		          = \Hm^{a} \Bigl( f_{dilate, R_{m}} \bigl[ \Ss_{m}' 
		             \cap ( \mcl{B}_{\twoess} \setminus \mcl{Y}_{m} ) \bigr] \Bigr).
	\end{multline*}

Therefore, by \eqref{E:Ha.Sm.cap.Ym.0} and \eqref{E:p.poz.a.big}, \emph{it suffices to show}   
	\begin{equation}  \label{E:f.dil.ineq.Ym}
		 \Hm^{a} \Bigl( f_{dilate, R_{m}} \bigl[ \Ss_{m}' 
				    \cap ( \mcl{B}_{\twoess} \setminus \mcl{Y}_{m} ) \bigr] \Bigr)
		    \leq K_{3} R_{m}^{p-d+1} \Hm^{a} \bigl( \Ss_{m}' 
				    \cap ( \mcl{B}_{\twoess} \setminus \mcl{Y}_{m} ) \bigr),
	\end{equation}
for some $K_{3} < \infty$ depending only on $\D$, $C[\Pf]$, $\T$, and $a$.

\subsubsection{Change of variables} \label{SSS:change.of.vars} 
We simplify \eqref{E:f.dil.ineq.Ym} through repeated change of variables. Through change of variables and restriction, we transform $f_{dilate,R_{m}}$ into a function to which we can conveniently apply calculus. Through several steps we translate the problem into one concerning convex subsets of Euclidean space. 

Let $X$ be a metric space, let $S \subset X$, and let $f : X \to X$. Let $Y$ be another metric spaces, let $\phi : X \to Y$ be a bi-Lipschitz bijection, and let $L < \infty$ be a Lipschitz constant for both $\phi$ and $\phi^{-1}$. Assume $L$ depends only on $\D$, $C[\Pf]$, $\T$, and $a$. Then by \eqref{E:Lip.magnification.of.Hm}, 
	\begin{equation}  \label{E:2.Hma.Lip.ineqs}
		\Hm^{a} \bigl[ f(S) \bigr] = \Hm^{a} \Bigl( \phi^{-1} \bigl[ \phi \circ f(S) \bigr] \Bigr) 
		  \leq L^{a} \Hm^{a} \bigl[ \phi \circ f(S) \bigr] \text{ and }
		    \Hm^{a} \bigl[ \phi(S) \bigr] \leq L^{a} \Hm^{a} (S),
	\end{equation}
where in each case $\Hm^{a}$ is computed using Hausdorff measure based on the appropriate metric. Note that  the first inequality in the preceding can be written 
    \begin{equation}  \label{E:first.ineq.rewrite}
        \Hm^{a} \bigl[ f(S) \bigr] 
          \leq L^{a} \Hm^{a} \Bigl( (\phi \circ f \circ \phi^{-1}) \bigl[ \phi(S) \bigr] \Bigr). 
    \end{equation}
Suppose for some $K < \infty$ we have
    \begin{equation*}
      \Hm^{a} \Bigl( (\phi \circ f \circ \phi^{-1}) \bigl[ \phi(S) \bigr] \Bigr) 
    		  \leq K R_{m}^{p-d+1} \, \Hm^{a} \bigl[ \phi(S) \bigr] .
    \end{equation*}
Then, by \eqref{E:first.ineq.rewrite} and the second inequality in \eqref{E:2.Hma.Lip.ineqs},
    \begin{equation*}
      \Hm^{a} \bigl[ f(S) \bigr] 
        \leq L^{a} \Hm^{a} \Bigl( (\phi \circ f \circ \phi^{-1}) \bigl[ \phi(S) \bigr] \Bigr)
          \leq L^{a} K R_{m}^{p-d+1} \, \Hm^{a} \bigl[ \phi(S) \bigr]
            \leq (L^{2a} K) R_{m}^{p-d+1} \, \Hm^{a} (S) .
    \end{equation*}
Summing up, we get
	\begin{gather} \label{E:general.change.of.variables}
		\text{\emph{IF} } 
		  \Hm^{a} \Bigl( (\phi \circ f \circ \phi^{-1}) \bigl[ \phi(S) \bigr] \Bigr) 
		  \leq K R_{m}^{p-d+1} \, \Hm^{a} \bigl[ \phi(S) \bigr]
	            \text{ for some } K < \infty \\ 
	          \text{ \emph{THEN} }  
		\Hm^{a} \bigl[ f(S) \bigr] \leq (L^{2a} K) R_{m}^{p-d+1} \, \Hm^{a} (S). \notag 
	\end{gather}
Suppose $K$ in \eqref{E:general.change.of.variables} depends only on $\D$, $C[\Pf]$, 
$\T$, and $a$. Then for any relevant $S$ and $f$, we can work in $Y$ instead of $X$ and with $\phi \circ f \circ \phi^{-1}$ instead of $f$. Note that, since $\phi$ is bi-Lipschitz, by \eqref{E:Lip.magnification.of.Hm}, we have that $\phi(S)$ has finite $\Hm^{a}$ measure if and only if $S$ does.

\emph{First application of} \eqref{E:general.change.of.variables}: The first step is use \eqref{E:general.change.of.variables} to change the problem about events in $\mcl{B}_{2}$ to one concerning events in $C[\clU]$. Recall the definition 
of $\mcl{C}$ in part \ref{I:Exp.alpha.homeom} of definition \ref{D:fibering.by.cones}. Recall also the definition, \eqref{E:boldF.[E].defns}, 
of $\mbf{F}$. Using the fact that $Exp = \alpha^{-1}$ and $\alpha$ are bijections to and from $\mcl{C}$, \eqref{E:g.commutes.w/.set.ops}, \eqref{E:Bs.defn},  \eqref{E:C.sub.s.in.Bold.F.[0,s)}, and \eqref{E:Ym.defn}, let 
	\begin{multline}  \label{E:S.2.defn}
		\mathscr{S}_{2} := \mathscr{S}_{2,m} := \alpha(\Ss_{m}' \cap \mcl{C}) 
		  \cap \bigl( C_{\twoess} \setminus \mbf{F}_{[0,R_{m}]}[\overline{\clU}] \bigr) \\
		    = \alpha \Bigl[ (\Ss_{m}' \cap \mcl{C}) \cap (\mcl{B}_{2} 
		      \setminus \mcl{Y}_{m}) \Bigr]  
		        = \alpha \Bigl[ \Ss_{m}' \cap (\mcl{B}_{2} \setminus \mcl{Y}_{m}) \Bigr] 
		          \subset C[\clU] \subset C[\overline{\clU}].
	\end{multline}
Notice that, since $\mathscr{S}_{2} \subset C_{\twoess} \setminus \mbf{F}_{[0,R_{m}]}$, we have
    \begin{equation}  \label{E:pts.of.S2.are.Rm.away.from.Pf}
        (y, v) \in \mathscr{S}_{2} \text{ implies } |v| > R_{m} .
    \end{equation}
By \eqref{E:g.commutes.w/.set.ops} and \eqref{E:general.change.of.variables} with 
$\phi = \alpha$ and $S  = \Ss_{m}' \cap (\mcl{B}_{2} \setminus \mcl{Y}_{m})$, to prove \eqref{E:f.dil.ineq.Ym}, and hence the lemma, \emph{it suffices to show}
	\begin{equation}   \label{E:dilate.alpha.ineq}
	     \Hm^{a} \bigl[ ( \alpha \circ f_{dilate, R_{m}} 
	       \circ \alpha^{-1} ) ( \mathscr{S}_{2,m} ) \bigr] 
		\leq K_{4} R_{m}^{p-d+1} \, 
		  \Hm^{a} \bigl( \mathscr{S}_{2,m} \bigr)
	\end{equation}
for some $K_{4} < \infty$ depending only on $\D$, $C[\Pf]$, $\T$, and $a$. (Here, we compute $\Hm^{a}$ based 
on the metric $\xi_{+}$; see \eqref{E:xi+.from.2.metrics}.) 

By \eqref{E:W.U.T.subset} and \eqref{E:overline.U.defn}, $\overline{\clU}$ is compact. 
By \eqref{E:C.sub.s.in.Bold.F.[0,s)}, $C_{\twoess} \subset C[\overline{\clU}]$. By \eqref{E:Bs.defn}, we have $\mcl{B}_{\twoess} = Exp(C_{\twoess})$. Therefore, by part \ref{I:Exp.alpha.homeom} of definition \ref{D:fibering.by.cones}, we have that 
 $\alpha : \mcl{C} \to C[\Pf]$ and $\alpha^{-1} = Exp \restriction_{C[\Pf]}$ are Lipschitz on 
$\mcl{B}_{\twoess}$ and $C_{\twoess}$, resp. 
Now, by part \ref{I:sing.set.in.Sm'} of property \ref{Pty:four.properties}, 
$\Ss_{m}'$ is closed, hence Borel. Since $\mcl{C}$ is open (by part \ref{I:Exp.alpha.homeom} of definition \ref{D:fibering.by.cones}) we have that 
$\Ss_{m}' \cap \mcl{C}$ is Borel. Therefore, $\alpha(\Ss_{m}' \cap \mcl{C}) = Exp^{-1}(\Ss_{m}' \cap \mcl{C})$ is Borel. Moreover, $C_{\twoess} \setminus \mbf{F}_{[0,R_{m}]}$ is Borel. By now applying \eqref{E:Sm'.countably.rect} and \eqref{E:Lip.magnification.of.Hm}, we now conclude
	\begin{equation} \label{E:S2.Borel.finite.meas}
		\mathscr{S}_{2} \text{ is Borel, countably $a$-rectifiable, and } 
		  \Hm^{a}(\mathscr{S}_{2}) < \infty.
	\end{equation} 
This finiteness of $\Hm^{a}$-measure property will be preserved under all further manipulations of $\Ss_{m}'$ performed below.

Now, by \eqref{E:C.sub.s.in.Bold.F.[0,s)} and \eqref{E:S.2.defn}, we have 
$\pi(\mathscr{S}_{2}) \subset \clU \subset \overline{\clU}$ (see \eqref{E:pi.is.proj}) and, 
by definition \ref {D:fibering.by.cones} part \ref {I:local.triv} and \eqref{E:overline.U.defn}, 
$\overline{\clU}  \subset \Pf$ is covered by finitely many sets $\mcl{V}$ with properties described there. Let $\mcl{V}$ be a generic one. Recall \eqref{E:CL.in.Euc.space}. 
$\mcl{V}$, in turn, is covered by finitely many sets $\mcl{A}_{i}$ with associated cone 
$\msf{CL}_{i}$ and injective map 
$h_{i} : \mcl{A}_{i} \times \msf{CL} _{i}\to C[\mcl{A}_{i}]$. Thus, 
    \begin{equation}  \label{E:clU.bar.convered.by.finitely.many.Ai's}
      \overline{\clU} \text{ is covered by finitely many sets } \mcl{A}_{i} .
    \end{equation}

Since each of the $\msf{L}_{i}$'s is a compact subset of some Euclidean space (see \eqref{E:CL.in.Euc.space}), and there are only finitely many of them, there is a constant, 
$\mathscr{L} \in (0, \infty)$, that does not depend on $\bigl\{ (\Phi_{m}, \Ss_{m}') \bigr\}$ but just on $\D$, $C[\Pf]$, and $\T$, s.t.\ if $z \in \msf{L}_{i}$ then $|z| < \mathscr{L}$. In fact, we can take $\mathscr{L} = 2$: Just replace 
$\msf{L}_{i}$ by $\msf{L}_{i}' := (2/\mathscr{L}) \msf{L}_{i} \subset \RR^{J}$ (for appropriate $J$) and $h_{i}$ by 
$h_{i}' : \bigr( x, s(1, z') \bigl) := h_{i} \Bigr( x, s \bigl(1, (\mathscr{L}/2) z' \bigr) \Bigl)$ 
($x \in \mcl{A}_{i}$, 
$s \in [0,1)$, and $z' \in \msf{L}_{i}'$; \eqref{E:CL.in.Euc.space} again). Thus, we may assume
	\begin{equation}  \label{E:msfZ.unif.bounded}
		\text{If } z \in \msf{L}_{i} \text{ then } |z| < 2. 
	\end{equation}

For $I \subset [0,1)$, define 
    \begin{equation*}
        I \cdot \bigl( \{1\} \times \msf{L}_{i} \bigr) 
          := \Bigl\{ r \cdot  \bigl( \{1\} \times \msf{L}_{i} \bigr) : r \in I  \Bigr\}.
    \end{equation*}
Here ``$\cdot$'' means scalar multiplication. 

Since each of the $\msf{L}_{i}$'s corresponding to an $\mcl{A}_{i}$ in \eqref{E:clU.bar.convered.by.finitely.many.Ai's} is a compact subset of some Euclidean space (see \eqref{E:CL.in.Euc.space}), and there are only finitely many of them, there is a constant, $\mathscr{L} \in (0, \infty)$, that does not depend on 
$\bigl\{ (\Phi_{m}, \Ss_{m}') \bigr\}$ but just 
on $\D$, $C[\Pf]$, and $\T$, s.t.\ if $z \in \msf{L}_{i}$ then $|z| < \mathscr{L}$. In fact, we can take $\mathscr{L} = 2$: 
Just replace $\msf{L}_{i}$ by $\msf{L}_{i}' := (2/\mathscr{L}) \msf{L}_{i} \subset \RR^{J}$ (for appropriate $J$) and $h_{i}$ by 
$h_{i}' : \bigr( x, s(1, z') \bigl) := h_{i} \Bigr( x, s \bigl(1, (\mathscr{L}/2) z' \bigr) \Bigl)$ 
($x \in \mcl{A}_{i}$, 
$s \in [0,1)$, and $z' \in \msf{L}_{i}'$; \eqref{E:CL.in.Euc.space} again). Thus, we may assume
	\begin{equation}  \label{E:msfZ.unif.bounded}
		\text{If } z \in \msf{L}_{i} \text{ then } |z| < 2. 
	\end{equation}

Let $c >0$ be an integer and suppose $G_{1}, G_{2} : X \to \RR^{c} \setminus \{ 0 \}$, where $X$ is a subspace of a metric space, $Y$. E.g., $Y$ could be the extended reals and $X$ the integers. Let $x_{0} \in \overline{X}$. If $\bigl| G_{1}(x) \bigr|/\bigl| G_{2}(x) \bigr|$ stays bounded and bounded away from 0 as $x \to x_{0}$ through $X$ we say that ``$G_{1}(x)$ is asymptotic to $G_{2}(x)$ as $x \to x_{0}$'' and write 
    \begin{equation}  \label{E:asymp}
         ``G_{1}(x) \asymp G_{2}(x) \text{ as } x \to x_{0}.''
    \end{equation} 
E.g., ``$G_{1}(x) \asymp 1$'' means $G_{1}(x)$ stays bounded and bounded away from 0 as $x \to x_{0}$. 

Recall \eqref{E:eps.=.2}. \emph{Claim:} There exists 
    \begin{equation}  \label{E:theta.m.in.(0,1)}
      \theta_{m} \in (0,1) 
    \end{equation} 
s.t.\ for all $\mcl{A}_{i}$ covering $\overline{\clU}$ we have
  \begin{equation}  \label{E:hi.invrs.theta.m}
	h_{i}^{-1} \bigl( \mbf{F}_{(R_{m},2)}[\overline{\clU}] \bigr)
	    \subset \mcl{A}_{i} \times \Bigl[ (\theta_{m}, 1) 
	      \cdot \bigl( \{1\} \times \msf{L}_{i} \bigr) \Bigr]
		\text{ and } \theta_{m} \asymp R_{m}, \text{ as } m \to \infty.                  
   \end{equation}
(Here $(\theta_{m}, 1)$ is the open interval from $\theta_{m}$ to $1$. 
See \eqref{E:boldF.[E].defns}.) Moreover, there are positive bounds, above and below, on the ratio $\theta_{m} / R_{m}$ that only depend 
on $(\D, C[\Pf], \T, a)$. Similar statements will apply in all instances of ``$\asymp$'' and ``$O(\cdot)$'' -- Landau ``big O'', de Bruijn \cite[Section 1.2]{ngdeB81.AsympMthdsAnlys}) -- below. (By \eqref{E:Rm.=.dist.Sm.P}, $ \theta_{m} \asymp R_{m}$ does not contradict \eqref{E:theta.m.in.(0,1)}.)

Before we prove \eqref{E:hi.invrs.theta.m}, observe the following. By \eqref{E:S.2.defn}, \eqref{E:g.commutes.w/.set.ops}, 
the fact that $\alpha = \bigl( Exp \restriction_{C[\Pf]} \bigr)^{-1}$, and \eqref{E:Ym.defn}, 
we have,  
  \begin{align}  \label{E:hi.invrs.S2.eps.re-expression}
	h_{i}^{-1} (\mathscr{S}_{2,m}) 
	  &= h_{i}^{-1} \left[ \alpha \Bigl[ \Ss_{m}' \cap (\mcl{B}_{2} 
	    \setminus \mcl{Y}_{m}) \Bigr]  \right] \notag \\
	  &= h_{i}^{-1} \bigl[ \alpha (\Ss_{m}') \cap \alpha(\mcl{B}_{2} 
	    \setminus \mcl{Y}_{m}) )  \bigr] \\
	  &= h_{i}^{-1} \left[ \alpha (\Ss_{m}') \cap \Bigl( C_{2} 
	    \setminus \alpha 
	      \bigl( Exp \bigl( \mbf{F}_{[0,R_{m}]}[\overline{\clU}] \bigr) \Bigr) \notag \right] \\
	  &= h_{i}^{-1} \left[ \alpha (\Ss_{m}') \cap \Bigl( C_{2} 
	    \setminus \mbf{F}_{[0,R_{m}]}[\overline{\clU}] \Bigr) \right] \notag.
  \end{align}
By \eqref{E:C.sub.s.in.Bold.F.[0,s)}, $C_{2} \subset \mbf{F}_{[0,2)}[\clU]$. By \eqref{E:Rm.=.dist.Sm.P}, $R_{m} \in (0,1)$. Therefore,
    \begin{align*}
      C_{2} \setminus \mbf{F}_{[0,R_{m}]}[\overline{\clU}] 
        &= \bigl( C_{2} \cap \mbf{F}_{[0,2)}[\clU] \bigr) 
          \cap \mbf{F}_{[0,R_{m}]}[\overline{\clU}]^{c} \\
        &= C_{2} 
          \cap \bigl( \mbf{F}_{[0,2)}[\clU] \cap \mbf{F}_{[0,R_{m}]} [\overline{\clU}]^{c} \bigr) \\
        &= C_{2} \cap \mbf{F}_{(R_{m}, 2)}[\clU] .
   \end{align*}
Combining this with \eqref{E:hi.invrs.S2.eps.re-expression} and the as yet unproven \eqref{E:hi.invrs.theta.m}, we get
  \begin{align}  \label{E:hi.invrs.S2.eps.inclusion}
	h_{i}^{-1} (\mathscr{S}_{2,m}) 
	  &= h_{i}^{-1} \left[ \alpha (\Ss_{m}') 
	    \cap \Bigl( C_{2} \cap \mbf{F}_{(R_{m}, 2)}[\clU] \Bigr) \right] \notag \\
	  &\subset h_{i}^{-1} ( \mbf{F}_{(R_{m}, 2)}[\clU] ) \\
	    &\subset \mcl{A}_{i} \times \Bigl[ (\theta_{m}, 1) 
	      \cdot \bigl( \{1\} \times \msf{L}_{i} \bigr) \Bigr] \notag .
  \end{align}
(See \eqref{E:S.2m.subset.C2.less.F}.)
 
We may apply $h_{i}$ to both sides of \eqref{E:hi.invrs.theta.m} and get
	\begin{equation}  \label{E:hi.theta.m}
	   \mbf{F}_{(R_{m},2)}[\overline{\clU}] 
	     \subset h_{i} \left( \mcl{A}_{i} \times \Bigl[ (\theta_{m}, 1) \cdot \bigl( \{1\} 
	       \times \msf{L}_{i} \bigr) \Bigr] \right).                  
	\end{equation}

To prove \eqref{E:hi.invrs.theta.m}, let $\mcl{A}_{i}$ be one of the finite collection of $\mcl{A}$'s covering $\overline{\clU}$. (Since $\overline{\clU}$ is compact, it is covered by finitely many $\mcl{V}$'s as in definition \ref{D:fibering.by.cones}, part \ref{I:local.triv}. 
Let $(x, v)  \in h_{i}(\mcl{A}_{i} \times \msf{CL}_{i})$. Assume $|v| = 1$ (permissible by \eqref{E:eps.=.2}). 
Thus, there exists $s \in [0,1)$ and $z \in \msf{L}_{i}$ s.t.\ $(x,v) = h_{i}(x, s, sz)$. Recall \eqref{E:xi+.from.2.metrics}, \eqref{E:metric.on.Pf.x.CL}, and \eqref{E:lambda.metric.defn}. By  parts \ref{I:homogeneity.of.hi} and \ref{I:h.hi.invrs.Lip} of definition \ref{D:fibering.by.cones}, and \eqref{E:msfZ.unif.bounded}, for each $i$ there exists 
$M_{i} \in (1, \infty)$ s.t.\ 
	\begin{align}   \label{E:M'.s.and.v}
		1 &= |v| = \xi_{+} \bigl[ (x,0), (x,v) \bigr] \notag \\ 
		   &= \xi_{+} \bigl[ h_{i}(x, 0, 0 \cdot z), h_{i}(x, s, sz) \bigr] \notag \\
		   &\leq M_{i} (\xi \times \lambda_{i}) \bigl[ (x, 0, 0 \cdot z), (x, s, sz) \bigr]  \\            
		   &= M_{i} \, \lambda_{i} \bigl( (0, 0 \cdot z), (s, sz) \bigr) \notag  \\           
		   &= M_{i} \, \bigl| (0, 0) - (s, sz) \bigr| \notag \\
		   &\leq M_{i} \bigl( s + s |z| \bigr) \leq 3 M_{i} \, s. \notag            
	\end{align}
Thus, $s \geq 1/(3 M_{i})$. 

Let 
    \begin{equation*}
      \tilde{\theta} \in (0, 1) \text{ be the minimum value of } 1/(3 M_{i}) 
    \end{equation*} 
over the finite number of $\mcl{A}_{i}$'s included in \eqref{E:clU.bar.convered.by.finitely.many.Ai's}. Then for all those $\mcl{A}_{i}$'s we have 
$s \geq \tilde{\theta}$ 
if $(x, v)  \in h_{i} \Bigl( \mcl{A}_{i} \times s \cdot \bigl( \{ 1 \} \times \msf{L}_{i} \bigr) \Bigr)$  
with $|v| = 1$. Let 
    \begin{equation}  \label{E:theta.m.defn}
      \theta_{m} = R_{m} \tilde{\theta}, \quad (m = 1, 2, \ldots) . 
    \end{equation}
Since 
$\tilde{\theta} > 0$, trivially 
    \begin{equation}  \label{E:theta.m.asymp.R.m}
      \theta_{m} \asymp R_{m} .
    \end{equation}. 
By \eqref{E:Rm.=.dist.Sm.P}, 
$\theta_{m} \in (0,1)$ as required by \eqref{E:theta.m.in.(0,1)}.
Recall \eqref{E:boldF.[E].defns}. 
Suppose $(x,v) \in \mbf{F}_{(R_{m},2)}[\overline{\clU} \cap \mcl{A}_{i}]$, so $|v| > R_{m}$. 
Write $(x,v) = h_{i}(x, s, sz)$. Then, by \eqref{E:extended.homogeneity.of.hi}, 
$1 = |v|^{-1} \bigl| (x,v) \bigr| = \Bigl| h_{i} \bigl( x, s |v|^{-1}, s |v| ^{-1} z \bigr) \Bigr|$. Hence, 
	\begin{equation}  \label{E:s.|v|.invrs.lwr.bound}
		s |v|^{-1} \geq \tilde{\theta}. 
	\end{equation}
Therefore, $s \geq |v| \tilde{\theta} \geq R_{m} \tilde{\theta} = \theta_{m}$. This proves the claim \eqref{E:theta.m.in.(0,1)} -- \eqref{E:hi.invrs.theta.m}.

We introduced $\threeess$ in \eqref{E:AB.dilate.defn} as an unspecified constant 
$\geq 1$. Now we finally specify it. Release $t$ from its definition as $\dim \T$. We \emph{claim:} There exists a constant $\threeess \in [1, \infty)$ s.t.\ 
	\begin{multline} \label{E:threeess.appears}
		\text{If } x \in \mcl{A}_{i}, \, t \in \bigl( \rho(x) R \tilde{\theta}, 1 \bigr), 
		     \, z \in \msf{L}_{i}, \text{  and } (x,v) := h_{i}(x, t, tz). \\
		    \text{ Then } |v| \geq R \rho(x)/\threeess, 
		      \text{ for all } i \text{ and } R \in (0,1).  
	\end{multline}
To prove \eqref{E:threeess.appears}, let $x \in \mcl{A}_{i}$, $t \in [0,1)$, 
$z \in \msf{L}_{i}$, and let $(x,v) := h_{i}(x, t, tz)$. Then, by \eqref{E:metric.on.Pf.x.CL}, \eqref{E:lambda.metric.defn}, and \eqref{E:xi+.xi.and.|.|}, since $h_{i}^{-1}$ is Lipschitz (by part \ref{I:h.hi.invrs.Lip} of definition \ref{D:fibering.by.cones}), there is a constant 
$N_{i} \in (0, \infty)$ s.t.\ 
	\begin{equation*}   \label{E:N.s.and.v}
		t \leq t \bigl| (1,z) \bigr| 
		  = (\xi \times \lambda_{i}) \bigl[ (x, t, tz), (x, 0, 0) \bigr] 
		  \leq N_{i} \, \xi_{+} \bigl[ (x, v), (x, 0) \bigr] = N_{i} |v|.
	\end{equation*}
Let $\bar{N}$ denote the largest $N_{i}$ corresponding to all the finitely many 
$\mcl{A}_{i}$'s covering all the $\mcl{V}$'s intersecting 
$\overline{\clU}$. So $\bar{N} < \infty$ and $|v| \geq t/\bar{N}$. 
Let 
    \begin{equation}  \label{E:threeess.defn}
      \threeess := \max \{1, \bar{N}/\tilde{\theta} \} .
    \end{equation}
Then, if $t > \rho(x) R \tilde{\theta}$ we have that 
$|v| \geq R \, \rho(x)/\threeess$. This proves the claim \eqref{E:threeess.appears}. Since $R_{m} \to 0$, by \eqref{E:Rm.=.dist.Sm.P}, we may assume 
$0 < R_{m} < \min(1, \threeess/3)$. Therefore, the conclusions of \eqref{E:R.<.threeess/3.A.B.ineqs} and lemma \ref{L:f.dil.Lipschitz} hold with $R = R_{m}$.

Let $\msf{L} := \msf{L}_{i}$. Then $\msf{L}$ is a stratified space so it is the disjoint union of finitely many strata $\msf{Z}$. By definition \ref{D:fibering.by.cones}, part \ref{I:L.A.tameness}, $X := \msf{L}$ satisfies \eqref{E:tameness.of.stratified.space}. Therefore, since $\msf{L}$ is compact, $\msf{Z}$ is covered by finitely many tractable coordinate neighborhoods, 
$\mbf{Z}$. If $t \in [0,1)$, let 
    \begin{equation}  \label{E:bold.Z.tilde.defn}
       \tilde{\mbf{Z}}^{t} := (t, 1) \cdot \bigl( \{1\} \times \mbf{Z} \bigr) 
          := \bigl\{ (s, sz) \in \msf{CL} : s \in (t, 1), z \in \mbf{Z} \bigr\}. 
            \text{ Let } \tilde{\mbf{Z}} := \tilde{\mbf{Z}}^{\theta_{m}} .
    \end{equation} 
Note that if $\mbf{Z}_{1}$ and $\mbf{Z}_{2}$ are coordinate neighborhoods of (possibly the same) strata of $\msf{L}$ then 
    \begin{equation*} \label{E:if.tilde.Z.thetas.overlap.so.do.Zs}
        \text{If } \tilde{\mbf{Z}}_{1}^{\theta_{m}} \cap \tilde{\mbf{Z}}_{2}^{\theta_{m}} 
          \neq \varnothing 
            \text{ then } \mbf{Z}_{1} \cap \mbf{Z}_{2} \neq \varnothing .
    \end{equation*}
If $y \in \Pf$, $t > 0$, and $z \in \mbf{Z}$ define 
$\tau \bigl( y, t(1, z) \bigr) := (y, z) \in \Pf \times \mbf{Z}$ so 
$\tau \bigl( y, t(1, z) \bigr)$ is constant in $t > 0$ and $\tau$ does not depend on $m$. 
Recall \eqref{E:indicator.fn.defn}. Hence, if $\mbf{A} \subset \Pf$, 
    \begin{equation}  \label{E:tau.constant.in.m}
            1^{ \mbf{A} \times \tilde{\mbf{Z}}^{\theta_{m}} } 
              \leq (1^{ \mbf{A} \times \mbf{Z} }) \circ \tau .
    \end{equation}

Let $q := \dim \msf{Z}$. Since $\msf{Z}$ is a stratum of $\msf{L}$, by part \ref{I:L.d-p-1} of definition \ref{D:fibering.by.cones}, 
	\begin{equation} \label{E:q.leq.d-p-1}
		q \leq d-p-1. 
	\end{equation}
	
Let $\mbf{Z}$ be a tractable coordinate neighborhood of a stratum, $\msf{Z}$, of $\msf{L}$. Then
    \begin{multline}  \label{E:psi.paramz.Z}
         \text{There exists } \psi :  D \to \mbf{Z} \subset \RR^{J} \text{ s.t.\ } D \subset \RR^{q}
           \text{ is open, bounded, and convex} \\
             \text{ and $\psi$ is a smooth bi-Lipschitz bijection.}
    \end{multline} 
(This holds trivially if $q = 0$. Do not confuse $D$ and $\D$.) Since $D$ is bounded we may replace $\psi^{-1}$ by $\beta \circ \psi^{-1}$, where $\beta : \RR^{q} \to \RR^{q}$ is an appropriate affine function to arrange things so that 
	\begin{equation}  \label{E:1/3.<.b.lngth.<.2/3}
		  1/3 < |b| <  2/3 \text{ for every } b \in D.
	\end{equation}
To see this, pick $c > 0$ s.t.\ $|c b| < 1/6$ for every $b \in D$. 
Pick $v \in \RR^{q}$  s.t.\ $|v| = 1/2$. Let $b \in D$. Then
    \begin{equation*}
      1/3 = 1/2 - 1/6 < |v| - |cb| \leq |cb+v| \leq |v| + |cb| < 1/2 + 1/6 = 2/3 .
    \end{equation*}
Thus, the affine transformation $b \mapsto cb+v$ gives the desired result.
($\mbf{Z}$ is a tractable subset of a stratum $\msf{Z}$ of the ``link'' $\msf{L}$.) By \eqref{E:msfZ.unif.bounded}, we may assume $|z| < 2$ for every $z \in \mbf{Z}$. Then $\tilde{\mbf{Z}}$ is an $(q+1)$-dimensional manifold and is parametrized by 
$(s,b) \mapsto s \bigl( 1, \psi(b) \bigr) = s \cdot \bigl( 1, \psi(b) \bigr) 
= \bigl( s, s \psi(b) \bigr)$, 
where $s \in (\theta_{m},1)$, $b \in D$

In \eqref{E:clU.bar.convered.by.finitely.many.Ai's} we noted that only finitely many 
$\mcl{A}_{i}$'s are needed to cover $\overline{\clU}$. Let $\mcl{A}$ be one of them. From subsection \ref{SSS:conical.fibers}, we know that there are only finitely many strata $\Rcl$ 
of $\Pf$. Let $\Rcl$ be one. It is a smooth manifold of dimension no greater than $p$.  By part \ref{I:local.triv} of definition \ref{D:fibering.by.cones}, 
$\mcl{A} \cap \Rcl$ is open in $\Rcl$, Let $\ell := \dim (\mcl{A} \cap \Rcl) = \dim \Rcl \leq p$. By definition \ref{D:fibering.by.cones}, part \ref{I:L.A.tameness}, $X := \mcl{A}$ satisfies \eqref{E:tameness.of.stratified.space}. Therefore, there are finitely many tractable coordinate neighborhoods of $\mcl{A} \cap \Rcl$ that cover 
$\mcl{A} \cap \Rcl \cap \overline{\clU}$. We conclude that 
    \begin{equation*}
      \text{A finite collection of tractable neighborhoods of $\mcl{A}_{i} \cap \Rcl$'s suffices to cover } 
        \overline{\clU} .
    \end{equation*}
 
If $\mbf{A}$ is one of these tractable neighborhoods then,
    \begin{multline}  \label{E:eta.paramz.A}
         \text{There exists } \eta : E \to \mbf{A} \text{ s.t.\ } E \subset \RR^{\ell} 
           \text{ is open, bounded, and convex} \\
             \text{ and $\eta$ is a smooth bi-Lipschitz bijection.}
    \end{multline} 
It is w.r.t.\ to the Euclidean metric on $\RR^{\ell}$ and $\xi$ (see \eqref{E:xi.is.metric.on.D}) 
that $\eta$ is bi-Lipschitz. 

Thus, there are only finitely many sets 
$\mbf{A}$ and, hence, only finitely many maps $\eta : E \to \mbf{A}$ that we have to worry about. That implies that there are only finitely many sets $\mbf{Z}$ and, hence, only finitely many maps $\psi : D \to \mbf{Z}$ that we have to worry about. It follows that there are Lipschitz constants for $\psi$ and $\eta$ that only depend only on $\D$, $C[\Pf]$, $\T$, and $a$, not specifically on $\mbf{A}$ and $\mbf{Z}$.  

By \eqref{E:bold.Z.tilde.defn}, 
$\tilde{\mbf{Z}}^{0} := \bigl\{ (s, sz) \in \msf{CL} : s \in (0, 1), z \in \mbf{Z} \bigr\}$. 
A consequence of all this is the following.
    \begin{equation} \label{E:finitely.many.A.x.Ztilde}
        C[\overline{\clU}] \setminus \Pf \text{ is covered by finitely many sets of the form } 
          h( \mbf{A} \times \tilde{\mbf{Z}}^{0} ) ,
    \end{equation}
where $\mbf{A}$ is a tractable coordinate neighborhood of some 
$\mcl{A}_{i} \cap \Rcl \subset \Pf$ for some stratum of $\Pf$, $h = h_{i}$, and $\mbf{Z}$ is a tractable coordinate neighborhood of a stratum of the corresponding $\msf{L}_{i}$. 

Re-index the finite collection of $\mcl{A}_{i}$'s that cover $\overline{\clU}$ and corresponding $h$'s and $\msf{L}$'s. (One or more $\mcl{A}_{i}$'s may appear more than once on the list, but with distinct $h_{i}$'s.) Recall the definition of indicator function, \eqref{E:indicator.fn.defn}. Define a function $\mathfrak{g}$ on $C[\overline{\clU}]$ by
    \begin{equation*}
        \mathfrak{g} := \mathfrak{g}_{m} := \sum_{\Rcl} \sum_{i} 
          \sum_{\mbf{A} \subset \mcl{A}_{i} \cap \Rcl, \, \mbf{A} \cap \overline{\clU} 
            \neq \varnothing} 
              \; \sum_{\msf{Z} \subset \msf{L}_{i}}
                \sum_{\mbf{Z} 
                  \subset \msf{Z}} 1^{h_{i}(\mbf{A} \times \tilde{\mbf{Z}}^{\theta_{m}})},
    \end{equation*}
where the outer sum is over the finite collection of strata, $\Rcl$, of $\Pf$ that intersect 
$\overline{\clU}$. Thus, the sum in the preceding is finite and takes values in the set of non-negative integers. Moreover, by \eqref{E:C.sub.s.in.Bold.F.[0,s)} and \eqref{E:hi.theta.m}, 
    \begin{equation}  \label{E:frak.g.geq.1}
      \mathfrak{g}(x,v) \geq 1 \text{ for every }
        (x,v) \in C_{\twoess} \setminus \mbf{F}_{[0,R_{m}]}[\overline{\clU}] .
    \end{equation} 
By definition \ref{D:fibering.by.cones} part \ref{I:L.d-p-1}, the $h_{i}$'s are injective. Therefore, 
    \begin{equation}  \label{E:1.h=1.circ.h.invrs}
      1^{h_{i}(\mbf{A} \times \tilde{\mbf{Z}}^{\theta_{m}})} 
        = (1^{ \mbf{A} \times \tilde{\mbf{Z}}^{\theta_{m}} }) \circ h_{i}^{-1} .
    \end{equation}
The sets $\mbf{A} \times \tilde{\mbf{Z}}$ are Borel measurable. (Each is the product of inverse images, under coordinate maps, of open sets). It follows that the functions 
$1^{h_{i}(\mbf{A} \times \tilde{\mbf{Z}}^{\theta_{m}})}$ are Borel measurable. And hence so 
is $\mathfrak{g}$. Moreover, by \eqref{E:1.h=1.circ.h.invrs} and \eqref{E:tau.constant.in.m},
    \begin{equation*}
        \mathfrak{g} \leq \sum_{\Rcl} \sum_{i} 
          \sum_{\mbf{A} \subset \mcl{A}_{i} \cap \Rcl, \, \mbf{A} \cap \overline{\clU} 
            \neq \varnothing} 
              \; \sum_{\msf{Z} \subset \msf{L}_{i}}
                \sum_{\mbf{Z} \subset \msf{Z}} (1^{ \mbf{A} \times \mbf{Z} }) 
                  \circ \tau \circ h_{i }^{-1}. 
    \end{equation*} 

Let $\kappa$ be the maximum of the right hand side (RHS) of the preceding. 
Then $\kappa < \infty$ 
and $\kappa$ \emph{does not depend on} $\{ \Phi_{m}, \Ss_{m}', R_{m} \}$. Let 
$\mathscr{B} \subset C_{\twoess} \setminus \mbf{F}_{[0,R_{m}]}$ be Borel measurable. Then, by \eqref{E:Cs.defn} and \eqref{E:rho.on.W.off.U}, we have 
$\pi(\mathscr{B}) \subset \clU$. Therefore, by \eqref{E:frak.g.geq.1}, 
    \begin{equation}  \label{E:Hma.S2.bounds}
        \kappa \, \Hm^{a} (\mathscr{B}) \geq \int_{\mathscr{B}} \mathfrak{g} \; d\Hm^{a} 
                    \geq \Hm^{a} (\mathscr{B}) .
    \end{equation}
But
    \begin{equation}  \label{E:int.g.=.sum}
        \int_{\mathscr{B}} \mathfrak{g} \; d\Hm^{a} 
          = \sum_{\Rcl} \sum_{i} 
            \sum_{\mbf{A} \subset \mcl{A}_{i} \cap \Rcl, \, \mbf{A} \cap \overline{\clU} 
              \neq \varnothing}  
                \; \sum_{\msf{Z} \subset \msf{L}_{i}}
                  \sum_{\mbf{Z} \subset \msf{Z}} 
                    \Hm^{a} \bigl[ h_{i}(\mbf{A} \times \tilde{\mbf{Z}}^{\theta_{m}})
                     \cap \mathscr{B} \bigr] .
    \end{equation}
    
Recall the definition, \eqref{E:F.on.C[Pf].defn}, of $F_{R}$. Define
    \begin{equation}  \label{E:F:=.alpha.f.alpha.invrs}  
          F := F_{m} := F^{R_{m}} .
    \end{equation}
(See \eqref{E:F.from.f.dil}.) By definition \ref{D:fibering.by.cones} part \ref{I:Exp.alpha.homeom}, $\alpha$ and its inverse are Borel measurable functions. Therefore, by lemma \ref{L:f.dil.Lipschitz}, $F$ and $F^{-1}$ are Borel. Hence, by 
\eqref{E:S2.Borel.finite.meas}, $\mathscr{B}_{1} := \mathscr{S}_{2,m}$ and 
$\mathscr{B}_{2} := F( \mathscr{S}_{2,m} ) = (F^{-1})^{-1}(\mathscr{S}_{2,m})$ 
are two possible $\mathscr{B}$'s.

\emph{Suppose} for some $K < \infty$ independent of $m$, etc., we have for every pair of tractable coordinate neighborhoods 
$\mbf{A}$ and $\mbf{Z}$, 
    \begin{equation}  \label{E:ineq.restricted.to.AxZ}
            \Hm^{a} \bigl[ h_{i}(\mbf{A} \times \tilde{\mbf{Z}}^{\theta_{m}}) 
              \cap F( \mathscr{S}_{2,m} ) \bigr]  
               \leq K R_{m}^{p-d+1} 
                 \,\Hm^{a} \bigl[ h_{i}(\mbf{A} \times \tilde{\mbf{Z}}^{\theta_{m}}) 
                   \cap \mathscr{S}_{2,m} \bigr] .
    \end{equation}
Then, by \eqref{E:Hma.S2.bounds} and \eqref{E:int.g.=.sum},
    \begin{align*}
      \Hm^{a} \bigl( F( \mathscr{S}_{2,m} ) \bigr) 
        & \leq \sum_{\Rcl} \sum_{i} 
          \sum_{\mbf{A} \subset \mcl{A}_{i} \cap \Rcl, \, \mbf{A} \cap \overline{\clU} 
            \neq \varnothing}  
              \; \sum_{\msf{Z} \subset \msf{L}_{i}}
                \sum_{\mbf{Z} \subset \msf{Z}} 
                  \Hm^{a} \bigl[ h_{i}(\mbf{A} \times \tilde{\mbf{Z}}^{\theta_{m}}) 
                    \cap F ( \mathscr{S}_{2,m} ) \bigr] \\
          &\leq  K R_{m}^{p-d+1} \, 
            \sum_{\Rcl} \sum_{i} \sum_{\mbf{A} \subset \mcl{A}_{i} \cap \Rcl, \, \mbf{A} 
              \cap \overline{\clU} \neq \varnothing}  
                \; \sum_{\msf{Z} \subset \msf{L}_{i}}
                  \sum_{\mbf{Z} \subset \msf{Z}} 
                    \Hm^{a} \bigl[ h_{i}(\mbf{A} \times \tilde{\mbf{Z}}^{\theta_{m}}) 
                      \cap \mathscr{S}_{2,m} \bigr] \\
          &\leq \kappa K R_{m}^{p-d+1} \, \Hm^{a}(\mathscr{S}_{2,m}) .
    \end{align*} 
Therefore, in order to prove \eqref{E:dilate.alpha.ineq}, \emph{it suffices to show} \eqref{E:ineq.restricted.to.AxZ} for every tractable $\mbf{A}$ and $\mbf{Z}$.

\emph{Until further notice let $\msf{Z}$ be a fixed stratum of a link $\msf{L}_{i}$ and and let 
$\mbf{Z}$ be a fixed tractable neighborhood in $\msf{Z}$. Except where noted, 
let $\Rcl$, $i$, and $\mbf{A}$, a tractable neighborhood of $\Rcl \cap \mcl{A}_{i}$, also be fixed.}

\emph{Second application of} \eqref{E:general.change.of.variables}: Now we use \eqref{E:general.change.of.variables} to change the problem about events in $C[\clU]$ to one concerning events in nice subsets of subsets of the form 
$ \mcl{A}_{i} \times \msf{CL}_{i}$. (See definition \ref{D:fibering.by.cones}.)
If $\bigl( y, (s, sz) \bigr) \in \{y\} \times \msf{CL}_{i}$ then, by definition \ref{D:fibering.by.cones} part \ref{I:L.d-p-1}, $h_{i} \bigl( y, (s, sz) \bigr) \in C[y]$. Hence, by \eqref{E:F(C[P])=C[P]}, 
$F_{m} \circ h_{i}\bigl( y, (s, sz) \bigr) \in C[y]$. Therefore, 
$h_{i}^{-1} \circ F_{m} \circ h_{i}\bigl( y, (s, sz) \bigr) \in \{y\} \times \msf{CL}_{i}$.
Thus, for given $i$ and $m$, 
	\begin{equation}   \label{E:F.i.Rm.defn}
	  \mathscr{F}_{R_{m}} := \mathscr{F}_{i, R_{m}} := h_{i}^{-1} \circ F_{m} \circ h_{i}
	    : \mcl{A}_{i} \times \msf{CL}_{i} \to \mcl{A}_{i} \times \msf{CL}_{i}. 
	\end{equation}
$\mathscr{F}_{i, R_{m}}$ is defined everywhere 
on $\mcl{A}_{i} \times \msf{CL}_{i}$. By parts \ref{I:L.d-p-1}, \ref{I:h.hi.invrs.Lip}, and \ref{I:Exp.alpha.homeom} of definition \ref{D:fibering.by.cones};  
lemma \ref{L:f.dil.Lipschitz}; \eqref{E:comp.of.Lips.is.Lip}, and \eqref{E:F.is.injective}; we have that $\mathscr{F}_{i, R_{m}}$ is injective and bi-Lipschitz. 

Recall \eqref{E:S.2.defn}. \emph{Suppose} for some constant $K_{5} < \infty$ the following holds for every $i$, $m$, and every tractable $\mbf{A}$ and $\mbf{Z}$ belonging to $\mcl{A}_{i}$, $\msf{L}_{i}$ appropriately,
   \begin{equation}   \label{E:dilate.ineq.in.C[U].with.h.invrs}
       \Hm^{a} \left[ \mathscr{F}_{i, R_{m}} 
	\Bigl( h_{i}^{-1} \bigl[ h_{i}(\mbf{A} \times \tilde{\mbf{Z}}^{\theta_{m}}) 
	  \cap \mathscr{S}_{2,m} \bigr] \Bigr) \right] 
	\leq K_{5} R_{m}^{p-d+1}
	  \Hm^{a} \Bigl( h_{i}^{-1} \bigl[ h_{i}(\mbf{A} \times \tilde{\mbf{Z}}^{\theta_{m}}) 
	    \cap \mathscr{S}_{2,m} \bigr] \Bigr).
   \end{equation}
(Compute $\Hm^{a}$ based on the metric $\xi \times \lambda$ 
defined in \eqref{E:metric.on.Pf.x.CL}.)

Since $h_{i}$ is bi-Lipschitz by part \ref{I:h.hi.invrs.Lip} of definition \ref{D:fibering.by.cones}, by \eqref{E:general.change.of.variables} 
with $S = h_{i}(\mbf{A} \times \tilde{\mbf{Z}}^{\theta_{m}}) \cap \mathscr{S}_{2,m}$. 
$f = F$, and $\phi := h_{i}^{-1}$, by \eqref{E:dilate.ineq.in.C[U].with.h.invrs}, the following must hold for some constant $K < \infty$ and all $i$, etc.,
    \begin{equation} \label{E:not.quite.rite.alpha.f.alpha.invrs.ineq}
      \Hm^{a} \Bigl( F 
        \bigl[ h_{i}(\mbf{A} \times \tilde{\mbf{Z}}^{\theta_{m}}) 
          \cap \mathscr{S}_{2,m} \bigr] \Bigr)
            \leq K R_{m}^{p-d+1} \Hm^{a} \bigl[ h_{i}(\mbf{A} \times \tilde{\mbf{Z}}^{\theta_{m}}) 
              \cap \mathscr{S}_{2,m} \bigr] .
    \end{equation}
(Compute $\Hm^{a}$ based on $\xi_{+}$ defined in \eqref{E:xi+.from.2.metrics}.)

By \eqref{E:g.commutes.w/.set.ops}, we have
    \begin{equation}  \label{E:alpha.f.alpha.invrs.distributes}
      F \bigl[ h_{i}(\mbf{A} \times \tilde{\mbf{Z}}^{\theta_{m}}) \cap \mathscr{S}_{2,m} \bigr] 
         = F \bigl[ h_{i}(\mbf{A} \times \tilde{\mbf{Z}}^{\theta_{m}}) \bigr]
           \cap F(\mathscr{S}_{2,m}) .
    \end{equation}
    \emph{Claim:} 
        \begin{equation} \label{E:alpha.f.alpha.invrs.maps.h(AxZ.theta).into.self}
           F \bigl[ h_{i}(\mbf{A} \times \tilde{\mbf{Z}}^{\theta_{m}}) \bigr]
             \subset h_{i}(\mbf{A} \times \tilde{\mbf{Z}}^{\theta_{m}}) .
        \end{equation}
 To see this, let $\bigl( y, s(1,z) \bigr) \in \mbf{A} \times \tilde{\mbf{Z}}^{\theta_{m}}$ 
 and let $(y,v) = h_{i} \Bigl[ \bigl( y, s(1,z) \bigr) \Bigr] \in C[\mbf{A}]$. By \eqref{E:F(C[P])=C[P]}, $F(y,v) \in C[\mbf{A}]$ and, by \eqref{E:f.dil.is.dilation},
    \begin{equation}  \label{E:there.exists.r}
      \text{there exists } r = r(y,v) \geq 1 \text{ s.t.\ } F(y,v) = (y, rv) \in C[\mbf{A}] .
    \end{equation} 
 Now, by  \eqref{E:homogeneity.of.hi}, 
 $(y, v) = h_{i} \Bigl[ \bigl( y, r^{-1} r s(1,z) \bigr) \Bigr] = r^{-1} 
 h_{i} \Bigl[ \bigl( y, r s(1,z) \bigr) \Bigr]$. Hence, by \eqref{E:extended.homogeneity.of.hi}, 
        \begin{equation*}
          F(y,v) = (y, rv) = h_{i} \Bigl[ \bigl( y, r s(1,z) \bigr) \Bigr] .
        \end{equation*}
Now, $s > \theta_{m}$ so, by \eqref{E:there.exists.r}, $r s > \theta_{m}$. 
But, $(y, rv) \in C[\mbf{A}]$. Therefore, by \eqref{E:h(y,(1,z)).notin.C[P]}, 
we must have $r s < 1$. Hence, by \eqref{E:bold.Z.tilde.defn}, 
$\bigl( y, r s(1,z) \bigr) \in \mbf{A} \times \tilde{\mbf{Z}}^{\theta_{m}}$. This completes the proof of the claim \eqref{E:alpha.f.alpha.invrs.maps.h(AxZ.theta).into.self}.
    
We next \emph{claim:}
     \begin{equation} \label{E:hi.cap.F(S2).ineq}
           h_{i}(\mbf{A} \times \tilde{\mbf{Z}}^{\theta_{m}}) 
                  \cap F( \mathscr{S}_{2,m} ) 
            \subset F \bigl[ h_{i}(\mbf{A} \times \tilde{\mbf{Z}}^{\theta_{m}})  \bigr]
                  \cap F( \mathscr{S}_{2,m} ) .
    \end{equation}
Once we have prove this we can conclude, from it and \eqref{E:alpha.f.alpha.invrs.maps.h(AxZ.theta).into.self}, that 
        \begin{equation} \label{E:hi.cap.F(S2)=F(hi).cap.F(S2)}
           h_{i}(\mbf{A} \times \tilde{\mbf{Z}}^{\theta_{m}}) 
                  \cap F( \mathscr{S}_{2,m} ) 
              = F \bigl[ h_{i}(\mbf{A} \times \tilde{\mbf{Z}}^{\theta_{m}})  \bigr]
                  \cap F( \mathscr{S}_{2,m} ) .
        \end{equation}

The proof of \eqref{E:hi.cap.F(S2).ineq} is a bit delicate. Let 
$(y,v) \in h_{i}(\mbf{A} \times \tilde{\mbf{Z}}^{\theta_{m}}) \cap F( \mathscr{S}_{2,m} )$.
Since $(y,v) \in h_{i}(\mbf{A} \times \tilde{\mbf{Z}}^{\theta_{m}})$, we have $y \in \mbf{A}$ (by definition \ref{D:fibering.by.cones} part \ref{I:pi.h.(y,w)=y}) and, by \eqref{E:bold.Z.tilde.defn}, for some $z \in \mbf{Z}$ and $s \in (\theta_{m} ,1)$ we have 
    \begin{equation*}
      (y,v) = h_{i}(y, s, sz) .
    \end{equation*}
By \eqref{E:F:=.alpha.f.alpha.invrs} and \eqref{E:f.dil.is.dilation}, there exists 
$t \in (0,1]$ s.t.\ $F(y, tv) = (y,v)$. By \eqref{E:homogeneity.of.hi}, 
    \begin{equation*}
      (y, tv) = h_{i}(y, ts, tsz) .
    \end{equation*}

We also have $(y,v) \in F( \mathscr{S}_{2,m} )$. Recall that, by definition \ref{D:fibering.by.cones} part \ref{I:Exp.alpha.homeom}, 
$\alpha = Exp^{-1}$ on $\mcl{C}$. Thus, from \eqref{E:S.2.defn}, 
we have $\mathscr{S}_{2,m} \subset C_{2} \setminus \mbf{F}_{[0,R_{m}]}[\overline{\clU}]$.
Applying \eqref{E:C.sub.s.in.Bold.F.[0,s)} and \eqref{E:boldF.[E].defns} we get, 
    \begin{equation} \label{E:S.2m.subset.C2.less.F}
       \mathscr{S}_{2,m} \subset C_{2} \setminus \mbf{F}_{[0,R_{m}]}[\overline{\clU}] 
         \subset \mbf{F}_{(R_{m}, 2)}[\clU]  .
    \end{equation}
Thus, 
$(y,v) \in F \bigl( \mbf{F}_{(R_{m}, 2)}[\clU] \bigr)$. I.e., 
    \begin{equation*}
      F \bigl[ h_{i}(y, ts, tsz) \bigr] = F(y, tv) = (y,v) 
        \in F \bigl( \mbf{F}_{(R_{m}, 2)}[\clU] \bigr) .
    \end{equation*}
But $F$ is injective, so $h_{i}(y, ts, tsz) = (y, tv) \in \mbf{F}_{(R_{m}, 2)}[\clU] $. 
I.e., $(y, ts, tsz) \in h_{i}^{-1} \bigl( \mbf{F}_{(R_{m}, 2)}[\clU] \bigr)$. 
Therefore, by \eqref{E:hi.invrs.theta.m}, $ts \in (\theta_{m}, 1)$. 
Since $y \in \mbf{A}$ and $z \in \mbf{Z}$, we thus have, by \eqref{E:bold.Z.tilde.defn} again,  
$(y, ts, tsz) \in \mbf{A} \times \tilde{\mbf{Z}}^{\theta_{m}}$. Hence,
    \begin{equation*}
      (y,v) = F(y, tv) = F \bigl[ h_{i}(y, ts, tsz) \bigr] 
         \in F(\mbf{A} \times \tilde{\mbf{Z}}^{\theta_{m}}) .
    \end{equation*}
We are given 
$(y,v) \in h_{i}(\mbf{A} \times \tilde{\mbf{Z}}^{\theta_{m}}) \cap F( \mathscr{S}_{2,m} )$. This proves the claim \eqref{E:hi.cap.F(S2).ineq}. 
 
Combining \eqref{E:hi.cap.F(S2)=F(hi).cap.F(S2)}, \eqref{E:alpha.f.alpha.invrs.distributes}, and \eqref{E:not.quite.rite.alpha.f.alpha.invrs.ineq} we get 
    \begin{multline*}  
         \Hm^{a} \bigl[ h_{i}(\mbf{A} \times \tilde{\mbf{Z}}^{\theta_{m}}) 
                  \cap F( \mathscr{S}_{2,m} ) \bigr] \\
            = \Hm^{a} \Bigl( F \bigl[ h_{i}(\mbf{A} \times \tilde{\mbf{Z}}^{\theta_{m}})  \bigr]
                  \cap F( \mathscr{S}_{2,m} ) \Bigr) \
            = \Hm^{a} \Bigl( F 
              \bigl[ h_{i}(\mbf{A} \times \tilde{\mbf{Z}}^{\theta_{m}}) 
                \cap \mathscr{S}_{2,m}) \bigr] \Bigr) \\
                \leq K R^{p-d+1} \Hm^{a} \bigl[ h_{i}(\mbf{A} 
                  \times \tilde{\mbf{Z}}^{\theta_{m}}) \cap \mathscr{S}_{2,m} \bigr] .
    \end{multline*}
I.e., \eqref{E:ineq.restricted.to.AxZ} holds. Now, \eqref{E:hi.cap.F(S2)=F(hi).cap.F(S2)}, \eqref{E:alpha.f.alpha.invrs.distributes} are true regardless of whether \eqref{E:ineq.restricted.to.AxZ} holds or not. Therefore, since \eqref{E:not.quite.rite.alpha.f.alpha.invrs.ineq} is a consequence of \eqref{E:dilate.ineq.in.C[U].with.h.invrs}, 
    \begin{equation} \label{E:suffices.to.prov."dilate.ineq..."}
      \text{It suffices to prove } \eqref{E:dilate.ineq.in.C[U].with.h.invrs}.
    \end{equation}
 
The following is immediate from \eqref{E:alpha.f.alpha.invrs.maps.h(AxZ.theta).into.self} and \eqref{E:F.i.Rm.defn}, 
    \begin{equation} \label{E:FR.maps.A.x.Z.into.self}
      \mathscr{F}_{R} ( \mbf{A} \times \tilde{\mbf{Z}}^{\theta_{m}} ) 
        \subset \mbf{A} \times \tilde{\mbf{Z}}^{\theta_{m}} .
    \end{equation}
So in \eqref{E:dilate.ineq.in.C[U].with.h.invrs} we are working 
in $\mbf{A} \times \tilde{\mbf{Z}}^{\theta_{m}}$. 

Now we translate the problem into one concerning convex subsets of Euclidean space. Recall (see \eqref{E:eta.paramz.A} and \eqref{E:psi.paramz.Z}) that 
$E \subset \RR^{\ell}$ and $D \subset \RR^{q}$ are open, bounded, and convex, 
$\eta : E \to \mbf{A}$ parametrizes $\mbf{A}$, and $\psi : D \to \mbf{Z}$ parametrizes 
$\mbf{Z}$. Let 
    \begin{equation*}
        N := \ell+q+1 .
    \end{equation*}
If $\mathscr{J} \subset (0, \infty]$ let 
	\begin{multline}  \label{E:mathscrX.defn}
		\mathscr{X}_{\mathscr{J}} 
		  := \bigl\{ \bigl(e, t(1,b) \bigr) \in \RR^{N} : e \in E, t \in 
		    \mathscr{J}, b \in D \bigr\} .  \\     
		    \text{ If } r \in [0,1),  \text{ let } \mathscr{X}_{r} := \mathscr{X}_{(r,1)}
		      \text{ and } \mathscr{X} := \mathscr{X}_{0} . 
	\end{multline}
Put on $\mathscr{X}_{\mathscr{J}}$ the topology it inherits from $\RR^{N}$. 
Thus, if $\mathscr{J}$ is open in $(0, \infty)$, then 
$\mathscr{X}_{\mathscr{J}}$ is open. 

\emph{Claim:} If $0 \leq a < b \leq \infty$ then 
    \begin{equation} \label{E:mathscrX.(a,b).convex}
      \mathscr{X}_{(a,b)} \text{  is convex. }
    \end{equation} 
So $\mathscr{X}$ is an open convex subset of $\RR^{N}$. For let $(e_{i}, s_{i}, s_{i} b_{i}) \in \mathscr{X}_{(a,b)}$ ($i = 1,2$), let $\lambda \in [0,1]$ ,and consider $\lambda \, (e_{1}, s_{1}, s_{1} b_{1}) + (1-\lambda) (e_{2}, s_{2}, s_{2} b_{2})$. By \eqref{E:eta.paramz.A}, $e := \lambda e_{1} + (1-\lambda) e_{2} \in E$. 
Let $s := \lambda s_{1} + (1-\lambda) s_{2} \in (a,b)$.
Let $\mu := \lambda s_{1}/s \in [0,1]$ and let $b := \mu b_{1} + (1-\mu) b_{2}$. By \eqref{E:psi.paramz.Z}, $b \in D$. We have
    \begin{multline*}
      s b = s \mu b_{1} + s (1-\mu) b_{2} 
        = \lambda s_{1} b_{1} + s (1 - \lambda s_{1}/s) b_{2} \\
          = \lambda s_{1} b_{1} + (s - \lambda s_{1}) b_{2} 
            = \lambda s_{1} b_{1} + (1-\lambda) s_{2} b_{2} .
    \end{multline*}
Thus, 
    \begin{equation*}
      \lambda \, (e_{1}, s_{1}, s_{1} b_{1}) + (1-\lambda) (e_{2}, s_{2}, s_{2} b_{2})
        = (e, s, sb) \in \mathscr{X}_{(a,b)} 
    \end{equation*}
as desired.

Recall that, by \eqref{E:bold.Z.tilde.defn}, 
$\tilde{\mbf{Z}}^{0} := (0, 1) \cdot \bigl( \{1\} \times \mbf{Z} \bigr) 
:= \bigl\{ (t, tz) \in \msf{CL}_{i} : t \in (0, 1), z \in \mbf{Z} \bigr\}$.
Let $H := H_{\mbf{A}, \mbf{Z}}: \mathscr{X} \to \mbf{A} \times \tilde{\mbf{Z}}^{0}$ be the map 
   \begin{equation}  \label{E:H.map.defn}
	H \bigl( e, t(1,b) \bigr) := H_{\mbf{A}, \mbf{Z}} \bigl( e, t(1,b) \bigr) := 
	  \Bigl( \eta(e), t \bigr( 1, \psi(b) \bigr) \Bigr) 
	    \in \mbf{A} \times \tilde{\mbf{Z}}^{0}, 
	      \quad \bigl(e, t(1,b) \bigr) \in \mathscr{X}.
   \end{equation}
\emph{Claim:} $H$ is bi-Lipschitz w.r.t.\ the Euclidean metric on $\RR^{N}$ 
and $\xi \times \lambda$. Moreover, there are Lipschitz constants for $H$ and $H^{-1}$ that do not depend on $R$. Clearly, $H$ is Lipschitz since $\psi$ and $\eta$ are. (See example \ref{Ex:ratnl.fns.loc.Lip}. $t$ and $\psi(b)$ are bounded on $\mathscr{X}$.) The Lipschitz constants for $\psi$ and $\eta$ only depend on the geometry of $C[\Pf]$ and as such are independent of $R$.

It remains to show that $H$ is injective and $H^{-1}$ is Lipschitz. Let $\pi_{j}$ be projection onto the $j^{th}$ factor of $\bigl\{ (t, tz) : t \in (0,1),  z \in \mbf{Z} \bigr\}$ ($j = 1,2$). 
(Thus, $\pi_{1}(t, tz) := t$, $\pi_{2}(t, tz) := tz$.) $H^{-1}$ is given by
	\begin{multline}  \label{E:H.invrs.formla}
		H^{-1} : (y, \tilde{z}) \mapsto \biggl( \eta^{-1}(y), \; \pi_{1}(\tilde{z}) 
		  \cdot \Bigl(1, \psi^{-1} \bigl[ \pi_{1}(\tilde{z})^{-1} \cdot 
		    \pi_{2}(\tilde{z}) \bigr] \Bigr) \biggr), \\
		      \quad y \in \mbf{A}, \; \tilde{z} = (t, tz) \in \tilde{\mbf{Z}},
	\end{multline}
where here ``$\cdot$'' denotes scalar multiplication. 

The  Lipschitz property of the $\eta^{-1}$ component is already assumed. The  Lipschitz property of the $\pi_{1}$ is already trivial. By example \ref{Ex:ratnl.fns.loc.Lip} again, it then suffices to prove the Lipschitz-osity 
of $(t, \tilde{z}) \mapsto t \, \psi^{-1}( t^{-1} \tilde{z} )$ on 
$(0,1) \times \bigl[ (0,1) \cdot \mbf{Z} \bigr]$. 
Let $t_{j} \in (0,1)$ and $\tilde{z}_{j} \in t_{j} \mbf{Z}$ ($j = 1,2$). 
Thus, $(t_{j}, \tilde{z}_{j}) \in \msf{CL}_{i}$. Then, by \eqref{E:msfZ.unif.bounded},  we have 
$|\tilde{z}_{2}| < 2t_{2}$. Let $L < \infty$ be a Lipschitz constant for $\psi^{-1}$. 
 
We have, by \eqref{E:1/3.<.b.lngth.<.2/3} and \eqref{E:lambda.revrs.triangle}, there exists 
$K'' < \infty$ s.t.\  
	\begin{align*}
		\bigl| t_{1} \psi^{-1}( t_{1}^{-1} \tilde{z}_{1}) 
		  - t_{2} \psi^{-1}( t_{2}^{-1} \tilde{z}_{2}) \bigr| 
		  &\leq \bigl| t_{1} \psi^{-1}( t_{1}^{-1} \tilde{z}_{1}) 
		    - t_{1} \psi^{-1}( t_{2}^{-1} \tilde{z}_{2}) \bigr| 
		      + |t_{1} - t_{2}| \bigl|  \bigr| \psi^{-1}( t_{2}^{-1} \tilde{z}_{2}) \bigr| \\
		  &\leq L t_{1} |  t_{1}^{-1} \tilde{z}_{1} -  t_{2}^{-1} \tilde{z}_{2} | 
		    + \tfrac{2}{3} | t_{1} -  t_{2} | \\
		  &\leq L t_{1} |  t_{1}^{-1} \tilde{z}_{1} -  t_{1}^{-1} \tilde{z}_{2} | 
		    + L t_{1} |  t_{1}^{-1} -  t_{2}^{-1} | | \tilde{z}_{2} | 
		      + \tfrac{2}{3} | t_{1} -  t_{2} | \\
		  &\leq L |  \tilde{z}_{1} -  \tilde{z}_{2} | 
		    + 2 L t_{1} |  t_{1}^{-1} -  t_{2}^{-1} | t_{2} + \tfrac{2}{3} | t_{1} -  t_{2} | \\
		  &= L |  \tilde{z}_{1} -  \tilde{z}_{2} | + 2 L |  t_{2} -  t_{1} | 
		    + \tfrac{2}{3} | t_{1} -  t_{2} | \\
		  &\leq K'' \lambda \bigl[ (t_{1}, \tilde{z}_{1}), (t_{2}, \tilde{z}_{2}) \bigr].
	\end{align*}
This completes the proof of the claim that $H$ is bi-Lipschitz w.r.t.\ the Euclidean metric 
on $\RR^{N}$ and $\xi \times \lambda$. Obviously, there is not need for the Lipschitz constants to depend on $R$. Note that, by \eqref{E:finitely.many.A.x.Ztilde}, there is a uniform finite Lipschitz constant for $H_{\mbf{A}, \mbf{Z}}$ 
and $H_{\mbf{A}, \mbf{Z}}^{-1}$ as $(\mbf{A}, \mbf{Z})$ varies.

\emph{Third application of} \eqref{E:general.change.of.variables}: Let
	\begin{equation} \label{E:S3.defn} 
		\mathscr{S}_{3} :=\mathscr{S}_{3, m, \mbf{A}, \mbf{Z}} 
		  := H^{-1} \Bigl( h_{i}^{-1} \bigl[ h_{i}(\mbf{A} \times \tilde{\mbf{Z}}^{\theta_{m}}) 
		    \cap \mathscr{S}_{2,m} \bigr] \Bigr) 
		    \subset \mathscr{X} \subset \RR^{N}.
	\end{equation}
Now, $\mbf{A}$ (see \eqref{E:eta.paramz.A}) and $\mbf{Z}$ (see \eqref{E:psi.paramz.Z}) are manifolds and $H^{-1}$ (see \eqref{E:H.map.defn}) and $h_{i}$ (see part \ref{I:h.hi.invrs.Lip} of definition \ref{D:fibering.by.cones}) are all bi-Lipschitz (in particular they are all inverses of continuous functions). Therefore, by \eqref{E:S2.Borel.finite.meas}, Hardt and Simon \cite[Definition 2.1', p.\ 20]{rHlS86.GMT} and \eqref{E:Lip.magnification.of.Hm} we see that 
	\begin{equation} \label{E:S3.Borel.rect.finite.meas}
		\mathscr{S}_{3} \text{ is Borel, countably $a$-rectifiable, and } 
		  \Hm^{a}(\mathscr{S}_{2}) < \infty.
	\end{equation} 
Let $x = \bigl( e, t(1,b) \bigr) \in \mathscr{S}_{3}$. Then, by \eqref{E:g.commutes.w/.set.ops}, 
$H(x) \in h^{-1}( \mathscr{S}_{2,m}) \subset C[\overline{\clU}]$. Hence, by \eqref{E:hi.invrs.S2.eps.inclusion},  
    \begin{equation} \label{E:S3.subset.Xtheta}
        \bigl( e, t(1,b) \bigr) \in \mathscr{S}_{3} \text{ implies } t \in (\theta_{m}, 1) .   
    \end{equation}

By 
    \begin{itemize}
        \item \eqref{E:dilate.ineq.in.C[U].with.h.invrs}, 
        \item the fact that $H^{-1}$ is bi-Lipschitz, 
        \item \eqref{E:general.change.of.variables} 
        with $\phi := H^{-1}$, 
$S = h_{i}^{-1} \bigl[ h_{i}(\mbf{A} \times \tilde{\mbf{Z}}^{\theta_{m}}) \cap \mathscr{S}_{2,m} \bigr]$, $f = \mathscr{F}_{R}$, and
        \item \eqref{E:suffices.to.prov."dilate.ineq..."}, 
    \end{itemize}
\emph{it suffices to show}
	\begin{equation}   \label{E:dilate.ineq.in.H.invrs.AxZ}
		     \Hm^{a} \bigl[ (H^{-1} \circ \mathscr{F}_{R} \circ H)(\mathscr{S}_{3}) \bigr] \\
		\leq K_{6} R^{p-d+1} \Hm^{a} (\mathscr{S}_{3}).
	\end{equation}
for some $K_{6} < \infty$, etc. 

\emph{We make one final change of variables,} one not making (explicit) use of \eqref{E:general.change.of.variables}. Let $A_{dilate,R}$ and $B_{dilate,R}$ be as \eqref{E:AB.dilate.defn} (with $\threeess$ as in \eqref{E:threeess.appears}), so 
$A_{dilate,R} \asymp 1$ and $B_{dilate,R} \asymp 1$ as $R \downarrow 0$. (See \eqref{E:asymp}.) Write $A := A_{dilate,R}$ and $B := B_{dilate,R}$. Recall $\rho : \D \to [0,1]$ described beginning with \eqref{E:rho.on.W.off.U} and $\eta$ introduced in \eqref{E:eta.paramz.A}. For convenience, 
use $\rho(e)$ as shorthand for $\rho \circ \eta(e)$, but we may still use $\rho(y)$, 
for $y \in \Pf$.

Recall \eqref{E:mathscrX.defn} and \eqref{E:psi.paramz.Z}. Let $h$ be the $h_{i}$ belonging to $\mbf{A}$ and $\mbf{Z}$. So $\mbf{A} \subset \mcl{A}_{i}$ and 
$h : \mbf{A} \times \tilde{\mbf{Z}}^{0} \to C[\mbf{A}]$. (See \eqref{E:bold.Z.tilde.defn}.) Define 
$\zeta : \mathscr{X}_{(B, A+B)} \to \mathscr{X}_{(0, \infty)}$ as follows. 
   \begin{multline} \label{E:zeta.fn.defn}
	\zeta \bigl( e, s(1,b) \bigr) 
	  := \Bigl( e, \bigl(A |v| + B \rho(e) \bigr) |v|^{-1} t \, (1, b) \Bigr)  
	  \in \mathscr{X} (= \mathscr{X}_{(0,1)}), \\
	    \text{ where } \bigl( e,s(1,b) \bigr) \in \mathscr{X}_{(B, A+B)}, \; 
	      t := (s - B)/A \in (0,1), \; \\
	       (y,v) := h \Bigl[ \eta(e), t \bigl(1, \psi(b) \bigr) \Bigr] 
	         = h \circ H \bigl( e, t(1,b) \bigr).
   \end{multline}

\emph{Claim:}
    \begin{equation} \label{E:zeta.is.Lip}
        \zeta \text{ is Lipschitz.}
    \end{equation}
To see this, let $\bigl( e, s(1,b) \bigr) \in \mathscr{X}_{(B, A+B)}$. Let
$(y,v) := h \Bigl[ \eta(e), t \bigl(1, \psi(b) \bigr) \Bigr] = h \circ H \bigl( e, t(1,b) \bigr)$,
 where $t := (s - B)/A$. Thus, $y = \eta(e)$ ($e \in E$). Let $z := \psi(b) \in \mbf{Z}$ 
 ($b \in D$). Let
    \begin{equation}  \label{E:(y,w).from.e.b}
      (y, w) := h \bigl( y, (1,z) \bigr) 
        =  h \Bigl( \eta(e), \bigl( 1,\psi(b) \bigr) \Bigr) \in T_{y} \D , 
    \end{equation}
 i.e.\ like $(y,v)$, but with1 in place of $t$. (This makes sense by remark \ref{R:extending.hi}.)  
Again by remark \ref{R:extending.hi}, $h$ is Lipschitz on 
$\mcl{A}_{i} \times \Bigl[ [0,1] \cdot \bigl( \{1\} \times \msf{L}_{i} \bigr) \Bigr]$. 
By \eqref{E:psi.paramz.Z} and \eqref{E:eta.paramz.A}, $\psi$ and $\eta$ are bi-Lipschitz. It follows from \eqref{E:comp.of.Lips.is.Lip} that $(y,w)$ is Lipschitz in $e$ and $b$. As noted after \eqref{E:varkappa.is.other.proj}, the projection 
$\varkappa$ is Lipschitz. Therefore, $w = \varkappa(y,w)$ is Lipschitz. 
By \eqref{E:rho.is.Lip.on.D}, $\rho$ is Lipschitz. Therefore, by \eqref{E:eta.paramz.A} and \eqref{E:comp.of.Lips.is.Lip}, $\rho \circ \eta$ (sometimes known simply as $\rho$) is Lipschitz.  
Therefore, $\bigl( e, (1,b) \bigr) \mapsto \bigl( y, w, \rho(e) \bigr) 
= \bigl( \eta(e), w, \rho(e) \bigr)$ is Lipschitz in $(e,b) \in E \times D$. 

Since $|v| = t|w|$, 
    \begin{equation}  \label{E:zeta.in.terms.of.w}
        \zeta \bigl( e, s(1,b) \bigr) 
          = \Bigl( e, \bigl(A |v| + B \rho(e) \bigr) |w|^{-1} (1, b) \Bigr) .
    \end{equation}
Now, by definition \ref{D:fibering.by.cones} part \ref{I:h.hi.invrs.Lip}, $h^{-1}$ is Lipschitz. Therefore, by \eqref{E:metric.on.Pf.x.CL}, \eqref{E:lambda.metric.defn},  \eqref{E:xi+.from.2.metrics}, \eqref{E:extended.homogeneity.of.hi}, and \eqref{E:(y,w).from.e.b}, there exists $K < \infty$ s.t.\
    \begin{equation*}
        \tfrac{1}{2} \leq \tfrac{1}{2} \bigl| (1, z) \bigr| 
          = (\xi \times \lambda) \Bigl[ \bigl( y, \tfrac{1}{2}(1,z) \bigr), \bigl( y, (0,0) \bigr) \Bigr]
            \leq K \xi_{+} \Bigl[ \bigl( y , \tfrac{1}{2} w \bigr), (y,0) \Bigr] = \tfrac{1}{2} K |w|.
    \end{equation*}
Thus, $|w| \geq 1/K > 0$ and so is bounded below. The claim \eqref{E:zeta.is.Lip} follows from example \ref{Ex:ratnl.fns.loc.Lip} again.

Recall the definition of $\mathscr{X}_{\ast}$, \eqref{E:mathscrX.defn}. 
Define $g_{dil} : \mathscr{X}_{(0, \infty)} \to \mathscr{X}_{(0, \infty)}$ by
    \begin{equation}  \label{E:g.dil.defn}
        g_{dil} \bigl( e, t(1,b) \bigr) := \bigl( e, (At+B)(1, b) \bigr) \in \RR^{N}, 
          \quad e \in E, t > 0, b \in D.
    \end{equation}
By \eqref{E:theta.m.in.(0,1)}, $\theta_{m} < 1$. Write $\theta = \theta_{m}$. 
Note that $g_{dil}(\mathscr{X}_{\theta}) \subset \mathscr{X}_{(A \theta+B, A+B)}$. But by \eqref{E:S3.subset.Xtheta}, 
$\mathscr{S}_{3} \subset \mathscr{X}_{\theta}$ so 
    \begin{equation}  \label{E:g.dil(S3).subset.X.(A.theta+B,A+B)}
      g_{dil}(\mathscr{S}_{3}) \subset \mathscr{X}_{(A \theta+B, A+B)} .
    \end{equation}

Let $y \in \clU$, $z \in \msf{L}$, and $t \in (0,1)$. 
Write $(y, v) := h \bigl( y, t(1, z) \bigr)$. By \eqref{E:h(y,(1,z)).notin.C[P]}, 
$(y,u) := h \bigl( y, (1, z) \bigr) \notin C[\overline{\clU}]$. Hence, by \eqref{E:eps.=.2}, 
we have $|u| > 4$.
By homogeneity of $h$, we have $(y,v) = t \, h \bigl( y, (1, z) \bigr) = (y, tu)$. 
Thus, $|v| = t |u| > 4 t$ so $1/4 > |v|^{-1} t$. By the hypotheses of lemma \ref{L:dilate.R.-(d-p-1)} (which, as you may recall, is what we're trying to prove here), we have
    \begin{equation} \label{E:0<Rm<threeess/3}
      0 < R_{m} < \threeess/3 .
    \end{equation}
Thus the inequalities \eqref{E:R.<.threeess/3.A.B.ineqs} hold.
Recall $\theta = \theta_{m}$ as in \eqref{E:hi.invrs.theta.m}. Thus, by \eqref{E:rho.in.[0,1]}, \eqref{E:AB.dilate.defn}, and, we have
    \begin{multline} \label{E:Av+B.rho.v.invrs.t.range}
        \bigl( A |v| + B \rho(y) \bigr)  |v|^{-1} t \leq A t + B \rho(y)/4 < A + 2(1 - A) /2 = 1. \\
          \text{ If } t \in (\theta, 1) \text{ then } \theta / 2 < A \theta 
            < \bigl( A |v| + B \rho(y) \bigr)  |v|^{-1} t < 1 .
    \end{multline}
 
Suppose $s \in (A \theta+B, A+B)$. Then $t := (s-B)/A \in (\theta, 1)$. Therefore, by \eqref{E:zeta.fn.defn} and \eqref{E:Av+B.rho.v.invrs.t.range}, we have 
    \begin{equation}  \label{E:zeta.(A.theta+B,A+B).subset.X.theta/2}
      \zeta(\mathscr{X}_{(A \theta+B, A+B)}) \subset \mathscr{X}_{\theta/2} 
     \end{equation}   
  
Recall the definition, \eqref{E:H.map.defn}, of $H$ and the formula, \eqref{E:H.invrs.formla}, for $H^{-1}$. Recall the definition, \eqref{E:F.i.Rm.defn}, of $\mathscr{F}_{R}$ and the definition, \eqref{E:S3.defn}, of $\mathscr{S}_{3}$. \emph{Claim:} The following commutes.
     \begin{equation}  \label{E:gdil.zeta.CD}
          \begin{CD}
            \mathscr{S}_{3} @>{H^{-1} \circ \mathscr{F}_{R} \circ H}>> \mathscr{X}_{\theta/2} \\
    	      @V{g_{dil}}VV @ |  \\ 
    	   \mathscr{X}_{(A \theta+B, A+B)} @>>{\zeta}> \mathscr{X}_{\theta/2} .
          \end{CD}
    \end{equation}
By \eqref{E:S3.subset.Xtheta}, 
    \begin{equation}  \label{E:in.S3,s.in.(theta,1)}
      \text{If } \bigl( e, s(1,b) \bigr) \in \mathscr{S}_{3} 
        \text{ then } s \in (\theta_{m}, 1) .
    \end{equation}
(It follows from \eqref{E:g.dil.defn}, \eqref{E:in.S3,s.in.(theta,1)}, and \eqref{E:R.<.threeess/3.A.B.ineqs} ($0 < R < \threeess/3$ by assumption in lemma \ref{L:dilate.R.-(d-p-1)}), $g_{dil}$ is Lipschitz on $\mathscr{S}_{3}$.)

By \eqref{E:S3.defn}, \eqref{E:S.2m.subset.C2.less.F}, and \eqref{E:Cs.defn}, 
    \begin{equation}  \label{E:h.H.S3.subset.C2}
      (y,v) := h \circ H \bigl( e, s(1,b) \bigr) \in \mathscr{S}_{2} \subset C_{2}. 
        \text{ Therefore, } |v| < 2 \rho(y) .
    \end{equation}
In particular, by \eqref{E:H.map.defn}, 
     \begin{equation} \label{E:y.z.and.v}
      y = \eta(e) \text{ and, if } z := \psi(b), 
        \text{ then } (y,v) := h \bigl( y, s(1,z) \bigr) .
    \end{equation}

By \eqref{E:g.dil(S3).subset.X.(A.theta+B,A+B)}, we have 
$g_{dil} \bigl( e, s(1,b) \bigr) \in \mathscr{X}_{(A \theta+B, A+B)}$. 
Therefore, by \eqref{E:zeta.(A.theta+B,A+B).subset.X.theta/2}, $\zeta \circ g_{dil}\bigl( e, s(1,b) \bigr) \in \mathscr{X}_{\theta/2}$. Specifically, since $\bigl[ (As+B) - B \bigr]/A = s$, by \eqref{E:g.dil.defn} and \eqref{E:zeta.fn.defn}, 
    \begin{align} \label{E:zeta.circ.gdil}
        \zeta \circ g_{dil}\bigl( e, s(1,b) \bigr) &= \zeta \bigl( e, (As+B)(1,b) \bigr) \notag \\
          &= \Bigl( e, \bigl( A |v| + B \rho(y) \bigr) |v|^{-1} s(1, b) \Bigr) . 
    \end{align}
That is what you get if you follow the low road in \eqref{E:gdil.zeta.CD}. 

Now we take the high road in \eqref{E:gdil.zeta.CD}, \emph{viz.} we evaluate 
$(H^{-1} \circ \mathscr{F}_{R} \circ H) \bigl( e, s(1,b) \bigr)$. 
As remarked, $s \in (\theta, 1)$. \emph{A fortiori}, by \eqref{E:rho.in.[0,1]}, 
$s \in \bigl( \rho(y) \theta, 1 \bigr)$. Hence, by \eqref{E:threeess.appears}, 
we have 
    \begin{equation*}
      |v| \geq R \rho(y)/\threeess .
    \end{equation*}
Therefore, by\eqref{E:F.i.Rm.defn}, \eqref{E:H.map.defn}, and \eqref{E:h.H.S3.subset.C2},  
we have
    \begin{equation*} 
            (H^{-1} \circ \mathscr{F}_{R} \circ H) \bigl( e, s(1,b) \bigr) 
              = (H^{-1} \circ h^{-1} \circ F) \Bigl[ h \circ H \bigl( e, s(1,b) \bigr) \Bigr] \notag 
                = H^{-1} \circ h^{-1} \circ F(y,v) .
    \end{equation*}
We know $|v| > R \rho(y)/\threeess$. And by \eqref{E:h.H.S3.subset.C2} again, 
$|v| < 2 \rho(y)$. Therefore, by \eqref{E:F.on.C[Pf].defn},  
    \begin{equation*}
      (H^{-1} \circ \mathscr{F}_{R} \circ H) \bigl( e, s(1,b) \bigr)
        = H^{-1} \circ h^{-1} \Bigl( y, \bigl( A |v| + B \rho(e) \bigr) |v|^{-1} v \Bigr) .
    \end{equation*}
But $h^{-1}$ is homogeneous, by \eqref{E:homogeneity.of.hi}. Therefore, by \eqref{E:y.z.and.v}, \eqref{E:h.H.S3.subset.C2}, and \eqref{E:H.invrs.formla}, 
    \begin{multline*}
      (H^{-1} \circ \mathscr{F}_{R} \circ H) \bigl( e, s(1,b) \bigr) 
        = H^{-1} \Bigl( y,  \bigl[ A |v| + B \rho(e) \bigr] |v|^{-1} s (1, z) \Bigr) \\
          = \Bigl( e, \bigl (A |v| + B \rho(y) \bigr) |v|^{-1} s(1,b) \Bigr) .
    \end{multline*}
This is the same as what we got in \eqref{E:zeta.circ.gdil}. This proves the claim that 
\eqref{E:gdil.zeta.CD} commutes and, incidentally, 
that $H^{-1} \circ \mathscr{F}_{R} \circ H (\mathscr{S}_{3}) \subset \mathscr{X}_{\theta/2}$.

By \eqref{E:zeta.is.Lip}, $\zeta$ has a Lipschitz constant $K < \infty$. Then, by \eqref{E:gdil.zeta.CD} and \eqref{E:Lip.magnification.of.Hm}, we have 
    \begin{equation*}
        \Hm^{a} \bigl[ (H^{-1} \circ \mathscr{F}_{R} \circ H)(\mathscr{S}_{3}) \bigr] 
          = \Hm^{a} \bigl[ (\zeta \circ g_{dil}) (\mathscr{S}_{3}) \bigr] 
            \leq K^{a} \Hm^{a} \bigl[ g_{dil} (\mathscr{S}_{3}) \bigr].
    \end{equation*}
Therefore, by \eqref{E:dilate.ineq.in.H.invrs.AxZ}, to prove lemma \ref{L:dilate.R.-(d-p-1)}, 
    \begin{equation}  \label{E:Hm.a.gdil.S3.R-min.ineq}
      \text{It suffices to show that for some } K_{8} < \infty, \;
        \Hm^{a} \bigl[ g_{dil} (\mathscr{S}_{3}) \bigr]  
          \leq K_{8} R^{p-d+1} \Hm^{a} (\mathscr{S}_{3}).
    \end{equation}

\vspace{0.20in}
	
\subsubsection{Bounding an integral} \label{SSS:bound.integral}
So it remains to prove \eqref{E:Hm.a.gdil.S3.R-min.ineq}. Recall \eqref{E:S3.Borel.rect.finite.meas}. And obviously, by \eqref{E:R.<.threeess/3.A.B.ineqs}, $g_{dil}$ defined in \eqref{E:g.dil.defn} is injective. Therefore, by the ``area formula'' 
(Hardt and Simon \cite[1.8 p.\ 13, p.\ 14, and p.\ 27]{rHlS86.GMT}, Federer \cite[3.2.3, p.\ 243 and 3.2.46, p.\ 282]{hF69}, we give details presently), we have
	\begin{equation}  \label{E:integral.over.S3}
		\Hm^{a} \bigl[ g_{dil}(\mathscr{S}_{3}) \bigr]  
		  = \int_{\mathscr{S}_{3}} J^{\mathscr{S}_{3}} g_{dil} (e,t,b) \; 
		    \Hm^{a} (de \, dt \, db),
	\end{equation}
where $\Hm^{a}$ is computed based on the Euclidean metric on $\RR^{N}$, where 
$N := \ell+q+1$. (See \eqref{E:eta.paramz.A} and \eqref{E:psi.paramz.Z}.) Here, $J^{\mathscr{S}_{3}} g_{dil}$ is defined in Hardt and Simon \cite[pp.\ 13, 27]{rHlS86.GMT}). To bound $J^{\mathscr{S}_{3}} g_{dil}$ we use the following lemma, proved in appendix \ref{Chptr:misc.proofs}. First, note this elementary fact. Let $V$ and $W$ be inner product spaces let $L : V \to W$ be linear, and let $L^{adj}$ be the adjoint of $L$. Then 
$L^{adj} L : V \to V$ is self-adjoint so its eigenvalues are real (Stoll and Wong \cite[Theorem 4.1, p.\ 207]{rrSetW68.LinearAlgebra}). Moreover, its eigenvalues are non-negative. To see this, let $v \in V$. Then
	\begin{equation}  \label{E:L.is.noneg.def}
		\bigl\langle (L^{adj} L) v, v \bigr\rangle_{V} 
		   = \langle L v, L v \rangle_{W} \geq 0,
	\end{equation}
where $\langle \cdot, \cdot \rangle_{V}$ and $\langle \cdot, \cdot \rangle_{W}$ are the inner products on $V$ and $W$, resp. The proof of the following is in appendix \ref{Chptr:misc.proofs}.
 	\begin{lemma}   \label{L:rect.set.Jake.bounds}
Let $M$ be an $m$-dimensional $C^{\infty}$-manifold imbedded in some $\RR^{n_{1}}$, $n_{1} \geq m$. Put on $M$ the Riemannian metric it inherits from $\RR^{n_{1}}$. 
Let $n_{2} \geq m$ be an integer and suppose 
$g = (g_{1}, \ldots, g_{n_{2}}) : M \to \RR^{n_{ 2}}$ is continuously differentiable. Let $X$ be an imbedded countably $\Xdim$-rectifiable subset of $M$, 
where $0 \leq \Xdim \leq m$ is an integer. Then the following holds for $\Hm^{r}$-almost all $x \in X$: The linear map $d^{M} g_{x} : T_{x} M \to \RR^{n_{2}}$ exists (Hardt and Simon \cite[p.\ 13]{rHlS86.GMT}). 
 For $x \in M$, let $(d^{M}g_{x})^{adj} : T_{g(x)} \RR^{n_{2}} \to T_{x} M$ be the adjoint of $d^{M}g_{x}$ (defined in terms of the Riemannian metrics on $ \RR^{n_{2}}$ and $M$, the latter inherited from $\RR^{n_{1}}$; of course $T_{g(x)} \RR^{n_{2}} \isomto \RR^{n_{2}}$).
  Let $\nu_{1}^{2}(x) \geq \cdots \geq \nu_{m}^{2}(x) \geq 0$, 
  be the eigenvalues of the transformation 
  $(d^{M}g_{x})^{adj} \circ (d^{M}g_{x}) : T_{x} M \to T_{x} M$, where $\nu_{j}(x) \geq 0$ for all $j$. Then
     \[
        \nu_{m-\Xdim+1}(x) \cdots \nu_{m}(x) \leq J^{X}g(x) 
          \leq \nu_{1}(x) \cdots \nu_{\Xdim}(x), \quad \Hm^{\Xdim} 
            \text{-almost everywhere}.
     \]
	 \end{lemma}

We apply the lemma with: $g := g_{dil}$ (defined in \eqref{E:g.dil.defn}), 
$M := \mathscr{X} = \mathscr{X}_{(0, 1)}$ (see \eqref{E:mathscrX.defn}), and 
$X := \mathscr{S}_{3}$ (see \eqref{E:S3.defn}, \eqref{E:S3.Borel.rect.finite.meas}, and \eqref{E:S3.subset.Xtheta}). So we need to compute 
$d^{\mathscr{X}} g_{dil, x}$ ($x \in \mathscr{X}$) and its adjoint. But, as remarked just after \eqref{E:mathscrX.defn}, $\mathscr{X}$ is an \emph{open} subset of $\RR^{N}$ 
(recall $N := \ell+q+1$). Therefore, $d^{\mathscr{X}} g_{dil, \cdot}$ is just the differential, 
$D g_{dil}$. 

Now, by \eqref{E:g.dil.defn}, $g_{dil}$ is not really defined on $(e,t,b)$-space. It takes an argument $x \in \RR^{N}$ of the form:  
    \begin{equation*}
      x = (e, t, w) = \bigl( e, t(1, b) \bigr) \in \mathscr{X} ,
    \end{equation*} 
where $b \in D$. So $w = t b \in \RR^{q}$. (We will treat $b$ as a $1 \times q$ row vector.) Let $x$ be an arbitrary vector of that form. So we are interested in  
    \begin{equation}  \label{E:t.in.(0,1)}
      t \in (0,1).
    \end{equation} 
Let $A_{dilate,R}$ and $B_{dilate,R}$ be as \eqref{E:AB.dilate.defn}, 
so $A_{dilate,R} \asymp 1$ and $B_{dilate,R} \asymp 1$ as 
$R \downarrow 0$. Write $A := A_{dilate,R}$ and $B := B_{dilate,R}$. 
Hence, $g_{dil}(x) = \bigl( e, At+B, (A + B t^{-1}) w \bigr)$. Write 
$g_{dil} = (g_{dil,1}, \ldots, g_{dil,N})$. Use ``${}^{T}$'' to indicate transposition. Observe that  $(\partial/\partial t) (A + B t^{-1}) w^{T} = -B t^{-2}  w^{T}= -B t^{-1} b^{T}$. Then the matrix of the derivative (or Jacobian) matrix, $D g_{dil}(x) := (\partial g_{dil,i}/\partial x_{j} )$, 
of $g_{dil \ast}$ is, 
	\begin{equation}  \label{E:matrix.of.Dg.dil}
		Dg_{dil}(x) =
			\begin{pmatrix}
				I_{\ell} & 0^{\ell \times 1} & 0^{\ell \times q} \\
				0^{1 \times \ell} & A^{1 \times 1} & 0^{1 \times q} \\  
				0^{q \times \ell} & (-B t^{-1} b^{T})^{q \times 1} & (A + B t^{-1}) I_{q}
			\end{pmatrix}.
	\end{equation}
Here, $I_{q}$ is the $q \times q$ identity matrix. Since $g_{dil}$ maps an open subset 
of $\RR^{N}$ into $\RR^{N}$, we have that the matrix of the adjoint of $d^{\mathscr{X}}$ is just the transpose $Dg_{dil}(x)^{T}$ and 
	\begin{equation*}  
		Dg_{dil}(x)^{T} \, Dg_{dil}(x) =
			\begin{pmatrix}
				I_{\ell} & 0^{\ell \times 1} & 0^{\ell \times q} \\
				0^{1 \times \ell} & A^{2} + B^{2} t^{-2} |b|^{2} & 
				  -B t^{-1} (A + B t^{-1}) b^{1 \times q} \\  
				0^{q \times \ell} & \bigl( -B t^{-1} (A + B t^{-1}) b^{T} \bigr)^{q \times 1}
				 & (A + B t^{-1})^{2} I_{q}
			\end{pmatrix}.
	\end{equation*}

To apply lemma \ref{L:rect.set.Jake.bounds}, we need the eigenvalues 
of $Dg_{dil}(x)^{T} \, Dg_{dil}(x)$. 
Any vector of the form $(v^{1 \times \ell}, 0^{1 \times 1}, 0^{1 \times q})$ is an eigenvector with eigenvalue 1. If $v^{1 \times q}$ is orthogonal to $b$ then
$(0^{1 \times \ell}, 0^{1 \times 1}, v^{1 \times q})$ is an eigenvector with eigenvalue 
$(A + B t^{-1})^{2} \asymp t^{-2}$ as $t \downarrow 0$. Hence, we have calculated 
	\begin{multline}  \label{E:N-2.eigvals.of.DTD}
		N-2 \text{ eigenvalues of } Dg_{dil}(x)^{T} \, Dg_{dil}(x): \\
		  \text{ 1 with multiplicity } \ell \text{ and } (A + B t^{-1})^{2} \asymp t^{-2} 
		    \text{ with multiplicity } q-1, \\
		       t \in (0,1) \text{ and } R \in (0, \threeess/3), 
                          \text{ uniformly in } e \in E, b \in D, \\
                            \text{ where } z = \bigl( e,t(1,b) \bigr) \in \mathscr{X}_{(0, 1)} .
	\end{multline}
Notice that $(0,0,b)$ is not an eigenvector. 

The remaining two eigenvectors have to span a subspace, $V$, of $\RR^{N}$ orthogonal to the eigenvectors described in the last paragraph. It suffices to study the eigenvalues of the 
    \begin{multline}  \label{E:matrix.M.defn}
       \bigl(  \text{Lower quadrant of } Dg_{dil}(x)^{T} \, Dg_{dil}(x) \bigr) 
         = t^{-2} M, \text{ where } \\
           M^{(q+1) \times (q+1)} := M(x) := 
			\begin{pmatrix}
				A^{2}  t^{2} + B^{2} |b|^{2} & -B (A  t + B) b^{1 \times q} \\  
				\bigl( -B (A  t + B) b^{T} \bigr)^{q \times 1} & (A t + B)^{2} I_{q}
			\end{pmatrix}.
    \end{multline}
Thus, keeping in mind the $t^{-2}$, we only need to find the eigenvalues of $M$ corresponding to eigenvectors of the form 
    \begin{equation*}
      (\alpha^{1 \times 1}, b) \text{ with } \alpha \neq 0 ,
    \end{equation*} 
because we have already counted the eigenvectors that are all 0 except the last $q$ coordinates, which constitute a vector orthogonal to $b$. 

Let 
    \begin{equation*}
      \beta := |b|^{2} .
    \end{equation*}  
so, by \eqref{E:1/3.<.b.lngth.<.2/3}, 
    \begin{equation} \label{E:1/9.<.beta.<.4/9}
      1/9 < \beta<  4/9 \text{ for every } b \in D.
    \end{equation}
Now let
    \begin{multline}  \label{E:C(t)}
        C := C(t) := - \frac{(1-\beta) B^{2} + 2 ABt}{At + B} = - B - B \frac{At - \beta B}{At+B} \\
          = -B - B \frac{(At + B) - B(1+\beta)}{At+B}
            = -2B + \frac{B^{2} (1+\beta) }{At+B} .
    \end{multline}
By \eqref{E:R.<.threeess/3.A.B.ineqs} and \eqref{E:1/9.<.beta.<.4/9},
    \begin{equation*}
      C(t) \text{ is bounded and bounded away from 0 in } t \geq 0 . 
    \end{equation*}
For future reference, 
    \begin{align}  \label{E:C.properties} 
        C(0) &= -(1-\beta) B < 0, \notag \\
        C'(t) &= \frac{-A B^{2} (1+\beta) }{ (At+B)^{2} } \text{ so } C'(0) = -A(1 + \beta), \\
        C''(t) &= \frac{ 2A^{2}B^{2}(1+\beta) }{ (At+B)^{3} } \text{ so } C''(0) 
          = \frac{ 2A^{2}(1+\beta) }{ B }, 
          \text{ and } \notag \\
        C^{(3)}(t) &= \frac{ -6 A^{3}B^{2}(1+\beta) }{ (At+B)^{4} } \text{ so } C^{(3)}(0)
          = -\frac{ 6A^{3}(1+\beta) }{ B^{2} } \notag.
    \end{align}
By \eqref{E:R.<.threeess/3.A.B.ineqs} and \eqref{E:1/3.<.b.lngth.<.2/3}, $C^{j}(0) \neq 0$ 
and $C^{(j)}(t)$ is bounded in $t \geq 0$ ($j = 0, 1, 2, 3$). 

From \eqref{E:C(t)}, we have    
    \begin{equation*}
      C(t)^{2} = 4 B^{2} - \frac{4 B^{3} (1+\beta) }
        {At+B} + \frac{B^{4} (1+\beta)^{2} }{(At+B)^{2}}
          = 4 B^{2} + B^{3} (1+\beta) \left[ \frac{B (1+\beta) }
            {(At+B)^{2}} -  \frac{4}{At+B} \right] .
    \end{equation*}
Therefore, with some drudgery we derive a bunch of complicated formulas. To do this it may help to use ``Leibnitz's formula'': Given two functions, $u$, $v$ we have 
    \begin{equation*}
      \frac{d^{n}}{dt^{n}} (uv) 
        = \sum_{k=0}^{n} \binom{n}{k} u^{(k)} v^{(n-k_)}, \; n = 0, 1, \ldots . 
    \end{equation*}
Applying this with $u = v = C$ it is not too awful to compute:
    \begin{align}  \label{E:C.sqrd.properties} 
        C^{2}(0) &= B^{2} (1-\beta)^{2} > 0 \notag \\
          \tfrac{d}{dt} C^{2}(t) &= 2AB^{3} (1+\beta) \left[ -\frac{B (1+\beta) }{(At+B)^{3}}  
            + \frac{2}{(At+B)^{2}} \right], \notag \\
              & \qquad \text{ so }  \tfrac{d}{dt} C^{2}(t) \restriction_{t=0} 
                = 2AB (1 - \beta^{2}),  \notag \\
        \tfrac{d^{2}}{dt^{2}} C^{2}(t) 
          &= 2A^{2}B^{3} (1+\beta) \left[ \frac{3B (1+\beta) }{(At+B)^{4}}  
            - \frac{4}{(At+B)^{3}} \right]  \\
              & \qquad \text{ so } \tfrac{d^{2}}{dt^{2}} C^{2}(t) \restriction_{t=0} 
                = 2A^{2} (3 \beta - 1)(\beta + 1)  
            \text{ and } \notag \\
        \tfrac{d^{3}}{dt^{3}} C^{2}(t)(t) 
          &= 24A^{3}B^{3} (1+\beta) \frac{At-B \beta}{(At+B)^{5}} 
              \left[ -\frac{B (1+\beta) }{(At+B)^{5}} 
                + \frac{1}{(At+B)^{4}} \right] \notag \\
          &= 24A^{3}B^{3} (1+\beta) \frac{At-B \beta}{(At+B)^{5}} \notag \\
          & \qquad \text{ so } \tfrac{d^{3}}{dt^{3}} C^{2}(t) \restriction_{t=0} 
            = - 24 A^{3} B^{-1} \beta (1+\beta) \notag.
    \end{align}

Recall that, by assumption in lemma \ref{E:dilate.R.-(d-p-1)}, 
$0 < R_{m} < \threeess/3$. By \eqref{E:R.<.threeess/3.A.B.ineqs} and \eqref{E:1/9.<.beta.<.4/9}, the expression 
$24A^{3}B^{3} (1+\beta)$ in $\tfrac{d^{3}}{dt^{3}} C^{2}(t)(t)$ is bounded and bounded away from 0 as a function of $A$, $B$, and $\beta$. Suppose $t \in (0, 1/9)$. Then 
$\tfrac{At-B \beta}{(At+B)^{5}}$ is negative, finite, and bounded away from 0 as a function of $A$, $B$, and $t \in (0,1/9)$. Thus, by \eqref{E:C.sqrd.properties},  
    \begin{multline}  \label{E:C.sqrd.3rd.deriv.bnded}
      \tfrac{d^{3}}{dt^{3}} C^{2}(t) 
        \text{ is bounded and bounded away from 0 if } t \in (0, 1/9) . \\
        \text{Conseqently, }
        \tfrac{d^{3}}{dt^{3}} C^{2}(t) \asymp 1 \text{ as } t \downarrow 0. . 
    \end{multline}

Note further that 
    \begin{equation} \label{E:now.I.know.my.ABCs}
        \bigl( At + B + C(t) \bigr) (At + B) = A^{2} t^{2} + B^{2} \beta.
    \end{equation}
Then it is easily seen from \eqref{E:matrix.M.defn}, that 
    \begin{multline} \label{E:M'.matrix.defn}
        M = (At+B) M', \text{ where } \\
          M'^{(q+1) \times (q+1)} := M'(x) := 
			\begin{pmatrix}
				A t + B + C & -B b^{1 \times q} \\  
				\bigl( -B b^{T} \bigr)^{q \times 1} & (A t + B) I_{q}
			\end{pmatrix}.
    \end{multline}
Hence, we only need to find the eigenvalues of $M'$ corresponding to eigenvectors of the form $(\alpha^{1 \times 1}, b)$. We already observed that $\alpha$ cannot be 0.  
Hence, it suffices to find $\gamma = \gamma(t) \in \RR$ s.t.\ $(1, \gamma b)$ is an eigenvalue of $M'$. Denote the corresponding eigenvalue by $\nu^{2}$, with $\nu \geq 0$. 
(Since $Dg_{dil}(x)^{T} \, Dg_{dil}(x)$ is non-negative definite, by \eqref{E:L.is.noneg.def}, so is $M'$.) Recall that $\beta := |b|^{2}$. Thus,
    \begin{equation} \label{E:nu.gamma.M'}
        \nu^{2} 
			\begin{pmatrix}
				1 \\ 
				\gamma b^{T}
			\end{pmatrix}
          = M' 
			\begin{pmatrix}
				1 \\ 
				\gamma b^{T}
			\end{pmatrix}
			=
			\begin{pmatrix}
				A t + B + C  -\gamma B \beta \\  
				-B b^{T} + \gamma (A t + B) b^{T}
			\end{pmatrix}.
    \end{equation}

Observe that $\nu^{2}$ cannot be 0. If it were we would have, from the first row in the preceding matrix 
$\gamma = (A t + B + C)/(B \beta)$ and, hence, from the second row, 
$B = (A t + B + C)(A t + B)/(B \beta)$. Thus, by \eqref{E:now.I.know.my.ABCs}, we have $B^{2} \beta = A^{2} t^{2} + B^{2} \beta$, so $A^{2} t^{2} = 0$. 
But by \eqref{E:R.<.threeess/3.A.B.ineqs} $A \neq 0$, and, by assumption, $t \in (0,1)$ so $A^{2} t^{2} \neq 0$. Contradiction. Thus, $\nu^{2} \neq 0$. From \eqref{E:nu.gamma.M'} we see
    \begin{equation} \label{E:nu.no.bT}
        \nu^{2} = A t + B + C  -\gamma B \beta \text{ and } \nu^{2} \gamma b^{T} 
          = -B b^{T} + \gamma \, (A t + B) b^{T}. 
    \end{equation}
Therefore,
    \begin{equation*}
        \gamma \, (A t + B + C  -\gamma B \beta) = -B + \gamma \, (A t + B), 
          \text{ which is equivalent to } B \beta \gamma^{2} - C \gamma - B = 0.
    \end{equation*}
Hence,
    \begin{equation}  \label{E:gamma.quad.soln}
        \gamma = \frac{ C \pm \sqrt{C^{2} + 4 B^{2} \beta} }{ 2B \beta }.
    \end{equation}
By \eqref{E:C.sqrd.properties}, we have
    \begin{equation}  \label{E:C(0)sqrd+4.Bsqrd.beta}
      C(0)^{2} + 4 B^{2} \beta = (1 + \beta)^{2} B^{2} .
    \end{equation}
Using this in combination with \eqref{E:C.properties}, \eqref{E:R.<.threeess/3.A.B.ineqs}, and \eqref{E:1/9.<.beta.<.4/9}, we get that
    \begin{equation}  \label{E:gamma.limit}
        \gamma(t) \to \frac{ -(1-\beta) \pm (1+\beta) }{ 2 \beta } \neq 0 
          \text{ as } t \downarrow 0.
    \end{equation}

First, replace ``$\pm$'' in \eqref{E:gamma.quad.soln} by ``$-$'' and call the corresponding 
$\gamma$ ``$\gamma_{-}$''. Call the corresponding eigenvector $\nu_{-}^{2}$. By \eqref{E:gamma.limit} we see that  
    \begin{equation}  \label{E:gamma.minus.asymp.1}
        \gamma_{-}(t) \to -1/\beta < 0 \text{ as } t \downarrow 0
    \end{equation}
so $\gamma_{-} < 0$ for $t$ sufficiently small. Plugging this and the first line of \eqref{E:C.properties} into the first part of \eqref{E:nu.no.bT} we see
    \begin{equation}  \label{E:nu.minus.limit}
       \nu_{-}^{2}(t) \to B - (1-\beta) B + B = (1+\beta) B > 0.
    \end{equation}
I.e., $\nu_{-}^{2}(t) \asymp 1$. Recalling the definitions \eqref{E:M'.matrix.defn} and \eqref{E:matrix.M.defn} of $M'$ and $M$, resp., we see that 
    \begin{equation} \label{E:what.nu.minus.means}
      \nu_{-}^{2} \text{ translates into an eigenvalue of } Dg_{dil}(x)^{T} \, Dg_{dil}(x)
        \text{ that } \asymp t^{-2} \text{ as } t \downarrow 0.
    \end{equation}

Now replace ``$\pm$'' in \eqref{E:gamma.quad.soln} by ``$+$'' and call the corresponding 
$\gamma$ ``$\gamma_{+}$''. Denote the corresponding eigenvalue 
by $\nu_{+}^{2} = \nu_{+}^{2}(t)$. From \eqref{E:gamma.limit} we see that 
$\gamma_{+} \to 1$ as $t \downarrow 0$. Combining this with \eqref{E:C.properties} and \eqref{E:nu.no.bT} we see that $\nu_{+}^{2}(t) \to 0$ as $t \downarrow 0$. We show that, more precisely, 
    \begin{equation} \label{E:nu+.asymptotics}
        \nu_{+}^{2}(t) \asymp t^{2} \text{ as } t \downarrow 0.
    \end{equation}

It follows from the Taylor approximation (Apostol \cite[Theorem 5-14, p.\ 96]{tmA57.Apostol}), \eqref{E:C.sqrd.3rd.deriv.bnded}, and \eqref{E:C.sqrd.properties} that 
    \begin{multline*}
        C^{2}(t) = C(0)^{2} + \left( \frac{d}{ds} C^{2}(s) \restriction_{s=0} \right) t 
          + \left( \frac{1}{2} \frac{d^{2}}{ds^{2}} C^{2}(s) \restriction_{s=0} \right) t^{2} 
            + o(t^{2}) \\                 
              = (1-\beta)^{2} B^{2} + 2(1-\beta^{2}) A B t + A^{2} 
                (3 \beta - 1)(\beta + 1) t^{2} + o(t^{2}) 
                  \text{ as } t \downarrow 0.   
    \end{multline*}
Here, $o(t^{2})$ (Landau ``little o'' notation, de Bruijn \cite[Section 1.3]{ngdeB81.AsympMthdsAnlys}) only depends only on $A$, $B$, $\beta$, and $t$. Observe that $(1-\beta)^{2} B^{2} + 4 B^{2} \beta = (1 + \beta)^{2} B^{2}$. Examining \eqref{E:gamma.quad.soln} and using \eqref{E:C(0)sqrd+4.Bsqrd.beta}, we see that to compute $\gamma(t)$ we thus need to evaluate
    \begin{equation}  \label{E:C.sqrd(t)+4B.sqrd.beta}
        C^{2}(t) + 4 B^{2} \beta 
          = (1 + \beta)^{2} B^{2} + 2(1-\beta^{2}) A B t 
            + A^{2} (3 \beta - 1)(\beta + 1) t^{2} + o(t^{2}).
    \end{equation}
Therefore, to use ``big O'' notation (de Bruijn \cite[Section 1.2]{ngdeB81.AsympMthdsAnlys}), $C^{2}(t) - C^{2}(0) = O(t)$.
 
Recall \eqref{E:C(0)sqrd+4.Bsqrd.beta} and let 
    \begin{equation*}
     \Delta(t) := \bigl[ C^{2}(t) + 4 B^{2} \beta \bigr] - \bigl[ C^{2}(0) + 4 B^{2} \beta \bigr]
       = \bigl[ C^{2}(t) + 4 B^{2} \beta \bigr] - (1+\beta)^{2} B^{2} .
    \end{equation*}
Then from \eqref{E:C.sqrd(t)+4B.sqrd.beta},
    \begin{align}  \label{E:Delta(t),Delta(t).sqrd}
         \Delta(t) &= 2(1-\beta^{2}) A B t + A^{2} (3 \beta - 1)(\beta + 1) t^{2} 
           + o(t^{2}) = O(t) \\
         \Delta(t)^{2} &= 4 (1-\beta^{2})^{2} A^{2} B^{2} t^{2} + o(t^{2}) \notag .
    \end{align}
Hence, $\Delta(t)^{3} = o(t^{2})$. We have observed that $C(t)$ is bounded in $t \geq 0$. 
Moreover, by \eqref{E:R.<.threeess/3.A.B.ineqs}, $B > 0$ is bounded and bounded away from 0. Therefore, the third derivative of the square root function is bounded on the interval with endpoints $C^{2}(t) + 4 B^{2} \beta$ and $(1+\beta)^{2} B^{2}$ for all $t \geq 0$. 
Therefore, expanding the square root function in a Taylor expansion about 
$C^{2}(0) + 4 B^{2} = (1 + \beta)^{2} B^{2}$ (see \eqref{E:C(0)sqrd+4.Bsqrd.beta}) and applying \eqref{E:Delta(t),Delta(t).sqrd}, we get,  
    \begin{align}  \label{E:Csqrd(t)+4.Bsqrd.beta.expansion} 
       \sqrt{ C^{2}(t) + 4 B^{2} \beta }
          &= \sqrt{(1+\beta)^{2} B^{2}} 
           + \frac{1}{2\sqrt{(1+\beta)^{2} B^{2}}} \Delta(t) 
             - \frac{1}{ 2 \times 4 \bigl( (1+\beta)^{2} B^{2} \bigr)^{3/2} } \Delta(t)^{2} + o(t^{2}) 
               \notag \\
         &=  (1+\beta) B +  \frac{1}{2 (1+\beta) B} \Delta(t) 
           - \frac{1}{ 8 \bigl( (1+\beta) B \bigr)^{3} } \Delta(t)^{2}
           + o(t^{2}) \notag \\
         &=  (1+\beta) B + \left( \frac{2(1-\beta^{2}) A B }{2(1+\beta) B} t 
           + \frac{A^{2} (3 \beta - 1)(\beta + 1)}{2 (1+\beta) B} t^{2} \right) \\
         & \qquad \qquad - \frac{4 (1-\beta^{2})^{2} A^{2} B^{2} }
           { 8 \bigl( (1+\beta) B \bigr)^{3} } t^{2} + o(t^{2}) \notag \\
         &=  (1+\beta) B + (1-\beta) A  t 
           + \frac{A^{2} (3 \beta - 1)(\beta + 1)}{2 (1+\beta) B} t^{2}
           - \frac{(1-\beta)^{2} A^{2} }{ 2 (1+\beta) B } t^{2}
           + o(t^{2}) \notag \\
         &=  (1+\beta) B + (1-\beta) A  t 
           + \frac{A^{2}}{(1+\beta) B } ( \beta^{2} + 2 \beta - 1 ) t^{2}
           + o(t^{2}) .  \notag    
     \end{align}
    
Using \eqref{E:C.properties}, expand $C(t)$ in a second order Taylor expansion about $t=0$:
\linebreak 
$C(t) = -(1-\beta) B - A(1 + \beta) t +  \frac{ A^{2}(1+\beta) }{ B } t^{2} + o(t^{2})$. Therefore, from \eqref{E:gamma.quad.soln} and \eqref{E:Csqrd(t)+4.Bsqrd.beta.expansion}, 
    \begin{align*}
        2 B \beta \gamma_{+}(t) &= -(1-\beta) B - A(1 + \beta) t 
          +  \frac{ A^{2}(1+\beta) }{ B } t^{2} + (1+\beta)B \\
           & \qquad \qquad + (1-\beta) A t + A^{2} \frac{\beta^{2} 
             + 2 \beta - 1}{(1+\beta) B} t^{2} + o(t^{2}) \\
           &= 2 \beta B - 2 A \beta t + 2 A^{2} \beta \frac{ \beta + 2 }{(1+\beta) B} t^{2} + o(t^{2}) .
    \end{align*}

Hence, by the preceding, \eqref{E:nu.no.bT} and \eqref{E:now.I.know.my.ABCs},
    \begin{align*}
        2 (At+B) \nu_{+}^{2}(t) &=  2 A^{2} t^{2} + 2 B^{2} \beta 
          - 2 (At+B) \gamma_{+}(t) B \beta \\
           &= 2 A^{2} t^{2} + 2 B^{2} \beta - (At+B) \left[ 2 \beta (B - A t) + 
               2 A^{2} \beta \frac{ \beta + 2 }{B(1+\beta)} t^{2} \right] + o(t^{2}) \\
           &= 2 A^{2} t^{2} + 2 B^{2} \beta 
             - 2 \beta (B^{2} - A^{2} t^{2}) - 2 A^{2} \beta \frac{ \beta + 2 }{1+\beta} t^{2} 
               + o(t^{2}) \\
           &= 2 A^{2} t^{2} 
             + 2 \beta A^{2} t^{2} - 2 A^{2} \beta \frac{ \beta + 2 }{1+\beta} t^{2} + o(t^{2}) \\
           &= 2 A^{2} \left[ 1+\beta - \beta \frac{ \beta + 2 }{1+\beta} \right] t^{2} + o(t^{2}) \\
           &= \frac{ 2 A^{2}}{1+\beta} t^{2} + o(t^{2})
    \end{align*}
Hence, by \eqref{E:1/9.<.beta.<.4/9} and \eqref{E:R.<.threeess/3.A.B.ineqs}, we have 
that $\nu_{+}^{2}(t) \asymp t^{2}$ as $t \downarrow 0$. Hence, recalling the definitions \eqref{E:M'.matrix.defn} and \eqref{E:matrix.M.defn} of $M'$ and $M$, resp., we see that 
$\nu_{+}^{2}$ translates into an eigenvalue $\asymp 1$ as $t \downarrow 0$ 
of $Dg_{dil}(x)^{T} \, Dg_{dil}(x)$.

Combining this with \eqref{E:N-2.eigvals.of.DTD} and \eqref{E:what.nu.minus.means},
we conclude that these eigenvalues of $J$ have the following bounds:
	\begin{multline}  \label{E:asymptotics.of.eigvals.of.J}
		\ell + 1 \text{ eigenvalues are bounded and } q \text{ eigenvalues are } 
		  < K'' t^{-2}, 
		  \text{ for some } K'' < \infty, \\
		    \text{ uniformly in } e \in E, b \in D, \text{ where } z = \bigl( e,t(1,b) \bigr) 
		      \in \mathscr{X}_{(0, 1)} . 
	\end{multline}
 
Let $\nu_{1} \bigl(e, t(1,b) \bigr) \geq \cdots \geq \nu_{N} \bigl(e, t(1,b) \bigr) \geq 0$ be the square roots of the eigenvalues of $Dg_{dil}(x)^{T} \, Dg_{dil}(x)$. By \eqref{E:q.leq.d-p-1} and  \eqref{E:p.poz.a.big}, we have $q \leq d-p-1 < a$. Hence, by \eqref{E:asymptotics.of.eigvals.of.J} there exists $K' < \infty$ s.t.\ 
$\prod_{i=1}^{a} \nu_{i} \leq K' t^{-q} \leq K' t^{-(d-p-1)} = K' t^{p-d+1}$ for $t < 1$. (By \eqref{E:Sm'.countably.rect}, $a$ is an integer.)
Now, by \eqref{E:S3.defn}, $\mathscr{S}_{3} \subset \mathscr{X}$ and, by \eqref{E:S3.subset.Xtheta}, $x = \bigl( e, t(1,b) \bigr) \in \mathscr{S}_{3}$ 
implies $t \geq \theta$. Therefore we get 
$\prod_{i=1}^{a} \nu_{i} \leq K' t^{-q} \leq K' \theta^{p-d+1}$. Hence, by \eqref{E:hi.invrs.theta.m} implies that there exists $K_{8} < \infty$ s.t.\ 
    \begin{equation}  \label{E:prod.nu.i.leq.K.R^p-d+1}
      \prod_{i=1}^{a} \nu_{i} \leq K_{8} R^{p-d+1} \text{ for $R$ small}.
    \end{equation}
Actually, by \eqref{E:R<threeess}, $R$ is small. In fact, we are contemplating 
$R = R_{m} \to 0$. (See \eqref{E:Rm.=.dist.Sm.P}.)

Hence, by \eqref{E:integral.over.S3}, lemma \ref{L:rect.set.Jake.bounds}, and \eqref{E:prod.nu.i.leq.K.R^p-d+1},
	\begin{align*}  
		\Hm^{a} \bigl[ g_{dil}(\mathscr{S}_{3}) \bigr]  
		 &= \int_{\mathscr{S}_{3}} J^{\mathscr{S}_{3}} g_{dil} \bigl(e, t(1,b) \bigr) \; \
		   \Hm^{a} (de \, dt \, db) \\
		 &\leq \int_{\mathscr{S}_{3}} \prod_{i=1}^{a} \nu_{i} \bigl(e, t(1,b) \bigr) \; 
		   \Hm^{a} (de \, dt \, db) \\
		 & \leq K_{8} R_{m}^{p-d+1} \Hm^{a}( \mathscr{S}_{3} . 
	\end{align*}

Let us review the proof of lemma \ref{L:dilate.R.-(d-p-1)}. First, we reduce the problem to proving \eqref{E:f.dil.ineq.Ym}, where $\mcl{Y}_{m}$ is defined in \eqref{E:Ym.defn}. To simplify, we make a series of changes of variables, at first relying on \eqref{E:general.change.of.variables}. From the first change of variables we determine that to prove \eqref{E:f.dil.ineq.Ym} it suffices to prove \eqref{E:dilate.alpha.ineq}. After another change of variables we learn that in order to prove \eqref{E:dilate.alpha.ineq}, it suffices to prove \eqref{E:ineq.restricted.to.AxZ}. 
Another change of variables gets us from a hypothetical \eqref{E:dilate.ineq.in.C[U].with.h.invrs} to \eqref{E:not.quite.rite.alpha.f.alpha.invrs.ineq}. But the latter implies \eqref{E:ineq.restricted.to.AxZ}. Hence, it suffices to prove \eqref{E:dilate.ineq.in.C[U].with.h.invrs}. Changing variables again shows that it suffices to prove \eqref{E:dilate.ineq.in.H.invrs.AxZ}. 

So far our changes of variables have made use of \eqref{E:general.change.of.variables}. The last change of variables does not use of \eqref{E:general.change.of.variables}. We find that to prove \eqref{E:dilate.ineq.in.H.invrs.AxZ} we need ``only'' prove \eqref{E:Hm.a.gdil.S3.R-min.ineq}, which asserts that 
$\Hm^{a} \bigl[ g_{dil} (\mathscr{S}_{3}) \bigr] 
\leq K_{8} R^{p-d+1} \Hm^{a} (\mathscr{S}_{3})$.

In subsubsection \ref{SSS:bound.integral}, we express 
$K_{8} R^{p-d+1} \Hm^{a} (\mathscr{S}_{3})$ as an integral of something called 
``$J^{\mathscr{S}_{3}} g_{dil}$'' over $\mathscr{S}_{3}$. Lemma \ref{L:rect.set.Jake.bounds} shows us how to bound $J^{\mathscr{S}_{3}} g_{dil}$, and hence the integral, by a product of the largest eigenvalues of a certain matrix. The rest of the proof is an eigen analysis. The outcome is that \eqref{E:Hm.a.gdil.S3.R-min.ineq} holds.

This proves lemma \ref{L:dilate.R.-(d-p-1)}. Thus, we have an inequality of the form \eqref{E:Hm.a.S.geq.R.d-p-1}. But this $\gamma$ partly depends on the bi-Lipschita triangulation $f : |P| \to \D$ and choice of $\T \subset \Pf$. Take the supremum of such $\gamma$'s over all such triangulations and all appropriate $\T$'s to end up with a $\gamma$ depending only on $\D$, $C[\Pf]$, $a$, and $\F$.

Theorem \ref{T:lwr.bnd.on.Haus.meas} follows.. 

\section{Further remarks on measure}  \label{SS:remarks.on.measure}
 \begin{remark}[``local'' version of theorem] \label{R:local.level.theorem}
 The theorem gives a lower bound on $\Hm^{a}(\Ss')$ in terms 
of the $\Hm^{a}$-essential distance from $\Ss'$ to $\Pf$. Actually, what really matters 
is the $\Hm^{a}$-essential distance from $\Ss'$ to that portion of $\Pf$ that is 
\emph{``near''} $\T$. (Of course, this only matters if $\Pf \neq \T$.) Indeed, the proof of the theorem depends on this fact. See the beginning of subsection \ref{SS:R.geq.R0}.

In fact, it seems that one should be able to prove theorem \ref{T:lwr.bnd.on.Haus.meas} with the following refinement. 
Let $\clU$ be a neighborhood of $\T$ and let $R_{\clU}$ be the $\Hm^{a}$-essential distance from 
$\Ss'$ to $\Pf \cap \clU$. Then there exists a constant $\gamma_{\clU} > 0$, depending only 
on $\D$, $\T$, $C[\Pf]$, $a$, $\F$, \emph{and} $\clU$ s.t.\ 
$\Hm^{a}(\Ss') \geq \gamma_{\clU} R_{\clU}^{ \min(d-p-1,a) }$. Now, $R_{\clU}$ is no smaller than the $R$ in the theorem so this bound seems stronger than the one in the theorem. However, $\gamma_{\clU}$ also depends on $\clU$. Still, this ``local'' version should be stronger for small $R_{\clU}$.

However, as a practical matter, it is the global, not local, behavior that matters. 
First of all, $\Pf$ may be full of sets all homeomorphic to $\T$ and all of which can function as ``test pattern spaces''. (See section \ref{SS:D.T.plane.fit}.) 

More fundamentally, suppose $\Ss'$ lies far from $\T$ but close to some far flung corner 
of $\Pf$. Then one will have the worst of both worlds: $\Ss'$ can have large measure and one can still get the distasteful behavior described in section \ref{SS:measure.distance.to.P}.
 \end{remark}

\begin{remark}[Topologically based bounds on volume]  \label{R:topological.bounds.on.vol}
The proof of theorem \ref{T:lwr.bnd.on.Haus.meas} ultimately relies on the fact that the
$\Hm^{a}$-volume of underlying space of an $a$-dimensional sub complex, 
$O := \tilde{\Ss} \cap |Q|$, of the finite simplicial complex, $P$, is no smaller than the 
$\Hm^{a}$-volume of the smallest $a$-simplex in $P$. (See appendix \ref{Chptr:basics.of.simp.comps} and subsection \ref{SS:R.geq.R0}.) $O$ is derived from 
$\Ss'$ using theorem \ref{T:polyApproxThm}.

We found that the challenging case in the proof of the theorem is for integral $a$. Suppose 
$a$ is an integer and $(\Phi, \Ss', G, \T, a)$ satisfies the stronger property described in remark \ref{R:stronger.property}. (See also remark \ref{E:S.tilde.(co)homology}.) Then the argument that leads to \eqref{E:Hma(S.tilde)>0} leads to the stronger statement, $\check{H}^{a}(\tilde{\Ss}) \neq \{0\}$. Since outside $|Q|$ $\tilde{\Ss}$ has 0 $\Hm^{a}$ measure, then the $a$-dimensional \v{C}ech cohomology 
of $\tilde{\Ss}$ must be supported in $|Q|$ (okay, this requires proof), but 
$\tilde{\Ss} \cap |Q|$ is a subcomplex. Therefore, in \eqref{E:gamma.R0.defn} we can take $Vol_{a}(R_{0})$ to be the minimum $a$-dimensional volume of subcomplexes of $Q$ that have nontrivial \v{C}ech cohomology in dimension $a$ (which is the same as nontrivial simplicial cohomology in dimension $a$, Munkres \cite[Theorem 73.2, p.\ 437]{jrM84}.) This could considerably increase $\gamma$ in \eqref{E:Hm.a.S.geq.R.d-p-1}. 

(\eqref{E:diam.simps.<.R0/3} complicates this a bit. Maybe we need to make $R_{0}$ big enough so \eqref{E:diam.simps.<.R0/3} holds.)

Is Gromov \cite{mG82.VolumeAndCohom} relevant to this? 
\end{remark}  
 
  \begin{remark}[Computing $\gamma$]  \label{R:recipe.for.gamma}
Suppose we are given a compact Riemannian manifold $\D$ and 
$\Phi : \D \partlyto \F$ as usual. Conceptually, to compute a $\gamma$ valid in \eqref{E:Hm.a.S.geq.R.d-p-1} we proceed as follows. 
  \begin{enumerate}
\item Find a simplicial complex $P$ and triangulation $f : |P| \to \D$ as described in the statement of the theorem and compute $K_{f^{-1}}$ as in \eqref{E:triangulation.Lip.const}. 
\item Select a stratified space $\Pf \subset \D$ of perfect fits.   
\item Construct a cone bundle over $\Pf$ that fibers a neighborhood of $\Pf$ in 
$T \D \restriction_{\Pf}$ as in definition \ref{D:fibering.by.cones}.
\item Compute $R_{0}$ as in \eqref{E:R0.choice}.
\item Compute $Vol_{a}(R_{0})$ either as in \eqref{E:Vol.a(R0).defn} or as in remark \ref{R:topological.bounds.on.vol}. 
\item Compute $Q$ as in \eqref{E:Q.stays.away.from.T}.
\item Compute a value of $K(R_{0}) \geq 1$ as in \eqref{E:Ha.S.tilde.leq.mult.Ha.S'}. (Section 2.6.2 in \cite{spE09.PolyhedralApproxLong} develops a formula for that.)
\item Define $\gamma(R_{0})$ as in \eqref{E:gamma.R0.defn}. 
\item Compute a value of $K < \infty$ s.t. 
	\begin{equation}  \label{E:Ha.S'.S'dilate.ineq}
		\Hm^{a}(\Ss_{dilate}')
		   \leq K R^{-\min( d-p-1, a)} \Hm^{a}(\Ss') \text{ for } R \in (0, R_{0})  ,   
	\end{equation}
where $\Ss_{dilate}' := f_{dilate,R}(\Ss')$ and $f_{dilate}$ is defined in \eqref{E:when.f.dilate.is.identity} and \eqref{E:f.in.terms.of.F}. (Section \ref{SSS:bound.integral} illustrates one approach: Use the ``area formula'' 
(Hardt and Simon \cite[1.8 p.\ 13, p.\ 14, and p.\ 27]{rHlS86.GMT}, Federer \cite[3.2.3, p.\ 243 and 3.2.46, p.\ 282]{hF69} for $f_{dilate,R}$ in combination with lemma \ref{L:rect.set.Jake.bounds}.)  \label{I:fdil.K}
  \end{enumerate}

As in \eqref{E:Ha.and.S.dilate}, we have 
	\begin{equation*}  
		\Hm^{a}(\Ss_{dilate}') < \infty 
		  \text{ and } \Hm^{a}( \Ss_{dilate}' \cap \mcl{B}_{\ess} ) = 0.
	\end{equation*}
Therefore, as in \eqref{E:lwr.bnd.on.Ha.S.dilate} 
    \begin{equation*}
      \Hm^{a}(\Ss_{dilate}') \geq \gamma(R_{0}) R_{0}^{\min( d-p-1, a)} > 0 .
    \end{equation*}
Combining this with \eqref{E:Ha.S'.S'dilate.ineq} we get
    \begin{equation*}
      \Hm^{a}(\Ss') \geq K^{-1} \gamma(R_{0}) R_{0}^{\min( d-p-1, a)} 
        R^{\min(d-p-1, a)} \text{ for } R > 0 ,
    \end{equation*} 
as desired. 

A recipe better than this may well exist, but I expect computing a good general value for $\gamma$ would be quite difficult. A more realistic goal, I believe, is to compute one separately in each specific data map class of interest. Or even start with toy examples?
  \end{remark}

\begin{remark}  \label{R:Applications.of.min.meas.thm}
Here are a couple of areas where theorem \ref{T:lwr.bnd.on.Haus.meas} is helpful. 
In proposition \ref{P:aug.direct.mean.beats.robst.loc}, we use theorem \ref{T:lwr.bnd.on.Haus.meas} to show that the singular sets of measures of location on the circle that are ``resistant'' to outliers have ``asymptotically'' more measure than does that of a certain other specific kind of measure of location, even controlling the distance of the corresponding singular sets to the relevant $\Pf$'s. Another is the impact of the number of variables, in relation to the sample size, on the size of the singular set might be another (remark \ref{R:long.vs.wide}). 
\end{remark}

 \begin{remark}  \label{R:dist.moment.for.sing.sets}
   Theorem \ref{T:lwr.bnd.on.Haus.meas} suggests that the moments
      \[
         \int_{\Ss'} dist(x, \Pf)^{-s} \, \Hm^{a}(dx) ,
      \]
where $s = d-p-1$ or $d-p-2$ might be interesting.  
 \end{remark}

\chapter{Severity}  \label{Chptr:severity} 
Figure \ref{F:HeightFen} shows two data sets that appear to be quite close to singularities of $L^{1}$ or LAD linear regression (subsection \ref{SS:LAD}). In panel (b) of the figure the fitted difference in the lines before and after the perturbation in the data seems to be too small to be of practical importance. Of course, in that panel only one sort of perturbation of the data was tried, but assuming other perturbations in the data would lead to similar results (probably provably the case), this apparent singularity is not of grave concern. The issue here is that of \emph{severity} of singularity, the subject of this chapter. Not only is severity of practical interest, but we see how, by taking severity into consideration, some of the assumptions made in chapters \ref{Chptr:topology} and \ref{Chptr:Haus.meas.of.sing.set} can be relaxed (remark \ref{R:improvments.over.main.thm}). 

Let $\Phi : \D' \to \F$ be a data map and let $\Ss := \D \setminus \D'$ be its singular set. (We remind the reader or our default assumption \eqref{E:D'.=.D.less.S}.) The extent to which singularities make $\Phi$ unstable depends not only on how plentiful they are, but also on how severe they are. By definition, a singularity of $\Phi$ is a data set, $x_{0} \in \D$, in the vicinity of which a small change in the data, $x$, can cause a \emph{relatively} big change in $\Phi(x)$. \emph{Severity} of the singularity at $x_{0}$ (definition \ref{D:V-severe}) has to do with how big the change in $\Phi(x)$ can be in \emph{absolute} terms for a small change in $x$. More precisely, the severity of the singularity at $x_{0}$ has to do with the sizes of the images under $\Phi$ of arbitrarily small neighborhoods of $x_{0}$. (In this book all neighborhoods are open.) 

Sometimes a singularity at $x_{0}$ can be so severe that $\Phi$ maps any neighborhood of
$x_{0}$ \emph{onto} the feature space $\F$. In \cite{spE91.sings.plane.fit} we call such a singularity ``severe'' and show, essentially, (\cite[Theorem 2.5]{spE91.sings.plane.fit}) 
that in plane fitting (the subject of chapter \ref{Chptr:sings.in.plane.fit} below) one can reduce the size of the singular set of $\Phi$ by replacing nonsevere singularities by a smaller collection of severe ones. The definition of severe'' we use in this chapter is different from the one in 
\cite{spE91.sings.plane.fit}.

Severe singularities (in the \cite{spE91.sings.plane.fit} sense) seem to be optional, but it is typically impossible to avoid rather bad singularities. For example, if $\Phi$ is a measure of location for directional data (data on spheres; chapters \ref{Chptr:spherical.location}, \ref{Chptr:aug.direct.mean}, \ref{Chptr:robst.loc.on.circle}) then $\Phi$ must have many singularities,
$x_{0}$, s.t.\ there is no neighborhood $\clU$ of $x_{0}$ s.t.\ the closure of the image 
$\Phi ( \clU \cap \D')$ lies in any open hemisphere (\cite{spE91.top.direct.axis} 
and chapter \ref{Chptr:spherical.location}). Something similar happens for the location problem on projective spaces 
(Munkres \cite[p.\ 231]{jrM84}, \cite{spE91.top.direct.axis}).

Analogous to this in the plane fitting context is ``$90^{\circ}$ singularity'' (subsection \ref{SS:lin.combo.for.plane.fit.w/.k=nvar-1}). For simplicity, suppose $\Phi$ fits lines to bivariate data. We may assume that the fitted lines always pass through the origin. Then $\Phi$ has a $90^{\circ}$ singularity at $x_{0}$ if for no neighborhood, $\clU$, of $x_{0}$ does the closure of 
the image $\Phi(\clU \setminus \Ss)$ lie within the smaller angle made by any pair of nonperpendicular lines through the origin. The singularities shown in the (c) panels of figure \ref{F:LS.PC.LAD.lf.plots} are $90^{\circ}$ singularities of their respective plane (line) fitters.
We will see that a singularity, $x_{0}$, that is \emph{not} a $90^{\circ}$ singularity can be eliminated by modifying $\Phi$ in a neighborhood of $x_{0}$. 

In this chapter we generalize this idea 
(theorem \ref{T:if.lin.combo.on.F.then.can.rstrct.to.bad.sings}) to show that often one can replace a singular set $\Ss$ by a closed subset of $\Ss$ consisting of severe singularities. This is possible, e.g., if the feature space, $\F$, is a differentiable manifold and no symmetry properties are imposed on $\Phi$ (proposition \ref{P:smooth.manifs.have.commutative.convex.combos} and theorem \ref{T:if.lin.combo.on.F.then.can.rstrct.to.bad.sings}). 

  \begin{definition}  \label{D:V-severe}
      Let $\msf{V}$ be an open cover of $\F$. Let $\D' \subset \D$. 
      Let $\Phi : \D' \to \F$, let $\Ss \subset \D$, and suppose \eqref{E:D'.=.D.less.S} holds. Define 
         \begin{multline}  \label{E:S^V.notation}
            \Ss^{\msf{V}} = \Ss^{\msf{V}}(\Phi, \D') = \bigl\{  x \in \D : \text{if $\clU$ 
            is any neighborhood of $x$ }    \\
            \text{ then there does \emph{not} exist $V \in \msf{V}$ s.t.\ }
            \overline{ \Phi(\clU \cap \D') } \subset V \bigr\}.
         \end{multline}
      (Here, ``${}^{\overline{{\quad}}}$'' indicates closure.)  
      Call the points of $\Ss^{\msf{V}}$ ``$\msf{V}$-severe singularities of $\Phi$''. 
  \end{definition}

(This usage of ``severe'' is different from that appearing in \cite{spE91.sings.plane.fit}. See section \ref{SS:nondeterministic.data.maps} for a generalization.)  
Let $\msf{V_{1}}$ and $\msf{V_{2}}$ be covers of $\D$ and suppose that $\msf{V_{2}}$ is a refinement of $\msf{V}_{1}$. Recall that means that if $V_{2} \in \msf{V}_{2}$ then there exists $V_{1} \in \msf{V}_{1}$ s.t.\ $V_{2} \subset V_{1}$. 
Then clearly $\Ss^{\msf{V}_{1}} \subset \Ss^{\msf{V}_{2}}$. ``In the limit'', as it were, if $\msf{V}$ is a cover of $\F$ then $\Ss^{\msf{V}} \subset \Ss$, the ordinary singular set of $\Phi$. 

\emph{Claim:} 
   \begin{equation} \label{E:SV.is.closed}
      \Ss^{\msf{V}} \subset \Ss \text{ is \emph{closed}.}  
   \end{equation}
For suppose not. Then $\Ss^{\msf{V}}$ does not contain its boundary. Let $x \in \D \setminus \Ss^{\msf{V}}$ be a boundary point 
of $\Ss^{\msf{V}}$. Then $x$ has a neighborhood $\clU$ for which there exists $V \in \msf{V}$ s.t.\ $\overline{ \Phi(\clU \cap \D') } \subset V$. Since $x$ is a boundary point of $\Ss^{\msf{V}}$ there exists $y \in \Ss^{\msf{V}} \cap \clU$. But then $\clU$ is a neighborhood of $y$ and so $y \notin \Ss^{\msf{V}}$. Contradiction.

  \begin{remark}[Stronger notion of perfect fit space] \label{R:strong.perfect.fit}
The notion of severity suggests a strengthening of the concept of perfect fit space, $\Pf$ (section \ref{SS:calibration}). Let $\Pf$ be the perfect fit space of a class, $\blds{\Phi}$, of data maps, 
$\Phi$, with feature space $\F$. Suppose $x$ is a data set near $\Pf$. $x$ may be a singularity of a data map in $\blds{\Phi}$, but since it is almost a perfect fit, the set of plausible features in $\F$ describing $x$ should be small. This means that, for a given cover $\msf{V}$, $x$ should not be $\msf{V}$-severe. This suggests defining perfect fit space to be a pair $(\Pf, \clU)$, where $\clU$ is a neighborhood 
of $\Pf$, s.t.\ whenever $\Phi \in \blds{\Phi}$ then $\Phi$ behaves appropriately on $\Pf$ and has no severe singularities in $\clU$.

Connecting theorem \ref{T:lwr.bnd.on.Haus.meas} to this idea, it seems that if 
$(\Pf, \clU)$ is the perfect fit space for $\blds{\Phi}$, then the Hausdorff measure, of appropriate dimension, of severe singular sets of $\Phi \in \blds{\Phi}$ should be bounded below. Remark \ref{R:severity.trick} explains how sometimes this can be done. One of those times is described in corollary \ref{C:Haus.measure.of.severe.sings.on.spheres}.
  \end{remark}

  \begin{remark}[Measurability of $\Ss$] \label{R:S.measurable}
Let $\Phi \partlyto \F$ satisfy \eqref{E:D'.=.D.less.S} and let $\Ss$ be the singular set 
of $\Phi$. $\Ss$ may not be closed. (See remark \ref{R:Ss.closed?}.) By \eqref{E:D.metric.F.normal}, $\F$ is normal. Suppose $\F$ is actually a complete metric space. (See remark \ref{R:.completeness.of.F}.) Let $\msf{V}_{n}$ denote the cover of $\F$ consisting of all open balls of radius $1/n$ ($n = 1, 2, \ldots$). Let $x \in \Ss$. \emph{Claim:} $x$ is $\msf{V}_{n}$-severe for some $n$. For suppose not. Let $\{ x_{m} \} \subset \D'$ be a sequence converging to $x$. Then for every $n = 1, 2, \ldots$ there exists $m_{n}$ s.t.\ $\Phi(x_{m_{n}+1}), \Phi(x_{m_{n}+2}), \ldots$ all lie in a ball of radius $1/n$. Thus, $\bigl\{ \Phi(x_{m}) \bigr\}$ is a Cauchy sequence. Therefore, it converges. Since $x_{m} \to x$ is arbitrary, we have that $\Phi(x')$ converges as $x' \to x$ through $\D'$ contradicting the assumption that $x$ is a singularity. The claim is proved. Thus, $\Ss = \bigcup_{n} \Ss^{\msf{V}_{n}}$ and therefore, by \eqref{E:SV.is.closed}, when $\F$ is complete metric, 
$\Ss$ is Borel measurable. 
  \end{remark}

Let $\Phi$ be a data map with singular set $\Ss$. In this chapter we show that sometimes 
(often?)\ $\Phi$ can be replaced by a data map $\Omega$ all of whose singularities 
lie in $\Ss' := \Ss^{\msf{V}}$, the set of $\msf{V}$-severe singularities of the original data map 
$\Phi$. So $(\Omega, \Ss')$ satisifes the closure assumptions appearing  
in proposition \ref{P:sing.dim.when.H.d-r.D.=.0} and chapter 
\ref{Chptr:Haus.meas.of.sing.set} (in particular, property \ref{Pty:agree.near.T}). But not only do we get a closed set $\Ss'$ off which the data map is continuous, but $\Ss'$ consists of actual singularities, in fact, the most severe, hence, most interesting singularities of $\Phi$. Any lower bound on the dimension (or measure) of $\Ss^{\msf{V}}$ is automatically a lower bound on the dimension (resp., measure) of the original singular set, $\Ss$. 

In order to replace $\Ss$ by $\Ss^{\msf{V}}$ we eliminate the non-$\msf{V}$-severe singularities by smoothing them away. In order to do that we take local averages, i.e., ``convex combinations''. (See section \ref{SS:existence.of.convx.combos}.)
   \begin{definition}  \label{D:convex.combo.fn}
Let $\msf{V}$ be an open cover of $\F$. Let $k = 0, 1, 2, \ldots$ and let $\Delta_{k}$ 
be the $k$-dimensional simplex
   \begin{multline} \label{E:Delta.k.defn}
     \Delta_{k} := \Bigl\{ (\lambda_{0}, \lambda_{1}, \ldots, \lambda_{k}) \in \RR^{k+1} : 
       \lambda_{i} \geq 0 \, (i = 0, \ldots k)  \\
         \text{ and } \lambda_{0} + \lambda_{1} + \ldots + \lambda_{k} = 1 \Bigr\}.
   \end{multline}
If $V \in \msf{V}$, let $V^{k+1}$ denote the $(k+1)$-fold Cartesian power of $V$. 
We say that a function 
$\gamma_{k} : \bigcup_{V \in \msf{V}} \bigl( \{ V \} \times \Delta_{k} \times V^{k+1} \bigr) \to \F$ is a ``$k$-convex combination function'' on  $\msf{V}$ if: 
   \begin{enumerate}
      \item For every $V \in \msf{V}$,  
      $(\lambda_{0}, \lambda_{1}, \ldots, \lambda_{k}) \in \Delta_{k}$, and 
         $x_{0}, x_{1}, \ldots, x_{k} \in V$ we have \linebreak
         $\gamma_{k} \Bigl[ \bigl( V, (\lambda_{0}, \lambda_{1}, \ldots, \lambda_{k}), 
                  (x_{0}, x_{1}, \ldots, x_{k}) \bigr) \Bigr] \in V$.
         \label{I:convex.combos.are.local}
      \item More generally, $\gamma_{k}$ has the following consistency property:
         \begin{multline*}
            \text{If } V_{1}, V_{2} \in \msf{V}, \;
            (\lambda_{0}, \lambda_{1}, \ldots, \lambda_{k}) \in \Delta_{k},  
               \text{ and } x_{0}, x_{1}, \ldots, x_{k} \in V_{1} \cap V_{2} \text{ then }  \\
            \gamma_{k} \Bigl[ \bigl( V_{1}, (\lambda_{0}, \lambda_{1}, \ldots, \lambda_{k}), 
                    (x_{0}, x_{1}, \ldots, x_{k}) \bigr) \Bigr]  \\
            = \gamma_{k} \Bigl[ \bigl( V_{2}, (\lambda_{0}, \lambda_{1}, \ldots, \lambda_{k}), 
                 (x_{0}, x_{1}, \ldots, x_{k}) \bigr) \Bigr]
             \in V_{1} \cap V_{2}.
         \end{multline*}
      We may then regard $\gamma_{k}$ as a function on 
      $\bigcup_{V \in \msf{V}} \Delta_{k} \times V^{k+1}$.
                   \label{I:consistency.of.conv.combos}
      \item $\gamma_{k}$ is continuous on 
      $\bigcup_{V \in \msf{V}} (\Delta_{k} \times V^{k+1})$, which has its relative topology as a subset of $\Delta_{k} \times \F^{k+1}$. 
              \label{I:gamma.k.cont}
      \item If $(\lambda_{0}, \lambda_{1}, \ldots, \lambda_{k}) \in \Delta_{k}$ and 
      $x \in \F$ then
    \begin{equation}  \label{E:gamma.k.on.diagonal}
            \gamma_{k} \bigl[ (\lambda_{0}, \lambda_{1}, \ldots, \lambda_{k}), 
                 (x, x, \ldots, x)) \bigr] = x.
    \end{equation}
           \label{I:convex.combos.of.1.pt}
      \item $k=0$ case:   
      $\gamma_{0}(1, x) = x$, \; $x \in \F$. 
        \label{I:1.is.identity}
         \end{enumerate}
Suppose  
$\gamma :  \bigcup_{k=0}^{\infty} \bigcup_{V \in \msf{V}} \Delta_{k} \times V^{k+1} \to \F$
has the property that for every $k = 0, 1, \ldots$, the restriction of $\gamma$ to 
$\bigcup_{V \in \msf{V}} \Delta_{k} \times V^{k+1}$ is a $k$-convex combination function. Suppose further that ``terms with 0 coefficients'' can be dropped:
         \begin{multline}  \label{E:drop.0.coefs}
            \gamma \bigl[ (\lambda_{0},  
              \ldots, \lambda_{j-1}, 0, \lambda_{j+1}, \ldots,  \lambda_{k}),  
              (x_{0}, \ldots, x_{j-1}, x_{j}, x_{j+1}, \ldots, x_{k} )\bigr]  \\
                =  \gamma \bigl[ (\lambda_{0},  \ldots, \lambda_{j-1}, \lambda_{j+1}, 
                  \ldots,  \lambda_{k}),  
                    (x_{0}, \ldots, x_{j-1}, x_{j+1}, \ldots, x_{k} )\bigr],      \\
                     \text{ for every } (\lambda_{0}, \ldots, \lambda_{j-1}, \lambda_{j+1}, 
                       \ldots, \lambda_{k}) \in \Delta_{k-1}; \\
                        x_{0}, \ldots, x_{j-1}, x_{j}, x_{j+1}, \ldots, x_{k} \in V;  
                          \text{ and } V \in \msf{V}.
         \end{multline}
We then say that $\gamma$ is a ``convex combination function'' on $\msf{V}$. 
    \end{definition}
    
Explicit examples of convex combination functions are constructed in \cite{spE91.top.direct.axis} and in section \ref{SS:lin.combo.for.plane.fit.w/.k=nvar-1} and chapter \ref{Chptr:spherical.location} below. 

Suppose 
$\gamma :  \bigcup_{k=0}^{\infty} \bigcup_{V \in \msf{V}} \Delta_{k} \times V^{k+1} \to \F$
is a convex combination function on a cover $\msf{V}$ of $\D$. Suppose $\msf{V}'$ is an open cover of $\D$ that is a refinement of $\msf{V}$. The restriction of $\gamma$ to
$\bigcup_{k=0}^{\infty} \bigcup_{V \in \msf{V}'} \Delta_{k} \times V^{k+1}$, where this time the open sets $V$ are required to belong to $\msf{V}'$, may or may not be a 
convex combination function on the refinement $\msf{V}'$. If it is we say that $\gamma$ ``restricts'' to $\msf{V}'$.

(The expression ``convex combination mapping'' is used in Floater \cite{msF2002.PiecewiseLinearMaps}. However the phrase seems there to have a different meaning there than does our ``convex combination function''.) 

Suppose $x_{1}, \ldots, x_{j-1}, x_{j}, x_{j+1}, \ldots, x_{m} \in V \in \msf{V}$ but 
$x_{j} \in \F$ may or may not belong to $V$. 
We may then make \eqref{E:drop.0.coefs} a definition. Similarly, a, possibly, infinite convex combination
	 \begin{multline}  \label{E:infinite.convex.combo}
	   \gamma \bigl[ \bigl( ( \ldots, \lambda_{-j}, \lambda_{-j+1}, 
	         \ldots, \lambda_{j-1}, \lambda_{j}, \ldots),  
	                  ( \ldots, x_{-j}, x_{-j+1}, \ldots, x_{j-1}, x_{j}, \ldots) \bigr], \\
	                   \lambda_{j} \geq 0, \, x_{j} \in F, \, \sum_{j} \lambda_{j} = 1
	\end{multline} 
makes sense if only a finite set of $\lambda_{j}$'s are nonzero and the $x_{j}$'s corresponding to nonzero $\lambda_{j}$'s are all in the same $V \in \msf{V}$. Call the vector 
$( \ldots, \lambda_{-j}, \lambda_{-j+1}, \ldots, \lambda_{j-1}, \lambda_{j}, \ldots)$
the ``coefficients'' of $( \ldots, x_{-j}, x_{-j+1}, \ldots, x_{j-1}, x_{j}, \ldots)$

Call the expression \eqref{E:infinite.convex.combo} a ``$\gamma$-convex combination''. Say that $\gamma$ is ``commutative'' if the entries in the vector 
$( \ldots, \lambda_{-j}, \lambda_{-j+1}, \ldots, \lambda_{j-1}, \lambda_{j}, \ldots)$ can be permuted without changing the value of the convex combination providing that the same permutation is applied to \linebreak
$( \ldots, x_{-j}, x_{-j+1}, \ldots, x_{j-1}, x_{j}, \ldots)$. Commutativity will come up in theorem 
\ref{T:if.lin.combo.on.F.then.can.rstrct.to.bad.sings}. 

We may redundantly write the 0-convex function defined in part \ref{I:1.is.identity} of the preceding definition like this: $\gamma_{0}(V,1, x) = x$, \; $x \in V$, $V \in \msf{V}$. Written this way, 
$\gamma_{0}$ does itself have all the applicable properties, \emph{viz.}, \ref{I:convex.combos.are.local} through \ref{I:convex.combos.of.1.pt}.

Obviously one can define convex combinations satisfying properties \ref{I:convex.combos.are.local}, \ref{I:convex.combos.of.1.pt}, and \ref{I:1.is.identity} on any open set $V \subset \F$ s.t.\ $V$ is homeomorphic to a ball. But given a cover $\msf{V}$ of such sets, constructing convex combinations satisfying properties \ref{I:consistency.of.conv.combos} and \ref{I:gamma.k.cont} of the definition might be challenging. 

In chapter \ref{Chptr:spherical.location} it will be important that data maps satisfy a certain symmetry property. So in the following we assume that the data map is invariant under a, possibly trivial, group action. 

  \begin{remark}[Group invariance]  \label{R:symmetry}
Sometimes, as in \eqref{E:assume.Phi.S'.sym}, one wants a data map to be invariant under a group action. Let $\tilde{M}$ be a differentiable manifold and suppose a discrete group, $G$, acts ``freely'' and ``properly discontinuously'' on $\tilde{M}$ (Boothby \cite[pp.\ 94 -- 96]{wmB75}). Then by Boothby \cite[Theorem (8.3), p.\ 97]{wmB75} the  orbit space $\tilde{M}/G$ is also a differentiable manifold. This might allow one to reduce a problem with symmetry restrictions to one without such restrictions:  One just works on 
$\tilde{M}/G$. However, in chapter \ref{Chptr:spherical.location}, we consider a group that does not act freely, so the $\tilde{M}/G$ tactic might not work.

Let $\D$ be a metric space and let $G$ be a finite group of homeomorphisms of $\D$ onto itself. For $\mcl{A} \subset \D$, recall the definition, \eqref{E:G(x).G(A)}, of $G(\mcl{A}) := \bigl\{ g(x) \in \D : x \in \mcl{A}, \; g \in G \bigr\}$. Note that if $G(\mcl{A}) = \mcl{A}$ if and only if $g(\mcl{A}) = \mcl{A}$ for every $g \in G$. In this case we say that $\mcl{A}$ is ``$G$-invariant''. Assume $\D'$ is a $G$-invariant dense subset 
of $\D$. Let $\Ss = (\D')^{c}$. (``${}^{c}$'' denotes complementation relative to $\D$.) Then, by \eqref{E:g.commutes.w/.set.ops}, $\Ss$ is $G$-invariant. Let $\Phi: \D' \to \F$ be continuous. Assume that if $y \in \D'$ and  
$g \in G$ then $\Phi \bigl[g(y)\bigr] = \Phi(y)$. (``$\Phi$ is $G$-invariant'').  

Let $\msf{V}$ be an open cover of $\F$. \emph{Claim:} 
$\Ss^{\msf{V}}$ is $G$-invariant. Let $x \in \D$ and $g \in G$. 
Let $\mcl{W}$ be an open neighborhood of $g(x)$ and $V \in \msf{V}$. 
Since $\D'$ is $G$-invariant and $\Phi$ is $G$-invariant on $\D'$, by \eqref{E:g.commutes.w/.set.ops}, 
$\Phi \bigl[ g^{-1}(\mcl{W}) \cap \D' \bigr] = \Phi \bigl[ g^{-1}(\mcl{W} \cap \D' )\bigr]
= \Phi (\mcl{W} \cap \D')$. Thus, $\overline{\Phi \bigl[ g^{-1}(\mcl{W}) \cap \D' \bigr]} \subset V$ if and only if $\overline{\Phi (\mcl{W} \cap \D')} \subset V$. Since $g$ is a homeomorphism, any open neighborhood of $x$ is $g^{-1}(\mcl{W})$ for some open neighborhood $\mcl{W}$ of $g(x)$. Thus, $x \in \Ss^{\msf{V}}$ if and only if 
$g(x) \in \Ss^{\msf{V}}$. This proves the claim that $\Ss^{\msf{V}}$ is $G$-invariant.
  \end{remark}
 
Commutativity as a property of a convex combination function is defined one paragraph down from \eqref{E:infinite.convex.combo}. The following is the main result of this chapter. 
 
   \begin{theorem}  \label{T:if.lin.combo.on.F.then.can.rstrct.to.bad.sings} 
Let $\D$ be a locally compact, second countable metric space. (See \eqref{E:D.metric.F.normal}.) Let $G$ be a finite group of homeomorphisms of $\D$ onto itself. Let $m < \infty$ be the number of distinct elements of $G$. Let $\Ss \subset \D$ ($\Ss$ not necessarily closed) 
and let $\D' := \D \setminus \Ss$. Assume $\D'$ is dense in $\D$ and $G$-invariant. Thus, $\Ss$ is $G$-invariant. Let $\Phi: \D' \to \F$ be continuous and $G$-invariant.
Let $\msf{V}$ be an open cover of $\F$ so $\Ss^{\msf{V}}$ is $G$-invariant, by remark \ref{R:symmetry}. Suppose there is a convex combination function, $\gamma$, 
on $\msf{V}$ and $\gamma$ is commutative if $m > 1$. 
Then $\tilde{\D} := \D \setminus \Ss^{\msf{V}}$ 
satisfies $\D' \subset \tilde{\D}$ and $G(\tilde{\D}) = \tilde{\D}$. (See \eqref{E:G(x).G(A)}.) 
Let $\mathfrak{D}$ be an open covering of $\D$. \emph{Unless otherwise specified} 
$\mathfrak{D} = \{ \D \}$. 
 \begin{enumerate}
	\item There exists a continuous $G$-invariant data map 
	$\Omega := \Omega_{\gamma}: \tilde{\D} \to \F$ 
with the following property. If $x \in \tilde{\D}$ then there is a neighborhood 
$\clU \subset \D$ of $x$, $E \in \mathfrak{D}$, and $V \in \msf{V}$ s.t.\ 
$\clU \subset E$, $\overline{ \Phi(\clU \cap \D') } \subset V$, and $\Omega(x)$ 
is a $\gamma$-convex combination of points 
in $\Phi(\clU \cap \D')$, hence, by property \eqref{I:convex.combos.are.local} of definition \ref{D:convex.combo.fn}, 
$\Omega(x) \in V$. Thus, $\Omega$ approximates $\Phi$ on $\D'$ in the sense that if $x \in \D'$ then there exists 
$V  \in \msf{V}$ s.t.\ $\Phi(x) \in V$ and $\Omega(x) \in V$. \label{I:cont.approx.to.Phi.exists}
	\item Let $\Pf \subset \D$ be closed and $G$-invariant. Suppose  $\Pf \cap \D'$ is dense in $\Pf$. By \eqref{E:g.commutes.w/.set.ops}, $\Pf \cap \D'$ is $G$-invariant. Suppose the restriction 
	$\Phi \restriction_{\Pf \cap \D'}$ has a unique continuous extension, $\Theta$, 
	to $\Pf$. Let $\Rcl \subset \D$ be a neighborhood of $\Pf$ and suppose 
	$\rho : \Rcl \to \Pf$ is a retraction. Suppose $\Rcl$ is $G$-invariant and $\rho$ is $G$-equivariant, i.e., $\rho \circ g = g \circ \rho$ for every $g \in G$. If $\Omega$ is as in part \ref{I:cont.approx.to.Phi.exists}, then it can be modified in $\Rcl$ in such a way that it remains defined and continuous on $\tilde{\D}$, $G$-invariant, and agrees 
with $\Theta$ on $\Pf \cap \tilde{\D}$.  
	    \label{I:Omega.Phi.agree.on.Pf}
 \end{enumerate} 
   \end{theorem}

Since $\Omega$ is continuous on 
$\tilde{\D} := \D \setminus \Ss^{\msf{V}} \supset \D \setminus \Ss = \D'$, it follows that 
    \begin{equation}  \label{E:sing.set.of.Omega.in.severe.sings.of.Phi}
      \text{The singular set of $\Omega$ w.r.t.\  $\tilde{\D}$ 
        (and hence w.r.t.\ $\D'$) is a subset of } \Ss^{\msf{V}} .
    \end{equation} 

   \begin{remark}
For chapter \ref{Chptr:sings.in.plane.fit} we may take $G$ to be the trivial group (i.e., $m=1$) so for chapter \ref{Chptr:sings.in.plane.fit} the convex combination function $\gamma$ need not be commutative. See sections \ref{SS:convex.combos.by.recursion} and \ref{SS:lin.combo.for.plane.fit.w/.k=nvar-1}. But in chapters \ref{Chptr:spherical.location}, \ref{Chptr:aug.direct.mean}, and \ref{Chptr:robst.loc.on.circle}, $G$ will not be trivial so a commutative $\gamma$ will be needed. Such a $\gamma$ is constructed in section \ref{S:convex.combos.on.spheres}. 
  \end{remark}

Note that it might be possible to continuously extend $\Omega$ to points 
of $\Ss^{\msf{V}}$, as in lemma \ref{L:extend.Phi.to.D.less.S}. Afterwards, the singular set of $\Omega$ would be a proper subset of $\Ss^{\msf{V}}$. It is not clear when that maneuver would be useful

Note that if $\D$ is a finite dimensional manifold then, by Munkres \cite[pp.\ 3--4]{jrM66}, it is locally compact and second countable and, therefore, by Hocking and Young \cite[Corollary 2-59, p.\ 75]{jgHgsY61.Topology}, metrizable.
By \eqref{E:sing.set.of.Omega.in.severe.sings.of.Phi}, all the singularities of $\Omega$ w.r.t.\ $\tilde{\D}$ lie in $\Ss^{\msf{V}}$. The theorem is proved by eliminating non-severe singularities by using $\gamma$ to take local averages of values in $\F$. We call this operation ``smoothing''. 

In some cases, instead of using the $\Pf$ at hand, we may apply the the theorem with a compact test pattern space $\T \subset \Pf$, as in theorem \ref{T:Phi.star.Hr.contains.Theta.star.Hr} and proposition \ref{P:sing.dim.when.H.d-r.D.=.0}, playing the role of the $\Pf$ in part \ref{I:Omega.Phi.agree.on.Pf} of the theorem.
 
  \begin{remark}[Neighborhood Retracts] \label{R:retraction.in.manifs}
If $\D$ is a smooth manifold and $\Pf$ is an imbedded submanifold of $\D$ without boundary then, by Munkres \cite[Corollary 5.6, p.\ 53]{jrM66}, there exists an open neighborhood $\clU$ of $\Pf$ and a smooth retraction $r : \clU \to \Pf$. (Alternatively, one can appeal to Boothby \cite[Theorem (4.5), p.\ 193]{wmB75} and the Tubular Neighborhood Theorem, proposition \ref{P:tubular.nbhd.thm} above.) Examples will be given in \eqref{E:retraction.onto.Pk}, remark \ref{R:retraction.sphere.loc}, and in section \ref{SS:general.lin.class.thy}. 

However, in theorem \ref{T:if.lin.combo.on.F.then.can.rstrct.to.bad.sings}, $\Pf$ does not have to be a manifold. $\Pf$ might, e.g., be a stratified space (subsection \ref{SSS:conical.fibers}).   
In the notation of definition \ref{D:fibering.by.cones}, 
$Exp \circ \pi_{C} \circ \alpha : \mcl{C} \to \Pf$ is a retraction. See lemma \ref{L:Pk.is.nhbd.retract} for an example of a nontrivial $G$-equivariant retraction onto a stratified space. 
  \end{remark}   
   
  \begin{remark}[``Severity trick'']  \label{R:severity.trick}  
As discussed in remark \ref{R:Ss.closed?}, on the face of it proposition \ref{P:sing.dim.when.H.d-r.D.=.0} can only tell us about the dimension of a closed superset 
of $\Ss$, the singular set of $\Phi$. Similarly, theorem \ref{T:lwr.bnd.on.Haus.meas} only seems to give a lower bound on the Hausdorff measure of a closed superset of $\Ss$.

However, theorem \ref{T:if.lin.combo.on.F.then.can.rstrct.to.bad.sings} offers a possible way to bound $\dim \Ss$, the dimension of $\Ss$ itself and its measure, too. By \eqref{E:sing.set.of.Omega.in.severe.sings.of.Phi}, all singularities of $\Omega$ lie 
in $\Ss^{\msf{V}}$. If we can apply proposition \ref{P:sing.dim.when.H.d-r.D.=.0} 
to $\Omega$ with $\Ss' = \Ss^{\msf{V}}$, then we have a lower bound on 
$\dim \Ss^{\msf{V}}$, \emph{a fortiori} on $\dim \Ss$. Since severe singularities are more important than non-severe ones, bounding $\dim \Ss^{\msf{V}}$ below is the more important outcome of this strategy. It is in $\Phi$ that our interest lies. $\Omega$'s role is just to help us learn more about $\Phi$. We call ths maneuver the ``severity trick''. 

Similarly, with $\Pf := \T$, we might apply theorem \ref{T:Phi.star.Hr.contains.Theta.star.Hr} to $\Phi = \Omega$ and $\Ss' = \Ss^{\msf{V}}$ and conclude that $\Ss^{\msf{V}}$ is non-empty, a conclusion that then apply \emph{a fortiori} to $\Ss \supset \Ss^{\msf{V}}$. 

If $(\Phi, \Ss', G, \T, a)$ with $\Ss' := \Ss$ does not have property \ref{Pty:agree.near.T} the quintuplet $(\Omega, \Ss^{\msf{V}}, G, \T, a)$ might. In that case we may
apply theorem \ref{T:lwr.bnd.on.Haus.meas} with 
$\Omega$ in place of $\Phi$ and $\Ss^{\msf{V}}$ in place of $\Ss'$. Then, since $\Ss \supset \Ss^{\msf{V}}$ and, therefore, 
$dist^{a}(\Ss^{\msf{V}}, \Pf) \geq dist^{a}(\Ss, \Pf)$, inequality \eqref{E:Hm.a.S.geq.R.d-p-1} becomes
             \begin{equation*}  \label{E:Hm.a.SV.geq.R.d-p-1}
                \Hm^{a}(\Ss) \geq \Hm^{a}(\Ss^{\msf{V}}) 
                  \geq \gamma \; dist^{a}(\Ss^{\msf{V}}, \Pf)^{ \min(d-p-1,a) } 
                    \geq \gamma \; dist^{a}(\Ss, \Pf)^{ \min(d-p-1,a) }.
             \end{equation*}
To repeat, this holds whether or not $\Ss$ is closed. But even when \eqref{E:Hm.a.S.geq.R.d-p-1} holds for $\Ss' := \Ss$, 
by using $\Ss^{\msf{V}}$ we get a potentially bigger lower bound 
on $\Hm^{a}(\Ss)$ than we get with \eqref{E:Hm.a.S.geq.R.d-p-1}.
(\emph{Fine print:} To carry out the severity trick to use theorem \ref{T:lwr.bnd.on.Haus.meas}, we may have to replace $\Pf$ by a closed subset on which $\Phi$ has a continuous extension, as required in part \ref{I:Omega.Phi.agree.on.Pf} of theorem \ref{T:if.lin.combo.on.F.then.can.rstrct.to.bad.sings}.) 

Another important use of the severity trick is in checking \textbf{hypothesis \ref{Hyp:S.cap.T.small}} of theorem \ref{T:Phi.star.Hr.contains.Theta.star.Hr}. It is presumably easier to prove that there are few severe singularities in $\T$ (or, for application of theorem \ref{T:lwr.bnd.on.Haus.meas}, near $\Pf$) than to prove the same thing regarding singularities of any kind. See remark \ref{R:improvments.over.main.thm}. 

We will use this trick repeatedly in subsequent chapters.
  \end{remark}

  \begin{proof}[Proof of theorem \ref{T:if.lin.combo.on.F.then.can.rstrct.to.bad.sings} ]
By \eqref{E:D.metric.F.normal}, there exists a metric $\phi$ on $\D$. WLOG, 
$\phi$ is invariant under the action of $G$. I.e., $\phi$ is a metric w.r.t.\ which $G$ is a group of isometries. (This means, for every $g \in G$ and $x_{1}, \, x_{2} \in \D$ 
we have $\phi \bigl( g(x_{1}), g(x_{2}) \bigr) = \phi(x_{1}, x_{2})$.) If $\phi$ is not $G$-invariant, replace $\phi$ 
by $\bar{\phi} : (x_{1}, x_{2}) \mapsto m^{-1} \sum_{g \in G} \phi \bigl[ g(x_{1}), g(x_{2}) \bigr]$. If $x \in \D$ and $\delta > 0$ define the open ball $B_{\delta}(x) \subset \D$ w.r.t.\ the metric $\phi$. (See \eqref{E:ball.defn}.)

$\D'$ is $G$-invariant by assumption. By remark \ref{R:symmetry}, $\Ss^{\msf{V}}$ is also $G$-invariant and, therefore, so is $\tilde{\D}$. Since $\tilde{\D} = \D \setminus \Ss^{\msf{V}}$ we have that $\tilde{\D}$ is open, by \eqref{E:SV.is.closed}. Moreover, $\D' \subset \tilde{\D}$. We will construct a new data map $\Omega_{\gamma}$ continuous 
on $\tilde{\D}$. 

Because $\tilde{\D}$ is open, the space $\tilde{\D}$ is locally compact and second countable since $\D$ is. Then as in example \ref{Ex:nested.compacts}, there exists a sequence $\mcl{K}_{-2} = \mcl{K}_{-1} = \mcl{K}_{0} = \varnothing, \mcl{K}_{1}, \mcl{K}_{2}, \ldots$ of compact subsets of $\tilde{\D}$ whose union is $\tilde{\D}$ and satisfy $\mcl{K}_{i-1} \subset \mcl{K}_{i}^{\circ}$ ($i = 1, 2, \ldots$; $\mcl{K}_{i}^{\circ}$ is the interior of $\mcl{K}_{i}$). Replacing $\mcl{K}_{i}$ by $G(\mcl{K}_{i})$ (see \eqref{E:G(x).G(A)}) if necessary, we may assume $G(\mcl{K}_{i}) = \mcl{K}_{i}$ ($i = 1, 2, \ldots$): Let $i \in \ZZ$.
Then $G(\mcl{K}_{i})$ is compact, since $G$ is finite, and 
$G(\mcl{K}_{i}) \subset G(\mcl{K}_{i+1}^{\circ}) \subset G(\mcl{K}_{i+1})^{\circ}$.

Let $x \in \tilde{\D}$ and let $j = i > 0$ be the smallest $j$ s.t.\ $x \in \mcl{K}_{j}$. I.e., 
    \begin{equation*}
      x \in \mcl{K}_{i} \setminus \mcl{K}_{i-1} .
    \end{equation*}
Notice that $i$ is uniquely determined by $x$. Now, 
$\mcl{K}_{i+1}^{\circ} \setminus \mcl{K}_{i-1}$ is an open set containing $\mcl{K}_{i} \setminus \mcl{K}_{i-1}$. By definition of $\Ss^{\msf{V}}$, there exists $\delta(x) > 0$, 
$E \in \mathfrak{D}$, and $V(x) \in  \msf{V}$ s.t.\ 
   \begin{equation}   \label{E:bar.Phi.BD.in.VD}
     \overline{B_{\delta(x)}(x)} \subset E \cap (\mcl{K}_{i+1}^{\circ} 
       \setminus \mcl{K}_{i-1}) \cap \tilde{\D} \text{ and }
         \overline{ \Phi [B_{\delta(x)}(x) \cap \D'] }  \subset V(x).
   \end{equation}
Since $\mcl{K}_{i+1}$ is compact, we have,  
    \begin{equation} \label{E:B.delta.bar.is compact}
        \overline{ B_{\delta(x)}(x) } \text{ is compact.}
    \end{equation}

Let 
	\begin{equation}  \label{E:Ux.B.deltax/3}
	  \clU(x) := B_{\delta(x)/3}(x) 
	    \subset E \cap (\mcl{K}_{i+1}^{\circ} \setminus \mcl{K}_{i-1}) 
	      \cap \tilde{\D}.
	\end{equation}
In particular, by \eqref{E:bar.Phi.BD.in.VD}, 
	\begin{equation}  \label{E:Phi(x).in.Vx}
		\Phi(x') \in V(x), \text{ if } x \in \tilde{\D} \text{ and } x' \in  \clU(x) \cap \D'.
	\end{equation} 
Let $\mcl{E}(x) := B_{\delta(x)/6}(x) \subset \tilde{\D}$. Thus,
   \begin{equation}  \label{E:UD.in.K.i+2.less.K.i-1}
      x \in \overline{\mcl{E}(x)} \subset \clU(x) 
        \subset \mcl{K}_{i+1}^{\circ} \setminus \mcl{K}_{i-1}
   \end{equation}
and $\overline{\mcl{E}(x)}$ is compact (since it is closed and lies in $\mcl{K}_{i+1}$).
 
\emph{Claim:}  We may assume 
   \begin{equation}  \label{E:delta.radius.G.invar}
       \delta \bigl[ g(x) \bigr] = \delta(x),
   \end{equation}
First, since $G$ is a group of isometries and $\tilde{\D}$ and the $\mcl{K}_{j}$'s are $G$-invariant, for every $g \in G$ we have
   \[
      (\mcl{K}_{i+1}^{\circ} \setminus \mcl{K}_{i-1}) \cap \tilde{\D} 
        \supset g \bigl[ B_{\delta(x)}(x) \bigr] 
          = B_{\delta(x)} \bigl[ g(x) \bigr].
   \]
Similarly, 
$g \bigl[ \clU(x) \bigr] = g \bigl[ B_{\delta(x)/3}(x) \bigr] 
= B_{\delta(x)/3} \bigl[ g(x) \bigr]$.) 
By \eqref{E:g.commutes.w/.set.ops} and the fact that $\Phi$ and $\D'$ are $G$-invariant,
   \[
      \Phi \Bigl( B_{\delta(x)} \bigl[ g(x) \bigr] \cap \D' \Bigr) 
        = \Phi \Bigl( g \bigl[ B_{\delta(x)}(x) \cap \D' \bigr] \Bigr)
         = \Phi \bigl[ B_{\delta(x)} (x) \cap \D' \bigr].
   \]
In particular, 
   \[
      \overline{ \Phi \Bigl( B_{\delta(x)} \bigl[ g(x) \bigr] \cap \D' \Bigr) } \subset V(x).
   \]
Thus, we may take 
	\begin{equation*}
		\delta(x) = \min \Bigl\{ \delta \bigr[ g(x) \bigl] : g \in G \Bigr\}.
	\end{equation*} 
The claim \eqref{E:delta.radius.G.invar} follows. It also follows that
	\begin{equation}   \label{E:U.E.equivar}
		g \bigl[ \clU(x) \bigr] = \clU \bigl[ g(x) \bigr] \text{ and } 
		   g \bigl[ \mcl{E}(x) \bigr] = \mcl{E} \bigl[ g(x) \bigr].
	\end{equation}

Let $\nu = 1, 2, 3 \ldots$ and 
let $x \in \mcl{K}_{\nu} \setminus \mcl{K}_{\nu-1}^{\circ}$. 
If $x \in \mcl{K}_{\nu} \setminus \mcl{K}_{\nu-1} \subset \mcl{K}_{\nu} \setminus \mcl{K}_{\nu-1}^{\circ}$, then, by \eqref{E:UD.in.K.i+2.less.K.i-1}, we have 
$\overline{\mcl{E}(x)} \subset \clU(x) \subset \mcl{K}_{\nu+1}^{\circ} 
\setminus \mcl{K}_{\nu-1}$. The only other possibility is that $x$ belongs to the boundary 
$\mcl{K}_{\nu-1} \setminus \mcl{K}_{\nu-1}^{\circ} \subset \mcl{K}_{\nu-1} \setminus \mcl{K}_{\nu-2}$. In that case, by \eqref{E:UD.in.K.i+2.less.K.i-1} again, we have 
$\overline{\mcl{E}(x)} \subset \clU(x) \subset \mcl{K}_{\nu}^{\circ} 
\setminus \mcl{K}_{\nu-2}$. 
Thus, in general 
$\overline{\mcl{E}(x)} \subset \clU(x) \subset \mcl{K}_{\nu+1}^{\circ} 
\setminus \mcl{K}_{\nu-2}$.

By compactness, for every $\nu = 1, 2, \ldots$, there exists a finite collection 
$x_{\nu,  1}, \ldots, x_{\nu,  n_{\nu}} \in \mcl{K}_{\nu} \setminus \mcl{K}_{\nu-1}^{\circ} \subset \tilde{\D}$ s.t.\
    \begin{equation}  \label{E:E.x.nu.j.covers}
          \mcl{E}(x_{\nu,  j}) \subset \mcl{K}_{\nu+1}^{\circ} \setminus \mcl{K}_{\nu-2} 
            \; (j = 1, \ldots, n_{\nu}) \text{ cover }
              \mcl{K}_{\nu} \setminus \mcl{K}_{\nu-1}^{\circ}. 
    \end{equation}
\emph{A fortiori}, by \eqref{E:UD.in.K.i+2.less.K.i-1} again and invariance 
of the $\mcl{K}_{i}$'s, we have
	\begin{equation}  \label{E:G.U.subset.K.diff}
		G \bigl[ \clU(x_{\nu,  j}) \bigr] 
		  \subset \mcl{K}_{\nu+1}^{\circ} \setminus \mcl{K}_{\nu-2}, 
	            \quad j = 1, \ldots, n_{\nu},
	\end{equation}
cover $\mcl{K}_{\nu} \setminus \mcl{K}_{\nu-1}^{\circ}$. 

Observe that 
    \begin{equation}  \label{E:finite.Ui's.intrsct.Kell}
        \text{for each } \ell = 1,2, \ldots, \text{ only finitely many sets } 
          G \bigl[ \clU(x_{\nu,  j}) \bigr] \text{ intersect } \mcl{K}_{\ell}.
    \end{equation} 
To see this, let $\ell = 1, 2, \ldots$ and suppose for some $\nu$ and $j$ we have 
$G \bigl[ \clU(x_{\nu,  j}) \bigr] \cap \mcl{K}_{\ell} \neq \varnothing$. Now, by \eqref{E:G.U.subset.K.diff} and the fact that $\mcl{K}_{i} \uparrow$, we must have 
$\nu -2 < \ell$. Therefore, the number of sets $G \bigl[ \clU(x_{\nu,  j}) \bigr]$ intersecting 
$\mcl{K}_{\ell}$ is at most $n_{1} + \cdots + n_{\ell+1}$. In particular, at most $n_{1} + \cdots + n_{\ell+1}$ of the $x_{\nu, ,j}$'s lie in $\mcl{K}_{\ell}^{\circ}$. Thus, the collection $\{ x_{\nu, ,j} \in \tilde{\D} :\nu = 1, 2, \ldots ; j = 1, \ldots, n_{\nu} \}$ is locally finite. 

Recall that $m$ is the number of elements of $G$. Relabel the points $x_{\nu,  j}$, 
($j = 1, \ldots, n_{\nu}; \nu = 1, 2, \ldots$) as $x_{km + 1}$, $k = 0, 1, 2, \ldots$. (Just to be clear, here ``$km + 1$'' is not a double index. 
It means ``$k$ times $m$ plus 1''.) Thus, there are gaps of length $m-1$ in the indexing. 
By \eqref{E:E.x.nu.j.covers}, \eqref{E:UD.in.K.i+2.less.K.i-1}, \eqref{E:G.U.subset.K.diff}, and \eqref{E:finite.Ui's.intrsct.Kell}, $\{ G \bigl[ \clU(x_{jm+1}) \bigr], \; j = 0, 1, \ldots \}$ is a locally finite open covering of $\tilde{\D}$. 

Let 
    \begin{equation}  \label{E:X.jm+1.in.U(x.jm+1).cap.D'}
      X_{jm+1} \in  \clU(x_{jm+1}) \cap \D' 
    \end{equation}
be arbitrary ($j = 0, 1, 2, \ldots$ ). 
($X_{jm+1}$ exists since $\D'$ is dense in $\D$. 
Thus, $\Phi(X_{jm+1})$ is defined for every $j$.) Again we have gaps of length $m-1$. We fill them in. Let $g_{1}, \ldots, g_{m}$ be the elements of $G$ with $g_{1}$ being the identity element. Recall \eqref{E:U.E.equivar}. Let 
   \begin{multline}   \label{E:G.invariance.of.X's}
      X_{jm+k} := g_{k}(X_{jm+1}), \; x_{jm+k} := g_{k}(x_{jm+1}), \;
       \mcl{E}_{jm+k} := \mcl{E}(x_{jm+k}) := g_{k}(\mcl{E}_{jm+1}), \\
         \text{ and } \clU_{jm+k} := \clU(x_{jm+k}) := g_{k}( \clU_{jm+1})  \quad
      (j = 0, 1, 2, \ldots; k = 1, \ldots, m). 
   \end{multline}
Thus, since $\D'$ and $\tilde{\D}$ are $G$-invariant, we have that 
    \begin{equation}  \label{E:X.i.in.U(x.i).cap.D'}
      X_{i} \in \clU(x_{i}) \cap \D' \text{ and } x _{i} \in \tilde{D} \text{ for every } i = 1, 2, \ldots.
    \end{equation}
 In addition, $X_{i} \in \clU_{i}$ ($i=1, 2, \ldots$) and for every $g \in G$ and $i =  1, 2, \ldots$ there exists $j = 1, 2, \ldots$ s.t.\ $g(\clU_{i}) = \clU_{j}$. Moreover, $\clU_{i}, i=1, 2, \ldots$ is a locally finite cover of $\tilde{\D}$. Since, by \eqref{E:E.x.nu.j.covers}, $G \bigl[ \mcl{E}(x_{km+1}) \bigr]$ ($k = 1, 2, \ldots$) covers $\tilde{\D}$, it follows that 
	\begin{equation}   \label{E:Ei.covers.D.tilde}
		\mcl{E}_{i}, \; i = 1, 2, \ldots, \text{ cover } \tilde{\D}.
	\end{equation} 

The proof of Rudin \cite[Theorem 2.13, p.\ 40]{wR66.realcmplx} 
easily extends to show that there exists a (continuous) partition of unity 
(Munkres \cite[Definition 2.5, p.\ 20]{jrM66}) on $\tilde{\D}$ s.t.\ for every $i$ we have 
$supp \, f_{i} \subset \clU_{i}$.\footnote{(This footnote might be continued on the next page.) In fact, by Urysohn's lemma (Simmons \cite[Theorem A, p.\ 135]{gfS63}; by \eqref{E:D.metric.F.normal}, $\F$ is normal) and \eqref{E:UD.in.K.i+2.less.K.i-1}, for each $i$ there exists a continuous function $\gamma_{i} : \tilde{\D} \to [0,1]$ s.t.\ 
	\begin{equation*}
		\gamma_{i}(x) =
			\begin{cases}
				1, &\text{ if } x \in \overline{\mcl{E}_{i}}, \\
				0, &\text{ if } x \in \tilde{\D} \setminus \clU_{i}.
			\end{cases}
	\end{equation*}
Let $f_{1} := \gamma_{1}$ and for $j > 1$, let $f_{j} := (1-\gamma_{1}) (1-\gamma_{2}) \cdots (1-\gamma_{j-1}) \gamma_{j}$. 
So $supp \, f_{j} \subset \clU_{j}$. Then, by induction, for every $n = 1, 2, \ldots$, 
	\[
		f_{1} + \cdots + f_{n} = 1 - (1-\gamma_{1}) (1-\gamma_{2}) \cdots 
		  (1-\gamma_{n}).
	\]
Let $x \in \tilde{\D}$. By \eqref{E:Ei.covers.D.tilde}, there exists $i$ s.t.\ $x \in \overline{\mcl{E}_{i}}$. Therefore, $\gamma_{i}(x) = 1$, so $j > i$ implies $f_{j}(x) = 0$ 
and $\sum_{j=1}^{n} f_{j}(x) = 1$ for every $n \geq i$. 
I.e., $\sum_{j=1}^{\infty} f_{j}(x) = 1$.} 
(Here, ``$supp$'' means ``support.'' Conceivably, $f_{i} \equiv 0$ for some $i$'s.)

We employ an idea used in the proof of \eqref{E:smooth.<.min}. For $j = 0, 1, 2, \ldots$ 
and $k = 1, \ldots, m$ let
   \[
      \bar{f}_{jm+k} = m^{-1} \sum_{\ell=1}^{m} f_{jm+\ell} \circ g_{\ell} \circ g_{k}^{-1}.
   \]  
We introduce the $\bar{f}_{jm+k}$'s in order to get:
   \begin{equation}    \label{E:G.equivariance.of.f.bar}
      \text{If } g_{k'} = g^{-1} \circ g_{k} \text{ then } \bar{f}_{jm+k} \circ g = \bar{f}_{jm+k'} 
         \quad (g \in G; \; j = 0, 1, 2, \ldots; \;k = 1, \ldots, m).
   \end{equation}
Note that for any function $\mathscr{F} : \D \to \RR$ we have
    \begin{equation}  \label{E:g.and.supp.fi}
       g(supp \, \mathscr{F}) = supp \, (\mathscr{F} \circ g^{-1}) .
    \end{equation}
    
We \emph{claim} that $\{ \bar{f}_{i} \}$ is a partition of unity on $\tilde{\D}$ s.t.\ for every $i$ we have $supp \, \bar{f}_{i} \subset \clU_{i}$.
Let $j = 0, 1, 2, \ldots$ and $k = 1, \ldots, m$. Observe that, since $f_{i} \geq 0$ for every $i$,
   \[
      supp \, \bar{f}_{jm+k} = \bigcup_{\ell} supp \, (f_{jm+\ell} \circ g_{\ell} \circ g_{k}^{-1} )
   \]
so, by \eqref{E:g.and.supp.fi} and \eqref{E:G.invariance.of.X's},
   \[
      supp \, (f_{jm+\ell} \circ g_{\ell} \circ g_{k}^{-1} ) 
        = g_{k} \circ g_{\ell}^{-1} ( supp \, f_{jm+\ell} )
         \subset g_{k} \circ g_{\ell}^{-1} \bigl( \clU_{jm+\ell} \bigr) 
           = \clU_{jm+k}.
   \]
I.e., 
	\begin{equation}  \label{E:supp.f.bar.subset.U}
		supp \, \bar{f}_{jm+k} \subset \clU_{jm+k}. 
	\end{equation} 
In particular, $\{ supp \, \bar{f}_{i}, \, i = 1, 2, \ldots \}$ is a locally 
finite collection of subsets of $\tilde{D}$.

To complete proof of the claim we must show that
   \begin{equation}   \label{E:bar.f.is.part.of.1}
      \sum_{i \geq 1} \bar{f}_{i}(x) = 1 \text{ for every } x \in \D.
   \end{equation}
Since $\{ f_{i} \}$ is a partition of unity on $\tilde{\D}$, we have, on $\tilde{\D}$, 
   \begin{align*}
   \sum_{i \geq 1} \bar{f}_{i} &= m^{-1} \sum_{j\geq 0} 
          \sum_{k=1}^{m} \sum_{\ell=1}^{m} f_{jm+\ell} \circ g_{\ell} \circ g_{k}^{-1}  \\
     &= m^{-1} \sum_{g \in G} \sum_{j\geq 0} \sum_{\ell=1}^{m} 
           f_{jm+\ell} \circ g_{\ell} \circ (g^{-1} \circ g_{\ell})^{-1}  \\
     &= m^{-1} \sum_{g \in G} \sum_{j\geq 0} \sum_{\ell=1}^{m} f_{jm+\ell} \circ g  \\
     &= m^{-1} \sum_{g \in G} 1  \\
     &= 1.
   \end{align*}
This proves \eqref{E:bar.f.is.part.of.1} and the claim that $\{ \bar{f}_{i} \}$ is a partition of unity on $\tilde{\D}$ subordinate to $\{ \clU_{i} \}$.

Recall that $\gamma$ is a convex combination function on $\msf{V}$, commutative 
if $m > 1$. \emph{Claim:}   
   \begin{multline}  \label{E:Xi.is.defined.and.cont}
      \text{The formula } \Xi(x) = \gamma \Bigl[ \bigl( \bar{f}_{i}(x), i = 1, 2, \ldots \bigr), 
         \bigl( \Phi(X_{i}), i = 1, 2, \ldots \bigr) \Bigr], \quad x \in \tilde{D} \\
           \text{ defines a continuous function } \tilde{\D} \to \F .
   \end{multline}
Let $x \in \tilde{\D}$. By local finiteness of $\clU_{i}$, 
$i = 1, 2, \ldots$, there are only finitely many $\clU_{i}$'s s.t.\ $x \in \clU_{i}$. Only if $x \in \clU_{i}$ might we have $\bar{f}_{i}(x) > 0$ so only finitely many of the ``coefficients'' 
$\bar{f}_{i}(x)$ are nonzero. By \eqref{E:infinite.convex.combo}, to prove that $\Xi(x)$ is defined we must show that for some $V \in \msf{V}$ we have 
$\bigl\{ \Phi(X_{i}) \in \F : \bar{f}_{i}(x) > 0 \bigr\} \subset V$. We then will still have to establish continuity. 

Let 
	\begin{equation}  \label{E:Y=.intersect.Ui}
		\Y(x) := \bigcap_{x \in \clU_{i}} \clU_{i}, \qquad x \in \tilde{D} .
	\end{equation}
Since $x$ only belongs to a non-empty finite collection of $\clU_{i}$'s, we have 
that $\Y(x)$ is open. \emph{Claim:} $\Y(x)$ is ``equivariant'' in $x$. I.e., 
	\begin{equation} \label{E:Y.commutes.w/.G}
		g \bigl[ \Y(x) \bigr] = \Y \bigl[ g(x) \bigr] \text{ for ever } x \in \tilde{D} 
		  \text{ and } g \in G.
	\end{equation}
Let $g \in G$. Now, for $i = 1, 2, \ldots$, we have $x \in \clU_{i}$ if and only if 
$g(x) \in g(\clU_{i})$ and, by \eqref{E:G.invariance.of.X's}, $g(\clU_{i}) = \clU_{j}$ 
for some $j = 1, 2, \ldots$. Thus, 
$\bigl\{ g(\clU_{i}) : x \in \clU_{i} \bigr\} \subset \bigl\{ \clU_{j} : g(x) \in \clU_{j} \bigr\}$. Therefore, by \eqref{E:g.commutes.w/.set.ops}, 
    \begin{equation} \label{E:g.Y.x.subset.Y.g.x}
        g \bigl[ \Y(x) \bigr] = \bigcap_{x \in \clU_{i}} g(\clU_{i}) 
          \subset \bigcap_{g(x) \in \clU_{j}} \clU_{j} = \Y \bigl[ g(x) \bigr] .
    \end{equation}
By replacing $g$ by $g^{-1}$ and $x$ by $g(x)$ in the preceding we get 
$g^{-1} \Bigl( \Y \bigl[ g(x) \bigr] \Bigr) \subset \Y(x)$. 
I.e., $\Y \bigl[ g(x) \bigr]  \subset g \bigl[ \Y(x) \bigr]$. Combining this with \eqref{E:g.Y.x.subset.Y.g.x}, we get \eqref{E:Y.commutes.w/.G}, as claimed.
 
By \eqref{E:supp.f.bar.subset.U}, for every $i = 1, 2, \ldots$, 
$supp \, \bar{f}_{i} \subset \clU_{i}$. Let $\overline{\Y(x)}$ be the closure of $\Y(x)$ in $\D$. By \eqref{E:finite.Ui's.intrsct.Kell}, only finitely many $\{ \clU_{i} \}$'s intersect any given 
$\mcl{K}_{\ell}$. It follows from \eqref{E:UD.in.K.i+2.less.K.i-1} that 
$\overline{\Y(x)} \subset \mcl{K}_{\ell}$ for some $\ell$.  Hence, by \eqref{E:supp.f.bar.subset.U}, there are only finitely 
many $\bar{f}_{i}$'s, say, $\bar{f}_{i_{1}} \ldots , \bar{f}_{i_{N_{2}}}$, whose support intersects $\overline{\Y(x)}$. 
Let $x \in \tilde{\D}$ so $x \in \Y := \Y(x)$. 
    \begin{equation}  \label{E:f.i1,...,f.iN2}
      \text{Let } \bar{f}_{i_{1}} \ldots , \bar{f}_{i_{N_{2}}} 
        \text{ be the only $\bar{f}$'s whose support intersects } \overline{\Y} .
    \end{equation} 
Relabeling if necessary, there exists $N_{1} = 1, 2, \ldots, N_{2}$ s.t.\  
	\begin{multline}  \label{E:x.in.bar.f.ij.in.Uij}
		\text{for } j = 1, \ldots, N_{1} \text{ we have }
		x \in supp \, \bar{f}_{i_{j}} \subset \clU_{i_{j}} \\
		    \text{  but if } j = N_{1} + 1, \ldots, N_{2} 
		      \text{ then } x \notin supp \, \bar{f}_{i_{j}}.
	\end{multline} 
Recall that $\phi$ is the metric on $\D$. Now, by \eqref{E:G.invariance.of.X's} and \eqref{E:Ux.B.deltax/3},  
    \begin{equation} \label{E:x.within.delta.of.x.ij}
      \phi(x', x_{i_{j}}) < \delta(x_{i_{j}})/3 \text{ if } x' \in \clU_{i_{j}} = \clU(x_{i_{j}}) .
    \end{equation}

Let 
	\begin{equation}   \label{E:Y.tilde(x).defn}
	     \tilde{\Y} := \tilde{\Y}(x) := \Y(x) \setminus 
	       \left( \bigcup_{j = N_{1}+1}^{N_{2}} supp \, \bar{f}_{i_{j}} \right)
	           \subset \clU_{i_{1}} 
	\end{equation}
(Yes, $i_{1}$.) Thus, $\tilde{\Y}$ is open and $x \in \tilde{\Y}$. 
Thus, if $x' \in \tilde{\Y}$ then, by \eqref{E:drop.0.coefs} and \eqref{E:infinite.convex.combo}, 
    \begin{multline} \label{E:sum.1.to.N2.=1}
            \ell \notin \{ i_{1}, \ldots, i_{N_{1}} \} \text{ implies } 
                 \bar{f}_{\ell}(x') = 0, \text{ so } 
                   \bar{f}_{i_{1}}(x') + \cdots + \bar{f}_{i_{N_{1}}}(x') = 1, \;
                \text{ and } \\
               \Xi(x') = \gamma \Bigl[ \bigl( \bar{f}_{i_{j}}(x'), j = 1, 2, \ldots, N_{1} \bigr), 
                \bigl( \Phi(X_{i_{j}}), j = 1, 2, \ldots, N_{1} \bigr) \Bigr],
    \end{multline}
providing we show that for $V \in \msf{V}$ we have 
$\Phi(X_{i_{j}}) \in V$ ($j = 1, 2, \ldots, N_{1}$). Now, by \eqref{E:X.i.in.U(x.i).cap.D'}, 
    \begin{equation*}
      X_{i_{j}} \in \clU(x_{i_{j}}), \quad j = 1, 2, \ldots, N_{1} .
    \end{equation*}

By \eqref{E:G.invariance.of.X's} and \eqref{E:Ux.B.deltax/3}, 
$\clU_{i_{j}} =  B_{\delta(x_{i_{j}})/3}(x_{i_{j}})$ ($j = 1, \ldots, N_{1}$).
We may assume, 
    	\begin{equation}  \label{E:delta.i1.biggest}
    		\delta (x_{i_{1}}) \geq \cdots  \geq \delta (x_{i_{N_{1}}}) > 0.
    	\end{equation}
 
By \eqref{E:X.jm+1.in.U(x.jm+1).cap.D'} and \eqref{E:G.invariance.of.X's}, we have 
$X_{i_{j}} \in \clU_{i_{j}}$ ($j = 1, 2, \ldots, N_{1}$). 
And by \eqref{E:x.within.delta.of.x.ij} and \eqref{E:x.in.bar.f.ij.in.Uij}, we have 
$x \in B_{\delta(x_{i_{1}})/3}(x_{i_{1}})$ and  
   \begin{multline*}
      \phi \bigl( X_{i_{j}}, x_{i_{1}} \bigr) 
        \leq \phi \bigl( X_{i_{j}}, x_{i_{j}} \bigr) 
          + \phi \bigl( x_{i_{j}}, x \bigr) + \phi \bigl( x, x_{i_{1}} \bigr)    \\
            <  \tfrac{1}{3} \delta \bigl( x_{i_{j}} \bigr) 
              +  \tfrac{1}{3} \delta \bigl( x_{i_{j}} \bigr) 
                +  \tfrac{1}{3} \delta \bigl( x_{i_{1}} \bigr) 
                  \leq  \delta \bigl( x_{i_{1}} \bigr),  \quad j = 1, \ldots, N_{1}.
   \end{multline*}
I.e., 
	\begin{equation}  \label{E:Xis.x.in.B.delta.x1}
		X_{i_{1}}, \ldots, X_{i_{N_{1}}} \text{  and } x \text{ all lie in } 
		           B_{\delta(x_{i_{1}})}(x_{i_{1}}). 
	\end{equation}

But by \eqref{E:bar.Phi.BD.in.VD}, 
    \begin{equation}   \label{E:Phi(B.delta.i1).subset.V}
      \Phi(\clU_{i_{1}}) \subset 
        \overline{ \Phi \Bigl( B_{\delta(x_{i_{1}})} (x_{i_{1}}) \cap \D' \Bigr) } 
          \subset V(x_{i_{1}}) \in \msf{V} .
    \end{equation}
Now, $X_{i_{j}} \in \D'$, so, by 
\eqref{E:Xis.x.in.B.delta.x1} and \eqref{E:Phi(B.delta.i1).subset.V}, 
$\Phi(X_{i_{j}}) \in V(x_{i_{1}})$ ($j = 1, \ldots N_{1}$). Therefore, convex combinations 
of $\Phi(X_{i_{1}}), \ldots, \Phi(X_{i_{N_{1}}})$ exist and, hence by \eqref{E:sum.1.to.N2.=1}, $\Xi(x)$ is defined. 

Moreover, writing $\clU := B_{\delta(x_{i_{1}})}(x_{i_{1}})$ and $V := V(x_{i_{1}})$, by \eqref{E:x.in.bar.f.ij.in.Uij}, we have, in the language of part \ref{I:cont.approx.to.Phi.exists} of  the theorem, that $\clU$ is a neighborhood of $x$ and $\Xi(x)$ is a $\gamma$-convex combination of points in $\Phi(\clU \cap \D')$. In addition, by \eqref{E:x.in.bar.f.ij.in.Uij}
and \eqref{E:Phi(B.delta.i1).subset.V}, if $x \in \D'$ then $\Phi(x) \in V$ as well. 
I.e., $\Xi$ has the ``approximation'' property promised in the theorem. Furthermore, by \eqref{E:bar.Phi.BD.in.VD}, there exists $E \in \mathfrak{D}$ s.t.\ $\clU \subset E$. 

Suppose $x' \in \tilde{\Y}$. Then, by \eqref{E:Y.tilde(x).defn} and \eqref{E:Y=.intersect.Ui}, we have $x' \in \Y = \Y(x) \subset \clU_{i_{1}}$. There exists $i$ s.t.\ $\bar{f}_{i}(x') > 0$. In that case, $(supp \, \bar{f}_{i}) \cap \overline{\Y}$ (a bar over $\Y$) contains 
$(supp \, \bar{f}_{i}) \cap \tilde{\Y} \ne \varnothing$ (a tilde over $\Y$). By \eqref{E:f.i1,...,f.iN2}, this implies $i = i_{j}$ for some $j = 1, \ldots, N_{2}$, in fact $j \leq N_{1}$. Hence, $\Xi(x') = \gamma \Bigl[ \bigl( \bar{f}_{i_{j}}(x'), j = 1, 2, \ldots, N_{1} \bigr), 
         \bigl( \Phi(X_{i_{j}}), j = 1, 2, \ldots, N_{1} \bigr) \Bigr]$ is defined with coefficients continuous in $x' \in \tilde{\Y}$, so, by part \ref{I:gamma.k.cont} of definition \ref{D:convex.combo.fn}, $\Xi(x')$ is continuous in $x' \in \tilde{\Y}$. 
Since $x \in \tilde{\D}$ is arbitrary, the claim \eqref{E:Xi.is.defined.and.cont} that $\Xi$ is defined and continuous on $\tilde{\D}$ is proved. 

Therefore, all singularities of $\Xi$ lie in $\Ss^{\msf{V}}$. Moreover, if 
$x' \in \tilde{\Y} \cap \D'$ then, by virtue of the fact that $x' \in \clU_{i_{1}}$, 
we have $\Phi(x') \in V(x_{i_{1}})$, by \eqref{E:bar.Phi.BD.in.VD}. Thus, by \eqref{E:Xis.x.in.B.delta.x1}, \eqref{E:Y.tilde(x).defn}, and \eqref{E:Phi(B.delta.i1).subset.V}, 
    \begin{equation}   \label{E:Phi.Xi.x'.in.same.V}
        \text{if } x \in \tilde{\D} \text{ then there exists } V \in \msf{V} \text{ s.t.\ } \\
          \Xi \bigl[ \tilde{\Y}(x) \cap \tilde{\D} \bigr] 
            \cup \overline{ \Phi \bigl[ \tilde{\Y}(x) \cap \D' \bigr] } \subset V.
    \end{equation} 
    
For future reference we prove the \emph{claim:}
    \begin{equation}  \label{E:Y.tilde.G.equivar}
        \tilde{\Y} \text{ is } G\text{-equivariant.}
    \end{equation} 
i.e., for $g \in G$ and $x \in \tilde{\D}$ we have $g \bigl[ \tilde{\Y}(x) \bigr] = \tilde{\Y} \bigl[ g(x) \bigr]$. First, note that, by \eqref{E:Y.tilde(x).defn}
 and the definition, \eqref{E:x.in.bar.f.ij.in.Uij}, of $i_{N_{1}+1}, \ldots, i_{N_{2}}$: 
    \begin{multline*}
      \tilde{\Y}(x) = \Y(x) \setminus 
        \left( \bigcup_{j = N_{1}+1}^{N_{2}} supp \, \bar{f}_{i_{j}} \right) \\
          = \Y(x) \setminus \left( \bigcup_{(supp \, \bar{f}_{i}) \cap \Y(x) \neq \varnothing; 
            x \notin supp \, \bar{f}_{i}} 
              supp \, \bar{f}_{i} \right) 
               = \Y(x) \setminus \left( \bigcup_{x \notin 
                 supp \, \bar{f}_{i}} supp \, \bar{f}_{i} \right).
    \end{multline*}
Therefore, by \eqref{E:g.commutes.w/.set.ops} and \eqref{E:Y.commutes.w/.G}, it suffices to show 
    \begin{equation}  \label{E:g.supp.union.commute}
        g \left( \bigcup_{x \notin supp \, \bar{f}_{i}} supp \, \bar{f}_{i} \right)
         = \bigcup_{g(x) \notin supp \, \bar{f}_{i}} supp \, \bar{f}_{i}.
    \end{equation}
The proof is similar to that of \eqref{E:Y.commutes.w/.G}. By \eqref{E:g.and.supp.fi}, 
we have $g(supp \, \bar{f}_{i}) = supp \, \bar{f}_{i} \circ g^{-1}$. Now, by \eqref{E:G.equivariance.of.f.bar}, for every $i$ there exists a $j$ s.t.\ $\bar{f}_{i} \circ g^{-1} = \bar{f}_{j}$, so, by \eqref{E:g.and.supp.fi}, $g(supp \, \bar{f}_{i}) = supp \, \bar{f}_{j}$. In particular, $x \notin supp \, \bar{f}_{i}$ 
if and only if $g(x) \notin supp \, \bar{f}_{j}$. Thus, by \eqref{E:g.commutes.w/.set.ops}, \eqref{E:g.and.supp.fi}, and \eqref{E:G.equivariance.of.f.bar}, 
    \begin{multline} \label{E:g.union.x.supps.=.union.g(x).supps}
      g \left( \bigcup_{x \notin supp \, \bar{f}_{i}} supp \, \bar{f}_{i} \right)
        = \bigcup_{x \notin supp \, \bar{f}_{i}} g(supp \, \bar{f}_{i}) \\
        = \bigcup_{g(x) \notin g(supp \, \bar{f}_{i})} g(supp \, \bar{f}_{i})
         \subset \bigcup_{g(x) \notin supp \, \bar{f}_{j}} supp \, \bar{f}_{j}.
    \end{multline}
 That is one direction of the equality \eqref{E:g.supp.union.commute}, to get the other direction, let $h \in G$ be arbitrary and replace $x$ in \eqref{E:g.union.x.supps.=.union.g(x).supps} by $h(x)$ and $g$ by $g^{-1}$. This yields
    \begin{equation*} 
      g^{-1} \left( \bigcup_{h(x) \notin supp \, \bar{f}_{i}} supp \, \bar{f}_{i} \right)
         \subset \bigcup_{g^{-1} \circ h(x) \notin supp \, \bar{f}_{j}} supp \, \bar{f}_{j}.
    \end{equation*}
 Then replace $h$ by $g$ to get
    \begin{equation*}
      g^{-1} \left( \bigcup_{g(x) \notin supp \, \bar{f}_{i}} supp \, \bar{f}_{i} \right)
         \subset \bigcup_{x \notin supp \, \bar{f}_{j}} supp \, \bar{f}_{j}.
    \end{equation*}
 Combining this with \eqref{E:g.union.x.supps.=.union.g(x).supps} yields \eqref{E:g.supp.union.commute} and proves the claim \eqref{E:Y.tilde.G.equivar}.

\emph{Claim:} $\Xi$ is invariant under $G$ action. Let $x \in \tilde{\D}$ and $g \in G$. If $m = 1$, then $G$ is the trivial group and we are done. Suppose $m > 1$. 
For every $k = 1, 2, \ldots, m$ let $\ell = \pi_{g}(k)$ ($\ell= 1, \ldots, m$) satify 
$g_{\ell} = g^{-1} \circ g_{k}$. Then $\pi := \pi_{g}$ is a permutation 
of $\{ 1, 2, \ldots, m \}$ and by \eqref{E:G.invariance.of.X's}  
	\begin{equation}  \label{E:Xk.=.g(Xpik)}
		X_{jm+k} = g_{k}(X_{jm+1}) =g \circ g_{\pi(k)}(X_{jm+1}) = g(X_{jm+\pi(k)}).
	\end{equation}
Hence, by \eqref{E:Xi.is.defined.and.cont}, \eqref{E:G.equivariance.of.f.bar}, \eqref{E:Xk.=.g(Xpik)}, and $G$ invariance of $\Phi$, we have
 \begin{small}
   \begin{align*}
      \Xi \bigl[ g(x) \bigr]   \\
        \begin{split}
	= \gamma \biggl[ \Bigl( \ldots, \bar{f}_{jm} \bigl[ g(x) \bigr], 
    		\bar{f}_{jm+1} \bigl[ g(x) \bigr], \bar{f}_{jm+2} \bigl[ g(x) \bigr], 
               	\ldots, \bar{f}_{(j+1)m} \bigl[ g(x) \bigr], 
              	   \bar{f}_{j(m+1)+1} \bigl[ g(x) \bigr], \ldots \Bigr), \\
	   \Bigl( \ldots, \Phi(X_{jm}), \Phi(X_{jm+1}), \Phi(X_{jm+2}), \ldots, \Phi(X_{(j+1)m}), 
	                       \Phi(X_{(j+1)m+1}) , \ldots \Bigr)  \biggr]
	\end{split} \\
        \begin{split}
		= \gamma \biggl[ \Bigl( \ldots, \bar{f}_{(j-1)m+\pi(m)} (x), \bar{f}_{jm+\pi(1)} (x), 
				\bar{f}_{jm+\pi(2)} (x), 
               \ldots, \bar{f}_{jm + \pi(m)} (x), 
              	   \bar{f}_{j(m+1)+\pi(1)} (x), \ldots \Bigr), \\
	   \Bigl( \ldots, \Phi(X_{jm}), \Phi(X_{jm+1}), \Phi(X_{jm+2}), 
	     \ldots, \Phi(X_{(j+1)m}), 
	   			\Phi(X_{(j+1)m+1}) , \ldots \Bigr) , \ldots \Bigr)  \biggr]
	\end{split} \\
        \begin{split}
		= \gamma \biggl[ \Bigl( \ldots, \bar{f}_{(j-1)m+\pi(m)} (x), \bar{f}_{jm+\pi(1)} (x), 
				\bar{f}_{jm+\pi(2)} (x), 
               \ldots, \bar{f}_{jm + \pi(m)} (x), 
              	   \bar{f}_{j(m+1)+\pi(1)} (x), \ldots \Bigr), \\
	   \Bigl( \ldots, \Phi(X_{(j-1)m+\pi(m)}), \Phi(X_{jm+\pi(1)}), \Phi(X_{jm+\pi(2)}), \ldots, 
	   		\Phi(X_{jm + \pi(m)}), \Phi(X_{(j(m+1)+\pi(1)}) , \ldots \Bigr)  \biggr].
	\end{split} 
   \end{align*}
 \end{small}
 Hence, since $\gamma$ is commutative if $m > 1$, we have 
$\Xi \bigl[ g(x) \bigr]  = \Xi (x)$, i.e., 
    \begin{equation}  \label{E:Xi.is.G-invar}
      \Xi \text{ is $G$-invariant}
    \end{equation}
and the claim is proved. For the part \eqref{I:cont.approx.to.Phi.exists} of the theorem we may take 
$\Omega := \Omega_{\gamma} := \Xi$. 

\emph{Proof of part \eqref{I:Omega.Phi.agree.on.Pf} of the theorem:} Suppose $\Pf \subset \D$ is closed, $G(\Pf) = \Pf$, $\Pf \cap \D'$ is dense in $\Pf$, and $\Phi \restriction_{\Pf \cap \D'}$ has a unique continuous extension, $\Theta$, to $\Pf$. Suppose $\Rcl$ is a $G$-invariant neighborhood of $\Pf$ and there exists a $G$-equivariant retraction 
$\rho : \Rcl \to \Pf$.
We show that, after possible tweaking of $\Xi$ in $\Rcl$, we arrive at a map $\Omega : \tilde{\D} \to \F$ that agrees with $\Phi$ 
on $\D' \cap \Pf$. The idea of the proof is as follows. First, we construct a continuous map $\Psi: \Rcl \to \F$ that agrees with $\Theta$ on $\Pf$. Then, using Urysohn's lemma, we define $\Omega$ to to equal $\Xi$ off $\Rcl$ and, on $\Rcl$ to be a convex combination of $\Xi$ and $\Psi$.

That $\rho$ is a retraction means that $\rho(x) = x$ if $x \in \Pf$. $\rho$, of course, is not one-to-one so does not have an inverse. Still, we show that $G$-equivariance of $\rho$ implies that of its inverse set mapping, specifically,
    \begin{equation*}
      \rho^{-1} \bigl[ g(x) \bigl]  = g \bigl[ \rho^{-1}(x) \bigr], \quad g \in G , 
        \; x \in \Pf . 
    \end{equation*}
To see this, let $g \in G$ and $x \in \Pf$ and suppose 
$y \in g \bigl[ \rho^{-1}(x) \bigr]$. This is true if and only if $\rho \circ g^{-1} (y) = x$. By equivariance of $\rho$, this is true if and only if $g^{-1} \circ \rho(y) = x$, which is equivalent to $y \in \rho^{-1} \bigl[ g(x) \bigl]$. 

The unique continuous extension, $\Theta$, is $G$-invariant (since $\Phi$ is). Define
    \begin{equation}  \label{E:Psi=Theta.circ.rho}
            \Psi(x) := \Theta \circ \rho(x), \quad (x \in \Rcl).
    \end{equation}
Thus, $\Psi$ is continuous, $G$-invariant, and $\Psi(x) = \Theta(x)$ if $x \in \Pf$. Hence, since $\Pf \cap \D'$ is dense in $\Pf$ and $\Theta$ is the continuous extension of 
$\Phi \restriction_{\Pf \cap \D'}$ to $\Pf$ we have the following. Recall \eqref{E:Y.tilde(x).defn}. Let $x \in \tilde{\D} \cap \Pf$, $x', x'' \in \tilde{\Y}(x)$ 
with $x' \in \Pf$ and $x'' \in \tilde{\D}$. Then, by \eqref{E:Phi.Xi.x'.in.same.V}, 
there exists $V \in \msf{V}$ s.t.\
    \begin{equation}  \label{E:Theta.Xi.in.same.V}
	      \Theta(x') = \lim_{y \to x', \, y \in \D' \cap \Pf} \Phi(y) 
	        \in \overline{ \Phi \bigl[ \tilde{\Y}(x)  \cap \D' \bigr] } \subset V 
	          \text{ and } \Xi(x'') \in V.
    \end{equation}

Let
   \[
      \mcl{W} := \bigcup_{x \in \tilde{D} \cap \Pf} \tilde{\Y}(x) \cap 
        \rho^{-1} \bigl[ \tilde{\Y}(x) \cap \Pf \bigr] .
   \]
$\mcl{W}$ is open and 
    \begin{equation*}
      \tilde{\D} \cap \Pf \subset \mcl{W} \subset \Rcl .
    \end{equation*} 
\emph{Claim:} $\mcl{W}$ is $G$-invariant. For let $x \in \tilde{\D} \cap \Pf$, $g, h \in G$, and 
$y \in \Rcl \cap \tilde{\D}$. Then, by \eqref{E:Y.tilde.G.equivar}, \eqref{E:g.commutes.w/.set.ops}, $G$-invariance of $\Pf$, and $G$-equivariance of $\rho$, we have
    \begin{align} \label{E:y.in.rho.invrs.Y.tilde.g(x)}
         y \in \rho^{-1} \Bigl( \tilde{\Y} \bigl[ g(x) \bigr] \cap \Pf \Bigr)  &\text{ if and only if } 
           \rho(y) \in g \bigl[ \tilde{\Y}(x) \bigr] \cap \Pf 
             = g \bigl[ \tilde{\Y}(x) \cap \Pf \bigr] \notag \\
         &\text{ if and only if } \rho \circ g^{-1}(y) = g^{-1} \circ \rho(y) \in \tilde{\Y}(x) \cap \Pf \\
         &\text{ if and only if }  g^{-1}(y) \in \rho^{-1} \bigl[ \tilde{\Y}(x) \cap \Pf \bigr]. \notag
    \end{align}
Suppose $y \in \mcl{W}$. Then, by $G$-invariance of $\tilde{D} \cap \Pf$, there exists 
$x \in \tilde{D} \cap \Pf$ s.t.\ $y \in \tilde{\Y} \bigl[ g(x) \bigr] 
\cap \rho^{-1} \Bigl( \tilde{\Y} \bigl[ g(x) \bigr]  \cap \Pf \Bigr)$. 
Then, by \eqref{E:y.in.rho.invrs.Y.tilde.g(x)} and \eqref{E:Y.tilde.G.equivar}, and \eqref{E:g.commutes.w/.set.ops},
we get that 
    \begin{equation*}
        y \in \tilde{\Y} \bigl[ g(x) \bigr] \cap g \Bigl( \rho^{-1} \bigl[ \tilde{\Y} (x) 
          \cap \Pf \bigr] \Bigr) 
          = g \bigl[ \tilde{\Y}(x) \bigr] \cap g \Bigl( \rho^{-1} \bigl[ \tilde{\Y} (x) 
            \cap \Pf \bigr] \Bigr) \subset g(\mcl{W}).
    \end{equation*} 
I.e., $\mcl{W} \subset g(\mcl{W})$. Replacing $g$ by $g^{-1}$ shows that $\mcl{W} \supset g(\mcl{W})$.
This proves the claim that $\mcl{W}$ is $G$-invariant.

Let $y \in \mcl{W} \cap \tilde{\D}$. Then there exists $x \in \tilde{\D} \cap \Pf$ s.t.\ $y \in \tilde{\Y}(x)$ and there exists $x' \in \tilde{\Y}(x) \cap \Pf$ s.t.\ $\rho(y) = x'$. (Note that $x'$ may or may not be in $\tilde{\D}$.) Hence, by \eqref{E:Psi=Theta.circ.rho}, 
$\Psi(y) = \Theta(x')$. Therefore, by \eqref{E:Theta.Xi.in.same.V}, there exists 
$V \in \msf{V}$ s.t.\ 
   \begin{equation} \label{E.Phi.Psi.Xi.in.V}
      \Psi(y), \, \Xi(y) \in V \text{ if } y \in \mcl{W} \cap \tilde{\D} .
   \end{equation}

Now, $\D \setminus \mcl{W}$ and, by assumption, $\Pf$ are closed in $\D$. 
Hence, $\Pf \cap \tilde{\D} \subset \mcl{W}$ and $\tilde{\D} \setminus \mcl{W}$ are disjoint relatively closed subsets 
of the metric space $\tilde{\D}$. Therefore, there exists a neighborhood 
$\mcl{W}' \subset \tilde{\D}$ of $\Pf \cap \tilde{\D}$ s.t.\ 
$\overline{\mcl{W}'} \subset \Rcl \cap \mcl{W} \cap \tilde{\D}$, 
where $\overline{\mcl{W}'}$ is 
the closure of $\mcl{W}$ in $\tilde{\D}$. We may assume $\mcl{W}'$ is $G$-invariant. 
If it is not, just replace $\mcl{W}'$ by $\bigcap_{g \in G} g(\mcl{W}')$, a finite intersection of open sets. We continue to have 
$\Pf \cap \tilde{\D} \subset \mcl{W}' \subset \overline{\mcl{W}'} \subset \mcl{W}$.

Therefore, by Urysohn's lemma (Simmons \cite[Theorem A, p.\ 135]{gfS63}) there is a continuous function 
$\mu: \tilde{\D} \to [0,1]$ that is 1 on  $\Pf \cap \tilde{\D}$ and 0 off $\mcl{W}'$. Now, $\tilde{\D}$, $\Pf$, and $\mcl{W}'$ are all $G$-invariant. Therefore, replacing $\mu$ by $m^{-1} \sum_{g \in G} \mu \circ g$, we may assume $\mu$ is $G$-invariant. Let  
   \[
      \Omega(x) = 
         \begin{cases}
            \gamma \Bigl[ \bigl( \mu (x), 1 - \mu(x) \bigr), \bigl( \Psi(x), \Xi(x) \bigr) \Bigr],
                &\text{if } x \in \mcl{W}' \\ 
            \Xi(x), &\text{if } x \in \tilde{\D} \setminus \mcl{W}'.
         \end{cases}
   \]
By \eqref{E.Phi.Psi.Xi.in.V} and the fact that $\gamma$ is a convex combinationn function 
on $\msf{V}$, $\Omega$ is defined everywhere on $\tilde{\D}$. By \eqref{E:Xi.is.defined.and.cont} and \eqref{E:Psi=Theta.circ.rho}, it is continuous. 
By \eqref{E:Xi.is.G-invar} and \eqref{E:Psi=Theta.circ.rho} again, it is $G$-invariant 
on $\tilde{\D}$. And, by \eqref{E:Psi=Theta.circ.rho} yet again, $\Omega = \Theta$ 
on $\tilde{\D} \cap \Pf$. Since 
$\tilde{\D} \setminus \Rcl \subset \tilde{\D} \setminus \mcl{W}'$,  we have 
$\Omega = \Xi$ off $\Rcl$.
  \end{proof}

The preceding proof raises Axiom of Choice issues. See remark \ref{R:axiom.of.choice}.

\section{Approximate continuity on $\Pf$, discrete maps}  \label{SS:approx.cont.} So far we have developed the ``sales pitch'' idea (remarks \ref{R:sales.pitch} and \ref{R:extended.sales.pitch}) to show that if the set of singularities on or near $\T = \Pf$ is small then we can infer something about singularities elsewhere. Instead of restricting the volume of the singular set on or near $\T$ we can make a similar inference by restricting the \emph{severity} of singularities locally around $\T$. One way of doing this is a trivial application of the ``severity trick'' (remark \ref{R:severity.trick}). This will be exemplified in proposition \ref{P:sing.codim.in.plane-fitting}, and at various points in chapters \ref{Chptr:spherical.location}, \ref{Chptr:aug.direct.mean}, and \ref{Chptr:robst.loc.on.circle}.

In this section we develop another version of the severity-instead-of-dimension idea. 

Using theorem \ref{T:if.lin.combo.on.F.then.can.rstrct.to.bad.sings} we prove, presently, two results that allow weakening of hypotheses \ref{Hyp:S.cap.T.small} and \ref{Hyp:extend} of theorem \ref{T:Phi.star.Hr.contains.Theta.star.Hr}. In the following, we regard $\Phi : \D' \to \F$ as only defined on $\D'$ so inverse images under $\Phi$ lie entirely in $\D'$. Find the proof below after remark \ref{R:discreteness}. 
  \begin{prop}  \label{P:V1.V2.V.homotop}
      Let $\D$, $G$, $\D' \subset \D$, and $\Phi : \D' \to \F$ be as in theorem 
      \ref{T:if.lin.combo.on.F.then.can.rstrct.to.bad.sings} up through part \eqref{I:cont.approx.to.Phi.exists}. 
      We do \emph{not} require the extra assumptions in theorem \ref{T:if.lin.combo.on.F.then.can.rstrct.to.bad.sings} part \ref{I:Omega.Phi.agree.on.Pf}.
      Suppose $\msf{V}$, $\msf{V}_{1}$, 
      and $\msf{V}_{2}$ are covers of $\F$. 
      Assume
          \begin{equation}  \label{E:V1.union.V2.in.V}
             \text{If } V_{i} \in \msf{V}_{i} \; (i = 1,2) 
               \text{ and } V_{1} \cap V_{2} \neq \varnothing, 
                 \text{ then there exists } V \in \msf{V} \text{ s.t.\ } 
                   V_{1} \cup V_{2} \subset V.
          \end{equation}
      (In particular, $\msf{V}_{i}$ ($i=1,2$) are refinements of $\msf{V}$. 
      $V_{1} = V_{2}$ is possible.)
      Let $\gamma$ be a convex combination function on $\msf{V}$ and 
      let $\gamma_{2}$ be a convex combination function on $\msf{V}_{2}$. 
      Let $\Pf \subset \D$ and suppose 
         \begin{equation} \label{E:S.V2..cap.Pf.empty}
		\Ss^{\msf{V}_{2}} \cap \Pf = \varnothing 
         \end{equation}
      and there is a continuous function $\Sigma : \Pf \to \F$ s.t., 
         \begin{equation}  \label{E:Phix.Sigma(x).in.V1}
		\text{For every } x \in \Pf \text{ there exists } 
		  V_{1} \in \msf{V}_{1} \text{ s.t.\ }
		  \Sigma(x) \in V_{1}   
		    \text{ and } x \in \overline{\Phi^{-1}(V_{1}) \cap \Pf \cap \D'} . 
	\end{equation}
Here, as usual $\overline{\Phi^{-1}(V_{1}) \cap \Pf \cap \D'}$
denotes the closure of $\Phi^{-1}(V_{1}) \cap \Pf \cap \D'$ in $\D$. 
      Let $\Omega_{\gamma_{2}}: \D \setminus \Ss^{\msf{V}_{2}} \to \F$ be the $G$-invariant map as in theorem \ref{T:if.lin.combo.on.F.then.can.rstrct.to.bad.sings} part \ref{I:cont.approx.to.Phi.exists} 
      with $\gamma_{2}$ in place of $\gamma$ and $\msf{V}_{2}$ in place of $\msf{V}$ there. 
      Then $\Omega_{\gamma_{2}}$ 
      is defined and continuous on all of $\Pf$ and the restriction 
      of $\Omega_{\gamma_{2}}$ 
      to $\Pf$ is homotopic to $\Sigma$. 
   \end{prop}
 
Think of the function $\Sigma$ as a continuous ``standard'," as defined in section \ref{SS:calibration}. Note that closure of subsets of $\Pf$ in $\D$ is the same as closure 
in $\Pf$ because we retain the assumption in theorem \ref{T:if.lin.combo.on.F.then.can.rstrct.to.bad.sings} that $\Pf$ itself is closed in $\D$. Observe that \eqref{E:Phix.Sigma(x).in.V1} implies 
\emph{$\D' \cap \Pf$ is dense in $\Pf$.} But $\Phi$ is not required to satisfy \textbf{hypothesis \ref{Hyp:S.cap.T.small}} of theorem \ref{T:Phi.star.Hr.contains.Theta.star.Hr}.
Similarly, in the proposition it is \emph{not} required that $\Phi$ satisfy \textbf{hypothesis \ref{Hyp:extend}} of theorem \ref{T:Phi.star.Hr.contains.Theta.star.Hr}. Here the \emph{restriction} $\Phi \restriction_{\T \setminus \Ss}$ is allowed have singularities. 

Notice that it is not required that $\msf{V}_{1}$ be a refinement of $\msf{V}_{2}$ or \emph{vice versa}. However, in light of \eqref{E:S.V2..cap.Pf.empty}, in order that \eqref{E:Phix.Sigma(x).in.V1} not be vacuous, the sets in $\msf{V}_{1}$ must be smaller in some sense than those in $\msf{V}_{2}$.
 
  \begin{remark} \label{R:approx.extension.of.Phi.T}
Assume the hypotheses of proposition \ref{P:V1.V2.V.homotop} hold. Consider the statement
     \begin{multline}  \label{E:Phil.cont.on.T}
        \text{For every } x \in \Pf \text{ and \emph{any} neighborhood, } V_{1} , 
          \text{ of } \Sigma(x) \\
            \text{ (not restricted to elements of } \msf{V}_{1}), 
              \text{there exists a neighborhood } \mcl{W} \text{ of } x  \\
                \text{ s.t.\  } \Phi ( \mcl{W} \cap \D' \cap \Pf ) \subset V_{1}.
    \end{multline}
\eqref{E:Phil.cont.on.T} says that $\Phi(x') \to \Sigma(x)$ as $x' \to x$ through $\D' \cap \Pf$ and $\Phi(x) = \Sigma(x)$ if $x \in \D'$. So, according to \eqref{E:Phil.cont.on.T}, 
$\Phi \restriction_{\Pf \cap \D'}$ has a continuous extension, \emph{viz.}\ $\Sigma$, on $\Pf$. Now,
\eqref{E:Phix.Sigma(x).in.V1} resembles \eqref{E:Phil.cont.on.T} except in \eqref{E:Phix.Sigma(x).in.V1} $V_{1}$ must belong to $\msf{V}_{1}$ and we require 
$x \in \overline{\Phi^{-1}(V_{1}) \cap \Pf}$. So we might interpret \eqref{E:Phix.Sigma(x).in.V1} as saying that $\Phi \restriction_{\Pf \cap \D'}$ has a continuous approximate extension on $\Pf$, \emph{viz.} $\Sigma$, where ``approximate'' is defined by $\msf{V}_{1}$.

If the map $\Sigma : \Pf \to \F$ is a standard for the problem (section \ref{SS:calibration}), then \eqref{E:Phix.Sigma(x).in.V1} says that $\Phi$ is approximately calibrated. 
  \end{remark}
  
  \begin{remark}
\eqref{E:Phix.Sigma(x).in.V1} does not require that every $x \in \Pf$  has a neighborhood $\mcl{W}$ s.t.\ $\overline{\Phi ( \mcl{W} \cap \D' )}$ lies in some 
$V_{1} \in \msf{V}_{1}$. So it does not preclude 
$\Ss^{\msf{V}_{1}} \cap \Pf \neq \varnothing$. 
Here is a fable illustrating this point. Let $\Pf = [-1,1]$ and $\F = \RR$. 
For every $\ell = 0, 1, 2, \ldots$ let 
$\Phi(\pm 2^{-\ell}) = \ell \pmod 2$. So if $\ell$ is odd, $\Phi(\pm 2^{-\ell}) = 1$. If $\ell$ is even, $\Phi(\pm 2^{-\ell}) = 0$. For $x \in [-1, 1] \setminus \{0\}$ define $\Phi(x)$ by linear interpolation between successive values of $\pm 2^{-\ell}$. Let $\msf{V}_{1}$ be the collection of all open intervals of length 1. Let $\msf{V}_{2}$ be the collection of all open intervals of length 2. Then 0 is a $\msf{V}_{1}$-severe singularity of $\Phi$ w.r.t.\ 
$\D' := \D \setminus \{0\}$, where $\D$ is some superset of $\Pf$. But $0$ is not 
$\msf{V}_{2}$-severe. 

Let $V_{1} := (0,1) \in \msf{V}_{1}$. Then we have 
$\Phi^{-1}(V_{1}) \cap \Pf \cap \D' = \Pf \setminus \bigl( \{0\} \cup \{ \pm 2^{-\ell} : \ell = 0, 1, 2, \ldots \} \bigr)$. Therefore, 
$\overline{\Phi^{-1}(V_{1}) \cap \Pf \cap \D'} = \Pf$. Hence, if we define 
$\Sigma \equiv 1/2$ and let $V_{1} = (0,1) \in \msf{V}_{1}$ then \eqref{E:Phix.Sigma(x).in.V1} holds for every $x \in \Pf$. 
  \end{remark}

  \begin{remark}[Using proposition \ref{P:V1.V2.V.homotop}] \label{R:using.not.cont.prop}
Here we suggest how proposition \ref{P:V1.V2.V.homotop} can be used to prove that 
$\Ss^{\msf{V}_{2}}$ is not empty. Suppose $\T \subset \D$ is a compact manifold (in the relative topology) so \textbf{hypothesis \ref{Hyp:T.manif}} of theorem \ref{T:Phi.star.Hr.contains.Theta.star.Hr} holds. Suppose \textbf{hypothesis 
\ref{Hyp:r.integer}} holds.

Suppose the hypotheses of proposition \ref{P:V1.V2.V.homotop} are satisfies with 
$\Pf = \T$. Redefine $\Phi := \Omega_{\gamma_{2}}$ and $\Ss' := \Ss^{\msf{V}_{2}}$ as in proposition \ref{P:V1.V2.V.homotop} 
for the moment. Then, by \eqref{E:SV.is.closed} and assumptions inherited indirectly from theorem \ref{T:if.lin.combo.on.F.then.can.rstrct.to.bad.sings}, \textbf{hypothesis \ref{Hyp:S.cap.T.closed}} of theorem \ref{T:Phi.star.Hr.contains.Theta.star.Hr} holds. Since 
$\Ss' \cap \T = \Ss^{\msf{V}_{2}} \cap \T = \varnothing$, by \eqref{E:S.V2..cap.Pf.empty}, we have that \textbf{hypotheses \ref{Hyp:S.cap.T.small} and \ref{Hyp:extend}} of theorem \ref{T:Phi.star.Hr.contains.Theta.star.Hr} holds with $\Theta$ equal to the restriction $\Omega_{\gamma_{2}} \restriction_{\T}$. 

Suppose $\Sigma_{\ast} : H_{r}(\T) \to H_{r}(\F)$ is nontrivial. We then have by proposition \ref{P:V1.V2.V.homotop} with $\Pf := \T$ that \eqref{E:nontriv.r-dim.homol} also holds for $\Phi = \Omega_{\gamma_{2}}$. That is in addition to all the hypotheses of theorem \ref{T:Phi.star.Hr.contains.Theta.star.Hr}. This is a promising state of affairs for proving things about $\Ss' = \Ss^{\msf{V}_{2}}$.

An interesting question is, is there an example of a $\Phi$ for which the preceding argument shows $\Phi$ has $\msf{V_{2}}$-severe singularities, but $\Phi$ has no 
$\msf{V}$-severe singularities?
  \end{remark} 

Chapter \ref{Chptr:sings.in.plane.fit} is about singularity in plane-fitting (section \ref{SS:examples}, example \ref{Exmpl:plane.fitting} in section \ref{SS:examples}). In that setting $H_{r}(\D)$ is trivial (section \ref{SS:D.T.plane.fit}) and convex combination combination functions on $\F$ exist (proposition \ref{P:sing.codim.in.plane-fitting}). Therefore, it seems that proposition \ref{P:V1.V2.V.homotop}
can be applied to plane-fitting.  

  \begin{remark} \label{R:improvments.over.main.thm}
Proposition \ref{P:V1.V2.V.homotop} improves upon theorem \ref{T:Phi.star.Hr.contains.Theta.star.Hr} as follows.
	\begin{enumerate}
		\item We may replace the partially closed superset, $\Ss' \supset \Ss$, 
		of the singular set $\Ss$ in hypothesis \ref{Hyp:S.cap.T.closed} 
		of theorem \ref{T:Phi.star.Hr.contains.Theta.star.Hr} 
		  by some $\Ss^{\msf{V}_{2}} \subset \Ss$.
		\item We may drop \textbf{hypothesis \ref{Hyp:S.cap.T.small}}.
		\item We may drop the ``infinite resolution'' \textbf{hypothesis \ref{Hyp:extend}} 
		of theorem \ref{T:Phi.star.Hr.contains.Theta.star.Hr} and replace it by the 
		``finite resolution'' hypothesis \eqref{E:Phix.Sigma(x).in.V1}. 
		(See remark \ref{R:approx.extension.of.Phi.T}.)  
	\end{enumerate}
  \end{remark}

  \begin{proof}[Proof of proposition \ref{P:V1.V2.V.homotop}]
We apply theorem \ref{T:if.lin.combo.on.F.then.can.rstrct.to.bad.sings}, part \eqref{I:cont.approx.to.Phi.exists} with $\msf{V}_{2}$ in place of $\msf{V}$ and 
$\mathfrak{D} = \{ \D \}$. 
Denote by $\Omega_{\gamma_{2}}$ the function $\Omega$ promised by part \eqref{I:cont.approx.to.Phi.exists} of the theorem. By \eqref{E:S.V2..cap.Pf.empty}, 
$\Ss^{\msf{V}_{2}} \cap \Pf = \varnothing$ so 
$\Pf \subset \tilde{\D}_{2} := \D \setminus \Ss^{\msf{V}_{2}}$. 
Let $x \in \Pf$. 
By theorem \ref{T:if.lin.combo.on.F.then.can.rstrct.to.bad.sings}, part \eqref{I:cont.approx.to.Phi.exists}, there exists  a neighborhood $\clU$ of $x$ 
and $V_{2} \in \msf{V}_{2}$ s.t.\ $\overline{\Phi(\clU \cap \D')} \subset V_{2}$ and 
$\Omega_{\gamma_{2}}(x) \in  V_{2}$. (We have already observed that \eqref{E:Phix.Sigma(x).in.V1} implies $\D' \cap \Pf$ is dense in $\Pf$.) Moreover, 
$\Omega_{\gamma_{2}}$ is continuous on 
$\tilde{\D}_{2}$.  
In particular, 
    \begin{equation*}
      \Omega_{\gamma_{2}} \text{ is continuous on } \Pf .
    \end{equation*}

Let $V_{1} \in \msf{V}_{1}$ be as in \eqref{E:Phix.Sigma(x).in.V1}. 
Thus, $\Sigma(x) \in V_{1}$ and $x \in \overline{\Phi^{-1}(V_{1}) \cap \Pf \cap \D'}$.
Hence, there exists $x' \in \Pf$ arbitrarily close to $x$ s.t.\ 
$x' \in \Phi^{-1}(V_{1}) \cap \Pf \cap \D'$. We may take $x' \in \clU \cap \Pf$. Thus, 
$x' \in \Phi^{-1}(V_{1}) \cap \Pf \cap \clU \cap \D' $. I.e., $\Phi(x') \in V_{1} \cap V_{2}$ so $V_{1} \cap V_{2} \neq \varnothing$. Hence (see next paragraph), by \eqref{E:V1.union.V2.in.V}, there exists $V \in \msf{V}$ s.t.\ $V_{1} \cup V_{2} \subset V$. I.e., $\Omega_{\gamma_{2}}(x), \, \Sigma(x) \in V$. Thus, by definition \ref{D:convex.combo.fn}, the following function is well-defined and continuous.
   \[
      H(x,t) = \gamma \Bigl[ (t, 1-t), \bigl( \Sigma(x), \Omega_{\gamma_{2}}(x) \bigr) \Bigr], 
        \quad x \in \Pf.
   \]
$H$ is the desired homotopy. (In particular, see \eqref{E:drop.0.coefs} and part \eqref{I:1.is.identity} of definition \ref{D:convex.combo.fn}.)

(Suppose we tried to go through this proof with $V \in \msf{V}$ in place of 
$V_{2} \in \msf{V}_{2}$. That works and we get an $\Omega$ continuous on $\Pf$. But we run into trouble when we reach the ``Hence'': There might not be a set in $\msf{V}$ containing $V_{1} \cup V$. Thus, \emph{prima facie} we do not get a homotopy between the restriction of $\Omega_{\gamma_{2}}$ and $\Sigma$.)
  \end{proof}

   \begin{remark}[Discretness] \label{R:discreteness}
These results may allow our theory to be applied to cases like discrete maps in which precision is finite. The level of precision is governed by the cover $\msf{V}_{1}$. 
If $\Phi$ is ``almost'' continuous then one can get away with using open sets 
in $\msf{V}_{1}$ with small diameter, which allows the sets in $\msf{V}_{2}$ to be bigger, and therefore the singularities in $\Ss^{\msf{V}_{2}}$ to be more severe. Thus, not surprisingly, having $\Phi$ close to continuous on $\D' \cap \Pf$ is a stronger condition, in the sense of implying more severe singularity, than the opposite. 

Suppose the codomain of $\Phi$, call it $\F_{discrete}$, is a discrete metric space, for example a lattice. Then unless $\Phi$ is trivial, it will have singularities, jumps. We may tolerate jumps, providing they are not too big, but big jumps may be distressing. We may be able to use proposition \ref{P:V1.V2.V.homotop} to analyze the problem of large jumps in $\Phi$. Suppose we are able to isometrically imbed $\F_{discrete}$ into a connected metric space $\F$ that has some interesting topology. Let $\msf{V}_{1}$ be an open cover of $\F$ that defines what it means for a jump to be ``small'': Both ends of the jump belong to some $V_{1} \in \msf{V}_{1}$. Suppose we can find other open covers $\msf{V}$ and 
$\msf{V}_{2}$, having convex combination functions, s.t.\ \eqref{E:V1.union.V2.in.V} holds. If the sets in $\msf{V}_{2}$ are much ``bigger'' than those in $\msf{V}_{1}$, we might view jumps that escape any $V_{2} \in \msf{V}_{2}$ as ``large''. 

Suppose the domain of $\Phi$ is also discrete or disconnected. (So we have to relax \eqref{E:D.metric.F.normal} here.) A class of data maps of this sort is described in chapter \ref{Chptr:linear.classification}. (But the discreteness aspect of such data maps is not considered there.) Call the codomain $\D_{discrete}$. We may be able to isometrically imbed $\D_{discrete}$ into a space $\D$ satisfying \eqref{E:D.metric.F.normal}. Now we can extend $\Phi$ to a dense subset of $\D$ as follows. Let $\mbf{T}$ be the Voronoi tessellation of $\D$ corresponding to $\D_{discrete}$ (Edelsbrunner and Harer \cite[p.\ 65]{hEjlH10.CompTopol}). If $\mbf{S}$ is the open Voronoi cell containing a point $x \in \D_{discrete}$, define $\Phi$ on the interior 
of $\mbf{S}$ to have the constant value $\Phi(x)$. Of course, the extended $\Phi$ will have singularities along the boundaries of the Voronoi cells, but we ignore those that are only 
$\msf{V}_{1}$-severe. We might be able to apply proposition \ref{P:V1.V2.V.homotop}, 
to the extended map $\Phi : \D \partlyto \F$. 

Another situation in which we may be able to exploit proposition \ref{P:V1.V2.V.homotop}, is when the behavior of the map $\Phi$ is difficult to analyze. This is the case if $\Phi$ is non-algorithmic (section \ref{S:topology}) 
Also if $\Phi$ is a deep neural net, for example. (See section \ref{S:DNN}.) But even when $\Phi$ has a compact algorithmic description it might be difficult to analyze. 
In these difficult cases there may be a small, interesting test pattern set $\T \subset \D$ near which it is possible to understand the behavior of $\Phi$.

But suppose a theoretical analysis of $\Phi$ even on a small $\T$ is difficult. 
Let $\T \subset \Pf \subset \D$ be compact manifolds and $\Sigma : \Pf \to \F$ a standard for the type of data analysis problem $\Phi$ is intended to solve (subsection \ref{SS:calibration}). 
We then find ourselves in a ``damned if you do and damned if you don't'' situation 
(remark \ref{R:Scylla.and.Charybdis}):
Either $\Phi$ has severe singularities or it is not calibrated, the latter being the more serious problem. We might be able to get insights experimentally: Choose a finite subset 
$X := \{x_{1}, \ldots, x_{n}\} \subset \T$ and compute $\Phi(x_{i})$ ($i =1, \ldots, n$). Or rather try to compute those values. For some $i$ we might find that $\Phi(x_{i})$ cannot be computed. That would be a bad sign. But suppose we are able to compute $\Phi$ on $X$.
We then compare $\Phi(x_{i})$ and $\Sigma(x_{i})$ ($i =1, \ldots, n$. If we are lucky, there is a fairly fine cover $\msf{V}_{1}$ -- ``fine'' means the sets in $\msf{V}_{1}$ are small -- s.t.\ \eqref{E:Phix.Sigma(x).in.V1} holds for $x \in X$. If it does that is consistent with $\Phi$ satisfying \eqref{E:Phix.Sigma(x).in.V1} on all of $\Pf = \T$. 

Approximate calibration on $X$ is encouraging, but does not \emph{prove} that \eqref{E:Phix.Sigma(x).in.V1} holds everywhere: It is possible that off $X$ the behavior 
of $\Phi$ is very bad. However, one might be able to obtain high confidence that $\Phi$ satisfies \eqref{E:S.V2..cap.Pf.empty} and \eqref{E:Phix.Sigma(x).in.V1}:
Assume \emph{a priori} that $\Phi \restriction_{\T}$ is a realization of a random process that seems reasonable. So the randomness is in $\Phi \restriction_{\T}$. Take $X$ to be a random, or at least very irregular, sample. Alternatively, assume $\Phi \restriction_{\T}$ belongs to some fairly broad class of maps and take $X$ to be random, so the randomness is in $X$. If $n$ is big enough one might be able to obtain high posterior probability of, or high confidence in, good behavior of $\Phi$ on $\T$. From that one might be able to confidently conclude that $\Phi$ has severe singularities, perhaps many of them. 

If the behavior of $\Phi$ on $X$ is inconsistent with good behavior 
of $\Phi \restriction_{\T}$, then one concludes, in a ''damned if you do and damned if you don't'' fashion (remark \ref{R:Scylla.and.Charybdis}) that $\Phi$ exhibits some kind of horrible behavior.
  \end{remark}

\section{Nondeterministic data maps}  \label{SS:nondeterministic.data.maps}
Our theory is deterministic but living organisms process data nondeterministically. 
Even formal data analysis by a statistician usually involves subjective inputs that can be thought of as random. Some algorithms, like bootstrapping and cross-validation (Efron and Tibshirani \cite{bErjT93.bootstrap}), or Markov Chain Monte Carlo (Robert and Casella \cite{cpRgC04.MonteCarlo}) involve the explicit introduction of randomness into the algorithm. Moreover, randomization is a useful theoretical technique for handling the singularities of hypothesis testing (Lehmann \cite[p.\ 71]{elL93.StatHyps}). So consider ``stochastic data maps'', $\Phi : \D' \to \mathbb{P}(\F)$, where
$\mathbb{P}(\F)$ is the set of all Borel probability measures on $\F$.
(So the stochastic element we are considering is in the \emph{analysis} of the data, not in the mechanism that generates the data, which may or may not be stochastic.)
And there is this: ``Modern algorithms often have a random component, such as random projections or random initial values in gradient descent and stochastic gradient descent.'' (Yu and Kumbier \cite[p.\  6]{bYkK2019.VeridicalDataScience}) 

An example where this may arise is as follows. Suppose $\Phi(x)$ is in general not an element of $\F$, but a well-behaved subset of $\F$. E.g., $\Phi(x)$ may be the result of an optimization 
and for some $x$ the set of optima may contain more than one element 
(e.g. lemma \ref{L:basic.LAD.soln.facts}). Let $\mu$ be a Borel measure on $\F$ that gives finite, positive mass to $\Phi(x)$ for all $x \in \D'$. Recall \eqref{E:integer.part.floor}. 
Then if $\Phi(x) = \msf{A}$ one might interpret $\Phi(x)$ as the probability measure 
$\mu(\msf{A})^{-1} ( \mu \lfloor \msf{A} )$, where $\mu \lfloor \msf{A}$
is the restriction of $\mu$ to $\msf{A}$. 

Clearly, this generalizes the concept of data map that we have been using.
Just consider the operation that takes a data map
$\Phi : \D' \to \F$ to $\tilde{\Phi} : \D' \to \mathbb{P}(\F)$ defined by $\tilde{\Phi}(x) =$ unit mass at $x$ for every $x \in \D'$.

However, even stochastic data analysis requires some level of consistency and so the concept of singularity even extends to the nondeterministic setting. A natural way to do this is to just put a topology on $\mathbb{P}(\F)$, e.g., the ``weak topology'' (Parthasarathy \cite{krP67.ProbOnMetricSpaces}, Billingsley \cite[pp.\ 237--239]{pB68.ConvProbMeas}), and apply the results we have obtained so far. 

Another approach is as follows. Let $\Phi$ be a stochastic data map and let $\msf{V}$ be an open cover of $\F$. 
Say that $x \in \D$ is a ``$\msf{V}$-severe singularity of $\Phi$'' and write $x \in \Ss^{\msf{V}}$ if the following statement is \emph{false}.
   \begin{quote}      
      (*) There exists a neighborhood, $\clU$, of $x$ and $V \in \msf{V}$ s.t.\ for every 
      $x' \in \clU \cap \D'$ we have $supp \, \Phi(x') \subset V$. 
   \end{quote}
Here ``$supp \, \Phi(x')$" is the support of the measure $\Phi(x)$. (See Federer \cite[p.\ 60]{hF69} for the definition of the support of a measure.) Let $\Ss^{\msf{V}}$ denote the set of all $\msf{V}$-severe singularities of $\Phi$. As in \eqref{E:SV.is.closed}, $\Ss^{\msf{V}}$ is closed. This notion generalizes the notion of $\msf{V}$-severe singularities defined above (definition \ref{D:V-severe}). 

Now suppose $\msf{V}$ has a convex combination function that extends to integration w.r.t.\ probability measures. I.e., if $P \in \mathbb{P}$ and $supp \, P \subset V \in \msf{V}$, 
then $\int_{\F} x \, P(dx) \in V$ with other properties generalizing those listed in definition \ref{D:convex.combo.fn}. I conjecture that theorem \ref{T:if.lin.combo.on.F.then.can.rstrct.to.bad.sings} extends to stochastic data maps to show that there exists an ordinary, i.e., deterministic, data map, $\Omega$, whose singular set lies in $\Ss^{\msf{V}}$ and having analogues of the other properties given in the theorem. This would allow some of our deterministic theory to apply to stochastic data maps. 

Criterion (*) might be weakened as follows. Let $\epsilon > 0$ be small. 
   \begin{quote}   
      There exists a neighborhood, $\clU$ of $x$, $V \in \msf{V}$, and a closed subset 
      $\msf{C} \subset V$ s.t.\ 
      for every $x' \in \clU \cap \D'$ the probability measure $\Phi(x')$ gives probability less 
      than $\epsilon$ to $\F \setminus \msf{C}$.
   \end{quote}

\section{Existence of convex combination functions}  \label{SS:existence.of.convx.combos}
Now we turn to the problem of the existence of convex combination functions. We show that if the feature space $\F$ is a smooth manifold it has an open cover with a convex combination function. 
By Boothby Boothby \cite[Theorem (4.5), p.\ 193]{wmB75}, a smooth manifold can be endowed with a Riemannian metric. Consider that done. 

We discuss two cases. In the first we show the existence of a cover with a commutative convex combination function, but do not specify how to recognize such a cover. In the second case we show that any cover consisting of geodesically convex sets (definition \ref{D:geodesic.convexity}) has at least a non-commutative convex combination function.

\subsection{Fr\'{e}chet mean}
      \begin{remark} \label{R:construct.commut.convex.combo}
 Before introducing the Fr\'{e}chet mean we sketch another idea for construction of a commutative convex combination function on a manifold. Bhattacharya and Patrangenaru \cite{rBvP03.MeansOnManifs} call it the ``extrinsic mean''. A special case of this method is described in section  \ref{S:convex.combos.on.spheres}.
    
Let $M$ be a differentiable manifold. Imbed it in $\RR^{N}$ for some $N$. Let $T$ be a tubular neighborhood about $M$ in $\RR^{N}$ (subsection \ref{SSS:tubular.nbhd}). Suppose that each point in $M$ has a neighborhood $V$ with the property that the convex hull, $Conv(V)$, of $V$ in $\RR^{N}$ lies in $T$ and that if $y \in Conv(V)$ then the unique closest point of $y$ to $M$ lies in $V$. (See proposition \ref{P:tubular.nbhd.thm}. Might any differentiable manifold have this property for some $N$?) Define a convex combination function, $\gamma( V, \cdot, \cdot)$, in $V$ as follows. If $x_{0}, ..., x_{n} \in V$ and $(\lambda_{0}, ..., \lambda_{n})$ lies in the $n$-simplex, $\Delta_{n}$, let $y$ be the convex combination 
$y := \lambda_{0} x_{0} + \lambda_{1} x_{1} + ... + \lambda_{n} x_{n} \in Conv(V) \subset T \subset \RR^{N}$ and let 
$\gamma \bigl( V, (\lambda_{0}, ..., \lambda_{n}), (x_{0}, ..., x_{n})  \bigr) \in M$ be the closest point, $z$, in $M$ to $y$, so $z \in V$. Then it seems that $\gamma$ is a commutative convex combination function on $M$ as defined in definition \ref{D:convex.combo.fn}.
      \end{remark}
      
Now we describe the ``Fr\'{e}chet'' or ``intrinsic mean'' (Bhattacharya and Patrangenaru \cite{rBvP03.MeansOnManifs}) as a commutative convex combination function on a smooth manifold. Let $\F$ be a Riemannian manifold. 
Let $\rho$ be the topological metric on $\F$ corresponding to the Riemannian metric. 
Let $x_{0} \in \F$ be arbitrary and, for $r > 0$, 
let $\mcl{B}_{r}(x_{0})$ be the open geodesic ball, 
$\bigl\{ x \in \F : \rho(x, x_{0}) < r \bigr\}$ as in \eqref{E:ball.defn} and let 
$\overline{\mcl{B}_{r}(x_{0})} \subset \F$ be its closure. 
Let $\mbf{P}_{\F}$ denote the space of probability measures on the Borel 
$\sigma$-field on $\F$. Put on $\mbf{P}_{\F}$ the topology of weak convergence 
(the weakest topology on $\mbf{P}_{\F}$ that makes each map 
$P \mapsto \int f \, dP$ continuous for every bounded continuous $f : \F \to \RR$; Billingsley \cite[p.\ 236]{pB68.ConvProbMeas}). 
Let $\mbf{P}_{\overline{x_{0}, r}} \subset \mbf{P}_{\F}$ denote the set of those elements 
of $\mbf{P}_{\F}$ s.t.\ $P \bigl( \overline{\mcl{B}_{r}(x_{0})}^{c} \bigr) = 0$, 
where $\overline{\mcl{B}_{r}(x_{0})}^{c}$ is the complement 
of $\overline{\mcl{B}_{r}(x_{0})}$ in $\F$. 

Observe that, reducing the radius $r$ if necessary, any bounded continuous real function 
on $\overline{\mcl{B}_{r}(x_{0})}$ can be extended to a bounded and continuous real function 
on $\F$  
(in such a way that the operation of extending functions 
on $\overline{\mcl{B}_{r}(x_{0})}$ is Lipschitz in the sup norm)\footnote{Take $r > 0$ sufficiently small that $\overline{\mcl{B}_{r}(x_{0})}$ lies in a normal neighborhood, $\clU$, of $x_{0}$ (Boothby \cite[p.\ 335]{wmB75}). 
Suppose $\dim \F = m$. Then $\clU$ can be parametrized by $\psi : V \to \clU$, where $V$ is a star-shaped neighborhood of 0 in $\RR^{m}$, $\psi(0) = x_{0}$, 
and $\psi \Bigl( \overline{B_{r}^{m}(0)} \Bigr) = \overline{\mcl{B}_{r}(x_{0})}$. (See \eqref{E:Euc.ball.defn}.)  
Let $f : \overline{\mcl{B}_{r}(x_{0})} \to \RR$ be bounded and continuous. Define 
$g : \overline{B_{r}^{m}(0)} \to \RR$ by $g = f \circ \psi$. Extend $g$ outside 
$\overline{B_{r}^{m}(0)}$ radially: If $y \in V \setminus \overline{B_{r}^{m}(0)}$, define $g(y) := g \bigl( r |y|^{-1} y \bigr)$. Let $\omega : \F \to [0,1]$ be a continuous function 
s.t.\ 
$\omega = 1$ on $\overline{\mcl{B}_{r}(x_{0})}$ and $\omega = 0$ outside $\clU$. Finally, extend $f$ to all of $\F$ by: 
    \begin{equation*}
      f(x) := 
        \begin{cases}
          0 , &\text{ if } x \in \F \setminus \clU, \\
          \omega(x) \, g \circ \psi^{-1}(x), &\text{ otherwise.}
        \end{cases} 
    \end{equation*}
Let $\epsilon > 0$. If $f' : \overline{\mcl{B}_{r}(x_{0})} \to \RR$ is bounded and continuous and $|f' - f| < \epsilon$ on $\overline{\mcl{B}_{r}(x_{0})}$ then the extensions satisfy the same inequality on $\F$.}. 
It follows that the weak topology on $\mbf{P}_{\overline{x_{0}, r}}$ is the same as the relative topology it inherits as a subset of $\mbf{P}_{\F}$ with its weak topology.

Let $P \in \mbf{P}_{\overline{x_{0}, r}}$ and define
    \begin{equation}  \label{E:gP.defn}
      g_{P}(y) := g(y,P) := \int_{\overline{\mcl{B}_{r}(x_{0})}} \rho^{2}(y, z) \, P(dz) , 
        \qquad y \in \overline{\mcl{B}_{r}(x_{0})} .
    \end{equation}
We show in the proof of proposition \ref{P:smooth.manifs.have.commutative.convex.combos} that if $r > 0$ is sufficiently small, then $g_{P}$ has a unique minimum. That idea can be used to define a cover of $\F$ which has a convex combination function, viz., 
the Fr\'{e}chet, intrinsic, or Karcher mean. Actually, $\F$ only needs to be homeomorphic to a Riemannian manifold. Look after \eqref{E:infinite.convex.combo} for the definition of a commutative convex combination function.

    \begin{prop}  \label{P:smooth.manifs.have.commutative.convex.combos}
Let $\msf{M}$ be a Riemannian manifold and let $h : \F \to \msf{M}$ be a homeomorphism. Then there exists an open cover
$\msf{V}$ of $\F$, consisting of inverse images (under $h$) of certain open geodesic balls in $\msf{M}$ s.t.\ there is a commutative convex combination function 
on $\msf{V}$. 
   \end{prop}

\begin{proof} WLOG $\F = \msf{M}$. Let $x_{0} \in \F$ be arbitrary. \emph{Claim:} 
    \begin{equation}  \label{E:g(y,P).is.cont}
      g \text{ is continuous on } 
        \mcl{B}_{r}(x_{0}) \times \mbf{P}_{\overline{x_{0}, r}}.
    \end{equation} 
Notice that if
$y_{1}, y_{2}, z \in \overline{\mcl{B}_{r(x_{0}}(x_{0})}$ then
    \begin{align}   \label{E:rho.y1.rho.y2.close}
      \bigl| \rho^{2}(y_{1}, z) - \rho^{2}(y_{2}, z) \bigr| 
       &= \bigl| \rho(y_{1}, z) - \rho(y_{2}, z) \bigr| \bigl| \rho(y_{1}, z) + \rho(y_{2}, z) \bigr| 
         \notag \\
       &\leq 4 r \bigl| \rho(y_{1}, z) - \rho(y_{2}, z) \bigr|| \\
       &\leq 4 r \rho(y_{1}, y_{2}) .  \notag
    \end{align}

Let $\epsilon > 0$ be given. Pick $y_{1}, y_{2} \in \overline{\mcl{B}_{r(x_{0}}(x_{0})}$ s.t.\ 
$\rho(y_{1}, y_{2}) < \epsilon/(8 r)$. Notice that  the function 
$z \mapsto \rho^{2}(y_{1}, z)$ is bounded and continuous on $\mcl{B}_{r(x_{0}}(x_{0})$. 
Pick $P_{1}, P_{2} \in \mbf{P}_{\overline{x_{0}, r}}$ so close w.r.t.\ the topology on $\mbf{P}_{\F}$ that
    \begin{equation*}  \label{E:P1.P2.integrals.close}
      \left| \int_{\mcl{B}_{r}(x_{0})} \rho^{2}(y_{1} ,z) \, P_{1}(dz)  
        -  \int_{\mcl{B}_{r}(x_{0})} \rho^{2}(y_{1}, z) \, P_{2}(dz) \right| < \epsilon/2 .
    \end{equation*}
Then, by \eqref{E:rho.y1.rho.y2.close},
    \begin{align*}
      \left| \int_{\mcl{B}_{r}(x_{0})}  \rho^{2}(y_{1}, z) \, P_{1}(dz) \right.
        &-  \left. \int_{\mcl{B}_{r}(x_{0})}  \rho^{2}(y_{2}, z) \, P_{2}(dz) \right| \\
        &\leq \left| \int_{\mcl{B}_{r}(x_{0})}  \rho^{2}(y_{1}, z) \, P_{1}(dz)  
          -  \int_{\mcl{B}_{r}(x_{0})} \rho^{2}(y_{1}, z) \, P_{2}(dz) \right| \\
        & \qquad \qquad + \int_{\mcl{B}_{r}(x_{0})}  
              \bigl| \rho^{2}(y_{1}, z)  
                -  \rho^{2}(y_{2}, z) \bigr| \, P_{2}(dz) \\
        &< \epsilon .
    \end{align*}
This proves the claim \eqref{E:g(y,P).is.cont}.

Denote by $\mbf{P}_{x_{0}, r} \subset \mbf{P}_{\overline{x_{0}, r}}$ the set of all probability measures, $P \in \mbf{P}_{\overline{x_{0}, r}}$, on $\F$ s.t.\ 
$P \bigl( \mcl{B}_{r}(x_{0})^{c} \bigr) = 0$ Karcher \cite[Theorem 1.2, p.\ 510]{hK77.RiemannianCenterOfMass} asserts that:
    \begin{multline}  \label{E:properties.of.r(x0)}
          \text{There exists $r(x_{0}) > 0$ s.t., } \mcl{B}_{r(x_{0})}(x_{0}) 
            \text{ is geodesically convex } \\
          \text{ and if $r \in \bigl( 0, r(x_{0}) \bigr]$ and $P \in \mbf{P}_{x_{0}, r}$, then } \\
        \text{ there is a unique }
        y = \alpha(P; x_{0}, r) \in \mcl{B}_{r}(x_{0}) \\
         \text{ at which $g_{P}$ achieves its minimum \emph{on} } 
          \overline{\mcl{B}_{r(x_{0})/2}(x_{0})} .
    \end{multline} 
Thus, the minimum in the interior, $\mcl{B}_{r}(x_{0})$ is actually the unique minimum on the closure, $\overline{\mcl{B}_{r}(x_{0})}$. (This requires that the sectional curvature of $\F$ in $\mcl{B}_{r}(x_{0})$ be bounded above. 
But if $r(x_{0}) > 0$ is sufficiently small it will be. See Bhattacharya and Patrangenaru \cite[Remark 2.1, pp.\ 6--7]{rBvP03.MeansOnManifs}. A careful argument is given in \cite[Section 2.1]{spE2022.ReadingKarcher77}. \cite{spE2022.ReadingKarcher77} is a careful reading of much of Karcher \cite{hK77.RiemannianCenterOfMass}.) 
In fact, Karcher's result tells us more, viz., $g_{P}$ is strictly convex along geodesics 
in $\mcl{B}_{r(x_{0})}(x_{0})$.

Observe that $\mbf{P}_{x_{0}, r}$ is convex, in the usual sense of the term: 
$\lambda \in [0,1]$, $P_{0}, P_{1} \in \mbf{P}_{x_{0}, r}$ imply 
$(1-\lambda) P_{0} + \lambda P_{1} \in \mbf{P}_{x_{0}, r}$. Hence, \emph{a fortiori} it is path-wise connected. A Riemannian manifold, in particular $\mcl{B}_{r}(x_{0})$, is a separable metric space, therefore normal. Hence, by 
Billingsley \cite[pp.\ 238--239]{pB68.ConvProbMeas}, 
$\mbf{P}_{x_{0}, r}$ is also a separable metric space. Therefore, \eqref{E:D.metric.F.normal} holds with $ \mbf{P}_{x_{0}, r}$ playing the role 
of $\D$ and $\mcl{B}_{r}(x_{0})$ playing the role of $\F$. 
Take $r(x_{0}) \geq r > 0$ so small that 
$\overline{\mcl{B}_{r}(x_{0})}$ is compact and $\mcl{B}_{r}(x_{0})$ is geodesically convex (definition \ref{D:geodesic.convexity}). (This is possible by proposition \ref{P:geod.cnvx.nbhds.exist}.) Recalling \eqref{E:g(y,P).is.cont}, we may 
apply lemma \ref{L:data.maps.defined.by.opt}\eqref{I:cmpct.opt} 
with $\D' = \D = \mbf{P}_{x_{0}, r}$ and $\F = \overline{\mcl{B}_{r}(x_{0})}$  
we see that 
    \begin{equation*}
      \alpha(\cdot; x_{0}, r) \text{ is continuous on } \mbf{P}_{x_{0}, r} .
    \end{equation*} 

In definition \eqref{E:gP.defn} the ball $\mcl{B}_{r}(x_{0})$ appears explicitly. 
But it extends  
to all of $\F$:
    \begin{equation}  \label{E:gP.on.F.defn}
      g_{P}(y) = \int_{\F} \rho^{2}(y, z) \, P(dz) , \qquad y \in \F .
    \end{equation}
So $g_{P}(y)$ might be infinite. Now $g_{P}$ only depends on $\mcl{B}_{r}(x_{0})$ implicitly through $P$. Suppose $P \in \mbf{P}_{x_{0}, r(x_{0})/2}$. We show that $g_{P}$ achieves a minimum value in $\F$ at only one point  and that point is in 
$V :=  V(x_{0}) := \mcl{B}_{r(x_{0})/2}(x_{0})$. Trivially, as observed in 
Bhattacharya and Patrangenaru \cite[Remark 2.1, pp.\ 6--7]{rBvP03.MeansOnManifs} again, we have $\min_{y \in V} g_{P}(y) \leq g_{P}(x_{0}) < r(x_{0})^2/4$, but if 
$y \in \F \setminus \mcl{B}_{r(x_{0})}(x_{0})$, i.e. $y$ lies outside the ball with twice the radius, $g_{P}(y) \geq r(x_{0})^2/4$, because all of the mass of $P$ lies inside 
$\mcl{B}_{r(x_{0}/2)}(x_{0})$. Thus, $g_{P}$ achieves its minimum (over $\F$) in the larger ball $\mcl{B}_{r(x_{0})}(x_{0})$. By \eqref{E:properties.of.r(x0)}, $g_{P}$ achieves its minimum over $\overline{\mcl{B}_{r(x_{0})}(x_{0})}$ at just one point, $y_{1}$, in the interior, $\mcl{B}_{r(x_{0})}(x_{0})$. Similarly, $g_{P}$ achieves its minimum over 
$\overline{\mcl{B}_{r(x_{0})/2}(x_{0})}$ at just one point, $y_{0}$, 
in $\mcl{B}_{r(x_{0})/2}(x_{0})$. 

Suppose $y_{0} \neq y_{1}$. 
Since $\overline{\mcl{B}_{r(x_{0})/2}(x_{0})} \subset \mcl{B}_{r(x_{0})}(x_{0})$, we must have
     \begin{equation*}
       g_{P}(y_{0}) > g_{P}(y_{1}) .
    \end{equation*} 
By \eqref{E:properties.of.r(x0)}, $\mcl{B}_{r(x_{0})}(x_{0})$ is geodesically convex. Let $c$ be a shortest geodesically joining $y_{1}$ to $y_{0}$. Say, $c(0) = y_{0}$ 
and $c(1) = y_{1}$. By definition \ref{D:geodesic.convexity}, $c$ lies entirely 
in $\mcl{B}_{r(x_{0})}(x_{0})$. For some $t \in (0,1]$, $c(t)$ is in the boundary 
of $\mcl{B}_{r(x_{0})/2}(x_{0})$. I.e., 
$c(t) \in \overline{\mcl{B}_{r(x_{0})/2}(x_{0})} \setminus \mcl{B}_{r(x_{0})/2}(x_{0})$, which means $c(t) \neq y_{0} \in \mcl{B}_{r(x_{0})/2}(x_{0})$. But we have already recognized that $g_{P}$ is strictly convex along geodesics in $\mcl{B}_{r(x_{0})}(x_{0})$, in particular along $c$. Thus, since $g_{P}(y_{0}) > g_{P}(y_{1})$ and $t > 0$, 
    \begin{equation*} 
      g_{P}(y_{0}) > (1-t) g_{P}(y_{0}) + t g_{P}(y_{1}) \geq g_{P} \bigl[ c(t) \bigr].
    \end{equation*} 
But this contradicts the fact that $g_{P}$ achieves its minimum over 
$\overline{\mcl{B}_{r(x_{0})/2}(x_{0})}$ only 
at $y_{0} \in \mcl{B}_{r(x_{0})/2}(x_{0})$. We conclude $y_{0} = y_{1}$. 
I.e., $g_{P}$ achieves its minimum over $\F$ only at 
$y_{0} \in V(x_{0}) := \mcl{B}_{r(x_{0})/2}(x_{0})$. 
Thus, $\alpha(P; x_{0}, r(x_{0})/2))$ does not depend on $x_{0}$ or $r(x_{0})$. We therefore write 
$\alpha(P) := \alpha(P; x_{0},r(x_{0})/2) \bigr)$. Thus, for $P \in \mbf{P}_{x_{0}, r(x_{0})/2}$, $\alpha(P)$ is the minimizer of $g_{P}$ over $\F$. It is well-defined and continuous in $P \in \mbf{P}_{x_{0}, r(x_{0})/2}$. 
 
Let $\msf{V} := \bigl\{ V(x_{0}) \subset \F : x_{0} \in \F \bigr\}$. 
Let $k = 0, 1, 2, \ldots$. Define $\Delta_{k}$ as in \eqref{E:Delta.k.defn}. Define a map 
$m_{k} : \Delta_{k} \times \F^{k+1} \to \mbf{P}_{\F}$ by
    \begin{multline*}
      m_{k} \bigl[ (\lambda_{0}, \lambda_{1}, \ldots, \lambda_{k}), 
        (y_{0}, y_{i}, \ldots, y_{k}) \bigr] 
          := \sum_{j=0}^{k} \lambda_{j} \delta_{y_{j}} \in \mbf{P}_{\F} , \\
            (\lambda_{0}, \lambda_{1}, \ldots, \lambda_{k}) \in \Delta_{k} , \;
              y_{0}, y_{i}, \ldots, y_{k} \in \F , 
    \end{multline*}
where $\delta_{x}$ denotes unit mass at $x$. Then 
$m_{k} : \Delta_{k} \times \F^{k+1} \to \mbf{P}_{\F}$ is continuous w.r.t.\ the product topology on $\Delta_{k} \times \F^{k+1}$ and the weak topology 
on $\mbf{P}_{\F}$.\footnote{Let 
$\blds{\lambda} := (\lambda_{0}, \lambda_{1}, \ldots, \lambda_{k}) \in \Delta_{k}$ and
$y_{0}, y_{i}, \ldots, y_{k} \in \F$. Let $f : \F \to \RR$ be bounded and continuous. 
Pick $C < \infty$ s.t.\ $|f| < C$. Let $\epsilon > 0$. 
Pick $\delta > 0$ s.t.\ $z \in \F$ with $\rho(y_{i},z) < \delta$ implies 
$\bigl| f(y_{i}) - f(z) \bigr| < \epsilon/2$ ($i = 0, \ldots, k$).
Let $x_{0}, x_{i}, \ldots, x_{k} \in \F$ with $\rho(y_{i}, x_{i}) < \delta$ 
($i=0, 1, \ldots, k$). Write $\mbf{y} := (y_{0}, y_{i}, \ldots, y_{k})$ and
$\mbf{x} := (x_{0}, x_{i}, \ldots, x_{k})$. 
Let $\blds{\mu} = (\mu_{0}, \mu_{1}, \ldots, \mu_{k}) \in \Delta_{k}$ satisfy 
$\sum_{i=0}^{k} |\mu_{i} - \lambda_{i}| < \epsilon/\bigl[ 2(k+1)C \bigr]$. Then  
    \begin{multline*}
      \left| \int_{\F} f(z) \, m_{k}(\blds{\lambda}, \mbf{x})(dz) 
        - \int_{\F} f(z) \, m_{k}(\blds{\mu}, \mbf{y})(dz) \right|
          \leq \sum_{i=0}^{k} \bigl| \lambda_{i} f(y_{i}) - \mu_{i} f(x_{i}) \bigr| \\
          \leq \sum_{i=0}^{k} | \lambda_{i} - \mu_{i} | \bigl| f(x_{i} \bigr|
          + \sum_{i=0}^{k} \lambda_{i} \bigl| f(y_{i}) - f(x_{i}) \bigr|
           < \epsilon . 
    \end{multline*}
  } 
$m_{k}$ is also commutative in the sense that $m_{k}(\blds{\lambda}, \mbf{x})$ is unchanged when the components of $\blds{\lambda} \in \Delta_{k}$ and $\mbf{x} \in \F^{k+1}$ undergo the same arbitrary permutation. 

Let $\blds{\lambda} \in \Delta_{k}$. If $\mbf{y} \in V^{k+1}$, where $V \in \msf{V}$, the support of $m_{k}(\blds{\lambda}, \mbf{y})$ lies in $V$. Hence, 
$\alpha \circ m_{k}(\blds{\lambda}, \mbf{y})$ is defined and is the composition of two continuous functions, hence is continuous. Recall the definition, \ref{D:convex.combo.fn}, of convex combination functions. Let $\gamma_{k}$ denote the map 
    \begin{multline*}
      \gamma_{k} : \bigcup_{V \in \msf{V}} \bigl( \{ V \} \times \Delta_{k} \times V^{k+1} \bigr) 
        \to \F \text{ given by }  
          \gamma_{k} (V, \blds{\lambda}, \mbf{x})
            = \alpha \circ m_{k} (\blds{\lambda}, \mbf{x} ) , \\
              \qquad V \in \msf{V}, \; \blds{\lambda} \in \Delta_{k} , \; \mbf{x} \in V^{k+1} .
    \end{multline*}

It is immediate that $\gamma_{k}$ has properties \ref{I:convex.combos.are.local} through \ref{I:1.is.identity} in definition \ref{D:convex.combo.fn}. In particular, we may drop the first argument $V$. Combining $\gamma_{k}$ over $k = 0, 1, \ldots$ into a function 
$\gamma :  \bigcup_{k=0}^{\infty} \bigcup_{V \in \msf{V}} \; \Delta_{k} \times V^{k+1} \to \F$, we see that $\gamma$ satisfies \eqref{E:drop.0.coefs} because the analogous equation holds 
for $m_{k}$ ($k=0, 1, \ldots$). $\gamma$ also inherits the commutativity of the $m_{k}$'s.
  \end{proof}

\subsection{Convex combination by recursion} \label{SS:convex.combos.by.recursion}
Proposition \ref{P:smooth.manifs.have.commutative.convex.combos} tells us that if $\F$ is homeomorphic to a smooth manifold then \emph{there exists} an open cover, $\msf{V}$ 
of $\F$ on which there is a commutative convex combination function. The proposition does not tell us how to find such a cover. Here we show (proposition \ref{P:smooth.manifs.have.convex.combos}) that any cover consisting of geodesically convex neighborhoods (definition \ref{D:geodesic.convexity}) has a convex combination function, but it may not be commutative.

Suppose that $\gamma$ is a convex combination function on a covering $\msf{V}$ 
of $\F$. Let $V \in \msf{V}$ and let $x_{1}, \, x_{2} \in V$. Define
	\[
            \varphi_{x_{1} x_{2}} (\lambda) 
              := \gamma \bigl[ ( \lambda, 1-\lambda ), ( x_{2}, x_{1} )\bigr]	
                \quad \lambda \in [0,1].
	\]
Then $\varphi_{x_{1} x_{2}} \bigl( [0,1] \bigr)$ is a curve joining $x_{1}$ and $x_{2}$ that lies
entirely inside $V$. Moreover $\varphi_{x_{1} x_{2}} (\lambda)$ is continuous in 
$( \lambda, x_{1}, x_{2} )$. Thus, convex combination functions define certain families of
curves. It turns out that the converse is also true. We prove the following below.
	\begin{prop}  \label{P:make.conv.combo.fns.from.curves}  
	Suppose there is an open subset $X \subset \F \times \F$ 
s.t.\ $\bigl\{ (x,x) : x \in \F \bigr\} \subset X$ and having the property that if $(x_{1}, x_{2}) \in X$ then there is a finite number $s_{x_{1}, x_{2}} \geq 0$ and a curve $\varphi_{x_{1}, x_{2}} : I_{x_{1}, x_{2}} \to \F$ joining $x_{1}$ and $x_{2}$, where 
$I_{x_{1}, x_{2}} = [0, s_{x_{1}, x_{2}}]$. Thus, $\varphi_{x_{1}, x_{2}}(0) = x_{1}$ 
and $\varphi_{x_{1}, x_{2}}(s_{x_{1}, x_{2}}) = x_{2}$. Suppose the function $(x_{1}, x_{2}) \to s_{x_{1}, x_{2}}$ is continuous and $s_{x_{1}, x_{2}} = 0$ if and only if $x_{1} = x_{2}$. Define
	\begin{equation*}
		E := \bigl\{ (s, x_{1}, x_{2}) \in \RR \times X  : s \in I_{x_{1}, x_{2}} \bigr\}.
	\end{equation*}
 Give $E$ the relative topology it inherits as a subset of $\RR \times \F \times \F$. Suppose the curves $\varphi_{x_{1}, x_{2}}$ have the property that the function
		\[
		    \Gamma: (s; x_{1}, x_{2}) \mapsto \varphi_{x_{1}, x_{2}}(s) \in  \F, 
			    \quad (s, x_{1}, x_{2}) \in E 
		\]
	is continuous on $E$. Let $\msf{V}$ be an open covering of $\F$  and suppose that for every
	$V \in \msf{V}$ the product $V \times V \subset X$ and if  $x_{1}, x_{2} \in V$ then 
	$\varphi_{x_{1}, x_{2}}  [ I_{x_{1}, x_{2}} ] \subset V$. Then there is a convex combination function on $\msf{V}$. That convex combination function may not be commutative. 
	\end{prop}

We prove the preceding presently. First, we use it to prove the following. Recall the definition,  \ref{D:geodesic.convexity}, of geodesically convex subset of a Riemannian manifold. 
By Boothby \cite[Theorem 4.5, p.\ 193]{wmB75} any smooth manifold can be equipped with a Riemannian tensor. 
    \begin{prop}  \label{P:smooth.manifs.have.convex.combos}
If $\F$ is a Riemannian manifold then \emph{any} open cover, $\msf{V}$ of $\F$ consisting of geodesically convex subsets of $\F$ has an, at least noncommutative, convex combination function. 
   \end{prop}

\begin{proof}[Proof of proposition \ref{P:smooth.manifs.have.convex.combos}]
Let $\phi$  be the topological metric on $\F$ induced by the Riemannian tensor. We use proposition \ref{P:geod.cnvx.nbhds.exist}.

Let $\epsilon \in (0,1)$ and for every $x_{0} \in \F$ let $r(x_{0})$ be as in proposition \ref{P:geod.cnvx.nbhds.exist} and let $V(x_{0})$ be the open ball
	\[
		V(x_{0}) := \bigl\{ y \in \F : \phi(y, x_{0}) < (1-\epsilon) r(x_{0}) \bigr\}.   
	\]
Let $\msf{V} = \bigl\{ V(x) : x \in \F \bigr\}$. By making $r(x_{0})$ smaller if necessary, we may assume $\overline{V(x_{0})}$ is compact. It follows from proposition \ref{P:geod.cnvx.nbhds.exist} that
   \begin{equation}   \label{E:V.bar.is.geod.convx}
      \text{$\overline{V(x_{0})}$ is geodesically convex for every $x_{0} \in \F$.}
   \end{equation}

Let 
	\begin{equation*}
		X := \bigl\{ (x_{1}, x_{2}) \in \F \times \F : 
		  \text{There exists } x \in \F \text{ s.t.\ } x_{1}, x_{2} \in V(x) \bigr\}.
	\end{equation*}
Clearly, $X$ is an open subset of $\F \times \F$ and obviously if $x \in \F$ then $V(x) \times V(x) \subset X$. If $(x_{1}, x_{2}) \in X$, let $s_{x_{1}, x_{2}} := \phi(x_{1}, x_{2})$. Obviously, $s_{x_{1}, x_{2}}$ is continuous in $(x_{1}, x_{2}) \in X$ and $s_{x_{1}, x_{2}} = 0$ if and only if $x_{1} = x_{2}$.

Let $x_{0} \in \F$ and let $x_{1}, x_{2} \in  V(x_{0})$. 
Let $\varphi_{x_{1}x_{2}} : [0, s_{x_{1}, x_{2}}] \to \F$ be the unique shortest geodesic in $\F$ joining $x_{1}$ and $x_{2}$. (Uniqueness by part \ref{I:geodesically.convex} of proposition \ref{P:geod.cnvx.nbhds.exist}. 
$\varphi_{x_{1} x_{2}}$ is parametrized by arclength; 
Boothby \cite[lemma 5.2, p.\ 327]{wmB75}.)
Notice, that $\varphi$ is actually defined on an open interval containing $[0, s_{x_{1}, x_{2}}]$ 
(Boothby \cite[Corollary (5.6), pp.\ 329 --330]{wmB75}). Then by proposition 
\ref{P:geod.cnvx.nbhds.exist}(\ref{I:geodesically.convex}), we have
$\varphi_{x_{1}x_{2}} [0, s_{x_{1}, x_{2}}] \subset V(x_{0})$.    
If  $x_{1}, x_{2} \in V(x_{0})$ and $0 \leq s \leq s_{x_{1}, x_{2}}$, let 
   \begin{equation}  \label{E:Gamma.doesn't.leave.V}
     \Gamma(s; x_{1}, x_{2}) = \varphi_{x_{1} x_{2}}(s) \in  
            V(x_{0}).
   \end{equation}
In particular, 
   \begin{multline}  \label{E:trivial.properties.of.Gamma}
      \Gamma(0; x_{1}, x_{2}) = x_{1},  \;\Gamma(s_{x_{1}, x_{2}}; x_{1}, x_{2}) = x_{2}, \text{ and } 
        \; \Gamma(0 ; x, x) = x  \\ 
           \text{ for } s \in [0,s_{x_{1}, x_{2}}],  
             \; x_{1}, x_{2}, x \in V(x_{0}).
   \end{multline}

\emph{Claim:}  $\Gamma: (s; x_{1}, x_{2}) \mapsto \varphi_{x_{1}, x_{2}}(s)$ is continuous in its three arguments (on $E$ as defined in proposition \ref{P:make.conv.combo.fns.from.curves}). Let
$x_{1}, x_{2}, x_{1}', x_{2}' \in V(x_{0})$, $s \in [0, s_{x_{1}, x_{2}}]$, and $s' \in [0, s_{x_{1}', x_{2}'}]$. 
Imagine $x_{i}' \to x_{i}$ ($i=1,2$) and $s' \to s$. (We know that $s_{x_{1}', x_{2}'} \to s_{x_{1}, x_{2}}$ 
as $x_{i}' \to x_{i}$ ($i=1,2$) so $s' \to s$ is possible.) Suppose $\Gamma(s'; x_{1}', x_{2}')$ does not converge to 
$\Gamma(s; x_{1}, x_{2})$ Then, by compactness of $\overline{V(x_{0})}$,  there is a 
point $x \in \overline{V(x_{0})} \setminus \bigl\{ \Gamma(s; x_{1}, x_{2}) \bigr\}$, and there are sequences $\{ s_{m}' \} \subset [0, \infty)$, $x_{1m}' \to x_{1}$ and $x_{2m}' \to x_{2}$ s.t.\ 
$s_{m}' \in [0,  s_{x_{1m}', x_{2m}'}]$ for every $m$, $s_{m}' \to s$, and 
$\Gamma(s_{m}' ; x_{1m}', x_{2m}')$ does not converge to $\Gamma(s; x_{1}, x_{2})$ but instead
$\Gamma(s_{m}' ; x_{1m}', x_{2m}') \to x$ as $m \to \infty$. First, notice that 
   \begin{multline*}
      \phi(x_{1}, x) 
        = \lim_{m \to \infty} \phi \bigl( x_{1m}', \, \Gamma(s_{m}' ; x_{1m}', x_{2m}') \bigr)   
          = \lim_{m \to \infty} \phi \bigl(  x_{1m}',  \,\varphi_{x_{1m}', x_{2m}'} (s_{m}') \bigr) \\
            = \lim_{m \to \infty} s_{m}' =  s 
              = \phi \bigl(  x_{1}, \, \varphi_{x_{1}, x_{2}}(s) \bigr) 
                = \phi \bigl( x_{1}, \, \Gamma(s; x_{1}, \, x_{2}) \bigr). 
   \end{multline*}
Suppose $x$ were a point on the geodesic joining $x_{1}$ to $x_{2}$, 
i.e., on $\varphi_{x_{1}, x_{2}}$,. Then, by the preceding, we must have $x =  \varphi_{x_{1}, x_{2}}(s)$, i.e., $\lim_{m \to \infty} \Gamma(s_{m}' ; x_{1m}', x_{2m}') = x = \Gamma(s ; x_{1}, x_{2})$, contradicting the assumption that $\Gamma(s_{m}' ; x_{1}', x_{2}')$ does not converge to 
$\Gamma(s; x_{1}, x_{2})$. Now, since $\Gamma(s_{m}'; x_{1m}', x_{2m}')$ lies on the shortest geodesic arc joining $x_{1m}'$ and $x_{2m}'$, we have
    \begin{equation}  \label{E:phi.x1m'.Gamma.sum}
 	  \phi (x_{1m}', \, \Gamma(s_{m}'; x_{1m}', x_{2m}')) 
	     + \phi (\Gamma(s_{m}'; \, x_{1m}', x_{2m}'), \, x_{2m}') = \phi (x_{1m}', x_{2m}').
    \end{equation}
    
Let $\beta$ be the piecewise $C^{1}$ curve that goes from $x_{1}$ to $x$ along a shortest geodesic and then along another shortest geodesic from $x$  to $x_{2}$. It now follows from \eqref{E:phi.x1m'.Gamma.sum} that the length of $\beta$ is 
   \begin{multline*}
      \phi (x_{1}, x ) + \phi (x , x_{2}) \leq \phi (x_{1}, x_{1m}') 
         + \phi (x_{1m}', \, \Gamma(s_{m}'; x_{1m}', x_{2m}'))  \\
      + \phi (\Gamma(s_{m}'; x_{1m}', x_{2m}'), \, x ) 
      + \phi (x , \, \Gamma(s_{m}'; x_{1m}', x_{2m}')) 
         + \phi (\Gamma(s_{m}'; x_{1m}', x_{2m}'), \, x_{2m}') + \phi (x_{2m}', x_{2})  \\
      = \phi (x_{1},x_{1m}') + 2 \phi (x , \, \Gamma(s_{m}'; x_{1m}', x_{2m}')) + 
      \phi (x_{1m}', x_{2m}') + \phi (x_{2m}', x_{2}).
   \end{multline*}
Since $\Gamma(s_{m}'; x_{1m}', x_{2m}') \to x$, $x_{jm}' \to x_{j}$ ($j=1,2$), and $\phi$ is continuous, the expression following the equal sign in the preceding converges 
to $\phi (x_{1}, x_{2})$. 

Thus, $\beta$  is a piecewise $C^{1}$ curve in $F$ parametrized by arclength and joining $x_{1}$ to $x_{2}$ which is no longer the unique geodesic curve
$\varphi_{x_{1} x_{2}}$, in $\overline{V(x_{0})}$ joining $x_{1}$ to $x_{2}$. By \eqref{E:V.bar.is.geod.convx} above and proposition \ref{P:geod.cnvx.nbhds.exist}, this means that $\beta  = \varphi_{x_{1} x_{2}}$. 
(See also Boothby \cite[Theorem (7.2), p.\ 340]{wmB75}.) But $\beta$  passes through $x$  and $\varphi_{x_{1} x_{2}}$ does \emph{not} pass through $x$. This contradiction establishes that 
$\Gamma(s'; x_{1}', x_{2}') \to \Gamma(s; x_{1}, x_{2})$ as $s' \to s$, $x_{1}' \to x_{1}$, and $x_{2}' \to x_{2}$. 

The proposition now follows from proposition \ref{P:make.conv.combo.fns.from.curves}.
\end{proof}

\begin{proof}[Proof of propostion \ref{P:make.conv.combo.fns.from.curves}]
We will define a convex combination function, $\gamma$, on $\msf{V}$. Let $V \in \msf{V}$. Define 
    \begin{equation} \label{E:0-convex.gamma}
        \gamma(V, 1, x_{0}) = x_{0}, \quad x_{0} \in V, \; V \in \msf{V}.
    \end{equation} 
So $\gamma$ is a 0-convex combination function. 

Notice that 
   \begin{multline}  \label{E:simple.Gamma.facts}
      \Gamma(s; x_{0}, x_{1}) \in V, \\
      \Gamma(0; x_{0}, x_{1}) = \varphi_{x_{0}, x_{1}} (0) = x_{0}, \text{ and } 
        \Gamma(s_{x_{0}, x_{1}}; x_{0}, x_{1}) = \varphi_{x_{0}, x_{1}} (s_{x_{0}, x_{1}})  
          = x_{1}, \\
            \text{ for } x_{0}, x_{1} \in V, s \in I_{x_{0}, x_{0}}.
   \end{multline}
If $\lambda \in  [0,1]$ and $x_{0}, x_{1} \in V$, write 
    \begin{equation}  \label{E:1-convex.gamma}
		\gamma \bigl[ V, (1 - \lambda, \lambda), ( x_{0}, x_{1} ) \bigr]  
			:= \Gamma ( \lambda s_{x_{0}, x_{1}} ; x_{0}, x_{1} ) \in V.
    \end{equation}
Then $\gamma$ restricts to a 1-convex combination function on $\msf{V}$ and \eqref{E:drop.0.coefs} holds with $k = 1$.

Let $m = 2, 3, \ldots$ and assume inductively the following the following holds :
    \begin{multline} \label{E:gamma.induction.hyp}
      \text{For every } k = 0, \ldots, m-1 \text{ we have: For every } 
      (\mu_{0}, \ldots, \mu_{k}) \in \Delta_{k}; \; V \in \msf{V}; \\
        \text{ and } x_{0}, \ldots , x_{k} \in  V \text{ there is defined } 
            \gamma \bigl[ V,  (\mu_{0}, \ldots, \mu_{k}), (x_{0}, \ldots, x_{k}) \bigr] \in V \\
              \text{ having the properties } 
                \text{of a $k$-convex combination function on } \msf{V} \\
                \text{ for which \eqref{E:drop.0.coefs} holds.}
    \end{multline} 

If $k = 1, \ldots, m-1$ further assume also the following. Let $x_{0}, \ldots , x_{k} \in  V$. Let $(\lambda_{0}, \ldots , \lambda_{k}) \in  \Delta_{k}$. If $\lambda_{k} < 1$, write 
	\begin{equation}  \label{E:y.k-1.gamma.defn}
		y_{k-1} :=  \gamma \bigl[ V,  (1-\lambda_{k})^{-1} 
		              (\lambda_{0}, \ldots, \lambda_{k-1}), (x_{0}, \ldots, x_{k-1}) \bigr] \in V.
	  \end{equation}
Then,
   \begin{equation}  \label{E:Gamma.combo.defn}
      \gamma \bigl[ V,  (\lambda_{0}, \ldots, \lambda_{k}), (x_{0}, \ldots, x_{k}) \bigr]  =  
         \begin{cases}
	         x_{k}, &\text{ if } \lambda_{k} = 1;  \\
		         \Gamma ( \lambda_{k} s_{y_{k-1}, x_{k}} ;  y_{k-1}, x_{k} ), 
	            &\text{ if } 0 \leq \lambda_{k} < 1.
         \end{cases}
   \end{equation}
Thus, part of the induction hypothesis is that \eqref{E:Gamma.combo.defn} is consistent with definition \ref{D:convex.combo.fn}. Notice that our inductive hypotheses hold for $m = 2$. In particular, 
\eqref{E:0-convex.gamma} and \eqref{E:1-convex.gamma}  parallel  \eqref{E:y.k-1.gamma.defn} and \eqref{E:Gamma.combo.defn}, with $k=1$, and $\lambda_{k} = \lambda$.

Now use \eqref{E:Gamma.combo.defn} with $k = m$ to \emph{extend} the domain of $\gamma$ to include
$\bigcup_{V \in \msf{V}} \bigl( \{ V \} \times \Delta_{m} \times V^{m+1} \bigr)$. I.e., \emph{define} $\gamma$ for $k=m$ by \eqref{E:Gamma.combo.defn}. We prove continuity of $\gamma$ presently, but first notice that, since $y_{k-1} \in V$ (if $\lambda_{k} < 1$) and $x_{k} \in V$, we have, by \eqref{E:simple.Gamma.facts}, that definition \ref{D:convex.combo.fn}, part \ref{I:convex.combos.are.local} holds:
$\gamma \bigl[ V,  (\lambda_{0}, \ldots, \lambda_{m}), (x_{0}, \ldots, x_{m}) \bigr] \in V$. Definition \ref{D:convex.combo.fn}, part (\ref{I:consistency.of.conv.combos}) holds also: 
$\gamma \bigl[ V,  (\lambda_{0}, \ldots, \lambda_{m}), (x_{0}, \ldots, x_{m}) \bigr]$ does not depend 
on $V \supset \{ x_{0}, \ldots , x_{m} \}$ ($V \in \msf{V}$).

We prove definition \ref{D:convex.combo.fn}, part (\ref{I:gamma.k.cont}), \emph{viz.}, that 
$\gamma \bigl[ V,  (\lambda_{0}, \ldots, \lambda_{m}), (x_{0}, \ldots, x_{m}) \bigr]$ is continuous in $x_{0}, \ldots , x_{m}$ and $\lambda_{0}, \ldots , \lambda_{m}$. 

Assume we do have a failure of continuity. Then, by Ash \cite[Theorem A2.14, pp.\ 375--376]{rbA72}, for some $V \in \msf{V}$, there exists 
$\blds{\lambda}_{0} = (\lambda_{0}, \ldots, \lambda_{m})  \in \Delta_{m}$, 
$\mbf{x}_{0} = (x_{0}, \ldots, x_{m}) \in V^{m+1}$, a neighborhood $A$ 
of $\gamma \bigl( V,  \blds{\lambda}_{0}, \mbf{x}_{0} \bigr) \in V$, and a net 
(Ash \cite[Definition A2.2, p.\ 371]{rbA72}) 
$\mathfrak{N} := \bigl\{ (\blds{\lambda}_{\alpha}, \mbf{x}_{\alpha}), \alpha \in D \bigr\}$, where $D$ is a directed set,
$\blds{\lambda}_{\alpha} = (\lambda_{\alpha 0}, \ldots, \lambda_{\alpha m}) 
\in \Delta_{m}$, 
and $\mbf{x}_{\alpha} = (x _{\alpha 0}, \ldots, x _{\alpha m}) \in V^{m+1}$, s.t.\ $\mathfrak{N}$ converges to 
$(\blds{\lambda}_{0}, \mbf{x}_{0})$ (Ash \cite[Definition A2.2, p.\ 371]{rbA72} again), but for every $\beta \in D$ there exists 
$\alpha \geq \beta$ s.t.\ 
$\gamma (V,  \blds{\lambda}_{\alpha}, \mbf{x}_{\alpha}) \notin A$. (We have to resort to nets because $\msf{F}$ might not be first countable. See \eqref{E:D.metric.F.normal}.))

Recall that $D_{0} \subset D$ is ``cofinal'' in $D$ if for every $\alpha \in D$ there exists 
$\beta \in D_{0}$ s.t.\ $\beta \geq \alpha$. Let 
    \begin{equation*}
      D_{1} := \bigl\{ \alpha \in D : \blds{\lambda}_{\alpha} = (0, \ldots, 0, 1) \bigr\} . 
    \end{equation*}
Suppose $D_{1}$ is not cofinal in $D$. (For example, $D_{1}$ might be empty.)
Then there exists $\alpha_{1} \in D$ s.t.\ for no $\beta \in D_{1}$ do we have 
$\beta \geq \alpha_{1}$. We will cover that case presently. If, on the other hand, $D_{1}$ is cofinal, then 
$\bigl\{ (\blds{\lambda}_{\alpha}, \mbf{x}_{\alpha}) : \alpha \in D_{1} \bigr\}$ converges 
to $(\blds{\lambda}_{0}, \mbf{x}_{0})$. In particular, $\blds{\lambda}_{0} = (0, \ldots, 0, 1)$ and 
$x_{\alpha m} \to x_{m}$ along $D_{1}$. By \eqref{E:Gamma.combo.defn}, 
    \begin{equation*}
      \gamma \bigl( V,  (0, \ldots, 0,1), \mbf{x}_{\alpha} \bigr) = x_{\alpha m}
        \to x_{m} = \gamma \bigl( V,  \blds{\lambda}_{0}, \mbf{x}_{0} \bigr) \in A 
          \text{ along } D_{1}.
    \end{equation*}
Hence, 
    \begin{equation}  \label{E:convergence.along.D1}
      \text{there exists } \alpha_{1} \in D_{1} \text{ s.t.\ if } \beta \in D_{1} 
        \text{ with } \beta \geq \alpha_{1} \text{ then } 
          \gamma \bigl( V,  \blds{\lambda}_{\beta}, \mbf{x}_{\beta} \bigr) \in A .
    \end{equation}

Similarly, let $D_{<1} := \bigl\{ \alpha \in D : \lambda_{\alpha m} < 1 \bigr\}$. 
Suppose $D_{<1}$ is not cofinal in $D$. Then there exists $\alpha_{<1} \in D$ s.t.\ for 
no $\beta \in D_{<1}$ do we have $\beta \geq \alpha_{<1}$. I.e., eventually, $D$ and $D_{1}$ are the same. Hence, $D_{1}$ is cofinal and, from what we have just seen, 
$\gamma \bigl( V,  \blds{\lambda}_{\beta}, \mbf{x}_{\beta} \bigr)$ is eventually in $A$, i.e., it is in $A$ from some point on. Contradiction.

Therefore $D_{<1}$ must be cofinal. Hence, 
$\bigl\{ (\blds{\lambda}_{\alpha}, \mbf{x}_{\alpha}) : \alpha \in D_{<1} \bigr\}$ converges 
to $(\blds{\lambda}_{0}, \mbf{x}_{0})$. Let 
    \begin{equation*}
	y_{\alpha (m-1)} :=  \gamma \bigl[ V,  (1-\lambda_{\alpha m})^{-1} 
	  (\lambda_{\alpha 0}, \ldots, \lambda_{\alpha (m-1)}), 
	     (x_{\alpha 0}, \ldots, x_{\alpha (m-1)}) \bigr],  \qquad \alpha \in D_{<1} .
    \end{equation*}
Then by \eqref{E:Gamma.combo.defn}, 
    \begin{equation} \label{E:gamma(V,lambda,x).along.D<1}
      \gamma ( V,  \blds{\lambda}_{\alpha}, \mbf{x}_{\alpha} ) =
        \Gamma ( \lambda_{\alpha m} s_{y_{\alpha (m-1)}, x_{\alpha m}} ;  \;
          y_{\alpha (m-1)}, x_{\alpha m} ) , \qquad \alpha \in D_{<1} .
    \end{equation}

Suppose $\gamma \bigl( V,  \blds{\lambda}_{   }, \mbf{x}_{\alpha} \bigr)$ 
($\alpha \in D_{<1}$) is not eventually in $A$. Then the directed set 
    \begin{equation*}
      D_{A^{c}} := \bigl\{ \alpha \in D_{<1} : 
        \gamma ( V,  \blds{\lambda}_{\alpha}, \mbf{x}_{\alpha} ) \notin A \bigr\}
    \end{equation*}
is cofinal. For every $\alpha \in D_{A^{c}} \subset D_{<1}$, we have 
$(1-\lambda_{\alpha m})^{-1} (\lambda_{\alpha 0}, \ldots, \lambda_{\alpha (m-1)}) \in \Delta_{m-1}$. Since $\Delta_{m-1}$ is compact, it follows from Ash \cite[Theorem A5.2(c), p.\ 381]{rbA72}), that the net $\bigl\{ (1-\lambda_{\alpha m})^{-1} (\lambda_{\alpha 0}, \ldots, \lambda_{\alpha (m-1)}) : \alpha \in D_{A^{c}} \bigr\}$ has a subnet (Ash \cite[Definition A2.5, p.\ 372]{rbA72}) converging to some $\blds{\mu} \in \Delta_{m-1}$. Denote by 
$D_{ A^{c}, \blds{\mu} }$ the directed set indexing that subnet. 

By the induction hypothesis \eqref{E:gamma.induction.hyp}, 
$\{ y_{\alpha (m-1)} : \alpha \in D_{ A^{c}, \blds{\mu} } \}$ converges to \linebreak 
$y_{m-1} := \gamma \bigl( V,  \blds{\mu},  (x_{0}, \ldots, x_{(m-1)}) \bigr)$. 
Therefore, by \eqref{E:gamma(V,lambda,x).along.D<1} and hypothesis 
on $\Gamma$, 
$\gamma ( V,  \blds{\lambda}_{\alpha}, \mbf{x}_{\alpha} )$ converges, along 
$D_{ A^{c}, \blds{\mu} }$, to 
$\Gamma ( \lambda_{m} s_{y_{m-1}, x_{m}} ;  y_{m-1}, x_{m} )$. If $\lambda_{m} =1$, then 
$\Gamma ( \lambda_{m} s_{y_{m-1}, x_{m}} ;  y_{m-1}, x_{m} ) = x_{m} = \gamma ( V,  \blds{\lambda}_{0}, \mbf{x}_{0} ) \in A$. 
If $\lambda_{m} < 1$, then $\blds{\mu} = (1-\lambda_{m})^{-1} (\lambda_{0}, \ldots, \lambda_{m-1})$ and, by \eqref{E:Gamma.combo.defn}, 
$\Gamma ( \lambda_{m} s_{y_{m-1}, x_{m}} ;  y_{m-1}, x_{m} ) 
= \gamma ( V,  \blds{\lambda}_{0}, \mbf{x}_{0} ) \in A$.
In particular, for some 
$\alpha \in D_{ A^{c}, \blds{\mu} } \subset D_{A^{c}}$ we have 
$\gamma ( V,  \blds{\lambda}_{\alpha}, \mbf{x}_{\alpha} ) \in A$. This contradicts the definition of $D_{ A^{c}, \blds{\mu} }$ as a subset of $D_{<1}$. We conclude that 
$\gamma \bigl( V,  \blds{\lambda}_{\alpha}, \mbf{x}_{\alpha} \bigr)$ ($\alpha \in D_{<1}$) is eventually in $A$. Thus, there exists 
 $\alpha_{<1} \in D_{<1}$ s.t.\ if $\delta \in D_{<1}$ and $\delta \geq \alpha_{<1}$ 
 then $\gamma \bigl( V,  \blds{\lambda}_{\delta}, \mbf{x}_{\delta} \bigr) \in A$. 
 
 By \eqref{E:convergence.along.D1}, there exists  $\alpha_{1} \in D_{1}$ s.t.\ 
 if $\delta \in D_{1}$  and $\delta \geq \alpha_{1}$ 
 then $\gamma \bigl( V,  \blds{\lambda}_{\delta}, \mbf{x}_{\delta} \bigr) \in A$.
There exists $\alpha_{\leq 1} \in D$ s.t.\ $\alpha_{\leq 1} \geq \alpha_{1}$ and 
$\alpha_{\leq 1} \geq \alpha_{<1}$. 
 
Pick $\beta \in D \geq \alpha_{\leq 1}$. By assumption, there exists $\alpha \geq \beta$ s.t.\ 
$\gamma (V,  \blds{\lambda}_{\alpha}, \mbf{x}_{\alpha}) \notin A$. 
Now, $\lambda_{\alpha m}$ is either 1 or is less than 1. Hence, $\alpha \in D_{1}$ 
or $\alpha \in D_{<1}$. Suppose $\lambda_{\alpha m} = 1$. 
We have $\alpha \geq \beta \geq \alpha_{\leq 1} \geq \alpha_{1}$. Hence, by definition 
of $\alpha_{1}$, we have 
$\gamma \bigl( V,  \blds{\lambda}_{\alpha}, \mbf{x}_{\alpha} \bigr) \in A$. Similarly, 
if $\lambda_{\alpha m} < 1$. This contradicts our assumption and proves continuity 
of $\gamma(V, \blds{\lambda}, \mbf{x})$ ($\blds{\lambda} \in \Delta_{m}$ 
and $\mbf{x} \in V^{m+1}$). 

We also have that definition \ref{D:convex.combo.fn}, part (\ref{I:convex.combos.of.1.pt}) holds with 
$k = m$. 
To see this let $x \in V \in \msf{V}$. If $\lambda_{m} = 1$ then trivially, by \eqref{E:Gamma.combo.defn}, we have that \eqref{E:gamma.k.on.diagonal} holds. Now suppose $\lambda_{m} < 1$ and let $x_{0} = \cdots = x_{m} = x$. Note that, by the induction hypothesis and \eqref{E:y.k-1.gamma.defn}, 
we have $y_{m-1} = x$. Thus, by assumption, $s_{y_{m-1}, x_{m}} = s_{x, x} = 0$. Therefore, \eqref{E:Gamma.combo.defn} and \eqref{E:simple.Gamma.facts},
    \begin{equation*}
        \gamma \bigl[ V,  (\lambda_{0}, \ldots, \lambda_{m}), (x, \ldots, x) \bigr]  =
         \Gamma ( \lambda_{m} \cdot 0 ;  x, x )  = x.
    \end{equation*}

Definition \ref{D:convex.combo.fn}, part (\ref{I:1.is.identity}) $\gamma(V, 1, x_{0} ) = x_{0}$ holds by definition. 

We prove \eqref{E:drop.0.coefs} holds with $k = m$. First, assume $j=m$. I.e., $\lambda_{m} = 0$. 
Then, by \eqref{E:Gamma.combo.defn}, \eqref{E:simple.Gamma.facts}, and \eqref{E:y.k-1.gamma.defn} with $k = m-1$,
    \begin{multline*}
        \gamma \bigl[ V,  (\lambda_{0}, \ldots, \lambda_{m-1}, 0), (x_{0}, \ldots, x_{m}) \bigr]  = 
         \Gamma ( 0 \cdot s_{y_{m-1}, x_{m}} ;  y_{m-1}, x_{m} ) \\
           = y_{m-1} = \gamma \bigl[ V, (\lambda_{0}, \ldots, \lambda_{m-1}), (x_{0}, \ldots, x_{m-1}) \bigr].
    \end{multline*}
Next, suppose $j < m$. First case: $\lambda_{m} = 1$. By \eqref{E:Gamma.combo.defn}, 
    \begin{equation*}
        \gamma \bigl[ V,  (
            \underbrace{0, \ldots, 0}_{m-1}
          , 1), (x_{0}, \ldots, x_{m}) \bigr] = x_{m} = \gamma \bigl[ V,  (
            \underbrace{0, \ldots, 0}_{m-2}
          , 1), (x_{0}, \ldots, x_{j-1}, x_{j+1}, x_{m}) \bigr].
    \end{equation*}

Second case: $\lambda_{m} < 1$. By \eqref{E:y.k-1.gamma.defn} and the induction hypothesis,
   \begin{multline*}
     y_{m-1} = \\
       \gamma \bigl[ V,  (1-\lambda_{m})^{-1} 
         (\lambda_{0}, \ldots, \lambda_{j-1}, 0, \lambda_{j+1}, \ldots, \lambda_{m-1}),
           (x_{0}, \ldots, x_{j-1}, x_{j}, x_{j+1}, \ldots, x_{m-1}) \bigr] \\
             = \gamma \bigl[ V,  (1-\lambda_{m})^{-1} 
              (\lambda_{0}, \ldots, \lambda_{j-1}, \lambda_{j+1},  \ldots, \lambda_{m-1}),
                (x_{0}, \ldots, x_{j-1}, x_{j+1}, \ldots, x_{m-1}) \bigr].
   \end{multline*}
By \eqref{E:Gamma.combo.defn} with $k = m$:
    \begin{multline*}
        \gamma \bigl[ V,  (\lambda_{0}, \ldots, \lambda_{j-1}, 0, \lambda_{j+1}, \ldots,  \lambda_{k}), 
          (x_{0}, \ldots, x_{j-1}, x_{j}, x_{j+1}, \ldots,  x_{m}) \bigr] \\
            = \Gamma ( \lambda_{m} s_{y_{m-1}, x_{m}} ;  y_{m-1}, x_{m} ) \\
              = \gamma \bigl[ V,  (\lambda_{0}, \ldots, \lambda_{j-1}, \lambda_{j+1}, 
                \ldots,  \lambda_{m}), (x_{0}, \ldots, x_{j-1}, x_{j+1}, \ldots,  x_{m}) \bigr].
    \end{multline*}
$\gamma$ thus enjoys all the properties of a convex combination function 
on $\msf{V}$.
  \end{proof} 

\chapter{Singularity in Plane Fitting}  \label{Chptr:sings.in.plane.fit}
In the remainder of this book we apply the results of the previous chapters to specific classes of data maps and examine the issue of singularity therein. Let $r$ be as in theorem \ref{T:Phi.star.Hr.contains.Theta.star.Hr}. The $r = 0$ case (e.g., hypothesis testing) is very important in practice. That case is treated in example \ref{Ex:disconnected.F}. Here we examine a class of data maps, \emph{viz.} plane-fitting, for which 
    \begin{equation}  \label{E:r=1}
      r = 1.
    \end{equation} 
(The basic results of this chapter are derived using simpler arguments in \cite{spE91.sings.plane.fit,spE95}. But those arguments are specific to plane-fitting. This chapter is to show how the results of chapters \ref{Chptr:topology}, \ref{Chptr:Haus.meas.of.sing.set}, and \ref{Chptr:severity}, apply to plane-fitting.) 

Of the examples we examine, plane-fitting is probably the most difficult to analyze. 

A very common data analytic operation is fitting a plane to multivariate data. Singularity is inherent in plane fitting (Belsley \cite{daB91}, \cite{spE91.sings.plane.fit,spE95,spE96,spE98,spE.3.or.4}). Let $n =$ sample size, 
$\nvar =$ number of variables, $k =$ dimension of plane to be fitted. Assume 
	\begin{equation}  \label{E:n>nvar>k>0}
		n > \nvar > k > 0.
	\end{equation}
In the plane-fitting context a data set is an $n \times \nvar$ matrix of real numbers. In this chapter we generically denote data sets by $Y$ and we denote the set of all such data sets by $\Y$. I.e.,
    \begin{equation} \label{E:mcl.Y,defn}
      \Y := \text{ space of all } n \times \nvar \text{ real matrices.}
    \end{equation}

Thus, 
    \begin{equation}  \label{E:Y.homeomto.Rn.nvar}
      \Y \homeomto \RR^{n \nvar} ,
    \end{equation} 
where $\homeomto$ means ``homeomorphic to''. (One way to do this is to metrize $\Y$ by the Euclidean norm. See \eqref{E:matrix.norm}.) A data set $Y \in \Y$ is often referred to as a ``point cloud''. Tentatively, let $\D := \Y$. (We discuss other choices of the data space 
$\D$ in section \ref{SS:D.T.plane.fit}.) Our interest is in stability w.r.t.\ perturbations in the data in $\D$, as opposed to stability w.r.t.\, say, augmentation of the data set (Dodge and Roenko \cite{yDnR92.L1.stability}). We follow the convention of indicating the dimension of matrices by superscripts. Thus, e.g., $Y^{n \times \nvar}$ specifies that $Y$ is an $n \times \nvar$ matrix. In this section, $x$ and $y$ will usually denote vectors or numbers. (In this book all matrices and vectors are real unless otherwise specified.)  

    \begin{equation}  \label{E:rho=row.space}
      \text{If } Y \text{ is a matrix, denote its row space by } \rho(Y). 
    \end{equation}
Let 
    \begin{multline}  \label{E:1n.col.vec.defn}
      1^{n} := \text{ the $n$-dimensional column vector consisting only of $1$'s. } \\
        1_{n} := (1^{n})^{T} \text{ is an $n$-dimensional row vector} . 
    \end{multline} 
(Do not confuse this with similar notation for the indicator function introduced in \eqref{E:indicator.fn.defn}. Hopefully, which is meant will be clear from context.)
Let $G(k,\nvar)$ denote the Grassmann manifold of all $k$-dimensional linear subspaces 
of $\RR^{\nvar}$. It can be given the structure of a compact Riemannian manifold (Boothby \cite[Example 2.6, pp.\ 63--64 and Theorem (4.5), p.\ 193]{wmB75}, Wong \cite{ycW67.Grassmanif}). 
    
   \begin{remark}[Affine planes] \label{R:affine.planes}
Often one wants to ``fit'' to a point of $\Y$ an $k$-dimensional plane in $\RR^{\nvar}$. The plane does not necessarily have to pass through the origin, 0. We call such a plane a ``$k$-plane''. In this chapter we focus on maps, $\Phi : \Y \to G(k, \nvar)$. Consequently, a necessary form of ``post-processing'' is to shift $k$-planes not passing through the origin to the $k$-plane passing through the origin, i.e.\ in $G(k, \nvar)$, parallel to it. Here we show that the shifting operation is continuous.

Let $A$ denote the set of all $k$-planes in $\RR^{\nvar}$ not necessarily passing through 0.    
    \begin{multline}  \label{E:alpha=xi+x}
      \text{Each plane, } \alpha \in A \text{ has the form } \alpha = \xi + x, 
        \text{ where } \xi \in G(k, \nvar) \text{ and } x \in \RR^{\nvar} . \\
          \text{ This equality holds for any } x \in \alpha.
    \end{multline} 
(This is essentially \eqref{E:when.rows.on.alpha} below. Since $0 \in \xi$, we must have 
$x \in \alpha$. We try to adhere to the convention that vectors are row vectors unless otherwise specified.) 

We topologize $A$ as follows. Define an equivalence relation $R$ 
on $G(k, \nvar) \times \RR^{\nvar}$ as follows. If $\xi, \zeta \in G(k, \nvar)$ 
and $x,z \in \RR^{\nvar}$ write $(\xi, x) R (\zeta, z)$ if $\xi = \zeta$ and $x-z \in \xi$. 
Let $[\xi, x]$ denote the equivalence class containing $(\xi,x)$ and let $Q$ be the quotient space, 
$\bigl( G(k, \nvar) \times \RR^{\nvar} \bigr) / R$ (Munkres \cite[p.\ 112]{jrM84}). Since $(\xi,x) \mapsto \xi$ is continuous, by Munkres \cite[p.\ 112]{jrM84} again, the map $[\xi, x] \mapsto \xi$ is continuous.
The map $[\xi, x] \mapsto \xi + x$ is a well-defined bijection from $Q \to A$.\footnote{Surjectivity is obvious. What about injectivity? By \eqref{E:n>nvar>k>0}, $k < \nvar$. Let $w, y \in \RR^{\nvar}$; 
let $\xi, \zeta \in G(k, \nvar)$; and suppose $\xi + w = \zeta + y$. We need to show 
$[\xi,w] = [\zeta,y]$. It suffices to show $\xi = \zeta$, because $w-x \in \xi$ is immediate 
from $\xi = \zeta$. If $z \in \zeta$ there exists $x \in \xi$ s.t.\ 
    \begin{equation*}
      x +w = z + y .
    \end{equation*}
If $w = 0$ then take $z = 0$ leaving us with $x = y$. I.e., $y \in \xi$. It follows that
$\xi = \zeta$ and we are done. Suppose $w = 0$ is not a possibility and let $v \in \xi$ be the orthogonal projection of $w$ on $\xi$. Then $\xi + (w - v) = (\xi + v) + (w - v) = \zeta + y$ so we may assume $w \perp \xi$. Thus, $|w|^{2} = w \cdot x + |w|^{2} = w \cdot z + w \cdot y$. I.e., 
$w \cdot z = |w|^{2} - w \cdot y$. I.e., $w \cdot z$ is constant in $z \in \zeta$. 
So, e.g., for every $z \in \zeta$, we have $w \cdot z = w \cdot (2z) - w \cdot z = 0$. 
Thus, $w \perp \zeta$. Hence, both $\xi$ and $\zeta$ lie in the orthogonal complement, $w^{\perp}$, of $w$. (See \eqref{E:superscript.perp.notation}.) This allows a reduction of $\nvar$ by 1. Proceeding recursively we eventually have $\nvar = k+1$. When we carry out the operation again we have both $k$-dimensional $\xi$ and $\zeta$ in a $k$-dimensional $w^{\perp}$. This means $\xi = \zeta$.} 
Put on $A$ the topology making this bijection a homeomorphism.
   \end{remark}

Let $k' = 1, \ldots, \nvar$ and let $\alpha \subset \RR^{\nvar}$ ($\mathbb{R} =$ reals) be a plane (not necessarily passing through the origin) of dimension $k'$, a ``$k'$-plane''. That means that there is a $k'$-dimensional subspace $\zeta$ (so $\zeta$ includes the origin) 
s.t.\ $y \in \alpha$ if and only if $\alpha - y = \zeta$. 
Let $(Z')^{k' \times \nvar}$ be a matrix whose row space is $\zeta$. Hence, 
$x \in \alpha$ if and only if there is $b^{1 \times k'} \in \RR^{k'}$ s.t.\ 
$x = b Z' + y$. Now let $y' \in \alpha$. Then there exists 
$(b')^{1 \times k'} \in \RR^{k'}$ s.t.\ $y' = b' Z' + y$. Therefore, 
$x = b Z' + (y' - b'Z') = (b-b')Z' + y'$. Hence:
    \begin{multline}  \label{E:when.rows.on.alpha}
        \text{Let } \alpha \subset \RR^{\nvar} 
          \text{ be a $k'$-plane and let } y^{1 \times \nvar} \in \alpha . \\
            \text{ Let $(Z')^{k' \times \nvar}$ satisfy } 
             \rho(Z') = \alpha - y \in G(k',n) . \\
              \text{ Then the rows of } Y^{n \times \nvar} \text{ lie exactly on } \alpha 
                \text{ if and only if there exists a } \\
               B^{n \times k'} 
                 \text{ s.t.\ } Y = B Z' + 1^{n} y, \text{ in which case for every } \\
                   y' \in \alpha \text{ there exists a } (B')^{n \times k'} 
                      \text{ s.t.\ } Y = B' Z' + 1^{n} y' .             
    \end{multline}

In the following we will often have occasion to work with matrices of a certain form. Here is a basic fact about them. Let $r, s = 1, 2, \ldots$ be arbitrary. Recall that $1^{r}$ is the column vector all of whose entries are 1. We have,
    \begin{multline}  \label{E:const.rowspace.of.offset.mats} 
       \text{Let } X^{r \times s} \text{ be given. Then the map }
         w^{r \times 1} \mapsto \rho( X - 1^{r} w^{T} X ) \\
           \text{ is constant in } w^{r \times 1} \text{ s.t.\ } w^{T} 1^{r} = 1 . 
    \end{multline}
($\rho$ is the row space operator; ``$^{T}$'' = matrix transposition.) Thus, 
$w^{T} 1^{r} = 1$ means the sum of entries of $w$ is 1. E.g., $w = (1, 0, \ldots, 0)^{T}$ or $w = r^{-1} (1, 1, \ldots, 1)^{T}$, but $w$ may even have some negative entries. 
Also $1^{r} w^{T} X$ is the $r \times s$ matrix each of whose rows is the $1 \times s$ row vector $w^{T} X$. To prove \eqref{E:const.rowspace.of.offset.mats}, let $w^{r \times 1}$ satisfy $w^{T} 1^{r} = 1$ and let $W^{r \times s} := X - 1^{r} w^{T} X$. 
Notice that $w^{T} W = 0$. Let $\alpha := \rho (W)$, 
so $\alpha$ is an $s$-dimensional vector space. Let $v^{r \times 1}$ satisfy 
$v^{T} 1^{r} = 1$. Then, 
$(v^{T} - w^{T}) X = v^{T} (X - 1^{r} w^{T} X) \in \alpha$. 
Let $V^{r \times s} := X - 1^{r} v^{T} X$. Let $a^{r \times 1}$ be arbitrary. Then
    \begin{multline*}
       a^{T} V = a^{T} (V-W) + a^{T} W 
         = a^{T} \bigl[ (X - 1^{r} v^{T} X) - (X - 1^{r} w^{T} X) \bigr] + a^{T} W \\
           = (a^{T} 1^{r}) (w^{T} - v^{T}) X + a^{T} W \in \alpha + \alpha = \alpha .
    \end{multline*}
This proves that $\rho(V) \subset \alpha = \rho(W)$. Similarly, $\rho(W) \subset \rho(V)$, 
so $\rho(V) = \rho(W)$. 

Suppose the rows of $Y \in \Y$ lie exactly on a $k$-plane $\alpha \subset \RR^{\nvar}$. 
and let $w^{n \times 1}$ be a column vector s.t.\ $w^{T} 1^{n} = 1$. We have:
	\begin{multline}  \label{E:wT.Y.in.alph}   
	     \text{Suppose the rows of $Y \in \Y$ lie exactly on a $k$-plane 
		  $\alpha \subset \RR^{\nvar}$ and } w^{T} 1^{n} = 1. \\ 
		    \text{ Then } w^{T} Y \in \alpha .
	\end{multline}
By \eqref{E:alpha=xi+x}, $\alpha = \xi + y$, where $\xi \in G(k, \nvar)$ and 
$y \in \alpha$. Therefore, there is a matrix $X^{n \times \nvar}$ s.t.\ $\rho(X) = \xi$ and 
$Y = X + 1^{n} y$.
Thus, $w^{T} Y = w^{T} X + w^{T} 1^{n} y = w^{T} X + y$. But $w^{T} X \in \xi$, 
so $w^{T} X + y \in \alpha$, as desired.
Conversely, suppose \eqref{E:wT.Y.in.alph} holds for every $w^{n \times 1}$ s.t.\ 
$w^{T} 1^{n} = 1$. Taking coordinate vectors $w = (1, 0, \ldots, 0)^{T}$ etc., it is immediate that the rows of $Y$ lie on $\alpha$. 

Let 
    \begin{multline}  \label{E:Perfect.Fits.in.plane.fitting}
      \Pf = \Pf^{k} \text{ be the collection of all data sets whose rows } \\
        \text{lie exactly on a unique $k$-plane } 
         \text{(not necessarily passing through the origin).}
    \end{multline} 
(A stronger notion of perfect fit is proposed in remark \ref{R:degenerate.data}.) Thus, the data sets in $\Pf$ are ``perfect fits'' (subsection \ref{SS:calibration}) w.r.t.\ the operation of fitting $k$-planes to datasets in $\Y$. If $\xi \in G(k,\nvar)$ and 
$r \in \RR \setminus \{0\}$, then $r \xi = \xi$. The following is then immediate from \eqref{E:alpha=xi+x}:
    \begin{equation} \label{E:Pf.invar.under.shifts.rescale}
      \text{Let } s \in \RR \setminus \{0\} , \; 
        x \in \RR^{\nvar}, \text{ and } Y \in \Y . \text{ Then } Y \in \Pf 
          \text{ if and only if } s Y + 1^{n} x \in \Pf .
    \end{equation}

\emph{Claim:} 
    \begin{equation} \label{E:Y.in.Pf.means.rows.not.in.lwr.dim.plane}
       \text{If } Y \in \Pf 
         \text{ then the rows of $Y$ do \emph{not} lie on a plane of dimension } < k.
    \end{equation}
For suppose $Y \in \Pf$ and suppose the rows of $Y^{n \times \nvar}$ lie exactly on a $k'$-plane $\alpha$, where $k' < k < \nvar$. Then, by \eqref{E:when.rows.on.alpha}, if $y \in \alpha$ there exists $(B')^{n \times k'}$ 
s.t.\ $Y = B' Z' + 1^{n} y$, where $(Z')^{k' \times \nvar}$ has rank $k'$. By \eqref{E:n>nvar>k>0}, the orthogonal complement of the row space, 
$\rho(Z')$, of $Z'$ has dimension $\nvar - k' \geq k+1-k' \geq 2$. Thus, there exist nonzero orthogonal 
$\nvar$-dimensional row vectors $z_{k'+1}, \ldots, z_{k-1}, z_{k}, z_{k+1}$ perpendicular to $\rho(Z')$. Append $k-k'$ column vectors all of whose entries are 0 to the right side of $B'$ to create a $n \times k$ matrix $B$. Thus, the last $k-k'$ columns of $B$ are 0. Drop $z_{k+1}$ and append $z_{k'+1}, \ldots, z_{k-1}, z_{k}$ to the bottom of $Z'$ to create a $k \times \nvar$ matrix $Z_{k}$ and append $z_{k'+1}, \ldots, z_{k-1}, z_{k+1}$ (i.e., leaving out $z_{k}$) to the bottom of $Z'$ to create a $k \times \nvar$ matrix $Z_{k+1}$. Thus, $Z_{k}$ and $Z_{k+1}$ have different row spaces. Yet $B Z_{k} + 1^{n} y = Y = B Z_{k+1} + 1^{n} y$. Thus, by \eqref{E:when.rows.on.alpha}, the rows of $Y$ lie on two distinct $k$-planes. Therefore, $Y \notin \Pf$ and the claim \eqref{E:Y.in.Pf.means.rows.not.in.lwr.dim.plane} is proved. 

Conversely, we also \emph{claim:}
    \begin{multline}  \label{E:rows.on.k-plane.not.on.lwr.dim.plane.then.in.Pf}
      \text{If } Y \in \Y \text{ and the rows of $Y$ lie on a plane of dimension } k, \\
        \text{ but do \emph{not} lie on any plane of dimension } < k. 
          \text{ Then } Y \in \Pf. 
    \end{multline}
Let $Y \in \Y$ satisfy the hypothesis 
of \eqref{E:rows.on.k-plane.not.on.lwr.dim.plane.then.in.Pf}. We show $Y \in \Pf$. 
Let $\alpha$ be a $k$-plane containing the rows of $Y$ and let $w$ be as in \eqref{E:wT.Y.in.alph}. It follows from \eqref{E:wT.Y.in.alph} and \eqref{E:when.rows.on.alpha} that there is a $k$-dimensional subspace 
$\zeta \in G(k,\nvar)$ s.t.\ the row space 
of $X := Y - 1^{n} w^{T} Y$ is a subspace of $\zeta$. The only way $Y$ could escape membership in $\Pf$ would be if $\alpha$ were not unique, i.e.\ if there were another $k$-plane $\beta \neq \alpha$ s.t.\ the rows of $Y$ all lie on $\beta$ as well. 
But, by \eqref{E:wT.Y.in.alph}, $w^{T} Y \in \alpha \cap \beta$. Hence, by \eqref{E:when.rows.on.alpha}, there exists 
$\zeta' \in G(k,\nvar)$ s.t.\ $\zeta' \neq \zeta$ but 
$\rho(X) \subset \zeta'$. (We will not be fussy about whether planes consist of row vectors, column vectors, or just real tuples.) Thus, $\rho(X) \subset \zeta \cap \zeta'$. 
But $\dim (\zeta \cap \zeta') < k$. I.e., the rows of $Y$ \emph{do} lie on a plane of dimension $< k$, viz.\ $(\zeta \cap \zeta') + w^{T} Y$. 
This contradiction proves that $Y \in \Pf$ and hence the claim \eqref{E:rows.on.k-plane.not.on.lwr.dim.plane.then.in.Pf}.

We have
  \begin{lemma} \label{L:when.rank.Y=k.implies.Y.in.Pf}
Suppose $rank \, Y^{n \times \nvar} = k$, so the rows of $Y$ lie exactly on a plane 
in $G(k,\nvar)$. Then $Y \in \Pf^{k}$ if and only if $1^{n}$ does not lie in the column space of $Y$. 
  \end{lemma}
  \begin{proof}
Let $Y \in \Y$ have rank $k$. Let $\Xi$ be the column space of $Y$.

First, suppose $1^{n} \in \Xi$. We show $Y \notin \Pf^{k}$. 
There exists $\tilde{x}^{1 \times \nvar}$ s.t.\ $1^{n} = Y \tilde{x}^{T}$. We may assume 
$\tilde{x} \in \rho(Y)$. Let $x := |\tilde{x}|^{-2} \tilde{x}$. 
We have $\rho(Y - 1^{n} x) \subset \rho(Y)$ and
    \begin{equation*}
      ( Y - 1^{n} x ) \tilde{x} = Y \tilde{x} -   |\tilde{x}|^{-2} 1^{n} \tilde{x} \, \tilde{x}^{T} 
        = 1^{n} - 1^{n} = 0  .
    \end{equation*}
Let $\zeta$ be the orthogonal complement of $x$ \emph{within} $\rho(Y)$. 
Thus, $\dim \zeta = k-1$. (So if $k=1$ then $\zeta = \{0\}$.) The rows of $Y$ thus lie 
on $\zeta + x$, a $(k-1)$-dimensional plane. Thus, by \eqref{E:Y.in.Pf.means.rows.not.in.lwr.dim.plane}, $Y \notin \Pf$. 
(Aside: There exists $w^{n \times 1}$ s.t.\ $x = w^{T} Y$. Then 
$1 = \bigl( |\tilde{x}|^{-2} \tilde{x} \bigr) \tilde{x}^{T} 
        = x \tilde{x}^{T} = w^{T} Y  \tilde{x}^{T} = w^{T} 1^{n}$. I.e., $w^{T} 1^{n} = 1$
and $Y - 1^{n} x  = Y - 1^{n} w^{T} Y$.

Now suppose $1^{n} \notin \Xi$, $rank \, Y = k$, but $Y \notin \Pf^{k}$. 
Then, by \eqref{E:rows.on.k-plane.not.on.lwr.dim.plane.then.in.Pf}, there exists a plane 
$\alpha \subset \RR^{\nvar}$ of dimension $< k$ s.t.\ the rows of $Y$ lie exactly on $\alpha$. Hence, by \eqref{E:when.rows.on.alpha}, there exist matrices $B^{n \times (k-1)}$, $Z^{(k-1) \times \nvar}$, and 
$x^{1 \times \nvar} \in \alpha$ s.t.\ $Y = B Z + 1^{n} x$. Write $x = y + z$, 
where $y \perp \rho(Z)$ and $z \in \rho(Z)$. Thus, there exists 
$v^{(k-1) \times 1}$ s.t.\ $z = v^{T} Z$. Therefore, 
    \begin{equation*}
      Y = B Z + 1^{n} z + 1^{n} y = B Z + 1^{n} v^{T} Z + 1^{n} y 
        = (B + 1^{n} v^{T} )Z + 1^{n} y .
    \end{equation*}
If $y = 0$ then $Y = (B + 1^{n} v^{T} )Z$. But this contradicts $rank \, Y = k$. 
Therefore, $y \neq 0$ and $Y y^{T} = 0 + |y|^{2} 1^{n}$. I.e., $1^{n} \in \Xi$. Contradiction. Hence, $Y \in \Pf^{k}$. 
  \end{proof}

  \begin{example} \label{Ex:1.3.in.col.space?}
Here are two toy examples. Take $k = 1$, $\nvar = 2$, and $n = 3$, in compliance with \eqref{E:n>nvar>k>0}. First, let $Y^{3 \times 2}$ be the matrix each of whose rows is $(1,0)$. Then $rank \, Y = 1$, but each row lies on the ``0-plane'' $(1,0)$. Thus, $Y \notin \Pf^{1}$. But also $1^{3}$ is in the column space of $Y$. Next, suppose $Y^{3 \times 2}$ is the matrix with rows $(1,0)$, $(1,0)$, and $(0,0)$. Again, $rank \, Y = 1$, but the rows of $Y$ lie exactly on the unique 1-plane, the $x$-axis. Thus, $Y \in \Pf^{1}$. Moreover, $1^{3}$ is not in the column space of $Y$. 

Finally, suppose $Y^{3 \times 2}$ is the matrix with rows $(1,0)$, $(1,0)$, and $(0,1)$. Now $rank \, Y = 2$, but the rows of $Y$ lie exactly on the unique 1-plane passing through $(1,0)$ and $(0,1)$. Thus, $Y \in \Pf^{1}$. This time we again have $1^{3}$ in the column space of $Y$. But the rows of $Y$ do not lie in a line in $G(1,2)$ so this is not a counterexample to lemma \ref {L:when.rank.Y=k.implies.Y.in.Pf}.
  \end{example} 

\emph{Claim:}
    \begin{multline} \label{E:when.is.Y.in.Pk}
       \text{Let } w^{n \times 1} \text{ be some vector satisfying } w^{T} 1^{n} = 1. \\
       \text{ Then } Y \in \Pf^{k} \text{ if and only if } rank \, ( Y - 1^{n} w^{T} Y ) = k .
    \end{multline}
Let $Y \in \Y$ and $w^{n \times 1}$ satisfy $w^{T} 1^{n} = 1$. First, suppose $Y \in \Pf$. Then, by \eqref{E:Pf.invar.under.shifts.rescale}, $Y' := Y - 1^{n} w^{T} Y \in \Pf$. Let $\alpha$ be the $k$-plane on which the rows of $Y'$ lie exactly. Now, $w^{T} Y' = w^{T} Y - w^{T} 1^{n} w^{T} Y = 0$. Hence, by \eqref{E:wT.Y.in.alph}, $0 \in \alpha$. I.e., $\alpha$ passes through the origin. 
I.e., $\alpha \in G(k,n)$. The rows of $Y'$ lie exactly on $\rho(Y')$ and, by \eqref{E:Y.in.Pf.means.rows.not.in.lwr.dim.plane}, $\dim \rho(Y') \geq k$. But we must have $\rho(Y') \subset \alpha \in G(k,n)$. Hence, $\dim \rho(Y') \leq k$. 
I.e., $rank \, (Y - 1^{n} w^{T} Y) = rank \, Y' = \dim \rho(Y') = k$, as desired. 

Conversely, let $Y \in \Y$ with $rank \, ( Y - 1^{n} w^{T} Y ) = k$. By \eqref{E:Pf.invar.under.shifts.rescale}, to show $Y \in \Pf$ it suffices to show 
$Y' := Y - 1^{n} w^{T} Y \in \Pf$. By lemma \ref{L:when.rank.Y=k.implies.Y.in.Pf}, 
to show $Y' \in \Pf$ it suffices to show that $1^{n} \notin \Xi :=$ column space of $Y'$. For suppose $1^{n} \in \Xi$. Then there exists $x^{1 \times \nvar}$ s.t.\ $1^{n} = Y' x^{T}$. 
Write $w = (w_{1}, \ldots, w_{n})^{T}$, let $(y'_{i})^{1 \times \nvar}$ be the $i^{th}$ row of $Y'$, and let $a_{i} := y'_{i} x^{T} \in \RR$ ($i = 1, \dots, n$). Let $i = 1, \dots, n$. 
Now, $w^{T} 1^{n} = 1$ implies $w_{n} = 1 - (w_{1} - \cdots - w_{n-1})$. Therefore, the $i^{th}$ row 
of $(Y - 1^{n} w^{T} Y ) x^{T}$ is
    \begin{align}   \label{(Y-1.n.w.trans.Y).x.trans}
      1 = \left( y'_{i} - \sum_{j=1}^{n} w_{j} y'_{j} \right) x^{T} 
        &= a_{i} - \sum_{j=1}^{n} w_{j} a_{j} 
          \notag \\
        &= a_{i} - \sum_{j=1}^{n-1} w_{j} a_{j} - w_{n} a_{n} \\
        &= a_{i} - \sum_{j=1}^{n-1} w_{j} a_{j} - \left( 1 -  \sum_{j=1}^{n-1} w_{j} \right) a_{n} 
          \notag \\
        &= (a_{i} - a_{n}) - \sum_{j=1}^{n-1} w_{j} (a_{j} - a_{n}) . \notag
    \end{align}
This equals $1$. Thus,
    \begin{equation*}
       a_{i} - a_{n} = 1 + \sum_{j=1}^{n-1} w_{j} (a_{j} - a_{n}),  \qquad i = 1, \dots, n .
    \end{equation*}
But the RHS of the preceding is constant in $i$, which means so is the left hand side (LHS). But with $i = n$ we have 
$a_{n} - a_{n} = 0$. We conclude that $a_{i} - a_{n} = 0$ for $i = 1, \ldots, n$. But by \eqref{(Y-1.n.w.trans.Y).x.trans}, this means $1 = 0$. Contradiction. 
Therefore, $1^{n} \notin \Xi$ so by lemma \ref{L:when.rank.Y=k.implies.Y.in.Pf}, we conclude $Y \in \Pf^{k}$. This completes the proof of the claim \eqref{E:when.is.Y.in.Pk}. (The proof shows that no matter the rank of $Y - 1^{n} w^{T} Y$, $1^{n}$ is not in its column space.)

Define a map $\Delta : \Pf \to G(k,\nvar)$ as follows.
    \begin{multline}  \label{E:Delta(Y).defn}
        \text{ If } Y \in \Pf, \text{ there is a unique } k\text{-dimensional subspace } 
          \Delta(Y) := \Delta_{k}(Y) \in G(k,\nvar)  
            \\ \text{ passing through the origin, that is  parallel to the unique } \\
               k\text{-plane on which the rows of } Y \text{ lie exactly. } \\
                \text{ Let } w^{n \times 1} \text{ be an arbitrary column vector s.t.\ } 
                  w^{T} 1^{n} = 1. \\
                   \text{ Then } \Delta(Y) = \rho( Y - 1^{n} w^{T} Y ).
    \end{multline}
	
Along these lines suppose the rows of $Y \in \Pf$ lie on a $k$-dimensional subspace 
$\xi \in G(k,\nvar)$, i.e.\ a plane through the origin. 
So $\Delta(Y) = \xi$. We \emph{claim} that just what one would expect from \eqref{E:when.is.Y.in.Pk} is indeed the case:
    	\begin{multline} \label{E:Y.lies.on.k-dim.subspace.Y.in.Pf}
    	  \text{If the rows of $Y \in \Pf$ lie on a plane in } G(k,\nvar) \\
	      \text{ then there exists } w^{n \times 1} \text{ s.t.\ }
	         w^{T} 1^{n} = 1 \text{ but } w^{T} Y = 0 .
    	\end{multline} 
By lemma \ref{L:when.rank.Y=k.implies.Y.in.Pf}, $1^{n} \notin \Xi$. This means that the orthogonal projection, $(\tilde{w})^{n \times 1}$, of $1^{n}$ onto $\Xi^{\perp}$ is not 0. This means $1^{n} \tilde{w}^{T} \neq 0$. Let $w := (1^{n} \tilde{w}^{T})^{-1} \tilde{w}$. Thus, $1^{n} w^{T} = 1$. But $w \in \Xi^{\perp}$. Therefore, $w^{T} Y = 0$ as desired. This proves the claim \eqref{E:Y.lies.on.k-dim.subspace.Y.in.Pf}.

To start with, at least initially, we may identify the data space $\D$ with 
$\mathbb{R}^{\nvar n}$. In this case we say that the ``sample size'' is $n$ and the rows 
of $Y \in \Y$ are ``observations''. $\Pf$ is a manifold (\cite[p.\ 500]{spE95} or lemma \ref{L:Pf.is.a.manif}; see also \eqref{E:Pf(0).is.diff.manif}) but not a compact manifold. Its closure includes ``degenerate data sets'', viz., those lying exactly on planes of dimension $< k$. (See remark \ref{R:degenerate.data}.) We have the following. For proof, see appendix \ref{Chptr:misc.proofs}.
 
    \begin{lemma}  \label{L:Pf.is.a.manif}
	$\Pf^{k}$ is a smooth imbedded submanifold of $\Y$ of dimension 
	  $nk + (k + 1)(\nvar - k)$.
    \end{lemma}
In particular, by remark \ref{R:retraction.in.manifs}, 
    \begin{multline}  \label{E:retraction.onto.Pk}
        \text{ There exists an open neighborhood } \clU \text{ of } \Pf^{k} \\
          \text{ and a smooth retraction } R : \clU \to \Pf^{k} .
    \end{multline}

We define
    \begin{equation}  \label{E:Kk.defn}
      \mcl{K}_{k} \text{ is the space of matrices } 
        B^{k \times \nvar} \text{ of full rank } k.
    \end{equation} 
Then, by lemma \ref{L:rank.lwr.semicont} in appendix \ref{Chptr:misc.proofs},   
$\mcl{K}_{k}$ is an open subset of the space, 
$\mcl{M}_{k, \nvar} := \mcl{M} \homeomto \RR^{k \nvar}$, 
of all $k \times \nvar$ matrices. Therefore, 
    \begin{equation}  \label{E:dim.Kk=k.nvar}
      \dim \mcl{K}_{k} = k \nvar.
    \end{equation} 
Moreover, $\mcl{K}_{k}$ is also dense in $\mcl{M}$. Suppose not. 
Then $\mcl{B} := \mcl{M} \setminus \mcl{K}_{k}$ would have non-empty interior. That would mean $\dim \mcl{B} = k \nvar$. But $\mcl{B}$ consists of matrices of rank less than $k$. By lemma \ref{L:rank.k.mats.form.manif} and \eqref{E:dim.of.whole.=.max.dim.of.parts}, we have $\dim \mcl{B} < k \nvar$, contradiction. 

We always shift the plane so that it passes through the origin so the feature space is 
    \begin{equation} \label{E:feature.space.in.plane.fitting}
      \F = G(k,\nvar) .
    \end{equation}
(See remark \ref{R:affine.planes}.) Recall (Boothby \cite[Definition (2.1), p.\  60 and Example (2.6) p.\ 64]{wmB75}) that $G(k,\nvar)$ is topologized 
so that $\rho : \mcl{K}_{k} \to G(k,\nvar)$ is continuous, where as usual $\rho(X)$ is the row space a matrix $X$. Therefore,
	\begin{equation}  \label{E:convergence.in.Grassmann}
	  \text{If } B, B_{1}, B_{2}, \ldots \in  \mcl{K}_{k} 
	    \text{ and } B_{i} \to B \text{ (in $\| \cdot \|$ norm) then } \rho(B_{i}) \to \rho(B) .
	\end{equation}
Here, in accordance with our convention, the superscript ``${}^{k \times \nvar}$'' indicates matrix dimension. So ``$B^{k \times \nvar}$'' indicates that $B$ must be a matrix, of dimension $k \times \nvar$. 

Let $1 \leq j_{1} < \cdots < j_{k} \leq \nvar$ and write $\mbf{j} = (j_{1}, \ldots, j_{k})$. 
Let $\mcl{K}_{k}(\mbf{j})$ be the set of matrices $B \in \mcl{K}_{k}$ s.t.\ the $k \times k$ submatrix 
consisting of columns of $B$ indexed by $\mbf{j}$ is invertible. By lemma \ref{L:rank.lwr.semicont}, the set of $k \times k$ invertible matrices is open in the space of all $k \times k$ matrices. It follows that $\mcl{K}_{k}(\mbf{j})$ is open in $\mcl{K}_{k}$. Moreover, every matrix in $\mcl{K}_{k}$ belongs to some 
$\mcl{K}_{k}(\mbf{j})$. Let $\clU := \clU(\mbf{j}) := \rho \bigl[ \mcl{K}_{k}(\mbf{j}) \bigr]$. 

Following Boothby \cite[p.\ 64]{wmB75} (with $\nvar$ in place of $n$), we define a coordinate map 
$\varphi := \varphi_{\mbf{j}} : \clU \to \mcl{M}_{k, \nvar-k}$. (Recall that $\mcl{M}_{k, \nvar-k}$ is the space of all $k \times (\nvar - k)$ matrices.) Let $\xi \in \clU$ and let $X \in \mcl{K}_{k}(\mbf{j})$ satisfy 
$\rho(X) = \xi$. By performing row operations one can transform $X$ into a matrix 
$Z \in \mcl{K}_{k}(\mbf{j})$ whose submatrix consisting of the columns of $Z$ indexed by $\mbf{j}$ is the identity matrix $I_{k}$. Thus, $\rho(Z) = \rho(X)$. It turns out that $Z$ actually only depends on $X$ through $\xi = \rho(X)$. 
Let $V^{k \times (n-k)} \in \mcl{M}_{k, \nvar-k}$ be the matrix obtained from $Z$ by dropping its columns indexed by $\mbf{j}$. Denote the operation $X \mapsto V$ by $g$. $g$ is surjective and smooth. 
Let $h$ denote the reverse operaton that reinserts the identity matrix $I_{k}$ into $V$ to produce $Z$. Then $g \circ h$ is the identity map on $\mcl{M}_{k, \nvar-k}$ and $h \circ g$ is the identity map on $\mcl{K}_{k}(\mbf{j})$. The following commutes.
	\begin{equation*} \label{E:rho.is.smooth.on.K}
      \begin{CD}  
         \mcl{K}_{k} @<{\hookleftarrow}<< \mcl{K}_{k}(\mbf{j}) @>{\rho}>> \clU 
             @>{\hookrightarrow}>> G(k,\nvar) \\ 
         && @V{g}VV                             @AA{\rho}A  \\
         && \mcl{M}_{k, \nvar-k} @>>{h}> \mcl{K}_{k}(\mbf{j})
      \end{CD} 
	\end{equation*}
By definition of the differentiable structure on $G(k,\nvar)$, the composition $\rho \circ h$ is smooth. Therefore, the composition $\rho \circ h \circ g$ is smooth. It follows that $\rho : \mcl{K}_{k}(\mbf{j}) \to G(k,\nvar)$ is smooth. But $\mcl{K}_{k}(\mbf{j})$ is open in 
$\mcl{K}_{k}$ and every point of $\mcl{K}_{k}$ lies in some $\mcl{K}_{k}(\mbf{j})$. Hence, 
    \begin{equation} \label{E:rho.is.smooth}
      \rho : \mcl{K}_{k} \to G(k,\nvar) \text{ is smooth. }
    \end{equation}	

We can go further. Let $\mcl{K}_{n}$ be the set of all $n \times \nvar$ matrices of rank 
$k \in (0, \nvar)$. By lemma \ref{L:rank.k.mats.form.manif}, $\mcl{K}_{n}$ is an imbedded submanifold of $\Y$ (of dimension $nk + k \nvar - k^{2}$). For each choice of $0 < i_{1} < \cdots < i_{k} \leq n$ the collection of matrices in $\mcl{K}_{n}$ whose rows numbered $i_{1}, \ldots, i_{k}$ are linearly independent is open and as 
$\mbf{i} := (i_{1}, \ldots, i_{k})$ varies one gets an open cover, 
$\{ \mcl{K}_{n,\mbf{i}} \}$, of $\mcl{K}_{n}$. The operation that takes 
$Y \in \mcl{K}_{n,\mbf{i}}$ to the $k \times \nvar$ matrix, $Y_{\mbf{i}}$, consisting of its rows numbered $i_{1}, \ldots, i_{k}$ is obviously smooth. 
But $Y_{\mbf{i}} \in \mcl{K}_{k}$ and $\rho(Y_{\mbf{i}}) = \rho(Y)$. Thus, applying the last paragraph we get
	\begin{equation} \label{E:rho.is.smooth.on.nxnvar.mats}
	  \rho \text{ is smooth on the set of all } n \times \nvar 
	     \text{ matrices of rank } k.
	\end{equation} 
In particular, by \eqref{E:Delta(Y).defn}, 
    \begin{equation} \label{E:Delta.smooth.on.Pf}
      \Delta \text{ is smooth on } \Pf^{k}.
    \end{equation}

Conversely, if $\xi$ is any linear subspace of $\RR^{k}$ define
    \begin{equation}  \label{E:defn.of.Pi(xi)}
      \Pi_{\xi}^{\nvar \times \nvar} = \Pi(\xi) 
        \text{ is the matrix of orthogonal projection of } \RR^{\nvar} \text{ onto } \xi .
    \end{equation} 
Let $x \in \xi$. 
Then $x \Pi_{\xi}  = x$. If $y^{1 \times \nvar}$ is arbitrary, $y \Pi_{\xi} \in \xi$. Hence, 
$y \Pi(\xi)^{2} = y \Pi(\xi)$. It is not hard to see that $\Pi(\xi)$ is self-adjoint. Hence, by Stoll and Wong 
\cite[Theorem 6.5, p.\ 134]{rrSetW68.LinearAlgebra}, $\Pi(\xi)^{T} = \Pi(\xi)$. In summary,
    \begin{equation} \label{E:proj.mat.symmetric.idempotent}
       \Pi(\xi)^{T} = \Pi(\xi) \text{ and } \Pi(\xi)^{2} = \Pi(\xi) .
    \end{equation}
Recall \eqref{E:Kk.defn}. Then it is well-known (and easy to prove) that
	\begin{equation} \label{E:proj.mat.from.K}
		\text{If } Z^{k \times \nvar} \in \mcl{K}_{k} 
		  \text{ then } Z^{T} (Z Z^{T})^{-1} Z 
		   =  \Pi \bigl[ \rho(Z) \bigr] .
	\end{equation} 
Let $\mcl{M}$ be the space of all $\nvar \times \nvar$ real matrices. We have
    \begin{lemma} \label{L:proj.mat.is.imbedding.of.Grass}
        $\Pi$ is a smooth imbedding of $G(k, \nvar)$ into $\mcl{M}$.
    \end{lemma}
For proof see appendix \ref{Chptr:misc.proofs}. Recall that $\Pf = \Pf^{k}$ is the collection of all data sets (i.e.,  $n \times \nvar$ matrices) whose rows lie exactly on a unique $k$-plane (not necessarily through the origin). (Recall \eqref{E:Delta(Y).defn}.) Recall \eqref{E:Delta(Y).defn}. It is reasonable to demand that 
   \begin{multline}  \label{E:plane.fitting.constraint}
      \text{On dense set of data sets } Y \in \Pf, \text{a plane fitting method } \\ 
          \text{ should map } Y \text{to a plane parallel to } \Delta(Y) .
   \end{multline}
\eqref{E:plane.fitting.constraint} defines the standard, $\Sigma$, (subsection \ref{SS:calibration}) for plane fitting. In fact, we want $\Phi$ to behave well off some sort of ``null'' subset of $\Pf$. (\eqref{E:plane.fitting.constraint} will not be true if ``regularization'' is used. See remark \ref{R:regularization.generalities}.)
A data map $\Phi : \Y \partlyto G(k,\nvar)$ that satisfies \eqref{E:plane.fitting.constraint}, at least approximately, is a ``plane-fitter.'' I.e., 
    \begin{multline}  \label{E:plane-fitter.defn}
      \text{A map } \Phi : \Y \partlyto G(k,\nvar) 
        \text{ is a ``plane-fitter'' if and only if } \\
          \begin{aligned}
               1.& \; \Phi \text{ is defined and continuous on a dense subset } 
                 \Y' \text{ of } \Y, \\
               2.& \; \Y' \cap \Pf \text{ is dense in } \Pf, \text{ and } \\
               3.& \; \Phi = \Delta \text{ on } \Y' \cap \Pf  .
          \end{aligned}
    \end{multline}
Part 3 of the preceding can be relaxed. See section \ref{SS:loose.plane.fitting}.
In this chapter 
    \begin{equation}  \label{E:plane.fit.G.is.triv}
      \text{The group $G$ of homeomorphisms of $\D$ onto itself is the trivial group.}
    \end{equation}
(Common plane-fitting methods are invariant under permutation but if, say, the data are multivariate time-series, one might choose to use a method that takes order into account. The theory in this chapter allows that. In the chapters on measures of location on spheres (chapters \ref{Chptr:spherical.location}, \ref{Chptr:aug.direct.mean}, and \ref{Chptr:robst.loc.on.circle}), however, we will confine attention to methods invariant under permutation.)

\begin{example}[Three important examples] \label{Ex:3.plane.fitters}
Least squares (LS, subsection \ref{SS:lin.reg.and.LS}) and least absolute deviation regression (LAD, Bloomfield and Steiger \cite{pBwlS83}, section \ref{SS:LAD} and appendix \ref{Chptr:LAD.technicalities} below), like LS except minimizing the $L^{1}$ norm, not the $L^{2}$ norm) are well-known plane-fitting methods.

Principal components (PC, section \ref{SS:PC.plane.fitting}) is another common plane-fitter. Unlike LS and LAD, PC is an ``unsupervised learning'' method (Christianini and Shawe-Taylor \cite[p.\ 3]{nCjS-T00.svm}, Hastie \emph{et al} \cite[p.\ 438]{tHrTjF01.statlearn}, Johnson and Wichern \cite[Chapter 8]{raJdwW92}). PC is often used for dimension reduction and it works as follows. Let $Y \in \Y$. Let $Y^{0}$ be the matrix obtained from $Y$ by subtracting the arithmetic mean 
$\bar{y}^{1 \times \nvar} := n^{-1} 1_{n} Y$ of each column from all the entries in that column. So $(Y^{0})^{n \times \nvar} := Y - 1^{n} \bar{y}$. Then the ``sample covariance  matrix'' of $Y$ is 
    \begin{equation}  \label{E:sample.covariance.matrix}
      cov(Y)^{\nvar \times \nvar}  :=  \frac{1}{n-1}(Y^{0})^{T} \, Y^{0} 
        = \frac{1}{n-1} Y^{T} Y - \frac{n}{n-1} \bar{y}^{T} \bar{y} .
    \end{equation} 
(Sometimes $1/n$ is used instead of $1/(n-1)$.) The PC $k$-plane, $PC(Y)$, for $Y$ is that spanned by the eigenvectors of $cov(Y)$ corresponding to the $k$ largest eigenvalue, providing the $k^{th}$
and $(k+1)^{st}$ largest eigenvalues are unequal. (In another version, $cov(Y)$ is replaced by the correlation matrix.) \footnote{In section \ref{SSS:LF.plots}, an alternative definition of PC line-fitting on the plane. See appendix \ref{Chptr:misc.proofs} for a proof of the essential equivalence of the two definitions in this simple case.} 

Figure \ref{F:LS.PC.LAD.lf.plots} shows ``graphs'' of the three methods on a nonlinear slice through $\D$ with $k=1$, $\nvar=2$, and $n = 3$.
\end{example}

For the purpose of analyzing the singularities of plane-fitters, we sometimes replace 
\eqref{E:plane-fitter.defn} by a more general version:
   \begin{subequations} \label{E:general.plane-fitter.defn}
	\begin{gather*}
           \D \text{ satisfies \eqref{E:D.metric.F.normal}. } 
             \tag{\ref{E:general.plane-fitter.defn}a} \\
           \D \cap \Y \text{ is non-empty and the identity map on } \D \cap \Y 
              \tag{\ref{E:general.plane-fitter.defn}b} \\
            \qquad \qquad  \text{ is locally bi-Lipschitz w.r.t.\ the metrics it inherits
               from $\D$ and $\Y$.}  \\
           \D' \subset \Y \text{ and } \D' \cap \Pf^{k} \text{ is dense in } \D \cap \Pf^{k} 
             \tag{\ref{E:general.plane-fitter.defn}c} .\\
           \D' \subset \D \text{ is dense in } \D .  \tag{\ref{E:general.plane-fitter.defn}d} \\
           \text{If a test pattern space } \T \subset \Pf^{k} 
             \text{ has been specified, then } 
             \D' \cap \T \text{ is dense in } \T . \tag{\ref{E:general.plane-fitter.defn}e} \\
           \Phi : \D' \to G(k,\nvar) \text{ is continuous and } \Phi = \Delta_{k} 
             \text{ on } \D' \cap \Pf^{k} . \tag{\ref{E:general.plane-fitter.defn}f} 
	\end{gather*}
   \end{subequations}
It follows from (\ref{E:general.plane-fitter.defn}c) and (\ref{E:general.plane-fitter.defn}d) that $\Y$ is dense in $\D$. When $\D = \Y$ we often use the symbol $\Y'$ rather than 
$\D'$. In section \ref{SS:loose.plane.fitting} we show that (\ref{E:general.plane-fitter.defn}f) can be relaxed. 

\section{$\D$, $\Pf$, and $\T$ in plane-fitting} \label{SS:D.T.plane.fit}
In this section we investigate Hausdorff dimension and measure of singular sets of plane-fitting methods in general. That means investigating how plane-fitting might satisfy the hypotheses of proposition \ref{P:sing.dim.when.H.d-r.D.=.0} (and therefore of theorem \ref{T:Phi.star.Hr.contains.Theta.star.Hr}) and theorem \ref{T:lwr.bnd.on.Haus.meas}. Later sections will examine specific methods or issues. 

To apply proposition \ref{P:sing.dim.when.H.d-r.D.=.0}, $\D$ must be a compact manifold and we get the tightest bound by taking $r$ as small as possible. In fact, in this chapter   
	\begin{equation}  \label{E:r=1}
		r=1 .
	\end{equation} 
We will deal with the compactness issue later. Temporarily let $\D := \Y$ defined in \eqref{E:mcl.Y,defn}. 

The following familiar fact is useful here and below.
    \begin{equation} \label{E:switch.and.trace}
      \text{If } A \text{ and } B \text{ have dimensions } \alpha \times \beta 
        \text{ and } \beta \times \alpha, \text{ resp., then } trace \, AB = trace \, BA.
    \end{equation} 
Let $A^{\ell \times m}$ be an arbitrary real matrix. Define the \emph{Euclidean} or \emph{Frobenius} norm (Blum \emph{et al} \cite[p.\ 203]{lBfCmSsS98.realcompute}, Marcus and Minc \cite[p.\ 18]{mMhM64.MatrixThyIneq}) of $A$ by 
    \begin{equation} \label{E:matrix.norm}
      \| A \| := \sqrt{ trace \, A^{T} A } =  \sqrt{ trace \, A A^{T} } .
    \end{equation} 
($\Y$ with the Euclidean norm is consistent with \eqref{E:Y.homeomto.Rn.nvar}.)

Let $\nu_{1}, \ldots, \nu_{m}$ be the eigenvalues of $A^{T} A$ and 
$\lambda_{1}, \ldots, \lambda_{\ell}$ the eigenvalues of $A A^{T}$. We have
    \begin{equation} \label{E:Frob.norm.in.terms.of.eigvals}
      \| A \| := \sqrt{ trace \, A^{T} A } = \sqrt{ \sum_{j=1}^{m} \nu_{j} } 
        = \sqrt{ trace \, A A^{T} }  = \sqrt{ \sum_{i=1}^{\ell} \lambda_{i} } .
    \end{equation}
(All those eigenvalues are non-negative. $A A^{T}$ and $A^{T} A$ are symmetric. Diagonalize.)

Construct a preliminary version of $\T$ as follows. We will modify it below. Pick an arbitrary $(k-1)$-plane $\zeta \in G(k-1,\nvar)$ through the origin. (So if $k = 1$, $\zeta$ \emph{is} the origin.) Hold $\zeta$ fixed. Let $v_{1}, v_{2} \in \RR^{\nvar}$ be fixed orthonormal row vectors perpendicular to $\zeta$ (they exist since 
$\nvar  - \dim \zeta = (\nvar-k) + (k-\dim \zeta) \geq 2$ by \eqref{E:n>nvar>k>0}).

Define 
    \begin{equation}  \label{E:1-D.proj.space}
      P^{1} = P^{1}(\RR) \text{ is the one-dimensional real projective space }
    \end{equation}
(``real projective line''; do not confuse this ``$P$'' with ``$\Pf$''). I.e., $P^{1}$ is the space of all lines in $\RR^{2}$ through the origin (Boothby \cite[p.\ 15]{wmB75}).  
\emph{Claim:} 
    \begin{equation}  \label{E:P1.diffeom.to.S1}
      P^{1} \text{ is diffeomorphic to the circle (1-sphere), } S^{1} .
    \end{equation}
This is proved as follows. 
Let $J$ is any open interval of length $\pi$. Then 
$\ell_{J} : \theta \mapsto \text{ span of } (\cos \theta, \sin \theta) \in P^{1}$ parametrizes a coordinate neighborhood $\clU_{J}$ in $P^{1}$.\footnote{$P^{1} = G(1,2)$. Following 
Boothby \cite[Example 2.6, pp.\ 63--64]{wmB75}, $P^{1}$ should have coordinate neighborhoods defined as follows: Let $I \subset \RR$ be an open interval of length 
$\pi/2$. It is not the case that both $\sin \theta = 0$ occurs for some $\theta \in I$ 
and $\cos \theta = 0$ for some (necessarily not the same) 
$\theta \in I$. Suppose for no $\theta \in I$ do we have $\cos \theta = 0$. Then 
$\ell_{I} : \theta \mapsto \text{ span of } (1, \tan \theta) \in P^{1}$ ($\theta \in I$) is a parametrization of a coordinate neighborhood, $\clU_{I}$, of $P^{1}$. If $\cos \theta = 0$ for some $\theta \in I$ then for no $\theta \in I$ do we have $\sin \theta = 0$. In that case 
$\ell^{I} : \theta \mapsto \text{ span of } (\cot \theta, 1) \in P^{1}$ ($\theta \in I$) is a parametrization of a coordinate neighborhood, $\clU^{I}$, of $P^{1}$. (Note that if neither $\cos \theta = 0$ nor $\sin \theta = 0$ then $\ell_{I}(\theta) = \ell^{I}(\theta)$.) Clearly 
$\ell : \theta \mapsto \text{ span of } (\cos \theta, \sin \theta)$ is the same as 
$\ell_{I}$ when it is defined and $\ell^{I}$ when it is defined.}
Define $2 J := \{ 2 \phi \in \RR : \phi \in J \}$. 
The map $\omega_{J} : \phi \mapsto ( \cos \phi, \sin \phi)$, ($\phi \in 2J$) parametrizes a coordinate neighborhood, $\mcl{V}_{J}$, of $S^{1}$. Given $m \in \clU_{J}$ define 
$f_{J}(m) := \omega_{J} \bigl[ 2 \ell_{J}^{-1}(m) \bigr] \in \mcl{V}_{J}$. $f_{J}$ is smooth and so its inverse. Let $J_{1}, J_{2} \subset \RR$ be two open intervals of length 
$\pi$ and let $m \in \clU_{J_{1}} \cap \clU_{J_{2}}$. Then $\ell_{J_{1}}^{-1}(m)$ and 
$\ell_{J_{2}}^{-1}(m)$ differ by an integer multiple of $\pi$. Hence, $f_{J_{1}}$ and 
$f_{J_{2}}$ agree on $\clU_{J_{1}} \cap \clU_{J_{2}}$. The claim \eqref{E:P1.diffeom.to.S1} is proved.

If $\ell \in P^{1}$, then $\ell$ determines a line $L(\ell) \subset \RR^{\nvar}$ 
in the $(v_{1}, v_{2})$-plane:
    \begin{equation*}
      L(\ell) := \bigl\{ \alpha v_{1} + \beta v_{2} \in \RR^{\nvar} :  
       (\alpha, \beta) \in \ell \bigr\} .
    \end{equation*} 
$L(\ell)$ is a line in $\RR^{\nvar}$, i.e.\, a point in the projective space $P^{\nvar-1}$. In local coordinates $L$ is given by 
$\theta \mapsto \text{ span of } \bigl[ (\cos \theta) v_{1} + (\sin \theta) v_{2} \bigr]$, 
as $\theta$ varies over an open interval $J$ of length $\pi$. 
$L : P^{1} \to G(1, \nvar) = P^{\nvar-1}$ is clearly an imbedding.

Let 
    \begin{equation} \label{E:lambda(ell).defn}
      \lambda(\ell) := L (\ell) \oplus \zeta \in G(k,\nvar), \; (\ell \in P^{1}).
    \end{equation} 
\emph{Claim:}
    \begin{equation} \label{E:lambda(ell).imbedding}
      \lambda : P^{1} \to G(k,\nvar) \text{ is an imbedding.}
    \end{equation}
WLOG assume $v_{1} = (0, \ldots, 0, 1, 0, \ldots, 0)$, where the ``1'' is in position $q-k$ and $v_{2} = (0, \ldots, 0, 1, 0, \ldots, 0)$, where the ``1'' is in position $q-k+1$. 
Let $Z^{(k-1) \times \nvar} = (0^{(k-1) \times (\nvar-k+1)}, I_{k-1})$.  By assumption, 
$\zeta \in G(k-1,\nvar)$ and $v_{1}, v_{2} \perp \zeta$. Recall \eqref{E:rho=row.space}. WLOG $\zeta = \rho(Z)$. Let $H$ be an open interval of length $\pi/2$. Either $\sin$ does not vanish on $H$ or $\cos$ does not. Suppose it is $\cos$ that does not vanish on $H$. Then in the local coordinates of $P^{1}$, $\lambda$ can be written as 
    \begin{equation*}
      \theta \mapsto \rho
        \begin{pmatrix}
             0^{1 \times (\nvar-k-1)} & \tan \theta & 1 & 0^{1 \times (k-1)} \\
             0^{(k-1) \times (\nvar-k-1)} & 0^{(k-1) \times 1} & 0^{(k-1) \times 1} 
               & I_{k-1}
        \end{pmatrix} ,
          \qquad \theta \in H .
    \end{equation*}
The last $k$ columns of this matrix form $I_{k}$. Therefore by Boothby \cite[Example 2.6, pp.\ 63--64]{wmB75}, the preceding thus expresses $\lambda$ in local coordinates of $G(k,\nvar)$. Similarly, if it is $\sin$ that does not vanish on $H$. Hence, $\lambda$ is an immersion. It is injective. Moreover, $P^{1}$ is compact. The claim \eqref{E:lambda(ell).imbedding} now follows from Boothby 
\cite[Theorem (5.7), p.\ 79]{wmB75}.

Let $\mcl{M}$ be the space of all $\nvar \times \nvar$ real matrices. It follows from \eqref{E:lambda(ell).imbedding} and lemma \ref{L:proj.mat.is.imbedding.of.Grass} that, in the notation of that lemma, 
    \begin{equation} \label{E:Pi.circ.lambda.is.imbedding}
      \Pi \circ \lambda \text{ is an imbedding of $P^{1}$ into } \mcl{M} .
    \end{equation}

Pick $\mbf{Y} \in \Y$ of full rank, $\nvar$. The only restriction we put on $\mbf{Y}$ is that there exists 
    \begin{equation}  \label{E:plane.fitting.wT.Y=0}
      w^{n \times 1} \text{ s.t.\ } w^{T} 1^{n} = 1 \text{ and } w^{T} \mbf{Y} = 0 .
    \end{equation}
In particular, we forbid $\mbf{Y}$ from having a column proportional 
to $1^{n}$. This is possible: Start with $w^{n \times 1}$ with $w^{T} 1^{n} = 1$. The dimension of the orthogonal complement, $w^{\perp}$, (see \eqref{E:superscript.perp.notation}) of $w$ in $\RR^{n}$ has dimension $n-1 \geq \nvar$, by \eqref{E:n>nvar>k>0}. Thus, we can find $\nvar$ linearly independent $n$-vectors in $w^{\perp}$ to be the columns of $\mbf{Y}$.  

For $\ell \in P^{1}$ let $\Upsilon(\ell) \in \Y$ be the data set  ($n \times \nvar$ matrix) whose $i^{th}$ row is the orthogonal projection of the $i^{th}$ row of $\mbf{Y}$ onto $\lambda(\ell)$ ($i=1, \ldots, n$). Thus, 
    \begin{equation}  \label{E:plane-fitting.Upsilon.defn}
      \Upsilon = \mbf{Y} \, (\Pi \circ \lambda) .
    \end{equation}
It follows from \eqref{E:Pi.circ.lambda.is.imbedding} that 
    \begin{equation}  \label{E:Upsilon.is.smooth.imbedding}
       \Upsilon \text{ is a smooth imbedding of $P^{1}$ into } \Y .
    \end{equation}

Recall the definition, \eqref{E:Perfect.Fits.in.plane.fitting}, of $\Pf^{k}$. In this section, in accordance with that definition, we write $\Pf = \Pf^{k}$. \emph{Claim:} 
	\begin{equation} \label{E:Upsilon.maps.P1.into.Pf}
		\text{If } \ell \in P^{1} \text{ then } \Upsilon(\ell) \in \Pf. 
	\end{equation}
To see this, let $w^{n \times k}$ be as in \eqref{E:plane.fitting.wT.Y=0}. Thus,  $w^{T} \Upsilon(\ell) = 0$ so $\Upsilon(\ell) - 1^{n} w^{T} \Upsilon(\ell) = \Upsilon(\ell)$. Hence, by \eqref{E:when.is.Y.in.Pk}, to prove \eqref{E:Upsilon.maps.P1.into.Pf}, it suffices to prove 
$rank \, \Upsilon(\ell) = k$. First observe that, since $\dim \lambda(\ell) = k$, we have 
$rank \, \Upsilon(\ell) \leq k$. But, since $\mbf{Y}$ has full rank $\nvar$, there is $\nvar \times \nvar$ matrix, $M$, consisting of linearly independent rows of $\mbf{Y}$. I.e., $M^{\nvar \times \nvar}$ is invertible. Hence, if $x \in \lambda(\ell)$, thought of as a $1 \times \nvar$ row vector, is arbitrary, there exists $y^{1 \times \nvar}$ s.t.\ $x = y M $.
Since $x \in \lambda(\ell)$, $x = x \Pi \bigl[ \lambda(\ell) \bigr] = y M \Pi \bigl[ \lambda(\ell) \bigr] = y M(\ell)$, where $M(\ell)$ is the row-wise orthogonal projection of $M$ onto $\lambda(\ell)$. But $M(\ell)$ consists of rows of $\Upsilon(\ell)$). This proves that 
    \begin{equation}  \label{E:lambda(ell).subset.Oops!(ell)}
      \lambda(\ell) \subset \rho \bigl[ \Upsilon(\ell) \bigr] .
    \end{equation}
So $rank \, \Upsilon(\ell) \geq k$. 
Thus, 
    \begin{equation}  \label{E:plane.fit.rank.Oops!=k}
      rank \, \Upsilon(\ell) = k .
    \end{equation} 
The claim \eqref{E:Upsilon.maps.P1.into.Pf} follows. 

By \eqref{E:lambda(ell).subset.Oops!(ell)}, 
\eqref{E:plane.fit.rank.Oops!=k}, \eqref{E:Upsilon.maps.P1.into.Pf}, \eqref{E:Delta(Y).defn}, and \eqref{E:plane.fitting.wT.Y=0}, 
	\begin{equation}  \label{E:lambda(ell)=Delta.Upsilon(ell)}
		\lambda(\ell) = \rho \bigl[ \Upsilon(\ell) \bigr] 
		  = \Delta \bigl[ \Upsilon(\ell) \bigr] , \quad \ell \in P^{1}.
	\end{equation}

Let 
	\begin{equation}  \label{E:plane.fittin.T.defn}
		\T := \Upsilon(P^{1}) \subset \Pf.   
	\end{equation}
By \eqref{E:Upsilon.is.smooth.imbedding},
    \begin{equation}  \label{E:in.plane.fitting.T.is.smooth.submanif}
        \text{ $\T$ is a compact smooth 1-dimensional 
          imbedded submanifold of } \Y.
    \end{equation}
In fact, by \eqref{E:P1.diffeom.to.S1}, $\T$ is diffeomorphic to a circle ($S^{1}$). 

Let $\Theta$ be the restriction $\Delta \restriction_{\T}$. \emph{Claim:} 
    \begin{equation}  \label{E:Theta.ast.nontriv.in.dim.1}
      \Theta_{\ast} \text{ is nontrivial in dimension 1.}
    \end{equation}

$\Phi$ also satisfies \eqref{E:nontriv.r-dim.homol} with $r = 1$ (using 
$\mathbb{Z}/2$ coefficients). This is a consequence of the following. (See the appendix \ref{Chptr:misc.proofs} for proof.)

  \begin{lemma}  \label{L:nontriv.homol.if.cov.by.bndl.map}
Let $\psi_{j}$ be an $\RR^{n}$-bundle over a base space, $B_{j}$ ($j = 1,2$) 
(Milnor and Stasheff \cite[p.\ 13]{jwMjdS74}). Let $f : B_{1} \to B_{2}$ be continuous and suppose $f$ can be covered by a bundle map from $\psi_{1}$ to $\psi_{2}$ (Milnor and Stasheff \cite[p.\ 26]{jwMjdS74}). If $\psi_{1}$ has a nontrivial Stiefel-Whitney characteristic cohomology class (Milnor and Stasheff \cite[\S 4]{jwMjdS74}) in dimension $s > 0$, 
then $f_{\ast} : H_{s}(B_{1} ; \mathbb{Z}/2) \to H_{s}(B_{2} ; \mathbb{Z}/2)$, the induced homomorphism of homology in dimension $s$, is nontrivial.
  \end{lemma}

We apply this as follows. Let $\gamma^{1}_{1}$ be the canonical line bundle over $P^{1}$ 
(Milnor and Stasheff \cite[Example 4, pp.\ 15 -- 16]{jwMjdS74}). Let $\epsilon$ be the trivial $(k-1)$-bundle over $P^{1}$ all of whose fibers equal the $(k-1)$-plane $\zeta$ (Milnor and Stasheff \cite[Example 1, p.\ 14]{jwMjdS74}). Let $\psi_{1}$ be the Whitney sum, $\gamma^{1}_{1} \oplus \epsilon$ (Milnor and Stasheff \cite[p.\ 27]{jwMjdS74}). 
Thus, $\psi_{1}$ is a $k$-bundle over $P^{1}$. 

Let $\gamma^{k}(\RR^{\nvar})$ be the canonical $k$-plane bundle over $G(k,\nvar)$ (Milnor and Stasheff \cite[pp.\ 59-60]{jwMjdS74}). Now, by \eqref{E:lambda(ell).defn}, 
$\lambda(\cdot) : P^{1} \to G(k,\nvar)$ takes $\ell$ to $L (\ell) \oplus \zeta$.
Then $\lambda$ is covered by the bundle map, $F$, from $\psi_{1}$ 
to $\gamma^{k}( \RR^{\nvar} )$ defined as follows. Recall that $v_{1}, v_{2} \in \RR^{\nvar}$ are perpendicular to $\zeta$. Let $\ell \in P^{1}$, 
$(\alpha, \beta) \in \ell \subset \RR^{2}$, and $z \in \zeta \subset \RR^{\nvar}$. 
Then $F$ takes $\bigl( \ell, (\alpha, \beta, z) \bigr)$ 
to $\bigl( \lambda(\ell), \alpha v_{1} + \beta v_{2} + z \bigr)$. 
Thus, $F : \psi_{1} \to \gamma^{k}(\RR^{\nvar})$ is a bundle map that covers $\lambda$. 
But, by Milnor and Stasheff \cite[Axiom 4, p.\ 38]{jwMjdS74} 
$\gamma^{1}_{1}$ has a nontrivial Stiefel-Whitney class in dimension 1. 
Therefore, by (Milnor and Stasheff \cite[Proposition 3, p.\ 39]{jwMjdS74}), 
so does $\psi_{1}$. 
Hence, by lemma \ref{L:nontriv.homol.if.cov.by.bndl.map}, 
    \begin{equation}  \label{E:lambda.ast.nontriv}
      \lambda_{\ast} \text{ is nontrivial in dimension 1.}
    \end{equation} 
But, by \eqref{E:lambda(ell)=Delta.Upsilon(ell)}
$\lambda_{\ast} = \Delta_{\ast} \circ \Upsilon_{\ast}$. Therefore, 
$\Delta_{\ast}$ is nontrivial in dimension 1, as claimed.
 
As remarked already, $\D = \Y$ is not a compact manifold, but proposition \ref{P:sing.dim.when.H.d-r.D.=.0} and theorem \ref{T:lwr.bnd.on.Haus.meas} both require 
$\D$ to be one.  We propose two compact manifold versions of $\D$, with corresponding versions of $\T$ and $\Pf$.

\emph{First, take $\D$ to be} $\D_{\infty}$, the one point compactification of 
$\Y = \RR^{n \nvar}$. 
    \begin{equation}  \label{E:plane-fitting.D.infty}
      \D_{\infty} = \text{ the one point compactification of } \Y = \RR^{n \nvar} .
    \end{equation}
Working in $\D_{\infty}$ allows us to compute a global lower bound on the dimension of the singular set.

We parametrize $\D_{\infty} \setminus \{ \infty \}$ by inverse stereographic projection 
(Apostol \cite[p.\ 11]{tmA57.Apostol}, Edelsbrunner 
and Harer \cite[pp.\ 64--65]{hEjlH10.CompTopol}). To make this concrete, 
let $w := (0^{1 \times n \nvar}, 1) \in \RR^{n \nvar +1}$ and let $\D_{\infty}$ 
be the $n \nvar$-dimensional sphere 
$S^{n \nvar}(w) \subset \RR^{n \nvar+1}$ with center at $w$ and radius 1. We can think of $S^{n \nvar}(w)$ as ``resting'' on $\RR^{n \nvar}$ at $0 \in \RR^{n}$.
Let $z := (0^{1 \times n \nvar}, 2)$ be the North Pole of $\D_{\infty}$. $z$ will play the role of $\infty$ in the stereographic projection. Identify $\Y$ with 
$\D_{\infty} \setminus \{z\}$ using the inverse, 
$PS : \RR^{n \nvar} \to \D_{\infty} \subset \RR^{n \nvar +1}$, of a version of stereographic projection from $z$. Recall \eqref{E:matrix.norm}. Let $Y \in \Y = \RR^{n \nvar}$ and let 
$\sigma(Y) = 4/\bigl( \|Y\|^{2} + 4 \bigr)$. Then $PS(Y)$ is given by 
    \begin{equation}  \label{E:PS.formla}
      PS(Y) := \sigma(Y) \, (Y,0) + \bigl( 1-\sigma(Y) \bigr) z 
        = \Bigl( \sigma(Y) Y, 2 \bigl(1-\sigma(Y) \bigr) \Bigr) 
          \in \D_{\infty} \setminus \{z\} .
    \end{equation} 
$PS$ is clearly injective. Iits inverse, being also rational, is continuous.   

\emph{Claim:} $PS : \Y \to \RR^{n \nvar +1}$ is an imbedding (Boothby \cite[Definition (4.11), p.\ 73]{wmB75}). Let $Y \in \Y$. Let $z^{1 \times n \nvar}$ be the row vector obtained by concatenating the rows of $Y$ in order. It is not hard to see that the first $n \nvar$ rows of the Jacobian matrix $D \, PS$ (Boothby \cite[p.\ 26]{wmB75}) form the matrix
    \begin{equation*}
      \begin{pmatrix}
         \sigma(z) \left[ - \frac{2}{|z|^{2} + 4} z^{T}  z + I_{n \nvar} \right] 
      \end{pmatrix}^{(n \nvar) \times (n \nvar)} ,
    \end{equation*}
where $I_{n \nvar}$ is the $(n \nvar) \times (n \nvar)$ identity matrix. The last row is $\sigma(z)^{2} z$. Thus, $(D \, PS) z^{T}$ is an $(n \nvar+1)$-column vector whose last entry is  $\sigma(z)^{2} z$. So if $z \neq 0$ then the last coordinate of $(D \, PS) z^{T}$ is 
$\sigma(z)^{2} |z|^{2} \neq 0$. If $w \in z^{\perp}$ is non-zero then the first $n \nvar$ coordinates of $(D \, PS) w^{T}$ form the vector $\sigma(z) w \neq 0$. This proves that 
$D \, PS$ has full rank so $PS$ is an immersion. We have already observerd that $PS$ is one-to-one with a continuous inverse. I.e., $PS$ is an injective immersion with a continuous inverse. Therefore, $PS$ is an imbedding (Boothby \cite[Definition (4.11), p.\ 73]{wmB75}), as claimed. Taking note of Boothby \cite[Theorem (5.5), p.\ 78]{wmB75}, we have 
    \begin{multline}  \label{E:PS.is.diffeom}
      PS : \RR^{n \nvar} \to \D_{\infty} \setminus \{z\} \\
        \text{ is a homeomorphism, in fact, a diffeomorphism onto its image.}
    \end{multline}

Use $PS$ to identify $\Y$ with $\D_{\infty} \setminus \{z\}$ and identify 
a data map $\Phi$ on $\Y$ with $\Phi \circ PS^{-1}$ on $\D_{\infty}$. So we think 
of $\D_{\infty}$ as $\Y \cup \{\infty\}$. But under this identification, $\Y$ has the metric $(PS)^{\ast} \delta$ as in lemma \ref{L:imbedding.is.loc.Lip}, where $\delta$ is the great circle geodesic metric on $\D_{\infty}$. I.e., if $Y, Y' \in \Y$ then 
$\bigl( (PS)^{\ast} \delta \bigr)(Y, Y') = \delta \bigl[ PS(Y), PS(Y') \bigr]$. With this identification $\D_{\infty}$ satisfies (\ref{E:general.plane-fitter.defn}a) and (\ref{E:general.plane-fitter.defn}b). By \eqref{E:n>nvar>k>0} and construction, 
    \begin{equation}  \label{E:plane.fit.D.infty.is.a.sphere}
      \D = \D_{\infty} \text{ is a sphere of dimension } 
        d_{\infty} := n \nvar \geq 6 .
    \end{equation} 

Recall \eqref{E:plane-fitting.Upsilon.defn} and \eqref{E:Delta(Y).defn}. Define
    \begin{equation}  \label{E:Upsilon.infty.defn}
      \Upsilon_{\infty} := PS \circ \Upsilon .
    \end{equation}

Recall \eqref{E:1-D.proj.space}. Let 
    \begin{equation}  \label{E:T.infty:=PS[Upsilon(P1)]}
      \T_{\infty} := \Upsilon_{\infty} (P^{1}) := PS \circ \Upsilon(P^{1}) , 
        \text{ which we identify with } \Upsilon(P^{1}). 
    \end{equation}
Then, by \eqref{E:Upsilon.is.smooth.imbedding} and \eqref{E:PS.is.diffeom}, 
$\T_{\infty}$ is an imbedded one-dimensional submanifold of $\D_{\infty}$. Let 
    \begin{equation}  \label{E:Pf.infty.Delta.defns}
      \Pf_{\infty} := PS(\Pf) \text{ and }
      \Delta_{\infty} := \Delta \circ PS^{-1} \text{ on } \Pf_{\infty} .
    \end{equation} 
It follows from \eqref{E:PS.is.diffeom} and \eqref{E:Delta.smooth.on.Pf}, that
    \begin{equation}  \label{E:Delta.infty.is.cont}
      \Delta_{\infty} \text{ is continuous on } \Pf_{\infty} .
    \end{equation}

Identify $\Pf_{\infty}$ with $\Pf$. It follows from \eqref{E:lambda(ell)=Delta.Upsilon(ell)} that
    \begin{equation}  \label{E:lambda(ell)=Delta.infty.Upsilon.infty(ell)}
      \lambda(\ell) 
        = \Delta_{\infty} \bigl[ \Upsilon_{\infty}(\ell) \bigr] , \quad \ell \in P^{1}.
    \end{equation}

By lemma \ref{L:Pf.is.a.manif}, corollary \ref{C:cont.diff.=.loc.Lip}, and the fact that 
$PS$ is a diffeomorphism, we have, 
    \begin{equation}  \label{E:Pf.infty.is.submanif}
      \Pf_{\infty} \text{ is an imbedded submanifold of } \D_{\infty}
        \text{ of dimension } nk + (k + 1)(\nvar - k) . 
    \end{equation} 
(Alternatively, since $PS$ and its inverse are both rational functions, by example \ref{Ex:ratnl.fns.loc.Lip},  they are locally Lipschitz. Therefore lemma \ref{L:loc.Lip.image.of.null.set.is.null} tells us that the dimensions of subsets of $\Y = \RR^{n \nvar}$ and, in the other direction, subsets 
of $\D_{\infty} \setminus \{ \infty \}$, are preserved by $PS$ and $PS^{-1}$, resp.) 

Neither $\T_{\infty}$ nor $\Pf_{\infty}$ contain $\infty = z$.

Suppose $(\Phi, \Y')$ satisfies \eqref{E:plane-fitter.defn}. Then, by \eqref{E:PS.is.diffeom}, 
    \begin{equation}  \label{E:plane.fit.D'.infty}
      \D' = \D'_{\infty} := PS(\Y') \text{ is dense in } \D_{\infty}  
    \end{equation}
and $\D_{\infty}' \cap \Pf_{\infty}$ is dense in $\Pf_{\infty}$. Identify 
$\D'$ with $\Y'$, so we tacitly replace $\Phi$ by 
    \begin{equation}  \label{E:plane.fitting.Phi.infty}
      \Phi_{\infty} := \Phi \circ PS^{-1}
    \end{equation} 
and leave $\Phi(z)$, where $z$ is the North Pole, undefined. 
Assume $\D' \cap \T$ is dense in $\T$. Then $\D_{\infty}' \cap \T_{\infty}$ 
is dense in $\T_{\infty}$. Thus, with $\D = \D_{\infty}$ identified with 
$\Y \cup \{\infty\}$, and $\T = \T_{\infty}$, all the conditions \eqref{E:general.plane-fitter.defn} are satisfied. 

Let  
    \begin{equation}  \label{E:plane.fit.S.infty.defn}
      \Ss_{\infty} := (\D_{\infty} \setminus \D_{\infty}') \cup \{ \infty \} .
    \end{equation}  
where ``$\infty$'' denotes the point at infinity (concretely, the point 
$z = (0^{1 \times n \nvar}, 2)$). 
Since $\dim \{z\} = 0$, by \eqref{E:dim.of.whole.=.max.dim.of.parts}, 
if $\Ss \neq \varnothing$, we have $\dim \Ss = \dim \Ss_{\infty}$ and if $a > 0$ 
then $\Hm_{\D_{\infty}}^{a}(\Ss) = \Hm_{\D_{\infty}}^{a}(\Ss_{\infty})$ (calculated using the metric $\delta$ on $\D_{\infty} \subset \RR^{n \nvar+1}$).  

We have observed that subsets of $\D_{\infty} \setminus \{ \infty \}$, are preserved by $PS$ and $PS^{-1}$, resp. However, Hausdorff measure on $\D_{\infty} \setminus \{ \infty \}$ and on $\Y = \RR^{n \nvar}$, as in \eqref{E:Hm.a.S.geq.R.d-p-1}, will \emph{not} be preserved. (See remark \ref{R:different.measures.on.D}.).

\emph{As an alternative to $\D_{\infty}$ we now consider} ``$\D_{\mu}$'': 
Let $S^{n \nvar-1} := \bigl\{ Y \in \Y :  \| Y \| = 1 \bigr\}$. (See \eqref{E:matrix.norm}.) Let 
    \begin{equation}  \label{E:mu.as.in.D.mu.defn}
      \mu : S^{n \nvar-1} \to (0, \infty)
    \end{equation} 
be continuous (e.g., constant). We will assume
    \begin{equation}  \label{E:mu.is.smooth}
      \mu \text{ is smooth. }
    \end{equation}
In place of $\D = \Y$ define 
	\begin{equation}  \label{E:plane.fitting.D.mu.defn}
	      \D := \D_{\mu} := \Bigl\{ Y \in \Y \setminus \{ 0 \} : 
	           \| Y \| = \mu \bigl( \| Y \|^{-1} Y \bigr) \Bigr\}.
	\end{equation} 

Let $R_{\mu} : \Y \setminus \{0\}$ be the map 
    \begin{equation}  \label{E:R.mu.defn}
      R_{\mu} : Y \mapsto \mu \bigl( \| Y \|^{-1} Y \bigr) \| Y \|^{-1} Y 
        \in \D_{\mu}, \qquad Y \in \Y \setminus \{0\} .
    \end{equation}
So 
    \begin{equation}  \label{E:R.mu.is.retraction}
      R_{\mu} \text{ is a retraction of $\Y \setminus \{0\}$ onto } \D_{\mu} .
    \end{equation} 
Note that 
    \begin{equation}  \label{E:R.mu.is.bijection}
      \text{The restriction }R_{\mu} \restriction_{S^{n \nvar - 1}} 
        \text{ is a bijection of $S^{n \nvar - 1}$ onto } \D_{\mu}
    \end{equation}
and, by \eqref{E:Pf.invar.under.shifts.rescale} and \eqref{E:Delta(Y).defn}, 
    \begin{equation}  \label{E:R.mu(Pf)=Pf} 
      R_{\mu}(\Pf) \subset \Pf \text{ and if }Y \in \Pf \text{ then } 
        \Delta \bigl[ R_{\mu}(Y) \bigr] = \Delta(Y) .
    \end{equation}

By \eqref{E:n>nvar>k>0}, we have 
    \begin{equation}  \label{E:dim.D.mu.in.plane.fit}
      \dim \D_{\mu} = d_{\mu} := n \nvar - 1 \geq 5 ,
    \end{equation}

\emph{Claim:} 
    \begin{multline}  \label{E:D.mu.diffeo.to.sphere}
      \text{If } \mu : S^{n \nvar-1} \to (0, \infty) \text{ is smooth then } \\
      \D_{\mu} \text{ is an imbedded $C^{\infty}$ submanifold of } \Y \\
        \text{ and } R_{\mu} \restriction_{S^{d_{\mu}}} : S^{d_{\mu}} \to \D_{\mu}
          \text{ is a diffeomorphism.}
    \end{multline}
(See \eqref{E:restriction.notation}.) To see this, let $f : \Y \setminus \{ 0 \} \to \RR$ be the function
    \begin{equation}  \label{E:f:mclY.to.RR}
      f(Y) = \| Y \| - \mu \bigl( \| Y \|^{-1} Y \bigr) \in \RR, \qquad (Y \in \Y) .
    \end{equation}
Notice that $f$ has full rank 1: The derivative of $f$ at $Y$ in the direction $tY$ is 
$d f( t Y )/dt = \| Y \| > 0$. But $\D_{\mu} = f^{-1}(0)$. Hence, by Boothby \
\cite[Theorem (5.8), p.\ 79; Definition (5.3), p.\ 77; and Lemma (5.2), p.\ 76]{wmB75}, 
    \begin{equation}  \label{E:D.mu.is.imbedded}
      \D_{\mu} \text{ is an imbedded submanifold of $\Y$ of dimension } n \nvar - 1 .
    \end{equation} 
 
Regard elements of $\Y$ as $n \nvar \times 1$ column vectors $z$. Write 
    \begin{equation}  \label{E:x(z).in.sphere}
      x^{n \nvar \times 1} := x(z) :=  |z|^{-1} z \in S^{n \nvar-1} , 
        \quad z^{n \nvar \times 1} \in \RR^{nq} \setminus \{0\} .
    \end{equation} 
Define $\nabla \mu(x)^{1 \times n \nvar}$ to be the row vector with the following property. 
Let $v^{1 \times n \nvar} \in T_{x} S^{n \nvar-1}$, the tangent space to $S^{n \nvar-1}$ at $x$. So $v$ is expressed as a vector in $\RR^{n \nvar}$. At $x$, the derivative of $\mu$ in the direction $v$ is linear in $v$. Hence, there exists 
$\nabla \mu(x)^{1 \times n \nvar} \in \RR^{n \nvar}$ s.t.\ $\nabla \mu(x) v^{T}$ is the derivative of $\mu$ in the direction $v$ at $x$. WLOG we may take 
$\nabla \mu(x) \in T_{x} S^{n \nvar-1}$. 

By \eqref{E:R.mu.defn} and \eqref{E:mu.is.smooth}, 
$R_{\mu} : \Y \setminus \{0\} \to \RR^{n \nvar}$ is smooth. Let $DR_{\mu}(z)^{n \nvar \times n \nvar}$ be the Jacobian matrix of $R_{\mu}$, \emph{viz.} 
$(\partial R_{\mu,i}/\partial z_{j})$. The proof of the following is in appendix \ref{Chptr:misc.proofs}. 
    \begin{equation}  \label{E:DR.mu(z).1st.version}
      DR_{\mu}(z)^{n \nvar \times n \nvar} = 
        |z|^{-1} x \, \nabla \mu(x) 
          + \frac{\mu(x)}{|z|^{3}} \bigl( - z z^{T} + |z|^{2} I_{n \nvar} \bigr) , 
        \qquad z^{1 \times n \nvar} \in \RR^{nq} \setminus \{0\} .
    \end{equation}
 
Identify $\Y$ with $\RR^{n \nvar}$. Recall that by \eqref{E:x(z).in.sphere}, 
$x \in S^{n \nvar -1}$. If $v^{1 \times n \nvar} \neq 0$ is tangent to $S^{n \nvar -1}$ at 
$z \in S^{n \nvar -1}$, then $x = x(z) = z$, $v x = v z = 0$, and 
$v \, DR_{\mu}(z) = 0 + \mu(x) |z|^{-1} v \neq 0$. 
Hence, $DR_{\mu}$ has full rank on $T S^{n \nvar -1}$. 
It then follows from \eqref{E:D.mu.is.imbedded}, \eqref{E:R.mu.is.bijection}, and Boothby \cite[Exercise 2, p.\ 74]{wmB75} that the restriction of $R_{\mu}$ to $S^{n \nvar - 1}$ is in fact a diffeomorphism. That proves the claim \eqref{E:D.mu.diffeo.to.sphere}. 

Recall \eqref{E:matrix.norm}. Denote by $Euc$ the Euclidean metric on $\Y$: 
$Euc(Y,Y') := \|Y-Y'\|$ ($Y,Y' \in \Y$). Let $inc : \D_{\mu} \hookrightarrow \Y$ denote inclusion. In the notation of lemma \ref{L:imbedding.is.loc.Lip}, the restriction 
of $Euc$ to $\D_{\mu}$ is 
$inc^{\ast}(Euc)$: $inc^{\ast}(Euc)(x, x') = Euc(x,x')$ ($x,x' \in \D_{\mu}$). 
Put on $\D_{\mu}$ a metric $\delta$, say, s.t.\ 
    \begin{multline}  \label{E:metrics.on.D.mu}
      \text{The identity map on $\D_{\mu}$ is bi-Lipschitz w.r.t.\ } \delta 
        \text{ and } inc^{\ast}(Euc) .
    \end{multline}   
$\D = \D_{\mu}$ satisfies (\ref{E:general.plane-fitter.defn}a) and, by \eqref{E:metrics.on.D.mu}, it also satisfies (\ref{E:general.plane-fitter.defn}b).

Put on $\D_{\mu}$ the Riemannian metric, $\eta$, induced by the Euclidean metric 
on $\Y$ by the inclusion $inc : \D_{\mu} \hookrightarrow \Y$.  ($\eta$ is not the same as $inc^{\ast}(Euc)$!) Let $Eta$ be the topological metric on $\D_{\mu}$ induced by the tensor $\eta$. By lemma \ref{L:imbedding.is.loc.Lip} and \eqref{E:local.Lip.is.Lip.on.compacts}, 
the identity map on $\D_{\mu}$ is bi-Lipschitz w.r.t.\ $Eta$ and $inc^{\ast}(Euc)$. Thus, \eqref{E:D.mu.diffeo.to.sphere} implies \eqref{E:metrics.on.D.mu} with $\delta = Eta$. 

Define $f$ as in \eqref{E:f:mclY.to.RR}. Let
    \begin{equation}  \label{E:Pf.mu.defn}
      \Pf_{\mu} := \Pf \cap \D_{\mu} = \bigl\{ Y \in \Pf : f(Y) = 0 \bigr\} .
    \end{equation}
By \eqref{E:Pf.invar.under.shifts.rescale} and \eqref{E:R.mu.is.retraction},
    \begin{equation}  \label{E:Pf.mu.=.R.mu(Pf)}
      \Pf_{\mu} = R_{\mu}(\Pf) . 
    \end{equation}

As observed above, the derivative of $f$ at $Y$ in the direction $tY$ is 
$d f( t Y )/dt = \| Y \| > 0$. By \eqref{E:Pf.invar.under.shifts.rescale}, rescaling (i.e., $Y \mapsto tY$) maps $\Pf$ into itself. Therefore, $f$ has full rank on $\Pf$. Hence, by lemma \ref{L:Pf.is.a.manif} and Boothby \cite[Theorem (5.8), p.\ 79]{wmB75} again, 
    \begin{multline}  \label{E:Pf.mu.is.manifold}
      \Pf_{\mu} \text{ is a imbedded submanifold of } \Pf \text{ of dimension } \\
        \dim \Pf- 1 = nk + (k + 1)(\nvar - k) - 1 .
    \end{multline}. 

Recall  
\eqref{E:Upsilon.maps.P1.into.Pf} 
and \eqref{E:R.mu(Pf)=Pf}. Define a rescaled version of $\Upsilon(\ell)$ by 
    \begin{equation}  \label{E:Upsilon.Delta.mu.defn}
      \Upsilon_{\mu} := R_{\mu} \circ \Upsilon : P^{1} \to \Pf_{\mu} , \text{ and }
        \Delta_{\mu} := \Delta .
    \end{equation} 
We have $\| \Upsilon_{\mu}(\ell) \| = 
\mu \bigl( \bigl\| \Upsilon(\ell) \bigr\|^{-1} \Upsilon(\ell) \bigr)$. (By \eqref{E:plane.fit.rank.Oops!=k}, $\Upsilon(\ell) \ne 0$.)   

By \eqref{E:Upsilon.is.smooth.imbedding}, $\Upsilon$ is injective. It turns out that
    \begin{equation}  \label{E:plane-fitting.Upsilon.mu.is.injective}
      \Upsilon_{\mu} \text{ is injective.}
    \end{equation}  
For suppose not. Then there exist $\ell_{1}, \ell_{2} \in P^{1}$ unequal but which nonetheless satisfy $\Upsilon_{\mu}(\ell_{2}) = \Upsilon_{\mu}(\ell_{1})$. 
Let $Y_{i} := \Upsilon(\ell_{i})$ ($i=1,2$). Then, by \eqref{E:R.mu.defn}, 
$\mu \bigl( \| Y_{1} \|^{-1} Y_{1} \bigr) \| Y_{1} \|^{-1} Y_{1}$ \linebreak 
$= \mu \bigl( \| Y_{2} \|^{-1} Y_{2} \bigr) \| Y_{2} \|^{-1} Y_{2}$. I.e., for some 
$s \in \RR \setminus \{0\}$, we have $Y_{2} = s Y_{1}$. Hence, by \eqref{E:lambda(ell)=Delta.Upsilon(ell)}, 
$\lambda(\ell_{1}) = \rho(Y_{1}) = \rho(Y_{2}) = \lambda(\ell_{2})$. 
But $\lambda$ is injective by \eqref{E:lambda(ell).imbedding}. 
\eqref{E:plane-fitting.Upsilon.mu.is.injective} follows.

In fact, we have 
   \begin{lemma}   \label{L:Upsilon.mu:P1.to.Pf.is.imbedding}
If $\mu : S^{n \nvar-1} \to (0, \infty)$ is smooth then the map 
$\Upsilon_{\mu} : P^{1} \to \Pf$ is a smooth imbedding.
   \end{lemma}
For proof see appendix \ref{Chptr:misc.proofs}. By lemma \ref{L:Pf.is.a.manif}, it follows that 
    \begin{equation}  \label{E:Oops!.mu.is.imbedding}
      \Upsilon_{\mu} \text{ is also a smooth imbedding of $P^{1}$ into } \Y .
    \end{equation}

Recall \eqref{E:Upsilon.maps.P1.into.Pf}, \eqref{E:R.mu(Pf)=Pf}, and \eqref{E:Upsilon.Delta.mu.defn}. Let 
	\begin{equation}  \label{E:plane.fittin.T.mu.defn}
		\T_{\mu} := \Upsilon_{\mu}(P^{1}) \subset \Pf_{\mu} .   
	\end{equation}
The elements of $\T_{\mu}$ are $n \times \nvar$ matrices. By \eqref{E:Oops!.mu.is.imbedding}, 
Boothby \cite[Theorem (5.5), p.\ 78]{wmB75}, and \eqref{E:Pf.mu.is.manifold}, 
    \begin{multline}  \label{E:in.plane.fitting.T.mu.is.smooth.submanif}
      \text{If $\mu : S^{n \nvar-1} \to (0, +\infty)$ is smooth, } \\
        \text{then $\T_{\mu}$ is a compact smooth imbedded submanifold 
          of $\Pf_{\mu}$, and hence, of } \D_{\mu} .
    \end{multline}.

Let $w^{n \times 1}$ satisfy $w^{T} 1^{n} = 1$. By \eqref{E:when.is.Y.in.Pk}, for every 
$Y \in \Pf$ the matrix $Y - 1^{n} w^{T} Y$ has rank $k$. Recall \eqref{E:Delta(Y).defn}. By 
\eqref{E:Pf.mu.is.manifold} and \eqref{E:Delta.smooth.on.Pf}, 
	\begin{equation} \label{E:Delta.cont.on.Pf}
		Y \mapsto \Delta_{\mu}(Y) =  \rho(Y - 1^{n} w^{T} Y) \in G(k,\nvar) 
		        \text{ is continuous in } Y \in \Pf_{\mu}. 
	\end{equation}  

Let $w^{n \times 1}$ be an arbitrary column vector s.t.\  
$w^{T} 1^{n} = 1$. Then, by \eqref{E:Delta(Y).defn}, \eqref{E:R.mu.defn} and  \eqref{E:Upsilon.Delta.mu.defn}, 
    \begin{align}  \label{E:Delta.mu=Dellta.mu.circ.R.mu} 
      \Delta_{\mu} \circ R_{\mu}(Y) &= \Delta \circ R_{\mu}(Y) \notag \\ 
        &=  \rho \bigl( R_{\mu}(Y) - 1^{n} w^{T} R_{\mu}(Y) \bigr) \notag \\
        &=  \rho \Bigl( \mu \bigl( \| Y \|^{-1} Y \bigr) \| Y \|^{-1} Y 
          -  \mu \bigl( \| Y \|^{-1} Y \bigr) \| Y \|^{-1} 1^{n} w^{T} Y \Bigr) \\
        &=  \rho \Bigl( \mu \bigl( \| Y \|^{-1} Y \bigr) \| Y \|^{-1} 
          [Y - 1^{n} w^{T} Y ] \Bigr) \notag \\
        &=  \rho (Y - 1^{n} w^{T} Y ) \notag \\
        &= \Delta(Y) , \quad Y \in \Pf . \notag
    \end{align}
It now follows from \eqref{E:lambda(ell)=Delta.Upsilon(ell)} and \eqref{E:Upsilon.Delta.mu.defn} that 
    \begin{equation}  \label{E:lambda(ell)=Delta.mu.Upsilon.mu(ell)}
      \Delta_{\mu} \bigl[ \Upsilon_{\mu}(\ell) \bigr] 
        = \Delta_{\mu} \circ R_{\mu} \circ \Upsilon(\ell) 
          = \Delta \circ \Upsilon(\ell) = \lambda(\ell), \quad \ell \in P^{1}.
    \end{equation}  

Suppose $(\Phi, \Y')$ satisfies \eqref{E:plane-fitter.defn} and let $\Phi_{\mu}$ be the restriction of $\Phi$ to $\D_{\mu}$:
    \begin{equation}  \label{E:plane.fitting.Phi.mu}
      \Phi_{\mu} := \Phi \restriction_{\D_{\mu}} \text{ when defined.}
    \end{equation} 
Suppose further
    \begin{equation}  \label{E:plane.fit.D'.mu}
      \D'_{\mu} := \Y' \cap \D_{\mu} \text{ is dense in } \D_{\mu} 
    \end{equation} 
and $\Phi_{\mu} : \D_{\mu}' \to G(k,\nvar)$ is continuous. 
Let 
    \begin{equation}  \label{E:plane.fit.S.mu.defn}
      \Ss := \D_{\mu} \setminus \D'_{\mu}
    \end{equation}

\emph{Now let $\D$ be either $\D_{\mu}$ (\eqref{E:plane.fitting.D.mu.defn}) 
or $\D_{\infty}$ (\eqref{E:plane-fitting.D.infty}).} Correspondingly, let 
$\T$ be either $\T_{\mu}$ (\eqref{E:plane.fittin.T.mu.defn}) or $\T_{\infty}$ (\eqref{E:T.infty:=PS[Upsilon(P1)]}), let $\Ss$ be either $\Ss_{\mu}$ 
(\eqref{E:plane.fit.S.mu.defn}) or $\Ss_{\infty}$ (\eqref{E:plane.fit.S.infty.defn}). $\Ss$ need not be closed. Let $\D'$ be either $\D'_{\mu}$ (\eqref{E:plane.fit.D'.mu}) or $\D'_{\infty}$ (\eqref{E:plane.fit.D'.infty}). Let $\Pf = \Pf_{\mu}$ (see \eqref{E:Pf.mu.defn})  
or $\Pf = \Pf_{\infty}$ (see \eqref{E:Pf.infty.Delta.defns}). Let $\Upsilon$ be either $\Upsilon_{\mu}$ (\eqref{E:Upsilon.Delta.mu.defn}) 
or $\Upsilon_{\infty}$ (\eqref{E:Upsilon.infty.defn}), and let $\Delta$ 
be either $\Delta_{\mu}$ (\eqref{E:Upsilon.Delta.mu.defn}) or $\Delta_{\infty}$  (\eqref{E:Pf.infty.Delta.defns}). Thus, by \eqref{E:plane.fittin.T.mu.defn} or \eqref{E:T.infty:=PS[Upsilon(P1)]}, we have 
$\T = \Upsilon(P^{1})$. 
Let $\Phi$ be either $\Phi_{\mu}$ (\eqref{E:plane.fitting.Phi.mu}) or $\Phi_{\infty}$ (\eqref{E:plane.fitting.Phi.infty}). Assume, however we arrived at it, 
$(\Phi, \D, \D', \T)$ is a plane-fitter by the criteria in \eqref{E:general.plane-fitter.defn}.

We apply the ``severity trick'' (remark \ref{R:severity.trick}) with $\text{``$\Pf$''} = \T$. By \eqref{E:feature.space.in.plane.fitting}, in plane-fitting the codomain,$\F$, is the  Grassmannian, $G(k,\nvar)$, a Riemannian manifold. Therefore, by proposition \ref{P:smooth.manifs.have.convex.combos}, there exists a, possibly noncommutative, convex combination function, $\gamma$, on the cover, $\msf{V}$, 
of $\F$ consisting of all geodesically convex subsets. Using proposition \ref{P:smooth.manifs.have.commutative.convex.combos} one gets a commutative convex combination function with a finer cover. In section \ref{SS:lin.combo.for.plane.fit.w/.k=nvar-1} we construct a coarse cover with commutative convex combination function for the case 
$k = \nvar - 1$. In any case, let $\msf{V}$ be an open cover of of the codomain $\F$ with a convex combination function $\gamma$. 
Let $\Ss^{\msf{V}} \subset \D_{\mu}$ be the set of $\msf{V}$-severe singularities 
of $\Phi$. By \eqref{E:SV.is.closed}, $\Ss^{\msf{V}}$ is closed. 

By \eqref{E:plane.fittin.T.mu.defn} and 
\eqref{E:Oops!.mu.is.imbedding} and \eqref{E:plane.fittin.T.mu.defn}, or \eqref{E:Upsilon.is.smooth.imbedding} and \eqref{E:T.infty:=PS[Upsilon(P1)]}, 
$\T$ is an imbedded 1-dimensional submanifold of $\D$. Therefore, by \eqref{E:r=1}, \textbf{hypotheses \ref{Hyp:T.manif} and \ref{Hyp:r.integer}} of theorem \ref{T:Phi.star.Hr.contains.Theta.star.Hr} holds with $r = 1$, by \eqref{E:r=1}. By \eqref{E:D.mu.diffeo.to.sphere} or \eqref{E:plane.fit.D.infty.is.a.sphere}, $\D$ is diffeomorphic to a sphere, i.e., a $C^{\infty}$ manifold. Hence, by remark \ref{R:retraction.in.manifs}, $\T$ has a neighborhood $\Rcl$ with a smooth retraction 
$\Rcl \to \T$.

Let $\tilde{\D} = \D \setminus \Ss^{\msf{V}}$. (See \eqref{E:S^V.notation}.) Since we have assumed $\D'$ is dense in $\D$, $\tilde{\D} \supset \D'$ is automatically dense in $\D$. Since, by \eqref{E:SV.is.closed}, $\Ss^{\msf{V}}$ is closed, we conclude that \textbf{hypothesis \ref{Hyp:S.cap.T.closed}} of theorem \ref{T:Phi.star.Hr.contains.Theta.star.Hr} holds for $\Ss^{\msf{V}}$. 

By (\ref{E:general.plane-fitter.defn}e), $\D' \cap \T$ is dense in $\D \cap \T$. 
By (\ref{E:general.plane-fitter.defn}f), \eqref{E:Delta.mu=Dellta.mu.circ.R.mu},  \eqref{E:Delta.smooth.on.Pf}, and \eqref{E:Delta.infty.is.cont}, $\Phi \restriction_{\T}$ has a continuous extension $\Theta$ (\emph{viz.} $\Delta$) to all of $\Pf$, and hence to to all 
of $\T$. We may apply theorem \ref{T:if.lin.combo.on.F.then.can.rstrct.to.bad.sings} to 
$(\D, \D', \Phi)$ to conclude there is a continuous map 
$\Omega : \tilde{D} \to \F$ s.t.\ $\Theta = \Delta$ is a continuous extension of the restriction $\Omega \restriction_{\Pf \cap \tilde{D}}$ of $\Omega$ 
to $\Pf \cap \tilde{D}$. 

Since $r = t = 1$, by \eqref{E:0.dim.Haus.measure}, if \textbf{hypothesis \ref{Hyp:S.cap.T.small}} of theorem \ref{T:Phi.star.Hr.contains.Theta.star.Hr}
 holds for $(\Omega, \D, \Ss^{\msf{V}})$ then 
    \begin{equation*}
      \Ss^{\msf{V}} \cap \T = \varnothing \; \footnote{This fails for least squares regression (LS; section \ref{SS:lin.reg.and.LS}, example \ref{R:dim.S.for LS}), but not for PC (section \ref{SS:PC.plane.fitting}) or least absolute deviation regression (section \ref{SS:LAD}). (LAD requires a special choice 
of $\mbf{Y}$. See section \ref{SSS:codim.LAD.sings}.) Let $\Ss'$ be a closed superset of a singular set $\Ss$ in plane-fitting. In section \ref{SS:gnrl.lwr.bnd.plane.fit} we derive another bound on $\text{codim} \, \Ss'$ in plane-fitting that holds even if \textbf{hypothesis \ref{Hyp:S.cap.T.small}} fails. That bound holds for LS.}
    \end{equation*}

Let $\Theta$ be the restriction $\Delta \restriction_{\T}$. Then, \textbf{hypothesis \ref{Hyp:extend}} of theorem \ref{T:Phi.star.Hr.contains.Theta.star.Hr} holds. Then, by \eqref{E:general.plane-fitter.defn}, with the possible exception of \textbf{hypothesis \ref{Hyp:S.cap.T.small}}, all the hypotheses of theorem \ref{T:Phi.star.Hr.contains.Theta.star.Hr} hold 
with $(\Omega, \Ss' = \Ss^{\msf{V}})$ in place of $(\Phi, \Ss')$. 

By \eqref{E:plane.fittin.T.mu.defn} and lemma \ref{L:Upsilon.mu:P1.to.Pf.is.imbedding}, or \eqref{E:T.infty:=PS[Upsilon(P1)]} and \eqref{E:Upsilon.is.smooth.imbedding} we have that 
$\Upsilon : P^{1} \to \T$ is a homeomorphism. Therefore, by \eqref{E:lambda(ell)=Delta.mu.Upsilon.mu(ell)}, or 
\eqref{E:lambda(ell)=Delta.infty.Upsilon.infty(ell)}, and  
	\begin{multline}  \label{E:lambda(ell)=Theta.Upsilon(ell)}
		\Theta \bigl[ \Upsilon(\ell) \bigr] = \lambda(\ell), 
		  \text{ for every } \ell \in P^{1} . \\
		  \text{ Equivalently, } 
		    \Theta(x) = \lambda \bigl[  \Upsilon^{-1}(x) \bigr], 
		      \text{ for every } x \in \T .
	\end{multline}  
By \eqref{E:lambda.ast.nontriv}, $\lambda_{\ast}$ is a non-trivial homomorphism of homology in dimension 1. Therefore, applying the homology functor to the second of the preceding equations we 
see that $\Theta_{\ast}$ is nontrivial in dimension $r=1$. I.e., 
    \begin{equation}  \label{E:Theta.ast.nontriv}
      \eqref{E:nontriv.r-dim.homol} \text{ holds for } \Theta . 
    \end{equation}
I.e., \eqref{E:nontriv.r-dim.homol} holds in both the ``$\infty$'' and ``$\mu$'' contexts.

By \eqref{E:D.mu.diffeo.to.sphere} or \eqref{E:plane.fit.D.infty.is.a.sphere}, 
$\D$ is, or is at least diffeomorphic to, a round sphere of dimension $d \geq 5$.  
Since $r = 1$, by \eqref{E:r=1}, by Munkres \cite[Corollary 47.2, p.\ 281]{jrM84}
we have 
    \begin{equation}  \label{E:H.d-r.D.=0.plane.fitting}
      H^{d-r}(\D) = \{0\} 
    \end{equation}
with any coefficient ring, in particular with $\ZZ/2$ coefficients.

Thus, we can apply proposition \ref{P:sing.dim.when.H.d-r.D.=.0} to $\Omega$ and conclude the following.
   \begin{prop} \label{P:sing.codim.in.plane-fitting}
Suppose $\D$ is the $(n \nvar - 1)$-sphere $\D_{\mu}$ or the $n \nvar$-sphere 
$\D_{\infty}$. Suppose $\Phi : \D \partlyto G(k, \nvar)$ is a plane-fitter as defined in \eqref{E:general.plane-fitter.defn}
and let $\Ss$ be its singular set. There is an open cover 
of $G(k, \nvar)$ on which is defined a convex combination function. 
Let $\msf{V}$ be such a cover. Assume that $\Ss' = \Ss^{\msf{V}}$ satisfies 
\textbf{hypothesis \ref{Hyp:S.cap.T.small}} of theorem \ref{T:Phi.star.Hr.contains.Theta.star.Hr}.
Then $\Hm^{d-2}(\Ss) \geq \Hm^{d-2}(\Ss^{\msf{V}}) > 0$, where $d := \dim \D$. Thus, 
    \begin{equation} \label{E:big.lwr.bound.in.plane.fit}
       codim \, \Ss \leq codim \, \Ss^{\msf{V}} \leq 2.
    \end{equation}
   \end{prop} 
 
  \begin{remark}[Sales pitch in plane-fitting]  \label{R:sales.pitch.in.plane.fitting}
Plane-fitting provides a stunning example of the ``sales pitch'' (remark \ref{R:sales.pitch}). Suppose one is considering a method for fitting a million dimensional plane to a data set consisting of a trillion points in billion dimensional space. One might learn something about the global stability properties of the method by examining its behavior in the immediate vicinity of a topological circle, a compact 1-dimensonal set! Circles like the $\T_{\mu}$ or $\T_{\infty}$ defined in this section work for all examples described in this chapter. (But some might work better than others, as in section \ref{SSS:codim.LAD.sings}. The boundary of the triangle in figure \ref{F:LF.plot.pipeline} functions as a $\T$, but it is constructed differently than the $\T$ in this section.)
  \end{remark}
 
  \begin{remark}
Not every choice of $\mbf{Y} \in \Y$ of full rank for which there exists $w^{n \times 1}$ satisfying \eqref{E:plane.fitting.wT.Y=0} will lead to a $\T$ s.t.\ 
$\Ss' = \Ss^{\msf{V}}$ satisfies 
\textbf{hypothesis \ref{Hyp:S.cap.T.small}} of theorem \ref{T:Phi.star.Hr.contains.Theta.star.Hr}. In section \ref{SSS:codim.LAD.sings}, for example, we must make a special choice of $\mbf{Y}$. 
  \end{remark}

When proposition \ref{P:sing.dim.when.H.d-r.D.=.0} applies (e.g., when proposition \ref{P:sing.codim.in.plane-fitting} applies) to $(\Omega, \D')$, we have, by example \ref{Ex:basic.property.case}, 
that $(\Omega, \Ss^{\msf{V}}, G, \T, a)$ satisfies property \ref{Pty:agree.near.T} with $G$ the trivial group and, by \eqref{E:dim.D.mu.in.plane.fit} and \eqref{E:plane.fit.D.infty.is.a.sphere}, resp., 
    \begin{align}  \label{E:a.mu.a.infty.defns}
         a &:= a_{\mu} := d_{\mu}-2 := \dim \D_{\mu} - 2 = (n \nvar - 1) - 2 = n \nvar - 3 
          \text{ for $\D = \D_{\mu}$, or } \\
         a &:= a_{\infty} := d_{\infty}-2 := \dim \D_{\infty} - 2 = n \nvar -2 
          \text{ for } \D = \D_{\infty} \notag .
    \end{align}
(Recall that, by \eqref{E:r=1}, in this chapter $r = 1$.) 
   
Put on $\D$ the Riemannian metric induced by the ambient Euclidean space, 
$\RR^{n \nvar}$ in the $\D = \D_{\mu}$ case and $\RR^{n \nvar + 1}$ in the 
$\D = \D_{\infty}$ case. Therefore, by lemma \ref{L:biLip.triangulation}, it has a bi-Lipschitz triangulation (in fact, by the boundary of a simplex of dimension $d+1$). 

$\Pf$ is an imbedded submanifold of $\D$, by \eqref{E:Pf.mu.is.manifold} in the 
$\Pf_{\mu}$ case or \eqref{E:Pf.infty.is.submanif}
in the $\Pf_{\infty}$ case. Hence, by example \ref{Ex:TubeNbhdSpecialCaseOfCones}, 
$\Pf$ has a neighborhood in $T \D \restriction_{\Pf}$, the tangent space of $\D$ restricted to $\Pf$, fibered over $\Pf$ by cones in the sense of definition \ref{D:fibering.by.cones}.  
 
It follows from this that, if proposition \ref{P:sing.codim.in.plane-fitting} holds 
for $\Phi$ (so $(\Omega, \Ss^{\msf{V}}, G, \T, a)$ satisfies property \ref{Pty:agree.near.T})  holds for $(\Omega, \Ss^{\msf{V}}, \T, \Pf, a)$ with $a = d-2$.  But by remark \ref{R:degenerate.data} \emph{we still cannot apply theorem \ref{T:lwr.bnd.on.Haus.meas} to plane-fitting!}

  \begin{remark}[``Degenerate'' data sets]  \label{R:degenerate.data}
 Say that $Y \in \Y$ is ``degenerate" if the following holds.
    \begin{equation} \label{E:when.Y.is.degenerate}
       \text{If } w^{n \times 1} \text{ satisfies } w^{T} 1^{n} = 1 
       \text{ then } rank \, ( Y - 1^{n} w^{T} Y ) < k .
    \end{equation}
For example, if $y^{1 \times \nvar} \in \RR^{\nvar}$ then $1^{n} y$ is degenerate. 
Let $\Y_{degen}$ denote the set of degenerate data sets.

For $Y \in \Y \setminus \{0\}$, define $Y^{0}$ and the ``sample covariance  matrix'', $cov(Y)^{\nvar \times \nvar}$, of $Y$ as in \eqref{E:sample.covariance.matrix}. Let 
$\lambda_{1}(Y) \geq \ldots \geq \lambda_{\nvar}(Y) \geq 0$ be the eigenvalues of $cov(Y)$. Then $Y$ is degenerate if and only if $\lambda_{k} = 0$. 

Let $Y \in \Y_{degen}$ be degenerate. Then by definition, for some $\ell < k$ we have 
$rank \, ( Y - 1^{n} w^{T} Y ) = \ell$. By \eqref{E:when.is.Y.in.Pk} and \eqref{E:Perfect.Fits.in.plane.fitting}, $Y$ lies exactly on a plane of dimension $\ell < k$.  
Therefore, $Y$ lies on infinitely many planes of dimension $k$. An arbitrarily small perturbation in $Y$ can yield a data set lying exactly one of those infinitely $k$-dimensional planes. Therefore, if $\Phi$ is a plane-fitter then, by part 3 of \eqref{E:plane-fitter.defn}, degenerate data sets are singularities of $\Phi$. In fact, they are probably severe: $\Y_{degen} \subset \Ss^{\msf{V}}$. (See section \ref{SS:lin.combo.for.plane.fit.w/.k=nvar-1}.)

Suppose $\ell = 0$, so $Y = 1^{n} w^{T} Y$. Call the set of all $Y$ for which this is true $\Pf^{0}$. Let $Y \in \Pf^{0}$ and recall \eqref{E:1n.col.vec.defn}.  
Now let $w^{n \times 1}$ satisfy $w^{T} 1^{n} = 1$. We have
    \begin{align*}
      Y - 1^{n} w^{T} Y &= Y - 1^{n} w^{T} 
        \bigl[ Y - n^{-1}1^{n} 1_{n} Y +  n^{-1}1^{n} 1_{n} Y \bigr] \\
          &= Y - 1^{n} w^{T} 
           \bigl[ (Y - n^{-1}1^{n} 1_{n} Y) + n^{-1}1^{n} 1_{n} Y \bigr] \\
              &= Y - n^{-1}1^{n} (w^{T} 1^{n}) 1_{n} Y) \\
                &= Y - n^{-1}1^{n} 1_{n} Y = 0 .
    \end{align*}
Thus, $Y \in \Pf^{0}$ if and only if $0 = (I_{n} - n^{-1}1^{n} 1_{n}) Y$. The diagonal elements of $I_{n} - n^{-1}1^{n} 1_{n}$ are $(n-1)/n$ and the off diagonal elements are $-n^{-1}$. Thus, $(I_{n} - n^{-1}1^{n} 1_{n}) Y = 0$ if and only if the columns of $Y$ are constant, i.e., for some $b^{1 \times \nvar}$ we have $Y = 1^{n} b$. So 
$\Pf^{0} = \{ 1^{n} b \in \Y : b \text{ is } 1 \times \nvar \}$. Therefore 
$\dim \Pf^{0} = \nvar$. Thus, lemma \ref{L:Pf.is.a.manif} holds with $k = 0$. 

For $\ell > 0$, $Y$ is a perfect fit, just one corresponding to a smaller $k$. We conclude that $\Y_{degen} = \bigcup_{\ell=0}^{\ell=k-1} \Pf^{\ell}$. Hence, by lemma \ref{L:Pf.is.a.manif} (and \eqref{E:dim.of.whole.=.max.dim.of.parts}), 
$\dim \Y_{degen} = \dim \Pf^{k-1}$. So 
    \begin{equation*}  
      \text{codim} \, \Y_{degen} = n \nvar - \bigl[ n(k-1) + k(\nvar - k+1) \bigr]
        = n \nvar - n(k-1) - k(\nvar-k+1) .
    \end{equation*} 
Recalling \eqref{E:n>nvar>k>0}, we see that this expression is decreasing in $k \in (0,  \nvar)$. Hence, its minimum value is 
    \begin{equation}  \label{E:codim.of.degenerates}
      \text{codim} \, \Y_{degen} \geq 2(n-\nvar+1) \geq 4 .
    \end{equation}
Comparing this to \eqref{E:big.lwr.bound.in.plane.fit}, we see that practically none of the singularities of a $\Phi$ satisfying the hypotheses of proposition \ref{P:sing.codim.in.plane-fitting} are degenerate. 

The argument given in section \ref{SS:PC.plane.fitting} to prove \eqref{E:PC.Y'.dense} serves to show that 
    \begin{equation*}
      \Y_{degen} \subset \overline{\Pf} ,
    \end{equation*}
where  is the closure of $\Pf$. Let $Y \in \Y_{degen}$. 

In the notation of section \ref{SS:PC.plane.fitting}, the eigenvalues of $cov(Y)$ are $\nu_{1}^{2}/(n-1) \geq \cdots \geq \nu_{\nvar}^{2}/(n-1)$. Since $Y$ is degenerate, $\nu_{k}^{2} = \cdots = \nu_{\nvar}^{2} = 0$. For $\epsilon > 0$, the covariance matrix of the data set $Y_{\epsilon}$ has $k^{th}$ eigenvalue $> 0$, but all subsequent eigenvalues are 0. Thus, $Y_{\epsilon} \in \Pf$. But as $\epsilon \downarrow 0$, $Y_{\epsilon} \to Y$. This proves that the degenerate data sets lie in the closure of $\Pf$. I.e., 
    \begin{equation*}
      dist^{a}(\Ss, \Pf) = 0 ,
    \end{equation*}
where $a$ is defined in \eqref{E:a.mu.a.infty.defns}. This means, \emph{prima facie}, we cannot apply theorem \ref{T:lwr.bnd.on.Haus.meas} to plane-fitting, probably not even with $\Ss' = \Ss^{\msf{V}}$. 

We can get around this by strengthening the notion of perfect fit. (See \eqref{E:Perfect.Fits.in.plane.fitting}.) Recall that if $Y \in \Y$ and 
$\lambda_{1}(Y) \geq \ldots \geq \lambda_{\nvar}(Y) \geq 0$ are the eigenvalues of 
$cov(Y)$ then $\lambda_{k} = 0$. 

Let $c > 0$. By lemma \ref{L:Eigen.cont.}, the function $\lambda_{k}(Y)/(\lambda_{1}(Y) + c)$ is continuous on $\Y$. Let $r \in (0,1)$ and define 
$\Pf_{r,c} := \{ Y \in \Pf : \lambda_{k}(Y)/(\lambda_{1}(Y) + c) > r \}$.   
$\lambda_{1}(Y) \geq \lambda_{k}(Y) > r \lambda_{1}(Y) + r c > 0$. 
Recall \eqref{E:set.distances} and that $Euc$ is the Euclidean metric on $\Y$: $Euc(Y,Y') := \|Y-Y'\|$ ($Y,Y' \in \Y$). \emph{Claim:}  
    \begin{equation}  \label{E:dist(Pf.rc,Y.degen)}
      dist_{Euc}(\Pf_{r,c}, \Y_{degen}) > rc > 0 .
    \end{equation}
(The metrics on $\D_{\infty}$ and $\D_{\mu}$ are not the same as $Euc$. In fact, 
on $\D_{\infty}$, the point at infinity lies in the closure of both $\Pf_{r,c}$ and 
$\Y_{degen}$ so we have $dist(\Pf_{r,c}, \Y_{degen}) = 0$. So theorem \ref{T:lwr.bnd.on.Haus.meas} cannot be used on $\D_{\infty}$. However, by \eqref{E:metrics.on.D.mu}, in the $\D_{\mu}$ setting, $dist_{Euc}(\Pf_{r,c}, \Y_{degen})$ will be bounded below by a multiple of $rc$.) To prove \eqref{E:dist(Pf.rc,Y.degen)}, let $Y \in \Pf_{r,c}$. 
Let $\bar{y}^{1 \times \nvar} := n^{-1} 1_{n} Y$ be the vector of column means of $Y$. 
By the Singular Value Decomposition (Rao \cite[(v), p.\ 42]{crR73.LinStatInf}), we may write 
$Y = 1^{n} \bar{y} + L^{n \times \nvar} \Lambda N^{T}$, where $L$ has orthonormal columns, $\Lambda^{\nvar \times \nvar}$ is non-negative diagonal, and $N^{\nvar \times \nvar}$ is orthogonal. We may assume the elements along the diagonal of $\Lambda$ are non-increasing from the upper left to the lower right. Since $Y \in \Pf_{r,c} \subset \Pf$, by \eqref{E:when.is.Y.in.Pk}, the last $\nvar - k$ diagonal elements of $\Lambda$ are 0. Let $M^{\nvar \times \nvar}$ be the diagonal matrix obtained from $\Lambda$ by replacing the $k^{th}$ diagonal element of $\Lambda$ by 0.  

Let $Y' := 1^{n} \bar{y} + L^{n \times \nvar} M N^{T}$. Then 
$Y' \in \Y_{degen}$. By the Eckhart-Young theorem (Eckart and Young \cite{cEgY36.eckhart-young}), $Y'$ is the closest element of 
$\Y_{degen}$ to $Y$. By \eqref{E:switch.and.trace}, we have
    \begin{multline*}
      \| Y' - Y \|^{2} = trace \, (Y' - Y) (Y '- Y)^{T}
        = trace \, \bigl[ L (M-\Lambda) N^{T} \bigr] \bigl[ L (M-\Lambda) N^{T} \bigr]^{T} \\
          = trace \, L (M-\Lambda)^{2} L^{T} 
            = L^{T} L (M-\Lambda)^{2} = trace \, (M-\Lambda)^{2} .
    \end{multline*}
The $k^{th}$ diagonal entry in $M - \Lambda$ is $r \lambda_{1}(Y) + r c  > rc > 0$. Hence, 
$\| Y' - Y \| > rc > 0$. This completes the proof of the claim \eqref{E:dist(Pf.rc,Y.degen)}.
  \end{remark}
  
A data set in $\Pf_{r,c}$ robustly determines a $k$-plane and using 
``$\Pf\text{''} = \Pf_{r,c}$ instead of $\Pf$ as defined in \eqref{E:Perfect.Fits.in.plane.fitting} makes applying theorem \ref{T:lwr.bnd.on.Haus.meas} to plane-fitting a possibility. 
Define $\Pf_{\mu}$ and $\Pf_{\infty}$ based on $\Pf_{r,c}$ instead of $\Pf$. Generically, denote either one by $\Pf$. By \eqref{E:Pf.mu.is.manifold}, 
$p_{\mu} := \dim \Pf_{\mu} = nk + (k + 1)(\nvar - k) - 1$. By \eqref{E:Pf.infty.is.submanif}, 
$p_{\infty} := nk + (k + 1)(\nvar - k)$. By \eqref{E:dim.D.mu.in.plane.fit}, \eqref{E:a.mu.a.infty.defns}, and \eqref{E:n>nvar>k>0}, we then have 
    \begin{multline*}
      d_{\mu} - p_{\mu} -1 = (nq - 1) - \bigl[ nk + (k + 1)(\nvar - k) - 1 \bigr] - 1 \\
        = n \nvar - k (n+\nvar-k) - (\nvar -k) - 1 < n \nvar - 3 = a_{\mu} .
    \end{multline*}
Similarly, using \eqref{E:plane.fit.D.infty.is.a.sphere}, we have 
    \begin{multline*}
      d_{\infty} - p_{\infty} -1 = nq - \bigl[ nk + (k + 1)(\nvar - k) \bigr] - 1 \\
        = n \nvar - k (n+\nvar-k) - (\nvar -k) - 1 < n \nvar - 2 = a_{\infty} .
    \end{multline*}

Let $\Hm_{\D}^{a}$ denote $a$-dimensional Hausdorff measure on $\D$. Suppose $dist^{a}(\Ss, \Pf) \geq R > 0$ (see \eqref{E:essential.dist.defn}) and 
$dist^{a}(\Ss^{\msf{V}}, \Pf) \geq R^{\msf{V}} > 0$. So  
$R^{\msf{V}} \geq R$. Then by \eqref{E:Hm.a.S.geq.R.d-p-1},  
         \begin{equation}  \label{E:Hm.a.S.lwr.bnd.for.plane.fitting.Pf}
            \Hm_{\D}^{a}(\Ss) \geq \Hm_{\D}^{a}(\Ss^{\msf{V}}) 
              \geq \gamma \; (R^{\msf{V}})^{ d-p-1 }
                \geq \gamma R^{ d-p-1 } ,
         \end{equation}
where $\gamma > 0$ does not depend on $(\Phi, \Ss)$ or $(\Omega, \Ss^{\msf{V}})$. 
(As mentioned above, at least without further adjustments, theorem \ref{T:lwr.bnd.on.Haus.meas} cannot be applied to $\D_{\infty}$.)

   \begin{remark}[Different measures on $\D$]  \label{R:different.measures.on.D} 
$\Hm_{\D}^{a}$ appearing in \eqref{E:Hm.a.S.lwr.bnd.for.plane.fitting.Pf} is computed using the geodesic-based metric on $\D$. (As assumed in theorem \ref{T:lwr.bnd.on.Haus.meas}.) Recall that $Euc$ is the Euclidean metric on $\Y$. Let $\delta$ be a metric on $\D_{\mu}$ satisfying \eqref{E:metrics.on.D.mu}. (Recall that $\eta$ is the Riemannian metric on $\D_{\mu}$ induced by the inclusion $\D_{\mu} \hookrightarrow \RR^{n \nvar}$. The corresponding topological metric was called $Eta$. Then $\delta \geq Euc$ and, as observed above, $\delta = Eta$ satisfies \eqref{E:metrics.on.D.mu}.  Let $\Hm_{\Y}^{a}$ be computed using $Euc$ on $\Y$. 
Therefore, by \eqref{E:Lip.magnification.of.Hm}, there exists $K < \infty$ s.t.\ 
$\Hm_{\D_{\mu}}^{a} \leq K^{a} \Hm_{\Y}^{a}$. Thus, \eqref{E:Hm.a.S.lwr.bnd.for.plane.fitting.Pf} holds with $\Hm_{\D}^{a}(\Ss)$ 
(where $\D = \D_{\mu}$) replaced by $\Hm_{\Y}^{a}(\Ss)$, where \emph{$\Ss$ is still the singular set of the restriction $\Phi \restriction_{\D_{\mu}}$}.

But there is another wrinkle. Let $\Y'$ be a dense subset of $\Y$ satisfying 
\eqref{E:plane-fitter.defn}. Suppose $\D' := \Y' \cap \D_{\mu}$ satisfies 
\eqref{E:general.plane-fitter.defn}. Singularities of the restriction $\Phi \restriction_{\D_{\mu}}$ relative to $\D'$ will also be singularities of $\Phi$ 
in $\D_{\mu}$ relative to $\Y'$. But the converse is false. There may be $\Y'$ singularities in $\D_{\mu}$ that are not $\D'$ singularities. (This phenomenon is similar to that described in remark \ref{R:Phi.on.and.near.T}. The same point is also made in remark \ref{R:restricting.plane.fitter.to.sphere}.) Thus, the alternate version of \eqref{E:Hm.a.S.lwr.bnd.for.plane.fitting.Pf}, the one with $\Hm_{\D}^{a}$ replaced by $\Hm_{\Y}^{a}$, will still hold with $\Ss$ replaced by the set of singularities on $\D_{\mu}$ relative to $\Y' \cap \D_{\mu}$. 

As for $\D = \D_{\infty}$, let $\Ss$ be the singular set for a $\Phi$ defined on a dense subset, $\Y'$, of \emph{all of} $\Y$. (See the paragraph preceding \eqref{E:plane.fit.S.infty.defn}.) In real life $\Hm_{\Y}^{a}(\Ss)$ will be infinite.
   \end{remark}

  \begin{remark}[Shifting $\D_{\mu}$] \label{R:shift.D.mu}
Suppose $\Y' \subset \Y$ is dense, $\Phi : \Y' \to G(k,\nvar)$, and one wishes to localize and consider the behavior of $\Phi$ in some region of $\Y$. One way to do this is to shift $\D_{\mu}$ to that region and consider the restriction of $\Phi$ on the shifted $\D_{\mu}$. (See remark \ref{R:restricting.plane.fitter.to.sphere}). For example given $y_{0}^{1 \times \nvar} \in \RR^{\nvar}$ one might consider 
$\D_{\mu}(y_{0}) := \D_{\mu} + 1^{n} y_{0} \subset \Y$.  

Many plane-fitters $\Phi$ are translation invariant: For every 
$Y \in \Y$ for which $\Phi(Y)$ is defined, $\Phi(Y + 1^{n} y_{0})$ is defined and constant in $y_{0} \in \RR^{\nvar}$. (See \eqref{E:shft.invar.of.regrssn} for a special case.) For such 
a $\Phi$, any $\D_{\mu}$ as in \eqref{E:plane.fitting.D.mu.defn}, and $y \in \RR^{\nvar}$, we can learn about the singularities of $\Phi$ on $\D_{\mu}(y_{0})$ by applying our theory 
to $\D_{\mu}$.

In general, if $\D_{\mu}$ is shifted to $\D_{\mu}(y_{0})$ it is natural to shift 
$\T_{\mu}$ to $\T_{\mu}(y_{0}) := \T_{\mu} + 1^{n} y_{0}$. 
\textbf{Hypotheses \ref{Hyp:T.manif}, and \ref{Hyp:r.integer}} of theorem \ref{T:Phi.star.Hr.contains.Theta.star.Hr} continue to hold with 
$\T = \T_{\mu}(y_{0})$ and $\D = \D_{\mu}(y_{0})$. Suppose 
$\D' \cap \D_{\mu}(y_{0})$ is dense in $\D_{\mu}(y_{0})$ and let 
$\Ss'(y_{0}) \subset \D_{\mu}(y_{0})$ be a closed superset 
of $\D_{\mu}(y_{0}) \setminus \D'$. 

Even if \textbf{hypotheses \ref{Hyp:S.cap.T.closed} and \ref{Hyp:S.cap.T.small}} of theorem \ref{T:Phi.star.Hr.contains.Theta.star.Hr} hold with $\T = \T_{\mu}$ and 
$\D = \D_{\mu}$, they may not hold with $\T = \T_{\mu}(y_{0})$ and 
$\D = \D_{\mu}(y_{0})$. If \textbf{hypothesis \ref{Hyp:S.cap.T.closed}} fails then $\Ss'(y_{0})$ has non-empty interior in $\D_{\mu}(y_{0})$. 
That means $\dim \bigl( \Ss'(y_{0}) \bigr) = \dim \D_{\mu}(y_{0}) = d-1$. 
If \textbf{hypothesis \ref{Hyp:S.cap.T.small}} fails 
$\Ss'(y_{0}) \cap \T_{\mu}(y_{0}) \neq \varnothing$. 

By \eqref{E:Pf.invar.under.shifts.rescale}, $\Pf$ is shift invariant in $\Y$. Hence, by 
\eqref{E:Pf.mu.defn}, $\Pf_{\mu}  + 1^{n} y_{0} \subset \Pf$. Similarly, by 
\eqref{E:plane.fittin.T.mu.defn}, we have $\T_{\mu}(y_{0}) \subset \Pf$. Hence, 
$\Delta$ (see \eqref{E:Delta(Y).defn}) is defined and continuous on 
$\T_{\mu}(y_{0})$.  

Let $Y \in \T_{\mu}$ and let $w^{n \times 1}$ be as in \eqref{E:Delta(Y).defn}: 
$w^{T} 1^{n} = 1$. Then 
    \begin{align*}
      \Delta(Y + 1^{n} y_{0}) 
        &= \rho \bigl[ (Y + 1^{n} y_{0}) - 1^{n} w^{T} (Y + 1^{n} y_{0}) \bigr] \\
        &= \rho \bigl[ Y + 1^{n} y_{0} - 1^{n} w^{T} Y 
            - 1^{n} (w^{T} 1^{n}) y_{0}) \bigr] \\
        &= \rho \bigl[ Y + 1^{n} y_{0} - 1^{n} w^{T} Y - 1^{n} y_{0}) \bigr] \\
        &= \rho( Y - 1^{n} w^{T} Y ) .
    \end{align*}
I.e., $\Delta(Y + 1^{n} y_{0}) = \Delta(Y)$. Hence, as before, \eqref{E:nontriv.r-dim.homol} also holds with $\Theta := \Delta$. 

Note that $\D_{\mu}(y_{0})$ is homeomorphic to $\D_{\mu}$. Therefore, by 
\eqref{E:H.d-r.D.=0.plane.fitting}, the hypothesis ``$\D$ is a compact $d$-dimensional manifold with $\check{H}^{d-r}(\D) \isomto H^{d-r}(\D) = \{ 0 \}$'' in proposition \ref{P:sing.dim.when.H.d-r.D.=.0} holds with $\D = \D_{\mu}(y_{0})$, $r = 1$ (and, e.g., 
$\ZZ/2$ coefficients). Therefore, if $\Ss'(y_{0})$ is closed, by proposition 
\ref{P:sing.dim.when.H.d-r.D.=.0}, \eqref{E:codim.S.leq.r+1} holds with 
$d = \dim \D_{\mu}(y_{0}) = \dim \D_{\mu} = n \nvar - 1$ and $r = 1$. 
So $\text{codim} \, \Ss'(y_{0}) \leq 2$.
  \end{remark}
  
    \begin{remark} \label{R:restricting.plane.fitter.to.sphere}
Suppose $\Phi : \Y' \to G(k, \nvar)$, where $\Y'$ is dense in $\Y$. It still might make sense to consider the behavior of $\Phi$ on a subspace $\D_{\mu} \subset \Y$. An advantage of doing so is that it allows one to locate somewhat where the singularities are. This idea is applied in remark \ref{R:dnsity.contrs.as.data.spaces}. By shifting $\D_{\mu}$ (remark \ref{R:shift.D.mu}) one gains flexibility in doing this. Call the act of restricting a plane-fitter to some $\D_{\mu}$ ``localization''.

This is only sensible if $\D' \cap \D_{\mu}$ is dense in $\D_{\mu}$. (In theory, 
$\D' \cap \D_{\mu} = \varnothing$ is possible.) If $\D' \cap \D_{\mu}$ is dense we may consider the restriction, $\Phi_{\mu} := \Phi \restriction_{ \D' \cap \D_{\mu}}$ of $\Phi$ 
to $\D' \cap \D_{\mu}$. 

By supposition, $\Y'$ is dense in $\Y$. Suppose $\Y'$ is invariant under rescaling. 
I.e., if $s > 0$, then $s \Y' = \Y'$. \emph{Claim:} $\Y' \cap \D_{\mu}$ is dense 
in $\D_{\mu}$. Let $\Ss_{\mu} := \D_{\mu} \setminus \Y'$. If $\Ss_{\mu}$ is empty, then $\Y' \cap \D_{\mu} = \D_{\mu}$ and we are done. 

So assume $\Y' \cap \D_{\mu}$ is not dense in $\D_{\mu}$. It follows that $\Ss_{\mu}$ has non-empty interior, $\mcl{O}_{\mu} := \Ss_{\mu}^{\circ}$, in $\D_{\mu}$. Therefore, by \eqref{E:D.mu.diffeo.to.sphere}, $\mcl{O}_{\mu}^{1} := (R_{\mu} \restriction_{S^{n \nvar - 1}})^{-1}(\mcl{O}_{\mu})$ is open in $S^{n \nvar - 1}$. But, by \eqref{E:R.mu.defn}, $R_{\mu}$ is a pointwise rescaling. Hence, by \eqref{E:g.commutes.w/.set.ops} and scale invariance of $\Y'$, \linebreak 
$\mcl{O}_{\mu}^{1} \subset (R_{\mu} \restriction_{S^{n \nvar - 1}})^{-1}
(\D_{\mu} \setminus \Y') = S^{n \nvar - 1} \setminus \Y'$. Thus, by scale invariance of $\Y'$ again, 
    \begin{equation*}
      (0,+\infty) \mcl{O}_{\mu}^{1} \subset 
        (0,+\infty) S^{n \nvar - 1} \setminus (0,+\infty) \Y' 
          = (0,+\infty) S^{n \nvar - 1} \setminus \Y' .
    \end{equation*} 
This means 
    \begin{equation}  \label{E:(0,infty) S1.disjoint.from.Y'}
      (0,+\infty) \mcl{O}_{\mu}^{1} \cap \Y' = \varnothing .
    \end{equation}
    
Let $g(Y) := \|Y\|^{-1} Y \in S^{n \nvar - 1}$ ($Y \neq 0$), so $g$ is continuous. 
Then $(0,+\infty) \mcl{O}_{\mu}^{1} = g^{-1}(\mcl{O}_{\mu}^{1})$. This means 
$(0,+\infty) \mcl{O}_{\mu}^{1} \neq \varnothing$ is open in $\Y$. But, by 
\eqref{E:(0,infty) S1.disjoint.from.Y'}, this means $\Y'$ is disjoint from an open subset of $\Y$. This contradicts the supposition that $\Y'$ is dense in $\Y$. The claim that 
$\Y' \cap \D_{\mu}$ is dense in $\D_{\mu}$. is proved. 

Even if $\Y'$ is invariant under rescaling it is possible that $\Phi_{\mu}$ can be extended to a subset, $\D_{\mu}'$, of $\D_{\mu}$ larger than $\Y' \cap \D_{\mu}$. (See remark \ref{R:different.measures.on.D}.) In that case apply lemma \ref{L:extend.Phi.to.D.less.S} so that \eqref{E:D'.=.D.less.S} holds on $\D_{\mu}$. Otherwise, let 
$\D_{\mu}' := \Y' \cap \D_{\mu}$.  
  \end{remark}

\subsection{``Loose'' plane-fitting}  \label{SS:loose.plane.fitting}
Suppose $\Phi : \Y \partlyto \F$. Here we see that 
(\ref{E:general.plane-fitter.defn}e,f) can be replaced by a weaker requirement. Recall \eqref{E:Delta(Y).defn} and \eqref{E:defn.of.Pi(xi)}. Then we replace (\ref{E:general.plane-fitter.defn}e,f)can be replaced by,
    \begin{multline}  \label{E:loose.plane.fit.defn}
      \Y' \cap \T \text{ is dense in } \T, \, \\
        \Phi : \Y' \to G(k,\nvar) \text{ and } \Phi \restriction_{\T \cap \Y'} 
          \text{ has a continuous extension $\Omega$ to all of $\T$ s.t.\  } \\
            \text{ if } x \in \T \text{ then for every } v \in \Omega(x) \setminus \{0\} 
              \text{ we have } v \Pi_{\Delta(x)} \neq 0 .
    \end{multline}

Suppose \eqref{E:loose.plane.fit.defn} holds. Let $x \in \T$, 
$v \in \Omega(x) \setminus \{0\}$, and $\lambda \in [0,1]$. Recall \eqref{E:superscript.perp.notation}. Then 
$I_{\nvar} - \Pi_{\Delta(x)} = \Pi_{\Delta(x)^{\perp}}$. Hence, 
    \begin{equation*}
      \lambda v + (1-\lambda) v \Pi_{\Delta(x)} 
        = v \bigl[ \lambda (I_{\nvar} - \Pi_{\Delta(x)}) + \Pi_{\Delta(x)} \bigr]
          = v (\lambda \Pi_{\Delta(x)^{\perp}} + \Pi_{\Delta(x)}) .
    \end{equation*}
 Suppose $\lambda > 0$. If 
 $v (\lambda \Pi_{\Delta(x)^{\perp}} + \Pi_{\Delta(x)}) = 0$ then the orthogonal projections of $v$ onto the two perpendicular planes whose direct sum if the whole space $\RR^{\nvar}$ is 0. That cannot happen because $v \neq 0$. If $\lambda = 0$, then, by \eqref{E:loose.plane.fit.defn}, it cannot be 0. 
 
Let 
    \begin{equation*}
      \xi(\lambda,x) := \lambda \Omega(x) + (1-\lambda) \Omega(x)  \Pi_{\Delta(x)} 
        := \bigl\{ \lambda v + (1-\lambda) v \Pi_{\Delta(x)} \in \RR^{\nvar} : 
          v \in \Omega(x) \bigr\}
    \end{equation*}
It follows from the preceding paragraph that $\dim \xi(\lambda,x) \geq k$. 
Since $\dim \Omega(x) = k$, we have $\dim \xi(\lambda,x) \leq k$. 
I.e. $\xi(\lambda,x) \in G(k,\nvar)$. Thus, $\xi$ is a homotopy between $\Delta$ and 
$\Omega$. 

By \eqref{E:Delta(Y).defn} and lemma \ref{L:proj.mat.is.imbedding.of.Grass}, 
$\xi(\lambda,x)$ is continuous in $\lambda \in [0,1]$ and $x \in \T \cap \Y'$. Suppose 
$\T \subset \Y$ is a manifold s.t.\ \eqref{E:nontriv.r-dim.homol} holds with 
$\Theta = \Delta$ and $r = 1$. By \eqref{E:Theta.ast.nontriv.in.dim.1} this is true 
if $\T$ is as in section \ref{SS:D.T.plane.fit}. Since $\Delta$ and the $\Omega$ are homotopic, we have \eqref{E:Theta.ast.nontriv.in.dim.1} holds for the 
$\Theta = \Omega$. 

Now the whole of section \ref{SS:D.T.plane.fit} goes through for $\Phi$ satisfying \eqref{E:loose.plane.fit.defn}.  

\section{Principal components plane fitting} \label{SS:PC.plane.fitting} 
Principal components plane fitting (PC) is defined in example \ref{Ex:3.plane.fitters}. 
Let $\D' = \Y' \subset \Y$, be the set of data sets $Y$ for which 
$\Phi_{PC} := PC(Y)$ is defined. Let $Y \in \Y'$ and let 
$\lambda_{1} \geq \cdots \lambda_{\nvar} \geq 0$ be the eigenvalues of $cov(Y)$. By definition of $\Y'$, $\lambda_{k} > \lambda_{k+1}$. And conversely, if $Y \in \Y$ and the eigenvalues of $cov(Y)$ are $\lambda_{1} \geq \cdots \lambda_{\nvar} \geq 0$ with 
$\lambda_{k} > \lambda_{k+1}$, then $Y \in \Y'$. By lemma \ref{L:Eigen.cont.}, there is an open neighborhood of $Y$ in which the $k^{th}$ eigenvalue of the covariance matrix is strictly bigger than the $(k + 1)^{st}$. This proves that $\Y'$ is open. 

We prove that 
    \begin{equation}  \label{E:Pf.subset.Y'.for.PC}
      \Pf \subset \Y' .
    \end{equation} 
Let $Y \in \Pf$ and let 
$w^{n \times 1} = n^{-1} 1^{n}$. Then the matrix $Y^{0}$, defined in example \ref{Ex:3.plane.fitters}, is just $Y - 1^{n} w^{T} Y$ and therefore, by \eqref{E:when.is.Y.in.Pk}, has rank $k$. By the Singular Value Decomposition (Rao \cite[(v), p.\ 42]{crR73.LinStatInf}), we may write 
    \begin{equation}  \label{E:Y0.SVD}
      Y^{0} = L^{n \times \nvar} \Lambda N^{T} ,
    \end{equation}
where $L$ has orthonormal columns, $\Lambda^{\nvar \times \nvar}$ is non-negative diagonal, and $N^{\nvar \times \nvar}$ is orthogonal. By \eqref{E:sample.covariance.matrix}, 
    \begin{equation}  \label{E:cov(Y).SVD}
      cov(Y) =  \frac{1}{n-1}(Y^{0})^{T} \, Y^{0} = \frac{1}{n-1} N \Lambda^{2} N^{T} .
    \end{equation}
It follows that the covariance matrix $cov(Y)$ (see \eqref{E:sample.covariance.matrix}) also has rank $k$. Hence, if $\lambda_{1} \geq \cdots \lambda_{\nvar} \geq 0$ are the eigenvalues of $cov(Y)$, then $\lambda_{k} > 0 = \lambda_{k+1}$. 
Hence, $\Phi_{PC} \in G(k, \nvar)$ is defined. 

Still let $Y \in \Pf$. As we just observed, $rank(Y^{0}) = k = rank(cov(Y))$. Therefore, all but $k$ of the elements of $\Lambda$ are non-zero. Moreover, the diagonal elements of 
$\Lambda^{2}$ are  the eigenvalues of $cov(Y)$ and columns of $N$ are eigenvectors. 
Recall \eqref{E:rho=row.space}. Then $\Phi_{PC}(Y) = \rho(cov(Y)) = \rho(Y^{0})$. 
 But by \eqref{E:Delta(Y).defn}, $\Delta(Y) = \rho(Y - 1^{n} \bar{y}) = \rho(Y^{0})$. 
I.e., 
    \begin{equation}  \label{E:Phi.PC=Delta.on.Pf}
      \Delta(Y) = \Phi_{PC}(Y) \text{ if } Y \in \Pf .
    \end{equation} 
Thus, $\Phi_{PC}$ satisfies (\ref{E:general.plane-fitter.defn}f). 

\emph{Claim:} 
    \begin{equation}  \label{E:PC.Y'.dense}
      \Y' \text{  is dense in } Y .
    \end{equation} 
Let $Y \in \Y \setminus \Y'$. Then, if 
$\lambda_{1} \geq \cdots \lambda_{\nvar} \geq 0$ are the eigenvalues of $cov(Y)$, then $\lambda_{k} = \lambda_{k+1}$. As in example \ref{Ex:3.plane.fitters}, write 
$(Y^{0})^{n \times \nvar} := Y - 1^{n} \bar{y}$, where 
$\bar{y}^{1 \times \nvar} := n^{-1} 1_{n} Y$. Write $Y^{0}$ as in \eqref{E:Y0.SVD}. 

Then, by \eqref{E:cov(Y).SVD}, the eigenvalues of $cov(Y)$ are $(n-1)^{-1} \lambda_{1}^{2} \geq \cdots (n-1)^{-1} \lambda_{k}^{2} = (n-1)^{-1} \lambda_{k+1}^{2} \geq \cdots 
\geq (n-1)^{-1} \lambda_{\nvar}^{2}$ with $\lambda_{k}^{2} = \lambda_{k+1}^{2}$. And the columns of $N$ are corresponding unit eigenvectors.
 
Recall \eqref{E:1n.col.vec.defn}. \emph{Claim:} We may assume $1_{n} L = 0$. Since $1_{n} Y^{0} = 0$, we have $0 = (1^{n} L \Lambda N^{T}) N = L \Lambda$. Thus, every column of $L$ corresponding to a non-zero column (i.e., non-zero diagonal entry) 
of $\Lambda$ is perpendicular to $1^{n}$. Thus, if there are no zero diagonal entries 
in $\Lambda$ then $1_{n} L = 0$. Suppose there are $j = 1, \ldots, \nvar$ zero diagonal entries in $\Lambda$. Then $L$ has $\nvar - j$ orthogonal columns also orthogonal to $1^{n}$. Those columns span a $(\nvar - j)$-dimensional subspace, $V$, 
of the $(n-1)$-dimensional space $(1^{n})^{\perp}$. (See \eqref{E:superscript.perp.notation}.) Replace the other $j$ columns of $L$ by $j$ orthogonal vectors in $(1^{n})^{\perp}$ that are also perpendicular to $V$. Then we still have $Y^{0} = L^{n \times \nvar} \Lambda N^{T}$, but also $(1_{n})^{T} L = 0$. This proves the claim that we may assume $(1_{n})^{T} L = 0$. 
 
Let $\nu_{1}, \ldots, \nu_{\nvar}$ be the diagonal elements of $\Lambda$. WLOG we may assume $\nu_{1} \geq \cdots \geq \nu_{\nvar} \geq 0$. 
Thus, the eigenvalues of $(n-1) \, cov(Y)$ are 
$\nu_{1}^{2} \geq \cdots \geq \nu_{\nvar}^{2}$. Since $Y \in \Y \setminus \Y'$ by assumption, we must have $\nu_{k} = \nu_{k+1}$. Let $\epsilon > 0$ and let 
$\mu_{i} = \nu_{i} + \epsilon/i$ for $i = 1, \ldots, k$. Thus, $\mu_{j} > \mu_{j+1}$ 
for $j = 1, \ldots, k$. Let $M^{\nvar \times \nvar}$ be diagonal with diagonal entries 
$\mu_{1} > \cdots \mu_{k} > \mu_{k+1} > \cdots > \mu_{\nvar}$ and let 
$Y_{\epsilon} := L^{n \times \nvar} M N^{T} + 1^{n} \bar{y}$. Since $1_{n} L = 0$, 
we have $\bar{y}_{\epsilon}^{1 \times \nvar} := n^{-1} 1_{n} Y_{\epsilon} 
= \bar{y}^{1 \times \nvar} := n^{-1} 1_{n} Y$. 
Let $Y_{\epsilon}^{0} = Y_{\epsilon} - 1^{n} \bar{y}$. 

As in \eqref{E:cov(Y).SVD},  
    \begin{equation}  \label{E:cov(Y.eps).SVD}
      cov(Y_{\epsilon}) =  \frac{1}{n-1}(Y_{\epsilon}^{0})^{T} \, Y_{\epsilon}^{0} 
        = \frac{1}{n-1} N M^{2} N^{T} .
    \end{equation}
Hence, the eigenvalues of $cov(Y_{\epsilon})$ are 
$(n-1)^{-1} \mu_{1}^{2} > \cdots (n-1)^{-1} \mu_{k}^{2} > (n-1)^{-1} \mu_{k+1}^{2} 
> \cdots > (n-1)^{-1} \mu_{\nvar}^{2}$. And the columns of $N$ are corresponding unit eigenvectors. Thus $Y_{\epsilon} \in \Y'$. By making 
$\epsilon > 0$ small, $Y_{\epsilon}$ can be made as close to $Y$ as desired. This proves the claim \eqref{E:PC.Y'.dense} that $\Y'$ is dense in $\Y$. 

It follows from the preceding that $Y \in \Y \setminus \Y'$ is a singularity. Let $v_{j}$  denote the $j^{th}$ column of $N$. 
Since $\lambda_{k}^{2} = \lambda_{k+1}^{2}$, \eqref{E:Y0.SVD} still holds if $v_{k}$ and $v_{k+1}$ are replaced by $v_{k,\theta} := (\cos \theta) v_{k} + (\sin \theta) v_{k+1}$ and 
$v_{k+1,\theta} := (\sin \theta) v_{k} - (\cos \theta) v_{k+1}$, resp., for any $\theta \in \RR$. Then, by \eqref{E:cov(Y.eps).SVD}, the PC plane of $Y_{\epsilon}$ is spanned 
by $v_{1}, \ldots, v_{k-1}, v_{k,\theta}$. 
As $\epsilon \downarrow 0$, $Y_{\epsilon} \to Y$, but the PC plane remains constant. Allowing $\theta$ to vary, we therefore get different planes in the limit. I.e., $Y$ is a singularity.

Next, we show \emph{claim:} 
    \begin{equation}  \label{E:Phi.PC.is.cont}
      \Phi_{PC} \text{ is continuous on } \Y' .
    \end{equation} 
Let $Y \in \Y'$. Since $\Y'$ is open, $Y$ has a neighborhood 
$\mcl{V} \subset \Y'$. Let $\{ Y_{m} \} \subset \mcl{V}$ (so $\Phi_{PC}(Y_{m})$ is defined) s.t.\ $Y_{m} \to Y$ (in $\| \cdot \|$ norm of course). 

Let $\lambda_{m,1} \geq \cdots \geq \lambda_{m,\nvar}$ be the eigenvalues 
of $(n-1) \, cov(Y_{m})$. Let $\ell = 1, \ldots, \nvar$. By lemma \ref{L:Eigen.cont.} again, 
$\lambda_{m,\ell} \to \lambda_{\ell}$, the $\ell^{th}$ eigenvalue of $(n-1) \, cov(Y)$. 
 
Let $Q_{m}^{\nvar \times \nvar}$ be a matrix whose rows are eigenvectors 
of $(n-1) \, cov(Y_{m})$. We may assume that the $\ell^{th}$ row of $Q_{m}$ is a unit (row) eigenvector, $v_{m,\ell}$, with eigenvalue $\lambda_{m,\ell}$. Taking a subsequence if necessary, the lemma also tells us that $Q_{m}$ converges to a matrix whose rows are the eigenvectors of $(n-1) \, cov(Y)$. In particular, if 
$\ell \neq \ell'$ then $v_{m,\ell}$ and $v_{m,\ell'}$ converge to distinct eigenvectors 
of $(n-1) \, cov(Y_{m})$. 

Let $\delta := \lambda_{k} - \lambda_{k+1}$. Since $Y \in \Y'$, we have $\delta > 0$ and $\lambda_{\ell} - \lambda_{\ell'} \geq \delta$ for $\ell \leq k < \ell'$. 
Clearly, $cov(Y)$ is continuous in $Y \in \Y$. Therefore, $Y_{m} \to Y$ means 
$\| cov(Y_{m}) - cov(Y) \| \to 0$. Thus, 
    \begin{align*} 
        | \lambda_{m,\ell} v_{m,\ell} - \lambda_{\ell'} v_{\ell'} | 
        &= (n-1) \bigl| v_{m,\ell} \, cov(Y_{m}) - v_{\ell'} \, cov(Y) \bigr| \\
        &= (n-1) \bigl| \bigl( v_{m,\ell} \, cov(Y) - v_{\ell'} \, cov(Y) \bigr) 
           + (n-1) \bigl( v_{m,\ell} \, cov(Y_{m}) - v_{m,\ell} \, cov(Y) \bigr) \bigr| \\
        &\leq (n-1) | v_{m,\ell} - v_{\ell'} | \| cov(Y) \| 
          + (n-1) \| cov(Y_{m}) - cov(Y) \| \to 0 . 
    \end{align*}
On the other hand, since $v_{m,\ell}$ and $v_{\ell'}$ are unit vectors and 
$v_{m,\ell} \to v_{\ell'}$, we have, 
    \begin{multline*}
      | \lambda_{m,\ell} v_{m,\ell} - \lambda_{\ell'} v_{\ell'} |^{2}
        = \lambda_{m,\ell}^{2} 
          - 2 \lambda_{m,\ell} \lambda_{\ell'} (v_{m,\ell} \cdot v_{\ell'})
            + \lambda_{\ell'}^{2} \\ 
              \to \lambda_{\ell}^{2} - 2 \lambda_{\ell} \lambda_{\ell'}
               + \lambda_{\ell'}^{2} = ( \lambda_{\ell} - \lambda_{\ell'} )^{2}
                 \geq \delta^{2} > 0 .
    \end{multline*}
Contradiction. 
Thus, if $\ell \leq k$ then $v_{m,\ell} \to v_{\ell'}$, where $v_{\ell'}$ is a unit eigenvector 
of $(n-1) \, cov(Y)$ with eigenvalue $\lambda_{\ell'}$ and $\ell' \leq k$. Similarly, 
if $\ell > k$ then $v_{m,\ell} \to v_{\ell'}$ with $\ell' > k$. 

Let $V_{m}^{k \times \nvar}$ be the matrix whose rows are $v_{m,\ell}$ with 
$\ell = 1, \ldots, k$. Then $V_{m}$ converges in $\| \cdot \|$ norm to a matrix $V^{k \times \nvar}$ whose $\ell^{th}$ row is a unit eigenvector of $(n-1) \, cov(Y)$ with eigenvalue $\lambda_{\ell}^{2}$ and $\ell \leq k$. And the limits constitute all the first $k$ eigenvectors of $(n-1) cov(Y)$. Then, by \eqref{E:convergence.in.Grassmann}, 
$\Phi_{PC}(Y_{m}) = \rho(V_{m}) \to \rho(V) = \Phi_{PC}(Y)$. This proves the claim \eqref{E:Phi.PC.is.cont}. 

Note that $\D'$ and $\Phi_{PC}$ are scale and shift invariant: If $Y \in \Y'$, $s > 0$, 
and $y^{1 \times k} \in \RR^{k}$, 
then $cov(s Y + 1^{n} y) = s \, cov(Y)$. Thus, $s Y + 1^{n} y \in \Y'$ and 
$\Phi_{PC}(s Y + 1^{n} y) = \Phi_{PC}(Y)$. Recall \eqref{E:Pf.subset.Y'.for.PC} and \eqref{E:Phi.PC=Delta.on.Pf}. Then, whether 
    \begin{itemize}
        \item $\D = \Y$, $\D' = \Y'$, and $\T$ as in \eqref{E:plane.fittin.T.defn}; OR
        \item $\D = \D_{\infty} = PS(\Y')$, $\D' = \D_{\infty}' := PS(\Y')$, 
          and $\T = \T_{\infty}$ as in \eqref{E:T.infty:=PS[Upsilon(P1)]} 
            (and see \eqref{E:plane.fittin.T.mu.defn}); OR 
        \item $\D = \D_{\mu}$ as in \eqref{E:plane.fitting.D.mu.defn}, 
        $\D' = \D'_{\mu} := \Y' \cap \D_{\mu}$, and $\T = \T_{\mu}$ as in \eqref{E:plane.fittin.T.mu.defn} (and see \eqref{E:in.plane.fitting.T.mu.is.smooth.submanif}) for some smooth $\mu$ as in \eqref{E:mu.as.in.D.mu.defn};
    \end{itemize}
in all those cases we have that $\Phi_{PC}$ satisfies the requirements spelled out in \eqref{E:general.plane-fitter.defn} and $\Phi_{PC}$ is continuous on $\T$. Hence, proposition \ref{P:sing.codim.in.plane-fitting} and \eqref{E:Hm.a.S.lwr.bnd.for.plane.fitting.Pf} applies to PC.

Since $\D'$ and $\Phi_{PC}$ are scale and shift invariant, the ideas in remarks  \ref{R:restricting.plane.fitter.to.sphere} and \ref{R:dnsity.contrs.as.data.spaces} apply to PC as well. It turns out that, as exemplified in figure \ref{F:sing.dists.cdfs}, the codimension of the singular set of $\Phi_{PC}$ is exactly 2
(\cite[Proposition 1.5, p.\ 6]{Ellis.dim.sing.long.version}). More generally, the singular sets of certain``projection pursuit plane fitting'' methods, 
\cite[Example 2.4, pp.\ 494--496]{spE95}, have codimension no bigger than 2.)
PC is often used as a version of ``factor analysis''. As such it has in addition another, more subtle, form of singularity (\cite[Examples 1.1 and 7.1]{spE.fact.anal}).)

\section{Linear regression in general, least squares linear regression in particular}  \label{SS:lin.reg.and.LS}
Linear regression is one of the most common statistical activities and by far the most common form it takes is least squares. The data take the form 
    \begin{equation} \label{E:initially.in.lin.reg.nvar=k+1}
      Y = (X^{n \times k}, y^{n \times 1}), \text{ so } \nvar = k+1 .
    \end{equation}
The columns of $X$ are the ``predictors'' or ``independent variables'' and $y$ is the column vector of ``responses''. The vector $y$ is also called the ``dependent variable''.
Let $x_{i}^{1 \times k}$ be the $i^{th}$ of $X$ and $y_{i} \in \RR$ the $i^{th}$ entry in $y$ ($i=1, \ldots, n$). In linear regression the following notion is important. 
    \begin{definition}  \label{D:collinearity}
$Y = (X^{n \times k}, y^{n \times 1}) \in \Y$ is ``(multi)collinear'' 
if $x_{2} - x_{1}, \ldots, x_{n}- x_{1}$ do not span $\RR^{k}$.
    \end{definition}
 (Notice that this definition still makes sense if we relax \eqref{E:n>nvar>k>0} and allow 
 $n = \nvar$.)
 
Let $Y = (X, Z)$ be collinear. It follows from lemma \ref{L:rank.lwr.semicont} 
that by making arbitrarily small perturbations in $X$, we can get a noncollinear data set. Thus,
    \begin{equation}  \label{E:noncollinear.dense.inY}
      \text{The set of all non-collinear data sets is dense in } \Y.
    \end{equation}
This is made precise in lemma \ref{L:dim.set.of.collin.data.sets}
 
Let $f_{1}^{n \times 1} := (1, 0, \ldots, 0)^{T}$, so $f_{1}^{T} 1^{n} = 1$.. Then $Y$ is collinear if $rank \, (X - 1^{n} w^{T} X) < k$, with $w = f_{1}$. There is nothing special about this choice of $w$. By \eqref{E:const.rowspace.of.offset.mats}, 
    \begin{multline}  \label{E:nothing.special.about.x1.in.collin}
      \text{Let $w^{n \times 1}$ satisfy $w^{T} 1^{n} = 1$. Then } \\
        (X^{n \times k}, y^{n \times 1}) \text{ is collinear if and only if }
          rank \, (X - 1^{n} w^{T} X) < k .
    \end{multline}
(Lemma \ref{L:collinearity.and.w.1xn} generalizes this.) 

Let $1^{n}$ be the $n$-dimensional column vector consisting only of $1$'s. In linear regression 
plane-fitting one seeks $a \in \RR$ and a column vector $b^{k \times 1}$ s.t.\ 
$a 1^{n} + X b \in \RR^{n}$ approximates $y$ well in some sense. Call the pair $(a, b^{T})$ the ``regression of $y$ on $X$''. The corresponding plane, 
    \begin{equation}   \label{E:in.lin.reg.Phi.is.graph}
      \Phi(X,y) \text{ is the graph } \bigl\{ (x, x^{1 \times k} \, b^{k \times 1}) : 
        x \in \RR^{k} \text{ is a row vector} \bigr\},
    \end{equation}
which is the $k$-dimensional subspace parallel to the ``regression plane'': the graph of the affine function (``linear regression function'')
    \begin{equation}  \label{E:linear.regression.function}
      f : x^{1 \times k} \mapsto a 1^{n} + x b .
    \end{equation} 
($x^{1 \times k} \, b^{k \times 1})$ is, of course, just the inner product of $x$ with $b^{T}$.) Given $x \in \RR^{k}$, but not the corresponding $y \in \RR$, the regression function $f$ is a data map that can be used to ``predict'' $y$. In this section, we are not interested in $f$. Rather we examine the process of ``learning'' $f$ from the data $Y$. (See remark \ref{R:learning.and.predicting}.)

Anyway, 
    \begin{multline}  \label{E:Phi(Y).orthog.to.in.span.of}
      \Phi(Y) \text{ is the orthogonal complement of } (b^{T}, -1) \text{ in } \RR^{\nvar}, \\
        \text{ the same as the row space of } (I_{k}, b), 
    \end{multline}
where $I_{k}$ is the $k \times k$ identity matrix. Note that $\Phi(Y)$ is also the orthogonal complement of $\pm \bigl| (b^{T}, -1) \bigr|^{-1} (b^{T}, -1) \in S^{k}$. Note that $b$ can be recovered from $\pm \bigl| (b^{T}, -1) \bigr|^{-1} (b^{T}, -1)$. 
 
Often, linear regression is shift invariant. This means that for 
$v^{1 \times k} \in \RR^{k}$ and $c \in \RR$ arbitrary, 
	\begin{multline}  \label{E:shft.invar.of.regrssn} 
	      \text{If the coefficient vector for the regression of $y$ on $X$ is } ( a, b ) . \\
	         \text{ Then the coefficient vector for the regression of } 
                  y - c 1^{n} \text{ on } X - 1^{n} v \\
                    \text{ is } ( a - c + v b, b ). 
	\end{multline}
I.e., changing the data in this way has no impact on $b$. A possible choice 
of $v^{1 \times k}$ 
might be $x_{i}$ for some $i = 1, \ldots, n$. (\eqref{E:shft.invar.of.regrssn} may fail for Bayesian or ``shrinkage'' methods Berger \cite{joB85.BergerBayes}, 
Hoerl and Kennard \cite{aeHrwK70.ridge.reg}, and Tibshirani \cite{rT96.lasso}.)

  \begin{remark}  \label{R:linear.reg.of.colinear.data}
Here we consider, in the regression setting, perfect fits (i.e., data in $\Pf$) that are almost collinear. We show, that as a noncollinear perfect fit approaches a collinear perfect fit, its coefficient vector goes to infinity. To see this, suppose we are employing a regression method satisfying \eqref{E:shft.invar.of.regrssn} and suppose 
    \begin{equation}  \label{E:Y.is.perfect.and.collin}
      Y^{n \times \nvar} = (X, y) \in \Pf \text{ is collinear.}
    \end{equation} 
Let $x_{1}^{1 \times k}$ be the first row of $X$ and let $X_{0} := X - 1^{n} x_{1}$. Let 
    \begin{equation*}
      w^{n \times 1} = (1, 0, \ldots, 0)^{T} .
    \end{equation*} 
Thus, 
    \begin{equation}  \label{E:X0.w}
      X_{0} = X - 1^{n} x_{1} = X - 1^{n} w^{T} X, \text{ so } w^{T} X_{0} = 0 .
    \end{equation} 
Since $Y$ is collinear, by definition \ref{D:collinearity}, $rank \, X_{0} < k$. 
  
Let $y_{1} \in \RR$ be the first entry in $y$ and let 
    \begin{equation}  \label{E:y0.defn}
      y_{0}^{n \times 1} := y - y_{1} 1^{n} = y - 1^{n} w^{T} y \; 
        \text{ so } \; w^{T} y_{0} = 0 .
    \end{equation} 
By \eqref{E:Y.is.perfect.and.collin} and \eqref{E:when.is.Y.in.Pk}, 
$rank \, (X_{0}, y_{0}) = rank \, ( Y - 1^{n} w^{T} Y ) = k$. But $rank \, X_{0} < k$. It follows that:
    \begin{equation}  \label{E:rank.X0=k-1}
      rank \, X_{0} = k-1 \text{ and } y_{0} 
        \text{ is not in the column space of } X_{0} . 
    \end{equation}
Define 
    \begin{equation}  \label{E:y'=proj.of.y0.onto.col.space}
      (y')^{n \times 1} = \text{ orthogonal projection of } y_{0} 
        \text{ onto the column space of }X_{0} .
    \end{equation}
Therefore, there exists $(b')^{k \times 1}$ s.t.\
    \begin{equation}  \label{E:y'=X0.b'}
      y' = X_{0} b' . 
    \end{equation}
We may assume 
    \begin{equation}  \label{E:b.orthog.to.b'}
      (b')^{T} \in \rho(X_{0}) .
    \end{equation} 
(See \eqref{E:rho=row.space}.)

By \eqref{E:y'=X0.b'} and \eqref{E:X0.w}, $w^{T} y' = 0$. Since, by \eqref{E:rank.X0=k-1}, $y_{0}$ is not in the column space of $X_{0}$, by \eqref{E:y0.defn} we have  
    \begin{equation}  \label{E:y''.perp.w.X0}
      (y'')^{n \times 1} := y_{0} - y' \neq 0 \text{ and } (y'')^{T} (w, X_{0}) = 0 .
    \end{equation}

Let 
    \begin{equation}  \label{E:b.perp.rowspace.of.X0}
      b^{k \times 1} \text{ be a unit vector s.t.\ $b^{T}$ is orthogonal to } \rho(X_{0}) .
    \end{equation}
Hence, by \eqref{E:b.orthog.to.b'}, 
    \begin{equation}  \label{E:b.perp.b'}
      b \text{ is orthogonal to } b' .
    \end{equation}
(Since $rank \, X_{0}^{n \times k} = k-1$ by \eqref{E:rank.X0=k-1}, $b$ is unique up to sign.) 

Now we perturb $X$ in the direction $(y'' b^{T})^{n \times k}$. 
Let $\epsilon \in \RR \setminus \{0\}$ and let 
    \begin{equation*}
      \tilde{X}_{\epsilon} := \tilde{X} := \epsilon y'' b^{T} + X .
    \end{equation*} 
Write 
    \begin{equation}  \label{E:X0.tilde.defn}
      \tilde{X}_{0} := \tilde{X} - 1^{n} w^{T} \tilde{X} 
        \; \text{ so } \; w^{T} \tilde{X}_{0} = 0 , 
    \end{equation}
and 
    \begin{equation}  \label{E:Y.tilde.defn}
      \tilde{Y} := \tilde{Y}_{\epsilon} := (\tilde{X}_{\epsilon}, y) .
    \end{equation} 
By \eqref{E:X0.w} and \eqref{E:y''.perp.w.X0}, we have 
    \begin{equation}  \label{E:w.X.tilde=x1}
      w^{T} \tilde{X} = \epsilon w^{T} y'' b^{T} + w ^{T}1^{n} x_{1} 
        + w^{T} X_{0} = x_{1}.
    \end{equation} 
Therefore by \eqref{E:X0.w} again, 
    \begin{equation}  \label{E:X0.tilde=y'' b+ X0}
      \tilde{X}_{0} = (\epsilon y'' b^{T} + 1^{n} x_{1} + X_{0}) - 1^{n} x_{1} 
        = \epsilon y'' b^{T} + X_{0} .
    \end{equation}
Hence, by \eqref{E:y''.perp.w.X0} and \eqref{E:b.perp.rowspace.of.X0}, 
$\tilde{X}_{0} b = \epsilon y'' \neq 0$. Therefore, by \eqref{E:rank.X0=k-1},
    \begin{equation}  \label{E:rank.X0.tilde=k}
       rank \, \tilde{X}_{0} = k . 
    \end{equation}
Therefore, by definition \ref{D:collinearity}, \eqref{E:X0.tilde.defn}, and \eqref{E:rank.X0.tilde=k}, 
    \begin{equation*}
      \tilde{Y} \text{ is not collinear.}
    \end{equation*} 

By \eqref{E:y''.perp.w.X0} and \eqref{E:y'=X0.b'},  
    \begin{equation}  \label{E:y0=y''+X0.b'}
      y_{0} = y'' + y' = y'' + X_{0} b' .
    \end{equation}
    
Recall \eqref{E:Y.tilde.defn}, \eqref{E:y0.defn}, \eqref{E:X0.tilde.defn},  \eqref{E:w.X.tilde=x1}, and \eqref{E:y0.defn}. We compute the rank of 
    \begin{equation}  \label{E:Ytilde.0.defn}
      \tilde{Y}_{0} := \tilde{Y} - 1^{n} w^{T} \tilde{Y} = (\tilde{X}_{0}, y_{0}) .
    \end{equation} 

By \eqref{E:y0=y''+X0.b'} and \eqref{E:X0.tilde=y'' b+ X0}, we have
$\tilde{Y}_{0} = ( \tilde{X}_{0} , y_{0} )  
= ( \epsilon y'' b^{T} + X_{0} , y'' + X_{0} b' )$. 
By \eqref{E:b.perp.rowspace.of.X0} and \eqref{E:b.orthog.to.b'}, we have 
$b^{T} b' = 0 = X_{0} b$ and $b^{T} b = 1$. Hence, by \eqref{E:y0=y''+X0.b'}, \eqref{E:y'=X0.b'}, and  \eqref{E:X0.tilde=y'' b+ X0}, 
the last column of $\tilde{Y}_{0}$ can be re-expressed as follows.
    \begin{multline}  \label{E:y0.in.terms.of.X0.tilde}
       y_{0} = y' + y'' = X_{0} b' + y'' = (\epsilon y'' b^{T} + X_{0}) b' 
         + \epsilon^{-1} (\epsilon y'' b^{T} + X_{0}) b \\
           = \tilde{X}_{0} b' 
             + \epsilon^{-1} \tilde{X}_{0} b
               = \tilde{X}_{0} (b' + \epsilon^{-1} b) . 
    \end{multline}
Thus, the last column of $\tilde{Y}_{0}$ is in the column space of $\tilde{X}_{0}$. Therefore, by \eqref{E:rank.X0.tilde=k}, 
    \begin{equation*}
      rank \, \tilde{Y}_{0} = k .
    \end{equation*} 
Thus, by \eqref{E:when.is.Y.in.Pk} and \eqref{E:Ytilde.0.defn}, we have 
$\tilde{Y}_{\epsilon} \in \Pf$.  

Let 
    \begin{equation*}
      \tilde{b} := \tilde{b}_{\epsilon} := b' + \epsilon^{-1} b .
    \end{equation*}
Then, by \eqref{E:b.perp.rowspace.of.X0}, $\tilde{b} \to \infty$ as $\epsilon \to 0$. 
By \eqref{E:X0.tilde.defn}, \eqref{E:y0.in.terms.of.X0.tilde}, and \eqref{E:y0.defn},
    \begin{equation*}
      \tilde{X} \tilde{b} = \tilde{X}_{0} \tilde{b} + 1^{n} w^{T} \tilde{X} \tilde{b} 
        = y_{0} + (w^{T} \tilde{X} \tilde{b}) 1^{n} 
          = y + ( -w^{T} y + w^{T} \tilde{X} \tilde{b} ) 1^{n} .
    \end{equation*}
Thus, if $a := w^{T} y - w^{T} \tilde{X} \tilde{b}$, then
    \begin{equation*}
      y = a 1^{n} + \tilde{X}_{\epsilon} \tilde{b}_{\epsilon} . 
    \end{equation*}

Suppose $\Phi$ satisfies \eqref{E:plane-fitter.defn} and is defined at all non-collinear perfect fits, a not unreasonable demand to put on a regression method (example: \eqref{E:noncollin.in.Pf.is.in.Y.LAD'}). Then the regression coefficient vector for the regression of $y$ on $\tilde{X}$ is 
$\tilde{b}_{\epsilon}$, which goes to infinity as $\epsilon \to 0$, in other words 
as $\tilde{Y}_{\epsilon} \to Y$. But we may replace $b$ by $-b$. This will make 
$\tilde{b}_{\epsilon} - b'$ go to infinity in the opposite direction.

Recall \eqref{E:b.perp.rowspace.of.X0}: $b$ is a unit vector. As observed after \eqref{E:Phi(Y).orthog.to.in.span.of}, $\Phi(\tilde{X}_{\epsilon}, y)$ is the orthogonal complement of 
    \begin{equation*}
      \pm |\tilde{b}_{\epsilon}|^{-1} (\tilde{b}_{\epsilon}, -1) 
        = \pm |\epsilon b' + b|^{-1} (\epsilon b' + b, -\epsilon)  \to (\pm b, 0) ,
          \text{ as } \epsilon \to 0, 
    \end{equation*} 
i.e., as $\tilde{Y}_{\epsilon} \to Y$. 

Thus, collinear data sets in $\Pf$ can be thought of as the points at infinity in ``$b$-space'' and we may consider linear regression to be a map into the projective space $P^{k}$. But we already knew that: By Milnor and Stasheff \cite[Lemma 5.1, p.\ 57]{jwMjdS74}), 
$\F = G(k, k+1) \homeomto G(1,k+1) = P^{k}$. (See \eqref{E:feature.space.in.plane.fitting} and \eqref{E:initially.in.lin.reg.nvar=k+1}.) Do not confuse the projective space $P^{k}$ with the perfect fit space $\Pf^{k}$.) We exploit this fact in section \ref{SS:lin.combo.for.plane.fit.w/.k=nvar-1} to construct a convex combination function for linear regression.
  \end{remark} 

  \begin{remark}[Mean centering]  \label{R:mean.center}
A useful choice of $v^{1 \times k}$ in \eqref{E:shft.invar.of.regrssn} is the mean of all the rows of $X$ (i.e., the row vector consisting of all the column means): $v := n^{-1} 1_{n} X$. In that case, the operation of replacing $X$ by $X - 1^{n} v$ is called ``mean centering'' $X$. Similarly, $b$ is unaffected if we take $c$ in \eqref{E:shft.invar.of.regrssn} 
to be $c = \bar{y}$, the mean of $y$. (This is ``mean-centering'' $y$.) Thus, $b$ is unaffected if we mean center the whole matrix $Y$ by mean-centering both $X$ and $y$. By \eqref{E:nothing.special.about.x1.in.collin}, if $Y$ is collinear, then it remains collinear if we mean center $X$, $y$, or both. 
  \end{remark}

We stated that the goal of linear regression is to approximate $y$ well. In least squares linear regression (LS), approximating $y$ well means that $a$ and $b$ are chosen to minimize the $L^{2}$ norm of $y - a 1^{n} - X b$. The singular set of LS consists precisely of the collinear data sets (Ellis \cite[Example 2.8]{spE95}, proposition \ref{P:collin.data.are.sings.of.LS}).

We examine this issue in the context of a more general procedure, viz.\ multivariate least squares multiple regression (Anderson \cite[Section 8.2, pp.\ 287--289]{Anderson}). Let $k$, $m$, and $n$ be positive integers with 
	\begin{equation}  \label{E:n.>.k+m}
		n > \nvar := k + m.
	\end{equation}
The data consists of pairs $(x_{i}, z_{i})$,  ($i = 1, \ldots, n$), where $x_{i}$ is a $k$-dimensional (row) vector (the predictor) and $z_{i}$ is a $m$-dimensional (row) vector (the response). 
Let $Y$ be the matrix whose $i^{th}$ row is $(x_{i}, z_{i})$, ($i =  1, \ldots, n$). Let $X^{n \times k}$ be the matrix whose $i^{th}$ row is $x_{i}$ ($ i = 1, \ldots, n$). Let $Z^{n \times m}$ be the matrix whose $i^{th}$ row is $z_{i}$, ($i = 1, \ldots, n$). Thus, $Y = (X , Z)$. 
Let $\hat{\beta}$ be a $k \times m$ matrix and $\hat{\alpha}$ a $m$-dimensional row vector s.t.\  $\beta = \hat{\beta}$ and $\alpha = \hat{\alpha}$ minimize
	\begin{equation}  \label{E:LS.criterion}
		\sum_{i=1}^{n} | z_{i} - \alpha - x_{i} \beta |^{2}.
	  \end{equation}
(Here, $| \cdot |$ is the usual Euclidean norm.) The pair $\hat{\alpha}$ and $\hat{\beta}$ are ``least squares (LS) estimates for the regression of $Z$ on $X$ or for $Y$''. Define the corresponding ``LS plane for $Y$'' to be 
    \begin{equation}  \label{E:LS.plane}
      \text{LS plane for } Y = \bigl\{ (x, x \hat{\beta}) : 
        x^{1 \times k} \in \RR^{k} \bigr\} \in G(k, \nvar) .
    \end{equation}
(See \eqref{E:in.lin.reg.Phi.is.graph}.) If there is only one such plane,
 i.e. if $\hat{\beta}$ exists uniquely, denote the plane by $\Phi_{LS}(Y)$. Since we ignore $\hat{\alpha}$ here, in general $\Phi_{LS}(Y)$ is not the LS regression plane passing through $Y$.

Let $v^{1 \times k}$ be arbitrary. E.g., $v$ could be $n^{-1} 1_{n} X$, the mean of the rows of $X$. Then obviously, by \eqref{E:LS.criterion},
    \begin{multline} \label{E:subtracting.const.doesn't.change.LS.beta}
      \hat{\alpha} \text{ and } \hat{\beta} \text{ are LS estimates for the regression of } 
        Z \text{ on } X \\
          \text{ if and only if } \hat{\alpha} + v \hat{\beta} \text{ and } \hat{\beta} 
            \text{ are LS estimates} \\
              \text{ for the regression of } Z \text{ on } X - 1^{n} v. \\
                \text{Therefore, the set of LS planes for } (X - 1^{n} v, Z) \\
                  \text{ is exactly the same as that for } (X,Z).
    \end{multline}
(See \eqref{E:shft.invar.of.regrssn}.) In particular, $\Phi_{LS} (X , Z)$, when it exists, is not changed if we mean-center $X$. From \eqref{E:LS.criterion} we also see that 
    \begin{equation} \label{E:X.not.full.rank.then.beta.not.unique}
       \text{If } X \text{ is not of full rank } k \text{ then } \hat{\beta} \text{ is not unique.}
    \end{equation}

Let
    \begin{equation}  \label{E:X1.defn}
	X_{1} := (1^{n},  X)^{n \times (k+1)}    
    \end{equation}

Write $\alpha = (\alpha^{1}, \ldots, \alpha^{m})$. For $j = 1, \ldots, m$, let $\beta^{j}$ and $z^{j}$ be the $j^{th}$ columns of $\beta^{k \times m}$ and $Z^{n \times m}$, resp. Then \eqref{E:LS.criterion} can be written
	\begin{equation*}
		\sum_{i=1}^{n} | z_{i} - \alpha - x_{i} \beta |^{2} 
		  = \sum_{j=1}^{m} \left| z^{j} - X_{1}
			\begin{pmatrix}
				\alpha^{j} \\
				\beta^{j}
			\end{pmatrix} \right|^{2}
	\end{equation*}

Then $\hat{\alpha}$ and $\hat{\beta}$ are LS estimates for $Y$ if and only if the columns of
	\[
		X_{1} 
			\begin{pmatrix}
				\hat{\alpha} \\
				\hat{\beta}
			\end{pmatrix}
	\]
are the respective orthogonal projections of the columns of $Z$ onto the column space of $X_{1}$, which means columns of
		$Z - X_{1} 
			\begin{pmatrix}
				\hat{\alpha} \\
				\hat{\beta}
			\end{pmatrix}$
are perpendicular to the column space of $X_{1}$. In particular, LS estimates always exist 
and $\hat{\alpha}$ and $\hat{\beta}$ are LS estimates for $Y$ if and only if they satisfy the ``normal equations'' (Rice \cite[p.\ 476]{jaR88}): 
	\begin{equation}   \label{E:normal.eqns.Z}
	        X_{1}^{T}   X_{1} \,  (\hat{\alpha}^{T}, \, \hat{\beta}^{T} )^{T} =
	          X_{1}^{T}   X_{1}
			\begin{pmatrix}
				\hat{\alpha} \\
				\hat{\beta}
			\end{pmatrix}
	                   = X_{1}^{T}   Z.
	\end{equation} 
Hence, if $X_{1}$ is of full rank, e.g., if $X$ is mean-centered and $rank \, X = k$, then $X_{1}^{T}  X_{1}$ is invertible and 
	\begin{equation}  \label{E:LS.estimate.formula}
		( \hat{\alpha}^{T}, \, \hat{\beta}^{T} )^{T} = 
			\begin{pmatrix}
				\hat{\alpha} \\
				\hat{\beta}
			\end{pmatrix} =
		  ( X_{1}^{T}  X_{1} )^{- 1} X_{1}^{T} Z
	\end{equation}
(Anderson \cite[(10), p.\ 288]{Anderson}). Thus, if $X_{1}$ is of full rank the LS estimates 
$(\hat{\alpha}, \hat{\beta})$ are unique.

In this context, the expression ``$Y$ is collinear'' means the same thing as in definition \ref{D:collinearity}, viz.\ $x_{2} - x_{1}, \ldots, x_{n}- x_{1}$ do not span $\RR^{k}$. 
Thus, $Y$ is collinear if and only if $X - 1^{n} e_{1} X = (I_{n} - 1^{n} e_{1} )X$ has rank less than $k$. Here, $I_{n}^{n \times n}$ is the identity matrix and 
$e_{1}^{1 \times n} = (1, 0, \ldots, 0)$. More generally, we have the following generalization of \eqref{E:nothing.special.about.x1.in.collin}. Its proof can be found in appendix \ref{Chptr:misc.proofs}.

   \begin{lemma}  \label{L:collinearity.and.w.1xn}  
Let $X$ be a $n \times k$ matrix. If there exists $w^{n \times 1}$ s.t.\ 
$rank \, (X - 1^{n} w^{T} X) < k$ then $Y = (X, \, Z)$ is collinear. Conversely,  if $Y$ is collinear then for every $w^{1 \times n}$ with $w 1^{n} = 1$ we have $rank \, (X - 1^{n} w^{T} X) < k$. In particular, if $X$ is mean-centered and has rank $k$ then $Y$ is not collinear.
   \end{lemma}  
By the lemma, $Y = (X, Z)$ is collinear if and only if there exists $w^{n \times 1}$ s.t.\ $rank \, (X - 1^{n} w^{T} X) < k$, i.e., if and only if there is a plane $\zeta \subset \RR^{k}$ with $k' := \dim \zeta < k$ and the rows of $X$ lie on $\zeta + w^{T} X$. 
I.e., $Y$ is collinear if and only if the rows of $X$ lie on a plane in $\RR^{k}$ of dimension $ < k$. Note that in the first sentence of the preceding lemma $w$ need \emph{not} satisfy $w^{T} 1^{n} = 1$. 

  \begin{remark}[Linear regression is plane-fitting] \label{R:perf.fits.in.reg}
Let $Y = (X^{n \times k}, Z^{n \times m}) \in \Pf^{k}$ and suppose $Y$ is not collinear. Let $\bar{x}^{1 \times k} = n^{-1} 1_{n} X$ be the row vector of column means of $X$ 
and let $\bar{z}^{1 \times m} = n^{-1} 1_{n} Z \in \RR$ be the row vector of column means of $Z$. 
Thus, $X_{0} := X - 1^{n} \bar{x}$ and $Z_{0} := Z - 1^{n} \bar{z}$ are the mean-centered versions of $X$ and $Z$, resp. Since $Y \in \Pf^{k}$, by \eqref{E:when.is.Y.in.Pk}  
with $w = n^{-1} 1^{n}$, we have $rank \, (X_{0}, Z_{0}) = k$. On the other hand, since $Y$ is not collinear, by lemma \ref{L:collinearity.and.w.1xn}, we have $rank \, X_{0} = k$. Therefore, there exists a unique $B^{k \times m}$ s.t.\ $Z_{0} = X_{0} B$. Expanding that out we get 
$Z = (\bar{z} - \bar{x} B) 1^{n} + X B$. 

Consider a linear regression method $R$ which, given $Y' = (X', Z')$ computes, when possible, 
$a' \in \RR^{m}$ and $(B')^{k \times m}$ s.t.\ $a' 1^{n} + X' B'$ approximates $Z'$ as well as possible, in some sense. And suppose this is possible for $Y' \in \Y'$, where $\Y'$ is a dense subset of $\Y$ with $\Y' \cap \Pf$ dense in $\Pf$. Let $Y \in \Pf^{k}$ and suppose $Y$ is not collinear. From the preceding paragraph, we know that there exists a unique $B^{k \times m}$ s.t.\ $Z = (\bar{z} - \bar{x} B) 1^{n} + X B$. No better approximation to $Z$ is possible. Hence, $R(Y)$ is just the pair 
$(\bar{z} - \bar{x} B, B)$. Let $\Phi(Y) = \rho \bigl[ (I_{k}, B) \bigr]$. By \eqref{E:noncollinear.dense.inY} the set of collinear data sets has empty interior.
Therefore, by \eqref{E:plane-fitter.defn}, $\Phi$ is a plane-fitter providing few non-collinear data sets in $\Pf$ are singularities of $\Phi$. 

From Ellis \cite{spE98} we see that the preceding argument applies to show that the $\Phi$ corresponding to Least Median of Squares regression is a plane-fitter (remark \ref{R:robust.lin.reg}). That the same holds for Least Squares regression is a consequence of proposition \ref{P:collin.data.are.sings.of.LS}. It follows from \eqref{E:not.coll.in.Pk.not.LAD.sing} that the same is true of Least Absolute Deviation regression.
  \end{remark}

The proof of the following can be found in appendix \ref{Chptr:misc.proofs}.
   \begin{lemma}  \label{L:Y.collnr.iff.rank.X1.<.k+1}
$Y^{n \times \nvar} = (X^{n \times k}, Z^{n \times m})$ is collinear if and only if the rank of $X_{1} := (1^{n} , X)^{n \times (k+1)}$ is strictly less than $k+1$. This holds even if \eqref{E:n>nvar>k>0} is relaxed to allow $n = \nvar$. 
   \end{lemma}
Lemma \ref{L:Y.collnr.iff.rank.X1.<.k+1} means, by \eqref{E:LS.estimate.formula}, 
    \begin{equation}  \label{E:Y.not.collin.then.Phi(Y).unique}
      \text{If } Y \text{ is not collinear than } \Phi_{LS}(Y) \text{ exists uniquely.}
    \end{equation}

Recall from chapter \ref{Chptr:basic.setup} that singularity is always defined w.r.t.\ some dense subset of the data space $\D$. Recall that $X_{1} := (1^{n}  \;  X)^{n \times (k+1)}$. The following asserts, roughly speaking, that the closure of the image of any neighborhood of a singularity of LS contains the image of some Grassmannian. For proof see appendix \ref{Chptr:misc.proofs}. Recall \eqref{E:n.>.k+m}: $\nvar = k + m$.

   \begin{prop}   \label{P:collin.data.are.sings.of.LS}
Let $\Y'$ be the set of all non-collinear $\nvar$-dimensional data sets of the form 
$(X^{n \times k}, Z^{n \times m})$ to be used for regression of $Z$ on $X$. If $Y \in \Y'$ then the LS estimates for $Y$ are unique and $\Phi_{LS}$ is continuous 
on $\Y'$. $Y := (X, Z)$ is a singularity of LS (w.r.t.\ $\Y'$) if and only if $Y$ is collinear. In fact, if $Y$ is collinear and 
$rank \, X_{1} = k'+1 < k+1$, there is a $k'$-plane $\xi \in G(k', \nvar)$ and a  
linear imbedding $F : \RR^{k-k'+m} \to \RR^{k +m}$, both depending on $Y$, with the following properties. $F(\RR^{k-k'+m}) \cap \xi = \{ 0 \}$ 
(so $\RR^{\nvar} = \xi \oplus F(\RR^{k-k'+m})$) and for \emph{any} $\zeta \in G(k-k', k-k'+m)$ there is a family $\{ Y_{\epsilon, \zeta} \in \Y' : \epsilon > 0 \}$ of non-collinear data sets s.t.\ $Y_{\epsilon}$ converges to $Y$ and $\Phi_{LS}(Y_{\epsilon, \zeta})$ converges 
to $\xi \oplus F(\zeta)$ as $\epsilon \downarrow 0$. ($Y_{\epsilon}$ does not necessarily belong to $\Pf^{k}$.) 
   \end{prop}

In example \ref{Ex:all.LS.sings.are.90.degrees}, we will see that, at least if $m = 1$, all singularities of LS are severe in a precise sense. Clearly, $\D' = \Y'$ is invariant under rescaling. Therefore, by remark \ref{R:restricting.plane.fitter.to.sphere}, LS is suitable for localization (in a scale invariant fashion). 

Hence, by lemma \ref{L:dim.set.of.collin.data.sets}, when $m=1$ the dimension of the singular set of LS is $(n+1)k$. By lemma \ref{L:Y.collnr.iff.rank.X1.<.k+1}, if $Y$ is collinear then we must have $rank \, X_{1} < k+1$. See remark \ref{Ex:all.LS.sings.are.90.degrees} for further discussion of LS when $m = 1$.

Does LS have property \ref{Pty:agree.near.T} with $a$ equal to the dimension of its singular set? In any case, I conjecture that for LS the appropriate $R$ in theorem \ref{T:lwr.bnd.on.Haus.meas} is 0, at least if $m=1$. That is probably not hard to prove. 

 \begin{remark} \label{R:F.can.be.orthog.to.xi}
By \eqref{E:n.>.k+m}, $k+m = \nvar$ so $k-k'+m = \nvar - k'$. Let $F : \RR^{\nvar-k'} \to \RR^{\nvar}$ and $\xi \in G(k', \nvar)$ be as in the proposition. \emph{Claim:} We may assume that $F$ is an isometry s.t.\ 
$F(\RR^{\nvar-k'}) \perp \xi$. Recall \eqref{E:superscript.perp.notation}. Let $\Pi : \RR^{\nvar} \to \xi^{\perp}$ be orthogonal projection. Let $\omega \in G(k-k', \nvar)$ with $\omega \cap \xi = \{0\}$. E.g., $\omega$ might be $F(\zeta)$ for some 
$\zeta \in G(k-k', \nvar-k')$. Notice that $\Pi(\omega) \subset \xi^{\perp}$ 
is also a $(k-k')$-plane, obviously orthogonal to $\xi$. For suppose not. Then there exists $x \in \omega \setminus \{0\}$ s.t.\ $\Pi(x) = 0$. But $\Pi(x) = 0$ means $x \in \xi$, contradicting $\omega \cap \xi = \{0\}$.  Moreover, $\xi + \omega = \xi + \Pi(\omega)$. 

Let $F' := \Pi \circ F : \RR^{\nvar-k'} \to \xi^{\perp}$. $F'$ has full rank $\nvar-k'$. For assume not. Then, we can construct $\pi \in G(k-k', \nvar-k')$ s.t.\ $F'$ maps a nonzero vector in $\pi$ to 0. Let $\omega := F(\pi)$. Then by assumed properties of $F$, 
$\omega \in G(k-k', \nvar)$ and $\omega \cap \xi = \{0\}$. Thus, there exists $x \in \omega \setminus \{0\}$ s.t.\ $\Pi(x) = 0$. But in the last paragraph we showed this is impossible. Conclusion: $F'$ has full rank $\nvar-k'$. Thus, we may use $F'$ in place of $F$. 

Since $\dim \xi^{\perp} = \nvar-k'$, there exists an isometry 
$F'' : \RR^{\nvar-k'} \to \xi^{\perp}$. Suppose $\zeta \in G(k-k', \nvar-k')$. 
Let $\pi := (F'')^{-1} \bigl[ F'(\zeta) \bigr]$. Then $\pi \in G(k-k', \nvar-k')$ and 
$F''(\pi) = F'(\zeta)$. In the other direction. Suppose $\pi \in G(k-k', \nvar-k')$ and let $\zeta := (F')^{-1} \circ F''(\pi)$. Then $F'(\zeta) = F''(\pi)$. Thus, we may use $F''$ in place of $F'$ in place of $F$. That concludes the proof of the claim that we may assume that $F$ is an isometry s.t.\ $F(\RR^{\nvar-k'}) \perp \xi$. 
  \end{remark}

The following makes \eqref{E:noncollinear.dense.inY} more precise.
    \begin{lemma} \label{L:dim.set.of.collin.data.sets}
      If $m = 1$, the dimension of the set of collinear data sets 
        $= \dim \Pf -1 = (n+1)k \leq n \nvar - 2$.
    \end{lemma}

(This appears as \cite[Example 2.8]{spE95}. See \eqref{L:Pf.is.a.manif}.) It follows from proposition \ref{P:collin.data.are.sings.of.LS}, that the dimension of the singular set of Least Squares (with $m=1$) is $(n+1)k$. Later (subsection \ref{SS:gnrl.lwr.bnd.plane.fit}, we will see that, when 
$\nvar = k+1$, $(n+1)k$ is the smallest dimension that the singular set of a plane-fitter can have.
   \begin{proof}[Proof of lemma \ref{L:dim.set.of.collin.data.sets}]
As usual given $X^{n \times k}$ we denote its rows by $x_{1}, x_{2}, \ldots, x_{n}$. Consider the map $F$ which takes $X$ to the matrix of the same size with rows $x_{1}, x_{2} - x_{1}, \ldots, x_{n} - x_{1}$. $F$ is invertible and, by example \ref{Ex:ratnl.fns.loc.Lip}, bi-Lipschitz. (See \eqref{E:bi-Lipschitz.defn}.) Let $\mcl{C}$ denote the set of collinear data sets. By \eqref{E:Lip.magnification.of.Hm}, the dimension of $\mcl{C}$ is therefore the same as that of 
$\mcl{C}' := \bigl\{ (F(X), y) : (X,y) \in \mcl{C} \bigr\}$. Let $G$ take $X^{n \times k}$ to the $(n-1) \times k$ consisting of the last $n-1$ rows of $F(X)$, vix.\ . $x_{2} - x_{1}, \ldots, x_{n} - x_{1}$. If $Y = (X,y)$ is collinear, then $rank \, G(X) \leq k-1$. Hence, by lemma \ref{L:dim.of.low.rank.mats} with $\mu = n-1$, $\nu=k$, and $r = k-1$; the dimension 
of $\bigl\{ G(X) :(X,y) \in \mcl{C} \bigr\}$ is no bigger than $(k-1)(n-1) + (k-1)k - (k-1)^{2} = n(k-1)$. Collinearity of $Y$ puts no constraints on $x_{1}$. Thus, $x_{1}$ can be an arbitrary element of $\RR^{k}$. Therefore, by lemma \ref{L:dim.of.product}. the dimension of $F(X)$  is $nk - n + k$. Similarly, collinearity of $Y$ puts no constraints on $y^{n \times 1}$ so 
$\dim \mcl{C} = \dim \mcl{C}' = (nk - n + k) + n = (n+1)k$. By lemma \ref{L:Pf.is.a.manif} 
with $\nvar = k+1$, this is 1 less than $\dim \Pf^{k}$. 
   \end{proof}

  \begin{remark}[Dimension of singular set for least squares]  \label{R:dim.S.for LS}
At least if $n > \nvar + 1$, any $\T$ constructed as in section \ref{SS:D.T.plane.fit}, must contain a collinear data set. For otherwise, in the setting of $\D_{\infty}$ in section \ref{SS:D.T.plane.fit}. LS would satisfy the hypotheses of proposition \ref{P:sing.codim.in.plane-fitting} and its singular set would have codimension no greater than 2. But the inequality in lemma \ref{L:dim.set.of.collin.data.sets} is strict 
if $n > \nvar + 1$.

However, proposition \ref{P:collin.data.are.sings.of.LS} also tells us that proposition \ref{P:sing.codim.in.plane-fitting} does not apply to LS because the $\T$ in section \ref{SS:D.T.plane.fit}, always contains a collinear data set, i.e., it always contains a singularity of LS. (See panel (LS,a) in figure \ref{F:LS.PC.LAD.lf.plots} for an example.) But, by \eqref{E:r=1} and \eqref{E:in.plane.fitting.T.is.smooth.submanif}, in plane-fitting, $r = t = 1$ so \textbf{hypothesis \ref{Hyp:S.cap.T.small}} of theorem \ref{T:Phi.star.Hr.contains.Theta.star.Hr} therefore requires 
$\Ss \cap \T = \varnothing$. Therefore, \textbf{hypothesis \ref{Hyp:S.cap.T.small}} of theorem \ref{T:Phi.star.Hr.contains.Theta.star.Hr} does not hold for LS and so proposition \ref{P:sing.dim.when.H.d-r.D.=.0} does not either. We already saw this in figure \ref{F:sing.dists.cdfs}, where the codimension of the singular set of LS was 3. In section \ref{SS:gnrl.lwr.bnd.plane.fit} we compute a bound on singular set dimension for plane-fitting that does not require \textbf{hypothesis \ref{Hyp:S.cap.T.small}}. The bound computed in that section is tight for LS when $m = \nvar - k = 1$. 
  \end{remark}

  \begin{remark}[LS in ``long`` vs.\ ``wide`` data]   \label{R:LS.in.long.vs.wide}
In Statistics there is considerable interest in analysis of ``wide'' data sets, i.e., data sets in which $k$ is large relative to $n$ (Hall \emph{et al} \cite{pHjsMaN05.HiDimLowN}). Lemma \ref{L:dim.set.of.collin.data.sets} can shed some light on this issue. Suppose $m=1$ 
so $k = \nvar -1$. Consider pairs $(n, \nvar)$ with $\text{total amount of data} = n \times \nvar = C$, a constant. 
I.e $\dim \Y = C$. 
 
By proposition \ref{P:collin.data.are.sings.of.LS}, the singular set, $\Ss$, of LS is the set of collinear data sets. By lemma \ref{L:dim.set.of.collin.data.sets},
    \begin{equation*}
      \dim \Ss = n k + k = n(\nvar - 1) + k = C - C/\nvar + k = C - C/(k+1) + k. 
    \end{equation*}
Since $C$ is constant, this expression is increasing in $k$. This suggests that LS is less stable on ``wide'' data sets ($k$ large relative to $n$) than on ``long'' ones ($k$ small relative to $n$). (See section \ref{SS:asymp.prob}.)  See remark \ref{R:long.vs.wide} for further discussion of long vs.\ wide..
  \end{remark}

\section{Miscellaneous remarks} \label{SS:remarks}
  \begin{remark}[``Long'' vs.\ ``wide'' data]  \label{R:long.vs.wide} As stated in remark \ref{R:LS.in.long.vs.wide}, an important issue in Statistics is the effect of the relative sizes of $n$ and $\nvar$ on the performance of statistical methods. The issue is related to the ``curse of dimensionality'', (Bellman \cite[Section  5.16]{reB61}, Hastie \emph{et al} \cite[section 2.5]{tHrTjF01.statlearn}). (Incidentally, Bellman \cite{reB57} is sometimes cited as a reference on the ``curse of dimensionality''. However, in my cursory examination of that work I found that Bellman \cite{reB57} only seems to mention it in passing in the preface, on p.\ ix.)  If $n$ is much larger than $k$ then $x$ is ``long''. If $n$ is not much bigger than $\nvar$ or even smaller than $\nvar$ then the data set is ``wide''. Most statistical theory deals with the ``long'' case. Comparison of singular sets of LS in long and wide data is discussed in remark \ref{R:LS.in.long.vs.wide} above. Here we discuss it for general linear regression.

It seems that the theory developed in this book might help compare data analysis in the long vs.\ wide cases. Recall that $\Pf^{k}$ is the collection of all data sets whose rows lie exactly 
on a unique $k$-plane (not necessarily through the origin). 

 Let $m = 1, 2, \ldots,$ be constant. Let $0 < k_{2} < k_{1}$. As in \eqref{E:n.>.k+m} let $\nvar_{i} = k_{i} + m$. Suppose $n_{1} \nvar_{1} = d = n_{2} \nvar_{2}$. Thus, $\nvar_{1} > \nvar_{2}$ so $n_{2} > n_{1}$. Then working in $\D_{\infty}$ as in section \ref{SS:D.T.plane.fit}, the data spaces, $\D_{i}$, for $i=1,2$ are  the same, 
but if $Y \in \D_{1}$ then $Y$ is relatively ``wide'' while if $Y \in \D_{2}$ then $Y$ is ``long''. Let $\Pf^{k_{i}}$ be the corresponding space of perfect fits. Then, by lemma \ref{L:Pf.is.a.manif},  
	\begin{equation*}
		\dim \Pf^{k_{i}} = n(\nvar_{i} - m) + (k_{i} + 1)m = d - n_{i}m + mk_{i} + m 
	          \qquad (i=1,2).
	\end{equation*}
Therefore,
    \begin{multline*}
       \dim \Pf^{k_{1}} - \dim \Pf^{k_{2}} 
         \approx \bigl[ d - n_{1}m + mk_{1} + m \bigr] - \bigl[ d - n_{2}m + mk_{2} + m \bigr] \\
           = m \bigl[ (n_{2} - n_{1}) + (k_{1} - k_{2}) \bigr] > 0.
    \end{multline*}  

Suppose for $R > 0$, $\Phi_{i,R}$ is a plane fitting method on $\D_{i}$ with $k_{i}$ predictors and having singular set $\Ss_{i,R}$ ($i=1,2$) s.t.\ $(\Phi_{i,R}, \Ss_{i,R}, etc.)$ has property \ref{Pty:agree.near.T} with $a = d-2$ (see \eqref{E:big.lwr.bound.in.plane.fit}) and 
$dist_{d-2}(\Ss_{i,R}, \Pf^{k_{i}}) = R$. (See \eqref{E:essential.dist.defn}.) Let $\Pf^{k_{i}}$ play the role of $\Pf$ in theorem \ref{T:lwr.bnd.on.Haus.meas}. (See example \ref{Ex:basic.property.case}.)
Suppose the inequality \eqref{E:Hm.a.S.geq.R.d-p-1} is an equality 
when applied to $(\Phi_{i,R}, \Ss_{i,R}, \Pf^{k_{i}})$, with possibly different values 
of the constant $\gamma$. Then the exponent in \eqref{E:Hm.a.S.geq.R.d-p-1} 
is smaller for $\Phi_{1,R}$ than it is for $\Phi_{2,R}$. Now, the constant $\gamma$ may depend on $i$ ($i=1,2$), but still for $R$ sufficiently small we have 
	\[
		\Hm^{d-2}(\Ss_{2,R}) < \Hm^{d-2}(\Ss_{1,R}) .
	\]
This suggests that $\Phi_{2,R}$ is more stable on ``long'' data sets than on ``wide''. This is just a ``hand waving'' argument. A similar but complete rigorous argument of this sort is carried out in chapter \ref{Chptr:robst.loc.on.circle} for a different statistical problem. (See proposition \ref{P:aug.direct.mean.beats.robst.loc}.)

(It seems like a similar argument goes through with $n_{1} = n_{2}$ but $k_{2} < k_{1}$. However, in that case the dimension of the data space, $\D_{2}$, is smaller, so it is not surprising 
that $\Ss_{2,R}$ is also smaller. In the case we considered, the dimensions of the two data spaces are the same.)
  \end{remark}

  \begin{remark}[Transformed variables]  \label{R:transformed.vars}
In linear regression it is common to add terms to the regression model that are nonlinear functions of the variables. So then the regression model takes the form
	\[
		y = b_{0} + \sum_{i=1}^{k} b_{i} x_{i} + \sum_{i=1}^{m} b_{k+i} f_{i}(\mbf{x}),
	\]
where $\mbf{x} = (x_{1}, \ldots, x_{k})$ is the vector of predictor values and 
$f_{i} : \RR^{k} \to \RR$. A common choice for 
the functions $f_{i}$ are polynomials (Draper and Smith \cite[chapter 5]{nDhS81.draper.and.smith}), e.g., $f_{\ell}(\mbf{x}) = x_{i} x_{j}$. This extension of linear regression is easily handled by the theory in this section. The space, $\Pf^{k}$ can still be used. Data sets in $\Pf^{k}$ correspond to $b_{k+1} = \cdots b_{\nvar} = 0$. 
The projection  $(b_{1}, \ldots, b_{k}, b_{k+1}, \ldots, b_{\nvar}) \mapsto (b_{1}, \ldots, b_{k})$ thus defines a plane-fitter and singularity of $(b_{1}, \ldots, b_{k})$ is a form of singularity of $(b_{1}, \ldots, b_{k}, b_{k+1}, \ldots, b_{\nvar})$.

It is also important to note that adding the transformed variables to the model does not increase $\dim \D$. These issues come up again in remark \ref{R:nonparam.reg}.
  \end{remark}

  \begin{remark}[Plane-fitting in vector bundles]   \label{R:plane.fitting.on.vec.bndls}
We can generalize plane-fitting in the following fashion. 
Let $\zeta$ be a $\nvar$-plane bundle over a paracompact base space, $\D$ (Milnor and Stasheff \cite[\S\S 2, 5.8]{jwMjdS74}). 
By Milnor and Stasheff \cite[Theorem 5.6, p.\ 65]{jwMjdS74} there is a bundle map (Milnor and Stasheff \cite[p.\ 26]{jwMjdS74})
$H : E(\zeta) \to E(\gamma^{\nvar})$, where $\gamma^{\nvar}$ is the 
``universal $\nvar$-plane bundle'' (Milnor and Stasheff \cite[p.\ 63]{jwMjdS74}) and ``$E$'' indicates total space. 
Let $\Gamma(\zeta)$ be the fiber bundle 
(Spanier \cite[pp.\ 90--91]{ehS66}, Husemoller  \cite[pp.\ 11--15]{dH75.FibreBundles}) whose fiber over $x \in \D$ is the Grassmann manifold consisting of all
$k$-planes through the origin in $\pi^{-1}(x) \homeomto \RR^{\nvar}$, 
where $\pi : E(\zeta) \to \D$ is the bundle projection map 
of $\zeta$. Denote the total space of $\Gamma(\zeta)$ by 
$E \bigl[ \Gamma(\zeta) \bigr]$.
Define $\Gamma(\gamma^{\nvar})$ and $E \bigl[ \Gamma(\gamma^{\nvar}) \bigr]$ similarly. 
$H$ induces a bundle morphism $\hat{H} : E \bigl[ \Gamma(\zeta) \bigr] \to E \bigl[ \Gamma(\gamma^{\nvar}) \bigr]$.
Let $\pi: E \bigl[ \Gamma(\zeta) \bigr] \to \D$ also denote the obvious 
projection in $\Gamma(\zeta)$. Suppose $\D' \subset \D$ and 
$\Phi : \D' \to E \bigl[ \Gamma(\zeta) \bigr]$ is continuous and satisfies $\pi \circ \Phi(x) = x$, 
$x \in \D'$. (I.e., $\Phi$ is a section of the restriction of $\Gamma$ to $\D'$.)

We can think of a point of $E \bigl[ \Gamma(\gamma^{\nvar}) \bigr]$ as a pair, $(X,Y)$, 
where $X$ is a point in the infinite Grassmannian $G_{\nvar}$ (Milnor and Stasheff \cite[p.\ 63]{jwMjdS74}) and $Y$ is a
$k$-dimensional subspace of $X$. But $X$ is a $\nvar$-dimensional subspace 
of $\RR^{\infty}$. 
Thus, $Y$ is a $k$-dimensional subspace of $\RR^{\infty}$. Hence, there is a projection
$g: E \bigl[ \Gamma(\gamma^{\nvar}) \bigr] \to G_{k}$. Consider the plane-fitter. 
$\Phi_{\infty} = g \circ \hat{H} \circ \Phi : \D' \to G_{k}$.
Then we can try to apply our theory with $\Phi = \Phi_{\infty}$ and $\F = G_{k}$. The tricky part is checking condition 
\eqref{E:nontriv.r-dim.homol}. However, if $\D$ is compact then by Milnor and Stasheff \cite[Lemma 5.3, p.\ 61]{jwMjdS74} we can replace 
$G_{k}$ by a finite dimensional Grassmannian. Then the theory of this chapter applies directly. 
  \end{remark}

\subsection{Function-valued maps}  \label{SS:functions.vs.geometry}
This chapter applies in particular to linear regression or a linear systems solving method viewed as a plane-valued operation. In practice, however, a linear regression method is usually viewed as a vector- or function-valued operation in which the components of the vector are the coefficients in the real scalar-valued affine function. Let $R$ be such a method and let $\Phi$ be the associated plane-fitter. Here we show that any data set that is a singularity of $\Phi$ is also a singularity of $R$. Let $\D'$ be a dense subset of $\D$ s.t.\ $\D' \cap \Pf^{k}$ is dense in $\Pf^{k}$. 

If $a \in \RR$ and $b^{k \times 1} \in \RR^{k}$ define 
$f : (a,b^{T}) : (a,b^{T}) \mapsto (x \mapsto a + x b , \; x \in \RR^{k}$), where 
$\nvar = k+1$. So $f$ maps $\RR^{\nvar}$ to an affine function. (One might topologize the space of such functions by uniform convergence on unit ball in $\RR^{k}$. Note that that space is not complete, remark \ref{R:.completeness.of.F}.)
Let $\phi : Y \to \bigl( a(Y)^{1 \times 1}, [b(Y)^{T}]^{1 \times k} \bigr) \in \RR^{\nvar}$ 
($Y \in \D'$). Suppose $\phi$ is continuous on $\D'$. By \eqref{E:in.lin.reg.Phi.is.graph},
$\Phi(Y) \in G(k, \nvar)$ is the row space of $(I_{k}^{k \times k}, b(Y))$, where $I_{k}$ is 
the $k \times k$ identity matrix, is the $k$-plane through the origin parallel to the graph of $f \bigl[ (a(Y),b(Y)^{T}) \bigr]$. By \eqref{E:convergence.in.Grassmann}, the composition $g : (a,b^{T}) \mapsto b \mapsto (I_{k}, b) \mapsto \rho(I_{k}, b)$ is continuous. 
Thus, $\Phi = g \circ \phi$. Therefore, if $Y$ is a singularity 
of $\Phi$ (w.r.t.\ $\D'$), then \emph{a fortiori} it is a singularity of $\phi$. 

This phenomenon generalizes. Let $\F$ and $\F'$ be topological spaces and suppose we are interested in a map $\phi : \D \partlyto \F'$. Suppose there is a continuous map 
$g : \F' \to \F$. Consider the composition $\Phi := g \circ \phi : \D \partlyto \F$. Let $\Ss$ be the singular set of $\Phi$. Then $\Ss$ is a subset of the singular set of $\phi$.

The converse is false. I.e., $Y \in \D$ might be a singularity of $\phi$ but not of $\Phi$. We illustrate this in the regression setting. First, notice that 
$\RR^{\nvar} = \bigl\{  (a^{1 \times 1} , (b^{T})^{1 \times k}) \bigr\}$ with the Euclidean norm is homeomorphic to the space of affine functions on $\RR^{k}$ with sup norm on the unit ball 
in $\RR^{k}$ or the $L^{1}(\mu)$ norm, where $\mu$ is a measure with finite first absolute moment. So if suffices to consider the Euclidean norm on $\RR^{\nvar}$. Note that at a singularity not only is the representation of the function, i.e., the coefficient vector, unstable, but the function itself is unstable because $(a,b)$ can be recovered from $f(a,b)$ by a continuous operation.

Suppose $\phi : \D' \to \RR^{\nvar}$ maps each $Y \in \D'$ to a pair $(a,b^{T}) \in \RR^{\nvar}$, where $a \in \RR$ and $b^{k \times 1} \in \RR^{k}$. Suppose $\phi$ satisfies the analogue of \eqref{E:plane.fitting.constraint}: First suppose that for \emph{every} $Y \in \Pf$ the vector 
$\phi(Y)$ is defined and the graph of $f \bigl[ \phi(Y) \bigr]$ is parallel to $\Delta(Y)$. 
Now let $Y \in \Y$ be collinear (definition \ref{D:collinearity}). \emph{Claim:} $Y$ is a singularity 
of $\phi$. Write $Y = (X^{n \times k}, y^{n \times 1})$ as in subsection \ref{SS:lin.reg.and.LS}. 
Let $x_{i}^{1 \times k}$ be the $i^{th}$ row 
of $X$ and $y_{i} \in \RR$ be the $i^{th}$ entry in $y$ ($i=1, \ldots, n$). Since $Y$ is collinear, by definition \ref{D:collinearity}, the matrix $Z^{(n-1) \times k}$ whose $i^{th}$ row is 
$x_{i+1} - x_{1}$ ($i=2, \ldots, n$) has rank $< k$. Let $z^{(n-1) \times 1}$ be the column vector 
$(y_{2}-y_{1}, \ldots, y_{n}-y_{1})^{T}$. 

Let $\epsilon > 0$. Let $c^{k \times 1}$ be a unit vector s.t.\ $Z c = 0$ and let 
$W_{1}^{(n-1) \times k} =  \bigl( z+ \sqrt{\epsilon} 1^{n-1} \bigr) c^{T}$, where, as usual, 
$(1^{n-1})^{(n-1) \times 1}$ is the $(n-1)$-dimensional column vector of 1's. 
Thus, $W_{1} c = z+ \sqrt{\epsilon} 1^{n-1}$. Note that there exists $\epsilon_{0} > 0$ 
s.t.\ for $\epsilon \in (0, \epsilon_{0})$ we have $z+ \sqrt{\epsilon} 1^{n-1} \neq 0$ (even if $z=0$). Thus, for $\epsilon \in (0, \epsilon_{0})$ we have $W_{1} c \neq 0$ and 
$\rho(W_{1}) \subset \RR^{k}$ is just the one-dimensional space spanned by $c^{T}$. 

We have $rank \, Z < k$. If $rank \, Z = k-1$ then let $W_{2}^{(n-1) \times k} := 0$. Otherwise, let $V \subset \RR^{k}$ be the orthogonal complement of $c$, so $\dim V = k-1$. 
Choose $W_{2}^{(n-1) \times k}$ to satisfy $W_{2} \, c = 0$ 
(so $\rho(W_{2}) \subset V$ ) and also satisfy $V = \rho(W_{2}) \oplus \rho(Z)$. 
Then, if $\epsilon \in (0, \epsilon_{0})$, 
    \begin{equation} \label{E:eps.W1+eps.W2+Z}
      \epsilon \, W_{1} + \epsilon \, W_{2} + Z \text{ has rank } k .
    \end{equation}

Let $W = W_{1}+W_{2}$ and, for $\epsilon > 0$, let 
	\begin{equation*}
		Y_{c, \epsilon}^{n \times \nvar} := (X_{c, \epsilon}, y_{c, \epsilon}) :=
			\left( 
			                1^{n} x_{1} + \epsilon
					\begin{pmatrix}
						0^{1 \times k} \\
						W
					\end{pmatrix} 
				+
					\begin{pmatrix}
						0^{1 \times k} \\
						Z
					\end{pmatrix} \, , \,
				y + \sqrt{\epsilon}
					\begin{pmatrix}
						0^{1 \times 1} \\
						1^{n-1}
					\end{pmatrix}
			\right).
	\end{equation*}
Then $Y_{c, \epsilon} \to Y$ as $\epsilon \to 0$. The first row of $X_{c, \epsilon}$ is $x_{1}$. By \eqref{E:eps.W1+eps.W2+Z}, subtracting $x_{1}$ from rows 2 through $n$ yields a matrix of rank $k$. Therefore, $rank \, X_{c, \epsilon} = k$.  

Let $(w^{n \times 1})^{T} := (1, 0, \ldots, 0)$. Then
    \begin{equation*}
      Y_{c, \epsilon} - 1^{n} w^{T} Y_{c, \epsilon} = 
      			  \left(
        					\begin{pmatrix}
    						0^{1 \times k} \\
    						\epsilon W
    					\end{pmatrix} 
    				+
    					\begin{pmatrix}
    						0^{1 \times k} \\
    						Z
    					\end{pmatrix},
    					\begin{pmatrix}
    						0^{1 \times 1} \\
    						z + \sqrt{\epsilon} 1^{n-1}
    					\end{pmatrix}
    			\right).
    \end{equation*}
By choice of $c$ and definitions of $W_{1}$, $W_{2}$, and $W$, we have
 $\epsilon^{-1} (\epsilon W + Z) c = W_{1} c = z + \sqrt{\epsilon} 1^{n-1}$. 
Thus, the last column of $Y_{c, \epsilon} - 1^{n} w^{T} Y_{c, \epsilon}$ is in the span of the first $k$ columns. Therefore, by \eqref{E:eps.W1+eps.W2+Z}, 
$Y_{c, \epsilon} - 1^{n} w^{T} Y_{c, \epsilon}$ has rank $k$. Hence, by \eqref{E:when.is.Y.in.Pk}, $Y_{c, \epsilon} \in \Pf^{k}$. But we are currently assuming that for every $Y \in \Pf$ the vector 
$\phi(Y)$ is defined and the graph of $f \bigl[ \phi(Y) \bigr]$ is parallel to $\Delta(Y)$. 
So $\phi(Y_{c, \epsilon})$ is defined.

Let $a := a_{c, \epsilon} := - \epsilon^{-1} x_{1} c + y_{1} \in \RR$ and $b^{k \times 1} := b_{c, \epsilon} := \epsilon^{-1} c$. Then it is easy to see that 
    \begin{equation} \label{E:a.1n.+.X.b.=.y}
        a 1^{n} + X_{c, \epsilon} \, b = y_{c, \epsilon}.
    \end{equation} 
Therefore $\phi(Y_{c, \epsilon}) = (something, b_{c, \epsilon})$. (The 
``$something \in \RR$'' may or may not be $a_{c, \epsilon}$.) By assumption, the graph of $f \bigl[ \phi(Y_{c, \epsilon}) \bigr]$ is parallel to $\Delta(Y_{c, \epsilon})$. 
I.e, $\rho(I_{k}, b_{c, \epsilon}) = \Delta(Y_{c, \epsilon})$. (See \eqref{E:in.lin.reg.Phi.is.graph}.)
 
Now, as $\epsilon \to 0$ the coefficient vector $b_{c, \epsilon}$ shoots off to infinity. But we may replace $c$ by $-c$, in which case as $\epsilon \to 0$ the coefficient vector $b_{c, \epsilon}$ shoots off in the opposite direction. This completes the proof of the claim that $Y$ is a singularity of $\phi$, under the assumption that for \emph{every} $Y \in \Pf$ the vector $\phi(Y)$ is defined and the graph 
of $f \bigl[ \phi(Y) \bigr]$ is parallel to $\Delta(Y)$. 

Suppose that only for $Y$ in a dense subset, $\Pf'$, of $\Pf$ is it the case that 
$\phi(Y)$ is defined and the graph of $f \bigl[ \phi(Y) \bigr]$ is parallel 
to $\Delta(Y)$. Then replace $Y_{c, \epsilon}$ by 
$Y_{c, \epsilon} + o(\epsilon) \in \Pf'$. Then the preceding argument will still go through and we find that every collinear data set is a singularity of $\phi$.

Contrast this with proposition \ref{P:few.collin.LAD.sings}, according to which \emph{most collinear data sets are} not \emph{singularities of LAD} (example \ref{Ex:3.plane.fitters} and section \ref{SS:LAD}), but as we have just seen every collinear data set is a singularity of LAD regarded as a vector- or affine function-valued map. Thus, the singular set of $\phi$ can be a proper superset of that of the corresponding plane-fitter. So as data approaches most collinear data sets, the graph of the LAD regression function converges to a plane (space of planes, $G(\nvar-1, \nvar)$, is complete), but the regression function itself tries to converge to a non-function, one with a ``vertical'' graph (space of functions, with $\sup$ norm, say, is not complete). This possibility was raised in remark \ref{R:.completeness.of.F}. 

Subsection \ref{SSS:reg.coefs,near.90.degree.sings} also concerns instability of regression coefficients. 

\section{Least absolute deviation linear regression} \label{SS:LAD}
In this section we treat stability of LAD regression in some depth. (Dodge and Roenko \cite{yDnR92.L1.stability} examine a different form of stability of LAD.) 
Let $Y = (X,y)^{n \times \nvar} \in \Y$ as in section \ref{SS:lin.reg.and.LS}, specifically \eqref{E:initially.in.lin.reg.nvar=k+1}. (Thus, in this section 
$\nvar = k + 1$.) As usual, denote the $i^{th}$ row of $X$ by $x_{i}^{1 \times k}$ and the $i^{th}$ element of $y$ by $y_{i}$. Recall that in $L^{1}$ or Least Absolute Deviation (LAD, Bloomfield and Steiger \cite{pBwlS83}) regression one fits to $Y$ a plane 
$y = \beta_{0}^{1 \times 1} + x^{1 \times k} \beta_{1}^{k \times 1}$  
($x \in \RR^{k}$), where 
$b^{(k+1) \times 1} = (b_{0}, b_{1}^{T})^{T} = (\beta_{0}, \beta_{1}^{T})^{T}$ minimizes 
	\begin{equation}   \label{E:L1.criterion.defn}
		L^{1}(b,Y) := \sum_{i=1}^{n} | y_{i} - b_{0} - x_{i} b_{i} |.
	\end{equation}
($b_{0}, \beta_{0} \in \RR$ and $b_{1}, \beta_{1}$ are $k \times 1$.) In this case say that the $k$-plane $\{ (x, \beta_{0} + x \beta_{1}): x \in  \RR^{k} \}$ is an ``LAD plane'' and $\beta$ an ``LAD solution'' for $Y$. Let  $\hat{B}(Y)$ denote the set of all $\beta$'s minimizing $L^{1}(\beta, Y)$. Write $\beta(Y) = \beta^{1 \times (k+1)}$ whenever there is only one vector $b = \beta^{T}$ minimizing $L^{1}(b, Y)$. By lemma \ref{L:basic.LAD.soln.facts}(a) in appendix \ref{Chptr:LAD.technicalities},  
$\hat{B}(Y)$ is nonempty, compact, and convex. If $\hat{B}(Y)$ is a singleton, denote the element of $G(k, \nvar)$ parallel to the unique LAD plane 
by $\Phi(Y) = \Phi_{LAD}(Y)$. 
  
Recall that $Y \in \Y$ is ``(multi)collinear'' (definition \ref{D:collinearity}) 
if $x_{2} - x_{1}, \ldots, x_{n}- x_{1}$ do not span $\RR^{k}$ 
(by \eqref{E:n>nvar>k>0}, $n > \nvar$.) 
   \begin{multline} \label{E:Y.LAD'.defn}
	\text{Let  $\Y_{LAD}'$ denote the set of all $Y \in \Y$ s.t.\ $Y$ is not collinear} \\
	    \text{ and $\hat{B}(Y)$ contains exactly one point.}  
   \end{multline}
So $\Phi_{LAD}$ is defined everywhere in $\Y_{LAD}'$. Then by proposition \ref{P:Y'.LAD.is.dense} in appendix \ref{Chptr:LAD.technicalities} we have that 
$\Y_{LAD}'$ is dense in $\Y$. (As observed just after proposition \ref{P:Y'.LAD.is.dense}, $\Y_{LAD}' \cap \D$ is dense in $\D$ defined by \eqref{E:plane.fitting.D.mu.defn}.) Clearly, 
$\D' = \Y_{LAD}'$ is invariant under rescaling. Therefore, by remark \ref{R:restricting.plane.fitter.to.sphere}, LAD is suitable for localization (in a scale invariant fashion).

\emph{Claim:} $\Phi$ is continuous on $\Y_{LAD}'$. Let $Y \in \Y_{LAD}'$. By lemma \ref{L:basic.LAD.soln.facts}(a) there is a neighborhood $\mcl{V} \subset \D$ of $Y$ and a compact set $C \subset \RR^{k+1}$ s.t.\ if $Y' \in \mcl{V}$ then $Y'$ is not collinear and $\hat{B}(Y') \subset C$. Now, $\Y_{LAD}' \cap \mcl{V} \neq \varnothing$ and is precisely the set of data sets $Y'$ in $\mcl{V}$ s.t.\ the LAD optimization problem has a unique solution, $\beta(Y')$, and that solution is always in the compact set $C$. By lemma \ref{L:data.maps.defined.by.opt}, part \eqref{I:cmpct.opt} with $\D = \mcl{V}$ and $\F = C$ it follows that $\beta(Y)$ is continuous 
in $Y \in \Y_{LAD}'$ as claimed. A consequence is:
    \begin{multline} \label{E:unique.LAD.soln.means.not.sing}
      \text{If $Y \in \Y$ is not collinear and has a unique LAD solution } \\
        \text{(i.e., } Y \in \Y_{LAD}') \text{ then $Y$ is not a singularity of LAD 
          w.r.t.\ } \Y_{LAD}' .
    \end{multline}

Therefore, \eqref{E:D'.dense.Phi.cont.on.D'} holds. Hence, by lemma \ref{L:extend.Phi.to.D.less.S}, we may replace  $\Y_{LAD}'$ by $\D' \supset \Y_{LAD}'$ satisfying \eqref{E:D'.=.D.less.S} with $\Ss$ the singular set of $\Phi$ w.r.t.\ $\Y_{LAD}'$. 
(See also corollary \ref{C:nonunique.mins.are.sings} in appendix \ref{Chptr:LAD.technicalities}.) By proposition \ref{P:few.collin.LAD.sings}, $\D'$ will contain most collinear data sets. Since $\Y_{LAD}' \subset \D'$ any singularity of LAD w.r.t.\ $\Y_{LAD}'$ is a singularity w.r.t.\ $\D'$.

By \eqref{E:Perfect.Fits.in.plane.fitting}, $\Pf^{k}$ is the collection of all data sets (i.e., $n \times \nvar$ matrices) whose rows lie exactly on a unique $k$-plane (not necessarily through the origin). \emph{Claim:} 
    \begin{equation}  \label{E:noncollin.in.Pf.is.in.Y.LAD'}
      \text{Any non-collinear data set in $\Pf^{k}$ is in } \Y_{LAD}' . 
    \end{equation}
For suppose $Y = (X,y)$ is such a data set. Since $Y$ is noncollinear, by lemma \ref{L:Y.collnr.iff.rank.X1.<.k+1}, 
$(1^{n} , X)^{n \times (k+1)}$ has full rank $k+1$. That means 
    \begin{equation*}
      rank \, X = k .
    \end{equation*}  

Since $rank \, (1^{n} , X) = k+1$ but $rank \, X = k$, we have that $1^{n}$ does not lie in the column space, $\gamma(X)$, of $X$. So if $v$ is the orthogonal projection of $1^{n}$ onto $\gamma(X)$ then $v \neq 1^{n}$. Let $w' := 1^{n} - v \neq 0$. Then $w'$ belongs to the column space, $\gamma \bigl[ (1^{n} , X) \bigr]$, of $(1^{n} , X)$. 
Since $rank \, (1^{n} , X) = k+1$, we have $(w')^{T} (1^{n} , X) \neq 0$. 
But $(w')^{T} X = 0$. Therefore, $c := (w')^{T} 1^{n} \neq 0$. Take $w := c^{-1} w'$. Then $w^{T} 1^{n} = 1$ but $w^{T} X = 0$. 

Therefore, $Y - 1^{n} w^{T} Y = (X,y) - 1^{n} (w^{T} X, w^{T} y) 
= \bigl( X, y - 1^{n} (w^{T} y) \bigr)$. 
Since $w^{T} 1^{n} = 1$ and $Y \in \Pf^{k}$, by \eqref{E:when.is.Y.in.Pk}, 
$rank \, (Y - 1^{n} w^{T} Y) = k$. Thus, $rank \, \bigl( X , y - 1^{n} w^{T} y \bigr) = k$. But $rank \, X = k$. Hence, there exists a unique $b_{1}^{k \times 1}$ s.t.\ 
$y - (w^{T} y) 1^{n} - X b_{1} = 0$. 
I.e., $L^{1}(b) = 0$, where $b^{(k+1) \times 1}  := (w^{T} y, b_{1}^{T})^{T}$. 
Therefore, $\beta(Y) = b^{T}$ uniquely, 
so $Y \in \Y_{LAD}'$, as claimed. This proves the claim \eqref{E:noncollin.in.Pf.is.in.Y.LAD'}. 

Since $\Phi_{LAD}$ is continuous on $\Y_{LAD}'$, we have
	\begin{equation}   \label{E:not.coll.in.Pk.not.LAD.sing}
		\text{No noncollinear data set in } \Pf^{k}  \text{ is a singularity of } \Phi_{LAD}.
	\end{equation}
It follows from remark \ref{R:perf.fits.in.reg} that LAD is a plane-fitting method.

By lemma \ref{L:diff.rank.condn.implications}\eqref{I:diff.rank.condn.implies.not.LAD.sing} in appendix \ref{Chptr:LAD.technicalities}, we have that if 
$Y = (X^{n \times k}, y^{n \times 1}) \in \Y$ is collinear but the following holds then $Y$ is \emph{not} a singularity w.r.t.\ $\Y'_{LAD}$.
 	\begin{multline}    \label{E:diff.rank.condition}   
		\text{If } 1 \leq i_{1} < \cdots < i_{\nvar} \leq n, \text{ then } \\ 
			(x_{i_{2}} - x_{i_{1}}, y_{i_{2}} - y_{i_{1}}), 
			           \cdots, (x_{i_{\nvar}} - x_{i_{1}}, y_{i_{\nvar}} - y_{i_{1}}) 
				\text{ are linearly independent.}
 	\end{multline}
($\nvar = k+1$.) From corollary \ref{C:defective.collin.data.sets.dim}
we see that almost all collinear data sets satisfy condition \eqref{E:diff.rank.condition} and hence are \emph{not} singularities of LAD. Example \ref{Ex:collnr.not.sing.of.LAD} displays a class of collinear data sets satisfying \eqref{E:diff.rank.condition}. 
As observed in subsection \ref{SS:functions.vs.geometry}, LAD is an example of a regression method that, when viewed as a function-valued map, has more singularities than it does when viewed as a plane-valued map. 

See appendix \ref{Chptr:misc.proofs} for the proof of the following. 

	\begin{lemma}  \label{L:power.diff.lin.indep}
Let $n = 2, 3, \ldots$ and let $z = (z_{1}, \ldots, z_{n})$ be a vector of distinct, possibly complex, nonzero numbers. Let $\ell = 2, 3, \ldots, n$ and let $Z^{n \times \ell}$ be the matrix whose $i^{th}$ row is $w_{i} := (z_{i}^{0}, z_{i}^{1}, \ldots, z_{i}^{\ell-1})$. (This time superscripts are exponents.)  Then for any $1 \leq i_{1} < \ldots < i_{\ell} \leq n$ the vectors $w_{i_{2}} - w_{i_{1}}, \ldots, w_{i_{\ell}} - w_{i_{1}}$ are linearly independent (over the complex numbers, $\mathbb{C}$).
	\end{lemma}

   \begin{example}[Nonsingular collinear data sets] \label{Ex:collnr.not.sing.of.LAD}
Assume \eqref{E:n>nvar>k>0} holds. Let $z_{1}, \ldots, z_{n}$ be distinct nonzero real numbers. 
Let $x_{i} = (z_{i}^{0}, z_{i}^{1}, \ldots, z_{i}^{k-1})$ and $y_{i} = z_{i}^{k}$ ($i=1, \ldots, n$). 
Let $Y_{0} = \bigl( (x_{1}, y_{1})^{T}, \ldots, (x_{n}, y_{n})^{T} \bigr)^{T}$. Then $Y_{0}$ is collinear (definition \ref{D:collinearity}) because for every $i = 2, \ldots, n$, the first coordinate of the $k$-vector $x_{i} - x_{1}$ is 0. Yet, by lemma \ref{L:power.diff.lin.indep} 
with $\ell = \nvar = k+1$, condition \eqref{E:diff.rank.condition} holds. Hence, by lemma \ref{L:diff.rank.condn.implications}, $Y_{0}$ is not a singularity of LAD.
   \end{example} 

\subsection{Codimension of singular set of LAD}  \label{SSS:codim.LAD.sings}
First, we show that we may apply proposition \ref{P:sing.codim.in.plane-fitting} to show that \eqref{E:big.lwr.bound.in.plane.fit} holds for LAD.
All that needs to be proved is that \textbf{hypothesis \ref{Hyp:S.cap.T.small}} of theorem \ref{T:Phi.star.Hr.contains.Theta.star.Hr} holds. By proposition \ref{P:few.collin.LAD.sings}, to prove that hypothesis \ref{Hyp:S.cap.T.small} holds it suffices to consider the behavior of LAD on collinear data sets. 
    \begin{equation*}
      \text{Let } Y_{0} = (X,y) \text{ be a collinear data set satisfying condition 
        \eqref{E:diff.rank.condition}.}
    \end{equation*}
Then, by lemma \ref{L:diff.rank.condn.implications}, 
	\begin{equation}  \label{E:LAD.y.aint.0}
		y \neq 0.
	\end{equation}
Neither the collinearity (definitiion \ref{D:collinearity}) nor condition \eqref{E:diff.rank.condition} are invalidated if we mean center $X$ (remark \ref{R:mean.center}) and $y$. So we may assume $1_{n} X = 0^{n \times 1}$ and 
$1_{n} y = 0^{1 \times 1}$. I.e.,
	\begin{equation}  \label{E:1n.Y0.=.0}
		1_{n} Y_{0} = 0^{1 \times \nvar}.
	\end{equation}
By lemma \ref{L:diff.rank.condn.implications},  
$\text{rank} \, Y_{0} = k$, $Y_{0} \in \Pf^{k}$, and the rows of $Y_{0}$ lie exactly on a unique plane $\xi \in G(k,\nvar)$ passing through the origin. I.e., $\xi$ is the row space of $Y_{0}$. By lemma \ref{L:diff.rank.condn.implications}\eqref{I:X.mean.centered.rank}, 
    \begin{equation} \label{E:rank.X=k-1}
      rank \, X = k-1 .
    \end{equation}
Hence, there is a unit vector,  $z^{1 \times k}$, unique up to sign, orthogonal to the row space of $X$:
    \begin{equation}  \label{E:|z|=1.z.perp.rho(X)}
      |z| = 1 \text{ and } z \perp \rho(X) .
    \end{equation}
Thus, since $rank \, Y_{0} = k$, we have that $(z, 0^{1 \times 1})$ is, up to sign, the unique unit vector orthogonal to $\xi = \rho(Y_{0})$. (In this section, $\nvar = k+1$.) It follows that 
    \begin{equation}  \label{E:(z,0).and.rows.of.Y0}
      (z, 0) \text{  and the rows of $Y_{0}$ span } \RR^{\nvar} .
    \end{equation} 

Since $n > \nvar > k$, by \eqref{E:n>nvar>k>0}, the orthogonal complement, $1_{n}$, 
of $1^{n}$ in $\RR^{n}$ has dimension $n -1 > k = \text{rank} \, Y_{0}$. By \eqref{E:1n.Y0.=.0}, the column space, $C$, of $Y_{0}$ lies in $1_{n}$. Therefore, there exists a unit vector 
$g^{n \times 1} \in 1_{n}$ s.t.\ $g \perp C$. I.e.,
    \begin{equation}  \label{E:g.Y0.and.g.1n.=.0}
	g^{T} Y_{0} = 0 \text{ and } 1_{n} \, g = 0 .
    \end{equation}
Let 
    \begin{equation} \label{E:Z.=.gz}
      Z^{n \times k} :=  g z .
    \end{equation}
Let 
	\begin{equation} \label{E:Y.X.Z.mat}
	   \mbf{Y}^{n \times \nvar} := (X + Z, y) = Y_{0} + (Z, 0)^{n \times \nvar}.
	\end{equation}
By \eqref{E:1n.Y0.=.0} and \eqref{E:g.Y0.and.g.1n.=.0}, $1_{n} \mbf{Y} = 0$. \emph{Claim:} 
	\begin{equation}   \label{E:Y.has.full.rank}
	  \mbf{Y} \text{ is of full rank, } \nvar. 
	\end{equation}
For suppose $\mbf{Y}$ is not of full rank. Then there exists $u^{1 \times \nvar} \neq 0$ s.t.\ 
$0 = \mbf{Y} u^{T}$. From \eqref{E:(z,0).and.rows.of.Y0}, we may assume that, 
for some $\gamma \in \RR$ and $b \in \rho(Y_{0})$, 
we have $u = b + \gamma \, (z, 0)$. Now, by \eqref{E:|z|=1.z.perp.rho(X)} and choice of $b$ we have, $Y_{0} \,  (z, 0)^{T} = 0$ and $(Z, 0) \, b^{T} = 0$. Hence, 
	\begin{equation} \label{E:0.Y0.w.z}
		0^{n \times 1} = \mbf{Y} \, u^{T} = Y_{0} b^{T} +  \gamma \, (Z, 0) (z, 0)^{T} 
		         = Y_{0} b^{T} +  \gamma \, g \, (z,0) (z, 0)^{T} 
			= Y_{0} b^{T} +  \gamma \, g.
	\end{equation}
Hence, by \eqref{E:g.Y0.and.g.1n.=.0}, 
	\[
		0 = g^{T} \bigl( Y_{0} b^{T} +  \gamma \, g \bigr) =   \gamma.
	\]
I.e., $\gamma = 0$ so $u = b  \in \rho(Y_{0})$. Therefore, there exists $v^{n \times 1}$ s.t.\ $v^{T} Y_{0} = b$. Therefore, from \eqref{E:0.Y0.w.z} and the fact that $\gamma = 0$, 
	\[
		0 = Y_{0} b^{T} = Y_{0} Y_{0}^{T} v \quad \text{  so  } 
		  \quad 0 = v^{T} Y_{0} Y_{0}^{T} v = |b|^{2}.
	\]
I.e., $b = 0$. Therefore, $u = b = 0$, a contradiction, and the claim \eqref{E:Y.has.full.rank} is proved. We have already observed that $1_{n} \mbf{Y} = 0$, so \eqref{E:plane.fitting.wT.Y=0} holds. Thus, we may use $\mbf{Y}$ in the construction in section \ref{SS:D.T.plane.fit}, 

Now let $v_{1} \in \RR^{\nvar}$ in section \ref{SS:D.T.plane.fit} just be the unit vector 
$(z,0)^{1 \times \nvar}$ so $v_{1}$ is orthogonal to $\xi = \rho(Y_{0}) \in G(k, \nvar)$ and, by \eqref{E:Z.=.gz}, $(Z,0) = g v_{1}$. In fact, we have seen that $v_{1} = (z,0)$ spans 
$\xi^{\perp}$. (See \eqref{E:superscript.perp.notation} and \eqref{E:(z,0).and.rows.of.Y0}.) Let $v_{2} := (0^{1 \times k}, 1) \perp v_{1}$. Thus,
	\begin{equation}  \label{E:Y0.X.y.v2}
		Y_{0} = (X, 0)^{n \times \nvar} + y^{1 \times n} v_{2}.
	\end{equation}

Since $\xi^{\perp}$ is the line spanned by $v_{1}$, we have $v_{2} \in \xi$. Let 
$\zeta \in G(k-1,\nvar)$ be the $(k-1)$-dimensional subspace of $\xi$ perpendicular to $v_{2}$. (So $\zeta \perp v_{1}$ as well.) Since $\text{rank} \, X = k-1$, the space 
$\zeta$ is just the row space of $(X, 0^{n \times 1})$. 

If $\ell \in P^{1}$, let $(c,s)^{1 \times 2} \in \ell$ be a unit vector and let $\lambda(\ell)$ and 
$\Upsilon$ be defined as in \eqref{E:lambda(ell).defn} and \eqref{E:plane-fitting.Upsilon.defn}, resp., with $\mbf{Y}$ defined by \eqref{E:Y.X.Z.mat}. 
Let $\pi^{\nvar \times \nvar}$ be the matrix of orthogonal projection of $\RR^{\nvar}$ onto the $(k-1)$-dimensional space $\zeta$, the row space of $(X, 0^{n \times 1})$. Then the matrix of orthogonal projection onto $\lambda(\ell)$ is
	\begin{equation*}
		\Pi \bigl[ \lambda(\ell) \bigr] := \pi + (c v_{2} + s v_{1})^{T} (c v_{2} + s v_{1}).
	\end{equation*}
Hence, by \eqref{E:Y.X.Z.mat}, \eqref{E:Y0.X.y.v2}, \eqref{E:Z.=.gz},  \eqref{E:g.Y0.and.g.1n.=.0}, and recalling that $v_{1} = (z,0)^{1 \times \nvar}$, we have
	\begin{align*}
	  \Upsilon(\ell) &:= \mbf{Y} \, \Pi \bigl[ \lambda(\ell) \bigr] \\
	            &= \bigl[ (X, 0)^{n \times \nvar} 
			   + y^{1 \times n} v_{2} + g v_{1} \bigr] \Pi \bigl[ \lambda(\ell) \bigr] \\
		    &= (X,0) + (c y + s g) (c v_{2} + s v_{1})  \\
		    &= \bigl( X + (cs \, y + s^{2} g) z, c^{2} y + sc g \bigr) .
	\end{align*}

By \eqref{E:Upsilon.maps.P1.into.Pf}, 
    \begin{equation*}
      \Upsilon(\ell) \in \Pf^{k} .
    \end{equation*}
Thus, when $s = 0$ and $c = \pm 1$, then $\Upsilon(\ell) = Y_{0}$ and so is collinear. 
($s = 0$ and $c = \pm 1$ corresponds 
to just one $\ell \in P^{1}$.) \emph{Claim:} If $s \neq 0$, then $\Upsilon(\ell)$ is not collinear. By lemma \ref{L:Y.collnr.iff.rank.X1.<.k+1}, it suffices to show that the matrix
	\begin{equation*}
		X_{1}(\ell) := \bigl( 1^{n}, X + (cs \, y + s^{2} g) z \bigr)
	\end{equation*}
has rank $\nvar = k+1$ when $s \neq 0$. 

Let $W^{n \times k} := X + (cs \, y + s^{2} g) z$. Let $a^{1 \times k} \neq 0$. We show $W a^{T} \neq 0$ if $s \neq 0$. By \eqref{E:rank.X=k-1}, 
$\text{rank} \, X = k-1$. Moreover, $z$ spans the orthogonal complement, 
$\rho(X)^{\perp}$, of  the row space, $\rho(X)$, of $X$. Hence, $a = x + \lambda z$, for some $x \in \rho(X)$ and $\lambda \in \RR$.
Suppose $s \neq 0$, but $W a^{T} = 0$. Then, by \eqref{E:g.Y0.and.g.1n.=.0}, 
$0 = g^{T} W a^{T} =  s^{2} z a^{T} = s^{2} \lambda$. 
Hence, $\lambda = 0$, so $a = x \in \rho(X)$. Thus, we can write $a = u X$, where 
$u^{1 \times n} \neq 0$ is some $1 \times n$ row vector, and $W a^{T} = X a^{T}$, 
by \eqref{E:|z|=1.z.perp.rho(X)}. Hence, 
$0  = u W a^{T}= u X a^{T} = (u X) (X u)^{T} = |a|^{2}$. I.e., $a = 0$, contradiction. 
Hence, $W a^{T} \neq 0$, as desired. Therefore, $X + (cs \, y + s^{2} g) z$ has rank $k$. 

Moreover, by \eqref{E:1n.Y0.=.0}, and \eqref{E:g.Y0.and.g.1n.=.0}, 
$1_{n} \bigl[ X + (cs \, y + s^{2} g) z ] = 0$. Thus, $1^{n}$ is not in the column space of $W$. Let $W_{1} := (1^{n}, W)$. It follows that 
$rank \, X_{1}(\ell)  = rank \, W_{1}  = \nvar$ if $s \neq 0$, as desired. This completes the proof of the claim that $\Upsilon(\ell)$ is not collinear.

Hence, if $s \neq 0$ then $\Upsilon(\ell)$ is not collinear. Therefore, by the fact that $\Upsilon(\ell) \in \Pf^{k}$ and \eqref{E:not.coll.in.Pk.not.LAD.sing}, if $s \neq 0$ then $\Upsilon(\ell)$ is not a singularity of LAD.

Recall that in section \ref{SS:D.T.plane.fit} two versions of $\D$ are proposed, $\D_{\mu}$ and $\D_{\infty}$. First, consider the $\D = \D_{\infty}$ case. 
Let $\ell_{0} \in P^{1}$ be the span of 
$(1,0) \in \RR^{2}$. $\ell_{0}$ corresponds to $s = 0$. We have seen that $\Upsilon(\ell)$ is collinear precisely when $\ell = \ell_{0}$. In fact, $\Upsilon(\ell_{0}) = Y_{0}$. By \eqref{E:T.infty:=PS[Upsilon(P1)]}, $\T_{\infty}$ is identified with $\Upsilon(P^{1})$.
By assumption, $Y_{0}$, the only collinear data set in $\T_{\infty}$, satisfies \eqref{E:diff.rank.condition}. Therefore, by lemma \ref{L:diff.rank.condn.implications}, $Y_{0}$ is not a singularity of LAD. Thus, the space 
$\T_{\infty}$ does not include any singularities of LAD. I.e., LAD satisfies \textbf{hypothesis \ref{Hyp:S.cap.T.small}} of theorem \ref{T:Phi.star.Hr.contains.Theta.star.Hr} with 
$\T = \T_{\infty}$.

Next, consider the $\D = \D_{\mu}$ case. $\Upsilon(\ell)$ is collinear precisely when 
$\ell = \ell_{0}$ and $\Upsilon(\ell_{0}) = Y_{0}$. Now, $Y \in \Y$ is collinear, if and only if 
$s Y$ is collinear for any $s \neq 0$. Hence, by \eqref{E:R.mu.defn}, 
$R_{\mu} \circ \Upsilon(\ell)$ is collinear precisely when $\ell = \ell_{0}$. 
Hence, by \eqref{E:plane.fittin.T.mu.defn} and \eqref{E:Upsilon.Delta.mu.defn}, the only collinear data set in $\T_{\mu}$ is $R_{\mu}(Y_{0}) = R_{\mu} \circ \Upsilon(\ell_{0}) 
= \Upsilon_{\mu}(\ell_{0}) \in \T_{\mu}$.  

By assumption, $Y_{0}$ satisfies \eqref{E:diff.rank.condition} and, like collinearity, the property of satisfying \eqref{E:diff.rank.condition} is invariant under rescaling. Hence, $R_{\mu}(Y_{0})$, the only collinear data set in $\T_{\mu}$, satisfies \eqref{E:diff.rank.condition}. Therefore, by lemma \ref{L:diff.rank.condn.implications} again, $R_{\mu}(Y_{0})$ is not a singularity of LAD. Thus, the space 
$\T_{\mu}$ does not include any singularities of LAD. I.e., LAD satisfies \textbf{hypothesis \ref{Hyp:S.cap.T.small}} of theorem \ref{T:Phi.star.Hr.contains.Theta.star.Hr} with 
$\T = \T_{\mu}$. 

Hence, whether $\D = \D_{\mu}$ or $\D_{\infty}$, proposition 
\ref{P:sing.codim.in.plane-fitting} holds for LAD. Therefore, 
    \begin{multline}  \label{E:codim.LAD.sing.set}
      \text{Let } \msf{V} \text{ be a cover of } G(k,k+1) 
        \text{ on which a convex combination function is defined.} \\
          \text{Then the codimension of the set of } 
          \msf{V}-\text{severe singularities of LAD } \\
            \text{is no greater than 2 and \eqref{E:Hm.a.S.lwr.bnd.for.plane.fitting.Pf} 
            holds with } a = \dim \D - 2 .
    \end{multline}
    
(But see proposition \ref{P:codim.S.LAD.=1.n-k.even}.) As we will see in section \ref{SS:lin.combo.for.plane.fit.w/.k=nvar-1}, $\msf{V}$ can be chosen so that $\msf{V}$-severity is quite severe. Note that, if the $X$ matrix is fixed 
and $Y = (X,y)$ is not collinear, then $\Phi_{LAD}(X,y)$ is continuous in $y^{n \times 1}$, indeed Lipschitz (\cite{spE95.smoothL1}). Hence, at noncollinear singularities, LAD can only be hypersensitive to perturbations that perturb $X$. That is the case in figure \ref{F:HeightFen}. (See remark \ref{R:robust.lin.reg}.)

By \eqref{E:codim.LAD.sing.set}, the codimension of the singular set of LAD is no greater than 2. But by lemma \ref{E:exact.fit.basis}, if $Y \in \Y_{LAD}'$ then on its unique LAD plane lie at least $\nvar$ observations, i.e. rows of $Y$. Thus, for each choice of 
$0 \leq i_{1} < \cdots < i_{\nvar} \leq n$ there is a region of $\Y_{LAD}'$ in which the fitted LAD plane contains rows $i_{1} < \cdots < i_{\nvar}$ of $Y$ and only those rows. Must the noncollinear singularities of $LAD$ lie on the boundaries of these regions? Anyway, if that is true it suggests that the set of singularities of LAD is one-dimensional. At least half the time it is (see appendix \ref{Chptr:misc.proofs} for proof): 
  \begin{prop}  \label{P:codim.S.LAD.=1.n-k.even}
When $n-k$ is even the singular set of LAD has codimension 1.
  \end{prop}

The proposition is consistent with the bottom left panel in figure \ref{F:LS.PC.LAD.lf.plots} where we see three lines of singularities of LAD when $n = 3$, $k=1$. It is proved in the appendix of \cite{spE.3.or.4} that $codim \, \Ss_{LAD} = 1$ when $n=4$ and $k=1$. Figure \ref{F:sing.dists.cdfs} appears consistent with that. I conjecture that in general even if $n-k$ is odd it is still the case that $codim \, \Ss_{LAD} = 1$. I leave the determination of the truth value of that conjecture as an exercise for the reader.
 
I also conjecture that the codimension of the set of $90^{\circ}$ singularities (section \ref{SS:lin.combo.for.plane.fit.w/.k=nvar-1}) of LAD is 2. I believe that the only $90^{\circ}$ singularity of LAD in figure \ref{F:LS.PC.LAD.lf.plots} is the one shown in the panel ``(LAD,c)''. (I successfully fought off the temptation to prove that.) If so, that is consistent with this conjecture.

The difficulty of all this points up the usefulness of proposition 
\ref{P:sing.codim.in.plane-fitting}.

  \begin{remark}[Robust linear regression]  \label{R:robust.lin.reg}
LAD is often recommended because it is more resistant to ``outliers'' (extreme data points, specifically in this case extreme components of $y$) than is LS (Bloomfield and Steiger \cite[Section 2.3]{pBwlS83}). 

An even more resistant linear regression method is ``least median of squares'' regression (LMS; Hampel \cite[p.\ 380]{frH75.robust}, Rousseeuw \cite{pjR84.LMS}, 
Rousseeuw and Leroy \cite[p.\ 14]{pjRamL03.robust}). 
By \cite[Theorem 3.1, p.\ 343]{spE98}, 
the singular set of LMS has codimension no greater than 2. (Recall \eqref{E:integer.part.floor}. For LMS assume 
$\lfloor n/2 \rfloor > k = \nvar-1$, where $\lfloor n/2 \rfloor$ is the largest integer $\leq n/2$.) By \eqref{E:codim.LAD.sing.set}, the singular set of LMS has codimension no greater than 2 as well. (See proposition \ref{P:codim.S.LAD.=1.n-k.even} and remark \ref{R:penalty.for.robustness.in.lin.reg?}.)

\cite{spE00} argues that sensitivity to outliers is just ``singularity at infinity" and an overall measure of stability of a regression method is the codimension of the ``extended singular set'', consisting of both ordinary singularities, the main topic of this chapter, and singularities at infinity. By this measure, LS is at least as stable as LAD or LMS.  
The sets of the ordinary singularities of LAD and LMS is at least as big in dimension as the extended singular set of LS, bigger if $n - k > 2$ and $k > 1$.

It would be interesting if the set of singularities at infinity could be usefully studied using, 
say, proposition \ref{P:sing.dim.when.H.d-r.D.=.0}.
  \end{remark}

Let $\alpha > 0$ and consider the general problem
	\[
		\text{Find $b$ to minimize } \| y - X b \|_{\alpha},
	\]
where $\| v \|_{\alpha}$ is the $L^{\alpha}$ norm of the vector $v$. We know that this operation has 
a singular set of dimension at least $n \nvar - 2$ if $\alpha = 1$ and of dimension $\dim \Pf -1 = (n+1)k$ if $\alpha = 2$ (proposition \ref{P:collin.data.are.sings.of.LS}, lemma \ref{L:dim.set.of.collin.data.sets}). An interesting question is, what is the supremum of the set of $\alpha$ for which the codimension is no greater than 2?  It would be interesting if that supremum were $> 1$.

  \begin{remark}[Finer covers]
Let $\phi \in (0, \pi/4]$ and, for $\xi_{0} \in P(S)$, define $V_{\phi}(\xi_{0})$ as in the definition, \eqref{E:V.xi0.defn}, 
of $V(\xi_{0})$ but with \eqref{E:W(L0).defn} replaced by $v \cdot v_{0} > \cos \phi$. 
Then $V_{\phi}(\xi_{0}) \subset V(\xi_{0})$ and $V(\xi_{0}) = V_{\pi/4}(\xi_{0})$. 
The finer covers 
$\msf{V}_{\phi} := \{ V_{\phi}(\xi) \subset \F : \xi \in P(S) \}$ ($0 < \phi \leq \pi/4$) might be useful in applications to linear regression of proposition \ref{P:V1.V2.V.homotop}. 
  \end{remark}

\section{A convex combination function for plane fitting with $k = \nvar-1$}  \label{SS:lin.combo.for.plane.fit.w/.k=nvar-1} 
Since the Grassmann manifold, $G(k,\nvar)$ is a Riemannian manifold, proposition \ref{P:smooth.manifs.have.commutative.convex.combos}  
tells us that one can construct a commutative convex combination function (definition \ref{D:convex.combo.fn}) on a cover of $\F = G(k,\nvar)$ consisting or open geodesic balls of sufficiently small radius.  

Proposition \ref{P:smooth.manifs.have.convex.combos} tells us that for any open cover of $\F$ consisting of geodesically convex subsets, one can at least construct a possibly noncommutative convex combination function. Jost and Xin \cite{jJylX1999.Bernstein.theorems} shows how to construct geodesically convex neighborhoods in a Grassmann manifold.  
See also H\"{u}per \emph{et al} \cite{kHuHsH2010.MeansOnGrassmannians}.

In this section we take
    \begin{equation}  \label{E:nvar=k+1}
      \nvar = k+1 ,
    \end{equation}
the usual linear regression case, but we do not confine ourselves to regression methods here. In this setting it is easy to describe a method for constructing convex combinations. Here, one can give an explicit sufficient condition for convex combinations of planes to make sense. That is because, by Milnor and Stasheff \cite[lemma 5.1 and remark following it, p.\ 57]{jwMjdS74}, $G(\nvar-1, \nvar)$ is homeomorphic to $G(1,\nvar)$ which is just the $k$-dimensional projective space, $P^{k}$. So if $k = \nvar-1$, taking convex combinations of planes is equivalent to taking convex combinations of lines in $P^{k}$. So here we develop a convex combination function for projective space. Convex combinations of lines are constructed in \cite{spE91.top.direct.axis}. The method we develop here is essentially the same. 

For study of non-parametric regression (remark \ref{R:nonparam.reg}), for example, it might be helpful to consider a more general setup. Let $S$ be an inner product space 
over $\RR$. $S$ might be infinite dimensional, but does not have to be a Hilbert space, i.e., complete. If $v,w \in S$, write the inner product and norm 
as $v \cdot w$ 
and $|v| := \sqrt{v \cdot v}$, respectively. Let $S_{0} := S \setminus \{0\}$. 
If $v,w \in S_{0}$, say that $v$ and $w$ are \emph{equivalent} and write $v \sim w$ if there exists $\alpha \in \RR$ s.t.\ $v = \alpha w$. Thus, equivalence classes are lines in $S$ through the origin. 
    \begin{equation} \label{E:proj.space.defn}
      \text{Let the projective space, } P(S), \text{ be the space } 
        S_{0} / \sim \text{ of equivalence classes.}
    \end{equation} 
Give $P(S)$ the quotient topology (Munkres \cite[p.\ 112]{jrM84}). In this section we initially take $\F := P(S)$. If $L \in  P(S)$, say that $L$ is ``oriented'' if a choice has been made of a unit vector $o(L) \in L$. (This use of ``o'' is not Landau notation. Strictly speaking an element of $P(S)$ is a one-dimensional subspace of $S$, a line through the origin, with the origin plucked out, but we will not be fussy about whether the origin is included or not.)

If $x, y \in S$ are nonzero vectors, define the angle between them to be 
    \begin{equation}  \label{E:angle.between.vectors}
      \angle (x,y) := \arccos \bigl( |x|^{-1} |y|^{-1} ( x \cdot y ) \bigr) \in [0, \pi] .
    \end{equation} 
Notice that
	\begin{multline} \label{E:angle.-x,y}
		\angle(- x,y) = \pi - \angle( x,y) . \text{ Consequently, }
		  \angle(- x,-y) = \pi - \angle( x,-y) \\
		    = \pi - \bigl( \pi - \angle( x,y) \bigr) = \angle( x,y) .
	\end{multline}

  \begin{remark}  \label{R:metrics.on.P(S).and.G(k,k+1)}
Define a metric on $P(S)$ as follows. Let $L, M \in P(S)$. Then one can orient $L, M$ by choosing unit vectors $o(L) \in L$ and $o(M) \in M$ s.t.\  
$o(L) \cdot o(M) \geq 0$. Now define the distance between $L, M$ to just be the angle between $o(L), o(M)$. Call that distance the angle between $L$ and $M$ and write 
    \begin{equation}  \label{E:angle.L.M}
      \angle (L, M) := \angle \bigl[ o(L) , o(M) \bigr] 
        = \arccos \bigl[ o(L) \cdot o(M) \bigr] \in [0, \pi/2] .
    \end{equation}
    
Now let 
    \begin{equation*}
      S = \RR^{\nvar} .
    \end{equation*} 
We prove $(L, M) \mapsto \angle (L,M)$ is a metric on $P(\RR^{\nvar})$. If $L,M \in P(\RR^{\nvar})$, then $o(L), o(M) \in S^{\nvar-1}$ (the $(\nvar-1)$-sphere). Now, $\angle$ gives geodesic distance on $S^{\nvar-1}$. (Recall that 
$\angle$ is the shorter great circle distance on the sphere, $S^{\nvar-1}$.) Thus, by Boothby \cite[Theorem (3.1), p.\ 187 and Corollary (7.11), p.\ 346]{wmB75}, 
$\angle$, defined by \eqref{E:angle.between.vectors} and restricted to $S^{\nvar-1}$, is a metric on $S^{\nvar-1}$.

We show that $\angle : P(\RR^{\nvar}) \to [0, \infty)$ metrizes the quotient topology on $P(\RR^{\nvar})$. Let $p : S_{0} \to P(\RR^{\nvar})$ be the quotient map. (Do not confuse this ``$p$'' with ``$p := \dim \Pf$''.) Thus, if $x \in S_{0}$, then $p(x)$ is the line through the origin in $\RR^{\nvar}$ containing $x$. The map $h : S_{0} \times S_{0} \to [0,\infty)$ defined by 
$h(x,y) := \arccos \bigl( |x|^{-1}|y|^{-1} | x \cdot y | \bigr)$ is continuous. 
Moreover, if $x', y' \in S_{0}$ with $x' \sim x$, $y' \sim y$ then $h(x',y') = h(x,y)$. It follows from Munkres \cite[p.\ 112]{jrM84} that $h$ induces a continuous map 
$g : P(\RR^{\nvar})^{2} \to [0, \infty)$. But $g = \angle$. Thus, the topology generated by $\angle$, interpreted as a metric on $P(\RR^{\nvar})$, is no finer than the quotient topology.

Conversely, let $U \subset P(\RR^{\nvar})$ be open in the quotient topology and let $L \in U$. 
Let $\mbf{B}_{s}(L) \subset P(\RR^{\nvar})$ be the open ball about $L$ with radius $s > 0$ as measured by $\angle$. We show that for some 
$s > 0$, $\mbf{B}_{s}(L) \subset U$. This means that the quotient topology is no finer than 
the $\angle$ topology. Let $V := p^{-1}(U) \subset S_{0}$. Then $\alpha V = V$ 
for every $\alpha \in \RR \setminus \{0\}$ and 
$L \subset V$. Let $x \in L \cap S_{0}$. Then $x \in V$, in particular $x \neq 0$, and 
$\alpha x \in L \subset V$ for every $\alpha \in \RR \setminus \{0\}$. 
$V$ is open in $S_{0}$, which means $V$ is open in $S = \RR^{\nvar}$. Hence, there exists 
$r > 0$ s.t.\ the open ball $B_{r}(x) \subset V$, where $B_{r}(x)$ is the open ball in 
$\RR^{\nvar}$ corresponding to the norm $| \cdot |$ on $\RR^{\nvar}$. Thus, $0 \notin B_{r}(x)$, so $r < |x|$. 
Let $y \in B_{r}(x)$ satisfy $|y| = |x|$. Then $r^{2} > |x - y|^{2} = 2|x|^{2} - 2 x \cdot y$. In particular, $2 x \cdot y > 2|x|^{2} - r^{2} > 0$. I.e., $x \cdot y > 0$. Thus, 
    \begin{equation*}  \label{E:x.dot.y/|x|.sqrd.ineq}
       \cos \angle \bigl( L, p(y) \bigr)
         = \bigl| \cos \angle ( x, y ) \bigr| = \frac{|x \cdot y|}{|x||y|} 
           = \frac{x \cdot y}{|x|^{2}} > 1 - \frac{r^{2}}{2 |x|^{2}} > \frac{1}{2} .
    \end{equation*}
Conversely, with $|x| = |y|$, 
    \begin{equation}  \label{E:x.dot.y/|x|.sqrd.ineq}
        \cos \angle ( x, y ) > 1 - \frac{r^{2}}{2 |x|^{2}} \text{ and } x \cdot y \geq 0
           \text{ imply } |x - y|^{2} < r^{2} .
    \end{equation}
I.e., if $\cos \angle ( x, y ) > 1 - \frac{r^{2}}{2 |x|^{2}}$,  $x \cdot y \geq 0$, and $|x| = |y|$, then $y \in B_{r}(x) \subset V$.

Let $s := \arccos \bigl( 1 - r^{2}/(2 |x|^{2}) \bigr) < \arccos(1/2)$. Suppose $y \in S_{0}$, 
$|y| = |x|$, and $\bigl| \cos \angle(x,y) \bigr| > \cos s$. If $x \cdot y \geq 0$ then 
by \eqref{E:x.dot.y/|x|.sqrd.ineq}, $y \in B_{r}(x) \subset V$. Therefore, $\alpha y \in V$ for every $\alpha \in \RR \setminus \{0\}$. I.e., $p(y) \in U$. 

Let $L' \in \mbf{B}_{s}(L)$, so $p^{-1}(L') \subset p^{-1} \bigl( \mbf{B}_{s}(L) \bigr)$. 
Now, $y \in p^{-1}(L')$ if and only if $-y \in p^{-1}(L')$. Hence, we may assume $x \cdot y \geq 0$. Also $y \in p^{-1}(L')$ if and only if $(|x|/|y|) y \in p^{-1}(L')$. 
But $\bigl| (|x|/|y|) \, y \bigr| = |x|$ and $L' \in \mbf{B}_{s}(L)$ means 
$\angle(L',L) = \arccos \bigl| \cos \angle \bigl( x, (|x|/|y|) y \bigr) \bigr| < s$. 
I.e., $\bigl| \cos \angle \bigl( x, (|x|/|y|) y \bigr) \bigr| > \cos s$.
Therefore, $(|x|/|y|) \, y \in V$, hence $y \in V$. That means so $L' \in U$. 
I.e., $\mbf{B}_{s}(L) \subset U$. 

This proves that $\angle$ generates the quotient topology on $P(\RR^{\nvar})$. This is more or less a special case of Wong \cite[Theorem 8(a), p. 591]{ycW67.Grassmanif}.

Since, $P(\RR^{\nvar})$ and $G(k, k+1)$ are homeomorphic (Milnor and Stasheff \cite[Lemma 5.1, p.\ 57]{jwMjdS74}; recall $k+1 = \nvar$), we can define a metric on $G(k, k+1)$ as follows. If $\xi, \zeta \in G(k, k+1)$, let $L, M \in P^{k} = P(\RR^{k+1})$ be the unique lines (one-dimensional subspaces) in $\RR^{k+1}$ orthogonal to $\xi$, $\zeta$, resp., and define 
    \begin{equation}  \label{E:metric.on.G(k,k+1)}
      d(\xi, \zeta) := \angle(L,M) , 
    \end{equation}
where the latter is defined in \eqref{E:angle.L.M}.
  \end{remark}

Define:
    \begin{multline}  \label{E:sign.function}
      \text{If } t \in \RR, \; sign(t) = sign \, t := \pm1 \text{ making } sign(t) \, t \geq 0. \\
        \text{ (Define } sign(0) = \pm 1, \text{ whichever is convenient at the time.)} 
    \end{multline}

Take
    \begin{equation*}
      S := \RR^{\nvar} .
    \end{equation*}
Let $V \subset P(\RR^{\nvar})$ and let $M \in P(\RR^{\nvar})$. Let $o(M)$ be a unit vector in the line $M$. We have 
    \begin{equation} \label{E:can.orient.leq.pi/2}
      \text{All the lines in $V$ can be oriented so that }
        \angle \bigl[ o(L), o(M) \bigr] \leq \pi/2 
          \text{ for every } L \in V.
    \end{equation}
With this orientation $o(L) \cdot o(M) \geq 0$ for every $L \in V$. To prove \eqref{E:can.orient.leq.pi/2}, let $L \in V$. If $v$ is a unit vector in $L$. Let $o(L) := sign \bigl( v \cdot o(M) \bigr) v$. If $v \cdot o(M) = 0$ define $sign \bigl( v \cdot o(M) \bigr) = \pm 1$ arbitrarily. (See \eqref{E:sign.function}.)
 
Suppose one can orient all the lines in $V$ in such a way 
that $L, M  \in  V$ implies $o(L) \cdot  o(M ) > 0$. 
Thus, if $L, M \in V$ then $L, M$ are ``within 90 degrees'' of each other.
If this holds say that $V$ is ``acute.''  \emph{Claim:}  
    \begin{equation} \label{E:o(L).cont}
      \text{If } V \subset  P(\RR^{\nvar}) \text{ is acute, then } o(L) \text{ is continuous on } V.
    \end{equation}  
Suppose not. Then there exist $L, L_{1}, L_{2}, \ldots  \in  V$ s.t.\ 
$L_{m} \to L$, but $o(L_{m}) \to -o(L)$. Hence, for $m$ sufficiently large, 
$o(L_{m}) \cdot o(L) < 0$, contradiction. This proves the claim. 

If $L_{0} \in P(\RR^{\nvar})$ let $v_{0} \in \RR^{\nvar}$ be a unit vector in $L_{0}$. Let 
	\begin{multline} \label{E:W(L0).defn}
	  W(L_{0}) := \bigl\{ L \in P(\RR^{\nvar}) : 
	    \text{ there exists } v \in L \cap S^{\nvar -1} \text{ s.t.\ } 
	      v \cdot v_{0} > \sqrt{2}/2 \bigr\} \\
	        = \bigl\{ L \in P(\RR^{\nvar}) : 
	    \text{ there exists } v \in L \cap S^{\nvar -1} \text{ s.t.\ } 
	      \angle(v, v_{0}) < \pi/4 \bigr\} .
	\end{multline}  
Trivially, $W(L_{0})$ is independent of choice of the unit vector $v_{0} \in L_{0}$.
If $L \in W(L_{0})$, define $o(L) \in L \cap S^{\nvar -1}$ to be the unique unit vector in $L$ s.t.\ $o(L)  \cdot v_{0} > \sqrt{2}/2$. 

\emph{Claim:} 
    \begin{equation} \label{E:W(L0).acute}
      W(L_{0}) \text{ is acute, }
    \end{equation}
i.e., $L_{1}, L_{2} \in W(L_{0})$ implies $o(L_{1}) \cdot o(L_{2}) > 0$. 
Let $v_{1}, v_{2} \in \RR^{\nvar}$ be unit vectors with 
   \begin{equation}  \label{E:vi.dot.v0.bggr.thn.root.2.over.2}
      c_{i} := v_{i} \cdot v_{0} > \sqrt{2}/2
   \end{equation}
(so $0 < \sqrt{2}/2 < c_{i} \leq 1$) and let $w_{i} = c_{i} v_{0}$ 
(so $(v_{i} - w_{i}) \perp v_{0}$) and therefore $(v_{i} - w_{i}) \perp w_{1} - w_{2}$ 
($i = 1,2$). Therefore,  
   \begin{align}  \label{E:bnd.on.2.minus.2.v1.v2}
      2 - 2 v_{1} \cdot v_{2} &= |v_{1} - v_{2}|^{2} \notag \\
        &= \bigl| (v_{1} - w_{1}) + (w_{1} - w_{2}) +  (w_{2} - v_{2}) \bigr|^{2} \\
        &= |w_{1} - w_{2}|^{2} + |v_{1} - w_{1}|^{2} + |w_{2} - v_{2}|^{2} 
               + 2 (v_{1} - w_{1}) \cdot (w_{2} - v_{2}) \notag \\
        &\leq |w_{1} - w_{2}|^{2} + |v_{1} - w_{1}|^{2} + |w_{2} - v_{2}|^{2} 
          + 2 |v_{1} - w_{1}||w_{2} - v_{2}| . \notag
   \end{align}
Now
   \[
      |w_{1} - w_{2}| = |c_{1} - c_{2}| \text{ and } |v_{i} - w_{i}|^{2} = 1 - c_{i}^{2} 
        \quad (i = 1,2).
   \]
Substituting this into \eqref{E:bnd.on.2.minus.2.v1.v2} we get, after some simplification,
   \begin{equation}  \label{E:lwr.bound.on.v1.dot.v2}
      v_{1} \cdot v_{2} \geq c_{1} c_{2} - \sqrt{(1 - c_{1}^{2})(1 - c_{2}^{2})}.
   \end{equation}
Let 
   \[
      f(s,t) = s t - \sqrt{(1 - s^{2})(1 - t^{2})}, \quad (s,t \in (0,1]).
   \]
It is easy to see that $f$ is strictly increasing in its arguments. (As $s \uparrow$, then 
$st \uparrow$, but $(1 - s^{2}) \downarrow$, etc.) Therefore, by \eqref{E:vi.dot.v0.bggr.thn.root.2.over.2} and \eqref{E:lwr.bound.on.v1.dot.v2}
   \[
      v_{1} \cdot v_{2} > \left( \frac{\sqrt{2}}{2} \right)^{2} 
         - \sqrt{ \left[ 1 - \left( \frac{\sqrt{2}}{2} \right)^{2} \right]
            \left[ 1 - \left( \frac{\sqrt{2}}{2} \right)^{2} \right] } = 0.
   \]
This proves the claim \eqref{E:W(L0).acute} that $W(L_{0})$ is acute. 

Let $V \subset  P(\RR^{\nvar})$ be acute (e.g., $V = W(L_{0})$ for some $L_{0} \in P(\RR^{\nvar})$), 
let $L_{1}, \ldots , L_{m} \in  V$, and let
$\lambda_{1}, \ldots , \lambda_{m} \geq 0$ with 
$\lambda_{1} + \cdots + \lambda_{m} = 1$. First, note that 
$\sum_{i=1}^{m} \lambda_{i} o(L_{i}) \ne 0$:
   \[
      \left| \sum_{i=1}^{m} \lambda_{i} o(L_{i}) \right|^{2}
         = \sum_{i=1}^{m} \lambda_{i}^{2} \bigl| o(L_{i}) \bigr|^{2}
          + \sum_{i \ne j} \lambda_{i} \lambda_{j} o(L_{i}) \cdot o(L_{j}) 
            \geq \sum_{i=1}^{m} \lambda_{i}^{2} \ge m^{-2} > 0,
   \]
since at least one $\lambda_{i}$ exceeds $1/m$. Therefore, 
$\sum_{i=1}^{m} \lambda_{i} o(L_{i})$ spans an element of $P(\RR^{\nvar})$.

Let $\mbf{W} := \bigl\{ W(L_{0}) : L_{0} \in P(\RR^{\nvar}) \bigr\}$. We show that continuous convex combinations of finitely many lines can be defined on $\mbf{W}$. 
For $m = 1, 2, \ldots$, $V \in \mbf{W}$, $L_{1}, \ldots , L_{m} \in  V$ and $\lambda_{1}, \ldots , \lambda_{m} \geq 0$ with $\lambda_{1} + \cdots + \lambda_{m} = 1$ define 
	\begin{multline}  \label{E:regrssn.gamma.defn}
		\gamma \bigl[ V,  (\lambda_{1}, \ldots, \lambda_{m}), 
		  (L_{1}, \ldots, L_{m}) \bigr] \\
		  \text{ to be the unique line in } P(\RR^{\nvar}) \text{ containing } 
		         \lambda_{1} o(L_{1}) + \cdots + \lambda_{m} o(L_{m}).
	\end{multline}
We show that $\gamma$ (with codomain $P(\RR^{\nvar})$) is a commutative convex combination function in the sense of definition \ref{D:convex.combo.fn}. Commutativity is immediate. Trivially, if $L_{1}, \ldots, L_{m} \in W(L_{0})$ and $\lambda_{1}, \ldots, \lambda_{m} \geq 0$ with $\lambda_{1} + \cdots + \lambda_{m} = 1$ then 
$\gamma \bigl[ (\lambda_{1}, \ldots, \lambda_{m}), (L_{1}, \ldots, L_{m}) \bigr] 
\in W(L_{0})$ 
(property \ref{I:convex.combos.are.local} of definition \ref{D:convex.combo.fn}).
Continuity of $o$, \eqref{E:o(L).cont}, implies continuity of convex combination (property \ref{I:gamma.k.cont}). Properties \ref{I:consistency.of.conv.combos}, \ref{I:convex.combos.of.1.pt}, \ref{I:1.is.identity}, and \eqref{E:drop.0.coefs} of definition \ref{D:convex.combo.fn} are obvious.

Let $L \in W(L_{0})$ (see \eqref{E:W(L0).defn} and let $v, v_{0}$ be unit vectors in $L$, $L_{0}$, resp.\ s.t.\ 
$v \cdot v_{0} > \sqrt{2}/2$. Let $u := -v+2(v \cdot v_{0}) v_{0}$. Then $|u| = 1$ and so spans a line in $W(L_{0})$ but 
$v \cdot \bigl[-v+2 (v \cdot v_{0}) v_{0}  \bigr]\downarrow 0$ 
as $v \cdot v_{0} \downarrow \sqrt{2}/2$. I.e., 
	\begin{equation}  \label{E:perp.pairs.in.W.xi}
	   W(L_{0}) \text{ contains pairs of lines arbitrarily close to perpendicular.}
	\end{equation} 

Recall $S := \RR^{\nvar}$. For $L_{0} \in P(\RR^{\nvar})$, let 
	\begin{equation}  \label{E:V.xi0.defn}
		V(L_{0}) := \bigl\{ \xi \in G(\nvar-1, \nvar) : \xi \perp L 
		  \text{ for some } L \in W(L_{0}) \bigr\}.
	\end{equation}
Define a covering $\msf{V}_{90^{\circ}}$ of $G(\nvar-1,\nvar)$ by 
    \begin{equation} \label{E:V.90.defn}
      \msf{V}_{90^{\circ}} := \bigl\{ V(L_{0}) : L_{0} \in P(\RR^{\nvar}) \bigr\} .
    \end{equation} 
Since $P(\RR^{\nvar})$ and 
$G(k, \nvar)$ are homeomorphic (Milnor and Stasheff \cite[Lemma 5.1, p.\ 57]{jwMjdS74}), 
$\gamma$ corresponds to a convex combination function on $\msf{V}_{90^{\circ}}$. Hence, 
    \begin{multline}  \label{E:plane-fitting.when.nvar=k+1}
      \text{We may apply proposition \ref{P:sing.codim.in.plane-fitting} and 
        \eqref{E:Hm.a.S.lwr.bnd.for.plane.fitting.Pf} } \\
          \text{with this cover and convex combination function.}
    \end{multline}
 (Providing $\nvar = k+1$, of course).

Each set $V(L_{0}) \in \msf{V}_{90^{\circ}}$ contains planes arbitrarily close to being $90^{\circ}$ apart, in dihedral angle. For that reason we call a $\msf{V}_{90^{\circ}}$-severe singularity a ``$90^{\circ}$ singularity''. The data sets shown in the rightmost column of figure \ref{F:LS.PC.LAD.lf.plots} are all $90^{\circ}$ singularities of their respective line-fitters.  

\section{Diameter of image of neighborhoods of $90^{\circ}$ singularities} \label{SS:90.degree.sings.in.line-fitting.on.plane}
This section concerns $90^{\circ}$ singularities, i.e. $\msf{V}_{90^{\circ}}$-severe singularities, of plane-fitters with $\nvar = k+1$. (See \eqref{E:V.90.defn}.) 

To start, let $\Phi$ be a plane-fitter with $k=1$ and $\nvar=2$ (line-fitting, section \ref{SSS:LF.plots}, is a special case). Let $Y \in \Y$ be a $90^{\circ}$ singularity 
of $\Phi$ w.r.t.\ a dense set $\D' \subset \D = \Y$. Let $\clU$ be a neighborhood of $Y$ 
and let 
    \begin{equation*}
      \msf{A} := \overline{\Phi(\clU \cap \D')}.
    \end{equation*} 
Since $k = 1$, $\msf{A} \subset \F = G(1,2) = P(\RR^{2}) = P^{1}$ (see \eqref{E:proj.space.defn}). Then there exists no 
$L \in P(\RR^{2})$ s.t.\ $\msf{A} \subset V(L)$. (See \eqref{E:V.xi0.defn}.)
Examples for LS, PC, and LAD are provided by the data sets shown in the
``(c)'' panels of figure \ref{F:LS.PC.LAD.lf.plots}. In those cases $\msf{A}$ contains orthogonal pairs of lines. 

But this is not be the case in general. Consider the line fitter $\Phi_{ex}$ defined as follows. Given a data set $Y^{n \times 2}$, $\Phi_{ex}(Y)$ is defined as follows. Suppose there is a unique pair of points (i.e., rows) in $Y$ that are maximally far apart. (In particular, since $n > 2$ by \eqref{E:n>nvar>k>0}, the distance between them is strictly positive. Otherwise, the pair of points would not be unique.) Then 
$\Phi_{ex}(Y)$ is the line through the origin parallel to the unique line passing through those extremal points. 
($\Phi_{ex}(Y)$ is thus very sensitive to outliers, which makes it impractical.) If the extremal pair of points is not unique then $\Phi_{ex}(Y)$ is not defined. Suppose $n = 3$ and the rows of $Y$ are the vertices of an equilateral triangle. Then $\Phi_{ex}(Y)$ is not defined. In fact, $Y$ is a $90^{\circ}$ singularity of $\Phi_{ex}$ for which, up to rotation, 
$\msf{A}$ includes the three lines spanned by $(\cos (j \pi/3), \sin (j \pi/3))$ ($j = 0, 1, 2$). Those three lines do not lie in any $V(L)$, but the pairwise angles between them are all 
$\pi/3 < \pi/2$. Thus, the name ``$90^{\circ}$ singularity'' is a misnomer. 
 
However, we do \emph{claim:} In line-fitting on the plane, the example described in the last paragraph is the smallest $diam(\msf{A})$ (see \eqref{E:diam.of.set}) can be. 
I.e.\ in line-fitting, $\msf{A}$ corresponding to a $90^{\circ}$ singularity always contains pairs of lines at least $\pi/3$ radians apart. I.e., 
    \begin{quote}  
       (**) Let $\Phi$ is a line-fitter and let $Y$ is a $90^{\circ}$ singularity of $\Phi$ w.r.t.\ a dense set $\D' \subset \D = \Y$. Then if $\clU$ is any neighborhood of $Y$, then 
       $diam \bigl( \overline{\Phi(\clU \cap \D')} \bigr) \geq \pi/3$. 
    \end{quote} 
Thus, even though the apparent singularities of LAD near the data sets in figure \ref {F:HeightFen} are not severe, by  there is an at least codimension 2 set of data sets arbitrarily near which the LAD lines veer at least $\pi/3$ radians from each other.

Suppose (**) is false. Let $L \in \msf{A}$ be arbitrary. 
Choose a unit vector $v \perp L$. Then for every $M \in \msf{A}$ we can find a unit vector $v(M) \perp M$ 
s.t.\ $\angle \bigl( v, v(M) \bigr) < \pi/3$. 
Let $v_{+} \in S^{1}$ be the vector $v(M)$ ($M \in \msf{A}$) furthest from $v$ in the positive direction and let $v_{-}$ be similar but in the negative direction. (Possible by compactness.)
Then $v(\msf{A}) := \bigl\{ v(M) \in S^{1} : M \in \msf{A} \bigr\}$ lies between $v_{-}$ and $v_{+}$ inclusive. 
 
For $k = 1, 2, \ldots$ define:
    \begin{equation} \label{E:langle.rangle.line}
      \text{If } u \in S^{k} \text{ let } \langle u \rangle \in P^{k} 
        \text{ be the line spanned by } u.
    \end{equation} 
$\langle \cdot \rangle$ is just the restriction of the quotient map in \eqref{E:proj.space.defn} to $S^{1}$. Thus, 
    \begin{equation} \label{E:langle.rangle.is.cont}
      \langle \cdot \rangle \text{  is continuous. }
    \end{equation}
Regarding vectors in $\RR^{2}$ as row vectors, we have, by \eqref{E:rho.is.smooth}, that $\langle \cdot \rangle$ is in fact smooth.

Since $diam(\msf{A}) < \pi/3$ by supposition, we have 
$\angle \bigl( \langle v_{-} \rangle, \langle v_{+} \rangle \bigr) < \pi/3$. By \eqref{E:angle.L.M} and \eqref{E:angle.-x,y}, this means either 
$\angle(v_{-}, v_{+}) < \pi/3$ \emph{or} $\pi - \angle(v_{-}, v_{+}) < \pi/3$. 
But $\angle(v_{-}, v_{+}) \leq \angle(v_{-}, v) + \angle(v, v_{+}) < 2 \pi/3$ so 
$\pi - \angle(v_{-}, v_{+}) > \pi/3$. Therefore, $\angle(v_{-}, v_{+}) < \pi/3$. Let $w$ be the unit vector bisecting the angle (of size $< \pi/3$) between $v_{-}, v_{+}$. Thus, 
$\angle(v_{-}, w) = \angle(v_{+}, w) < \pi/6 < \pi/4$. But $v(\msf{A})$ lies between $v_{-}$ and $v_{+}$ inclusive. I.e., if $v' \in v(\msf{A})$, then $\angle(v', w) < \pi/4$.
Let $K = w^{\perp} \in P^{1}$. Then, by  \eqref{E:V.xi0.defn} and \eqref{E:W(L0).defn}, we have $\msf{A} \subset V(K)$. But this is impossible because $Y$ is a $90^{\circ}$ singularity of $\Phi$. This contradiction is a consequence of the supposition that the angle between every line in $\msf{A}$ is strictly less than $\pi/3$. This proves (**): 
$\msf{A} := \overline{\Phi(\clU \cap \D')}$ contains pairs of lines $\pi/3$ or more radians apart.

The example of $\Phi_{ex}$ with which we began shows this bound is tight. We return to this situation in remark \ref{R:90.degree.sings.in.line-fitting.again}.

Here is another example:

  \begin{example}[All singularities of least squares are $90^{\circ}$ singularities.] \label{Ex:all.LS.sings.are.90.degrees}
Consider least squares linear regression (LS, section \ref{SS:lin.reg.and.LS}) with 
$m = 1$ in \eqref{E:n.>.k+m}. Then $k = \nvar - 1$. We show that 
every singularity of LS is a $90^{\circ}$ singularity. ($m=1$ necessary in order that $90^{\circ}$ singularity makes sense.) For let $\Phi_{LS}$ be the LS plane-fitter and let $Y \in \Y$ be a singularity of $\Phi_{LS}$. Let $\Y' \subset \Y$ be the set of non-collinear data sets, let $\clU \subset \Y$ be a neighborhood of $Y$, 
and let $\msf{A} :=\overline{\Phi_{LS}(\clU \cap \Y'})$. Then by proposition \ref{P:collin.data.are.sings.of.LS} and remark \ref{R:F.can.be.orthog.to.xi}, for some 
$k' = 0, 1, \ldots, k-1$ and $\xi \in G(k',\nvar)$, we have that $\msf{A}$ contains an isometric image of $\{\xi\} \times G(k-k', k-k'+1)  \subset G(k, \nvar)$. Thus, $\msf{A}$ contains planes 
$\omega, \omega'$ s.t.\ $d(\omega, \omega') = \pi/2$. (See \eqref{E:metric.on.G(k,k+1)}.) Hence, $\msf{A}$ lies in no $V \subset \msf{V}_{90^{\circ}}$. Hence, $Y$ is a $90^{\circ}$ singularity. It also follows that the diameter, $diam(\msf{A})$, of $\msf{A}$ is $\pi/2$.

To make a short story long, let $Y = (X,y) \in \Y$ be a singularity of LS w.r.t.\ $\Y'$. Then, by proposition \ref{P:collin.data.are.sings.of.LS} (with $m=1$), $Y$ is collinear. Recall the definition, \eqref{E:X1.defn}, of $X_{1}$. In proposition \ref{P:collin.data.are.sings.of.LS}, $k' := rank \, X_{1} - 1$. Therefore, by lemma \ref{L:Y.collnr.iff.rank.X1.<.k+1}, 
$k' \leq k-1$. Let $\ell := k-k'+1 \geq 2$. Let 
    \begin{equation*}
       \xi \in G(k', k+1) \text{ and } F : \RR^{\ell} \to \RR^{k +1} = \RR^{\nvar}
    \end{equation*}
correspond to $Y$ as described in proposition \ref{P:collin.data.are.sings.of.LS}. Let 
    \begin{equation*}
      \omega := F(\RR^{\ell}) \in G(\ell, k +1) .
    \end{equation*}
Then by proposition \ref{P:collin.data.are.sings.of.LS} yet again, 
$\omega \cap \xi = \{0\}$. Let $A^{\ell \times \nvar}$ (recall $\nvar = k+1$ here) satisfy $\rho(A) = \omega$, where as usual ``$\rho(\cdot)$'' denotes the row space functional. $A$ has full rank $\ell$. 

Let $\Pi_{\xi}^{\nvar \times \nvar}$ be the matrix of orthogonal projection 
of $\RR^{k +1}$ onto $\xi$. (Again, $\nvar = k+1$.)
Define $B^{\ell \times \nvar} := A(I_{\nvar} - \Pi_{\xi})$. ($I_{\nvar}$ is the $\nvar$-dimensional identity matrix.) If $rank \, B < \ell$, then there exists $a^{\ell \times 1} \neq 0$ s.t.\ 
$0 = a B = a A - a A \Pi_{\xi}$. Thus, $a A$ is a nonzero point ($A$ has full rank, 
$a \neq 0$) in $\omega$ that is equal to the point $a A \Pi_{\xi} \in \xi$. That contradicts 
$\omega \cap \xi = \{0\}$. Therefore, the eigenvalues of $(B B^{T})^{\ell \times \ell}$ are all strictly positive.

Let $u_{i}^{1 \times \ell}$ ($i=1,2$) be orthonormal eigenvectors 
of $B B^{T}$. ($B B^{T}$ is $\ell \times \ell$. $\ell \geq 2$.) 
Thus, $u_{i} B \neq 0$ ($i=1,2$). For $i=1,2$, 
let $\alpha_{i} := \{ b A \in \omega :  b^{1 \times \ell} \perp u_{i} \}$. (See \eqref{E:superscript.perp.notation}.) Because $A$ is of full rank $\ell$, $\alpha_{i}$ 
is an $(\ell -1 = k-k')$-dimensional subspace of $\omega$. 
Since $\alpha_{i} \subset \omega$ we have  $\xi \cap \alpha_{i} = \{0\}$. 
Let $\zeta_{i} := F^{-1}(\alpha_{i})$. Hence, $\zeta_{i}$ is an $(\ell -1 = k-k')$-dimensional subspace of $\RR^{\ell}$ and  
$\xi + F(\zeta_{i}) = \xi + \alpha_{i} \in G(k, k+1)$. If $y \in \zeta_{i}$, 
then for some $b_{i}^{1 \times \ell}$, 
we have $F(y) = b_{i} A \in \omega \subset \RR^{k +1}$, where $b_{i} \perp u_{i}$. 

\emph{Claim:} 
    \begin{equation} \label{E:u.B.perp.xi+alpha}
      u_{i} B \perp \xi + F(\zeta_{i}) = \xi + \alpha_{i}, \qquad (i = 1,2). 
    \end{equation}
Let 
    \begin{equation*}
      z^{1 \times (k+1)} \in \xi + \alpha_{i} \in G(k, k+1) . 
    \end{equation*}
By \eqref{E:proj.mat.symmetric.idempotent},
    \begin{multline}  \label{E:(z-zPi.dot.u.=z.dot.uB}
      (z - z \Pi_{\xi}) \cdot u_{i} B = z(I_{\nvar} - \Pi_{\xi}) B^{T} u_{i}^{T}
        = z(I_{\nvar} - \Pi_{\xi}) (I_{\nvar} - \Pi_{\xi}) A^{T} u_{i}^{T} \\
          = z(I_{\nvar} - \Pi_{\xi}) A^{T} u_{i}^{T} = z B^{T} u_{i}^{T}  = z \cdot u_{i} B .
    \end{multline}
Thus, $u_{i} B \perp z$ if and only if $u_{i} B \perp (z - z \Pi_{\xi})$. 
Write $z = x + a$, where $x \in \xi$ and $a \in \alpha_{i}$. Notice, 
$z - z \Pi_{\xi}= a - a \Pi_{\xi}$. 
Since $a \in \alpha_{i}$, we can write $a = b_{i} A$, where $b_{i} \perp u_{i}$. Hence, 
    \begin{equation*}
      z - z \Pi_{\xi} = a - a \Pi_{\xi} = b_{i} A - b_{i} A \Pi_{\xi} 
        = b_{i} \bigl[ A (I_{\nvar} - \Pi_{\xi}) \bigr] = b_{i} B .
    \end{equation*} 
We thus have $u_{i} B \cdot (z - z \Pi_{\xi}) = u_{i} B B^{T} b_{i}^{T}$. 
But $b_{i} \perp u_{i}$ and $u_{i}$ is an eigenvector of $B B^{T}$. 
Therefore, by \eqref{E:(z-zPi.dot.u.=z.dot.uB}, 
$u_{i} B \cdot z = u_{i} B \cdot (z - z \Pi_{\xi}) \propto u_{i} b_{i}^{T} = 0$. But $z$ is an arbitrary element of $\xi + \alpha_{i}$. Thus, $u_{i} B \perp \xi + F(\zeta_{i})$ 
($i=1,2$), proving the claim, \eqref{E:u.B.perp.xi+alpha}.

Now, $u_{1}$ and $u_{2}$ are orthonormal eigenvectors of $BB^{T}$. 
Hence, $u_{1} B \cdot u_{2} B = u_{1} B B^{T} u_{2}^{T} = 0$. I.e., $u_{1} B \perp u_{2} B$. At the same time, $u_{i} B \perp \xi + F(\zeta_{i}) \in G(k, k+1)$ ($i=1,2$). Therefore, by \eqref{E:u.B.perp.xi+alpha}, \eqref{E:V.xi0.defn}, and \eqref{E:W(L0).acute},
we have that $\xi + F(\zeta_{1})$, and $\xi + F(\zeta_{2})$ cannot both belong to the same 
$V \in \msf{V}_{90^{\circ}}$. But, by proposition \ref{P:collin.data.are.sings.of.LS}, for $i=1,2$ there is a sequence $\{ Y_{i,m} \} \subset \Y'$ s.t.\ $Y_{i,m} \to Y$
and $\Phi_{LS}(Y_{i,m}) \to \xi + F(\zeta_{i})$ as $m \to \infty$ 
so $\xi + F(\zeta_{i}) \in \msf{A}$ ($i = 1,2$). 
Thus, $Y$ is a $90^{\circ}$ singularity of LS and $diam(\msf{A}) = \pi/2$. But $Y \in (\Y')^{c}$ is an arbitrary singularity of LS, that is to say an arbitrary collinear data set.
  \end{example}

Let $\Phi$ be a plane-fitter 
(with $\nvar = k+1$) and let $\D'$ be the dense subset of $\D$ on which $\Phi$ is defined and continuous and relative to which singularities of $\Phi$ are defined, let $x$ be a $90^{\circ}$ singularity of $\Phi$, let $\clU$ is a neighborhood of $x$, and let $V \in \msf{V}_{90^{\circ}}$. 
Define $\msf{A} := \overline{\Phi(\clU \cap \D')}$. 
$\msf{A}$ is a compact subset of $G(k, \nvar)$ and $\msf{A} \nsubseteqq V$. By \eqref{E:perp.pairs.in.W.xi} and \eqref{E:V.xi0.defn}, $V$ contains pairs of planes arbitrarily close to being $\pi/2$ apart. (See \eqref{E:metric.on.G(k,k+1)}.) But as we have seen in section \ref{SS:90.degree.sings.in.line-fitting.on.plane}, it does not follow that $diam(\msf{A}) \geq \pi/2$. 

However, there is something that can be said in general about the size 
of $\msf{A}$ \emph{providing we orient all the planes in} $\msf{A}$. Let 
$P^{k} = P(\RR^{\nvar})$ be real $k$-dimensional projective space. 
Let $v \in S^{k}$ and let 
    \begin{equation} \label{E:X(v,pi/2).defn}
      X(v, \pi/2) := \{ x \in S^{k} : x \cdot v \geq 0 \} 
    \end{equation}
so $X(v, \pi/2)$ is a closed hemisphere. By Milnor and Stasheff \cite[lemma 5.1 and remark following it, p.\ 57]{jwMjdS74}, the map that takes $\xi \in G(k, \nvar)$ 
to $\xi^{\perp} \in P^{k}$ is continuous. Let $\msf{A}^{\perp} := \{ L \in P^{k} : \text{ there exists } \xi \in \msf{A} \text{ s.t.\ } L \perp \xi \}$. 
Thus, $\msf{A}^{\perp}$ is compact.

Recall the definition \eqref{E:langle.rangle.line} of $\langle \cdot \rangle : S^{k} \to P^{k}$. If $L \in P^{k}$ then $\langle \rangle^{-1}(L)$ consists of the two vectors in $S^{k}$ that each span $L$. Define the obvious notation:  
$\langle \rangle^{-1}(\msf{A}^{\perp}) := \bigcup_{L \in \msf{A}^{\perp}} 
\langle \rangle^{-1}(L)$. Let $v \in S^{k}$. The maximum angle between any two lines in $P^{k}$ is $\pi/2$. Thus, for every $L \in \msf{A}^{\perp}$ there exists 
$w \in X(v, \pi/2)$ s.t.\ $w \in L$, i.e., $w$ spans $L$. $o(\msf{A},v)$ is the set of all such:
    \begin{equation}  \label{E:o(A,v).defn}
      o(\msf{A},v) := \langle \rangle^{-1}(\msf{A}^{\perp}) \cap X(v, \pi/2) 
        = \{ w \in X(v, \pi/2) : 
          \text{there exists } \xi \in \msf{A} \text{ s.t.\ } w \perp \xi \} .
    \end{equation}
Since $\langle \cdot \rangle$ is continuous, by \eqref{E:langle.rangle.is.cont}, and
$X(v, \pi/2)$ and $\msf{A}^{\perp}$ are compact we have that 
    \begin{equation}  \label{E:o(A,v).is.compact}
      o(\msf{A},v) \text{ is compact.}
    \end{equation}

If $\xi \in \msf{A}$, $v \in S^{k}$, and $v \notin \xi$ 
(so $\xi^{\perp}$ is not perpendicular to $v$) then $\xi$ is represented by exactly one vector in $o(\msf{A},v)$. (I.e., there exists exactly one $w \in o(\msf{A},v)$ s.t.\ $w \perp \xi$.) But if $v \in \xi$ (so $\xi^{\perp}$ \emph{is} perpendicular to $v$) then $\xi$ is represented by \emph{two} antipodal vectors in $o(\msf{A},v)$. 

One can ask what is the smallest diameter of $o(\msf{A},v)$ possible, for any $v$? Another way to put it is, suppose instead of measuring the distance between planes $\xi, \zeta$ by \eqref{E:metric.on.G(k,k+1)} (and \eqref{E:angle.L.M}), we start with an arbitrary $v_{0} \in S^{k}$ and measure the distance between the planes as follows. As just observed, there might be two vectors $v \in S^{k}$ s.t.\ $v \perp \xi$ and 
$v \cdot v_{0} \geq 0$. But suppose there is exactly one $v \in S^{k}$ s.t.\ 
$v \perp \xi$ and $v \cdot v_{0} \geq 0$. Suppose there is exactly one $w \in S^{k}$ s.t.\ $w \perp \zeta$ and $w \cdot v_{0} \geq 0$. Define the distance between $\xi$ and 
$\zeta$ to be $\angle(v,w)$. This can be thought of as the distance between the planes oriented by $v_{0}$. Relative to that distance the diameter (see \eqref{E:diam.of.set}) 
of $\msf{A} \subset G(k, \nvar)$ is the same as the diameter 
of $o(\msf{A},v) \subset S^{k}$ and has to be at least $\pi/3$. In fact:

   \begin{prop} \label{P:diam.oriented.90.degree.sing}
Let $\Phi$ be a plane-fitter (with $\nvar = k+1$) and let $\D'$ be the dense subset of $\D$ on which $\Phi$ is defined and continuous and relative to which singularities of $\Phi$ are defined. 
Let $Y \in \Ss^{\msf{V}_{90^{\circ}}}$. Let $\clU$ be a neighborhood of $Y$. 
Let $\msf{A} := \overline{\Phi(\clU \cap \D')} \subset G(k,\nvar)$.  

Let $v_{0} \in S^{k}$ be arbitrary. Then for every $\epsilon > 0$ we have the following. Let $i=1,2$. There exists $Y_{i} \in \clU \cap \D'$ and 
$w_{i} \in S^{k}$ with $w_{i} \perp \Phi(Y_{i})$ and $w_{i} \cdot v_{0} \geq 0$ (so $w_{i} \in o(\msf{A},v_{0})$) s.t.\ 
    \begin{equation} \label{E:oriented.angle.lwr.bnd}
      \angle (w_{1}, w_{2}) 
        > 2 \arcsin \bigl( (1/2) \sqrt{  (k + 1)/k } \bigr) - \epsilon .
    \end{equation}
Then 
    \begin{equation*}
      diam \bigl[ o(\msf{A},v_{0}) \bigr] 
        \geq 2 \arcsin \bigl( (1/2) \sqrt{  (k + 1)/k } \bigr) .
    \end{equation*}
In particular, $diam \bigl[ o(\msf{A},v_{0}) \bigr] > \pi/3$.
   \end{prop}
Suppose there exists $\xi \in \msf{A}$ s.t.\  $v_{0} \in \xi$ and let $w \in S^{k}$ be orthogonal to $\xi$. Then $\{w, -w\} \subset o(\msf{A},v_{0})$ so 
$diam \bigl[ o(\msf{A},v_{0}) \bigr] = \pi$, a not very interesting case. 
  \begin{proof} Suppose $Y \in \Ss^{\msf{V}_{90^{\circ}}}$ (see \eqref{E:V.90.defn}) and let $\clU$ be a neighborhood of $Y$. Let $L_{0} \in P^{k} = P(\RR^{\nvar})$, 
$k$-dimensional projective space. Then the closure, 
    \begin{equation*}
      \msf{A} := \overline{\Phi(\clU \cap \D')},
    \end{equation*}
  of the image of $\clU \cap \D'$ does not lie 
  in $V(L_{0})$ (see \eqref{E:V.xi0.defn}). 
  
If $v \in S^{k}$ and $\theta \in [0, \pi]$ let 
     \begin{equation}  \label{E:X(v.theta).defn}
      X(v, \theta) := \{ x \in S^{k} : x \cdot v \geq \cos \theta \}
        = \{ x \in S^{k} : 0 \leq \angle(x, v) \leq \theta \} .
    \end{equation} 
($\cos$ is decreasing on $[0, \pi]$.) Let $v_{0} \in S^{k}$ be arbitrary. Let 
    \begin{equation}  \label{E:theta.v0.defn}
      \theta_{v_{0}} := 
          \inf \bigl\{ \theta \in [0, \pi] : 
        \text{There exists } v \in S^{k} \text{ s.t.\ } o(\msf{A},v_{0}) 
          \subset X(v, \theta) \bigr\} .
    \end{equation}
(Note that it is $o(\msf{A},v_{0})$ that must lie in $X(v, \theta)$, not $o(\msf{A}, v)$.) Since by \eqref{E:o(A,v).defn}, $o(\msf{A},v_{0}) \subset X(v_{0}, \pi/2)$, 
we have $\theta_{v_{0}} \leq \pi/2$.

\emph{Claim:}  
    \begin{equation}  \label{E:theta.v0.geq.pi/4}
       \pi/2 \geq \theta_{v_{0}} \geq \pi/4 .
    \end{equation} 
We have just agreed that $\pi/2 \geq \theta_{v_{0}}$. Suppose 
$\pi/4 > \theta_{v_{0}}$. Then there exists $\theta \in [0,\pi/4)$ and 
$v \in S^{k}$ s.t.\ 
    \begin{equation}  \label{E:o(A,v0).subset.X(v.theta}
      o(\msf{A}, v_{0}) \subset X(v,\theta) .
    \end{equation}
Let $L_{v}$ be the span, 
$\langle v \rangle \in P(\RR^{\nvar})$, of $v$. (See \eqref{E:langle.rangle.line}.) Since 
$Y \in \Ss^{\msf{V}_{90^{\circ}}}$ (see \eqref{E:V.90.defn}), we have 
$\msf{A} \nsubseteq V(L_{v})$. (See \eqref{E:V.xi0.defn}.) Let $\xi \in \msf{A} \setminus V(L_{v})$. There exists $w \in S^{k}$ s.t.\ $w \perp \xi$ and $w \cdot v_{0} \geq 0$. By \eqref{E:X(v,pi/2).defn} and \eqref{E:o(A,v).defn}, 
$w \in o(\msf{A}, v_{0})$ so, by \eqref{E:o(A,v0).subset.X(v.theta}, $w \in X(v, \theta)$. Therefore, by \eqref{E:X(v.theta).defn}, 
$\angle(w,v) \leq \theta < \pi/4$. Hence, $L := \langle w \rangle \in W(L_{v})$. (See \eqref{E:W(L0).defn}.) Therefore $\xi \in V(L_{v})$. Contradiction.

We will use the following.
  \begin{lemma} \label{L:max.dist.is.cont}
Let $Z$ be a metric space with finite metric $\mu$ and let $K \subset Z$ be compact. Then the function $z \mapsto \max \{\mu(z, x) : x \in K \} \in [0, \infty)$ is continuous on $Z$.
  \end{lemma}
  \begin{proof}[Proof of lemma]
Note that the map that takes $z \in Z$ to the compact set $\{z\} \subset Z$ is continuous (in fact an isometric imbedding) w.r.t.\ $ \mu $ and the Hausdorff distance (Tuzhilin \cite{aaT2019.HausdorffGromovHausdorffDistance}; see also the Wikipedia article about Hausdorff distance) on the space of compact subsets of $Z$. The space of compact subsets of $Z$ is compact w.r.t.\ the Hausdorff distance. Moreover, if $z \in Z$, then 
$\max \{ \mu (z, x) : x \in K \}$ is just the Hausdorff distance from $\{z\}$ to $K$. Since a metric on a space is  continuous w.r.t.\ itself, the lemma follows. 
  \end{proof}

\emph{Proof of proposition \ref{P:diam.oriented.90.degree.sing} continued.} 
For $v \in S^{k}$, let 
$F(v) := F_{v_{0}}(v) := \max \bigl\{ \angle(w, v) : w \in o(\msf{A},v_{0}) \bigr\}$. 
$\angle$ is a metric on $S^{k}$. Since $o(\msf{A},v_{0})$ is compact, by lemma \ref{L:max.dist.is.cont}, $F$ is continuous on $o(\msf{A},v_{0})$. Let $v \in S^{k}$. By \eqref{E:theta.v0.defn}, a necessary condition that $o(\msf{A},v_{0}) \subset X(v, \theta)$ is 
$\theta \geq \theta_{v_{0}}$. By \eqref{E:X(v.theta).defn},
        \begin{align}  \label{E:o.X.F}
           o(\msf{A},v_{0}) \subset X(v, \theta)
             &\text{ if and only if for all } w \in o(\msf{A},v_{0}) \text{ we have }
               \angle(w,v) \leq \theta,  \\
             &\text{ if and only if } F(v) \leq \theta \notag .
        \end{align} 
Since $F$ is continuous, it achieves a minimum value on the compact set 
$o(\msf{A},v_{0})$. Let $v_{\msf{A}} \in S^{k}$ be a point where $F$ achieves its minimum value and let $\theta_{min}$ be that minimum value. 
We must have $\theta_{min} \leq \pi/2$, otherwise, by \eqref{E:o(A,v).defn}, we would have $F(v_{0}) < F(v_{\msf{A}})$, contradicting the definition of $v_{\msf{A}}$.
By \eqref{E:o.X.F}, $o(\msf{A},v_{0}) \subset X(v_{\msf{A}}, \theta_{min})$. Hence, by \eqref{E:theta.v0.defn}, $\theta_{min} \geq \theta_{v_{0}}$. But by definition 
of $\theta_{min}$, $\theta_{min} \leq  \theta_{v_{0}}$. I.e., $\theta_{min} = \theta_{v_{0}}$. To emphasize the connection of 
$\theta_{min} = \theta_{v_{0}}$ to $v_{\msf{A}}$ we define 
    \begin{equation} \label{E:three.thetas}
      \theta_{\msf{A}} := \theta_{min} = \theta_{v_{0}} .
    \end{equation}
We conclude
    \begin{equation}  \label{E:o(A,v0).subset.X(vA,theta.v0)}
       o(\msf{A},v_{0}) \subset X(v_{\msf{A}}, \theta_{\msf{A}}) 
         \text{ so } x \cdot v_{\msf{A}} \geq \cos \theta_{\msf{A}} 
           \text{ for every } x \in o(\msf{A},v_{0}).
    \end{equation}

Let 
    \begin{equation}  \label{E:Bd.o(A).defn}
      Bd_{v_{\msf{A}}} \, o(\msf{A},v_{0}) := 
        \{ x \in o(\msf{A},v_{0}) : x \cdot v_{\msf{A}} = \cos \theta_{\msf{A}} \}.
    \end{equation} 
Since $F(v_{\msf{A}}) = \theta_{\msf{A}}$, $Bd_{v_{\msf{A}}}$ is nonempty. 
($Bd_{v_{\msf{A}}} \, o(\msf{A},v_{0})$ is not literally the topological boundary of $o(\msf{A},v_{0})$.)  

Thus, $Bd_{v_{\msf{A}}} \, o(\msf{A},v_{0})$ lies in the $k$-plane 
    \begin{equation*}
      P := \{ x \in \RR^{\nvar} : x \cdot v_{\msf{A}} = \cos \theta_{\msf{A}} \} .
    \end{equation*} 
I.e.,
    \begin{equation} \label{E:Bdo.subset.P}
      Bd_{v_{\msf{A}}} \, o(\msf{A},v_{0}) = o(\msf{A},v_{0}) \cap P .
    \end{equation}
Hence, $Bd_{v_{\msf{A}}} \, o(\msf{A},v_{0})$ is compact.

Let 
    \begin{equation*}
      c := (\cos \theta_{\msf{A}}) \, v_{\msf{A}} \in P \subset \RR^{\nvar} .
    \end{equation*}
$c = 0 \in \RR^{\nvar}$ is possible. Thus, 
    \begin{equation} \label{E:P=vA.perp+c}
       P = v_{\msf{A}}^{\perp} + c.
    \end{equation} 
Let $x \in P \cap S^{k}$. E.g., $x$ might be in $o(\msf{A},v_{0})$. Write $x = z + c$, where $z \perp v_{\msf{A}}$. Since $o(\msf{A},v_{\msf{A}} ) \subset S^{k}$, 
$1 = |z|^{2} + |c|^{2}$, 
so $|z|^{2} = 1 - |c|^{2} = 1 - \cos^{2} \theta_{\msf{A}} = \sin^{2} \theta_{\msf{A}}$. On the other hand, $|x - c|^{2} = |z|^{2} = \sin^{2} \theta_{\msf{A}}$. Thus, 
    \begin{multline} \label{E:Bd.o(A).S.sin.theta.A}
      Bd_{v_{\msf{A}}} \, o(\msf{A},v_{0}) \text{ lies on the sphere } S := P \cap S^{k}. 
        \text{ The center of } S \text{ is } c \in P \\
          \text{ and the radius is } s := \sin \theta_{\msf{A}} . 
    \end{multline}
We have  
    \begin{equation} \label{E:s.geq.sqrt(2)/2}
      s = \sin \theta_{\msf{A}} \geq \sqrt{2}/2 ,
    \end{equation}
because if $s < \sqrt{2}/2$ then $\cos \theta_{\msf{A}} = \sqrt{1 - s^{2}} >  \sqrt{2}/2$, contradicting \eqref{E:theta.v0.geq.pi/4} and \eqref{E:three.thetas}. 
See appendix \ref{Chptr:misc.proofs} for the proof of the following.
  \begin{lemma} \label{L:Bd.va.o(A,v0).does.not.lie}
$Bd_{v_{\msf{A}}} \, o(\msf{A},v_{0})$ does not lie in a ball in $P$ of radius smaller than $s = \sin \theta_{\msf{A}}$. 
  \end{lemma}

The Euclidean diameter of $o(\msf{A},v_{0})$ is at least 
$\delta := diam \bigl(  Bd_{v_{\msf{A}}} \, o(\msf{A},v_{0}) \bigr)$, the diameter 
of $Bd_{v_{\msf{A}}} \, o(\msf{A},v_{0})$  measured in Euclidean distance in $P$, which is the same as the diameter measured in Euclidean distance in 
$\RR^{\nvar} = \RR^{k+1}$. We have just seen that the smallest ball in $P$ containing 
$Bd_{v_{\msf{A}}} \, o(\msf{A},v_{0})$ is $S$, which has radius $\sin \theta_{\msf{A}}$. 
(See \eqref{E:Bd.o(A).S.sin.theta.A}.) By \eqref{E:s.geq.sqrt(2)/2} and Jung's theorem (Jung \cite{hJ1901.Jungs.theorem}, \cite{hJ1910.Jungs.theorem}, 
Federer \cite[2.10.41, p.\ 200]{hF69}, Rademacher and Toeplitz \cite[Chapter 16]{hRoT1957.EnjoymentOfMath}, Wikipedia), 
    \begin{equation*}
      \delta \geq \sin \theta_{\msf{A}} \sqrt{ \frac{2 (\dim P + 1)}{\dim P} }
        \geq \frac{\sqrt{2}}{2} \sqrt{ \frac{2(k + 1)}{k} } = \sqrt{ \frac{k + 1}{k} } 
          > 1 .
    \end{equation*}

The length of arc in $S^{k}$ connecting two points in $S^{k}$ that are at least
$\sqrt{ (k + 1)/k }$ units apart in Euclidean distance is at least
    \begin{equation} \label{E:min.diam.image.90deg.sing}
      d_{k} := 2 \arcsin \bigl( (1/2) \sqrt{  (k + 1)/k } \bigr) \in c(\pi/3, \pi/2] ,
    \end{equation}
by \eqref{E:n>nvar>k>0}. Thus, since $o(\msf{A},v)$ is compact, there exist 
$z_{1}, z_{2} \in o(\msf{A},v_{0})$ s.t.\ 
$\angle(z_{1}, z_{2}) \geq$ \linebreak
$2 \arcsin \bigl( (1/2) \sqrt{  (k + 1)/k } \bigr)$.  
By definition of $\msf{A}$, there exist $Y_{1}, Y_{2} \in \clU \cap \D'$ s.t.\ $w_{i} \perp \Phi(Y_{i})$ with $w_{i} \cdot v_{0} \geq 0$ and $w_{i}$ is arbitrarily close to $z_{i}$ ($i=1,2)$. But $v_{0}$ is an arbitrary point in $S^{k}$. The proposition follows.
  \end{proof}

  \begin{remark} 
\label{R:90.degree.sings.in.line-fitting.again}
 If we restrict ourselves to oriented lines, in line-fitting ($k = 1$, 
$\nvar = 2$), the proposition tells us that the diameter of $o(\msf{A},v)$ is at least 
$\pi/2$. For example, in the case of the the line fitter $\Phi_{ex}$ defined in section \ref{SS:90.degree.sings.in.line-fitting.on.plane}, if $n = 3$ and the rows of $Y$ are the vertices of an equilateral triangle. We have 
$diam \bigl[ o(\msf{A},v) \bigr] \geq 2 \pi/3$, which is bigger than $\pi/2$, the lower bound guaranteed by the proposition (when $k=1$). However, as we saw in section \ref{SS:90.degree.sings.in.line-fitting.on.plane}, in the case of $\Phi_{ex}$ the diameter of $\msf{A}$ unoriented (the same as $diam(\msf{A})$) is $\pi/3$.
  \end{remark}

To handle the unoriented case a version of Jung's theorem in real projective space (Dekster \cite{bvD97.JungThmOnManifs}) might be useful. 

\subsection{Stability of regression coefficients near $90^{\circ}$ singularities} \label{SSS:reg.coefs,near.90.degree.sings}
In linear regression the interest is not in planes but in the coefficients of the affine functions whose graphs the planes are. (See subsection \ref{SS:functions.vs.geometry}.) Using the result of proposition \ref{P:diam.oriented.90.degree.sing}, we show that $90^{\circ}$ singularities (see \eqref{E:V.90.defn}) have a serious impact on regression coefficients, at least if $m = 1$ (look near \eqref{E:n.>.k+m}). 

Let $\Phi$ be the plane fitter corresponding to a linear regression method, $R$. Suppose $m$, the dimension of the response variable is $m=1$. Let $\D'$ be the dense subset 
of $\Y$, \eqref{E:mcl.Y,defn}, w.r.t.\ which singularity is defined in the setting of interest.

If $Y \in \D'$ let 
$\bigl\{ \bigl( x, \, a(Y)^{1 \times 1} + x^{1 \times k} b(Y)^{k \times 1} \bigr) : x \in \RR^{k}) \bigr\}$ be the $k$-plane fitted to $Y$ by $R$. \emph{We assume that} $|b(Y)| < \infty$ for every $Y \in \D'$. By \eqref{E:Phi(Y).orthog.to.in.span.of}, 
    \begin{equation}  \label{E:(b(Y),-1).perp.Phi(Y)}
      \bigl( b(Y)^{T}, -1 \bigr)^{1 \times \nvar} \perp \Phi(Y) .
    \end{equation}

Let $Y_{0} \in \Y$ be a $90^{\circ}$ singularity of $\Phi$ and let $\clU \subset \Y$ be a neighborhood of $Y_{0}$. We analyze the diameter of 
$b(\clU \cap \D') = \bigl\{ b(Y) \in \RR^{k} ; Y \in \clU \cap \D' \bigr\}$. 
If $b$ is unbounded on $\clU \cap \D'$ then 
$diam \bigl( b(\clU \cap \D') \bigr) = +\infty$. (See \eqref{E:diam.of.set}.) So suppose
    \begin{equation}   \label{E:reg.coef.bnd}
      b \text{ is bounded on } \clU \cap \D' .
    \end{equation}
Let
    \begin{equation*}
      \msf{A} :=\overline{\Phi(\clU \cap \D')} \subset G(k, k+1) .
    \end{equation*}
Let 
    \begin{equation*}
      v_{0} = (0^{1 \times k}, -1) \in S^{k} \subset \RR^{\nvar} .
    \end{equation*}

Define 
    \begin{equation}  \label{E:o[Y].defn}
      o[Y] := \bigl( |b(Y)|^{2} + 1 \bigr)^{-1/2} \bigl( b(Y)^{T}, \, -1 \bigr) \in S^{k-1} ,  
            \qquad Y \in \D' ,
    \end{equation}
Then, by \eqref{E:(b(Y),-1).perp.Phi(Y)}, 
    \begin{equation}  \label{E:o[Y].perp.Phi(Y)}
      \text{ for every } Y \in \clU \text{ we have } o(Y) \perp \Phi(Y) .
    \end{equation}
Then, by \eqref{E:reg.coef.bnd}, 
    \begin{equation}  \label{E:o[Y].dot.v0.bnd.above.0} 
      \text{there exists } \epsilon > 0 \text{ s.t.\ } o[Y] \cdot v_{0} > \epsilon .
        \text{ for every } Y \in \clU .
    \end{equation}
Let
    \begin{equation*}
      B := \bigl\{ o[Y] \in S^{k-1} : Y \in \clU \cap \D' \bigr\} .
    \end{equation*}
Thus, $B = o[\clU \cap \D']$.  If $Y \in \D'$ then $|b(Y)| < \infty$ so we can recover $b(Y)$ from $o[Y]$. Thus, $B$ is directly relevant to the behavior of the regression method $R$ near $Y_{0}$..

Define $o(\msf{A},v_{0})$ as in \eqref{E:o(A,v).defn}.  By \eqref{E:o(A,v).is.compact}, $o(\msf{A},v_{0})$ is compact, hence closed. \emph{Claim:} $o(\msf{A},v_{0})$ is the closure, $\overline{B}$, of $B$. To see this, first 
let $w \in B$. Then for some $Y \in \clU \cap \D'$ we have 
$w = o[Y]$, so $w \in S^{k}$ (since, by \eqref{E:reg.coef.bnd}, $b(Y)$ is finite 
in $Y \in \D'$). Moreover, $w \cdot v_{0} > 0$, by \eqref{E:o[Y].dot.v0.bnd.above.0}, and 
$w \perp \Phi(Y)$, by \eqref{E:o[Y].perp.Phi(Y)}. 
Hence, $w \in o(\msf{A},v_{0})$. Since $o(\msf{A},v_{0})$ is closed, this shows 
$\overline{B} \subset o(\msf{A},v_{0})$. 

Conversely, let $w \in o(\msf{A},v_{0})$. Then $w \cdot v_{0} \geq 0$ and there exists 
$\xi \in \msf{A}$ s.t.\ $w \perp \xi$. Arbitrarily close to $\xi$ is a plane 
$\zeta \in \Phi(\clU \cap \D')$. Say, $\zeta = \Phi(Y)$, where $Y \in \clU \cap \D'$. 
Write $w(\zeta) := o[Y]$. By \eqref{E:o[Y].perp.Phi(Y)} and \eqref{E:o[Y].dot.v0.bnd.above.0},  
    \begin{equation}  \label{E:w(zeta).dot.v0.>eps}
      w(\zeta) \perp \zeta \text{ and } w(\zeta) \cdot v_{0} > \epsilon. 
    \end{equation}  
Let $\{\zeta_{m}\} \subset \Phi(\clU \cap \D')$ be an arbitrary sequence 
in $\Phi(\clU \cap \D')$ converging to $\xi$. There exists $Y_{m} \in \clU \cap \D'$ s.t.\ 
$\zeta_{m} = \Phi(\Y_{m})$. By \eqref{E:w(zeta).dot.v0.>eps}, $w(\zeta_{m}) = o(Y_{m}) \perp \zeta_{m}$ and $w(\zeta_{m}) \cdot v_{0} > \epsilon > 0$. 

Recall \eqref{E:langle.rangle.line} and \eqref{E:metric.on.G(k,k+1)}. 
By \eqref{E:angle.L.M}, for every $m \in \NN$ there exists $o(\xi)_{m} \in S^{k-1}$ s.t.\ $o(\xi)_{m} \perp \xi$ and 
$d(\zeta_{m}, \xi) = \angle \bigl( w(\zeta_{m}) , o(\xi)_{m} \bigr)$. Thus, 
$\angle \bigl( w(\zeta_{m}) , o(\xi)_{m} \bigr) \to 0$ as $m \to \infty$. 
We must have $o(\xi)_{m} = \pm w$. Suppose there exists a subsequence 
$\{w(\zeta_{m_{j}})\}$ converging to $-w$. Then 
$0 < \epsilon \leq w(\zeta_{m_{j}}) \cdot v_{0} \to - w \cdot v_{0} \leq 0$. Contradiction. Hence, $o(Y_{m}) = w(\zeta_{m_{j}}) \to w$. I.e., $w \in \overline{B}$. This proves the claim that $o(\msf{A},v_{0}) = \overline{B}$.
 
Thus we have, $diam(B) = diam \bigl[ o(\msf{A},v_{0}) \bigr]$, so by proposition \ref{P:diam.oriented.90.degree.sing},
    \begin{equation}  \label{E:diam.B.bound}
      diam(B) \geq 2 \arcsin \bigl( (1/2) \sqrt{  (k + 1)/k } \bigr) .
    \end{equation}
This is true for any neighborhood $\clU$ of the $90^{\circ}$ singularity $Y$, whether \eqref{E:reg.coef.bnd} holds or not. In particular, as proposition \ref{P:diam.oriented.90.degree.sing} points out, there are 
$Y_{1}, Y_{2} \in \D'$ arbitrarily close to $Y$ with the property that 
$o[Y_{1}] \cdot o[Y_{2}] < \cos(\pi3) = 1/2$. 

Let $Y_{1}, Y_{2} \in \clU \cap \D'$. Use $\epsilon$ flexibly and generically for numbers 
in $(0,1)$ that all go to 0 as any one of them does. 
Let $\alpha := \arcsin \bigl( (1/2) \sqrt{ (k + 1)/k }$. Then for 
$\epsilon > 0$ given we may assume 
    \begin{equation}  \label{E:angle.o[Yi]}
      \angle \bigl( o[Y_{1}], o[Y_{2}] \bigr) > 2 \alpha - \epsilon .
    \end{equation} 
We have
    \begin{align*}
      \cos (2 \alpha - \epsilon) &= 
              \cos (2 \alpha) \cos \epsilon + \sin (2 \alpha) \sin \epsilon \\
          &< \cos (2 \alpha) + \epsilon = \cos^{2} \alpha - \sin^{2} \alpha + \epsilon \\
          &= (1 - \sin^{2} \alpha) - \sin^{2} \alpha + \epsilon \\
          &= 1 - 2 \sin^{2} \alpha + \epsilon \\
          &= 1 - 2 \left(\frac{1}{4} \frac{k+1}{k} \right) + \epsilon \\
          &= \frac{k-1}{2k} + \epsilon < \frac{1}{2} + \epsilon .
    \end{align*}
Let $b_{i} := b(Y_{i})$ so, by \eqref{E:o[Y].defn}, 
$o[Y_{i}] = \bigl( |b_{i}|^{2} + 1 \bigr)^{-1/2} ( b_{i}^{T}, -1 )$ 
($i = 1,2$). Therefore, by the preceding and \eqref{E:angle.o[Yi]} in any neighborhood of the $90^{\circ}$ singularity $Y$ there are data sets $Y_{1}, Y_{2} \in \D'$ s.t.\
    \begin{equation*}
       \epsilon + \frac{k-1}{2k} > o[Y_{1}] \cdot o[Y_{2}]   
         = \frac{b_{1} \cdot b_{2} + 1}
           {\sqrt{\bigl( |b_{1}|^{2} + 1 \bigr) \bigl( |b_{2}|^{2} + 1 \bigr) }} .
    \end{equation*}
Hence,
    \begin{equation}  \label{E:dot.prod.of.coef.vecs}
       b_{1} \cdot b_{2} 
         < \frac{k-1}{2k} \sqrt{ \bigl( |b_{1}|^{2} + 1 \bigr) 
           \bigl( |b_{2}|^{2} + 1 \bigr) } - 1 + \epsilon.
    \end{equation}
Therefore,
    \begin{align}  \label{E:|b1-b2|.sqrd.lwr.bound}
      |b_{1} - b_{2}|^{2} &=  |b_{1}|^{2} - 2 b_{1} \cdot b_{2} + |b_{2}|^{2} \notag \\
           &> |b_{1}|^{2} 
             - \frac{k-1}{k} \sqrt{ \bigl( |b_{1}|^{2} + 1 \bigr) \bigl( |b_{2}|^{2} + 1 \bigr) } 
               + 2 - \epsilon + |b_{2}|^{2} \notag \\
           &= \frac{k+1}{k} \sqrt{ \bigl( |b_{1}|^{2} + 1 \bigr) 
             \bigl( |b_{2}|^{2} + 1 \bigr) } \\
           & \qquad + \Bigl[ \bigl( |b_{1}|^{2} + 1 \bigr)
             - 2 \sqrt{ \bigl( |b_{1}|^{2} + 1 \bigr) \bigl( |b_{2}|^{2} + 1 \bigr) } 
               + \bigl( |b_{2}|^{2} + 1 \bigr) \Bigr] - \epsilon \notag \\
           &= \frac{k+1}{k} \sqrt{ \bigl( |b_{1}|^{2} + 1 \bigr) 
             \bigl( |b_{2}|^{2} + 1 \bigr) }
             + \bigl( \sqrt{|b_{1}|^{2} + 1} - \sqrt{|b_{2}|^{2} + 1} \, \bigr)^{2}
               - \epsilon \notag \\
           &\geq \frac{k+1}{k} \sqrt{ \bigl( |b_{1}|^{2} + 1 \bigr) \bigl( |b_{2}|^{2} + 1 \bigr) } 
             - \epsilon \notag .
    \end{align}
Thus,
    \begin{equation}  \label{E:|b1-b2|>1}
       |b_{1} - b_{2}| \geq \sqrt{(k+1)/k} - \epsilon > 1 .
    \end{equation}
Hence, for example, in fitting a line to bivariate data ($k=1$) the near a $90^{\circ}$ singularity, the coefficients may differ by almost $\sqrt{2}$. \eqref{E:|b1-b2|>1} also implies that in general
$|b_{1}| + |b_{2}| \geq |b_{1} - b_{2}| \geq \sqrt{(k+1)/k} - \epsilon > 1$. So $|b_{1}|$ 
or $|b_{2}|$ is at least 1/2.

\eqref{E:|b1-b2|.sqrd.lwr.bound} also implies.
    \begin{equation*}  \label{E:b1.b2.rel.diff}
      \frac{|b_{1} - b_{2}|^{2}}{|b_{1}||b_{2}|} 
        > \sqrt{ \bigl( 1 + |b_{1}|^{-2} \bigr) \bigl( 1 + |b_{2}|^{-2} \bigr) } > 1 .
    \end{equation*}
Thus, the length of the difference between $b_{1}$ and $b_{2}$ is at least equal to the geometric mean, $\sqrt{|b_{1}||b_{2}|}$, of the lengths. \emph{That may be a very conservative bound!}

But it seems that, \emph{a priori}, little can be said about $|b_{1}|$ and $|b_{2}|$. 
  
\section{General lower bound on $\dim \Ss$ in plane-fitting} \label{SS:gnrl.lwr.bnd.plane.fit}
As observed in remark \ref{R:dim.S.for LS}, LS does not satisfy \eqref{E:big.lwr.bound.in.plane.fit}. In this section we prove that if \textbf{hypothesis \ref{Hyp:S.cap.T.small}} of Theorem \ref{T:Phi.star.Hr.contains.Theta.star.Hr} fails in plane-fitting one can still get the lower bound
   \begin{equation}  \label{E:lower.lower.bound.in.plane.fitting}
      \dim \Ss \geq \kappa := nk + (k + 1)(\nvar - k) -1 .
   \end{equation}
Notice that this is less than or equal to the lower bound $d - 2 = n \nvar -2$ in proposition \ref{P:sing.codim.in.plane-fitting} and is strictly less than that bound unless 
$n = \nvar+1$ and $\nvar = k+1$. (See \eqref{E:n>nvar>k>0}.) Note that, by proposition \ref{P:collin.data.are.sings.of.LS} and lemma \ref{L:dim.set.of.collin.data.sets}, the bound \eqref{E:lower.lower.bound.in.plane.fitting} \emph{is achieved} by least squares regression with 
$m = \nvar - k = 1$. 

  \begin{remark}
Suppose a statistician is in the habit of looking at the data and making subjective judgments about how to fit a plane to them. Then, unless the statistician is so incompetent he or she frequently does not recognize exact planar fits, they have a personal plane-fitting singular set of dimension no smaller than the LHS of \eqref{E:lower.lower.bound.in.plane.fitting}. 
  \end{remark}

Let $\D := \Y$ (see \eqref{E:mcl.Y,defn}). 
Metrize $\D$ by the Frobenius or Euclidean norm, 
$\| \cdot \|$, \eqref{E:matrix.norm}. Then $\D$ is $\RR^{n \nvar}$ with the usual norm. 
In this section we prove \eqref{E:lower.lower.bound.in.plane.fitting} using proposition \ref{P:sing.dim.when.H.d-r.D.=.0}. At the end of the section we derive 
\cite[Theorem 2.2, p.\ 493]{spE95} as a corollary of our work here. The proof given in \cite{spE95} is much shorter, but here we show how the result can be proved using theorem \ref{T:Phi.star.Hr.contains.Theta.star.Hr} and proposition \ref{P:sing.dim.when.H.d-r.D.=.0}. Moreover, this proof might generalize.

Let $\Pf$ be as in \eqref{E:Perfect.Fits.in.plane.fitting}. By lemma \ref{L:Pf.is.a.manif}, we have that $\Pf$ is a manifold of dimension $\dim \Pf = nk + (k + 1)(\nvar - k)$. Therefore, the inequality \eqref{E:lower.lower.bound.in.plane.fitting} is exactly \eqref{E:general.fallback} with $\T = \Pf$ and 
    \begin{equation*}
      r = 1. 
    \end{equation*}
(See \eqref{E:r=1}.) However, this way of invoking \eqref{E:general.fallback} does not work here because $\Pf$ is not a compact manifold. This means proposition \ref{P:sing.dim.when.H.d-r.D.=.0} cannot be used to prove \eqref{E:codim.S.leq.r+1} on which \eqref{E:general.fallback} depends. However, in this section we develop a way to prove \eqref{E:lower.lower.bound.in.plane.fitting} by making \eqref{E:general.fallback} valid to in the plane-fitting setting. 

The general plan is as follows. Let $y^{1 \times \nvar} \in \RR^{\nvar}$, 
    \begin{multline} \label{E:Dy.is.one.pt.compacitification}
      \text{Let } \widehat{\D}_{(y)} \text{ be the one point compactification of the space, }
        \D_{(y)}, \text{ of all } \\
          n \times \nvar \text{ matrices whose last row is } y.
    \end{multline} 
(See section \ref{SS:D.T.plane.fit}.)
Thus, $\widehat{\D}_{(y)}$ is homeomorphic 
to an $(n-1) \nvar$-sphere. Put on $\widehat{\D}_{(y)}$ the usual differentiable structure of a sphere. Denote the point at infinity of $\widehat{\D}_{(y)}$ by $\infty_{(y)}$. Parametrize $\widehat{\D}_{(y)} \setminus \{ \infty_{(y)} \}$ by stereographic projection (section \ref{SS:D.T.plane.fit}). As explained in section \ref{SS:D.T.plane.fit}, identifying $\Y$ with its image under inverse stereographic projection, $\Y$ is an imbedded submanifold of $\widehat{\D}_{(y)}$.
 
$\infty_{(y)} \in \widehat{\D}_{(y)}$ has dimension 0. Therefore, as explained in section \ref{SS:D.T.plane.fit} pertaining to $\D_{\infty}$,
if $\mcl{A} \subset \widehat{\D}_{(y)}$ is not $\{ \infty \}$, then, w.r.t.\ the metric in $\widehat{\D}_{(y)}$, $\dim \mcl{A}$ is the same as $\dim (\mcl{A} \cap \Y)$ as a subset of $\Y$ with $\Y$ metrized by the Frobenius norm. I.e., if $\mcl{A} \neq \{ \infty_{(y)} \}$ then removing the point at infinity does not change the dimension.

Let $\epsilon > 0$ be small. In $\widehat{\D}_{(y)}$ we construct a family, $\T^{\epsilon}_{(0)}$, of high-dimensional $\T$'s and apply proposition 
\ref{P:sing.dim.when.H.d-r.D.=.0} and \eqref{E:general.fallback} to $\T^{\epsilon}_{(0)}$ 
for each $\epsilon$. Integrating over $\epsilon$ raises the dimension by 1. Integrating over $y$ raises the dimension by $\nvar$ and we end up with the bound in \eqref{E:lower.lower.bound.in.plane.fitting}. 

For now take $y = 0^{1 \times \nvar}$. (Later we will let $y$ vary.) Define a submanifold 
of $\widehat{\D}_{(0)}$ as follows. For every $\xi \in G(k,\nvar)$, 
let $\Pi_{\xi}^{\nvar \times \nvar}$ be the matrix of orthogonal projection onto $\xi$. By \eqref{E:proj.mat.symmetric.idempotent},
	\begin{equation}  \label{E:basic.props.of. Pi}
		\Pi_{\xi} \text{ is symmetric and idempotent } (\Pi_{\xi}^{2} = \Pi_{\xi}). 
	\end{equation}
Moreover, the eigenvalues of $\Pi_{\xi}$ are either 0 or 1. Now let
	\begin{equation} \label{E:Y.xi.defn}
		Y_{\xi}^{n \times \nvar} :=
			\begin{pmatrix}
				\Pi_{\xi}^{\nvar \times \nvar} \\
				0^{(n-\nvar) \times \nvar}
			\end{pmatrix}
		\in \D_{(0)}.
	\end{equation}
Write
	\[
		\Y_{G(k, \nvar)} := \bigl\{ Y_{\xi} \in \Y : \xi \in G(k, \nvar) \bigr\}.
	\]
By considering the diagonalization of $\Pi_{\xi}$, we see that $\Y_{G(k, \nvar)}$ is a bounded subset (w.r.t.\ the Frobenius norm). 
	
By lemma \ref{L:proj.mat.is.imbedding.of.Grass}, we have
	\begin{equation}  \label{E:xi.to.Y.xi.imbed}
		\text{The map } P: \xi \mapsto Y_{\xi} \text{ is a smooth imbedding of } 
		         G(k, \nvar) \text{ into } \widehat{\D}_{(0)}. 
	\end{equation}

We construct a submanifold of $\widehat{\D}_{(0)}$ that is the total space of a sphere bundle (Milnor and Stasheff \cite[p.\ 38]{jwMjdS74}) over $\Y_{G(k, \nvar)}$ and, hence, 
over $G(k,\nvar)$. Let $\xi \in G(k, \nvar)$. Let $D^{k \times \nvar}$ be a matrix whose rows are orthonormal vectors 
in $\xi$. Denote the set of all such $D$ by $O_{\xi}$. Hence, by \eqref{E:proj.mat.from.K}, 
	\begin{equation}  \label{E:D.has.orthon.rows}
		D^{T} D = \Pi_{\xi} \text{ and } (D \Pi_{\xi} D^{T})^{k \times \nvar} 
		  = D D^{T} = I_{k}.
	\end{equation}
Let 
	\begin{equation}  \label{E:Vk.defn}
		\mcl{V}_{k} \text{ be the set of all } k \times \nvar 
		  \text{ matrices whose rows are orthonormal.}
	\end{equation} 
E.g., we have $O_{\xi} \subset \mcl{V}_{k}$. Given $D \in \mcl{V}_{k}$, 
let $U_{D} \subset G(k,\nvar)$ consist of $\zeta \in G(k, \nvar)$ s.t.\ the smallest eigenvalue of $D \Pi_{\zeta} D^{T}$ is strictly bigger than 1/2. $U_{D}$ is open by \eqref{L:proj.mat.is.imbedding.of.Grass} and lemma \ref{L:Eigen.cont.}. By \eqref{E:D.has.orthon.rows}, $\xi \in U_{D}$. I.e., $U_{D}$ is an open neighborhood of $\xi$. Obviously,  
	\begin{equation}   \label{E:D.Pi.DT.is invrtble}
		D \, \Pi_{\zeta} \, D^{T} \text{ is an invertible } k \times k \text{ matrix for every } 
		  \zeta \in U_{D}.
	\end{equation}

Let $\epsilon > 0$ and let 
	\begin{multline} \label{E:T.eps.defn}
		  \T^{\epsilon}_{(0)}
			:= \Bigl\{ Y \in \D_{(0)} : \text{ There exists } \xi \in G(k, \nvar) \\
			   \text { s.t.\ the rows of } Y \text{ lie exactly on } \xi  
			     \text{ and } \| Y - Y_{\xi} \| = \epsilon \Bigr\}.
	\end{multline} 
There might be multiple planes in $G(k,\nvar)$ on which the rows of 
$Y \in \T^{\epsilon}_{(0)}$ lie. Now, $\Y_{G(k, \nvar)}$ is a bounded set and any point 
of $\T^{\epsilon}_{(0)}$ is within $\epsilon$
 of $\Y_{G(k, \nvar)}$. Hence, 
    \begin{equation}  \label{E:T.eps.bdd}
      diam(T^{\epsilon}) \leq diam(\Y_{G(k, \nvar)}) + 2 \epsilon < + \infty .
    \end{equation}  

If $Y^{n \times \nvar} \in \T^{\epsilon}_{(0)}$, then $rank \, Y \leq k$, since $\rho(Y)$ is a subspace of a $k$-dimensional linear subspace of $\RR^{\nvar}$. 
For $Y \in \T^{\epsilon}_{(0)}$ and $\xi \in G(k, \nvar)$ as in \eqref{E:T.eps.defn}, call $\xi$ a ``corresponding plane'' for $Y$. $Y$ lies on the sphere in $\widehat{\D}_{(0)}$ centered at $Y_{\xi}$ with radius $\epsilon$.

\emph{Claim:} $\T^{\epsilon}_{(0)}$ is compact. We just saw that $\T^{\epsilon}_{(0)}$ is bounded 
w.r.t.\ $\| \cdot \|$ and $\T^{\epsilon}_{(0)} \subset \D_{(0)} \homeomto \RR^{(n-1) \nvar}$. (See \eqref{E:Dy.is.one.pt.compacitification}.) So by Heine-Borel (Rudin \cite[Theorem 2.41, p.\ 35]{wR64.PMA}), the closure, $\overline{\T^{\epsilon}_{(0)}}$, 
of $\T^{\epsilon}_{(0)}$ in $\D_{(0)}$ is compact. Let $Y^{n \times \nvar} \in \overline{\T^{\epsilon}_{(0)}}$. Then there exists a sequence
$\{ Y_{m} \} \subset \T^{\epsilon}_{(0)}$ s.t.\ $\| Y_{m} - Y \| \to 0$ as $m \to + \infty$. 
Let $\xi_{m} \in G(k, \nvar)$ be a corresponding plane for $Y_{m}$. Then for every $m$, 
we have $\| Y_{m} \Pi_{\xi_{m}} - Y_{m} \| = 0$ and $\| Y_{m}  - Y_{\xi_{m}} \| = \epsilon$. Since $G(k,\nvar)$ is compact there is a subsequence, $\{ \xi_{m} \}$ converging 
to some $\xi \in G(k, \nvar)$. By lemma \ref{L:proj.mat.is.imbedding.of.Grass} again and \eqref{E:Y.xi.defn}, the functions $(Y', \xi') \mapsto \| Y' \Pi_{\xi'} - Y' \|$ 
and $Y' \mapsto \| Y'  - Y'_{\xi'} \|$ are continuous in $Y' \in \D_{(0)}$ 
and $\xi' \in G(k, \nvar)$. 
Therefore, we must have $\| Y \Pi_{\xi} - Y \| = 0$ and $\| Y  - Y_{\xi} \| = \epsilon$. 
Thus, $Y \in \T^{\epsilon}_{(0)}$. I.e, $\T^{\epsilon}_{(0)} = \overline{\T^{\epsilon}_{(0)}}$. This proves the claim that $\T^{\epsilon}_{(0)}$ is compact.

By \eqref{E:T.eps.bdd}, $T^{\epsilon}$ \emph{is bounded} uniformly 
in $\epsilon \in (0, \nvar^{-2})$. \emph{Claim:} 
	\begin{equation}  \label{E:eps.<.nvar.-2.means.rank.k}
		\text{If } \epsilon < \nvar^{-2} \text{ and } Y^{n \times \nvar} \in \T^{\epsilon}_{(0)} 
		        \text{ then } rank \, Y \geq k \text{, i.e., } rank \, Y = k.
	\end{equation} 
(Because $\dim \rho(Y) \leq k$ for $Y \in \T^{\epsilon}_{(0)}$.) 
Let $Y^{n \times \nvar} \in \T^{\epsilon}_{(0)}$. Suppose 
    \begin{equation*}
      \epsilon \in (0, \nvar^{-2}).
    \end{equation*} 
Let $Y_{\nvar}^{\nvar \times \nvar}$ be the matrix consisting of the first $\nvar$ rows of $Y$. It suffices to show $rank \, Y_{\nvar} \geq k$. Let $\xi \in G(k, \nvar)$ be a corresponding plane for $Y$. Note that $Y_{\nvar} - \Pi_{\xi}$ is just the matrix of the first $\nvar$ rows of $Y - Y_{\xi}$. Let $z_{j}^{1 \times \nvar}$ be the $j^{th}$ row of $Y_{\nvar} - \Pi_{\xi}$ ($j=1, \ldots, \nvar$). Let $a^{\nvar \times 1}$ be a unit column-vector. Since $\| Y - Y_{\xi} \| = \epsilon$, we have that each entry in $Y - Y_{\xi}$, in particular each entry in $Y_{\nvar} - \Pi_{\xi}$, has absolute value no greater then $\epsilon$. Therefore,
	\begin{equation*}
		\bigl| (Y_{\nvar} - \Pi_{\xi}) a \bigr| = \bigl| (z_{1} a, \ldots, z_{\nvar} a)^{T} \bigr|
		  \leq \sum_{j=1}^{\nvar} | z_{j} a | \leq \sum_{j} |z_{j}| 
			 \leq \sum_{j} \nvar \epsilon \leq \nvar^{2} \epsilon < 1.
	\end{equation*}
Hence, if $a^{T} \in \xi$, so $\Pi_{\xi} a = a$, then
	\begin{equation*}
		| Y a | \geq | Y_{\nvar} a | 
		     \geq | \Pi_{\xi}  a | - \bigl| (Y_{\nvar} - \Pi_{\xi}) a \bigr|
	            \geq |a| - \nvar^{2} \epsilon = 1 - \nvar^{2} \epsilon > 0.
	\end{equation*}
Since $a$ is an arbitrary unit vector in a $k$-dimensional space, this proves the claim that $rank \, Y \geq k$. Let 
	\begin{equation}   \label{E:eps.<.1/nvar.sqrd}
	   \epsilon \in (0, \nvar^{-2}) \subset (0, 1/4) . 	
	\end{equation}
be arbitrary. Then $\epsilon < 1/4$ by \eqref{E:n>nvar>k>0}.

Thus, by \eqref{E:eps.<.nvar.-2.means.rank.k}, if $Y \in \T^{\epsilon}_{(0)}$ then 
$\rho(Y) \in G(k,\nvar)$. \emph{Claim:} 
    \begin{equation} \label{E:rho.surj.on.T.eps}
      \rho : \T^{\epsilon}_{(0)} \to G(k, \nvar) \text{ is surjective.}
    \end{equation}
To see this let $\xi \in G(k, \nvar)$. Let $D^{k \times \nvar} \in O_{\xi}$. Then, by \eqref{E:D.has.orthon.rows} and \eqref{E:basic.props.of. Pi}, 
$\Pi_{\xi}^{T} \Pi_{\xi} = \Pi_{\xi} = D^{T} D$. Now, 
$trace \, D^{T} D = trace \, D D^{T} = k$. Thus, $\| \Pi_{\xi} \| = \sqrt{k}$. Let $Y := (1+\epsilon/\sqrt{k}) Y_{\xi}$. By \eqref{E:Y.xi.defn}, $\rho(Y) = \xi$. Moreover,
$\| Y - Y_{\xi} \| = (\epsilon/\sqrt{k}) \| Y_{\xi} \| = (\epsilon/\sqrt{k})) \| \Pi_{\xi} \| = \epsilon$. I.e., $Y \in \T^{\epsilon}_{(0)}$, by \eqref{E:T.eps.defn}, and $\rho(Y) = \xi$. 
But $\xi \in G(k,\nvar)$ is arbitrary. This proves the claim \eqref{E:rho.surj.on.T.eps}.

Let 
    \begin{equation} \label{E:Pf.(0).defn}
      \Pf^{(0)} := \bigl\{ Y \in \D_{(0)}: rank \, Y = k \bigr\} 
        = \bigl\{ Y \in \D_{(0)}: \rho(Y) \in G(k,\nvar) \bigr\} .
    \end{equation} 
(Don't confuse $\Pf^{(0)}$ with $\Pf^{0}$ in remark \ref{R:degenerate.data}.) Note that by \eqref{E:when.is.Y.in.Pk} with $w^{n \times 1} := (0, \ldots, 0, 1)^{T}$ we see that $\Pf^{(0)} \subset \Pf^{k}$ as defined in \eqref{E:Perfect.Fits.in.plane.fitting} and, by \eqref{E:Delta(Y).defn},  
    \begin{equation} \label{E:rho.=.Delta.on.Pf.0}
        \text{If } Y \in \Pf^{(0)} \text{ then } \Delta(Y) = \rho(Y).
    \end{equation}
Any $Y \in \Pf^{(0)}$ can be written in the form  
	$\begin{pmatrix}
		X^{(n-1) \times \nvar} \\
		0^{1 \times \nvar}
	\end{pmatrix}$,
where $X$ has rank $k$. And conversely any such   
	$\begin{pmatrix}
		X^{(n-1) \times \nvar} \\
		0^{1 \times \nvar}
	\end{pmatrix}$ 
is in $\Pf^{(0)}$. Therefore, by lemma \ref{L:rank.k.mats.form.manif} (with ``$n-1$'' in place of ``$n$''), we have 
	\begin{equation}  \label{E:Pf(0).is.diff.manif}
		\Pf^{(0)} \text{ is an imbedded smooth submanifold of } \widehat{\D}_{(0)} 
		  \text{ of dimension } k(n-1) + k(\nvar-k).
	\end{equation}

By \eqref{E:T.eps.defn}, \eqref{E:eps.<.nvar.-2.means.rank.k}, and 
\eqref{E:eps.<.1/nvar.sqrd}, we have
	\begin{equation}  \label{E:T.eps.in.P.0}
		\T^{\epsilon}_{(0)} \subset \Pf^{(0)} .
	\end{equation} 
 
 \emph{Claim:}
	\begin{multline}  \label{E:T.eps.is.manif}
		\T^{\epsilon}_{(0)} \text{ is a compact imbedded differentiable submanifold of } 
		  \Pf^{(0)} \text{ of dimension }  \\
		    \dim \Pf^{(0)} - 1 = k(n-1) + k(\nvar-k) - 1 \in (1, \dim \D_{(0)}),
	\end{multline}
by \eqref{E:n>nvar>k>0}. We already know that $\T^{\epsilon}_{(0)}$ is compact. 
Let $\ell : \Pf^{(0)} \to \RR$ be defined by 
	\begin{equation}  \label{E:ell(Y).defn}
		\ell(Y) := \| Y - Y_{\rho(Y)} \|^{2}, \quad Y \in \Pf^{(0)}.
	\end{equation} 
By \eqref{E:T.eps.defn} and \eqref{E:T.eps.in.P.0}, if $Y \in \T^{\epsilon}_{(0)}$ its ``corresponding plane'' is just $\rho(Y)$. Hence, 
	\begin{equation} \label{E:ell.invrs.eps2.T.eps}
		\T^{\epsilon}_{(0)} =  \{ Y \in \Pf^{(0)} : \ell(Y) = \epsilon^{2} \} . 
	\end{equation} 
By \eqref{E:rho.is.smooth} and \eqref{E:xi.to.Y.xi.imbed}, the composition 
$Y \overset{\rho}{\mapsto} \rho(Y) \overset{P}{\mapsto} Y_{\rho(Y)}$ 
is smooth. Therefore, 
    \begin{equation}  \label{E:ell.is.diff'ble.on.P^(0)}
      \ell \text{ is differentiable.}
    \end{equation} 
Moreover, $\ell$ has rank 1. 
(Consider the function $t \mapsto \ell(tY)$, $t > 0$.) Therefore, by 
\eqref{E:Pf(0).is.diff.manif} and Boothby \cite[Theorem (5.8), p.\ 79]{wmB75}, 
we have that the claim \eqref{E:T.eps.is.manif} holds. 
Therefore, \textbf{hypothesis \ref{Hyp:T.manif}} of theorem \ref{T:Phi.star.Hr.contains.Theta.star.Hr} holds with $\T = \T^{\epsilon}_{(0)}$ 
and $t := \dim \T^{\epsilon}_{(0)} = k(n-1) + k(\nvar-k) - 1$.
It follows from \eqref{E:xi.to.Y.xi.imbed} that 
    \begin{equation}  \label{E:T.eps.cmpct.imbed.manif}
      \T^{\epsilon}_{(0)} \text{ is a compact imbedded submanifold of } \widehat{\D}_{(0)} .
    \end{equation}

Let 
    \begin{equation}  \label{E:pi.is.restrict.of.rho.to.T.eps}
      \pi := \rho \restriction_{\T^{\epsilon}_{(0)}}, 
    \end{equation}
the restriction of $\rho$ to $\T^{\epsilon}_{(0)}$. By \eqref{E:T.eps.is.manif}
and \eqref{E:xi.to.Y.xi.imbed}, $\T_{\epsilon}$ is an imbedded differentiable submanifold 
of $\widehat{\D}_{(0)}$. 

For the proof of the following see appendix \ref{Chptr:misc.proofs}.
	\begin{equation} \label{E:T.eps.is.sphere.bndl}
	  \bigl( \T^{\epsilon}_{(0)}, G(k,\nvar), S^{(n-1)k-1}, \pi \bigr)
	    \text{ is a fiber bundle over } G(k, \nvar) .
	\end{equation}
(See Spanier \cite[p.\ 90]{ehS66}.) Thus, $\T^{\epsilon}_{(0)}$ is the total space, $G(k,\nvar)$ is the base space, the fiber is the sphere $S^{(n-1)k-1}$, and $\pi$ is the bundle projection. Since we do not claim that the orthogonal group is the structure group of the bundle, technically we cannot claim that $\bigl( \T^{\epsilon}_{(0)}, G(k,\nvar), S^{(n-1)k-1}, \pi \bigr)$ is a sphere bundle. See Spanier \cite[p.\ 91]{ehS66}.

Let $\Phi: \Y \partlyto G(k, \nvar)$ be a plane-fitter. Thus, $\Phi$ is a data map satisfying \eqref{E:plane-fitter.defn}. Let $\Ss$ be the singular set of $\Phi$. If $\Ss$ is not closed,
we may apply the severity trick, remark \ref{R:severity.trick}, as described just before proposition \ref{P:sing.codim.in.plane-fitting} 
to allow us to replace $\Ss$ by the closed set $\Ss' := \Ss^{\msf{V}} \subset \Ss$ and assume $\Phi$ is continuous off $\Ss'$. Thus, $\dim \Ss \geq \dim \Ss'$. Let 
    \begin{equation} \label{E:Y'=Y-S}
      \Y' := \Y \setminus \Ss' .
    \end{equation}
If $\Ss'$ has nonempty interior, we are done. So assume $\Y'$ is dense in $\Y$ in accordance with \eqref{E:D'.=.D.less.S} (with $\D = \Y$, $D' = \Y'$).  
Recall that $\infty_{(y)}$ is the point at infinity of $\widehat{\D}_{(y)}$. 
For $y^{1 \times \nvar} \in \RR^{\nvar}$, let 
    \begin{equation}  \label{E:S'(y).defn}
      \Ss'_{(y)} :=  (\Ss' \cap \D_{(y)}) \cup \{ \infty_{(y)} \}.
    \end{equation} 
Then 
    \begin{equation}  \label{E:Ss'_(y).closed}
      \Ss'_{(y)} \text{ is closed }
    \end{equation} 
and the restriction 
$\Phi_{(y)} := \Phi \restriction_{\widehat{\D}_{(y)}}$ 
of $\Phi$ to $\widehat{\D}_{(y)}$ ($\Phi$ is not defined at $\infty_{(y)}$) is defined and continuous on $\widehat{\D}_{(y)} \setminus \Ss'_{(y)}$. Let 
	\begin{equation}  \label{E:s.n.k.nvar.defn}
		s := \dim \T^{\epsilon}_{(0)} - 1 =  k(n-1) + k(\nvar-k) - 2 > 0,
	\end{equation}
by \eqref{E:T.eps.is.manif}. We will prove that 
	\begin{equation}   \label{E:dim.S0.lwr.bnd.in.D0}
		\dim (\Ss' \cap \D_{(0)}) = \dim \Ss'_{(0)} \geq \dim \Pf^{(0)} - 1 
		  = \dim \T^{\epsilon}_{(0)} = s + 1.
	\end{equation}
The first equality holds because $\Ss' \cap \D_{(0)}$ and $\Ss'_{(0)}$ only differ by one point, $\infty_{(0)}$.
 
If $\Ss'_{(0)} \subset \widehat{\D}_{(0)}$ has non-empty interior relative 
to $\widehat{\D}_{(0)}$ then, 
$\dim \Ss'_{(0)} = \dim (\widehat{\D}_{(0)}) = (n-1) \nvar > s+1$. I.e., \eqref{E:dim.S0.lwr.bnd.in.D0} holds. So suppose $\Ss'_{(0)}$ has empty interior (relative to $\widehat{\D}_{(0)}$). Moreover, by \eqref{E:Ss'_(y).closed} $\Ss'_{(y)}$ is closed so, by \eqref{E:T.eps.cmpct.imbed.manif}, \textbf{hypothesis \ref{Hyp:S.cap.T.closed}} 
of theorem \ref{T:Phi.star.Hr.contains.Theta.star.Hr}  holds, too.

Similarly, if $\Ss'_{(0)} \cap \T^{\epsilon}_{(0)}$ has non-empty interior relative 
to $\T^{\epsilon}_{(0)}$ 
then $\dim \Ss'_{(0)} \geq \dim \T^{\epsilon}_{(0)}$ so then \eqref{E:dim.S0.lwr.bnd.in.D0} again holds. So suppose $\Ss'_{(0)} \cap \T^{\epsilon}_{(0)}$ has empty interior 
(relative to $\T^{\epsilon}_{(0)}$). \emph{A fortiori} we may similarly assume 
$\Pf^{(0)} \setminus \Ss'_{(0)}$ is dense in $\Pf^{(0)}$.  
$\T^{\epsilon}_{(0)} \setminus \Ss'_{(0)} \subset \Y'$, by \eqref{E:Y'=Y-S}. Therefore, by  \eqref{E:plane-fitter.defn}, \eqref{E:T.eps.in.P.0}, and \eqref{E:rho.=.Delta.on.Pf.0}, 
    \begin{equation} \label{E:when.Phi.=.Delta.on.T.eps}
        \Phi = \Delta = \rho \text{ on } \T^{\epsilon}_{(0)} \setminus \Ss'_{(0)}.
    \end{equation} 
Thus, 
	\begin{equation} \label{E:Phi.Y.=.rho.Y}
		\Phi(Y) = \rho(Y) \text{ for all } Y \in \T^{\epsilon}_{(0)} 
		  \text{ on a dense subset of }  \T^{\epsilon}_{(0)}.
	\end{equation} 
Therefore, by \eqref{E:convergence.in.Grassmann}, \textbf{hypothesis \ref{Hyp:extend}} of the theorem holds, too, with $\Theta = \pi := \rho \restriction_{\T^{\epsilon}_{(0)}}$. 

By \eqref{E:T.eps.is.manif} and \eqref{E:s.n.k.nvar.defn}, 
\textbf{hypothesis \ref{Hyp:T.manif}} of theorem \ref{T:Phi.star.Hr.contains.Theta.star.Hr} holds with $\T := \T^{\epsilon}_{(0)}$ 
and $t := \dim  \T^{\epsilon}_{(0)} = \dim \Pf^{(0)} - 1 = s+1 > 1$. Let  
    \begin{equation*}
      r = 1 
    \end{equation*} 
so \textbf{hypothesis \ref{Hyp:r.integer}} of theorem \ref{T:Phi.star.Hr.contains.Theta.star.Hr} holds.
\emph{Assume} that for some $\epsilon \in (0, \nvar^{-2})$ we have
	\begin{equation} \label{E:dim.S0.<.s}
		\dim( \Ss'_{(0)}  \cap \T^{\epsilon}_{(0)}) 
		  = \dim( \Ss'  \cap \T^{\epsilon}_{(0)})
		   < \dim \T^{\epsilon}_{(0)} - r = s . 
	\end{equation}
Then \eqref{E:dim.S0.<.s} implies \textbf{hypothesis \ref{Hyp:S.cap.T.small}} of theorem \ref{T:Phi.star.Hr.contains.Theta.star.Hr} holds holds for $\Phi = \Phi_{(0)}$ 
 and $\T = \T^{\epsilon}_{(0)}$. In appendix \ref{Chptr:misc.proofs} we show that \textbf{(\ref{E:nontriv.r-dim.homol})} holds
using two proofs suggested by Steven Ferry (personal communication). 

Thus, $(\Phi, \Ss' \cap \D_{(0)}, \T^{\epsilon}_{(0)}, \F)$ satisfies the \textbf{hypotheses} of theorem \ref{T:Phi.star.Hr.contains.Theta.star.Hr} and \eqref{E:nontriv.r-dim.homol}. Moreover, 
$\widehat{\D}_{(0)}$ is an $(n-1) \nvar$-sphere, and thus a compact manifold. Moreover, by \eqref{E:n>nvar>k>0}, $(n-1) \nvar \geq 4$. Therefore, with $r=1$ and $d := \dim \widehat{\D}_{(0)} = (n-1) \nvar$, we have $H^{d-r}( \widehat{\D}_{(0)} ) = \{0\}$.  
We know that $\Ss'$ is closed. Therefore, by proposition \ref{P:sing.dim.when.H.d-r.D.=.0}, if \eqref{E:dim.S0.<.s} holds then    
	\begin{equation}  \label{E:dim.S.cap.T.eps}
	    \Hm^{(n-1) \nvar - 2}(\Ss' \cap \D_{(0)}) > 0 . \text{ Therefore, } 
		\dim (\Ss' \cap \D_{(0)}) \geq (n-1) \nvar - 2 > s+1 , 
	   \end{equation}
by \eqref{E:s.n.k.nvar.defn}. So again \eqref{E:dim.S0.lwr.bnd.in.D0} holds. It is enough that \eqref{E:dim.S0.<.s} hold for just one $\epsilon$ satisfying 
\eqref{E:eps.<.1/nvar.sqrd}.
 
Suppose \eqref{E:dim.S0.<.s} does \emph{not} hold for \emph{any} $\epsilon$ satisfying 
\eqref{E:eps.<.1/nvar.sqrd} and let $J \subset (0, \nvar^{-2})$ be a closed interval 
 of positive length. Then, by \eqref{E:s.n.k.nvar.defn}, 
	\begin{equation} \label{E:failed.dim.S.cap.D(0).ineq}
		\dim (\Ss' \cap \D_{(0)}) \geq \dim( \Ss'_{(0)} 
		  \cap \T^{\epsilon}_{(0)}) \geq \dim \T^{\epsilon}_{(0)} - r = s
		    \text{ for every } \epsilon \in J.
	\end{equation}
 Thus, \textbf{hypothesis \ref{Hyp:S.cap.T.small}} of theorem \ref{T:Phi.star.Hr.contains.Theta.star.Hr} may not hold for $\Phi = \Phi_{(0)}$ 
 and $\T = \T^{\epsilon}_{(0)}$, for \emph{any} $\epsilon \in J$.  
So the argument that led to \eqref{E:dim.S.cap.T.eps} fails for every $\epsilon \in J$. 

The $s$ in \eqref{E:failed.dim.S.cap.D(0).ineq} is one less than the $s+1$ in \eqref{E:dim.S0.lwr.bnd.in.D0}. We now make up that difference. Let $\delta \in (0, s)$. 
Then $\dim( \Ss'_{(0)}  \cap \T^{\epsilon}_{(0)}) \geq s$ implies 
	\begin{equation}  \label{E.Hm.s.(S'.cap.T.eps.=.infty}
		\Hm^{s-\delta}( \Ss'_{(0)}  \cap \T^{\epsilon}_{(0)}) = + \infty 
		  \text{ for every } \epsilon \in J.
	\end{equation}
(See appendix \ref{Chptr:Lip.Haus.meas.dim}.) 

Let $\T^{J} = \cup_{\epsilon \in J} \T^{\epsilon}_{(0)} \subset \Pf^{(0)}$. By \eqref{E:T.eps.bdd}, 
$\T^{J}$ is bounded in $\D_{(0)} \homeomto \RR^{(n-1) \nvar}$. 
Define $\ell : \Pf^{(0)} \to \RR$ as in \eqref{E:ell(Y).defn}: 
$\ell(Y) := \| Y - Y_{\rho(Y)} \|^{2}$, $Y \in \Pf^{(0)}$. By \eqref{E:ell.invrs.eps2.T.eps}, 
$\T^{J} = \ell^{-1}(J^{2})$, where $J^{2} := \{ \epsilon^{2} : \epsilon \in J \}$. Note that $J$ is bounded away from 0. 
Let $\tilde{J} \subset (0, \nvar^{X-2})$ be an open interval containing $J$ whose left endpoint is positive. $\ell^{-1}(\tilde{J}^{2})$ is open in $\Pf^{(0)}$. By \eqref{E:ell.is.diff'ble.on.P^(0)}, $\ell$ is smooth on $\ell^{-1}(\tilde{J}^{2})$. Therefore, by corollary \ref{C:cont.diff.=.loc.Lip} and \eqref{E:local.Lip.is.Lip.on.compacts}, $\ell$ is Lipschitz on $\ell^{-1}(J^{2})$. Now, $J$ is bounded away from 0. This means $\ell^{1/2}$ is Lipschitz on $\ell^{-1}(J^{2})$. But $\T^{J} = \ell^{-1}(J^{2})$. Therefore, $\ell^{1/2}$ is Lipschitz on $\T^{J}$. 

Now, $J$ has positive length. By \eqref{E.Hm.s.(S'.cap.T.eps.=.infty} there do not exist any ``upper functions'' (Federer \cite[2.4.2, p.\ 81]{hF69} for 
$\epsilon \mapsto \Hm^{s-\delta}( \Ss'_{(0)}  \cap \T^{\epsilon}_{(0)})$. Therefore, 
$\int_{J}^{\ast} \Hm^{s-\delta}( \Ss'  \cap \T^{\epsilon}_{(0)}) \, d \epsilon = + \infty$ (ibid).
Therefore, by \eqref{E:ell.invrs.eps2.T.eps} and 
Federer \cite[Theorem 2.10.25, p.\ 188]{hF69}, there exists a constant 
$C < + \infty$ s.t.\ 
	\begin{equation*}
		+ \infty = \int_{J}^{\ast} \Hm^{s-\delta}( \Ss'  \cap \T^{\epsilon}_{(0)}) 
		  \, d \epsilon
		  = \int_{J}^{\ast} \Hm^{s-\delta} \bigl[ \Ss'  \cap (\ell^{1/2})^{-1}
		    (\epsilon) \bigr] 
		    \, d \epsilon \leq C \Hm^{s + 1-\delta}( \Ss'  \cap \T^{J}),
	\end{equation*}
where ``$\int^{\ast}$'' denotes upper integral (Federer \cite[2.4.2, p.\ 81]{hF69}). 
But $\delta \in (0, s)$ is arbitrary. From the definition of Hausdorff measure in appendix \ref{Chptr:Lip.Haus.meas.dim}, we conclude
	\begin{equation} \label{E:dim.S.cap.Pf(0).geq.s+1}
		\dim (\Ss' \cap \D_{(0)}) \geq \dim (\Ss' \cap \Pf^{(0)}) 
		  \geq \dim ( \Ss'  \cap \T^{J})  \geq s+1
	\end{equation}  
holds whether or not $\Ss'_{(0)}$ has nonempty interior. This completes the proof of \eqref{E:dim.S0.lwr.bnd.in.D0}. 

Now we drop the restriction to data matrices $Y^{n \times \nvar}$ whose last row is 0. Recall that $\Y$ is the space of all $n \times \nvar$ matrices. Metrize $\Y$ by the Frobenius norm, \eqref{E:matrix.norm}.  
Recall the definition \eqref{E:Dy.is.one.pt.compacitification} of $\D_{(y)}$. 
For every $y^{1 \times \nvar} \in \RR^{\nvar}$, 
let $\Phi_{(y)} = \Phi \restriction_{\D_{(y)}}$. 

Let $y^{1 \times \nvar} \in \RR^{\nvar}$. Let $h_{(y)} : \Y \to \Y$ be the map 
    \begin{equation}  \label{E:h_(y).defn}
      h_{(y)} : Y \mapsto Y + y 1^{n} \quad (Y \in \D). 
    \end{equation}
$h_{(y)}(Y)$ is the matrix $Y \in \Y$ with $y$ added to each row. $h_{(y)}$ is obviously an isometry and $h_{(y)}^{-1} = h_{(-y)}$. Recall the definition \eqref{E:Dy.is.one.pt.compacitification} of $\D_{(y)}$.
We have $h_{(-y)}(\D_{(y)}) = \D_{(0)}$. By \eqref{E:when.is.Y.in.Pk} with 
$w = n^{-1} 1^{n}$, we have that $h_{y}(\Pf) = \Pf$ and, by \eqref{E:Delta(Y).defn}, 
$\Delta \circ h_{(y)} = \Delta$ on $\Pf$. 

Let $\Psi(Y) := \Phi \circ h_{(y)} (Y)$, whenever it is defined. Recall the definition, 
\eqref{E:Y'=Y-S}, of $\Y'$. $\Psi$ is defined on $h_{(y)}^{-1}(\Y')$, which is dense in $\D$ since $\Y'$ is dense and $h_{y)}$ is an isometry. 
By \eqref{E:plane-fitter.defn}, $\Psi$ is a plane-fitter since $\Phi$ is one.  
Let $\Rcl = \D \setminus h_{(y)}^{-1}(\Y') \subset \D$. Then $\Rcl$ is a superset of the singular set of $\Psi$. Since $h_{(y)}^{-1}(\D) = \D := \Y$, by \eqref{E:g.commutes.w/.set.ops},
    \begin{multline*}
       \Rcl = \D \setminus h_{(y)}^{-1}(\Y') = \D \cap \bigl[ h_{(y)}^{-1}(\Y') \bigr]^{c}
         = h_{(y)}^{-1}(\D) \cap h_{(y)}^{-1} \bigl[ (\Y')^{c} \bigr] \\
           = h_{(y)}^{-1} \bigl[ \D \cap (\Y')^{c} \bigr]
             = h_{(y)}^{-1}(\D \setminus Y') = h_{(y)}^{-1} (\Ss') . 
    \end{multline*}
The argument that led to \eqref{E:dim.S0.lwr.bnd.in.D0} for $\Phi$ and $\Ss'$ applies equally as well to $\Psi$ and $\Rcl$:
	\begin{equation*} 
		\dim (\Rcl \cap \D_{(0)}) \geq s+1 ,
	\end{equation*}
where $s$ is defined in \eqref{E:s.n.k.nvar.defn}.  
Hence, by lemma \ref{L:loc.Lip.image.of.null.set.is.null},   
	\begin{equation} \label{E:dim.S.cap.D(y).lwr.bnd} 
	    \dim (\Ss' \cap \D_{(y)}) \geq \dim h_{(y)}^{-1} (\Ss' \cap \D_{(y)})
	      = \dim (\Rcl \cap \D_{(0)}) \geq s+1,
    			\quad ( y^{1 \times \nvar} \in \RR^{\nvar} ) 
	\end{equation}

Define 
    \begin{equation*}
      g_{n}(Y) := n^{th} \text { row of } Y \in \Y .
    \end{equation*} 
Thus, $\D_{(y)} = g_{n}^{-1}(y)$ so 
$\Ss' \cap \D_{(y)} = g_{n}^{-1}(y) \cap \Ss'$. Therefore, by \eqref{E:dim.S.cap.D(y).lwr.bnd}, 
$\dim \bigl[ \Ss' \cap g_{n}^{-1}(y) \bigr] \geq s+1$. 
Let $\D_{[1/2]} := \Bigl\{ Y \in \D : \bigl| g_{n}(Y) \bigr| < 1/2 \Bigr\}$. Note that $g_{n}$ 
is Lipschitz on $\D_{[1/2]}$ with Lipschitz constant 1. 

Define $B_{1/2}^{\nvar}(0) \subset \RR^{\nvar}$ as in \eqref{E:Euc.ball.defn}. 
Let $\overline{B}_{1/2}^{\nvar}(0)$ be its closure.Let 
    \begin{equation*}
      \mbf{B} := \overline{B}_{1/2}^{\nvar}(0) .
    \end{equation*}
If $\delta \in (0, s+1)$, by definition of Hausdorff measure (appendix \ref{Chptr:Lip.Haus.meas.dim}), 
$\Hm^{s+1-\delta} \bigl[ \Ss' \cap g_{n}^{-1}(y) \bigr] = + \infty$. 
Therefore, applying Federer \cite[Theorem 2.10.25, p.\ 188]{hF69} again as above,  
there is a constant $C < + \infty$ s.t., 
	\begin{equation*}
		+ \infty = \int_{\mbf{B}}^{\ast} \Hm^{s+1-\delta} 
		    ( \Ss' \cap \D_{(y)} ) \, d y
		  = \int_{\mbf{B}}^{\ast} \Hm^{s+1-\delta} 
		    \bigl[ \Ss' \cap g_{n}^{-1}(y) \bigr] \, \Hm^{\nvar}(d y)
		      \leq C \Hm^{s + 1 + \nvar-\delta}( \Ss' ) . 
	\end{equation*}
We conclude
	\begin{equation} \label{E:dim.S.geq.kappa}
		\dim \Ss' \geq s+\nvar + 1 = \kappa.
	\end{equation}
This proves \eqref{E:lower.lower.bound.in.plane.fitting}.

We can use the preceding to prove \cite[Theorem 2.2, p.\ 493]{spE95}: 
  \begin{prop} \label{P:dim(S.cap.P)<kappa}
Let $\Phi : \Y \partlyto G(k,\nvar)$ be a plane-fitter (i.e., a data map satisfying 
\eqref{E:plane-fitter.defn}) with singular set $\Ss$. 
Suppose $\dim (\Ss \cap \Pf) < \kappa$, 
where $\kappa$ is defined in \eqref{E:lower.lower.bound.in.plane.fitting}. 
Then $\dim \Ss \geq nq -2$. (Otherwise, $\dim \Ss \geq \kappa$, of course.)
  \end{prop}
Note that we only assume $\Phi$ satisfies \eqref{E:plane-fitter.defn}. With $\T = \Pf$ and 
$r = 1$, by lemma \ref{L:Pf.is.a.manif}, $\dim (\Ss' \cap \Pf) < \kappa$ becomes 
$\dim (\Ss' \cap \T) < \dim \T - r$, just as in \textbf{hypothesis \ref{Hyp:S.cap.T.small}} of theorem \ref{T:Phi.star.Hr.contains.Theta.star.Hr}. And, yes, $\Pf$ is a manifold. The difference is that, unlike $\T$ in \textbf{hypothesis \ref{Hyp:T.manif}} of the theorem, 
\emph{$\Pf$ is not compact.} So theorem \ref{T:Phi.star.Hr.contains.Theta.star.Hr} does not apply here.

The proof of proposition \ref{P:dim(S.cap.P)<kappa} we give is actually longer than the more elementary proof given in \cite{spE95}! But our proof shows that the theory developed in this book indeed generalizes the results in \cite{spE95} and perhaps points the way to further generatizations?

  \begin{remark}  \label{R:general.lwr.bnd.on.plane-fitting.sing.set.dim}
It seems that any non-regularized (remark \ref{R:regularization.generalities}) process, no matter how subjective or algorithmic, that claims the name ``plane-fitting'' will satisfy \eqref{E:plane-fitter.defn}. Thus, any such process will always have singularities and 
$\kappa$ will always be a lower bound on the dimension of the set of them. (See section \ref{S:topology}.)
  \end{remark}

  \begin{proof}[Proof of proposition \ref{P:dim(S.cap.P)<kappa}]
Suppose 
    \begin{equation}  \label{E:S.cap.Pf.and.S.both.have.small.dim}
      \dim (\Ss \cap \Pf) < \kappa \text{ but } \dim \Ss < nq -2.
    \end{equation} 
Suppose $\Ss$ is not closed. Then apply the severity trick as above to get a closed set $\Ss' \subset \Ss$ off which we may assume $\Phi$ is continuous. 
Then 
    \begin{equation}  \label{E:dimS'.dimS.kappa}
      \dim (\Ss' \cap \Pf) \leq \dim (\Ss \cap \Pf) < \kappa \text{ but } 
        \dim \Ss' \leq \dim \Ss < nq -2 .
    \end{equation} 

Recall that $\dim \D_{(y)} = (n-1) \nvar$. Then, 
by Federer \cite[Theorem 2.10.27, p.\ 190]{hF69},
    \begin{equation} \label{E:gn.invrs.int=0}
      \int_{\mbf{B}}^{\ast} \Hm^{(n-1) \nvar - 2} 
		    ( \Ss' \cap \D_{(y)} ) \, dy 
	      \leq C \Hm^{n \nvar - 2}( \Ss' ) = 0 .
    \end{equation}
(By \eqref{E:when.Haus.meas.=.Leb.meas}, $dy = \Hm^{\nvar}(dy)$.)  

Let $f(y) := \Hm^{(n-1) \nvar - 2} ( \Ss' \cap \D_{(y)} ) \geq 0$ 
($y \in \mbf{B}$). \eqref{E:gn.invrs.int=0} says 
$\int_{\mbf{B}}^{\ast} f(y) \, dy = 0$. By definition of upper integral 
(Federer \cite[p.\ 81]{hF69}), for every $m = 1, 2, \ldots$ there exists a step function 
$u_{m} \geq f$ s.t.\ $\int_{\mbf{B}} u_{m}(y) \, dy < 1/m$.\footnote{
$u_{m}$ is a step function: (1) $u_{m}$ is $\Hm^{\nvar}$-measurable. 
(Since $u_{m}$ is defined on $\mbf{B}$, this is the same as Lebesgue-measurable. See \eqref{E:when.Haus.meas.=.Leb.meas}.) And (2) $u_{m}$ takes on an at most countable set of values, in $\RR$. Define 
$\int_{\mbf{B}} u_{m}(y) \, dy 
:= \sum_{t \in \RR} t \Hm^{\nvar} \bigl( u_{m}^{-1}(t) \bigr)$. The sum is taken over 
$t \in u_{m} \bigl( \mbf{B} \bigr)$. See Federer \cite[p.\ 81]{hF69}.
} 
Since $f \geq 0$, each $u_{m}$ is non-negative. We may assume $u_{m} \downarrow$. 
Let $u_{\infty} := \liminf_{m \to \infty} u_{m} \geq f \geq 0$. Thus, $u_{\infty}$ is 
$\Hm^{\nvar}$-measurable (Federer \cite[p.\ 73]{hF69}) and by \eqref{E:gn.invrs.int=0} and Fatou's lemma (Federer \cite[2.4.6, p.\ 84]{hF69}), 
$\int_{\mbf{B}} u_{\infty}(y) \, dy = 0$. 
Let $Z := u_{\infty}^{-1}(0) \subset f^{-1}(0) \subset \mbf{B}$. 
Then $Z$ is $\Hm^{\nvar}$-measurable. \emph{Claim:} 
    \begin{equation*}
      \Hm^{\nvar}(Z) > 0 .
    \end{equation*}
Suppose not. 
For $j = 1, 2, \ldots$ let $A^{j} := \{ y \in \mbf{B} : u_{\infty}(y) > 1/j \}$. 
So $Z = \mbf{B} \setminus \bigcup_{j} A^{j}$. Since $\Hm^{\nvar}(Z) = 0$, there must exist $j$ s.t.\ $\Hm^{\nvar}(A^{j}) > 0$. That means
$\int_{\mbf{B}} u_{\infty}(y) \, dy \geq (1/j) \Hm^{\nvar}(A^{j}) > 0$. Contradiction. 
Therefore, $\Hm^{\nvar}(Z) > 0$ as claimed. 

Let $y^{1 \times \nvar} \in Z$. Recall the definition, \eqref{E:h_(y).defn}, of $h_{(y)}$. Suppose $\epsilon > 0$ satisfies \eqref{E:eps.<.1/nvar.sqrd}. Recall \eqref{E:Pf.invar.under.shifts.rescale}. Define $\Pf^{(0)}$ as in \eqref{E:Pf.(0).defn} and define
    \begin{equation}  \label{E:Pf.(y).defn}
      \Pf^{(y)} := h_{(y)}(\Pf^{(0)}) \subset \Pf , 
    \end{equation}
and $\T^{\epsilon}_{(y)} :=  h_{(y)}(\T^{\epsilon}_{(0)})$, where $\T^{\epsilon}_{(0)}$ is defined in \eqref{E:T.eps.defn}. Then, by \eqref{E:T.eps.in.P.0}, 
$\T^{\epsilon}_{(y)} \subset \Pf^{(y)}$. 

Arguing as in the proof of \eqref{E:dim.S.cap.T.eps},
we see that \eqref{E:dim.S0.<.s} with ``$(y)$'' in place of ``$(0)$'' implies \eqref{E:dim.S.cap.T.eps} with ``$(y)$'' in place of ``$(0)$''. But $y \in Z$, so \eqref{E:dim.S.cap.T.eps} (with ``$(y)$'' in place of ``$(0)$'') is false. Therefore, the corresponding variation on \eqref{E:dim.S0.<.s} is false. I.e.,  we have 
	\begin{equation*} 
	  \text{For every } \epsilon \in (0, \nvar^{-2}) ,\; 	  
	    \dim( \Ss' \cap \T^{\epsilon}_{(y)}) \geq s ,
	\end{equation*}
where $s$ is defined in \eqref{E:s.n.k.nvar.defn}. Hence, arguing as in the proof of \eqref{E:dim.S.cap.Pf(0).geq.s+1} (with 
$\ell(Y) := \| Y - 1^{n} y - Y_{\rho(Y - 1^{n} y)} \|^{2}$ for $Y \in \Pf^{(y)}$), we have
	\begin{equation}  \label{E:dim.S.cap.Pf(y).geq.s+1}
		\dim (\Ss' \cap \Pf^{(y)}) \geq s+1 .
	\end{equation} 
	
By hypothesis of the proposition, we may pick $\delta > 0$ so small that 
    \begin{equation}  \label{E:Hm.kappa-delta.(S.cap.Pf).=.0}
      \Hm^{\kappa-\delta}(\Ss' \cap \Pf) = 0 .
    \end{equation}
By \eqref{E:dim.S.cap.Pf(y).geq.s+1}, we have 
$\Hm^{s-\delta+1}(\Ss' \cap \Pf^{(y)}) = +\infty$. But $y \in Z \subset \mbf{B}$ is arbitrary and $\Hm^{\nvar}(Z) > 0$. Thus, 
    \begin{equation*}
      \int_{Z}^{\ast} \Hm^{s-\delta+1}(\Ss' \cap \Pf^{(y)}) \, \Hm^{\nvar}(dy) = + \infty .
    \end{equation*}
By \eqref{E:s.n.k.nvar.defn}, we have $s+\nvar + 1 = \kappa$. Therefore, by Federer \cite[Theorem 2.10.25, p.\ 188]{hF69} yet again,  
there is a constant $C < + \infty$ s.t.,
	\begin{multline*}
           + \infty = \int_{Z}^{\ast} \Hm^{s-\delta+1}(\Ss' \cap \Pf^{(y)}) \, \Hm^{\nvar}(dy) \leq 
           \int_{\mbf{B}}^{\ast} \Hm^{s-\delta+1}(\Ss' \cap \Pf^{(y)}) \, \Hm^{\nvar}(dy) \\
             \leq C \Hm^{s+\nvar-\delta+1} \bigl( \Ss' \cap \Pf \cap g_{n}^{-1}(\mbf{B})  \bigr)
                \leq C \Hm^{\kappa-\delta} (\Ss' \cap \Pf)= 0 .
        \end{multline*}  
This contradicts \eqref{E:Hm.kappa-delta.(S.cap.Pf).=.0} showng that \eqref{E:S.cap.Pf.and.S.both.have.small.dim} is impossible and proves the proposition.
  \end{proof}

Now $\F = G(k, \nvar)$ is a smooth manifold. Therefore, by propositions \ref{P:smooth.manifs.have.commutative.convex.combos} or \ref{P:make.conv.combo.fns.from.curves} we may apply the severity trick (remark \ref{R:severity.trick}) and conclude that $\F$ has an open covering $\msf{V}$ s.t.\: 
If $\Phi: \Y \partlyto G(k, \nvar)$ is a ``plane-fitter'', i.e.\ a data map satisfying 
\eqref{E:plane-fitter.defn}, then 
   \begin{equation*}  
      \dim \Ss^{\msf{V}} \geq nk + (k + 1)(\nvar - k) -1 .
   \end{equation*} 
  
\section{Final remarks}
  \begin{remark}
Plane-fitting is not a dimensionless enterprise. Rescaling the variables can change the severity of a singularity, but not the dimension of the singular set. 
  \end{remark}

  \begin{remark}[``Highly probable singularities'']   \label{R:dnsity.contrs.as.data.spaces}
   
Suppose $Z^{n \times \nvar}$ is a random data set, i.e.\ random point in $\Y$, with independent and identically distributed observations. I.e., the rows of $Z$ are random with the same distribution, but they are statistically independent of each other. Suppose further that the common probability distribution of the rows of $Z$ is absolutely continuous with continuous density on $\RR^{\nvar}$ (w.r.t.\ Lebesgue measure) $f$. Then the probability density of $Z$ is $f^{\otimes n} := f \otimes \cdots \otimes f$. I.e., given $Y \in \Y$ with rows $y_{1}, \ldots, y_{n} \in \RR^{k}$, we have $f^{\otimes n}(Y) = f(y_{1}) \cdots f(y_{n})$. 
Suppose $f$ is unimodal, i.e., has just one local, and therefore global, maximum, at $y_{0} \in \RR^{\nvar}$. Then $f^{\otimes n}$ has just one local, and therefore global, maximum, at $Y_{0}^{n \times \nvar} = 1^{n} y_{0}$ (see \eqref{E:1n.col.vec.defn}). Thus, every row of $Y_{0}$ equals $y_{0}$. 

Next, we assume the contours, i.e. level sets, of $f^{\otimes n}$ are homeomorphic 
to a $(n \nvar-1)$-sphere and starlike w.r.t.\ $Y_{0}$ in the sense that if $Y$ lies on or surrounded by a contour, then the line segment joining $Y_{0}$ to $Y$ is also surrounded by the contour. Then every contour has the form $\D_{\mu} + Y_{0}$ (see \eqref{E:plane.fitting.D.mu.defn} and remark \ref{R:shift.D.mu}) for some 
$\mu : S^{n \nvar-1} \to (0, \infty)$. 

An obvious example is $Z$ with rows that are independent, identically distributed (non-degenerate) multivariate Gaussian each with mean $y_{0}$ (Johnson and Wichern 
\cite[Chapter 4]{raJdwW92}). Classically in Statistics, such $Z$ is often taken as a model for multivariate data sets. Then the contours are $(n \nvar-1)$-dimensional ellipsoids.

Let $\Phi : \Y \partlyto G(k, \nvar)$ be a plane-fitter, i.e.\ it satisfies \eqref{E:plane-fitter.defn}, on $\D$. 
Let $\mcl{C} = \D_{\mu} + Y_{0} \subset \RR^{n \nvar}$ be an arbitrary contour of 
$f^{\otimes n}$. By remark \ref{R:shift.D.mu}, $\Phi$ may have singularities in $\mcl{C}$. I expect that typically proposition \ref{P:sing.codim.in.plane-fitting} will apply and $\Phi$ will have lots of severe singularities in $\mcl{C}$. For example, the set of collinear data sets (see \eqref{D:collinearity}) is invariant under scalar multiplication, so any ellipsoid around $Y_{0}$ will contain collnear data sets, i.e., singularities of least squares regression (proposition \ref{P:collin.data.are.sings.of.LS}). By remark \ref{Ex:all.LS.sings.are.90.degrees} all those singularities will be severe if $m = 1$. . 

But the preceding holds for any contour $\mcl{C}$ of $f$ and we can choose $\mcl{C}$ on which the value of $f$ is arbitrarily close to the maximum of $f^{\otimes n}$, \emph{viz.}, $f(y_{0})^{n}$. Thus, in this (plausible) situation the severe singularities of $\Phi$ are not confined to god forsaken -- i.e.\ low density ($f^{\otimes n}$) -- corners of $\D$. They are rare merely because $\Ss$ has positive codimension. This highlights the importance of dimension (subsection  \ref{SS:dim.of.sing.sets}) and measure (chapter \ref{Chptr:Haus.meas.of.sing.set}) as tools for studying the singular set.

Taking this idea to its logical conclusion, note that $f^{\otimes n}$ achieves its maximum value at $Y_{0}$. $Y_{0}$ is the ``most probably data set''. But for most (all?) plane-fitters used in practice, $Y_{0}$ is a singularity. I.e., the most probably data set is a singularity. 

A general, but perverse, way to make the same point is as follows. Suppose $\D$ is compact and the singular set $\Ss \subset \D$ is as well. Then there is a probability density on $\D$ proportional to $x \mapsto \exp \bigl( - dist(x, \Ss) \bigr)$. This way the singularities are precisely the most probable data sets in $\D$.
  \end{remark}
  
  \begin{remark}[Non-parametric regression]  \label{R:nonparam.reg}
 In regression, given a data set $Y = (X^{n \times k}, y^{n \times 1})$ as in section \ref{SS:lin.reg.and.LS} and a space $\mathrm{F}$ of functions from $\D$ to $\RR$, one computes an element $f = \mcl{L}(Y) \in \mathrm{F}$ s.t.\ $f(x_{i})$ is close to $y_{i}$, in some sense, for all (or at least most) $i = 1, \ldots, n$ (e.g., Ogden \cite[Section 2.2]{rtO97.wavelets}, Wahba \cite{gW90.spline}, Cucker and Smale \cite{fCsS02.learning}; see remarks \ref{R:learning.and.predicting} and subsection \ref{SS:functions.vs.geometry}). 
 
Let $\Phi$ be a mapping on $\D$ that captures much of what $\mcl{L}$ is doing. 
Denote the codomain of $\Phi$ by $\F$. In non-parametric regression, the function space $\mathrm{F}$ is often infinite dimensional. This means 
$\F$ is infinite dimensional. If $\mathrm{F}$ contains all affine functions then fitting functions in $\mathrm{F}$ might inherit the singularity issues described in this chapter. (This point was made in remark \ref{R:transformed.vars}.)

\emph{However,} (a) Non-parametric regression often involves regularization (remark \ref{R:regularization.generalities}) and regularized linear regression cannot be expected to have singularities. So reducing non-parametric regression to linear regression probably will not capture instability in $\Phi$ well. (b) Even if reducing to linear does work it seems there are many more ways non-parametric regression can be unstable than linear regression can. Even with regularization might that instability have a topological basis? 

Might our theory be extended to reveal singularity in nonparametric regression? Might nonlinear functional analysis (Brown \cite{rfB93.TopolNonlinAnlys}) help?   
  \end{remark}

  \begin{remark}  \label{R:designer.sing.sets}
Here is a rather speculative thought. One way to explain the discrepancy between the sizes of the singular sets of least squares (section \ref{SS:lin.reg.and.LS}) and LAD (section \ref{SS:LAD}) is as follows. The bound \eqref{E:codim.LAD.sing.set}, on the codimension of the singular set of LAD, also applies also to $90^{\circ}$ singularities of LAD. By remark \ref{Ex:all.LS.sings.are.90.degrees}, every singularity of LS is a $90^{\circ}$ singularity. Near a severe singularity, the discrepancy between the fitted regression plane and the ``true'' regression plane can be large (section \ref{SS:90.degree.sings.in.line-fitting.on.plane}). How serious a problem singularity is may depend on one's loss function. If the loss function, like squared error, heavily penalizes large errors and lightly penalizes small errors, then it pays to make the singular set small. On the other hand, if the loss function, like absolute error, penalizes large errors less heavily and penalizes small errors more heavily, then it might pay to sop up the variability of one's data map in a large singular set so that the data map can be less variable elsewhere. Singularity is not an absolute evil. (A similar point is made in section \ref{SS:measure.distance.to.P}.) If one is designing a data map, $\Phi$, one might choose to stow some of the variability of $\Phi$ in singularities.
  \end{remark}

\chapter{Location Problem for Directional Data}  \label{Chptr:spherical.location}
This is the first of three chapters on this topic. Let
    \begin{equation}  \label{E:nvar.n.for.spheres}
      \nvar, \, n \text{ be positive integers with } n > 1 .
    \end{equation}
Let $y_{1}, \ldots, y_{n}$ be points on the 
$\nvar$-sphere, $S^{\nvar} := \bigl\{ y \in \mathbb{R}^{\nvar +1} : |y| = 1 \bigr\}$ 
($| \cdot |$ = Euclidean norm). In this case we say that the ``sample size'' is $n$ and the points $y_{1}, \ldots, y_{n}$ are ``observations''. Consider the problem of measuring location of such data clouds on the sphere. 
(Fisher \emph{et al}  \cite{niFtLbjjE87} and Watson \cite{gW82} are general references.) For example, we consider analogues on the sphere of the sample mean (chapter \ref{Chptr:aug.direct.mean}) and median (chapter \ref{Chptr:robst.loc.on.circle}). (The general problem of computing a mean on a Riemannian manifold has been studied. 
See Karcher \cite{hK77.RiemannianCenterOfMass}, 
Bhattacharya and Patrangenaru \cite{rBvP02.LocationDispersionOnManifs}, \cite{rBvP05.MeansOnManifs}, Pigoli and Piercesare Secchi \cite{dPpS12.MeanOnRiemannianManif}, Bhattacharya and Patrangenaru \cite{rBvP14.StatOnManifs}, and Bhattacharya \cite{rB13.NonparamStatManifs}.)

The data space is the Cartesian product, 
    \begin{equation}   \label{E:D.defn.in.loc.prob.on.sphere}
      \D := (S^{\nvar})^{n}, 
    \end{equation}
so $d := \dim \D = n \nvar$. Let
    \begin{equation}  \label{E:N.sub.n}
      \NN_{n} := \{1, \ldots, n \} .
    \end{equation}
and
    \begin{equation}  \label{E:symbol.for.symmetric.group}
      \text{Let } S_{n} \text{ be the group of permutations of } \NN_{n}. 
    \end{equation}
Since $\D$ is a compact $C^{1}$ manifold 
(Boothby \cite[Theorem (1.7), p.\ 57]{wmB75}) 
$\D$ is triangulable (lemma \ref{L:biLip.triangulation}). Hence, by lemma \ref{L:biLip.triangulation}, there exists a finite simplicial complex $P$ and a bi-Lipschitz homeomorphism $f : |P| \to \D$. $S_{n}$ obviously acts on 
$\D := (S^{\nvar})^{n}$ by permutation of the factors. Therefore, by lemma \ref{L:Cart.pwrs.of.simp.cmplx.can.be.perm.invar.cmplx}, 
    \begin{multline} \label{E:prod.of.spheres.has.perm.invar.triangulation}
      \D \text{ has a bi-Lipschitz triangulation invariant under action of } G = S_{n} \\
        \text{ as required by theorem \ref{T:lwr.bnd.on.Haus.meas}.}
    \end{multline}

Regard $\D$ as a subset of $\RR^{n(\nvar +1)}$ and put 
on $\D$ the Riemannian metric it inherits from $\RR^{n(\nvar+1)}$. 
Thus, if $v = (v_{1}, \ldots, v_{n})$ 
and $w = (w_{1}, \ldots, w_{n})$ are tangent vectors on $\D$ 
(so $v_{i}, w_{i} \in \RR^{\nvar + 1}$) at $x \in \D$, then 
    \begin{equation}  \label{E:directional.location.Riem.metric}
        \langle v, w \rangle_{x} = v_{1} \cdot w_{1} + \cdots + v_{n} \cdot w_{n},
    \end{equation} 
where ``$\cdot$'' denotes ordinary Euclidean inner product on $\RR^{\nvar + 1}$. Therefore, geodesics on $\D$ are products of great circular arcs on each 
$S^{\nvar}$ factor in the sense any geodesic on $\D$ has the following form. 
Let $i = 1, \ldots, n$ and let $\gamma_{i} : [0,1] \to S^{\nvar}$ be a geodesic on $S^{\nvar}$. 
Thus, $\gamma_{i}$ maps $[0,1]$ onto a great circular arc on $S^{\nvar}$. Then 
    \begin{multline} \label{E:geod.on.prod.of.spheres}
        \gamma_{1} \times \cdots \times \gamma_{n} : 
          t \mapsto \bigl( \gamma_{1}(t), \ldots, \gamma_{n}(t) \bigr), \; t \in [a, b], \\
            \text{ where } \gamma_{i} \text{ parametrizes a great circular arc on } 
              S^{\nvar} \\
                \text{ w.r.t.\ a multiple of arc length, } i = 1, \ldots, n, \\ 
                  \text{ is the form of every geodesic on } \D .
    \end{multline}
For proof, see appendix \ref{Chptr:misc.proofs}. (See also \eqref{E:when.Gamma.is.geod.2}.) 
    \begin{equation}   \label{E:rho.is.top.metric}
      \text{Let } \rho \text{ be the topological metric on } \D 
        \text{ determined by the Riemannian metric on } \D. 
    \end{equation}
So in this chapter 
    \begin{equation} \label{E.rho.plays.role.of.xi.in.loc.on.spheres}
      \rho \text{ plays the role of $\xi$ in chapter \ref{SS:main.haus.theorem}.}
    \end{equation} 
(See \eqref{E:xi.is.metric.on.D}.)

Note that 
    \begin{equation}  \label{E:Riem.metric.on.prod.of.spheres.is.invar}
      \text{The Riemannian metric } \langle v, w \rangle_{x} 
        \text{ is invariant under the action of the group } S_{n}.
    \end{equation} 
(See \eqref{E:directional.location.Riem.metric} and \eqref{E:symbol.for.symmetric.group}.)
 
  \begin{remark}[Another metric on $\D$]  \label{R:another.metric}
Another, topologically equivalent, metric on $\D$ is defined as follows. Let $x,y \in D$ and let $\gamma_{1} \times \cdots \times \gamma_{n}$ be a shortest geodesic arc connecting them as in \eqref{E:geod.on.prod.of.spheres}. Then the length of each arc 
$\gamma_{i}$ is no greater then than $\pi$. Let $\chi(x,y)$ be the maximum length of the $\gamma_{i}$'s. An equivalent definition is as follows. Write 
$x = (x_{1}, \ldots, x_{n})$ and $y = (y_{1}, \ldots, y_{n})$ so 
$x_{i}, y_{i} \in S^{\nvar} \subset \RR^{\nvar+1}$. 
Then $\chi(x,y) = \max_{i=1, \ldots, n} \angle(x_{i}, y_{i})$. (See \eqref{E:angle.between.vectors}. \eqref{E:Delta.metric.for.loc.on.circle} defines another metric on $\D$.) 

By \eqref{E:n.c.sqrd.sum.ineq}, $\chi$ generates the same topology on $\D$ 
that $\rho$ does. But unless otherwise specified, any reference to distance in $\D$, by the word ``nearest'' for example, involves distance measured by $\rho$.
  \end{remark}

The feature space is just the sphere 
    \begin{equation}  \label{E:F.for.loc.on.sphere}
      \F := S^{\nvar} .
    \end{equation}
Take the test pattern space, $\T$, to be the ``diagonal,''.
	\begin{equation}  \label{E:directional.T.defn}
	  \T  := \T_{n} 
	    := \{ (y_{1}, \ldots, y_{n}) \in \D \; : \; y_{1} = \cdots = y_{n} \in S^{\nvar} \}.
	\end{equation}
Hence,
    \begin{equation*}
       \dim \T = \nvar .
    \end{equation*}

It is natural to take the space, $\Pf$, of perfect fits to also be the diagonal. I.e., $\Pf=\T$. (See chapter \ref{Chptr:robst.loc.on.circle} for another choice.) It is also reasonable to suppose that for the location problem, the standard, 
    \begin{equation}  \label{E:Sigma.for.loc.on.sphere}
       \Sigma : (y, \ldots, y) \in \T \mapsto y \quad (y \in S^{\nvar}).
    \end{equation} 
(See subection \ref{SS:calibration}.) So $\Sigma$ maps $\T$ homeomorphically 
onto $\F$. 

Let $\Phi : \D \partlyto \F := S^{\nvar}$ and consider the condition
	\begin{equation} \label{E:Phi.on.diagnl.exact}
	    \text{For every } y \in S^{\nvar}, \Phi(y, \ldots, y) 
	      \text{ is defined and  equals } y.
	\end{equation}
A $\Phi$ satisfying \eqref{E:Phi.on.diagnl.exact} is sensibly calibrated in the sense of section \ref{R:regularization.generalities}. If $\Phi$ satisfies \eqref{E:Phi.on.diagnl.exact}, at least approximately and is defined at least almost everywhere in $\D$, call $\Phi$ a ``measure of location'' on $S^{\nvar}$. (For an example of what it means for \eqref{E:Phi.on.diagnl.exact} to hold only approximately see \eqref{E:location.Phi.on.diagnl.loose} below. Sometimes, as in corollary \ref{C:local.exactness.of.fit} below, we relax the assumption that $\Phi$ be defined almost everywhere on $\D$.) Let $\Phi$ be such a measure of location and let $\Ss$ be the singular set of $\Phi$. Thus, $\Phi : \D \setminus \Ss \to S^{\nvar}$ is continuous. (So there are three $S$'s: The sphere $S^{\nvar}$, the group $S_{n}$, and the singular set $\Ss$.) We remind the reader of \eqref{E:D'.=.D.less.S}. 

Without further assumptions $\Phi$ may have no singularities. I.e., $\Ss$ may be empty.
For example, consider the ``measure of location'', 
$\Phi_{proj} : y_{1}, \ldots, y_{n} \mapsto y_{1}$, i.e., projection onto the first argument. $\Phi$ is continuous. The only $r = 0, \ldots, \dim \T = \nvar$ for which \eqref{E:nontriv.r-dim.homol} holds are $r = 0$ and $\nvar$. For those values of $r$ and $\Ss' = \varnothing$, 
$\Phi$ satisfies theorem \ref{T:Phi.star.Hr.contains.Theta.star.Hr}. But $\Phi$ does not satisfy proposition \ref{P:sing.dim.when.H.d-r.D.=.0} because $\check{H}^{d-r}(\D)$ is non-trivial for $r = 0, \nvar$. 

However, it is customary to assume that a measure of location is  symmetric in its arguments, i.e. that $\Phi \bigl( y_{\sigma(1)}, \ldots, y_{\sigma(n)} \bigr)$ is constant in permutations 
$\sigma$ of $\{ 1, \ldots, n \}$. We will follow that practice. 
$\Phi_{proj}$ does not satisfy this condition. In particular, this symmetry condition implies
    \begin{equation}  \label{E:S.is.perm.invar}
      \Ss \text{ is invariant under permutation of the $n$ factors of } \D. 
    \end{equation}
The paper \cite{spE91.top.direct.axis} applies the results in Eckmann \emph{et al} \cite{bEtGpjH62.genMeans} to show that any symmetric measure of location 
on $S^{\nvar}$ must have singularities. 
(The same issue arises, in a different guise is social choice theory. See, e.g., Chichilnisky  \cite{gC79.SocialChoiceTopol} and Ghrist \cite[Section 8.7, pp.\ 170--171]{rG14.ElemAppliedTopol}.) Here, we apply the results in chapter \ref{Chptr:topology} to compute a lower bound on the dimension of the singular set of a general measure of location on a sphere. (In chapter \ref{Chptr:linear.classification} we apply the result to the problem of linear classification on a sphere.)
Recall ``$\check{H}^{\ast}$'' is the \v{C}ech cohomology functor (Dold \cite[Chapter VIII, chapter 6]{aD95.alg.topol}, Munkres \cite[\S 73]{jrM84}). We use integer, i.e.\ 
$\mathbb{Z}$, coefficients for (co)homology. 

   \begin{theorem}   \label{T:spher.loc.sing.codim.nvar+1}
   Suppose $n > 1$ and $\nvar > 0$ and define $\D$ and $\T$ as above. 
   Let $\Ss' \subset \D$  be invariant under permutation of the $n$ factors of $\D$. 
   Let $\Phi : \D \setminus \Ss' \to \F := S^{\nvar}$ be continuous.
   Assume \eqref{E:Phi.on.diagnl.exact} holds. Assume
         \begin{equation} \label{E:loc.measure.has.no.sings.near.T}
           \Ss' \text{ is closed, has empty interior, and } \Ss' \cap \T = \varnothing.
         \end{equation}
      Assume
         \begin{multline} \label{E:assume.Phi.S'.sym}
             \text{$\Phi$ is symmetric in its arguments and } \Ss'
               \text{ is invariant } \\
                 \text{ under permutation of the factors }
                   S^{\nvar} \times \cdots \times S^{\nvar} \text{ of } \D .
         \end{multline}
      Then 
               \begin{equation}   \label{E:S.has.nontriv.cohom}
			\text{For some $k = 0, \ldots, \nvar$, we have } 
			  \check{H}^{n \nvar - k-1}(\Ss') \ne \{0\},  
	      \end{equation}
      Therefore, by \eqref{E:cohom.and.codim}, 
		\begin{equation}  \label{E:S.has.poz.n.nvar-nvar-1.measure}
			\Hm^{n \nvar - \nvar-1}(\Ss') > 0.
		\end{equation} 
	In particular, 
         \begin{equation}    \label{E:bound.on.sing.set.dim.in.sphere.loc}
            \text{codim} \, \Ss' \leq \nvar + 1.
         \end{equation}
   \end{theorem}

Hypothesis \eqref{E:loc.measure.has.no.sings.near.T} implies that \textbf{hypothesis \ref{Hyp:S.cap.T.small}} of Theorem \ref{T:Phi.star.Hr.contains.Theta.star.Hr} holds. Below, \eqref{E:loc.meas.has.no.severe.sings.near.T}, we relax hypothesis \eqref{E:loc.measure.has.no.sings.near.T}. We relax hypothesis \eqref{E:Phi.on.diagnl.exact}, as well. See \eqref{E:location.Phi.on.diagnl.loose}.

  \begin{proof}[Proof of Theorem \ref{T:spher.loc.sing.codim.nvar+1}]
$\Ss'$ is closed by assumption. We prove the following. Recall that  $\T \subset \D$ is the diagonal
	\begin{equation}   \label{E:sphere.loc.test.pattern.space}
	      \T = \{ (y_{1}, \ldots, y_{n}) \in \D : y_{1} = \cdots = y_{n} \in S^{\nvar} \}.
	\end{equation}

Assume \eqref{E:Phi.on.diagnl.exact} holds. Thus, 
$\Theta := \Phi \restriction_{\T} : \T \to S^{\nvar}$ is a homeomorphism. Hence, 
$\Theta_{\ast} : H_{\nvar}(\T) \to H_{\nvar}(S^{\nvar}) \isomto \mathbb{Z}$ is nontrivial.
By hypothesis \eqref{E:loc.measure.has.no.sings.near.T} we have  
   \begin{equation} \label{E:S.and.T.disjoint}
      \Ss' \cap \T = \varnothing.
   \end{equation}
Thus, theorem \ref{T:Phi.star.Hr.contains.Theta.star.Hr} applies in this case and \eqref{E:nontriv.r-dim.homol} holds with $r = \nvar$. Therefore, the homomorphism 
    \begin{equation}  \label{E:Phi.star.to.H.nvar.sphere.nontriv}
      \Phi_{\ast} : H_{\nvar}(\D \setminus \Ss') \to H_{\nvar}(S^{\nvar}) 
        \text{ is nontrivial. }
    \end{equation} 
But by Munkres \cite[Example 2, p.\ 346 and Theorem 53.1, p.\ 320]{jrM84} and Dold \cite[Chapter VIII, Propositions 1.3, p.\ 248 and 6.12, p.\ 285]{aD95.alg.topol}, we have $\check{H}^{d-r}(\D) \neq \{ 0 \}$. Therefore, proposition \ref{P:sing.dim.when.H.d-r.D.=.0} does not apply so one still cannot get 
\eqref{E:S.has.nontriv.cohom}  
without further work.
 
Suppose \eqref{E:S.has.nontriv.cohom} is false. I.e., \emph{suppose}
   \begin{equation} \label{E:cohom.S.vanishes.in.large.dim}
      \check{H}^{n \nvar - k-1}(\Ss') = 0 \quad \forall \; k = 0, \ldots, \nvar.
   \end{equation}
Let 
	\begin{equation*}
		i : \D \hookrightarrow (\D, \Ss') \text{ and } 
		  j : \D \setminus \Ss' \hookrightarrow \D \text{ be inclusions.}
	\end{equation*}
By \eqref{E:cohom.S.vanishes.in.large.dim} 
(and Dold \cite[Proposition 6.10, p.\ 284]{aD95.alg.topol}), for $\ell \leq \nvar$
   \[
      0 = \check{H}^{n \nvar-\ell}(\Ss') \leftarrow \check{H}^{n \nvar-\ell}(\D) 
         \xleftarrow{\check{i}} \check{H}^{n \nvar-\ell}(\D, \Ss') 
           \leftarrow \check{H}^{n \nvar-\ell-1}(\Ss') = 0,
   \]
where the equalities on the left and right follow from \eqref{E:cohom.S.vanishes.in.large.dim} with $k = \ell-1$ and $k = \ell$, resp. Thus, 
$\check{i}$ is an isomorphism in dimensions $n \nvar - \ell$ for $0 \leq \ell \leq \nvar$. Let $\ell \leq \nvar$ be an arbitrary non-negative integer. 
By Poincar\'{e}-Lefschetz duality (Dold \cite[Proposition 7.2, p.\ 292 and (7.6), p.\ 293]{aD95.alg.topol}), 
there is a commutative diagram\footnote{The commutativity of that square follows from Dold \cite[(7.6), p.\ 293]{aD95.alg.topol}. Here are the details. 
Take $M := \D$, $(\tilde{K}, \tilde{L}) := (\D, \varnothing)$, and $(K,L) := (\D, \Ss')$. 
Dold \cite[(7.6), p.\ 293]{aD95.alg.topol} is actually a little ambiguous. In its first appearance in the formula, $i'$ is the inclusion map 
$(M \setminus L, M \setminus K) \hookrightarrow (M \setminus \tilde{L}, M \setminus \tilde{K})$. Call this $i'_{L}$. Thus, $i'_{L}$ is the inclusion $j$ we already defined. The inclusion $i$ in Dold \cite[(7.6), p.\ 293]{aD95.alg.topol} is just the inclusion 
$i : M = \D \hookrightarrow (\D, \Ss') = (K,L)$ that we already defined. 

In its second appearance, $i'$ is the inclusion $(M, M \setminus K) \hookrightarrow (M, M \setminus \tilde{K})$. Call this $i'_{R}$. Thus, $i'_{R} : \D \to \D$ is the identity. $\xi$ is the orientation class, $o \in H_{n \nvar}(\D)$ (see proof of theorem \ref{T:Phi.star.Hr.contains.Theta.star.Hr}). The vertical isomorphisms in \eqref{E:i.j.S.D.com.diag}  just correspond to cap product with $o$. Let $x \in \check{H}^{n \nvar-k}(K,L) = \check{H}^{n \nvar-k}(\D, \Ss')$. Then Dold \cite[(7.6), p.\ 293]{aD95.alg.topol} becomes
	\[
		j_{\ast}(x \cap o) = i'_{L, \ast}(x \cap \xi) = \check{i} (x) \cap i'_{R, \ast} (\xi) 
		             = \check{i} (x) \cap o,
	\]
which is exactly the diagram \eqref{E:i.j.S.D.com.diag}.} 
   \begin{equation}    \label{E:i.j.S.D.com.diag}
      \begin{CD}
         H_{\ell}(\D \setminus \Ss') @>j_{\ast}>> H_{\ell}(\D)  \\
         @A\isomto AA          @AA\isomto A  \\
         \check{H}^{n \nvar-\ell}(\D, \Ss') @>>\check{i} \; (\isomto) > \check{H}^{n \nvar-\ell}(\D).
      \end{CD}
   \end{equation}
Therefore, 
   \begin{equation}  \label{E:j*.is.isom.in.low.dim}
      j_{\ast} \text{ is an isomorphism in dimensions $\nvar$ or lower.}
   \end{equation}
By Munkres \cite[Example 2, p.\ 346]{jrM84}, 
	\begin{equation}  \label{E:H.p-1.is.triv.or.cyc}
		H_{\nvar-1}(\D) \text{ is trivial, if $\nvar > 1$, or infinite cyclic if } \nvar = 1.
	\end{equation}
The same thing goes for $S^{\nvar}$.
By \eqref{E:j*.is.isom.in.low.dim} the same thing goes for 
$H_{\nvar-1}(\D \setminus \Ss')$. Therefore,
by Munkres \cite[Theorem 52.3(b), p.\ 318]{jrM84}
   \begin{equation}  \label{E:Ext.H.p-1.D.less.S.is.0}
       \text{Ext}[H_{\nvar-1}(S^{\nvar}), \mathbb{Z}] 
         = \text{Ext}[H_{\nvar-1}(\D), \mathbb{Z}] = 
          \text{Ext}[H_{\nvar-1}(\D \setminus \Ss'), \mathbb{Z}] = 0. 
   \end{equation}

Let ``id'' generically denote the identity map or homorphism. By Munkres \cite[Theorem 41.1(a), p.247]{jrM84} \eqref{E:j*.is.isom.in.low.dim}, implies
    \begin{equation}  \label{E:Hom.j*.is.isom}
       \text{Hom}(j_{\ast}, \text{id}) : \text{Hom}[H_{\nvar}(\D), \mathbb{Z}]
          \to \text{Hom}[H_{\nvar}(\D \setminus \Ss'), \mathbb{Z}]
          \text{ is an isomorphism.}
    \end{equation}

By \eqref{E:Phi.star.to.H.nvar.sphere.nontriv}, if $f : H_{\nvar}(S^{\nvar}) \to \mathbb{Z}$ is an isomorphism then $f \circ \Phi_{\ast} : H_{\nvar}(\D \setminus \Ss') \to \mathbb{Z}$ is nontrivial. 
Thus, $\text{Hom}(\Phi_{\ast}, \text{id}) : \text{Hom}[H_{\nvar}(S^{\nvar}), \mathbb{Z}]  \to \text{Hom}[H_{\nvar}(\D \setminus \Ss'), \mathbb{Z}]$ is nontrivial. 
(See the definition in Munkres \cite[p.\ 248]{jrM84}.)  
By Munkres \cite[Corollary 53.2 p.\ 323]{jrM84} and
\eqref{E:Ext.H.p-1.D.less.S.is.0} we have the following homomorphism of exact sequences.
 \small
  \[
   \begin{CD}
    & & & & & & 0        \\
    & & & & & & @|       \\
    0 @<<< \text{Hom}[H_{\nvar}(S^{\nvar}), \mathbb{Z}]  @<\isomto<<  H^{\nvar}(S^{\nvar})  @<<<  
                                                                        \text{Ext}[H_{\nvar-1}(S^{\nvar}), \mathbb{Z}]  @<<< 0 \\ 
    & &    @VV\text{Hom}(\Phi_{\ast}, \text{id})V    @VV\Phi^{\ast}V    @V\text{Ext}(\Phi_{\ast}, \text{id})VV  \\ 
    0 @<<< \text{Hom}[H_{\nvar}(\D \setminus \Ss'), \mathbb{Z}]  @<\isomto<<  H^{\nvar}(\D \setminus \Ss')  
                                                         @<<<  \text{Ext}[H_{\nvar-1}(\D \setminus \Ss'), \mathbb{Z}]  
                                                                        @<<< 0  \\
   & & & & & & @|   \\
   & & & & & & 0,
   \end{CD}
  \]
 \normalsize 
where ``$H^{\ast}$'' indicates singular cohomology. It follows that
   \begin{equation}   \label{E:lambda^*.is.nontriv.in.dim.p}
      \Phi^{\ast} : H^{\nvar}(S^{\nvar}) \to H^{\nvar}(\D \setminus \Ss') 
         \text{ is nontrivial.}
   \end{equation}

Similarly, by \eqref{E:Hom.j*.is.isom}, we have,
 \small
   \[
   \begin{CD}    & & & & & & 0        \\
    & & & & & & @|       \\
    0 @<<< \text{Hom}[H_{\nvar}(\D), \mathbb{Z}]  @<\isomto<<  H^{\nvar}(\D)  
        @<<<  \text{Ext}[H_{\nvar-1}(\D), \mathbb{Z}]  @<<< 0 \\ 
    & &    @VV\text{Hom}(j_{\ast}, \text{id})\;(\isomto)V    @VVj^{\ast}V    
        @V\text{Ext}(j_{\ast}, \text{id})VV  \\ 
    0 @<<< \text{Hom}[H_{\nvar}(\D \setminus \Ss'), \mathbb{Z}]  @<\isomto<<  
        H^{\nvar}(\D \setminus \Ss') @<<<  \
            text{Ext}[H_{\nvar-1}(\D \setminus \Ss'), \mathbb{Z}]  @<<< 0.  \\
   & & & & & & @|   \\
   & & & & & & 0
   \end{CD}
   \]
 \normalsize
It follows from \eqref{E:j*.is.isom.in.low.dim} that 
   \begin{equation}  \label{E:j^*.is.isom.in.dim.p}
      j^{\ast} : H^{\nvar}(\D) \to H^{\nvar}(\D \setminus \Ss') \text{ is an isomorphism.}
   \end{equation}

Let $\alpha \in H^{\nvar}(S^{\nvar})$ be a generator of $H^{\nvar}(S^{\nvar})$. By \eqref{E:H.p-1.is.triv.or.cyc} and Munkres 
\cite[Theorem 60.5, p.\ 358; Theorem 54.4(c), p.\ 329; and Theorem 50.8, p.\ 305]{jrM84}, the following classes consitute a basis for $H^{\nvar}(\D)$
   \begin{align}  \label{E:bk.=.1sxa}
      \text{\small{$k^{th}$}} & \text{\small{factor}}  \notag \\
	      &\downarrow  \\
	         \beta^{k} := 1 \times \cdots \times 1 \times & \,\alpha \times 1 \times \cdots \times 1 
	            \quad (k = 1, \ldots, n),     \notag
   \end{align}
where ``$\times$'' denotes the
cohomology cross product and $1 \in H^{0}(S^{\nvar})$ is the unit element. It follows from \eqref{E:j^*.is.isom.in.dim.p} that 
	\begin{equation}  \label{E:bks.form.basis}
		j^{\ast}(\beta^{1}), \ldots, j^{\ast}(\beta^{n}) \text{ form a basis of } 
		  H^{\nvar}(\D \setminus \Ss').
	\end{equation}

Let $\tau$ be a permutation of $(1, \ldots, n)$. Define 
$\tilde{\tau} : \D \to \D$ to be the map that performs the corresponding permutation of coordinates on $\D$. I.e., if $y_{1}, \ldots, y_{n} \in S^{\nvar}$ then 
$\tilde{\tau}(y_{1}, \ldots, y_{n}) = (y_{\tau(1)}, \ldots, y_{\tau(n)})$. By \eqref{E:assume.Phi.S'.sym} $\Ss'$ is invariant under permutation of coordinates. Therefore, $\tilde{\tau}$ maps $\D \setminus \Ss'$ into itself. Also denote by $\tilde{\tau}$ the restriction of $\tilde{\tau}$ to $\D \setminus \Ss'$ regarded as a map 
into $\D \setminus \Ss'$. The operation $\tau \mapsto \tilde{\tau}$ is functorial: 
$\widetilde{ \tau_{1} \circ \tau_{2} } = \tilde{\tau}_{1} \circ \tilde{\tau}_{2}$.
We have
   \begin{equation}  \label{E:j.and.tau.commute}
      j \circ \tilde{\tau} = \tilde{\tau} \circ j \text{ on } \D \setminus \Ss'.
   \end{equation}
Since $\Phi$ is symmetric in its arguments (\eqref{E:assume.Phi.S'.sym}), 
   \begin{equation}  \label{E:Phi.sigma.tild.=.lambda}
      \Phi \circ \tilde{\tau} = \Phi \text{ on } \D \setminus \Ss'.
   \end{equation}
By Munkres\cite[Theorem 61.2, p.\ 361]{jrM84}\footnote{
If $\tau: \{ 1, 2, \ldots, n \} \to \{ 1, 2, \ldots, n \}$ is a simple transposition, i.e., permutation that just swaps two adjacent numbers, then it is immediate from
Munkres \cite[Theorem 61.2, p.\ 361]{jrM84} and the naturality of the cohomology cross product 
(Munkres \cite[Theorem 60.5, p.\ 358]{jrM84}) that
$\tilde{\tau}^{\ast}(\beta^{k}) = \beta^{\tau(k)} = \beta^{\tau^{-1}(k)}$. If $\tau$ is any permutation then it can be written as a product $\tau = \tau_{1} \circ \cdots \circ \tau_{q}$ of simple transpositions. Thus, since the cohomology functor is contravariant,
$\tilde{\tau}^{\ast}(\beta^{k}) = \beta^{\tau_{q}^{-1} \circ \cdots \circ \tau_{1}^{-1}(k)}$. 
But $\tau_{q}^{-1} \circ \cdots \circ \tau_{1}^{-1} = \tau^{-1}$.
}, 
   \begin{equation} \label{E:perm.form}
      \tilde{\tau}^{\ast}(\beta^{k}) = \beta^{\tau^{-1}(k)} \quad (k = 1, \ldots, n).
   \end{equation}

Let 
    \begin{equation}  \label{E:gamma:=.Phi.star(alpha)}
      \gamma = \Phi^{\ast}(\alpha) \in H^{\nvar}(\D \setminus \Ss') .
    \end{equation}
Then, by \eqref{E:lambda^*.is.nontriv.in.dim.p}, $\gamma \ne 0$. By \eqref{E:bks.form.basis},
we may write
   \[
      \Phi^{\ast}(\alpha) = \gamma = \sum_{k = 1}^{n} m_{k} \, j^{\ast}(\beta^{k}),
   \]
where $m_{1}, \ldots, m_{n} \in \mathbb{Z}$ are not all 0. Applying \eqref{E:Phi.sigma.tild.=.lambda}, \eqref{E:j.and.tau.commute}, and \eqref{E:perm.form} to this we get,
   \[
      \sum_{k = 1}^{n} m_{k} \, j^{\ast}(\beta^{k}) = \gamma = \tilde{\tau}^{\ast} \circ \Phi^{\ast}(\alpha) 
         = \sum_{\ell = 1}^{n} m_{\tau(\ell)} \, j^{\ast}(\beta^{\ell}) \quad 
              \text{ for every permutation } \tau.
   \]
Since $j^{\ast}(\beta^{1}), \ldots, j^{\ast}(\beta^{n})$ are linearly independent, it follows that 
$m_{1} = \cdots = m_{n} = m \ne 0$ for some $m \in \mathbb{Z}$. I.e.,
   \begin{equation} \label{E:gamma.is.mult.of.sum}
      \gamma = m \sum_{k = 1}^{n} j^{\ast}(\beta^{k}).
   \end{equation}

Let $\Delta$ be the diagonal map, 
    \begin{equation}  \label{E:Delta.is.diag.map.on.sphere}
      \Delta(y) = (y, \ldots, y) \in \D \; (y \in S^{\nvar}) .
    \end{equation}  
By \eqref{E:loc.measure.has.no.sings.near.T}, 
$\Delta(S^{\nvar}) \subset \D \setminus \Ss'$. Thus, 
$\Delta$ may be thought of as having codomain $\D$ or $\D \setminus \Ss'$. With this understanding, we have
   \begin{equation} \label{E:j.connects.2.deltas}
      \Delta = j \circ \Delta.
   \end{equation}
By \eqref{E:Phi.on.diagnl.exact},
   \begin{equation} \label{E:Delta.is.rite.inverse.of.Phi}
      (\Phi \restriction_{\T}) \circ \Delta = \Phi \circ \Delta = \text{identity on } S^{\nvar}.
   \end{equation}
Thus, by \eqref{E:Delta.is.rite.inverse.of.Phi}, \eqref{E:gamma:=.Phi.star(alpha)}, \eqref{E:gamma.is.mult.of.sum}, \eqref{E:j.connects.2.deltas}, \eqref{E:bk.=.1sxa}, 
Munkres \cite[Theorem 61.3, p.\ 362]{jrM84}, and the fact that, by assumption, $n > 1$, we have
    \begin{equation}  \label{E:alpha=mn.alpha}
        \alpha = \Delta^{\ast} \circ \Phi^{\ast}(\alpha) = \Delta^{\ast}(\gamma)
          = m \sum_{k = 1}^{n} \Delta^{\ast} \circ j^{\ast}(\beta^{k}) 
             = m \sum_{k = 1}^{n} \Delta^{\ast}(\beta^{k})
                 = mn \: \alpha \ne \alpha,
    \end{equation}
This contradiction means \eqref{E:cohom.S.vanishes.in.large.dim} must be false, i.e., \eqref{E:S.has.nontriv.cohom} holds.
  \end{proof}

  \begin{corly}  \label{C:homotopic.to.identity.on.sphere}
Let $\Delta$ be the diagonal map, $\Delta(y) = (y, \ldots, y) \in \D$ 
($y \in S^{\nvar}$). Suppose all the hypotheses of theorem \ref{T:spher.loc.sing.codim.nvar+1} hold except instead of \eqref{E:Phi.on.diagnl.exact} only require that 
    \begin{equation} \label{E:Phi.circ.Delta.homot.to.identity.on.sphere}
      \Phi \circ \Delta \text{ is homotopic to the identity on } S^{\nvar}.
    \end{equation} 
Then \eqref{E:S.has.nontriv.cohom}, \eqref{E:S.has.poz.n.nvar-nvar-1.measure}, and \eqref{E:bound.on.sing.set.dim.in.sphere.loc} all hold.
  \end{corly}
  \begin{proof}
Let $\Theta := \Phi \restriction_{\T} : \T \to S^{\nvar}$. Under the weakened assumption it remains the case that 
$\Theta_{\ast} : H_{\nvar}(\T) \to H_{\nvar}(S^{\nvar}) \isomto \mathbb{Z}$ remains nontrivial and \eqref{E:alpha=mn.alpha} still holds. So the proof of the theorem still goes through under the weaker hypostheses.
  \end{proof}
For an example of such a homotopy see remark \ref{R:loose.exactness.of.fit.and.homotopy}.

  \begin{remark}[Retraction of neighborhood of $\T$ for directional location problem] \label{R:retraction.sphere.loc} 
In order to apply theorem \ref{T:if.lin.combo.on.F.then.can.rstrct.to.bad.sings} part \ref{I:Omega.Phi.agree.on.Pf} we need a closed set, $\Pf$, of perfect fits s.t.\ $\Pf \cap \D'$ is dense in $\Pf$; a neighborhood, $\Rcl \subset \D$, of $\Pf$; and a retraction, $r : \Rcl \to \Pf$. Take $\Pf := \T$ defined in \eqref{E:directional.T.defn}. Then $\Pf$ is closed and, by \eqref{E:loc.measure.has.no.sings.near.T} and \eqref{E:D'.=.D.less.S}, $\Pf \cap \D' = \Pf$. The neighborhood $\Rcl$ needs to be $G$-invariant and $r$ needs to be $G$-equivariant, i.e., $r \circ g = g \circ r$ for every $g \in G$. Here, $G$ is a sensible finite group acting on $\D$. From \eqref{E:assume.Phi.S'.sym}, we see that the natural choice of $G$ is the permutation group on $n$ symbols.
Let $y \cdot y'$ be the usual ``dot'' product of $y, y' \in \RR^{\nvar +1}$. Take $\Rcl \supset \Pf$ to be the set of $x = (y_{1}, \ldots, y_{n}) \in (S^{\nvar})^{n}$ s.t.\ for every $i, j = 1, \ldots, n$ we have $y_{i} \cdot y_{j} > 0$. Thus, if $x \in \Rcl$ 
we have $| y_{1} + \cdots + y_{n} | > 0$. 
If $x \in \Rcl$ let $y := | y_{1} + \cdots + y_{n} |^{-1} ( y_{1} + \cdots + y_{n} ) \in S^{\nvar}$ and 
define $r(x) = (y, \ldots y) \in \Pf = \T \subset \D$. Obviously, $r \circ g = g \circ r$.
(See remark \ref{R:retraction.in.manifs}.)
  \end{remark}

\section{Severe singularities for location problem on $S^{\nvar}$}  \label{S:convex.combos.on.spheres}
It is easy to to construct a convex combination function (definition \ref{D:convex.combo.fn}) on $S^{\nvar}$. Recall that $\F = S^{\nvar}$. If $u \in \F$ and $\theta \in (0, \pi/2]$, denote by $V_{u, \theta}$ the spherical cap
	\begin{equation}  \label{E:sphere.cap.defn}
	      V_{u, \theta} = \{ v \in \F : v \cdot u > \cos \theta \}.
	 \end{equation}
Thus, $V_{u, \theta}$ is a spherical cap of radius $\theta$ and $V_{u,\pi/2}$ ($u \in \F$) is an open hemisphere. Let
   \begin{equation}    \label{E:V.theta.defn}
      \msf{V}_{\theta} = \{ V_{u, \theta} \subset \F : u \in \F \}.
   \end{equation}
This cover is similar to the cover $\msf{V}_{\theta}$ defined in section \ref{SS:lin.combo.for.plane.fit.w/.k=nvar-1} except there $\F$ consists of lines (actually, orthogonal complements of lines) while  
here $\F$ consists of vectors. Thus a ``bad'' singularity (one in $\Ss^{\msf{V}_{\theta}}$) is a data set $x \in \D$ s.t.\ \emph{no} neighborhood 
of $x$ (in $\D'$) has an image under $\Phi$ whose closure lies in an open spherical cap (of radius $\theta$). (A subspace of $S^{q}$ that lies in no open hemisphere is ``taut''?)

It is easy to define a convex combination function (definition \ref{D:convex.combo.fn}) 
on $\msf{V}_{\theta}$. Let $u \in S^{\nvar}$; let $\theta \in (0, \pi/2]$; 
let $v_{0}, v_{1}, \ldots, v_{m} \in V_{u, \theta}$; 
and let $\lambda_{0}, \lambda_{1}, \ldots, \lambda_{m} \geq 0$  
with $\lambda_{0} + \lambda_{1} + \cdots + \lambda_{m} = 1$. 
Let $w := \lambda_{0} v_{0} + \cdots + \lambda_{m} v_{m}$. 
Then 
$u \cdot w > (\lambda_{0} + \lambda_{1} + \cdots + \lambda_{m}) \cos \theta 
= \cos \theta \geq 0$. In particular, $w \neq 0$. Let $V := V_{u, \theta}$ and define
   \begin{equation}   \label{E:lin.combo.on.sphere}
      \gamma \bigl( V, (\lambda_{0}, \lambda_{1}, 
        \ldots, \lambda_{m}), (v_{0}, v_{1}, \ldots, v_{m}) \bigr) = |w|^{-1} w. 
   \end{equation}
(See \cite{spE91.top.direct.axis}, section 2.) We also have $|w| \leq \lambda_{0} |v_{0}| + \cdots + \lambda_{m} |v_{m}| = 1$. Therefore, 
	\[
	      u \cdot 
	            \gamma \bigl( V, (\lambda_{0}, \lambda_{1}, 
        \ldots, \lambda_{m}), (v_{0}, v_{1}, \ldots, v_{m}) \bigr)  
		   \geq u \cdot w > (\lambda_{0} + \lambda_{1} + 
		            \cdots + \lambda_{m}) \cos \theta = \cos \theta.
	\]
I.e., $\gamma \bigl( V, (\lambda_{0}, \lambda_{1}, \ldots, \lambda_{m}), (v_{0}, v_{1}, \ldots, v_{m}) \bigr) \in V$, so property \ref{I:convex.combos.are.local} of definition \ref{D:convex.combo.fn} holds for $\gamma$.
Trivially, $\gamma$ satisfies properties \ref{I:consistency.of.conv.combos} through \ref{I:1.is.identity} of definition \ref{D:convex.combo.fn} as well as \eqref{E:drop.0.coefs}. We see that $\gamma$ is commutative (defined shortly after definition \ref{D:convex.combo.fn}).

Note that $\msf{V}_{\pi/2}$ means something very different from the $\msf{V}_{90^{\circ}}$ in section \ref{SS:lin.combo.for.plane.fit.w/.k=nvar-1} .
  
  \begin{remark} \label{R:convx.combo.fn.on.sphere}
The convex combination function defined in \eqref{E:lin.combo.on.sphere} 
with $\lambda_{0} = \cdots = \lambda_{m} = 1/(m+1)$ extends to a measure of location defined on 
	\[
		\bigl\{ (y_{0}, y_{1}, \ldots, y_{n}) \in \D : 
		  y_{0} + y_{1} + \cdots + y_{n} \neq 0 \bigr\}.
	\]
This measure of location, the ``directional mean'', is examined further in 
chapter \ref{Chptr:aug.direct.mean}.
  \end{remark}

By how much might $\Phi(x)$ swing as $x \in \D'$ varies near a $\msf{V}_{\pi/2}$-severe singularity? We argue as in the proof of proposition \ref{P:diam.oriented.90.degree.sing} to prove the following. Recall that, by \eqref{E:angle.between.vectors}, the codomain 
of $\arccos$ is $[0, \pi]$.

  \begin{prop}  \label{P:diam.of.image.of.nhbd.of.sing.of.loc.meas}
Let $\Phi$ be a measure of location on the sphere $S^{\nvar}$. Let 
$x \in \Ss^{\msf{V}_{\pi/2}}$. Let $\epsilon > 0$ and let $\clU$ be a neighborhood of $x$. Then there exists $x_{1}, x_{2} \in \clU \cap \D'$ s.t.\ 
$\arccos(-1/\nvar + \epsilon) < \angle \bigl( \Phi(x_{1}, \Phi(x_{2}) \bigr)$. 
In particular, for some $x_{1}, x_{2} \in \clU \cap \D'$, 
$\angle \bigl( \Phi(x_{1}), \Phi(x_{2}) \bigr)$ is obtuse.
  \end{prop}
Observe that if $\epsilon \in (0, 1/\nvar)$ then $\pi/2 < \arccos(-1/\nvar + \epsilon) < \pi$.
  \begin{proof} Let $\D'$ be the dense subset of $\D$ relative to which singularities of $\Phi$ are defined. 
Let $x \in \Ss^{\msf{V}_{\pi/2}}$. Let $\clU$ be a neighborhood of $x$ and let 
$\msf{A} := \overline{\Phi \bigl[ \clU \cap \D' \bigr]}$. Thus, $\msf{A}$ is contained in no $V_{u,\pi/2}$. Regard $\msf{A}$ as a subset of $\RR^{\nvar}$. $\msf{A}$ lies in the closed unit ball, $\overline{B_{1}(0)}$, centered at 0, obviously. (See \eqref{E:ball.defn}.) Let 
$\overline{B_{r}(y)}$, for some $y \in \RR^{\nvar+1}$, be a smallest closed ball (i.e. has smallest radius) containing 
$\msf{A}$. Thus, $\msf{A} \subset \overline{B_{r}(y)} \cap \overline{B_{1}(0)}$. The radius, $r$, of $\overline{B_{r}(y)}$ is $\leq 1$ and the center, $y$, does not have to be 0. 
If $r = 1$ then we might as well take $y = 0$. We will show $r = 1$. Assume $r < 1$. 
Since $\msf{A} \subset S^{\nvar}$, $y$ cannot be 0. Let $u := |y|^{-1} y \in S^{\nvar}$. 

\emph{Claim:} $\msf{A} \subset V_{u, \pi/2}$. For suppose not and let 
$w \in \msf{A} \setminus V_{u, \pi/2} \subset \overline{B_{r}(y)}$. 
Then $\cos \angle(w, u) = w \cdot u \leq 0$. (See \eqref{E:angle.between.vectors}.)
But $y \propto u$ so $\cos \angle(w, y) = \cos \angle(w, u) < 0$. Therefore, applying the Law of Cosines to the triangle with vertices $0, y, w$ we have
    \begin{equation*}
     r^{2} \geq |w-y|^{2} = |w|^{2} + |y|^{2} 
        - 2 |w| |y| \bigl( \cos \angle(w, y) \bigr) \geq |w|^{2} + |y|^{2} 
          = 1 + |y|^{2} > 1 > r^{2} .
    \end{equation*}
Contradiction. Therefore, $\msf{A} \subset V_{u, \pi/2}$,
which in turn contradicts the assumption that $x \in \Ss^{\msf{V}_{\pi/2}}$. Therefore, 
$r = 1$.

In other words, $\overline{B_{1}(0)}$ is the (a?)\ smallest ball containing $\msf{A}$. Therefore, by Jung's theorem (Jung \cite{hJ1901.Jungs.theorem}, \cite{hJ1910.Jungs.theorem}, Rademacher and Toeplitz \cite[Chapter 16]{hRoT1957.EnjoymentOfMath}, Wikipedia), $diam(\msf{A}) \geq \sqrt{2(\nvar+1)/\nvar} > \sqrt{2}$. (Here, ``$diam(\msf{A})$'' is Euclidean diameter. See \eqref{E:diam.of.set}.)

Let $\epsilon \in (0,1)$. Recall that $| \cdot |$ is Euclidean distance (in $\RR^{\nvar+1}$ in this case). 
Since $\msf{A} = \overline{\Phi \bigl[ \clU \cap \D' \bigr]}$ there exist 
$x_{1}, x_{2}  \in \clU \cap \D'$ s.t.\ 
$\bigl| \Phi(x_{1}) - \Phi(x_{2}) \bigr| > \sqrt{2(\nvar+1)/\nvar} - \epsilon/2$. 
Let $z_{i} := \Phi(x_{i})$ ($i=1,2$). Then
    \begin{equation*}
      2 - 2 z_{1} \cdot z_{2} = |z_{1}-z_{2}|^{2} > \frac{2(\nvar+1)}{\nvar} 
        - \epsilon \sqrt{\frac{2(\nvar+1)}{\nvar}} + \frac{\epsilon^{2}}{4} 
    \end{equation*}
Now, $\nvar \geq 1$, by \eqref{E:nvar.n.for.spheres}, and $2(\nvar+1)/\nvar$ decreases in $\nvar$. Therefore,  
    \begin{equation*}
         2 - 2 z_{1} \cdot z_{2} \geq \frac{2(\nvar+1)}{\nvar} + (-2 + 1/4) \epsilon 
           >  \frac{2(\nvar+1)}{\nvar} - 2 \epsilon .
    \end{equation*}
Hence, 
    \begin{equation*}
      \cos \angle(z_{1}, z_{2}) = z_{1} \cdot z_{2} < 1 - \frac{1(\nvar+1)}{\nvar} + \ \epsilon
        = -1/\nvar + \epsilon .
    \end{equation*}
Remembering that $\arccos$ is decreasing, we get 
$\angle(z_{1}, z_{2}) > \arccos (-1/\nvar + \epsilon)$, as desired.
  \end{proof}
Thus, for measures of location on the circle ($\nvar = 1$) arbitrarily close to a 
$\msf{V}_{\pi/2}$-severe singularity one can find $x_{1}, x_{2} \in \D'$ s.t.\ 
$\angle \bigl( \Phi(x_{1}), \Phi(x_{2}) \bigr) \bigr)$ is arbitrarily close to $\pi$ radians. This is achieved by the directional mean (remark \ref{R:convx.combo.fn.on.sphere}, chapter \ref{Chptr:aug.direct.mean}). Chapter \ref{Chptr:aug.direct.mean} shows that this generalizes. This is illustrated in figure \ref{F:CircleSing}. 

In corollary \ref{C:homotopic.to.identity.on.sphere} we relaxed the assumption \eqref{E:Phi.on.diagnl.exact}. We can use the ``severity trick'' (remark \ref{R:severity.trick} to relax assumption \eqref{E:loc.measure.has.no.sings.near.T} as well. Recall that $\Delta$ is the diagonal map, 
$\Delta(y) = (y, \ldots, y) \in \D$.
  \begin{corly}  \label{C:Haus.measure.of.severe.sings.on.spheres}
Suppose $\Phi : \D \partlyto S^{q}$, continuous on $\D \setminus \Ss$, satisfies \eqref{E:D'.=.D.less.S} and \eqref{E:assume.Phi.S'.sym}.
In place of \eqref{E:Phi.on.diagnl.exact} assume the restriction, 
$\Phi \restriction_{\T \setminus \Ss}$, of $\Phi$ to $\T \setminus \Ss$ has a unique continuous extension $\Theta$ to all of $\T$ (i.e., hypothesis \ref{Hyp:extend} of theorem \ref{T:Phi.star.Hr.contains.Theta.star.Hr} holds) and 
    \begin{equation} \label{E:Theta.circ.Delta.homot.to.id.on.sphere}
      \Theta \circ \Delta \text{ is homotopic to the identity on } S^{\nvar} .
    \end{equation} 
In place of \eqref{E:loc.measure.has.no.sings.near.T} assume  
      \begin{equation}   \label{E:loc.meas.has.no.severe.sings.near.T}
          \T \setminus \Ss \text{ is dense in } \T
            \text{ and } \Ss^{\msf{V}_{\pi/2}} \cap \T = \varnothing .
      \end{equation}
Then there is a data map $\mu' : \D \partlyto S^{q}$, continuous 
on $\D \setminus \Ss^{\msf{V}_{\pi/2}}$, s.t.\ the restriction $\mu' \restriction_{\T}$ 
equals $\Theta$. Thus, 
\eqref{E:Phi.circ.Delta.homot.to.identity.on.sphere} holds with $\mu'$ in place of $\Phi$.
Moreover, \eqref{E:loc.measure.has.no.sings.near.T}, 
and \eqref{E:assume.Phi.S'.sym} hold 
with $\mu'$ in place of $\Phi$ and $\Ss^{\msf{V}_{\pi/2}}$ in place of $\Ss'$.

We have $R := dist_{n \nvar - \nvar -1}(\Ss^{\msf{V}_{\pi/2}}, \T) > 0$ and, 
even if $\Ss$ is not closed,
    \begin{equation}  \label{E:Haus.measure.severe.sings.on.spheres}
      \Hm^{n \nvar - \nvar -1}(\Ss) \geq \Hm^{n \nvar - \nvar -1}(\Ss^{\msf{V}_{\pi/2}}) 
        \geq \gamma R^{n \nvar - \nvar -1} > 0
    \end{equation}
for some $\gamma > 0$ not depending on $\Phi$ or $\Ss$.  
(If $n \nvar - \nvar -1 = 0$, then, by \eqref{E:0.dim.Haus.measure} and \eqref{E:Haus.measure.severe.sings.on.spheres}, 
$\Ss^{\msf{V}_{\pi/2}} \neq \varnothing$.) 
In particular, $\text{codim} \, \Ss \leq \text{codim} \, \Ss^{\msf{V}_{\pi/2}} \leq \nvar + 1$.
  \end{corly}

Obviously, \eqref{E:loc.measure.has.no.sings.near.T} implies \eqref{E:loc.meas.has.no.severe.sings.near.T}. In \eqref{E:loc.meas.has.no.severe.sings.near.T}, we allow singularities in $\T$, just so long as they are not $\msf{V}_{\pi/2}$-severe. Otherwise, we do not restrict the size of the set of singularities in $\T$. This is in the spirit of the idea enunciated at the beginning of section \ref{SS:approx.cont.}. 
  \begin{proof}
The existence of $\mu'$ follows from theorem \ref{T:if.lin.combo.on.F.then.can.rstrct.to.bad.sings} and remark \ref{R:retraction.sphere.loc}. Therefore, by corollary \ref{C:homotopic.to.identity.on.sphere}, \eqref{E:S.has.nontriv.cohom}, \eqref{E:S.has.poz.n.nvar-nvar-1.measure}, and \eqref{E:bound.on.sing.set.dim.in.sphere.loc} all hold with $\mu'$ in place of $\Phi$ and 
$\Ss^{\msf{V}_{\pi/2}}$ in place of $\Ss'$. 
That $R > 0$ is immediate from \eqref{E:loc.meas.has.no.severe.sings.near.T}, \eqref{E:SV.is.closed}, and compactness of $\D$. 

Let $a := n \nvar - \nvar -1$. \emph{Claim:} 
$(\mu', \Ss^{\msf{V}_{\pi/2}}, S_{n}, \T, a)$, where as usual 
$S_{n}$ is the symmetric group on $n$ symbols (see \eqref{E:symbol.for.symmetric.group}) acting on $\D$, has property \ref{Pty:agree.near.T}. 
That part \ref{I:Phi.is.nice} of the property holds is immediate. 

Let $\Psi$ be as in part \ref{E:Psi.is.nice} of the property, so $\tilde{\Ss}$ is closed with empty interior and \eqref{E:assume.Phi.S'.sym} holds 
for $\Psi$. 
Then $\tilde{\Ss} \cap \T = \Ss^{\msf{V}_{\pi/2}} \cap \T = \varnothing$ and 
\eqref{E:Phi.circ.Delta.homot.to.identity.on.sphere} holds with $\Psi$ in place of $\Phi$, because it holds with $\mu'$ in place of $\Phi$. Therefore, by corollary \ref{C:homotopic.to.identity.on.sphere} again, 
\eqref{E:S.has.poz.n.nvar-nvar-1.measure} holds for $\Psi$, so $\Hm^{a}(\tilde{\Ss}) > 0$. Thus, part \ref{E:Psi.is.nice} of property \ref{Pty:agree.near.T} holds, too. 
So the claim that $(\mu', \Ss^{\msf{V}_{\pi/2}}, G, \T, a)$ has property \ref{Pty:agree.near.T} is proved. 

Since $\Pf = \T$ is a manifold, by example \ref{Ex:TubeNbhdSpecialCaseOfCones}, it has a neighborhood in $T \D \restriction_{\Pf}$ fibered over $\T$ by cones. And, by \eqref{E:Riem.metric.on.prod.of.spheres.is.invar}, the Riemannian metric on $\D$ is invariant under $S_{n}$.  See also \eqref{E:prod.of.spheres.has.perm.invar.triangulation}.  Therefore, we may apply theorem \ref{T:lwr.bnd.on.Haus.meas}. The ``$\Pf$'' in theorem \ref{T:lwr.bnd.on.Haus.meas} is $\T = S^{\nvar}$ so $p = \nvar$ and $d = n \nvar$. Therefore, $d-p-1 = n \nvar - \nvar - 1$. The ``$a$'' in \eqref{E:Hm.a.S.geq.R.d-p-1}  also equals $n \nvar - \nvar - 1$ and \eqref{E:Haus.measure.severe.sings.on.spheres} holds.
(By \eqref{E.rho.plays.role.of.xi.in.loc.on.spheres}, $\Hm^{n \nvar - \nvar - 1}$ is computed using $\rho$ defined in \eqref{E:rho.is.top.metric}.)
  \end{proof}. 
  
  \begin{example}  \label{E:really.dumb.Phi}
Let $n=3$ and $\nvar = 1$. Define $\Psi : \D \partlyto S^{1}$ as follows. On $\T$, 
$\Psi$ sends $(y,y,y) = \Delta(y)$ to $y \in S^{1}$. If $x \in \D \setminus \T$ 
define $\Psi(x) = (-1,0) \in S^{1}$. Every point of $\T$ except 
$\Delta \bigl( (-1,0) \bigr)$ is a singularity of $\Psi$ but $\Delta \bigl( (1,0) \bigr)$ is the only $\msf{V}_{\pi/2}$-severe of $\Psi$. Hence, $\text{codim} \, \Ss^{\msf{V}_{\pi/2}}$ here 
is 3 - 0 = 3. 

Now, for a $\Phi$ satisfying its hypotheses, corollary \ref{C:Haus.measure.of.severe.sings.on.spheres} predicts 
$\text{codim} \, \Ss^{\msf{V}_{\pi/2}} \leq \nvar + 1 = 2$. The discrepancy in codimension is explained by the failure of $\Psi$ to satisfy \eqref{E:loc.meas.has.no.severe.sings.near.T}. 
  \end{example}

Consider this weaker version of hypotheses \eqref{E:Phi.on.diagnl.exact} and \eqref{E:loc.measure.has.no.sings.near.T}.
   \begin{equation}  \label{E:location.Phi.on.diagnl.loose} 
     \Phi \text{ is defined and continuous everywhere on $\T$ and for every } 
       y \in S^{\nvar}, \text{ we have } \Phi(y, \ldots, y) \cdot y > -1.
   \end{equation}
(See lemma \ref{L:loose.exactness.of.location.fit} for a generalization.) Here, ``$\cdot$'' indicates the usual inner product on $\mathbb{R}^{\nvar}$. \eqref{E:location.Phi.on.diagnl.loose} will play an important role in chapter \ref{Chptr:aug.direct.mean}. Thus, $\Phi$ is defined everywhere in $\T$, but $\T \cap \Ss$ maybe non-empty. A map $\Phi : \D \partlyto \Ss^{\nvar}$ symmetric in its arguments and satisfying \eqref{E:location.Phi.on.diagnl.loose} will be considered a measure of location on $\Ss^{\nvar}$. 

Note that $\Phi(y, \ldots, y) \cdot y > -1$ if and only if 
$\Phi(y, \ldots, y) + y \neq 0$. I.e., if and only if $\Phi(y, \ldots, y)$ and $y$ are not antipodal.

  \begin{remark}[\eqref{E:location.Phi.on.diagnl.loose} and homotopy]  \label{R:loose.exactness.of.fit.and.homotopy}
We connect \eqref{E:location.Phi.on.diagnl.loose} 
with corollary \ref{C:homotopic.to.identity.on.sphere}. Suppose 
$\Phi : \D \partlyto S^{\nvar}$ satisfies \eqref{E:location.Phi.on.diagnl.loose}.  
Let $\Delta$ be the diagonal map, $\Delta(y) = (y, \ldots, y) \in \T$ ($y \in S^{\nvar}$). Define 
	\begin{equation}  \label{E:arg!.maties!}
	   \arg(z) := |z|^{-1} z \in S^{\nvar} \quad 
		  \bigl( z \in \RR^{\nvar +1} \setminus \{ 0 \} \bigr).
	\end{equation}
$arg$ is continuous on its domain. Then, for $y \in S^{\nvar}$, 
$(\Delta \circ \Delta)(y)= \Delta(y, \ldots, y)$. Tentatively define
   \begin{equation*}
        H(y, t) = 
            \arg \bigl( [1 - t] (\Delta \circ \Delta)(y) + t y \bigr)  \in S^{\nvar},
               \text{ if } y \in S^{\nvar}, \, t \in [0,1] .
   \end{equation*}
We show $H(y, t)$ is defined for all $y \in S^{\nvar}$ and $t \in [0,1]$.
Let $y \in S^{\nvar}$, $t \in [0,1]$. As usual, denote the standard inner product 
on $\RR^{\nvar+1}$ by ``$\cdot$''. Then, by \eqref{E:location.Phi.on.diagnl.loose},
    \begin{multline*}
      \bigl| [1 - t] (\Delta \circ \Delta)(y) + t y \bigr|^{2}
        = (1 - t)^{2} \bigl| (\Delta \circ \Delta)(y) \bigr|^{2} 
          + 2 (1-t) t (\Delta \circ \Delta)(y) \cdot y + t^{2} |y|^{2} \\
            = (1 - t)^{2} + 2 (1-t) t (\Delta \circ \Delta)(y) \cdot y + t^{2} \\
              > (1 - t)^{2} - 2 (1-t) t + t^{2} = (1-2t)^{2} \geq 0 .
    \end{multline*}
Thus, $H$ is defined and continuous on $S^{\nvar} \times [0,1]$. Hence, $H$ is a homotopy between $\Delta \circ \Delta$ and the identity on $S^{\nvar}$ and so \eqref{E:Theta.circ.Delta.homot.to.id.on.sphere} holds.

Therefore, corollary \ref{C:Haus.measure.of.severe.sings.on.spheres} still holds with \eqref{E:Theta.circ.Delta.homot.to.id.on.sphere} replaced by \eqref{E:location.Phi.on.diagnl.loose}. 
  \end{remark} 

  \begin{remark}[Regularization of measures of location on a sphere] \label{R:regularization.on.spheres}
Ideas similar to those in remark \ref{R:regularization.generalities} hold for measuring location on a sphere. But some changes are needed because 
$H_{r}(\D) = \{0\}$, with $r = \nvar$, is not true in that context. Instead we appeal to corollary \ref{C:homotopic.to.identity.on.sphere}. The hypotheses of that corollary must fail for a regularized ``measure of location'', by which we mean a continuous map 
$\Phi : \D \to S^{\nvar}$ which we want to be calibrated (section \ref{SS:calibration}). For such a map, we end up with a hopefully small subset, 
$\T_{\ast}$, on which $\Phi$ must ``unwrap'' $\T$ from around $S^{\nvar}$. 

If a regularized $\Phi$ otherwise behaves reasonably, they by corollary \ref{C:Haus.measure.of.severe.sings.on.spheres} and remark \ref{R:loose.exactness.of.fit.and.homotopy}, it must violate \eqref{E:location.Phi.on.diagnl.loose}. Thus, a regularized measure of location on a sphere has to be maximally uncalibrated at some data set (in $\T$).

A specific example is discussed in remark \ref{R:aug.mean.regularization}.
  \end{remark}   

\chapter{Augmented Directional Mean}  \label{Chptr:aug.direct.mean}
In this chapter we investigate a class of measures of location on spheres (in particular on the circle) called ``augmented directional means''. In chapter \ref{Chptr:robst.loc.on.circle} we apply the results of chapters \ref{Chptr:Haus.meas.of.sing.set} and \ref{Chptr:spherical.location} to show that ``robust'' measures of location on the circle have larger singular sets than do augmented directional means, at least in extreme cases. 

In this chapter (and the next) we assume
    \begin{equation} \label{E:n>2,nvar>0}
        n > 2 \text{ and } \nvar > 0.
    \end{equation}

The ``directional or spherical mean'' (Fisher \emph{et al}  \cite[p.\ 31]{niFtLbjjE87} and 
remark \ref{R:convx.combo.fn.on.sphere} above) is the measure of location, $\Phi_{dm}$, that takes $x := (y_{1}, \ldots, y_{n}) \in \D := (S^{\nvar})^{n}$ to $\bar{x}/|\bar{x}|$, providing $\bar{x} \ne 0$, where $\bar{x}$ is the sample mean of $y_{1}, \ldots, y_{n}$ regarded as vectors in $\mathbb{R}^{\nvar +1}$. I.e., 
	\begin{equation}  \label{E:xbar.defn}
		\bar{x} := n^{-1} (y_{1} + \cdots + y_{n}).
	\end{equation} 

We generalize this somewhat as follows. Let $y_{0} \in S^{\nvar}$ be arbitrary but fixed, let $a \in [0, n)$, and consider the following measure of location. First, let
	\begin{equation}  \label{E:x.bar.y0.a.defn}
		\bar{x}_{y_{0}, a} := \bar{x}_{a} := (a + n)^{-1} ( a y_{0} + n \bar{x} ) .
	\end{equation}
Then let
	\begin{equation} \label{E:aug.direct.mean.defn}
		\mu_{y_{0}, a, n}(x) := \mu_{y_{0}, a}(x) := \mu_{a}(x) 
		    := |\bar{x}_{y_{0}, a}|^{-1} \bar{x}_{y_{0}, a} \in S^{\nvar},
	\end{equation}
whenever $\bar{x}_{y_{0}, a} \ne 0$. $\mu_{a}$ is the directional mean of the data set consisting of $x$ augmented by ``$a$ copies'' of $y_{0}$. (But $a$ does not have to be an integer. In fact, in section \ref{SS:aug.mean.sing.set.size}, we focus 
on the case $n-1 < a < n$.) Call $\mu_{y_{0}, a, n}(x)$ the ``augmented directional mean'' at $x$ (with ``augmentation point'' $y_{0}$ and ``augmentation weight'' $a$).  (This is not to be confused with the method of ``data augmentation'', Tanner \cite{maT91.data.aug}.)

Observe that $\mu_{a}$ is biased toward $y_{0}$. This makes sense if \emph{a priori} one believes the ``true'' location is near $y_{0}$. Larger $a$ corresponds to stronger belief. This idea can be formalized: $\mu_{a}$ is a Bayes estimator (Berger \cite{joB85.BergerBayes}; Nu\~{n}ez-Antonio and Gutierrz-Pena \cite{gN-AeG-P05.BayesDirectData}. This interpretation comes up in remark \ref{R:remoteness.from.Pf}.) Since for $a > 0$, the vector $\bar{x}_{y_{0}, a}$ is just $\bar{x}$ ``shrunk'' toward $y_{0}$, it is tempting to call $\mu_{y_{0}, a}$ a ``directional shrinkage mean'' or something similar. Instead, we call $\mu_{y_{0}, a}$ an ``augmented directional mean''. 

Note that if $a = n$, then, although $(-y_{0}, \cdots, -y_{0}) \in \T$ (see \eqref{E:directional.T.defn}), we have that $\mu_{y_{0}, n}(-y_{0}, \cdots, -y_{0})$ is not defined, so \eqref{E:Phi.on.diagnl.exact} fails. If $a > n$, then \eqref{E:location.Phi.on.diagnl.loose} 
with $\Phi =  \mu_{y_{0}, a}$ and $y = - y_{0}$ fails. On the other hand, let $0 \leq a < n$,  $y_{0} \in S^{\nvar}$, and $x = (y, \ldots, y) \in \T$. 
Then $|a y_{0} + n \bar{x}| = |a y_{0} + n y| \geq n - a > 0$, so $\mu_{y_{0}, a}$ is defined and continuous on $\T$. So, except remark \ref{R:aug.mean.regularization}, we always assume 
	\begin{equation} \label{E:a.tween.0.n}
		0 \leq a < n.
	\end{equation} 

In fact, we \emph{claim:} 
    \begin{equation} \label{E:aug.mean.on.diagnl.loose}
      \text{\eqref{E:location.Phi.on.diagnl.loose} holds for } \Phi = \mu_{a} 
        \text{ with } a \in [0,n) .
    \end{equation} 
Let $a \in [0,n)$. It suffices to show $\mu_{a}(y, \ldots, y) \cdot y > -1$ for every $y \in S^{\nvar}$. For suppose not. Then there exists $y \in S^{\nvar}$ s.t.\ $\mu_{y_{0}, a}(y, \ldots, y) \cdot y \leq -1$. This cannot happen if $a=0$, so $a > 0$. By the (Cauchy-)Schwarz inequality (Stoll and Wong \cite[Theorem 3.1, p.\ 79]{rrSetW68.LinearAlgebra}), 
	\begin{equation*}
		-1 = -\bigl| \mu_{y_{0}, a}(y, \ldots, y) \bigr| |y| 
		  \leq \mu_{y_{0}, a}(y, \ldots, y) \cdot y \leq -1.
	\end{equation*}
So $\mu_{y_{0}, a}(y, \ldots, y) \cdot y = -1$. By (Cauchy-)Schwarz again we have that 
$\mu_{y_{0}, a}(y, \ldots, y)$ and $y$ are linearly dependent. 
Hence $\mu_{y_{0}, a}(y, \ldots, y) = -y$. Let $c := (a+n) |\bar{x}_{y_{0}, a}| > 0$. Then 
    \begin{equation*}
        a y_{0} + n y =(a+n)  \bar{x}_{y_{0}, a} = c \mu_{y_{0}, a}(y, \ldots, y) = -c y. 
    \end{equation*}
Therefore, $y_{0} = \pm y$. If $y_{0} = y$ then $a = -n-c < 0$. Therefore, $y_{0} = -y$. But this means $n > a = n+c > n$, another contradiction. Thus,  \eqref{E:aug.mean.on.diagnl.loose} holds for $\Phi = \mu_{a}$.

For simplicity we often assume, WLOG, that 
	\begin{equation} \label{E:y0.starts.with.0s}
		y_{0} := (0, \ldots, 0, 1) \in S^{\nvar} \subset \RR^{\nvar +1}.
	\end{equation}
Observed that,
	\begin{multline} \label{E:same.hemisphere}
	 \text{If \eqref{E:y0.starts.with.0s} holds and } y \in S^{\nvar}
	   \text{ lies in the same hemisphere as } -y_{0} \\
	 \text{ then for some } w \in \overline{B_{1}^{\nvar}(0)} 
  \text{ we have } y = \bigl( w, -\sqrt{1 - | w |^{2} } \bigr)
	\end{multline}

Assume \eqref{E:y0.starts.with.0s}. Write $x_{0} := (y_{0}, \ldots, y_{0}) \in \D$. Let $x = (y_{1}, \ldots, y_{n}) \in \D$. Write $y_{i} = (w_{i}, z_{i})$, where $w_{i} \in \overline{B_{1}^{\nvar}(0)}$ (see \eqref{E:Euc.ball.defn}) and $z_{i} \in \RR$ ($i=1, \ldots,, n$). 
Thus, $|w_{i}|^{2} + |z_{i}|^{2} = 1$. 
Suppose $\bigl| x - (- x_{0}) \bigr| \leq \sqrt{2}$. Let $i = 1, \ldots, n$. Then
    \begin{equation*}
      2 - 2 y_{i} \cdot (-y_{0}) = |y_{i} - (-y_{0})|^{2} \leq 2, 
        \; i = 1, \ldots, n .
    \end{equation*}
Thus, $z_{i} = - y_{i} \cdot (-y_{0}) \leq 0$ so $y_{i}$ lies in the same hemisphere 
as $-y_{0}$, for each $i=1, \ldots, n$. Therefore, 
	\begin{multline}  \label{E:y.in.terms.of.w}
	    \text{Let } x = (y_{1}, \ldots, y_{n}) \in \D. 
	      \text{ If } |x - (-x_{0})| \leq \sqrt{2} \, \\
	        \text{ then }
	          \; y_{i} = \bigl( w_{i}, -\sqrt{1 - | w_{i} |^{2} } \bigr), 
	            \quad i =  1, \ldots, n.
	\end{multline}

\section{The singularities of $\mu_{a}$} \label{SS:sings.of.directional.mean}
Let 
    \begin{multline}   \label{E:S'a.D'a.defns}
      \Ss'_{a} := \{ x \in \D : \bar{x}_{a} = 0 \} 
        = \left\{ (y_{1}, \ldots, y_{n}) \in (S^{\nvar})^{n} : 
          \sum_{1}^{n} y_{i} = -a y_{0} \right\}
          \text{ and } \\ 
        \D'_{a} := \D' := \D \setminus \Ss'_{a} 
          = \left\{ (y_{1}, \ldots, y_{n}) \in (S^{\nvar})^{n} : 
            \sum_{1}^{n} y_{i} \neq -a y_{0} \right\} .
    \end{multline}
First, note that if $x = (y_{1}, \ldots, y_{n}) \in \D$ with $\bar{x}_{a} \ne 0$ then 
$\mu_{a}(x)$ is defined and continuous in a neighborhood of $x$. (In particular, 
if $y \in S^{\nvar}$ then $\mu_{a}$ is defined and continuous in a neighborhood 
of $x := (y, \ldots, y)$.) Thus, $\mu_{a}$ is continuous on $\D'_{a}$.
Obviously, $\Ss'_{a}$ is closed and \eqref{E:assume.Phi.S'.sym} holds 
for $\Phi = \mu_{a}$. Note that, by \eqref{E:x.bar.y0.a.defn}, if $y \in S^{\nvar}$ and $x = (y, \ldots, y) \in \T$ then, (Cauchy-)Schwarz, 
    \begin{multline}  \label{E:|x.bar.y0,a|.sqrd}
      |\bar{x}_{y_{0}, a}|^{2} = |a + n|^{-2} |a y_{0} + n y|^{2} 
        \geq |a + n|^{-2} \bigl( a^{2} - 2 an (y_{0} \cdot y) + n^{2} \bigr) \\
          \geq |a + n|^{-2} (a^{2} - 2 an + n^{2}) 
            = |a + n|^{-2} (a-n)^{2}.
    \end{multline}
Thus, by \eqref{E:a.tween.0.n},
    \begin{equation} \label{E:Ss'.a.cap.T.is.empty}
      \Ss'_{a} \cap \T = \varnothing , \text{ if } a < n .
    \end{equation}

  \begin{remark}[Regularization of $\mu_{a}$] \label{R:aug.mean.regularization}
Consider the map $\Psi : \D \times [0, \infty) \partlyto S^{\nvar}$ defined by 
$\Psi(x, a) := \mu_{a}(x)$, whenever it is defined. $\Psi$ is an instance of the general map $\Psi$ discussed in remark \ref{R:regularization.generalities}. \eqref{E:|x.bar.y0,a|.sqrd} implies $\Ss'_{a} \cap \T = \varnothing$ if $a > n$. In fact, if $a > n$, then
    \begin{equation*}
      \left| a y_{0} + \sum_{i=1}^{n} y_{i} \right| 
        \geq a - \left| \sum_{i=1}^{n} y_{i} \right| \geq a - n > 0 .
    \end{equation*}
Therefore, $\mu_{a}$ is defined and continuous on all of $\D$ if $a > n$. Thus, the augmented directional mean allows regularization in a straightforward way. (See remark \ref{R:regularization.on.spheres}.)

Let $a \geq 0$. Let $y \in S^{\nvar}$. We may write 
$y = y(\theta) := (\cos \theta) y_{0} + (\sin \theta) z$, where $z \in S^{\nvar}$ is orthogonal to $y_{0}$ and $\theta \in [0 , \pi]$. 
Let $x = x(\theta) = (y(\theta), \ldots, y(\theta)) \in \T$. 
For some $\phi = \phi(\theta) \in (-\pi , \pi]$ we have 
$\mu_{a}(x) = (\cos \phi) y_{0} + (\sin \phi) z$. Thus,
    \begin{equation}  \label{E:mu.a(x).dot.y}
      \mu_{a} \bigl( x(\theta) \bigr) \cdot y(\theta) = \cos(\phi - \theta) .
    \end{equation} 
$\mu_{a} \bigl( x(\theta) \bigr)$ is proportional 
to $(a + n \cos \theta) y_{0} + (n \sin \theta) z$. 
For $\omega \in (-\pi, \pi]$ and $r > 0$ define \linebreak
$\arg(r \cos \omega, r \sin \omega) := \omega$. Thus, $\arg$ is continuous and
    \begin{equation*}  
      \phi(\theta) = \arg (a + n \cos \theta , \, n \sin \theta) 
        = \arg (a/n + \cos \theta , \, \sin \theta) .
    \end{equation*}
By comparing $\phi$ to $\theta$ we can see how poorly calibrated $\mu_{a}$ is. The preceding implies that if $a = r n$, with $r \geq 0$ constant, then $\phi(\theta)$ is independent of $n$.

This is illustrated in figure \ref{F:aug.mean.calibration}. The red lines are the graphs of $\phi(\theta)$ for a measure of location $\Phi$ on the sphere satisfying \eqref{E:Phi.on.diagnl.exact}, e.g.\ the directional mean $\mu_{0}$. One has to sacrifice that when $a > 0$. In the top plot, with $a = 0.99 \, n$, we have that 
$\bigl| \phi(\theta) - \theta \bigr| < \pi$. Therefore, by \eqref{E:mu.a(x).dot.y}, 
for $a = 0.99 \, n$, we have $\mu_{a}(x) \cdot y > -1$. I.e., $\mu_{0.99 \, n}$ satisfies, \eqref{E:location.Phi.on.diagnl.loose} and by corollary \ref{C:Haus.measure.of.severe.sings.on.spheres} and remark \ref{R:loose.exactness.of.fit.and.homotopy}, $\mu_{0.99 \, n}$ has singularities.  

But in the lower plot, showing the graph 
of $\phi(\theta)$ when $a = 1.01 \, n$, we have $\phi(\pi) - \pi = -\pi$ and \eqref{E:location.Phi.on.diagnl.loose} fails. Since $1.01 \, n > n$, as observed above, 
$\mu_{1.01 \, n}$ is continuous, i.e., has no singularities. In the bottom panel, as 
$\theta \uparrow \pi$ the black curve, instead of proceeding up to $\pi$, returns to 0, leading to the grossest of possible errors: $\mu_{1.01} \bigl( x(\pi) \bigr) = - y(\pi)$. This is the kind of ``unwrapping'' described in remark \ref{R:regularization.generalities}. The difference in the graphs is dramatic and the switch, which occurs at $a = n$, from one pattern to the other is the sort of bifurcation also mentioned in remark \ref{R:regularization.generalities}.
  \end{remark}
  
   \begin{figure}
      \epsfig{file = 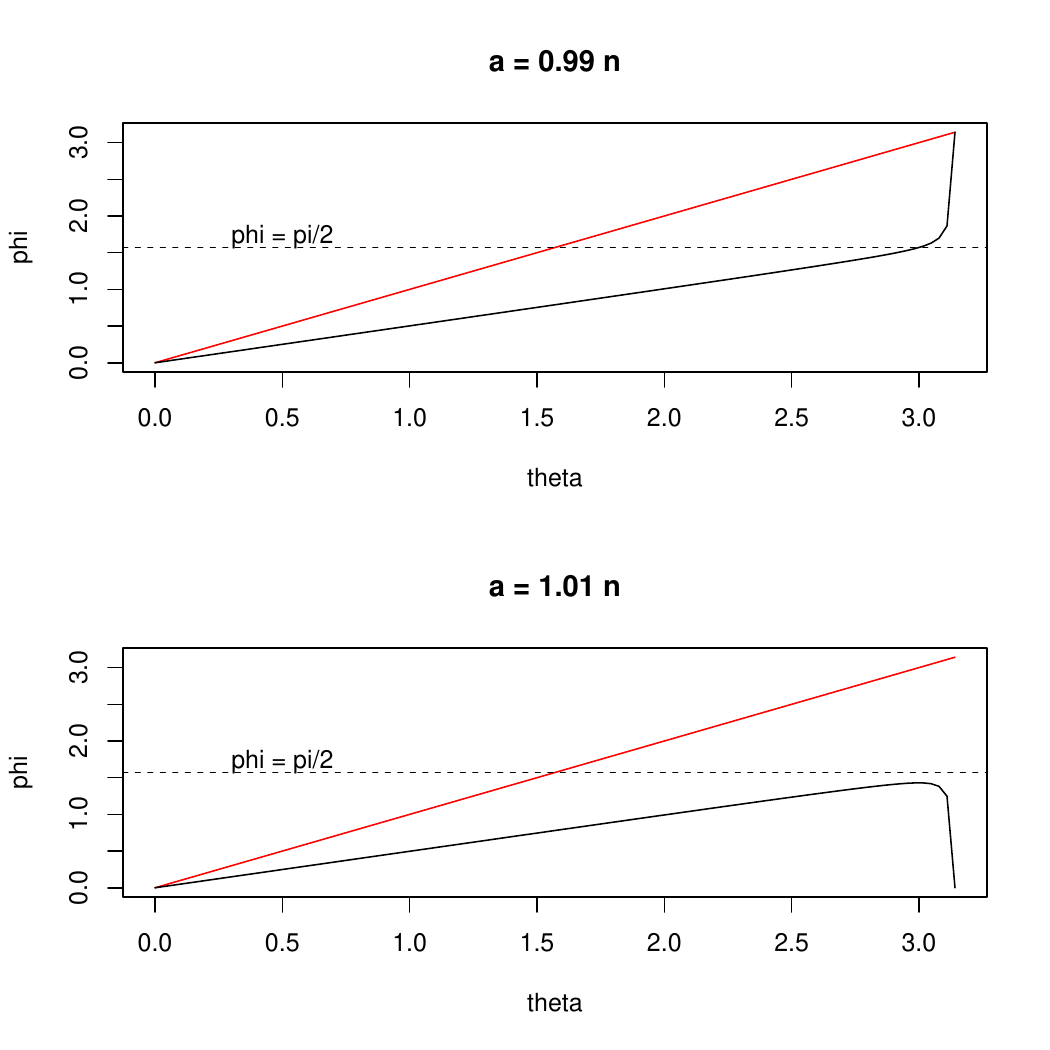, height = 5.8in, , }  
      \caption{$\theta$-axis is the angular displacement from $y_{0}$ of a point $y$ on $S^{\nvar}$. $\phi$-axis is the angular displacement from $y_{0}$ of the image under a measure of location on the sphere applied to $(y, \ldots, y) \in \T$. Red lines are the identity. That is the $\phi$ vs. $\theta$ curve of a perfectly calibrated measure of location, i.e., one satisfying \eqref{E:Phi.on.diagnl.exact}. The black curves are the $\phi$ vs.\ $\theta$ curve of the  augmented mean with, in the top graph, augmentation weight $0.99 n$ and, in the bottom, weight $1.01 n$. The augmented mean is regularized in the bottom plot, but not in the top one.
The two graphs illustrate the bifurcation that occurs in $\mu_{a}$ at $a = n$. }
             \label{F:aug.mean.calibration}
  \end{figure}

Denote the singular set of $\mu_{a}$ w.r.t.\ $\D'_{a}$ by $\Ss_{\mu_{y_{0}, a, n}}$, 
$\Ss_{y_{0}, a, n}$, $\Ss_{y_{0}, a}$, or $\Ss_{a}$. Then $\Ss'_{a}$ is a closed superset of $\Ss_{a}$. Denote the set of $\msf{V}_{\pi/2}$-severe singularities of $\mu_{a}$ 
by $\Ss_{a}^{\msf{V}_{\pi/2}}$. (See section \ref{S:convex.combos.on.spheres}.) To keep things simple assume \eqref{E:a.tween.0.n}. \emph{Claim:} 
	\begin{equation}  \label{E:Sa=Sa'}
	  \Ss'_{a} = \Ss_{a} = \Ss_{a}^{\msf{V}_{\pi/2}} .
	  \end{equation}
So $\Ss_{a}$ is an algebraic variety. 
 
Let $x = (y_{1}, \ldots, y_{n}) \in \Ss'_{a}$ be arbitrary. Let $v \in S^{\nvar}$ be orthogonal to $y_{1}$ and for $\theta \in \RR$, let 
$y_{1}' := y_{1}'(\theta) := \cos \theta \, y_{1} + \sin \theta \, v$. 
Thus, $y_{1}' \in S^{\nvar}$. 
For $i = 2, \ldots, n$ let $y_{i}' := y_{i}'(\theta) :=y_{i}$, so if $i > 1$ then $y_{i}'(\theta)$ is constant in $\theta$, and let $x'(\theta) := x'(\theta,v) := (y_{1}', y_{2}', \ldots, y_{n}')$. 
Let $\bar{x}'_{a} := (a + n)^{-1} \bigl( a y_{0} + \sum_{i=1}^{n} y_{i}' \bigr)$. 
Since $x \in \Ss'_{a}$,
    \begin{equation}  \label{E:x'.bar.for.trig.x'}
      (a + n) \bar{x}'_{a} = a y_{0} + \sum_{i=1}^{n} y_{i}'(\theta) 
        = a y_{0} + (y_{1}' - y_{1}) + \sum_{i=1}^{n} y_{i} 
          = y_{1}' - y_{1} = -(1 - \cos \theta) y_{1} + \sin \theta \, v .
    \end{equation}
A bi-product of the preceding is the following. Suppose $\theta \in (0, \pi/2)$, so $\theta \neq 0$. Then $\bar{x}'_{a} \neq 0$. Thus, $x'(\theta) \notin \Ss'_{a}$. But for such $\theta$, $x'(\theta)$ can be arbitrarily close to $x$. We conclude,
    \begin{equation} \label{E:mu.a.sing.set.has.empty.interior}
      \Ss'_{a} \text{ has empty interior.} 
    \end{equation}
(Which is essentially immediate from \eqref{E:S'a.D'a.defns} anyway.) Therefore, by \eqref{E:S'a.D'a.defns}, 
    \begin{equation} \label{E:mu.a.satisfies.E:D'.dense}
      \D' \text{ is dense in } \D .
    \end{equation}

By \eqref{E:x'.bar.for.trig.x'}, 
    \begin{equation*}
      \left| a y_{0} + \sum_{i=1}^{n} y_{i}'(\theta) \right|^{2}
      = \cos^{2} \theta + \sin^{2} - 2 \cos \theta + 1 = 2(1 - \cos \theta) ,
    \end{equation*}
because $y_{1} \cdot v = 0$ by choice of $v \in S^{\nvar}$. Therefore, by \eqref{E:x'.bar.for.trig.x'} again, 
    \begin{multline}  \label{E:mu.a.for.x'(theta)}
      \mu_{a} \bigl( x'(\theta) \bigr) = |\bar{x}'_{a}|^{-1} \bar{x}'_{a}
        = \frac{1}{\sqrt{2(1 - \cos \theta)}}
           \bigl] -(1 - \cos \theta) y_{1} + \sin \theta \, v \bigr] \\
            = -(1 - \cos \theta) y_{1} + \sin \theta \, v
              = - \frac{\sqrt{1 - \cos \theta}}{\sqrt{2}} \, y_{1} 
               + \frac{\sin \theta}{\sqrt{2(1 - \cos \theta)}} \, v .
    \end{multline}
But, applying L'Hospital's rule (Rudin \cite[Theorem 5.13, p.\ 94]{wR64.PMA}), we get 
    \begin{equation*}
       \frac{\sin \theta}{\sqrt{2(1 - \cos \theta)}} = \frac{\sin \theta}{2\sqrt{(1 - \cos \theta)/2}}
         = \frac{\sin \theta}{2 \sin \tfrac{1}{2} \theta} \to 1 \text{ as } \theta \to 0 .
    \end{equation*}
Substituting this into \eqref{E:mu.a.for.x'(theta)} and letting $\theta \to 0$, we get
$\mu_{a}(x'(\theta)) \to v$. I.e., 
    \begin{multline} \label{E:every.pt.in.S'a.is.really.bad.sing}
       \text{In an arbitrary neighborhood of any } x = (y_{1}, \ldots, y_{n}) \in \Ss'_{a} \\
         \text{ there is a data set $x' \in \D'$ s.t.\ $\mu_{a}(x')$ is arbitrarily close } \\
           \text{to any unit vector orthogonal to } y_{1}. 
    \end{multline}
In particular, arbitrarily close to any $x \in \Ss'_{a}$ there are data sets in $\D'$ whose images under $\mu_{a}$ are arbitrarily close to being antipodal. Thus, the closure of the image of any neighborhood of $x \in \Ss'_{a}$ in $\D'$ lies in no $V \in \msf{V}_{\pi/2}$.
Thus, $x \in \Ss_{a}^{\msf{V}_{\pi/2}} \subset \Ss_{a} \subset \Ss'_{a}$ (section \ref{S:convex.combos.on.spheres}). But $x$ is an arbitrary data set in $\Ss'_{a}$. 
The claim \eqref{E:Sa=Sa'} is proved. Examples of singularities of $\mu_{a}$ for two different values of $a$ are shown in figure \ref{F:CircleSing}. Those examples are examined in detail in section \ref{SS:17.point.aug.mean.exml.sings}. 

It now follows from \eqref{E:SV.is.closed} that
        \begin{equation}   \label{E:mu.a.Sa.is.compact}
            \Ss_{a} \text{ is compact.}
        \end{equation} 
It now follows from \eqref{E:Ss'.a.cap.T.is.empty} that  
    \begin{equation} \label{E:S.a.cap.T.empty}
      \Ss^{\msf{V}_{\pi/2}} \cap \T = \Ss_{a} \cap \T = \varnothing .
    \end{equation}
It follows from \eqref{E:mu.a.sing.set.has.empty.interior}, \eqref{E:mu.a.Sa.is.compact}, and \eqref{E:S.a.cap.T.empty} that \eqref{E:loc.measure.has.no.sings.near.T} holds 
with $\Ss' = \Ss_{a}$. \emph{A fortiori}, \eqref{E:loc.meas.has.no.severe.sings.near.T} holds for $\mu_{a}$. 

Recalling \eqref{E:aug.mean.on.diagnl.loose}, in summary we have,
	\begin{equation}  \label{E:basic.props.of.mu.a}
           \text{\eqref{E:location.Phi.on.diagnl.loose}, \eqref{E:loc.measure.has.no.sings.near.T}, 
		\eqref{E:loc.meas.has.no.severe.sings.near.T}
		and \eqref{E:assume.Phi.S'.sym} all hold for } 
		   \Phi = \mu_{a} \text{ for } a \in [0, n).
	\end{equation}
Therefore, by corollary \ref{C:Haus.measure.of.severe.sings.on.spheres} and remark \ref{R:loose.exactness.of.fit.and.homotopy}, 
we have \eqref{E:Haus.measure.severe.sings.on.spheres} holds for $\Phi = \mu_{a}$ and 
$\Ss = \Ss_{a}$.   
Let $R_{a} > 0$ denote the $\Hm^{n \nvar-\nvar-1}$-essential distance, 
$dist_{n \nvar-\nvar-1}(\Ss_{a}, \T) \geq dist (\Ss_{a}, \T) > 0$, 
from $\Ss_{a}$ to $\T$. (See \eqref{E:essential.dist.defn}.)
Then, by \eqref{E:Sa=Sa'},
    \begin{multline}  \label{E:R.n.nvar-p-1.H.bdd.away.frm.0}
      R_{a}^{-(n \nvar-\nvar-1)} \Hm^{n \nvar-\nvar-1}(\Ss_{a}) 
        = R_{a}^{-(n \nvar-\nvar-1)} \Hm^{n \nvar-\nvar-1}(\Ss_{a}^{\msf{V}_{\pi/2}}) \\
		      \text{ is bounded away from 0 as } a \uparrow n. \\
            \text{ In particular, }
         \dim \Ss_{a} = \dim \Ss_{a}^{\msf{V}_{\pi/2}} 
          \geq n \nvar - \nvar - 1.
    \end{multline}

\section{Size of singular set of augmented directional mean} \label{SS:aug.mean.sing.set.size}
(For analysis of the singular set of the augmented directional mean in a specific case, see section \ref{SS:17.point.aug.mean.exml.sings}.) Let 
    \begin{equation*}
      \Ss_{y_{0}, a, n} := \Ss_{a, n} := \Ss_{a}
    \end{equation*}
be the singular set of $\mu_{y_{0}, a}$. We have just seen that 
$\text{codim} \, \Ss_{a, n} \leq \nvar + 1$. First we prove the \emph{Claim:}   
    	\begin{equation}   \label{E:codim.Sa.=.nvar+1}
    		\text{codim} \, \Ss_{a, n} = \nvar + 1 .
    	\end{equation}
I.e., the augmented directional mean achieves the bound \eqref{E:bound.on.sing.set.dim.in.sphere.loc}.   

 Let
	   \begin{multline} \label{E:not.single.line.U.defn}
	      U := \bigl\{ (y_{1}, \ldots, y_{n}) \in (\RR^{\nvar +1})^{n} : 
	        \text{No } y_{i} = 0 \; (i = 1, \ldots, n)   \\
	          \text{ and } y_{1}, \ldots, y_{n} 
	            \text{ do not all lie on a single line through the origin in } 
	              \RR^{\nvar +1}  \bigr\}.
	   \end{multline}
Thus, $U$ is an open subset of $(\RR^{\nvar +1})^{n}$. 
Let 
	\begin{equation*}
		\tilde{\Ss}_{a} := \tilde{\Ss}_{a, n} := \Ss_{a} \cap U.
	\end{equation*}

Let $x := (y_{1}, \ldots, y_{n}) \in \Ss_{a} \setminus U$. Since $0 \notin \Ss_{a}$, we must have that $y_{1}, \ldots, y_{n} \in \D$ all lie on a single line through the origin 
in $\RR^{\nvar +1}$. On a given line through the origin there are $2^n$ data sets, singular or not, because there are two unit vectors (they are antipodal) that each span the line. Thus, $y_{i} = \pm y_{1}$ ($i = 2, \ldots, n$). By \eqref{E:Sa=Sa'} and \eqref{E:S'a.D'a.defns}, we also have $\sum_{i} y_{i} = -a y_{0}$. Therefore, 
$y_{i} = \pm y_{0}$ ($i = 2, \ldots, n$) and $a$ is an integer. It follows that 
$\Ss_{a} \setminus U$ is at most finite. In fact, if $a$ is not an integer 
$\Ss_{a} \setminus U$ is empty. Thus, by \eqref{E:n>2,nvar>0},
    \begin{equation} \label{E:Hm.S.less.U.0}
        \text{If $a$ is not an integer } \Ss_{a} \setminus U = \varnothing 
          \text{ so } \Ss_{a} = \tilde{\Ss}_{a}. \text{ In any case, } 
            \Hm^{n \nvar-\nvar-1}(\Ss_{a} \setminus U) = 0
    \end{equation}
Hence, 
    \begin{equation}  \label{E:Hm.S.tilde.=.Hm.S}
      \Hm^{n \nvar-\nvar-1}(\tilde{\Ss}_{a}) = \Hm^{n \nvar-\nvar-1}( \Ss_{a}) .
    \end{equation}
Thus, it sufffices to show $\text{codim} \, \tilde{\Ss}_{a} = \nvar +1$. 
Consider the map $H : U \to \RR^{n + \nvar + 1}$ given by
   \[
      H(y_{1}, \ldots, y_{n}) 
         := \bigl( y_{0} + \sum_{i=1}^{n} y_{i}, 
           \, |y_{1}|^{2}, \ldots, |y_{n}|^{2} \bigr)^{1 \times (n+\nvar+1)}, 
             \quad (y_{1}, \ldots, y_{n}) \in U.
   \]
 We have,
    \begin{equation} \label{E:S.tilde.a.is.invrs.image}
      \tilde{\Ss}_{a} = H^{-1} \bigl( (1-a)y_{0}, 1, \ldots, 1 \bigr)^{1 \times (n+1)} . 
    \end{equation} 
By \eqref{E:R.n.nvar-p-1.H.bdd.away.frm.0} and \eqref{E:Hm.S.tilde.=.Hm.S}, 
$\dim \tilde{\Ss}_{a,n} \geq n \nvar - \nvar -1$. In particular,
 $\tilde{\Ss}_{a,n} \ne \varnothing$ 
 so $\bigl( (1-a)y_{0}, 1, \ldots, 1 \bigr)^{1 \times (n+1)} \in H(U)$. 
Regarding each $y_{i}$ as a $1 \times (\nvar +1)$ row matrix, the Jacobian matrix of $H$ (Boothby \cite[p.\ 26]{wmB75}) is given by
   \[
      DH(y_{1}, \ldots, y_{n})^{(n+\nvar +1) \times n(\nvar +1)} =
         \begin{pmatrix}
            I_{\nvar +1} & I_{\nvar +1} & \cdots & I_{\nvar +1} \\
            2 y_{1} & 0^{1 \times (\nvar +1)} & \cdots & 0^{1 \times (\nvar +1)} \\
            0^{1 \times (\nvar +1)} & 2 y_{2} & \cdots & 0^{1 \times (\nvar +1)} \\
            \vdots &  \vdots & \ddots & \vdots \\
            0^{1 \times (\nvar +1)} & 0^{1 \times (\nvar +1)} & \cdots & 2 y_{n}
         \end{pmatrix}^{(n+\nvar+1) \times n(\nvar+1)} .
   \]
Subtracting the first $\nvar +1$ columns of this matrix as a block from the remaining columns we see that $DH$ has the same rank as
   \begin{equation*}   \label{E:I.on.top.2x1.diag}
      \begin{pmatrix}
         I_{\nvar +1} & 0^{(\nvar +1) \times (\nvar +1)} & \cdots & 
           0^{(\nvar +1) \times (\nvar +1)} \\
         2 y_{1} & -2 y_{1} & \cdots & -2 y_{1} \\
         0^{1 \times (\nvar +1)} & 2 y_{2} & \cdots & 0^{1 \times (\nvar +1)} \\
         \vdots &  \vdots & \ddots & \vdots \\
         0^{1 \times (\nvar +1)} & 0^{1 \times (\nvar +1)} & \cdots & 2 y_{n}
      \end{pmatrix}.
   \end{equation*}

Subtracting the appropriate linear combination of the first $\nvar +1$ rows from the 
$(\nvar +2)^{nd}$ row of \eqref{E:I.on.top.2x1.diag}
we see that $DH$ has the same rank as
    \begin{equation}  \label{E:I.0.on.top.2x1.diag}
      \begin{pmatrix}
         I_{\nvar +1} & 0^{(\nvar +1) \times (\nvar +1)} & \cdots & 0^{(\nvar +1) 
           \times (\nvar +1)} \\
         0^{1 \times (\nvar +1)} & -2 y_{1} & \cdots & -2 y_{1} \\
         0^{1 \times (\nvar +1)} & 2 y_{2} & \cdots & 0^{1 \times (\nvar +1)} \\
         \vdots &  \vdots & \ddots & \vdots \\
         0^{1 \times (\nvar +1)} & 0^{1 \times (\nvar +1)} & \cdots & 2 y_{n}
      \end{pmatrix}.
   \end{equation}
Consider the last $n$ rows of the preceding. Now, $(y_{1}, \ldots, y_{n}) \in U$ so none of the rows is 0. A nontrivial linear combination of those $n$ rows has the form 
$z = (0^{1 \times (\nvar +1)}, -a_{1} y_{1} + a_{2} y_{2}, \ldots, 
-a_{1} y_{1} + a_{n} y_{n})$ with $a_{1}, \ldots, a_{n} \in \RR$ not all 0. Suppose $z = 0$. If $a_{1} = 0$ then for at least one $j = 2, \ldots, n$ we have $a_{j} \neq 0$ so $-a_{1} y_{1} + a_{j} y_{j} = a_{j} y_{j} \neq 0$, contradicting $z=0$. So assume $a_{1} \neq 0$. 
$z = 0$ then implies none of $a_{2}, \ldots, a_{n}$ are 0 and, for $j = 2, \ldots, n$, we have $y_{j} = (a_{1}/a_{j}) y_{1}$. I.e., $y_{1}, \ldots, y_{n}$ all lie on the same line, viz., the line spanned by $y_{1}$. This contradicts $(y_{1}, \ldots, y_{n}) \in U$. Therefore the last $n$ rows are linearly independent and the matrix \eqref{E:I.0.on.top.2x1.diag} has full rank 
$n + \nvar + 1$. 

That means 
$DH(y_{1}, \ldots, y_{n})^{(n+\nvar +1) \times n(\nvar +1)}$ also has full rank 
$n + \nvar + 1$. Now, $U$ is open in $(\RR^{\nvar +1})^{n}$. Therefore, by \eqref{E:S.tilde.a.is.invrs.image} and 
Boothby \cite[Theorem (5.8), p.\ 79]{wmB75} and \eqref{E:Hm.S.less.U.0}, 
	\begin{multline}  \label{E:dim.Sa.tilde}
	    \tilde{\Ss}_{a} \text{ is a closed, regular submanifold of } U \text{ and } \\
	      \dim (\tilde{\Ss}_{a}) = \dim U - (n + \nvar + 1) = n(\nvar +1) -n - \nvar -1 \\
	      = n \nvar - \nvar - 1 = \dim \D - (\nvar +1).
	\end{multline}
We had already observed that it sufficed to show 
$\text{codim} \, \tilde{\Ss}_{a} = \nvar +1$. Therefore we have proved the claim \eqref{E:codim.Sa.=.nvar+1} that the augmented mean achieves bound \eqref{E:bound.on.sing.set.dim.in.sphere.loc}.

Next, we examine what happens to $\Hm^{n \nvar - \nvar-1}(\Ss_{\mu_{y_{0}, a, n}})$ as $a \uparrow n$. Continue to assume \eqref{E:n>2,nvar>0}. Let 
	\begin{equation}  \label{E:a.tween.n-1.n}
		a \in (n-1, n)
	\end{equation}
and suppose \eqref{E:y0.starts.with.0s} holds. Let $x = (y_{1}, \ldots y_{n}) \in \D$. Then, by \eqref{E:Sa=Sa'},
	\begin{equation}    \label{E:sum.tilde.y.=.-a.y0}
		x \in \Ss_{a} \text{ if and only if } 
		          \sum_{i=1}^{n} y_{i} = -a y_{0} = (0, \ldots, 0, - a).
	\end{equation}
Now, $\T$ is compact and $\Ss_{a}$ is compact by \eqref{E:mu.a.Sa.is.compact}. 
Therefore, by \eqref{E:S.a.cap.T.empty}, we have 
    \begin{equation} \label{E:dist.Sa.T.>.0}
      dist (\Ss_{a}, \T) > 0.
    \end{equation}
By \eqref{E:a.tween.0.n} and \eqref{E:Sa=Sa'}, $\mu_{a}$ satisfies \eqref{E:loc.meas.has.no.severe.sings.near.T} (with $\mu_{a}$ in place of $\Phi$).

Let $x \in \Ss_{a}$. Let $i = 1, \ldots, n$. For $y_{i} \in \RR^{\nvar +1}$, let the $j^{th}$ coordinate of $y_{i}$ be denoted by $y_{i, j}$ or $y_{ij}$ ($j=1, \ldots, \nvar+1$). Now, for every $j$ we have $y_{j,\nvar +1} \geq -1$. Thus, replacing $y_{j,\nvar +1}$ ($j \neq i$) by $-1$ we get, by \eqref{E:sum.tilde.y.=.-a.y0}, 
	\begin{equation}   \label{E:with.n-1.1s}
		-a \geq y_{i,\nvar +1} - (n-1).
	\end{equation}
Therefore, if we let  
	\begin{equation}  \label{E:delta.a.defn}
		\delta := \delta_{a} := a - (n-1),
	\end{equation}
then, by \eqref{E:a.tween.n-1.n} and \eqref{E:with.n-1.1s}, we have,
	\begin{equation}  \label{E:tilde.y.i.nvar +1.bdd.away.from.0}
		\delta \in (0,1) \text{ and } y_{i,\nvar +1} \leq -\delta < 0, 
		  \quad i = 1, \ldots, n.
	\end{equation}
Let $w_{i} = (y_{i1}, \ldots, y_{i\nvar})$. Then by \eqref{E:y0.starts.with.0s}, we have that \eqref{E:same.hemisphere} holds with $y = y_{i}$ and $w = w_{i}$ and 
	\begin{equation}  \label{E:1-|w|2.>.delta}
		\sqrt{1 - | w_{i} |^{2} } \geq \delta_{a} > 0, \quad i =  1, \ldots, n.
	\end{equation}
Hence, $1 > 1 - \delta_{a}^{2} \geq | w_{i} |^{2}$. Since $a \in (n-1, n)$, it follows from \eqref{E:delta.a.defn} and \eqref{E:a.tween.n-1.n} that
	\begin{align}  \label{E:w.bdd.away.from.ball.bndry}
	  |w_{i}|^{2} &\leq 1 - \bigl[ (n-1) -a \bigr]^{2} \notag \\
	    &= 1 - \bigl[ (n-a) -1 \bigr]^{2} \\
	    &= (n-a) \bigl[ 2 - (n-a) \bigr] < 2(n-a) , \notag \\
	    & \qquad i =  1, \ldots, n . \notag 
	\end{align}

Conversely, by \eqref{E:sum.tilde.y.=.-a.y0}, since $x \in \Ss_{a}$ and $a < n$, not all $y_{i}$'s equal $-y_{0}$. In fact, 
	\begin{multline}  \label{E:minmax.tilde.w}
		\text{For \emph{some} } j = 1, \ldots, n, \text{ we have } 
		\sqrt{1 - | w_{j} |^{2}} = -y_{j,\nvar+1} \leq a/n, \\
		  \text{ so } | w_{j} | \geq \sqrt{1 - a^{2}/n^{2}}.
	\end{multline}

We are interested in the singular set $\Ss_{a}$ of $\mu_{a}$ as $a \uparrow n$. Now, by \eqref{E:y0.starts.with.0s} and \eqref{E:y.in.terms.of.w}, 
    \begin{equation*}
         \bigl| y_{i} - (-y_{0}) \bigr|^{2} 
           = |w_{i}|^{2} + \bigl( 1 - \sqrt{1 - | w_{i} |^{2} } \bigr)^{2}
             = 2 - 2 \sqrt{1 - | w_{i} |^{2} } .
    \end{equation*}
Since $x = (y_{1}, \ldots, y_{n})$ is an arbitrary element of $\Ss_{a}$ and for each $i$, $y_{i} = (w_{i}, - \sqrt{1 - | w_{i} |^{2}} )$, we have, by the preceding,
    \begin{multline} \label{E:dist(-y0,Sa).min.max.range}
      \min_{x \in \Ss_{a}} \left( 2 - 2 \sqrt{1 - | w_{i} |^{2} } \right) 
        \leq \bigl| -(y_{0}, \ldots, y_{0}) - x \bigr|\
          \leq \max_{x \in \Ss_{a}} \left( 2 - 2 \sqrt{1 - | w_{i} |^{2} } \right)  \\
            \text{ for every } x \in \Ss_{a} .
    \end{multline}

For every $i = 1, \ldots, n$ we have $\sqrt{1 - | w_{i} |^{2}} \leq 1$. Let $j$ be as in \eqref{E:minmax.tilde.w}. Then, by \eqref{E:dist(-y0,Sa).min.max.range}, 
\eqref{E:delta.a.defn}, and \eqref{E:1-|w|2.>.delta}, 
    \begin{multline}  \label{E:dist.y0s.to.Sa}
         \sqrt{ 2 \frac{n-a}{n} } 
           = \sqrt{ \sum_{i \neq j} (2 - 2) + (2 - 2 a/n) } \\
           \leq \bigl| -(y_{0}, \ldots, y_{0}) - x \bigr|
             \leq \sqrt{ 2n(1 - \delta_{a}) } = \sqrt{2n(n-a)} 
             \text{ for every } x \in \Ss{a} .
    \end{multline}
Therefore, by \eqref{E:dist.Sa.T.>.0}, as $a \uparrow n$, the distance in 
$\RR^{n(\nvar + 1)}$ from $\Ss_{a}$ to $\T$ remains strictly positive, but goes to 0. 

Next, we bound above the LHS of \eqref{E:R.n.nvar-p-1.H.bdd.away.frm.0} above. Here we hold $a \in (n-1,n)$ fixed and use $\mu_{a}$ as a kind of template while we consider alternative values of $a$, which we denote by $t$. We will make use of Landau ``$O$'', ``$o$'' notation 
(de Bruijn \cite[Sections 1.2 and 1.3]{ngdeB81.AsympMthdsAnlys}). Part \ref{I:aug.mean.R.vol.bounded} of the following is used in the proof of proposition \ref{P:aug.direct.mean.beats.robst.loc}. See appendix \ref{Chptr:Lip.Haus.meas.dim} for the definition of ``locally Lipschitz''. See \eqref{E:set.distances} and \eqref{E:essential.dist.defn} for the definition of distance and essential distance. For proof of the following see appendix \ref{Chptr:misc.proofs}.
  \begin{prop}  \label{P:rate.of.decrease.of.Hm.St.aug.direct.mean}
Assume $n > 2$. Let $t \in [0, n)$ and $x = (y_{1}, \ldots, y_{n}) \in \Ss_{t}$. We have the following:
  \begin{enumerate}
	\item The Euclidean distance from $x$ to $\T$, i.e.\ the distance 
	in $\RR^{n(\nvar+1)}$, 
	is $\sqrt{2(n-t)}$. Specifically, we have
	\begin{multline} \label{E:bunch.of.neg.y0s.is.closest.to.T}
	     \text{ The closest point of } \T \text{ to } x \in \Ss_{t} 
	       \text{ in Euclidean distance is }
		x_{0} := (-y_{0}, \ldots, -y_{0}) \\
		   \text{ and the Euclidean distance from $x$ to $\T$ is } 
		         \sqrt{2(n-t)}.
	\end{multline} 
		\label{I:aug.mean.euc.dist}
	\item The geodesic distance from $\Ss_{t}$ to $\T$ satisfies 
	  \begin{equation}  \label{E:rho.t.approx.sqrt.2(n-t)}
		\rho_{t} := \text{dist} (\Ss_{t}, \T) = 
		  \sqrt{2(n-t)} + O(n-t)^{3/2}, \quad \text{ as } t \uparrow n.
	  \end{equation} 
		\label{I:aug.mean.geod.dist}
          \item $\rho_{t}$ is locally Lipschitz in $t \in (n-1, n)$.  \label{I:rho.t.is.locally.Lip}
          \item Let $R_{t} := dist_{n \nvar-\nvar-1}(\Ss_{t}, \T)$ be the essential 
          $\Hm^{n \nvar-\nvar-1}$-distance from $\Ss_{t}$ to $\T$ \emph{in} 
          the manifold $\D$. Then $R_{t} = \rho_{t}$ 
          Thus, $R_{t} / \sqrt{2(n-t)} \to 1$ as $t \uparrow n$, so $R_{t} \to 0$ 
          as $t \uparrow n$, and $R_{t}$ is locally Lipschitz in $t$. 
                \label{I:aug.mean.essential.dist.approx}
	\item We have
	  \begin{equation}  \label{E:R.n.nvar-nvar-1.H.bdd.above} 
		R_{t}^{-(n \nvar-\nvar-1)} \Hm^{n \nvar-\nvar-1}(\Ss_{t}) 
		       \text{ is bounded above as } t \uparrow n.
	  \end{equation}
($\Hm^{n \nvar-\nvar-1}$ is calculated w.r.t.\ geodesic metric on $\D$.)     \label{I:aug.mean.R.vol.bounded}
  \end{enumerate}
   \end{prop} 

Recall \eqref{E:asymp}. Combining \eqref{E:R.n.nvar-p-1.H.bdd.away.frm.0}, 
\eqref{E:R.n.nvar-nvar-1.H.bdd.above}, and \eqref{E:rho.t.approx.sqrt.2(n-t)}, we have
    \begin{equation} \label{E:aug.mean.sing.vol.asymp}
      \Hm^{n \nvar-\nvar-1}(\Ss_{t}) 
        \asymp \bigl( \sqrt{2(n-t)} \, \bigr)^{n \nvar-\nvar-1}, 
          \text{ as } t \uparrow n .
    \end{equation}
More precisely, let $\Gamma < \infty$ be the upper bound in 
\eqref{E:R.n.nvar-nvar-1.H.bdd.above} and $\gamma > 0$ the lower bound in 
\eqref{E:R.n.nvar-p-1.H.bdd.away.frm.0}. Then
    \begin{equation*}
        \gamma R_{t}^{n \nvar-\nvar-1} \leq \Hm^{n \nvar-\nvar-1}(\Ss_{t}) 
          \leq \Gamma R_{t}^{n \nvar-\nvar-1} .
    \end{equation*}
Hence, if for some $t \in [0,n)$ we could compute $\Hm^{n \nvar-\nvar-1}(\Ss_{t})$ and 
$R_{t}$, we would get an upper bound on the mysterious constant $\gamma$ for the location problem on a sphere (for given $n$ and $\nvar$). In appendix \ref{Chptr:F:CircleSing.data.and.calcs} we actually calculate 
$\Hm^{n \nvar-\nvar-1}(\Ss_{t})$ for two values of $t$ (and $n = 17$, $\nvar = 1$). It remains to compute at least one of the corresponding $R_{t}$'s. 

Figure \ref{F:oneZeroDimP} shows examples for which the factor $R^{d-p-1}$ is achieved. \eqref{E:R.n.nvar-nvar-1.H.bdd.above} gives another.  

For $t \in (0,n)$, define $\rho_{t}$ as in \eqref{E:rho.t.approx.sqrt.2(n-t)}. For $r > 0$, the following gives an approximate solution, $t$, to the equation $\rho_{t} = r$ for $r$ close to $0$. Use the notation in proposition \ref{P:rate.of.decrease.of.Hm.St.aug.direct.mean}. See appendix \ref{Chptr:misc.proofs} for the proof.
  \begin{lemma}  \label{L:solve.rho.t=r}
If $r > 0$ is sufficiently small, there exists $t_{r} \in (0,n)$ s.t.\ $t = t_{r}$ solves 
$R_{t} = \rho_{t} = r$. We have 
    \begin{equation*}
      t _{r} = n - \tfrac{1}{2} r^{2} + O(r^{4}) \text{ as } r \downarrow 0 .
    \end{equation*}
  \end{lemma}

\chapter{Robust Measures of Location on the Circle}  \label{Chptr:robst.loc.on.circle}
\section{Exactness of fit} \label{S:exactness.of.fit}
The ``exact fit property'' seems to be ordinarily defined in the context of regression (subsection \ref{SS:lin.reg.and.LS}; Rousseeuw and Leroy \cite[p.\ 60]{pjRamL03.robust}) but we adapt the idea to measuring location on a sphere. 

Let $x = (y_{1}, \ldots, y_{n}) \in \D := (S^{\nvar})^{n}$, with $y_{i} \in S^{\nvar}$ for $i = 1, \ldots, n$. In this chapter we continue to assume \eqref{E:n>2,nvar>0} and $k$ will be an integer in $[0,n/2)$: 
	\begin{equation}  \label{E:n.nvar.k.sizes}
	   n > 2, \; \nvar > 0, \text{ and } 0 \leq k < n/2,
	\end{equation}
(Since $n > 2$, there is at least one integer $k$ satisfying $0 < k < n/2$.) Recall that each $y_{i}$ is called an ``observation''. 
  \begin{definition}  \label{D:exactness.of.fit}
Say that a measure of location, $\Phi$, on a sphere has ``exactness of fit of order $k$ (with sample size $n$)'' if the following holds. 
If $x = (y_{1}, \ldots, y_{n}) \in \D := (S^{\nvar})^{n}$ with $n-k$ observations $y_{i}$ having a common value $y \in S^{\nvar}$ then $\Phi(x)$  is defined and equals $y$, no matter what the values of the remaining $k$  observations are.
  \end{definition}

(We relax this condition in \eqref{E:exactness.of.fit.loose}.) Note that the augmented mean, $\mu_{a}$, defined in \eqref{E:aug.direct.mean.defn} has order of exact fit 0.  
Moreover, if $\Phi : \D \partlyto S^{\nvar}$ has order of exactness of fit $k$ then it automatically has order of exactness of fit $\ell$ for $\ell = 1, \ldots, k$. In particular, this is true for $\ell = 0$: 
	\begin{multline}  \label{E:exact.fit.then.on.diagnl.exact}
		\text{If } \Phi : \D \partlyto S^{\nvar} 
		  \text{ has exactness of fit of order $k$ for some } 
		  k \in [0, n/2) \\
		    \text{ then $\Phi$ satisfies } \eqref{E:Phi.on.diagnl.exact} .
	\end{multline}

The hope is that a measure of location with order of exact fit $k$ is ``robust'' or ``resistant'' in the sense that it is little affected if as many as $k$ observations are wrong or otherwise unindicative of the ``central tendency'' of the population. More generally, let $\mcl{E} \subset \D$. Say that a map, $\nu : \mcl{E} \to S^{q}$ has ``exactness of fit of order $k$ (with sample size $n$)'' if 
$x = (y_{1}, \ldots, y_{n}) \in \mcl{E}$ and $n-k$ observations have a common value $y$ then $\nu(x)$ is defined and equals $y$ no matter what the values of the remaining $k$ observations are.

Let $\Pf_{k} \subset (S^{\nvar})^{n}$ be the perfect fit space appropriate for measures of location having exactness of fit of order $k$. Specifically,  
    \begin{multline}  \label{E:Pk.defn.robust.loc.on.sphere}
      \Pf_{k} \text{ consists of points } (y_{1}, \ldots, y_{n}) \in (S^{\nvar})^{n} \\
        \text{ s.t.\ at least $n-k$ of the observations $y_{i} \in S^{\nvar}$ are equal.} 
    \end{multline}
Thus, $\Pf_{k}$ is compact and $\Pf_{0} = \T$ defined in \eqref{E:directional.T.defn}. (Do not confuse $\Pf_{k}$ with $\Pf^{k}$ defined in \eqref{E:Perfect.Fits.in.plane.fitting}. And the two instances of $k$ mean something different, too.)

Let $\Phi$ be as in \eqref{E:exact.fit.then.on.diagnl.exact}. Let $\Ss$ be the singular set of $\Phi$ and $\Ss^{\msf{V}_{\pi/2}}$ 
the set of $\msf{V}_{\pi/2}$-severe singularities of $\Phi$. To avoid useless and uninteresting examples, we always assume \eqref{E:assume.Phi.S'.sym} holds and include in the definition of measure of location with order of exactness of fit $k$ the following requirement. 
	\begin{equation}  \label{E:no.severe.sings.in.Pk}
	   \Pf_{k} \setminus \Ss \text{ is dense in } \Pf_{k}  \text{ and }
	     \Ss^{\msf{V}_{\pi/2}} \cap \Pf_{k} = \varnothing .
	\end{equation}
Now, by \eqref{E:SV.is.closed}, $\Ss^{\msf{V}_{\pi/2}}$ is closed. $\Pf_{k}$ is compact. 
Therefore, if $\Phi$ satisfies \eqref{E:no.severe.sings.in.Pk}, then
	\begin{equation}  \label{E:no.severe.sings.near.Pk}
		\text{There is a neighborhood of } \Pf_{k} \text{ containing no } 
		  \msf{V}_{\pi/2} \text{-severe singularities of } \Phi .
	\end{equation}

The following lemma satisfies a requirement
of theorem \ref{T:if.lin.combo.on.F.then.can.rstrct.to.bad.sings} part \ref{I:Omega.Phi.agree.on.Pf} with $\Pf_{k}$ in place of $\Pf$.  
Recall, by \eqref{E:N.sub.n}, $\NN_{n} := \{1, \ldots, n \}$ and recall that $S_{n}$ is the group of permutations of $\NN_{n}$. If $x = (y_{1}, \ldots, y_{n}) \in \D$ write 
$\sigma(x) := (y_{\sigma(1)}, \ldots, y_{\sigma(n)})$ ($\sigma \in S_{n}$). 
Thus, $\Pf_{k}$ is $S_{n}$ invariant.

  \begin{lemma}  \label{L:Pk.is.nhbd.retract}
For $k \in [0, n/2)$, there is a neighborhood $\clU \subset \D$ of $\Pf_{k}$ in $\D$ and 
a retraction $R : \clU \to \Pf_{k}$ onto $\Pf_{k}$ s.t.\ $\sigma (\clU) = \clU$ 
and $R \circ \sigma = \sigma \circ R$ on $\clU$ for every $\sigma \in S_{n}$.
  \end{lemma}
See appendix \ref{Chptr:misc.proofs} for the proof. Also see appendix \ref{Chptr:misc.proofs} for the proof of the following.
  \begin{corly}  \label{C:local.exactness.of.fit}
Let $k \in [0, n/2)$ and let $\clU$ be as in lemma \ref{L:Pk.is.nhbd.retract}. 
Then there exists a continuous measure of location, $\nu_{R} : \clU \to S^{\nvar}$, satisfying \eqref{E:assume.Phi.S'.sym}, with order of exactness of fit $k$.
  \end{corly} 
 
Here we generalize corollary \ref{C:Haus.measure.of.severe.sings.on.spheres} and remark \ref{R:loose.exactness.of.fit.and.homotopy}. (See appendix \ref{Chptr:misc.proofs} for the proof.) Recall the definition, \eqref{E:V.theta.defn}, of $\msf{V}_{\pi/2}$.

  \begin{lemma} \label{L:loose.exactness.of.location.fit}
Suppose $\Phi : \D' \to S^{\nvar}$ satisfies \eqref{E:D'.=.D.less.S}
and \eqref{E:assume.Phi.S'.sym}. Let $\Ss$ be the singular set of $\Phi$. Suppose $\Ss' := \overline{\Ss}$ satisfies \eqref{E:loc.measure.has.no.sings.near.T}. Let $\Ss^{\msf{V}_{\pi/2}}$ be the set of $\msf{V}_{\pi/2}$-severe singularities of $\Phi$ and let $k \in [0, n/2)$.
Suppose $\Phi$ satisfies \eqref{E:no.severe.sings.in.Pk} and can be extended to be defined and continuous on $\Pf_{k}$. (So the restriction $\Phi \restriction_{\Pf_{k}}$ is defined and continuous on $\Pf_{k}$.) Suppose further that for every 
$y, y_{n-k+1}, \ldots, y_{n} \in S^{\nvar}$ we have
	\begin{equation}  \label{E:exactness.of.fit.loose}
             \Phi(y, \ldots, y, y_{n-k+1}, \ldots, y_{n}) \cdot y > -1.
	\end{equation}
Here ``$y, \ldots, y$'' represents $n-k$ copies of $y$. (Allow $k = 0$, in which case 
$(y, \ldots, y, y_{n-k+1}, \ldots, y_{n})$ $:= (y, \ldots, y) \in (S^{\nvar})^{n}$.)  
Then there exists a measure of location on $S^{\nvar}$, symmetric in its arguments, having order of exactness of fit $k$, and continuous 
on $\D \setminus \Ss^{\msf{V}_{\pi/2}}$.

It follows that
    \begin{equation}  \label{E:poz.measure.for.exact.fit.sings}
      \Hm^{n \nvar - \nvar-1}(\Ss^{\msf{V}_{\pi/2}}) > 0 \, \text{  so  } \,
        \text{codim} \, \Ss^{\msf{V}_{\pi/2}} \leq \nvar + 1 .
   \end{equation}
  \end{lemma}

Note that, as observed after \eqref{E:location.Phi.on.diagnl.loose}, \eqref{E:exactness.of.fit.loose} holds if and only if 
$\Phi(y, \ldots, y, y_{n-k+1}, \ldots, y_{n}) + y \neq 0$. 
I.e., $\Phi(y, \ldots, y, y_{n-k+1}, \ldots, y_{n})$ and $y$ are not antipodal.

As mentioned above, the augmented directional mean, $\mu_{a}$ ($a \in [0, n)$) has order of exactness of fit 0. Next, we consider whether the singular set of $\mu_{a}$ might  include that of a measure of location with \emph{positive} order of exactness of fit. For the proof of the following see appendix \ref{Chptr:misc.proofs}.

  \begin{prop} \label{P:aug.mean.sing.set.like.exact.fit}
Let $k \in [0, n/2)$. If $a \in [0, n-2k)$ then $\mu_{a}$ satisfies \eqref{E:no.severe.sings.near.Pk} and there exists a measure of location on $S^{\nvar}$ with order of exactness of fit $k$ whose singularities are all 
$\msf{V}_{\pi/2}$-severe singularities of $\mu_{a}$. But if $a \in [n-2k, n)$, 
then $\mu_{a}$ has $\msf{V}_{\pi/2}$-severe singularities in $\Pf_{k}$ and so violates \eqref{E:no.severe.sings.near.Pk}.
  \end{prop}
By \eqref{E:Sa=Sa'}, $\Ss_{a}^{\msf{V}_{\pi/2}}$ is the set of \emph{all} singularities 
of $\mu_{a}$. Thus, if $a \in [0, n-2k)$ one has the option of gaining resistance to outliers by replacing $\mu_{a}$ by a measure of location on $S^{\nvar}$ with order of exactness of fit $k$ while paying no extra cost in terms of its singular set. 

\section{Augmented directional median}   \label{SS:AugDirectMedian}
Lemma \ref{L:loose.exactness.of.location.fit} and proposition \ref{P:aug.mean.sing.set.like.exact.fit} assert the existence of measures of location on the sphere with positive order of exactness of fit. Here we construct an explicit family of examples of such.

For the rest of this chapter we focus on data on a circle. I.e., 
    \begin{equation}  \label{E:ma:nvar=1}
      \nvar=1 \text{ so } \D := (S^{1})^{n} .
    \end{equation}
The ``spherical median'' (Fisher \emph{et al} \cite[p.\ 111]{niFtLbjjE87};  Fisher \cite{niF85.SphericalMedians}) is defined as follows. Recall the definition of angle, \eqref{E:angle.between.vectors}. Given a data set $x = (y_{1}, \ldots, y_{n}) \in (S^{1})^{n}$, the spherical median (perhaps in the $\nvar =1$ case we should call it the ``directional median'') is the point $v = m(x) \in S^{1}$ that minimizes.
	\begin{equation*}
		G(v;x) := \sum_{i=1}^{n} \angle (y_{i}, v),  \quad v \in S^{1},
	\end{equation*}
whenever the minimization has a unique solution. 

In analogy with the augmented directional mean, \eqref{E:aug.direct.mean.defn}, define the ``augmented directional median'' as follows.  Let $a > 0$ and let $y_{0} \in S^{1}$ be fixed. Call $a$ the ``augmentation weight'' and $y_{0}$ the ``augmentation point''. 

Then the ``augmented directional median'' of $x$ is the point $v = m_{a}(x) \in S^{1}$ that minimizes
	\begin{equation}  \label{E:Ga.defn}
		G_{a}(v; x) := G_{a}(v) := a \angle(v, y_{0}) +  \sum_{i=1}^{n} \angle (y_{i}, v),  
		            \quad v \in S^{1},
	\end{equation}
whenever the minimization has a unique solution. (See lemma \ref{L:data.maps.defined.by.opt}.) 
Note that $G_{a}(v;x)$ is continuous -- by compactness, \emph{uniformly} continuous -- in $(v;x) \in S^{1} \times \D$. Moreover, $\Phi = m_{a}$ obviously has property \eqref{E:assume.Phi.S'.sym}.

Let $k$ be a positive integer $< n/2$. We want $m_{a}$ to have order of exact fit $k$. (See definition \ref{D:exactness.of.fit}.) Suppose $a \geq n-2k$. 
Let $y_{i} = y \in S^{1}$ for $i = 1, \ldots, n-k$ and let $y_{i} = y_{0}$ for $i = n-k+1, \ldots, n$. Then
    \begin{equation*}
        G_{a}(v; x) = (a + k) \angle(v, y_{0}) + (n-k) \angle (y, v).
    \end{equation*}
Hence, $G_{a}(y; x) = (a + k) \angle(y, y_{0}) \geq (n-k) \angle(y, y_{0})$ 
and $G_{a}(y_{0}; x) = (n-k) \angle(y, y_{0})$. Then $G_{a}(v; x)$ is not uniquely minimized by $v = y$, if at all. Thus, for these values of $a$ and $k$, $m_{a}$ does \emph{not} have order of exact fit, $k$.

Continue to assume \eqref{E:n.nvar.k.sizes} holds (\emph{viz.}\ $n > 2$) but now assume $0 < a < n-2k$. In summary, we assume
	\begin{equation}  \label{E:ma.n.k.a.sizes}
	n > 2, \; 0 < k < n/2, \text{ and } 0 < a < n-2k.
	\end{equation}
Exactness of fit of $m_{a}$ is investigated in section \ref{S:exactness.of.m_a}. 
\emph{We will assume}
	\begin{equation}   \label{E:.for.aug.median.a.not.integer}
		a \text{ is not an integer.}
	\end{equation}
In fact, in section \ref{SS:dist.from.anglar.median.to.sev.sings.to.Pk} we are most interested 
in $a \in (n - 2k -1, n-2k)$. The fact that $y_{0}$ has non-integer ``multiplicity'' distinguishes $y_{0}$ from the \emph{observations} $y_{1}, \ldots, y_{n}$. 

In general, if $v \in S^{1}$ minimizes $G_{a}( \cdot;x)$, then $G_{a}(v;x) \leq G_{a}(-v;x)$. From \eqref{E:angle.-x,y} it follows that 
	\begin{equation*}
		(a + n)^{-1} \left( a \angle \bigl( m_{a}(x), y_{0} \bigr) 
		          + \sum_{i=1}^n{} \angle \bigl( m_{a}(x), y_{i} \bigr) \right) < \pi/2.
	\end{equation*}

\section{Construct a dense set, $\D' \subset \D := (S^{1})^{n}$ on which $m_{a}$ is continuous} 
    \label{ SS:construct.dense.circle.set}
Once we construct a $\D'$ satisfying \eqref{E:D'.dense.Phi.cont.on.D'}, lemma \ref{L:extend.Phi.to.D.less.S} can be applied so that, enlarging $\D'$ if necessary, \eqref{E:D'.=.D.less.S} holds. Let $v \in S^{1}$ and let 
$\phi := \phi_{v} : (-\pi, \pi] \to S^{1}$ parametrize $S^{1}$ by arc length from $v$ 
so $\phi(0) = v$. Specifically, given $s,t \in (-\pi, \pi]$, $|s-t|$ is the length of one of the two shortest arcs connecting $\phi(s)$ and $\phi(t)$. (There are arcs joining $\phi(s)$ 
and $\phi(t)$ that wrap around $S^{1}$ arbitrarily many times. We say that $\phi$ is a parametrization ``at $v$".) Now, $\angle$ is the metric on $S^{1}$ defined to be the length of the shorter arc joining two points. Thus, if $s, t \in (-\pi, \pi]$, we have
    \begin{equation}  \label{E:angle.in.terms.of.phi}
      \angle \bigl[ \phi(s), \phi(t) \bigr] = \min \bigl\{ |s-t|, 2 \pi - |s-t| \bigr\}.
    \end{equation}

Let ``$\cdot$'' be the usual inner product on $\RR^{2}$. Now let $v' \in S^{1}$ with $v' \notin \{ v, -v \}$ so $v \cdot v' \in (-1, 1)$. We say that $\phi$ ``turns toward'' $v'$ if $v' = \phi_{v}(s)$ with 
$0 < s \leq \pi$. Thus, as $u \in (0, s]$ decreases to 0, $\angle \bigl[ \phi(u), v' \bigr]$ increases. $\phi_{v}$ ``turns away'' from $v'$ is the opposite: If $\phi_{v}$ turns toward $v'$ then $s \mapsto \phi_{v}(-s)$ turns away. If $f$ is a function on $S^{1}$ we say that some behavior of $f$ pertains to ``turning toward'' $v'$ if it refers to $f \circ \phi_{v}$ for a parametrization turning toward $v'$. ''Turning away from'' $v'$ has the opposite meaning.

For example, let $\tfrac{d_{+}}{dt} \restriction_{t=s}$ denote right derivative, i.e. the limit of the difference quotient as $t \downarrow s$, and $\tfrac{d_{-}}{dt}$ denote left derivative.
Suppose $\phi$ turns toward $v'$. Then the derivative of $f$ at $v$ ``turning toward'' $v'$ 
is $\tfrac{d_{-}}{dt} f \circ \phi(t) \restriction_{t=0}$, providing it exists. And we write  
$\tfrac{d_{-}}{dy'} f(y') \restriction_{y'=v} = \tfrac{d_{-}}{dt} f \circ \phi(t) \restriction_{t=0}$, where $v = \phi(0)$. The derivative of $f$ at $v$ ``turning away'' from 
$v'$ is $\tfrac{d_{+}}{dt} f \circ \phi(t) \restriction_{t=0}$, providing it exists, and we define 
$\tfrac{d_{+}}{dy} f(y') \restriction_{y'=v} = \tfrac{d_{+}}{dt} f \circ \phi(t) \restriction_{t=0}$. Of course, if $f$ is differentiable at $v$ the two derivatives will be equal 
and we define $\tfrac{d_{}}{dy} f(y') \restriction_{y'=v}$ to be their common value. One-sided partial derivatives are defined similarly. We say that all these derivatives, one-sided or not, partial or not, are taken ``along $S^{1}$''.

Let $w \in S^{1}$ be orthogonal to $v$, i.e., $v \cdot w = 0$. Let $\phi$ be a parametrization at $v$ turning toward $w$. So $w = \phi(\pi/2)$. 
Suppose $s \in (-\pi, \pi)$, so $s \neq \pi$. Suppose also that 
$s \neq 0$. Then, for $t$ satisfying $|t| < \min \bigl\{ |s|, \pi - |s| \bigr\}$, 
we have $|s-t| \leq |s| +|t| < |s| + \pi - |s|$ so, by \eqref{E:angle.in.terms.of.phi}, 
$\angle \bigl[ \phi(s), \phi(t) \bigr] = |s-t|$. If $s < 0$, i.e., $\phi(s) \cdot w < 0$, 
then $t-s = t + |s| > -|s| + |s| = 0$. Let $sign \, u \in \{-1, 0, 1 \}$ be the sign 
of $u \in \RR$. (See \eqref{E:sign.function}.)
Therefore, $\angle \bigl[ \phi(s), \phi(t) \bigr] = t-s$, 
so $\tfrac{d}{dt} \angle \bigl( \phi(s),\phi(t) \bigr) \restriction_{t=0} = 1 =  - sign \, (\phi(s) \cdot w)$. 
If $s > 0$, i.e., $\phi(s) \cdot w > 0$, then again with $|t| < \min \bigl\{ |s|, \pi - |s| \bigr\}$, we have
$s - t = |s| - t > |s| - |s| = 0$. So $\angle \bigl[ \phi(s), \phi(t) \bigr] = s - t$ and 
$\tfrac{d}{dt} \angle \bigl( \phi(s),\phi(t) \bigr) \restriction_{t=0} = -1  
= - sign \, (\phi(s) \cdot w)$.

In summary, suppose $v \in S^{1}$, $\phi(0) = v$, $\phi : (-\pi, \pi] \to S^{1}$, \eqref{E:angle.in.terms.of.phi} holds, and $w = \phi(\pi/2)$. Then, taking $y = \phi(s)$, we have
	\begin{multline}  \label{E:angle.deriv.sign}
		\text{If } y \in S^{1} \setminus \{ v, -v \} \text{ then } 
		  \frac{d}{dt} \angle \bigl( y,\phi(t) \bigr) \restriction_{t=0} \text{ exists and } \\
		    \frac{d}{dt} \angle \bigl( y,\phi(t) \bigr) \restriction_{t=0} = -sign(y \cdot w).
	\end{multline}  

For $s \in (-\pi, \pi]$, $\angle \bigl( v,\phi(s) \bigr) = |s|$ 
and, by \eqref{E:angle.-x,y}, $\angle \bigl( -v,\phi(s) \bigr) = \pi - |s|$.
Thus, if $s \in \{ 0, \pi \}$, then $\tfrac{d}{dt} \angle \bigl( \phi(s),\phi(t) \bigr) \restriction_{t=0}$ is not defined, but the one-sided derivatives are defined:  
    \begin{align} \label{E:one-sided.angle.derivs}
      \frac{d_{+}}{dt} \angle \bigl( v,\phi(t) \bigr) \restriction_{t=0} 
        &= \frac{d_{-}}{dt} \angle \bigl( -v,\phi(t) \bigr) \restriction_{t=0} = 1 \text{ and } \\
      \frac{d_{-}}{dt} \angle \bigl( v,\phi(t) \bigr) \restriction_{t=0} 
        &= \frac{d_{+}}{dt} \angle \bigl( -v,\phi(t) \bigr) \restriction_{t=0} = -1 . \notag
    \end{align}
Write $u_{0} := \phi(0)$. We write
    \begin{equation}  \label{E:(d/du).angle}
      \frac{d}{du} \angle (y, u) \restriction_{u=u_{0}} 
        = \frac{d}{dt} \angle \bigl( y,\phi(t) \bigr) \restriction_{t=0}.
    \end{equation}
Similarly for $\frac{d+}{du} \angle (y, u) \restriction_{u=u_{0}}$ and 
$\frac{d-}{du} \angle (y, u) \restriction_{u=u_{0}}$.

Given $x \in \D$, let
	\begin{multline} \label{E:Y(x).defn}
	     (y_{1}, \ldots, y _{n}) := x, \;
		  Y := Y(x)  := \{ y_{0}, y_{1}, \ldots, y_{n} \} \subset S^{1} \\
		           \text{ and } - Y := -Y(x) := \{ -y_{0}, -y_{1}, \ldots, -y_{n} \} \subset S^{1}.
	\end{multline}
So $Y(x), -Y(x) \subset S^{1}$ are the \emph{sets} of unique locations 
of $y_{0}, y_{1}, \ldots, y_{n}  \in S^{1}$ and $-y_{0}, -y_{1}, \ldots, -y_{n} \in S^{1}$, resp., and the cardinalities $\bigl| Y(x) \bigr|$, $\bigl| -Y(x) \bigr|$, though equal, can be less than $n$. Thus, $Y$ and $-Y$ are not samples (remark \ref{R:sample}). By contrast, $x$ is an $n$-\emph{tuple} of points in $S^{1}$ In fact, by \eqref{E:assume.Phi.S'.sym}, in  this discussion the order of the $y_{i}$'s in $x$ does not matter, $x$ \emph{is} a sample. Define
	\begin{equation} \label{E:Ma.defn}
		M_{a}(x) \subset S^{1} = \text{ the set of } v \in S^{1} \text{ at which } 
		  v \mapsto G_{a}(v; x) \text{ achieves its minimum.}
	\end{equation} 
(By compactness of $S^{1} \times \D$ and continuity of $G$, we have 
$M_{a}(x) \neq \varnothing$.) 

Recall \eqref{E:.for.aug.median.a.not.integer}. \emph{Claim:} 
	\begin{equation} \label{E:Ma(x).in.Y}
		M_{a}(x) \subset Y(x).
	\end{equation}
Let $v \in M_{a}(x)$. We show $v \in Y$. First, suppose that $v \notin Y \cup (-Y)$ and let $w \in S^{1}$ be perpendicular to $v$. Let $\phi(t)$ be a parametrization at $v$ (so $\phi(0) = v$) turning toward $w$. Since $\{v, -v\} \cap \bigl[ Y \cup (-Y) \bigr] = \varnothing$, by \eqref{E:angle.deriv.sign}, the derivative of $G_{a}$ at $v$ along the circle turning toward $w$ exists and is just
	\[
		\frac{d}{dt} G \bigl[ \phi(t); x \bigr] \restriction_{t=0}
		    = - \left( a \, sign \, (y_{0} \cdot w) 
		          + \sum_{i=1}^{n} sign \, (y_{i} \cdot w) \right).
	\]
(See \eqref{E:sign.function}.) Since $v \notin Y \cup (-Y)$ by assumption, we have that 
$sign \, (y_{i} \cdot w)$ ($i=0, \ldots, n$) are all nonzero (because the only points 
of $S^{1}$ orthogonal to $w$ are $v$ and $-v$). Hence, by \eqref{E:.for.aug.median.a.not.integer}, this derivative cannot be 0. Hence, by moving in one direction or the other $G_{a}$ can be decreased. This contradicts $v \in M_{a}(x)$ and proves that 
    \begin{equation}  \label{E:v.in.Y.union.(-Y)}
      v \in Y \cup (-Y) .
    \end{equation}

Now suppose $G_{a}(\cdot; x)$ is minimized (not necessarily uniquely) by $v \in Y \cup (-Y)$ 
and again let $w \in S^{1}$ be perpendicular to $v$. Let $c = 0, 1, 2, \ldots, n$ be the number of points $y_{i}$ ($i = 1, \ldots, n$; we exclude $y_{0}$) for which $w \cdot y_{i} > 0$ and 
let $d = 0, 1, 2, \ldots, n$ be the number of points $y_{i}$ ($i = 1, \ldots, n$) 
for which $w \cdot y_{i} < 0$. Let $e = 0, 1, 2, \ldots, n$ be the number of points $y_{i}$ 
($i \in \NN_{n}$) which equal $v$ and let $f = 0, 1, 2, \ldots, n$ be the number of points $y_{i}$ 
($i \in \NN_{n}$) which equal $-v$. (For such $i$, $y_{i} \cdot w = 0$.) Thus, $c + d + e + f = n$. To sum up:
    \begin{align*}
        c &:= \text{ number of $i > 0$ s.t.\ } w \cdot y_{i} > 0, \\
        d &:= \text{ number of $i > 0$ s.t.\ } w \cdot y_{i} < 0, \\
        e &:= \text{ number of $i > 0$ s.t.\ } y_{i} = v, \text{ and } \\
        f &:= \text{ number of $i > 0$ s.t.\ } y_{i} = -v.
    \end{align*}

Suppose first that $v \neq \pm y_{0}$. WLOG $w \cdot y_{0} > 0$. (Otherwise, replace $w$ by $-w$.) Then, by \eqref{E:angle.deriv.sign} and \eqref{E:one-sided.angle.derivs}, the one-sided derivative 
of $G_{a}(\cdot; x)$ at $v$ turning toward $w$, i.e., 
$\tfrac{d_{-}}{dt} G \bigl( \phi(s), x \bigr) \restriction_{s=0}$ for $y \in Y$,  is $-e + f - c + d - a$. 
Since $v' = v = \phi(0)$ minimizes $G_{a}(v', x)$, we have $G_{a} \bigl[ \phi(s), x \bigr]$ decreases as $s$ increases to $0$. Hence,
$-e + f - c + d - a = \tfrac{d_{-}}{dt} G \bigl( \phi(s), x \bigr) \restriction_{s=0} \leq 0$. 
Thus, if $v \neq \pm y_{0}$,
        \begin{equation}  \label{E:left.deriv.acdef}
          e - f + c - d + a \geq 0 .
        \end{equation} 

Similarly, since $v' = v = \phi(0)$ minimizes $G_{a}(v', x)$, we have $G_{a} \bigl[ \phi(s), x \bigr]$ decreases as $s$ decreases to $0$. But as $s \downarrow 0$ the point $\phi(s)$ turns away from $w$. Therefore, the one-sided derivative, $\tfrac{d_{+}}{dt} f \circ \phi(t) \restriction_{t=0}$, 
of $G_{a}(\cdot; x)$ at $v$ turning away from $w$ is non-negative. By \eqref{E:angle.deriv.sign} and \eqref{E:one-sided.angle.derivs}, that derivative 
is $e - f - c + d - a$. Thus, if $v \neq \pm y_{0}$,
    \begin{equation}  \label{E:right.deriv.acdef}
      e - f - c + d - a \geq 0 .
    \end{equation}  
Combining \eqref{E:left.deriv.acdef} and \eqref{E:right.deriv.acdef} we conclude,
	\begin{equation}  \label{E:a.c.d.e.f.ineq}
		e - f \geq 0 \text{ and } n-c = d+e+f \geq e-f+d \geq a + c, 
		  \text{ if } v \neq \pm y_{0}. 
	\end{equation}

Suppose $v \in (-Y) \setminus Y$, but $v \neq -y_{0}$. Then there must be some $i > 0$ s.t.\ 
$v = -y_{i}$. Thus, $f > 0$. 
On the other hand, $v \notin Y$ so $e = 0$, Therefore, $e-f < 0$, contradicting \eqref{E:a.c.d.e.f.ineq}. 
By \eqref{E:v.in.Y.union.(-Y)}, this proves that $v \in Y$ if $v \neq \pm y_{0}$.

If $v = y_{0}$ then $v \in Y$ and we are done. So suppose $v = - y_{0}$. If $-y_{0} \in Y$ then $v \in Y$ and we are again done. So suppose $v = -y_{0} \notin Y$. Thus, $e = 0$ and $y_{0} = -v$. Using \eqref{E:angle.deriv.sign} and \eqref{E:one-sided.angle.derivs} again, we see that 
the one-sided derivative of $G_{a}(\cdot; x)$ at $v$ turning toward $w$ is $f-c+d+a$. As before, this is non-positive, so $-f+c-d-a \geq 0$. Again as before, the one-sided derivative 
of $G_{a}(\cdot; x)$ at $v$ turning away from $w$ is non-negative. That derivative is $-f-c+d-a$.
I.e., $-f-c+d-a \geq 0$. Hence, we get the following. 
	\begin{equation}  \label{E:e-a-f.geq.0}
	         -f+c-d-a \geq 0 \text{ and } -f-c+d-a \geq 0 , \text{ so } a + f \leq 0, 
	           \text{ if } v = - y_{0} \notin Y.
	\end{equation} 
But $f \geq 0$ and, by \eqref{E:ma.n.k.a.sizes}, $a > 0$. 
This contradicts \eqref{E:e-a-f.geq.0}.
This proves the claim \eqref{E:Ma(x).in.Y} that $M_{a}(x) \subset Y$. 

Let $y \in S^{1}$ and $x = (y, \ldots, y) \in \T$. (See \eqref{E:directional.T.defn}.) By \eqref{E:Ma(x).in.Y}, $M_{a} \subset \{ y_{0}, y \}$. Thus, by \eqref{E:Ga.defn} and \eqref{E:ma.n.k.a.sizes}, we have 
$G_{a}(y;x) = a \angle(y_{0}, y) < (n-2) \angle(y_{0}, y) < n \angle(y_{0}, y) 
= G_{a}(y_{0}; x)$. Hence, $m_{a}(x) = y$. I.e., 
    \begin{equation}  \label{E:ma.defined.on.T}
      m_{a} \text{ is defined on } \T \text{  and satisfies \eqref{E:Phi.on.diagnl.exact}.}
    \end{equation}

Let $z_{1}, \ldots, z_{t} \in S^{1}$ be the distinct locations of the points 
$y_{1}, \ldots, y_{n}$. Call $\{ z_{1}, \ldots, z_{t} \}$ the ``support'' of $x$. We only count observations, i.e.\ $y_{1}, \ldots, y_{n}$, not the augmentation point $y_{0}$, but it is possible that $y_{0} \in \{ z_{1}, \ldots, z_{t} \}$. 
If $y_{0} \notin \{ z_{1}, \ldots, z_{t} \}$, define $z_{0} = y_{0}$. 
Let $\ell_{\alpha} \in [1,n]$ be the multiplicity of $z_{\alpha}$, i.e., $\ell_{\alpha}$ is the number of $y_{i}$'s that equal $z_{\alpha}$ ($\alpha=1, \ldots, t$). 
So $\ell_{1} + \cdots + \ell_{t} = n$. We \emph{claim:}
  \begin{lemma} \label{L:not.both.antipodals.in.Ma}
If $z_{\beta} = -z_{\alpha}$ ($\alpha, \beta = 0, \ldots, t$) then at most 
one of $z_{\alpha} $ and $z_{\beta}$ is in $M_{a}(x)$.
  \end{lemma}
For proof see appendix \ref{Chptr:misc.proofs}.
 
Let $\D' \subset \D$ be the set 
    \begin{equation}   \label{E:ma.D'.defn}
         \D' := \bigl\{ x \in \D' : m_{a}(x) \text{ is defined } \bigr\} .
    \end{equation}
Thus, $x \in \D'$ if and only if $G_{a}(\cdot ; x)$ has exactly one global minimum, 
in $Y(x)$ by \eqref{E:Ma(x).in.Y}. Since $\F = S^{1}$ is compact and as we just observed $m_{a}$ is defined on $\T$ so $\D'$ is not empty, we may apply lemma \ref{L:data.maps.defined.by.opt} part \ref{I:cmpct.opt} to $g = G_{a}$ to conclude that 
$m_{a}$ is continuous on $\D'$. We have the following. For proof of the following see appendix \ref{Chptr:misc.proofs}.

  \begin{lemma} \label{L:D'.is.dense.for.aug.median}
$\D'$ is dense in $\D$ and every point in $(\D')^{c} := \D \setminus \D'$ is a singularity of $m_{a}$. Thus, $(\Phi, \D')$ with $\Phi = m_{a}$ satisfies \eqref{E:D'.=.D.less.S}.
  \end{lemma}
In particular, 
    \begin{equation}  \label{E:ma.satisfies.hyps.of.loose.lemma}
      m_{a} 
        \text{ satisfies the hypotheses of lemma \ref{L:loose.exactness.of.location.fit}.}
    \end{equation} 
(So it satisfies the conclusion as well!)

\section{Severe singularities of $m_{a}$} \label{SS:severe.sings.ma} 
Let $x$ be a $\msf{V}_{\pi/2}$-severe singularity of $m_{a}$. (See \eqref{E:V.theta.defn}.) We \emph{claim} that $M_{a}(x)$ does not lie in any open semi-circle in $S^{1}$. (See \eqref{E:Ma.defn}. A semi-circle is an arc of length $\pi$.) For suppose $H \subset S^{1}$ is an open semi-circle and $M_{a} \subset H$. Write $x = (y_{1}, \ldots, y_{n})$. 
By \eqref{E:Ma(x).in.Y}, we know that 
$M_{a}(x) \subset Y = \{ y_{0}, y_{1}, \ldots, y_{n} \}$. Suppose $y_{j} \in M_{a}(x)$. 
($j=0$ is possible.)  Let   
	\begin{equation} \label{E:alpha.y1.x}
		g := g(a) := G_{a}(y_{j}; x) = \min_{v \in S^{1}} G_{a}(v; x).
	\end{equation}
By compactness, continuity of $G_{a}(\cdot; x)$, and the fact that 
$\overline{M_{a}(x)} = M_{a}(x) \subset H$, 
we may pick $\epsilon > 0$ s.t.\ 
	\begin{equation}   \label{E:Ga.Hc.3eps.alpha}
		G_{a}(v;x) > g + 3 \epsilon,
			\quad \text{ for every } v \in H^{c},
	\end{equation}
where $H^{c} := S_{1} \setminus H$. 

Let $u \in S^{1}$ be the midpoint of $H$ so $H = V_{u, \pi/2}$ by \eqref{E:sphere.cap.defn}. Since $x$ is a $\msf{V}_{\pi/2}$-severe singularity of $m_{a}$, it follows from definition \ref{D:V-severe} and compactness of $\F = S^{1}$ that there exists a sequence 
$\{ x_{\nu} \} \subset \D'$ s.t.\ $x_{\nu} \to x$ and  
$m_{a}(x_{\nu}) \to  v_{\infty} \notin H$. 
Thus, for every open neighborhood, $\mcl{W}$, of $H^{c}$, eventually $m_{a}(x_{\nu}) \in \mcl{W}$.) By \eqref{E:Ga.Hc.3eps.alpha}, continuity of $G_{a}$, and \eqref{E:alpha.y1.x}, 
	\[
		g + 3 \epsilon \leq G_{a}(v_{\infty}; x) 
		     = \lim_{\nu \to \infty} G_{a} \bigl( m_{a}(x_{\nu}); x_{\nu} \bigr)
		       \leq \lim_{\nu \to \infty} G_{a} \bigl( y_{j}; x_{\nu} \bigr) 
		         = G_{a}(y_{j}; x) = g.
	\]
This contradiction establishes the claim that $M_{a}(x)$ does not lie in any open semi-circle. 

In particular, by \eqref{E:Ma(x).in.Y},
    \begin{equation} \label{E:Y.not.in.semi-circle}
      \text{If } Y(x) \text{ lies in an open semi-circle then 
        $x$ is not a } \msf{V}_{\pi/2}\text{-severe singularity of } m_{a}. 
    \end{equation}

\section{Exactness of fit property of $m_{a}$}  \label{S:exactness.of.m_a}
Let $k$, an integer, and $a$ be as in \eqref{E:ma.n.k.a.sizes}. We show that $m_{a}$ has exactness of fit of order $k$ (definition \ref{D:exactness.of.fit}). Thus, we show first that, 
if $x = (y_{1}, \ldots, y_{n}) \in \D := (S^{1})^{n}$, $0 < i_{1} < \cdots < i_{n-k} \leq n$, 
and $y_{i_{1}} = \cdots = y_{i_{n-k}}$ then $m_{a}(x) =  y_{i_{1}} = \cdots = y_{i_{n-k}}$. Recall the definition \eqref{E:Pk.defn.robust.loc.on.sphere} of $\Pf_{k}$. 
$\Pf_{k}$ will function as the ``perfect fit space'' for $m_{a}$. Note that 
    \begin{equation}  \label{E:dim.Pk.=.k+1}
      \dim \Pf_{k} = k+1 . 
    \end{equation}
We also show that $m_{a}$ has no $\msf{V}_{\pi/2}$-severe singularities in a neighborhood of $\Pf_{k}$. (See \eqref{E:V.theta.defn} for definition of $\msf{V}_{\pi/2}$.) In fact, the same neighborhood of $\Pf_{k}$ works for all $a$ not too close to $n-2k$. 

Let $a_{0} \in [0, n-2k)$ and $\alpha \in (0, \pi/4)$ be fixed and let 
	\begin{equation}  \label{E:epsilon.alpha}
		\epsilon_{0} := \epsilon_{0}(a_{0}, \alpha) := (n-2k-a_{0}) \alpha , 
	\end{equation}
so $\epsilon_{0} > 0$, and let $\epsilon \in (0, \epsilon_{0}]$. Recall, by \eqref{E:N.sub.n}, 
$\NN_{n} := \{1, \ldots, n \}$. If $I \subset \NN_{n}$ has cardinality $n-k$, let
    \begin{multline}  \label{E:U.epsilon.defn}
      \clU_{I}(\epsilon) := \bigl\{ (y_{1}, \ldots, y_{n}) \in \D : |y_{i} - y_{j}| 
        < \epsilon/(n-k-1) \text{ for every } i, j \in I \bigr\} \\
          \text{ and } \clU := \clU(\epsilon) := \bigcup_{I} \clU_{I} ,
    \end{multline}
where the union is over all $I \subset \NN_{n}$ with cardinality $n-k$. So $\clU$ is an open neighborhood of $\Pf_{k}$. Let
    \begin{equation}  \label{E:a.leq.a0}
      a \in (0, a_{0}] \text{ not be an integer; otherwise $a$ is arbitrary.}
    \end{equation}
Then we \emph{claim:}
   \begin{multline}   \label{E:ma.has.no.svr.sings.near.Pk}
	\text{For any non-integer } a \in (0, a_{0}], 
          m_{a} \text{ has no } \msf{V}_{\pi/2} 
            \text{-severe singularities in } \clU(\epsilon_{0}) \\
	      \text{ and no singularities in } \Pf_{k} \text{ of any severity} .
   \end{multline}

Let 
    \begin{equation*}
      x = (y_{1}, \ldots, y_{n}) \in \clU(\epsilon) .
    \end{equation*}
For definiteness, 
suppose $x \in \clU_{\{1, \ldots, n-k \}}(\epsilon)$. Then
	\begin{equation}  \label{E:angle.sum.<.eps}
		\sum_{i=1}^{n-k} \angle(y_{i}, y_{1}) = \sum_{i=2}^{n-k} \angle(y_{i}, y_{1}) 
		            <  \epsilon.
	\end{equation}
Let
	\begin{equation}    \label{E:C.angle.defn}
		C :=  \sum_{i=n-k+1}^{n} \angle(y_{1}, y_{i}).
	\end{equation}
So the $i$'s in the preceding summation and that in \eqref{E:angle.sum.<.eps} are disjoint. Let $v \in S^{1}$. Since $\angle$ is a metric on $S^{1}$, we have, by the triangle inequality,
	\begin{align}   \label{E:Ga.v.leq}
	   G_{a}(v) &= a \angle(v, y_{0}) + \sum_{i=1}^{n} \angle(v, y_{i}) \notag  \\
		         &\leq a \angle(v, y_{0}) + \sum_{i=1}^{n-k} \angle(v, y_{i}) 
		           + \left( k \, \angle(v, y_{1}) + \sum_{j=n-k+1}^{n} \angle(y_{1}, y_{j}) \right). 
	\end{align}
Substituting $y_{1}$ in place of $v$ in the preceding and applying \eqref{E:angle.sum.<.eps}, we get in particular,
	\begin{equation}  \label{E:Ga.x1.leq}
		G_{a}(y_{1}) \leq a \angle(y_{1}, y_{0}) +  \epsilon + C.
	\end{equation}

Let $v_{a} \in M_{a}(x)$ be an arbitrary minimizer of $G_{a}$ in \eqref{E:Ga.defn}. (See \eqref{E:Ma.defn}.) We have the following. 
	\begin{equation}  \label{E:circ.triang.ineqs1}
		\angle(v_{a}, y_{i}) + \angle(y_{i}, y_{1}) \geq \angle(v_{a}, y_{1}), 
		  \quad (i=0, \ldots, n) ,
	\end{equation}
and
	\begin{equation}	 \label{E:circ.triang.ineqs4}
		\angle(v_{a}, y_{j}) + \angle(v_{a}, y_{1}) \geq \angle(y_{j}, y_{1}), 
		  \quad (j=0, \ldots, n).
	\end{equation}
Note that \eqref{E:circ.triang.ineqs4} implies
	\begin{equation}	 \label{E:circ.triang.ineqs5}
		\angle(v_{a}, y_{1}) \geq \angle(y_{j}, y_{1}) - \angle(v_{a}, y_{j}), 
		  \quad (j=0, \ldots, n).
	\end{equation}

From \eqref{E:Ga.x1.leq}, \eqref{E:circ.triang.ineqs1}, and \eqref{E:angle.sum.<.eps},
	\begin{align}  \label{E:a.angle.geq.-eps}
		a \angle(y_{1}, y_{0}) &+  \epsilon + C \notag \\ 
				   &\geq G_{a}(y_{1}) \geq G_{a}(v_{a}) \notag \\
				   &= a \angle(v_{a}, y_{0}) + \sum_{i=1}^{n-k} \angle(v_{a}, y_{i}) 
				          + \sum_{j=n-k+1}^{n} \angle(v_{a}, y_{j})  \\
				   &\geq a \angle(v_{a}, y_{0}) 
				        + \left[ (n-k) \angle(v_{a}, y_{1}) 
				                            - \sum_{i=1}^{n-k} \angle(y_{1}, y_{i}) \right]
				          + \sum_{j=n-k+1}^{n} \angle(v_{a}, y_{j}) \notag \\
				   &\geq a \angle(v_{a}, y_{0}) 
				        + (n-k) \angle(v_{a}, y_{1}) - \epsilon 
				                  + \sum_{j=n-k+1}^{n} \angle(v_{a}, y_{j}).    \notag
	\end{align}

Therefore, by \eqref{E:a.angle.geq.-eps}, \eqref{E:C.angle.defn}, \eqref{E:circ.triang.ineqs4}, and \eqref{E:circ.triang.ineqs5},
	\begin{multline*}  \label{E:n-k.angle.leq}
		 (n-k) \angle(v_{a}, y_{1}) \\
			\begin{aligned}
				{} 
				   &\leq 2 \epsilon 
				     + a \bigl[ \angle(y_{1}, y_{0}) - \angle(v_{a}, y_{0}) \bigr]  
				       + C - \sum_{j=n-k+1}^{n} \angle(v_{a}, y_{j}) \\
				    &= 2 \epsilon + a \bigl[ \angle(y_{1}, y_{0}) - \angle(v_{a}, y_{0}) \bigr]  
				       + \left[  \sum_{j=n-k+1}^{n} \angle(y_{1}, y_{j}) 
				                        - \sum_{j=n-k+1}^{n} \angle(v_{a}, y_{j}) \right] \\   
				    &\leq     2 \epsilon + a \angle(v_{a}, y_{1}) 
				       + k \angle(y_{1}, v_{a}). 
			\end{aligned}
	\end{multline*}  
Rearranging, we get
	\begin{equation*}
		(n-k - a - k) \angle(v_{a}, y_{1}) \leq 2 \epsilon.
	\end{equation*}
By \eqref{E:ma.n.k.a.sizes}, $n-2k - a > 0$. Hence, by \eqref{E:a.leq.a0} and \eqref{E:epsilon.alpha},
	\begin{equation}    \label{E:m.y1.angle.bound}
              \angle(v_{a}, y_{1}) \leq \frac{2 \epsilon}{n-2k - a}
		\leq \theta_{0} := \frac{2 \epsilon_{0}}{n-2k - a_{0}} 
		  = 2 \alpha < \frac{\pi}{2}.
	\end{equation}
By assumption (\eqref{E:U.epsilon.defn}), $x \in \clU_{I}(\epsilon)$. If $x \in \Pf_{k}$ then we may let $\epsilon \downarrow 0$ in \eqref{E:angle.sum.<.eps} and \eqref{E:m.y1.angle.bound} and get $v_{a} = y_{1} = \cdots = y_{n-k}$. 
Permuting $y_{1}, \ldots, y_{n}$ does not change 
$G_{a}( \cdot ; x)$, we have that 
    \begin{equation}  \label{E:m.a.and.n-k.equal.obs}
        m_{a} \text{ has exactness of fit of order } k .
    \end{equation}
(See definition \ref{D:exactness.of.fit}.) Hence, $v_{a} = y_{1} = \cdots = y_{n-k}$ 
if $x \in \Pf_{k}$. Recall that $\Delta$ is the diagonal map, 
$\Delta : y \in S^{\nvar} \mapsto (y, ..., y) \in \T$.  
Let $\Theta := m_{a} \restriction_{\T}$, so 
    \begin{equation}  \label{E:aug.median.Theta.cont.on.T}
      \Theta \text{ is defined and continuous on } \T .
    \end{equation} 
Since $ma_{a}$ has exactness of fit, we have 
$\Theta \circ \Delta = m_{a} \circ \Delta = $ identity on $S^{\nvar}$. Thus, trivially, 
    \begin{equation}  \label{E:homot.holds.for.ma}
      \eqref{E:Theta.circ.Delta.homot.to.id.on.sphere} 
        \text{ holds for } m_{a} .
    \end{equation}

Now drop the assumption that $x \in \Pf_{k}$ and only demand that 
$x \in \clU_{I}(\epsilon)$. $v_{a}$ is an arbitrary point in $M_{a}(x)$. Therefore, by \eqref{E:m.y1.angle.bound}, 
$M_{a}(x) \subset \overline{V_{y_{1}, \theta_{0}}} \subset V_{y_{1}, \pi/2}$. (See \eqref{E:sphere.cap.defn} for definition.) By \eqref{E:Ma(x).in.Y}, 
$M_{a}(x) \subset Y(x) := \{y_{0}, \ldots, y_{n}\}$. (See \eqref{E:Y(x).defn}.) Let $J \subset N := \{0, \ldots, n\}$ 
satisfy $y_{i} \in M_{a}(x)$ if and only if $i \in J$. Thus, if $j \in J^{c} := N \setminus J$ then 
$G_{a}(y_{j}; x) > G_{a}(v_{a}; x) = G_{a}(y_{i}; x)$ for every $i \in J$. 
Pick $\theta \in (\theta_{0}, \pi/2)$ fixed. Then there exists $\delta > 0$ s.t.\ 
if $x' = (y'_{0}, \ldots, y'_{n}) \in \D$ and $\rho(x', x) < \delta$ (recall $\rho =$ metric 
on $\D$; \eqref{E:rho.is.top.metric}), then
 for every $j \in J^{c}$ we have $G_{a}(y_{j}; x') > \max \{ G_{a}(y_{i}; x') : i \in J \}$ and 
$\angle(y'_{i}, y_{i}) < \theta - \theta_{0}$ ($i \in J$), so, 
$M_{a}(x') \subset \{y'_{i} : i \in J \}$. Let $i \in J$ so we may assume $v_{a} = y_{i}$. Then, by \eqref{E:m.y1.angle.bound},  
$\angle(y'_{i}, y_{1}) \leq \angle(y'_{i}, y_{i}) + \angle(y_{i}, y_{1}) = \angle(y'_{i}, y_{i}) + \angle(v_{a}, y_{1}) < \theta < \pi/2$.
In particular, if $x' \in \D'$ and $\rho(x', x) < \delta$, then
    \begin{equation}  \label{E:ma.in.V.theta}
      m_{a}(x') \in \subset \overline{V_{y_{1}, \theta}} \subset V_{y_{1}, \pi/2}.
    \end{equation}
I.e., if $x' \in \D'$ is in the ball $B_{\delta}(x) \subset \D$ about $x$ 
with radius $\delta$, then $m_{a}(x') \subset \overline{V_{y_{1}, \theta}}$. The preceding argument goes through for $x$ in $\clU_{I}(\epsilon)$ for any 
$I \subset \NN_{n}$ with cardinality $n-k$.

Now suppose $x \in \Pf_{k}$ with $y_{1} = \cdots = y_{n-k}$. Suppose $x' \to x$ through 
$\D'$. By letting $\epsilon_{0} > 0$ be arbitrarily small we make 
$\theta_{0}$ arbitrarily small. But for sufficiently small $\delta$, if $\rho(x', x) < \delta$ then, by \eqref{E:ma.in.V.theta}, $m_{a}(x') \in V_{y_{1}, \theta_{0}}$, i.e., by \eqref{E:sphere.cap.defn}, $\angle \bigl( m_{a}(x'), y_{1} \bigr) < \theta_{0}$. We conclude
    \begin{equation} \label{E:ma.has.no.sings.in.Pk}
       m_{a} \text{ has no singularities in } \Pf_{k} .
    \end{equation}
The claim \eqref{E:ma.has.no.svr.sings.near.Pk} now follows from \eqref{E:V.theta.defn} and definition \ref{D:V-severe}.

Let $\Ss_{a}$ be the singular set of $m_{a}$ (it may not be closed). 
Let $\Ss_{a}^{\msf{V}_{\pi/2}}$ denote the set of $\msf{V}_{\pi/2}$-severe singularities 
of $m_{a}$. Then, $\Phi = m_{a}$ satisfies \eqref{E:no.severe.sings.near.Pk} 
($\Ss_{a}^{\msf{V}_{\pi/2}} \cap \Pf_{k} = \varnothing$). We wish to apply corollary \ref{C:Haus.measure.of.severe.sings.on.spheres} to $m_{a}$. By \eqref{E:homot.holds.for.ma}, we have that
\eqref{E:Theta.circ.Delta.homot.to.id.on.sphere} ($\Theta \circ \Delta$ homotopic to identity) holds for $m_{a}$. By \eqref{L:D'.is.dense.for.aug.median}, $(\Phi, \D')$ with 
$\Phi = m_{a}$ satisfies \eqref{E:D'.=.D.less.S} ($\D' = D \setminus \Ss$). We have already observed that $m_{a}$ satisfies \eqref{E:assume.Phi.S'.sym} (symmetric in arguments).
By \eqref{E:aug.median.Theta.cont.on.T}, $\Theta$  is defined and continuous on $\T$.
By \eqref{E:ma.has.no.sings.in.Pk}, \eqref{E:loc.meas.has.no.severe.sings.near.T} 
($\Ss_{a}^{\msf{V}_{\pi/2}} \cap \T = \varnothing$) also holds. Therefore, by corollary \ref{C:Haus.measure.of.severe.sings.on.spheres}, \eqref{E:Haus.measure.severe.sings.on.spheres} holds:  
Let $R := dist_{n - 2}(\Ss_{a}^{\msf{V}_{\pi/2}}, \T) > 0$. Then for some $\gamma > 0$ not depending on $a$.
    \begin{multline} \label{E:ma.has.exact.fit.etc.}
       \text{If } k \text{ and } a 
         \text{ satisfy } \eqref{E:ma.n.k.a.sizes} 
           \text{ and } \eqref{E:.for.aug.median.a.not.integer}, \\
             \text{ then } m_{a} 
             \text{ has exactness of fit of order } k \text{ and } \\
               \Hm^{n-2}(\Ss_{a}) \geq \Hm^{n-2}(\Ss_{a}^{\msf{V}_{\pi/2}}) 
                 \geq \gamma R^{n-2}. 
                   \text{ In particular, codim} \, \Ss_{a} \leq 2.
    \end{multline}
Here, $\gamma > 0$ is a constant depending only on $n$. In the next section, with $k=1$, we will derive a bound on $\Hm^{n \nvar - \nvar -1}(\Ss_{a})$ similar to the preceding, but computing distance to $\Pf_{k}$ instead of to $\T$.

\section{Distance from severe singularities to $\Pf_{k}$}  \label{SS:dist.from.anglar.median.to.sev.sings.to.Pk} Next, \emph{for } $k=1$, we show more or less the opposite of \eqref{E:ma.has.no.svr.sings.near.Pk}, \emph{viz.}, by allowing $a$ to approach 
$n-2k = n-2$ (see \eqref{E:ma.n.k.a.sizes}), the severe singularities of $m_{a}$ come arbitrarily close to $\Pf_{k}$. Recall the definition, \eqref{E:angle.between.vectors}, 
of $\angle$. If $x=(y_{1}, \ldots, y_{n})$ and $x'=(y_{1}', \ldots, y_{n}')$ are in $\D$, define a distance, $\sigma(x,x')$, from $x$ to $x'$ to be 
	\begin{equation}  \label{E:Delta.metric.for.loc.on.circle}
		\sigma(x,x') := \sum_{i=1}^{n} \angle(y_{i}, y_{i}') .
	\end{equation}
(Another proposal is described in remark \ref{R:another.metric}.)  
By \eqref{E:geod.on.prod.of.spheres} and \eqref{E:rho.is.top.metric}, the topological metric on $\D$ determined by the Riemannian metric on $\D$ is
$\rho(x, x') = \Bigl| \bigl( \angle(y_{1}, y_{1}'), \ldots, \angle(y_{n}, y_{n}') \bigr) \Bigr|$, the Euclidean length of the vector of angles. (See also \eqref{E:geod.dist.on.D.2} in appendix \ref{Chptr:rob.loc.circle.cones.appendix}.) Therefore, by \eqref{E:n.c.sqrd.sum.ineq}, 
    \begin{equation*}
       \rho(x, x') \leq \sigma(x,x')  \leq \sqrt{n} \, \rho(x, x') .
    \end{equation*}
Thus, by definition of Hausdorff measure (appendix \ref{Chptr:Lip.Haus.meas.dim}), Hausdorff measure computed using $\rho$ or $\sigma$ can only differ by at most a constant multiple. Consequently, dimension computed w.r.t.\ the two measures are the same. Hence, our results concerning dimension (chapter \ref{Chptr:spherical.location}) or measure (chapter \ref{Chptr:Haus.meas.of.sing.set}) of singular sets yield the same results no matter which metric we use.

Recall the definition \eqref{E:V.theta.defn} of $\msf{V}_{\theta}$ singularities. 
Using $\sigma$ as our metric, we show the $\Hm^{n-2}$-essential distance between 
$\Pf_{1}$ and the set of 
$\msf{V}_{\pi/2}$-severe singularities of $m_{a}$ can be made arbitrarily small by taking $a$ close to $n-2$ (see \eqref{E:ma.n.k.a.sizes}). This does not contradict \eqref{E:ma.has.no.svr.sings.near.Pk} because \eqref{E:ma.has.no.svr.sings.near.Pk} holds for $a$ bounded away from $n-2k = n-2$. Now we let $a \uparrow n-2$. 

As a first step, we prove the following. See appendix \ref{Chptr:misc.proofs} for the proof.

    \begin{prop}   \label{P:severe.sings.of.ma.dist} 
    If $n \geq 4$ we have
	\begin{equation}  \label{E:dist.to.Pk.O(n-2-a)}
		dist_{n-2} \bigl(\Ss_{a}^{\msf{V}_{\pi/2}}, \Pf_{1} \bigr) = O( n - 2 - a ) 
		              \text{ as } a \uparrow n-2 \text{ (through non-integer values) }.
	\end{equation}
    \end{prop}

For the augmented directional mean $\mu_{t}$ (chapter \ref{Chptr:aug.direct.mean}), the test pattern space $\Pf_{0} = \T$ is the natural perfect fit space. Let $t \in [0, n)$ (see \eqref{E:a.tween.0.n}). For a measure of location with order of exactness of fit 
$\ell \in [0, n/2)$ the space 
$\Pf_{\ell}$ is the natural perfect fit space. Let $\Ss_{\mu_{t}}^{\msf{V}_{\pi/2}}$ denote the singular set of the augmented directional mean $\mu_{t}$. By \eqref{E:Sa=Sa'} every singularity of $\mu_{t}$ is $\msf{V}_{\pi/2}$-severe. 
I.e., $\Ss_{\mu_{t}}^{\msf{V}_{\pi/2}} = \Ss_{\mu_{t}}$, the set of all singularities 
of $\mu_{t}$. As discussed in section \ref{SS:measure.distance.to.P}, for any data map we would like the distance of its set of severe singularities to its perfect fit space to be large while the measure of that set to be small. (But remark \ref{R:fit.instab.tradeoff} points out that these two desiderata cannot be achieved simultaneously.)  Here we examine this issue both for $\mu_{t}$ and general measures, $\Phi$, of location on $S^{1}$ that have positive order of exactness of fit. We find that in sufficiently extreme cases, $\mu_{t}$ dominates $\Phi$ in both respects.

The following suggests that measures of location on $S^{1}$ with positive order of exactness of fit have comparatively large singular sets. However, this may be a price worth paying in order to get resistance to outliers. Resistance might be another thing one might want to hold equal in judging methods in terms of the sizes of their singular sets.  
Recall the definition, \eqref{E:essential.dist.defn}, of essential distance.

Recall lemma \ref{L:solve.rho.t=r}. See appendix \ref{Chptr:misc.proofs} for the proof of the following. 
  \begin{prop}  \label{P:aug.direct.mean.beats.robst.loc}
Let $n > 3$. Take $\nvar =1$ so $d - \nvar -1 = n \nvar-\nvar-1 = n-2$. 
If $R > 0$ let $F_{R, 1}$ denote the collection of maps $\Phi : \D \partlyto S^{1}$ having the following properties. 
	\begin{itemize}
		\item $\Phi$ satisfies the hypotheses 
		  of lemma \ref{L:loose.exactness.of.location.fit} with $k = 1$. 
		  In particular,  
		$\Ss^{\msf{V}_{\pi/2}} \cap \Pf_{1} = \varnothing$. (Just as you would expect, 
		$\Ss^{\msf{V}_{\pi/2}}$ is the set of $\msf{V}_{\pi/2}$-severe singularities 
		of $\Phi$.)
		\item Thus the distance, \emph{a fortiori} the 
		    $\Hm^{n-2}$-essential distance, 
		$dist_{n-2}(\Ss^{\msf{V}_{\pi/2}}, \Pf_{1})$, 
		from $\Ss^{\msf{V}_{\pi/2}}$ to $\Pf_{1}$ is positive. It is strictly less than $R$.
	\end{itemize} 
Thus, $F_{R, 1}$ is increasing in $R$. \emph{THEN:}
  \begin{enumerate}
    \item For any $R > 0$, $F_{R, 1}$ is non-empty.  \label{I:F.R.1.nonempty}
    \item  Let $\delta > 0$ be arbitrary but fixed. Then for $R > 0$ sufficiently small, the following holds. Suppose $\Phi \in F_{R, 1}$ and let 
$r = dist_{n-2} (\Ss^{\msf{V}_{\pi/2}}, \Pf_{1}) (< R)$. There exists 
$t_{r} = n - \tfrac{1}{2} (r/\delta)^{2} + O(r^{4}/\delta^{4}) < n$ s.t., 
if $\Ss_{\mu_{t_{r}}}$ is the set of all singularities of the augmented directional \emph{mean} $\mu_{t_{r}}$, we have 
	\begin{equation}   \label{E:robust.dist.vol.ineqs}
		dist_{n-2} \bigl(\Ss_{\mu_{t_{r}}}, \T \bigr) = \delta^{-1} 
		            dist_{n-2} (\Ss^{\msf{V}_{\pi/2}}, \Pf_{1} )  
		\text{ but } 
		  \Hm^{n-2} (\Ss^{\msf{V}_{\pi/2}} ) . 
		                > \delta^{n-2} \, \Hm^{n-2} \bigl(\Ss_{\mu_{t_{r}}} \bigr). 
	\end{equation}
     \label{I:mu.a.beats.the.shit.out.of.Phi}
   \end{enumerate}
  \end{prop}

By \eqref{E:Sa=Sa'} and \eqref{E:S'a.D'a.defns}, we have 
$\Ss_{\mu_{t_{r}}} = \Ss_{\mu_{t_{r}}}^{\msf{V}_{\pi/2}}$, where 
$\Ss_{\mu_{t_{r}}}^{\msf{V}_{\pi/2}}$ is the set of $\msf{V}_{\pi/2}$-severe singularities of $\mu_{t_{r}}$. Note that $\T = \Pf_{0}$. See proposition \ref{P:rate.of.decrease.of.Hm.St.aug.direct.mean}.

  \begin{remark}
Distances to $\T$ and $\Pf_{1}$ may have different meaning even if their numerical values are the same. The $\delta$ factor allows an exchange rate different from unity. However, distance to $\T$ is never smaller than distance to $\Pf_{1}$. For a given 
$\delta > 0$, any number $R > 0$ small enough that \eqref{E:robust.dist.vol.ineqs} holds might be smaller than any that comes up in practice. However, I conjecture that for 
$\delta = 1$, there are values of $R$ small enough that \eqref{E:robust.dist.vol.ineqs} holds yet are still of practical size. 

The interesting case is $\delta < 1$. In that case, $\Ss_{\mu_{t_{r}}}$ is further from $\T$ than $\Ss^{\msf{V}_{\pi/2}}$ is from $\Pf_{1}$, yet has smaller measure. Thus, with $\delta < 1$, $\mu_{t_{r}}$ is superior to $\Phi$ in the two respects we focus on in this book. (Except in some applications distance 
from $\Pf_{1}$ may be of greater interest than distance from $\T$.)

The proposition suggests that a cost of using a method resistant to outliers is an increase in singularity problems. Sometimes it is cost worth paying.
  \end{remark}

  \begin{remark} \label{R:penalty.for.robustness.in.lin.reg?}
Might it be possible to apply the strategy used in proving proposition \ref{P:aug.direct.mean.beats.robst.loc} to robust linear regression methods (remark \ref{R:robust.lin.reg}) to show that there is a penalty in using a robust linear regression method? 
  \end{remark}
  
\chapter{Linear classification}  \label{Chptr:linear.classification}
We next examine linear classification. That subject deserves a detailed examination, but we only discuss it briefly here. For background information, 
see Johnson and Wichern \cite[pp.\ 494--508]{raJdwW92}, 
Anderson \cite[pp.\ 195--223]{Anderson}, 
Christianini and Shawe-Taylor \cite[Chapter 2]{nCjS-T00.svm}, Vapnik (1998), Hastie \emph{et al} \cite{tHrTjF01.statlearn}, and Agresti \cite[Section 4.2, pp.\ 84--91]{aA90.CatDatAnal}.

\section{Linear classification: General theory}  \label{SS:general.lin.class.thy}
In linear classification the data (more precisely the training data) consist of a point cloud in $\RR^{k}$ each point of which is labeled by either $+1$ or $-1$. These labelled points are ``observations'' or ``examples''. So each observation or example has the form 
$z = (x, z)$, with $x \in \RR^{k}$ and $z = \pm 1$. ($x$ is the ``predictor''.) Let $n$ be the number of observations. Thus, a training data set has the form 
$Y^{n \times \nvar} = (X^{n \times k}, z^{n \times 1})$. We will require
    \begin{equation}  \label{E:n,k.in.lin.class}
      n > 1, \, k > 1 , \, \nvar = k+1 .
    \end{equation}
($k \geq n$, i.e.\  ``wide'' data, is allowed. See remark \ref{R:long.vs.wide}.)

One seeks a $(k-1)$-dimensional affine plane $\pi$ passing through the point cloud. So far this sounds like plane-fitting (chapter \ref{Chptr:sings.in.plane.fit}). But now the purpose of the plane is to separate well the positive points and negative ones. I.e., the goal, not always achievable, is that one side of the plane be enriched in positive points, the other side enriched in negative. Moreover, in addition to $\pi$ one also needs to know which side of $\pi$ is the positive side (equivalently, which is the negative). As we will see presently this cannot be a plane-fitting in the sense of chapter \ref{Chptr:sings.in.plane.fit}.

Cover \cite{tmC65.LinIneqPatRecog}, and references therein, investigated when a linear classifier performs optimally on the training data in the sense of perfectly discriminating the positive and negative examples. See remark \ref{R:constant.classifiers} for a discussion of the opposite situation.

Like linear regression (section \ref{SS:lin.reg.and.LS}, \eqref{E:linear.regression.function}) linear classification involves learning a function, call it $\Gamma$, that can then be used to make predictions. (See remark \ref{R:learning.and.predicting}.) $\Gamma$ depends on parameters $b \in \RR$ (the "bias") and a vector $v^{1 \times k}$ (the "weight vector").  Specifically, given an unlabeled $x \in \RR^{k}$ we classify $x$ to be 
    \begin{equation}  \label{E:how.lin.clssfier.works}
      \Gamma(x) : = \Gamma(x ; Y) := sign (b + x \cdot v) .
    \end{equation} 
(Here, $sign(0)$ is unspecified. See \eqref{E:sign.function}. The dot ``$\cdot$'' indicates Euclidean inner product.) $b$ and $v$ are ``learned'' from the training data. And just as in section \ref{SS:lin.reg.and.LS}, here we are interested in the learning phase. Denote by $LC$ an operation that takes training data as input and (usually) produces a function 
$\Gamma$.
 
As remarked above, linear classification is not a form of plane-fitting (chapter \ref{Chptr:sings.in.plane.fit}). Even the map $X \mapsto v^{\perp} \in G(k-1,k)$ (see \eqref{E:superscript.perp.notation}) is not plane-fitting in the sense of that chapter: If the rows of $X$ lay exactly on a unique $(k-1)$-plane and the plane 
$v^{\perp}$ were were parallel to that plane as required by (\ref{E:plane-fitter.defn}(3)) 
(recall \eqref{E:Delta(Y).defn}) then it would be incapable of discriminating the positive and negative data points. 

  \begin{remark}[Linear classification ``failure'']  \label{R:constant.classifiers}
Let $\Gamma$, $b \in \RR$, and $v^{1 \times k}$ be as in \eqref{E:how.lin.clssfier.works}. If $b \neq 0$ and $x \in \RR^{k}$ satisfies $|x| < |b|/|v|$ then $\Gamma(x)$ finds no information in the predictor vector $x$: $\Gamma$ is constant on the ball $B_{|b|/|v|}(0)$. (See \eqref{E:ball.defn}.) In particular, if $v = 0$ then $\Gamma$ is constant on $\RR^{k}$, therefore not discriminating. Let $Y = (X,z)$ be a training data set used to determine $b$ and $v$. Then $v = 0$ suggests that $X$ has no information about $z$, normally an indication of failure of classifier training. This situation is examined in detail in \cite{spE.ConstClassifiers}.

If $v = 0$, then the pair $(b, v)$ does not define a plane and if at $Y$ a learning algorithm $LC$ has $v = 0$ in its output, we regard $LC$ as undefined at $Y$. Otherwise, it is clear we may assume 
    \begin{equation*}
      |v| = 1 .
    \end{equation*}   
  \end{remark}

Let $m \in \NN$ and
    \begin{equation} \label{E:mcl.Xmk.lin.class.defn}
       \X_{m}^{k} := \text{ space of all } m \times k \text{ real matrices.}
    \end{equation}
So $\X_{m}^{k}$ is homeomorphic to $\RR^{mk}$.

In this chapter we study the singularities of the map $LC$ that specifies the affine function, $\Gamma$. The input to $LC$ consists of training data  
$Y^{n \times \nvar} = (X^{n \times k}, z^{n \times 1})$ (so $\nvar = k+1$), where 
$X \in \X_{n}^{k}$ and the entries in $z$ are all $+1$ or $-1$. 

The set of all such matrices $Y$, tentatively call it $\D$, is disconnected, a departure from \eqref{E:D.metric.F.normal}. So we proceed as follows. Let $Y$ be as in the last paragraph. Let $P$ denote the number of $+1$'s in $z$. Let $N$ denote the number of $-1$'s. Hold $P$ (and therefore $N = n-P$) fixed. So we ``condition'' on $P$. If $P=n$ or $N=n$, $Y$ is clearly useless as a training data set. So assume $0 < P < n$. One expects that the typical classification method $LC$ will be invariant under permutation of rows of $Y$, but in any case take the first $P$ entries in $z$ to be $+1$ and the last $N$ entries to be $-1$. Thus, $z$ is fixed. Let $z_{i}$ be the $i^{th}$ entry of $z$ 
($i\in \NN_{n}$; see \eqref{E:N.sub.n}). Let $\D^{z}$ be the set of training data sets $Y$ whose last column is this special $z$: 
    \begin{equation}  \label{E:zP.in.lin.class}
      z := z^{P} = \bigl( z_{1}, \ldots, z_{P}, z_{P+1}, \ldots, z_{n} \bigr) 
        := (+1, \ldots, +1, -1, \ldots, -1)^{T}
    \end{equation} 
with $1_{n} \cdot z = P-N$, where ``$1^{n}$'' is defined in \eqref{E:1n.col.vec.defn}. Now drop the subscript: 
$\D := \D^{z^{P}}$. Thus, 
    \begin{equation}  \label{E:prelim.D.defn.in.lin.class}
      \D := \X_{m}^{k} \times \{ z^{P} \} .
    \end{equation}
Later we will replace this $\D$ by one that is a compact manifold.

Our analysis will not capture sensitivity of $LC$ to changes 
in $z = (\pm 1, \pm1, \ldots, \pm 1)$. The methods like those described in remark \ref{R:discreteness} might be employed to study that and it might be interesting to do so, but we do not attempt that here. 
  
As usual, let $S^{k-1} \subset \RR^{k}$ be the $(k-1)$-dimension unit sphere centered at 0. Define
    \begin{equation}  \label{E:LC.Phi}
      \Phi(Y) := v \in S^{k-1}, \qquad Y \in \D 
    \end{equation}  
whenever possible. Thus, the codomain, $\F$, of $\Phi$ is just $S^{k-1}$.    

Next we define $\Pf$ and $\T$. Let
    \begin{equation*}
      s > 0 .
    \end{equation*} 
Choose arbitrary matrices 
$T_{+}^{P \times k}$ and $T_{-}^{N \times k}$. Later we will require that  
the rows of these matrices have distance less than $s/2$ from some fixed vector 
$\tilde{x}$. For example, we might require their rows to all be 0. Let 
    \begin{equation}  \label{E:lin.class.T.matrix}
      T^{n \times k} := (T_{+}^{T}, T_{-}^{T})^{T} .
    \end{equation}
Thus, the first $P$ rows of $T$ constitute $T_{+}$, the last $N$ rows constitute 
$T_{-}$. 
    \begin{multline*}
      \text{Denote the $i^{th}$ row of $T$ by } x_{+,i} \; (i=1, \ldots, P) 
        \text{ and } x_{-,j} \; (j=P+1, \ldots, n) \\
          \text{ or, generically, by } x_{\pm,i} \;  (i \in \NN_{n}) . 
    \end{multline*}
For $u^{1 \times k} \in S^{k-1}$, 
    \begin{multline}  \label{E:lin.class.X(u)}
      \text{let } X(u)^{n \times k} \text{ be a matrix whose first $P$ rows are }
        T_{+} + s 1^{P} \, u \\
          \text{ and whose last $N = n-P$ rows are } T_{-} - s 1^{N} \, u .
    \end{multline} 
Let 
    \begin{equation}  \label{E:T.and.Pf.in.lin.class}
      \T = \T_{T} := \Pf_{T} 
        := \bigl\{ (X(u),z^{P}) : u \in S^{k-1} \bigr\} .
    \end{equation} 
(Again, $z^{P} = (+1, \ldots, +1, -1, \ldots, -1)^{T}$ is fixed. Do not confuse $\T$ 
with the $n \times k$ matrix $T$.) 

Since $s > 0$, $T_{+}$, and $T_{-}$ are arbitrary, $\Pf$ is not canonical. This is a quality it shares with the $\T$ discussed in section \ref{SS:calibration} and example \ref{Ex:disconnected.F}. Except, possibly, if the classifier $LC$ is regularized (remark \ref{R:regularization.generalities}), it is reasonable to suppose that for some choice 
of $T_{+}$, $T_{-}$, and $s > 0$, if $LC$ is trained on a data set 
$(X,z^{P}) \in \Pf_{T}$, it will correctly classify every row 
in $(X,z^{P})$, the training set. 

We show 
    \begin{equation}  \label{E:lin.class.Pf.homeomto.sphere} 
      \T = \T_{T} \homeomto S^{k-1} .
    \end{equation} 
(``$\homeomto$'' means ``homeomorphic to''): 
Given $Y \in \T_{T}$ one can determine $u \in S^{k-1}$ and \emph{vice versa}. Specifically, let $Y = (X, z^{P})^{n \times \nvar} \in \T_{T}$. 
Then for some $u \in S^{k-1}$, $X= X(u)$. Therefore the rows of $X^{n \times k}$ are 
    \begin{equation}  \label{E:xi.in.lin.class}
      x_{i} := x_{\pm,i} + z_{i} s u \qquad (i \in \NN_{n}) .
    \end{equation} 
Let
    \begin{equation}  \label{E:lin.class.Y(u).defn}
      Y = Y(u) := \bigl( X(u), z^{P} \bigr) 
    \end{equation}
and define
    \begin{equation}  \label{E:Lambda.in.lin.class}
      \Lambda \bigl[ Y(u) \bigr] = s^{-1}(x_{1} - x_{+, 1}) = u .
    \end{equation}
It follows that $Y(\cdot) : S^{k-1} \to \D$ is an imbedding. Therefore, by Boothby \cite[Theorem (5.5), p.\ 78]{wmB75}, $\T$ is an imbedded submanifold of $\D$ and 
    \begin{equation}  \label{E:Lambda.is.lin.class.diffeom}
      \Lambda : \T \to S^{k-1} \text{ is a diffeomorphism. }
    \end{equation}
    
Let $\Phi$ be as in \eqref{E:LC.Phi} and suppose it is defined and continuous on a dense subset $\D'$ of $\D$ satisfying \eqref{E:D'.=.D.less.S}. \emph{Assume:}
    \begin{equation}  \label{E:LinClass.Theta.cont.on.T}
      \text{The restriction } \Phi \restriction_{T} \text{ of $\Phi$ to } 
        \T \text{ has a unique continuous extension $\Theta$ to all of } \T . 
    \end{equation}

Recall \eqref{E:how.lin.clssfier.works}. 
We wish $T_{+} \in \X_{P}^{k}, \; T_{-} \in \X_{N}^{k} \subset \RR^{k}$, 
$s > 0$, and $b \in \RR$ to be s.t.\ the classifier 
$\Gamma(x;Y) := sign \bigl( b + x \cdot \Phi(Y) \bigr)$ ($x \in \RR^{k}$) classifies every row of $Y(u)$ correctly for every $u \in S^{k-1}$. When can we be assured of finding a $b \in \RR$ that makes this possible? 
Let 
    \begin{equation*}
      v = v(u) := \Phi \bigl( Y(u) \bigr) \in S^{k-1} .
    \end{equation*} 
For $u \in S^{k-1}$ and each $i = 1, \ldots, P$ we need to have $b + x_{i} \cdot v(u) > 0$ and for each 
$i = P+1, \ldots, n$ we need $b + x_{i} \cdot v(u) < 0$, where $b = b(Y)$ and $v = \Phi(Y)$ depend on the training data, $Y \in \T$. 

Let $u \in S^{k-1}$. By \eqref{E:xi.in.lin.class}, $x_{i}^{1 \times k} := x_{\pm,i} + z_{i} s u$ is the $i^{th}$ row of $X(u)$. So we want
    \begin{align}  \label{E:classifies.test.patterns.correctly}
      0 < b(u) + v \cdot (x_{\pm,i} + s u) &= 
      b(u) + (v \cdot x_{\pm,i} + s v \cdot u)  \text{ for $i = 1, \ldots, P$ and} \\
      0 > b(u) + v \cdot (x_{\pm,j} - s u) &=
      b(u) + (v \cdot x_{\pm,j} - s v \cdot u)  \text{ for } j = P+1, \ldots, n \notag .
    \end{align}

Let $\tilde{x}^{1 \times k}$ be fixed. Recall \eqref{E:lin.class.T.matrix}. Two possible choices of $\tilde{x}$, but not the only ones, are 0 and the vector of column-wise arithmetic means of $T$: 
$n^{-1} 1_{n} T = n^{-1} \sum_{i=1}^{n} x_{\pm,i}$. Define
    \begin{equation}  \label{E:lin.class.delta.defn}
      \delta := \max \bigl\{ |x_{\pm,i} - \tilde{x}| : i \in \NN_{n} \bigr\} .
    \end{equation} 
Recall \eqref{E:lin.class.Y(u).defn}. Write $v(u) := \Phi \bigl( Y(u) \bigr)$. Suppose 
    \begin{equation} \label{E:Theta(Y).dot.Lambda(Y).big}
      \Theta \bigl( Y(u) \bigr) \cdot \Lambda \bigl( Y(u) \bigr)  
        = v(u) \cdot u > 2 \delta/s > 0 
          \text{ for every } u \in S^{k-1} . 
    \end{equation}
Since $u, v(u) \in S^{k-1}$, this requires
        \begin{equation}  \label{E:s>2.delta}
          s > 2 \delta . 
        \end{equation} 
\eqref{E:Theta(Y).dot.Lambda(Y).big} means $\Theta \bigl( Y(u) \bigr)$ and $\Lambda \bigl( Y(u) \bigr)$ are in rough agreement: 
    \begin{equation*}
      \bigl| \Theta \bigl( Y(u) \bigr) - \Lambda \bigl( Y(u) \bigr) \bigr|^{2} 
        < 2(1-2\delta/s) .
    \end{equation*} 
If $\Phi \bigl( Y(u) \bigr) \cdot \Lambda \bigl( Y(u) \bigr) > 0$ for every $Y \in \T_{T}$, then, by \eqref{E:Lambda.is.lin.class.diffeom}, \eqref{E:LinClass.Theta.cont.on.T}, and compactness of $S^{k-1}$, \eqref{E:Theta(Y).dot.Lambda(Y).big} holds for some $s > 0$.

Recall that $u$ and $v(u)$ are unit vectors and $Y(u)$ is the training data 
$\bigl( X(u), z^{P} \bigr)$. Let $b : S^{k-1} \to \RR$ satisfy 
    \begin{equation}  \label{E:b(u).interval}
      b(u) \in \bigl( -\delta - v(u) \cdot \tilde{x}, \; \delta - v(u) \cdot \tilde{x} \bigr) 
        \text{ for every } u \in S^{k-1} .
    \end{equation} 
Then, by \eqref{E:xi.in.lin.class}, \eqref{E:lin.class.delta.defn}, and \eqref{E:Theta(Y).dot.Lambda(Y).big}, the following holds for every 
$i =1, \ldots, P$ and $j = P+1, \ldots, n$: 
    \begin{multline*}
      v(u) \cdot x_{j} = 
        v(u) \cdot (x_{\pm,j} -  \tilde{x}) + v(u) \cdot \tilde{x} 
          - s v(u) \cdot u \leq \delta + v(u) \cdot \tilde{x} - 2 \delta
          = -\delta + v(u) \cdot \tilde{x} \\
            < -b(u) < \\ 
              \delta + v(u) \cdot \tilde{x} 
               = - \delta + v(u) \cdot \tilde{x} + 2 \delta 
                < v(u) \cdot (x_{\pm,i} -  \tilde{x}) + v(u) \cdot \tilde{x} 
                 + s v(u) \cdot u = v(u) \cdot x_{i} . 
    \end{multline*}
Thus, if \eqref{E:b(u).interval} and \eqref{E:Theta(Y).dot.Lambda(Y).big} hold then so does \eqref{E:classifies.test.patterns.correctly}. 
    
To rephrase, let $\Gamma$ be as in \eqref{E:how.lin.clssfier.works}, where $b$ and $v$ are determined by the training sample $Y$, if defined there. Write $\Phi(Y) = v$ as in \eqref{E:LC.Phi}. To acknowledge the fact that $\Gamma$ is trained on $Y$, 
write $\Gamma_{Y} = \Gamma$. For $u \in S^{k-1}$, for $\Gamma_{Y(u)}$ 
write $b = b(u)$ and $v = v(u)$. Suppose \eqref{E:LinClass.Theta.cont.on.T} holds. Then, defining $\delta$ by \eqref{E:lin.class.delta.defn} and recalling \eqref{E:b(u).interval}, we have,
    \begin{multline}  \label{E:big.Phi.dot.Lambda.big.Gamma.correct.on.T}
      \text{If for every $Y \in \T$ we have } \\
        \Phi(Y) \cdot \Lambda(Y) > 2 \delta/s \text{ and } 
      \bigl| b \circ \Lambda(Y) + v \circ \Lambda(Y) \cdot \tilde{x} \bigr| < \delta ,
      \text{ then \eqref{E:classifies.test.patterns.correctly} holds, } \\
      \text{ i.e. $\Gamma_{Y}$ correctly classifies every observation in } Y \in \T . 
    \end{multline} 

Recall \eqref{E:LinClass.Theta.cont.on.T}. We will assume
    \begin{equation}  \label{E:Theta.dot.Lambda.>.-1}
      \text{If } \Theta(Y) \cdot \Lambda(Y) > -1 \text{ for every } Y \in \T.
    \end{equation} 
\eqref{E:Theta.dot.Lambda.>.-1} is reminiscent of \eqref{E:location.Phi.on.diagnl.loose}. In particular, \eqref{E:Theta.dot.Lambda.>.-1} olds when \eqref{E:Theta(Y).dot.Lambda(Y).big} does. \eqref{E:Theta.dot.Lambda.>.-1} is a very weak form of calibration (section \ref{SS:calibration}).

\emph{Claim:}  
    \begin{equation}  \label{E:lin.class.Theta.star.nontrivial}
      \text{\eqref{E:Theta.dot.Lambda.>.-1} implies }
        \Theta \text{ satisfies \eqref{E:nontriv.r-dim.homol} with } r := k-1 .
    \end{equation} 
Let $Y \in \T$. Regard 
$\Theta(Y), \Lambda(Y) \in S^{k-1}$ as points in $\RR^{k}$. If \eqref{E:Theta.dot.Lambda.>.-1} holds then, for $t \in [0,1]$, we have
    \begin{align*}
      \bigl| (1-t) \Theta(Y) + t \Lambda(Y) \bigr|^{2}
        &= (1-t)^{2}  \bigl| \Theta(Y) \bigr|^{2} + 2 t(1-t) \Theta(Y) \cdot \Lambda(Y)
          + t^{2} \bigl| \Lambda(Y) \bigr|^{2} \\
        &> (1-t)^{2} - 2 t(1-t) + t^{2}  \\
        &= (1-2t)^2 \geq 0 .
    \end{align*}
Thus, we may define a homotopy from $\Theta$ to $\Lambda$ by
    \begin{equation*}
      H(Y,t) := \bigl| (1-t) \Theta(Y) + t \Lambda(Y) \bigr|^{-1}
        \bigl[ (1-t) \Theta(Y) + t \Lambda(Y) \bigr] \in S^{k-1} , 
          \qquad Y \in \T, \, t \in [0,1] .
    \end{equation*}
(This is reminiscent of remark \ref{R:loose.exactness.of.fit.and.homotopy}.) $H$ is continuous by \eqref{E:LinClass.Theta.cont.on.T} and \eqref{E:Lambda.is.lin.class.diffeom}. This proves that $\Theta$ and $\Lambda$ are homotopic. By \eqref{E:Lambda.is.lin.class.diffeom} again, it follows that $\Theta_{\ast} : \tilde{H}_{k-1}(\T) \to \tilde{H}_{k-1}(S^{k-1}) = H_{k-1}(\F)$ is nontrivial. By \eqref{E:n,k.in.lin.class}, $k > 1$ so $\tilde{H}_{k-1} = H_{k-1}$. Therefore, the claim \eqref{E:lin.class.Theta.star.nontrivial} holds.

So far $\D \homeomto \RR^{nk}$ is not a compact manifold. 
We replace $\D$ by analogues of the spaces $\D_{\infty}$ and $\D_{\mu}$ in section \ref{SS:D.T.plane.fit}. Recall \eqref{E:mcl.Xmk.lin.class.defn}. 
Write $\D = \X_{n}^{k} \times \{ z^{P} \}$. Let 
    \begin{equation}  \label{E:LDA.X.infty.defn}
      \X_{\infty} =  \text{ the one-point compactification of } \X_{n}^{k} .. 
    \end{equation}
Thus, $\X_{\infty}$ is diffeomorphic to $S^{nk}$. In fact, as spelled out in section \ref{SS:D.T.plane.fit} for $\D_{\infty}$, the space $\X_{\infty}$ \emph{is} a round sphere. Define 
    \begin{equation}  \label{E:LDA.D.infty.defn}
      \D_{\infty} := \X_{\infty} \times \{z^{P}\} .
    \end{equation}
So 
    \begin{equation}  \label{E:lin.class.D.infty.is.sphere}
      \D_{\infty} \text{ is diffeomorphic to } S^{nk} .
    \end{equation}

Alternatively, let $\mu : S^{n k-1} \to (0, \infty)$ be smooth (e.g., constant). Define 
	\begin{equation}  \label{E:LDA.X.mu.defn}
	      \X_{\mu} := \Bigl\{ X \in \X_{n}^{k} \setminus \{0\} :  
	        \| X \| = \mu \bigl( \| X \|^{-1} X \bigr) \Bigr\} ,
	\end{equation}
where $\| \cdot \|$ is the Euclidean norm defined in \eqref{E:matrix.norm}. This is analogous to \eqref{E:plane.fitting.D.mu.defn}. Assume for $\X_{\mu}$ the analogue of \eqref{E:metrics.on.D.mu}. 
Let 
    \begin{equation}  \label{E:LDA.D.mu.defn}
      \D_{\mu} :=  \X_{\mu} \times \{ z^{P} \} .
    \end{equation} 
By \eqref{E:D.mu.diffeo.to.sphere}, 
    \begin{equation}  \label{E:LinClass.D.mu.diffeom.to.sphere}
      \D_{\mu} \text{ is diffeomorphic to an $(nk-1)$-dimensional sphere.}
    \end{equation} 

Actually, as in remark \ref{R:shift.D.mu}, we can shift $\D_{\mu}$. This is accomplished by shifting $\X_{\mu}$. Let $\hat{x}$ be a $k$-row vector. 
Replace $\X_{\mu}$ by $\X_{\mu} + 1^{n} \hat{x}$. In particular $T$ (see \eqref{E:lin.class.T.matrix}) gets replaced by $T + 1^{n} \hat{x}$ and, 
for $u \in S^{k-1}$, $X(u)$ (see \eqref{E:lin.class.X(u)}) gets replaced 
by $X(u) + 1^{n} \hat{x}$. Our analysis above is undisturbed if we replace 
$\tilde{x}$ (in \eqref{E:lin.class.delta.defn}, for example) by $\tilde{x} + \hat{x}$. 
 $\D_{\mu}$, perhaps after shifting, can be used to ``localize'' the singular set as in remarks \ref{R:restricting.plane.fitter.to.sphere} and \ref{R:dnsity.contrs.as.data.spaces}. 

    \begin{equation}  \label{E:D.X.options.in.lin.class}
      \text{ Let $\X$ be either $\X_{\infty}$ or } \X_{\mu} .
      \text{ Let $\D = \X \times \{ z^{P} \}$ be either $\D_{\infty}$ or } \D_{\mu} .  
    \end{equation}
Metrize $\D$ by the metric on $\X$. Whether $\D = \D_{\infty}$ 
or $\D = \D_{\mu}$, interpret $\Phi$, $\D'$, 
$\Ss := \D \setminus \D'$, and $\T$ in the obvious manner as in section \ref{SS:D.T.plane.fit}. 
(If $\D = \D_{\infty}$, the point at infinity is included in $\Ss$.) 
Remark \ref{R:different.measures.on.D} immediately translates to the linear classification setting.

 \begin{remark}  \label{R:Pf.in.lin.class}
Recall \eqref{E:D.X.options.in.lin.class}. Let $X \in \X$. This time let $x_{i}$ be the $i^{th}$ row of $X$ ($i \in \NN_{n}$). Let $Y = (X, z^{P}) \in \D$ and suppose $\Gamma$ defined in \eqref{E:how.lin.clssfier.works} classifies every row of $X$ correctly. 
So for $1 \leq i \leq P < j \leq n$ we have $b + v \cdot x_{i} > 0 > b + v \cdot x_{j}$. 
Hence, $0 < (b + v \cdot x_{i}) - (b + v \cdot x_{j}) = (x_{i} - x_{j}) \cdot v$. 

For $\epsilon \in (0,1)$ define $\Pf_{\epsilon}$ to be the space of $Y = (X,z^{P})$ s.t.\ there exists $v = v[Y] \in S^{k-1}$ (depending on $Y$) having the following property. 
If $1 \leq i \leq P < j \leq n$ then 
    \begin{equation}  \label{E:lin.class.Pf.eps.ineq}
      (x_{i} - x_{j}) \cdot v[Y] > (1-\epsilon) |x_{i} - x_{j}| . 
    \end{equation} 
Then $\Pf_{\epsilon}$ is open.  

Let $Y = (X,z^{P}) \in \Pf_{\epsilon}$ and 
$c := (1-\epsilon) \min_{i \leq P < j} |x_{i} - x_{j}| > 0$. Because the inequality in \eqref{E:lin.class.Pf.eps.ineq} is strict, we must have $c > 0$. 
Let $i,j \in \NN_{n}$ satisfy $i \leq P < j$. Let $v = v[Y]$. Then 
$x_{i} \cdot v > c + x_{j} \cdot v$. Thus, for small $\epsilon$, 
$x_{i} \cdot v > c + x_{j} \cdot v$ are well separated for all $i \leq P < j$, relative to the lengths of the difference, $x_{i} - x_{j}$. Let $m := \max_{j > P} \, (x_{j} \cdot v)$. 
Then $x_{i} \cdot v > c + m \geq c + x_{j} \cdot v > x_{j} \cdot v$ 
($1 \leq i \leq P < j \leq n$). Hence, if $b = - c-m$, then 
$b + (x_{i} \cdot v) > 0 > b + x_{j} \cdot v$. Thus, the classifier $\Gamma$ in \eqref{E:how.lin.clssfier.works} with these choices of $b$ and $v$ classifies, i.e. fits, $Y$ perfectly. But the smaller $\epsilon$ is the ``more perfect'' it is. (This concept is discussed in remark \ref{R:complexity}.) 

Let $Y \in \T$. Recall \eqref{E:T.and.Pf.in.lin.class} and \eqref{E:lin.class.Y(u).defn}. There exists $u \in S^{k-1}$ s.t.\ $Y = Y(u)$. Then $x_{i}$ is given by \eqref{E:xi.in.lin.class} ($i \in \NN_{n}$). Thus, if $1 \leq i \leq P < j \leq n$, we have $x_{i} - x_{j} = (x_{\pm,i} - x_{\pm,j}) + 2 s u$. Recall \eqref{E:Lambda.in.lin.class} and take $v[Y] :=  \Lambda(Y) = u$. Define $\delta$ as in \eqref{E:lin.class.delta.defn} and assume \eqref{E:s>2.delta}. Then no matter what $\tilde{x}$ is, we have $s > 2 \delta \geq |x_{\pm,i} - x_{\pm,j}|$  so $|x_{i} - x_{j}| \leq |x_{\pm,i} - x_{\pm,j}| + 2s \leq 2 \delta + 2 s < 3 s$. Therefore, 
    \begin{equation*}
      (x_{i} - x_{j}) \cdot v[Y] = (x_{\pm,i} - x_{\pm,j}) \cdot u + 2 s 
        \geq 2 s - 2 \delta 
          = s + (s - 2 \delta) > s 
            \geq \frac{s}{2 s + 2 \delta} |x_{i} - x_{j}| 
    \end{equation*}
Since $2 \delta \in [0, s)$, we see that \eqref{E:s>2.delta} implies that $\epsilon$ in \eqref{E:lin.class.Pf.eps.ineq} can range from $1/2$ to $2/3$. 
  \end{remark}

$\T$ has a neighborhood in $\D$ with a retraction onto $\T$ constructed as follows. 
Let $\clU$ be an open neighborhood, of $\T$ in $\D$ sufficiently small that, even 
if $\D = \D_{\infty}$, all elements of $\clU$ are finite. We may extend $\Lambda$ defined in \eqref{E:Lambda.in.lin.class} to all of $\clU$ as follows. Let $Y = (X, z^{P}) \in \clU$ with $x_{1}$ the first row of $X$. Define 
$\Lambda(Y) = s^{-1}(x_{1} - x_{+, 1}) \in \RR^{k}$. Making $\clU$ smaller if necessary, we may assume that for every $Y \in \clU$ we at least have 
$\Lambda(Y) \neq 0$. This defines a continuous function on $\clU$, but 
$\Lambda(\clU)$ does not lie in $S^{k-1}$.  

Recall \eqref{E:zP.in.lin.class}. For $Y \in \clU$, define $f(Y) \in \T$ to be the $n \times \nvar$ matrix $(X,z^{P})$ whose last column is $z^{P}$ and with the $i^{th}$ row 
of $X^{n \times k}$ given by 
$x_{\pm,i} + z_{i} \, s \bigl| \Lambda(Y) \bigr|^{-1} \Lambda(Y)$ ($i \in \NN_{n}$). Recall \eqref{E:lin.class.X(u)} and \eqref{E:T.and.Pf.in.lin.class}. 
Thus, $f(Y) \in \T$ for every $Y \in \clU$ and, if $Y \in \T$, we have 
$f(Y) = Y$. I.e., $f : \clU \to \T$ is a retraction. (See remark \ref{R:retraction.in.manifs}.)

As mentioned, typically, the training operation, $LC$, will be invariant under permutation of the examples in the training data. We do not need to assume that here. Instead, we take the group, $G$, in chapters \ref{Chptr:Haus.meas.of.sing.set} and \ref{Chptr:severity} to be trivial.  

Let $\Ss$ be the singular set of $\Phi$. 
    \begin{equation}  \label{E:Lin.Class.S.has.emp.int}
      \text{We assume $\Ss$ has empty interior.}
    \end{equation} 
We apply the ``severity trick'' (remark \ref{R:severity.trick}). Since $\F$ is a sphere, we can define covers and convex combination function as in section \ref{S:convex.combos.on.spheres}. Define $\msf{V}_{\pi/2}$ by \eqref{E:V.theta.defn} 
with $\theta = \pi/2$. Use the convex combination function on $\msf{V}_{\pi/2}$ given by \eqref{E:lin.combo.on.sphere}. As specified by \eqref{E:S^V.notation} we use 
$\Ss^{\msf{V}_{\pi/2}} \subset \Ss$ to denote the set 
of $\msf{V}_{\pi/2}$-severe singularities. By \eqref{E:SV.is.closed}, 
$\Ss^{\msf{V}_{\pi/2}}$ is closed.

With $\D = \D_{\mu}$ or $\D = \D_{\infty}$ and $\D' := \D \setminus \Ss$ assume: $\D'$ is dense in $\D$, $\D' \cap \Pf$ is dense in $\Pf = \T_{T}$. Thus, 
$\Ss^{\msf{V}_{\pi/2}}$ has empty interior. Asssume
\eqref{E:LinClass.Theta.cont.on.T} holds and  
    \begin{equation}  \label{E:Lin.Class.SV.pi/2.disjoint.from.T}
        \Ss^{\msf{V}_{\pi/2}} \cap \T = \varnothing . 
    \end{equation}
Recall that $\T$ is a neighborhood retract. Then $(\Phi, \Ss)$, satisfies the hypotheses of theorem \ref{T:if.lin.combo.on.F.then.can.rstrct.to.bad.sings},  
with $G = \{ \text{identity on } \D\}$, $\msf{V} = \msf{V}_{\pi/2}$, and $\Pf = \T$. Therefore, there exists 
$\Omega : D \partlyto \F$ s.t.\ $\Omega \restriction_{\T}$ is continuous and equals 
$\Theta$. Therefore, $\Omega$ satisfies \textbf{hypothesis \ref{Hyp:extend}} of theorem \ref{T:Phi.star.Hr.contains.Theta.star.Hr}. 
The singular set of $\Omega$ 
(relative to $\tilde{\D} := \D \setminus \Ss^{\msf{V}_{\pi/2}}$) is a subset 
of $\Ss^{\msf{V}_{\pi/2}}$.   

$\Ss^{\msf{V}_{\pi/2}} \cap \T = \varnothing$ is much more plausible than 
$\Ss \cap \T = \varnothing$. By compactness, \eqref{E:Lin.Class.SV.pi/2.disjoint.from.T} and \eqref{E:dist.s,dist.0} imply 
$dist_{d-k}(\Ss^{\msf{V}_{\pi/2}}, \T) \geq dist(\Ss^{\msf{V}_{\pi/2}}, \T) > 0$. 
By \eqref{E:Lin.Class.SV.pi/2.disjoint.from.T}, $\Ss^{\msf{V}_{\pi/2}}$ satisfies \textbf{hypothesis \ref{Hyp:S.cap.T.small}} of theorem \ref{T:Phi.star.Hr.contains.Theta.star.Hr}. 
$(\Omega, \Ss^{\msf{V}_{\pi/2}})$ obviously satisfies \textbf{hypothesis \ref{Hyp:T.manif}}. By \eqref{E:SV.is.closed} and \eqref{E:Lin.Class.S.has.emp.int}, 
\textbf{hypothesis \ref{Hyp:S.cap.T.closed}} of theorem \ref{T:Phi.star.Hr.contains.Theta.star.Hr} holds for
$(\Omega, \Ss^{\msf{V}_{\pi/2}})$.

Let 
    \begin{equation*}
      r = t = k-1 
    \end{equation*}
(see \eqref{E:Lambda.is.lin.class.diffeom}) so \textbf{hypothesis \ref{Hyp:r.integer}} of theorem \ref{T:Phi.star.Hr.contains.Theta.star.Hr}) is satisfied. Assume \eqref{E:Theta.dot.Lambda.>.-1}. Then, by \eqref{E:lin.class.Theta.star.nontrivial}, 
$\Theta$ satisfies \eqref{E:nontriv.r-dim.homol} with $r = k-1 > 0$ (by \eqref{E:n,k.in.lin.class}). 

Now, $\D \homeomto S^{nk}$ or $S^{nk-1}$. 
And with $d = nk$ or $d = nk-1$, by \eqref{E:n,k.in.lin.class} and Munkres \cite[Corollary 47.2, p.\ 281]{jrM84}, we have 
$\check{H}^{d-r}(\D) \isomto H^{d-r}(\D) = \{ 0 \}$. 
Therefore, proposition \ref{P:sing.dim.when.H.d-r.D.=.0} applies, with $\Omega$ in place of $\Phi$ and \eqref{E:codim.S.leq.r+1} holds with $\Ss^{\msf{V}_{\pi/2}}$ in place of $\Ss'$. I.e., we have proved.
         \begin{multline}  \label{E:sing.codim.in.lin.class}
            \Hm^{d-k}(\Ss) \geq \Hm^{d-k}(\Ss^{\msf{V}_{\pi/2}}) > 0
              \; (d = nk \text{ or } d = nk-1) .  \\
               \; \text{ In particular, } 
                 \text{codim} \, \Ss \leq \text{codim} \, \Ss^{\msf{V}_{\pi/2}} \leq k .
         \end{multline}

But \eqref{E:sing.codim.in.lin.class} holds for any data map 
$\Psi : \D \partlyto \F = S^{k-1}$ whose singular set is closed, has empty interior, is disjoint from $\T$ and whose restriction to $\T$ equals $\Theta$. (See example \ref{Ex:basic.property.case}.) Thus, $\Omega$ has property \ref{Pty:agree.near.T} 
with $G = \{ \text{identity on } \D\}$ and $a = d - r -1 = d - (k-1) - 1 = d-k$. By \eqref{E:lin.class.D.infty.is.sphere} and \eqref{E:LinClass.D.mu.diffeom.to.sphere}, we can apply lemma \ref{L:biLip.triangulation} to conclude that it has a bi-Lipschitz triangulation. Indeed, it is triangulable by the $d$-skeleton (the boundary) of a $(d+1)$-simplex (as in section \ref{SS:D.T.plane.fit}). 

By \eqref{E:Lambda.is.lin.class.diffeom}, $\T$ is diffeomorphic to a sphere. Therefore, by example \ref{Ex:TubeNbhdSpecialCaseOfCones}, $\Pf$ has a neighborhood in the restriction $T \D \restriction_{\Pf} = T \D \restriction_{\T}$ that is fibered by cones as in definition \ref{D:fibering.by.cones}.

Thus, $\Phi$ satisfies the hypotheses of theorem \ref{T:lwr.bnd.on.Haus.meas} 
with $\Pf = \T$. Note that, by \eqref{E:n,k.in.lin.class} and with $p = t = k-1$, we have $d-p-1 = d-k = a$.) Therefore, for some $\gamma > 0$ not depending on $\Phi$ we have, 
  \begin{prop}  \label{P:measure.bnd.for.lin.class}
Suppose \eqref{E:Theta.dot.Lambda.>.-1} and \eqref{E:Lin.Class.SV.pi/2.disjoint.from.T} hold. Then there exists $\gamma > 0$ depending only 
on $\T = \T_{T}$ and $\D$ s.t.\ $\text{codim} \, \Ss^{\msf{V}_{\pi/2}} \leq k$, 
$dist_{d-k}(\Ss^{\msf{V}_{\pi/2}}, \T) > 0$, and 
   \begin{equation}  \label{E:measure.ineq.for.lin.class}
      \Hm^{d-k}(\Ss) \geq \Hm^{d-k}(\Ss^{\msf{V}_{\pi/2}}) 
        \geq \gamma R^{d-k} > 0 , 
          \text{ where } 0 < R \leq dist_{d-k}(\Ss^{\msf{V}_{\pi/2}}, \T) .
    \end{equation}
  \end{prop}
Here, $\Hm^{d-k}$ is defined based on the metric on 
$\D = \D_{\infty}$ or  $\D_{\mu}$ induced by its Riemannian metric. 

Suppose a data scientist wants to train a classifier based on a data set $Y$. Since this is a chapter about linear classification, let us assume that the data scientist (or AI) has decided in advance, i.e. without looking at $Y$, to fit a linear classifier of some sort satisfying \eqref{E:Theta(Y).dot.Lambda(Y).big} and \eqref{E:Lin.Class.SV.pi/2.disjoint.from.T}, but may not have settled on a specific one. (To make this decision without looking at $Y$ is dangerous. But let us ignore that difficulty.) But he, she, or it seeks a linear classifier satisfying \eqref{E:Theta(Y).dot.Lambda(Y).big} and \eqref{E:Lin.Class.SV.pi/2.disjoint.from.T}.  

Suppose, having made that decision, the data scientist now examines the data before choosing a specific classifier. (The next section analyzes a specific example.) It doesn't matter how that choice is made. The map 
$\Phi$ captures \emph{the entire process of choosing a linear classifier and applying it.} This process might include non-algorithmic elements but still satisfies \eqref{E:Theta(Y).dot.Lambda(Y).big} and \eqref{E:Lin.Class.SV.pi/2.disjoint.from.T}.  
In any case proposition \ref{P:measure.bnd.for.lin.class} tells us that the set 
of $\msf{V}_{\pi/2}$ singularities of $\Phi$ has codimension $\leq k$ and satisfies \eqref{E:measure.ineq.for.lin.class}. (See section \ref{S:topology}.) 

\section{Linear Discriminant Analysis} \label{SS.lin.discr.anal}
We conclude this chapter by considering an important example. In linear discriminant analysis (LDA, Johnson and Wichern \cite[Chapter 11]{raJdwW92}) $\Phi$ is based on the sample covariance matrices of the positive and negative examples. (See \eqref{E:sample.covariance.matrix}.)  

The population covariance matrix of a random vector $X^{1 \times k}$ is the obvious analogue of the sample covariance matrix:
    \begin{equation*}
      \Sigma := E \bigl[ (X - EX)^{T} (X - EX) \bigr]^{k \times k} ,
    \end{equation*}
where ``$E$'' denotes expectation (integral w.r.t.\ probability measure). LDA is derived under the assumption that the positive and negative samples are each independent draws from (different) multivariate normal populations with the same, invertible \emph{population} covariance matrices. (But the population means, the $EX$'s, are not approximately equal. Otherwise linear classification is unpromising.) 

Suppose the data scientist training the classifier is willing to believe the preceding story is approximately correct. This might be a reasonable conclusion if the sample covariance matrices are approximately the same and invertible. In that case the data scientist might choose LDA.

(Unequal means is a safe assumption, but samples each independent draws from multivariate approximately normal populations? Approximately equal population covariance matrices? These questions are often answered by subjective judgments, i.e.\ non-algorithmically, after looking at the data. See section \ref{S:topology} and the last paragraph of section \ref{SS:general.lin.class.thy}.)

Let $P \in \NN_{n-1}$ and let $Y^{n \times \nvar} = (X, z^{P})$ be the data. 
Let $\bar{x}_{+}$ and $\bar{x}_{-}$ be the sample means of the predictors (rows of $X$) in the positive and negative samples. 
(Don't confuse $\bar{x}_{+}$ and $\bar{x}_{-}$ with the rows $x_{+,i}$, $x_{-,i}$, or $x_{\pm,i}$, of the $n \times k$ matrix $T = (T_{+}^{T}, T_{-}^{T})^{T}$! See \eqref{E:lin.class.T.matrix}.)  
Let $S_{+}$ be the function that computes the covariance matrix of the first $P$ rows of an $n \times k$ matrix. Define $S_{-}$ similarly. Let
    \begin{equation}  \label{E:S.pooled.defn}
      S_{pooled}^{k \times k} = S_{pooled}(X) := \mu S_{+}(X) + \nu S_{-}(X) , 
    \end{equation}
where $\mu, \nu > 0$ only depend on $P$ and $N$. (E.g., 
$\mu := (P-1)/\bigl[ (P-1) + (N-1) \bigr]$ and $\nu := 1 - \mu$, Johnson and Wichern \cite[Equation (11-17), p.\ 504]{raJdwW92}.) It follows from \eqref{E:sample.covariance.matrix} that $S_{+}(X)$, $S_{-}(X)$, and hence 
$S_{pooled}(X)$ are positive semidefinite, i.e., all their eigenvalues are nonnegative. 

We assume $S_{+}(X)$ and $S_{-}(X)$ are invertible. (A safe assumption unless two or more predictor variables are linearly related, a correctible problem.) Invertible positive semidefinite matrices are positive definite (i.e., have strictly positive eigenvalues) so $S_{pooled}$ is positive definite. 

The $\Phi$ corresponding to LDA is given by \eqref{E:LC.Phi} with $v \in S^{k-1}$ equal to 
    \begin{multline}  \label{E:Phi.LDA.defn}
      \Phi_{LDA}(Y)^{1 \times k}
        := |w|^{-1} w, \text{ where }
          w := (\bar{x}_{+} - \bar{x}_{-}) S_{pooled}^{-1}, \\
            \text{ whenever defined and nonzero} 
    \end{multline} 
(Johnson and Wichern \cite[Equation (11-18), p.\ 505]{raJdwW92}).  

Adopt this notation: If $M$ is a matrix 
    \begin{equation}  \label{E:LDA.overline}
      \text{let } \overline{M} 
        \text{ be the row vector consisting of the column means of } M .
    \end{equation} 
If $M$ has $n$ rows, let 
    \begin{equation*}
      M_{+} \text{ be the matrix consisting of the first $P$ rows of } M 
        \text{ and $M_{-}$ consist of the last } N
    \end{equation*}
Recall \eqref{E:zP.in.lin.class}. Let $\D$ be $\D_{\infty}$ or $\D_{\mu}$ defined in the section \ref{SS:general.lin.class.thy} and let 
    \begin{equation}  \label{E:LDA.D'.defn}
      \D' := \bigl\{ (X, z^{P}) \in \D : S_{pooled}(X) \text{ has full rank and} \
        \overline{X_{+}} \neq \overline{X_{-}} \bigr\} .
    \end{equation}
Then, by lemma \ref{L:rank.lwr.semicont},  
    \begin{equation}  \label{E:LDA.D'.is.nice}
      \Phi_{LDA} \text{ is continuous on $\D'$ and } \D' \text{ is open and dense in } \D .
    \end{equation}

Since \eqref{E:Phi.LDA.defn} is designed for the case of equal population covariance matrices, it simplifies things somewhat to choose $T_{+}^{P \times k}$ and 
$T_{-}^{(n-P) \times k}$, the partial data sets used in constructing 
$\T_{T}$, to have the same invertible covariance matrix, 
call it $\mbf{S}$. This means the numbers, $P$ and $N := n-P$, of positive and negative examples (the number of rows in $T_{+}$ and $T_{-}$, resp.) must each be at least $k+1$. I.e., \eqref{E:Phi.LDA.defn} only makes sense for $Y \in \T_{T}$ if 
    \begin{equation}  \label{E:in.LDA.P.N.geq.k+1}
      P, N \geq k+1 \text{ so } n \geq 2k+2 . 
    \end{equation}
(In particular, we assume the data, $Y$, are not ``wide'': $k \geq n$. See \eqref{E:n,k.in.lin.class}.)

In \eqref{E:S.pooled.defn}, take $\mu \in (0,1)$ and $\nu = 1- \mu$. Then 
    \begin{equation}  \label{E:LDA.cov.mat.on.T}
      \text{for } Y \in \T, \, S_{pooled} = \mbf{S} .
    \end{equation} 

Things are easier if the mean vectors, $\bar{T}_{+}$ and $\bar{T}_{-}$ resp. 
of $T_{+}$ and $T_{-}$ are chosen to both be 0. Let $s > 0$, $u \in S^{k-1}$, and $Y(u) = \bigl (X(u),z^{P} \bigr) \in \T$. Then, by \eqref{E:lin.class.X(u)} and \eqref{E:xi.in.lin.class}, 
    \begin{equation}  \label{E:bar.x.pm.=pm.su}
      \overline{X(u)_{+}} = su \text{ and } \overline{X(u)_{-}} = -su
    \end{equation} 
It follows from \eqref{E:LDA.cov.mat.on.T} and \eqref{E:Phi.LDA.defn} that 
$\T \subset \D'$. Moreover, $\Phi_{LDA}$ satisfies \eqref{E:LinClass.Theta.cont.on.T}. Therefore, by \eqref{E:LDA.D'.is.nice}, $\T$ has an open neighborhood lying in $\D'$. Moreover, \eqref{E:LDA.D'.is.nice} tells us $\Phi_{LDA}$ 
satisfies \eqref{E:Lin.Class.SV.pi/2.disjoint.from.T}.

Define $\delta$ as in \eqref{E:lin.class.delta.defn}. Let $\lambda_{min}$ be the smallest eigenvalue of $\mbf{S}^{-1}$. Thus, $\lambda_{min} > 0$. Let $\lambda_{max} \geq \lambda_{min}$ be the largest eigenvalue of $\mbf{S}^{-1}$. \emph{Claim:} 
$\Phi = \Phi_{LDA}$ satisfies \eqref{E:Theta(Y).dot.Lambda(Y).big} if  
$s > 2 \delta \lambda_{max}/\lambda_{min}$. By \eqref{E:bar.x.pm.=pm.su}, 
   \begin{equation}  \label{E:overline.X(u).diff}
      \overline{X(u)_{+}} - \overline{X(u)_{-}} = 2 s u . 
   \end{equation}

Thus, $\T \subset \D'$ as defined in \eqref{E:LDA.D'.defn}. Thus, $\Phi_{LDA}$ is defined everywhere on $\T$ and, by \eqref{E:LDA.D'.is.nice}, is continuous on $\T$ as well. A consequence is that \eqref{E:LinClass.Theta.cont.on.T} holds with 
$\Theta = \Phi_{LDA} \restriction_{\T}$. 

By \eqref{E:overline.X(u).diff}, we have 
    \begin{equation}  \label{E:X.bar.diff.S.invrs.lambda} 
      \Bigl| \bigl( \, \overline{X(u)_{+}} - \overline{X(u)_{-}} \,  \bigr) \mbf{S}^{-1} \Bigr|
       = 2 s | u \mbf{S}^{-1} | \leq 2 s \lambda_{max} . 
    \end{equation} 
Suppose 
    \begin{equation*}
      s > 2 \delta \lambda_{max}/\lambda_{min} .
    \end{equation*} 
By \eqref{E:xi.in.lin.class}, the first row of $X$ is $x_{1} := x_{+,1} + z_{i} s u$. Thus, by 
\eqref{E:Phi.LDA.defn}, \eqref{E:overline.X(u).diff}, \eqref{E:LDA.cov.mat.on.T},  and \eqref{E:X.bar.diff.S.invrs.lambda}, 
    \begin{multline}  \label{E:Phi.dot.Lamdbda.for.LDA}
      \Theta \bigl( Y(u) \bigr) \cdot \Lambda \bigl( Y(u) \bigr) 
          = \Bigr( \bigl|( \, \overline{X(u)_{+}} - \overline{X(u)_{-}} \, ) \mbf{S}^{-1} \bigr|^{-1} 
                      ( \, \overline{X(u)_{+}} - \overline{X(u)_{-}}) \mbf{S}^{-1} \Bigl) \cdot u \\
            = \Bigr( 2 \bigl| ( \, \overline{X(u)_{+}} - \overline{X(u)_{-}} \, ) 
              \mbf{S}^{-1} \bigr|^{-1} s u \mbf{S}^{-1} \Bigl) \cdot u 
                \geq \frac{1}{2 s \lambda_{max}} 2 s (u \mbf{S}^{-1} \cdot u) 
                  \geq \frac{\lambda_{min}}{\lambda_{max}} .
    \end{multline}
Thus, if $s > 2 \delta \lambda_{max}/\lambda_{min}$ we have 
$\Theta \bigl( Y(u) \bigr) \cdot \Lambda \bigl( Y(u) \bigr) > 2 \delta/s > 0$, as claimed. 
Hence, by \eqref{E:big.Phi.dot.Lambda.big.Gamma.correct.on.T}, if 
$s > 2 \delta \lambda_{max}/\lambda_{min}$ we can choose a 
$b \in \RR$ s.t.\ LDA correctly classifies every observation in $Y$ for every 
$Y \in \T$.

\eqref{E:Phi.dot.Lamdbda.for.LDA} implies \eqref{E:Theta.dot.Lambda.>.-1} 
for $\Phi_{LDA}$. We have already seen that LDA satisfies 
\eqref{E:Lin.Class.SV.pi/2.disjoint.from.T}. Therefore proposition \ref{P:measure.bnd.for.lin.class} holds for LDA.

Until $\gamma$ in \eqref{E:measure.ineq.for.lin.class} has been determined, that inequality is useless for a specific linear classifier like LDA. The best we can do is establish \eqref{E:sing.codim.in.lin.class} for it. We do that now. We have defined $\X_{n}^{k}$, 
$\X_{\infty}$, and $\X_{\mu}$. To add to the confusion, define
    \begin{equation} \label{E:LDA.X0.defn}
      \X_{0} := \{ X \in \X_{n}^{k} : \overline{X_{-}} = \overline{X_{+}} \} .
    \end{equation}
So $\X_{0}$ is a linear subspace of $\X_{n}^{k}$. Define 
    \begin{equation}  \label{E:LDA.S0.defn}
      \Ss_{0} := \X_{0} \times \{z^{P}\} . 
    \end{equation}
of $\X_{n}^{k} \times \{z^{P}\}$.) $\Ss_{0}$ is isometric to $\X_{0}$ so that the projection 
$\Ss_{0} \to \X_{0}$ preserves Hausdorff dimension and measure. 

We make the non-astonishing \emph{claim:}
    \begin{equation}  \label{E:Ss0.subset.SS.V.pi/2}
      \Ss_{0} \subset \Ss^{\msf{V}_{\pi/2}} .
    \end{equation}
So, by \eqref{E:LDA.D'.defn}, $\Ss_{0} \cap \D' = \varnothing$. 

Let $(X, z^{P}) \in \Ss_{0}$. We do not require $S_{pooled}(X)$ be non-singular. 
We begin by constructing an $n \times k$ matrix $\tilde{X}$ approximating $X$ but s.t.\ $S_{pooled}(\tilde{X})$ has full rank $k$. If $S_{pooled}(X)$ has full rank $k$, then just 
let $\tilde{X} := X$. 
    \begin{equation}  \label{E:X.tilde.when.S.has.full.rank}
      \text{If $S_{pooled}(X)$ has full rank $k$, then let } \tilde{X} := X .
    \end{equation}

Suppose $rank \, S_{pooled}(X) < k$. Define the ``null space'', 
$null \, R$, of a $k \times k$ matrix $R$ to be the linear subspace, 
$null \, R := \{ x^{1 \times k} \in \RR^{k} : x R = 0 \}$. Let 
    \begin{equation*}
      m := \dim \bigl( null \, S_{pooled}(X) \bigr) > 0
        \text{ and } \ell := k-m < k . 
    \end{equation*}
So $\ell = rank \, S_{pooled}(X)$. Similarly, let 
    \begin{equation}  \label{E:ell+,m+,rank.S+}
      m_{+} := \dim \bigl( null \, S_{+}(X)  \bigr) \text{ and } 
        \ell_{+} := k-m_{+} = rank \, S_{+}(X) .
    \end{equation} 
Notice that by \eqref{E:S.pooled.defn} and the fact that covariance matrices are non-negative definite, \linebreak 
$null \, S_{pooled}(X) = \bigl( null \, S_{+}(X) \bigr) \cap \bigl( null \, S_{-}(X) \bigr)$. 
Therefore, 
    \begin{equation*}
      m_{+} \geq m \text{ so } \ell_{+} \leq \ell < k. \text{ Hence, } \ell_{+} < k .
    \end{equation*}
    
$S_{+}(X)$ is symmetric. We can construct a matrix $C_{+}^{m_{+} \times k}$ with orthonormal rows s.t.\ 
    \begin{equation}  \label{E:C+.S+=0}
      C_{+} S_{+}(X)^{T} = C_{+} S_{+}(X) = 0 . 
    \end{equation}
Since $m_{+} \leq k$ we have $rank \, C_{+} = m_{+}$. The rows of $C_{+}$ form an orthonormal basis of $null \, S_{+}(X)$. Eigenvectors 
of $C_{+}^{T} C_{+}$ are the rows of $C_{+}$ (eigenvalues = 1) plus any $k-\ell_{+}$ orthonormal vectors orthogonal to the the rows of $C_{+}$ (eigenvalues = 0). 

If $\ell_{+} = 0$ then $m_{+} = k$ so $S_{+}(X) = 0$,  $C_{+}$ is $k \times k$, and by \eqref{E:in.LDA.P.N.geq.k+1} there is a $P \times k = P \times (k-\ell_{+})$ matrix $M$ with orthonormal columns s.t.\ $(1^{P})^{T}  M = 0$. Since $S_{+}(X) = 0$ we must have $X_{+} = 1^{P} x_{1}$, where $x_{1}$ is a $k$-dimensional row vector. Thus,
    \begin{equation}  \label{E:MX+=0.if.ell+=0}
      M^{T} X_{+} = 0 \text{ if } \ell_{+} = 0 .
    \end{equation}  

Suppose $\ell_{+} \in (0,k)$. Let 
    \begin{equation*}
      X_{+0}^{P \times k} := X_{+} - 1^{P}  \overline{X_{+}} .
    \end{equation*} 
By \eqref{E:ell+,m+,rank.S+}, $rank \, S_{+}(X) = \ell_{+}$. Thus, by the Singular Value Decomposition, Rao \cite[(v), p.\ 42]{crR73.LinStatInf}, we may write 
    \begin{equation*}
      X_{+0} = (A^{P \times \ell_{+}})( B^{\ell_{+} \times k}),
    \end{equation*} 
where the rows of $B$ are orthonormal. Recall \eqref{E:rho=row.space}. 
We have $\rho(B) = \rho(X_{+0})$. Since $(1^{P})^{T} X_{+0} = 0$, we have 
    \begin{equation*}
      (1^{P})^{T} A = 0 .
    \end{equation*} 

Now, by \eqref{E:sample.covariance.matrix}, 
    \begin{equation}  \label{E:S+(X) = BT.AT.A.B}
      S_{+}(X) =  \frac{1}{P-1} X_{+0}^{T} \, X_{+0} = B^{T} A^{T} A B. 
    \end{equation}
Since $rank \, S_{+}(X) = \ell_{+}$, we have $(B^{T} A^{T} A B)^{k \times k}$ has rank $\ell_{+}$. \eqref{E:S+(X) = BT.AT.A.B} also implies $B^{T} S_{+}(X) B = A^{T} A$. Therefore, $rank \, A = \ell_{+}$ as well. 

Let $W$ be the space of $w^{P \times 1}$ s.t.\ $w^{T} A = 0$. $W$ has dimension 
$P - \ell_{+}$. By \eqref{E:in.LDA.P.N.geq.k+1} and our current assumption that 
$\ell_{+} \in (0,k)$, we have $P - \ell_{+} > P-k > 0$. Requiring $w^{T} 1^{P} = 0$ removes another degree of freedom: $(1^{P})^{T} A = 0$ but we are now going to exclude it from $W$.  

So the space, $W_{1} \subset W$, of $P$-dimensional column vectors $w$ s.t.\ 
$w^{T} A = 0$ and $w^{T} 1^{P} = 0$ has dimension 
$P - \ell_{+} -1 = (P-k-1) + (k-\ell_{+})$. Therefore, by \eqref{E:in.LDA.P.N.geq.k+1} again and our current assuption that $\ell_{+} \in (0,k)$, we have 
$P - \ell_{+} -1 \geq k-\ell_{+} > 0$. 

Hence, if $0 < \ell_{+} < k$, we can construct a 
$P \times (k-\ell_{+})$ matrix $M$ with orthonormal columns in $W_{1}$ s.t.\ 
$M^{T} A = 0$ and $(1^{P})^{T} M = 0$. Thus, we have 
    \begin{equation*}
      M^{T} X_{+} = M^{T} (X_{+0} + 1^{P} \overline{X_{+}}) 
        = M^{T} AB + M^{T} 1^{P} \overline{X_{+}} = 0  \text{ if } \ell_{+} \in (0,k) .  
    \end{equation*}
Combining this with \eqref{E:MX+=0.if.ell+=0} we get
    \begin{multline}  \label{E:MT.X+=0}
       \text{If } 0 \leq \ell_{+} < k \text{ then } 
         M \text{ is a } P \times (k-\ell_{+}) \\
           \text{ matrix with orthonormal columns and } M X_{+} = 0 .
    \end{multline} 

Suppose $0 \leq \ell_{+} < k$ and let $\epsilon > 0$ be arbitrary. 
Let 
    \begin{equation}  \label{E:lin.disc.anal.X+tilde}
      \widetilde{X_{+}} := X_{+} + \epsilon M C_{+} .
    \end{equation}  
Since $(1^{P})^{T}  M = 0$, the vector of column means of $\widetilde{X_{+}}$ is the same as that of $X_{+}$, viz.\ $\overline{X_{+}}$. Hence, by \eqref{E:sample.covariance.matrix} and \eqref{E:MT.X+=0}, 
        \begin{align*}
          (P-1) S(\widetilde{X_{+}}) 
            &= (\widetilde{X_{+}})^{T} \widetilde{X_{+}} - P \, (\overline{X_{+}})^{T} 
              \, \overline{X_{+}} \\
            &= (X_{+} + \epsilon M C_{+})^{T}  (X_{+} + \epsilon M C_{+})  
              - P \, (\overline{X_{+}})^{T} \, \overline{X_{+}}  \\
            &= X_{+}^{T} X_{+} + \epsilon C_{+}^{T} M^{T} X_{+} 
              + \epsilon X_{+}^{T} MC_{+}
                + \epsilon^{2} C_{+}^{T} M^{T} M C_{+} - P \, 
                  (\overline{X_{+}})^{T} \overline{X_{+}}  \\
            &= \bigl( X_{+}^{T} X_{+} - P \, (\overline{X_{+}})^{T} \; \overline{X_{+}} \bigr) 
              + 0 + 0 + \epsilon^{2} C_{+}^{T} M^{T} M C_{+}  \\
            &= (P-1) S_{+}(X) + \epsilon^{2} C_{+}^{T} C_{+} .
        \end{align*}
Recall \eqref{E:ell+,m+,rank.S+}. By \eqref{E:C+.S+=0} and the fact that 
$rank \, C_{+} = m_{+} = k-\ell_{+} = k - \bigl[ rank \, S_{+}(X) \bigr]$, 
we have that $S(\widetilde{X_{+}})$ has full rank.

Let $\widetilde{X}^{n \times k} = \widetilde{X}_{\epsilon}$ be the result of stacking $\widetilde{X_{+}}$ on top of $X_{-}$. Thus, $S_{+}(\widetilde{X}) = S(\widetilde{X_{+}})$ and $S_{-}(\widetilde{X}) = S_{-}(X)$. In addition, by \eqref{E:S.pooled.defn}, if $0 \leq \ell_{+} < k$, 
    \begin{multline*}
      S_{pooled}(\widetilde{X}) = \mu S_{+}(\widetilde{X}) + \nu S_{-}(\widetilde{X}) 
        = \mu S_{+}(X) + \nu S_{-}(X) + \mu \frac{\epsilon^{2}}{P-1} C_{+}^{T} C_{+} \\
          = S_{pooled}(X) + \frac{\epsilon^{2} \mu}{P-1} C_{+}^{T} C_{+} .
    \end{multline*}
Since $S(\widetilde{X_{+}})$ has full rank so does $S_{pooled}(\widetilde{X})$. (By \eqref{E:X.tilde.when.S.has.full.rank}, if $S_{pooled}(X)$ has full rank $k$, 
then $\widetilde{X} := X$. So in this case again $S_{pooled}(\widetilde{X})$ has full rank.)

Let $u \in S^{k-1}$ (written as a row vector) and let 
    \begin{equation}  \label{E:lin.disc.anal.y}
      y^{1 \times k} := u \, S_{pooled}(\widetilde{X}) \neq 0 .
    \end{equation} 
Let $(1^{P,n})^{T} := (+1, \ldots, +1, -1, \ldots, -1)^{1 \times n}$, where the number 
of ``$+1$'s'' is $P$ and the number of ``$-1$'s'' is $n-P$. (Thus, $1^{P,n}$ is 
$n \times 1$. Note that $1^{n,n} = 1^{n}$ as in \eqref{E:1n.col.vec.defn}. 
Yes, $1^{P,n} = z^{P}$ defined in \eqref{E:zP.in.lin.class}, but to prevent confusing the ``$X$'' realm with the ``$z$'' realm here I call it $1^{P,n}$ instead.)  
Let $X' := X'_{\epsilon} := \widetilde{X} + \epsilon 1^{P,n} y$. Then, by decreasing 
$\epsilon$, $X'_{\epsilon}$ can be made arbitrarily close to $X$. $X'$ is obtained 
from $\widetilde{X}$ by adding the same vector, $\epsilon y$ to each of its first $P$ rows and subtracting $\epsilon y$ from each of its last $n-P$ rows. 
The $\epsilon y$ cancels in the process of computing $S_{+}(X')$ 
and $S_{-}(X')$. Thus, $S_{+}(X') = S_{+}(\widetilde{X})$ and 
$S_{-}(X') = S_{-}(\widetilde{X})$. 
Therefore, $S_{pooled}(X') = S_{pooled}(\widetilde{X})$ so $S_{pooled}(X')$ is invertible. It has full rank.
Let $\bar{x}'_{+}$ and $\bar{x}'_{-}$ be the means of the first $P$ and last $N$ rows of $X'$, resp. Thus, by definition, \eqref{E:LDA.X0.defn}, of $\X_{0}$, 
$\bar{x}'_{+} - \bar{x}'_{-} = (\overline{X_{+}} + \epsilon y) - (\overline{X_{-}} 
- \epsilon y) = 2 \epsilon y \neq 0$. And, as just mentioned, $S_{pooled}(X')$ has full rank. Therefore, by \eqref{E:LDA.D'.defn}, $X' \in \D'$. Moreover, by \eqref{E:Phi.LDA.defn} and \eqref{E:lin.disc.anal.y}, 
    \begin{equation*}
      \Phi_{LDA}(X'_{\epsilon}) \propto 2 \epsilon y S_{pooled}(X'_{\epsilon})^{-1} 
        = 2 \epsilon u \, S_{pooled}(\widetilde{X}) S_{pooled}(\widetilde{X})^{-1} 
          = 2 \epsilon u \neq 0 
    \end{equation*}
By \eqref{E:LC.Phi}, we have $\bigl| \Phi_{LDA}(X'_{\epsilon}) \bigr| = 1$. Therefore, in fact,  
$\Phi_{LDA}(X'_{\epsilon}) = u$. Since $u \in S^{k-1}$ is arbitrary, we see that 
$X \in \Ss^{\msf{V}_{\pi/2}}$. This proves the claim \eqref{E:Ss0.subset.SS.V.pi/2}.

Remember $\Ss_{0}$ defined in \eqref{E:LDA.S0.defn}? We have just proved \eqref{E:Ss0.subset.SS.V.pi/2}, 
$\Ss_{0} \subset \Ss^{\msf{V}_{\pi/2}}$. What is the dimension of $\Ss_{0}$? 

Let $X \in \X_{n}^{k}$ (see \eqref{E:mcl.Xmk.lin.class.defn}) and regard $X$ as an $nk$-dimensional column vector by concatenating its columns in order. 
Denote that vector by $c(X)^{nk \times 1}$. $\X_{0}$ (see \eqref{E:LDA.X0.defn}) is the kernel of the linear transformation 
$B : c(X) \mapsto (\overline{X_{+}} - \overline{X_{-}})^{T} \in \RR^{k}$. (By \eqref{E:LDA.overline}, $(\overline{X_{+}} - \overline{X_{-}})^{T}$ 
is a $k \times 1$ column vector.) Let $E$ be of the matrix of $B$ (applied on the left). $E$ has dimensions $k \times nk$. 
Let $\eta^{1 \times n} := (P^{-1} (1^{P})^{T} \, , \, -N^{-1} (1^{N})^{T})$. So $\eta$ is the 
$1 \times n$ row matrix obtained by concatenating $P^{-1} (1^{P})^{T}$ followed by 
$-N^{-1} (1^{N})^{T}$. Let $O^{1 \times n}$ have all 0 entries. Then it is easy to see that
    \begin{equation*}
      E^{k \times nk} =
        \begin{pmatrix}
            \eta & O & O & \ldots & O & O \\
            O & \eta & O & \ldots & O & O \\
            \vdots & & \vdots & & & \vdots \\
            O & O & O & \ldots & O & \eta
        \end{pmatrix} .
    \end{equation*}
I.e., if $X \in \X_{n}^{k}$ then $E \, c(X) = (\overline{X_{+}} - \overline{X_{-}})^{T}$. Clearly, 
$rank \, E = k$. Therefore, by Stoll and Wong \cite[Theorem 2.1, p.\ 99]{rrSetW68.LinearAlgebra}, the dimension of the kernel, $\Ss_{0}$, of that transformation is $nk - k$. Hence, \emph{as a subset of} 
$\X_{n}^{k} \times \{z^{P}\}$ we have $\dim \Ss_{0} = nk - k = d-k$.  
So $\text{codim} \, \Ss_{0} = k$ (in $\X_{n}^{k} \times \{z^{P}\}$) consistent with  \eqref{E:sing.codim.in.lin.class}.

But $\X_{n}^{k} \times \{z^{P}\}$ is not $\D$.  Recall \eqref{E:D.X.options.in.lin.class}. Identify $\D$ with $\X$. Start with \eqref{E:LDA.D.infty.defn}, and \eqref{E:LDA.X.infty.defn}. Suppose $\D = \D_{\infty}$. Then $\X$ is correspondingly the one point compactification, $\X_{\infty}$, of $\RR^{nk}$. 
As in a similar discussion in section \ref{SS:D.T.plane.fit}, the inclusion 
$\X_{n}^{k} \hookrightarrow \X_{\infty}$ is locally bi-Lipschitz onto its image 
$\X_{\infty} \setminus \{\infty\}$. Therefore, by lemma \ref{L:loc.Lip.image.of.null.set.is.null}, as a subset of $\X_{\infty}$ the codimension of $\X_{0}$ is also $k$. 

As for $\Hm^{d-k}(\Ss_{0})$, $\Ss_{0}$ is a $(nk - k)$-dimensional linear subspace 
of $\RR^{nk}$. Therefore, by theorem \ref{T:Hausdorff.measure.gives.right.value.on.manifs}, 
$\Hm^{nk-k}(\Ss_{0}) = \infty$, where $\Hm^{nk-k}(\Ss_{0})$ is computed w.r.t.\ the Euclidean metric on $\RR^{nk}$. Define the inverse stereographic projection map 
$PS : \RR^{nk} \to S^{nk}\bigl( (0^{1 \times n k}, 1) \bigr)$ (the unit sphere 
in $\RR^{nk+1}$ with center at $(0^{1 \times n k},1)$) as in \eqref{E:PS.formla}. 
As in section \ref{SS:D.T.plane.fit} $S^{nk}\bigl( (0^{1 \times n k}, 1) \bigr)$ is the concrete interpretation of $\X_{\infty}$ and hence of $\D_{\infty}$. Let $S'$ be 
$\X_{\infty} = S^{nk}\bigl( (0^{1 \times n k}, 1) \bigr)$ with its north pole, 
$(0^{1 \times n k}, 2)$,  removed. 
$PS$ is locally Lipschitz on $\RR^{nk}$ and its inverse (stereographic projection) is locally Lipschitz on $S'$. Let $\mcl{A}$ be a compact subset of the linear space 
$\Ss_{0} \subset \RR^{nk}$. Suppose $\Hm^{nk-k}(\mcl{A}) > 0$, computed based on the Euclidean metric on $\Ss_{0}$. It follows from \eqref{E:local.Lip.is.Lip.on.compacts} and \eqref{E:Lip.magnification.of.Hm} that 
$\Hm^{nk-k}\bigl[ PS(\mcl{A}) \bigr] > 0$, computed w.r.t.\ the manifold metric on $S'$ is inherits from $\RR^{nk+1}$. Therefore, since 
$\X_{\infty} = S^{nk}\bigl( (0^{1 \times n \nvar}, 1) \bigr)$, we see that 
$\Hm^{nk-k}\bigl[ PS(\Ss_{0}) \bigr] = \Hm^{nk-k}\bigl[ PS(\X_{0}) \bigr] > 0$. We have thus shown that \eqref{E:Ss0.subset.SS.V.pi/2} holds for LDA when $\D = \D_{\infty}$.  

Now recall \eqref{E:LDA.D.mu.defn} and \eqref{E:LDA.X.mu.defn}. If $\D = \D_{\mu}$, we need to analyze $\X_{0} \cap \X_{\mu}$ (see \eqref{E:LDA.X0.defn}). First, suppose 
$\mu : \RR^{nk} \to \RR$ is constant and positive. Then $\X_{\mu}$ is an $(nk-1)$-sphere of radius $\mu$ centered at the origin. Since $\X_{0}$ is a $(nk - k)$-dimensional linear subspace of $\RR^{nk}$, we have that $\X_{0} \cap \X_{\mu}$ is a $\dim \X_{0} - 1$ dimensional sphere of radius $\mu$. I.e., 
$\dim (\D_{0} \cap \D_{\mu}) = \dim \X_{0} - 1 = (\dim \RR^{nk} - k) - 1 = (nk - 1) - k 
= d - k = \dim(\D) - k$. (See  \eqref{E:LinClass.D.mu.diffeom.to.sphere}.) Of course, by \eqref{E:Lip.magnification.of.Hm} and theorem \ref{T:Hausdorff.measure.gives.right.value.on.manifs} again, 
$\Hm^{d-k}(\Ss_{0} \cap \D_{\mu}) =  \mu^{d-k} \, \Hm^{d-k}(S^{d-k}) > 0$, 
where $\Hm^{d-k}$ is computed w.r.t.\ the manifold metric on 
$\Ss_{0} \cap \D_{\mu}$ it inherits from $\RR^{nk}$.

Next, let $\mu : S^{n k-1} \to (0, \infty)$ be any smooth map. $S^{n k-1}$ is mapped 
onto $\X_{\mu}$ by the obvious analogue of $R_{\mu}$ defined in \eqref{E:R.mu.defn}. Specifically, $X \in \X_{n}^{k} \setminus \{0\}$ is mapped 
to $r_{\mu}(X) := \mu \bigl( \| X \|^{-1} X \bigr) \| X \|^{-1} X$. As in \eqref{E:D.mu.diffeo.to.sphere} with $k$ in place of $\nvar$, the restriction
$r_{\mu} \restriction_{S^{n k-1}} : S^{n k-1} \to \D_{\mu}$ is a diffeomorphism. 

Let $X \in \X_{0}$. Then $r_{\mu}(X)$ is just a multiple of $X$. Therefore, by \eqref{E:LDA.X0.defn}, $r_{\mu}(X) \in \X_{0}$, too. And conversely. I.e., 
Now, $r_{\mu}$ maps $\X_{0} \cap S^{nk-1}$ onto $\X_{0} \cap \X_{\mu}$. 
By corollary \ref{C:cont.diff.=.loc.Lip}, $r_{\mu} \restriction_{S^{n k-1}}$ is bi-locally Lipschitz. In fact, by compactness and \eqref{E:local.Lip.is.Lip.on.compacts}, 
$r_{\mu} \restriction_{S^{n k-1}}$ is bi-Lipschitz. Hence, by  
\eqref{E:Lip.magnification.of.Hm} and the last paragraph, 
    \begin{equation*}
      \text{codim} \, (\Ss_{0} \cap \D_{\mu}) = k \text{ and }
        \Hm^{d-k}(\Ss_{0} \cap \D_{\mu}) > 0 .
    \end{equation*} 

By \eqref{E:Ss0.subset.SS.V.pi/2}, this confirms \eqref{E:sing.codim.in.lin.class} for linear discriminant analysis.     
  
\chapter{Summary, Discussion, Conclusions}   \label{Chptr:sum.dis.con}
The generality of our results in chapters \ref{Chptr:topology}, \ref{Chptr:Haus.meas.of.sing.set}, and \ref{Chptr:severity} together with the broad range of examples 
(chapters \ref{Chptr:sings.in.plane.fit}, \ref{Chptr:spherical.location}, \ref{Chptr:aug.direct.mean},  \ref{Chptr:robst.loc.on.circle}), and \ref{Chptr:linear.classification} indicate that the singularity problem of data maps is a widespread problem. 

This book concerns data analytic \emph{functions}, not just algorithms, because it applies to biological, not just formal, cognition.

The main remaining question is, what is the impact of singularities in practice?  If a function has singularities, then one must consider what the probability is that one will get data near a singularity (subsection \ref{SS:asymp.prob} and corollary \ref{C:lwr.bound.on.prob.of.being.close.to.S'}). 

Remark \ref{R:mitigating.sings} offers suggestions for how to deal with singularity.

\clearpage

\begin{center}
\begin{Large}
APPENDICES
\end{Large}
\end{center}

\setcounter{chapter}{0}
\setcounter{section}{0}
\setcounter{equation}{0}
\renewcommand{\thechapter}{\Alph{chapter}}
\renewcommand{\chaptername}{APPENDIX}
\renewcommand{\thesection}{\thechapter.\arabic{section}}
\numberwithin{equation}{section}
\numberwithin{theorem}{section}
\renewcommand{\thetheorem}{\thechapter.\arabic{theorem}}

\chapter{Some Technicalities}  \label{Chptr:misc.proofs}
  
  \begin{proof}[Proof of lemma \ref{L:extend.Phi.to.D.less.S}]
It is immediate from \eqref{E:D'.dense.Phi.cont.on.D'} that $\Ss$ has empty interior. Next, we prove that 
$\hat{\Phi}$ is continuous on $\D \setminus \Ss$. Let $x_{0} \in \D \setminus \Ss$. We will show that 
$\hat{\Phi}$ is continuous at $x_{0}$. If $x_{0}$ is isolated there is nothing to prove. So assume $x_{0}$ is not isolated and let $V \subset \F$ be a neighborhood of $\hat{\Phi}(x_{0})$. By \eqref{E:D.metric.F.normal}, there exists an open neighborhood, $W$, of $\hat{\Phi}(x_{0})$ s.t.\ $\overline{W} \subset V$. By definition of $\hat{\Phi}(x_{0})$, we may pick a neighborhood, $\clU$ of $x_{0}$ s.t.\ $x \in \clU \cap \D'$ implies 
$\Phi(x) \in W$. 
It follows that if $x \in \clU \setminus \Ss$ then $\hat{\Phi}(x) \in \overline{W} \subset V$, proving continuity at $x_{0}$. Thus, the singular set of $\hat{\Phi}$ w.r.t.\ $\D \setminus \Ss$ is a subset of $\Ss$. Since \eqref{E:D.metric.F.normal} and \eqref{E:D'.dense.Phi.cont.on.D'} imply that $\hat{\Phi} \restriction_{\D'} = \Phi \restriction_{\D'}$, every point of $\Ss$ is a singularity of $\hat{\Phi}$ w.r.t.\ 
$\D \setminus \Ss$.

Suppose $\tilde{\D}$ is another dense subset of $\D$ on which $\hat{\Phi}$ is defined, and let $\tilde{\Ss}$ be the singular set of $\hat{\Phi}$ w.r.t.\ $\tilde{\D}$. 
Now, $\hat{\Phi}$ is continuous 
on $\D \setminus \Ss \supset \tilde{\D}$. So any singularity of $\hat{\Phi}$ 
w.r.t.\ $\tilde{\D}$ must be in $\Ss$.
  \end{proof}

  \begin{proof}[Proof of lemma \ref{L:data.maps.defined.by.opt}]
First, we prove part \eqref{I:unif.opt}. Suppose the hypotheses of part \eqref{I:unif.opt} hold and let $x' \in \D'_{\ref{I:unif.opt}}$. We show that $\Phi(x')$ exists. Let $f_{0} \in F$ be as in \eqref{E:outside.G.g.is.big.unif}, but suppose $f_{1} \in \F \setminus \{ f_{0} \}$ satisfies $g(f_{1}, x') \leq g(f_{0}, x')$. Since, by \eqref{E:D.metric.F.normal}, $\F$ is normal, there exists a neighborhood, $G$ of $f_{0}$ s.t.\ $f_{1} \notin G$. Then, by \eqref{E:outside.G.g.is.big.unif}, $g(f_{1}, x') > g(f_{0}, x')$. Contradiction. Thus, $\Phi(x')$ exists uniquely and equals $f_{0}$.

Suppose the hypotheses of part \eqref{I:unif.opt} hold, but $\Phi$ is \emph{not} continuous on $\D'_{\ref{I:unif.opt}}$. Then there exists $x' \in \D'_{\ref{I:unif.opt}}$ and a sequence $\{ x_{n} \} \subset \D'_{\ref{I:unif.opt}}$ s.t.\ $x_{n} \to x'$, 
but $\Phi(x_{n}) \nrightarrow f_{0} := \Phi(x')$. (By \eqref{E:D.metric.F.normal}, $\D$ is a metric space, hence, first countable.)  Taking a subsequence if necessary, we may assume that there is a neighborhood, $G \subset \F$, of $f_{0}$ s.t.\ for no $n$ do we have $\Phi(x_{n}) \in G$.  
By \eqref{E:outside.G.g.is.big.unif}, there exists a neighborhood $\clU \subset \D$ of $x'$ s.t.\ $g(f_{0}, x) < \inf_{f \notin G} g(f, x)$ for every $x \in \clU$. But eventually $x_{n} \in \clU$. Therefore, eventually, 
	\[
	  g \bigl[ \Phi(x_{n}), x_{n} \bigr] < g(f_{0}, x_{n}) 
	    < g \bigl[ \Phi(x_{n}), x_{n} \bigr].
	\]
Contradiction. 

Next, suppose the hypotheses of  part \eqref{I:cmpct.opt} hold but there exists $x' \in \D'_{1}$ s.t.\ $\Phi$ is not continuous at $x'$. Then, there is a sequence $\{ x_{n} \} \subset \D'_{1}$ and a neighborhood, $G \subset \F$, of $f_{0} := \Phi(x')$ s.t.\ $x_{n} \to x'$  and $\Phi(x_{n}) \in \F \setminus G$ for every $n$. Therefore, 
for every $n = 1, 2, \ldots$, 
	\[
			g \bigl[ \Phi(x_{n}), x_{n} \bigr] < g (f_{0}, x_{n} ).
	\]
By compactness of $\F$ we may assume $\Phi(x_{n})$ converges to $f_{\infty} \in \F \setminus G$, say. Thus, $f_{\infty} \neq f_{0}$. Since $g$ is continuous, we then have
	\[
		g(f_{\infty}, x') \leq g (f_{0}, x' ). 	\]
Hence, $f \mapsto g(f, x')$ does not have a unique minimum at $f_{0}$, contradicting the assumption that $x' \in \D'_{\ref{I:cmpct.opt}}$ and $f_{0} = \Phi(x')$. 
  \end{proof}

  \begin{proof}[Proof of proposition \ref{P:tube.about.sing.set}]
 \emph{Claim:} There exist constants $0 < a < b < \infty$ depending only on $\D$ and $\Rcl$ s.t.\  for $\eta > 0$  sufficiently small
   \begin{equation}  \label{E:vol.of.ball.approx.by.power}
      a \eta^{d} \leq \Hm^{d} \bigl( \mcl{B}_{\eta}(x) \bigr) \leq b  \eta^{d}, 
          \text{ for every } x \in \Rcl,
   \end{equation}
where $\mcl{B}_{\eta}(x) \subset \D$ is the open ball with center at $x$ and radius $\eta$. 
To prove this, first we prove
   \begin{multline}  \label{E:finite.numbr.coord.nbhds.cover.any.ball}
      \text{There exist coordinate neighborhoods, } (\clU_{i}, \varphi_{i}), 
        i = 1, \ldots, N, \text{ in } \D \\
         \text{ with } \varphi_{i} : \clU_{i} \to \RR^{d} \text{ and its inverse Lipschitz,  } 
            (i = 1, \ldots, N) \text{ s.t.\ for $\eta$ sufficiently small, } \\ 
              \text{ for every } x \in \Rcl \text{ there exists $i = 1, \ldots, N$ s.t.\ } 
                \mcl{B}_{\eta}(x) \subset \clU_{i}.
   \end{multline}

To prove \eqref{E:finite.numbr.coord.nbhds.cover.any.ball} we in turn first prove that there exists $\mcl{C} \subset \D$, compact, s.t.\ $\Rcl \subset \mcl{C}^{\circ}$, the interior of $\mcl{C}$. To this end, note that we may assume that any $x \in \D$ has a coordinate neighborhood 
$(\clU, \varphi)$ in $\D$ with the following properties.
	\begin{enumerate}
		\item $\varphi(\clU)$ is bounded.
		\item $\varphi$ extends to the closure, $\overline{\clU}$ and $\varphi^{-1}$ extends to the closure, $\overline{\varphi(\clU)}$.
		\item $\varphi$ and $\varphi^{-1}$ are Lipschitz on $\overline{\clU}$ and $\overline{\varphi(\clU)}$, resp.
	\end{enumerate}
Since $\Rcl$ is compact it has a finite covering $(\clU_{i}, \varphi_{i})$ ($i=1, \ldots, N$) consisting of coordinate neighborhoods with the preceding three properties. Let $\mcl{C} := \bigcup_{i} \overline{\clU_{i}}$.

Since $\mcl{C}$ is compact and $\D$ is a Lipschitz manifold, there exists $\eta_{0} > 0$ s.t.\ if $0 < \eta \leq \eta_{0}$ then for any 
$x \in \Rcl$ there exists a coordinate neighborhood $(\clU_{x}, \varphi_{x})$
(with $\varphi_{x}$ and $\varphi_{x}^{-1}$ Lipschitz) s.t.\ $\mcl{B}_{2 \eta}(x) \subset \clU_{x}$ (Lebesgue's covering lemma, Simmons \cite[Theorem C, p.\ 122]{gfS63}). Choose $x_{1}, \ldots, x_{M} \in \Rcl$ 
s.t.\ $\Rcl \subset \bigcup_{i=1}^{M} \mcl{B}_{\eta_{0}}(x_{i})$.
Then $(\clU_{i}, \varphi_{i}) := (\clU_{x_{i}}, \varphi_{x_{i}})$ ($i = 1, \ldots, M$) satisfies 
\eqref{E:finite.numbr.coord.nbhds.cover.any.ball}. 

Let $L \in [1, \infty)$ be larger than the Lipschitz constants 
for $\varphi_{i}$ and $\varphi_{i}^{-1}$ ($i = 1, \ldots, M$). 
By \eqref{E:finite.numbr.coord.nbhds.cover.any.ball}, if $\eta$ is sufficiently small then for every $x \in \Rcl$ there exists a coordinate neighborhood $(\clU_{i}, \varphi_{i})$ 
(with $\varphi_{i}$ Lipschitz) s.t.\ $\mcl{B}_{\eta}(x) \subset \clU_{i}$. Then 
        \begin{equation}  \label{E:phi.B.eta.L}
		\varphi_{i} \bigl( \mcl{B}_{\eta}(x) \bigr) 
		      \subset B_{L \eta} \bigl( \varphi_{i}(x) \bigr) \subset \RR^{d},
  	 \end{equation}
where $B_{L \eta} \bigl( \varphi_{i}(x) \bigr)$ is the ball in $\RR^{d}$ with center at $\varphi_{i}(x)$ and radius $L \eta$ (see \eqref{E:Euc.ball.defn}). Hence, applying $\varphi_{i}^{-1}$ to to both sides of \eqref{E:phi.B.eta.L}, we have, by \eqref{E:Lip.magnification.of.Hm} and \eqref{E:when.Haus.meas.=.Leb.meas} (or theorem \ref{T:Hausdorff.measure.gives.right.value.on.manifs}),
   \[
      \Hm^{d} \bigl( \mcl{B}_{\eta}(x) \bigr) 
        \leq L^{d} \Hm^{d} \bigl[ B_{L \eta} \bigl( \varphi_{i}(x) \bigr) \bigr]
          = L^{d} \mcl{L}^{d} \bigl[ B_{L \eta} \bigl( \varphi_{i}(x) \bigr) \bigr]
            = L^{d} \alpha(d) \, (L ^{d} \eta^{d}),
   \]
where $\mcl{L}^{d}$ denotes $d$-dimensional Lebesgue measure 
and $\alpha(d)$ is the volume of the unit ball in $\RR^{d}$. Let $b :=  L^{2d} \alpha(d)$. Similarly,
	\[
		\varphi_{i}^{-1} \Bigl( B_{\eta/L} \bigl( \varphi_{i}(x) \bigr) \Bigr) 
		         \subset \mcl{B}_{\eta}(x) \subset \D. 
	\]
Applying $\varphi_{i}$ to both sides:
   \[
      L^{d} \Hm^{d} \bigl( \mcl{B}_{\eta}(x) \bigr) 
        \geq \Hm^{d} \bigl( \varphi_{i} \bigl[ B_{\eta}(x) \bigr] \bigr)  \geq
         \Hm^{d} \bigl[ B_{\eta/L} \bigl( \varphi_{i}(x) \bigr) \bigr] 
           = \mcl{L}^{d} \bigl[ B_{\eta/L} \bigl( \varphi_{i}(x) \bigr) \bigr]
             = \alpha(d) (L ^{-d} \eta^{d}).
   \]
Take $a := L^{-2d} \alpha(d)$. Thus, \eqref{E:vol.of.ball.approx.by.power} holds and the claim is proved.

Let $\delta \in (0,1)$. For $\delta > 0$, let $D(\delta, \Rcl)$ denote the $\delta$-packing number 
and $N(\delta, \Rcl)$ be the $\delta$-covering number of
$\Rcl$ (Pollard \cite[p.\ 10]{dP90.EmpPro}). (An alternative to using packing and covering numbers is to use Vitali's covering theorem (Giaquinta \emph{et al} \cite[Lemma 1, p.\ 30, Volume I]{mGgMjS98.cart.currents}, Simon \cite[Theorem 3.3, p.\ 11]{lS83.GMT}.)
Then by Pollard \cite[p.\ 10]{dP90.EmpPro},
	\begin{equation}  \label{E:N.and.D.relationships}
		N(\delta/2, \Rcl) \geq D(\delta, \Rcl) 
		        \geq N(\delta, \Rcl).
	\end{equation}
Since $\Rcl$ is compact $N(\delta/2, \Rcl)$ and, hence, $D(\delta, \Rcl)$ are finite. By definition of $D(\delta, \Rcl)$ there are 
$D(\delta, \Rcl)$ disjoint open balls with centers in $\Rcl$ and radius $\delta/2$. Obviously, these balls all lie in $\Rcl^{\delta}$. Similarly, by definition 
of $N(\delta, \Rcl)$ there are $N(\delta, \Rcl)$ closed balls in $\D$ and radius $\delta$ that cover $\Rcl$. Let $\epsilon \in (0, r)$. (Recall $r := \dim \Rcl$.) By definition of Hausdorff measure (appendix \ref{Chptr:Lip.Haus.meas.dim}), if $\delta \in (0,1)$ is sufficiently small that \eqref{E:vol.of.ball.approx.by.power} holds with $\eta \in (0, \delta]$, then by \eqref{E:vol.of.ball.approx.by.power} and \eqref{E:N.and.D.relationships}, 
   \begin{align*}
      \Hm^{d} (\Rcl^{\delta}) &\geq a D(\delta, \Rcl) (\delta/2) ^{d}  \\
         &\geq 2^{-d} a   \, \omega_{r-(\epsilon/2)}^{-1} \delta^{d-r+(\epsilon/2)}  \times
            \bigl[ \omega_{r-(\epsilon/2)} N(\delta, \Rcl) \delta^{r-(\epsilon/2)} \bigr]  \\
         &\geq 2^{-d} a   \, \omega_{r-(\epsilon/2)}^{-1} \delta^{d-r+(\epsilon/2)} \, 
               \Hm^{r-(\epsilon/2)}_{2 \delta}(\Rcl),
   \end{align*}
where $\omega_{r-(\epsilon/2)} > 0$ is the multiplicative constant in the definition 
of $\Hm^{r-(\epsilon/2)}_{2 \delta}$. (See \eqref{E:Hs.delta.defn}.) By assumption, $\omega_{r-(\epsilon/2)}$ is bounded for $\epsilon \in (0, r)$. Hence, $( 2^{-d} a   \, \omega_{r-(\epsilon/2)}^{-1} )^{-1}$ is uniformly bounded in $\epsilon \in (0,r)$.
Since $r - (\epsilon/2) < \dim \Rcl$, as $\delta \downarrow 0$, we have 
$\Hm^{r-(\epsilon/2)}_{2 \delta}(\Rcl) \to \Hm^{r-(\epsilon/2)} (\Rcl) = + \infty$. Therefore, eventually 
	\[
		\Hm^{r-(\epsilon/2)}_{2 \delta}(\Rcl) 
			\geq \Bigl( 2^{-d} a   \, \omega_{r-(\epsilon/2)}^{-1} \Bigr)^{-1}.
	\]  
Therefore, as $\delta \downarrow 0$, eventually, 
   \[
      \Hm^{d} (\Rcl^{\delta}) 
         \geq  \delta^{d-r + \epsilon/2} = \delta^{-\epsilon/2} \delta^{d-r + \epsilon}
           \geq \delta^{d-r + \epsilon},
   \]
since $0 < \delta < 1$. I.e., \eqref{E:Hm.Rdelta.geq.delta.to.codim} holds.

A similar argument proves \eqref{E:Hm.Rdelta.geq.delta.to.codim.times.Hm.R}: By \eqref{E:N.and.D.relationships},
   \begin{align*}
      \Hm^{d} (\Rcl^{\delta}) &\geq a D(\delta, \Rcl) (\delta/2) ^{d}  \\
         &\geq 2^{-d} a   \, \omega_{r}^{-1} \delta^{d-r} \times
               \bigl[ \omega_{r} N(\delta, \Rcl) \delta^{r} \bigr]  \\
         &\geq 2^{-d} a   \, \omega_{r}^{-1} \delta^{d-r} \Hm^{r}_{2 \delta}(\Rcl).
   \end{align*}
Since $\Hm^{r}(\Rcl)$ is finite, for $\delta > 0$ sufficiently small, 
$\Hm^{r}_{2 \delta}(\Rcl) \geq \tfrac{1}{2} \Hm^{r}(\Rcl)$.  This proves
\eqref{E:Hm.Rdelta.geq.delta.to.codim.times.Hm.R}.
  \end{proof}

 \emph{Confidence set for $\pi^{\Delta}$}: 
Suppose $X$ is drawn from an unknown probability distribution $P$. But it \emph{is} ``known'' that $P = P_{\theta_{true}}$, for some $\theta_{true} \in \Theta$, 
where $\{ P_{\theta}, \theta \in \Theta \}$ is a specific family of probability distributions on $\D$ and $\Theta$ some index set. The catch is $\theta_{true}$ is unknown. Suppose $P_{\theta}$ is absolutely continuous w.r.t.\ some measure $\mu$ on $\D$ and let $f_{\theta}$ be the density $d P_{\theta}/d \mu$. 
Let $\delta(X) :=  \text{dist} (X, \Ss)$, so $\delta(X)$ is random. 
Given $\Delta > 0$ fixed, let 
$\pi(\theta) := \pi^{\Delta} := \pi(\Delta, \theta) := P_{\theta} \bigl\{ \delta(X) \geq \Delta \bigr\}$, 
$\theta \in \Theta$. 
Let $x$ be the value of $X$ we observe. I.e., $x$ is the ``data''. $\theta_{true}$ is unknown but based on $x$ we can try to compute a confidence set for $\pi(\theta_{true})$. 

One way to do so is by inverting tests of the hypotheses $H_{0,p} : \pi(\theta) = p$ vs. 
$H_{1,p}: \pi(\theta) \neq p$ ($p \in [0,1]$). (So $H_{0}$ and $H_{1}$ do not represent homology groups.) 
Let $\Theta_{0,p} := \{ \theta \in \Theta : \pi(\theta) = p \}$. Thus, the null hypothesis is that $\theta_{true} \in \Theta_{0,p}$. The likelihood ratio test (LRT; Bickel and Doksum \cite[pp.\ 209--210]{pjBkaD77.MathStat}) is a recipe for creating such a test. For any $y \in \D$ let 
	\[
		\lambda(p, y) :=  \frac{ \sup \{ f_{\theta}(y) : 
		  \theta \in \Theta \} }{ \sup \{ f_{\theta}(y) : \pi(\theta) = p \} } .
	\]
Let $\alpha \in (0,1)$ be small and suppose we can calculate a number $c(\theta, p) := c(\theta, p; \alpha)$ that approximately satisfies
$\text{Prob}_{\theta} \, \bigl\{ \lambda(p,X) \geq c(\theta, p) \bigr\} = \alpha$ for every $\theta \in \Theta_{0}$ and $p \in (0,1)$. (Here again $X$ is random.)  
$\theta_{true}$ is unknown, but suppose we have an estimate, $\hat{\theta} = \hat{\theta}(x)$ of it for which theory tells us
$c(\hat{\theta}, p; \alpha)$ is approximately 
equal to $c(\theta_{true}, p; \alpha)$ for every $p$.  
Let  
$C(x) := \bigl\{ p \in [0,1] : \lambda(p,x)  < c(\hat{\theta}, p; \alpha) \bigr\}$
Then 
    \begin{multline*}
       \text{Prob}_{\theta_{true}} \bigl\{ \pi_{true} \in C(x) \} 
         = \text{Prob}_{\theta_{true}} \bigl\{ x \in \D : 
           \lambda(\pi_{true},x)  < c(\hat{\theta}, \pi_{true}; \alpha) \bigr\} \\
         \text{ is approximately equal to } \text{Prob}_{\theta_{true}} \bigl\{ x \in D : 
           \lambda(\pi_{true},x)  < c(\theta_{true}, \pi_{true}) \bigr\} \\
            \text{ is approximately equal to } 1 - \alpha .
    \end{multline*} 
Thus, $C(x)$ is an approximate $(1-\alpha)$-confidence set for $\pi(\Delta, \theta_{true})$ (Lehmann \cite[Theorem 4, p.\ 79]{elL93.StatHyps}). In practice it probably would be hard to compute such a thing.

   \begin{proof}[Proof of proposition \ref{P:deriv.blows.up}]
 First, take $\delta = \delta_{\langle \cdot, \cdot \rangle}$.  
Let $x \in \Ss$ be fixed but arbitrary and let $\varphi : \clU_{0} \to \RR^{d}$ be a coordinate neighborhood of $x$. Let $G_{0} := \varphi(\clU_{0})$ and let $\psi : G_{0} \to \clU_{0}$ be the inverse of $\varphi$. We may assume $\varphi(x) = 0$. Since $\Ss$ is locally compact, we may assume that $\Ss \cap \clU_{0}$ is relatively closed in $\clU_{0}$. (\emph{Pf:} $x$ has a relatively open neighborhood in $\Ss$ with compact closure \emph{in $\Ss$}. Thus, $x$ has a neighborhood $\mcl{A}$ in $\D$ s.t.\ the relative closure of $\mcl{A} \cap \Ss$ in $\Ss$ is compact. Thus, there exists $\mcl{C} \subset \D$ closed s.t.\ $\mcl{C} \cap \Ss$ is compact and $\mcl{A} \cap \Ss \subset \mcl{C} \cap \Ss$. Thus, $\mcl{K} := \mcl{C} \cap \Ss$ is closed. Replace $\clU_{0}$ by $\clU_{0} \cap \mcl{A}$. Thus, $\clU_{0} \subset \mcl{A}$ so we have $\clU_{0} \cap \Ss \subset \mcl{K}$. Therefore,
	\begin{equation*}
		\mcl{K} \cap \clU_{0} = (\mcl{C} \cap \Ss) \cap \clU_{0} 
		     = (\mcl{C} \cap \Ss) \cap (\Ss \cap \clU_{0}) 
		       = \mcl{K} \cap (\Ss \cap \clU_{0}) 
		        = \Ss \cap \clU_{0}.
	\end{equation*}
Thus, $\Ss \cap \clU_{0}$ is relatively closed in $\clU_{0}$ as desired.)  

By proposition \ref{P:geod.cnvx.nbhds.exist} there are neighborhoods $\clU_{1}$ and $\clU$ of $x$ s.t.\  $\clU \subset \overline{\clU_{1}} \subset \clU_{0}$, $\overline{\clU_{1}}$ is compact, and $\clU$ is geodesically convex.  
 		
Define $\Gamma_{\D,x'}^{d \times d}$ ($x' \in \clU$) as in lemma \ref{L:coord.maps.are.Lip}, with $\D$ in place of $M$. Let $\mu_{1}(x') \geq \ldots \geq \mu_{d}(x') > 0$ be the eigenvalues of $\Gamma_{\D,x'}$. By part \ref{I:bold.mu.bounds.eigvals} of lemma \ref{L:coord.maps.are.Lip}, there exists $\blds{\mu} \in (0, \infty)$ s.t.\
	\begin{equation}  \label{E:Eigvals.bdd.away.from.0.infty}
		\mu_{1}(x') \leq \blds{\mu}^{2} \text{ and } \, 1/ \mu_{d}(x') 
		  \leq \blds{\mu}^{2} 
		  \text{ for every } x' \in \clU.
	\end{equation}

Pick $\eta_{0} = \eta_{0}(x) > 0$ so small that $\mcl{B}_{\eta_{0}}(x)$, the open ball in $\D$ about $x$ with radius $\eta_{0}$, lies in $\clU$. In particular, 
$\overline{ \mcl{B}_{\eta_{0}}(x) }$ is compact. Let $G = \varphi(\clU)$ and 
$H = \varphi \bigl( \mcl{B}_{\eta_{0}}(x) \bigr) \subset G$. By making $\eta_{0}$ smaller if necessary, we may choose $r_{0} \in (0, \infty)$ so that 
	\[
		\overline{H} \subset B_{r_{0}}(0) \subset G. 
	\]
Here $B_{r_{0}}^{k}(0) := \bigl\{ y \in \RR^{d} : |y| < r_{0} \bigr\}$. (See \eqref{E:Euc.ball.defn}.) Recall that $\varphi(x) = 0$. 

By part \eqref{I:phi.psi.Lip} of lemma \ref{L:coord.maps.are.Lip}, we may assume there exists $\blds{\mu}$ s.t.\ \eqref{E:Eigvals.bdd.away.from.0.infty} holds and so does the following.
	\begin{multline}   \label{E:phi.and.psi.are.Lip}
	  \text{The map $\varphi$ is Lipschitz on $\mcl{B}_{\eta_{0}}(x)$, $\psi$ is Lipschitz 
	   on $\overline{H}$, and $\blds{\mu}$, as in \eqref{E:Eigvals.bdd.away.from.0.infty},} 
	   \\
	    \text{ is a Lipschitz constant for both. }
	\end{multline}

Let $\eta \in (0, \eta_{0})$. \emph{Claim:}
	\begin{equation} \label{E:B.eta/2.not.fill.B.eta}
		\text{There exists } r > 0 \text{ s.t.\ } B_{r}(0) \nsubseteq \varphi \bigl( \mcl{B}_{\eta/2}(x) \bigr)
			\text{ but } B_{r}(0) \subset \varphi \bigl( \mcl{B}_{\eta}(x) \bigr) \subset B_{r_{0}}(0).
	\end{equation}
Suppose not. Then
	\begin{equation}   \label{E:Br.in.phi.B.eta.means.Br.in.phi.B.eta/2}
		B_{r}(0) \subset \varphi \bigl( \mcl{B}_{\eta}(x) \bigr) \text{ implies }
			B_{r}(0) \subset \varphi(\mcl{B}_{\eta/2}(x)).
	\end{equation}
Let $r_{\eta} := \inf \Bigl\{ |y| : y \in \RR^{d} \setminus \varphi \bigl( \mcl{B}_{\eta}(x) \bigr)  \Bigr\} > 0$. Then $B_{r_{\eta}}(0) \subset \varphi \bigl( \mcl{B}_{\eta}(x) \bigr)$. Hence, \eqref{E:Br.in.phi.B.eta.means.Br.in.phi.B.eta/2} implies 
	\begin{equation}   \label{E:B.r.eta.in.phi.B.eta/2}
		B_{r_{\eta}}(0) \subset \varphi(\mcl{B}_{\eta/2}(x)).
	\end{equation}
For $n = 1, 2, \ldots$, pick 
	\[
		y_{n} \in \Bigl[ \varphi \bigl( \mcl{B}_{\eta}(x) \bigr) \Bigr]^{c}
	\] 
s.t.\ $|y_{n} | \downarrow r_{\eta}$. WLOG, $\{ y_{n} \}$ converges to some 
$y_{\infty} \in \Bigl[ \varphi \bigl( \mcl{B}_{\eta}(x) \bigr) \Bigr]^{c}$. 
Thus, $|y_{\infty}| = r_{\eta}$ so $y_{\infty} \in \overline{ B_{r_{\eta}}(0) }$. Therefore, by \eqref{E:B.r.eta.in.phi.B.eta/2},
	\begin{equation}   \label{E:y.infty.in.phi.B.eta/2}
		 y_{\infty} \in \overline{ \varphi \bigl( \mcl{B}_{\eta/2}(x) \bigr) }.
	\end{equation}
 Let $x_{\infty} := \psi(y_{\infty})$, so 
$x_{\infty} \in \clU \setminus \mcl{B}_{\eta}(x)$. Therefore, $\delta(x_{\infty}, x) \geq \eta$. But, \eqref{E:y.infty.in.phi.B.eta/2} implies $\delta(x_{\infty}, x) \leq \eta/2$. This contradiction proves the claim \ref{E:B.eta/2.not.fill.B.eta}.

I.e., we can choose $r \in (0, r_{0})$ large enough that $B_{r}(0)$ is a subset 
of $\varphi \bigl[ \mcl{B}_{\eta}(x) \bigr]$ but  is \emph{not} a subset 
of $\overline{ \varphi \bigl( \mcl{B}_{\eta/2}(x) \bigr) }$. Thus, $\psi \bigl[ B_{r}(0) \bigr]  \nsubseteq \mcl{B}_{\eta/2}(x)$. Hence, by \eqref{E:phi.and.psi.are.Lip}, 
	\begin{equation}   \label{E:eta/2mu.leq.r.leq.mu.eta}
	      B_{\eta/(2 \blds{\mu})}(0) \subset \varphi \bigl[ \mcl{B}_{\eta/2}(x) \bigr] \text{ so }
		\frac{\eta}{2 \blds{\mu}} \leq r; \quad
		 B_{r}(0) \subset \varphi(\mcl{B}_{\eta}(x)) \text{ so } r \leq \blds{\mu} \eta.
	\end{equation}

Since $x \in \Ss$ and $\F$ is complete w.r.t.\ $\rho$, as we assume throughout this section, there exists $\epsilon > 0$ (independent of $\eta$) s.t.\ there exist 
$x_{1}, x_{2} \in \D \setminus \Ss$ arbitrarily close to $x$ 
with 
	\begin{equation}   \label{E:rho.phi.x1.phi.x2.geq.eps}
		\rho \bigl[ \Phi(x_{1}), \Phi(x_{2}) \bigr] \geq \epsilon.
	\end{equation}  
Define $\mcl{B}_{\eta}'(x) := \mcl{B}_{\eta}(x) \setminus \Ss$. In particular, we may choose 
$x_{1}, x_{2} \in \psi \bigl[ B_{r}(0) \bigr] \cap \mcl{B}_{\eta/2}'(x)$, the open ball in $\D$ about $x$ with radius $\eta/2$ with that property. Let 
$y_{i} = \varphi(x_{i}) \in \Bigl(\varphi \bigl[ \mcl{B}_{\eta/2}(x) \bigr] 
\setminus \varphi( \Ss ) \Bigr) \cap B_{r}(0)$ ($i=1,2$). 
Thus, $y_{1}, y_{2} \notin \varphi \bigl[ \Ss \cap \mcl{B}_{\eta_{0}}(x) \bigr]$.

\emph{Claim:} For any $\beta > 0$ with $B_{\beta}(y_{2}) \subset B_{r}(0)$  we can find $y_{2}' \in B_{\beta}(y_{2})$ s.t.\ the line segment, $L$, joining $y_{1}$ and $y_{2}'$ does not intersect $\varphi(\Ss) \cap \clU$ and $\Phi_{\ast}$ is defined at almost all points of $\psi(L)$. To see this, let $b = |y_{2} - y_{1}|$ and 
$\partial B_{b}(y_{1}) := \bigl\{ y \in \RR^{d} : |y - y_{1}| = b \bigr\}$. I.e., $\partial B_{b}(y_{1})$ is the $(d-1)$-sphere centered at $y_{1}$ with radius $b$. Let
		\[
			F(s, z) = y_{1} + s (z - y_{1}) \in \overline{ B_{b}(y_{1}) }, 
				\quad s \in (0, 1 ], \; z \in \partial B_{b}(y_{1}).
		\]
Thus, $F$ is Lipschitz and
	\begin{equation*}
	   F^{-1}(w) 
	       = \Bigl( b^{-1} | w-y_{1} |,  \; b | w-y_{1} |^{-1} ( w-y_{1} ) + y_{1} \Bigr)
	       \in (0,1] \times \partial B_{b}(y_{1}), 
		\quad w \in B_{b}(y_{1}) \setminus \{ y_{1} \}.
	\end{equation*}   

Let $\Rcl := \bigl\{ x' \in \mcl{B}_{\eta_{0}}'(x) :  \Phi_{\ast, x} \text{ is not defined.}\bigr\}$. Then by \eqref{E:Phi.star.defined.a.e.}, we have $\Hm^{d} (\Rcl) = 0$. In particular, $\Rcl$ is $\Hm^{d}$-measurable (Federer \cite[p.\ 54]{hF69}). Therefore, by lemma \ref{L:loc.Lip.image.of.null.set.is.null} and \eqref{E:phi.and.psi.are.Lip}, 
$\mcl{L}^{d} ( R' ) = 0$, 
where $R' = \varphi(\Rcl) \cap B_{b}(y_{1}) \setminus \{ y_{1} \}$. But $F^{-1}$ is locally Lipschitz on $B_{b}(y_{1}) \setminus \{ y_{1} \}$. Therefore, by lemma \ref{L:loc.Lip.image.of.null.set.is.null} 
	\begin{equation} \label{E:0.=.H.d.F.invrs}
		0 = \Hm^{d} \bigl[ F^{-1}(R') \bigr].
	\end{equation}
For $z \in \partial B_{b}(y_{1})$, let
	\[
		S_{z} = \bigl\{ s \in (0,1] : (s,z) \in F^{-1}(R') \bigr\}.
	\]
Therefore, by \eqref{E:0.=.H.d.F.invrs} and Federer \cite[2.10.27, p.\ 190]{hF69} we have
	\begin{equation*}
		0 = \int_{\partial B_{b}(y_{1})}^{\ast} \int_{S_{z}} ds  \, \Hm^{d-1}(dz).
	\end{equation*}
Hence, by lemma \ref{L:0.star.int.means.0.a.e.} below, for $\Hm^{d-1}$-almost all 
$z \in \partial B_{b}(y_{1})$ we have 
$\Hm^{1} \bigl\{ s \in (0,b] : F(s,z) \in R' \bigr\} = \mcl{L}^{1}(S_{z})=0$. 

Now, $\dim \Ss < d-1$ by assumption. By lemma \ref{L:loc.Lip.image.of.null.set.is.null} in appendix \ref{Chptr:Lip.Haus.meas.dim} and \eqref{E:phi.and.psi.are.Lip}, $\dim \Bigl[ \varphi \bigl( \Ss \cap \mcl{B}_{\eta_{0}}(x) \bigr) \Bigr] < d-1$. In particular, 
	\begin{equation}   \label{E:H.d-1.phi.S.0}
		\Hm^{d-1} \Bigl[ \varphi \bigl( \Ss \cap \mcl{B}_{\eta_{0}}(x) \bigr) \Bigr] = 0.
	\end{equation}
Let $\pi_{2} : (0, 1 ] \times \partial B_{b}(y_{1}) \to \partial B_{b}(y_{1})$ be projection onto the second factor. Then, since $F^{-1}$ is locally Lipschitz on $B_{b}(y_{1}) \setminus \{ y_{1} \}$, by \eqref{E:comp.of.Lips.is.Lip}, we have that $\pi_{2} \circ F^{-1}$ is locally Lipschitz map of $B_{b}(y_{1}) \setminus \{ y_{1} \}$ into $\partial B_{b}(y_{1})$. Hence, by \eqref{E:H.d-1.phi.S.0} and lemma \ref{L:loc.Lip.image.of.null.set.is.null} again, we have that 
	\begin{equation*}
		\Hm^{d-1} \left( \pi_{2} \circ F^{-1} 
		    \Bigl[ \varphi \bigl( \Ss \cap \mcl{B}_{\eta_{0}}(x) \bigr) \Bigr] \right) = 0.
	\end{equation*}

Now,
$\pi_{2} \circ F^{-1}  \Bigl[ \varphi \bigl( \Ss \cap \mcl{B}_{\eta_{0}}(x) \bigr) \Bigr] \subset \partial B_{b}(y_{1})$. 
Since $\dim \partial B_{b}(y_{1}) = d-1$, it follows that we may pick $y_{2}' \in \partial B_{b}(y_{1}) \cap B_{\beta}(y_{2})$ s.t.\ the line segment, $L$, joining $y_{1}$ to $y_{2}'$ does not intersect $\varphi \bigl( \Ss \cap \mcl{B}_{\eta_{0}}(x) \bigr)$ 
and $\Phi_{\ast, x}$ is defined for almost all $x \in \psi(L)$. This completes the proof of the claim. 

By \eqref{E:rho.phi.x1.phi.x2.geq.eps}, since $\Phi$ is continuous off $\Ss$, for $\beta > 0$ sufficiently small, 
	\begin{equation}  \label{E:y2'.near.y2}
		\rho \Bigl( \Phi \bigl[ \psi(y_{1}) \bigr], \Phi \bigl[ \psi(y_{2}') \bigr] \Bigr) =
		     \rho \Bigl( \Phi (x_{1}), \Phi \bigl[ \psi(y_{2}') \bigr] \Bigr) \geq \epsilon/2 
			\text{ and } y_{2}' \in B_{\beta}(y_{2}) \cap B_{r}(0).
	\end{equation} 

Similarly, we can find $y_{3}' \in B_{r}(0) \setminus \overline{ \varphi \bigl( \mcl{B}_{\eta/2}(x) \bigr) }$ s.t.\ 
the line segment $\overline{y_{2}'y_{3}'}$ joining $y_{2}'$ and $y_{3}'$ does not intersect 
$\varphi \bigl( \Ss \cap \mcl{B}_{\eta_{0}}(x) \bigr)$, $\Phi_{\ast, \psi(y)}$ is defined for $\mcl{L}^{1}$-almost all $y$ in $\overline{y_{2}'y_{3}'}$, and $y_{3}' - y_{2}'$ and $y_{1} - y_{2}'$ are linearly independent. (Recall $y_{1} \neq y_{2}$.) Of course, 
$\overline{y_{2}'y_{3}'} \subset B_{r}(0) \subset \varphi \bigl[ \mcl{B}_{\eta_{0}}'(x)  \bigr]$. To see this, first observe that, by \eqref{E:B.eta/2.not.fill.B.eta}, there exists $y_{3} \in B_{r}(0) \setminus \varphi(\mcl{B}_{\eta/2}(x))$. We may assume $y_{3} - y_{2}'$ and $y_{1} - y_{2}'$ are linearly independent. Now proceed as before with $y_{2}'$ playing the role of $y_{1}$ and $y_{3}$ playing the role of $y_{2}$. Denote the point then corresponding to the new $y_{2}'$ by $y_{3}'$.

Define 
$\xi_{1} : \bigl[ 0, | y_{2}' - y_{1} | + | y_{3}' - y_{2}' | \bigr] 
\to B_{r}(0) \subset \varphi \bigl( \mcl{B}_{\eta}(x) \bigr)$ by
	\begin{equation}   \label{E:xi.curve.defn}
		\xi_{1}(s) = 
			\begin{cases}
				y_{1} + \frac{s}{| y_{2}' - y_{1} |} ( y_{2}' - y_{1} ),  
				            & \text{ if } 0 \leq s \leq | y_{2}' - y_{1} |, \\
				y_{2}' + \frac{s-| y_{2}' - y_{1} |}{| y_{3}' - y_{2}' |} ( y_{3}' - y_{2}' ),  
				   & \text{ if } | y_{2}' - y_{1} | < s \leq | y_{2}' - y_{1} | + | y_{3}' - y_{2}' |.
			\end{cases}
	\end{equation}
Thus, the image of $\xi_{1}$ consists of two line segments lying in $B_{r}(0)$, \emph{viz.}\ the one joining $y_{1}$ to $y_{2}'$ and the one joining $y_{2}'$ to $y_{3}'$. Note that $\xi_{1}$ does not intersect $\varphi(\Ss \cap \clU)$. Since $y_{3}'$ and $y_{3}' - y_{2}'$ and $y_{1} - y_{2}'$ are linearly independent, the function $\xi_{1}$ is one-to-one. Notice that $\xi_{1}$ is parametrized by arclength.
Since $y_{1}, y_{2}', y_{3}'  \in B_{r}(0)$, by \eqref{E:eta/2mu.leq.r.leq.mu.eta},
	\begin{equation}  \label{E:bound.on.lngth.of.xi1}
		\text{Length of the curve $\xi_{1}$ } \leq 4 r < 4 \blds{\mu} \eta.
	\end{equation}

Now let $\alpha : [0, \lambda ] \to \mcl\D$, where $\lambda := | y_{2}' - y_{1} | + | y_{3}' - y_{2}' |$, be 
the curve $\alpha(s) = \psi \circ \xi_{1}(s)$. Note that, since the image of $\xi_{1}$ lies in $B_{r}(0)$ we have, by \eqref{E:B.eta/2.not.fill.B.eta}, that
	\begin{equation} \label{E:alpha.image.in.B.eta}
		\text{Image of } \alpha \subset \mcl{B}_{\eta}'(x).
	\end{equation}

Since $\psi$ and $\xi$ are both one-to-one, so is $\alpha$. Then, by \eqref{E:phi.and.psi.are.Lip}, 
\eqref{E:bound.on.lngth.of.xi1}, and lemma \ref{L:loc.Lip,avg.bound.on.Haus.meas} (with $h := \psi$),
	\begin{equation}  \label{E:lngth.alph.leq.4.mu2.eta}
		\text{The length of $\alpha$ is no greater than $4 \blds{\mu}^{2} \eta$. }
	\end{equation}
However, by \eqref{E:y2'.near.y2}, as one moves along $\alpha$ from $\psi( y_{1} )$ to $\psi( y_{2}' )$ in $\D$ the point $\Phi \circ \alpha$ moves a distance of at least $\epsilon/2$. Thus, adapting formula \eqref{E:alpha.length.integral},
	\begin{equation}  \label{E:half.eps.length.int}
		\frac{\epsilon}{2} 
			\leq \int_{0}^{\lambda} 
				\bigl\| \Phi_{\ast} \circ \alpha'(s) \bigr\|_{\F, \Phi \circ \alpha(s)} \; ds,
	\end{equation}
where $\alpha'(s) = \alpha_{\ast} ( d/du)_{u=s}$. But by \eqref{E:operator.norm.of.Phi.ast}
	\begin{equation}   \label{E:Riemann.norms.operator.norm}
			\bigl\| \Phi_{\ast} \circ \alpha'(s) \bigr\|_{\F, \Phi \circ \alpha(s)}
				\leq \bigl\| \alpha'(s) \bigr\|_{\D, \alpha(s)} \, | \Phi_{\ast}  |_{\alpha(s)}.
	\end{equation}

By part \ref{I:alpha'.shorter.longer.mu} of lemma \ref{L:coord.maps.are.Lip}, 
	\begin{equation}  \label{E:lwr.uppr.bnd.on.alpha'}
	           \blds{\mu} \geq \bigl\| \alpha'(s) \bigr\|_{\D, \alpha(s)}
			  \geq \blds{\mu}^{-1}.
	\end{equation}

We have, 
	\begin{equation}  \label{E:avg.size.Phi.alng.curve.Riem}
		\frac{1}{\text{length of } \alpha} 
		     \int_{\alpha[0, \lambda]}  | \Phi_{\ast,x}  | \, \Hm^{1}(dx) 
				= \frac{1}{\text{length of } \alpha} \int_{0}^{\lambda} 
				  | \Phi_{\ast, \alpha(s)}| \, \| \alpha'(s) \|_{\D, \alpha(s)} \, ds,   
	\end{equation}
where equality follows from the change of variables formula (Federer \cite[Theorem 3.2.5, pp.\ 244 and 282]{hF69}) 
(See also Giaquinta \emph{et al} \cite[Theorem 2, p.\ 75, Volume I]{mGgMjS98.cart.currents}).
Therefore, by \eqref{E:avg.size.Phi.alng.curve}, \eqref{E:avg.size.Phi.alng.curve.Riem}, \eqref{E:Riemann.norms.operator.norm}, \eqref{E:half.eps.length.int}, and \eqref{E:lngth.alph.leq.4.mu2.eta}, the average size of the derivative of $\Phi$ along the curve 
$\alpha$ is
	\begin{align*}
		\frac{1}{\text{length of } \alpha} \int_{0}^{\lambda} 
		             | \Phi_{\ast, \alpha(s)} | \,  \bigl\| \alpha'(s) \bigr\|_{\D, \alpha(s)} \, ds 
		   &\geq \frac{1}{\text{length of } \alpha} \int_{0}^{\lambda} 
				\bigl\| \Phi_{\ast} \circ \alpha'(s) \bigr\|_{\F, \Phi \circ \alpha(s)} \; ds  \\
		   &\geq \frac{1}{4 \blds{\mu}^{2} \eta} \, \epsilon/2.
	  \end{align*}
This, and \eqref{E:alpha.image.in.B.eta}, prove part (\ref{I:avg.size.bggr.C/eta}) of the proposition (since $\epsilon$ is independent of $\eta$).

We assert that for some constant $K > 0$, independent of $\eta$ and $r$,
	\begin{equation} \label{E:r.avg.dist.from.0.along.xi1}
		\int_{0}^{\lambda} \bigl| \xi_{1}(s) \bigr| \, ds \geq K \lambda r.
	\end{equation}
(We may take $K := \min \left\{ \tfrac{1}{32 \blds{\mu}^{2}},  \tfrac{1}{256 \blds{\mu}^{4}} \right\}$.) Thus, the average distance from the origin to $\xi_{1}$ is at least $K r$. For proof of \eqref{E:r.avg.dist.from.0.along.xi1} see below.

Recall that for now $\delta :=\delta_{\langle \cdot, \cdot \rangle}$. Now, by \eqref{E:phi.and.psi.are.Lip}, 
	\begin{align}  \label{E:|xi1|.leq.mu.delta}
		\bigl| \xi_{1}(s) \bigr| &= \bigl| \xi_{1}(s) - 0 \bigr| \notag \\ 
			&= \bigl| \xi_{1}(s) - \varphi(x)\bigr| \\ 
			&= \bigl| \varphi \circ \alpha(s) - \varphi(x)\bigr|  \notag  \\
			&\leq \blds{\mu} \delta \bigl[ \alpha(s), x \bigr]. \notag 
	\end{align}
By \eqref{E:lwr.uppr.bnd.on.alpha'}, $\blds{\mu} \geq \| \alpha'(s) \| \geq \blds{\mu}^{-1}$. Therefore, analogously to \eqref{E:avg.size.Phi.alng.curve.Riem}, by \eqref{E:alpha.length.integral}, \eqref{E:lwr.uppr.bnd.on.alpha'}, \eqref{E:|xi1|.leq.mu.delta}, \eqref{E:r.avg.dist.from.0.along.xi1}, and \eqref{E:eta/2mu.leq.r.leq.mu.eta},
	\begin{align}   \label{E:avg.dist.to.x.bound}
		\text{average distance from } \alpha \text{ to } x
		&= \frac{ \int_{0}^{\lambda} \delta \bigl[ \alpha(s), x \bigr] \| \alpha'(s) \|_{\D, \alpha(s)} \, ds}
		        { \int_{0}^{\lambda} \| \alpha'(s) \|_{\D, \alpha(s)} \, ds} \notag \\
		&\geq  \frac{\blds{\mu}^{-1} \int_{0}^{\lambda} \delta \bigl[ \alpha(s), x \bigr]  
		  \, ds}
		            {\blds{\mu} \int_{0}^{\lambda} \, ds} \notag \\
		&\geq \frac{ \int_{0}^{\lambda} \bigl| \xi_{1}(s) \bigr|  \, ds}
		            {\blds{\mu}^{3} \lambda} \\
		&\geq \frac{ K \lambda r}
		            {\blds{\mu}^{3} \lambda} \notag \\
		&= \blds{\mu}^{-3} K r \notag \\
		& \geq \blds{\mu}^{-4} K \eta/2. \notag
	\end{align}
This proves part (\ref{I:avg.dist.bggr.C.eta}) for $\delta$.

Now let $\delta$ be any metric on $\D$ s.t.\ \eqref{E:two.deltas.locally.equivalent} holds. Let $\Hm^{1, 1}$ denote 1-dimensional Hausdorff measure computed relative to $\delta_{\langle \cdot, \cdot \rangle}$ and let $\Hm^{2, 1}$ denote 1-dimensional Hausdorff measure computed relative to $\delta$. For $\eta > 0$, let $\mcl{B}_{1,\eta}(x) \subset \D$ denote ball centered at $x$ with radius $\eta$ computed relative to $\delta_{\langle \cdot, \cdot \rangle}$ and let $\mcl{B}_{2,\eta}(x) \subset \D$ denote ball centered at $x$ with radius $\eta$ computed relative to $\delta$. 

Pick $\eta_{0} = \eta_{0}(x) > 0$ so small that $\mcl{B}_{1, K(\clU) \eta_{0}}(x) \subset \clU$. Then, by \eqref{E:two.deltas.locally.equivalent}, we have $\mcl{B}_{2, \eta_{0}}(x) \subset \clU$. Let $H_{1} = \varphi \bigl( \mcl{B}_{1, K(\clU) \eta_{0}}(x) \bigr) \subset G$. Let $H_{2} = \varphi \bigl( \mcl{B}_{2, \eta_{0}}(x) \bigr) \subset G$. By making $\eta_{0}$ smaller if necessary, we may choose $r_{0} \in (0, \infty)$ so that 
	\[
		\overline{H}_{2} \subset \overline{H}_{1} \subset B_{r_{0}}(0) \subset G. 
	\] 
Let $\eta \in (0, \eta_{0})$. Now go through the construction above for $\delta_{\langle \cdot, \cdot \rangle}$ with $\eta$ replaced by $\eta/ K(\clU)$. Then 
	\begin{equation*}
		\text{Image of } \alpha \subset \mcl{B}_{1, \eta/K(\clU)}'(x) 
		  \subset \mcl{B}_{2, \eta}'(x).
	\end{equation*}

By lemma \ref{L:metric.dominance.means.Haus.meas.dominance}, making $\clU$ smaller if necessary, there is a Borel measurable function $M : \D \to (0, \infty)$ s.t.\ $M$ and $1/M$ are both bounded on $\clU$ and, using an obvious notation,
	\begin{equation}  \label{E:M.Hm1.12}
		M(y)^{-1} \Hm^{2,1}(dy) \leq \Hm^{1,1}(dy) \leq M(y) \Hm^{2,1}(dy), \quad y \in \clU.
	\end{equation}
Let $b := := b(\clU) := \sup_{y \in \clU} M(y) < \infty$. Let $A := \alpha \bigl( [0,\lambda] \bigr) \subset \clU \subset \D$. Thus, by \eqref{E:M.Hm1.12},
	\begin{equation} \label{E:H21.A.H11.A}
		\Hm^{2,1}(A) = \int_{A} \Hm^{2,1}(dy) \leq \int_{A} M(y) \Hm^{1,1}(dy) \leq b \Hm^{1,1}(A).
	\end{equation}
Hence, by \eqref{E:M.Hm1.12}, \eqref{E:H21.A.H11.A}, and 
lemma \ref{L:meas.dominance.means.integral.dominance} below,
	\begin{multline*}
		\text{average size of the derivative of } \Phi \text{ along } \alpha 
		  \text{ w.r.t. } \delta \\
			\begin{aligned}
				=& \frac{ \int_{A}  | \Phi_{\ast,x}  | \, \Hm^{2,1}(dy ) }{\Hm^{2,1}(A) } \\
				\geq& \frac{ \int_{A}  | \Phi_{\ast,x}  | \, M(y)^{-1} \, \Hm^{1,1}(dy ) }
				  { b \Hm^{1,1}(A) } \\
				\geq& b^{-1} \frac{ \int_{A}  | \Phi_{\ast,x}  | \, \Hm^{1,1}(dy ) }
				  { b \Hm^{1,1}(A) } \\
				=& b^{-2} \text{average size of the derivative of } 
				  \Phi \text{ along } \alpha \text{ w.r.t. } 
				\delta_{\langle \cdot, \cdot \rangle} \\
				\geq& b^{-2} C(x)/ \bigl( \eta / K(\clU) \bigr),
			\end{aligned}
	\end{multline*}
since part \ref{I:avg.size.bggr.C/eta} of the proposition holds for $\delta_{\langle \cdot, \cdot \rangle}$ and we are applying it with $\eta / K(\clU)$ in place of $\eta$. Now, by Ash \cite[Theorem A5.15, p.\ 387]{rbA72}, we may assume the cover by sets $\clU$ is countable and locally finite: $\clU_{1}, \clU_{2}, \ldots$. Replace $C(x)$ by 
	\begin{equation*}
		\min \bigl\{ b(\clU_{i})^{-2} K(\clU_{i}) C(x) : i = 1, 2, \ldots s.t.\ x \in \clU_{i} \bigr\}.
	\end{equation*}
(Thus, in the preceding the minimization is over $i$, not $x$, which is fixed.) Therefore, part \ref{I:avg.size.bggr.C/eta} of the proposition holds for $\delta$.

Similarly, we have,
	\begin{multline*}
		\text{average distance from } \alpha \text{ to } x \text{ w.r.t. } \delta \\
			\begin{aligned}
				=& \frac{ \int_{A}  \delta(y, x) \, \Hm^{2,1}(dy ) }{\Hm^{2,1}(A) } \\
				\geq& \frac{ \int_{A}  K(\clU)^{-1} \delta_{\langle \cdot, 
				  \cdot \rangle}(y, x) M(y)^{-1} \, 
				        \Hm^{1,1}(dy ) }{ b \Hm^{1,1}(A) } \\
				\geq& b^{-1} K(\clU)^{-1} \frac{ \int_{A}  \delta_{\langle \cdot, 
				  \cdot \rangle}(y, x) \, 
				       \Hm^{1,1}(dy ) }{ b \Hm^{1,1}(A) } \\
				=& (b^{2}K(\clU))^{-1} \text{average distance from } 
				  \alpha \text{ to } x \text{ w.r.t. } 
				      \delta_{\langle \cdot, \cdot \rangle} \\
				\geq& (b^{2}K(\clU))^{-1} C(x) \bigl( \eta / K(\clU) \bigr),
			\end{aligned}
	\end{multline*}
since part \ref{I:avg.dist.bggr.C.eta} of the proposition holds for $\delta_{\langle \cdot, \cdot \rangle}$. Therefore, part \ref{I:avg.dist.bggr.C.eta} of the proposition holds for $\delta$.
   \end{proof}   
   
   \begin{lemma} \label{L:0.star.int.means.0.a.e.}
Let $\phi$ be a measure on a set $X$ and let $f : X \to [0, + \infty]$. If
	\begin{equation}  \label{E:f.star.int.=.0}
		\int^{\ast} f \, d \phi = 0
	\end{equation}
then $f = 0$ $\phi$-almost everywhere. (See Federer \cite[2.4.2, p.\ 81]{hF69} for definition.)
   \end{lemma}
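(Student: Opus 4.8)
The plan is to reduce the claim to showing that each super-level set $A_{n} := \{x \in X : f(x) > 1/n\}$ is $\phi$-null. Granting that, since $f \geq 0$ one has $\{x \in X : f(x) \neq 0\} = \bigcup_{n=1}^{\infty} A_{n}$, and countable subadditivity of the (outer) measure $\phi$ (part of the definition of a measure in Federer's sense) gives $\phi(\{f \neq 0\}) \leq \sum_{n=1}^{\infty} \phi(A_{n}) = 0$, which is exactly the assertion that $f = 0$ $\phi$-almost everywhere.

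So the real work is to prove $\phi(A_{n}) = 0$ for each $n \geq 1$. First I would record the pointwise inequality $\tfrac{1}{n} \chi_{A_{n}} \leq f$ on $X$ (immediate on $A_{n}$, and on its complement because $f \geq 0$). Monotonicity and positive homogeneity of the upper integral (Federer \cite[p.\ 81]{hF69}) then yield $\tfrac{1}{n} \int^{\ast} \chi_{A_{n}} \, d\phi \leq \int^{\ast} f \, d\phi = 0$, hence $\int^{\ast} \chi_{A_{n}} \, d\phi = 0$. It then remains to compare this upper integral with $\phi(A_{n})$: for any $\phi$-measurable $g$ with $g \geq \chi_{A_{n}}$ ($\phi$-a.e.), the $\phi$-measurable set $\{g \geq 1\}$ contains $A_{n}$ up to a $\phi$-null set, so $\phi(A_{n}) \leq \phi(\{g \geq 1\}) \leq \int g \, d\phi$; taking the infimum over all such $g$ gives $\phi(A_{n}) \leq \int^{\ast} \chi_{A_{n}} \, d\phi = 0$, as needed.

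I do not anticipate any substantive obstacle here. The one point to stay alert to is that I should \emph{not} appeal to the identity $\int^{\ast} \chi_{A} \, d\phi = \phi(A)$, which can fail when $\phi$ is not regular; only the one-sided bound $\phi(A) \leq \int^{\ast} \chi_{A} \, d\phi$ is needed, and that holds with no hypotheses on $\phi$ beyond its being a measure in the sense of Federer.
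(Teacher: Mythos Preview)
Your proof is correct and follows essentially the same route as the paper's: both reduce to the super-level sets $A_{n}$ and then bound $\phi(A_{n})$ from above by the integral of any upper function, the only difference being that the paper argues by contradiction working directly with an upper function for $f$, while you first pass to $\int^{\ast}\chi_{A_{n}}=0$ via monotonicity and homogeneity. One small terminological point: in Federer's definition the ``$g$'' in your last step should be a \emph{step} function (countable image, $\phi$-measurable level sets) rather than a general $\phi$-measurable function, but your argument goes through verbatim for such $g$.
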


  \begin{proof} 
Let $A := f^{-1}(0, +\infty] \subset X$. Suppose $\delta := \min \bigl\{\phi (A), 1 \bigr\} > 0$. For $n = 1, 2, \ldots$, 
let $A_{n} := f^{-1}[1/n, +\infty]$. Then
	\begin{equation*}
		A = \bigcup_{n} A_{n}.
	\end{equation*}
Hence,  
	\begin{equation*}
		0 < \phi(A) \leq \lim_{n \to \infty} \phi ( A_{n} ).
	\end{equation*}
Thus, for some $n$ we have $\phi ( A_{n} ) > \delta/2$. (Subadditivity is part 
of Federer's \cite[2.1.2, p.\ 53]{hF69} definition of measure. E.g., Hausdorff measures are subadditive. See \eqref{E:Hm.is.mono.and.count.subadditive}.)

If there are no upper functions (Federer \cite[2.4.2, p.\ 81]{hF69}) for $f$, then $\int^{\ast} f \, d \phi = \infty$, contradicting \eqref{E:f.star.int.=.0}. Let $u : X \to \RR$ be an upper function for $f$. Then $u(x) \geq 1/n$ for $\phi$-almost all $x \in A_{n}$. Hence, 
	\begin{equation*}
		\sum_{y \in \RR} y \cdot \phi \bigl[ u^{-1}(y) \bigr] \geq \frac{\delta}{2 n} > 0.
	\end{equation*}
But this also contradicts \eqref{E:f.star.int.=.0}. It follows that $f = 0$ $\phi$-almost everywhere, as desired.
 \end{proof}

\begin{lemma}  \label{L:meas.dominance.means.integral.dominance}
Let $(X, \X)$ be a measurable space, let $\mu_{1}$, $\mu_{2}$ be two measures on $(X, \X)$ and let $M : X \to [0, + \infty]$ be Borel measurable and satisfy
	\begin{equation}  \label{E:mu2.leq.M.mu1}
		\mu_{2}(A) \leq \int_{A} M(x) \, \mu_{1}(dx), \text{ for every } A \in \X.
	\end{equation}
Then if $f : X \to [0, + \infty]$ is Borel then
	\begin{equation*}  \label{E:mu2.int.leq.M.mu1.int}
		\int f(x) \, \mu_{2}(dx) \leq \int M(x) f(x) \, \mu_{1}(dx).
	\end{equation*}
\end{lemma}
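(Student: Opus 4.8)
The plan is the standard ``indicators $\to$ simple functions $\to$ monotone limits'' bootstrap, since hypothesis \eqref{E:mu2.leq.M.mu1} is precisely the desired inequality for $f = \mathbf{1}_{A}$.

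First I would observe that \eqref{E:mu2.leq.M.mu1} reads $\int \mathbf{1}_{A}(x)\,\mu_{2}(dx) \le \int M(x)\mathbf{1}_{A}(x)\,\mu_{1}(dx)$ for every $A \in \mcl{X}$. Next, for a nonnegative simple Borel function $s = \sum_{i=1}^{n} c_{i}\mathbf{1}_{A_{i}}$ with $c_{i} \ge 0$ and $A_{i} \in \mcl{X}$, linearity of the integral and \eqref{E:mu2.leq.M.mu1} give
	\begin{equation*}
		\int s \, d\mu_{2} = \sum_{i=1}^{n} c_{i}\,\mu_{2}(A_{i})
			\le \sum_{i=1}^{n} c_{i} \int M \mathbf{1}_{A_{i}} \, d\mu_{1}
				= \int M s \, d\mu_{1},
	\end{equation*}
where the interchange of finite sum and integral on the right is justified because $M\mathbf{1}_{A_{i}} \ge 0$.

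Then, for a general Borel $f : X \to [0,+\infty]$, I would choose an increasing sequence $s_{n}$ of nonnegative simple Borel functions with $s_{n} \uparrow f$ pointwise (the usual dyadic truncation construction). Since $M \ge 0$, we also have $M s_{n} \uparrow M f$ pointwise (using the convention $0 \cdot \infty = 0$, so that the limit is correct even on $\{M = +\infty\}$ and on $\{f = +\infty\}$). Applying the monotone convergence theorem to the sequence $s_{n}$ under $\mu_{2}$ and to the sequence $Ms_{n}$ under $\mu_{1}$, together with the simple-function inequality above, yields
	\begin{equation*}
		\int f \, d\mu_{2} = \lim_{n \to \infty} \int s_{n} \, d\mu_{2}
			\le \lim_{n \to \infty} \int M s_{n} \, d\mu_{1} = \int M f \, d\mu_{1},
	\end{equation*}
which is the assertion of the lemma.

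There is no real obstacle here; the only point requiring a word of care is the bookkeeping around $+\infty$ values of $M$ and $f$ (the convention $0 \cdot \infty = 0$ and the fact that $Ms_{n} \uparrow Mf$ still holds pointwise on the relevant sets), which is handled cleanly by invoking monotone convergence for $[0,+\infty]$-valued functions rather than trying to split cases.
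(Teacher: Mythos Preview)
Your proof is correct and follows essentially the same route as the paper: approximate $f$ from below by nonnegative simple (step) functions, apply \eqref{E:mu2.leq.M.mu1} to get the inequality for each approximant, and pass to the limit via monotone convergence. The only difference is that you spell out the simple-function step and the $0\cdot\infty$ bookkeeping explicitly, whereas the paper just cites the standard simple-function approximation and says ``clearly.''
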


\begin{proof}
Let $f : X \to [0, + \infty]$ be Borel.  Then by Ash \cite[Theorem 1.5.5(a), p.\ 38]{rbA72}, there exists a sequence, $\{ f_{n} \}$ of non-negative step functions increasing everywhere to $f$.  Then by \eqref{E:mu2.leq.M.mu1}, clearly
	\[
		\int f_{n}(x) \, \mu_{2}(dx) \leq \int M(x) f_{n}(x) \, \mu_{1}(dx)
	\]
Let $n \uparrow \infty$ and apply Monotone Convergence (Ash \cite[Theorem 1.6.2, p.\ 44]{rbA72}).
\end{proof}

\begin{proof}[Proof of \eqref{E:r.avg.dist.from.0.along.xi1}]
We use the following lemma, proved below.

  \begin{lemma}  \label{L:avg.dist.from.strght.line}
Let $N = 2, 3, \ldots$ and let $x, y \in \RR^{N}$ be distinct. Consider the straight line (parametrized by arclength) joining $x$ to $y$:
	\begin{equation*}
		\xi(s) := x + |y-x|^{-1} s (y-x), \quad 0 \leq s \leq |y-x|
	\end{equation*}
Then the average vector length along this line satisfies
	\begin{equation} \label{E:avg.dist.from.0.bnd}
		|y-x|^{-1} \int_{0}^{|y-x|} \bigl| \xi(s) \bigr| \, ds \geq \tfrac{1}{8} 
		  \max \bigl\{ |x|, |y| \bigr\}.
	\end{equation}
  \end{lemma}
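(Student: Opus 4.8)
The plan is completely elementary. First I would observe that the left side of \eqref{E:avg.dist.from.0.bnd} is symmetric under interchanging $x$ and $y$: replacing $\xi(s)$ by $\xi(|y-x|-s)$ merely reverses the orientation of the segment and so changes neither the integral nor the right side of \eqref{E:avg.dist.from.0.bnd}. Hence I may assume without loss of generality that $|y| \ge |x|$, so that $\max\{|x|,|y|\} = |y|$ and it suffices to bound the average below by $\tfrac{1}{8}|y|$.

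Next I would pass to the affine parametrization $t \mapsto (1-t)x + ty$ of the segment by the substitution $s = |y-x|\,t$, $t \in [0,1]$; since $\xi(s) = (1-t)x + ty$ when $s = |y-x|t$, this converts the left side of \eqref{E:avg.dist.from.0.bnd} into $\int_{0}^{1} \bigl| (1-t)x + ty \bigr| \, dt$. Then I restrict the integral to the sub-interval $[3/4,1]$ and apply the reverse triangle inequality. For $t \in [3/4,1]$ we have $1-t \le \tfrac14 \le t$, so, using $|x| \le |y|$,
\[
   \bigl| (1-t)x + ty \bigr| \;\ge\; t|y| - (1-t)|x| \;\ge\; t|y| - (1-t)|y| \;=\; (2t-1)|y| \;\ge\; \tfrac12 |y| .
\]
Integrating this over $[3/4,1]$, an interval of length $\tfrac14$, gives
\[
   |y-x|^{-1} \int_{0}^{|y-x|} \bigl| \xi(s) \bigr| \, ds \;=\; \int_{0}^{1} \bigl| (1-t)x + ty \bigr| \, dt \;\ge\; \tfrac14 \cdot \tfrac12 |y| \;=\; \tfrac18 \max\{|x|,|y|\},
\]
which is \eqref{E:avg.dist.from.0.bnd}. (The identical argument over $[1/2,1]$ in fact yields the constant $\tfrac14$, since $\int_{1/2}^{1}(2t-1)\,dt = \tfrac14$, but $\tfrac18$ is all that is needed downstream.)

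There is essentially no obstacle here; the only step deserving a moment's explicit attention is the symmetry reduction to the case $|y|\ge|x|$, which I would record rather than leave implicit, after which the bound is immediate from the reverse triangle inequality on the far quarter of the segment.
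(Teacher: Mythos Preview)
Your proof is correct and is genuinely simpler than the paper's. The paper proceeds by first decomposing $x = au + v$, $y = bu + v$ with $u = |y-x|^{-1}(y-x)$ and $v \perp u$, reducing the integrand to $\sqrt{c^{2}+t^{2}}$ with $c = |v|$, then bounding $\sqrt{c^{2}+t^{2}} \ge \tfrac{1}{2}(c+|t|)$ and splitting into the cases $0 \le a \le b$ and $a < 0 < b$, computing the elementary integrals in each case. Your approach bypasses the orthogonal decomposition and the case split entirely: after the same symmetry reduction to $|y|\ge|x|$, you simply restrict to the quarter of the segment nearest $y$ and apply the reverse triangle inequality pointwise. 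This is shorter and uses nothing beyond $|A+B|\ge |A|-|B|$; the paper's route, on the other hand, makes the planar geometry explicit and in the first case actually recovers the sharper constant $\tfrac14$. As you note, your method also yields $\tfrac14$ if one integrates $(2t-1)$ over $[1/2,1]$ instead of $[3/4,1]$, so nothing is lost.
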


Since $y_{3}' \notin \varphi \bigl( \mcl{B}_{\eta/2}(x) \bigr)$, by \eqref{E:eta/2mu.leq.r.leq.mu.eta}, 
	\begin{equation}  \label{E:y3.lngth.lwr.bnd}
		| y_{3}' | \geq \frac{r}{2 \blds{\mu}^{2} }.
	\end{equation}
Recall that $\lambda := | y_{2}' - y_{1} | + | y_{3}' - y_{2}' |$. 
Then, since $y_{1}, y_{2}', y_{3}' \in B_{r}(0)$,
	\begin{equation}  \label{E:lambda.leq.4r}
		\lambda = | y_{2}' - y_{1} | + | y_{3}' - y_{2}' | \leq 4 r.
	\end{equation}
Applying lemma \ref{L:avg.dist.from.strght.line} to each of the two segments 
  in $\xi_{1}$, we have
	\begin{equation}   \label{E:xi.lngth.smpl.lwr.bnd}
		8 \int_{0}^{\lambda} \bigl| \xi_{1}(s) \bigr| \, ds 
			\geq |y_{2}' - y_{1}| \max \bigl\{ |y_{1}|, |y_{2}'| \bigr\} 
			     + |y_{3}' - y_{2}'| \max \bigl\{ |y_{2}'|, |y_{3}'| \bigr\}.
	\end{equation}

Suppose $|y_{3}' - y_{2}'| < \tfrac{r}{4 \blds{\mu}^{2}}$. Then $|y_{2}'| \geq \tfrac{r}{4 \blds{\mu}^{2}}$ by \eqref{E:y3.lngth.lwr.bnd}. So by \eqref{E:lambda.leq.4r} and \eqref{E:xi.lngth.smpl.lwr.bnd},
	\begin{equation}  \label{E:4.int.bnd.w/.y2'.close.to.y3}
		8 \int_{0}^{\lambda} \bigl| \xi_{1}(s) \bigr| \, ds \geq |y_{2}' - y_{1}| |y_{2}'| 
		  + |y_{3}' - y_{2}'| |y_{2}'|  
			\geq \frac{ \lambda r}{4 \blds{\mu}^{2}}.
	\end{equation}
	
Now suppose $|y_{3}' - y_{2}'| \geq \tfrac{r}{4 \blds{\mu}^{2}}$. Then by \eqref{E:lambda.leq.4r} and \eqref{E:y3.lngth.lwr.bnd}, we have
	\[
		\frac{ |y_{3}' - y_{2}' |}{ \lambda } \geq \frac{1}{16 \blds{\mu}^{2}}.
	\]
Hence, by \eqref{E:xi.lngth.smpl.lwr.bnd},
	\begin{equation} \label{E:4.int.bnd.w/.y2'.far.from.y3}
		8 \int_{0}^{\lambda} \bigl| \xi_{1}(s) \bigr| \, ds \geq |y_{3}' - y_{2}'| |y_{3}'|  
			\geq \frac{ \lambda r }{32 \blds{\mu}^{4}}.
	\end{equation}

\eqref{E:r.avg.dist.from.0.along.xi1} follows from \eqref{E:4.int.bnd.w/.y2'.close.to.y3} and \eqref{E:4.int.bnd.w/.y2'.far.from.y3}.

\end{proof}

\begin{proof}[Proof of lemma \ref{L:avg.dist.from.strght.line}]
 Suppose $x \ne y$. WLOG $|y| \geq |x|$. Let $u = |y-x|^{-1} (y-x)$. So $u$ is a unit vector parallel to $\xi(s) $. There exist $v \in \RR^{N}$ perpendicular to $u$ and $a, b \in \RR$ s.t., $a u + v = x$ and $b u + v = y$. Then not both $a$ and $b$ are 0. Now $a^{2} + |v|^{2} = |x|^{2} \leq |y|^{2} = b^{2} + |v|^{2}$, so $|b| \geq |a|$. Moreover, by the (Cauchy-)Schwarz inequality (Stoll and Wong \cite[Theorem 3.1, p.\ 79]{rrSetW68.LinearAlgebra}), we have 
	\[
		b = u \cdot y =|y-x|^{-1} \bigl( |y|^{2} - x \cdot y \bigr) \geq 0.
	\]  
I.e., $b \geq |a|$. We also have 
	\[
		|y-x| = b-a.
	\]

We may reparametrize $\xi$  by defining $t = s+a$. Then
	\begin{equation*}
		\xi(t) :=  v + t u, \quad a \leq t \leq b
	\end{equation*} 
Let $c = |v| \geq 0$. Thus, we wish to bound $|y-x|^{-1} \int_{a}^{b} \sqrt{c^{2} + t^{2}} \, dt$. Note that
	\[
		\sqrt{c^{2} + t^{2}} \geq \tfrac{1}{\sqrt{2}} \bigl( c + |t| \bigr) 
		       \geq \tfrac{1}{2} \bigl( c + |t| \bigr), \quad t \in \RR.
	\] 
We thus have
	\begin{equation}  \label{E:int.of.sqrt.lwr.bnd}
		\int_{a}^{b} \sqrt{c^{2} + t^{2}} \, dt \geq \tfrac{1}{2} \int_{a}^{b} \bigl( c + |t| \bigr) \, dt.
	\end{equation}

First, suppose $0 \leq a \leq b$. Then 
	\begin{align}  \label{E:lwr.bnd.when.a.geq.0}
		\int_{a}^{b} \bigl( c + |t| \bigr) \, dt &= (b-a) \bigl[ c + \tfrac{1}{2} (b+a) \bigr] 
		  \notag \\
			 &= \tfrac{1}{2} |y-x| \bigl[ (c+a) + (c+b) \bigr] \notag \\
			 &= \tfrac{1}{2} |y-x| \Bigl[ \bigl( |v|+a |u| \bigr) + \bigl( |v|+b |u| \bigr) \Bigr] \\
			 &\geq \tfrac{1}{2} |y-x| \bigl( |x|+ |y| \bigr) \notag \\
			 &\geq |y-x| \cdot \tfrac{1}{2} \max \bigl\{ |x|, |y| \bigr\}. \notag 
	\end{align}
	
Next, suppose $a < 0 < b$. Then
	\begin{align}  \label{E:lwr.bnd.when.a.<.0}
		\int_{a}^{b} \bigl( c + |t| \bigr) \, dt 
		        &= \int_{a}^{0} ( c - t ) \, dt + \int_{0}^{b} ( c + t ) \, dt  \notag \\
		        &= -a \bigl( c - \tfrac{1}{2} a \bigr) + b \bigl( c + \tfrac{1}{2} b \bigr)  \\
		        &\geq -\tfrac{1}{2} a \bigl( c -  a \bigr) + \tfrac{1}{2} b \bigl( c + b \bigr) \notag. 
	\end{align}
Now, $|y-x| = b - a$. Therefore, since $b \geq |a| = -a$, we have $\tfrac{b}{b-a} \geq \tfrac{1}{2}$. Since, $a < 0$, we have $\tfrac{-a(c-a)}{(b-a)} \geq 0$. We also have $c + b = |v| + |b u| \geq |v + b u| = |y| = \max\{ |x|, |y| \}$. Therefore, from \eqref{E:lwr.bnd.when.a.<.0},
	\begin{align}  \label{E:lwr.bnd.whn.a.<.0.in.trms.x.y}
		\int_{a}^{b} \bigl( c + |t| \bigr) \, dt 
			&\geq \tfrac{1}{2} |y-x| \left[ \frac{-a}{b-a} (c-a) 
			        + \frac{b}{b-a} (c+b) \right] \notag \\
			&\geq \tfrac{1}{2} |y-x| \cdot \tfrac{1}{2} (c+b) \\
			&\geq \tfrac{1}{4} |y-x| \max\{ |x|, |y| \}. \notag 
	\end{align}
The lemma follows from \eqref{E:int.of.sqrt.lwr.bnd}, \eqref{E:lwr.bnd.when.a.geq.0}, and \eqref{E:lwr.bnd.whn.a.<.0.in.trms.x.y}.
\end{proof}

The following was adapted from \cite[Appendix A]{spE.fact.anal.long}.

   \begin{lemma} \label{L:Eigen.cont.}
      If $M$ is a symmetric $q \times  q$ (real) matrix ($q$, a given positive integer), let 
      $\Lambda (M) = \bigl( \lambda_{1}(M), \ldots, \lambda _{q}(M) \bigr)$, where 
      $\lambda _{1}(M) \geq \ldots \geq \lambda _{q}(M)$ are the eigenvalues of $M$. 
      Let $\| M \|$ be the Frobenius or Hilbert-Schmidt norm \eqref{E:matrix.norm}, 
      $\| M \| = \sqrt{trace \, M M^{T}}$. Then $\Lambda$ 
      is a continuous function (w.r.t. $\| \cdot \|$). Moreover, if $N$ and 
      $M_{1}, M_{2},\ldots$ are all symmetric $q \times  q$ (real) matrices s.t.\ 
      $M_{j} \rightarrow N$ (w.r.t.\ $\| \cdot \|$, i.e., entrywise) as 
      $j \rightarrow \infty $, let $Q_{j}^{q \times q}$ be a matrix whose rows comprise an orthonormal basis of $\mathbb{R}^{q}$ consisting of eigenvectors of $M_{j}$. Then there is a subsequence $j(n)$
      s.t.\ $Q_{j(n)}$ converges to a matrix whose rows comprise a basis 
      of $\mathbb{R}^{q}$ consisting
      of unit eigenvectors of $N$.
   \end{lemma}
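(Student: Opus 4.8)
The plan is to prove the two assertions separately. For the continuity of $\Lambda$, I would invoke a standard perturbation bound for eigenvalues of symmetric matrices — the Weyl / Hoffman--Wielandt inequality — which states that for symmetric $q\times q$ matrices $M,N$ one has $\sum_{i=1}^{q}\bigl(\lambda_i(M)-\lambda_i(N)\bigr)^2 \le \|M-N\|^2$ where $\|\cdot\|$ is the Frobenius norm. This immediately gives $\bigl|\Lambda(M)-\Lambda(N)\bigr| \le \|M-N\|$, so $\Lambda$ is (1-Lipschitz, hence) continuous. Alternatively, if one prefers to stay self-contained, one can use the Courant--Fischer min-max characterization $\lambda_k(M)=\max_{\dim V=k}\min_{0\ne v\in V}\frac{v^{T}Mv}{v^{T}v}$ and the elementary estimate $|v^{T}(M-N)v|\le \|M-N\|\,|v|^2$ to conclude $|\lambda_k(M)-\lambda_k(N)|\le\|M-N\|$ for each $k$. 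Either route is routine.

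For the second assertion, suppose $M_j\to N$ entrywise and let $Q_j$ be an orthogonal matrix whose rows form an orthonormal eigenbasis of $M_j$. The key observation is that the set $O(q)$ of $q\times q$ orthogonal matrices is compact (it is a closed and bounded subset of $\RR^{q\times q}$: closed because the defining equations $QQ^{T}=I$ are continuous, bounded because every row has norm $1$). Hence the sequence $\{Q_j\}$ has a convergent subsequence $Q_{j(n)}\to Q_{\infty}$, and $Q_{\infty}\in O(q)$, so its rows form an orthonormal basis of $\RR^{q}$. It remains to check that each row of $Q_{\infty}$ is a (unit) eigenvector of $N$. Let $w_{i}^{(n)}$ denote the $i$-th row of $Q_{j(n)}$, with $M_{j(n)}\,w_{i}^{(n)} = \lambda_i(M_{j(n)})\,w_{i}^{(n)}$ (reading rows as column vectors after transposing, so that $w_i^{(n)}$ is an eigenvector of $M_{j(n)}$). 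As $n\to\infty$ we have $w_{i}^{(n)}\to w_{i}^{(\infty)}$ (the $i$-th row of $Q_{\infty}$), $M_{j(n)}\to N$, and, by the first part of the lemma, $\lambda_i(M_{j(n)})\to\lambda_i(N)$. Passing to the limit in the eigenvalue equation, which is legitimate because matrix--vector multiplication and scalar multiplication are continuous, yields $N\,w_{i}^{(\infty)} = \lambda_i(N)\,w_{i}^{(\infty)}$. Since $|w_i^{(\infty)}|=1$, each $w_i^{(\infty)}$ is a unit eigenvector of $N$, and together they form an orthonormal basis.

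The only subtlety — and the one place a little care is needed — is the bookkeeping in the limiting eigenvalue equation: one must make sure the eigenvectors of $M_{j(n)}$ are matched with the correct eigenvalues (i.e.\ the ordering of rows of $Q_j$ is compatible with $\lambda_1\ge\cdots\ge\lambda_q$, or at least that we track which row goes with which eigenvalue), and then invoke the already-proved continuity of $\Lambda$ to pass $\lambda_i(M_{j(n)})\to\lambda_i(N)$ to the limit. No genuine obstacle arises: both halves are consequences of compactness of $O(q)$ plus elementary perturbation estimates, and the result is entirely standard. I would present it in roughly the two paragraphs above, citing a linear algebra reference (e.g.\ the Courant--Fischer theorem) for the min-max inequality used in the continuity step.
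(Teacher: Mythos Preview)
Your proof is correct, but the route differs from the paper's. You prove continuity of $\Lambda$ first, independently, by citing the Weyl/Hoffman--Wielandt inequality (or Courant--Fischer), and then use that result together with compactness of $O(q)$ to handle the eigenvector frames. The paper does the opposite: it proves both statements simultaneously by a bare-hands compactness argument. It extracts a subsequence along which each eigenvector $v_{j(n)i}\to v_i$ (compactness of $S^{q-1}$) and each eigenvalue $\lambda_i(M_{j(n)})\to\nu_i$ (boundedness), passes to the limit in the eigenvalue equation, and then spends most of its effort on a multiplicity count --- showing that the limits $\nu_1\ge\cdots\ge\nu_q$ are not merely \emph{among} the eigenvalues $\mu_1\ge\cdots\ge\mu_q$ of $N$ but equal to them with the correct multiplicities, by matching the dimensions of the eigenspaces. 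Continuity of $\Lambda$ is then deduced at the end via the ``every subsequence has a further subsequence'' trick.

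Your approach is cleaner and more modular: invoking Weyl's inequality gives a quantitative Lipschitz bound for free and lets you skip the multiplicity bookkeeping entirely. The paper's approach is more self-contained (no external perturbation theorem) but pays for it with the eigenspace-dimension argument. One small remark on your write-up: the statement does not assume the rows of $Q_j$ are ordered by eigenvalue, so the line $M_{j(n)}w_i^{(n)}=\lambda_i(M_{j(n)})w_i^{(n)}$ is not quite justified as written. You flag this yourself, and the fix is easy --- either note that the associated eigenvalues form a bounded sequence and pass to a further subsequence, or observe that the conclusion only requires the limiting rows to be \emph{some} eigenvectors of $N$, with no ordering claimed.
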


 \begin{proof}
  Let $M_{1}, M_{2},\ldots $ all be symmetric $q \times  q$ matrices. Suppose $M_{j} \rightarrow N$ as $j \rightarrow \infty$. Then $N$ is symmetric.
Let   $\mu _{i} = \lambda _{i}(N)$ ($i = 1, \ldots, q$). In particular, $\mu _{1} \geq \ldots \geq \mu _{q}$. Since   $M_{j} \rightarrow N $, $\{ M_{j} \}$ is bounded. Hence, $\bigl\{ (\lambda _{1}(M_{j}), \ldots, \lambda _{q}(M_{j})) \bigr\}$ is bounded (Marcus and Minc \cite[1.3.1, pp.\ 140--141]{mMhM64.MatrixThyIneq}). For each $n$, let   $v_{n1}, \ldots, v_{nq} \in \RR^{q}$ be orthonormal eigenvectors of $M_{n}$ corresponding to $\lambda _{1}(M_{n}), \ldots, \lambda _{q}(M_{n})$, resp. (Stoll and Wong \cite[Theorem 4.1, p.\ 207]{rrSetW68.LinearAlgebra}). 
In particular, $v_{n1}, \ldots, v_{nq}$ span $\RR^{q}$. Let $S^{q -1}$ be the $(q-1)$-sphere
	\[
		S^{q -1} = \{ x \in \RR^{q} : |x| = 1 \}.
	\]
Then $v_{n1}, \ldots, v_{nq} \in S^{q-1}$. By compactness of $S^{q -1}$ we may choose a subsequence, $\{ j(n) \}$, s.t.\  $v_{j(n)i}$ converges to some $v_{i} \in \mathbb{R}^{q}$ 
and $\lambda _{i}(M_{j(n)})$ converges to some $\nu_{i} \in \mathbb{R}$ as $n \rightarrow \infty$ 
($i = 1, \ldots, q$). The vectors $v_{1}, \ldots, v_{q}$ are orthonormal and therefore span $\RR^{q}$. Moreover, we have $M_{j(n)} v_{j(n)i} \to N v_{i}$ and $M_{j(n)} v_{j(n)i} = \lambda _{i}(M_{j(n)}) v_{j(n)i} \to \nu_{i} v_{i}$. Thus, $v_{1}, \ldots, v_{q}$ are orthonormal eigenvectors of $N$ with eigenvalues $\nu _{1} \geq \ldots \geq \nu _{q}$. 
Hence, $\{ \nu _{1}, \ldots, \nu _{q} \} \subset \{ \mu _{1}, \ldots, \mu _{q} \}$. But this does not take into account the multiplicity of the eigenvalues. Note that $v_{1}, \ldots, v_{q}$ must span $\RR^{q}$.

Let $\gamma_{1}, \ldots, \gamma_{k}$ be the set of \emph{distinct} values in $\{ \mu_{1}, \ldots, \mu_{q} \}$. Then $k \leq q$. By Stoll and Wong \cite[Theorem 4.1, p.\ 207]{rrSetW68.LinearAlgebra}, $\RR^{q} = S_{1} \oplus \cdots \oplus S_{k}$, where $S_{1}, \ldots, S_{k}$ are mutually orthogonal, $N x = \gamma_{j} x$ if $x \in S_{j}$, and $\dim S_{j}$ is the number of indices $i$ s.t.\ $\mu_{i} = \gamma_{j}$ ($j=1, \ldots, k$). Let $j = 1, \ldots, q$. If $\nu_{i} = \gamma_{j}$, then $v_{i} \in S_{j}$. But $v_{1}, \ldots, v_{q}$ are orthonormal. Hence, the number of $i$'s for which $\nu_{i} = \gamma_{j}$ is no larger than $\dim S_{j}$. But there are $q$ $\nu_{i}$'s and $\dim S_{1} + \cdots + \dim S_{k} = q$. Therefore, the number of $i$'s for which $\nu_{i} = \gamma_{j}$ is just $\dim S_{j}$. But as we just saw $\dim S_{j}$ is the number of indices $i$ s.t.\ $\mu_{i} = \gamma_{j}$. I.e., for every $j = 1, \ldots, k$, the number of $i$'s for which $\nu_{i} = \gamma_{j}$ is the same as the number of indices $i$ s.t.\ $\mu_{i} = \gamma_{j}$. So $\nu _{1}, \ldots, \nu _{q}$ are the same as $\mu _{1}, \ldots, \mu _{q}$ even taking multiplicity into account. 
  
 The preceding argument obviously goes through if, instead of $M_{1}, M_{2}, \ldots$, we had started with a  \emph{subsequence}
 of $M_{1}, M_{2}, \ldots$. Thus, any subsequence has a further subsequence 
 s.t.\ $\Lambda (M_{n})$ converges to $\Lambda (N)$ along that subsequence of a subsequence. 
  It follows that $\Lambda (M_{n}) \to \Lambda (N)$. I.e., $\Lambda$ is continuous.
  \end{proof}

  \begin{proof}[Proof of \eqref{E:Riem.metric.on.TD}] By Boothby \cite[Corollary (2.5), p.\ 183]{wmB75}, it suffices to show $inc_{\ast}$ is an immersion. 

Let $(\clU, \varphi)$ be a coordinate neighborhood of $\D$, 
let $V \subset \RR^{d} = \varphi(\clU)$, let $\psi = \varphi^{-1}$, and let 
$E_{i, \psi(z)} := \psi_{\ast} \left( \tfrac{\partial}{\partial y_{i}} \right) \restriction_{y=z}$ 
($i = 1, \ldots, d; z \in V$) be the coordinate frame on $(\clU, \varphi)$. 
Thus, for every $x \in \clU$, $E_{1,x}, \ldots, E_{d,x}$ are linearly independent.

Identify $\psi$ with $inc \, \circ \, \psi$. Then $\psi$ is $C^{\infty}$ and we may write $\psi = (\psi_{1}, \ldots, \psi_{k})$. Let $i = 1, \ldots, d$ and identify $E_{i}$ 
with $inc_{\ast}(E_{i})$. Then, regarding $\RR^{k}$ as its own coordinate neighborhood, by Boothby \cite[Theorem (1.6), p.\ 183]{wmB75}, if $y \in \clU$, 
    \begin{equation*}
      E_{i,y} = \sum_{j} \left( \frac{\partial \psi_{j}(z)}{\partial z_{i}} \right) \restriction_{z=\varphi(y)} \;
        \frac{\partial}{\partial \alpha_{j}} \restriction_{\alpha = y} .
    \end{equation*} 
As usual, we identify $E_{i,y}$ with 
$\bigl(\ldots, \bigl( \partial \psi_{j}(z)/\partial z_{i} \bigr) \restriction_{z=\varphi(y)}, \ldots  \bigr) \in \RR^{k}$. It is also convenient to denote this vector by $E_{i,z}$. The context should make it clear which usage we are employing. 

Let $\tilde{\clU} = \pi_{C}^{-1} (\clU)$ be the restriction of $T \D \restriction_{\Pf}$ 
to $\clU$. (See \eqref{E:pi.is.proj}.) Define 
$\tilde{\varphi} : (x, a_{1} E_{1,x} + \ldots + a_{d} E_{d,x}) \mapsto \bigl( \varphi(x), a_{1}, \ldots, a_{d} \bigr) \in V \times \RR^{d}$. Then $(\tilde{\clU}, \tilde{\varphi})$ is a generic coordinate neighborhood on $T \D$ (Boothby \cite[p.\ 331]{wmB75}). 
Let $\tilde{\psi} : V \times \RR^{d} \to \tilde{\clU}$ be the inverse of $\tilde{\varphi}$. 

Then, regarding $\RR^{2k}$ as its own coordinate neighborhood, by Boothby \cite[Definitions (4.3), p.\ 70 and (4.1), p.\ 69]{wmB75}) it suffices to show that $inc_{\ast} \circ \tilde{\psi}$ has full rank $2d$. As usual, let $D \psi(z)$ be the Jacobian matrix of $\psi$. It is just the $k \times d$ matrix whose $j^{th}$ column is $E_{j,z}$, regarding the latter as a column vector. 
By Boothby \cite[Corollaries (1.4), p.\ 108 and (1.7), p.\ 110]{wmB75}, $D \psi(z)$ is of full rank $d$. 

Identify $\tilde{\psi}$ and $inc_{\ast} \circ \tilde{\psi}$. Let $D \tilde{\psi}(z,a)^{2k \times 2d}$ be the Jacobian matrix of $\tilde{\psi}$. We then must show that $D \tilde{\psi}(z,a)$ has full rank $2d$. But it is easy to see that 
    \begin{equation*}
      D \tilde{\psi}(z,a) =
          \begin{pmatrix}
             D \psi(z)                              & 0^{k \times d} \\
             a D^{2} \psi(z)^{k \times d} & D \psi(z)
          \end{pmatrix} ,
    \end{equation*}
where $a D^{2} \psi(z)$ is a matrix involving $a$ and second derivatives of $\psi$. It does not matter what $a D^{2} \psi(z)$ is because it is immediate that $D \tilde{\psi}(z,a)$ is of full rank because 
$D \psi(z)$ is.
  \end{proof}

  \begin{proof}[Proof of lemma \ref{L:xi+.generates.manif.topol}] Let $\clU \subset \D$ be open and let $x \in \clU$. By the Normal Neighborhood theorem (Boothby \cite[Theorem (6.6), p.\ 335]{wmB75}), $x$ has a normal neighborhood 
$\mcl{N} \subset \D$. $\mcl{N} = Exp_{x}(W)$, where $W$ is a star-shaped neighborhood of 0 in $T_{x} \D$. Thus, $Exp_{x}^{-1}(\mcl{N} \cap \clU)$ is open in $W$. (By Boothby \cite[Theorem (7.1), p.\ 338]{wmB75}, $Exp_{x}$ is continuous.) The Riemannian metric 
on $\D$ induces a norm on $T_{x} \D$. Since the Riemannian metric on $\D$  is the restriction of the Euclidean metric 
on $\RR^{k}$ by \eqref{E:D.imbedded.in.Rk}, identifying $T_{x} \D$ in the obvious way, the norm on $W$ is just the Euclidean norm. (This is spelled out below.) Denote the norm 
by $| \cdot |$. Pick $\delta > 0$ so small that the open ball, $B_{\delta}(0)$ with radius $\delta$ about 0 in $W$ lies in $Exp_{x}^{-1}(\mcl{N} \cap \clU)$. 
Thus, $Exp \bigl[ B_{\delta}(0) \bigr| \subset \clU$. 
But from Boothby \cite[p.\ 333]{wmB75}  we know that if $X \in T_{x} \D$ 
then $\xi (x, Exp_{x} X) = |X|$. 
Hence, $Exp_{x} \bigl[ B_{\delta}(0) \bigr|$ is just a $\xi$-ball in $\D$. This, shows that 
the $\xi$-topology is finer than the usual manifold topology on $\D$. Now, $Exp_{x}$ is a diffeomorphism on $W$. This means that $Exp_{x} \bigl[ B_{\delta}(0) \bigr|$ is open in the manifold topology so the manifold topology is finer than the $\xi$-topology. q.e.d.

Now let $(\clU, \varphi)$ be a coordinate neighborhood of $\D$, 
let $V \subset \RR^{d} = \varphi(\clU)$, let $\psi = \varphi^{-1}$, and let 
$E_{i, \psi(z)} := \psi_{\ast} \left( \tfrac{\partial}{\partial y_{i}} \right) \restriction_{y=z}$ 
($i = 1, \ldots, d; z \in V$) be the coordinate frame on $(\clU, \varphi)$. 
Thus, for every $x \in \clU$, $E_{1,x}, \ldots, E_{d,x}$ are linearly independent.

Let $\tilde{\clU} = \pi_{C}^{-1} (\clU)$ be the restriction of $T \D \restriction_{\Pf}$ 
to $\clU$. (See \eqref{E:pi.is.proj}.) Define 
$\tilde{\varphi} : (x, a_{1} E_{1,x} + \ldots + a_{d} E_{d,x}) \mapsto \bigl( \varphi(x), a_{1}, \ldots, a_{d} \bigr) \in V \times \RR^{d}$. Let $\tilde{\psi} : V \times \RR^{d} \to \tilde{\clU}$ be the inverse of $\tilde{\varphi}$. By Boothby \cite[Lemma (6.1), p.\ 332]{wmB75} it suffices to show that with the usual topology on $V \times \RR^{d} \subset \RR^{2d}$ and the topology induced on $\tilde{\clU}$ by $\xi_{+}$ the set $\tilde{\clU}$ is open 
and the map $\tilde{\varphi}$ is a homeomorphism. 

Let $inc : \D \to \RR^{k}$ be inclusion. By corollary \ref{C:cont.diff.=.loc.Lip}, $inc$ is locally Lipschitz, in particular continuous, w.r.t.\ $\xi$ and $| \cdot |$, the Euclidean norm 
on $\RR^{k}$. But, by \eqref{E:D.imbedded.in.Rk}, $inc$ is an imbedding. Therefore, the restriction of $| \cdot |$ to $\D$ generates the topology on $\D$. (This means $\clU$ is open in the $\xi$ topology, but we already know that.)

Let $x \in \clU$ and $a_{1}, \ldots, a_{d} \in \RR$. 
Write $a := (a_{1}, \ldots, a_{d})^{1 \times d}$, where we use superscripts to indicate matrix dimension. Thus, $a$ is a row vector. Let $w := \varphi(x)$ 
and let $X := \sum_{i=1}^{d} a_{i} E_{i,x} \in T_{x} \D$. Thus, $(x, X) = \tilde{\psi}(w,a)$. Following our usual convention we regard $X \in \RR^{k}$.

Let $\mcl{B}_{\delta}(x)$ be the $\xi$-ball about $x \in \D$ with radius $\delta$. 
Pick $\delta > 0$ so small that $\mcl{B}_{\delta}(x) \subset \clU$. 
Let $y \in \mcl{B}_{\delta}(x)$. Then for \emph{any} $Y \in T_{y} \D$ 
we have $(y,Y) \in \tilde{\clU}$. In particular, if $\xi_{+} \bigl[ (y,Y), (x, X) \bigr] < \delta$ 
then $(y,Y) \in \tilde{\clU}$. This proves that $\tilde{\clU}$ is open in the $\xi_{+}$-topology on $T \D$.
 
For $\delta > 0$, let $\tilde{\mcl{B}}_{\delta}(x, X)$ 
be the $\xi_{+}$-ball about $(x, X)$ with radius $\delta$. (See \eqref{E:ball.defn}.) 
Let $\epsilon > 0$. We first show that by making $\delta$ sufficiently small, 
$\tilde{\varphi}( \tilde{\mcl{B}}_{\delta} ) \subset B_{\epsilon}^{2d}(w)$, the $\epsilon$-ball 
in $V \times \RR^{d} \subset \RR^{2d}$.

Identify $\psi$ with $inc \, \circ \, \psi$. Then we may write 
$\psi = (\psi_{1}, \ldots, \psi_{k})$. 
Since $\RR^{k}$ is its own coordinate neighborhood. It follows that $\psi$ is $C^{\infty}$. 
Let $i = 1, \ldots, d$ and identify $E_{i}$ with $inc_{\ast}(E_{i})$. Then, if $y \in \clU$, 
    \begin{equation*}
      E_{i,y} = \sum_{j} \left( \frac{\partial \psi_{j}(z)}{\partial z_{i}} \right) 
        \restriction_{z=\varphi(y)} \;
          \frac{\partial}{\partial \alpha_{j}} \restriction_{\alpha = y} .
    \end{equation*} 
As usual, we identify $E_{i,y}$ with 
$\bigl(\ldots, \bigl( \partial \psi_{j}(z)/\partial z_{i} \bigr) \restriction_{z=\varphi(y)}, \ldots  \bigr) \in \RR^{k}$. Thus,  \linebreak
$X = \left( \ldots,  \sum_{i=1}^{d} a_{i} \tfrac{\partial \psi_{j}(z)}{\partial z_{i}} 
\restriction_{z=\varphi(x)}, \ldots \right)$. Since $\psi$ is $C^{\infty}$ it follows that $E_{i,y}$ is $C^{\infty}$ in $y$. 

Let $(z,b) \in V \times \RR^{d}$ with $y := \psi(z) \in \clU$ and 
$b = (b_{1}, \ldots, b_{d})^{1 \times d}$. 
Let $Y := \sum_{i=1}^{d} b_{i} E_{i,y} \in T_{x} \D \subset \RR^{k}$. 
Since $\xi$ generates the topology on $\clU$ and $\varphi$ is continuous, 
there exists $\delta_{1} > 0$ s.t.\    
    \begin{equation} \label{E:xi(y,x)<delta.implies.|z-w|<eps/sqrt.2}
        \xi(y, x) < \delta_{1} \text{ implies } |z - w| < \epsilon/\sqrt{2}
    \end{equation}

Let $E_{x}^{d \times k}$ be the matrix whose $i^{th}$ row is $E_{x,i}$. Define $E_{y}$ similarly. 
Thus, the entries of $E_{y}$ are continuous in $y \in \clU$. Let $\eta > 0$ and suppose,
    \begin{align} \label{E:|Y-X|.sqrd.ineq}
      2\eta^{2} > |Y - X|^{2} 
        &= \left| \sum_{i=1}^{d} b_{i} (E_{i,y} - E_{i,x}) 
          + \sum_{i=1}^{d} (b_{i} - a_{i}) E_{i,x} \right|^{2} \notag \\
        &= \bigl| b (E_{y} - E_{x}) + (b-a) E_{x} \bigr|^{2} \notag \\
        &= b (E_{y} - E_{x}) (E_{y} - E_{x})^{T} b^{T} 
          + 2 b (E_{y} - E_{x}) E_{x}^{T} (b-a)^{T} \\
        &\qquad + (b-a) E_{x} E_{x}^{T} (b-a)^{T}  \notag \\
        &= b (E_{y} - E_{x}) \Bigl[ (E_{y} - E_{x})^{T} b^{T} 
          + 2 E_{x}^{T} (b-a)^{T} \Bigr] \notag \\
        &\qquad + (b-a) E_{x} E_{x}^{T} (b-a)^{T} \notag .
    \end{align}

Since the entries of $E_{y}$ are continuous in $y \in \clU$, $E_{y}$ is continuous in $y$ 
w.r.t.\ the Frobenius or Hilbert-Schmidt norm \eqref{E:matrix.norm} defined 
by $\| M \| = \sqrt{trace \, M M^{T}}$, where $M$ is a matrix. Therefore, by lemma \ref{L:Eigen.cont.}, this means the eigenvalues of $E_{y} E_{y}^{T}$ are continuous in $y$. 
Since $E_{1}, \ldots E_{d}$ are linearly independent in $\clU$, the smallest eigenvalue, 
$\lambda_{d}$, of $E_{x} E_{x}^{T}$ is strictly positive. Therefore, there exists 
$\delta_{2} \in (0, \delta_{1})$ s.t.\ if $\xi(y,x) < \delta_{2}$, then the smallest eigenvalue of $E_{y} E_{y}^{T}$ is at least $\lambda_{d}/2 > 0$. By \eqref{E:|Y-X|.sqrd.ineq}, we have
    \begin{equation*}
      \sqrt{\lambda_{d}/2} \, |b| \leq  \sqrt{ b E_{y} E_{y}^{T} b^{T} } = |b E_{y}| 
        = |Y| \leq |Y-X| + |X| < \sqrt{2} \eta + |X| .
    \end{equation*}
It follows that $b$ is bounded. As $y \to x$, $E_{y} \to E_{x}$. Therefore, there exists $\delta_{3} \in (0,\delta_{2})$ s.t.\ if $\xi(y,x) < \delta_{3} = \delta_{3}(\eta)$, then, 
    \begin{equation*}
      b (E_{y} - E_{x}) \Bigl[ (E_{y} - E_{x})^{T} b^{T} 
       + 2 E_{x}^{T} (b-a)^{T} \Bigr] < \eta^{2} .
    \end{equation*}
Hence, if $\xi(y,x) < \delta_{3}$, in particular, if $\xi_{+} \bigl[ (y,Y), (x,X) \bigr] < \delta_{3}$, then, by \eqref{E:|Y-X|.sqrd.ineq}, 
    \begin{equation*} 
      \eta^{2} > (b-a) E_{x} E_{x}^{T} (b-a)^{T} \geq \lambda_{d} |b-a|^{2} .
    \end{equation*}
Let $\eta := \epsilon \sqrt{ \lambda_{d}/2 }$, then we have 
$\xi_{+} \bigl[ (y,Y), (x,X) \bigr] < \delta_{3}$ implies $|b-a|^{2} < \epsilon^{2}/2$. 
Combining this with \eqref{E:xi(y,x)<delta.implies.|z-w|<eps/sqrt.2}, we get that 
$\xi_{+} \bigl[ (y,Y), (x,X) \bigr] < \delta_{3}$ implies $|z-w|^{2} + |b-a|^{2} < \epsilon^{2}$. I.e.,
$\bigl| (z,b) - (w,a) \bigr| < \epsilon$.
This proves that $\tilde{\varphi}$ is continuous on $\tilde{\clU}$ w.r.t.\ $\xi_{+}$ and 
$| \cdot |$.

For the other direction, again let $(w, a) \in V \times \RR^{d}$ and let $x = \psi(w)$. 
Thus, $\tilde{\psi}(w,a) = \bigl( \psi(w), a E_{x} \bigr) = (x, a E_{x})$. 
Let $(z, b)$ also be in $V \times \RR^{d}$. By corollary \ref{C:cont.diff.=.loc.Lip} again, 
for $z$ sufficiently close to $w$ there exists 
$K < \infty$, $\xi \bigl[ \psi(z), \psi(w) \bigr] < K |z-w|$. 
Therefore, $\psi$ is contiuous w.r.t.\ $| \cdot |$ and $\xi$. We have 
    \begin{equation*}
      | b E_{y} - a E_{x} | \leq \bigl| (b-a) E_{x}| + |b||E_{y} - E_{x}|.
    \end{equation*}
As $b \to a$, $\bigl| (b-a) E_{x}| \to 0$. As $z \to w$ we have 
$y = \psi(z) \to \psi(w) = x$ 
so $E_{y} \to E_{x}$. In summary, 
$\xi_{+} \bigl[ \tilde{\psi}(z,b), \tilde{\psi}(w,a) \bigr] \to 0$ 
as $(z,b) \to (w,a)$. This proves that $\tilde{\varphi}^{-1}$ is continuous 
w.r.t.\ $| \cdot |$ and $\xi_{+}$. The claim follows: $\xi_{+}$, and thus $\omega_{\D}$, generate the standard topology on $T \D$.  
  \end{proof}

  \begin{proof}[Proof of lemma \ref{L:Exp.loc.Lip}]
Let $x \in \D$. Let $\mcl{V}$ be a coordinate neighborhood of $x$ with coordinate map $\varphi : \mcl{V} \to \RR^{d}$. Let $H := \varphi(\mcl{V})$ so $H$ is an open subset of $\RR^{d}$. Let $\psi : H \to \mcl{V}$ be the inverse 
of $\varphi$. Let $\clU \subset \D$ be a neighborhood of $x$ satisfying $\overline{\clU} \subset \mcl{V}$ and $\overline{\clU}$ is compact. Let $G := \varphi(\clU)$, so $G$ is a relatively compact subset of $\RR^{d}$. 

Let $T \D \restriction_{\mcl{V}} := \{ X_{x} \in T \D : x \in \mcl{V} \}$. Here, the notation $X_{x}$ means $X_{x} \in T_{x} \D$. Define $\tilde{\varphi} : T \D \restriction_{\mcl{V}} \to H \times \RR^{d}$ as follows. If $X_{x} \in T \D \restriction_{\mcl{V}}$ then we can write 
$X_{x} = \sum_{i=1}^{d} y_{i} \, \psi_{\ast} \bigl( \partial/\partial w_{i} \restriction_{w=\varphi(x)} \bigr)$, 
where $w_{1}, \ldots, w_{d}$ are the coordinates in $\RR^{d}$. 
Define $\tilde{\varphi}(X_{x}) := \bigl( \varphi(x), (y_{1}, \ldots, y_{d}) \bigr) \in H \times \RR^{d}$. 
Then $\tilde{\varphi} : T \D \restriction_{\mcl{V}} \to H \times \RR^{d}$ is a coordinate map for $T \D$ 
(Boothby \cite[Lemma (6.1), p.\ 332]{wmB75}). In particlar, $\tilde{\varphi}$ is a bijection. 
Let $\tilde{\psi} : H \times \RR^{d} \to T \D \restriction_{\mcl{V}}$ be the inverse of $\tilde{\varphi}$. Thus,
$\tilde{\psi}\bigl( w, (y_{1}, \ldots, y_{d}) \bigr) = \sum_{i=1}^{d} y_{i} \, \psi_{\ast} (\partial/\partial u_{i} \restriction_{u = w} )
\in T_{\psi(w)} \D$, where $w \in H$ and $u = (u_{1}, \ldots, u_{d})$.

By assumption, $\D$ is an imbedded submanifold of $\RR^{k}$. Let $F = (F_{1}, \ldots, F_{k}): \D \to \RR^{k}$ be the imbedding. Then $F(\mcl{V})$ is a coordinate neighborhood in $F(\D)$ with coordinate map $\varphi \circ F^{-1}$. 
Now, $\RR^{k}$ with the identity $id : \RR^{k} \to \RR^{k}$ is a single coordinate neighborhood for $\RR^{k}$ and the tangent vectors on $\RR^{k}$ have the form $\sum_{i =1}^{k} a_{i} \, (\partial/\partial z_{i}) \restriction_{z = v}$, where $z_{1}, \ldots, z_{k}$ are the coordinates in $\RR^{k}$ and $v \in \RR^{k}$. Therefore, a single coordinate neighborhood of $T \RR^{k}$ is $T \RR^{k}$ itself with coordinate map
$\zeta : \sum_{j =1}^{k} a_{j} \, (\partial/\partial z_{j}) \restriction_{z = x} \mapsto \bigl( x, a_{1}, \ldots a_{k} \bigr) \in \RR^{2k}$. 
Let $\Psi := (\Psi_{1}, \ldots, \Psi_{k}) := F \circ \psi : H \to \RR^{k}$. Let $\partial \Psi/\partial u_{i} \restriction_{u=w} \in \RR^{k}$ be the vector, i.e., $k$-tuple, whose $j^{th}$ coordinate is $\partial \Psi_{j}/\partial u_{i} \restriction_{u=w}$ and let $D \Psi(w)$ be the $k \times d$ matrix whose element in the $i^{th}$ row and $j^{th}$ column is $\partial \Psi_{i}/\partial u_{j} \restriction_{u=w}$ ($i=1, \ldots, k$; $j=1, \ldots, d$). Then, regarding elements of $\RR^{d}$ as row vectors, by Boothby \cite[Theorem (1.6), p.\ 109]{wmB75}, we have 
	\begin{multline*}
	\Psi_{\ast} \left( \sum_{i=1}^{d} \alpha^{i} \frac{\partial}{\partial u_{i}} \vert _{u = w} \right)
	  = \sum_{j=1}^{k} \left( \sum_{i=1}^{d} \alpha^{i} \, \frac{\partial \Psi_{j}}{\partial u_{i}} \restriction_{u = w} \right)
	    \frac{\partial}{\partial z_{j}} \restriction_{z = \Psi(w)}, \\
	           w \in H, \alpha := (\alpha^{1}, \ldots, \alpha^{d}) \in \RR^{d}.
	\end{multline*}
(Here, we use ``$\alpha := (\alpha^{1}, \ldots, \alpha^{d})$'' to align with the notation in Boothby \cite[Theorem (1.6), p.\ 109]{wmB75}. 
Recall that $\Psi$ is already expressed in local coordinates.) Therefore,
    \begin{equation} \label{E:zeta.Fast.psi.formla}
      \zeta \circ F_{\ast} \circ \tilde{\psi}(w, \alpha) 
        = \bigl( \Psi(w), \alpha \, D \Psi(w)^{T} \bigr) \in \RR^{2k}.
    \end{equation}
I.e., the $(k+j)^{th}$ entry in the RHS of the preceding is $\sum_{i=1}^{d} \alpha^{i} \tfrac{\partial \Psi_{j}}{\partial u_{i}} \restriction_{u=w}$ ($j=1, \ldots, k$). 

Write $\Omega := \zeta \circ F_{\ast} \circ \tilde{\psi} : H \times \RR^{d} \to \RR^{2k}$. Then $\Omega$ is injective. 
We \emph{claim} that for every $r > 0$ $\bigl( \Omega \restriction_{G \times B_{r}^{d}(0)} \bigr)^{-1}$ (see  \eqref{E:Euc.ball.defn}) is Lipschitz w.r.t.\ $\xi_{+}$ and the Euclidean norm on $G \times B_{r}^{d}(0)$. Along the way we will show that for every $r > 0$ we have that $\Omega \restriction_{G \times B_{r}^{d}(0)}$ is Lipschitz w.r.t.\ the Euclidean norm on $G \times B_{r}^{d}(0)$ and $\xi_{+}$ (see \eqref{E:xi+.from.2.metrics}).

Let $r > 0$. Let $w,w' \in G$ and $y, y' \in B_{r}^{d}(0)$. So $\Psi(w), \Psi(w') \in F(\D)$. Write $\Delta := D \Psi^{T}$. We may think of $\xi$ as a metric on $F(\D)$. (In the notation of lemma \ref{L:imbedding.is.loc.Lip}, we may use the metric $(F^{-1})^{\ast} \xi$ on $F(\D)$.) Thus, $\xi \bigl[ \Psi(w), \Psi(w') \bigr] = \xi \bigl[ \psi(w), \psi(w') \bigr]$. Then, by \eqref{E:xi+.from.2.metrics} and \eqref{E:zeta.Fast.psi.formla}, 
	\begin{equation}   \label{E:omega.D.Omega}
		\xi_{+} \Bigl( \Omega \bigl[ w, y \bigr], \Omega \bigl[ w', y' \bigr]  \Bigr)^{2} 
			 = \xi \bigl[ \Psi(w), \Psi(w') \bigr]^{2} + \bigl| y \Delta(w) - y' \Delta(w') \bigr|^{2}.
	\end{equation}

$\Psi$ is smooth, injective, and has a smooth inverse. Therefore, by compactness of $\overline{G} \subset H$, there are constants $a, A$ s.t.\ 
    \begin{equation} \label{E:a.Delta.A.ineqs}
      0 < a < \bigl| y \Delta(w) \bigr| \leq \bigl\| \Delta(w) \bigr\| < A < \infty, 
        \text{ for every } w \in G \text{ and unit vector } y \in \RR^{d}. 
    \end{equation}
Here $\| \cdot \|$ is the operator norm 
(Rudin \cite[p.\ 185]{wR64.PMA}). Similarly, by corollary \ref{C:cont.diff.=.loc.Lip} in appendix \ref{Chptr:Lip.Haus.meas.dim} and the fact that $\Psi$ is $C^{\infty}$, there is $K < \infty$ s.t.\ $\bigl\| \Delta(w) - \Delta(w') \| < K |w-w'|$ for every $w, w' \in G$. By corollary \ref{C:cont.diff.=.loc.Lip} again, 
$\psi$ is Lipschitz on $G$ w.r.t.\ the Euclidean norm and $\xi$. Thus, increasing $K < \infty$ if necessary, for $w, w' \in G$ 
and $y, y' \in B_{r}^{d}(0)$ we have, by \eqref{E:a.Delta.A.ineqs}, 
   \begin{subequations} \label{E:w.and.y.parts}
	\begin{gather*}  
		\xi \bigl[ \Psi(w), \Psi(w') \bigr] = \xi \bigl[ \psi(w), \psi(w') \bigr] \leq K |w - w'| 
		   \text{ and } \tag{\ref{E:w.and.y.parts}a} \\
			\begin{split}
				\bigl| y \Delta(w) - y' \Delta(w') \bigr| &\leq 
				       \bigl| y \Delta(w) - y' \Delta(w) \bigr| + \Bigl| y' \bigl[ \Delta(w) 
				       - \Delta(w') \bigr] \Bigr| \\
				 &\leq |y-y'| \bigl\| \Delta(w) \bigr\| + |y'| \bigl\| \Delta(w) 
				   - \Delta(w') \bigr\| \\
				 &\leq A |y-y'| + r K |w - w'|.
			\end{split} \tag{\ref{E:w.and.y.parts}b}
	\end{gather*}
   \end{subequations}

Substituting \eqref{E:w.and.y.parts} into \eqref{E:omega.D.Omega} and applying the triangle inequality proves that $\Omega$ is Lipschitz on $G \times B_{r}^{d}(0)$.
	
Now consider $\Omega^{-1}$. Let $w, w' \in G$ and $y, y' \in B_{r}^{d}(0)$. By corollary \ref{C:cont.diff.=.loc.Lip} yet again, we have that $\varphi : \clU \to G$ is Lipschitz w.r.t.\ $\xi$ and Euclidean distance. Therefore, there exists $L < \infty$ s.t.\ 
	\begin{equation}  \label{E:varphi.is.Lip.on.U}
		| w - w' | \leq L \xi \bigl[ \psi(w), \psi(w') \bigr] = L \xi \bigl[ \Psi(w), \Psi(w') \bigr].
	\end{equation}

Let $\eta := \tfrac{a}{2Kr}$ and suppose first that $|w-w'| \geq \eta |y-y'|$. Then, by \eqref{E:varphi.is.Lip.on.U} and \eqref{E:omega.D.Omega},   
	\begin{multline} \label{E:|w-w'|.leq.eta |y-y'|}
	\bigl|(w,y) - (w', y') \bigr|^{2} = |w-w'|^{2} + |y-y'|^{2} \leq ( 1+ \eta^{-2}) |w-w'|^{2} \\
	  \leq ( 1+ \eta^{-2}) L^{2} \xi \bigl[ \Psi(w), \Psi(w') \bigr]^{2} 
	    \leq ( 1+ \eta^{-2}) L^{2} 
	      \xi_{+} \Bigl( \Omega \bigl[ w, y \bigr], \Omega \bigl[ w', y' \bigr]  \Bigr)^{2}. 
	\end{multline}

Now assume $|w-w'| < \eta |y-y'|$. Then, by \eqref{E:a.Delta.A.ineqs}, 
	\begin{align*}  \label{E:dominated.by.y.Delta}
		\bigl| y' \Delta(w') - y \Delta(w) \bigr| &\geq \bigl| (y'-y) \Delta(w) \bigr| -
		       \bigl| y' \Delta(w) - y' \Delta(w') \bigr| \notag \\
		       &\geq \bigl| (y'-y) \Delta(w) \bigr| -
		       |y'| \bigl\| \Delta(w) - \Delta(w') \bigr\| \notag \\
		  &\geq a |y'-y| - K |y'| |w - w' | \\
		  &\geq a |y'-y| - K |y'| \frac{a}{2Kr} |y' - y| \notag \\
		  &\geq a |y'-y| - (a/2) |y' - y| = (a/2) |y' - y|. \notag 
	\end{align*}
Combining this with \eqref{E:varphi.is.Lip.on.U} and applying \eqref{E:omega.D.Omega} yields
    \begin{multline*}
        \bigl|(w,y) - (w', y') \bigr|^{2} = |w - w'|^{2} + |y - y'|^{2} \\
          \leq L \xi \bigl[ \Psi(w), \Psi(w') \bigr]^{2} + (2/a) 
            \bigl| y' \Delta(w') - y \Delta(w) \bigr|^{2} 
              \leq (L + 2 a^{-1}) \xi_{+} \bigl[ \Omega(w,y), \Omega(w',y') \bigr]^{2}.
    \end{multline*}
Combining this with \eqref{E:|w-w'|.leq.eta |y-y'|} proves that $\Omega^{-1}$ is Lipschitz, as desired. This proves the claim that $\Omega$ and its inverse are both Lipschitz.
	
We have been regarding $Exp$ as being defined on $F(\D) \subset \RR^{2k}$. Let $\widetilde{Exp}$ be the exponential map defined intrinsically on $\D$, apart from the imbedding $F$. Thus, $Exp = \widetilde{Exp} \circ F_{\ast}^{-1} \circ \zeta^{-1}$. Now, as observed just before the statement of the lemma, we have that $\widetilde{Exp}$ is defined on all of the tangent bundle $T \D$. Therefore, by Boothby \cite[Theorem (7.1), p.\ 338]{wmB75}, $\widetilde{Exp}$ is differentiable on $T \D$. 
This means $\widetilde{Exp} \circ \tilde{\psi}$ is differentiable on $H \times \RR^{d}$. Therefore, by corollary \ref{C:cont.diff.=.loc.Lip} again, $\widetilde{Exp} \circ \tilde{\psi}$ is Lipschitz 
on $G \times B_{r}^{d}(0)$ w.r.t.\ the Euclidean norm and $\xi$. Thus, by \eqref{E:comp.of.Lips.is.Lip} and the fact, just proved, that $\Omega^{-1}$ is Lipschitz on $\Omega \bigl[ G \times B_{r}^{d}(0) \bigr]$, we have that 
$\widetilde{Exp} \circ \tilde{\psi} \circ \Omega^{-1}$ is Lipschitz on $\Omega \bigl[ G \times B_{r}^{d}(0) \bigr]$. But
	\begin{equation*}
	  \widetilde{Exp} \circ \tilde{\psi} \circ \Omega^{-1} 
	    = \widetilde{Exp} \circ \tilde{\psi} \circ \tilde{\psi}^{-1} 
	          \circ F_{\ast}^{-1} \circ \zeta^{-1}
	      = \widetilde{Exp} \circ F_{\ast}^{-1} \circ \zeta^{-1} = Exp.
	\end{equation*}
Thus, $Exp : F(\D) \times \RR^{k}$ is locally Lipschitz, as desired. (Of course, usually we identify $\D$ and $F(\D)$.)  \end{proof}
  
\begin{proof}[Proof of proposition \ref{P:tubular.nbhd.thm}]  
Let $x \in \Pf \subset \D$. Let $(\X, \varphi)$ be a coordinate neighborhood of $x$ in $\D$. So $\varphi : \X \to \RR^{d}$. We may assume $\varphi(x) = 0$. 
Let $\psi := \varphi^{-1} : H \to \X$, where $H := \varphi(\X) \subset \RR^{d}$. 
Let $e_{1}, \ldots, e_{d}$ be the usual unit coordinate vectors in $\RR^{d}$. 
Define $\mcl{W} := \X \cap \Pf$. We may assume that the closure, $\overline{\mcl{W}}$, \emph{in $\Pf$} is compact. ($\Pf$ itself may not be closed in $\D$.) Since $\Pf$ is an imbedded submanifold of $\D$, by Boothby \cite[Theorem (5.5), p.\ 78]{wmB75}, we have that $\Pf$ is a ``regular submanifold'' of $\D$. So if $R := \RR^{p} \cap H$, 
 where $\RR^{p} \subset \RR^{d}$ is the span of $e_{1}, \ldots, e_{p}$ and $p = \dim \Pf$, then we may assume that 
 $\varphi( \mcl{W} ) = R$. 
 
 Let $E_{iy} :=  := \psi_{\ast}(\partial/\partial z_{i})_{y}$ ($i = 1, \ldots, d$) be the coordinate frame at $y \in \mcl{W}$. The vector vectors $X_{iy} := E_{iy}$ ($i = 1, \ldots, p$) at $y \in \mcl{W}$ are linearly independent (in particular, nonzero) and tangent to $\Pf$ at $y$. In fact, $X_{1y}, \ldots, X_{py}$ span $T_{y} \Pf$. For $i = p+1, \ldots, d$ and $y \in \mcl{W}$, let $X_{iy} := E_{iy} - \Pi_{y} E_{iy} \neq 0$, where $\Pi_{y}$ is orthogonal projection onto $T_{y} \Pf$ (w.r.t.\ the Riemannian metric on $\D$). Specifically, we can write $X_{iy} = E_{iy} - \sum_{\ell=1}^{p} a_{i\ell}(y) X_{\ell y}$. The functions $a_{1\ell}(y), \ldots, a_{p\ell}(y)$ are smooth functions of $y$ because they are rational functions of the matrix, $\bigl( \langle E_{iy}, E_{jy} \rangle \bigr)$, of the Riemannian metric, which is of full rank and smooth in $y$.

Thus, for $y \in \mcl{W}$ we have that $X_{(p+1)y}, \ldots, X_{dy}$ span $(T_{y} \Pf)^{\perp}$. For $y \in \mcl{W}$, we have that $X_{iy} \in T_{y} \D$ ($i=1, \ldots, d$) are linearly independent because a vanishing nontrivial linear combination of $X_{1y}, \ldots, X_{dy}$ would translate into a vanishing nontrivial linear combination of $E_{1y}, \ldots, E_{dy}$. 

Define $\beta : R \times \RR^{d-p} \to N$ by 
	\begin{equation*}
		\beta(z,s) = \beta \bigl( z, (s_{p+1}, \ldots, s_{d}) \bigr) 
		          :=  \left( \psi(z), \sum_{i=p+1}^{d} s_{i} X_{i, \psi(z)} \right) \in N,
	\end{equation*}
where $z \in R \subset \RR^{p} \subset \RR^{d}$ and $s = (s_{p+1}, \ldots, s_{d}) \in \RR^{d-p}$. Note that, regarded as a map into $T \D$, $\beta$ is smooth (Boothby \cite[p.\ 331]{wmB75}).

Now consider another coordinate neighborhood, $\mcl{Y}$, of $x$ in $\D$ leading to a parametrization 
$\theta : S \times \RR^{d-p} \to N$, where $S$ is a neighborhood of 0 in $\RR^{p}$. Let $Y_{(p+1)y}, \ldots, Y_{dy}$ be the basis of $(T_{y} \Pf)^{\perp}$ for $y \in \mcl{Y}$ analogous to $X_{(p+1)y}, \ldots, X_{dy}$. 
If $y \in \X \cap \mcl{Y} \cap \Pf$ there exist unique $z \in R$, $w \in S$ that correspond to $y$. 
In fact, if $s \in \RR^{d-p}$ then there exists unique $t \in \RR^{d-p}$ s.t.\ $\beta(z, s) = \theta(w, t)$. The map $s \mapsto t$ is smooth. By Boothby \cite[Theorem (1.6), p.\ 109]{wmB75}, translating between $X_{(p+1)y}, \ldots, X_{dy}$ and $Y_{(p+1)y}, \ldots, Y_{dy}$ involves linear combinations of derivatives of smooth coordinate maps and parametrizations. We conclude that the coordinate neighborhoods $R \times \RR^{d-p}$and $S \times \RR^{d-p}$ with the parametrizations $\beta$ and $\theta$ are ``compatible'' (Boothby \cite[Definition (1.1), p.\ 52]{wmB75}). Therefore, by Boothby \cite[Theorem (1.3a), p.\ 54]{wmB75}, $N$ is a $d$-dimensional differentiable manifold. 

In fact, $N$ is an immersed submanifold of $T \D$. This is easily seen by considering the map 
$\tilde{\varphi} \circ \beta : R \times \RR^{d-p} \to H \times \RR^{d}$, where $\tilde{\varphi}: T \X \to H \times \RR^{d}$ is the coordinate map on $T \X$ corresponding to $\varphi$ (Boothby \cite[p.\ 331]{wmB75}).
 
Note that, by \eqref{E:D.is.cmpct.Riem.manif} and Hopf-Rinow (Boothby \cite[Theorem (7.7), p.\ 343]{wmB75}), $Exp$ is defined on the entire tangent bundle $T \D$. \emph{Claim:} $Exp \circ \beta : R \times \RR^{d-p} \to \D$ has full rank 
at $\varphi(x)=0 \in \RR^{d}$. Since $Exp \circ \beta (z, 0) = \psi(z)$ ($z \in R$, so $Exp \circ \beta(0) = x$) we have 
for $i = 1, \ldots, p$,  
$(Exp \circ \beta)_{\ast} (\partial/\partial z_{i} \restriction_{z=0, s=0} ) = \psi_{\ast}(\partial/\partial z_{i} \restriction_{z=0} ) = X_{ix}$. 
(Recall that $x = \psi(0)$.) 

Let $i = p+1, \ldots, d$ and define $\lambda(t) := (x, t X_{ix})$ ($t \in \RR$). 
Then $\beta_{\ast} (\partial/\partial s_{i} \restriction_{z=0,s=0} ) = \lambda_{\ast}(d/dt \restriction_{t=0} )$. 
Therefore, by Boothby \cite[Lemma (6.4), p.\ 334]{wmB75}, 
	\begin{multline*}
		(Exp \circ \beta)_{\ast} \left( \frac{\partial}{\partial s_{i}} 
		  \restriction_{z=0, s=0} \right) 
		  = Exp_{\ast} \circ \beta_{\ast} \left( \frac{\partial}{\partial s_{i}} 
		    \restriction_{z=0, s=0} \right) \\
		    = Exp_{\ast} \circ \lambda_{\ast} \left(\frac{d}{dt} \restriction_{t=0} \right)
		      = (Exp \circ \lambda)_{\ast} \left(\frac{d}{dt} \restriction_{t=0} \right) 
		        = X_{ix}. 
	\end{multline*}
Since $X_{ix}$ ($i=1, \ldots, d$) are linearly independent the claim, that $Exp \circ \beta$ is of full rank, is proved.

Let $F := \varphi \circ Exp \circ \beta$. Then $F(z,0) = (z,0)$ for $z \in R$ and $F$ is a $C^{\infty}$ map from a neighborhood of $0$ in $\RR^{d}$ to a neighborhood of $0$ in $\RR^{d}$ and the Jacobian matrix $DF(0)$ is of full rank. Therefore, by the Inverse Function Theorem (Boothby \cite[Theorem (6.4), p.\ 42]{wmB75}) we have that $F$ is a diffeomorphism of some neighborhood of $0$ onto an open neighborhood of $0$. Therefore, WLOG we may assume that for some bounded neighborhood 
$\mcl{V}_{x} \subset \overline{\mcl{V}_{x}} \subset \mcl{W} \subset \Pf$ and for some $\epsilon'_{x} > 0$ we have that 
	\begin{multline} \label{E:Exp.is.diffeo.on.Vx.eps'}
		Exp \text{ is a diffeomorphism on }
		  \hat{N}^{\epsilon'_{x}} \restriction_{\mcl{V}_{x}} 
		    := \bigl\{ (x,v) \in N(\Pf, \D) : x \in \mcl{V}_{x}, \; |v| < \epsilon'_{x} \bigr\} \\
		      \text{ onto a set open in } \D \text{ that contains } \mcl{V}_{x}.
	\end{multline}
By proposition \ref{P:geod.cnvx.nbhds.exist}\eqref{I:normal.nbhd}, by making $\epsilon'_{x}$ smaller if necessary, we may assume  
	\begin{multline}  \label{E:Exp.is.diffeomorph.near.P}
		Exp \text{ is a diffeomorphism one each ball } 
		  T_{x} \D(\epsilon'_{x'}) := \bigl\{ (x, v) \in T_{x} \D : 
		    |v| < \epsilon'_{x} \bigr\}, \\
		    \xi(x', x) < \epsilon'_{x'}, \; x' \in \Pf.
	\end{multline} 
(So $(y,v) \in T_{x} \D(\epsilon'_{x})$ does not imply $y \in \Pf$, but if $x \in \Pf$, 
then $\hat{N}^{\epsilon'_{x}} \restriction_{\mcl{V}_{x}} \subset B_{x}(\epsilon'_{x})$)

By Lindel\"of's theorem (Simmons \cite[Theorem A, p.\ 100]{gfS63}) there exist 
$x_{1}, x_{2}, \ldots \in \Pf$ s.t.\ $\Pf \subset \bigcup_{i} \mcl{V}_{x_{i}}$.
By Boothby \cite[Lemma (4.1), p.\ 191]{wmB75}, there exists a locally finite refinement, 
$\clU_{1}, \clU_{2}, \ldots$, of $\{ \mcl{V}_{x_{i}}, \,i = 1, 2, \ldots \}$ that still covers $\Pf$ and for each $j$ there exists $i$ s.t.\ $\overline{\clU_{j}} \subset \mcl{V}_{x_{i}}$. 
 Since $\overline{\mcl{V}_{x}}$ is compact (since $\mcl{W}$ is) for every $x \in \Pf$, we have that $\overline{\clU_{i}}$ is compact for every $i$. Moreover, for each $i = 1,2 \ldots$, we may pick $\delta^{0}_{i} \in (0,\epsilon'_{x_{i}}]$ s.t.\ 
 $Exp$ is a diffeomorphism on $\hat{N}^{\delta^{0}_{i}} \restriction_{\clU_{i}}$

Generically, use $\blds{\delta}$ to stand for a sequence $\delta_{1}, \delta_{2}, \ldots$ with $\delta_{i} > 0$ for every $i = 1,2 \ldots$. Partial order such sequences component-wise. Next, we \emph{claim} 
	\begin{equation}  \label{E:Exp.1-1.on.union}
		\text{For some } \blds{\delta} \text{ we have that } 
		   Exp \text{ is one-to-one on } A(\blds{\delta}) := \bigcup_{i} \hat{N}^{\delta_{i}} \restriction_{\clU_{i}} \subset N.
	\end{equation} 
For any choice of $\blds{\delta}$ let $A_{0}(\blds{\delta}) := \varnothing$. For $i = 1, 2, \ldots$, define 
$M_{k} := M_{k}(\blds{\delta}) := N \cap \bigl[ \clU_{k} \times B_{\delta_{k}}^{d}(0) \bigr] = \hat{N}^{\delta_{k}} \restriction_{\clU_{k}}$ (see \eqref{E:ball.defn}) and
$ A_{i} :=  A_{i}(\blds{\delta}) := \bigcup_{k=1}^{i} M_{k}$. Thus, $A_{i}(\blds{\delta}) \uparrow A(\blds{\delta})$. 
Let $\mcl{G}_{i} := \bigcup_{k=1}^{i} \clU_{k}$. 

\eqref{E:Exp.1-1.on.union} is equivalent to 
	\begin{equation}  \label{E:Exp.1-1.on.finite.union}
		Exp \text{ is one-to-one on } A_{i}(\blds{\delta}) \text{ for every } i = 1, 2, \ldots.
	\end{equation}
A naive way to proceed would be, for each $i = 1, 2, \ldots$, to find $\blds{\delta}^{i}$ 
s.t.\ $Exp$ is one-to-one on $A_{i}(\blds{\delta})$ with $\blds{\delta} = \blds{\delta}^{i}$. That does not assure the existence of a single $\blds{\delta}$ s.t.\ \eqref{E:Exp.1-1.on.finite.union} holds. For example, it might turn out that 
$\liminf_{i \to 0} \blds{\delta}^{i}$, component-wise, might have some 0 components.

So instead we proceed in a more careful fashion. Consider this statement for a given 
$\ell = 0, 1, 2, \ldots$ 
and $\blds{\delta}^{\ell}$, 
	\begin{multline} \label{E:Exp.1-1.on.partial.union}
		Exp \text{ is one-to-one on } A_{\ell} ( \blds{\delta}^{\ell} ) \\
                   \text{ and \emph{for every} } k > \ell, \;
                    Exp \bigl[ M_{k}(\blds{\delta}^{\ell}) \setminus A_{\ell} ( \blds{\delta}^{\ell} ) \bigr]
                      \cap Exp \bigl[ A_{\ell} ( \blds{\delta}^{\ell} ) \bigr] = \varnothing. 
	\end{multline}
Note that \eqref{E:Exp.1-1.on.partial.union} holds trivially for $\ell = 0$ and $\delta_{i} := \delta^{0}_{i}$ ($i=1,2, \ldots$). This is the starting point of a recursive procedure for constructing $\blds{\delta}$ for which \eqref{E:Exp.1-1.on.union} holds. Moreover,
	\begin{multline}   \label{E:partial.union.monot.in.delta}
		\text{If \eqref{E:Exp.1-1.on.partial.union} holds then it also holds with } \\
		    \blds{\delta} \text{ replaced by any }
			\blds{\delta}' \leq \blds{\delta} \text{ (component-wise). }
	\end{multline}

Suppose that for every $i = 1, 2, \ldots$, we can find 
$\blds{\delta}^{i} \leq \blds{\delta}^{i-1}$ s.t.\ $\deli_{k} = \delta^{i-1}_{k}$ 
($k = 1, 2, \ldots, i-1$) and \eqref{E:Exp.1-1.on.partial.union} holds with $\ell = i$. Then \eqref{E:Exp.1-1.on.partial.union} holds for 
$\blds{\delta}^{\ell}$ ($\ell = 0, 1, 2, \ldots$) all equal to the diagonal, 
$\blds{\delta}: \delta_{i} = \deli_{i}$ ($i = 1, 2, \ldots$). In particular, \eqref{E:Exp.1-1.on.finite.union}, hence \eqref{E:Exp.1-1.on.union}, holds. This is because, for every $k = 1, 2, \ldots$, we have $\delta_{j} = \delta^{j}_{j} \leq \delta^{k}_{j}$ for $j \geq k$ and $\delta_{j} = \delta^{j}_{j} = \delta^{k}_{j}$ for $j \leq k$. Now apply \eqref{E:partial.union.monot.in.delta}.

Inductively, let $i = 1, 2, \ldots$ and suppose \eqref{E:Exp.1-1.on.partial.union} holds for $\ell = 0, \ldots, i-1$. Suppose $\blds{\delta} \leq \blds{\delta}^{i-1} \leq \cdots \leq \blds{\delta}^{0}$. Since $\delta_{i} \leq \delta^{0}_{i}$ we have that $Exp$ is one-to-one on $M_{i}$. Since $\blds{\delta} \leq \blds{\delta}^{i-1}$, we have, by the induction hypothesis and \eqref{E:partial.union.monot.in.delta}, that  $Exp$ is also one-to-one on $A_{i-1} \bigl( \blds{\delta} \bigr) \subset A_{i-1} \bigl( \blds{\delta}_{i-1} \bigr)$. Therefore, if $(y,v), (y',v') \in A_{i} \bigl( \blds{\delta} \bigr)$ are distinct but $Exp (y',v') = Exp (y,v)$, then we must have $(y,v) \in M_{i} \bigl( \blds{\delta} \bigr) \setminus A_{i-1} \bigl( \blds{\delta} \bigr)$, say, and $(y',v') \in A_{i-1} \bigl( \blds{\delta} \bigr) \setminus M_{i} \bigl( \blds{\delta} \bigr) \subset A_{i-1} \bigl( \blds{\delta} \bigr)$. But by the induction hypothesis, \eqref{E:partial.union.monot.in.delta} again, and the second part of \eqref{E:Exp.1-1.on.partial.union}, we have $Exp (y',v') \neq Exp (y,v)$. Therefore, for such 
$\blds{\delta}$, we have that $Exp$ is one-to-one on $A_{i} \bigl( \blds{\delta} )$.

By the induction hypothesis and \eqref{E:partial.union.monot.in.delta} again, we have that \eqref{E:Exp.1-1.on.partial.union} holds with $\ell = i-1$ and $\blds{\delta}^{\ell}$ replaced by $\blds{\delta}$. By local finiteness of the cover $\{ \clU_{k} \}$ there exists $I = I(i) \geq i$ s.t.\ $\text{dist} (\Pf \setminus \mcl{G}_{I}, \mcl{G}_{i}) \bigr\} > 0$. 
Suppose $\blds{\delta}$ satisfies 
	\begin{multline}  \label{E:delta.dist.ineq}
		\blds{\delta} \leq \blds{\delta}^{i-1}, \; \delta_{k} = \delta^{i-1}_{k} \, 	
		     (k = 1, 2, \ldots, i-1), \text{ and, } \\
		    \text{ if } j = i \text{ or } j > I(i), \text{ we have  } 
		      \delta_{j} \leq \frac{1}{3} \text{dist} (\Pf \setminus \mcl{G}_{I}, \mcl{G}_{i}) > 0.
	\end{multline}
Suppose $j > I$, $(x,u) \in M_{i}(\blds{\delta})$, and $(y,v) \in M_{j}(\blds{\delta})$. Then 
	\begin{equation*} \label{E:Exp.x.u.y.v.xi}
		\xi \bigl[ Exp(x,u), Exp(y,v) \bigr] \geq \xi(x,y) - |u| - |v| 
			> \text{dist} (\clU_{j}, \mcl{G}_{i}) - \delta_{i} - \delta_{j} > 0.
	\end{equation*}
Thus, we have
	\begin{multline}   \label{E:Exp.1-1.on.partial.union.i.I}
		Exp \text{ is one-to-one on } A_{i} ( \blds{\delta} ) \\
                   \text{ and \emph{for every} } k > I(i), \;
                    Exp \bigl[ M_{k}(\blds{\delta}) \setminus A_{\ell} ( \blds{\delta} ) \bigr]
                      \cap Exp \bigl[ A_{\ell} ( \blds{\delta} ) \bigr] = \varnothing. 
	\end{multline}

Suppose for no $\blds{\delta}$ satisfying \eqref{E:delta.dist.ineq} does \eqref{E:Exp.1-1.on.partial.union} hold with 
$\ell = i$ and $\blds{\delta}^{\ell} = \blds{\delta}$. Then, for example, \eqref{E:Exp.1-1.on.partial.union} does not hold if 
$\blds{\delta} = \tilde{\blds{\delta}}_{n}$, ($n = 1, 2, \ldots$), where $\tilde{\blds{\delta}}_{n}$ is any sequence s.t.\ $\blds{\delta} = \tilde{\blds{\delta}}_{n}$ satisfies \eqref{E:delta.dist.ineq}, and, for $j = i, \ldots, I(i)$, 
we have $\tilde{\delta}_{n,j} = \min \{ n^{-1}, \delta^{i-1}_{j} \}$.
Thus, for every $n = 1, 2, \ldots$, we have 
		\begin{multline}   \label{E:Exp.race.to.bottom}
			\text{For every } n = 1, 2, \ldots, \text{ there exists } 
			   (x_{n}, u_{n}) \in M_{i}(\tilde{\blds{\delta}}_{n}), \; j_{n} = i+1, \ldots, I(i),  \\
			     \text{ and } (y_{n}, v_{n}) \in M_{j_{n}}(\tilde{\blds{\delta}}_{n}) 
			          \setminus A_{i} ( \tilde{\blds{\delta}}_{n} )
			        \text{ s.t.\ } Exp(x_{n}, u_{n}) = Exp(y_{n}, v_{n}). 
	      \end{multline}
	      
By compactness of $\overline{\mcl{G}_{I}}$ (recall $\mcl{G}_{I} := \bigcup_{k=1}^{I} \clU_{k}$), there is an increasing sequence, $\{ n_{t} \}$, of positive integers s.t.\ $x_{n_{t}} \to x_{\infty} \in \overline{\mcl{G}_{i}}$ and $y_{n_{t}} \to y_{\infty} \in \overline{\mcl{G}_{I}}$ as $t \to \infty$.  
Then, $x_{\infty} = Exp(x_{\infty}, 0) = Exp(y_{\infty}, 0) = y_{\infty}$. Thus, for sufficiently large $n$, we have 
$x_{n}, y_{n} \in \mcl{V}_{x_{\infty}}$ and $|u_{n}|, |v_{n}| < \epsilon'_{x_{\infty}}$. If $x_{n} \neq y_{n}$ by \eqref{E:Exp.is.diffeo.on.Vx.eps'}, it is impossible for $Exp(x_{n}, u_{n}) = Exp(y_{n}, v_{n})$. So suppose $x_{n} = y_{n}$. Now, $(x_{n}, u_{n}) \in M_{i}(\tilde{\blds{\delta}}_{n}) \subset A_{i} ( \tilde{\blds{\delta}}_{n} )$ 
while $(x_{n}, v_{n}) = (y_{n}, v_{n}) \notin  A_{i} ( \tilde{\blds{\delta}}_{n} )$. 
Therefore, we must have $u_{n} \neq v_{n}$. But $|u_{n}|, |v_{n}| < \epsilon'_{x_{\infty}}$. Hence,
again by \eqref{E:Exp.is.diffeo.on.Vx.eps'}, it is impossible for $Exp(x_{n}, u_{n}) = Exp(y_{n}, v_{n})$. Therefore, \eqref{E:Exp.race.to.bottom} is false. 

Therefore, by \eqref{E:Exp.1-1.on.partial.union.i.I}, there exists $\blds{\delta}$ s.t.\ \eqref{E:delta.dist.ineq} holds and \eqref{E:Exp.1-1.on.partial.union} holds with $\ell = i$ and $\blds{\delta}^{\ell} = \blds{\delta}$. Therefore, by induction, \eqref{E:Exp.1-1.on.partial.union} holds for every $\ell = 0, 1, 2, \ldots$ and, hence, as explained above, the claim \eqref{E:Exp.1-1.on.union} holds.

Next, we prove \emph{claim:} Again reducing the $\delta_{j} \leq \delta^{0}_{j}$ ($j=1, 2, \ldots$) if necessary, we may assume the following. Let $\ell = 1, 2, \ldots$ be arbitrary. Let $(x', u) \in M_{\ell}(\blds{\delta})$ and let $x = Exp(x', u)$. If $y' \in \Pf$ and $y' \neq x'$ then 
	\begin{equation}   \label{E:xi.y'.x.>.|u|}
		r_{0} := \xi(y', x) > |u| = \xi(x', x).
	\end{equation} 
(Recall that $\xi$ is the topological metric on $\D$ corresponding to the Riemannian metric.) 

First, consider the case $r_{0} := \xi(y', x) = 0$. Then $Exp_{y'}(y', 0) = y' = x = Exp_{x'} (x', u)$. This contradicts the fact that $Exp$ is one-to-one on $A(\blds{\delta})$. So assume
	\begin{equation}  \label{E:r0.>.0}
		r_{0} > 0.
	\end{equation}
 
Let $\eta_{\ell} = \eta \in ( 0, \delta_{\ell}/2 )$ and suppose $(x',u) \in \hat{N}^{\eta} \restriction_{\clU_{\ell}}$. Now, by \eqref{E:Exp.is.diffeomorph.near.P}, $Exp_{x}$ is injective on the $\eta$-ball, $T_{x} \D(\eta)$, centered at 0 
in $T_{x} \D$. Let $y' \in \Pf$ and suppose $y' \neq x'$ but $r_{0} \leq |u| = \xi(x', x)$. 
Now $|u| < \eta$, so $r_{0} < \eta$ and $y' \in \overline{\mcl{B}_{\eta}(x)} := \bigl\{ z \in \D : \xi(z, x) \leq \eta \bigr\}$ (see \eqref{E:ball.defn}). Since $\Pf$ is locally compact, by \eqref{E:Pf.loc.compct.strat.space}, making $\eta$ smaller if necessary, we have 
	\begin{equation}  \label{E:B.eta.cap.P1.compact}
		\overline{\mcl{B}_{2 \eta}(x)} \cap \Pf \text{ is non-empty and compact}.
	\end{equation}
(Making $\eta$ smaller might entail making $u$ shorter and, hence, $x$ closer to $x'$.) Hence, by local finiteness, there exist only finitely many $i$ s.t.\ $\clU_{i} \cap \overline{\mcl{B}_{\eta}(x)} = \clU_{i} \cap \overline{\mcl{B}_{\eta}(x)} \cap \Pf \neq \varnothing$. It follows that we may also require $\eta > 0$ to satisfy $\eta < \delta_{i}$ for every $i$ s.t.\ $\clU_{i} \cap \overline{\mcl{B}_{\eta}(x)} \neq \varnothing$. 

Let $q(t)$ be a $C^{\infty}$ curve in $\Pf$ defined in an open interval $J \subset \RR$ about 0 s.t.\ $q(0) =  y'$ and  
let $r(t) := r_{q}(t) := \xi \bigl( x, q(t) \bigr)$ ($t \in J$). Thus, by \eqref{E:r0.>.0}, $r(0) = r_{0} > 0$. Making $J$ smaller, if necessary, we may assume $r(t) > 0$ ($t \in J$). Because, by \eqref{E:Exp.is.diffeomorph.near.P}, $Exp$ is a diffeomorphism in $T_{x'} \D(\epsilon'_{x'})$, there exists a curve $X(t) \in T_{x} \D$ ($t \in J$) s.t.\ $q(t) = Exp_{x} \bigl[ r(t) X(t) \bigr]$, ($t \in J$), viz.\ 
$X(t) = r(t)^{-1} Exp_{x}^{-1} \bigl[ q(t) \bigr]$. 
Thus, 
	\begin{equation}  \label{E:|X|.=.1}
		\bigl\| X(\cdot) \bigr\| \equiv 1,
	\end{equation} 
where $\| \cdot \|$ is the norm of the Riemannian metric.\footnote{We prove this. Given $t \in J$ let $\lambda = \lambda_{t}$ be the curve defined by $\lambda_{t}(v) = Exp_{x} \bigl[ v r(t) X(t) \bigr]$ ($v \in \RR$). Then $\lambda$ is a geodesic and $\lambda'(0)= r(t) X(t)$ (Boothby \cite[Lemma (6.4), p.\ 334]{wmB75}). 
By Boothby \cite[Lemma (5.2), p.\ 327]{wmB75}, arc length, $s(w) := \xi \bigl[ \lambda(w), x \bigr]$ is proportional to $w \in \RR$. But $r(t) = s(1)$. Hence, $s(w) = r(t)w$ ($w \in \RR$). Now, from Boothby \cite[p.\ 186]{wmB75}, we see that in addition $r(t) w = s(w) = \int_{0}^{w} \bigl\| \lambda'(v) \bigr\| \, dv$. (See Boothby \cite[Lemma (7.4), p.\ 341]{wmB75}) Now, $\lambda'$ is continuous (since $Exp$ is $C^{\infty}$). Therefore, taking the right hand derivative of both sides w.r.t.\ $w$ at 0, 
we get $r(t) = \bigl\| \lambda'(0) \bigr\| = \bigl\| r(t) X(t) \bigr\| = r(t) \bigl\| X(t) \bigr\|$. Since $r(t) > 0$, 
we get $\bigl\| X(t) \bigr\| = 1$, as desired.} 

Let $y(t)$, ($t \in [0, r_{0}]$) be the geodesic in $\D$ joining $x$ to $y'$. Write $\dot{y} := dy/dt$. We may assume 
$| \dot{y} | \equiv 1$. (So $y(0) = x$ and $y(r_{0}) = y'$.) Since $y(t)$ is a geodesic, $Exp$ is one-to-one 
on $T_{x'} \D(\epsilon'_{x'})$ and $y(r_{0}) = y' = q(0) = Exp_{x} r(0) X(0) = Exp_{x} r_{0} X(0)$, we have that 
$y(t) = Exp_{x} t X(0)$. 

We also have that $r(t)$ is differentiable near $t=0$. To see this, note that $r(t) X(t) = Exp_{x}^{-1} \bigl[ q(t) \bigr]$, 
so $r(t) = \Bigl\| Exp_{x}^{-1} \bigl[ q(t) \bigr] \Bigr\|_{x}$, where $\| \cdot \|_{x}$ is the norm of the Riemannian metric at $x$. Now, $Exp_{x}$ is a diffeomorphism on the $T_{x} \D(\eta)$, $q$ is differentiable, and $r(t) > 0$ near $t=0$. Hence, $r(t)$ is differentiable near $t=0$.

We have the following. (The proof is given below.)

	\begin{lemma}  \label{L:r'(0).<.0}
		If $r_{q}'(0) = 0$ for every $q$ then $\dot{y}(r_{0}) \perp T_{y'} \Pf$. 
	\end{lemma} 

Suppose $\dot{y}(r_{0}) \perp T_{y'} \Pf$, i.e., $\bigl\langle Y, \dot{y}(r_{0}) \bigr\rangle = 0$ for every $Y \in T_{y'} \Pf$. Let $z(t) := y(r_{0} - t)$, $t \in [0,r_{0}]$. Then $\dot{z}(0) = - \dot{y}(r_{0}) \perp T_{y'} \Pf$ and $z(r_{0}) = x$. 
Since $\bigl\| \dot{y}(r_{0}) \bigr\| = 1$, we have $|v| = r_{0} \leq |u| < \eta$, where $(y',v) := r_{0} \, \dot{z}(0)$. 
Thus, $(y',v) \in N$ and $Exp(y',v) = x = Exp(x',u)$. 
But $Exp$ is one-to-one on $A(\blds{\delta})$. Therefore, $y' = x'$. Contradiction.

Now suppose $\dot{y}(r_{0})$ is not perpendicular to $T_{y'} \Pf$. Then, by lemma \ref{L:r'(0).<.0}, we can choose $q$ s.t.\ $r_{q}'(0) < 0$. Hence, by nudging $y' \in \Pf$ along $q$ a little, we have $\xi(y', x) < \xi(x', x) \leq \eta$. For each $n$, pick $y_{n} \in \Pf$ s.t., 
	\begin{multline*}
		\xi(y_{n}, x) \leq \inf_{y \in \Pf} \xi(y, x) + 1/n = \inf_{y \in \mcl{B}_{\eta}(x) \cap \Pf} \xi(y, x) 
		 +1/n \leq \xi(y', x) + 1/n 
		   < \xi(x', x) + 1/n \\
		       (n = 1, 2, \ldots). 
	\end{multline*}
Then eventually we have $y_{n} \in \overline{\mcl{B}_{2 \eta}(x)} \cap \Pf$, hence, by \eqref{E:B.eta.cap.P1.compact}, there exists $y_{\infty} \in \overline{\mcl{B}_{2 \eta}(x)} \cap \Pf$ s.t.\ $\xi(y_{\infty}, x) = \inf_{y \in \Pf} \xi(y, x) < \xi(x', x) < \eta$. 
So, in fact $y_{\infty} \in \mcl{B}_{\eta}(x) \cap \Pf$. 
 
Thus, replacing $y'$ by $y_{\infty}$ we have $r_{q}'(0) = 0$ for any $C^{\infty}$ path $q$ in $\Pf$ s.t.\ $q(0) = y'$. (Because otherwise we could find $y'' \in \Pf$ even closer to $x$ than $y_{\infty}$ is.) Therefore, by lemma \ref{L:r'(0).<.0}, we have $y(t)$ is orthogonal to $T_{y'} \Pf$. 
But as we have seen, this means that $y_{\infty} = x'$. I.e., $\xi(x', x) < \xi(x', x)$. This absurdity completes the proof of \eqref{E:xi.y'.x.>.|u|}. \eqref{E:alpha.dist.to.P} follows. 
 
Let $\zeta_{1}, \zeta_{2}, \ldots $ be a $C^{\infty}$ partition of unity on $\Pf$ s.t.\ $supp \, \zeta_{k} \subset \Pf$ is compact for every $k = 1, 2, \ldots$. (See Spivak \cite[Theorem 15, p.\ 68]{mS79.SpivakVol1}, 
Boothby \cite[Theorem (4.4), p.\ 192]{wmB75}, and Helgason \cite[Theorem 1.3, p.\ 8]{sH62.SymSpaces}.) Let
	\begin{equation*}
		\epsilon_{i} 
		     := \min \bigl\{ \delta_{j} : (supp \, \zeta_{i}) \cap \clU_{j} \neq \varnothing \bigr\},
			\quad i = 1, 2, \ldots.
	\end{equation*}
Since $supp \, \zeta_{i}$ is compact and $\clU_{1}, \clU_{2}, \ldots$ is locally finite there are only finitely many $\clU_{j}$'s s.t.\ $(supp \, \zeta_{i}) \cap \clU_{j} \neq \varnothing$. Therefore, each $\epsilon_{i}$ is strictly positive. Define
	\begin{equation*}
		\epsilon_{\Pf}(x) := \sum_{k} \zeta_{k}(x) \, \epsilon_{k}, \quad x \in \Pf.
	\end{equation*}
Let $i = 1,2, \ldots$. Let $x \in \Pf$. Suppose $x \in \clU_{i}$. Then
	\begin{equation*}
		\epsilon_{\Pf}(x) = \sum_{x \in supp \, \zeta_{k}} \zeta_{k}(x) \, \epsilon_{k} 
		\leq \Bigl( \min_{x \in \clU_{j}} \delta_{j} \Bigr) \sum_{k} \zeta_{k}(x) \leq \delta_{i}.
	\end{equation*}
So $0 < \epsilon_{\Pf}(x) \leq \delta_{i}$. Therefore, $Exp$ is a diffeomorphism on
	\begin{equation*}
		\hat{N}^{\epsilon_{\Pf}} := \bigl\{ (x, v) \in N : x \in \Pf, |v| < \epsilon_{\Pf}(x) \bigr\}.
	\end{equation*}

Let $\mcl{C} := Exp \bigl[ \hat{N}^{\epsilon_{\Pf}} \bigr]$ and define $\alpha : \mcl{C} \to \hat{N}^{\epsilon_{\Pf}}$ by 
	\begin{equation} \label{E:alpha.=.Exp.invrs}
		\alpha := \bigl( Exp \restriction_{\hat{N}^{\epsilon_{\Pf}}} \bigr)^{-1}.
	\end{equation}
\end{proof}

  \begin{proof}[Proof of lemma \ref{L:r'(0).<.0}] 
Let $Y \in T_{y'} \Pf$. We may assume $q'(0) = Y$. By \eqref{E:xi.y'.x.>.|u|}, we have $r(0) = r_{0}$. Let $I$ be an open interval containing $[0, r_{0}]$ for which we can define $\tilde{q} : I \times J$ 
by $\tilde{q}(s, t) := Exp_{x} s X(t)$. Now, the curve $ s \mapsto \tilde{q}(s,0) = Exp_{x} \bigl( s X(0) \bigr)$ is a geodesic joining $x$ to $y'$. So is $y(\cdot)$. But $Exp$ is one-to-one on $T_{x'} \D(\epsilon'_{x'})$ 
and, by \eqref{E:|X|.=.1}, $\bigl\| X(\cdot) \bigr\| = 1 = \bigl\| \dot{y}(\cdot) \bigr\|$. Therefore, $y(s) = \tilde{q}(s,0)$ 
($s \in [0, r_{0}]$). Thus, 
$\tilde{q}(\cdot, 0)_{\ast} \left( \tfrac{\partial}{\partial s} \restriction_{s=r_{0}} \right) = \dot{y}(r_{0})$.
We have $\xi \bigl[ x, \tilde{q}(r_{0}, \cdot) \bigr] \equiv r_{0}$. (To see this, recall \eqref{E:|X|.=.1} and apply Boothby \cite[Lemma (7.4), p.\ 341]{wmB75} with $``r(v)'' := v r_{0}$, $``X(v)'' :\equiv X(t)$, $a=0$, and $b=1$.)
I.e., $\tilde{q}(r_{0}, \cdot)$ lies in the ``geodesic sphere'' 
$S_{r_{0}} := \bigl\{ Exp_{x} W \in \D : W \in T_{x} \D, \, \|W\| = 1 \bigr\}$. 
Thus, $Z_{y'} := \tilde{q}(r_{0}, \cdot)_{\ast} \left( \tfrac{\partial}{\partial t} \restriction_{t=0} \right) \in T_{y'} \D$ is tangent to $S_{r_{0}}$ at $y'$. Therefore, since $t_{0} < \eta$ so by \eqref{E:Exp.is.diffeomorph.near.P}, $Exp_{x}$ is a diffeomorphism on the $\eta$-ball, $T_{x} \D(\eta)$, about 0 in $T_{x} \D$, we have, by Boothby \cite[Lemma (7.3), p.\ 340]{wmB75}, 
	\begin{equation}  \label{E:Z.perp.y.dot.r0}
		Z_{y'} \text{ is perpendicular to }
			\dot{y}(r_{0}) = \tilde{q}(\cdot, 0)_{\ast} \left( \tfrac{\partial}{\partial s} \restriction_{s=r_{0}} \right).
	\end{equation}

Let $K(t) := \bigl( r(t), t \bigr) \in \RR^{2}$ ($t \in J$). (Recall that $r(t) := \xi \bigl[ x, q(t) \bigr]$ and $r(0) = r_{0} > 0$.) 
Then $q(t) = \tilde{q} \circ K(t)$ and 
	\begin{align}  \label{E:Y.in.trms.r.q.tild}
	    Y &= q'(0) \notag \\
	       &= (\tilde{q} \circ K)_{\ast} \left( \frac{d}{dt} \restriction_{t=0} \right) \notag \\
	       &= \tilde{q}_{\ast} \circ K_{\ast} \left( \frac{d}{dt} \restriction_{t=0} \right)  
	       &\intertext{  \qquad \qquad (Boothby \cite[Theorem (1.6), p.\ 109]{wmB75})}
	       &= \tilde{q}_{\ast} \left( r'(0) \frac{\partial}{\partial s} \restriction_{s=r_{0}, t=0} 
	       		+ \frac{\partial}{\partial t} \restriction_{s=r_{0}, t=0} \right) \notag \\
	       &= r'(0) \tilde{q}(\cdot, 0)_{\ast} \left( \frac{\partial}{\partial s} 
	           \restriction_{s=r_{0}} \right)
	       		+ \tilde{q}(r_{0}, \cdot)_{\ast} \left( \frac{\partial}{\partial t} 
			\restriction_{t=0} \right) \notag \\
	       &= r'(0) \dot{y}(r_{0}) + Z_{y'}. \notag
	\end{align}
Thus, if $r'(0) = 0$, we have $Y = Z_{y'} \perp \dot{y}(r_{0})$. I.e., $Y \perp \dot{y}(r_{0})$, by \eqref{E:Z.perp.y.dot.r0}. Since $Y \in T_{y'} \Pf$ is arbitrary we have $\dot{y}(r_{0}) \perp T_{y'} \Pf$. 
  \end{proof}
  
  \begin{proof}[Proof of  \eqref{E:gExp=Expg.star}] 
Let $\nabla$ be the Riemannian connection on $\D$ (Boothby \cite[Definition (3.2), p.\ 314]{wmB75}). By assumption $\langle \cdot, \cdot \rangle$ is $G$ invariant. Therefore, by the following lemma $\nabla$ is $G$-equivariant. (See below for proof. See Boothby \cite[Exercise 4, p.\ 321]{wmB75}.)
	\begin{lemma}  \label{L:connection.is.G.invariant}
Let $g \in G$. Since $g$ is invertible, so is $g_{\ast} : T \D \to T \D$. For $x \in D$, define
	\begin{equation*}
		\nabla^{g}_{X_{x}}( Y ) 
		  := g_{\ast}^{-1} \bigl[ \nabla_{g_{\ast} X_{x}}( g_{\ast} Y ) \bigr]
		         \in T_{x} \D,
	\end{equation*}
where $X_{x} \in T_{x} (\D)$ and $Y$ is a vector field on $\D$. 
Then $\nabla^{g} = \nabla$.
	\end{lemma} 
	
Let $X' := (x',v) \in T \D$, and let $g \in G$. 
Now, by definition (Boothby \cite[Definition (6.3), p.\ 333]{wmB75}) $Exp(X') = \gamma(1)$, where $\gamma : (a,b) \to \D$ ($a < 0 < 1 < b$) is a geodesic and $\gamma'(0) = X'$. (So $\gamma(0) = x'$.)
Notice that $g_{\ast} \gamma'(t) = (g \circ\gamma)'(t)$. In particular, $g_{\ast}(X') = (g \circ \gamma)'(0)$.
Thus, (Boothby \cite[p.\ 319 and Definition (5.1), p.\ 326]{wmB75}) $\nabla_{\gamma'(t)} \gamma'(\cdot) = 0$ for $t \in (a,b)$.\footnote{Okay, this is a bit sloppy. $\nabla_{\gamma'(t)}$ is a functional on the space of vector fields on all of $\D$, or at least on an open neighborhood of $\gamma(t)$, but not on $\gamma \bigl[ (a,b) \bigr]$. (See Boothby \cite[Lemma (3.4), p.\ 314]{wmB75}.) We patch this up as follows. If $\gamma$ is constant, so $\gamma'(t) \equiv 0$, then trivially $g \circ \gamma$ is a geodesic. So assume $\gamma$ is not constant, so $\gamma'(t) \neq 0$. Then $\gamma$ is an immersion of $(a,b)$ into $\D$. Let $t \in (a,b)$ be fixed. Therefore, by Boothby \cite[Theorem (4.12), p.\ 74]{wmB75}, there is an neighborhood $(a_{t}, b_{t}) \subset (a,b)$ of $t$ s.t.\ the restriction $\gamma \restriction_{(a_{t}, b_{t})}$ is an imbedding. 

Therefore (Boothby \cite[Theorem (5.5), p.\ 78]{wmB75}), $\gamma(t)$ has a coordinate neighborhood $(\varphi, \clU) \subset \RR^{d}$ s.t.\ $\varphi \bigl[ \gamma(a_{t}, b_{t}) \cap \clU \bigr] \subset \RR$, where we identify $\RR$ with the set $\bigl\{ (s, 0, \ldots, 0) \in \RR^{d} : s \in \RR \bigr\}$ and $\varphi (\clU)$ is an open cube, $C$. Write $\varphi (\clU) = C = (a', b') \times$ (some cube in $\RR^{d-1}$). We may assume $\varphi \circ \gamma(t) = 0 \in \RR^{d}$ and 
$\varphi^{-1} \restriction_{(a', b') \times \{0\}} = \gamma \restriction_{(a',b')}$. 

Define a vector field, $Z$, on $C$ as follows. $Z_{z} := \tfrac{\partial}{\partial u_{1}} \restriction_{u = z}$ ($z \in C$). 
Let $Y = \varphi_{\ast}^{-1}(Z)$. Then the restriction of $Y$ to $\gamma(a,b)$ is just $\gamma'$. 
Define $\nabla_{\gamma'(t)} \gamma'(\cdot) = \nabla_{\gamma'(t)} Y$. 
Then $\nabla_{\gamma'(t)} \gamma'(\cdot) = \tfrac{D}{ds} \gamma'(s) \restriction_{s=t}= 0$.}
Thus, by lemma \ref{L:connection.is.G.invariant}
	\[
		0 = \nabla^{g}_{\gamma'(t)}( \gamma' )
		      = g_{\ast}^{-1} \bigl[ \nabla_{g_{\ast} \gamma'(t)}( g_{\ast} \gamma' ) \bigr]
		   = g_{\ast}^{-1} \Bigl[ \nabla_{(g \circ \gamma)'(t)} 
		     \bigl( (g \circ \gamma)' \bigr) \Bigr].
	\]
Applying $g_{\ast}$ to both sides of the preceding, we get 
$\nabla_{(g \circ \gamma)'(t)}( (g \circ \gamma)'(t) ) = 0$. I.e., $g \circ \gamma$ is a geodesic. It is tangent to 
$(g \circ \gamma)(d/ds \restriction_{t=0}) = g_{\ast} \gamma'(0) = g_{\ast} X'$ at 0. Hence, 
	\begin{equation} \label{E:Exp.is.natural}
		g \circ Exp(X') = g \circ \gamma(1) 
		        = Exp \bigl[ g_{\ast}(X') \bigr].
	\end{equation}
Since $g^{\ast} \bigl( \langle \cdot, \cdot \rangle \bigr) = \langle \cdot, \cdot \rangle$, we have (by \eqref{E:Riem.metrics.on.D.on.Rk.same})
		\begin{equation} \label{E:dg.is.isometry}
			\bigl\| g_{\ast, x'}(x', v) \bigr\|_{g(x')} = |v|, \quad (x', v) \in T_{x'} \D.
		\end{equation}
	where $\bigl\| \cdot \bigr\|_{g(x')}$ is the norm corresponding 
	to $\langle \cdot, \cdot \rangle_{g(x')}$. 

Combining this with \eqref{E:dg.is.isometry}, we get \eqref{E:gExp=Expg.star}.   
  \end{proof}
  
  \begin{corly}  \label{C:geods.on.D.G-invar}
Recall 
$g^{\ast} \bigl( \langle \cdot, \cdot \rangle \bigr) = \langle \cdot, \cdot \rangle$. 
Let $\gamma : (a,b) \to \D$ ($a < 0 < 1 < b$) be a geodesic. Then, for every 
$g \in G$, is $g \circ \gamma$ is a geodesic of the same length as $\gamma$.
  \end{corly}

  \begin{proof}[Proof of \eqref{E:smooth.<.min}] 
Recall that $\epsilon_{\Pf}$ is strictly positive. Let $g \in G$, let $\epsilon^{g} := \epsilon_{\Pf} \circ g^{-1}$, 
and let 
    \begin{equation*}
      \clU_{g} := \bigl\{ x \in \Pf : \tfrac{1}{2} \epsilon^{g}(x) 
        < \min_{h \in G} \epsilon^{h}(x) \bigr\}.
    \end{equation*} 
Then $\{ \clU_{g} : g \in G \}$ is an open cover of $\Pf$. 
Let $\{ f_{g} : g \in G \}$ be a smooth partition of unity with $supp \, f_{g} \subset \clU_{g}$ ($g \in G$). 
(See Spivak \cite[Theorem 15, p.\ 68]{mS79.SpivakVol1}. Recall that in this section $\Pf$ is a smooth manifold.)

We use an idea that will be used in the proof of theorem \ref{T:if.lin.combo.on.F.then.can.rstrct.to.bad.sings}.
Let $g_{1}, \ldots, g_{m}$ be the elements of $G$. For $k = 1, \ldots, m$ write $f_{k} := f_{g_{k}}$ and let
   \[
      \bar{f}_{k} = m^{-1} \sum_{\ell=1}^{m} f_{\ell} \circ g_{\ell} \circ g_{k}^{-1}.
   \]  
We have, 
   \begin{equation}    \label{E:G.equivariance.of.f.bar.measure.chapter}
      \text{If } g_{k'} = g^{-1} \circ g_{k} \text{ then } \bar{f}_{k} \circ g = \bar{f}_{k'} 
         \quad (g \in G; \; k = 1, \ldots, m).
   \end{equation}
Note that for $i = 1, 2, \ldots, m$ and $g \in G$ we have 
    \begin{equation}  \label{E:g.and.supp.fi.measure.chapter}
        g(supp \, f_{i}) = supp \, (f_{i} \circ g^{-1}). 
    \end{equation}
    
We \emph{claim} that $\{ \bar{f}_{i} \}$ is a partition of unity on $\Pf$ s.t.\ for every $i$ we have $supp \, \bar{f}_{i} \subset \clU_{g_{i}}$.
Let $k = 1, \ldots, m$. Observe that, since $f_{i} \geq 0$ for every $i$,
    \begin{equation}  \label{E:supp.fbar.=.union.supps}
      supp \, \bar{f}_{k} = \bigcup_{\ell} supp \, (f_{\ell} \circ g_{\ell} \circ g_{k}^{-1} )
    \end{equation}
and, by \eqref{E:g.and.supp.fi.measure.chapter}, 
    \begin{equation}  \label{E:supp.fell.gell.gk.invrs}
          supp \, (f_{\ell} \circ g_{\ell} \circ g_{k}^{-1} ) = g_{k} \circ g_{\ell}^{-1} ( supp \, f_{\ell} )
             \subset g_{k} \circ g_{\ell}^{-1} \bigl( \clU_{g_{\ell}} \bigr).
    \end{equation}

We show
    \begin{equation}  \label{E:gk.gell.invrs.U.gell.subset.U.gk}
        g_{k} \circ g_{\ell}^{-1} \bigl( \clU_{g_{\ell}} \bigr) \subset \clU_{g_{k}}.
    \end{equation} 
(If \eqref{E:gk.gell.invrs.U.gell.subset.U.gk} is true, then it is immediate that it holds with, ``$\subset$'' replaced by ``$=$''.) Let $x \in \clU_{g_{\ell}}$. 
Then
    \begin{align*}
        \tfrac{1}{2} \epsilon^{g_{k}} \bigl[ g_{k} \circ g_{\ell}^{-1}(x) \bigr] &= 
            \tfrac{1}{2} \epsilon_{\Pf} \circ g_{k}^{-1} \circ g_{k} \circ g_{\ell}^{-1} (x) 
              = \tfrac{1}{2} \epsilon^{g_{\ell}} (x) \\
          &< \min_{h \in G} \epsilon^{h}(x) \\
          &= \min_{h \in G} \bigl[ \epsilon_{\Pf} \circ h^{-1} \circ g_{\ell} \circ g_{k}^{-1} 
            \circ g_{k} \circ g_{\ell}^{-1}(x) \bigr] \\
          &= \min_{h \in G} \Bigl( \epsilon_{\Pf} \circ (g_{k} \circ g_{\ell}^{-1} \circ h)^{-1} 
            \bigl[ g_{k} \circ g_{\ell}^{-1}(x) \bigr] \Bigr)
            = \min_{g \in G} \Bigl( \epsilon^{g} \bigl[ g_{k} \circ g_{\ell}^{-1}(x) \bigr] \Bigr).
    \end{align*}
Thus, $g_{k} \circ g_{\ell}^{-1}(x)  \in \clU_{g_{k}}$. This proves \eqref{E:gk.gell.invrs.U.gell.subset.U.gk}.
Therefore, by \eqref{E:supp.fbar.=.union.supps} and \eqref{E:supp.fell.gell.gk.invrs}, we have 
    	\begin{equation}  \label{E:supp.fk.bar.in.U.gk}
    		supp \, \bar{f}_{k} \subset \clU_{g_{k}}. 
    	\end{equation} 

To complete proof of the claim we must show that
   \begin{equation}   \label{E:bar.f.is.part.of.1.measure.chapter}
      \sum_{i \geq 1} \bar{f}_{i}(x) = 1 \text{ for every } x \in \D.
   \end{equation}
Since $\{ f_{i} \}$ is a partition of unity on $\Pf$, we have, on $\Pf$, 
   \begin{align*}
   \sum_{k \geq 1} \bar{f}_{k} &= m^{-1} \sum_{k=1}^{m} \sum_{\ell=1}^{m} f_{\ell} \circ g_{\ell} \circ g_{k}^{-1}  \\
           &= m^{-1} \sum_{\ell=1}^{m} \sum_{k=1}^{m} f_{\ell} \circ g_{\ell} \circ g_{k}^{-1}  \\
     &= m^{-1} \sum_{\ell=1}^{m} \sum_{g \in G} 
           f_{\ell} \circ g_{\ell} \circ (g^{-1} \circ g_{\ell})^{-1}  \\
     &= m^{-1} \sum_{g \in G} \sum_{\ell=1}^{m} f_{\ell} \circ g  
     = m^{-1} \sum_{g \in G} 1 = 1.
   \end{align*}
This proves \eqref{E:bar.f.is.part.of.1.measure.chapter} and the claim that $\{ \bar{f}_{i} \}$ is a partition of unity on $\Pf$ subordinate 
to $\{ \clU_{g_{i}} \}$.

Let $\epsilon := \tfrac{1}{2} \sum_{k = 1}^{m} \bar{f}_{k} \, \epsilon^{g_{k}}$. Then 
$\epsilon$ is smooth. We show that $G$ invariant. Let $g \in G$ and for 
$k = 1, \ldots, m$, let $g_{j(k)} := g^{-1} \circ g_{k}$, so $k \mapsto j(k)$ is a bijection. Then
    \begin{equation}  \label{E:eps.circ.g}
        \epsilon \circ g = \tfrac{1}{2} \sum_{k = 1}^{m} (\bar{f}_{k} \circ g) (\epsilon^{g_{k}} 
          \circ g).
    \end{equation}
Now, 
    \begin{equation}  \label{E:eps.gk.o.g.=.eps.g.jk}
        \epsilon^{g_{k}} \circ g = \epsilon_{\Pf} \circ g_{k}^{-1} \circ g = \epsilon_{\Pf} 
          \circ g_{j(k)}^{-1} = \epsilon^{g_{j(k)}} .
    \end{equation}
Similarly, by \eqref{E:G.equivariance.of.f.bar.measure.chapter}, 
    \begin{equation*}
        \bar{f}_{k} \circ g = m^{-1} \sum_{\ell=1}^{m} f_{\ell} \circ g_{\ell} \circ g_{k}^{-1} \circ g
          = m^{-1} \sum_{\ell=1}^{m} f_{\ell} \circ g_{\ell} \circ g_{j(k)}^{-1} = \bar{f}_{j(k)}.
    \end{equation*}
Thus, substituting this and \eqref{E:eps.gk.o.g.=.eps.g.jk} into \eqref{E:eps.circ.g} we get
    \begin{equation*}  
        \epsilon \circ g(x) 
          = \tfrac{1}{2} \sum_{k = 1}^{m} \bar{f}_{j(k)}(x) \, \epsilon^{g_{j(k)}}(x) 
            = \epsilon(x), \qquad x \in \Pf .
    \end{equation*}

Finally, by definition of $\clU_{g_{k}}$ and \eqref{E:supp.fk.bar.in.U.gk}, we have
    \begin{multline*}
        \epsilon(x) = \tfrac{1}{2} \sum_{k} \bar{f}_{k}(x) \, \epsilon^{g_{k}} (x)
          \leq \sum_{k} \bar{f}_{k}(x) \, \min_{h \in G} \epsilon^{h}(x) \\
            = \min_{h \in G} \epsilon^{h}(x) 
              = \min_{h \in G} \bigl\{ \epsilon_{\Pf} \circ h^{-1}(x) \bigr\} 
                = \min \bigl\{ \epsilon_{\Pf} \circ g(x), \, g \in G \bigr\}, 
                  \qquad x \in \clU_{g_{k}} ,
    \end{multline*}
as desired.
  \end{proof}

  \begin{proof}[Proof of lemma \ref{L:connection.is.G.invariant}] 
By Boothby \cite[Theorem (3.3), p.\ 314]{wmB75} it suffices to show that $\nabla^{g}$ has properties (1) through (4) in Boothby \cite[p.\ 313]{wmB75}.

Let $a, b : \D \to \RR$ be smooth. Let $X, X', Y, Y'$ be vector fields on $\D$. Let ``$\cdot$'' be used to sometimes denote point-wise multiplication of a vector field by a scalar function. First, we want to show the following. 
	\begin{equation*}
		\nabla^{g}_{aX+bX'}( Y )  = a \cdot \nabla^{g}_{X}( Y )  + b 
		  \cdot \nabla^{g}_{X'}( Y ).
	\end{equation*}
By the corresponding property for $\nabla$,
	\begin{align*}
		\nabla^{g}_{aX+bX'}( Y ) 
		    &= g_{\ast}^{-1} \bigl[ \nabla_{a g_{\ast} X + b g_{\ast} X'}( g_{\ast} Y ) \bigr]  \\
		    &= g_{\ast}^{-1} \bigl[ a \cdot \nabla_{g_{\ast} X}( g_{\ast} Y ) 
		         + b \cdot \nabla_{g_{\ast} X'}( g_{\ast} Y ) \bigr]  \\
		    &= a \cdot (\nabla^{g}_{X} Y )  + b \cdot (\nabla^{g}_{X'} Y ),
	\end{align*}
as desired.

Next, we prove
	\begin{equation} \label{E:nabla.g.X.aY+bY'.identity}
		\nabla^{g}_{X_{x}}( aY + bY' ) 
		    = a(x) ( \nabla^{g}_{X_{x}} Y )  + b(x) ( \nabla^{g}_{X_{x}} Y' ) 
	    + (X_{x} a) Y_{x} + (X_{x} b) Y'_{x}.
	\end{equation}
(E.g., $\nabla_{X_{x}} Y \in T_{x} \D$ by Boothby \cite[Corollary (3.5), p.\ 315]{wmB75}.) By definition, if $x \in \D$,
	\begin{equation} \label{E:nabla.g.X.aY+bY'.defn}
		\nabla^{g}_{X_{x}}( aY + bY' ) 
		    = g_{\ast}^{-1} \Bigl( \nabla_{g_{\ast} X_{x}} \bigl[ g_{\ast} (a  Y  + b Y') \bigr] \Bigr).
	\end{equation}
Let us parse this carefully. 
By Boothby \cite[Definition (2.6), p.\ 119; p.\ 150; and Theorem (1.2), p.\ 107]{wmB75}, we have
	\begin{align*}
		\bigl[ g_{\ast} (a  Y  + b Y') \bigr]_{g(y)} 
		   &= g_{\ast} \bigl[ a(y) Y_{y}  + b(y) Y'_{y} \bigr] \\
		   &= a(y) (g_{\ast} Y_{y})  + b(y) (g_{\ast} Y'_{y}) \\
		   &= a(y) (g_{\ast} Y)_{g(y)}  + b(y) (g_{\ast} Y')_{g(y)},
		      \quad y \in \D.
	\end{align*}
Therefore, 
	\begin{equation}   \label{E:g.star.aY+bY'.at.z}
		\bigl[ g_{\ast} (a  Y  + b Y') \bigr]_{z} = (a \circ g^{-1})(z) (g_{\ast} Y)_{z} 
		     + (b \circ g^{-1})(z) (g_{\ast} Y')_{z},   \quad z \in \D.
	\end{equation}
Now apply  Boothby \cite[property (2), Definition (3.1), p.\ 313]{wmB75} to \eqref{E:nabla.g.X.aY+bY'.defn} to get
  \begin{align}  \label{E:nabla.g.aY+bY'.expand}
    \nabla^{g}_{X_{x}}( aY + bY' )
	    &= g_{\ast}^{-1} \biggl( (a \circ g^{-1}) \bigl[ g(x) \bigr] \nabla_{g_{\ast X_{x}}}
	      ( g_{\ast} Y )  
	          + (b \circ g^{-1}) \bigl[ g(x) \bigr] \nabla_{g_{\ast} X_{x}}(g_{\ast} Y' ) \notag \\
	    &  \qquad \quad  + \bigl[ (g_{\ast} X_{x}) (a \circ g^{-1}) \bigr] (g_{\ast} Y_{x}) 
	           + \bigl[ (g_{\ast} X_{x}) (b \circ g^{-1}) \bigr] (g_{\ast} Y'_{x}) \biggr)  \\
	    &= a(x) ( \nabla^{g}_{X_{x}} Y )  + b(x) ( \nabla^{g}_{X_{x}} Y' ) \notag \\
	    &  \qquad \quad  + \bigl[ (g_{\ast} X_{x}) (a \circ g^{-1}) \bigr] Y_{x} 
	           + \bigl[ (g_{\ast} X_{x}) (b \circ g^{-1}) \bigr] Y'_{x}. \notag 
  \end{align}
Now, by Boothby \cite[Theorem (1.2), p.\ 107]{wmB75},
	\begin{equation*}
		(g_{\ast} X_{x}) (a \circ g^{-1}) = X_{x} \bigl[ g^{\ast} (a \circ g^{-1}) \bigr] 
		  = X_{x}(a).
	\end{equation*}
Similarly, $(g_{\ast} X_{x}) (b \circ g^{-1}) = X_{x} \bigl[ g^{\ast} (b \circ g^{-1}) \bigr] = X_{x}(b)$. Substituting into \eqref{E:nabla.g.aY+bY'.expand} yields \eqref{E:nabla.g.X.aY+bY'.identity}.

Next, we prove
	\begin{equation*}
		[X,Y] = \nabla^{g}_{X}(Y) - \nabla^{g}_{Y}(X).
	\end{equation*}
By the corresponding property for $\nabla$ and theorem Boothby \cite[Theorem (7.9), p.\ 154]{wmB75},
	\begin{align*}
		\nabla^{g}_{X}(Y) - \nabla^{g}_{Y}(X)
		   &= g_{\ast}^{-1} \bigl[  \nabla_{g_{\ast} X}(g_{\ast} Y) 
		                     - \nabla_{g_{\ast} Y}(g_{\ast} X) \bigr] \\
		   &= g_{\ast}^{-1} \bigl( [g_{\ast} X, g_{\ast} Y] \bigr) \\
		   &= [X,Y],
	\end{align*}
as desired.

Finally, we prove
	\begin{equation*}
		X \langle Y, Y' \rangle = \bigl\langle \nabla^{g}_{X} Y, Y' \bigr\rangle
		     + \bigl\langle Y, \nabla^{g}_{X} Y' \bigr\rangle.
	\end{equation*}
Let $x \in \D$. By $g$-invariance of $\langle \cdot, \cdot \rangle$, the corresponding property for $\nabla$, and the definition of $g_{\ast} X_{x}$ (Boothby \cite[Theorem (1.2), p.\ 107]{wmB75}),
	\begin{align}  \label{E:prop4.of.Riem.cnnctn}
	    \bigl\langle \nabla^{g}_{X} Y, Y' \bigr\rangle_{x}
		     + \bigl\langle Y, \nabla^{g}_{X} Y' \bigr\rangle_{x}
		  &= \Bigl\langle g_{\ast}^{-1} \bigl[ \nabla_{(g_{\ast} X)_{g(x)}}(g_{\ast} Y) \bigr], 
		    Y' \Bigr\rangle + \Bigl\langle Y, 
		            g_{\ast}^{-1} \bigl[ \nabla_{(g_{\ast} X)_{g(x)}}(g_{\ast} Y') \bigr] 
		            \Bigr\rangle \notag \\
		  &= \bigl\langle \nabla_{(g_{\ast} X)_{g(x)}}(g_{\ast} Y), g_{\ast} Y' \bigr\rangle
		     + \bigl\langle g_{\ast} Y, 
		            \nabla_{(g_{\ast} X)_{g(x)}}(g_{\ast} Y') \bigr\rangle \notag \\
		  &= (g_{\ast} X)_{g(x)} \langle g_{\ast} Y, g_{\ast} Y' \rangle \\
		  &= (g_{\ast} X_{x}) \langle g_{\ast} Y, g_{\ast} Y' \rangle \notag \\
		  &= X_{x} \langle g_{\ast} Y, g_{\ast} Y' \rangle_{\circ g} \notag.
	\end{align}
Here the function $\langle g_{\ast} Y, g_{\ast} Y' \rangle_{\circ g}$ is defined by
	\[
		y \mapsto \bigl\langle (g_{\ast} Y)_{g(y)}, (g_{\ast} Y')_{g(y)} 
		  \bigr\rangle_{g(y)} \; , \quad y \in \D.
	\]
But by the $g$ invariance of $\langle \cdot, \cdot \rangle$ (Boothby \cite[pp.\ 200--201]{wmB75}) we have
	\[
		\langle g_{\ast} Y, g_{\ast} Y' \rangle_{g(y)} 
		         = g^{\ast} \bigl( \langle \cdot, \cdot \rangle_{y} \bigr)(Y_{y}, Y'_{y})
		            = \langle Y_{y}, Y'_{y} \rangle_{y}, \quad y \in \D.
	\]
Substituting this into \eqref{E:prop4.of.Riem.cnnctn}, we get
	\[
		\bigl\langle \nabla^{g}_{X} Y, Y' \bigr\rangle
		    + \bigl\langle Y, \nabla^{g}_{X} Y' \bigr\rangle
		      = X_{x} \langle Y, Y' \rangle,
	\]
as desired.
  \end{proof} 

  \begin{proof}[Proof of lemma \ref{L:2eps.P.exists}]
 Let $\mcl{V} \subset \Pf$ be a neighborhood of the sort promised by part \ref{I:local.triv} of definition \ref{D:fibering.by.cones}. (Recall that in this book all neighborhoods are open.) Let $n = 1, 2, \ldots$; $\mcl{A}_{i}$; $\msf{L}_{i}$; 
and $h_{i} : \mcl{A}_{i} \times \msf{CL}_{i} \to \pi_{C}^{-1}(\mcl{A}_{i})$ ($i \in \NN_{n}$) also be as in the definition. Think of ``$\mcl{V}$'' as not only denoting the point set in $\Pf$, but all the $\mcl{A}_{i}$'s, $\msf{CL}_{i}$'s, and $h_{i}$'s as well.
Let $i \in \NN_{n}$. Let $\lambda_{i}$ be the metric on $\msf{CL}_{i}$. It satisfies \eqref{E:dist.bnd.on.CL}. Define $\xi \times \lambda_{i}$ as in \eqref{E:metric.on.Pf.x.CL} 
with $\lambda = \lambda_{i}$. Let $(y,v) \in C[\mcl{A}_{i}]$. By \eqref{E:Riem.metrics.on.D.on.Rk.same} and \eqref{E:xi+.from.2.metrics}, we have
        \begin{equation}  \label{E:xi+.dist.in.tangent.space}
           \| (y, v) - (y, w) \|_{y} = |v-w| = \xi_{+} \bigl[ (y, v), (y, w) \bigr],
             \quad (y, v), (y, w) \in T_{y} \D ,
        \end{equation}
where $| \cdot |$ is the norm on $\RR^{k}$, the ambient Euclidean space containing $\D$. Let $K' = K'_{i} < \infty$ be a common Lipschitz constant for $h_{i}$ and $h_{i}^{-1}$. Similarly, let $\bigl[ (s,z) \bigr] \in \msf{CL}_{i}$. Then, by \eqref{E:dist.bnd.on.CL},
        \begin{equation}  \label{E:lambda.dist.to.vertex}
          (\xi \times \lambda_{i}) 
            \left[ \Bigl(y, \bigl[ (s,z) \bigr] \Bigr), \Bigl(y, \bigl[ (0,z) \bigr] \Bigr) \right]
              = \lambda_{i} \left( \bigl[ (s, z) \bigr], \bigl[ (0, z) \bigr] \right) 
                = s \bigl| (1, z) \bigr| .
        \end{equation}

Replace $K'_{i}$ by 
	\begin{equation*}
		K := K_{i} := K' \max \biggl[ \sup \Bigl\{ \bigl| (1,w) \bigr| : w \in \msf{L}_{i} \Bigr\}, 
		  \inf \Bigl\{ \bigl| (1,w) \bigr| : w \in \msf{L}_{i} \Bigr\}^{-1} \biggr].
	\end{equation*}

Let $y \in \mcl{A}_{i}$, $s \in [0,1)$, $z \in \msf{L}_{i}$, and 
$(y,v) = h_{i}\bigl( \bigl[ (s,z) \bigr] \bigr)$. Then, by \eqref{E:lambda.dist.to.vertex}, \eqref{E:xi+.dist.in.tangent.space}, and Lipschitz property of $h_{i}^{-1}$ 
(part \ref{I:h.hi.invrs.Lip}  of definition \ref{D:fibering.by.cones}), 
	\begin{align*}  
	  s \inf \Bigl\{ \bigl| (1,w) \bigr| : w \in \msf{L}_{i} \Bigr\}
	    &\leq s \bigl| (1,z) \bigr| \\
	    &= \lambda_{i} \left( \bigl[ (s, z) \bigr], \bigl[ (0, z) \bigr] \right) \\
	    &= (\xi \times \lambda_{i}) 
            \left[ \Bigl(y, \bigl[ (s,z) \bigr] \Bigr), \Bigl(y, \bigl[ (0,z) \bigr] \Bigr) \right] \\
	    &\leq K_{i}' \xi_{+} \bigl[ (y,v), (y,0) \bigr]  = K_{i}' |v| .
	\end{align*}
In the other direction, using the Lipschitz property of $h_{i}$ we get,
	\begin{align*}  
	  |v| &= \xi_{+} \bigl[ (y,v), (y,0) \bigr] \\
	  &\leq K'_{i} (\xi \times \lambda_{i}) 
            \left[ \Bigl(y, \bigl[ (s,z) \bigr] \Bigr), \Bigl(y, \bigl[ (0,z) \bigr] \Bigr) \right] \\
	       &= K'_{i} \lambda_{i} \left( \bigl[ (s, z) \bigr], \bigl[ (0, z) \bigr] \right) \\
	       &= K'_{i} \, s \bigl| (1,z) \bigr| \\
	       &\leq  K'_{i} \, s \sup \Bigl\{ \bigl| (1,w) \bigr| : w \in \msf{L}_{i} \Bigr\} 
	         \leq K_{i} \, s  .
	\end{align*}
The net result is:
    \begin{multline}  \label{E:K.is.biLip.const}
       h_{i} \Bigl( y, \bigl[ (s, z) \bigr] \Bigr) = (y, v) \in C[\mcl{A}_{i}] 
         \text{ implies } K_{i}^{-1} s \leq |v| \leq K_{i} s, \\
            \quad \text{ for every } s \in [0,1), z \in \msf{L}_{i}, \text{ and } y \in \mcl{A}_{i}.
    \end{multline}
Define $\hat{\epsilon}_{i} := \tfrac{1}{2 K_{i}} < \infty$ and let 
$\tilde{\epsilon}_{\mcl{A}}(\mcl{V}) := \min \{ \hat{\epsilon}_{i} : i \in \NN_{n} \} \in (0, \infty)$. 
Let $t \in \bigl[ 0, \tilde{\epsilon}_{\mcl{A}}(\mcl{V}) \bigr]$, $y \in \mcl{V}$ 
and $(y,u) \in \mbf{F}_{1}[y]$, so $|u| = 1$. 
By definition of $\mbf{F}_{1}[y]$, \eqref{E:boldF.[E].defns}, there exists $b \in (0, 1]$ s.t.\ $(y, b t u) \in C[y]$. By definition \ref{D:fibering.by.cones}(\ref{I:CV.from.h.Ai}), there exists $i \in \NN_{n}$ and $\bigl[ (s, z) \bigr] \in \msf{CL}_{i}$ s.t.\  $y \in \mcl{A}_{i}$ and
$h_{i} \Bigl( y, \bigl[ (s, z) \bigr] \Bigr) = (y, b t u)$. In particular, by definition of $\msf{CL}_{i}$, $s \in [0,1)$. By \eqref{E:K.is.biLip.const}, 
$s \leq K_{i} b t |u| = K_{i} b t \leq K_{i}  b \hat{\epsilon}_{i} = b/2$. 
I.e., $2 s/b \leq 1$. Therefore, by \eqref{E:homogeneity.of.hi.plus} and the fact that $C[y]$ is a cone, 
	\begin{equation*}  
		(y, t u) = \frac{1}{b} \left( y, b t u \right) = 
		  \frac{1}{b} h_{i} \Bigl( y, \bigl[ (s, z) \bigr] \Bigr) 
		   = \frac{2 s}{b} h_{i} \Bigl( y, \bigl[ (1/2, z) \bigr] \Bigr) \in C[y].
	\end{equation*}

Let $\tilde{\mcl{V}} := \bigcup_{g \in G} g(\mcl{V})$. Here, we interpret ``$g(\mcl{V})$'' as the result of $g$ acting not just on the points of $\mcl{V}$ but on the whole structure, 
the $\mcl{A}_{i}$'s, $h_{i}$'s, etc., as in remark \ref{R:G.acts.on.V.etc}. So $\tilde{\mcl{V}}$ is $G$ invariant. Then, by remark \ref{R:G.acts.on.V.etc}, for every $g \in G$, we have that some $K''_{i} < \infty$ is a common Lipschitz constant for $gh_{i}$ and $(gh_{i})^{-1}$ for all $g \in G$. Use $K'_{i} := K''_{i}$ in the preceding construction. Define
$\tilde{\epsilon}_{\mcl{A}}(\tilde{\mcl{V}}) := \tilde{\epsilon}_{\mcl{A}}(\mcl{V})$. 
Thus, if $t \in \bigl[ 0, \tilde{\epsilon}_{\mcl{A}}(\tilde{\mcl{V}}) \bigr]$, $y \in \tilde{\mcl{V}}$  
and $(y,u) \in \mbf{F}_{1}[y]$, then $(y, tu) \in C[y]$.

Since $\{ \mcl{E}_{i} \}$ is locally finite, each $x \in \Pf$ 
has a neighborhood $\hat{\mcl{E}}$ 
s.t.\ for every $g \in G$, we have 
$g(\hat{\mcl{E}})$ 
intersects only finitely many $\mcl{E}_{i}$'s.\footnote{Let $x \in \Pf$. For $r > 0$, let 
$\D_{r}(x) := \bigl\{ y \in \D : \xi(x,y) < r \bigr\}$. (See \eqref{E:xi.is.metric.on.D}.) If no neighborhood, $\hat{\mcl{E}}$, having the desired property exists then, for every 
$m = 1, 2, \ldots$, 
there exist $i_{m} = 1, 2, \ldots$ and $g_{m} \in G$ s.t.\ $\mcl{E}_{i_{m}}, m = 1, 2, \ldots$ are distinct and $\mcl{E}_{i_{m}} \cap g_{m} \bigl( \D_{1/m}(x) \bigr) \neq \varnothing$ 
for every $m$. Since $G$ is finite WLOG $g_{m} \equiv g_{1} \in G$.  
But, by \eqref{E:xi.is.G-invar}, 
$g_{1} \bigl[ \D_{1/m}(x) \bigr] = \D_{1/m} \bigl[ g_{1} (x) \bigr]$. This contradicts the assumption that $\{ \mcl{E}_{i} \}$ is locally finite.}
Replace $\hat{\mcl{E}}$ by $\tilde{\mcl{E}} := \bigcup_{g \in G} g(\hat{\mcl{E}})$. 
Then $\tilde{\mcl{E}}$ is $G$-invariant, but it remains the case that $\tilde{\mcl{E}}$ intersects only finitely many $\mcl{E}_{i}$'s.
Let $\tilde{\epsilon}_{\mcl{E}}(\tilde{\mcl{E}}) := \min \bigl\{ t_{i} : \mcl{E}_{i} \cap \tilde{\mcl{E}} \neq \varnothing \bigr\}$. 
Thus, $\tilde{\epsilon}_{\mcl{E}}(\tilde{\mcl{E}})$ is strictly positive. If $y \in \tilde{\mcl{E}}$ 
then $\tilde{\epsilon}_{\mcl{E}}(\tilde{\mcl{E}}) \leq t_{i}$ for every $i$ and $g \in G$ s.t.\ 
$y \in g(\mcl{E}_{i})$.
 
Suppose $\tilde{\mcl{V}} \cap \tilde{\mcl{E}} \neq \varnothing$. Let 
$\tilde{\epsilon}(\tilde{\mcl{V}}, \tilde{\mcl{E}}) := \min \bigl\{ \tilde{\epsilon}_{\mcl{A}}(\tilde{\mcl{V}}), \tilde{\epsilon}_{\mcl{E}}(\tilde{\mcl{E}}) \bigr\}$. Then $\tilde{\epsilon}(\tilde{\mcl{V}}, \tilde{\mcl{E}}) \in (0, \infty)$ and 
	\begin{multline}  \label{E:eps.tilde.property}
		\text{If } t \in [0, \tilde{\epsilon}(\tilde{\mcl{V}}, \tilde{\mcl{E}})] 
		  \text{ and } (y, u) \in \mbf{F}_{1}[\tilde{\mcl{V}} \cap \tilde{\mcl{E}}], \\
		    \text{ then } (y, t u) \in C[y] \text{ and } t \leq t_{i} \text{ for every } i 
		      \text{ and } g \in G \text{ s.t.\ } y \in g(\mcl{E}_{i}).
	\end{multline}

By \eqref{E:Pf.has.countable.loc.finite.refinement}, there exist a countable locally finite refinement $\{ \X_{i}, i = 1, 2, \ldots \}$ of 
$\{ \tilde{\mcl{V}} \cap \tilde{\mcl{E}} : \tilde{\mcl{V}} \cap \tilde{\mcl{E}} \neq \varnothing \}$ (each $\X_{i}$ open). Since $\tilde{\mcl{V}}$ and $\tilde{\mcl{E}}$ are each $G$-invariant, so is $\tilde{\mcl{V}} \cap \tilde{\mcl{E}}$. Hence, we may assume each 
$\X_{j}$ is $G$-invariant. For each $j$, let 
	\begin{equation*}
	  \tilde{\epsilon}_{j} 
	    := \tfrac{1}{2} \sup \bigl\{  \tilde{\epsilon}(\tilde{\mcl{V}}, \tilde{\mcl{E}}) > 0 : 
	      \X_{j} \subset \tilde{\mcl{V}} \cap \tilde{\mcl{E}} \bigr\}.
	\end{equation*}
Then, by \eqref{E:eps.tilde.property}, for every $j = 1, 2, \ldots$, 
we have that $\tilde{\epsilon}_{j} > 0$ and
	\begin{multline}  \label{E:eps.hat.X.property}
		\text{If } t \in [0, \tilde{\epsilon}_{j}] \text{ and } (y, u) 
		  \in \mbf{F}_{1}[\X_{j}], \\
		    \text{ then } (y, t u) \in C[y] \text{ and } t \leq t_{i} \text{ for every } 
		      i \text{ and } g \in G \text{ s.t.\ } y \in g(\mcl{E}_{i}).
	\end{multline}
	
Recall that $\pi : T \D \to \D$ is projection and $\clU \subset T \D$ is the total space of the cone bundle, a subset of $T \D$. For each $j$ let
	  \begin{equation*}
	  	\mcl{C}_{j} := Exp \bigl( C[\X_{j}] \bigr) 
		  = Exp \bigl( \pi^{-1}(\X_{j}) \cap \clU \bigr)
		    = Exp \bigl( \pi_{C}^{-1}(\X_{j}) \bigr) .
	  \end{equation*}
Since $\pi$ is continuous, by \eqref{E:pi.is.smooth.and.open}, and $Exp$ is open on $C[\Pf]$ by part \ref{I:Exp.alpha.homeom} of definition \ref{D:fibering.by.cones}, $\mcl{C}_{j}$ is open. By \eqref{E:gExp=Expg.star}, 
part \ref {I:C.C[P].G.invar} of definition \ref{D:fibering.by.cones},  
and $G$-invariance of $\X_{j}$, we have that $\mcl{C}_{j}$ is $G$-invariant.
	  
Recall from part \ref{I:Exp.alpha.homeom} of definition \ref{D:fibering.by.cones} again, 
$\mcl{C} := Exp \bigl( C[\Pf] \bigr) = \bigcup_{j} \mcl{C}_{j} \subset \D$. 
Thus, $\mcl{C}$ is a $d$-dimensional $G$-invariant manifold with open cover 
$\{ \mcl{C}_{j} \}$. By Boothby \cite[Theorem (4.4), p.\ 192]{wmB75} or Spivak \cite[Theorem 15, p.\ 68]{mS79.SpivakVol1}, there exists
a $C^{\infty}$ partition of unity $\{ f_{j}, \; j = 1, 2, \ldots \}$ on $\mcl{C}$ 
s.t.\ $supp \, f_{j} \subset \mcl{C}_{j}$ ($j = 1, 2, \ldots$). For every $j$ and $g \in G$, we have
$supp \, f_{j} \circ g \subset \mcl{C}_{j}$, since $\mcl{C}_{j}$ is $G$-invariant, so by averaging $f_{j} \circ g$ over $g \in G$, we can make each $f_{j}$ $G$-invariant. 
Let $\zeta_{j}$ be the restriction, $f_{j} \restriction_{\Pf}$, of $f_{j}$ 
to $\Pf$. (So $\zeta_{j}$ is continuous and $G$-invariant since $f_{j}$ is.) 
Thus, $supp \, \zeta_{j} \subset \X_{j}$.
 
Define
	\begin{equation*}
	     \epsilon(x) := \frac{1}{3} \sum_{j} \zeta_{j}(x) \, \tilde{\epsilon}_{j}, \quad x \in \Pf.
	\end{equation*}
Notice that, since the $\zeta_{j}$'s are $G$-invariant, so is $\epsilon$. Moreover, $\epsilon$ has a $C^{\infty}$ extension $\mcl{C}$, 
viz.\ $\tfrac{1}{3} \sum_{j} f_{j} \, \tilde{\epsilon}_{j}$, which is also $G$-invariant.

Let $y \in \Pf$, let $(y,v) \in \mbf{F}_{1}[y]$, and let $t \in \bigl[ 0, \epsilon(y) \bigr]$. There exists $m = 1, 2, \ldots$ s.t.\ there are exactly 
$m$ indices $j_{1}, \ldots, j_{m}$ s.t.\ $\zeta_{j_{\ell}}(y) \neq 0$ 
for $\ell = 1, \ldots, m$. Thus, $y \in \X_{j_{\ell}}$ for $\ell = 1, \ldots, m$. 
Let $q = 1, \ldots, m$ satisfy 
$\tilde{\epsilon}_{j_{q}} = \max_{\ell = 1, \ldots, m} \tilde{\epsilon}_{j_{\ell}}$. Hence, $y \in \X_{j_{q}}$ and we have
	\begin{equation*}
		3 \, \epsilon(y) = \sum_{k=1}^{m} \zeta_{j_{k}}(y) \, \tilde{\epsilon}_{j_{k}} 
			\leq \tilde{\epsilon}_{j_{q}}. 
	\end{equation*}
Therefore, by \eqref{E:eps.hat.X.property} with $j = j_{q}$ and $t = 3 \, \epsilon(y)$, 
if $v \in \mbf{F}_{1}[y]$ then $\bigl (y, 3 \, \epsilon(y) v \bigr) \in C[y]$ and 
$2 \, \epsilon(y) < t_{i}$ for every $i$ s.t.\ $y \in \mcl{E}_{i}$, as desired.
  \end{proof}
  
  \begin{proof}[Proof of lemma \ref{L:rescaling.C[]}.]
Let $\clU \subset T \D \restriction_{\Pf}$ be the total space of a cone bundle as described in definition \ref{D:fibering.by.cones}. By hypothesis, we may assume the bundle has a relatively compact trivialization $\bigl\{ ( \mcl{V}, \mcl{A}_{i}, h_{i}, \msf{L}_{i} ) \bigr\}$. Let $\mcl{C} := Exp(\clU)$ as in part \ref{I:Exp.alpha.homeom} of definition \ref{D:fibering.by.cones}.

By Boothby \cite[Lemma (6.1), p.\ 332]{wmB75}, $\clU$ is a finite dimensional manifold. Therefore, by Boothby \cite[p.\ 52]{wmB75} 
or Munkres \cite[pp.\ 3--4]{jrM66}, it is locally compact, second countable, and Hausdorff. Therefore, as in example \ref{Ex:nested.compacts}, there exists a sequence 
$\mcl{K}_{0} = \varnothing, \mcl{K}_{1}, \mcl{K}_{2}, \ldots$ of compact subsets of $\clU$ whose union is $\clU$ that satisfy 
$\mcl{K}_{i-1} \subset \mcl{K}_{i}^{\circ}$ ($i = 1, 2, \ldots$). 
Let $\mcl{L}_{i} := \pi(\mcl{K}_{i}) \subset \Pf$ ($i = 0, 1, 2, \ldots$), 
where $\pi : T \D \to \D$ is projection. By \eqref{E:pi.is.smooth.and.open}, $\pi$ is continuous and open. Therefore, each $\mcl{L}_{i}$ is compact and 
$\mcl{L}_{i-1} \subset \mcl{L}_{i}^{\circ}$ ($i = 1, 2, \ldots$). 
Let $\mcl{E}'_{i} := \mcl{L}_{i+1}^{\circ} \setminus \mcl{L}_{i-1}$ ($i = 1, 2, \ldots$).
As in example \ref{Ex:nested.compacts}, $\{ \mcl{E}'_{i} \}$ is a locally finite open cover 
of $\Pf$ consisting of sets relatively compact in $\Pf$.

Since each $\mcl{E}'_{j}$ is relatively compact in $\Pf$, by \eqref{E:C[K].rel.compact} for each $j$ there exists $M_{j} \in (0, \infty)$ s.t.\ $|v| < M_{j}$ for every $(y,v) \in C[\mcl{E}'_{j}]$. By lemma \ref{L:2eps.P.exists}, there exists a positive $G$-invariant function $\epsilon_{1/M}$ 
on $\Pf$, with $C^{\infty}$ extension to $\mcl{C}$, s.t.\ $2 \epsilon_{1/M}(y) < 1/M_{j}$ for every $y \in \mcl{E}'_{j}$. 

Let $\mcl{E}''_{ij} := \mcl{E}_{i} \cap \mcl{E}'_{j}$. Obviously, $\{ \mcl{E}''_{ij} \}$ is locally finite. By lemma \ref{L:2eps.P.exists}, there exists a positive $G$-invariant function $\epsilon$ 
on $\Pf$, with $C^{\infty}$ extension to $\mcl{C}$, s.t.\ $2 \epsilon(y) < \min(t_{i}, M_{j})$ for every $y \in \mcl{E}''_{ij}$. Therefore, if $y \in \mcl{E}''_{ij}$ then 
    \begin{equation}  \label{E:eps.eps1.<.1}
      \epsilon(y) \epsilon_{1/M}(y) < \min(t_{i}, M_{j}) / M_{j} \in (0,1). 
    \end{equation}

Let $\hat{\epsilon}$ denote the $C^{\infty}$ extension of $\epsilon$ to $\clU$. 
Define $\hat{\epsilon}_{1/M}$ similarly. $T \D$ is a Riemannian manifold with Riemannian metric being the restriction of that given by \eqref{E:Riem.metric.on.T.R^k}. 
$\omega_{\D}$, defined at \eqref{E:omega.D.defn}, is the topological metric on $\clU$ corresponding to this Riemannian metric. (See \eqref{E:Riem.metric.on.TD} and lemma \ref{L:xi+.generates.manif.topol}.) By corollary \ref{C:cont.diff.=.loc.Lip} 
$\hat{\epsilon} : \clU \to \RR$ is locally Lipschitz w.r.t.\ $\omega_{\D}$ and the Euclidean norm on $\RR$. Hence, by \eqref{E:omegaD.leq.Kb.xi+}, $\hat{\epsilon} : \clU \to \RR$ is locally Lipschitz w.r.t.\ $\xi_{+}$ and the Euclidean norm on $\RR$. The same thing is true 
of $\hat{\epsilon}_{1/M}$. In particular,
    \begin{equation}  \label{E:eps.eps.1/M.loc.Lip}
      \epsilon \text{ and } \epsilon_{1/M} \text{ are locally Lipschitz w.r.t.\ } \xi_{+} 
        \text{and the Euclidean norm on } \RR.
    \end{equation}

Let the neighborhoods $\mcl{V}$ with finite covering sets, links, and imbeddings 
$\{ \mcl{A}_{i} , \msf{L}_{i}, h_{i} \}$ be as in definition \ref{D:fibering.by.cones}, part \ref{I:local.triv}. We modify this as follows. The $\mcl{V}$s,
$\mcl{A}_{i}$s, and $\msf{L}_{i}$s remain the same, but replace $h_{i}$ by 
    \begin{equation*}
      h_{i, \epsilon} : (y, s, sz) \mapsto \epsilon(y) \epsilon_{1/M}(y) h_{i}(y, s, sz),
        \qquad y \in \mcl{A}_{i}, s \in [0,1), z \in \msf{L}_{i} .
    \end{equation*}

Define 
$\clU_{\epsilon} := \epsilon \epsilon_{1/M} \clU 
:= \bigl\{ \epsilon(y) \epsilon_{1/M}(y) (y,v) \in T \D :  (y,v) \in \clU \bigr\}$, 
$C_{\epsilon}[y] =  \epsilon(y) \epsilon_{1/M}(y) C[y]$ ($y \in \Pf$), and
$\pi_{\epsilon} := \pi \restriction_{\clU_{\epsilon}}$. If $\X \subset \Pf$, 
define $C_{\epsilon}[\X] := \pi_{\epsilon}^{-1}(\X)$. So hanging the subscript $\epsilon$ appropriately we see that \eqref{E:pi.-1.V.=.union.of.As} holds. Moreover, it is immediate from the definition of cone and \eqref{E:eps.eps1.<.1} that $\clU_{\epsilon} \subset \clU$ and 
$C_{\epsilon}(y) \subset C[y]$. Hence, by part \ref{I:Exp.alpha.homeom} 
of definition \ref{D:fibering.by.cones} $Exp \restriction_{\clU_{\epsilon}}$ is a bi-Lipschitz homeomorphism. By lemmas \ref{L:scalar.mult.xi+.cont} and \ref{L:xi+.generates.manif.topol}, $\clU_{\epsilon}$ is open in $T \D$. So $\mcl{C}_{\epsilon} := Exp(\clU_{\epsilon})$ is open and part \ref{I:Exp.alpha.homeom} of definition \ref{D:fibering.by.cones} continues to hold in the subscript $\epsilon$ world.

By part \ref{I:local.triv} of definition \ref{D:fibering.by.cones}, the fact that the tivialization of the bundle is relatively compact by assumption, \eqref{E:eps.eps.1/M.loc.Lip},  
and example \ref{Ex:ratnl.fns.loc.Lip}, $h_{i, \epsilon}$ is Lipschitz. 

Let $y \in \mcl{A}_{i}$, $s \in [0,1)$, $z \in \msf{L}_{i}$ and $(y,v) =  h_{i, \epsilon}(y, s, sz)$, so $\left( y, \tfrac{1}{\epsilon(y) \epsilon_{1/M}(y)} v \right) \in C[y]$. $\mcl{A}_{i}$ is relatively compact by assumption and $\epsilon \epsilon_{1/M}$ is positive. 
Therefore, $\epsilon \epsilon_{1/M}$ is bounded below on $\mcl{A}_{i}$. By corollary \ref{C:cont.diff.=.loc.Lip} and example \ref{Ex:ratnl.fns.loc.Lip} again each step in the following is Lipschitz.
    \begin{equation*}
          \begin{CD}
            (y,v) @>{\bigl((\epsilon \epsilon_{1/M}) \circ \pi \bigr) \times identity}>>
              \bigl( \epsilon(y) \epsilon_{1/M}(y), y, v \bigr) @>>>
                 \left( y, \frac{1}{\epsilon(y) \epsilon_{1/M}(y)} v \right)
                   @>h_{i}^{-1}>> (y, s, sz) .
           \end{CD}
   \end{equation*}
Therefore, by \eqref{E:comp.of.Lips.is.Lip}, $h_{i, \epsilon}^{-1}$ is Lipschitz. 

Let $(y,v) \in \clU_{\epsilon}$. 
Thus, $\left( y, \tfrac{1}{\epsilon(y) \epsilon_{1/M}(y)} v \right) \in \clU$.
There exists $i,j$ s.t.\ $y \in \mcl{E}''_{ij}$. Therefore, $y \in \mcl{E}'_{j}$ so by definition of $M_{j}$, $\left| \tfrac{1}{\epsilon(y) \epsilon_{1/M}(y)} v \right| < M_{j}$. 
In addition, $y \in \mcl{E}_{i}$ so $\epsilon(y) < t_{i}$. Hence, by definition 
of $\epsilon_{1/M}$, we have 
    \begin{equation*}
     |v| = \epsilon(y) \epsilon_{1/M}(y) 
       \left| \frac{1}{\epsilon(y) \epsilon_{1/M}(y)} v \right| 
         \leq \epsilon(y) M_{j}^{-1} \left| \frac{1}{\epsilon(y) \epsilon_{1/M}(y)} v \right| 
           < \epsilon(y) < t_{i} .
    \end{equation*}
Thus, \eqref{E:v.short} holds in $\clU_{\epsilon}$. 

Let $(y, v) \in \clU_{\epsilon}$. 
Thus, $\left( y, \tfrac{1}{\epsilon(y) \epsilon_{1/M}(y)} v \right) \in \clU$. Let $g \in G$. Then, by \eqref{E:C.C[P].G.invar}, we have
        \begin{equation*}
          g_{\ast} (y,v) 
            = \epsilon(y) \epsilon_{1/M}(y) \,
              g_{\ast} \left( y, \tfrac{1}{\epsilon(y) \epsilon_{1/M}(y)} v \right) \in \clU_{\epsilon} .
        \end{equation*}
It follows from \eqref{E:gExp=Expg.star} that \eqref{E:C.C[P].G.invar} holds 
in the ``subscript $\epsilon$'' world.

It is obvious that the analogues of properties \ref{I:pi.h.(y,w)=y} and \ref{I:homogeneity.of.hi} of definition \ref{D:fibering.by.cones} hold for $h_{\epsilon}$

This completes the proof of the lemma.
  \end{proof}
  
  \begin{proof}[Proof of lemma \ref{L:biLip.triangulation}]
 $\D$ is a $C^{\infty}$ manifold. Therefore, by Munkres \cite[Theorem 10.6, pp.\ 103--104]{jrM66},
$\D$ has a $C^{\infty}$ triangulation $f : |P| \to \D$. That $f$ is a triangulation means that, in particular, $f$ is a homeomorphism (appendix \ref{Chptr:basics.of.simp.comps}). Therefore, since $\D$ is a compact by \eqref{E:D.is.cmpct.Riem.manif}, $|P|$ is compact, hence, by \eqref{E:|P|.compact}, finite. 

``$f : |P| \to \D$ is $C^{\infty}$'' means the following (Munkres \cite[Definition 8.3, pp.\ 80--81 and Definition 1.2, p.\ 5]{jrM66}). Recall that, by \eqref{E:D.imbedded.in.Rk}, $\D$ is an imbedded submanifold of $\RR^{k}$. Let $b \in |P|$ and let $\sigma \in P$ satisfy $b \in \sigma$. Then the map $x \mapsto D f \restriction_{\sigma} (b)(x-b) \in \RR^{k}$ is required to be one-to-one in $x \in \sigma$. What does $D f \restriction_{\sigma} (b)$ mean? If $\dim \sigma = \ell$ then, by \eqref{E:formla.for.affine.plane}, $\sigma$ lies in a unique $\ell$-plane in $\RR^{N}$ so WLOG we may assume $\sigma \subset \RR^{\ell}$. Then there exists a neighborhood, $U$, of $b$ in $\RR^{\ell}$ and a $C^{\infty}$ extension, $g : U \to \RR^{k}$, of $f$. Then $Df(b)$ is defined to be the Jacobian matrix 
$Dg(b)^{k \times \ell} := \bigl( \partial g_{i}/\partial x_{j} \bigr)$. Thus, 
``$x \mapsto D f \restriction_{\sigma} (b)(x-b) \in \RR^{k}$ is one-to-one in 
$x \in \sigma$'' means $\ell \leq k$ and $Dg(b)$ has rank $\ell$. Therefore, making $U$ smaller if necessary, $Dg$ has full rank $\ell$ throughout $U$. 

Hence, $N := g(U)$ is an immersed submanifold of $\RR^{k}$ (Boothby \cite[Definition (4.3)), p.\ 70]{wmB75}) of dimension $\ell$. Therefore, by Boothby \cite[Theorem (4.12)), p.\ 74]{wmB75}, making $U$ smaller if necessary, $g : U \to \RR^{k}$ is an imbedding. Hence, by Boothby \cite[Theorem (5.5), p.\ 78]{wmB75}, $g : U \to g(U)$ is a diffeomorphism. By corollary \ref{C:cont.diff.=.loc.Lip}, that means making $U$ even smaller if necessary, $g$ and $g^{-1}$ are both Lipschitz. 
In particular, $f \restriction_{U \cap \sigma} = g \restriction_{U \cap \sigma}$ and its inverse are both Lipschitz. Hence, by compactness 
of $\sigma$, $f$ is bi-Lipschitz on $\sigma$. But $P$ is finite. Therefore, $f$ is bi-Lipschitz on $P$.
  \end{proof}
  
 \begin{proof}[Proof of lemma \ref{L:f.dil.Lipschitz}]
Most of the work will go into proving
      \begin{multline}  \label{E:F.and.F.invrs.Lip}
	\text{With } R \in (0, \threeess/3) \text{ fixed, } 
	  F: = F_{R} 
	    \text{ and its inverse are Lipschitz on } C[\Pf] \text{ w.r.t. } \xi_{+}. \\
	      \text{Moreover, the Lipschitz constant for } F_{R} 
	        \text{ is inversely proportional to } R.
     \end{multline}
(See \eqref{E:xi+.from.2.metrics}.)     

By assumption, $R < \threeess/3$. Temporarily release $t$ from its definition 
as $\dim \T$. Let    
    \begin{equation}  \label{E:t:=.threeess/R}
        t := \threeess/R > 3 .
    \end{equation}
Thus, Lipschitz constants proportional to $t$ are inversely proportional to $R$. 
Write $A := A_{dilate,R}$ and $B := B_{dilate,R}$. Notice, by \eqref{E:AB.dilate.defn}, 
    \begin{equation}  \label{E:A.B.in.trms.of.t}
        A = \frac{t}{2t-1} \text{ and } B = 2 \frac{t-1}{2t-1}.
    \end{equation}

There are a lot of cases to be considered. That makes the proof rather lengthy. Notice that, by \eqref{E:Cs.defn} and \eqref{E:rho.in.[0,1]}, we have $C_{t^{-1}}$ is defined. First we consider \emph{Case 0:} The restrictions of $F$ to three subsets of $C[\Pf]$:  $C_{t^{-1}}$, 
$C_{\twoess} \setminus C_{t^{-1}}$, and $C[\Pf] \setminus C_{\twoess}$ and $F^{-1}$ to $C_{1}$, $C_{\twoess} \setminus C_{1}$, and $C[\Pf] \setminus C_{\twoess}$. 
(See \eqref{E:Cs.defn} and \eqref{E:F.on.parts}.) 
Trivially, by \eqref{E:F.on.C[Pf].defn}, $F$ is Lipschitz (w.r.t.\ $\xi_{+}$, \eqref{E:xi+.from.2.metrics}) on $C_{t^{-1}}$ with Lipschitz constant $t$ and, by \eqref{E:t:=.threeess/R}, on $C[\Pf] \setminus C_{\twoess}$ with Lipschitz constant $1 < t$. Similarly, $F^{-1}$ is Lipschitz on $C_{1} = F(C_{t^{-1}})$ with Lipschitz constant $1/t < 1$ and on and $C[\Pf] \setminus C_{\twoess}$ with Lipschitz constant $1 > t^{-1}$. (See \eqref{E:F.on.parts}.) We show $F$ and $F^{-1}$ are Lipschitz 
on the two sets $C_{\twoess} \setminus C_{t^{-1}}$ and 
$C_{\twoess} \setminus C_{1} = F(C_{\twoess} \setminus C_{t^{-1}})$, resp. 

Thus, by \eqref{E:F.F.invrs} and example \ref{Ex:ratnl.fns.loc.Lip}, we only need to prove that $(x,u) \mapsto \rho(x) |u|^{-1} u$ is Lipschitz in 
$(x,u) \in C_{\twoess} \setminus C_{t^{-1}} \supset C_{\twoess} \setminus C_{1} = F(C_{\twoess} \setminus C_{t^{-1}})$. 
Let $(x,u), (x',u') \in C_{\twoess} \setminus C_{t^{-1}}$. By definition \eqref{E:Cs.defn} 
of $C_{\twoess} \setminus C_{t^{-1}}$ and \eqref{E:rho.in.[0,1]}, we have  
    \begin{equation}  \label{E:|u|,|u'|,rho,t.ineqs}
      |u| < 2 \rho(x), \, |u'| < 2 \rho(x'), \, \rho(x) / |u| \leq t, 
        \, \text{ and } \rho(x') / |u'| \leq t,
    \end{equation}
Therefore,  
    \begin{multline}  \label{E:C.t.invrs.ineq} 
            \left| \frac{\rho(x)}{|u|} u - \frac{\rho(x')}{|u'|} u' \right| 
              \leq \left| \frac{\rho(x)}{|u|} u - \frac{\rho(x)}{|u|} u' \right| 
                + \left| \frac{\rho(x)}{|u|} u' - \frac{\rho(x')}{|u'|} u' \right| \\
                = \frac{\rho(x)}{|u|} | u - u' | + \left| \frac{\rho(x)}{|u|} 
                  - \frac{\rho(x')}{|u'|} \right| |u'|
                    \leq t |u - u'| + \left| \frac{\rho(x)}{|u|} - \frac{\rho(x')}{|u'|} \right| |u'|.
    \end{multline}
Therefore, we need only analyze $\left| \tfrac{\rho(x)}{|u|} - \tfrac{\rho(x')}{|u'|} \right| |u'|$. 
WLOG $|u| \geq |u'|$. By \eqref{E:|u|,|u'|,rho,t.ineqs}, 
$\rho(x')/|u| \leq \rho(x')/|u'| \leq t$. By \eqref{E:t:=.threeess/R}, $|u'|/|u| \leq 1 < 2 t$. 
By \eqref{E:rho.is.Lip.on.D}, $\rho$ is Lipschitz on $\D$ (w.r.t.\ $\xi$ and $| \cdot |$). 
    \begin{equation}   \label{E:K.is.Lip.const.for.rho}
        \text{ Let } K \in (1, \infty) \text{ be a Lipschitz constant for } 
          \rho \text{ w.r.t.\ } \xi.
    \end{equation}
 Therefore,
    \begin{align}
        \left| \frac{\rho(x)}{|u|} - \frac{\rho(x')}{|u'|} \right| |u'| 
          &\leq \frac{|u'|}{|u|} \bigl| \rho(x) - \rho(x') \bigr| 
              + \left| \frac{\rho(x')}{|u|} - \frac{\rho(x')}{|u'|} \right| |u'| \notag \\
            &\leq 2 t \bigl| \rho(x) - \rho(x') \bigr| 
              + \left| \frac{\rho(x')}{|u|} - \frac{\rho(x')}{|u'|} \right| |u'| \\
            &\leq 2 t K \xi(x, x') +  \frac{\rho(x')}{|u|} \bigl| |u'| - |u| \bigr|  \notag \\
            &\leq 2 t K \xi(x, x') +  t \bigl| |u'| - |u| \bigr|  \notag \\
            &\leq 2 t K \xi(x, x') +  t |u' - u|. \notag 
    \end{align}
Combining this with \eqref{E:C.t.invrs.ineq}, we have proof that $F$ and $F^{-1}$ are Lipschitz on $C_{\twoess} \setminus C_{t^{-1}}$. Specifically, in the case of $F$,
    \begin{align*}
        \xi_{+} \bigl[ F(x, v), F(x', v') \bigr] 
          &\leq \xi(x, x') + \left| \left( A v + B \frac{\rho(x)}{|v|} v \right) 
            - \left( A v' + B \frac{\rho(x')}{|v'|} v' \right) \right| \\
          &\leq \xi(x,x') + A |v-v'| + B \left| \frac{\rho(x)}{|v|} v - \frac{\rho(x')}{|v'|} v' \right| \\
          &\leq (1+2 B t K) \xi(x, x') + (A+2 Bt) |v - v'| \\
          &\leq (1+2 B t K) \xi(x, x') + 2(A+Bt) |v - v'| .
    \end{align*}
But from \eqref{E:A.B.in.trms.of.t}, it is easy to see that 
    \begin{equation} \label{E:A+Bt=t}
        A + B t = t. 
    \end{equation}
Moreover, by \eqref{E:t:=.threeess/R} and \eqref{E:R.<.threeess/3.A.B.ineqs}, 
$1+2 B t K < (1 + 2K) t$ and $(1 + 2K) t \geq 2$. Therefore, by \eqref{E:n.c.sqrd.sum.ineq} and \eqref{E:xi+.from.2.metrics}, 
    \begin{align*}
        \xi_{+} \bigl[ F(x, v), F(x', v') \bigr] &\leq (1+2 K) t \bigl( \xi(x, x') + |v - v'| \bigr) \\
          &\leq \sqrt{2} (1+2 K) t \, \xi_{+} \bigl[ (x, v), (x', v') \bigr] .
    \end{align*}
So, by \eqref{E:t:=.threeess/R}, the upper bound is inversely proportional to $R$. 

Thus, $F$ and $F^{-1}$ are Lipschitz on each of $C_{t^{-1}}$, 
$C_{\twoess} \setminus C_{t^{-1}}$, and $C[\Pf] \setminus C_{\twoess}$ (and their $F$-images, in the case of $F^{-1}$). And in all cases the Lipschitz constant of $F$ is proportional to $t$. This completes the proof of \eqref{E:F.and.F.invrs.Lip} in Case 0.

We show now that $F$ and $F^{-1}$ are each also Lipschitz \emph{across} $C_{t^{-1}}$, 
$C_{\twoess} \setminus C_{t^{-1}}$, and $C[\Pf] \setminus C_{\twoess}$, i.e., when their arguments are in different sets. Let $(x,u), (y,v) \in C[\Pf]$. Temporarily, redefining $p$, write 
    \begin{equation} \label{E:p.q.s.r.defns}
        p := \rho(x), \; q := |u|, \;  r := \rho(y), \text{ and } s := |v|. 
    \end{equation} 
Notice:
    \begin{equation} \label{E:|u-v|.geq.|q-s|}
       |u - v|^{2} = q^{2} - 2 u \cdot v + s^{2} \geq q^{2} - 2 qs + s^{2} 
         = |q - s|^{2} .
    \end{equation}

\emph{Case 1:} 
    \begin{equation}  \label{E:x,u,y,v.Case1}
      (x,u) \in C_{t^{-1}}, \; (y,v) \in C_{\twoess} \setminus C_{t^{-1}}.
    \end{equation} 
First, we prove $F$ is Lipschitz. Notice that, by \eqref{E:Cs.defn}, 
    \begin{equation}  \label{E:pqrs.Case1.ineqs}
      0 \leq q < p/t \text{ and } r/t \leq s < 2 r .
    \end{equation}
In particular, $r, s > 0$. By \eqref{E:xi+.from.2.metrics}, \eqref{E:F.on.C[Pf].defn}, and \eqref{E:t:=.threeess/R}, we have
	\begin{align} \label{E:Case1.ineq}
		\xi_{+} \bigl[ F(x,u), F(y,v) \bigr] 
		  &\leq \xi(x,y) + \bigl| t u - ( A + B r s^{-1} ) v \bigr| \\
		  &\leq \xi(x,y) + t |u-v| + \Bigl| t - \bigl( A + B r s^{-1} \bigr) \Bigr| s . \notag\\
		  &\leq t \xi(x,y) + t |u-v| + \Bigl| t - \bigl( A + B r s^{-1} \bigr) \Bigr| s . \notag
	\end{align}
Suppose $u = 0$. Now, by \eqref{E:pqrs.Case1.ineqs}, $s^{-1`} \leq t/r$, so by \eqref{E:A+Bt=t},
    \begin{equation}  \label{E:Case1.u=0}
      \bigl| t u - ( A + B r s^{-1} ) v \bigr| = 
        \bigl( A + B r s^{-1} \bigr) |u-v| \leq \bigl( A + B t \bigr) |u-v| = t |u-v| .
    \end{equation}
So we may assume $q > 0$.

We show that 
	\begin{equation}  \label{E:t.geq.A+Br s.invrs.Case.1}
		t \geq A + B r s^{-1} \geq 1 \text{ and } 
		  A + B p q^{-1} > t .
	\end{equation}
By \eqref{E:pqrs.Case1.ineqs}, $A + B r s^{-1} \leq A + B t = t$ by \eqref{E:A+Bt=t}. 
Moreover, $r s^{-1} > 1/2$. Therefore, by \eqref{E:AB.dilate.defn},
    \begin{equation*}
        A + B r s^{-1} \geq A + 2 (1 - A) / 2 = 1.
    \end{equation*}
By \eqref{E:pqrs.Case1.ineqs}, $p q^{-1} > t$. Hence, $A + Bp q^{-1} > t$ by \eqref{E:A+Bt=t} again. 
\eqref{E:t.geq.A+Br s.invrs.Case.1} follows. Therefore, we get to drop an absolute value sign in \eqref{E:Case1.ineq}:
	\begin{equation} \label{E:xi+.FF.no.abs.sign}
		\xi_{+} \bigl[ F(x,u), F(y,v) \bigr] 
		\leq t \xi(x,y) + t |u-v| + \Bigl[ t - \bigl( A + B r s^{-1} \bigr) \Bigr] s.
	\end{equation}

Next, we show that,  
	\begin{equation}  \label{E:conjectured.t.diff.bound}
		\Bigl[ t - \bigl( A + B r s^{-1} \bigr) \Bigr] s \leq 2 t \bigl( s - r/t \bigr) .
	\end{equation}
By \eqref{E:pqrs.Case1.ineqs} again, the RHS of \eqref{E:conjectured.t.diff.bound} is non-negative and $r s^{-1} \leq t$. By \eqref{E:A+Bt=t}, \eqref{E:conjectured.t.diff.bound} holds 
if $r/t = s$, which means $r s^{-1} = t$. Assume $r/t < s$, which means $r s^{-1} < t$. We have, by \eqref{E:A.B.in.trms.of.t} and clearing fractions, 
	\begin{align*}  
	\frac{t - \bigl( A + B r/s \bigr)}{s - r/t} 
	   &= \frac{ 2t^{2} - t - \bigl[ t + 2(t-1)r/s \bigr] }{(2t-1)(s-r/t)} \\
	   &= 2 \frac{ s t^{2} - s t - r t + r }{s(2t-1)(s-r/t)} \\
	   &= 2 \frac{ s t^{3} - s t^{2} - r t^{2} + r t}{s(2t-1)(st-r)} \\
	   &= 2 \frac{ t (st - r)(t - 1)}{s(2t-1)(st-r)} \\
	   &= 2 \frac{ t (t - 1)}{s(2t-1)} .
	\end{align*}
Therefore, we have
	\begin{equation} \label{E:t-(A+Br/s).ratio}
		  \frac{\bigl[ t - ( A + B r s^{-1} ) \bigr] s}{s - r/t} = 2 \frac{ t (t - 1)}{2t-1} \leq 2 t.
	\end{equation}
This proves \eqref{E:conjectured.t.diff.bound}.

Now, by \eqref{E:pqrs.Case1.ineqs} and \eqref{E:p.q.s.r.defns}, $|u| - \rho(x)/t = q - p/t < 0$. Hence, by \eqref{E:p.q.s.r.defns}, \eqref{E:t:=.threeess/R}, and \eqref{E:K.is.Lip.const.for.rho},  
	\begin{align}  \label{E:t(s-r/t).Case1.ineq}
         2t (s - r/t) &= 2t \Bigl[ \bigl(|v| - |u| \bigr) + \bigl( |u| - \rho(x)/t \bigr) 
	    + \bigl( \rho(x) - \rho(y) \bigr)/t \Bigr] \notag \\
	    &\leq 2t \Bigl[ |v - u| + \bigl( \rho(x) - \rho(y) \bigr)/t \Bigr] \\
	    &\leq 2t \Bigl[ |v - u| + K \xi(x,y)/t \Bigr]  \notag \\
	    &\leq 2t \Bigl[ |v - u| + K \xi(x,y) \Bigr] \notag .
	\end{align}
Combining this with \eqref{E:xi+.FF.no.abs.sign} and \eqref{E:conjectured.t.diff.bound},   Lipschitz-osity of $F$ (with Lipschitz constant proportional to $t$) is proved in Case 1. 

Now $F^{-1}$ in Case 1. Let $L \in (1, \infty)$. We specify it presently. Recall \eqref{E:F.on.C[Pf].defn}, \eqref{E:x,u,y,v.Case1}, \eqref{E:p.q.s.r.defns}, and \eqref{E:t:=.threeess/R}. By \eqref{E:pqrs.Case1.ineqs}, $|v| = s \geq r/t$. Hence, by \eqref{E:xi+.from.2.metrics},  \eqref{E:n.c.sqrd.sum.ineq}, \eqref{E:t.geq.A+Br s.invrs.Case.1}, and \eqref{E:t-(A+Br/s).ratio}, 
	\begin{align*}
		\sqrt{2} \, L \, \xi_{+} \bigl[ F(x,u), & F(y,v) \bigr] 
		  \geq L \, \xi(x,y) + L\bigl| t u - ( A + B r s^{-1} ) v \bigr| \\
		  &\geq L \, \xi(x,y) + \bigl| t u - ( A + B r s^{-1} ) v \bigr| \\
		  &= L \, \xi(x,y) + \bigl| t (u-v) - ( A + B r s^{-1} -t ) v \bigr| \\
		  &\geq L \, \xi(x,y) + t |u-v| - \bigl| ( A + B r s^{-1} ) - t \bigr| s  \\
		  &= L \, \xi(x,y) + t |u-v| - \bigl[ t - ( A + B r s^{-1} ) \bigr] s \\
		  &= L \, \xi(x,y) + t |u-v| - 2 \frac{ t (t - 1)}{2t-1} (s - r/t) \\
		  &= L \, \xi(x,y) + t |u-v| \\
		  & \qquad - 2 \frac{ t (t - 1)}{2t-1} 
		    \Bigl[ \bigl( |v| - |u| \bigr) + \bigl( |u| - \rho(x)/t \bigr) 
		      + \bigl( \rho(x) - \rho(y) \bigr)/t \Bigr] .
        \end{align*}
As in \eqref{E:t(s-r/t).Case1.ineq}: By \eqref{E:pqrs.Case1.ineqs} and \eqref{E:p.q.s.r.defns}, 
$|u| - \rho(x)/t = q - p/t < 0$. Applying \eqref{E:K.is.Lip.const.for.rho} we get, 
	\begin{align*}
		\sqrt{2} \, L \, \xi_{+} \bigl[ F(x,u), & F(y,v) \bigr] \\
		  &\geq L \, \xi(x,y) + t |u-v|  \\
		  & \qquad - 2 \frac{ t (t - 1)}{2t-1} \Bigl[ \bigl( |v| - |u| \bigr) \bigr) 
		      + \bigl( \rho(x) - \rho(y) \bigr)/t \Bigr] \\
		  &\geq L \, \xi(x,y) + t |u-v| - 2 \frac{ t (t - 1)}{2t-1} 
		    \Bigl[ \bigl( |v| - |u| \bigr) \bigr) 
		      + K \xi(x,y)/t \Bigr] \\
		  &\geq \left( L -  2 \frac{ t - 1}{2t-1} K \right) \xi(x,y) + t |u-v| -
		     2 \frac{ t (t - 1)}{2t-1} |u-v| \\
		  &= \left( L -  2 \frac{ t - 1}{2t-1} K \right) \xi(x,y) 
		    + \left( t - 2 \frac{ t (t - 1)}{2t-1} \right) |u-v| \\
		  &= \left( L -  2 \frac{ t - 1}{2t-1} K \right) \xi(x,y) + \frac{t}{2t-1} |u-v|. 
	\end{align*}
By \eqref{E:t:=.threeess/R}, we have $ 2 (t - 1)/(2t-1) > 1/2$ and $t/(2t-1) > 1/2$. By \eqref{E:K.is.Lip.const.for.rho}, $K > 1$. Let
    \begin{equation*}
      L := 2 \frac{ t - 1}{2t-1} K + \frac{1}{2} > 1, 
    \end{equation*}
Then, by \eqref{E:xi+.from.2.metrics},
    \begin{equation*}
      2 \sqrt{2} \, L \, \xi_{+} \bigl[ F(x,u), F(y,v) \bigr] \geq \xi(x,y) + |u-v|
        \geq \xi_{+} \bigl( (x,u), (y,v) \bigr) .
    \end{equation*}
Replace $(x,u)$ by $F^{-1}(x,w)$ and $(y,v)$ by $F^{-1}(y,z)$. Then we see that $F^{-1}$, is Lipschitz in Case 1. Combining this with what we proved about $F$, this completes the proof of \eqref{E:F.and.F.invrs.Lip} in Case 1. 

\emph{Case 2:} $(x,u) \in C_{\twoess} \setminus C_{t^{-1}}$ 
and $(y,v) \in C[\Pf] \setminus C_{\twoess}$. Recall \eqref{E:p.q.s.r.defns} and 
\eqref{E:t:=.threeess/R}. Thus, by \eqref{E:Cs.defn}, 
	\begin{equation}  \label{E:Case2.pqrst.ineqs}
		p/t \leq q < 2p, \text{ so } 1/2 < p q^{-1} \leq t,
		   \text{ and } s \geq 2 r. 
	\end{equation}

First, we prove $F$ is Lipschitz. By \eqref{E:F.on.C[Pf].defn}, we have
	\begin{multline}  \label{E:xi+.r.qinvrs.dist.in.Case2}
		\xi_{+} \bigl[ F(x,u), F(y,v) \bigr] 
		  \leq \xi(x,y) + \bigl| ( A + B p q^{-1} ) u - v \bigr| \\
		  \leq \xi(x,y) + (A + B p q^{-1}) |u-v| + \bigl| ( A + B p q^{-1} ) - 1 \bigr| s. 
	\end{multline}
By \eqref{E:Case2.pqrst.ineqs}, \eqref{E:A+Bt=t}, and \eqref{E:R.<.threeess/3.A.B.ineqs}, 
	\begin{equation}  \label{E:t.geq.A+Bpq.invrs.Case.2}
		  t \geq A + B p q^{-1} \geq 1 .
	\end{equation}
Hence, we get to drop the last $| \cdot |$ sign in \eqref{E:xi+.r.qinvrs.dist.in.Case2} to get, after applying \eqref{E:Case2.pqrst.ineqs} and \eqref{E:A+Bt=t} again,
	\begin{multline}   \label{E:xi+.dist.in.Case2}
		\xi_{+} \bigl[ F(x,u), F(y,v) \bigr] 
		      \leq \xi(x,y) + (A + B t) |u-v| + \bigl[ ( A + B p q^{-1} ) - 1 \bigr] s \\
			  \leq \xi(x,y) + t |u-v| + \bigl[ ( A + B p q^{-1} ) - 1 \bigr] s .
	\end{multline}

Notice that, by \eqref{E:A.B.in.trms.of.t} and \eqref{E:Case2.pqrst.ineqs}, 
	\begin{align}  \label{E:A+Br/q.Case2}
		 ( A + B p q^{-1} ) - 1 
    		  &= \frac{t}{2t-1} + \frac{2t-2}{2t-1} \left( \frac{p}{q} \right) - 1\notag \\
    		  &= \frac{ 2tp - 2p- tq + q }{(2t-1)q} \\
    		  &= \frac{(2p - q)(t - 1)}{(2t-1)q} . \notag 
	\end{align}
But, by \eqref{E:Case2.pqrst.ineqs} and \eqref{E:|u-v|.geq.|q-s|}, 
    \begin{multline}  \label{E:2p-q.ineq}
      2p -q = (2p-2r) + (2r - q) \leq 2(p-r) + (s-q) \\
        \leq 2K\xi(x,y) + |s-q| \leq 2K\xi(x,y) + |u-v| .
    \end{multline}

\emph{Assume} first that $s \leq q$. (See \eqref{E:p.q.s.r.defns}. But if 
$(x,u) \in C_{\twoess} \setminus C_{t^{-1}}$ 
and $(y,v) \in C[\Pf] \setminus C_{\twoess}$ how can $s \leq q$? It can happen if $r < p$. See \eqref{E:Case2.pqrst.ineqs}.) Then from, \eqref{E:xi+.dist.in.Case2}, 
\eqref{E:A+Br/q.Case2}, \eqref{E:xi+.from.2.metrics}, the preceding, \eqref{E:t:=.threeess/R}, and \eqref{E:n.c.sqrd.sum.ineq},
    \begin{align}  \label{E:Case2,s.leq.q}
      \xi_{+} \bigl[ F(x,u), F(y,v) \notag \bigr] 
        &\leq \xi(x,y) + t |u-v| + \frac{(2p - q)(t - 1)}{2t-1} \frac{s}{q} \notag \\
        &\leq \xi(x,y) + t |u-v| + \frac{(2p - q)(t - 1)}{2t-1} \notag \\
        &\leq \xi(x,y) + t |u-v| + \bigl( 2K\xi(x,y) + |u-v| \bigr) \frac{t - 1}{2t-1} \\
        &\leq \xi(x,y) + t |u-v| + \bigl( 2K\xi(x,y) + |u-v| \bigr) \notag \\
        &\leq 3t \xi(x,y) + 3 t|u-v| \bigr) \notag \\
        &\leq 3t \sqrt{2} \, \xi_{+} \bigl[ (x,u), y,v) \bigr] \notag ,
    \end{align}
if $s \leq q$. By \eqref{E:t:=.threeess/R}, this bound is inversely proportional to $R$.
 
Next, assume $s > q$. We have
  \begin{align}  \label{E:|A.B.u-v|.beta.brkdwn.1}
     \bigl| ( A + B p q^{-1} ) u - v \bigr|
       &\leq ( A + B p q^{-1}) \bigl| u -  q s^{-1} v \bigr|
         + \bigl| ( A + B p q^{-1} ) q s^{-1} v - v \bigr| \notag \\
       &= ( A + B p q^{-1}) \bigl| u -  q s^{-1} v \bigr| 
         + \bigl| ( A + B p q^{-1} ) q s^{-1} - 1  \bigr| s \\
       &= ( A + B p q^{-1}) \bigl| u -  q s^{-1} v \bigr| 
         + \bigl| ( A + B p q^{-1} ) q - s \bigr| . \notag 
  \end{align}
Let $\beta := q s^{-1}$, so $1 - \beta > 0$. Then  
    \begin{multline}  \label{E:|u-v|.beta.brkdwn}
      | u - v |^{2} = \bigl| (u - \beta v) - (1 - \beta) v \bigr|^{2} \\ 
        = \bigl| u -  \beta v \bigr|^{2} - 2 (1 - \beta) (u - \beta v) \cdot v 
          + (1 - \beta)^{2} s^{2} .
    \end{multline}
Now, $q = \beta s$, so
    \begin{align*}
       - 2 (1 - \beta) (u -  &\beta v) \cdot v + (1 - \beta)^{2} s^{2} \\
         &= - 2 (1 - \beta) u \cdot v + 2 (1 - \beta) \beta s^{2} + (1 - \beta)^{2} s^{2}  \\
         &\geq - 2 (1 - \beta) q s + 2 (1 - \beta) \beta s^{2} + (1 - \beta)^{2} s^{2} \\
         &= - 2 (1 - \beta) \beta s^{2} + 2 (1 - \beta) \beta s^{2} + (1 - \beta)^{2} s^{2} \\
         &= (1 - \beta)^{2} s^{2} \geq 0 .
    \end{align*}
Substituting this into \eqref{E:|u-v|.beta.brkdwn}, we get 
    \begin{equation}  \label{E:s.geq.q.|u-v|.geq.|u-q.invrs.v|}
      \text{If } s \geq q \text{ then } |u-v| \geq \bigl| u -  q s^{-1} v \bigr| .
    \end{equation} 
\emph{Note that this does not depend on Case 2 assumptions. It is always true.} Moreover, by \eqref{E:Case2.pqrst.ineqs}, $A + Bp q^{-1} \leq A + B t = t$, by \eqref{E:A+Bt=t}. Applying these two facts to \eqref{E:|A.B.u-v|.beta.brkdwn.1}, we get 
  \begin{equation}  \label{E:|A.B.u-v|.beta.brkdwn.2}
     \bigl| ( A + B p q^{-1} ) u - v \bigr| 
       \leq t | u -  v | + \bigl| ( A + B p q^{-1} ) q - s \bigr| .
  \end{equation}

Suppose $( A + B p q^{-1} ) q \geq s$. By \eqref{E:Case2.pqrst.ineqs}, $s \geq 2 r$ and, by \eqref{E:R.<.threeess/3.A.B.ineqs}, $2 A r + Br = 2r$, so $s \geq 2 A r + Br$. This means
    \begin{equation*}
      \bigl| ( A + B p q^{-1} ) q - s \bigr| = ( A + B p q^{-1} ) q - s
        \leq ( Aq + B p) - (2 A r + Br) = A(q - 2r) + B(p-r) .
    \end{equation*}
Now, $q-2p < 0$, by \eqref{E:Case2.pqrst.ineqs}. Hence, 
    \begin{equation*}
      A(q - 2r) = A \bigl[ (q-2p) + 2(p-r) \bigr] \leq 2A(p-r)    
        \leq 2A K \xi(x,y) . 
    \end{equation*}
Therefore, if $( A + B p q^{-1} ) q \geq s$, we have, by \eqref{E:R.<.threeess/3.A.B.ineqs}, 
    \begin{equation} \label{E:A+B p.qinvrs.q-s.ineq}
      \bigl| ( A + B p q^{-1} ) q - s \bigr| 
        \leq A(q - 2r) + B(p-r) \leq (2A+B) K \xi(x,y) = 2 K \xi(x,y) .
    \end{equation}
Substituting this into \eqref{E:|A.B.u-v|.beta.brkdwn.2} and applying \eqref{E:n.c.sqrd.sum.ineq}, we get
    \begin{multline*}
      \xi_{+} \bigl[ F(x,u), F(y,v) \bigr] \leq \xi(x,y) +  \bigl| ( A + B p q^{-1} ) u - v \bigr| \\
        \leq (1+2K) \xi(x,y) + t | u -  v |) 
          \leq \sqrt{2} \, t (1+2K) \xi_{+} \bigl[ (x,u), (y,v) \bigr] ,
    \end{multline*}
a bound inversely proportional to $R$, by \eqref{E:t:=.threeess/R}.

Next, suppose $( A + B p q^{-1} ) q \leq s$. By \eqref{E:F.F.invrs} and \eqref{E:f.dil.is.dilation},  
$( A + B p q^{-1} ) q \geq q$. Thus, 
    \begin{equation*}
      \bigl| ( A + B p q^{-1} ) q - s \bigr| = s - ( A + B p q^{-1} ) q \leq s - q \leq |u-v|,
    \end{equation*}
by \eqref{E:|u-v|.geq.|q-s|}. Combining this with \eqref{E:|A.B.u-v|.beta.brkdwn.2},  \eqref{E:xi+.r.qinvrs.dist.in.Case2}, and \eqref{E:t:=.threeess/R}, we have 
    \begin{align}  \label{E:F.Lip.in.Case.2}
      \xi_{+} \bigl[ F(x,u), F(y,v) \bigr] &\leq \xi(x,y) + \bigl| ( A + B p q^{-1} ) u - v \bigr| 
        \notag \\
        &\leq t \xi(x,y) + (t+1) \bigl| u - v \bigr| \\ 
        &\leq 2 t \xi(x,y) + 2 t \bigl| u - v \bigr| \notag \\
        &\leq 2 \sqrt{2} \, t \xi_{+} \bigl[ (x,u), (y,v) \bigr] \notag
    \end{align}
if $s > q$. This is inversely proportional to $R$. Combine this with \eqref{E:Case2,s.leq.q}, we have, in Case 2, that $F$ Lipschitz with Lipschitz constant inversely proportional to $R$. 

Now we consider $F^{-1}$ in Case 2. Let $L \geq 6K/5 + 2/5 > 1$, by \eqref{E:K.is.Lip.const.for.rho}. We have, by \eqref{E:n.c.sqrd.sum.ineq} and \eqref{E:t.geq.A+Bpq.invrs.Case.2}, 
	\begin{align}  \label{E:Case2.inverse}
		\sqrt{2} \, L \xi_{+} \bigl[ F(x,u), F(y,v) \bigr] 
		  &\geq L \xi(x,y) + \bigl| ( A + B p q^{-1} ) u - v \bigr| \notag \\
		  &= L \xi(x,y) + \Bigl| \bigl[ ( A + B p q^{-1} ) u - u \bigr] - (v-u) \Bigr|  \\
		  &\geq L \xi(x,y) + |u-v| - \bigl| ( A + B p q^{-1} ) - 1 \bigr| q  \notag \\
		  &\geq L \xi(x,y) + |u-v| - \bigl( ( A + B p q^{-1} ) - 1 \bigr) q . \notag 
	\end{align}
By \eqref{E:t:=.threeess/R}, $(t - 1)/(2t-1) \leq 3/5$. Multiplying both sides 
of \eqref{E:A+Br/q.Case2} by $q$ and using \eqref{E:2p-q.ineq},  
we see,
	\begin{equation*}
		\bigl( ( A + B r q^{-1} ) - 1 \bigr) q = \frac{(2p - q)(t - 1)}{2t-1} 
		  \leq \frac{t - 1}{2t-1} \bigl( 2K\xi(x,y) + |u-v| ) 
		    \leq \frac{3}{5} \bigl( 2K\xi(x,y) + |u-v| ) .
	\end{equation*}

Hence, by \eqref{E:Case2.inverse},  
	\begin{align*} 
		\sqrt{2} \, L \xi_{+} \bigl[ F(x,u), F(y,v) \bigr] 
		  &\geq L \xi(x,y) + |u-v| - \frac{3}{5} \bigl( 2K\xi(x,y) + |u-v| ) \notag \\
		  &\geq \left( L  - \frac{6}{5} K \right) \xi(x,y) + \frac{2}{5} |u-v|  \\
		  &\geq \frac{2}{5} \bigl( \xi(x,y) + |u-v| \bigr)  \notag \\
		  &\geq \frac{2}{5} \xi_{+} \bigl[ (x,u), (y,v) \bigr]  \notag .
	\end{align*}
This proves $F^{-1}$ is Lipschitz in the Case 2 situation. Combining this with \eqref{E:F.Lip.in.Case.2}, we have that \eqref{E:F.and.F.invrs.Lip} holds in Case 2.

\emph{Case 3:} $(x,u) \in C_{t^{-1}}$ and $(y,v) \in C[\Pf] \setminus C_{\twoess}$. First, we prove $F$ is Lipschitz. Recall \eqref{E:p.q.s.r.defns}. As in Case 1 (see \eqref{E:Case1.u=0}) we may assume $q > 0$.
  	\begin{equation}  \label{E:pqr.ineqs.Case3}
		0 < q < t^{-1} p \text{ so } p q^{-1} > t, \, F(x,u) = (x, t u),
		   \text{ and } s \geq 2r.
	\end{equation}
Therefore, by \eqref{E:F.from.f.dil}, \eqref{E:F.on.C[Pf].defn}, and \eqref{E:Cs.defn}, 
we have
	\begin{equation}  \label{E:xi+.r.qinvrs.dist.in.Case3}
		\xi_{+} \bigl[ F(x,u), F(y,v) \bigr] 
		  \leq \xi(x,y) + \bigl| t u - v \bigr| 
		  \leq \xi(x,y) + t |u-v| + (t - 1) s. 
	\end{equation}
But, by \eqref{E:pqr.ineqs.Case3}, $t-1 \leq p/q - 1$ and $r - s \leq 0$ so, 
by \eqref{E:|u-v|.geq.|q-s|} and \eqref{E:K.is.Lip.const.for.rho}, 
    \begin{multline}   \label{E:t-1.ineq}
        t-1 \leq p/q - 1= (p-q)/q \\ 
          = \big[ (p-r) + (r-s) + (s-q) \bigr] / q 
            \leq \big[ K \xi(x,y) + |u-v| \bigr] / q .
    \end{multline}
Suppose $s \leq q$. (Possible if $r < t^{-1} p/2 < p/6$. See \eqref{E:pqr.ineqs.Case3} and \eqref{E:t:=.threeess/R}. ) Then substituting the preceding into \eqref{E:xi+.r.qinvrs.dist.in.Case3} we get, by \eqref{E:K.is.Lip.const.for.rho}, \eqref{E:n.c.sqrd.sum.ineq}, and \eqref{E:t:=.threeess/R},
    \begin{multline}  \label{E:Lip.bound.s.leq.q.Case3}
      \xi_{+} \bigl[ F(x,u), F(y,v) \bigr] \leq (K+1) \xi(x,y) + 2 t |u-v| \\
        \leq (K+1) t \bigl( \xi(x,y) + |u-v| \bigr)
       \leq 2 t \sqrt{2} \, K \xi_{+} \bigl[ (x,u), (y,v) \bigr] .
    \end{multline}
By \eqref{E:t:=.threeess/R}, this is inversely proportional to $R$.

So we may, until further notice assume $s > q$. We have
  \begin{align}  \label{E:|tu-v|.q.s.invrs}
     \bigl| t u - v \bigr|
       &\leq t \bigl| u -  q s^{-1} v \bigr|
         + \bigl| t q s^{-1} v - v \bigr| \notag \\
       &= t \bigl| u -  q s^{-1} v \bigr| 
         + \bigl| t q s^{-1} - 1  \bigr| s \\
       &= t \bigl| u -  q s^{-1} v \bigr| 
         + \bigl| t q - s \bigr| . \notag 
  \end{align}
Thus, from \eqref{E:|tu-v|.q.s.invrs} and \eqref{E:s.geq.q.|u-v|.geq.|u-q.invrs.v|}, 
  \begin{equation}  \label{E:|tu-v|.tq}    
     \bigl| t u - v \bigr| 
       \leq t | u -  v | + \bigl| t q - s \bigr| .
  \end{equation}

Assume $t q \geq s$. By \eqref{E:pqr.ineqs.Case3}, $tq < p$ and $s \geq 2 r$. The assumption $s > q$ remains in force. Therefore, 
    \begin{equation}  \label{E:Eq-s.leq.2K.xi(x,y)}
      \bigl| t q - s \bigr| = t q - s <  p - s \leq p - 2 r \leq 2(p - r) \leq 2 K \xi(x,y) .
    \end{equation}
Hence, by \eqref{E:|tu-v|.q.s.invrs}, we have
  \begin{equation*}  
     \bigl| t u - v \bigr| 
       \leq t | u -  v | + 2 K \xi(x,y) .
  \end{equation*}
Therefore, by \eqref{E:xi+.r.qinvrs.dist.in.Case3} and since $t \geq 3 > 1$, 
   \begin{align}  \label{E:Lip.bound.t.q.geq.s.Case.3} 
      \xi_{+} \bigl[ F(x,u), F(y,v) \bigr] 
        &\leq \xi(x,y) + \bigl| t u - v \bigr| \notag \\
        &\leq (2K+1) \xi(x,y) + t |u - v| \\
        &\leq \sqrt{2} \, (2K+t) \xi_{+} \bigl[ (x,u), (y,v) \bigr] \notag \\
        &\leq t \sqrt{2} \, (2K+1) \xi_{+} \bigl[ (x,u), (y,v) \bigr] . \notag 
    \end{align}
By \eqref{E:t:=.threeess/R}, this is inversely proportional to $R$. 

Next, assume $t q \leq s$. Then, by \eqref{E:t:=.threeess/R} and \eqref{E:|u-v|.geq.|q-s|}, 
    \begin{equation*}
      \bigl| t q - s \bigr| = s - t q \leq s - q = |s-q| \leq |u-v| .
    \end{equation*}
Combining this with \eqref{E:|tu-v|.tq} we have, 
    \begin{equation*}
      \bigl| t u - v \bigr| \leq (t+1)|u-v| .
    \end{equation*} 
By \eqref{E:xi+.r.qinvrs.dist.in.Case3} and using \eqref{E:t:=.threeess/R}, we have 
    \begin{align*}
      \xi_{+} \bigl[ F(x,u), F(y,v) \bigr] 
        &\leq \xi(x,y) + \bigl| t u - v \bigr| \\
        &\leq \xi(x,y) + (t+1)|u - v| \\
        &\leq 2 t \sqrt{2} \, \xi_{+} \bigl[ (x,u), (y,v) \bigr] . 
    \end{align*}
Again, inversely proportional to $R$. This together with \eqref{E:Lip.bound.s.leq.q.Case3} and \eqref{E:Lip.bound.t.q.geq.s.Case.3}, shows that $F$ is Lipschitz with constant inversely proportional to $R$ in Case 3.

As a check, here are the cases we have considered under Case 3, $F$:
    \begin{itemize}
    \item $q = 0$,
    \item $q > 0$:
        \begin{enumerate}
            \item $s \leq q$,
            \item $s > q$:
                \begin{enumerate}
                \item $t q \geq s$,
                \item $t q \leq s$.
                \end{enumerate}
        \end{enumerate}
    \end{itemize}

Now we consider $F^{-1}$ in Case 3. If $q := |u| = 0$, then $| t u - v | = | u - v |$. If $s := |v| = 0$, then $| t u - v | = t | u - v |$. In either case, by \eqref{E:t:=.threeess/R}, 
    \begin{equation*}
      \sqrt{2} \,  \xi_{+} \bigl[ F(x,u), F(y,v) \bigr]  \geq \xi(x,y) + | t u - v | 
       \geq \xi(x,y) + | u - v | \geq \xi_{+} \bigl[ (x,u), (y,v) \bigr] 
    \end{equation*}
and we are done. So assume $q, s > 0$, so, by \eqref{E:pqr.ineqs.Case3}, $p > 0$.
 
Notice that, by \eqref{E:n.c.sqrd.sum.ineq}, 
$\xi_{+} \bigl[ (x,u), (y,v) \bigr] \leq \sqrt{2} \bigl[ \xi(x,y) + |u-v| \bigr]$ and 
$\xi_{+} \bigl[ F(x,u), F(y,v) \bigr] \geq (1/\sqrt{2}) \bigl[ \xi(x,y) + |tu - v| \bigr]$. If $x=y$ 
then $\xi(x,y) = 0$. On the other hand, if $\xi(x,y) = 0$ then $p = r$. (See \eqref{E:p.q.s.r.defns}.) Thus, by \eqref{E:pqr.ineqs.Case3}, $0 < tq < p = r < 2r \leq s$. 
Therefore, $|tu - v| \geq s - tq \geq 2r - r = r > 0$. We conclude that $\xi(x,y) + |tu - v| > 0$ whether $x=y$ or not. Thus, we may take the quotient and get 
    \begin{equation}  \label{E:xi+.ratio.ineq}
      \frac{\xi_{+} \bigl[ (x,u), (y,v) \bigr]}{\xi_{+} \bigl[ F(x,u), F(y,v) \bigr] }
        \leq 2 \frac{\xi(x,y) + |u-v|}{\xi(x,y) + |tu - v|}
          \leq 2 + 2 \frac{|u-v|}{\xi(x,y) + |tu - v|} .
    \end{equation}
If we bound this by a finite constant, we are done. Let 
    \begin{equation}  \label{E:Case3.X.defn}
      X := \frac{|u-v|}{\xi(x,y) + |tu - v|} .
    \end{equation}
Call an inequality that bounds $X$ above by a finite constant (constant in appropriate $u$ and $v$) a ``Lipschitz inequality''. If we can derive such an inequality then we will have proved that $F^{-1}$ is Lipschitz in Case 3. Observe that if $u=v$, then 
$X = 0$ and $1$ is an upper bound. So assume $u \neq v$. 
 
Let $\gamma$ denote the cosine of the angle between $u$ and $v$. 
i.e., $\gamma := (u \cdot v)/(qs)$. (Here, ``$\gamma$'' means something different from what it means in \eqref{E:Hm.a.S.geq.R.d-p-1}.) We have
    \begin{equation*}
      | t u - v | ^{2} - | u - v | ^{2} = (t^{2} - 1) q^{2} - 2(t-1) \gamma qs .
    \end{equation*}
This is positive if $\gamma \leq 0$, in which case $|tu - v| \geq |u-v|$ and, with $u \neq v$, we again obtain a Lipschitz inequality: $X \leq 1$. So we may certainly assume 
    \begin{equation} \label{E:gamma.in.[0,1]}
      \gamma \in [0,1] .
    \end{equation}
    
 Next, with $q$ and $s$ held fixed consider the function
    \begin{equation} \label{E:H(gamma).defn}
       X^{2} \leq H(\gamma) := \frac{|u-v|^{2}}{|tu-v|^{2}}
        = \frac{q^{2} - 2 \gamma q s + s^{2}}{t^{2} q^{2} - 2 t \gamma q s + s^{2}} ,
          \qquad 0 \leq \gamma \leq 1 .
    \end{equation}
We have 
    \begin{equation*}
      H'(\gamma) 
        = \frac{ -2qs ( t^{2} q^{2} - 2 t \gamma q s + s^{2} ) + 2tqs(q^{2} - 2 \gamma q s 
          + s^{2}) }{(t^{2} q^{2} - 2 t \gamma q s + s^{2})^{2} } .
    \end{equation*}
We have 
    \begin{equation}  \label{E:Case3.num.of.H'}
      \text{Numerator of } H'(\gamma) = 2sq(t-1)(s^{2} - t q^{2}) . 
    \end{equation}
Thus, by \eqref{E:t:=.threeess/R}, if $\sqrt{t} \, q \geq s$, $H$ is non-increasing. Moreover, 
if $\sqrt{t} \, q \geq s$, by \eqref{E:t:=.threeess/R} again, 
     \begin{equation*}
       X^{2} \leq H(\gamma) \leq H(0)
        = \frac{q^{2} + s^{2}}{t^{2} q^{2} + s^{2}} \leq 1, 
          \qquad 0 \leq \gamma \leq 1 .
    \end{equation*}
Which immediately gives a Lipschitz inequality. 

So suppose $s \geq \sqrt{t} \, q$. Then, by \eqref{E:t:=.threeess/R} yet again, 
    \begin{equation} \label{E:s.geq.q}
      s \geq \sqrt{t} \, q \geq q .
    \end{equation}

Suppose $q \leq s \leq tq$. 
By \eqref{E:pqr.ineqs.Case3} this means
$2r \leq s \leq tq \leq p$. I.e., $r \leq p/2$ so $p-r \geq p/2$. Moreover, 
by \eqref{E:gamma.in.[0,1]}, 
    \begin{equation}  \label{E:q<s<ts.{u-v}.ineq}
      |u - v|^{2} = q^{2} - 2 \gamma qs + s^{2} \leq q^{2} + s^{2} \leq q^{2} (1 + t^{2}) 
        \leq \frac{1 + t^{2}}{t^{2}} p^{2} .
    \end{equation}
Now, by \eqref{E:K.is.Lip.const.for.rho}, $K \xi(x,y) \geq p - r \geq p/2$. Therefore, 
by \eqref{E:q<s<ts.{u-v}.ineq},
    \begin{multline*}
      X = \frac{|u-v|}{\xi(x,y) + |tu - v|} 
        \leq  \frac{|u-v|}{\xi(x,y)}
          \leq K \sqrt{ \frac{1 + t^{2}}{t^{2}} } \frac{p}{K \xi(x,y)} \\
            \leq K \sqrt{ \frac{1 + t^{2}}{t^{2}} } \frac{p}{p/2} 
              = 2 K \sqrt{ \frac{1 + t^{2}}{t^{2}} } < \infty ,
    \end{multline*}
a Lipschitz inequality.

Next, suppose $s \geq tq + p/3$. We are still assuming $s \geq \sqrt{t} \, q$ so, by \eqref{E:Case3.num.of.H'}, $H'(\gamma) \geq 0$. Therefore, 
$H(\gamma)$ as defined in \eqref{E:H(gamma).defn} is bounded above by $H(1)$. Thus,
    \begin{equation}  \label{E:|u-v|/|t u - v|.qst}
      \frac{|u-v|}{|t u - v|} = H(\gamma) \leq H(1) 
        = \frac{s-q}{s - tq} = \frac{1-q/s}{1 - tq/s} .
    \end{equation}
By supposition, 
    \begin{equation*}  \label{E:Case3.tqs.frac.ineq}
      \frac{tq}{s} \leq \frac{tq}{tq + p/3} .
    \end{equation*}
But $tq/(tq + p/3)$ is increasing in $tq$. Moreover, $0 < tq < p$, by \eqref{E:pqr.ineqs.Case3}. Hence,
    \begin{equation*}
        \frac{tq}{s} \leq \frac{tq}{tq + p/3} \leq \frac{p}{p + p/3} = \frac{3}{4} .
    \end{equation*}
Therefore, by \eqref{E:|u-v|/|t u - v|.qst}, 
    \begin{equation*}
     X \leq \frac{|u-v|}{|t u - v|} \leq \frac{1}{1 - 3/4} = 4 
    \end{equation*}
and a Lipschitz inequality is satisfied in this case.

Finally, assume $tq \leq s \leq tq + p/3$. Then, by \eqref{E:pqr.ineqs.Case3}, 
$2r \leq s \leq tq + p/3 < 4p/3$. Hence, $r < 2/3p$ and $p - r \geq p/3$. 
Hence, $p/3 \leq K \xi(x,y)$. Therefore, by \eqref{E:pqr.ineqs.Case3} again and \eqref{E:t:=.threeess/R}, 
    \begin{equation*}
      X = \frac{|u-v|}{\xi(x,y) + |tu - v|} 
        \leq \frac{s+q}{\xi(x,y)} \leq \frac{4p/3 + q}{\xi(x,y)} 
          \leq K \,  \frac{4p/3 + p/t}{p/3} = 3 K(4/3 + t^{-1}) \leq 5 K ,
    \end{equation*}
a Lipschitz inequality. 

Here are the cases we have considered under Case 3, $F^{-1}$:
    \begin{itemize}
    \item $q = 0$ or $s = 0$.
    \item $q, s > 0$:
        \begin{enumerate}
        \item $\gamma \leq 0$,
        \item $\gamma \in [0,1]$:
            \begin{enumerate}
            \item $\sqrt{t} \, q \geq s$,
            \item $s \geq \sqrt{t} \, q$:
                \begin{enumerate}
                \item $\sqrt{t} \, q \leq s \leq tq$,
                \item $s \geq tq + p/3$,
                \item $tq \leq s \leq tq + p/3$.
                \end{enumerate}
            \end{enumerate}
        \end{enumerate}
    \end{itemize}
Taking into account \eqref{E:pqr.ineqs.Case3}, this list is exhaustive. Therefore, we have proved that $F^{-1}$ is Lipschitz in Case 3. 

In this proof we have found multiple, but finitely many, Lipschitz constants. Take the largest one for $F$ and the largest for $F^{-1}$. \emph{This concludes the proof of \eqref{E:F.and.F.invrs.Lip}.}

Now we turn to proving, with the help of \eqref{E:F.and.F.invrs.Lip}, that $f_{dilate, R}$ and its inverse are Lipschitz, the former with Lipschitz constant inversely proportional to $R$. We break up the problem into several cases. 

Recall \eqref{E:Euc.ball.defn}. The set 
$\overline{\clU} \times \overline{B_{1}^{k}(0)}$ is compact. The map 
$(y,v) \mapsto \bigl( y, 2 \rho(y) v \bigr) \in \RR^{2k}$ is continuous on 
$\overline{\clU} \times \overline{B_{1}^{k}(0)}$. Hence, 
$\overline{C_{2} \restriction_{\overline{\clU}}} := \bigl\{ (y,v) : 
y \in \overline{\clU}, |v| \leq 2 \rho(y) \bigr\}$ is compact. Therefore, 
    \begin{equation*}
      \mcl{K} := Exp \bigl( C_{2} \restriction_{\overline{\clU}} \bigr) 
        \subset \mcl{C}  
    \end{equation*}
is compact. By \eqref{E:when.f.dilate.is.identity} and \eqref{E:over.Pf-U.fdil.is.identity}, 
    \begin{equation}  \label{E:fdil.is.ident.on.C-K}
      \text{If } x \in \mcl{D} \setminus \mcl{K} \text{ then } f_{dilate, R}(x) = x . 
    \end{equation}
(Note that $\mcl{D} \setminus \mcl{C} \subset \mcl{D} \setminus \mcl{K}$.)   

By \eqref{E:Pf.loc.compct.strat.space}, $\Pf$ is locally compact and, by \eqref{E:overline.U.defn} and \eqref{E:W.U.T.subset}, $\overline{\clU}$ is compact. Therefore, there exists an open set $\clU^{+} \subset \Pf$ with compact closure s.t.\ 
$\overline{\clU} \subset \clU^{+}$. It follows from definition \ref{D:fibering.by.cones}, part \ref{I:Exp.alpha.homeom} that $C[\Pf]$ is open. Recall the definition of $\pi_{C}$ from definition \ref{D:fibering.by.cones}. We have that 
$C[\clU^{+}] = \pi_{C}^{-1}(\clU^{+})$ 
is open in $T \D \restriction_{\Pf}$. But, by definition \ref{D:fibering.by.cones}, 
part \ref{I:Exp.alpha.homeom} again, $Exp \restriction_{C[\Pf]}$ is a homeomorphism. 
Therefore, 
    \begin{equation}  \label{E:mcl.V.defn}
      \mcl{V} := Exp \bigl( C[\clU^{+}] \bigr) \subset \mcl{C} \subset \D
    \end{equation} 
is open. Now, $C_{2} \restriction_{\overline{\clU}} \subset C[\clU^{+}$. Therefore, 
    \begin{equation*}
      \mcl{K} \subset \mcl{V} \text{ so }
        \mcl{C} \setminus \mcl{V} \subset \mcl{C} \setminus \mcl{K} .
    \end{equation*} 

Let $x, x' \in \D$. We wish to bound 
$\xi \bigl( f_{dilate, R}(x), f_{dilate, R}(x') \bigr)$ by something proportional 
to $\xi(x,x')/R$ and do something similar for $f_{dilate, R}^{-1}$ (but without the 
``$/R$'' part). We first focus on $f_{dilate, R}$ and do the inverse later. 

We can write  
$\D = \mcl{K} \cup (\mcl{C} \setminus \mcl{K}) \cup (\D \setminus \mcl{C})$. The following set of cases is complete. 

  \begin{enumerate}
\item $x \in \mcl{K}$, $x' \in \mcl{K}$.  \label{I:K,K} 
\item $x \in \mcl{K}$, $x' \in \mcl{C} \setminus \mcl{K}$.  \label{I:K,C-K}  
\item $x \in \mcl{K}$, $x' \in \mcl{D} \setminus \mcl{C}$.  \label{I:K,D-C}  
\item $x \in \mcl{C} \setminus \mcl{K}$, $x' \in \mcl{C} \setminus \mcl{K}$.    
\item $x \in \mcl{C} \setminus \mcl{K}$, $x' \in \mcl{D} \setminus \mcl{C}$.     
\item $x \in \mcl{D} \setminus \mcl{C}$, $x' \in \mcl{D} \setminus \mcl{C}$.  
  \end{enumerate}  
  
We have $\mcl{K} \subset \mcl{V} \subset \mcl{C}$. Therefore, 
$\mcl{C} \setminus \mcl{V} \subset \mcl{C} \setminus \mcl{K}$. Let ``${}^{c}$'' denote complementation relative to $\D$. Then 
    \begin{multline*}
      (\mcl{C} \setminus \mcl{K}) \setminus (\mcl{C} \setminus \mcl{V})
        = (\mcl{C} \cap \mcl{K}^{c}) \cap (\mcl{C} \cap \mcl{V}^{c})^{c}
          = (\mcl{C} \cap \mcl{K}^{c}) \cap (\mcl{C}^{c} \cup \mcl{V}) \\
            = (\mcl{C} \cap \mcl{K}^{c}) \cap \mcl{V} 
              =  \mcl{C} \cap (\mcl{V} \setminus \mcl{K}) 
                = \mcl{V} \setminus \mcl{K} \subset \mcl{V} .
    \end{multline*} 
Therefore, if $x \in \mcl{K}$ and 
$x' \in (\mcl{C} \setminus \mcl{K}) \setminus (\mcl{C} \setminus \mcl{V})$ then 
$x \in \mcl{V}$ and $x' \in \mcl{V}$. Hence, cases \ref{I:K,K} and \ref{I:K,C-K} are subsumed under  
    \begin{align*}
      x \in \mcl{V}, & \; x' \in \mcl{V}.  \\ 
      x \in \mcl{K}, & \; x' \in \mcl{C} \setminus \mcl{V}.
    \end{align*}

Now consider case \ref{I:K,D-C} in conjunction with the second case in the preceding. Since $\mcl{V} \subset \mcl{C}$, we have 
    \begin{equation}  \label{E:(C-V)+(D-C)=(D-V)}
      (\mcl{C} \setminus \mcl{V}) \cup (\mcl{D} \setminus \mcl{C}) 
        = \mcl{V}^{c} .
    \end{equation} 

So we can replace our list of six cases by the following five:
  \begin{enumerate}
\item $x \in \mcl{V}$, $x' \in \mcl{V}$.  \label{I:V,V} 
\item $x \in \mcl{K}$, $x' \in \mcl{V}^{c}$.  \label{I:V,D-V}  
\item $x \in \mcl{C} \setminus \mcl{K}$, $x' \in \mcl{C} \setminus \mcl{K}$.  
  \label{I:C-K,C-K}  
\item $x \in \mcl{C} \setminus \mcl{K}$, $x' \in \mcl{D} \setminus \mcl{C}$.  
  \label{I:C-K,D-C}   
\item $x \in \mcl{D} \setminus \mcl{C}$, $x' \in \mcl{D} \setminus \mcl{C}$.  
  \label{I:D-C,D-C}
  \end{enumerate}

\emph{Case \ref{I:V,V}:} Since $\clU^{+}$ is relatively compact, by definition \ref{D:fibering.by.cones}, part \ref{I:Exp.alpha.homeom} yet again, $\alpha$ and its inverse, which is just 
$Exp \restriction_{C[\Pf]}$, are Lipschitz on $\mcl{V}$ and $C[\clU^{+}]$, resp. 

But, by \eqref{E:F.on.parts}, $F \bigl( C[\clU^{+}] \bigr) = C[\clU^{+}]$ and, by \eqref{E:f.in.terms.of.F}, 
$f_{dilate, R} = \alpha^{-1} \circ F \circ \alpha$ on $\mcl{V}$. Thus, by \eqref{E:F.and.F.invrs.Lip}, the restriction $f_{dilate, R} \restriction_{\mcl{V}}$ and its inverse are Lipschitz. In fact, if $K_{\alpha} < \infty$ is a Lipschitz constant for 
$\alpha$ and $\alpha^{-1}$ and $K_{F} < \infty$ is s.t.\ $K_{F}/R$ is a Lipschitz constant for $F_{R}$ ($0 < R < \threeess/3$), then by \eqref{E:comp.of.Lips.is.Lip}, 
the Lipschitz constant for $f_{dilate, R} \restriction_{\mcl{V}}$ is 
    \begin{multline}  \label{E:a.Lip.sandwich}
      (\text{Lipschitz constant for } \alpha) (\text{Lipschitz constant for } F) 
        (\text{Lipschitz constant for } \alpha^{-1}) \\
          = K_{\alpha}^{2} K_{F}/R .
    \end{multline}
 I.e., the Lipschitz constant for $f_{dilate, R} \restriction_{\mcl{V}}$ is inversely proportional to $R$, as asserted by lemma \ref{L:f.dil.Lipschitz}. That takes care of case \ref{I:V,V} in the revised list of cases. 
 
 \emph{Case \ref{I:V,D-V}:} Let $\epsilon := dist(\mcl{V}^{c}, \mcl{K})/2$. 
Since $\mcl{V}^{c}$ is closed and $\mcl{K}$ compact, we have $\epsilon > 0$.\footnote{Suppose $\epsilon = 0$, i.e.\  
$dist(\mcl{V}^{c}, \mcl{K}) = 0$. Then, by \eqref{E:set.distances}, for every $m = 1, 2, \ldots$, there exist $x_{m} \in \mcl{K}$ and 
$z_{m} \in \mcl{V}^{c}$ s.t.\ $\xi(z_{m} , x_{m} ) < 1/m$. Since $\mcl{K}$ is compact, there exists $x_{\infty} \in \mcl{K}$ and a subsequence $\{m_{j}, j = 1, 2, \ldots\}$ s.t.\ $x_{m_{j}} \to x_{\infty}$. Given 
$k = 1, 2, \ldots$, define $\ell_{k}$ to be the smallest $m_{j} > 2k$ s.t.\ 
$\xi(x_{m_{j}}, x_{\infty}) \leq 1/(2k)$. Then $\xi(z_{\ell_{k}}, x_{\infty}) < 1/k$. Thus, $x_{\infty}$ is a limit point of the closed set $\mcl{V}^{c}$ and, hence, $x_{\infty} \in (\mcl{V}^{c}) \cap \mcl{K}$. But $\mcl{V}$ is an open set containing $\mcl{K}$. That contradiction proves $\epsilon > 0$.}
Let $x \in \mcl{K}$ and let $x' \in \mcl{V}^{c}$. 

Suppose $\xi(x,x') < \epsilon$. Then, by definition 
of $\epsilon$, we have $x' \notin \mcl{K}$. Therefore, by \eqref{E:fdil.is.ident.on.C-K}, $f_{dilate, R}(x) = x$. By \eqref{E:(C-V)+(D-C)=(D-V)}, either $x' \in \mcl{C} \setminus \mcl{V}$ or $x' \in \mcl{D} \setminus \mcl{C}$. Suppose the former. Then, by \eqref{E:mcl.V.defn}, 
$\pi_{C} \circ \alpha(x) \in \Pf \setminus \clU^{+} \subset \Pf \setminus \clU$. Hence, by \eqref{E:over.Pf-U.fdil.is.identity}, we have $f_{dilate, R}(x') = x'$. 
Suppose $x' \in \mcl{D} \setminus \mcl{C}$. Then by \eqref{E:when.f.dilate.is.identity}, again $f_{dilate, R}(x') = x'$. Thus, if $\xi(x,x') < \epsilon$, we get a Lipschitz inequality with Lipschitz constant $1< 1/R$, by \eqref{E:R<threeess}. 

Now consider case \ref{I:V,D-V} with $\xi(x,x') \geq \epsilon$. By \eqref{E:t:=.threeess/R}, 
    \begin{multline}  \label{E:long.dist.f.dil.Lip}
      \xi \bigl[ f_{dilate, R}(x), f_{dilate, R}(x') \bigr] 
        < \frac{diam(\D)}{\epsilon} \xi(x,x') 
          < \frac{\threeess \, diam(\D)}{3 R \, \epsilon} \; \xi(x,x') . \\
    \end{multline}
Thus, if $\xi(x,x') \geq \epsilon$, we have a Lipschitz inequality with Lipschitz constant 
$1< 1/R$.

 \emph{Case \ref{I:C-K,C-K}:} By \eqref{E:over.Pf-U.fdil.is.identity}, in this case 
 $f_{dilate, R}(x) = x$ and $f_{dilate, R}(x') = x'$. Thus, we again get a Lipschitz inequality with Lipschitz constant $1< 1/R$.
 
\emph{Case \ref{I:C-K,D-C}:} By \eqref{E:when.f.dilate.is.identity} and \eqref{E:when.f.dilate.is.identity}, $f_{dilate, R}(x) = x$ and $f_{dilate, R}(x') = x'$.

\emph{Case \ref{I:D-C,D-C}:} By \eqref{E:when.f.dilate.is.identity}, 
$f_{dilate, R}(x) = x$ and $f_{dilate, R}(x') = x'$.
  
We have shown that $f_{dilate, R}$ is Lipschitz on $\D$ with Lipschitz constant inversely proportional to $R$. The proof for $f_{dilate, R}^{-1}$, using the fact that $F^{-1}$ is Lipschitz, is almost identical, except for the $1/R$ part (which for 
$f_{dilate, R}$ only is only essential in case \ref{I:V,V}).
 \end{proof}

\begin{proof}[Proof of lemma \ref{L:rect.set.Jake.bounds}]
By Hardt and Simon \cite[Definition 2.1', p.\ 20]{rHlS86.GMT} there is a set 
$N_{0} \subset M$ and an at most countable collection of $\Xdim$-dimensional imbedded $C^{1}$ manifolds $N_{1}, N_{2}, \ldots \subset M$ s.t.\ 
$\Hm^{\Xdim}(N_{0}) = 0$ and $X = \bigcup_{i \geq 0} N_{i}$. Let $x \in N_{i}$ for some 
$i > 0$. By Hardt and Simon \cite[2.6 p.\ 23]{rHlS86.GMT}, the ``approximate tangent space'', $T_{x} X$, of $X$ at $x$ exists and is just the ordinary tangent space 
$T_{x} N_{i}$. We may think of  
	\begin{equation*}
		W := T_{x} X
	\end{equation*}
as a linear subspace of $\RR^{n_{1}}$ of dimension $\Xdim$. A crucial point is that 
$W \subset T_{x} M$. 

Let 
    \begin{equation}  \label{E:Dg(y):Ty.M}
      D g(y) : T_{y} M \to \RR^{n_{2}}
    \end{equation} 
be the derivative or differential of $g$ at points $y$ in the open neighborhood of $M$ in which $g$ is defined. It is a linear map. Regard $T_{x} M$ as an $m$-dimensional linear subspace of $\RR^{n_{1}}$. 
By Hardt and Simon \cite[1.5, p.\ 12 and 1.6, p. 13]{rHlS86.GMT}, we have 
    \begin{equation}  \label{E:dXg=Dg}
      d^{X}g_{x} = Dg(x) \restriction_{T_{x} X} .
    \end{equation}

Recall that if $f : V \to V'$ is a linear map between finite dimensional normed vector spaces then its ``operator norm'' is defined by $\|f\| := \max \bigl\{ |f(v)| : v \in V, |v| = 1 \bigr\}$, where $| \cdot |$ denotes the norm appropriate to the context, $V'$ or $V$.  
From Federer \cite[Section 3.2.1, p.\ 241]{hF69}, 
we see that
	\begin{equation}  \label{E:JXg.in.terms.of.Wedge.Dg}
	      J^{X}g(x) = \Bigl\| \WdgeXdim \bigl[ Dg(x) \restriction_{W} \bigr] \Bigr\|.
	\end{equation}
$\WdgeXdim \bigl[ Dg(x) \restriction_{W} \bigr]$ is obtained from $Dg(x) \restriction_{W}$ as described in Lang \cite[p.\ 426]{sL65.Algebra}. 
By assumption the Riemannian metric on $M$ is induced by that 
on $\RR^{n_{1}}$. Hence, the norm $\| \cdot \|$ is defined based on the inner products 
on $\RR^{n_{1}}$ and $\RR^{n_{2}}$ as described in Federer \cite[Sections 1.7.5, 1.7.6, pp. 31--34]{hF69}.  

We will bound 
$\bigl\| \WdgeXdim \bigl[ Dg(x) \restriction_{W} \bigr] \bigr\|$ above and below. Let 
	\begin{equation}  \label{E:V.and.V'}
		V := T_{x} M \text{ and } V' := T_{g(x)} \RR^{n_{2}} \isomto \RR^{n_{2}} ,
	\end{equation}
so $W \subset V$. Let
	\begin{equation*}
		j : W \to V \text{ be inclusion.}
	\end{equation*}
Write
	\begin{equation} \label{E:k.=.Dg.on.W}
		k = Dg(x) \restriction_{W} = Dg(x) \circ j : W \to V'. 
	\end{equation}
Thus, by \eqref{E:JXg.in.terms.of.Wedge.Dg},  
    \begin{equation}  \label{E:JX.g=|Wedge.k|}
      J^{X}g(x) = \| \WdgeXdim k \| .
    \end{equation}

Use ``${}^{\ast}$'' instead of ``${}^{adj}$'' to indicate adjoint of linear operators (Federer \cite[1.7.4, p.\ 30]{hF69}). $\WdgeXdim k$ is just a linear map from $W_{\Xdim} := \WdgeXdim W$ to $V'_{\Xdim} := \WdgeXdim V'$. \emph{Claim:} 
   \begin{equation}  \label{E:squar.of.Wedge.norm.=.norm.of.Wedge.square}
      \| \WdgeXdim k \|^{2} = \bigl\| \WdgeXdim (k^{\ast} \circ k) \bigr\|.
   \end{equation}
To see this, first note that by Lang \cite[p.\ 426]{sL65.Algebra} again and Federer 
\cite[1.7.6, p.\ 33]{hF69}, 
	\begin{equation}  \label{E:wedge.of.adjoint.compo.=.adjoint.wedge.compo}
		      \WdgeXdim (k^{\ast} \circ k) = ( \WdgeXdim k^{\ast} ) 
		        \circ ( \WdgeXdim k ) 
		           = ( \WdgeXdim k )^{\ast} \circ ( \WdgeXdim k ).
	\end{equation} 
Note that $\WdgeXdim (k^{\ast} \circ k)$, etc.,\ in the preceding map 
$W_{\Xdim}$ into itself. 

Let $h : W_{\Xdim} \to V'_{\Xdim}$ be any linear map.  
By \eqref{E:wedge.of.adjoint.compo.=.adjoint.wedge.compo}, to prove \eqref{E:squar.of.Wedge.norm.=.norm.of.Wedge.square} it suffices to show $\| h^{\ast} \circ h \| = \| h \|^{2}$. Since $W_{\Xdim}$ is a finite dimensional vector space (Federer \cite[p.\ 15]{hF69}), there exists $\mbf{v} \in W_{\Xdim}$ s.t.\ $|\mbf{v}| = 1$ and $\| h \| = | h(\mbf{v}) |$. Use $\langle \cdot , \cdot \rangle$ generically to denote inner products. Then, by the (Cauchy-)Schwarz inequality (Stoll and Wong \cite[Theorem 3.1, p.\ 79] {rrSetW68.LinearAlgebra}), 
   \begin{equation}  \label{E:squar.of.norm.leq.norm.of.square}
      \bigl\| h \bigr\|^{2} = \bigl\langle h(\mbf{v}), h(\mbf{v}) \bigr\rangle 
      = \bigl\langle (h^{\ast} \circ h)(\mbf{v}), \mbf{v} \bigr\rangle  
      \leq \bigl| (h^{\ast} \circ h)(\mbf{v}) \bigr| | \mbf{v} |
          \leq \| h^{\ast} \circ h \|. 
   \end{equation}
Thus, $\| h \|^{2} \leq \| h^{\ast} \circ h \|$. 

Now let $\mbf{v} \in W_{\Xdim}$, $|\mbf{v}| = 1$ be arbitrary. Then, by Federer \cite[1.7.6, p.\ 33]{hF69} again, $\| h^{\ast} \| = \| h \|$ and 
   \begin{equation}   \label{E:norm.of.square.leq.square.of.norm}
      \bigl| (h^{\ast} \circ h)(\mbf{v}) \bigr| 
        \leq \| h^{\ast} \| \bigl| h(\mbf{v}) \bigr| \leq \| h^{\ast} \| \| h \| |\mbf{v}|
          = \| h \|^{2} |\mbf{v}| = \| h \|^{2} .
   \end{equation}
Therefore, $\| h^{\ast} \circ h \| \leq \| h \|^{2}$. With $h = \WdgeXdim k$, the claim \eqref{E:squar.of.Wedge.norm.=.norm.of.Wedge.square} follows. Let 
	\begin{equation}  \label{E:K.=.k.star.k}
		K := (k^{\ast} \circ k) : W \to W. 
	\end{equation}
Then, by \eqref{E:JX.g=|Wedge.k|}, 
    \begin{equation}  \label{E:JX.g.sqrd=|Wedge.K|}
      J^{X}g(x)^{2} = \| \WdgeXdim K \|
    \end{equation}
Now, $\WdgeXdim K : \WdgeXdim W \to \WdgeXdim W$ and $\dim W = \Xdim$. By Federer \cite[1.4.3, p.\ 19]{hF69}, $W_{\Xdim} := \WdgeXdim W$ is a one dimensional space. 

Recall \eqref{E:Dg(y):Ty.M} and \eqref{E:V.and.V'}. Let 
    \begin{equation}  \label{E:G=Dg.adj.circ.Dg}
      G := (Dg(x))^{\ast} \circ (Dg(x)) : V \to V .
    \end{equation} 
($G$ and $K$ are similar but have different domains and codomains.) Then, by \eqref{E:K.=.k.star.k} and \eqref{E:k.=.Dg.on.W}, 
    \begin{equation}  \label{E:K.G.dMg}
      K = j^{\ast} \circ G \circ j 
        = (d^{X}g_{x})^{\ast} \circ (d^{X}g_{x}) .
    \end{equation} 
Moreover, by Lang \cite[p.\ 426]{sL65.Algebra},
$\WdgeXdim K = (\WdgeXdim j^{\ast}) \circ (\WdgeXdim G) \circ (\WdgeXdim j)$. 
Let $\mbf{w} \in W_{\Xdim}$ have norm 1. Then, by Federer \cite[p.\ 32]{hF69}, 
$\mbf{w} = w_{1} \wedge \cdots \wedge w_{\Xdim}$, where $w_{1}, \ldots, w_{\Xdim}$ is any orthonormal basis of $W$. Then $\mbf{w}$ spans $W_{\Xdim}$ and 
$\| \WdgeXdim K \| = \bigl| \WdgeXdim K (\mbf{w}) \bigr|$. Then,
	\begin{equation}  \label{E:norm.wedge.K.=.|wedge.G(x)|}
		\| \WdgeXdim K \| = \bigl| (\WdgeXdim K)(\mbf{w}) \bigr| 
		    = \bigl| (\WdgeXdim j^{\ast}) 
		       \circ (\WdgeXdim G) \circ (\WdgeXdim j)(\mbf{w}) \bigr|.
	\end{equation}

Now, $\dim V = m$. By \eqref{E:G=Dg.adj.circ.Dg}, $G$ is self-adjoint. 
Let 
    \begin{equation*}
      v_{1}, \ldots, v_{m} \in V \text{ be orthonormal eigenvectors of } 
        G = Dg(x)^{\ast} \circ Dg(x)^{\ast} 
    \end{equation*} 
with corresponding eigenvalues 
    \begin{equation}  \label{E:eigvals.of.G}
      \nu_{1}^{2}(x) \geq \cdots \geq \nu_{m}^{2}(x) \geq 0 .
    \end{equation} 
So $v_{1}, \ldots, v_{m}$ is a basis of $V$. Let $\Lambda(m,\Xdim) := \bigl\{ \text{all increasing maps of } \{ 1, \ldots, \Xdim \}$ 
into $\{ 1, \ldots, m \} \bigr\}$. 
By Federer \cite[pp.\ 14 -- 15 and p.\ 32]{hF69} 
    \begin{equation*}
      \mbf{v}_{\mbf{i}} := v_{\mbf{i}(1)} \wedge \cdots \wedge v_{\mbf{i}(\Xdim)}
        \quad (\mbf{i} \in \Lambda(m,\Xdim)
    \end{equation*}
is an orthonormal basis of $V_{\Xdim}$. If $\mbf{i} \in \Lambda(m,\Xdim)$, we have, by Lang \cite[p.\ 426]{sL65.Algebra},
   \begin{align}    \label{E:eigen.wedge.G}
       (\WdgeXdim G)(\mbf{v}_{\mbf{i}})   
        &=  G(\mbf{v}_{\mbf{i}(1)}) \wedge \cdots \wedge G(\mbf{v}_{\mbf{i}(\Xdim)})  
           =  (\nu_{\mbf{i}(1)}^{2} \mbf{v}_{\mbf{i}(1)}) \wedge 
              \cdots \wedge (\nu_{\mbf{i}(\Xdim)}^{2} \mbf{v}_{\mbf{i}(\Xdim)})  \\
        &= \left( \prod_{t=1}^{\Xdim} \nu_{\mbf{i}(t)}^{2} \right) \mbf{v}_{\mbf{i}}. \notag
   \end{align}
Write
	\begin{equation}  \label{E:nu.bold.i.defn}
		 \nu_{\mbf{i}}^{2} := \prod_{t=1}^{\Xdim} \nu_{\mbf{i}(t)}^{2} .
	\end{equation}
Thus, 
    \begin{multline} \label{E:eigs.of.Wedge.G}
      \mbf{v}_{\mbf{i}} \; (\mbf{i} \in \Lambda(m,\Xdim)) 
        \text{ are orthonormal eigenvectors of } 
          \WdgeXdim G. \\
           \text{ The eigenvalue corresponding to $\mbf{v}_{\mbf{i}}$ is }
            \nu_{\mbf{i}}^{2} .
    \end{multline} 
 Write $\mbf{w} = w_{1} \wedge \cdots \wedge w_{\Xdim}$ as above ($\mbf{w}$ spans $W_{\Xdim}$), then $(\WdgeXdim j^{\ast})(\mbf{w}) = j^{\ast}(w_{1}) \wedge 
 \cdots \wedge j^{\ast}(w_{\Xdim}) = w_{1} \wedge \cdots \wedge w_{\Xdim} 
 = \mbf{w}$. 
 Thus, 
     \begin{equation}  \label{E:Wedge.j.is.inclusion}
      \WdgeXdim j : V_{\Xdim} \to W_{\Xdim} \text{ is just inclusion.}
    \end{equation}

By Stoll and Wong \cite[Theorem 6.3(a), p.\ 131]{rrSetW68.LinearAlgebra}, 
$j^{\ast} : V \to W$ is just orthogonal projection onto $W$. 
\emph{Claim:} $\WdgeXdim j^{\ast} :V_{\Xdim} \to W_{\Xdim}$ is orthogonal projection onto $W_{\Xdim}$. We have  
    \begin{equation*}
      (\WdgeXdim j^{\ast}) (\mbf{w})
        = j^{\ast}(w_{1}) \wedge \cdots \wedge j^{\ast}(w_{\Xdim}) 
          = w_{1} \wedge \cdots \wedge w_{\Xdim} = \mbf{w} .
    \end{equation*} 
Suppose $\xi \in W_{\Xdim}$
with $\xi \perp W_{\Xdim}$. Now, by Federer \cite[1.7.6, p.\ 33]{hF69} again, 
$\WdgeXdim j^{\ast} = (\WdgeXdim j)^{\ast}$. Therefore, 
	\begin{equation*}
		0 = \langle \mbf{w}, \xi \rangle 
		  = \bigl\langle (\WdgeXdim j)(\mbf{w}), \xi \bigr\rangle 
		    = \bigl\langle \mbf{w}, (\WdgeXdim j)^{\ast}(\xi) \bigr\rangle
		      = \bigl\langle \mbf{w}, (\WdgeXdim j^{\ast})(\xi) \bigr\rangle.
	\end{equation*} 
Since $W_{\Xdim}$ is spanned by $\mbf{w}$, this proves the claim.

Thus, since $\WdgeXdim j^{\ast}$ is orthogonal projection onto $W_{\Xdim}$ and $\mbf{w}$ spans $W_{\Xdim}$, for any $\xi \in V_{\Xdim}$, there exists $c \in \RR$ s.t.\ 
$(\WdgeXdim j^{\ast})(\xi) = c \mbf{w}$. But $|\mbf{w}| = 1$. Therefore
$c = \bigl\langle \mbf{w}, \; (\WdgeXdim j^{\ast})(\xi) \bigr\rangle$. Hence, by \eqref{E:Wedge.j.is.inclusion}, 
    \begin{equation*}
        (\WdgeXdim j^{\ast})(\xi) = \Bigl\langle \mbf{w}, \; 
          (\WdgeXdim j^{\ast})(\xi) \Bigr\rangle \mbf{w}
          = \Bigl\langle (\WdgeXdim j)(\mbf{w}), \; \xi \Bigr\rangle \mbf{w} 
            = \langle \mbf{w}, \; \xi \rangle \, \mbf{w} .
    \end{equation*}
Consequently,
	\begin{equation} \label{E:norm.from.innr.prod}
		\bigl| (\WdgeXdim j^{\ast})(\xi) \bigr| 
		  = \bigl| \langle \mbf{w}, \; \xi \rangle \bigr| . 
	\end{equation}
	
Since $\mbf{w} \in W_{\Xdim} \subset V_{\Xdim}$, there exists 
$\mbf{a} = \bigl( a_{\mbf{i}}, \mbf{i} \in \Lambda(m,\Xdim) \bigr) 
\in \RR^{\binom{m}{\Xdim}}$ s.t.\ 
    \begin{equation}  \label{E:|a|=1,sum.w}
      | \mbf{a} | = 1 \text{ and } \mbf{w} 
        = \sum_{\mbf{i} \in \Lambda(m,\Xdim)} a_{\mbf{i}} \mbf{v}_{\mbf{i}} .
    \end{equation}
Hence, by \eqref{E:Wedge.j.is.inclusion}, \eqref{E:norm.from.innr.prod}, \eqref{E:|a|=1,sum.w}, and \eqref{E:eigs.of.Wedge.G}, we have
	\begin{align}  \label{E:pythag.for.wedgie.w}
		\bigl| (\WdgeXdim j^{\ast}) \circ (\WdgeXdim G) 
		  \circ (\WdgeXdim j)(\mbf{w}) \bigr| 
		  &= \Bigl| (\WdgeXdim j^{\ast}) 
		    \bigl[ (\WdgeXdim G) \circ (\WdgeXdim j)(\mbf{w}) \bigr] \Bigr| \notag \\ 
		  &= \Bigl| (\WdgeXdim j^{\ast}) 
		    \bigl[ (\WdgeXdim G) (\mbf{w}) \bigr] \Bigr| \notag \\
		  &= \Bigl| \bigl\langle \mbf{w} , 
		     (\WdgeXdim G) (\mbf{w}) \bigr\rangle \Bigr| \\
		 &= \left| \left\langle \sum_{\mbf{i} 
		   \in \Lambda(m,\Xdim)} a_{\mbf{i}} \, \mbf{v}_{\mbf{i}},
		        \sum_{\mbf{i} \in \Lambda(m,\Xdim)} a_{\mbf{i}} \, \nu_{\mbf{i}}^{2} \, 
		            \mbf{v}_{\mbf{i}} \right\rangle \right| \notag \\
		 &=  \sum_{\mbf{i} \in \Lambda(m,\Xdim)} a_{\mbf{i}}^{2} \, 
		         \nu_{\mbf{i}}^{2}. \notag
	\end{align}

Now, $\mbf{w}$ is a unit vector spanning the one-dimensional space $W_{\Xdim}$. Therefore, by \eqref{E:JX.g.sqrd=|Wedge.K|} and \eqref{E:norm.wedge.K.=.|wedge.G(x)|},  
    \begin{equation*}
        \bigl( J^{X}g(x) \bigr)^{2} = \Bigl\| (\WdgeXdim j^{\ast}) 
          \circ (\WdgeXdim G) \circ (\WdgeXdim j) \Bigr\| .
    \end{equation*}
Furthermore, $\WdgeXdim G) \circ (\WdgeXdim j)$ is defined on the one-dimensional space 
$W_{\Xdim}$ and $\mbf{w}$ is a unit vector in that space. Therefore, by \eqref{E:pythag.for.wedgie.w}, 
    \begin{equation}  \label{E:Jake.a's.nu's}
      \bigl( J^{X}g(x) \bigr)^{2}  
          = \bigl| (\WdgeXdim j^{\ast}) \circ (\WdgeXdim G) 
            \circ (\WdgeXdim j)(\mbf{w}) \bigr|
             =  \sum_{\mbf{i} \in \Lambda(m,\Xdim)} a_{\mbf{i}}^{2} \, \nu_{\mbf{i}}^{2}.
    \end{equation}

  Let $\mbf{i}_{0}, \mbf{i}_{1} \in \Lambda(m,\Xdim)$ satisfy 
	\[
	\nu_{\mbf{i}_{0}} := 
	   \sqrt{ \min_{\mbf{i} \in \Lambda(m,\Xdim)} \nu_{\mbf{i}}^{2} }
				  \;  \text{ and } \; 
	\nu_{\mbf{i}_{1}} := 
	   \sqrt{ \max_{\mbf{i} \in \Lambda(m,\Xdim)} \nu_{\mbf{i}}^{2} } .
	 \]
Then, by \eqref{E:nu.bold.i.defn} and \eqref{E:eigvals.of.G}, 
we have 
	\begin{equation}  \label{E:biggest.smallest.lambda.prods}
		\nu_{\mbf{i}_{0}}^{2} = \prod_{t=m-\Xdim+1}^{m} \nu_{t}^{2}
			\;  \text{ and } \;
		\nu_{\mbf{i}_{1}}^{2} =  \prod_{t=1}^{\Xdim} \nu_{t}^{2}.
	\end{equation}

Now, by \eqref{E:|a|=1,sum.w}, 
$\sum_{\mbf{i} \in \Lambda(m,\Xdim)} a_{\mbf{i}}^{2} = 1$ so, by \eqref{E:Jake.a's.nu's} and \eqref{E:biggest.smallest.lambda.prods}, 
   \begin{equation*}
      \prod_{t=m-\Xdim+1}^{m} \nu_{\mbf{i}(t)}^{2} 
      = \left( \sum_{\mbf{i} \in \Lambda(m,\Xdim)} a_{\mbf{i}}^{2} \right) 
        \nu_{\mbf{i}_{0}}^{2} 
          \leq J^{X}g(x)^{2}
     \leq \left( \sum_{\mbf{i} \in \Lambda(m,\Xdim)} a_{\mbf{i}}^{2} \right) 
       \nu_{\mbf{i}_{1}}^{2} = \prod_{t=1}^{\Xdim} \nu_{t}^{2}.
   \end{equation*}
This completes the proof of the lemma. 
\end{proof}

  \begin{proof}[Proof of lemma \ref{L:Pf.is.a.manif}]
 Let $\mcl{K}$ be the set of all matrices of dimension $(n-1) \times \nvar$ having rank $k$. Then, by lemma \ref{L:rank.k.mats.form.manif} below, $\mcl{K}$ is an imbedded submanifold of $\RR^{(n-1) \times \nvar}$ of dimension $(n-1)k + k \nvar - k^{2}$. Define a map $F : \mcl{K} \to \Y$ as follows. If $X^{(n-1) \times \nvar} \in \mcl{K}$, let $y_{n}^{1 \times \nvar}$ equal minus the sum of the rows of $X$ and let $F(X)^{n \times \nvar}$ be the matrix 
	$\begin{pmatrix}
		X \\
		y_{n}
	\end{pmatrix}$. 
So the sum of the rows of $F(X)$ is $0^{1 \times \nvar}$. By Boothby \cite[Exercise 2, p.\ 81]{wmB75}, $F(\mcl{K})$ is an imbedded submanifold of $\Y$ of dimension $(n-1)k + k \nvar - k^{2}$. By Boothby \cite[Theorem (1.7), p.\ 57]{wmB75}, $F(\mcl{K}) \times \RR^{\nvar}$ is a smooth manifold of dimension $nk + (k + 1)(\nvar - k)$. Now consider the one-to-one immersion $G : F(\mcl{K}) \times \RR^{\nvar} \to \Y$ defined by 
$G(Y, b) := Y + 1^{n} b$, where $Y \in F(\mcl{K}), b^{1 \times \nvar} \in \RR^{\nvar}$. ($(1^{n})^{n \times 1}$ is the column matrix of 1's, \eqref{E:1n.col.vec.defn}.) 
Then, by Boothby \cite[Exercise 2, p.\ 81]{wmB75} again, $G \bigl[ F(\mcl{K}) \times \RR^{\nvar} \bigr]$ is an imbedded submanifold of $\Y$.
\end{proof}

   \begin{lemma}  \label{L:rank.k.mats.form.manif}
Let $0 < k < \nvar \leq n$ be integers. Let $\mcl{K} \subset \RR^{n \nvar}$ be the set of all 
$n \times \nvar$ matrices of rank exactly $k$. Then $\mcl{K}$ is an imbedded smooth submanifold of $\RR^{n \nvar}$ of dimension $nk + k \nvar - k^{2} < n \nvar = $ dimension of space of all $n \times \nvar$ matrices.
   \end{lemma}
  \begin{proof}
 If $0 < j_{1} < \ldots < j_{k} \leq \nvar$ are integers, write $\mbf{j} := \{  j_{1}, \ldots, j_{k} \}$. If $N$ is $n \times \nvar$, 
let $N_{\mbf{j}}^{n \times k}$ be the matrix whose $i^{th}$ column is column $j_{i}$ of $N$ ($i = 1, \ldots, k$)

Let $M \in \mcl{K}$. There exists $\mbf{j}$ s.t.\ $M_{\mbf{j}}$ has rank $k$. 
For $i \in \NN_{n}$ let $e_{i}^{n \times 1}$ be the column vector whose $i^{th}$ coordinate is $1$ and whose other coordinates are 0. 

\emph{Claim:} There are integers $0 < i_{k+1} < \cdots < i_{n} \leq n$ s.t.\ the matrix 
$N := (M_{\mbf{j}}, E)^{n \times (n - k)}$ has rank $n$, where $E^{n \times (n-k)}$ is the matrix $(e_{i_{k+1}}, \ldots, e_{i_{n}})$. To prove this, put $M_{\mbf{j}}^{T}$ into echelon form (Stoll and Wong \cite[p.\ 46]{rrSetW68.LinearAlgebra}). Call the result $L^{k \times n}$. It has the same row space as $M^{T}$. Then, by Stoll and Wong \cite[Theorem 3.1, p.\ 47]{rrSetW68.LinearAlgebra}, reading left to right, each nonzero row of $L$ begins with 0's followed by a 1, the ``leading 1'' of the row. Moreover, except for the ``1'' all other entries in the column containing a leading 1 are 0. Since $L$ has rank $k$, all the rows of $L$ are nonzero. Let $0 < h_{1} < \cdots < h_{k} \leq n$ be the numbers of the columns of $L$ containing leading 1's. Let $i_{k+1} < \cdots < i_{n}$ be the indices in $1, \ldots, n$ not included in $0 < h_{1} < \cdots < h_{k} \leq n$. Then the columns of the corresponding $E$ are linearly independent but not in the span of the columns of $M$. This proves the claim.

For $i = 1, \ldots, \nvar$, let $f_{i}$ be the column $\nvar$-vector that is all 0, except for a 1 in the $i^{th}$ position. Extend $\mbf{j}$ to be a permutation, $i \mapsto j_{i}$, of $1, \ldots, \nvar$. For definiteness, assume $0 < j_{k+1} < \ldots < j_{\nvar} \leq \nvar$. Let $P_{\mbf{j}}^{\nvar \times \nvar}$ be the matrix whose $j_{i}^{th}$ column is $f_{i}$. Then there exists a unique matrix $C_{1}^{k \times (\nvar - k)}$ s.t.\ 
	\begin{equation*}
		M = M_{\mbf{j}} (I_{k}, C_{1}) P_{\mbf{j}}.
	\end{equation*}
(Thus, column $j_{i}$ of $(I_{k}, C_{1}) P_{\mbf{j}}$ is just $f_{i}$, $i=1, \ldots, k$.)

By lemma \ref{L:rank.lwr.semicont}, $M_{\mbf{j}}$ has a neighborhood $V_{M_{\mbf{j}},k}^{n \times k} \subset \RR^{nk}$ 
s.t.\ $L \in V_{M_{\mbf{j}},k}$ implies that $(L, E)^{n \times n}$ has rank $n$. 
Let $V_{E} := V_{M_{\mbf{j}}, \mbf{j}} := V_{M_{\mbf{j}},k} \times \RR^{k(\nvar-k)} \times \RR^{(n-k)(\nvar-k)}$. Then $V_{E}$ is an open subset 
of $\RR^{n \nvar}$. Define $\psi := \psi_{M_{\mbf{j}}, \mbf{j}} : V_{E} \to \RR^{n \nvar}$ by
	\begin{equation*}
		\psi \bigl( L^{n \times k}, D_{1}^{k \times (\nvar - k)}, 
		  D_{2}^{(n-k) \times (\nvar - k)} \bigr) 
		   := \bigl[ L \, (I_{k}, \, D_{1}) + (0^{n \times k}, \, E D_{2}) \bigr] P_{\mbf{j}}. 
	\end{equation*}
Then $\psi$ is a smooth map of $V$ into the set 
$\mcl{N} := \mcl{N}_{M_{\mbf{j}}, \mbf{j}} := \bigl\{ N^{n \times \nvar} \in \RR^{n \nvar} : N_{\mbf{j}} \in V_{M_{\mbf{j}},k} \bigr\}$, an open subset of $\RR^{nq}$. 

Let $N \in \mcl{N}$ and let $L = N_{\mbf{j}}$. Then for some $D^{n \times (\nvar-k)}$, we have $N = (L, D) P_{\mbf{j}}$. 
Let $N_{2}^{n \times (\nvar-k)}$ be the matrix consisting of the last $\nvar - k$ columns of $N P_{\mbf{j}}^{-1}$. Now, by definition 
of $\mcl{N}$ and $V_{M_{\mbf{j}},k}$, we have that $(L, E)^{n \times n}$ has rank $n$. 
Therefore, $D^{n \times (\nvar-k)} = (L, E)^{-1} N_{2}$ is valid. Write
    $D = \left(
        \begin{smallmatrix}
            D_{1}^{k \times (\nvar - k)} \\
            D_{2}^{(n-k) \times (\nvar - k)}
        \end{smallmatrix}
    \right)$.
Then $N = \psi(L, D_{1}, D_{2})$. This shows that $\psi : V_{E} \to \mcl{N}$ is onto and $\psi$ has an inverse, 
$\varphi := \varphi_{M_{\mbf{j}}, \mbf{j}}$. Both $\psi$ and $\varphi$ are compositions of matrix operations and, hence, are smooth.

Let $M' \in \mcl{K}$. Let $\mbf{j}'$ be a $k$-tuple $0 < j'_{1} < \ldots < j'_{k} \leq \nvar$ of integers s.t.\ 
the matrix $(M_{\mbf{j}'}')^{n \times k}$ has rank $k$. Let $E'$ be the corresponding $n \times (n-k)$ matrix of 0's and 1's. 
Suppose $V_{M_{\mbf{j}}, \mbf{j}} \cap V_{M_{\mbf{j}'}', \mbf{j}'} \neq \varnothing$. Then 
$\varphi_{M_{\mbf{j}'}', \mbf{j}'} \circ \psi_{M_{\mbf{j}}, \mbf{j}}$, on the obvious subset of $\mcl{N}_{M_{\mbf{j}}, \mbf{j}}$, is a composition of matrix operations and is therefore smooth. 
Hence, by Boothby \cite[Theorem (1.3), p.\ 54]{wmB75} the system $\varphi_{M_{\mbf{j}}, \mbf{j}}$ as $\mbf{j}$ and $E$ vary determines a differentiable structure on a neighborhood, call it $\mcl{M}$, of $\mcl{K}$ in $\RR^{nq}$.

But letting $D_{2}$ be identically 0 we see that $\mcl{K}$ has the $(nk + k \nvar - k^{2})$-submanifold property relative to $\mcl{M}$
(Boothby \cite[Definition (5.1), p.\ 75]{wmB75}). Hence, by Boothby \cite[Lemma (5.2), p.\ 76]{wmB75}, $\mcl{K}$ 
is an imbedded $(nk + k \nvar - k^{2})$-submanifold of $\RR^{n \nvar}$.

Finally, $nk + k \nvar - k^{2} = nk + k(\nvar - k) < nk + n(\nvar - k) = n \nvar$.
  \end{proof}
   
  \begin{lemma}  \label{L:rank.lwr.semicont}
Let $m, n = 1, 2, \ldots$ and let $\mcl{M}$ be the space of $m \times n$ matrices. Then the function $rank : \mcl{M} \to \RR$ is lower semicontinuous (Ash \cite[Definition A6.1, p.\ 388]{rbA72}). I.e., for $s \in \RR$, the set $\{ M \in \mcl{M} : rank \, M > s \}$ is open 
in $\mcl{M}$. Moreover, the set of $m \times n$ matrices of full rank $\min(m,n)$ is dense in $\mcl{M}$.
  \end{lemma}
  \begin{proof}
We start with a slavish copy of the argument in Boothby \cite[p.\ 47]{wmB75}. Recall that a square matrix is of full rank if and only if its determinant is nonzero (Stoll and Wong \cite[Theorem 5.6, p.\ 175]{rrSetW68.LinearAlgebra}). Therefore, if $M \in \mcl{M}$ and $k = 1, \ldots, \min \{m,n\}$ then it is easily seen that $rank \, M > k-1$ if and only if some $k \times k$ minor determinant of $M$ is nonzero. Since the vector of all 
$k \times k$ minor determinants of $M$ is continuous in $M$, regarded as a point 
in $\RR^{mn}$

WLOG we may assume $m \leq n$. Let $M \in \mcl{M}$ and suppose $rank \, M < m$.
It follows from the Singular Value Decomposition (Rao \cite[(v), p.\ 42]{crR73.LinStatInf}) that by making arbitrarily small perturbations in $M$, we can get a matrix with full rank.
  \end{proof}

  \begin{proof}[Proof of lemma \ref{L:proj.mat.is.imbedding.of.Grass}]
Some of this proof may be duplicative of material in section \ref{SS:D.T.plane.fit}. Sorry. 

We show that $\Pi$ is a one-to-one immersion (Boothby \cite[Definition (4.3), p.\ 70]{wmB75}). This makes sense since $\dim G(k, \nvar) = k (\nvar-k) < \nvar^{2} = \dim \mcl{M}$. 
Then, since $G(k, \nvar)$ is compact (Milnor and Stasheff \cite[Lemma 5.1, p.\ 57]{jwMjdS74}), 
by Boothby \cite[Theorem (5.7), p.\ 79]{wmB75}, it will follow that $\Pi$ is an imbedding. Since the row space of $\Pi(\xi)$ 
is just $\xi$ ($\xi \in G(k, \nvar)$), $\Pi$ is obviously one-to-one. It remains to show that $\Pi$ is an immersion.

Let $\xi_{0} \in G(k, \nvar)$ and let $U \subset G(k, \nvar)$ be a coordinate neighborhood of $\xi_{0}$ with coordinate map 
$\varphi : U \to \mcl{W}$, the space of matrices $k \times (\nvar-k)$ matrices as described in Boothby \cite[p.\ 64]{wmB75}. 
$\varphi$ depends on a choice of indices $1 \leq j_{1} < \cdots < j_{k} \leq \nvar$. WLOG we take $j_{i} = i$ ($i = 1, \ldots, k$). 
If $W \in \mcl{W}$ let $\mbf{x}^{k \times \nvar} := \mbf{x}(W) := (I_{k}, W)$, where $I_{k}$ is the $k \times k$ identity matrix. Then in the setup described in Boothby \cite[p.\ 64]{wmB75}, if $\xi \in U$ and $W := \varphi(\xi) \in \mcl{W}$ then $\rho \bigl[ \mbf{x}(W) \bigr] = \xi$. 
I.e., $\rho \circ \mbf{x} \circ \varphi$ is the identity map on $U$. For $W \in \mcl{W}$, 
let $\pi(W ) := \mbf{x}(W)^{T} \bigl[ \mbf{x}(W) \mbf{x}(W)^{T} \bigr]^{-1} \mbf{x}(W)$. Thus, by \eqref{E:proj.mat.from.K}, $\pi(W)$ is the matrix of orthogonal projection
onto $\rho \bigl[ \mbf{x}(W) \bigr]$. Since $\pi(W)$ is a rational function in the entries of $W$, we see that $\pi$, 
and hence $\Pi = \pi \circ \varphi$, is smooth. 

We need to show that $\Pi_{\ast}$ is an injection at each point of $G(k, \nvar)$. Notice that $\rho \circ \Pi = id$, where $id$ is the identity on $G(k, \nvar)$. We know that $\rho$ is smooth, by \eqref{E:rho.is.smooth}, 
so we can write $\rho_{\ast} \circ \Pi_{\ast} = id_{\ast}$. Suppose $\Pi_{\ast}$ were not one-to-one. Then there would exist 
a nonzero tangent vector $X \in T_{\xi} \, G(k, \nvar)$ at some $\xi \in G(k,\nvar)$ s.t.\ 
$0 = \rho_{\ast} \circ \Pi_{\ast} (X) = id_{\ast} (X) = X \neq 0$, contradiction. This proves the lemma.
  \end{proof}

  \begin{proof}[Proof of equivalence of two definitions of PC on the plane]
The two definitions are in example \ref{Ex:3.plane.fitters} and section \ref{SSS:LF.plots}.
In section \ref{SSS:LF.plots}, principal components plane-fitting (on the plane) is defined as follows: ``Principal components line fitting (PC) finds the line that minimizes the sum of the squared \emph{perpendicular} distances from points in the cloud to the line.'' The official definition is given in example \ref{Ex:3.plane.fitters}. Here we show, for fitting a line to a point cloud on the plane, the two definitions are essentially equivalent. 

Let $n > 2$ and let $x$ be an $n \times 2$ matrix, each row of which gives the coordinates of a point in a point cloud on the plane. 
Use ``${}^{\centerdot \times \centerdot}$'' notation to indicate matrix dimensions. Thus, e.g., we write $x^{n \times 2}$. $1^{n}$ is defined in \eqref{E:1n.col.vec.defn} to be the column $n$-vector all of whose entries are 1. Then we can write 
    \begin{equation}  \label{E:center.x}
      x = z + 1^{n} \bar{x} ,
    \end{equation} 
where 
$z^{n \times 2}$ satisfies 
    \begin{equation*}
      (1^{n})^{T} z = 0 
    \end{equation*}
and $\bar{x}$ is $1 \times 2$. (The coordinates of $\bar{x}$ are the column means of $x$.) 
Use superscript ``${}^{T}$'' to indicate matrix transposition.  

By the Singular Value Decomposition (Rao \cite[(v), p.\ 42]{crR73.LinStatInf}), we may write 
    \begin{equation}  \label{E:z.L.Lambda.NT}
      z = L^{n \times 2} \Lambda^{2 \times 2} N^{T} ,
    \end{equation} 
where $L$ has orthonormal columns, $\Lambda$ is non-negative diagonal, and 
$N^{2 \times 2}$ is orthogonal. For $j = 1$ or 2 let $u_{j}$ be the $j^{th}$ column of $L$ and $v_{j}$ be the $j^{th}$ column of $N$. 
So $u_{j}$ is an $n \times 1$ column vector and $v_{j}$ is a $2 \times 1$ column vector. The $u_{j}$'s and $v_{j}$'s are unit vectors and $u_{2}^{T} u_{1} = 0$ and 
$v_{2}^{T} v_{1} = 0$. Write 
      \begin{equation*}
        \Lambda =
            \begin{pmatrix}
                  \lambda_{1} & 0 \\
                  0  & \lambda_{2}
            \end{pmatrix} .
      \end{equation*}
\eqref{E:z.L.Lambda.NT} can be rewritten 
    \begin{equation}  \label{E:z.in.trms.of.v1.v2}
      z = \lambda_{1} u_{1} v_{1}^{T} +  \lambda_{2} u_{2} v_{2}^{T} .
    \end{equation} 

Since the columns of $z$ each have mean 0, by \eqref{E:sample.covariance.matrix}, the covariance matrix $C$ of $x$ is 
$(n-1)^{-1} z^{T} z = (n-1)^{-1} N \Lambda L^{T} L \Lambda N^{T} 
= (n-1)^{-1} N \Lambda^{2} N^{T}$. Thus, the eigenvectors of $C$ are $v_{1}$ and $v_{2}$ with eigenvalues $\lambda_{j}^{2}/(n-1)$ $(j=1,2)$. WLOG 
$\lambda_{1} \geq \lambda_{2}$. If the two eigenvalues are equal, then $z$ is a singularity of PC. (See section \ref{SS:PC.plane.fitting}.) So assume the inequality is strict. Thus, according to the example \ref{Ex:3.plane.fitters} definition, the PC line for $x$ is 
$\{ t v_{1}^{T} : t \in \RR \}$. We show that the PC line according to the section \ref{SSS:LF.plots} is parallel to that. In our analysis of plane-fitting we regard parallel lines as the same. So this will establish that the two definitions of PC are the same (at least on the plane).

Let $w^{1 \times 2}$ be a unit row vector and let $r$ be $1 \times 2$. Then 
    \begin{equation*}
      t \mapsto r + t w \text{  parametrizes a generic line, }
    \end{equation*}
call it $\ell(r,w)$, on the plane. We will compute the $w$ and $r$ s.t.\ (such that) the sum of the squared row-wise perpendicular distances from $x$ to that line is minimized. To begin with we will not insist on perpendicular distances, just any distances. Perpendicularity will arise automatically. 

Let $p$ be a column $n$-vector. Then $1^{n} r + p w$ is an $n \times 2$ matrix. Its rows lie on the line $\ell(r,w)$. Write 
    \begin{equation*}
      p = q + s 1^{n} ,
    \end{equation*} 
where $q^{T} 1^{n} = 0$ and $s \in \RR$. Recall $(1^{n})^{T} z = 0$.  We choose $p$, 
$r$, and the unit vector $w$ to minimize the following.  
    \begin{multline*}
      \bigl| x - (1^{n} r + p w) \bigr|^{2} = | z + 1^{n} \bar{x} - p w - 1^{n} r |^{2} \\
       = \bigl| (z - q w ) + 1^{n} ( \bar{x} - s w - r ) \bigr|^{2} 
         = | z - q w |^{2} + n | \bar{x} - s w - r |^{2} .
    \end{multline*}
Setting $r = \bar{x} - s w$ can only decreases this. So we may assume 
$\bigl| x - (1^{n} r + p w) \bigr|^{2} = | z - q w |^{2}$. We will compute $q$. $s \in \RR$ can then be arbitrary. So we take $s = 0$. This makes 
    \begin{equation*}
      p = q \text{ and } r = \bar{x} .
    \end{equation*} 

If $| z - q w |^{2}$ is minimum then, by \eqref{E:z.L.Lambda.NT}, $w$ lies in the row space of $z$, a subspace of the column space of $N$. Thus, 
    \begin{equation*}
      w = c v_{1}^{T} + d v_{2}^{T} , \text{ where } c^{2} + d^{2} = 1 .
    \end{equation*}
By \eqref{E:z.in.trms.of.v1.v2}, we have
    \begin{multline*}
      | z - q w |^{2} = \bigl| (\lambda_{1} u_{1} - cq)v_{1}^{T} 
        + (\lambda_{2} u_{2} - dq)v_{2}^{T} \bigr|^{2}
          = |\lambda_{1} u_{1} - cq|^{2} + |\lambda_{2} u_{2} - d q|^{2} \\
           = \lambda_{1}^{2} +  \lambda_{2}^{2} 
             - 2 (\lambda_{1} c u_{1} + \lambda_{2} d u_{2})^{T} q + (c^{2}+d^{2}) |q|^{2}
               = \lambda_{1}^{2} +  \lambda_{2}^{2} 
                - 2 (\lambda_{1} c u_{1} + \lambda_{2} d u_{2})^{T} q + |q|^{2} .
    \end{multline*}
 
 Let $y :=  \lambda_{1} c u_{1} + \lambda_{2} d u_{2}$. So 
 $| z - q w |^{2} = \lambda_{1}^{2} +  \lambda_{2}^{2} - 2 y^{T} q + |q|^{2}$. Holding $|q|$ fixed, $- 2 (\lambda_{1} c u_{1} + \lambda_{2} d u_{2}) q^{T}$ is made smaller (more negative) by letting $q = e y$ for some $e \geq 0$. Thus, we get
    \begin{multline*}
      | z - q w |^{2} = \lambda_{1}^{2} +  \lambda_{2}^{2} 
        - 2 e |y|^{2}  + e^{2} |y|^{2} \\
          = \lambda_{1}^{2} +  \lambda_{2}^{2} - |y|^{2}
            + \bigl( |y|^{2} - 2 e |y|^{2}  + e^{2} |y|^{2} \bigr)
              = \lambda_{1}^{2} +  \lambda_{2}^{2} - |y|^{2} + |y|^{2} (1 - e)^{2} .
    \end{multline*}
We make this smaller by taking $e = 1$, so $q = y$ and 
$| z - q w |^{2} = \lambda_{1}^{2} +  \lambda_{2}^{2} - |y|^{2}$. So we want to maximize 
$|y|^{2} = \lambda_{1}^{2} c^{2} + \lambda_{2}^{2} d^{2}$ subject to 
$c^{2} + d^{2} = 1$. Since $\lambda_{1} > \lambda_{2} \geq 0$ by assumption, 
we take $c = \pm 1$ and $d = 0$ so 
    \begin{equation}  \label{E:q=y.and.w=v1}
      p = q = y = \pm \lambda_{1} u_{1} \text{ and } w = \pm v_{1}^{T} .
    \end{equation} 
This choice, and the previous choice 
$r = \bar{x}$, minimizes $\bigl| x - (1^{n} r + p w) \bigr|$, with $p = q + s 1^{n}$. 

Now for row-wise perpendicularity. We have just seen that the closest line to $x$ is parametrized by $t \mapsto \bar{x} + t w = \bar{x} + t v_{1}^{T}$ and the closest vector of points on that line to $x$ is $1^{n} r + p w = 1^{n} \bar{x} + \lambda_{1} u_{1} v_{1}^{T}$. We show that  $x - ( 1^{n} \bar{x} + \lambda_{1} u_{1} v_{1}^{T} )$ is perpendicular to that line, i.e., to $w$. By \eqref{E:center.x}, \eqref{E:z.in.trms.of.v1.v2}, \eqref{E:q=y.and.w=v1}, 
    \begin{equation*}
      \bigl[ x - ( 1^{n} \bar{x} + \lambda_{1} u_{1} v_{1}^{T} ) \bigr] w^{T}
         = \bigl[ z - \lambda_{1} u_{1} v_{1}^{T} \bigr] w^{T}
           = \pm (\lambda_{2} u_{2} v_{2})^{T} v_{1} = 0^{n \times 1} ,
    \end{equation*}
as desired. 

(Yes, this generalizes.)
  \end{proof}

  \begin{proof}[Proof of \eqref{E:DR.mu(z).1st.version}]
$R_{\mu} : \Y \setminus \{0\} \to Y \setminus \{0\}$ is defined in \eqref{E:R.mu.defn}. Regard $\Y \setminus \{0\}$ as a subset of $\RR^{n \nvar}$. Let $Y \in \Y \setminus \{0\}$ and let $m := n \nvar$. Reshape $Y$ to be the column vector $z^{m \times 1}$ formed by concatenating the columns of $Y$ in order. Thus, 
if $(y^{j})^{n \times 1}$ is the $j^{th}$ column of $Y$ with transpose $y^{j T}$ 
($j=1, \ldots, \nvar$) we have 
$z^{m \times 1} = (y^{1T} , \ldots, y^{\nvar T})^{T} \neq 0$. 
Call this reshaping operation ``elongation''. Then $|z| = \|Y\|$. (See \eqref{E:matrix.norm}.) Identify each tangent space $T_{z}(\RR^{m})$ with $\RR^{m}$. 

For $z^{m \times 1} \neq 0$, define 
    \begin{equation}  \label{E:xi(z).and.nu(z).defns}
      \xi(z)^{m \times 1} := |z|^{-1} z \in \RR^{m} 
        \text{ and } \nu(z) := \mu \bigl( \xi(z) \bigr) \in \RR ,
    \end{equation} 
where $\mu$ is defined in \eqref{E:mu.as.in.D.mu.defn}. 
So $R_{\mu}(z) = \nu(z) \xi(z)^{m \times 1}$. Define 
$sprod(s, v^{m \times 1})^{m \times 1} = s v$ ($s \in \RR, v \in \RR^{m})$. Thus, 
$R_{\mu}(z) = sprod \bigl( \nu(z) , \xi(z) \bigr)$. The Jacobian matrix 
(Boothby \cite[p.\ 26]{wmB75}) of $sprod$ is 
$D \, sprod(s, v) = (v, s I_{m})^{m \times (m+1)}$, where $I_{m}$ is the $m \times m$ identity matrix. 

In the present case $s = \nu(z)$ and $v = \xi(z)$. 
Let $g(z) := \bigl( \nu(z), \xi(z)^{T} \bigr)^{T}$ so $R_{\mu} = sprod \circ g$. Then, by the chain rule (Boothby \cite[Theorem (2.3) p.\ 27]{wmB75}), the Jacobian matrix 
of $R_{\mu}$ is given by:
    \begin{align}  \label{E:DR.mu(z)}
      D R_{\mu}(z)^{m \times m} 
      &=  \bigl( D \, sprod \bigl[ g(z) \bigr] \bigr) D g(z) \notag \\
       &= \bigl( \xi(z) , \nu(z) I_{m} \bigr)^{m \times (m+1)}  
            \begin{pmatrix}
               \nabla \nu(z)^{1 \times m} \\
               D \xi(z)^{m \times m} 
            \end{pmatrix} \\
              &= \xi(z)^{m \times 1} \, \nabla \nu(z)^{1 \times m}
                + \nu(z) D \xi(z)^{m \times m} \notag .
    \end{align}

Consider $\xi$, defined in \eqref{E:xi(z).and.nu(z).defns}. Let $z_{i}$ be the $i^{th}$ element of $z$ ($i = 1, \ldots, m$). Notice that 
    \begin{equation*}
      \frac{\partial}{\partial z_{i}} \frac{z_{k}}{|z|} = 
        \begin{cases}
         - z_{i} z_{k}/|z|^{3}, &\text{ if } k \neq i, \\
          \bigl( |z|^{2} - z_{i}^{2} \bigr)/|z|^{3}, &\text{ if } k = i .
        \end{cases}
    \end{equation*}
Therefore,  
    \begin{equation}  \label{E:D.xi(z)}
      D \xi(z)^{m \times m} = - |z|^{-3} z z^{T} + |z|^{-1} I_{m} .
    \end{equation}
Hence, as one expects, 
    \begin{equation}  \label{Ez.perp.Dxi(z)}
      D \xi(z) z = - |z|^{-3} |z|^{2} z + |z|^{-1} z = 0 .
    \end{equation}

On the otherhand,  
    \begin{equation}  \label{E:D.xi.f.when.f.perp.z}
      \text{if } (f^{1 \times m})^{T} \perp z \text{ and } f \neq 0, \text{ then }
        D \xi(z) f^{T} = 0 + |z|^{-1} f^{T} \neq 0 . 
    \end{equation}
Let $f_{1}, \ldots, f_{m-1} \in \RR^{m}$ be $1 \times m$ row vectors orthonormal and orthogonal to $z$, which means orthogonal ot $\xi(z)$. 
Then the (multivariate) directional derivative of $\xi$ in the direction 
$f_{i}$ (i.e., $\tfrac{d}{dt} \xi (z + tf_{i}^{T}) \restriction_{t=0}$) is $|z|^{-1} f_{i}^{T}$ 
($i = 1, \ldots, m-1$). 
Let $f_{m}^{1 \times m} = \xi(z)^{T}$, so the derivative of $\xi$ in the direction $f_{m}$ is 0. 

Consider $\nu$, also defined in \eqref{E:xi(z).and.nu(z).defns}. We have 
    \begin{equation}  \label{E:nu.ast=mu.ast.circ.xi.ast}
      \nu_{\ast} = \mu_{\ast} \circ \xi_{\ast} ,
    \end{equation}
where $\nu_{\ast}$ is the differential of $\nu$, etc. Now, $\mu_{\ast}$, is only defined on the tangent bundle $T S^{m-1}$. Define $\nabla \mu \bigl[ \xi(z) \bigr]^{1 \times m}$ to be the row vector with the following property. Let $v^{1 \times m} \in T_{\xi(z)} S^{m-1}$, the tangent space to $S^{m-1}$ at $\xi(z)$. So $v$ is expressed as a vector in $\RR^{m}$. Then, at $\xi(z)$, the derivative of $\mu$ in the direction $v$ is 
$\nabla \mu \bigl[ \xi(z) \bigr] v^{T}$. 
WLOG we may take $\nabla \mu \bigl[ \xi(z) \bigr] \in T_{\xi(z)} S^{m-1}$.    

Let $F^{(m-1) \times m} = F(z)$ be the matrix whose $i^{th}$ row is $f_{i}$ 
($i = 1, \ldots, m-1$).  Thus, the rows of $F(z)$ constitute an orthonormal basis of the tangent space $T_{\xi(z)} S^{m-1} = z^{\perp}$. By \eqref{E:proj.mat.from.K}, this means $F^{T} F$ is the matrix of orthogonal projection $\RR^{m} \to T_{\xi(z)} S^{m-1}$. Since $\nabla \mu \bigl[ \xi(z) \bigr] \in T_{\xi(z)} S^{m-1}$, we thus have 
    \begin{equation}  \label{E:nabla.mu.FT.F}
      \nabla \mu \bigl[ \xi(z) \bigr] F^{T}F = \nabla \mu \bigl[ \xi(z) \bigr] .
    \end{equation}

From \eqref{E:D.xi.f.when.f.perp.z} and \eqref{Ez.perp.Dxi(z)}, we know that $D \xi(z) F^{T} =  |z|^{-1} F^{T}$ and $D \xi(z) f_{m}^{T} = 0$. Therefore, by  \eqref{E:nu.ast=mu.ast.circ.xi.ast} we have
    \begin{equation*}
      \nabla \nu(z)^{1 \times m} F^{T} 
        = \nabla \mu \bigl[ \xi(z) \bigr] \bigl( D\xi(z) F^{T} \bigr)
          = |z|^{-1} \nabla \mu \bigl[ \xi(z) \bigr] F^{T} 
    \end{equation*}
and $\nabla \nu(z)^{1 \times m} f_{m}^{T} = 0$. Let 
    \begin{equation*}
       F_{0}^{m \times m} =  
        \begin{pmatrix}
          F \\
          f_{m}
        \end{pmatrix}
\end{equation*}
Thus, $F_{0}$ is an orthonormal matrix: $F_{0}^{T} F_{0} = I_{m}$. And  
$\nabla \nu(z)^{1 \times m} F_{0}^{T} 
= |z|^{-1} \Bigl( \nabla \mu \bigl[ \xi(z) \bigr] F^{T}, 0 \Bigr)$. Therefore, by \eqref{E:nabla.mu.FT.F},  
    \begin{multline*}
      \nabla \nu(z)^{1 \times m} = \nabla \nu(z) F_{0}^{T} F_{0} \\
        = |z|^{-1} \Bigl( \nabla \mu \bigl[ \xi(z) \bigr] F^{T}, 0 \Bigr) F_{0} \notag  
          = |z|^{-1} \nabla \mu \bigl[ \xi(z) \bigr] F^{T} F 
            = |z|^{-1} \nabla \mu \bigl[ \xi(z) \bigr] .
    \end{multline*}
Substituting the preceding and \eqref{E:D.xi(z)} into \eqref{E:DR.mu(z)} we get: 
    \begin{equation}  \label{E:DR.mu.expanded}
      D R_{\mu}(z)^{m \times m} 
        = |z|^{-1} \xi(z) \nabla \mu \bigl[ \xi(z) \bigr] 
         + \nu(z) \bigl( - |z|^{-3} z z^{T} + |z|^{-1} I_{m} \bigr) .
    \end{equation}
Substituting $x := \xi(z)$ and $\mu(x) = \nu(z)$ (see \eqref{E:xi(z).and.nu(z).defns}), the preceding is \eqref{E:DR.mu(z).1st.version}. 
  \end{proof}
    
  \begin{proof}[Proof of lemma \ref{L:nontriv.homol.if.cov.by.bndl.map}]
 Let $w_{i}(\psi_{j})$ denote the $i^{th}$ Stiefel-Whitney class of $\psi_{j}$ 
($i = 1, 2, \ldots$; $j = 1,2$).
By assumption, $w_{s}(\psi_{1})$ is nontrivial. Therefore, by naturality of Stiefel-Whitney classes 
(Milnor and Stasheff \cite[Axiom 2, p.\ 37]{jwMjdS74})
$f^{\ast}$ is nontrivial in dimension $s$. Since the coefficient ring, $F := \mathbb{Z}/2$ is a field, we have by Munkres
\cite[Theorem 53.5, p.\ 325]{jrM84} that the following commutes and the rows are exact sequences.
   \[
      \begin{CD}
            0 @<<< \text{Hom}_{F} \,\bigl( H_{s}(B_{1}; F), F \bigr) @<<< H^{s}(B_{1}; F) 
              @<<< 0 \\
            & & @A{\text{Hom}_{F} \,(f_{\ast})}AA  @AA{f^{\ast}}A    \\
            0 @<<< \text{Hom}_{F} \,\bigl( H_{s}(B_{2}; F), F \bigr) @<<< H^{s}(B_{2}; F) 
              @<<< 0.
      \end{CD}
   \]
It follows that $f_{\ast}$ is nontrivial in dimension $s$.
  \end{proof}
  
  \begin{proof}[Proof of lemma \ref{L:Upsilon.mu:P1.to.Pf.is.imbedding}]
Since $P^{1}$ is compact and $\Upsilon_{\mu}$ injective (by \eqref{E:plane-fitting.Upsilon.mu.is.injective}), Boothby \cite[Theorem (5.7), p.\ 79]{wmB75} tells us that it suffices to show that 
$\Upsilon_{\mu \ast} : T_{\ell} P^{1} \to T_{\Upsilon_{\mu}(\ell)} \Pf^{k}$ is injective at each $\ell \in P^{1}$. 
(See Boothby \cite[Definition (4.3), p.\ 70]{wmB75}.)
By \eqref{E:Upsilon.Delta.mu.defn},  
and Boothby \cite[Theorem (1.2), p.\ 107]{wmB75}, 
    \begin{equation}  \label{E:Upsilon.mu.ast=R.mu.ast.circ.Upsilon.ast}
      \Upsilon_{\mu \ast} = R_{\mu \ast} \circ \Upsilon_{\ast} .
    \end{equation}

We have analyzed $R_{\mu \ast}$ in \eqref{E:DR.mu(z)} (a.k.a.\ \eqref{E:DR.mu(z).1st.version}). Next, we turn to $\Upsilon_{\ast}$. As remarked just after \eqref{E:P1.diffeom.to.S1}, $P^{1}$ can be locally parametrized by 
$s_{J} : \theta \mapsto \text{ span of } (\cos \theta, \sin \theta) \rangle$ 
for $\theta \in J$, where $J$ is any open interval of length $\pi$. Recall the definition \eqref{E:lambda(ell).defn} of $\lambda$. We analyze the map 
$\theta \mapsto \lambda \circ s_{J}(\theta)$. WLOG we may assume 
$v_{1} = (1, 0, 0, \ldots, 0)^{1 \times \nvar}$, 
$v_{2} = (0, 1, 0, \ldots, 0)^{1 \times \nvar}$, and $\zeta$ is spanned by the rows of
$(0^{(k-1) \times 2}, I_{k-1}, 0^{(k-1) \times (\nvar - k - 1)})^{(k-1) \times \nvar}$. 
Then $\lambda \circ s_{J}(\theta)$ is spanned by the rows of
    \begin{equation*}
      X := X(\theta)^{k \times \nvar} :=
        \begin{pmatrix}
          \cos \theta & \sin \theta & 0^{1 \times (k-1)} & 0^{1 \times (\nvar - k - 1)} \\
          0^{(k-1) \times 1} & 0^{(k-1) \times 1} & I_{k-1} & 0^{(k-1) \times (\nvar - k - 1)}
        \end{pmatrix} ,
        \qquad \theta \in J .
    \end{equation*} 

Let $\Pi$ be as in \eqref{E:defn.of.Pi(xi)}. 
Since the rows of $X$ are orthonormal, by \eqref{E:proj.mat.from.K}, the projection matrix 
onto $\zeta \circ s_{J}(\theta)$ is
    \begin{multline}  \label{E:Pi.lambda.circ.sJ}
      \Pi \bigl[ \lambda \circ s_{J}(\theta) \bigr]^{\nvar \times \nvar}= X^{T} X \\
        =
        \begin{pmatrix}
          \cos^{2} \theta & (\cos \theta)(\sin \theta) & 0^{1 \times (k-1)} 
            & 0^{1 \times (\nvar - k - 1)} \\
          (\cos \theta)(\sin \theta) & \sin^{2} \theta & 0^{1 \times (k-1)} 
            & 0^{1 \times (\nvar - k - 1)} \\
          0^{(k-1) \times 1} & 0^{(k-1) \times 1} & I_{k-1} & 0^{(k-1) \times (\nvar - k - 1)} \\
          0^{(\nvar - k - 1) \times 1} & 0^{(\nvar - k - 1) \times 1} 
            & 0^{(\nvar - k - 1) \times (k-1)} & 0^{(\nvar - k - 1) \times (\nvar - k - 1)}
        \end{pmatrix} .
    \end{multline}
Just before \eqref{E:plane.fitting.wT.Y=0} we learned that 
$\mbf{Y}^{n \times \nvar} \in \Y$ has full rank $\nvar$. Let the columns of $\mbf{Y}$ be denoted by $y^{1}, y^{2}, \ldots, y^{\nvar}$, all $n \times 1$. Recall \eqref{E:mcl.Y,defn}. 
Let $Y^{n \times \nvar} \in \Y \setminus \{0\}$ to be the matrix 
\eqref{E:Pi.lambda.circ.sJ}, we are interested in
    \begin{multline*}  
      Y := \Upsilon \bigl[ s_{J}(\theta) \bigr] 
        = \mbf{Y} \, \Pi \bigl[ \lambda \circ s_{J}(\theta) \bigr] \\
          = \Bigl( (\cos^{2} \theta) y^{1} + (\cos \theta \; \sin \theta) y^{2} ,
            (\cos \theta \; \sin \theta) y^{1} + (\sin^{2} \theta) y^{2} , 
              y^{3}, \ldots, y^{k+1}, 0^{n \times (\nvar-k-1)} \Bigr)^{n \times \nvar} .
    \end{multline*}

\emph{Claim:} The rank of this matrix is $k$. To see this, observe that the rank cannot exceed $k$ because, by \eqref{E:lambda(ell).imbedding}, 
$\lambda \circ s_{J}(\theta)$ is a $k$-plane. And since $\mbf{Y}$ has full rank, 
the $n$-vectors 
$(\cos \theta \; \sin \theta) y^{1} + (\sin^{2} \theta) y^{2}, y^{3}, \ldots, y^{k+1}$ are linearly independent. This proves the claim.  

$m := n \nvar$. As in the proof of \eqref{E:DR.mu(z).1st.version} above, reshape $Y$ to be be the $m$-dimensional column vector $z^{m \times 1}$ formed by concatenating the columns of $Y$ in order. 
Call this reshaping operation ``elongation''. Then $|z| = \|Y\| > 0$. (See \eqref{E:matrix.norm}.) The first two columns of $Y$ are linearly dependent: If we multiply the first column by $\sin \theta$ and the second by $\cos \theta$ we get identical column vectors. Denote the transpose of $y^{j}$ by $y^{j T}$. Thus, 
    \begin{multline}  \label{E:z.elongation.of.Y.Pi}
     z^{m \times 1} = \Bigl( (\cos^{2} \theta) y^{1 T} + (\cos \theta \; \sin \theta) y^{2 T} ,
           (\cos \theta \; \sin \theta) y^{1 T} + (\sin^{2} \theta) y^{2 T} , \\
             y^{3 T}, \ldots, y^{(k+1) T}, 
               0^{1 \times (\nvar (n-k-1)} \Bigr)^{T} .
    \end{multline}

By \eqref{E:Pi.lambda.circ.sJ}, 
$\tfrac{d}{d \theta}  \Pi \bigl[ \lambda \circ s_{J}(\theta) \bigr]$ is the 
$\nvar \times \nvar$ matrix which is all 0 except in the first two rows and columns in which are
    \begin{equation*}
        \begin{pmatrix}
           -2 (\cos \theta)(\sin \theta) & - \sin^{2} \theta + \cos^{2} \theta \\
           - \sin^{2} \theta + \cos^{2} \theta & 2 (\cos \theta)(\sin \theta)
        \end{pmatrix} =
        \begin{pmatrix}
           - \sin 2 \theta & \cos 2 \theta \\
           \cos 2 \theta & \sin 2 \theta
        \end{pmatrix} .
    \end{equation*}
Therefore, at $\ell = \ell(\theta)$, 
    \begin{multline*}
      \Upsilon_{\ast}(\ell) 
        = \frac{d}{d \theta} \mbf{Y} \,  \Pi \bigl[ \lambda \circ s_{J}(\theta) \bigr] =
           \mbf{Y} \frac{d}{d \theta}  \Pi \bigl[ \lambda \circ s_{J}(\theta) \bigr] \\
             = \bigl( - (\sin 2 \theta) y^{1} + (\cos 2 \theta) y^{2} , \;
               (\cos 2 \theta) y^{1} + (\sin 2 \theta) y^{2} , 
                 \; 0^{n \times (\nvar-2)} \bigr)^{n \times \nvar} .
    \end{multline*}
After elongation the final expression in the preceding becomes
    \begin{multline}  \label{E:w=Upsilon.ast}
     w^{m \times 1} := \Upsilon_{\ast}(\ell) \\
       = \bigl( - (\sin 2 \theta) y^{1 T} + (\cos 2 \theta) y^{2 T} , \;
            (\cos 2 \theta) y^{1 T} + (\sin 2 \theta) y^{2 T} , 
              \; 0^{1 \times n(\nvar-2)} \bigr)^{1 \times m} .
    \end{multline}
Since $\mbf{Y}$ has full rank $\nvar$, $y^{1}$ and $y^{2}$ are linearly independent, 
so $w \neq 0$.

\emph{Claim:} $w$ and $z$ are linearly independent. Suppose not. Then there exist 
$a, b \in \RR$ not both 0 s.t.\ $a z + b w = 0^{m \times 1}$. We may reverse the elongation, take the first two columns, and by \eqref{E:z.elongation.of.Y.Pi} and \eqref{E:w=Upsilon.ast} conclude 
    \begin{align*}
      0^{n \times 2} &= a \bigl[ (\cos^{2} \theta) y^{1} + (\cos \theta \; \sin \theta) y^{2} ,
               (\cos \theta \; \sin \theta) y^{1} + (\sin^{2} \theta) y^{2} \bigr]^{n \times 2} \\
        &\qquad + b \bigl[ - (\sin 2 \theta) y^{1} + (\cos 2 \theta) y^{2} , \;
                (\cos 2 \theta) y^{1} + (\sin 2 \theta) y^{2} \bigr]^{n \times 2} \\
        &=  a\, (y^{1} , y^{2})      
            \begin{pmatrix}
             \cos^{2} \theta & \cos \theta \; \sin \theta \\
             \cos \theta \; \sin \theta & \sin^{2} \theta
            \end{pmatrix}
           + b \, (y^{1} , y^{2})      
            \begin{pmatrix}
              -\sin 2 \theta  & \cos 2 \theta \\
              \cos 2 \theta & \sin 2 \theta
            \end{pmatrix} \\
       &= (y^{1} , y^{2}) \left[ a 
             \begin{pmatrix}
               \cos^{2} \theta & \cos \theta \; \sin \theta \\
               \cos \theta \; \sin \theta & \sin^{2} \theta
            \end{pmatrix} + b 
            \begin{pmatrix}
              -\sin 2 \theta  & \cos 2 \theta \\
              \cos 2 \theta & \sin 2 \theta
            \end{pmatrix} \right] .
    \end{align*}
Since $\mbf{Y}$ has full rank, $y^{1}$ and $y^{2}$ are linearly independent. It follows that the expression enclosed in the brackets is 0. That implies 
    \begin{align*}
      0 &= (-\sin \theta, \cos \theta)
        \left[ a 
            \begin{pmatrix}
         \cos^{2} \theta & \cos \theta \; \sin \theta \\
         \cos \theta \; \sin \theta & \sin^{2} \theta
        \end{pmatrix} \right. \\
        &\qquad \left. + b 
            \begin{pmatrix}
              -\sin 2 \theta  & \cos 2 \theta \\
              \cos 2 \theta & \sin 2 \theta
            \end{pmatrix} \right] \\
        &=  a (- \sin \theta \cos^{2} \theta + \sin \theta \cos^{2} \theta ,
            - \cos \theta \sin^{2} \theta + \cos \theta \sin^{2} \theta) \\
        &\qquad    + b (-\sin \theta, \cos \theta) 
        \begin{pmatrix}
        -\sin 2 \theta  & \cos 2 \theta \\
        \cos 2 \theta & \sin 2 \theta
        \end{pmatrix} \\
       &= 0 + b (-\sin \theta, \cos \theta) 
        \begin{pmatrix}
        -\sin 2 \theta  & \cos 2 \theta \\
        \cos 2 \theta & \sin 2 \theta
        \end{pmatrix} .
    \end{align*}
The determinant of the last matrix, 
    $\left(
      \begin{smallmatrix}
            -\sin 2 \theta  & \cos 2 \theta \\
            \cos 2 \theta & \sin 2 \theta
      \end{smallmatrix}
    \right)$,
is -1, so that matrix has full rank. So $b$ multiplies a nonzero vector. Thus, $b = 0$. The first matrix, 
    $\left(
      \begin{smallmatrix}
         \cos^{2} \theta & \cos \theta \; \sin \theta \\
         \cos \theta \; \sin \theta & \sin^{2} \theta
      \end{smallmatrix}
    \right)$,
does not have full rank, but it is not 0. Hence, $a = 0$ as well. This contradicts the assumption that $a$ and $b$ are not both 0 and proves the claim that $w$ and $z$ are linearly independent. In particular, neither $w$ nor $z$ is 0. It follows that 
$w' := w - |z|^{-2} (w \cdot z) z \neq 0$. We have $w = w' + |z|^{-2} (w \cdot z) z$ 
and $z \cdot w' = 0$.

For $z^{m \times 1} \neq 0$, define $\xi(z)^{m \times 1}$ and $\nu(z)$ as in \eqref{E:xi(z).and.nu(z).defns}. So $R_{\mu}(z) = \nu(z) \xi(z)^{m \times 1}$. By \eqref{E:D.xi(z)} and \eqref{Ez.perp.Dxi(z)}, 
    \begin{equation}  \label{E:Dxi(z).w.neq.0}
        D\xi(z) w = D\xi(z) w' = |z|^{-1} w' \neq 0 .
    \end{equation}
 
Therefore, by \eqref{E:Upsilon.mu.ast=R.mu.ast.circ.Upsilon.ast}, \eqref{E:DR.mu(z)}, \eqref{E:w=Upsilon.ast}, and \eqref{E:Dxi(z).w.neq.0}, 
    \begin{align}  \label{E:Oops!.mu.ast}
       \Upsilon_{\mu \ast}(\ell) 
         &= \Bigl( \xi(z)^{m \times 1} \, \nabla \nu(z)^{1 \times m} 
            + \nu(z) D \xi(z)^{m \times m}  \Bigr) w^{m \times 1} \notag \\
         &= \xi(z) \, \nabla \nu(z) w 
            + \nu(z) D \xi(z) w \\
         &= \xi(z) \, \nabla \nu(z) w 
            + \nu(z) |z|^{-1} w' \notag .
    \end{align}

Now, by \eqref{E:xi(z).and.nu(z).defns}, $\xi(z) \propto z$, so $\xi(z) \perp w'$. Moreover,  
$\nu(z) |z|^{-1} \neq 0$. Therefore, if $\xi(z) \, \nabla \nu(z) w \neq 0$, it is linearly independent of the non-zero vector $\nu(z) |z|^{-1} w'$. Hence, the last vector in \eqref{E:Oops!.mu.ast} is non-zero. I.e.,  
$\Upsilon_{\mu \ast}(\ell) \neq 0$. So the rank of $\Upsilon_{\mu \ast}(\ell)$ is at least 1. But the domain of $\Upsilon_{\mu}$ is $P^{1}$, a one-dimensional manifold.  
Therefore $\Upsilon_{\mu}$ is an immersion (Boothby \cite[Definition (4.3), p.\ 70]{wmB75}). By \eqref{E:plane-fitting.Upsilon.mu.is.injective} it is injective. But $P^{1}$ is compact, so by Boothby \cite[Theorem (5.7), p.\ 79]{wmB75}, $\Upsilon_{\mu}$ is an imbedding.  
  \end{proof}

  \begin{proof}[Proof of lemma \ref{L:collinearity.and.w.1xn}]
 First, suppose $w^{n \times 1}$ satisfies 
$rank \, (X - 1^{n} w^{T} X) < k$. Then there exists 
$a^{k \times 1} \neq 0$ s.t.\ $(X - 1^{n} w^{T} X) a = 0^{n \times 1}$. 
I.e., $Xa = 1^{n} w^{T} X a$. Let $f_{1}^{n \times 1} := (1, 0, \ldots, 0)^{T}$. Then, 
	\begin{multline*}
	  \bigl[ (I_{n} - 1^{n} f_{1}^{T} )X \bigr] a = (I_{n} - 1^{n} f_{1}^{T} ) (Xa)
	    = (I_{n} - 1^{n} f_{1}^{T} ) (1^{n} w^{T} X a) \\
	    = 1^{n} w^{T} X a - 1^{n} (f_{1}^{T} 1^{n}) w^{T} X a 
	      = 1^{n} w^{T} X a - 1^{n} w^{T} X a = 0.
	\end{multline*}
I.e., $Y$ is collinear.

Conversely, suppose $Y$ is collinear and $w^{n \times 1}$ satisfies $w^{T} 1^{n} = 1$. Then there exists 
$a^{k \times 1} \neq 0$ s.t.\ $(I_{n} - 1^{n} f_{1}^{T} ) X a = 0^{n \times 1}$. Thus,
$X a = 1^{n} f_{1}^{T} X a$ and 
	\begin{equation*}
	  (X - 1^{n} w^{T} X) a = (I_{n} - 1^{n} w^{T} ) 1^{n} f_{1}^{T} X a 
	    = 1^{n} f_{1}^{T} X a - 1^{n} (w^{T} 1^{n}) f_{1}^{T} X a = 0 .
	\end{equation*}
The lemma is proved.
  \end{proof}

  \begin{proof}[Proof of lemma \ref{L:Y.collnr.iff.rank.X1.<.k+1}] 
Let $n \geq \nvar = k+m$. (See \eqref{E:n.>.k+m}.) First, suppose $Y$ is collinear. We show that $\text{rank} \, X_{1} < k+1$. For suppose not, then for every $v^{1 \times k}$ there exists $c^{1 \times n}$ s.t.\ 
	\begin{equation*} \label{E:0.v.from.X1}
		(0^{1 \times 1}, v) = c X_{1}. 
	\end{equation*}
It follows that $c$ is perpendicular to the first column of $X_{1}$. I.e., $c 1^{n} = 0$, where $1^{n}$ is the $n$-dimensional column vector consisting only of $1$'s. Let $W^{(n-1) \times k}$ be the matrix whose $i^{th}$ row is $x_{i+1} - x_{1}$ and write $c = (c_{1}^{1 \times 1}, c_{2}^{1 \times (n-1)})$. Notice that $( -1^{n-1},  I_{n-1} ) 1^{n} = 0^{(n-1) \times 1}$ 
and $( -1^{n-1},  I_{n-1} ) X = W$. Thus, we have 
	\begin{multline*}
		(0^{1 \times 1},v) = c \bigl[ (-1^{n}, \, 0^{n \times (n-1)} ) + I_{n} \bigr] X_{1} = (c_{1}, c_{2})
			\begin{pmatrix}
				0^{1 \times 1} & 0^{1 \times (n-1)} \\
				-1^{n-1} & I_{n-1}
			\end{pmatrix}
		   X_{1}  \\
		      = c_{2} ( -1^{n-1},  I_{n-1} )(1^{n}  ,  X) = c_{2}(0^{(n-1) \times 1}, \; W).
	\end{multline*}
Hence, $c_{2} W = v$. But $v^{1 \times k}$ is arbitrary. Hence, the rows of $W$ span $\RR^{k}$. By definition \ref{D:collinearity} of collinearity this means $Y = (X, Z)$ is not collinear. Contradiction.

Next we prove that, conversely, if $Y$ is not collinear, then the matrix $X_{1}$ has full rank $k + 1$. For suppose not. Then there exists $c^{\nvar \times 1} = (c_{0}, c_{1}, \ldots, c_{k})^{T} \in \RR^{k+1}$ s.t.\ $c \neq 0$ but $X_{1} c = 0^{n \times 1}$. Let $c' :=  (c_{1}, \ldots, c_{k})^{T} \in \RR^{k}$. I.e., $c'$ is just $c$ with the first coordinate dropped. We have $0 = X_{1} c = c_{0} 1^{n} + X c'$. 
Hence $c \neq 0$ implies $c' \neq 0$ and moreover,
	\begin{equation*} \label{E:xi.c'.const}
		\text{For } i \in \NN_{n}, \text{ we have } x_{i} c' = -c_{0}.
	\end{equation*} 
Thus, for $i = 2, \ldots, n$, we have $(x_{i} - x_{1}) c' = 0$. I.e.,
	\[
		W c' = 0,
	\]
where $W$ is defined in the last paragraph. But $Y$ is not collinear so $x_{2} - x_{1}, \ldots, x_{n}- x_{1}$ spans $\RR^{k}$. Hence, there exists $a^{1 \times n} \in \RR^{n}$ s.t.\ $a W = (c')^{T}$. Thus, 
	\[
		0 < |c'|^{2} = a W c' = 0.
	\]
This contradiction proves the lemma. 
  \end{proof}

\begin{proof}[Proof of proposition \ref{P:collin.data.are.sings.of.LS}]
We will continue the custom of using superscripts to indicate the dimension of matrices. 
Let $Y^{n \times \nvar} = (X^{n \times k}, Z^{n \times m}) \in \Y$. We are interested in the LS planes for $Y$. By \eqref{E:subtracting.const.doesn't.change.LS.beta}, we may assume
    \begin{equation*}
         X \text{ is mean-centered.}
    \end{equation*}
Note that mean-centering a matrix is a continuous operation.

First, suppose
    \begin{equation*}
         Y \text{ is not collinear.}
    \end{equation*}
Then, by \eqref{E:Y.not.collin.then.Phi(Y).unique}, $\Phi_{LS}(Y)$ exists uniquely. We show that  $Y$ is not a singularity 
of $\Phi := \Phi_{LS}$ w.r.t.\ $\Y'$. By lemma \ref{L:Y.collnr.iff.rank.X1.<.k+1}, we have $rank \, X_{1} = k+1$ and the LS estimates, $(\hat{\alpha}^{1 \times m}, \, \hat{\beta}^{k \times m})$, for $Y$ are given by \eqref{E:LS.estimate.formula}. By lemma \ref{L:collinearity.and.w.1xn}, $X$, which we are assuming is mean-centered, 
has rank $k$. 

Let $(Z')^{n \times m}$ be arbitrary. By lemma \ref{L:rank.lwr.semicont}, there is a neighborhood, 
$\X$, 
of $X^{n \times k}$ s.t.\ if $X' \in \X$ then $rank \, X'_{1} = k+1$. Hence, by lemma \ref{L:Y.collnr.iff.rank.X1.<.k+1} 
this means $Y' := (X', Z') \in \Y'$. Moreover, the LS estimates, call them $\bigl( (\alpha',^{1 \times 1},  (\beta')^{k \times m} \bigr)$, are unique and given by \eqref{E:LS.estimate.formula}. 
Thus, $\bigl( (\alpha',^{1 \times 1},  (\beta')^{k \times m} \bigr)$ is continuous on $\Y'$. 
Let $e_{i}^{1 \times k}$ ($i = 1, \ldots, k$) be a basis for $\RR^{k}$. By \eqref{E:LS.plane}, 
the matrix $(B')^{k \times \nvar}$ ($\nvar = k + m$) whose $i^{th}$ row the vector
$(e_{i}, e_{i} \beta')$ ($i = 1, \ldots, k$) satisfies \ $\rho(B') = \Phi_{LS}(Y')$. As $Y' \to Y$, $B'$ converges the matrix $B$ whose $i^{th}$ row is 
to $(e_{i}, e_{i} \hat{\beta})$ ($i = 1, \ldots, k$). But $\rho(B) = \Phi_{LS}(Y)$. By \eqref{E:convergence.in.Grassmann}, it follows that $\Phi_{LS}(Y') \to \Phi_{LS}(Y)$ 
in $G(k, \nvar)$. 

Next, suppose 
    \begin{equation*}
     Y = (X,Z) \text{ is collinear}.
    \end{equation*}
Then, by lemma \ref{L:Y.collnr.iff.rank.X1.<.k+1} again, $X_{1}$ has rank $k'+1 < k+1$. 
Since $X$ is mean-centered and so whose columns are orthogonal to $1^{n}$, we have,
    \begin{equation} \label{E:rank.X.=.k'}
	  rank \, X = k' < k. \text{ Let } \ell := k-k'.   
    \end{equation}
By \eqref{E:X.not.full.rank.then.beta.not.unique}, the LS regression of $Z$ on $X$ is not (uniquely) defined. We will show that in this case $Y$ is a singularity of $\Phi$ w.r.t.\ 
$\Y'$.

Let $W^{n \times \ell}$ be a matrix whose columns are orthonormal and orthogonal to the column spaces of $X_{1}$ and $Z$. (This is possible since, by \eqref{E:n.>.k+m}, 
$n \geq \nvar+1 = k+1+m > k' + 1 + m$, so $n - (k' + 1 + m) \geq k-k' = \ell$.) In particular,
	\begin{equation} \label{E:W.is.mean.centered}
		1_{n} \, W = 0.
	\end{equation}
 For $j = 1, 2, \ldots$ let $I_{j}$ be the $j \times j$ identity matrix. Then summing up,
	\begin{equation} \label{E:W.X1.Z} 
		W^{T} (X_{1} , Z) = 0, \; W \text{ has dimensions } n \times \ell, 
		    \text{ and } W^{T} W = I_{\ell}.
	\end{equation}
	
 Let $A^{\ell \times k}$ have orthonormal rows orthogonal to $ \rho (X)$, the row space of $X$, 
 so $\rho(A)^{\perp} = \rho(X)$, where $\rho(A)^{\perp}$ is the orthogonal complement 
 of $\rho(A)$ in $\RR^{k}$. Then we have,
	\begin{equation} \label{E:A.and.X}
		X A^{T} = 0, \quad A \text{ is } \ell \times k, \quad A A^{T} = I_{\ell}.
	\end{equation}
 If $\epsilon > 0$ let 
	\begin{equation}  \label{E:X.eps.defn}
		X_{\epsilon} := X + \epsilon W A,
	\end{equation} 
so $X_{\epsilon} \to X$ as $\epsilon \to 0$. By assumption on $X$ and \eqref{E:W.is.mean.centered}, $X_{\epsilon}$ has mean-centered columns. 

\emph{Claim:} 
	\begin{equation}  \label{E:X.eps.has.rank.k}
		X_{\epsilon} \text{ has rank } k. 
	\end{equation}
For otherwise, there exists $a^{k \times 1} \neq 0$ s.t.\ $X_{\epsilon} a = 0$. Thus, by \eqref{E:W.X1.Z},
	\begin{equation*}
		0 = W^{T} X_{\epsilon} a = W^{T} X a + \epsilon (W^{T} W) A a = \epsilon A a.
	\end{equation*}
Therefore, $a^{T} \perp \rho(A)$. This means that $a^{T} \in \rho(X)$. Since $rank \, X = k' < k$, we may assume WLOG that the first $k'$ rows of $X$ are linearly independent. Let $X(1)^{k' \times k}$ and $X(2)^{(n-k') \times k}$ be the matrices consisting of the first $k'$ and last $n-k'$ rows of $X$, resp. Thus, 
	\begin{equation*}
		X =
			\begin{pmatrix}
				X(1) \\
				X(2)
			\end{pmatrix} .
	\end{equation*}
Now, $rank \, X(1) = k'$ so $X(1) X(1)^{T}$ is a $k' \times k'$ matrix of full rank $k'$. Since $a^{T} \in \rho(X) \setminus \{0\}$, there exists $b^{k' \times 1} \neq 0$ s.t.\ $a = X(1)^{T} b$. Thus, since $a^{T} \perp \rho(A)$, 
	\begin{equation*}
		0 = X_{\epsilon} a = X a + \epsilon W A a = X \, X(1)^{T} b =
			\begin{pmatrix}
				X(1) X(1)^{T} b \\
				X(2) X(1)^{T} b
			\end{pmatrix} .
	\end{equation*}
We conclude that $X(1) X(1)^{T} b = 0^{k' \times 1}$ so $b = 0$. This means $a = 0$, a contradiction that proves the claim that $rank \, X_{\epsilon} = k$. By \eqref{E:W.X1.Z} and the fact that $X$ has mean-centered columns, $1_{n} X_{\epsilon} = 0$. 
Therefore, $X_{\epsilon,1} := (1^{n}, X_{\epsilon})$ has rank $k+1$. 

Thus, by \eqref{E:LS.estimate.formula}, there is a unique LS regression of $Z$ on $X_{\epsilon}$. Let $\hat{\alpha}^{1\times m}$ and $\hat{\beta}^{k \times m}$ be LS estimates for the regression of $Z$ on $X$ ($X$, not $X_{\epsilon}$). (They exist but are not unique.)
Equation \eqref{E:normal.eqns.Z} and the fact that $X$ is mean-centered imply that $(\hat{\alpha}, \hat{\beta})$ can be any solution of the following.
	\begin{equation}  \label{E:normal.eqns.spelled.out}
			\begin{pmatrix}
				n \hat{\alpha} \\
				X^{T} \, X \, \hat{\beta}
			\end{pmatrix}
			=
			\begin{pmatrix}
				n & 0^{1 \times k} \\
				0 & X^{T} \, X
			\end{pmatrix}
			\begin{pmatrix}
				\hat{\alpha} \\
				\hat{\beta} 
			\end{pmatrix}
		=  X_{1}^{T} X_{1}
			\begin{pmatrix}
				\hat{\alpha} \\
				\hat{\beta} 
			\end{pmatrix}
		    = X_{1}^{T} Z = 
			\begin{pmatrix}
				1_{n} \,  Z \\
				X^{T} \, Z
			\end{pmatrix}^{(k+1) \times m}.
	\end{equation}
It follows that
	\begin{equation}  \label{E:alpha.hat.is.mean.of.rows}
		\hat{\alpha} = n^{-1} \, 1_{n} \,  Z.
	\end{equation}
I.e., $\hat{\alpha}$ is just the mean of the rows of $Z$. From \eqref{E:normal.eqns.spelled.out}, we may assume that  
	\begin{equation} \label{E:rows.of.beta.in.rho.X}
		\text{The columns of } \hat{\beta} \text{ lie in } \rho (X).
	\end{equation}

Let $V_{1}$ and $V_{2}$ be $\ell \times m$ (real) matrices with 
	\begin{equation}  \label{E:V1.V2.orthog}
		 V_{1}^{T} V_{2} = 0^{m \times m}.
	\end{equation}
(It is possible that $V_{1}$ or $V_{2}$, or both, is an all zero matrix.) Let
	\begin{equation}  \label{E:Z.eps.defn}
		Z_{\epsilon} := Z + \epsilon^{1/2} W V_{1}
		          + \epsilon W V_{2},
	\end{equation}
so $Z_{\epsilon} \to Z$ as $\epsilon \downarrow 0$. Let
	\begin{equation}  \label{E:B.eps.defn}
		B_{\epsilon}^{\ell \times m} 
		          := W^{T}  Z_{\epsilon}. 
	\end{equation}
Then, by \eqref{E:W.X1.Z}, 
	\begin{multline}  \label{E:expand.B.eps}
	B_{\epsilon} = W^{T} (Z + \epsilon^{1/2} W V_{1} + \epsilon W V_{2})  \\
		= W^{T} \, Z + \epsilon^{1/2} \, W^{T} \; WV_{1}+ \epsilon \, W^{T} \; WV_{2} 
		        = \epsilon^{1/2} V_{1} + \epsilon V_{2}.
	\end{multline}
Thus, if $V_{1}$ is nonzero then $\epsilon^{-1}  B_{\epsilon}$  blows up as $\epsilon  \downarrow  0$. Now let 
	\begin{equation*}
		Y_{\epsilon} := (X_{\epsilon},  Z_{\epsilon}),
	\end{equation*}

Observe that 
    \begin{equation*}
      Y_{\epsilon}  \to Y \text{ as } \epsilon  \downarrow  0 . 
    \end{equation*}
Since $X_{\epsilon}$ is mean-centered and has, by \eqref{E:X.eps.has.rank.k}, rank $k$, by lemma \ref{L:collinearity.and.w.1xn}, 
    \begin{equation*}
      Y_{\epsilon} \text{ is not collinear. I.e., } Y_{\epsilon} \in \mcl{Y}'.
    \end{equation*}
But it is not necessarily the case that $Y_{\epsilon} \in \Pf^{k}$.\footnote{Here is an example. Take $n := 4$, $k := 2$, and $m := 1$. Let $X^{4 \times 2}$ have $(1,0)$ as its first row, $(0,0)$ as its second and third rows, and $(-1,0)$ as its last row. So $X$ is mean-centered and has rank 1.
Thus, $\ell = 1$. Let $(Z^{4 \times 1})^{T} := (0, 1, 0, 0)$, so, by lemma \ref{L:collinearity.and.w.1xn}, 
$Y := (X,Z)$ is collinear. 
Let $(W^{4 \times 1})^{T} := 2^{-1/2} (0, -1, 1, 0)$, and $A^{1 \times 2} = (0,1)$. 
Take $V_{1} = V_{2} = 0^{1 \times 1}$ so $Z_{\epsilon} = Z$. Then 
    \begin{equation*}
    	Y_{\epsilon} = 
    		\begin{pmatrix}
    			1 & 0 & 0 \\
			0 & -\epsilon/\sqrt{2} & 1 \\
    			0 & \epsilon/\sqrt{2} & 0 \\
			-1 & 0 & 0 
    		\end{pmatrix} .
    \end{equation*}
Let $w = (0,1,0,0)^{T}$. Then $Y_{\epsilon} - 1^{n} w^{T} Y_{\epsilon}$ has rank 3. Therefore, by \eqref{E:when.is.Y.in.Pk}, $Y_{\epsilon} \notin \Pf$.} 
Let 
	\begin{equation}   \label{E:gamma.eps.defn}
		\gamma_{\epsilon}^{k \times m} := \epsilon^{-1} \, A^{T} \, B_{\epsilon} 
		     = \epsilon^{-1/2} A^{T} \, V_{1} + A^{T} V_{2}
	\end{equation} 
and
	\begin{equation}   \label{E:beta.hat.eps.defn}
		\hat{\beta}_{\epsilon} = \hat{\beta} + \gamma_{\epsilon}.
	\end{equation}

We have, by \eqref{E:W.X1.Z}, 
	\begin{equation}  \label{E:X1.X1.eps}
		X_{\epsilon}^{T} \; X_{\epsilon} 
		     = ( X^{T} \, X + \epsilon^{2} \, A^{T} \, A )^{k \times k}.
	\end{equation}
Moreover, by \eqref{E:gamma.eps.defn}, \eqref{E:A.and.X}, and \eqref{E:expand.B.eps}, 
	\begin{equation}  \label{E:expand.eps2.AT.A.gam}
		\epsilon^{2} \, A^{T} \; A \, \gamma_{\epsilon} 
		  = \epsilon A^{T} \, A A^{T} \, B_{\epsilon} = \epsilon A^{T} \, B_{\epsilon}
	             = \epsilon^{3/2} A^{T} \; V_{1} + \epsilon^{2} A^{T} \; V_{2}.
	\end{equation}

We know that the ``normal equations'', \eqref{E:normal.eqns.Z}, hold for $\hat{\alpha}$ and 
$\hat{\beta}$. \emph{Claim:} the normal equations hold in the ``$\epsilon$ world'', too. 
Let $X_{\epsilon, 1} := (1^{n}  \;  X_{\epsilon})$. Observe that since $X_{\epsilon}$, as in \eqref{E:normal.eqns.spelled.out} we have,
	\begin{equation}  \label{E:LHS.normal.eqns.epsilon}
	        X_{\epsilon, 1}^{T}   X_{\epsilon, 1} 				
	        			\begin{pmatrix}
					\hat{\alpha}^{T} \\
					\hat{\beta}_{\epsilon}
				\end{pmatrix}
	              = 
				\begin{pmatrix}
					n & 0^{1 \times k} \\
					0 & X_{\epsilon}^{T} \, X_{\epsilon}
				\end{pmatrix}
				\begin{pmatrix}
					\hat{\alpha} \\
					\hat{\beta}_{\epsilon} 
				\end{pmatrix}
		     =
				\begin{pmatrix}
					n \hat{\alpha} \\
					X_{\epsilon}^{T} \, X_{\epsilon} \hat{\beta}_{\epsilon} 
				\end{pmatrix}
	\end{equation}
and 
	\begin{equation}   \label{E:normal.eqns.epsilon.RHS}
	 	X_{\epsilon,1}^{T}   Z_{\epsilon} =
				\begin{pmatrix}
					1_{n} \,  Z_{\epsilon} \\
					X_{\epsilon}^{T} \, Z_{\epsilon}
				\end{pmatrix}^{(k+1) \times m}  .
	\end{equation}

Hence, combining \eqref{E:LHS.normal.eqns.epsilon} and \eqref{E:normal.eqns.epsilon.RHS}, to prove the claim it suffices to show  
	\begin{equation}  \label{E:beta.part.of.normal.eqns}
		n \hat{\alpha} = 1_{n} \,  Z_{\epsilon}
		   \text{ and } X_{\epsilon}^{T} \, X_{\epsilon} \; \hat{\beta}_{\epsilon}  
		     = X_{\epsilon}^{T} \, Z_{\epsilon}.
	\end{equation}
By \eqref{E:alpha.hat.is.mean.of.rows}, \eqref{E:Z.eps.defn}, and \eqref{E:W.is.mean.centered}, 
the $\hat{\alpha}$ equation holds.

It is easy to see, using \eqref{E:W.X1.Z}, that the following holds.
	\begin{equation}  \label{E:XepsT.Zeps}
		X_{\epsilon}^{T}   Z_{\epsilon} 
		  = X^{T}   Z  +  \epsilon^{3/2} A^{T} \; V_{1} + \epsilon^{2} A^{T} \; V_{2}.
	\end{equation}

On the other hand, by \eqref{E:X1.X1.eps} and \eqref{E:beta.hat.eps.defn}, 
	\begin{equation*}
		X_{\epsilon}^{T} \, X_{\epsilon} \; \hat{\beta}_{\epsilon}  
		=  (X^{T} \, X + \epsilon^{2} \, A^{T} \; A)(\hat{\beta} + \gamma_{\epsilon}).
	\end{equation*}
But, by \eqref{E:gamma.eps.defn}, \eqref{E:A.and.X}, \eqref{E:rows.of.beta.in.rho.X}, 
\eqref{E:expand.eps2.AT.A.gam}, and \eqref{E:normal.eqns.spelled.out}, 
	\begin{align*}
		(X^{T} \, X + \epsilon^{2} \, A^{T} \; A)(\hat{\beta} + \gamma_{\epsilon}) 
			&= (X^{T} \, X) \hat{\beta} + \epsilon^{2} A^{T} \; A \, \gamma_{\epsilon} \\
			&= (X^{T} \, X) \hat{\beta} + \epsilon^{3/2} A^{T} \; V_{1} 
			 		+ \epsilon^{2} A^{T} \; V_{2} \\
			&= X^{T} \, Z  + \epsilon^{3/2} A^{T} \; V_{1} 
			 		+ \epsilon^{2} A^{T} \; V_{2}.
	\end{align*}
Comparing this to \eqref{E:XepsT.Zeps}, \eqref{E:beta.part.of.normal.eqns} is proved. The claim that
$(\hat{\alpha}^{T}, \hat{\beta}_{\epsilon}^{T} )^{T}$ is a solution to the normal equations for $Y_{\epsilon}$ follows. By \eqref{E:X.eps.has.rank.k} and the fact that $X_{\epsilon}$ has mean-centered columns, we see that $X_{\epsilon,1}^{T}$ has full rank. Therefore, by \eqref{E:LS.estimate.formula} we have that $(\hat{\alpha}^{T}, \hat{\beta}_{\epsilon}^{T} )^{T}$ is the unique LS estimate of the regression of $Z_{\epsilon}$ on $X_{\epsilon}$.

Now, by \eqref{E:beta.hat.eps.defn} and \eqref{E:gamma.eps.defn}, 
	\begin{equation}  \label{E:beta.eps.revealed}
		\hat{\beta}_{\epsilon} = \hat{\beta} + \epsilon^{-1/2} A^{T} \, V_{1} + A^{T} V_{2}.
	\end{equation}
By \eqref{E:rows.of.beta.in.rho.X} and \eqref{E:A.and.X}, the column space of $\hat{\beta}$ and column space 
of $\epsilon^{-1/2} A^{T} \, V_{i}$ ($i = 1, 2$) are orthogonal. 

Let 
	\begin{equation} \label{E:V1T.perp.omega}
		\omega  := \rho(V_{1}^{T})^{\perp} \subset \RR^{\ell} 
		  \text{ so } \rho(V_{1}^{T}) \perp \omega.
	\end{equation}
I.e., $\omega$ is the orthogonal complement of $ \rho(V_{1}^{T})$ in $\RR^{\ell}$. Then, by \eqref{E:V1.V2.orthog},
	\begin{equation} \label{E:rho.V2T.in.omega}
		\rho(V_{2}^{T}) \subset \omega .
	\end{equation}
Recall \eqref{E:A.and.X}. Let $\omega A := \{ wA \in \RR^{k} : w \in \omega \}$.  
By \eqref{E:rank.X.=.k'} and \eqref{E:A.and.X}, we have 
    \begin{equation}  \label{E:Rk.=.rho(X)+rho(A)}
      \RR^{k} = \rho(X) \oplus \rho(A) .
    \end{equation}
Let $\rho(V_{i}^{T} \, A) \subset \rho(A)$ be the, possibly trivial, row space of $V_{i}^{T} \, A$ ($i=1,2$). By \eqref{E:V1T.perp.omega} and the fact that the rows of $A$ are linearly independent, we have
    \begin{equation}  \label{E:rho.A.decomp}
        \rho(A) = \rho( V_{1}^{T} A) \oplus \omega A \subset \RR^{k} .
    \end{equation}
By \eqref{E:A.and.X} and \eqref{E:V1T.perp.omega}, we see that 
$\rho( V_{1}^{T} A)$ and $\omega A$ are orthogonal. \emph{A fortiori,} by \eqref{E:rho.V2T.in.omega}, we have 
    \begin{equation} \label{E:V1T.A.and.V2T.A.orthog}
        \rho( V_{1}^{T} A) \text{ and } \rho(V_{2}^{T} A) \text{ are orthogonal.}
    \end{equation}

Thus, by \eqref{E:Rk.=.rho(X)+rho(A)} and \eqref{E:rho.A.decomp}, we have 
    \begin{equation}  \label{E:breakdown.of.Rk}
      \RR^{k} = \rho(X) \oplus \rho(V_{1}^{T} \, A) \oplus \omega A .
    \end{equation} 
So, if $x_{2} \in \rho(A)$, then we can write $x_{2} = x_{2}' + x_{2}''$ where $x_{2}' \in \rho(V_{1}^{T} \, A)$ 
and $x_{2}'' \in \omega A$. In this case, by \eqref{E:A.and.X}, $X \, (x_{2}')^{T} = 0 = X \, (x_{2}'')^{T}$. 
Then, by \eqref{E:breakdown.of.Rk}, \eqref{E:beta.eps.revealed}, \eqref{E:A.and.X}, 
\eqref{E:rows.of.beta.in.rho.X},  \eqref{E:V1T.A.and.V2T.A.orthog}, and \eqref{E:V1T.perp.omega} we may write
	\begin{multline} \label{E:Phi.Y.eps.x2}
		\Phi(Y_{\epsilon}) 
		  = \bigl\{ ( x_{1} +x_{2}' + x_{2}'', \; x_{1} \hat{\beta} 
	   	   + \epsilon^{-1/2} \,  x_{2}' A^{T} \, V_{1} + x_{2}'' A^{T} \, V_{2}) 
		     \in \RR^{\nvar} :  \\
		     x_{1}^{1 \times k} \in \rho(X), \; x_{2}' \in \rho(V_{1}^{T} \, A), 
			       \text{ and } x_{2}'' \in \omega A \bigr\}.
	\end{multline}
Now make the change of variables $y_{2}' := \epsilon^{-1/2} x_{2}'$. Then \eqref{E:Phi.Y.eps.x2} becomes
	\begin{multline} \label{E:Phi.Y.eps.y2}
		\Phi(Y_{\epsilon}) 
			= \bigl\{ ( x_{1} + \epsilon^{1/2} y_{2} + x_{2}, \, x_{1} \hat{\beta} 
	   	   	  + y_{2} A^{T} \, V_{1} + x_{2} A^{T} \, V_{2} ) \in \RR^{\nvar} : \\
			     x_{1}^{1 \times k} \in \rho(X), \; y_{2} \in \rho(V_{1}^{T} \, A), 
			       \text{ and } x_{2} \in \omega A \bigr\}.
	\end{multline}

Let $\zeta \in G(\ell, \ell+m)$. \emph{Claim:} $V_{1}$, $V_{2}$, and $\omega$ as above can be chosen so that
	\begin{equation}   \label{E:zeta.in.terms.of.omega.V1.V2}
		\zeta = \bigl\{ (w, \, z V_{1} + w V_{2} ) \in \RR^{\ell + m} :    
			      z^{1 \times \ell} \in \rho(V_{1}^{T}) 
			        \text{ ad } w^{1 \times \ell} \in \omega \bigr\}.
	\end{equation}
Let $U^{\ell \times (\ell + m)}$ be a matrix whose row space is $\zeta$. I.e, $\rho(U) = \zeta$. 
Thus, $rank \, U = \ell$. By applying row operations on $U$ if necessary, we may assume that $U$ is an echelon matrix 
(Stoll and Wong \cite[p.\ 46]{rrSetW68.LinearAlgebra}). 

Let $\pi_{1} : \RR^{\ell+m} \to \RR^{\ell}$ be projection onto the first $\ell$ coordinates. Let $L^{\ell \times \ell} := \pi_{1}(U)$ be the matrix obtained from $U$ by applying $\pi_{1}$ row-wise. 
Since $U$ is echelon, the nonzero rows of $L$ are linearly independent. Let $J$ be the set of indices of the nonzero rows of $L$. Thus, $rank \, L$ is the cardinality, $s := |J|$, of $J$. Since $U$ is echelon, we have $J = \{ 1, \ldots, s \}$. I.e., the nonzero rows of $L$ are at the ``top'' (i.e.\ the rows have the lowest row numbers). 

Let $L_{1}^{s \times \ell}$ be the matrix consisting of the nonzero rows of $L$. Hence, $L_{1}$ is of full rank and
	\begin{equation*}
		L =
			\begin{pmatrix}
				L_{1} \\
				0^{(\ell-s) \times \ell}
			\end{pmatrix}.
	\end{equation*}
Let 
	\begin{equation*}
		\omega := \rho(L) = \rho(L_{1}) \subset \RR^{\ell}. 
	\end{equation*}
Then $\dim \omega = s$ and the matrix of orthogonal projection $\RR^{\ell} \to \omega$ 
onto $\omega$ is 
    \begin{equation*}
      Q^{\ell \times \ell} := (R_{L}, 0^{(\ell-s) \times \ell}) L = R_{L} L_{1} ,
    \end{equation*}
where $R_{L}^{\ell \times s} := L_{1}^{T} (L_{1} L_{1}^{T})^{-1}$. 
Thus, $R_{L}$ is of full rank $s$. Set 
	\begin{equation*}
		R^{\ell \times \ell} := ( R_{L}, \; S ),
	\end{equation*}
where $S^{\ell  \times (\ell - s)}$ has rank $\ell - s$ and has column space orthogonal to that of $R_{L}$. Thus, $R$ is of full rank $\ell$. Therefore, $\rho(R U) = \rho(U) = \zeta$. 

Let $\pi_{2} : \RR^{\ell+m} \to \RR^{m}$ be projection onto the last $m$ coordinates and let $V^{\ell \times m}$ be the matrix obtained from $U$ by applying $\pi_{2}$ row-wise. Thus, $U = (L,V)$ and we have 
    	\begin{equation} \label{E:RUQV}
    		R U = (R L, \;RV) = (R_{L} L_{1}, \;RV) = (Q, \;RV).
    	\end{equation}
Replace $U$ by $R U$ -- which in general is not echelon -- so now $L = Q$. This changes $V$ to $RV$, but does not change  
$\zeta = \rho(U)$, or $\omega := \rho(L) = \rho(Q)$. Define $V_{1}^{\ell \times m} := V - QV$ and $V_{2}^{\ell \times m} := QV$ so $V = V_{1} + V_{2}$.
Since $Q$ is an orthogonal projection matrix, we have $Q^{T} = Q$ and $Q^{2} = Q$. \eqref{E:V1.V2.orthog} is immediate.

We prove \eqref{E:V1T.perp.omega}. Now, $V_{1}^{T} L^{T} = (V^{T} - V^{T} Q ) Q = 0$. 
Thus, $\omega \subset \rho(V_{1}^{T})^{\perp}$. Conversely, suppose $x^{1 \times \ell}$ 
is perpendicular to $\rho(V_{1}^{T})$. Then $0 = V_{1}^{T} x^{T} = V^{T} ( x^{T} - Q x^{T} )$ This means
    \begin{equation*}
        U^{T} ( x^{T} - Q x^{T} ) =
            \begin{pmatrix}
                L^{T} \\
                V^{T}
            \end{pmatrix}
        ( x^{T} - Q x^{T} )   =
            \begin{pmatrix}
                Q \\
                V^{T}
            \end{pmatrix}
        ( x^{T} - Q x^{T} )   = 0.
    \end{equation*}
But $U^{\ell \times (\ell + m)}$ has full rank, $\ell$. Therefore, $x^{T} - Q x^{T} = 0$. 
I.e., $x \in \omega$. I.e., \eqref{E:V1T.perp.omega} holds.

We have $\zeta = \{ x U \in \RR^{\ell+m} : x \in \rho(U^{T}) \}$, $L^{T} = Q^{T} = Q = L$. 
In addition, $V_{2}^{T} := V^{T} Q^{T} = V^{T} L^{T}$. 
Thus, $\rho(V_{2}^{T}) \subset \rho(L^{T})$. Therefore, 
    \begin{equation*}
        \rho(U^{T}) =
            \rho \left[
            \begin{pmatrix}
                L^{T} \\
                V^{T}
            \end{pmatrix}
        \right] = \rho \left[
            \begin{pmatrix}
                L ^{T} \\
                V_{1}^{T} + V_{2}^{T}
            \end{pmatrix}
        \right] = \rho \left[
            \begin{pmatrix}
                L \\
                V_{1}^{T}
            \end{pmatrix}
        \right] .
    \end{equation*}
Hence, if $x \in \rho(U^{T})$ we can write uniquely $x = w + z$ with $w \in \rho(L) = \omega$ and $z \in \rho(V_{1}^{T})$. Therefore, by \eqref{E:V1T.perp.omega} and \eqref{E:V1.V2.orthog},  
	\begin{equation*}
		x U = (x L, x V_{1} + x V_{2}) 
		  = \bigl( (w + z) Q, (w + z) V_{1} + (w + z) V_{2} \bigr)
			= (w, z V_{1} + w V_{2} \bigr). 
	\end{equation*}
Thus, \eqref{E:zeta.in.terms.of.omega.V1.V2} holds as claimed. 
	
Let $\xi$ be the plane
	\begin{equation} \label{E:xi.rho.X}
		\xi := \bigl\{ ( x_{1}, \, x_{1} \hat{\beta}) \in \RR^{\nvar} : 
		   x_{1}^{1 \times k} \in \rho(X) \bigr\} \in G(k', \nvar).
	\end{equation}
We have
	\begin{equation*} 
		\xi := \bigl\{ ( yX, yX \hat{\beta}) \in \RR^{\nvar} : 
		   y \in \RR^{n} \text{ a row vector } \bigr\}.
	\end{equation*}

Recall that $k' = rank \, X < k$ and $\ell := k - k'$. Map $\RR^{\ell+m}$ into $\RR^{k +m}$ by 
	\begin{equation*}
		F: (u, v) \mapsto (uA, v), 
		       \quad u^{1 \times \ell} \in \RR^{\ell}, \; v^{1 \times m} \in \RR^{m}.
	\end{equation*}
Recall \eqref{E:A.and.X} and \eqref{E:X.eps.defn}. Since $A^{\ell \times k}$ has orthonormal rows $F$ is an isometric imbedding of $\RR^{\ell+m}$ into $\RR^{k +m} = \RR^{\nvar}$. 
We show that $F(\RR^{k-k'+m}) \cap \xi = \{ 0 \}$. Suppose for some 
$y^{1 \times n}$, $u^{1 \times \ell}$, and $v^{1 \times m}$ we have 
$( yX, yX \hat{\beta}) = (uA, v)$. This means, by \eqref{E:A.and.X}, we have 
$0 = yX A^{T} = u$. Thus, $yX = 0$. Hence, $( yX, yX \hat{\beta}) = 0$, as desired.

By \eqref{E:A.and.X} and \eqref{E:zeta.in.terms.of.omega.V1.V2}, 
	\begin{align*}
	    F(\zeta) &= \Bigl\{ \bigl( w A, \, (z A) A^{T}V_{1} + (w A) A^{T} V_{2} \bigr) 
	              \in \RR^{\nvar} :    \\
		         & \quad \quad \quad \quad \quad \quad z^{1 \times \ell} \in \rho(V_{1}^{T}) 
			            \text{ and } w^{1 \times \ell} \in \omega) \Bigr\} \\
			&= \bigl\{ ( x_{2}, \, y_{2} A^{T} \, V_{1} + x_{2} A^{T} \, V_{2} ) 
			                   \in \RR^{\nvar} :    \\
		          & \quad \quad \quad \quad \quad \quad 
			                         y_{2} \in \rho(V_{1}^{T} \, A), 
			                           \text{ and } x_{2} \in \omega A \bigr\}
	\end{align*}
Thus, from \eqref{E:Phi.Y.eps.y2} and \eqref{E:xi.rho.X} we see that 
	\begin{equation*}
		\Phi(Y_{\epsilon}) \to \xi \oplus F(\zeta) \text{ as } \epsilon \to 0.
	\end{equation*}
Since $Y_{\epsilon} \in \mcl{Y}'$ the proof of the proposition is concluded.
  \end{proof}
 
  	\begin{proof}[Proof of lemma \ref{L:power.diff.lin.indep}]
Let $1 \leq i_{1} < \ldots < i_{\ell} \leq n$. Reordering the $z_{i}$'s if necessary, we may assume $i_{j} = j$. The first entry in every $w_{i} - w_{i_{1}}$ is 0 ($i=2, \ldots, \ell$). Let $w_{1,i}$ be the vector $w_{i}$, but with the first entry removed. Then we must show that $w_{1,2} - w_{1,1}, \ldots, w_{1,\ell} - w_{1,1}$ are linearly independent. Let $W^{(\ell-1) \times (\ell-1)}$ be the matrix whose rows are $w_{1,2} - w_{1,1}, \ldots, w_{1,\ell} - w_{1,1}$. Suppose $w_{1,2} - w_{1,1}, \ldots, w_{1,\ell} - w_{1,1}$ are linearly \emph{de}pendent. Then $W$ has rank less than $\ell-1$. Therefore, for some $a_{1}, \ldots, a_{\ell-1} \in \mathbb{C}$ (the complex numbers), not all 0, 
we have $W a = 0^{(\ell-1) \times 1}$, where 
$a^{(\ell-1) \times 1} := (a_{1}, \ldots, a_{\ell-1})^{T}$. I.e., for each $i = 2, \ldots, \ell$ we have
	\begin{equation*}
		0 = \sum_{j=1}^{\ell-1} a_{j} (z_{i}^{j} - z_{1}^{j})
			= (z_{i} - z_{1}) \sum_{j=1}^{\ell-1} a_{j} 
			  \sum_{m=1}^{j} z_{i}^{j-m} z_{1}^{m-1} 
	\end{equation*}
Since $z_{1}, \ldots, z_{n}$ are distinct, we have 
	\begin{equation*}
		0 = \sum_{j=1}^{\ell-1} a_{j} \sum_{m=1}^{j} z_{i}^{j-m} z_{1}^{m-1},  
				\quad i = 2, \ldots, \ell .
	\end{equation*}
Making the change of variables $h := j-m$, this becomes
	\begin{equation*}
	  0 = \sum_{h=0}^{\ell-2} \left( \sum_{j=h+1}^{\ell-1} a_{j}  
		z_{1}^{j-h-1} \right) z_{i}^{h}, \quad i = 2, \ldots, \ell.
	\end{equation*}
Thus, a polynomial of degree $\ell-2$ has $\ell-1$ distinct roots, $z_{2}, \ldots, z_{\ell}$. (The inner sums do not depend on $z_{i}$ and so can function as coefficients.) Therefore,
	\begin{equation}  \label{E:coefs.must.be.0}
		\sum_{j=h+1}^{\ell-1} a_{j}  z_{1}^{j-h-1} = 0,  \quad h = 0, \ldots, \ell-2.
	\end{equation}
Taking $h = \ell-2$, we find that $a_{\ell-1} = 0$. (Remember that $z_{1} \neq 0$.)  Hence, we may replace $\ell$ by $\ell-1$ in \eqref{E:coefs.must.be.0}, etc. All the $a_{j}$'s are 0. Contradiction.
	\end{proof}

  \begin{proof}[Proof of proposition \ref{P:codim.S.LAD.=1.n-k.even}]
One might be able to prove the proposition using the theory of semi-algebraic sets (Bochnak \emph{et al} \cite{jBmCr-rR98.RealAlgGeom}), but since I know virtually nothing about that subject, I will use elementary methods.

Let $\Ss_{LAD}$ be the singular set of LAD. Let $\Ss_{LAD, C}$ be the set of collinear singularities of LAD. By proposition \ref{P:few.collin.LAD.sings}, and \eqref{E:n>nvar>k>0} $codim \, \Ss_{LAD, C} \geq 3$. Thus, it suffices to show that the set, $\Ss_{LAD, NC}$, of \emph{non}-collinear singularities is 1. 
First, we show that 
    \begin{equation}  \label{E:codim.S.LAD.geq.1}
      codim \, \Ss_{LAD, NC} \geq 1
    \end{equation}

Let $Y = (X^{n \times k}, y^{n \times 1}) \in \Y$ be non-collinear. We will think of $Y$ as a possible element of $\Ss_{LAD, NC}$. As usual, let $x_{i}$ 
be the $i^{th}$ row and $y_{i}$ the $i^{th}$ entry in $X$, $y$, resp.\ 
($i \in \NN_{n}$). Let $J \subset I$ be non-empty. Let $X_{J}^{\nvar \times k}$ be the submatrix of $X$ consisting of rows $x_{i}$ with $i \in J$. Define $y_{J}$ similarly. Let $X_{J 1}^{\nvar \times \nvar} := (1^{\nvar}, X_{J})$. Define 
    \begin{equation}  \label{E:yi-xi.beta.j=0}
      \beta^{J}(Y)^{\nvar \times 1} = \beta^{J} = X_{J 1}^{-1} \; y_{J}
        \text{ so } y_{i} - x_{i} \beta^{j} = 0 \quad (i \in J) .
    \end{equation}
 
Recall, by \eqref{E:N.sub.n}, $\NN_{n} := \{1, \ldots, n \}$. 
Let $| \cdot |$, when applied to sets, denote cardinality. For $j = 1,2$ 
let $J_{j} \subset \NN_{n}$ have cardinality $\nvar$, i.e., $|J_{j}| = \nvar$. 
Suppose $J_{1} \neq J_{2}$. (Possible by \eqref{E:n>nvar>k>0}.) Let $j = 1,2$. By  
lemma \ref{L:basic.LAD.soln.facts} and the comment after it, we may assume at least one of $X_{J_{j} 1}$ ($j=1,2$) has full rank $\nvar$. 

Suppose $X_{J_{1} 1}$ has full rank $\nvar$, but for no choice of $J_{2} \neq J_{1}$ does $X_{J_{2} 1}$ have full rank. By \eqref{E:exact.fit.basis} there exists 
$\beta = \bigl( \beta_{0}, (\beta_{1})^{T} \bigr)^{T} \in \hat{B}(Y)$ s.t.\  for every 
$i \in J_{1}$, 
$y_{i} = \beta_{0} + x_{i} \beta_{1}$. Since $X_{J_{1} 1}$ has full rank there is only one vector 
$\beta$ with this property. We wish that $Y$ possibly be an LAD singularity. Suppose it were one. Then, in accordance with corollary \ref{C:nonunique.mins.are.sings}, 
$\hat{B(Y)}$ would contain another LAD solution 
$\beta' = \bigl( \beta'_{0}, (\beta'_{1})^{T} \bigr)^{T}$ (so $\beta' \neq \beta$). But by corollary \ref{C:B1.is.convex.hull.of.E} and lemma \ref{L:basic.LAD.soln.facts}(iii) there would exits a $J_{2}$ with $|J_{2}| = \nvar$ s.t.\ $X_{J_{2} 1}$ has full rank and for every 
$i \in J_{2}$ we have $y_{i} = \beta'_{0} + x_{i} \beta'_{1}$. But $\beta' \neq \beta$. This implies $J_{2} \neq J_{1}$, contradicting our assumption that $J = J_{1}$ is the only subset of $1, \ldots, n$ of cardinality $\nvar$ for which $X_{J 1}$ has full rank. Since we wish to leave open the possibility that $Y$ is an LAD singularity, we will assume both $X_{J_{1} 1}$ and $X_{J_{2} 1}$ have full rank 
$\nvar$.
 
See \eqref{E:yi-xi.beta.j=0}. Define 
    \begin{equation}  \label{E:beta.Jj}
      \beta^{j}(Y)^{\nvar \times 1} := \beta^{J_{j}} := X_{J_{j} 1}^{-1} \; y_{J_{j}}
        \text{ so } y_{i} - x_{i} \beta^{j} = 0 \quad (i \in J_{j}; j = 1,2) .
    \end{equation}
Write 
    \begin{equation*}
      \beta^{J_{j}} :=  \beta^{j} := 
        \begin{pmatrix}
           (\beta^{j}_{0})^{1 \times 1} \\
           (\beta^{j}_{1})^{k \times 1}
        \end{pmatrix} .
    \end{equation*}

WLOG, $J_{1} = \{ 1, \ldots, \nvar \}$ Again, to make  $Y$ singularity-like, 
assume $\beta^{2}_{1} \neq \beta^{1}_{1}$. ($\beta^{2}_{1} \neq \beta^{1}_{1}$ is a necessary, but not sufficient condition that $Y \in \Ss_{LAD, NC}$.) WLOG 
$\beta^{1}_{1} \neq 0$. 

\emph{Claim:} There exists a $1 \times k$ row vector  $z$ satisfying 
    \begin{equation} \label{E:z.beta.1.and.z.(beta.1-beta.2).not.0}
        z \beta^{1}_{1} \neq 0 \text{ and }
          z (\beta^{1}_{1} - \beta^{1}_{2}) \neq 0.
    \end{equation}
By assumption, $b := \beta^{1}_{1} \neq 0$, $c := \beta^{1}_{1} - \beta^{1}_{2} \neq 0$. Suppose $b$ and $c$ are linearly dependent, e.g. if $k=1$. 
Then for some $\mu =1, \ldots, k$, neither of the $\mu^{th}$ coordinates of $b$ and $c$ are 0. In that case, take $z$ to be the row vector all of whose coordinates are 0 except the $\mu^{th}$ which is 1. Now suppose $b$ and $c$ are linearly independent. 
Then $k \geq 2$ and for some $\mu, \nu =1, \ldots, k$ distinct, the $\mu^{th}$ and $\nu^{th}$ rows of $(b,c)^{k \times 2}$ are linearly independent. 
WLOG $\mu=1$, $\nu = 2$. Let $C^{2 \times 2}$ be the matrix consisting of the first 2 rows of $(b,c)^{k \times 2}$. Let $v^{1 \times 2}$ satisfy $v C = (1,1)$. 
Let $z^{1 \times k} = (v, 0^{1 \times (k-2)})$. This proves the claim.

Let $Z^{n \times k}$ have $z$ as first row with all the other rows 0. Let $X(t) := X + t Z$, 
$t \in \RR$. The first row of $X(t)$ is $x_{1}(t) := x_{1} + t z$. All the other rows of $X(t)$ are the same as those of $X$. Define $X_{J_{j}}(t)^{\nvar \times k}$ in the obvious way:  $X_{J_{j}}(t)$ is the matrix whose rows are all the $i^{th}$ rows of $X(t)$ with $i \in J_{j}$. Recall that $1 \in J_{1}$. Then the first row of $X_{J_{1} 1}(t)$ is 
$\bigl( 1, x_{1}(t) \bigr) := (1, x_{1} + t z)$. All the other rows are the same as those of $X_{J_{1} 1}$. Since $1 \notin J_{2}$ we have $X_{J_{2} 1}(t) \equiv X_{J_{2} 1}$.

Let $w = \bigl( 1, x_{1}(1) \bigr) \beta^{1}(Y) - y_{1}$. 
Thus, since $1 \in J_{1}$, by \eqref{E:beta.Jj}, 
    \begin{equation} \label{E:tw.=.tz.beta.1}
    w = \bigl( 1, x_{1}(1) \bigr) \beta^{1}(Y) - y_{1} 
      = \bigl[ ( 1, x_{1} \bigr) \beta^{1}(Y) - y_{1} \bigr] 
        + (0, z) \beta^{1}(Y) = 0 + z \beta^{1}_{1}(Y) \in \RR .
    \end{equation} 
It follows from \eqref{E:z.beta.1.and.z.(beta.1-beta.2).not.0} that $w \neq 0$. 

Let $y(t)^{n \times 1}$ be the same as $y$, but with $y_{1}$ replaced 
by $y_{1}(t) := y_{1} + t w$, so $y(0) = y$. Let $Y(t) = \bigl( y(t), X(t) \bigr)$. Therefore, $Y(t)$ differs from $Y$ only in the first row. 
Hence, $\beta^{2} \bigl( Y(t) \bigr) = \beta^{2}$. Moreover, by \eqref{E:tw.=.tz.beta.1}, 
    \begin{equation*}
      y_{1}(t) - \bigl( 1, x_{1}(t) \bigr) \beta^{1}   
        = y_{1} + t w - (1, x_{1} + t z)   \beta^{1} 
          = \big[ y_{1} - (1, x_{1}) \beta^{1} \bigr] 
            + \bigl[ t w -  t z \beta^{1} \bigr]  = 0.
    \end{equation*} 
Hence, we also have $\beta^{1} \bigl( Y(t) \bigr) = \beta^{1}$.

Let $\epsilon_{ij} = \pm 1$ ($i  \in \NN_{n}$; $j=1,2$) be arbitrary. Let 
$\blds{\epsilon}^{2 \times n}$ be the matrix $(\epsilon_{ij})$ and for $Y' \in \Y$ define 
    \begin{equation*}
      \Delta(Y'; \blds{\epsilon}) := \Delta_{J_{1}, J_{2}}(Y'; \blds{\epsilon}) 
         = \sum_{i \notin J_{1}} \epsilon_{i1} \bigl( y'_{i} - (1,x'_{i}) \beta^{1}(Y') \bigr)
           - \sum_{i \notin J_{2}} \epsilon_{i2} \bigl( y'_{i} - (1,x'_{i}) \beta^{2}(Y') \bigr)
             \in \RR .
    \end{equation*}
Since $1 \in J_{1} \setminus J_{2}$ and $Y(t)$ differs from $Y$ only in the first row, we have 
    \begin{align*}
     \Delta \bigl( Y(t) ; \blds{\epsilon} \bigr) 
         &= \sum_{i \notin J_{1}} \epsilon_{i1} \bigl( y_{i}(t) - (1,x_{i}(t)) \beta^{1}(Y(t)) \bigr)
           - \sum_{i \notin J_{2}} \epsilon_{i2} \bigl( y_{i}(t) - (1,x_{i}(t)) \beta^{2}(Y(t)) \bigr) \\
         &= \sum_{i \notin J_{1}} \epsilon_{i1} \bigl( y_{i} - (1,x_{i}) \beta^{1} \bigr) 
        - \sum_{i \notin J_{2}, i \neq 1} \epsilon_{i2} \bigl( y_{i} - (1,x_{i}) \beta^{2} \bigr) \\
         & \qquad \qquad - \bigl( y_{1}(t) - (1,x_{1}(t)) \beta^{2} \bigr) .
    \end{align*}
Now, by \eqref{E:tw.=.tz.beta.1}, 
    \begin{multline*}
      y_{1}(t) - (1,x_{1}(t)) \beta^{2} = (y_{1} + t w) - (1,x_{1} + t z) \beta^{2} \\
        = y_{1} - (1,x_{1}) \beta^{2} + t (w - z \beta^{2}) 
          = y_{1} - (1,x_{1}) \beta^{2} + t z (\beta^{1} - \beta^{2}) .
    \end{multline*}
Thus, by \eqref{E:z.beta.1.and.z.(beta.1-beta.2).not.0}, 
$(d/dt) \Delta \bigl( Y(t) ; \blds{\epsilon} \bigr) \restriction_{t=0} = - z (\beta^{1} - \beta^{2}) \neq 0$, by \eqref{E:z.beta.1.and.z.(beta.1-beta.2).not.0}. Therefore, the real-valued function $\Delta( \cdot ; \blds{\epsilon} )$ has full rank, namely 1.

Let $\mcl{G}_{J_{1}, J_{2}}$ be the set of $Y = (X,y) \in \Y$ s.t.\ $X_{J_{j} 1}$ ($j=1,2$) are of full rank and $\beta^{1}_{1}(Y) \neq \beta^{2}_{1}(Y)$. 
Then $\mcl{G}_{J_{1}, J_{2}}$ is clearly open. Let 
$\mcl{Z}_{J_{1}, J_{2}, \blds{\epsilon}} := \bigl\{ Y \in \mcl{G}_{J_{1}, J_{2}} : \Delta( Y ; \blds{\epsilon} ) = 0 \big\}$. Then, by Boothby \cite[Theorem (5.8), p.\ 79]{wmB75} and  \eqref{E:Haus.dim.s-manif.=.s} in appendix \ref{Chptr:Lip.Haus.meas.dim}, 
$codim \; \mcl{Z}_{J_{1}, J_{2}, \blds{\epsilon}} = 1$. Let $\mcl{Z}$ be the union 
of $\mcl{Z}_{J_{1}, J_{2}, \blds{\epsilon}}$ over all choices of $J_{1} \neq J_{2}$ and 
$\blds{\epsilon} \in \{ -1, +1 \}^{\{1, \ldots, 2n\}}$. By \eqref{E:dim.of.whole.=.max.dim.of.parts}, $codim \, \mcl{Z} = 1$. By corollary \ref{C:nonunique.mins.are.sings}, corollary \ref{C:B1.is.convex.hull.of.E}, and lemma \ref{L:basic.LAD.soln.facts}, the set, $\Ss_{LAD, NC}$, of non-collinear singularities of LAD is a subset of $\mcl{Z}$. Therefore, $codim \, \Ss_{LAD, NC} \geq 1$. 

Before we show $codim \, \Ss_{LAD, NC} \leq 1$ when $n-k$ is even, we prove a couple of facts true whether or not $n-k$ is even. First we prove a local boundedness property. Let also $Y = (X,y) \in \Y$. Let $Y' = (X', y') \in \Y$. Let $\delta > 0$ and suppose 
$\| Y' - Y \| < \delta$. (See \eqref{E:matrix.norm}.)
Let $J \subset \NN_{n} = \{1, \ldots, n \}$ contain $\nvar = k+1$ elements. As above, 
let $X_{J}^{\nvar \times k}$ be the submatrix of $X$ consisting of the rows $x_{i}$ of $X$ indexed by $i \in J$. Define $X'_{J}$, $y_{J}$, and $y'_{J}$ similarly. 
Suppose $X_{J 1}^{\nvar \times \nvar} := (1^{\nvar}, X_{J})$ has full rank $\nvar$. Assume $Y'$ is close enough to $Y$ that the similarly defined $X'_{J 1}$ also has full rank (lemma \ref{L:rank.lwr.semicont}). Suppose $\beta^{\nvar \times 1}$ 
and $(\beta')^{\nvar \times 1} = \beta'(Y')$ satisfy $y_{i} = (1,x_{i}) \beta$ and $y'_{i} = (1,x'_{i}) \beta'$ for $i \in J$. Write $\beta = (\beta_{0}, \beta_{1}^{T})^{T}$, etc., as usual. We do not require $\beta$ or $\beta'$ to be LAD solutions for $Y$ and $Y'$, resp. By lemma \ref{L:basic.LAD.soln.facts}(a),  
        \begin{equation}   \label{E:beta.bounded}
          \text{There exists } \delta > 0 \text{ and } C < \infty \text{ s.t.\ }
             |\beta'(Y')| < C \text{ if } \| Y' - Y \| < \delta.
        \end{equation}
        
Next, we show that 
    \begin{equation}   \label{E:beta'-beta=O(delta)}
      \beta' - \beta = O(\delta) .
    \end{equation}
By \eqref{E:beta.bounded}, 
$(X'_{J 1} - X_{J 1}) (\beta'  - \beta) = O(\delta)$ as $\delta \downarrow 0$. 

We have
    \begin{align*}
       0 &= y'_{J} - X'_{J 1} \beta' \\
          &= y_{J} + (y'_{J} - y_{J}) 
            - \bigl[ X_{J 1} + (X'_{J 1} - X_{J 1})  \bigr] 
              \bigl[ \beta + (\beta'  - \beta  \bigr] \\
          &= (y_{J} - X_{J 1} \beta ) 
            - X_{J 1} (\beta'  - \beta ) - (X'_{J 1} - X_{J 1}) \beta 
              - (X'_{J 1} - X_{J 1}) (\beta'  - \beta) 
                + (y'_{J} - y_{J}) \\
          &= 0 - X_{J 1} (\beta'  - \beta ) + O(\delta) .
    \end{align*}
Thus, 
    \begin{equation*}
      \beta'  - \beta = (X_{J 1})^{-1} O(\delta) .
    \end{equation*}
Since $X_{J 1}$ is of full rank and does not depend on $\delta$, that proves \eqref{E:beta'-beta=O(delta)}.

\emph{Now suppose $n-k$ is even,} say, $n-k = 2 \ell$. Let $e_{1}, \ldots, e_{k}$ be the standard basis of $\RR^{k}$. Identify $\RR^{k-1}$ with the span of $e_{1}, \ldots, e_{k-1}$. So if $k=1$ then $\RR^{k-1} = \{0\}$. Let $Y = (X,y) \in \Y$ with 
        \begin{multline} \label{E:x.i.y.i.choices} 
          x_{1} = 0 \text{ and } x_{2}, \ldots, x_{k} 
            \text{ are linearly independent vectors in } \RR^{k-1} , \\ 
              x_{k+1} = \cdots = x_{n} = e_{k} = e_{\nvar-1} , \\
               \text{ and } y_{1} = \cdots = y_{k} = 0 \text{ and } 
                 y_{k+1}, \ldots, y_{n} \in \RR \text{ are distinct.}  
        \end{multline}
As usual, write $x_{i} = (x_{i1}, \ldots, x_{ik})$. Thus, $x_{i}$ is $1 \times k$ 
($i=1, \ldots, k$) but $x_{1k}= \cdots = x_{kk} = 0$. 
Let $b = \bigl( b_{0}^{1\times 1}, (b_{1}^{k \times 1})^{T} \bigr)^{T}$ and let 
    \begin{equation}   \label{E:prime.double.prime}
      (b_{1}')^{\nvar \times 1} = (b_{11}, \ldots, b_{1(k-1)}, 0)^{T} \text{ and }
        (b_{1}'')^{\nvar \times 1} = (0^{1 \times (k-1)}, b_{1k})^{T} .
    \end{equation}
So $b_{1} = b_{1}' + b_{1}''$ and $b_{1}'$ and $b_{1}''$ are orthogonal. Recall the definition \eqref{E:L1.criterion.defn} of $L^{1}$. We have 
	\begin{equation*}   
		L^{1}(b,Y) = \sum_{i=1}^{n} | y_{i} - b_{0} - x_{i} b_{1} | 
		  =  | b_{0} | + \sum_{i=2}^{k} | b_{0} + x_{i} b_{1}' |
		    + \sum_{i=k+1}^{n} | y_{i} - b_{0} - e_{k} b_{1}'' | .
	\end{equation*}

A choice of $b$ that minimizes each term on the RHS of the preceding separately will be an LAD solution for $Y$. It is possible to do this: Take $b_{0} = 0$, $b_{1}' = 0$, and 
choose $e_{k} b_{1}'' = b_{1k}$ to be a median of $y_{k+1}, \ldots, y_{n}$. Actually, since $x_{1}, \ldots, x_{k}$ are linearly independent vectors in $\RR^{k-1}$, 
0 is the only choice for $b_{1}'$. Thus, by \eqref{E:prime.double.prime}, any LAD fit has the form $b^{\nvar \times 1} = \bigl(0^{1 \times 1}, 0^{1 \times (k-1)} , m \bigr)^{T}$, where $m$ is any median of $y_{k+1}, \ldots, y_{n}$. Since $y_{k+1}, \ldots, y_{n} \in \RR$ are an even number of distinct values, every point in the closed ``middle interval'', when $y_{k+1}, \ldots, y_{n}$ are arranged in increasing order, is a median. Re-index if necessary so that the two middle values are $y_{k+1}$ and $y_{k+2}$. Then 
    \begin{equation}  \label{E:beta.j's.for.Y}
      \beta^{j} = \bigl(0^{1 \times 1}, 0^{1 \times (k-1)} , y_{k+j} \bigr)^{T} , 
        \; (j=1,2) 
    \end{equation} 
are distinct LAD solutions for $Y$. Therefore, by corollary \ref{C:nonunique.mins.are.sings}, $Y$ is a singularity of LAD. 

Let $\delta > 0$ be small and displace each entry in the matrix $Y^{n \times \nvar}$ by an amount no greater than $\delta$ in absolute value. Call the perturbed data set $Y' = (y', X')$. Then $Y' - Y = O(\delta)$. For small enough $\delta$, $Y'$ is non-collinear and we will assume that. Thus, $x_{ik} = O(\delta)$ for $i = 1, \ldots, k$ and $e_{k} - x_{i} = O(\delta)$ for $i = k+1, \ldots, n$. We always take $\delta$ so small that $y'_{k+1}, \ldots, y'_{n}$ are distinct and have the same order as $y_{k+1}, \ldots, y_{n}$.

Let $(\beta')^{\nvar \times 1} = \bigl( \beta'_{0}, (\beta'_{1})^{T} \bigr)^{T}$ be one of the extreme points (perhaps the only point) of the solution set $\hat{B}_{1}(Y')$ as in lemma \ref{L:basic.LAD.soln.facts} and corollary \ref{C:B1.is.convex.hull.of.E}. (The usage of prime ``$'$'' specified by \eqref{E:prime.double.prime} does not apply here. Call the LAD plane $\bigl\{ (x, (1,x) \beta' : x \in \RR^{k} \bigr\}$ corresponding to $\beta'$ an ``extreme LAD plane''.) By lemma \ref{L:basic.LAD.soln.facts}(a), for $\delta > 0$ sufficiently small, $\beta'$ is bounded uniformly in $Y' \in \Y$ with $\|Y' - Y\| < \delta$. 

Since $\beta'$ is extreme, by lemma \ref{L:basic.LAD.soln.facts}(b,(iii)), the LAD plane corresponding to it -- i.e., the graph of $x \mapsto (1, x) \beta'$ -- must pass through at least $\nvar$ data points $(x'_{i}, y'_{i})$. First let us suppose that the LAD plane corresponding to $\beta'$ does \emph{not} pass through $(x'_{1}, y'_{1}), \ldots, (x'_{k}, y'_{k}), (x'_{k+1}, y'_{k+1})$ or through
$(x'_{1}, y'_{1}), \ldots, (x'_{k}, y'_{k}), (x'_{k+2}, y'_{k+2})$. 

Let us refine our supposition and begin by supposing the graph of $x \mapsto (1, x) \beta'$ \emph{does} pass through the $\nvar$ data points (rows of $Y'$) 
indexed by $1, \ldots, k, r$, for some $r > k+2$. Now, $y'_{i}$ is within $\delta$ of $y_{i}$ 
$(i \in \NN_{n}$) and, by \eqref{E:x.i.y.i.choices}, $y_{k+1}, \ldots, y_{n}$ are distinct. Hence, making $\delta > 0$ smaller, if necessary, the differences 
$y'_{i} - y'_{j}$ ($k+1 \leq i < j \leq n$ are bounded away from 0 (as $\delta \downarrow 0$), and hence are also distinct. We also have $| x'_{i} - x'_{r}| \leq |x'_{i} - e_{k}| + |x'_{i} - e_{k}| = | x'_{i} - x_{i}| + | x'_{r} - x_{r}| = O(\delta)$. Moreover, $\beta'$ is bounded. Thus,
    \begin{align*}
      y'_{i} - y'_{r} &= \bigl[ y'_{i} - (1,x'_{i}) \beta' \bigr] -  \bigl[ y'_{r} - (1,x'_{i}) \beta' \bigr] \\
       &= \bigl[ y'_{i} - (1,x'_{i}) \beta' \bigr] -  \bigl[ y'_{r} - (1,x'_{r}) \beta' \bigr] 
         - \bigl[ (1,x'_{r}) - (1,x'_{i}) \bigl] \beta' \\
       &= \bigl[ y'_{i} - (1,x'_{i}) \beta' \bigr] - 0 + O(\delta), \qquad i = k+1, \ldots, n .
    \end{align*}
\emph{Note that this argument is valid if we only require }$r > k$. Hence, for $i = k+1, \ldots, n$, with $i \neq r$, we have $y'_{i} - y'_{r}$ 
and $y'_{i} - (1,x'_{i}) \beta'$ have the same sign. From this we draw two conclusions. First, since $y'_{i} - y'_{r}$ ($k+1 \leq j \leq n$ is bounded away from 0, it follows that
    \begin{equation} \label{E:resid.neq.0.if.i.neq.s}
       \text{If } k < i \neq s \text{ then } y'_{i} - (1,x'_{i}) \beta' \neq 0, 
         \text{ where } s > k, \;  y_{i} = (1, x'_{i}) \beta' 
           \text{ for } i > k, i \neq s .
    \end{equation}
Our second is that, for $\delta > 0$ sufficiently small, $y'_{r}$ is the smallest of $y'_{k+1}, \ldots, y'_{n}$ satisfying 
$y'_{i} \geq (1, x'_{i}) \beta'$. 

Let $z_{1} < \cdots < z_{n-k}$  be the values $y_{k+1}, \ldots, y_{n}$ in increasing order. Recall $n-k = 2 \ell$ and $y_{k+1}$ and $y_{k+2}$ are the two middle values 
in $y_{k+1}, \ldots, y_{n}$. Thus, $z_{\ell} = y_{k+1}$ and $z_{\ell+1} = y_{k+2}$. Assume that $y_{r} = z_{c}$. Thus, there are $c$ $y_{j}$'s no bigger then $y_{r}$ and 
$2 \ell - c - 1$ $y_{j}$'s bigger then $y_{r}$. Thus, $c$ is the rank -- but not necessarily the position -- of $y_{r}$ in the list $y_{k+1}, \ldots, y_{n}$.  

We will examine what happens as we disturb $\beta'$ in a specific way. For $t \in \RR$ let 
    \begin{equation*}
      \beta'(t) = \beta' - t e_{\nvar} = \beta' - t e_{k+1} .
    \end{equation*}
Thus, $\beta'(t) = (\beta'_{0}, \beta'_{1}, \ldots, \beta'_{k-1}, \beta'_{k} - t)^{T}$. 
By assumption, $y'_{i} - (1, x'_{i}) \beta' = 0$ for $i = 1, \ldots, k, r$.  
By \eqref{E:x.i.y.i.choices}, we have
        \begin{equation} \label{E:y'i-(1,x'i).beta'(t),when.i.leq.k}
         y'_{i} - (1, x'_{i}) \beta'(t) = y'_{i} - (1, x'_{i}) \beta' + t x'_{ik}  
           = t x'_{ik} = O(\delta) t \text{ for } i = 1, \ldots, k  .
        \end{equation}
Now let $i > k$ with $i \neq r$. Then, as observed above, we have 
$y'_{i} - (1, x'_{i}) \beta' \neq 0$. Therefore, for $|t|$ sufficiently small, 
$y'_{i} - (1, x'_{i}) \beta'(t) \neq 0$. Moreover, by \eqref{E:x.i.y.i.choices}, 
    \begin{equation}  \label{E:y'i-(1,x'i).beta'(t),when.i.>.k}
      y'_{i} - (1, x'_{i}) \beta'(t) = y'_{i} - (1, x'_{i}) \beta' +  t \bigl( 1 + (x_{i k}-1) \bigr) 
        = y'_{i} - (1, x'_{i}) \beta' +  t + O(\delta) t .
    \end{equation}
Thus, if $i > k$ and $i \neq r$, the quantity $y'_{i} - (1, x'_{i}) \beta'(t)$ has the same sign 
as $y'_{i} - (1, x'_{i}) \beta'$. The same is true 
for $i = r$, providing we assign $y'_{r} - (1, x'_{r}) \beta = 0$ the same sign that $t$ has.

Recall that $y'_{r}$ is the smallest of $y'_{k+1}, \ldots, y'_{n}$ satisfying 
$y'_{i} \geq (1, x'_{i}) \beta'$. Then, by \eqref{E:y'i-(1,x'i).beta'(t),when.i.leq.k} and \eqref{E:y'i-(1,x'i).beta'(t),when.i.>.k}, for small $t \geq 0$, 
    \begin{align}   \label{E:L1.yr.even.case}
      L^{1} \bigl( \beta'(t), Y' \bigr) 
           &= \sum_{i=1}^{k} \bigl| y'_{i} - (1, x'_{i}) \beta'(t) \bigr|
             + \sum_{i > k, y'_{i} \geq y'_{r}} \big( y'_{i} - (1, x'_{i}) \beta'(t) \bigr) \notag \\
               & \qquad \qquad - \sum_{i > k, y'_{i} < y'_{r}} \bigl( y'_{i} - (1, x'_{i}) \beta'(t) \bigr) 
                 \notag \\
          &= |t| \sum_{i=1}^{k} |x'_{ik}| 
            + \sum_{i > k, y'_{i} \geq y'_{r}} \big( y'_{i} - (1, x'_{i}) \beta' \bigr) 
              + (n - k - c + 1)t + O(\delta) t \\
           & \qquad \qquad - \sum_{i > k, y'_{i} < y'_{r}} \big( y'_{i} - (1, x'_{i}) \beta' \bigr) 
               - (c - 1) t + O(\delta) t \notag \\
          &= \left( \sum_{i > k, y'_{i} \geq y'_{r}} \big( y'_{i} - (1, x'_{i}) \beta'(t) \bigr)
               - \sum_{i > k, y'_{i} < y'_{r}} \big( y'_{i} - (1, x'_{i}) \beta'(t) \bigr) \right) \notag \\
          & \qquad \qquad - 2 \bigl( c - \ell - 1 +  O(\delta) \bigr) t  . \notag
    \end{align} 
Suppose $c \geq \ell + 2$. Then, for $\delta$ sufficiently small, 
$2 \bigl( c - \ell - 1 +  O(\delta) \bigr) > 1$. Therefore, $L^{1}(\beta'(t), Y')$ decreases with increasing $t$. But by assumption, 
$L^{1}(\beta'(0), Y') = L^{1}(\beta', Y') \leq L^{1}(\beta'(t), Y')$. But this means  $L^{1}(\beta', Y)$ can be reduced by increasing $\beta'_{1k}$ a small amount. Thus, the assumption $c > \ell + 1$ has led to a contradiction. Similarly, $c < \ell$ also leads to a contradiction. Therefore, $c = \ell$ or $\ell+1$. I.e., if an LAD plane corresponding to an extreme LAD solution  passes through $(x_{1}, y_{1}), \ldots, (x_{k}, y_{k})$ it must also pass through $(x_{k+1}, y_{k+1})$ or $(x_{k+2}, y_{k+2})$. (If that plane is non-extreme it must pass between those points.)

Now suppose the LAD plane corresponding to the extreme LAD solution $\beta'$ does not pass through all the points 
$(x'_{1}, y'_{1}), \ldots, (x'_{k}, y'_{k})$. Then by lemma \ref{L:basic.LAD.soln.facts}(b,(iii)) it must pass through at least \emph{two} of $(x'_{i}, y'_{i})$ ($i = k+1, \ldots, n$), say, through $(x'_{\mu}, y'_{\mu})$ and $(x'_{\nu}, y'_{\nu})$, where $\mu, \nu = k+1, \ldots, n$ are distinct. \emph{And} we may assume $x'_{\nu} \neq x'_{\mu}$. 

Therefore, $x'_{\nu} - x'_{\mu} \neq 0$. 
However, it is true that $x'_{\nu} - x'_{\mu} = O(\delta) \neq 0$ because, by \eqref{E:x.i.y.i.choices}, $x'_{\nu}$ and $x'_{\mu}$ are both within $\delta$ of $e_{k}$. 
On the other hand, $y'_{\mu} - y'_{\nu} = y_{\mu} - y_{\nu} + O(\delta)$ and $y_{\mu} - y_{\nu} \neq 0$ and does not depend on $\delta$. Thus, 
    \begin{equation*}
      O(\delta) \beta' = \bigl[ (1,x'_{\nu}) - (1,x'_{\mu}) \bigr] \beta' 
        = y'_{\nu} - y'_{\mu} = O(1).
    \end{equation*} 
I.e., $\beta' = O(1/\delta)$, in a nontrivial way so that $\beta' \to \infty$ as $\delta \to 0$. 
This contradicts lemma \ref{L:basic.LAD.soln.facts}(a) and \eqref{E:beta.bounded}.\footnote{That argument only works if as $\delta \to 0$, the LAD plane continues to pass 
through $(x'_{\mu}, y'_{\mu})$ and $(x'_{\nu}, y'_{\nu})$, which may not be the case. (But we insist that the LAD plane does never passes through all the points 
$(x'_{1}, y'_{1}), \ldots, (x'_{k}, y'_{k})$.) But there are only $n < \infty$ data points, where $n$ does not depend on $\delta$. Hence, there exists a sequence $\delta_{m} \to 0$ and $j, j' \in \{ k+1, \ldots, n \}$ distinct such that for every $m$ the plane \emph{does} pass through $(x'_{j}, y'_{j)})$ and $(x'_{j'}, y'_{j'})$. WLOG $j = \mu$ and $j' = \nu$.} 
We conclude that for $\delta$ sufficiently small, no LAD plane can pass through any two of the points $(x'_{k+1}, y'_{k+1}), \ldots, (x'_{n}, y'_{n})$. Hence, any
 LAD plane must pass through all of $(x_{1}, y_{1}), \ldots, (x_{k}, y_{k})$.
 
To sum up, 
    \begin{multline} \label{E:passes.through.middle.2}
      \text{ For small $\delta > 0$, any extreme LAD plane of } Y' 
      \text{ passes through one of } \\
        (x'_{1}, y'_{1}), \ldots, (x'_{k}, y'_{k}), (x'_{k+j}, y'_{k+j}), j = 1 \text{ or } 2,
    \end{multline}
where $y'_{k+1}$ and $y'_{k+2}$ are the middle two values in $y'_{k+1}, \ldots, y'_{n}$. 

As before, let $Y' = (y', X') \in \Y'$ satisfy $Y' - Y = O(\delta)$. Let $J_{1}, J_{2} \subset \NN_{n}$ be distinct subsets of cardinality $\nvar = k +1$. Let $j = 1,2$. Suppose $b^{\nvar \times 1} = \beta^{\prime j}$ satisfies $y_{i} - (1,x_{i}) b = 0$ ($i \in J_{j}$). If $J_{j}$ is neither $\{ 1, \ldots, k, h \}$, where $h = k+1, k+2$, 
then $\beta^{\prime j}$ is not an LAD solution for $Y'$. So let
    \begin{equation}   \label{E:Jj.defn}
      J_{j} = \{1, \ldots, k, k+j \}, \qquad j = 1,2 . 
    \end{equation} 
Thus, the graph of $x \mapsto (1,x) \beta^{\prime j}$ passes through $(x'_{1}, y'_{1}), \ldots, (x'_{k}, y'_{k}), (x'_{k+j}, y'_{k+j})$ ($j=1,2$). From what we have just seen, one or both of $\beta^{\prime 1}$ and $\beta^{\prime 2}$ is an LAD solution for $Y'$.

By \eqref{E:beta.j's.for.Y}, $|\beta^{2} - \beta^{1}| =  y_{k+2} - y_{k+1} > 0$. Therefore, by \eqref{E:beta'-beta=O(delta)}, for small $\delta > 0$, 
    \begin{equation}  \label{E:beta'1.beta'2.bounded.apart}
      |\beta^{\prime 2} - \beta^{\prime 1}| >  (y_{k+2} - y_{k+1})/2 > 0 .
    \end{equation} 

For $t \in \RR$ let $\tilde{Y}_{t} = \tilde{Y}(t)$ be the same as $Y'$ except with $(x'_{k+1}, y'_{k+1})$ replaced by $(\tilde{x}_{k+1}, \tilde{y}_{k+1})$, where
    \begin{equation}   \label{E:x.tilde.and.y.tilde}
      \tilde{x}_{k+1} = x'_{k+1} + t e_{k} \text{ and } 
        \tilde{y}_{k+1} := y'_{k+1} + t (0, e_{k}) \beta^{\prime 1} .
    \end{equation}
    
Since all rows of $\tilde{Y}_{t}$ are the same as those of $Y'$ except the $(k+1)^{st}$, for $j=2$, the graph of $x \mapsto (1,x) \beta^{\prime j}$ still passes 
$(\tilde{x}_{1}, \tilde{y}_{1}), \ldots, (\tilde{x}_{k}, \tilde{y}_{k}), (\tilde{x}_{k+j}, 
\tilde{y}_{k+j})$. In fact, the same is true for $j=1$: 
    \begin{multline} \label{E:tilde.y.k+1-(0,tilde.x.k+1).beta'.1=0}
      \tilde{y}_{k+1} - (1, \tilde{x}_{k+1}) \beta^{\prime 1}
        = y'_{k+1} + t (0, e_{k}) \beta^{\prime 1} 
          - (1, x'_{k+1} + t e_{k}) \beta^{\prime 1} \\
            = \bigl( y'_{k+1} - (1, x'_{k+1}) \beta^{\prime 1} \bigr) 
              +  t (0, e_{k}) \beta^{\prime 1} - t (0, e_{k}) \beta^{\prime 1} = 0 .
    \end{multline}
Thus, $\beta^{\prime 1}$ and $\beta^{\prime 2}$ are possible extreme LAD solutions 
for $\tilde{Y}$ and as we have seen one or both are such solutions, at least if $|t|$ is small. 

For any $t \in \RR$,
        \begin{multline*}
          \tilde{y}_{k+1} - (1, \tilde{x}_{k+1}) \beta^{\prime 2} 
            = y'_{k+1} + t (0, e_{k}) \beta^{\prime 1}
              - (1, x'_{k+1}) \beta^{\prime 2} - t(0, e_{k}) \beta^{\prime 2} \\             
                = \bigl( y'_{k+1} - (1, x'_{k+1}) \beta^{\prime 2} \bigr)
                  + t(0, e_{k}) (\beta^{\prime 1} - \beta^{\prime 2}) . 
        \end{multline*} 
Thus, by \eqref{E:beta.bounded}, $t (0, e_{k}) (\beta^{\prime 1} - \beta^{\prime 2}) \to 0$ 
as $t \to 0$, providing $\delta > 0$ is small.

By \eqref{E:beta.j's.for.Y}, \eqref{E:beta'-beta=O(delta)}, and \eqref{E:x.i.y.i.choices}, 
    \begin{equation*}
      y'_{k+1} - (1, x'_{k+1}) \beta^{\prime 2} 
        = y_{k+1} - (1, x_{k+1}) \beta^{2} + O(\delta)
          = y_{k+1} - y_{k+2} + O(\delta).
    \end{equation*}
We may assume $y_{k+2} > y_{k+1}$. Hence, $y'_{k+1} - (1, x'_{k+1}) \beta^{\prime 2}$ is also $< 0$ for small $\delta$. In conclusion, for $\delta > 0$ and $|t|$ sufficiently small, 
    \begin{multline} \label{E:2.parts.of.y.tilde.k+1-(1,x.tilde.k+1)beta2}
      \bigl| \tilde{y}_{k+1} - (1, \tilde{x}_{k+1}) \beta^{\prime 2} \bigr|
        = - \bigl( y'_{k+1} - (1, x'_{k+1}) \beta^{\prime 2} \bigr) 
            - t(0, e_{k}) (\beta^{\prime 1} - \beta^{\prime 2}) \\
          = - \bigl( y'_{k+1} - (1, x'_{k+1}) \beta^{\prime 2} \bigr) 
            + t(0, e_{k}) (\beta^{\prime 2} - \beta^{\prime 1}) .
    \end{multline}
 
We know that $\bigl| y'_{i} - (1, x'_{i}) \beta^{\prime j} \bigr| = 0$ for $i = 1, \ldots, k, k+j$ ($j=1,2$). Hence, by \eqref{E:2.parts.of.y.tilde.k+1-(1,x.tilde.k+1)beta2} and 
\eqref{E:tilde.y.k+1-(0,tilde.x.k+1).beta'.1=0}, for $\delta > 0$ and $|t|$ small we have
    \begin{align}  \label{E:Delta(t)=t.beta.diff+Delta(0)}
       L^{1}(\tilde{\beta}^{2}, &\tilde{Y}_{t}) - L^{1}(\tilde{\beta}^{1}, \tilde{Y}_{t}) \notag \\
        &= L^{1}(\beta^{\prime 2}, \tilde{Y}_{t}) - L^{1}(\beta^{\prime 1}, \tilde{Y}_{t})  \\
        &= t(0, e_{k}) (\beta^{\prime 2} - \beta^{\prime 1})
            - \bigl( y'_{k+1} - (1, x'_{k+1}) \beta^{\prime 2} \bigr) \notag \\
        &\qquad \qquad + \sum_{i = k+2}^{n} \bigl| y'_{i} - (1, x'_{i}) \beta^{\prime 2} \bigr|
            - \sum_{i = k+1}^{n} \bigl| y'_{i} - (1, x'_{i}) \beta^{\prime 1} \bigr| \notag .
    \end{align}
Here we use the fact that, by \eqref{E:tilde.y.k+1-(0,tilde.x.k+1).beta'.1=0}, 
    \begin{equation} 
      y'_{k+1} - (1, x'_{k+1}) \beta^{\prime 1} = 0 
        = \tilde{y}_{k+1} - (1, \tilde{x}_{k+1}) \beta^{\prime 1} . 
    \end{equation} 
Let $\epsilon_{ij}(Y') := \epsilon_{ij} := sign \bigl( y'_{i} - (1, x'_{i}) \beta^{\prime j} \bigr)$. (See \eqref{E:sign.function}.) By \eqref{E:resid.neq.0.if.i.neq.s}, for $\delta$ sufficiently small and $i \in J_{j}^{c} := \NN_{n} \setminus J_{j}$ we have 
$y'_{i} - (1, x'_{i}) \beta^{\prime j} \neq 0$. Hence, for $\delta$ and $|t|$ sufficiently small and $i \in J_{j}^{c} := \NN_{n} \setminus J_{j}$ we have $\epsilon_{ij}(Y')$ is constant in $Y'$ and $t$. Let $\Delta(Y') := L^{1}(\beta^{\prime 2}, Y') - L^{1}(\beta^{\prime 1}, Y')$. Then
    \begin{equation*}
            \Delta(Y') 
                := \sum_{i \in J_{2}^{c}} \epsilon_{i2} 
                   \bigl( y'_{i} - (1, x'_{i}) \beta^{\prime 2} \bigr)
                     - \sum_{i \in J_{1}^{c}} \epsilon_{i1} 
                       \bigl( y'_{i} - (1, x'_{i}) \beta^{\prime 1} \bigr) \in \RR.
    \end{equation*}
(We have $\Delta(Y) = 0$ because in the case $Y' = Y$, we have that $\beta^{\prime 2}$ and $\beta^{\prime 1}$ are LAD solutions.) Note that $\Delta(Y')$ is differentiable in $Y'$. Let $v(Y')^{n \times \nvar}$ be the matrix all of whose entries are 0 except the entry in the $(k+1)^{st}$ row and $k^{th}$ column is 1 and the entry in the $(k+1)^{st}$ row and $\nvar^{th}$ column is $(0, e_{k}) \beta^{\prime 1}$. From \eqref{E:x.tilde.and.y.tilde}, \eqref{E:Delta(t)=t.beta.diff+Delta(0)}, and \eqref{E:beta'1.beta'2.bounded.apart} we know that the derivative of $\Delta(Y') $ in the direction $v(Y')$ is nonzero. Therefore, by Boothby \cite[Theorem (5.8), p.\ 79]{wmB75}, the set of $Y'$ in a small neighborhood of $Y$ for which $\Delta(Y') = 0$ is a $(n \nvar -1)$-dimensional smooth manifold. Call that manifold $\Ss_{Y,\delta}$. 
By \ref{E:Haus.dim.s-manif.=.s}, $codim \, \Ss_{Y,\delta} = 1$. ($codim$ is based on Hausdorff dimension.)

But by \eqref{E:beta'1.beta'2.bounded.apart} again, for $\delta$ sufficiently small, $\beta^{\prime 1} \neq \beta^{\prime 2}$. Therefore, by \eqref{E:passes.through.middle.2} and corollary \ref{C:nonunique.mins.are.sings}, we have 
$\Ss_{Y,\delta} \subset \Ss_{LAD}$. 
This proves that in the $n - k$ even case, $\Ss_{LAD}$ has co-dimension no greater than 1, hence exactly 1, by \eqref{E:codim.S.LAD.geq.1}. 
  \end{proof}

  \begin{proof}[Proof of lemma \ref{L:Bd.va.o(A,v0).does.not.lie}] For suppose 
    \begin{multline} \label{E:Br(p).contains.Bd.o(A)}
      \text{There exists } p \in P \text{ and } r \in [0, s) \\
        \text{ s.t.\ } Bd_{v_{\msf{A}}} \, o(\msf{A},v_{0}) \text{ lies in the ball } 
          B_{r}(p) \subset P \text{ centered at } p \text{ with radius } r < s.
    \end{multline}
This means $p \neq c$. We show that this leads to a contradiction to \eqref{E:theta.v0.defn}. 

Suppose $p = 0$. Since $p \in P$ that means the plane $P$ passes through the origin, which means $\cos \theta_{\msf{A}} = 0$ (so $\theta_{\msf{A}} = \pi/2$), which means $c = 0 = p$, which means $r = 1 = \sin \theta_{\msf{A}} = s$, 
contradicting $r < s$. Therefore, 
    \begin{equation*}
      p \neq 0 . 
    \end{equation*}

We show that $p$ may be chosen arbitrarily close to $c$. 
Let $\lambda \in (0,1)$ so  $(1-\lambda) c + \lambda p \in P$, because $P$ is convex. 
If $x \in Bd_{v_{\msf{A}}} \, o(\msf{A},v_{0})$ then $|x-c| =s$ and we have
    \begin{align*}
      \Bigl| x - \bigl[ (1-\lambda) c + \lambda p \bigr] \Bigr|^{2}
        &= \bigl| (x-c) + \lambda (c-p) \bigr|^{2} \\
        &= |x-c|^{2} + 2 \lambda (x-c) \cdot (c-p) + \lambda^{2} |c-p|^{2} \\
        &= |x-c|^{2} + \lambda \bigl[ 2 (x-c) \cdot (c-p) + \lambda |c-p|^{2} \bigr] \\
        &\leq |x-c|^{2} + \lambda \bigl[ 2 (x-c) \cdot (c-p) + |c-p|^{2} \bigr] \\
        &= (1-\lambda) |x-c|^{2} 
          + \lambda \bigl[ |x-c|^{2} + 2 (x-c) \cdot (c-p) + |c-p|^{2} \bigr] \\
        &= (1-\lambda) |x-c|^{2} + \lambda \bigl| (x-c) + (c-p) \bigr|^{2} \\
        &= (1-\lambda) |x-c|^{2} + \lambda |x-p|^{2} \\
        &\leq (1-\lambda) s^{2} + \lambda r^{2} < s^{2} .
   \end{align*}
Thus, we may replace $p$ by $(1-\lambda) c + \lambda p \in P$ 
and it remains the case that $Bd_{v_{\msf{A}}} \, o(\msf{A},v_{0})$ lies in a ball centered at $p$ with radius $< s$. (Moving $p$ close to $v_{\msf{A}}$ may require increasing $r$, but still $r < s$.) 
    
Next, we show that if $Bd_{v_{\msf{A}}} \, o(\msf{A},v_{0}) \subset P$ lies in a ball centered at $p$ with radius $r < s$ then it lies in an open hemisphere 
of $S \subset P$ (in fact in a closed spherical cap inside an open hemisphere). (Recall that by \eqref{E:Bd.o(A).S.sin.theta.A}, $S = P \cap S^{k}$ is the $(k-1)$-sphere centered at $c$ with radius $s := \sin \theta_{\msf{A}}$.)  
Let $x \in Bd_{v_{\msf{A}}} \, o(\msf{A},v_{0}) \subset S \subset S^{k}$. Then
    \begin{multline*}
      s^{2} = |x-c|^{2} = \bigl| (x-p)+(p-c) \bigr|^{2} 
        = |x-p|^{2} + 2(x-p) \cdot (p-c) + |p-c|^{2} \\
          \leq r^{2} + 2x \cdot (p-c) - 2p \cdot (p-c) + |p-c|^{2} .
    \end{multline*}
Thus, 
    \begin{equation} \label{E:2x.dot.(p-c).geq}
      2x \cdot (p-c) \geq (s^{2} - r^{2}) +  2p \cdot (p-c) - |p-c|^{2} .
    \end{equation}
Now, since $c, p \in P$; $c = (\cos \theta_{\msf{A}}) v_{\msf{A}}$; and 
$P = v_{\msf{A}}^{\perp} + c$, we have 
    \begin{equation} \label{E:v,c.perp.p-c}
      v_{\msf{A}}, c \perp p-c .
    \end{equation}
Therefore,
    \begin{multline} \label{E:2p.dot.(p-c)-|p-c|.sqrd}
       2p \cdot (p-c) - |p-c|^{2} = 2 \bigl[ (p-c) + c \bigr] \cdot (p-c) - |p-c|^{2} \\
         = 2 |p-c|^{2} + 0 - |p-c|^{2} = |p-c|^{2} > 0.
    \end{multline}
In addition, $s^{2} - r^{2} > 0$. Therefore, plugging \eqref{E:2p.dot.(p-c)-|p-c|.sqrd}  into \eqref{E:2x.dot.(p-c).geq}, we get
    \begin{equation*} 
      2x \cdot (p-c) \geq (s^{2} - r^{2}) + |p-c|^{2} > 0 ,
        \text{ if } x \in Bd_{v_{\msf{A}}} \, o(\msf{A},v_{0}) .
    \end{equation*}
Thus, we may pick $\eta > 0$ s.t.\ 
    \begin{equation} \label{E:x.dot.(p-c).>eta}
      x \cdot (p-c) > \eta > 0 \text{ for every } 
        x \in Bd_{v_{\msf{A}}} \, o(\msf{A},v_{0}) \subset S^{k} .
    \end{equation} 
In particular, $Bd_{v_{\msf{A}}} \, o(\msf{A},v_{0})$ lies in an open hemisphere of $S$. 

Let $K := \bigl\{ x \in S^{k} : x \cdot (p-c) \leq \eta \bigr\}$. Then $K$ is compact and, by \eqref{E:x.dot.(p-c).>eta}, if $x \in K \cap o(\msf{A},v_{0})$, then $x \notin Bd_{v_{\msf{A}}} \, o(\msf{A},v_{0})$ so, by \eqref{E:o(A,v0).subset.X(vA,theta.v0)} and \eqref{E:Bd.o(A).defn}, $x \cdot v_{\msf{A}} > \cos \theta_{\msf{A}}$. 
Since $o(\msf{A},v_{0}) \subset S^{k}$ is also compact, the set 
$K \cap o(\msf{A},v_{0})$ is compact.  
Therefore, we can choose $\beta > 0$ s.t.\ 
    \begin{equation}  \label{E:x.dot.v.when.x.dot.(p-c).leq.eta}
      x \cdot v_{\msf{A}} \geq \cos \theta_{\msf{A}} + \beta 
        \text{ for every } x \in o(\msf{A},v_{0}) \text{ with } x \cdot (p-c) \leq \eta .
    \end{equation} 
Consequently, if $u \in S^{k}$ and $x \in o(\msf{A},v_{0})$ with $x \cdot (p-c) \leq \eta$, 
    \begin{multline}  \label{E:x.dot.u.when.x.dot.(p-c).leq.eta}
      x \cdot u = x \cdot \bigl[ v_{\msf{A}} - (v_{\msf{A}}- u) \bigr] 
        > \cos \theta_{\msf{A}} + \beta - x \cdot (v_{\msf{A}} - u) , \\
        > \cos \theta_{\msf{A}} + \beta - |x||v_{\msf{A}} - u|
           = \cos \theta_{\msf{A}} + \beta - |v_{\msf{A}} - u| , \\
           \text{ for every } x \in o(\msf{A},v_{0}) \text{ with } x \cdot (p-c) \leq \eta 
             \text{ and } u \in S^{k} .
    \end{multline}

Recall that $p \neq c$. Let 
    \begin{equation} \label{E:w=|p-c|.invrs.times.(p-c)}
      w = |p-c|^{-1} (p-c) \in S^{k} .
    \end{equation} 
So, by \eqref{E:v,c.perp.p-c}, 
    \begin{equation}   \label{E:w.perp.vA}
      w \perp v_{\msf{A}} .
    \end{equation} 
Now suppose 
    \begin{equation*}
      x \in o(\msf{A},v_{0}) \text{ and } x \cdot (p-c) > \eta ,
    \end{equation*} 
but do not require $x \in Bd_{v_{\msf{A}}} \, o(\msf{A},v_{0})$. 
(See \eqref{E:x.dot.(p-c).>eta}.) Then
    \begin{equation*}
      \eta < |p-c| \, x \cdot w .
    \end{equation*}
Thus, 
    \begin{equation} \label{E:x.dot.(p-c).leq.eta:x.dot.w.>.|p|.invrs.etc}
      1 \geq x \cdot w > \gamma := |p-c|^{-1} \eta > 0 , 
        \text{ if } x \cdot (p-c) > \eta .
    \end{equation}

Let $\phi \in [0, \pi]$ and 
    \begin{equation}   \label{E:u(phi).defn}
      u(\phi) := (\cos \phi) v_{\msf{A}} + (\sin \phi) w \in S^{k} .
    \end{equation} 
Thus, $u(0) = v_{\msf{A}}$.
By \eqref{E:o(A,v0).subset.X(vA,theta.v0)} and 
\eqref{E:x.dot.(p-c).leq.eta:x.dot.w.>.|p|.invrs.etc},
    \begin{multline}  \label{E:u(phi).dot.x.when.(p-c).>.eta}
      u(\phi) \cdot x = (\cos \phi) \, v_{\msf{A}} \cdot x 
        + (\sin \phi) \, w \cdot x \geq (\cos \phi) (\cos \theta_{\msf{A}}) 
          + (\sin \phi) \, \gamma, \\
          \text{ for every } x \in o(\msf{A},v_{0}) \text{ with } x \cdot (p-c) > \eta .
    \end{multline}
Define 
    \begin{equation*}
       f(\phi) := (\cos \phi) (\cos \theta_{\msf{A}}) + \gamma \sin \phi .
    \end{equation*}
$f(\phi)$ has positive derivative at $\phi = 0$ and $f(0) = \cos \theta_{\msf{A}}$. Therefore, for small $\phi > 0$, we have $f(\phi) > \cos \theta_{\msf{A}} > 0$ (by \eqref{E:three.thetas} and  \eqref{E:theta.v0.geq.pi/4}). 
By \eqref{E:u(phi).dot.x.when.(p-c).>.eta}, 
    \begin{equation}  \label{E:when.u(phi).dot.x.geq.f(phi)} 
         u(\phi) \cdot x \geq f(\phi) 
           \text{ for every } x \in o(\msf{A},v_{0}) \text{ with } x \cdot (p-c) > \eta .
    \end{equation}

Recall the definition, \eqref{E:x.dot.(p-c).leq.eta:x.dot.w.>.|p|.invrs.etc}, of $\gamma$. Let 
    \begin{equation*}
      g(\phi) := (\cos \phi) (\cos \theta_{\msf{A}} + \beta) -  \eta (\sin \phi)/|p - c|
          = (\cos \phi) (\cos \theta_{\msf{A}} + \beta) - \gamma \sin \phi .
    \end{equation*}
$g(\phi)$ has negative derivative at $\phi = 0$ but 
$g(0) = \cos \theta_{\msf{A}} + \beta > \cos \theta_{\msf{A}}$. Therefore, for small 
$\phi > 0$, we still have $g(\phi) > \cos \theta_{\msf{A}} > 0$. By \eqref{E:u(phi).defn}, 
\eqref{E:x.dot.v.when.x.dot.(p-c).leq.eta}, \eqref{E:w=|p-c|.invrs.times.(p-c)}, and \eqref{E:x.dot.(p-c).leq.eta:x.dot.w.>.|p|.invrs.etc}, we have 
    \begin{multline*}
       u(\phi) \cdot x = (\cos \phi) (v_{\msf{A}} \cdot x) + (\sin \phi) (w \cdot x)
         \geq (\cos \phi) (\cos \theta_{\msf{A}} + \beta) - \gamma \sin \phi \\
           \text{ for every } x \in o(\msf{A},v_{0}) \text{ with } x \cdot (p-c) \leq \eta .
    \end{multline*}
Thus, 
    \begin{equation} \label{E:when.u(phi).dot.x.geq.g(phi)}
      u(\phi) \cdot x \geq g(\phi) 
        \text{ for every } x \in o(\msf{A},v_{0}) 
           \text{ with } x \cdot (p-c) \leq \eta .
    \end{equation}

We may pick $\phi' \in (0, \pi)$ s.t.\ $f(\phi') > \cos \theta_{\msf{A}}$ and $g(\phi') > \cos \theta_{\msf{A}}$. 
Therefore, by \eqref{E:when.u(phi).dot.x.geq.f(phi)} and \eqref{E:when.u(phi).dot.x.geq.g(phi)},
    \begin{equation}  \label{E:u(phi')>cos(thetaA)}
      u(\phi') \cdot x \geq \min \bigl\{ f(\phi') , g(\phi') \bigr\} > \cos \theta_{\msf{A}}
        \text{ \emph{for every} } x \in o(\msf{A},v_{0}) .
    \end{equation} 
Let $\theta' := \arccos \bigl( \min \bigl\{ f(\phi') , g(\phi') \bigr\} \bigr) \in (0, \pi/2]$. 
Then $u(\phi') \cdot x \geq \cos \theta'$ for every $x \in o(\msf{A},v_{0})$.
I.e., $o(\msf{A},v_{0}) \subset X \bigl[ u(\phi'), \theta' \bigr]$. (See \eqref{E:X(v.theta).defn}.) But $0 \leq \theta' < \theta_{\msf{A}}$ by \eqref{E:u(phi')>cos(thetaA)}, contradicting the definitions \eqref{E:three.thetas} and \eqref{E:theta.v0.defn} 
of $\theta_{\msf{A}} := \theta_{min} = \theta_{v_{0}}$. We arrived at this contraction beginning at \eqref{E:Br(p).contains.Bd.o(A)}. We conclude that \eqref{E:Br(p).contains.Bd.o(A)} is false and the lemma is proved.
  \end{proof}

  \begin{proof}[Proof of \eqref{E:T.eps.is.sphere.bndl}.] 
Let $\xi \in G(k,\nvar)$. The ``fiber'' of $\T^{\epsilon}_{(0)}$ over $\xi$ consists of all matrices 
$Y \in \T^{\epsilon}_{(0)}$ s.t.\ $\pi(Y) = \rho(Y) = \xi$. Write elements of the sphere (the fiber) 
$S^{(n-1) k-1}$ as $(n-1) \times k$ matrices. Thus, 
    \begin{equation*}
      M^{(n-1) \times k} \in S^{(n-1) k-1} \text{ if and only if } \| M \| = 1,
    \end{equation*}
where $\|\cdot\|$ is the Euclidean or Frobenius norm (see \eqref{E:matrix.norm}). Let $D \in \mcl{V}_{k}$. (See \eqref{E:Vk.defn}.) By \eqref{E:switch.and.trace}, 
$\| MD \| = trace \, \bigl[ (D^{T} M^{T} M) D \bigr] = trace \, \bigl[ D (D^{T} M^{T} M) \bigr] 
= trace \, M^{T} M = 1$ whenever $M \in S^{(n-1)k-1}$. 
Define $h := h_{D}: U_{D} \times S^{(n-1) k-1} \to \D_{(0)}$ by
	\begin{multline}  \label{E:h(M).defn}
		h_{D}(\zeta, M) :=  
				\begin{pmatrix}
					M \\
					0^{1 \times k}
				\end{pmatrix}
			(D^{k \times \nvar}, 0^{k \times (n-\nvar)}) \; Y_{\zeta} =
				\begin{pmatrix}
					M D \, \Pi_{\zeta} \\
					0^{1 \times k}
				\end{pmatrix}^{n \times \nvar} \in \D_{(0)}, \\
				  \zeta \in U_{D} \text{ and } M \in S^{(n-1) k-1}.
	\end{multline}
Thus, by lemma \ref{L:proj.mat.is.imbedding.of.Grass} or \eqref{E:xi.to.Y.xi.imbed}, 
    \begin{equation}  \label{E:hD.is.smooth}
      h_{D} \text{ is smooth. }
    \end{equation}

Notice that
	\begin{equation}  \label{E:rho.hD.zeta.M.in.zeta}
		\rho \bigl[ h_{D}(\zeta, M) \bigr] \subset \zeta, \text{ for } \zeta \in U_{D}.
	\end{equation}
 \emph{Claim:} If $\zeta \in U_{D}$ and $M \in S^{(n-1) k-1}$, then
	\begin{equation} \label{E:hD.nvr.0}
		\bigl\| h_{D}(\zeta, M) \bigr\|^{2} \geq \frac{1}{2k} > 0.
	\end{equation}
By \eqref{E:basic.props.of. Pi}, we have
	\begin{equation}  \label{E:sqrd.norm.hD.expansion}
		\bigl\| h_{D}(\zeta, M) \bigr\|^{2} 
		     = trace ( M D \, \Pi_{\zeta} \Pi_{\zeta} D^{T} M^{T} ) 
		       = trace ( M D \, \Pi_{\zeta} D^{T} M^{T} ) .
	\end{equation}

By the Singular Value Decomposition (Rao \cite[(v), p.\ 42]{crR73.LinStatInf}), we can write 
$M^{(n-1) \times k} = L^{(n-1) \times k} \Lambda N^{T}$, where $L$ has orthonormal columns, 
$\Lambda^{k \times k}$ is non-negative diagonal, and $N^{k \times k}$ is orthogonal. 
Thus, $M^{T} M = N \Lambda^{2} N^{T}$. 
Let $\lambda_{1}, \ldots, \lambda_{k}$ be the diagonal elements of $\Lambda$. WLOG we may assume $\lambda_{1} \geq \cdots \geq \lambda_{k} \geq 0$. Thus, the eigenvalues of 
$M^{T} M$ are $\lambda_{1}^{2}, \cdots, \lambda_{k}^{2}$. Therefore, by \eqref{E:Frob.norm.in.terms.of.eigvals},  
$1 = \| M \|^{2} = \lambda_{1}^{2} + \cdots + \lambda_{k}^{2}$, so $\lambda_{1} \geq 1/\sqrt{k}$. 

We can write $D \, \Pi_{\zeta} D^{T}  = Q \Sigma Q^{T}$, where $Q^{k \times k}$ is orthogonal and $\Sigma^{k \times k}$ is diagonal with diagonal entries the eigenvalues 
of $D \, \Pi_{\zeta} D^{T}$. Since $\zeta \in U_{D}$, the diagonal entries in $\Sigma$ are all 
$> 1/2$. 

Thus, proving \eqref{E:hD.nvr.0} is a matter of bounding below the trace of 
$L \Lambda N^{T} Q \Sigma Q^{T} N \Lambda L^{T}$. Let $u_{1}^{(n-1) \times 1}$ be the first column of $L$. So $|u_{1}|= 1$. 
Then $\Lambda L^{T} u_{1} = \Lambda (1, 0, ..., 0)^{T} = (\lambda_{1}, 0, ..., 0)^{T}$. 
Let $v^{k \times 1} := N \Lambda L^{T} u_{1}$. Then $|v| \geq 1/\sqrt{k}$. Thus, by Rao \cite[1f.2(i), p.\ 62]{crR73.LinStatInf},
    \begin{equation*}
    	\Bigl| u_{1}^{T} \bigl( M D \, \Pi_{\zeta} D^{T} M^{T} \bigr) u_{1} \Bigr|
          = \Bigl| u_{1}^{T} \bigl( L \Lambda N^{T} Q \Sigma Q^{T} N \Lambda L^{T} \bigr) u_{1} \Bigr| 
            = \bigl| v^{T} ( Q \Sigma Q^{T} ) v \bigr| \geq |v|^{2}/2 \geq \frac{1}{2 k}.
    \end{equation*}
Therefore, by by Rao \cite[1f.2(i), p.\ 62]{crR73.LinStatInf} again, $M D \, \Pi_{\zeta} D^{T} M^{T}$ has at least one eigenvalue no smaller than $\tfrac{1}{2 k}$. The claim \eqref{E:hD.nvr.0} now follows from \eqref{E:sqrd.norm.hD.expansion}.

Finally, define
	\begin{multline*}
		\varphi_{D}(\zeta, M)  := \varphi_{D, \epsilon}(\zeta, M) 
		      := \epsilon \bigl\| h_{D}(\zeta, M) \bigr\|^{-1} h_{D}(\zeta, M) + Y_{\zeta} 
		        \in \D_{(0)}, \\
		          (\zeta \in U_{D}, \; M \in S^{(n-1)k-1}).
	\end{multline*}
By \eqref{E:T.eps.defn} and \eqref{E:rho.hD.zeta.M.in.zeta}, 
$\varphi_{D}(\zeta, M) \in \T^{\epsilon}_{(0)}$. In fact, $\varphi_{D}$ will be the local triviality homeomorphism of the bundle for the neighborhood $U_{D}$ (Spanier \cite[p.\ 90]{ehS66}). So we must show \emph{claim:}
	\begin{equation} \label{E:varphi.is.homeom}
		\varphi_{D} \text{ is a homeomorphism of } U_{D} \times S^{(n-1) k-1}
			\text{ onto } \pi^{-1}(U_{D}) \subset \T_{\epsilon}. 
	\end{equation}
By \eqref{E:hD.nvr.0}, \eqref{E:hD.is.smooth}, and \eqref{E:xi.to.Y.xi.imbed}, $\varphi_{D}$ is continuous, in fact, smooth. 

Moreover, $\varphi_{D}$ is injective. To prove this, we first establish
	\begin{equation}  \label{E:pi.varphi.commutes}
	      \pi \bigl( \varphi_{D}(\zeta, M) \bigr) := \rho\bigl( \varphi_{D}(\zeta, M) \bigr) 
		   = \zeta ,
	\end{equation}
where $\pi$ is the restriction, $\pi := \rho \restriction_{\T^{\epsilon}_{(0)}}$. (Note that \eqref{E:pi.varphi.commutes} is one of the requirements 
for $(\T^{\epsilon}_{(0)}, G(k,\nvar), S^{(n-1)k-1}, \pi)$ to be a fiber bundle, 
Spanier \cite[p.\ 90]{ehS66}.) By \eqref{E:rho.hD.zeta.M.in.zeta}, 
we have $\rho\bigl( \varphi_{D}(\zeta, M) \bigr) \subset \zeta$. Conversely, 
since $\varphi_{D}(\zeta, M) \in \T^{\epsilon}_{(0)}$, by \eqref{E:T.eps.in.P.0} and 
\eqref{E:Pf.(0).defn}, for some $\omega \in G(k, \nvar)$, we have 
    \begin{equation*}
        G(k,\nvar) \ni \omega := \rho\bigl( \varphi_{D}(\zeta, M) \bigr) \subset \zeta 
          \in G(k, \nvar).
    \end{equation*}
But $\omega$ and $\zeta$ are both in $G(k, \nvar)$ we must have $\omega = \zeta$. This proves \eqref{E:pi.varphi.commutes}.

Since $\varphi_{D}$ is smooth, by \eqref{E:rho.is.smooth}, the map 
$(\zeta, M) \mapsto \rho\bigl( \varphi_{D}(\zeta, M) \bigr) \in G(k, \nvar)$ is smooth. 
Thus, from $\varphi_{D}(\zeta, M)$, we can smoothly determine $\zeta$ and, hence, by \eqref{E:xi.to.Y.xi.imbed}, smoothly determine the matrix
	\begin{equation*}
	     \epsilon \bigl\| h_{D}(\zeta, M) \bigr\|^{-1} h_{D}(\zeta, M) 
	         = \varphi_{D}(\zeta, M) - Y_{\zeta}.
	\end{equation*}
Hence, by \eqref{E:h(M).defn}, up to a multiplicative constant, we can determine $M D \, \Pi_{\zeta}$ in a smooth fashion from $\varphi_{D}(\zeta,M)$. By \eqref{E:D.Pi.DT.is invrtble}, we have 
	\begin{equation*}
		M = M (D \, \Pi_{\zeta} D^{T}) (D \, \Pi_{\zeta} \, D^{T})^{-1}
		  = (M D \, \Pi_{\zeta}) D^{T} (D \, \Pi_{\zeta} \, D^{T})^{-1}.
	\end{equation*}
Thus, from $\varphi_{D}(\zeta, M)$ we can determine $M D \, \Pi_{\zeta}$ and, by \eqref{E:pi.varphi.commutes}, $\zeta$. Therefore, since $D$ is known, we can determine $D^{T} (D \, \Pi_{\zeta} \, D^{T})^{-1}$, and hence $M$, up to a multiplicative constant. The multiplicative constant can be eliminated because we know $\| M \| = 1$. This completes the proof that $\varphi_{D}$ is injective. Moreover, the operations involved in inverting 
$\varphi_{D}(\zeta, M)$ are continuous, in fact, smooth. Hence, $\varphi_{D}$ is a homeomorphism and it is an immersion as well. 
So by definition, specifically Boothby \cite[Definition (4.11), p.\ 73]{wmB75}, $\varphi_{D}$ is an imbedding. 

Finally, we prove that $\varphi_{D}$ maps $U_{D} \times S^{(n-1)k-1}$ onto 
$\pi^{-1}(U_{D}) \subset \T^{\epsilon}_{(0)}$. Let $Y^{n \times \nvar} \in \pi^{-1}(U_{D})$. Then, by \eqref{E:T.eps.defn}, \eqref{E:T.eps.in.P.0}, and \eqref{E:Pf.(0).defn}, there exists 
$\zeta \in U_{D}$ s.t.\ $\rho(Y) = \zeta$ and $\| Y - Y_{\zeta} \| = \epsilon$. We must have $\rho(Y - Y_{\zeta}) \subset \zeta$ and the last row of $Y - Y_{\zeta}$ must be $0^{1 \times \nvar}$. Look just above \eqref{E:D.has.orthon.rows} for the definition of 
$O_{\xi} \subset \mcl{V}_{k}$ for $\xi \in G(k, \nvar)$. Then, there exists 
$M_{Y}^{(n-1) \times k}$ and $D_{Y}^{k \times \nvar} \in O_{\zeta}$ s.t.\
	\begin{equation}   \label{E:Y.in.pi.invrs.UD}
		Y = \epsilon
 				\begin{pmatrix}
					M_{Y} \\
					0^{1 \times k}
				\end{pmatrix}
			D_{Y} + Y_{\zeta}.
	\end{equation}
We must also have 
    \begin{multline}  \label{E:||MY.D.Y||=1}
      \| Y - Y_{\zeta} \|^{2} = \epsilon^{2} = \epsilon^{2} \| M_{Y} D_{Y} \|^{2} 
       = \epsilon^{2} trace \, (M_{Y} D_{Y} D_{Y}^{T} M_{Y}^{T}) \\
        = \epsilon^{2} trace \, (M_{Y} M_{Y}^{T}) = \epsilon^{2} \| M_{Y} \|^{2} ,
    \end{multline} 
by \eqref{E:D.has.orthon.rows}. I.e., $M_{Y} \in S^{(n-1)k-1}$. By \eqref{E:rho.is.smooth} and lemma \ref{L:proj.mat.is.imbedding.of.Grass} again, the map 
$Y \mapsto \Pi_{\rho(Y)}$ is smooth. Therefore, by \eqref{E:Y.xi.defn}, we can determine $Y_{\zeta}$ smoothly from $Y$. Hence, we can determine $M_{Y} D_{Y}$ smoothly from $Y$. 

But, by \eqref{E:D.has.orthon.rows} again and \eqref{E:D.Pi.DT.is invrtble}, 
$D (D_{Y}^{T} D_{Y} ) D^{T} =D \, \Pi_{\zeta} \, D^{T}$ is invertible so $D D_{Y}^{T}$ is invertible and
	\begin{multline}  \label{E:MY.DY}
		M_{Y} D_{Y} = M_{Y} (D D_{Y}^{T})^{-1} (D D_{Y}^{T}) D_{Y} 
			= M_{Y} (D D_{Y}^{T})^{-1} D (D_{Y}^{T} D_{Y}) \\
			  = \bigl[ M_{Y} (D D_{Y}^{T})^{-1} \bigr] D \Pi_{\zeta}.
	\end{multline}
Thus, by \eqref{E:Y.xi.defn}, 
	\begin{equation*}
		\epsilon
 				\begin{pmatrix}
					M_{Y} \\
					0^{1 \times k}
				\end{pmatrix}
			D_{Y} = \epsilon
 				\begin{pmatrix}
					M_{Y} (D D_{Y}^{T})^{-1} \\
					0^{1 \times k}
				\end{pmatrix}
			(D, 0^{k \times (n - \nvar)})Y_{\zeta}.
	\end{equation*}

That $M_{Y} \in S^{(n-1)k-1}$ means, \emph{a fortiori}, that $M_{Y} \neq 0$, so there exists $a^{k \times 1}$ s.t.\ $M_{Y} a \neq 0$. Hence, $\bigl[ M_{Y} (D D_{Y}^{T})^{-1} \bigr] (D D_{Y}^{T}) a = M_{Y} a \neq 0$. Therefore, $M_{Y} (D D_{Y}^{T})^{-1} \neq 0$ so $\bigl\| M_{Y} (D D_{Y}^{T})^{-1} \bigr\| > 0$. Let 
$M^{(n-1) \times k} := \bigl\| M_{Y} (D D_{Y}^{T})^{-1} \bigr\|^{-1} M_{Y} (D D_{Y}^{T})^{-1} 
\in S^{(n-1)k-1}$. Then, by \eqref{E:MY.DY},
    \begin{equation*}
        h_{D}(\zeta, M)  =  \bigl\| M_{Y} (D D_{Y}^{T})^{-1} \bigr\|^{-1}
			\begin{pmatrix}
				M_{Y} (D D_{Y}^{T})^{-1} D \Pi_{\zeta} \\
				0^{1 \times k}
			\end{pmatrix}
		=  \bigl\| M_{Y} (D D_{Y}^{T})^{-1} \bigr\|^{-1}
			\begin{pmatrix}
				M_{Y} D_{Y} \\
				0^{1 \times k}
			\end{pmatrix} .
    \end{equation*}
In particular, since $\| M_{Y} D_{Y} \| = 1$ by \eqref{E:||MY.D.Y||=1}, 
we have $\| h_{D}(\zeta, M) \| = \bigl\| M_{Y} (D D_{Y}^{T})^{-1} \bigr\|^{-1}$. Therefore, by \eqref{E:Y.in.pi.invrs.UD},
    \begin{multline*}
        \varphi_{D}(\zeta, M) = \epsilon \| h_{D}(\zeta, M) \|^{-1} h_{D}(\zeta, M) + Y_{\zeta}  \\
			= \epsilon \bigl\| M_{Y} (D D_{Y}^{T})^{-1} \bigr\|
			  \bigl\| M_{Y} (D D_{Y}^{T})^{-1} \bigr\|^{-1} 
            			\begin{pmatrix}
            				M_{Y} D_{Y} \\
            				0^{1 \times k}
            			\end{pmatrix}
			+ Y_{\zeta}  
			  = \epsilon
            			\begin{pmatrix}
            				M_{Y} D_{Y} \\
            				0^{1 \times k}
            			\end{pmatrix}
			+ Y_{\zeta} 
			= Y,
    \end{multline*}
as desired. This proves that $\varphi_{D}$ maps $U_{D} \times S^{(n-1)k-1}$ onto $\pi^{-1}(U_{D})$ and in turn completes the proof of the claim \eqref{E:varphi.is.homeom}. Note that, in fact, we have proved that $\varphi_{D} : U_{D} \times S^{(n-1)k-1} \to \pi^{-1}(U_{D})$ is a diffeomorphism.

It further follows from \eqref{E:varphi.is.homeom} and \eqref{E:pi.varphi.commutes} that \eqref{E:T.eps.is.sphere.bndl} holds.
  \end{proof}

  \begin{proof}[Proof that \eqref{E:nontriv.r-dim.homol} holds in the context of section \ref{SS:gnrl.lwr.bnd.plane.fit}.]
By \eqref{E:when.Phi.=.Delta.on.T.eps} and \eqref{E:convergence.in.Grassmann},  
$\Phi  \restriction_{\T^{\epsilon}_{(0)}}$ has a unique continuous extension, $\Theta$, to all of $\T^{\epsilon}_{(0)}$, viz.
	\begin{equation*}
		\Theta(Y) := \pi(Y) := \rho(Y) \in G(k, \nvar), \quad Y \in \T^{\epsilon}_{(0)}.
	\end{equation*}
We prove that $\Theta$ satisfies \eqref{E:nontriv.r-dim.homol}. I.e., we need to show that $\Theta_{\ast}$ is a nontrivial homomorphism 
in dimension $r=1$. 

Let 
	\begin{equation*}
		L := (n-1)k - 1 \geq 1.
	\end{equation*}
We give two approaches, both suggested by Steven Ferry (personal communication). 
They are both based on the fact that, by \eqref{E:T.eps.is.sphere.bndl}, 
$E := \T^{\epsilon}_{(0)}$ is the total space of an $L$-sphere bundle over 
$B := G(k,\nvar)$ with bundle map $\pi := \rho$ and, 
by Milnor and Stasheff \cite[Problem 7-B, p.\ 87]{jwMjdS74}, 
$H^{1}(B; \ZZ/2) = H^{1}\bigl( G(k,\nvar); \ZZ/2 \bigr)$ is non trivial. 

\emph{Method 1. Lift a cycle:} This is an elaboration of a suggestion by Steven Ferry. Since $H^{1}\bigl( B; \ZZ/2 \bigr)$ is nontrivial, by Munkres \cite[Corollary 53.6, p.\ 326]{jrM84}), we have that $H_{1}\bigl( B; \ZZ/2 \bigr)$ is nontrivial as well. Let $z = T_{0} + \cdots + T_{m}$ be a singular 1-cycle in $S_{1}(B) \otimes \ZZ/2$ (where $S_{1}(B)$ is the group of singular 1-chains in $B$) representing a nontrivial singular homology class in $H_{1}(B; \ZZ/2)$. We may assume the $T_{i}$'s are distinct. Let $\Delta_{1}$ be the standard 1-simplex (Munkres \cite[p.\ 162]{jrM84}), so $T_{i}$ is a continous map from $\Delta_{1}$ into $B$. $\Delta_{1}$ is homeomorphic to the unit interval, $I$. Then each $T_{i}$ can be regarded as a continuous map $I \to B$. 

\emph{Claim:} Relabeling if necessary,  for some $m' = 0, \ldots, m$ there is a non-bounding cycle $z' := T_{0} + \cdots + T_{m'} \in S_{1}(B) \otimes \ZZ/2$ 
in which $T_{i}(1) = T_{i+1}(0)$ ($i=0, \ldots, m'$), where $m'+1$ is identified with $0$. This, we claim, is true for any singular 1-cycle of $m$ terms. (But $m'$ is not constant.)
\emph{Pf:} We have 
    \begin{equation*}
        0 = \partial z = \sum_{i=0}^{m} \bigl( T_{i}(0) + T_{i}(1) \bigr).
    \end{equation*}
Thus, for every $i = 0, \ldots, m$ and $\alpha = 0$ or 1 there exists $j = 0, \ldots, m$ and $\beta = 0$ or 1 s.t.\ $(i, \alpha) \neq (j, \beta)$ but $T_{i}(\alpha) = T_{j}(\beta)$. Therefore, if $m = 0$ we may take $z' :=T_{0}$. I.e., the claim holds with $m' = m = 0$. 

Suppose the claim holds for $m = n$ for some $n = 0, 1, 2, \ldots$ and let $m = n+1$. 
First, suppose $T_{0}(1) = T_{0}(0)$ so $T_{0}$ is a cycle. If $T_{0}$ does not bound then take $z' = T_{0}$. If $T_{0}$ does bound then $z$ is homologous 
to $z' := T_{1} + \cdots + T_{m}$ so the claim holds by the induction hypothesis with 
$m = n$.

So suppose $T_{0}(1) \neq T_{0}(0)$. Pick $j > 0$ s.t.\ $T_{0}(1) = T_{j}(0)$ or $T_{0}(1) = T_{j}(1)$. In the latter case we may reverse $T_{j}$ so that $T_{0}(1) = T_{j}(0)$. Renumbering if necessary, we 
may assume $j = 1$. Let $T_{0}'$ be the singular simplex obtained by concatenating $T_{0}$ and $T_{1}$. Then $T_{0}, T_{1}, T_{0}'$ are the faces of an, admittedly degenerate, singular 2-simplex in $S_{1}(B) \otimes \ZZ/2$. Therefore, $T_{0} + T_{1} + T_{0}'$ is homologous to 0. This means $z$ is homologous to 
$z' := T_{0}' + T_{2} + \cdots + T_{m}$, so $z'$ does not bound. 

By the induction hypothesis, for some $m' = 0, \ldots, m$ there is a non-nonbounding cycle 
$z'' := \tilde{T}_{0} + \tilde{T}_{1} + \cdots + \tilde{T}_{m'}$, consisting of distinct terms, with the following properties. 
First, $\tilde{T}_{i}(1) = \tilde{T}_{i+1}(0)$ ($i=1, \ldots, m'$), where $m'+1$ is identified with $0$. Second, one of the following two statements holds. One possibility is that for every $i = 0, \ldots, m'$ there is some $\ell(j) = 0, \ldots, m$ with 
$\tilde{T}_{j} = T_{\ell(j)}$. In that case we are done. 

Otherwise, for some $i = 1, \ldots, m'$ we have $\tilde{T}_{i} = T_{0}'$ 
and for $j = 0, \ldots, m'$ with $j \neq i$ there is some $\ell(j) = 0, \ldots, m$ 
with $\tilde{T}_{j} = T_{\ell(j)}$. Now, $z''$ is homologous to 
$z' := (T_{0} + T_{1}) + \sum_{j \neq i} T_{\ell(j)}$. After relabeling, $z'$ has the desired properties. This proves the claim. Replace $z$ by $z'$ and $m$ by $m'$. Having redefined $z$, if necessary, in this way, we have $T_{i}(1) = T_{i+1}(0)$ ($i=0, \ldots, m$).

Let $i = 0, \ldots, m$. Now, if $P$ is an arbitrary one point space, we can also regard 
$T_{i}$ as a map 
from $P \times I$ into $B := G(k,\nvar)$. I.e., we may think of $T_{i}$ as a homotopy between the maps $P \mapsto T_{i}(0) \in B$ and $P \mapsto T_{i}(1) \in B$. 
Let $\zeta_{ij} := T_{i}(j) \in B$ ($j=0,1$), so $\zeta_{m1} = \zeta_{00}$.
Let $Y_{i0} \in \pi^{-1}(\zeta_{i0}) \in \T^{\epsilon}_{(0)}$ satisfy 
$\pi(Y_{i0}) = \zeta_{i0}$. $Y_{i0}$ will be specified later. 
Let $f_{i0} : P \times \{0\} \to \T^{\epsilon}_{(0)}$ be the constant map 
$f_{i0}(P,0) := Y_{i0}$. 
By \eqref{E:T.eps.is.sphere.bndl} and Spanier \cite[Corollary 14,p.\ 96]{ehS66}, 
$\pi : \T^{\epsilon}_{(0)} \to G(k,n)$ has the homotopy lifting property. Therefore, there exists a singular 1-simplex $T^{\epsilon}_{i} : I \to \T^{\epsilon}_{(0)}$ s.t.\ 
$T^{\epsilon}_{i}(0) = Y_{i0}$ and $\pi \circ T^{\epsilon}_{i} = T_{i}$.

Let $Y_{00} \in \T^{\epsilon}_{(0)}$ be arbitrary with $\pi(Y_{00}) = \zeta_{00}$. Let $T^{\epsilon}_{0}$ be constructed as above. 
Given $T^{\epsilon}_{i}$, let $Y_{(i+1)0} := T^{\epsilon}_{i}(1)$ and construct $T^{\epsilon}_{i+1}$ as above ($i = 0, \ldots, m-1$). 

We then have $\pi(Y_{m1}) = \zeta_{m1} = \zeta_{00} = \pi(Y_{00})$, but it may not be the case that $Y_{m1} = Y_{00}$. However, $Y_{m1}$ and $Y_{00}$ are both in $\pi^{-1}(\zeta_{m1}) = \pi^{-1}(\zeta_{00})$ which is homeomorphic to the sphere $S^{L}$. Since $L \geq 1$, there is an arc in $\pi^{-1}(\zeta_{00})$ joining $Y_{m1}$ and $Y_{00}$. 
Let $T^{\epsilon}_{m+1} : I \to \pi^{-1}(\zeta_{00})$ be that arc. 
Thus, $T^{\epsilon}_{m+1}$ is a singular 1-simplex and 
$T^{\epsilon}_{i}(1) = Y_{(i+1)0} = T^{\epsilon}_{i+1}(0)$ ($i=0, \ldots, m+1$), where $m+2$ is identified with $0$. Therefore, $z^{\epsilon} := T^{\epsilon}_{0} + \cdots + T^{\epsilon}_{m} + T^{\epsilon}_{m+1}$ is a singular 1-cycle 
in $S(\T^{\epsilon}_{(0)}) \otimes \ZZ/2$. Moreover, $\pi_{\#}(z^{\epsilon})$ is homologous to $z$, where $\pi_{\#} :  S_{1}(\T^{\epsilon}_{(0)}) \otimes \ZZ/2 \to S(B) \otimes \ZZ/2$ is the singular chain map corresponding to $\pi$. 
($\pi_{\#}(z^{\epsilon}) - z$ is a constant singular 1-simplex.)
Thus, $\pi_{\ast}\bigl( \{z^{\epsilon}\} \bigr) = \{z\} \neq 0$. 
This shows that $\Theta_{\ast} := \pi_{\ast}$ is nontrivial. I.e., \eqref{E:nontriv.r-dim.homol} holds.

\emph{Method 2. Vietoris-Begle theorem:} 
As observed near the beginning of chapter \ref{Chptr:sings.in.plane.fit}, $B := G(k,\nvar)$ is compact. By \eqref{E:T.eps.is.manif}, so is $\T^{\epsilon}_{(0)}$. 
By \eqref{E:rho.surj.on.T.eps}, $\pi : \T^{\epsilon}_{(0)} \to B$ is surjective. 
By compactness, $\pi$ is closed. Recall the definition, \eqref{E:pi.is.restrict.of.rho.to.T.eps}, of $\pi$. By \eqref{E:T.eps.is.sphere.bndl}, for every $\xi \in B$, we have that 
$\pi^{-1}(\xi) \homeomto S^{L}$ so $\tilde{H}^{\ell} \bigl[ \pi^{-1}(\xi); \ZZ/2  \bigr] = 0$ 
for $\ell = 0, \ldots, L-1 \geq 0$. Hence, by the Vietoris-Begle theorem (Spanier \cite[Theorem 15, p.\ 344]{ehS66}, Bredon \cite[Theorem 6.1, p.\ 318]{geB97.SheafThy}), 
$\pi^{\ast} : H^{1} ( B; \ZZ/2) \to H^{1} ( \T^{\epsilon}_{(0)}; \ZZ/2)$ is a monomorphism. 
By Munkres \cite[Corollary 53.6, p.\ 326]{jrM84}, we have the commutative diagram 
   \begin{equation*} 
      \begin{CD}
        \text{Hom}_{\ZZ/2} \bigl( H_{1}(B), \ZZ/2 \bigr) @<{\isomto}<< H^{1}(B) \\
        @V{Hom_{\ZZ/2}(\pi_{\ast})}VV       @VV{\pi^{\ast}}V \\
        \text{Hom}_{\ZZ/2} \bigl( H_{1}(\T^{\epsilon}_{(0)}), \ZZ/2 \bigr) 
            @<{\isomto}<< H^{1}(\T^{\epsilon}_{(0)}) .
      \end{CD}
   \end{equation*}
Here, (co)homology is computed with $\ZZ/2$ coefficients. it follows that 
$Hom_{\ZZ/2}(\pi_{\ast})$ is a monomorphism. As observed above, $H^{1}(B; \ZZ/2)$ is nontrivial. Therefore, $Hom_{\ZZ/2}(\pi_{\ast})$ is nonzero. That means 
$\Theta_{\ast} = \pi_{\ast}$ is nontrivial in dimension $r = 1$ and \eqref{E:nontriv.r-dim.homol} holds. (See \eqref{E:r=1}.) 
  \end{proof}
	
  \begin{proof}[Proof of \eqref{E:geod.on.prod.of.spheres}]
This is obvious but, as an exercise, let's prove it! Suppose $\gamma_{i}$ 
($i \in \NN_{n}$) are as in \eqref{E:geod.on.prod.of.spheres}. 

First, we prove the well-known fact (Boothby \cite[pp.\ 307--308]{wmB75}) that a curve $\gamma_{1}$ in a single sphere $S^{\nvar}$, that parametrizes a great circular arc on $S^{\nvar}$ w.r.t.\ a parameter proportional to arc length, is a geodesic on $S^{\nvar}$, and only such $\gamma_{1}$ is a geodesic. By Boothby \cite[Definition (5.1), p.\ 326]{wmB75}, $\gamma : (a,b) \to \D$ is a geodesic on $\D$ if and only if $\tfrac{D}{dt} \tfrac{d \gamma}{dt} = 0$ for all $t \in (a,b)$. We compute $\tfrac{D}{dt} \tfrac{d \gamma_{1}}{dt}$. This requires Christoffel symbols in Boothby \cite[Corollary (3.8), p.\ 318]{wmB75} or Spivak \cite[p.\ 210]{mS79.SpivakVol2}:
    \begin{equation}  \label{E:Christ.awful.symbol}
      \Gamma_{ij}^{k} 
        := \frac{1}{2} \sum_{s=1}^{\nvar} g^{ks} \left( \frac{\partial g_{si}}{\partial x_{j}} 
          -  \frac{\partial g_{ij}}{\partial x_{s}} +  \frac{\partial g_{sj}}{\partial x_{i}} \right), 
    \end{equation}
where the various $g$'s and $x$'s will be explained below. Let $Y(t)$ be a vector field along $\gamma_{1}$. Then from Boothby \cite[(3.11), p.\ 319]{wmB75}:
    \begin{equation}  \label{E:covar.deriv.gnrl.defn}
      \frac{DY}{dt} = \sum_{k=1}^{\nvar} 
        \left( \frac{d b_{k}}{dt} + \sum_{i,j=1}^{\nvar} \Gamma_{ij}^{k} 
          \bigl[ \gamma_{1}(t) \bigr] b_{i}(t) \tilde{x}_{j}'(t) \right) E_{k} , 
    \end{equation}
where $b = (b_{1}, \ldots, b_{\nvar})$ and $\tilde{x} = (x_{1}, \ldots, x_{\nvar})$ will be explained below.

Tentatively define
    \begin{equation} \label{E:gamma.1.curve}
     \gamma_{1}(t) = ( \cos t, 0, \ldots, 0, \sin t ) \in \RR^{\nvar+1}, 
       \qquad 0 < t_{0} < t < t_{1} < \pi ,
    \end{equation}
where $t_{0}, t_{1} $ are fixed. We verify that $\gamma_{1}$ defined this way is a geodesic on $S^{\nvar}$.

Parametrize the upper open hemisphere, $H$, of $S^{\nvar}$ by: 
    \begin{equation*}
      \zeta : (x_{1}, \ldots, x_{\nvar}) \mapsto \left( x_{1}, \ldots, x_{\nvar}, 
        \sqrt{1 - x_{1}^{2} - \cdots - x_{\nvar}^{2}} \right)  \in \RR^{\nvar+1}, 
          \qquad (x_{1}, \ldots, x_{\nvar}) \in B_{1}^{\nvar}(0) .
    \end{equation*}
(See \eqref{E:Euc.ball.defn}.)  Thus,  
    \begin{equation} \label{E:gamma1.in.local.coords}
      \gamma_{1}(t) = \zeta(\cos t, 0, \ldots, 0) .
    \end{equation}

Write $x^{1 \times (\nvar+1)} = \bigl( \tilde{x}^{1 \times \nvar}, x_{\nvar+1} \bigr) 
= \zeta(x_{1}, \ldots, x_{\nvar})$. (Thus, we write the individual coordinates without the tilde.) Thus, in local coordinates, 
$\gamma_{1}(t)$ is $\tilde{x}(t) = (\cos t, 0, \ldots, 0) \in B_{1}^{\nvar}(0)$ and
    \begin{equation}  \label{E:deriv.gamma1.in.loc.coords}
      \text{In local coordinates } \frac{d}{dt} \tilde{x}(t) = \tilde{x}'(t) 
        = (-\sin t, 0, \ldots, 0) \in B_{1}^{\nvar}(0) .
    \end{equation}

Let $E_{i, \zeta(\tilde{x})} := \zeta_{\ast} (\partial/\partial u_{i} \restriction_{u=\tilde{x}})$ 
($i=1, \ldots, \nvar; \tilde{x} \in B_{1}^{\nvar}(0)$), be the coordinate frame on $H$. 
Now, $\partial/\partial y_{i}$, ($i = 1, \ldots, \nvar+1$) span the tangent space 
of $\RR^{\nvar+1}$. By Boothby \cite[Theorem (1.6), p.\ 109]{wmB75}, we have
    \begin{equation}  \label{E:Ei.on.sphere}
         E_{i,\tilde{x}} = \left( \frac{\partial}{\partial y_{i}} 
           \restriction_{y=\tilde{x}} \right) - \left( \frac{x_{i}}{x_{q+1}} 
             \frac{\partial}{\partial y_{\nvar+1}} \restriction_{y=\tilde{x}} \right), 
               \quad i = 1, \ldots, \nvar; \, \tilde{x} \in H .
    \end{equation}
 
Write $\gamma_{1} = (\gamma_{1,1}, \ldots, \gamma_{1, \nvar+1})$, 
so, e.g., $\gamma_{1,1}(t) = \cos(t)$ 
and $\gamma_{1, \nvar+1}(t) = \sin t$. If $\tilde{x} \in B_{1}^{\nvar}(0)$, 
write $x_{\nvar+1} := \sqrt{1 - |\tilde{x}|^{2}}$, 
so $\zeta(\tilde{x}) = (\tilde{x}, x_{\nvar+1})$.
Let 
$Y_{\gamma(t)} := \dot{\gamma}_{1}(t) = \gamma_{1\ast}(d/ds \restriction_{s=t})$ and write $Y(t) := Y_{\gamma(t)} = \sum_{i} b_{i}(t) E_{i}$. 
By \eqref{E:Ei.on.sphere} and Boothby \cite[Theorem (1.6), p.\ 109]{wmB75} again,  
    \begin{multline} \label{E:the.bi's}
      \gamma_{1 \ast} \left( \frac{d}{dt} \right) 
        = - \sin t \frac{\partial}{\partial y_{1}} + \cos t \frac{\partial}{\partial y_{\nvar+1}} 
          =  - (\sin t) E_{1} , \\ 
            \text{ so } b_{i}(t) =
          \begin{cases} 
               - \sin t, &\text{ if } i = 1, \\     
               0, &\text{ otherwise }.
          \end{cases}
    \end{multline}

Plugging this and \eqref{E:deriv.gamma1.in.loc.coords} into \eqref{E:covar.deriv.gnrl.defn}, we get
    \begin{equation} \label{E:covar.deriv.of.gamma.ast}
           \frac{DY}{dt} =  - (\cos t) E_{1} + \sum_{k=1}^{\nvar} \Gamma_{11}^{k} 
              \bigl[ \gamma_{1}(t) \bigr] (\sin^{2} t) E_{k} .
    \end{equation}

It remains to evaluate $\Gamma_{11}^{k} \bigl[ \gamma_{1}(t) \bigr]$ 
($k=1, \ldots, \nvar$). Let $\tilde{x} = (x_{1}, \ldots, x_{\nvar}) \in  B_{1}^{\nvar}(0)$ and let 
$y := \zeta(\tilde{x}) \in H$. By \eqref{E:Ei.on.sphere} and \eqref{E:directional.location.Riem.metric}, 
    \begin{multline}  \label{E:Riemannian.gij.on.sphere}
      g_{ij}(y):= E_{i,y} \cdot E_{j,y} = 
          \begin{cases} 
               1 + \frac{x_{i}^{2}}{x_{\nvar+1}^{2}}, &\text{ if } i = j, \\     
               \frac{x_{i} x_{j}}{x_{\nvar+1}^{2}}, &\text{ otherwise } ,
          \end{cases} \\
               i,j =1, \ldots, \nvar .
    \end{multline}
Let $G_{1}^{\nvar \times \nvar}$ be the matix $(g_{ij})$. 
Thus, 
    \begin{equation*}
      G_{1}^{\nvar \times \nvar} = I_{\nvar} + x_{\nvar+1}^{-2} \; \tilde{x}^{T} \tilde{x}, 
    \end{equation*}
where we view 
$x^{1 \times (\nvar+1)} = (\tilde{x}, x_{\nvar+1}) = (x_{1}, \ldots, x_{\nvar}, x_{\nvar+1})$ as a row vector. 

Let $z := 1/x_{\nvar+1}^{2} = 1/(1 - x_{1}^{2} - \cdots - x_{\nvar}^{2})$. By \eqref{E:Riemannian.gij.on.sphere}, we have 
    \begin{multline}  \label{E:partial.deriv.of.g.mu.nu}
      \frac{\partial}{\partial x_{\lambda}} g_{\mu\nu}(y):= 
          \begin{cases}  
               z^{2} x_{\nu} (2x_{\mu}^{2} + x_{\nvar+1}^{2}), 
                 &\text{ if } \lambda = \mu \neq \nu, \\
               z^{2} x_{\mu} (2x_{\nu}^{2} + x_{\nvar+1}^{2}) , 
                 &\text{ if } \lambda = \nu \neq \mu, \\
               2 z^{2} x_{\mu} (x_{\mu}^{2} + x_{\nvar+1}^{2}), 
                 &\text{ if } \lambda = \mu = \nu, \\
               0, &\text{ otherwise, } 
          \end{cases} \\
          \lambda, \mu, \nu = 1, \ldots, \nvar .
    \end{multline}

Suppose $y = \gamma_{1}(t)$ so 
$y = ( \cos t, 0, \ldots, 0, \sin t) = (x_{1}, \ldots, x_{\nvar}, x_{\nvar+1})$ and $z = \sin^{-2} t$. Note that in this case $x_{1}^{2} + x_{\nvar+1}^{2} = 1$. Then the preceding becomes  
    \begin{multline}  \label{E:partial.deriv.of.g.on.gamma1}
      \frac{\partial}{\partial x_{\lambda}} g_{\mu\nu}(y):= 
          \begin{cases}  
               z \cos t, &\text{ if } \lambda = \mu \neq \nu = 1, \\
               z \cos t, &\text{ if } \lambda = \nu \neq \mu = 1, \\ 
               2 z^{2} \cos t, &\text{ if } \lambda = \mu = \nu = 1, \\
               0, &\text{ otherwise, } 
          \end{cases} \\
          \lambda, \mu, \nu = 1, \ldots, \nvar .
    \end{multline}
Observe that the ``otherwise'' case in the preceding includes 
$\tfrac{\partial}{\partial x_{\lambda}} g_{\mu\nu}(y)$ 
with $\lambda = \mu = 1$, $\nu \neq 1$ and $\lambda = \nu = 1$, $\mu \neq 1$.

Write $G_{1}^{-1} = (g^{ij})$. Then it is easy to see using 
the fact that $|\tilde{x}|^{2} + x_{\nvar+1}^{2} = 1$,  that 
$G_{1}^{-1} = I_{\nvar} - \tilde{x}^{T} \tilde{x}$. Hence, by \eqref{E:gamma.1.curve}, 
at $x = \gamma_{1}(t)$ we have $\tilde{x} = (\cos t, 0, \ldots, 0) \in \RR^{\nvar}$ and
    \begin{equation}   \label{E:g.ks.on.sphere}
     g^{ks} \bigl[ \gamma_{1}(t) \bigr] =
          \begin{cases} 
               1 - \cos^{2} t = \sin^{2} t ,
                 &\text{ if } k = s = 1, \\     
               1, &\text{ if } 1 < k = s \leq \nvar , \\  
               0, &\text{  otherwise } .
          \end{cases}
      \end{equation}  

By \eqref{E:covar.deriv.of.gamma.ast}, we only have to calculate 
$\Gamma_{11}^{k}$. 
From \eqref{E:partial.deriv.of.g.on.gamma1}, we see that 
$\tfrac{\partial g_{s1}}{\partial x_{1}} -  \tfrac{\partial g_{11}}{\partial x_{s}} 
+  \tfrac{\partial g_{s1}}{\partial x_{1}} = 2 z^{2} \cos t$ if $s = 1$. And from the ``otherwise'' case in \eqref{E:partial.deriv.of.g.on.gamma1}, we see that 
$\tfrac{\partial g_{s1}}{\partial x_{1}} -  \tfrac{\partial g_{11}}{\partial x_{s}} 
+  \tfrac{\partial g_{s1}}{\partial x_{1}} = 0$ if $s \neq 1$. 
Thus, by \eqref{E:Christ.awful.symbol}, at $y = \gamma_{1}(t) = ( \cos t, 0, \ldots, 0, \sin t) = (x_{1}, \ldots, x_{\nvar}, x_{\nvar+1})$ we have, 
    \begin{equation*}
      \Gamma_{11}^{k}(y) = 
        \frac{1}{2} \sum_{s=1}^{\nvar} g^{ks} \left( \frac{\partial g_{s1}}{\partial x_{1}} 
              -  \frac{\partial g_{11}}{\partial x_{s}} 
                +  \frac{\partial g_{s1}}{\partial x_{1}} \right) 
                  = g^{k1} z^{2} \cos t .
    \end{equation*}

By \eqref{E:g.ks.on.sphere}, $g^{k1} \bigl[ \gamma_{1}(t) \bigr] = 0$ unless $k = 1$.
Moreover, by the preceding, \eqref{E:g.ks.on.sphere}, and the fact that 
$z = 1/\sin^{2} (t)$,  
    \begin{multline}  \label{E:Christ.awful.sym.at.gamma1}
      \text{When } y = \gamma_{1}(t), \text{ we have } \\
      \Gamma_{11}^{1}(y) = (\sin^{2} t) z^{2} \cos t 
        = \frac{\cos t}{\sin^{2} t} \text{ and }
        \Gamma_{11}^{k}(y) = 0 \text{ for } k > 1.
    \end{multline}
Therefore, by \eqref{E:covar.deriv.of.gamma.ast}, the $E_{1}$ component of $D Y/dt$ is
    \begin{equation*}
       - \cos t + \frac{\cos t}{\sin^{2} t} \sin^{2} t = 0 .
    \end{equation*}
All the other components are 0 as well. Thus, $\gamma_{1}$ is a geodesic.

Now, the formula \eqref{E:gamma.1.curve} for $\gamma_{1}$ makes sense for every $t \in \RR$. Hence, $\gamma_{1}$ is a geodesic as a function on $\RR$. We \emph{claim} that every geodesic on $S^{\nvar}$ when parametrized by arc length can be put in the form $\gamma_{1}(t)$, for $t$ in some interval, by an appropriate choice of coordinates. 

For let $\phi : [c,d] \to S^{\nvar}$ be a geodesic on $S^{\nvar}$ parametrized by arc length. Let $s_{0} \in (c,d)$ and let $x_{0} = \phi(s_{0})$. By Boothby \cite[Theorem (7.2), p.\ 340]{wmB75}, for $s_{1} \in (s_{0},d)$ sufficiently close to $s_{0}$ then 
$\phi \restriction_{I}$, the restriction to the interval $I := (s_{0}, s_{1})$, is the unique geodesic parametrized by arc length that joins $x_{0} := \phi(s_{0})$ to $x_{1} := \phi(s_{1})$ with minimum arc length among all such. 

By \eqref{E:directional.location.Riem.metric}, the Riemannian metric, $g$, on $S^{\nvar}$ is inherited from the inner product on $\RR^{\nvar+1}$. That inner product is invariant under orthogonal transformation. Hence, so is $g$. By an appropriate orthogonal change of coordinates, there is a pair $t_{0}, t_{1} \in [0, \pi]$ s.t.\ $t_{0} < t_{1}$ and
$x_{i} = \gamma_{1}(t_{i})$ ($i=0,1$). By making $s_{1} \in (s_{0},d)$ closer 
to $s_{0}$, and hence $x_{1}$ closer to $x_{0}$, if necessary, by Boothby \cite[Theorem (7.2), p.\ 340]{wmB75} again, we may assume 
$\gamma_{1} \restriction_{J}$, where $J := [t_{0}, t_{1}]$, is the unique geodesic parametrized by arc length that joins $x_{0}$ to $x_{1}$ with minimum arc length among all such. Since $\phi$ is parametrized by arc length, we may assume $s_{i} = t_{i}$. Thus, $\phi \restriction_{I} = \gamma_{1} \restriction_{I}$.

Let $t_{2} := \sup \bigl\{ t \in [t_{1}, d] : \phi(t) = \gamma_{1}(t) \bigr\}$. By continuity, $\phi(t_{2}) = \gamma_{1}(t_{2})$. Suppose $t_{2} < d$. Again by continuity, $\phi_{\ast}( \tfrac{d}{dt} \restriction_{t=t_{2}} ) = \gamma_{1, \ast}( \tfrac{d}{dt} \restriction_{t=t_{2}} )$. 
But by Boothby \cite[Theorem (5.8), p.\ 330]{wmB75}, 
there then exists $\epsilon \in (0, d-t_{2})$ s.t.\ 
$\phi(t) = \gamma_{1}(t)$ for $t \in [t_{2}, t_{2}+\epsilon)$. This contradicts the definition of $t_{2}$. Therefore, $t_{2} = d$, so $\phi(t) = \gamma_{1}(t)$ for $t \in [t_{0}, d]$ A similar argument shows that 
$\phi(t) = \gamma_{1}(t)$ for $t \in [c, t_{0}]$. Thus, $\phi = \gamma_{1} \restriction_{[c,d]}$. This proves the claim that every geodesic on $S^{\nvar}$ when parametrized by arc length can be put in the form $\gamma_{1}$ (restricted, perhaps, to a subinterval of $\RR$) by an appropriate choice of coordinates.

Now consider geodesics on the product $\D := (S^{\nvar})^{n}$. Let $\gamma_{i}$ ($i \in \NN_{n}$) be as in \eqref{E:geod.on.prod.of.spheres} and let 
 $\gamma := \gamma_{1} \times \cdots \times \gamma_{n}$.
By \eqref{E:directional.location.Riem.metric} the matrix of the overall Riemannian, 
$G^{\nvar n \times \nvar n}$, is block diagonal. Each block is of the form $G_{1}$ as in \eqref{E:Riemannian.gij.on.sphere}. 

Implement a system of double indices $(i,j)$ with $i \in \NN_{n}$ indicating block and $j = 1, \ldots, \nvar$ indicating coordinate within the $i^{th}$ $\nvar$-sphere factor. Thus, entries in $G$ are labeled $g_{(i,j),(k,\ell)}$ with $g_{(i,j),(k,\ell)} = 0$ if $i \neq k$ and 
$g_{(i,j),(i,\ell)} = g_{j\ell}$ as in \eqref{E:Riemannian.gij.on.sphere}. Then 
$g^{(i,k), (j,\ell)}$ and 
$\Gamma_{(j,\ell),(t,s)}^{(i,k)}$ have the obvious meanings. Any of these quantities whose sub- or superscripts have first components that do not match is 0. 
E.g., $g^{(i,j),(k,\ell)} = 0$ if $i \neq k$. $\Gamma_{(j,\ell),(t,s)}^{(i,k)} = 0$ if $j, t, i$ are not the same. 
It follows that every geodesic on $\D := (S^{\nvar})^{n}$ is of the form 
$\gamma_{1} \times \cdots \times \gamma_{n}$, where each $\gamma_{i}$ is a great circular arc (appropriately parametrized). 
  \end{proof}
  
    	\begin{proof}[Proof of proposition \ref{P:rate.of.decrease.of.Hm.St.aug.direct.mean}]
Algebraic geometry might be used to prove the proposition. We use elementary methods.

First, we prove \eqref{E:bunch.of.neg.y0s.is.closest.to.T}. Recall that any point $z \in \T$ is of the form $(w, \ldots, w)$, for some $w \in S^{\nvar}$. Let $x \in \Ss_{t}$ and write $x = (y_{1}, \ldots, y_{n})$. By \eqref{E:Sa=Sa'} and \eqref{E:S'a.D'a.defns}, 
$\sum_{i=1}^{n} y_{i} = -t y_{0}$. Thus, by the (Cauchy-)Schwarz inequaltiy,
    \begin{equation}  \label{E:|x-z|^{2}.=.2n+2tw.dot.y.0}
    	|x - z|^{2} = 2n - 2 \sum_{i=1}^{n} y_{i} \cdot w = 2n - 2 w \cdot \sum_{i=1}^{n} y_{i} 
	  = 2n + 2 t \, w \cdot y_{0} \geq 2(n-t).
    \end{equation}
This is minimized, in $w \in S^{\nvar}$, by $w = -y_{0}$, i.e., $z = x_{0}$ and \eqref{E:bunch.of.neg.y0s.is.closest.to.T} is proved. This proves part \ref{I:aug.mean.euc.dist} of the proposition. 

\emph{Next, we prove part \ref{I:aug.mean.geod.dist} of the proposition.} Let $w, y \in S^{\nvar}$. The geodesic distance on $S^{\nvar}$ between $w$ and $y$ is the smaller of the angles between them. (See \eqref{E:angle.between.vectors}.) By definition, that angle is $\leq \pi/2$. Then if $s := |w-y|/2$, then the great circular distance between $w$ and $y$ is clearly $2 \arcsin s$. By \eqref{E:rho.is.top.metric}, $\rho$ is the topological metric 
on $\D$ determined by the Riemannian metric on $\D$. (So $\rho$ plays the role of $\xi$ as defined in \eqref{E:xi.is.metric.on.D}.) Now, let $x = (y_{1}, \ldots, y_{n}) \in \D$ 
and $x' = (y'_{1}, \ldots, y'_{n}) \in \D$ with $y'_{i} \cdot y_{i} \geq 0$ for $i \in \NN_{n}$. Let $s_{i} = |y'_{i} - y_{i}|/2$. Then,  by \eqref{E:geod.on.prod.of.spheres}, 
    \begin{equation}  \label{E:geod.dist.on.product.of.spheres}
        \rho(x,x') = \sqrt{ 4 \sum_{i=1}^{n} \arcsin^{2} s_{i} } .
    \end{equation}
(See \eqref{E:geod.dist.on.D.2} in the $\nvar =1$ case.)
 
We can use \eqref{E:geod.dist.on.product.of.spheres}, 
with $x' = x_{0} = (-y_{0}, \ldots, -y_{0}) $ and 
$s_{i} := |y_{i} + y_{0}|/2 \leq 1$, to compute $\rho(x,x_{0})$. First, we study the behavior of 
    \begin{equation*}
        \sqrt{ \frac{4 \sum_{i=1}^{n} \arcsin^{2} s_{i} }{ 2(n-t) } }
    \end{equation*}
as $x \to x_{0}$. By \eqref{E:|x-z|^{2}.=.2n+2tw.dot.y.0} with $w = -y_{0}$, we have 
    \begin{equation} \label{E:sum.si.sqrd}
        \sum_{i=1}^{n} s_{i}^{2} = (1/4) \sum_{i=1}^{n} |y_{i} + y_{0}|^{2} = (n-t)/2 .
    \end{equation}

Let $f(s) := \arcsin^{2} (s)$. We have,
    \begin{multline*}
        f'(s) = \frac{2 \arcsin s}{\sqrt{1 - s^{2}}}, \quad
          f''(s) = \frac{2}{1-s^{2}} + \frac{ 2 s \arcsin s }{(1 - s^{2})^{3/2}} \geq 0, \\
            f'''(s) = \frac{6 s}{(1-s^{2})^{2}} + \frac{2 \arcsin s}{(1-s^{2})^{3/2}} 
              + \frac{6 s \arcsin s}{(1-s^{2})^{5/2}}, \qquad 0 \leq s < 1.
    \end{multline*}
Thus, $f$ is convex on $[0,1]$ and $f(0)= 0$, $f'(0) = 0$, $f''(0) = 2$, and $f'''(0) = 0$. Since $f$ is $C^{\infty}$ we know by corollary \ref{C:cont.diff.=.loc.Lip}, that $f'''$ is locally Lipschitz. 
Hence, if $0 \leq u \leq s < 1$, then we have $f'''(u) = O(u)$ as $u \downarrow 0$. Therefore, expanding $\arcsin^{2} s_{i}$ about 0 in a Taylor expansion with remainder (Apostol \cite[Theorem 5--14, p.\ 96]{tmA57.Apostol}) 
we get for $s$ sufficiently close to 0, 
$f(s) = s^{2} + O(s) s^{3} = s^{3} = s^{2} \bigl[ 1 + O(s^{2}) \bigr]$. We thus get from \eqref{E:geod.dist.on.product.of.spheres},
    \begin{equation*}
        \rho(x,x')^{2} = 4 \sum_{i=1}^{n} s_{i}^{2} \bigl[ 1 + O(s_{i}^{2}) \bigr] 
          = \sum_{i=1}^{n} |y'_{i} - y_{i}|^{2}  \Bigl[ 1 + O \bigl( |y'_{i} - y_{i}|^{2} \bigr) \Bigr] .
    \end{equation*}
Hence, 
    \begin{equation}  \label{E:rho.above.below.bounds}
      \text{if } |x-x'| \text{ is sufficiently small, then }
      \tfrac{1}{2} |x-x'| < \rho(x,x') < \tfrac{3}{2} |x-x'| .
    \end{equation}

In particular, by \eqref{E:sum.si.sqrd}, for $|x - x_{0}|$ sufficiently small,
    \begin{equation*}
        \frac{4 \sum_{i=1}^{n} \arcsin^{2} s_{i} }{ 2(n-t) }
          = \frac{4 \sum_{i=1}^{n} \bigl( s_{i}^{2} +  O(s_{i}) s_{i}^{3} \bigr) }
            { 4 \sum_{i=1}^{n} s_{i}^{2} }
              = 1 + \frac{ \sum_{i=1}^{n} O(s_{i}) s_{i}^{3} }{ \sum_{i=1}^{n} s_{i}^{2} }.
    \end{equation*}

For $|x - x_{0}|$ sufficiently small there exists $K < \infty$ s.t.\ $O(s_{i})/s_{i} < K$ ($i \in \NN_{n}$) independent of $x$. Hence, by \eqref{E:sum.si.sqrd},
    \begin{multline} \label{E:geod.dist.to.y0.sqrd/2(n-t).=}
        \frac{4 \sum_{i=1}^{n} \arcsin^{2} s_{i} }{ 2(n-t) }
            = 1 + \frac{ \sum_{i=1}^{n} \tfrac{O(s_{i})}{s_{i}} s_{i}^{4} }{ \sum_{i=1}^{n} s_{i}^{2} }
              \leq 1 + K \frac{ \sum_{i=1}^{n} s_{i}^{4} }{ \sum_{i=1}^{n} s_{i}^{2} } \\
                \leq 1 + K \frac{ \left( \sum_{i=1}^{n} s_{i}^{2} \right)^{2} }
                  { \sum_{i=1}^{n} s_{i}^{2} }
                  = 1 + K \sum_{i=1}^{n} s_{i}^{2}
                    = 1 + (K/2) (n-t).
    \end{multline}

Let $x \in \Ss_{t}$ and for some $w \in S^{\nvar}$ let $x' := (w, \ldots, w) \in \T \subset \RR^{\nvar+1}$ be the closest point of $\T$ to $x$ w.r.t.\ the geodesic metric $\rho$. Such a $w$ exists since $\Ss_{t}$ is compact, by \eqref{E:mu.a.Sa.is.compact}. Thus, 
$\rho(x, x') = \text{dist} (x, \T)$, where $\text{dist} (x, \T)$ is the geodesic, i.e., $\rho$, distance from $x$ to $\T$. First, observe that on the punctured interval 
$(-1,0) \cup (0, 1)$ the function $| \arcsin |$ has derivative $\geq 1$. 
Thus, $|s| \leq | \arcsin s |$. ($s \in [-1, 1]$). Because of this, \eqref{E:bunch.of.neg.y0s.is.closest.to.T}, \eqref{E:geod.dist.on.product.of.spheres},  \eqref{E:geod.dist.to.y0.sqrd/2(n-t).=}, and the definition $s_{i} := |y_{i} + y_{0}|/2$,
    \begin{align}  \label{E:rho(x,x').sqrd.bounds}
        2(n-t) &= |x + x_{0}|^{2}  
        \leq |x - x'|^{2} = \sum_{i=1}^{n} |y_{i} - w|^{2} \notag \\
          &= 4 \sum_{i=1}^{n} \left(\frac{|y_{i} - w|}{2} \right)^{2} \notag \\
          &\leq 4 \sum_{i=1}^{n} \arcsin^{2} \frac{|y_{i} - w|}{2}  = \rho^{2}(x,x') \notag \\
          &= \text{dist} (x, \T)^{2}  \\
          &\leq \rho^{2}(x, x_{0}) = 4 \sum_{i=1}^{n} \arcsin^{2} \frac{|y_{i} + y_{0}|}{2} \notag \\
            &= 2(n-t)\bigl(1 + O(1)(n-t) \bigr) \notag . 
    \end{align}
Expanding about $u = 0$ we see $\sqrt{1+u} = 1 + u/2 + O(1) u^{2}$ as $u \downarrow 0$. Therefore, from \eqref{E:rho(x,x').sqrd.bounds}
    \begin{equation}  \label{E:geod.dist.to.y0.=.2(n-t).+}
        \text{dist} (x, \T) = \sqrt{2(n-t)} \sqrt{ 1 + O(1)(n-t) } 
           = \sqrt{2(n-t)} \bigl( 1 + O(1)(n-t) \bigr) .
    \end{equation}
This proves part \ref{I:aug.mean.geod.dist}, \eqref{E:rho.t.approx.sqrt.2(n-t)}, of the proposition.

\emph{Now we prove part \ref{I:rho.t.is.locally.Lip}.} \emph{Suppose} the following is true:
     \begin{multline}  \label{E:rho.x.x'(x).leq.Kbeta.|s-t|.|s-t|.leq.eta}      
          \text{For every } \beta \in (0, 1/2) \text{ there exists } K_{\beta} < \infty 
            \text{ and } \eta_{\beta} > 0 \\
            \text{ s.t.\ if } s, t \in (n-1 + \beta, n -\beta),  \quad |s-t| < \eta_{\beta}, 
              \text{ and } x \in \Ss_{t} \\
                \text{ then there exists } 
                  x'(x) \in \Ss_{s} \text{ s.t.\ } \rho \bigl( x, x'(x) \bigr) \leq K_{\beta} |s - t| .
    \end{multline}

Using the yet unproven \eqref{E:rho.x.x'(x).leq.Kbeta.|s-t|.|s-t|.leq.eta}, we prove part \ref{I:rho.t.is.locally.Lip} of the proposition. Let 
    \begin{equation}  \label{E:n-1<t<n}
      t \in (n-1, n) .
    \end{equation}
Also let 
    \begin{equation}  \label{E:beta.in.(0,1/2)}
      \beta \in \Bigl( 0, \min \bigl\{ t - (n-1), n-t \bigr\} \Bigr) \subset (0,1/2) .
    \end{equation}
Thus, \eqref{E:beta.in.(0,1/2)} implies $t \in (n-1 + \beta, n - \beta)$ as in \eqref{E:rho.x.x'(x).leq.Kbeta.|s-t|.|s-t|.leq.eta}.

Let $\eta_{\beta}$ and $K_{\beta}$ be as in \eqref{E:rho.x.x'(x).leq.Kbeta.|s-t|.|s-t|.leq.eta}. Since $\T$ and $\Ss_{t}$ are compact, by \eqref{E:mu.a.Sa.is.compact}, 
there exists $x_{t} \in \Ss_{t}$ s.t.\ $dist(x_{t}, \T) = \rho_{t}$, defined in \eqref{E:rho.t.approx.sqrt.2(n-t)}. Since $t \in (n-1 + \beta, n - \beta)$, the interval 
$J := (t-\eta_{\beta}, t+\eta_{\beta}) \cap (n-1 + \beta, n -\beta)$ is non-empty.
Let $s \in J$. Then $s,t \in (n-1 + \beta, n -\beta)$ and $|s-t| < \eta_{\beta}$. Hence, by \eqref{E:rho.x.x'(x).leq.Kbeta.|s-t|.|s-t|.leq.eta} there exists 
$x' = x'(x_{t}) \in \Ss_{s}$ s.t.\ 
$\rho(x_{t}, x') < K_{\beta} |s - t|$. Therefore, $dist(x', \T) < \rho_{t} + K_{\beta} |s - t|$ and, hence (see \eqref{E:set.distances}), $\rho_{s} < \rho_{t} + K_{\beta} |s - t|$. Reversing the roles of $s$ and $t$, we find
$\rho_{t} < \rho_{s} + K_{\beta} |s - t|$. Part \ref{I:rho.t.is.locally.Lip} of the proposition is now proved, modulo \eqref{E:rho.x.x'(x).leq.Kbeta.|s-t|.|s-t|.leq.eta}.

We make preparations for proving \eqref{E:rho.x.x'(x).leq.Kbeta.|s-t|.|s-t|.leq.eta}. WLOG, \eqref{E:y0.starts.with.0s}, holds: $y_{0} = (0, \ldots, 0, 1)$. 
Let $\beta \in (0,1/2)$; $t \in [n-1 + \beta, n -\beta]$, the closed interval; 
and $x \in \Ss_{t}$. As usual, write $x = (y_{1}, \ldots, y_{n})$. By \eqref{E:bunch.of.neg.y0s.is.closest.to.T}, 
$|x - (-x_{0})| = \sqrt{2(n-t)} < \sqrt{2}$ so by \eqref{E:y.in.terms.of.w}we may write
$y_{i} = (w_{i}^{1 \times \nvar}, -\sqrt{1 - | w_{i} |^{2} }^{1 \times 1}) \in S^{\nvar}$ 
($i \in \NN_{n}$). So
    \begin{equation}  \label{E:x.y.w.z.c}
      x \in \Ss_{t}, \; y_{i} = \bigl( w_{i}, -\sqrt{1 - | w_{i} |^{2} } \bigr) 
        \in S^{\nvar} ,
    \end{equation}
Write $c_{i} := |w_{i}|^{2}$.

Let 
    \begin{equation*}
      m := n-1 .
    \end{equation*}
By \eqref{E:Sa=Sa'} and \eqref{E:S'a.D'a.defns}, 
since $y_{0} = (0, \ldots, 0, 1)$, we have 
    \begin{equation} \label{E:sum.z=-t;sum.w=0}
      - \sum_{i=1}^{n} \sqrt{1 - c_{i}} = z_{1} + \cdots + z_{n} = - t < -m 
        \text{ and } w_{1} + \cdots + w_{n} = 0 .
    \end{equation}
Now, $z_{i} \geq -1$ for all $i \in \NN_{n}$ so, 
$-m + z_{i} \leq z_{1} + \cdots + z_{n} = - t$. I.e., 
    \begin{equation*}
      -1 \leq z_{i} \leq m-t < 0, \text{ for all } i \in \NN_{n} ,
    \end{equation*}
by \eqref{E:n-1<t<n}. It follows from \eqref{E:beta.in.(0,1/2)} 
that $- \sqrt{1 - c_{i}} = z_{i} \leq m-t \leq - \beta $. Therefore, 
    \begin{equation}  \label{E:uppr.bnd.on.ci}
      c_{i} \leq 1 - (t-m)^{2} \leq 1 - \beta^{2},  \text{ for all } i \in \NN_{n}. 
    \end{equation}
Since $-z_{1} - \cdots - z_{n} = t$, by \eqref{E:sum.z=-t;sum.w=0}, and 
$t \leq n-\beta$, there exists at least one $i$ 
s.t.\ $\sqrt{1 - c_{i}} \leq (n-\beta)/n$. Thus, by \eqref{E:uppr.bnd.on.ci},
    \begin{multline}  \label{E:ci.min.max.beta}    
      \text{If } t \in [n-1 + \beta, \, n -\beta], \text{ then } 
        1 - (1 - \beta/n)^{2} \leq \max_{i} c_{i} \leq 1 - \beta^{2} . \\
          \text{ The inequalities are strict if } t \in (n-1 + \beta, \, n -\beta). 
    \end{multline}

Let $r \geq 0$. Then, by \eqref{E:sum.z=-t;sum.w=0}, $r w_{1} + \cdots + r w_{n} = 0$. Suppose $r^{2} c_{i} \leq 1$ for all $i$. Let
    \begin{equation}  \label{E:x'[r].defn}
      x' := x'[r] := x'(r, x) :=  (y_{1}', \ldots, y_{n}') \text{ with }
        y_{i}' := \bigl( r w_{i}, z_{i}' \bigr), \text{ where } z_{i}' = - \sqrt{1 - r^{2} c_{i}}.
    \end{equation} 
Then $x' \in \Ss_{s}$, where $s = \sum_{i=1}^{n} \sqrt{1 - r^{2} c_{i}}$. 

Write $u := r^{2}$, so $u c_{i} \leq 1$ for all $i$. Suppose
$s := \sum_{i=1}^{n} \sqrt{1 - u c_{i}} \in [n-1+\beta, n- \beta]$. Then, by \eqref{E:ci.min.max.beta} with $s$ in place of $t$ and $u c_{i}$ in place of $c_{i}$, 
$n - \beta \geq s = \sum_{i=1}^{n} \sqrt{1 - u c_{i}} \geq n \sqrt{1 - u (1 - \beta^{2})}$. Now, $n - \beta \geq n \sqrt{1 - u (1 - \beta^{2})}$ is true if and only if 
    \begin{equation}  \label{E:lwr.bound.on.u}
      r^{2} = u \geq \frac{ n^{2} - (n - \beta)^{2} }{ n^{2} (1 - \beta^{2}) } .
    \end{equation} 
The RHS, $\bigl[ n^{2} - (n - \beta)^{2} \bigr] / \bigl[ n^{2} (1 - \beta^{2}) \bigr]$, of the preceding is strictly positive. Now, $n \geq 3$, by \eqref{E:n>2,nvar>0}, and $\beta < 1/2$ (by \eqref{E:beta.in.(0,1/2)}). It follows from a simple argument that the RHS of \eqref{E:lwr.bound.on.u} is strictly less than 1.

For $\mbf{v} = (v_{1}, \ldots, v_{n}) \in [0,1]^{n}$, define 
    \begin{equation}  \label{E:a(v).defn}
      a(\mbf{v}) := \sum_{i=1}^{n} \sqrt{1 - v_{i}} . 
    \end{equation} 
Then, if $x \in \Ss_{t}$ and $\mbf{v} = (c_{1}, \ldots, c_{n}) \in [0,1]^{n}$ 
where $c_{i} := |w_{i}|^{2}$, by \eqref{E:sum.z=-t;sum.w=0}, we have 
    \begin{equation}  \label{E:sum.sqrt(1-vi).=.t}
       a(\mbf{v}) = t.
    \end{equation}

For $\beta \in (0, 1/2)$, let 
    \begin{equation}  \label{E:V.beta.defn}
      V_{\beta} := a^{-1} \bigl( [n-1 + \beta, \, n -\beta] \bigr) \subset (0,1)^{n} .
    \end{equation} 
Thus, $V_{\beta}  \subset [0,1]^{n}$ is compact. Just as in \eqref{E:ci.min.max.beta} , we have 
    \begin{equation}  \label{E:max.vi.over.Vbeta}
      \mbf{v} = (v_{1}, \ldots, v_{n}) \in V_{\beta} \; \text{ implies }
         \max_{i} v_{i} \leq 1 - \beta^{2} .
    \end{equation}

If $\mbf{w} \in \overline{B_{1}^{\nvar}(0)}$ let  
$\blds{\langle w \rangle}^{2} := \bigl( |w_{1}|^{2}, \ldots, |w_{n}|^{2} \bigr)$.
For $\mbf{u} = (u_{1}, \ldots, u_{n}) \in [0, 1]^{n} 
\setminus \bigl\{ (0, \ldots, 0) \bigr\}$ define 
    \begin{equation*}
         J(\mbf{u}) := \bigl[ 0, (\max_{i} u_{i})^{-1/2} \bigr]
           \text{ and } J[\mbf{w}] := J \bigl( \blds{\langle w \rangle}^{2} \bigr) .
    \end{equation*}
\eqref{E:max.vi.over.Vbeta} implies that $[0,1/\sqrt{1-\beta^{2}} ] \subset J(\mbf{v})$ whenever $\mbf{v}) \in V_{\beta}$. Define
    \begin{multline}   \label{E:f(lambda,v).defn}
      f(\lambda) := f(\lambda, \mbf{v}) =  a(\lambda^{2} \mbf{v}) 
        = \sum_{i=1}^{n} \sqrt{1 - \lambda^{2} v_{i}} \leq n , \\
        \mbf{v} = (v_{1}, \ldots, v_{n}) \in [0, 1]^{n} \setminus \bigl\{ (0, \ldots, 0) \bigr\} , \;
          \lambda \in J(\mbf{v}) .
    \end{multline} 
$\max_{i} v_{i} \leq 1 - \beta^{2}$. 
Since $v \mapsto v/\sqrt{1 - v}$ is increasing in $v \in [0,1]$, we therefore have
    \begin{multline}  \label{E:f'(lambda)}
      \text{If } \mbf{v} \in V_{\beta} \text{ then }
        f'(\lambda) = \frac{\partial}{\partial \lambda} f ( \lambda, \mbf{v} )
          = - \sum_{i=1}^{n} \frac{\lambda v_{i}}{\sqrt{1 - \lambda^{2} v_{i}}} 
            \geq - n \frac{\lambda (1-\beta^{2})}{\sqrt{1 - \lambda^{2} (1-\beta^{2})}} , \\
              \text{ providing } 0 \leq \lambda < 1/\sqrt{1-\beta^{2}} .
    \end{multline}

Thus, given $\mbf{v}$, if $\mbf{v} = (v_{1}, \ldots, v_{n}) \in V_{\beta}$ then at least one $v_{i} \neq 0$. Hence, $f'(\lambda) < 0$ on its domain, providing $\lambda > 0$.
It follows that 
    \begin{equation}  \label{E:f.invrtbl}
      \lambda \mapsto f(\lambda, \mbf{v}) 
        \text{ is invertible on } \Bigl( f \bigl( 1/\sqrt{1-\beta^{2}} \bigr), n \Bigr].
    \end{equation} 

Suppose $t \in [n-1 + \beta, n -\beta]$. Then there exists $\mbf{v} \in V_{\beta}$ s.t.\ \eqref{E:sum.sqrt(1-vi).=.t} holds. Then
    \begin{equation}  \label{E:bounds.on.f}
      f \bigl[1/\sqrt{1 - \beta^{2}}, \mbf{v} \bigr] < f(1, \mbf{v} ) = a(\mbf{v}) = t
        \leq n - \beta < n = f(0).
    \end{equation} 
For $\mbf{v} \in V_{\beta}$, let 
$\eta(\mbf{v}) := \min \Bigl\{ a(\mbf{v}) - f \bigl[1/\sqrt{1 - \beta^{2}}, \mbf{v} \bigr] , 
n-a(\mbf{v}) \Bigr\}$, so $\eta(\mbf{v}) > 0$. If $s \in \RR$ and 
$|s - a(\mbf{v})| \leq \eta(\mbf{v})$ 
then $f \bigl[1/\sqrt{1 - \beta^{2}}, \mbf{v} \bigr] \leq s \leq n = f(0)$ so there exists 
$\lambda \in \bigl[ 0, 1/\sqrt{1 - \beta^{2}} \bigr]$ s.t.\ $f ( \lambda, \mbf{v} ) = s$. 

Since $\mbf{v} \mapsto \eta(\mbf{v})$ is strictly positive in $\mbf{v} \in V_{\beta}$, by \eqref{E:bounds.on.f}, and continuous on the compact set $V_{\beta}$, it achieves a finite minimum value, $\eta = \eta_{\beta} > 0$. Thus, 
    \begin{equation}   \label{E:when.f(lambda,v)=s}
      \text{ If } |s - a(\mbf{v})| < \eta_{\beta} \text{ there exists } 
        \lambda \in \bigl[ 0, 1/\sqrt{1 - \beta^{2}} \bigr] 
          \text{ s.t.\ } f ( \lambda, \mbf{v} ) = s .
    \end{equation}   
Moreover, since $f' < 0$ on its domain, it has an inverse. Therefore, 
    \begin{equation} \label{E:f.invrs.cont.}
      \text{As } s \to a(\mbf{v}) \text{ we have } u = f^{-1}(s) \to 1.
    \end{equation}

Finally we can prove \eqref{E:rho.x.x'(x).leq.Kbeta.|s-t|.|s-t|.leq.eta}. 
Let $\beta \in (0,1/2)$ and 
    \begin{equation}  \label{E:s,t.in.beta.n.intrvl}
      s, t \in (n-1+\beta, n- \beta) \text{ with } |s - t| \leq \eta .
    \end{equation}
Let $x \in \Ss_{t}$. Let $w_{i}^{1 \times \nvar}$ be as in \eqref{E:x.y.w.z.c}, write $c_{i} := |w_{i}|^{2}$ ($i \in \NN_{n}$), and let 
    \begin{equation*}
      \mbf{v} = (c_{1}, \ldots, c_{n}) \text{ so } a(\mbf{v}) = t.
    \end{equation*} 
Then, by \eqref{E:when.f(lambda,v)=s}, 
there exists $r \in \bigl[ 0, 1/\sqrt{1 - \beta^{2}} \bigr]$ 
s.t.\ $f(r, \mbf{v}) = s$. Let $x' := x'(x) := x'[r] := (y_{1}', \ldots, y_{n}')$, where
$y_{i}' = \bigl( r w_{i}^{1 \times \nvar}, - \sqrt{1 - r^{2} |w_{i}|^{2}} \bigr) \in S^{\nvar}$ 
($i \in \NN_{n}$). Thus,
    \begin{equation*}
      x' = x'(x) = x'[r] \in \Ss_{s} .
    \end{equation*}
(See \eqref{E:x'[r].defn}.) Write $u := r^{2}$. We have
    \begin{align}  \label{E:|x'-x|.bound}
      \bigl| x'[r] - x \bigr| &\leq \sum_{i=1}^{n} |y_{i}' - y_{i} | 
        \leq \sum_{i=1}^{n} |r w_{i} - w_{i} | 
          + \sum_{i=1}^{n} \bigl| \sqrt{1 - r^{2} |w_{i}|^{2}} - \sqrt{1 - |w_{i}|^{2}} \bigr| 
              \notag \\
            &= \sum_{i=1}^{n} |r - 1| \sqrt{c_{i}}
              + \sum_{i=1}^{n} \bigl| \sqrt{1 - r^{2} c_{i}} - \sqrt{1 - c_{i}} \bigr| \\
            &\leq n |r - 1|
              + \sum_{i=1}^{n} \bigl| \sqrt{1 - r^{2} c_{i}} - \sqrt{1 - c_{i}} \bigr| , \notag 
    \end{align}
since $c_{i} := |w_{i}|^{2}$. It follows from the preceding and \eqref{E:f.invrs.cont.} that, making $\eta$ smaller if necessary we may assume that \eqref{E:rho.above.below.bounds} holds for $x, x' = x'[r]$ 
if $|s-t| < \eta$.

Let $i \in \NN_{n}$. \emph{Claim:} 
    \begin{equation}  \label{E:r.ci.ineq}
      \bigl| \sqrt{1 - r^{2} c_{i}} - \sqrt{1 - c_{i}} \bigr| 
         \leq \frac{ \sqrt{1 - \beta^{2}}}{1 - \sqrt{1-\beta^{2}}} |1-r|,
             \quad \text{ if } r \in \bigl[ 0, 1/\sqrt{1 - \beta^{2}}\bigr].
    \end{equation}
We will use the following. By \eqref{E:ci.min.max.beta} (or \eqref{E:uppr.bnd.on.ci}), 
$0 \leq c_{i} \leq 1 - \beta^{2} < 1$. The function $u \mapsto u/(1-u)$ is increasing 
in $u \in [0, 1)$. Therefore, 
    \begin{equation}  \label{E:r.ci.ineq+}
      \frac{c_{i}}{\sqrt{1 - c_{i}}} = \frac{c_{i} \sqrt{1 - c_{i}}}{1 - c_{i}}
        \leq \frac{\sqrt{c_{i}} \sqrt{1-c_{i}}}{1 - \sqrt{c_{i}}}  
          \leq \frac{\sqrt{c_{i}} }{1 - \sqrt{c_{i}}} 
            \leq \frac{ \sqrt{1 - \beta^{2}}}{1 - \sqrt{1-\beta^{2}}} . 
    \end{equation}

To prove \eqref{E:r.ci.ineq}, we first, consider the case $r \leq 1$.
With $r \leq 1$ we have
$\bigl| \sqrt{1 - r^{2} c_{i}} - \sqrt{1 - c_{i}} \bigr| = \sqrt{1 - r^{2} c_{i}} - \sqrt{1 - c_{i}}$.
Hence, with $r \leq 1$, by \eqref{E:r.ci.ineq+}, it suffices to prove
    \begin{equation*}
       \sqrt{1 - r^{2} c_{i}} \leq \frac{c_{i}}{\sqrt{1 - c_{i}}} (1 - r) + \sqrt{1 - c_{i}} .
    \end{equation*}
This is true if and only if ($\Leftrightarrow$), 
    \begin{align*} 
      1 - r^{2} c_{i} - c_{i} + r^{2} c_{i}^{2} = (1 - c_{i})(1 - r^{2} c_{i}) 
        &\leq  \bigl[ c_{i} (1 - r) + (1 - c_{i}) \bigr]^{2} = r^{2} c_{i}^{2} - 2 r  c_{i} + 1 \\
      \Leftrightarrow - r^{2} c_{i} - c_{i} &\leq - 2 r  c_{i} \\
      \Leftrightarrow - r^{2} - 1 &\leq - 2 r \\
      \Leftrightarrow 0 &\leq r^{2} - 2 r + 1 = (r - 1)^{2} .
    \end{align*}
This proves \eqref{E:r.ci.ineq} in the case $r \leq 1$. 

Now suppose 
$r \in \bigl[ 1, 1/\sqrt{1 - \beta^{2}}\bigr] \subset \bigl[ 1, 1/\sqrt{c_{i}} \bigr]$, by \eqref{E:ci.min.max.beta}. 
Let 
    \begin{equation}   \label{E:big.C.in.terms.of.little.c}
      C := C_{\beta} := \frac{ \sqrt{1-\beta^{2}}}{ 1 - \sqrt{1-\beta^{2}}}
        \geq \frac{\sqrt{c_{i}} \sqrt{1-c_{i}}}{1 - \sqrt{c_{i}}}, 
    \end{equation}
by \eqref{E:r.ci.ineq+}. With $r \in \bigl[ 1, 1/\sqrt{c_{i}} \bigr]$ we have 
$\bigl| \sqrt{1 - r^{2} c_{i}} - \sqrt{1 - c_{i}} \bigr| = \sqrt{1 - c_{i}} - \sqrt{1 - r^{2} c_{i}}$ 
and $|1-r| = r-1$. Thus, \eqref{E:r.ci.ineq} in the case $r \in (1, 1/\sqrt{c_{i}}]$, is true if and only if 
    \begin{equation*}
      \sqrt{1 - c_{i}} \leq C (r-1) + \sqrt{1 - r^{2} c_{i}} .
    \end{equation*}
Define
    \begin{equation*}
      g(r) := g(r, c_{i}) := C (r-1) + \sqrt{1 - r^{2} c_{i}} - \sqrt{1 - c_{i}},
        \qquad r \in [1, 1/\sqrt{c_{i}}] .
    \end{equation*}
It suffices to show $g(r) \geq 0$ for $r \in [1, 1/\sqrt{c_{i}}]$. We determine the minimum value of $g$ on that interval. It is routine to show that $g$, as a function on $[1, 1/\sqrt{c_{i}})$, has non-positive second derivative. Thus, $g$ achieves its minimum in $[1, 1/\sqrt{c_{i}}]$ at one or more endpoints. Now, $g(1) = 0$ and it is easily seen that $g(1/\sqrt{c_{i}}) \geq 0$ if and only if $C \geq \sqrt{c_{i}} \sqrt{1 - c_{i}}/(1 - \sqrt{c_{i}})$. But by \eqref{E:big.C.in.terms.of.little.c} asserts just that.
Since $\bigl[ 1, 1/\sqrt{1 - \beta^{2}}\bigr] \subset \bigl[ 1, 1/\sqrt{c_{i}} \bigr]$, we have that $g$ is non-negative throughout the interval $\bigl[ 1, 1/\sqrt{1 - \beta^{2}}\bigr]$. This concludes the proof of claim \eqref{E:r.ci.ineq} in the case $r \in \bigl[ 1, 1/\sqrt{1 - \beta^{2}}\bigr]$, and, hence, in general. 

Recall the definition \eqref{E:x'[r].defn}. Just after \eqref{E:|x'-x|.bound}
we specified that $0 < \eta = \eta_{\beta}$ be sufficiently small that, if $|s-t| < \eta$ then \eqref{E:rho.above.below.bounds} holds. Therefore, by \eqref{E:rho.above.below.bounds}, \eqref{E:|x'-x|.bound}, and \eqref{E:r.ci.ineq}, there exists $D < \infty$ depending only on $n$ and $\beta$ (actually, only on $\beta$) s.t.\
    \begin{multline} \label{E:rho.dist.1-r.bound}
      \rho \bigl( x'[r], x \bigr) \leq 3 \bigl|  x'[r] - x \bigr|/2 \leq D |1-r |,  \\
        \quad s,t \in (n-1 + \beta, n -\beta), \; x \in \Ss_{t}, \; f(r, \mbf{v}) = s, \; 
          |s - t| < \eta .
    \end{multline}

Next, we bound $|1-r|$ by a multiple of $|s-t|$. Recall the definition, \eqref{E:f(lambda,v).defn}, of $f$. We have 
    \begin{equation*}
      f(r) = f( r, \mbf{v} ) 
        = \sum_{i=1}^{n} \sqrt{1 - r^{2} c_{i}} ,
           \quad r \in \bigl[ 0, 1/\sqrt{1-\beta^{2}} \bigr] , 
             \mbf{v} = (c_{1}, \ldots, c_{n}) \in \bigl[ 0, 1 - \beta^{2} \bigr]^{n} . 
    \end{equation*}
$f$ is invertible by \eqref{E:f.invrtbl}. Thus, $r = f^{-1}(s)$ and 
$1 = f^{-1}(t)$. By the Mean Value Theorem, there exists $\tilde{s}$ lying between $s$ and $t$ s.t.\ 
    \begin{equation}  \label{E:r.s-t.MVT.eqn}
      r = f^{-1}(s) = f^{-1}(t) + (f^{-1})'(\tilde{s})(s-t) 
        = 1 + (f^{-1})'(\tilde{s}) (s-t) .
    \end{equation}
Thus, to bound $|1-r|$ by a multiple of $|s-t|$, it suffices to bound 
$\bigl| (f^{-1})' \bigr|$ above. 
But $(f^{-1})' = 1/f'$. Thus, it suffices to bound $|f'|$ below, preferably by something depending only on $n$ and $\beta$. 

Now, by \eqref{E:ci.min.max.beta}, 
since $t \in [n-1 + \beta, n -\beta]$, there exists $j \in \NN_{n}$ s.t.\ 
$c_{j} \geq 1 - (1 - \beta/n)^{2}$. Thus, by \eqref{E:f'(lambda)} and \eqref{E:lwr.bound.on.u}, 
    \begin{equation*}
      \left| \frac{\partial}{\partial u} f(u, \mbf{v}) \right|
        = \sum_{i=1}^{n} \frac{u c_{i}}{\sqrt{1 - u c_{i}}} \geq u \sum_{i=1}^{n} c_{i}
          \geq \frac{ \sqrt{ n^{2} - (n - \beta)^{2} } }{ n \sqrt{1 - \beta^{2}} } 
              \bigl[ 1 - (1 - \beta/n)^{2} \bigr] .
    \end{equation*}
Thus, by \eqref{E:r.s-t.MVT.eqn}, 
    \begin{equation}  \label{E:1-r.s-t.bound}
     |1-r| \leq \frac{n \sqrt{1 - \beta^{2}}}
         { \sqrt{ n^{2} - (n - \beta)^{2} } \, \bigl[ 1 - (1 - \beta/n)^{2} \bigr]}  \, |s-t| .
    \end{equation}

Let $D < \infty$ be as in \eqref{E:rho.dist.1-r.bound}. 
Then with 
    \begin{equation*}
      K_{\beta} := \frac{D \, n \sqrt{1 - \beta^{2}}}
             { \sqrt{ n^{2} - (n - \beta)^{2} } \, \bigl[ 1 - (1 - \beta/n)^{2} \bigr]}
    \end{equation*}  
\eqref{E:rho.x.x'(x).leq.Kbeta.|s-t|.|s-t|.leq.eta}  
follows from \eqref{E:rho.dist.1-r.bound}, and \eqref{E:1-r.s-t.bound}. Notice that as $\beta \downarrow 0$, $K_{\beta} \uparrow \infty$. Thus, we may only claim a local Lipschitz property. As observed above, part \ref{I:rho.t.is.locally.Lip} of the proposition follows.

\emph{Next, we prove part \ref{I:aug.mean.essential.dist.approx} of the proposition.} 
Let $t \in (n-1,n)$ be arbitrary but fixed. In particular, $t$ is not an integer. We already know, from \eqref{E:mu.a.Sa.is.compact}, \eqref{E:Hm.S.less.U.0}, and \eqref{E:dim.Sa.tilde} that 
    \begin{equation}  \label{E:St.is.smooth.manf}
      \Ss_{t} \text{ is a compact smooth } (n \nvar - \nvar - 1 )\text{-manifold.} 
    \end{equation} 

Suppose $dist_{n\nvar-\nvar-1}(\Ss_{t}, \T) > dist(\Ss_{t}, \T)$ and  
let $r \in \bigl( dist(\Ss_{t}, \T) , \, dist_{n\nvar-\nvar-1}(\Ss_{t}, \T) \bigr)$. (See \eqref{E:set.distances}.)
Let $\mcl{A} := \bigl\{ y \in \Ss_{t} : dist(y, \T) < r \bigr\}$, so $\mcl{A} \neq \varnothing$. By example \ref{Ex:dist.is.Lip}, $\mcl{A}$ is open in $\Ss_{t}$. Let $x \in \mcl{A}$. By lemma \ref{L:coord.maps.are.Lip} part \ref{I:phi.psi.Lip} there exists a coordinate neighborhood, 
$\clU \subset \mcl{A}$, of $x$ in $\Ss_{t}$ with Lipschitz coordinate map 
$\varphi : \clU \to \RR^{n\nvar-\nvar-1}$. Therefore, by \eqref{E:when.Haus.meas.=.Leb.meas} and \eqref{E:Lip.magnification.of.Hm}, 
$\Hm^{n \nvar - \nvar - 1}(\mcl{A}) > 0$. 
But this contradicts $r < dist_{n\nvar-\nvar-1}(\Ss_{t}, \T)$. 
Hence, $R_{t} = dist_{n\nvar-\nvar-1}(\Ss_{t}, \T) = dist(\Ss_{t}, \T) = \rho_{t}$. Part \ref{I:aug.mean.essential.dist.approx} of the proposition now follows from parts \ref{I:aug.mean.geod.dist} and \ref{I:rho.t.is.locally.Lip}. 

\emph{Now we work on part \ref{I:aug.mean.R.vol.bounded}.} 
Supppose \eqref{E:a.tween.n-1.n} holds: $a \in (n-1, n)$. By \eqref{E:St.is.smooth.manf}, $\Ss_{a}$ is a compact smooth 
$(n \nvar - \nvar - 1 )$-manifold. By Boothby \cite[Theorem (5.7) and Definitions (5.3) and (5.1), pp.\ 79, 77, 75]{wmB75}, $\Ss_{a}$ is a finite union of smooth images of closed $(n \nvar - \nvar - 1)$-dimensional cubes. Therefore, by corollary \ref{C:cont.diff.=.loc.Lip} and \eqref{E:Lip.magnification.of.Hm}, we have
	\begin{equation} \label{E:Sa.has.finite.vol}
		\Hm^{n \nvar - \nvar - 1}( \Ss_{a}) < \infty.
	\end{equation}

Let $x_{0} := (y_{0}, \ldots, y_{0}) \in \T$, where $y_{0} = (0, \ldots, 0, 1)$ as in \eqref{E:y0.starts.with.0s}. We define a map on a neighborhood of $-x_{0}$ in $\D$ which maps $\Ss_{a}$ onto $\Ss_{t}$ for any $t \in [a, n)$ and we identify a Lipschitz constant for the the map. Let $x = (y_{1}, \ldots, y_{n}) \in \D$. Now, regardless whether or not $x$ is a singularity, by \eqref{E:y.in.terms.of.w}, if it is sufficiently close to $-x_{0}$, then there exist let $w_{i} \in \overline{B_{1}^{\nvar}(0)}$ ($i \in \NN_{n}$) s.t.\ 
	\begin{equation}  \label{E:x.w.in.B}
		x = \bigl( \ldots, (w_{i}, -\sqrt{1 - | w_{i} |^{2} }), 
		  \ldots \bigr)^{1 \times n(\nvar +1)}.
	\end{equation} 
Conversely, for any $w_{1}, \ldots, w_{n} \in \overline{B_{1}^{\nvar}(0)}$ the point $x$ defined by \eqref{E:x.w.in.B} is in $\D$. Thus, \eqref{E:x.w.in.B} defines a coordinate neighborhood of $-x_{0}$.

Since \eqref{E:y0.starts.with.0s} and \eqref{E:a.tween.n-1.n} hold, by \eqref{E:S'a.D'a.defns} and \eqref{E:Sa=Sa'},
	\begin{equation} \label{E:St.wi.condn}
		x \in \Ss_{a} \text{ if and only if } \sum_{i=1}^{n} w_{i} = 0 \in \RR^{\nvar}; 
		        \text{ and } \sum_{i=1}^{n} \sqrt{1 - | w_{i} |^{2} }) = a  \in (n-1, n).
	\end{equation}

We define a superset of $\Ss_{a}$:
    \begin{equation}   \label{E:S.tilde.defn}
      \tilde{\Ss}_{a} := \bigl\{ x \in \D : 
        x \text{ has the form \eqref{E:x.w.in.B} and } 
          \sqrt{1 - | w_{i} |^{2} }) = a \bigr\} .
    \end{equation}

Write $u_{i} := | w_{i} |^{2}$, $i \in \NN_{n}$. 
\emph{Some} $i \in \NN_{n}$, must satisfy $\sqrt{1-u_{i}} \leq a/n$. 
Thus, as in \eqref{E:ci.min.max.beta}, 
    \begin{equation}  \label{E:ui.min.max.t}
        \text{If } x \in \tilde{\Ss}_{a} \text{ then }
          1 - \left( \frac{a}{n} \right)^{2} \leq \max_{i} u_{i} \leq 1 - \bigl[ a - (n-1) \bigr]^{2} .
    \end{equation}

For $x \in \D$ as in \eqref{E:x.w.in.B} 
and $\lambda \in \bigl[ -1/\max_{i} |w_{i}|, \, 1/\max_{i} |w_{i}| \bigr]$ , define
	\begin{equation}   \label{E:xi.x.lambda.defn}
	    \xi(x, \lambda) := \Bigl( \ldots, 
		\bigl( \lambda w_{i}, - \sqrt{1 - \lambda^{2} | w_{i} |^{2} } \bigr), 
		     \ldots \Bigr) \in (S^{\nvar})^{n}.
	\end{equation}
So $\xi(x, \lambda) \in \D$, $\xi(x, 1) = x$, 
and $\xi(x, 0) = -(y_{0}, \ldots, y_{0}) = - x_{0}$. Recall the definition, \eqref{E:f(lambda,v).defn}, of $f$. If $x \in \D$ satisfies \eqref{E:y.in.terms.of.w}, we have 
$\xi(x, \lambda) \in \tilde{\Ss}_{f(\lambda, \blds{\langle w \rangle}^{2})}$, where 
$\blds{\langle w \rangle}^{2} := \bigl( |w_{1}|^{2}, \ldots, |w_{n}|^{2} \bigr)$, 
for $\lambda \in J[\mbf{w}]$.

Conversely, we \emph{claim:} 
    \begin{equation}  \label{E:xi(x)=x'.for.some.x.lambda}
     \text{For every } t \in [a,n) \text{ and } x' \in \tilde{\Ss}_{t} 
       \text{ there exists } x \in \tilde{\Ss}_{a} \text{ and } \lambda \in J_{a} 
         \text{ s.t.\  } \xi(x, \lambda) = x' .
    \end{equation}
If $t = a$ take $\lambda = 1$. Let $t \in (a, n)$ and let $x' \in \tilde{\Ss}_{t}$. Write 
    \begin{equation}  \label{E:x'.and.w'}
      x' = \Bigl( \ldots, \bigl(w'_{i}, -\sqrt{1 - | w'_{i} |^{2} } \bigr), 
        \ldots \Bigr)^{1 \times n(\nvar +1)},
          \qquad w'_{1}, \ldots, w'_{n} \in \overline{B_{1}^{\nvar}(0)}.
    \end{equation} 
(See \eqref{E:Euc.ball.defn}.) It suffices to show that there exists $\lambda \in [ 0, 1)$  
and $w_{1}, \ldots, w_{n} \in \overline{B_{1}^{\nvar}(0)}$ s.t.\
	\begin{equation}
		\lambda w_{i} = w'_{i} \; \;  (i \in \NN_{n}) \text{ and }
	          \sum_{i=1}^{n} \sqrt{1 - |w_{i}|^{2} } = a.
	\end{equation}
Let $\lambda_{t} := \max_{i \in \NN_{n}} |w'_{i}|\in [0, 1]$. Then, by \eqref{E:w.bdd.away.from.ball.bndry} and \eqref{E:minmax.tilde.w}, with $w'$'s instead of $w$'s, 
    \begin{equation*}
      \sqrt{1 - t^{2}/n^{2}} \leq \lambda_{t} \leq \min \bigl\{\sqrt{2(n-t)} , 1 \bigr\} ,
    \end{equation*}
so $\lambda_{t} \to 0$ as $t \uparrow n$. 

Let 
    \begin{equation*}
      g(\lambda) := \sum_{i=1}^{n} \sqrt{1 - \lambda^{-2} |w'_{i}|^{2} } 
        = f \Bigl[ 1/\lambda ,  \blds{\langle w' \rangle}^{2} \bigr] ,
          \qquad 1/\lambda \in J[\mbf{w'}] .
    \end{equation*}
(See \eqref{E:f(lambda,v).defn}.) Thus, $g(\lambda) \in [0, n)$ 
for $1/\lambda \in J[\mbf{w'}]$ and $g(1) = t \geq a$. But for at least one $i$ we have 
$\lambda_{t}^{-2} |w'_{i}|^{2} = 1$. 
Therefore, $g(\lambda_{t})$ is the sum of $n-1$ numbers all between 0 and 1. 
Thus, since $a \in (n-1,n)$, 
    \begin{equation*}
      g(\lambda_{t}) \leq n-1 < a < t = g(1) .
    \end{equation*}
By \eqref{E:f'(lambda)}, $(\partial /\partial \lambda) f(\lambda, \mbf{v}) < 0$. Therefore, 
$g'(\lambda) > 0$ for $\lambda \geq \lambda_{t}$. It follows that there is 
a unique $\lambda(x') \geq \lambda_{t}$ s.t.\ $g(\lambda(x')) = a$. Recall the definition \eqref{E:S.tilde.defn} of $\tilde{\Ss}_{a}$. Then 
    \begin{multline}  \label{E:x(x').defn}
        \text{if }x(x') := \Bigl( \ldots, \lambda(x')^{-1} w'_{i}, 
          - \sqrt{1 - \lambda(x')^{-2} | w'_{i} |^{2} }, \ldots \Bigr) \in \tilde{\Ss}_{a} \\
            \text{ then } \xi \bigl[ x(x'), \lambda(x') \bigr] = x'.
    \end{multline} 
This completes the proof of the claim \eqref{E:xi(x)=x'.for.some.x.lambda} that there exists $x \in \tilde{\Ss}_{a}$ and $\lambda \in [0, 1)$ s.t.\ $\xi(x, \lambda) = x'$.

Let $\beta \in (0, 1/2)$. Recall \eqref{E:a(v).defn} and \eqref{E:V.beta.defn}. Suppose 
    \begin{equation} \label{E:u.a(u).beta.rule}
      \mbf{u} \in (0,1)^{n} \in V_{\beta}. \text{ I.e., }
        a(\mbf{u}) \in (n-1 + \beta, n - \beta) \neq \varnothing.
    \end{equation}
Eventually, we will fix $a = a(\mbf{u}) \in (n-1,n)$ and take 
$\beta \in \Bigl( 0, \min \bigl\{ a - (n-1), n-a \bigr\} \Bigr) \subset (0,1/2)$ but for now we grant $\mbf{u}$ more freedom.

Recall the definition, \eqref{E:f(lambda,v).defn}, of $f$. By \eqref{E:f'(lambda)},  
$f$ is differentiable and
    \begin{equation}  \label{E:partial.tau.lambda}
        \frac{\partial}{\partial \lambda} f(\lambda, \mbf{u})
          =  - \sum_{i=1}^{n} \frac{\lambda u_{i}}{\sqrt{1 - \lambda^{2} u_{i}}} 
            \leq -\lambda^{-1} \sum_{i=1}^{n} \lambda^{2} u_{i} < 0 ,
              \qquad \lambda \in (0,1) .
    \end{equation}

Let 
    \begin{equation} \label{E:t.in(a(u),n)}
      t \in \bigl( a(\mbf{u}), n \bigr) .
    \end{equation}  
Since $f(1,\mbf{u}) = a(\mbf{u})$, $f(0,\mbf{u}) = n$, 
and $f(\lambda,\mbf{u})$ is strictly decreasing in $\lambda \in [0, 1)$, 
we have that there exists a unique 
    \begin{equation}  \label{E:lambda.t.u.solves.eqn}
      \lambda(t,\mbf{u}) \in (0,1) \text{ and }
      f \bigl( \lambda(t,\mbf{u}),\mbf{u} \bigr) = t  \text{ so, by \eqref{E:S.tilde.defn}, }
        \xi \bigl( x, \lambda(t,\mbf{u}) \bigr) \in \tilde{\Ss}_{t} .
    \end{equation}
Moreover, by \eqref{E:minmax.tilde.w} for some $i$ we have $\lambda(t,\mbf{u})^{2} u_{i}  \geq 1 - t^2/n^{2}$. Therefore, by \eqref{E:partial.tau.lambda} 
\eqref{E:minmax.tilde.w}, we have 
    \begin{equation} \label{E:partial.tau.lambda(t,u).bound}
        \frac{\partial}{\partial \vartheta} f(\vartheta, \mbf{u}) 
          \restriction_{\vartheta = \lambda(t,\mbf{u}) } 
            \leq  -\frac{ 1  - t^{2}/n^{2} }{\lambda(t,\mbf{u}) } < 0 .
    \end{equation}

Let $x$ be as in \eqref{E:y.in.terms.of.w} and let 
$\blds{\langle w \rangle}^{2} = \bigl( |w_{1}|^{2}, \ldots, |w_{n}|^{2} \bigr)$. We analyze the map 
$(\mbf{w},t) \mapsto \xi \bigl[ x, \lambda(t, \blds{\langle w \rangle}^{2}) \bigr] \in \D$ 
($t \in \bigl( a(\blds{\langle w \rangle}^{2}), n \bigr)$). 
Now, $\lambda(t,\mbf{u}) \in (0,1)$ so by \eqref{E:partial.tau.lambda(t,u).bound},  
\eqref{E:lambda.t.u.solves.eqn} and the Implicit Function Theorem 
(Rudin \cite[Theorem 9.18, p.\ 196]{wR64.PMA}), 
$\lambda(t, \mbf{u})$ is differentiable in $\mbf{u}$. Moreover, 
by \eqref{E:1-|w|2.>.delta}, 
    \begin{equation*} \label{E:partial.tau.lambda.u.v}
        \frac{\partial}{\partial v_{i}} f \bigl[ \lambda(t,\mbf{u}), \mbf{v} \bigr] 
                = \frac{ \lambda(t,\mbf{u})^{2} }
                  {2 \sqrt{1 - \lambda(t,\mbf{u})^{2} v_{i}}} \\ 
                  \geq \frac{ \lambda(t,\mbf{u})^{2} }{2 \sqrt{1 - v_{i}}}
                    \geq  - \frac{ \lambda(t,\mbf{u})^{2} }{2 \delta_{a(\mbf{v})}} .
    \end{equation*}
In particular, since $\lambda(t,\mbf{u}) \in (0,1)$, we have, 
    \begin{equation}  \label{E:partial.tau.lambda.u.v.bnd}
     0 < \left| \frac{\partial}{\partial v_{i}} f \bigl[ \lambda(t,\mbf{u}), \mbf{v} \bigr] \right| 
         \leq \frac{ \lambda(t,\mbf{u})^{2} }{2 \delta_{a(\mbf{v})}} .
    \end{equation}

Let $i \in \NN_{n}$. By \eqref{E:lambda.t.u.solves.eqn}, 
$f \bigl[ \lambda(t,\mbf{u}), \mbf{u} \bigr] = t$ is constant in $\mbf{u}$. Therefore, applying multivariate chain rule to \eqref{E:lambda.t.u.solves.eqn} we get,
    \begin{equation*}
      0 = \frac{\partial}{\partial u_{i}} f \bigl[ \lambda(t,\mbf{u}), \mbf{u} \bigr] 
        = \frac{\partial}{\partial \vartheta} f(\vartheta, \mbf{u}) 
          \restriction_{\vartheta=\lambda(t,\mbf{u})} \,
            \frac{\partial}{\partial u_{i}}\lambda(t,\mbf{u}) 
              + \frac{\partial}{\partial u_{i}} f \bigl[\lambda(t,\mbf{u}), \mbf{u} \bigr] .
    \end{equation*}
Therefore, by \eqref{E:partial.tau.lambda.u.v.bnd}
and \eqref{E:partial.tau.lambda(t,u).bound}, 
     \begin{multline} \label{E:partial.lambda.partial.u.bdd}
        \left| \frac{\partial}{\partial u_{i}}\lambda(t, \mbf{u}) \right| 
          = \left| \frac{ \frac{\partial}{\partial u_{i}} f \bigl(\lambda(t, \mbf{u}), \mbf{u} \bigr) }
                 { \frac{\partial}{\partial \vartheta} f (\vartheta, \mbf{u})
                  \restriction_{\vartheta=\lambda(t,\mbf{u})} } \right| \\
            \leq \frac{ \lambda(t, \mbf{u})^{3} }
              {2 \delta_{a(\mbf{u})} (1 - t^{2}/n^{2})} 
                =  \frac{ n^{2} \lambda(t, \mbf{u})^{3} }
                  { \delta_{a(\mbf{u})} (n+t) \cdot 2(n-t)} < \infty.
    \end{multline} 

By \eqref{E:minmax.tilde.w}, for some $i \in \NN_{n}$, we have 
$| w_{i} | \geq \sqrt{1 - a(\mbf{u})^{2}/n^{2}}$. Therefore, by \eqref{E:1-|w|2.>.delta} and \eqref{E:lambda.t.u.solves.eqn}, there exists $i \in \NN_{n}$ s.t.\ 
    	\begin{equation*}
    		1 - \delta_{t}^{2} \geq \lambda(t, \mbf{u})^{2} |w_{i}|^{2} 
    		  \geq \lambda(t, \mbf{u})^{2} \bigl( 1 - a(\mbf{u})^{2}/n^{2} \bigr).
    	\end{equation*}
By \eqref{E:t.in(a(u),n)}, $n-1 < a(\mbf{u}) \leq t < n$, so by \eqref{E:delta.a.defn}, for this particular $i$ we have,
    	\begin{align*}
    		\lambda(t, \mbf{u})^{2} \bigl( 1 - a(\mbf{u})^{2}/n^{2} \bigr) 
    			 &\leq 1 - (t - n + 1)^{2} \\
    			 &= -(n-t)^{2} + 2(n-t) 
    			   =  (n-t) \bigl[ 2 - (n-t) \bigr]  \\ 
    			 &< 2(n-t).
    	\end{align*}
Thus,
    	\begin{equation}  \label{E:lambda.t.u.ineq}
    		\lambda(t, \mbf{u}) \leq \bigl( 1 - a(\mbf{u})^{2}/n^{2} \bigr)^{-1/2} \sqrt{2(n-t)} 
		  = K \bigl[ a(\mbf{u}) \bigr] \sqrt{2(n-t)} .
    	\end{equation}
So $K = K \bigl[ a(\mbf{u}) \bigr] < \infty$ does not depend on $t$ and only on $\mbf{u}$ through $a := a(\mbf{u})$.

Combining this with \eqref{E:partial.lambda.partial.u.bdd}, we get
     \begin{equation} \label{E:partial.lambda.partial.u.sqrt(n-t).bound}
        \left| \frac{\partial}{\partial v_{i}} \lambda(t, \mbf{v}) 
          \restriction_{\mbf{v}=\mbf{u}} \right| 
            \leq \frac{ n^{2} K^{3} \sqrt{2(n-t)} }{\delta_{a} (n+t)} 
              \leq \frac{ n K^{3} \sqrt{2(n-t)} }{\delta_{a} }
                \leq \frac{ \sqrt{2} n K^{3} }{\delta_{a} } .
    \end{equation}

Suppose $x \in \D$ is given by \eqref{E:x.w.in.B}, where  
$\mbf{w} := (w_{1}, \ldots, w_{n}) \in \bigl( \overline{B_{1}^{\nvar}(0)} \bigr)^{n}$. Suppose 
$\blds{\langle w \rangle}^{2} := \bigl( |w_{1}|^{2}, \ldots, |w_{n}|^{2} \bigr) = \mbf{u}$.
Thus, $x \in \tilde{\Ss}_{a(\mbf{u})}$. 
Recall the definition, \eqref{E:xi.x.lambda.defn}, of $\xi$. Define 
    \begin{equation}  \label{E:Omega(w).defn}
      \Omega_{t}(\mbf{w}) 
        := \bigl( \Omega_{t1}(\mbf{w}), \ldots, \Omega_{tn}(\mbf{w}) \bigr)
          := \xi \bigl( x, \lambda(t, \mbf{u}) \bigr) \in (S^{\nvar})^{n} .
    \end{equation} 
Taking $x' := \Omega_{t}(\mbf{w})$, express $x$ as in \eqref{E:x'.and.w'}. Thus, 
    \begin{equation}   \label{E:w'=lambda.w}
      w'_{i} =  \lambda(t, \mbf{u}) w_{i}, \quad i \in \NN_{n} .
    \end{equation}
By \eqref{E:lambda.t.u.solves.eqn} and \eqref{E:f(lambda,v).defn}, 
$a(\blds{\langle w' \rangle}^{2}) = a \bigl[ \lambda(t, \mbf{u})\mbf{u} \bigr] = t$. Hence, $\Omega_{t}(\mbf{w}) \in \tilde{\Ss}_{t}$.

By \eqref{E:xi.x.lambda.defn}, 
$\Omega_{t}(\mbf{w})$ has $n$ components, each in $S^{\nvar} \subset \RR^{\nvar+1}$. 
Write $w_{i} = \bigl( w_{i1}, \ldots, w_{i(\nvar+1)} \bigr) \in \overline{B_{1}^{\nvar}(0)}$. 
Let $i \in \NN_{n}$ and $j = 1, \ldots, \nvar$. Then the derivative 
of $\mbf{u} :=  \blds{\langle w \rangle}^{2} \in \RR^{n}$ w.r.t.\ $w_{ij}$ is all 0, except in the $i^{th}$ component where one finds $2 w_{ij}$ in the $j^{th}$ coordinate. By \eqref{E:xi.x.lambda.defn}, 
in $\tfrac{\partial}{\partial w_{ij}} \Omega_{t}(\mbf{w})$ only the $i^{th}$ component, 
$\Omega_{ti}(\mbf{w})$, can be nonzero. 
Write 
$\tfrac{\partial}{\partial h_{ij}} \Omega_{ti}(\mbf{h}) \restriction_{\mbf{h}=\mbf{w}} = (C^{j}_{i}, S^{j}_{i}) \in \RR^{\nvar+1}$, where $C^{j}_{i} \in \RR^{\nvar}$ and $S^{j}_{i} \in \RR$. 
Let $e_{j} = (0, \ldots, 0, 1, 0, \ldots, 0) \in \RR^{\nvar}$, 
where the ``1'' is in the $j^{th}$ position ($j = 1, \ldots, \nvar$). We have
    \begin{equation*}
      C^{j}_{i} = \left( \frac{\partial}{\partial v_{i}} \lambda(t, \mbf{v}) 
          \restriction_{\mbf{v}=\mbf{u}} \times 2 w_{ij} \right) w_{i}
            + \lambda(t, \mbf{\mbf{u}}) e_{j} .
    \end{equation*}
Therefore, by \eqref{E:lambda.t.u.ineq} and 
\eqref{E:partial.lambda.partial.u.sqrt(n-t).bound},
    \begin{equation}   \label{E:Si.bound}
        | C^{j}_{i} | \leq K' \sqrt{2(n-t)},
    \end{equation}
where $K' < \infty$ does not depend on $t$ or $\mbf{w}$. 
(But it does depend on $a(\mbf{u})$.)

Next, $S^{j}_{i}$:
    \begin{multline} \label{E:Si.expression}
        S^{j}_{i}  =  - \frac{1}{2} 
          \Bigl( 1 - \lambda(t, \mbf{u})^{2} | w_{i} |^{2} \Bigr)^{-1/2} \\
            \left( - \lambda(t, \mbf{u})^{2} \times 2 w_{ij} 
              - 2 \lambda(t, \mbf{u})  
                \frac{\partial}{\partial v_{i}} \lambda(t, \mbf{v}) \restriction_{\mbf{v}=\mbf{u}} 
                  \times |w_{ij}|^{2} \right) .
    \end{multline}
By \eqref{E:1-|w|2.>.delta} and \eqref{E:delta.a.defn}
(with $w'_{i} =  \lambda(t, \mbf{u}) w_{i}$ in place of $w_{i}$ and $t$ in place of $a$) and \eqref{E:t.in(a(u),n)},
    \begin{equation} \label{E:lambda^2.wi2.bound}
        \Bigl( 1 - \lambda(t, \mbf{u})^{2} | w_{i} |^{2} \Bigr)^{-1/2} 
          \leq 1 / \delta_{t}  \leq 1 / \delta_{a} .
    \end{equation}
Similarly, by \eqref{E:w.bdd.away.from.ball.bndry} and \eqref{E:lambda.t.u.solves.eqn}, we have 
    \begin{equation} \label{E:partial.lambda.partial.u.etc.bound}
        \lambda(t, \mbf{u})^{2} \times 2 |w_{ij}| 
          = 2 \lambda(t, \mbf{u}) |w'_{i}|
            \leq 2 \lambda(t, \mbf{u}) \sqrt{2(n - t)} \leq 2 \sqrt{2(n - t)} .
    \end{equation}
Similarly, by \eqref{E:1-|w|2.>.delta} again, 
    \begin{equation*}
         \lambda(t, \mbf{u})  |w_{i}|^{2} 
           \leq \lambda(t, \mbf{u})  |w_{i}|  \leq \sqrt{2(n-t)} . 
    \end{equation*}

Combining \eqref{E:Si.expression}, \eqref{E:lambda^2.wi2.bound}, \eqref{E:partial.lambda.partial.u.etc.bound}, 
and \eqref{E:partial.lambda.partial.u.sqrt(n-t).bound} we find that 
there exists $K'' < \infty$ not depending on $t$ and only depending on $\mbf{w}$ through $a \bigl( \blds{\langle w \rangle}^{2} \bigr) = a(\mbf{u})$ s.t.\
    \begin{equation*}
        |S^{j}_{i}| \leq K'' \sqrt{2(n-t)}.
    \end{equation*}
Combining this with \eqref{E:Si.bound}, we get finally that there exists $K''' < \infty$ s.t.\
    \begin{equation}  \label{E:partial.Xi.bound}
         \left| \frac{\partial}{\partial h_{ij}} \Omega_{t}(\mbf{h}) 
           \restriction_{\mbf{h} = \mbf{w}} \right| = \bigl| (C^{j}_{i}, S^{j}_{i}) \bigr| 
             \leq K''' \sqrt{2(n-t)} ,
    \end{equation}
where $K''' < \infty$ does not depend on $t$ and only depending on $\mbf{w}$ through $a \bigl( \blds{\langle w \rangle}^{2} \bigr) = a(\mbf{u})$.

Let $j \in \NN_{n}$; $\mu = 1, \ldots, \nvar$; and $\epsilon = \pm 1$  be arbitrary and let $W_{j,\mu,\epsilon} = W_{j,\mu,\epsilon}(\mbf{u})$ be the set of all 
$(w_{1}, \ldots, w_{n}) \in \bigl( \overline{B_{1}^{\nvar}(0)} \bigr)^{n}$ (as usual the bar indicates closure) s.t.\  
    \begin{equation*}
      |w_{i}|^{2} < 1 - \bigl[ a - (n-1) \bigr]^{2} = 1-\delta_{a}^{2} \; \;
        (i \in \NN_{n}) \text{ and }
          \epsilon w_{j,\mu} >  \sqrt{ \nvar^{-1} \,  
            \left[ 1 - \left( \tfrac{a}{n} \right)^{2} \right] } .
    \end{equation*}
Here, $a = a(\mbf{u})$ and $w_{j} = (w_{j,1}, \ldots, w_{j,\nvar})$. 
Then $W_{j,\mu,\epsilon} \subset \RR^{n \nvar}$ is open and convex. Moreover, by \eqref{E:ui.min.max.t} with $a$ in place of $t$, 
if $x = (y_{1}, \ldots, y_{n}) \in \tilde{\Ss}_{a}$ satisfies \eqref{E:y.in.terms.of.w} 
then for some $j \in \NN_{n}$; $\mu = 1, \ldots, \nvar$; and $\epsilon = \pm 1$ we have 
$(w_{1}, \ldots, w_{n}) \in \overline{W}_{j,\mu,\epsilon}$.

By \eqref{E:lambda.t.u.solves.eqn}, $\lambda(t,\mbf{u}) \in (0,1)$ (see also \eqref{E:t.in(a(u),n)}) so the domain of $\Omega_{t}$ certainly includes $W_{j,\mu,\epsilon}$. (See \eqref{E:Omega(w).defn} and \eqref{E:xi.x.lambda.defn}.) Its codomain is $(S^{\nvar})^{n} \subset \RR^{n(\nvar+1)}$. 
Let $\mbf{w}_{\ell} \in W_{j,\mu,\epsilon}$ 
($\ell=1,2$). Then, by \eqref{E:partial.Xi.bound} and Boothby \cite[Theorem (2.2), pp.\ 26--27]{wmB75}, we have
    \begin{equation*}
        \bigl| \Omega_{t}(\mbf{w}_{1}) - \Omega_{t}(\mbf{w}_{2}) \bigr| 
          \leq n \sqrt{ \nvar (\nvar+1) } K''' \sqrt{2(n-t)} \; |\mbf{w}_{1} - \mbf{w}_{2}| .
    \end{equation*}
I.e., $\Omega_{t}$ is Lipschitz on $W_{j,\mu,\epsilon}$ with Lipschitz constant proportional to $\sqrt{2(n-t)}$. 
By continuity, $\Omega_{t}$ is Lipschitz on $\overline{W}_{j,\mu,\epsilon}$ with Lipschitz constant proportional to $\sqrt{2(n-t)}$. 

Finally, let $\Xi_{t}(x) := \Omega_{t}(\mbf{w})$, 
where $x = (y_{1}, \ldots, y_{n}) \in \tilde{\Ss}_{a}$ satisfies \eqref{E:y.in.terms.of.w}. 
Write $\mbf{w} := (w_{1}, \ldots, w_{n}) \in  \bigl( \overline{B_{1}^{\nvar}(0)} \bigr)^{n}$. Then, by \eqref{E:xi(x)=x'.for.some.x.lambda} and \eqref{E:lambda.t.u.solves.eqn}, 
$\Xi(\tilde{\Ss}_{a}) = \tilde{\Ss}_{t}$. 

Fix $a \in (n-1,n)$ and let $t \in (a,n)$. Let $x \in \Ss_{a}$ and write $x$ as in \eqref{E:y.in.terms.of.w}. Then, by \eqref{E:St.wi.condn}, 
we have $\sum_{i=1}^{n} w_{i} = 0$. Let $x' = \Xi_{t}(x)$. Write
$x' = \bigl( \ldots, (w'_{i}, -\sqrt{1 - | w'_{i} |^{2} }), \ldots \bigr)^{1 \times n(\nvar +1)}$. Then, by \eqref{E:w'=lambda.w}, 
$\sum_{i=1}^{n} w'_{i} = 0$. I.e., by \eqref{E:St.wi.condn} again, $x' \in \Ss_{t}$. And conversely: $x'= \Xi_{t}(x) \in \Ss_{t}$ implies $x \in \Ss_{a}$. We conclude that 
$\Xi_{t}(\Ss_{a}) = \Ss_{t}$.

Now, $\mbf{w}$ is obtained from $x$ by projection, so $x \to \mbf{w}$ is Lipschitz, with Lipschitz constant 1. Let $\Ss_{a,j,\mu,\epsilon}$ be the set of $x \in \Ss_{a}$ s.t.\ \eqref{E:x.w.in.B} holds for $x$ and 
$\mbf{w} \in \overline{W}_{j,\mu,\epsilon}$. Then, $\Xi_{t}$ is Lipschitz on $\Ss_{a,j,\mu,\epsilon}$ with Lipschitz constant $K_{4} \sqrt{2(n-t)}$ 
for some $K_{4} < \infty$ (that does not depend on $t$ and only depending on $\mbf{w}$ through $a \bigl( \blds{\langle w \rangle}^{2} \bigr) = a(\mbf{u})$). 
Therefore, by \eqref{E:Lip.magnification.of.Hm} and \eqref{E:Sa.has.finite.vol}, we have
	\begin{equation*}
		\Hm^{n \nvar-\nvar-1} \bigl[ \Xi_{t}(\Ss_{a,j,\mu,\epsilon}) \bigr]
		  \leq \bigl[ \sqrt{2(n-t)} \bigr]^{n \nvar-\nvar-1} K_{4}^{n \nvar-\nvar-1} 
		    \Hm^{n \nvar-\nvar-1}(\Ss_{a,j,\mu,\epsilon}) < \infty .
	\end{equation*}
But $\Ss_{a} = \bigcup_{j} \Ss_{a,j,\mu,\epsilon}$ and $\Xi_{t}(\Ss_{a}) = \Ss_{t}$. Therefore, 
	\begin{equation*}
		\Hm^{n \nvar-\nvar-1}(\Ss_{t}) 
		  \leq 2 n \nvar \bigl[ \sqrt{2(n-t)} \bigr]^{n \nvar-\nvar-1} 
		    K_{4}^{n \nvar-\nvar-1} 
		      \Hm^{n \nvar-\nvar-1}(\Ss_{a}) < \infty ,
	\end{equation*}
where $2 n \nvar$ is the number of combinations $j,\mu,\epsilon$. By part \ref{I:aug.mean.essential.dist.approx} of the proposition (already proved), 
\eqref{E:R.n.nvar-nvar-1.H.bdd.above} now follows.
  	\end{proof}
  
  \begin{proof}[Proof of lemma \ref{L:solve.rho.t=r}] 
By \eqref{E:S.a.cap.T.empty}, \eqref{E:mu.a.Sa.is.compact}, and 
proposition \ref{P:rate.of.decrease.of.Hm.St.aug.direct.mean} part \ref{I:aug.mean.essential.dist.approx}, $R_{t} = \rho_{t}$ are both strictly positive, but go to 0 as $t \uparrow n$. And $R_{t} = \rho_{t}$ are locally Lipschitz, hence continuous. Therefore, at least if $r > 0$ is small, there exists $t_{r } \in [0,n)$ s.t. 
$R_{t_{r}} = \rho_{t_{r}} = r$. 

By \eqref{E:rho.t.approx.sqrt.2(n-t)}, $r = \sqrt{2(n-t_{r})} + O(n-t_{r})^{3/2}$. We solve this equation for $t_{r}$. Start with $r^{2} = 2(n- t_{r}) + O(n-t_{r})^{2}$. Let
    \begin{equation*}
      s :=1/(n - t_{r}) .
    \end{equation*} 
Thus, $r^{2} s^{2} = 2 s + O(1)$. Hence, there exists a positive constant 
$K < \infty$ s.t.
    \begin{equation}  \label{E:rsK.ineq}
       -K <  r^{2} s^{2} - 2 s < K .
    \end{equation}

By \eqref{E:mu.a.Sa.is.compact}, $\Ss_{\mu_{t_{r}}}$ is compact. By \eqref{E:directional.T.defn}, so is $\T$. Therefore, there exist 
$x \in \Ss_{\mu_{t_{r}}}$ and $y \in \T$ s.t.\  
Now, $\rho_{t_{r}} = \rho(x,y)$. (See \eqref{E:rho.is.top.metric}. So $x \neq y$.)  
$\rho(x,y) > |x-y|$, the Euclidean distance 
from $x$ to $y$. Hence, $\rho_{t_{r}} > |x-y|$ which in turn is no smaller than the Euclidean distance from $\Ss_{\mu_{t_{r}}}$ to $\T$. 
But, by proposition \ref{P:rate.of.decrease.of.Hm.St.aug.direct.mean}(\ref{I:aug.mean.euc.dist}),
the Euclidean distance from $\Ss_{\mu_{t_{r}}}$ to $\T$ is $\sqrt{2(n-t_{r})}$. Thus, $r = \rho_{t_{r}} > \sqrt{2(n-t_{r})}$, implying $r^{2} > 2(n - t_{r})$. 
I.e., $t_{r} > n - \tfrac{1}{2} r^{2}$. Therefore, 
    \begin{equation*}  
       n-t < \frac{1}{2} r^{2} 
       \Leftrightarrow \frac{2}{r^{2}} < \frac{1}{n-t_{r}} 
       \Leftrightarrow 2 < s r^{2} 
       \Leftrightarrow r^{2} s^{2} - 2 s > 0 .
    \end{equation*}
In summary,
    \begin{equation}   \label{E:n-t.<.(r.sqrd)/2}
        n-t < \frac{1}{2} r^{2} \text{ and } r^{2} s^{2} - 2 s > 0 .
    \end{equation} 

Let $f(s) := r^{2} s^{2} - 2 s$. From \eqref{E:rsK.ineq} and \eqref{E:n-t.<.(r.sqrd)/2}, we see that for $r, s$ of interest, $0 < f(s) < K$. Since $s > 0$, the solution to the equation 
$f(s) = K$ is 
    \begin{equation*}
      s_{K} := r^{-2} \bigl( 1 + \sqrt{1 + r^{2} K} \bigr) .
    \end{equation*} 
$f$ is convex with minimum at $s = r^{-2}$ and $f(r^{-2}) = -1/r^{2} < 0$. Since $K > 0$, we have $s_{K} >  2 r^{-2} >  r^{-2}$. Since $f$ is convex this means $f(s) \geq K$ 
if $s \geq s_{K}$. Hence, \eqref{E:rsK.ineq} implies $s < s_{K}$.

Consider the function 
    \begin{equation*}
      g(r) := \frac{K}{2} r^{2} - (r^{2} s_{K} - 2) 
        = \frac{K}{2}r^{2} -  \bigl[ (1 + \sqrt{1 + r^{2} K}) - 2 \bigr] 
          = \frac{K}{2} r^{2} -  \sqrt{1 + r^{2} K} + 1 .
    \end{equation*}
We have $g(0) = 0$ and $g'(r) = K r - K r/\sqrt{1 + r^{2} K} > 0$ for $r > 0$. Hence, $g(r) \geq 0$ for $r \geq 0$. Thus, since $s < s_{K}$, we have
    \begin{equation*}
      \frac{1}{n-t_{r}} = s < s_{K} \leq s_{K} + r^{-2} g(r) = s_{K} 
        + \left( \frac{K}{2} - s_{K} + 2 r^{-2} \right) = \frac{K}{2} + 2 r^{-2} .
    \end{equation*}
It follows that 
    \begin{equation*}
      t_{r} < n - \frac{r^{2}}{2} 
        + \left( \frac{r^{2}}{2} - \frac{r^{2}}{\frac{K}{2} r^{2} + 2} \right) < n .
    \end{equation*}
Now,
    \begin{equation*}
      \frac{r^{2}}{2} - \frac{r^{2}}{\frac{K}{2} r^{2} + 2} 
        = \frac{ (K/2) r^{4}}{K r^{2} + 4} = O(r^{4}) .
    \end{equation*}
Thus,
    \begin{equation*}  
        t_{r} < n - \frac{1}{2} r^{2} + O(r^{4}) . 
    \end{equation*}
Combining the preceding with \eqref{E:n-t.<.(r.sqrd)/2} we get 
    \begin{equation*}  
        t_{r} = n - \frac{1}{2} r^{2} + O(r^{4})  
    \end{equation*}
as desired.
  \end{proof}
 	
    \begin{proof}[Proof of lemma \ref{L:Pk.is.nhbd.retract}]
 If $w \in \RR^{\nvar+1}$ denote the $j^{th}$ coordinate of $w$ by $w^{j}$. Thus, 
	\begin{equation}  \label{E:S.nvar.coord.lwr.bnd}
		\text{if }  w \in S^{\nvar} \text{ then }  
		  \max_{j = 1, \ldots, \nvar+1} |w^{j}| \geq 1/\sqrt{\nvar+1}.
	\end{equation}

Let $k \in [0, n/2)$, so $n-k >k$. Let $x = (y_{1}, \ldots, y_{n} ) \in \D := (S^{q})^{n}$. 
For each $j = 1, \ldots, \nvar+1$ let $\tilde{y}^{j} = \tilde{y}^{j}(x)$ be the median 
of $y_{1}^{j}, \ldots, y_{n}^{j}$, i.e., the median of the $j^{th}$ coordinate of all the $y_{i}$'s. (If $n$ is even follow the usual convention of defining the median of $n$ numbers to be the midpoint between the two middle numbers when the numbers are arranged in increasing order.) 
Let $\tilde{y}(x) := (\tilde{y}^{1}, \ldots, \tilde{y}^{\nvar+1}) \in \RR^{\nvar+1}$. 
The map $x \mapsto \tilde{y}(x)$ is defined and continuous in $x \in \D$. 

Since the median is order invariant, we have 
	\begin{equation}  \label{E:y.tilde.Sn.invar}
		\tilde{y} \circ \sigma = \tilde{y} \text{ for every } \sigma \in S_{n}.
	\end{equation} 

Let $x = (y_{1}, \ldots, y_{n} ) \in \D$, not necessarily in $\Pf_{k}$, and, for 
$i \in \NN_{n}$, let 
	\begin{equation}  \label{E:ri(x).defn}
		r_{i}(x) = y_{i} - \tilde{y}(x) \in \RR^{q+1}.
	\end{equation} 
By \eqref{E:y.tilde.Sn.invar}, we have 
	\begin{equation}  \label{E:ri.Sn.equivar}
		r_{i} \bigl[ \sigma(x) \bigr] = y_{\sigma(i)} - \tilde{y} \bigl[ \sigma(x) \bigr] = y_{\sigma(i)} - \tilde{y}(x) = r_{\sigma(i)}(x),
			\qquad x \in \D, \;\sigma \in S_{n}.
	\end{equation} 
Let $\delta(x)$ be the $(k+1)^{st}$ largest value of $|r_{i}(x)|$ ($i \in \NN_{n}$). Thus, for at least $n-k$ indices $i$ we have $|r_{i}(x)| \leq \delta(x)$, for no more than $n-k-1$ indices $i$ can we have $|r_{i}(x)| < \delta(x)$, and for no more than $k$ indices $i$ can we have $|r_{i}(x)| > \delta(x)$. Note that $r_{1}(x), \ldots, r_{n}(x)$ and, hence, $\delta(x)$ are continuous in $x \in \D$. Note that, by \eqref{E:ri.Sn.equivar} we have 
	\begin{equation} \label{E:delta.Sn.invar}
		\delta \circ \sigma = \delta, \quad \sigma \in S_{n}.
	\end{equation} 

If $x \in \Pf_{k}$ then there exists at least one subset $J \subset \{ 1, \ldots, n \}$ s.t.\ $|J| = n-k$ ($|J|$ is the cardinality of $J$) and the points $y_{i} \in \Ss^{\nvar}$, $i \in J$, are identical. Let $y \in \RR^{\nvar+1}$ be the common value of $y_{i} \in \Ss^{\nvar}$, $i \in J$. (Because $|J| = n-k > n/2$, $y$ does not depend on $J$.) Then, since $k < n/2$ we have $\tilde{y}^{j}(x) = y^{j}$ ($j = 1, \ldots, \nvar+1$). This has two consequences. First, by \eqref{E:S.nvar.coord.lwr.bnd},
	\begin{equation*}
		\max_{j = 1, \ldots, \nvar+1} |\tilde{y}^{j}(x)| \geq 1/\sqrt{\nvar+1}.
	\end{equation*}
Second, at least $n-k$ of the $r_{i}(x)$'s are 0. Therefore, $\delta(x) = 0$ if $x \in \Pf_{k}$. Define
	\begin{equation*}
		\clU := \left\{ x \in \D : \,
		      max_{j = 1, \ldots, \nvar+1} \, |\tilde{y}^{j}(x)| > \frac{1}{2 \sqrt{\nvar+1} }
		        \text{ and } \delta(x) < \tfrac{1}{2} \right\}.
	\end{equation*}
Then $\clU$ is an open neighborhood of $\Pf_{k}$. By \eqref{E:y.tilde.Sn.invar} and \eqref{E:delta.Sn.invar} we have that $\sigma (\clU) = \clU$ for every $\sigma \in S_{n}$. We will define a retraction $R : \clU \to \Pf_{k}$. 

If $x \in \clU$, then, by definition of $\clU$, $\tilde{y}(x) \neq 0$. Now let
	\begin{multline}   \label{E:y.dot.defn}
		\dot{y}_{i}(x) = 
		\begin{cases}
		       \tilde{y}(x) = y_{i}, &\text{ if } r_{i}(x) = 0, \\
			\tilde{y}(x) + \max \left\{ 1 -\frac{ \delta(x)}{|r_{i}(x)|}, 0 \right\} r_{i}(x), 
			   &\text{ otherwise,}
              \end{cases} \\
		      \quad x \in \clU, \; i \in \NN_{n}.
	\end{multline}
(Thus, $\dot{y}_{i}(x) \in \RR^{q+1}$.) By \eqref{E:y.tilde.Sn.invar}, \eqref{E:ri.Sn.equivar}, and \eqref{E:delta.Sn.invar} we have
	\begin{equation} \label{E:y.dot.i.Sn.equivar}
		\dot{y}_{i} \bigl[ \sigma(x) \bigr] =\dot{y}_{\sigma(i)}(x), \quad x \in \D, \; \sigma \in S_{n}.
	\end{equation}

\emph{Claim:} $\dot{y}_{i}(x)$ is continuous in $x$. To see this, let $x_{0} \in \D$ and suppose $x \to x_{0}$ through $\D$. Then $\tilde{y}(x) \to \tilde{y}(x_{0})$,  $\delta(x) \to \delta(x_{0})$, and $r_{i}(x) \to r_{i}(x_{0})$. If $r_{i}(x_{0}) \neq 0$, then clearly $\dot{y}(x) \to \dot{y}(x_{0})$. So suppose $r_{i}(x_{0}) = 0$. Then $\dot{y}(x_{0}) = \tilde{y}(x_{0})$. We have $\dot{y}(x) = \tilde{y}(x) + \beta(x) r_{i}(x)$, where $0 \leq \beta(x) \leq 1$. Since $\tilde{y}(x) \to \tilde{y}(x_{0})$ and $r_{i}(x) \to 0$, we have $\dot{y}(x) \to \tilde{y}(x_{0}) = \dot{y}(x_{0})$. This completes the proof of the claim.

\emph{Claim:} If $x \in \clU$ then $\dot{y}_{i}(x) \neq 0$ ($i \in \NN_{n}$). For suppose that for some $i \in \NN_{n}$ we have $\dot{y}_{i} := \dot{y}_{i}(x) = 0$. If $y_{i} = \tilde{y} := \tilde{y}(x)$ then, by \eqref{E:ri(x).defn}, $r_{i} = 0$ so $\dot{y}_{i} = \tilde{y} \neq 0$ by definition of $\clU$. So suppose
$y_{i} \neq \tilde{y}$. I.e., $r_{i}(x) \neq 0$. In fact, $|y_{i} - \tilde{y}| = |r_{i}(x)| > \delta(x)$, because otherwise, by \eqref{E:y.dot.defn}, $y_{i} = \tilde{y}$. Thus, $0 = \dot{y} = \tilde{y} + \beta r_{i}(x)$, 
where $\beta = 1 - \delta(x)/\bigr| r_{i}(x) \bigl|$. Rewriting, 
	\begin{equation*}
		0 = \dot{y} = \tilde{y} + (y_{i} - \tilde{y}) - \frac{\delta(x)}{|y_{i} - \tilde{y}|} (y_{i} - \tilde{y}) 
		  = y_{i} - \frac{\delta(x)}{|y_{i} - \tilde{y}|} (y_{i} - \tilde{y}).
	\end{equation*}
Thus, 
	\begin{equation*}
		\delta(x) = \left| \frac{\delta(x)}{|y_{i} - \tilde{y}|} (y_{i} - \tilde{y}) \right| = |y_{i}| = 1.
	\end{equation*}
But $x \in \clU$ which means that $\delta(x) < 1/2$. This contradiction proves the claim that no 
$\dot{y}_{i}(x)$ is 0.

For $x \in \clU$ define $\hat{y}_{i}(x) = |\dot{y}_{i}(x)|^{-1} \dot{y}_{i}(x) \in S^{\nvar}$ and 
let 
    \begin{equation*}
      R(x) = \bigl( \hat{y}_{1}(x), \ldots, \hat{y}_{n}(x) \bigr) \in \D .
    \end{equation*} 
By \eqref{E:y.dot.i.Sn.equivar}, we have that $\rho \circ \sigma = \sigma \circ \rho$ on $\clU$ for every $\sigma \in S_{n}$. Moreover, $\rho$ is continuous on $\clU$, since the $\dot{y}_{i}$'s are. Furthermore, for the $n-k$ or more indices $i$ for which $|r_{i}(x)| \leq \delta(x)$ we have $1 -\delta(x)/|r_{i}(x)| < 0$ so $\hat{y}_{i}(x) = \tilde{y}(x)$. Thus, in fact $\rho : \clU \to \Pf_{k}$. If $x \in \Pf_{k}$ we have $\delta(x) = 0$ so, by \eqref{E:y.dot.defn} and \eqref{E:ri(x).defn}, 
$\dot{y}_{i}(x) = \tilde{y}(x) + 1 \times r_{i}(x) = \tilde{y}(x) + \bigl( y_{i} - \tilde{y}(x) \bigr) = y_{i}$. Now,  $|\dot{y}_{i}(x)| = |y_{i}| = 1$ so  $\rho(x) = x$.
  \end{proof} 
  
  \begin{proof}[Proof of corollary \ref{C:local.exactness.of.fit}]
Let $R : \clU \to \Pf_{k}$ be the retraction promised by the lemma. 
Let $x \in \clU$ and let 
$R(x) = (\hat{y}_{1}(x), \ldots, \hat{y}_{n}(x)) = (\hat{y}_{1}, \ldots, \hat{y}_{n}) \in \Pf_{k}$.  Recall, by \eqref{E:N.sub.n}, $\NN_{n} := \{1, \ldots, n \}$. Let $\mcl{I} = \mcl{I}(x) \subset 2^{\NN_{n}}$ be the collection of sets 
$I \subset \NN_{n}$ s.t.\ if $i, j \in I$ then $\hat{y}_{i} = \hat{y}_{j}$. 

Let $x \in \clU$ be arbitrary but fixed. Observe that if $I_{1}, I_{2} \in \mcl{I}$ 
and $I_{1} \cap I_{2} \neq \varnothing$, then $I_{1} \cup I_{2} \in \mcl{I}$. 
Let $J(x) \subset \{ 1, \ldots, n \}$ be a largest element in $\mcl{I}$. 
Since $R(x) \in \Pf_{k}$, the cardinality, $\bigl| J(x) \bigr|$, of $J(x)$ is at least $n-k > n/2$. But two subsets of $\NN_{n}$ of cardinality $> n/2$ must overlap. It follows that $J(x)$ is unique. 

Let $\nu_{R}(x) \in S^{\nvar}$ be the common value of all $\hat{y}_{i}$ with $i \in J(x)$. Since the restriction $R \restriction_{\Pf_{k}}$ is the identity on $\Pf_{k}$ and every data set in $\Pf_{k}$ has at least $n-k$ copies of the same point of $S^{\nvar}$, the map $\nu_{R}$ has order of exactness of fit $k$. 

\emph{Claim:} The map $J$ is ``upper semicontinuous'' in the sense that as $x' \to x$ ($x' \in \clU$) we eventually have $J(x') \subset J(x)$. To see this write 
$R(x') = (\hat{y}'_{1}, \ldots, \hat{y}'_{n}) \in \Pf_{k}$. 
Let $\epsilon = \min \Bigl\{ \bigl| \nu_{R}(x) - \hat{y}_{i} \bigr| : i \notin J(x) \Bigr\}$. 
By definition of $J(x)$, 
$\epsilon > 0$. Since $R$ is continuous, as $x' \to x$ in $\clU$, $\hat{y}'_{i} \to \hat{y}_{i}$ 
for every $i \in \NN_{n}$. Thus, eventually, $|\hat{y}'_{i} - \hat{y}_{i}| < \epsilon/3$ 
for every $i \in \NN_{n}$. Let $i \notin J(x)$ and $j \in J(x)$. Then 
    \begin{equation*}
      \epsilon < | \hat{y}_{i} - \hat{y}_{j} | 
        \leq | \hat{y}_{i} - \hat{y}'_{i} | + | \hat{y}'_{i} - \hat{y}'_{j} | + | \hat{y}'_{j} - \hat{y}_{j} |
          < \tfrac{1}{3} \epsilon + | \hat{y}'_{i} - \hat{y}'_{j} | + \tfrac{1}{3} \epsilon.
    \end{equation*}
Hence, $| \hat{y}'_{i} - \hat{y}'_{j} | > \tfrac{1}{3} \epsilon$. Suppose $J(x') \nsubseteq J(x)$ 
and let  $i \in J(x') \setminus J(x)$. 
Since $|J(x')| \geq n-k > n/2$ we have $J(x') \cap J(x) \neq \varnothing$. 
Let $j \in J(x') \cap J(x)$. Then $| \hat{y}'_{i} - \hat{y}'_{j} | > \tfrac{1}{3} \epsilon$. Contradiction. This proves the claim. 

It follows that $\nu_{R}$ is continuous: Let $\{x'_{m}\} \subset \clU$ converge to $x \in \clU$. Recall, by \eqref{E:N.sub.n}, $\NN_{n} := \{1, \ldots, n \}$. There exists a subsequence $\{x'_{m_{\ell}}\} \to x$ and $j \in \NN_{n}$ s.t.\ $j \in J(x'_{m_{\ell}})$ 
for every $\ell$. Thus, $\nu_{R} (x'_{m_{\ell}}) = \hat{y}_{j}(x'_{m_{\ell}})$. Hence, by continuity of $R$, we have $\nu_{R} (x'_{m_{\ell}}) \to \hat{y}_{j}(x) = \hat{y}_{j}$. But as we showed above, as $\ell \uparrow \infty$ eventually $j \in J(x)$ so $\hat{y}_{j} = \nu_{R}(x)$. 
I.e., $\nu_{R} (x'_{m_{\ell}}) \to \nu_{R}(x)$. The same argument works if, instead of $\{x'_{m}\}$, we had started with an arbitrary subsequence of it. Continuity of $\nu_{R}$ follows.

Notice that, if $\sigma \in S_{n}$ then, 
since $R \circ \sigma = \sigma \circ R$ on $\clU$, we have $J \circ \sigma = \sigma \circ J$. But, again since $| J(x) | \geq n-k > n/2$, we have that $\bigl[ \sigma \circ J(x) \bigr] \cap \bigl[ J(x) \bigr] \neq \varnothing$. Thus, $\nu_{R} \circ \sigma(x) = \nu_{R}(x)$. 
Hence, $\Phi := \nu_{R}$ satisfies \eqref{E:assume.Phi.S'.sym} on $\clU$.
  \end{proof}

\begin{proof}[Proof of lemma \ref{L:loose.exactness.of.location.fit}]
Let $\msf{V}_{\pi/2}$ be the cover on $\F := S^{\nvar}$ constructed in section \ref{S:convex.combos.on.spheres}, where a convex combination function on $\msf{V}_{\pi/2}$ is also constructed. Since $\Pf_{k}$ is closed and (by assumption) $\Phi$ is defined and continuous on $\Pf_{k}$, by lemma \ref{L:Pk.is.nhbd.retract} and theorem \ref{T:if.lin.combo.on.F.then.can.rstrct.to.bad.sings} part \ref{I:Omega.Phi.agree.on.Pf}, there exists a measure of location, $\Omega$, continuous 
on $\tilde{\D} := \D \setminus \Ss^{\msf{V}_{\pi/2}} \supset \D \setminus \Ss$ satisfying \eqref{E:assume.Phi.S'.sym} and agreeing with $\Phi$ on $\Pf_{k}$. Since $\Phi$ satisfies \eqref{E:D'.=.D.less.S} and \eqref{E:no.severe.sings.in.Pk} and, by \eqref{E:SV.is.closed}, 
$\Ss^{\msf{V}_{\pi/2}}$ is closed, we see that $\Omega$ satisfies \eqref{E:loc.measure.has.no.sings.near.T}. Therefore, making the $\clU$ in lemma \ref{L:Pk.is.nhbd.retract} smaller, if necessary, we have 
$\Ss^{\msf{V}_{\pi/2}} \cap \clU = \varnothing$. Hence, 
    \begin{equation*}
      \clU \subset \tilde{\D} \text{ and } \Omega \text{ is continuous in } \clU .
    \end{equation*}
 
Let $\nu_{R}$ be the local measure of location as in corollary \ref{C:local.exactness.of.fit}. 
Since $\nu_{R}$ has exactness of fit order $k$ and $\Omega$ and $\Phi$ agree 
on $\Pf_{k}$, \eqref{E:exactness.of.fit.loose} tells us that if $x \in \Pf_{k}$ then
	\begin{equation}   \label{E:Phi.dot.mu.rho.-1}
		 \Omega(x) \cdot \nu_{R}(x) > -1.
	\end{equation} 
Recall $S_{n}$ be the group of permutations of $\NN_{n} := \{1, \ldots, n \}$. If it is not already the case that $\sigma(\clU) = \clU$ for all $\sigma \in S_{n}$ (remember, we modified the $\clU$ from lemma \ref{L:Pk.is.nhbd.retract}), simply replace $\clU$ by $\bigcap_{\sigma} \sigma(\clU)$, where the intersection is taken over $\sigma \in S_{n}$. Hence, we may assume 
$\sigma(\D \setminus \clU) = \D \setminus \clU$ for every $\sigma \in S_{n}$. 

Define 
	\begin{equation}  \label{E:arg!.maties!2}
	   \arg(y) := |y|^{-1} y \in S^{\nvar} \quad 
		  \bigl( y \in \RR^{\nvar +1} \setminus \{ 0 \} \bigr).
	\end{equation} 
(See \eqref{E:arg!.maties!} and \eqref{E:arg,matey!.2}) By Urysohn's Lemma (Simmons \cite[Theorem A, p.\ 135]{gfS63}) there exists a continuous function 
$f : \D \to [0,1]$ s.t.\ $f = 1$ on $\Pf_{k}$ and $f = 0$ on $\D \setminus \clU$. We may assume $f$ is symmetric in its arguments, else replace $f$ by $(n!)^{-1} \sum_{\sigma \in S_{n}} f \circ \sigma$. 
\emph{Claim:} $\arg \Bigl( \bigl[ 1 - f(x) ] \Omega(x) + f(x) \nu_{R}(x) \Bigr)$ is defined for 
$x \in \clU \subset \tilde{\D}$. Here, we regard $\Omega(x)$ and $\nu_{R}(x)$ as points in $\RR^{\nvar+1}$. This is equivalent to showing   
	\begin{equation}   \label{E:Phi.arg.combo.not.0}
	  \Bigl| \bigl[ 1 - f(x) ] \Omega(x) + f(x) \nu_{R}(x) \Bigr| > 0 
	      \text{ if } x \in \clU \cap \tilde{\D}.
	\end{equation}
Now, \eqref{E:Phi.arg.combo.not.0} obviously holds if $f(x) = 0$ or 1. Assume $0 <  f(x) < 1$. Then, by \eqref{E:Phi.dot.mu.rho.-1},
	\begin{multline}  \label{E:norm.of.Phi.arg.combo}
		\Bigl| \bigl[ 1 - f(x) ] \Omega(x) + f(x) \nu_{R}(x) \Bigr|^{2}  \\
		\begin{aligned}
			&= \bigl[ 1 - f(x) ]^{2} + 2 \bigl[ 1 - f(x) ] f(x) \, 
			        \Omega(x) \cdot \nu_{R}(x) + f(x)^{2} \\
			&> \bigl[ 1 - f(x) ]^{2} - 2 \bigl[ 1 - f(x) ] f(x)  + f(x)^{2} \\
			&= \bigl[ 1 - 2 f(x) \bigr]^{2} \geq 0.
		\end{aligned}
	\end{multline} 
Thus, \eqref{E:Phi.arg.combo.not.0} holds.

Let $\mu : \tilde{\D} \to S^{\nvar}$ by 
   \[
      \mu(x) = 
         \begin{cases}
            \arg \Bigl( \bigl[ 1 - f(x) ] \Omega(x) + f(x) \nu_{R}(x) \Bigr),
               \text{ if } x \in \clU, \\ 
            \Omega(x), \text{ if } x \in \tilde{\D} \setminus \clU.
         \end{cases}
   \]
By \eqref{E:Phi.arg.combo.not.0}, $\mu(x)$ is defined and continuous everywhere 
on $\tilde{\D} \supset \D \setminus \Ss$. 
Moreover, $\mu$ is clearly symmetric in its arguments, i.e. it satisfies \eqref{E:assume.Phi.S'.sym}, since $f$, 
$\Omega$, and $\nu_{R}$ are and $\clU$ is $S_{n}$-invariant. $\mu$ has order of exactness of fit $k$ because $\nu_{R}$ does and $f = 1$ on $\Pf_{k}$. $\nu_{R}$ is continuous 
on $\clU$ and $\Omega$ is continuous off $\Ss^{\msf{V}_{\pi/2}}$. Therefore, $\mu$ is continuous off $\Ss^{\msf{V}_{\pi/2}}$. By assumption, $\Phi$ satisfies \eqref{E:loc.measure.has.no.sings.near.T}. \emph{A fortiori}, so does $\mu$ and 
$\Ss = \Ss^{\msf{V}_{\pi/2}}$.

Since $\nu_{R}$ has order of exact fit $k$, by \eqref{E:exact.fit.then.on.diagnl.exact} it and hence $\mu$ satisfies \eqref{E:Phi.on.diagnl.exact}. Since the singular set 
of $\mu$ lies in $\msf{V}_{\pi/2}$, theorem \ref{T:spher.loc.sing.codim.nvar+1} applied to $(\mu, \Ss^{\msf{V}_{\pi/2}})$ implies \eqref{E:poz.measure.for.exact.fit.sings}.
  \end{proof}

  \begin{proof}[Proof of proposition \ref{P:aug.mean.sing.set.like.exact.fit}] 
Let $k = 0$. By \eqref{E:basic.props.of.mu.a}, $\mu_{a}$ satisfies \eqref{E:loc.measure.has.no.sings.near.T}. Since, as observed above, $\Pf_{0} = \T$, \eqref{E:no.severe.sings.in.Pk} holds with $k=0$. Therefore, the proposition holds for the case $k=0$. 

So let $k \in (0, n/2)$ and 
    \begin{equation*}
      a \in [0, n-2k) .
    \end{equation*} 
We show that there exists a measure of location on $S^{\nvar}$ with order of exactness of fit $k$ whose singular set is a subset of $\Ss_{a}^{\msf{V}_{\pi/2}}$. By \eqref{E:basic.props.of.mu.a}, $\mu_{a}$ satisfies \eqref{E:assume.Phi.S'.sym} and, by \eqref{E:mu.a.satisfies.E:D'.dense}, $\mu_{a}$ satisfies \eqref{E:D'.=.D.less.S}. Therefore, by lemma \ref{L:loose.exactness.of.location.fit}, 
it suffices to show that $\mu_{a}$ satisfies \eqref{E:no.severe.sings.in.Pk} and \eqref{E:exactness.of.fit.loose}.

Let $y_{0} \in S^{\nvar}$ be the augmentation point for $\mu_{a}$. Let $y_{n-k+1}, \ldots, y_{n} \in S^{\nvar}$ be arbitrary but fixed. Let $w := a y_{0} + y_{n-k+1} + \cdots + y_{n} \in \RR^{\nvar+1}$. Then
	\begin{equation}   \label{E:a+k.w.bnd}
		|w| \leq a + k < B := n-k.
	\end{equation}
Thus, for any $y \in S^{\nvar}$, we have
	\begin{equation} \label{E:|w+By|>0}
		\bigl| a y_{0} + (n-k) y + y_{n-k+1} + \cdots + y_{n} \bigr| 
			= | w + By | \geq B - |w| > 0. 
	\end{equation}
So, by \eqref{E:Sa=Sa'} and \eqref{E:S'a.D'a.defns}, $\mu_{a}$ has no singularities 
in $\Pf_{k}$. Thus, $\Phi := \mu_{a}$ satisfies \eqref{E:no.severe.sings.in.Pk}. In particular, $\mu_{a}$ is defined and continuous on $\Pf_{k}$. We will show that 
$\mu_{a}(y, \ldots, y, y_{n-k+1}, \ldots, y_{n}) \cdot y > -1$ for every 
$y \in S^{\nvar}$. Since $y_{n-k+1}, \ldots, y_{n}$ are arbitrary, that would imply \eqref{E:exactness.of.fit.loose} holds for $\mu_{a}$ so, by lemma \ref{L:loose.exactness.of.location.fit}  there exists a measure of location on $S^{\nvar}$ with order of exactness of fit $k$ whose singular set is a subset of that of $\mu_{a}$.

Let $y \in S^{\nvar}$. We can write $y = \alpha w + z$, where $\alpha \in \RR$ ands $z \in \RR^{\nvar+1}$ with $z \perp w$. 
(So if $w = 0$, $z = y$.) Let 
    \begin{equation*}
      A = |w|^{2} \geq 0. 
    \end{equation*}
Thus, $\alpha^{2} A + |z|^{2} = 1$, so
	\begin{equation}  \label{E:norm.alpha.sqrt.A}
		1 = |y| = | \alpha w + z | \geq |\alpha| \sqrt{A},
	\end{equation}
and
	\begin{equation*}
		1 \geq \mu_{a}(y, \ldots, y, y_{n-k+1}, \ldots, y_{n}) \cdot y 
		  = |w + B y|^{-1} (w + B y) \cdot y
		    = \frac{\alpha A + B}{\sqrt{A + 2 \alpha AB + B^{2}}}.
	\end{equation*}
Thus, we need to show 
	\begin{equation}  \label{E:A.B.sqrt.thingy.>.-1}
		f(\alpha) := \frac{\alpha A + B}{\sqrt{A + 2 \alpha AB + B^{2}}} > -1.
	\end{equation}
By \eqref{E:|w+By|>0}, $\bigl| f(\alpha) \bigr| > 0$. Now,
	\begin{equation*}
		f(\alpha) = \frac{\alpha A + B}{\sqrt{(\alpha A + B)^{2} + (A - \alpha^{2} A^{2})}}.
	\end{equation*}
If $A=0$, then $f(\alpha) = B/\sqrt{B^{2}} = 1 > -1$, since $B := n-k > 0$. Thus,  \eqref{E:A.B.sqrt.thingy.>.-1} holds if $A=0$.
\eqref{E:A.B.sqrt.thingy.>.-1} also holds if $\alpha A + B \geq 0$, 
in which case $f(\alpha) > 0 > -1$; or if $A - \alpha^{2} A^{2} > 0$, 
in which case $f(\alpha) \geq - \bigl| f(\alpha) \bigr| > -1$. 

By \eqref{E:norm.alpha.sqrt.A}, 
	\begin{equation*}
		A - \alpha^{2} A^{2} = A(1 - \alpha^{2} A)
		  = A \Bigl[ |y|^{2} - \bigl( |\alpha| \sqrt{A} \bigr)^{2} \Bigr] 
		    \geq 0 
	\end{equation*}
As just observed, if this inequality is strict, then \eqref{E:A.B.sqrt.thingy.>.-1} is proved. 
As already observed, if $A = 0$, \eqref{E:A.B.sqrt.thingy.>.-1} is proved again. 

So assume $A - \alpha^{2} A^{2} = 0$ but $A > 0$. ($A := |w|^{2}$, so $A \geq 0$.) 
Then $1 - \alpha^{2} A = 0$ (so $z = 0$). 
Hence, $\alpha A = \pm \sqrt{\alpha^{2} A} \sqrt{A} = \pm \sqrt{A}$. Thus, by \eqref{E:a+k.w.bnd},
$\alpha A + B \geq B - \sqrt{A} = (n-k) - |w| > 0$ and \eqref{E:A.B.sqrt.thingy.>.-1} is proved: $f(\alpha) = (\alpha A + B)/|\alpha A + B| = 1 > -1$. ($A - \alpha^{2} A^{2} = 0$.) This concludes the proof of \eqref{E:exactness.of.fit.loose} when $a \in [0, n - 2k)$. 

Next, \emph{suppose} $a \in [n-2k, n)$. Then $k > 0$. Recall that $y_{0}$ is the augmentation point for $\mu_{a} = \mu_{y_{0}, a}$. Let 
    \begin{equation*}
         v \in S^{\nvar} \text{ be perpendicular to } y_{0}.
    \end{equation*}
\emph{First, SUPPOSE $k$ IS EVEN.} Notice that $a \in [n-2k, n)$ implies that 
$\bigl| a - (n-k) \bigr| \leq k$. Recycle ``$A$'', redefining it as follows. 
$A := \tfrac{a - (n-k)}{k}$. So $1 - A^{2} \geq 0$. Let 
    \begin{equation} \label{E:w.sqrt.v.k.frac}
       w := \sqrt{1 - A^{2}} \; v.
    \end{equation} 
So $-A y_{0} \pm w \in S^{\nvar}$. Set half of $y_{n-k+1}, \ldots, y_{n}$ equal 
to $-A y_{0} + w$ and half equal to $-A y_{0} - w$. Set $y = -y_{0}$. Then
	\begin{equation} \label{E:Pk.singularity}
		(y, \ldots, y, y_{n-k+1}, \ldots, y_{n}) \in \Pf_{k} 
		   \text{ and } a y_{0} + (n-k) y + \sum_{i=n-k+1}^{n} y_{i} = 0.
	\end{equation}
So, by \eqref{E:Sa=Sa'} and \eqref{E:S'a.D'a.defns}, $\mu_{a}$ has a singularity in $\Pf_{k}$. Note that multiple singularities can be obtained this way since we can set $y_{n-k+1}, \ldots, y_{n}$ equal to $-A y_{0} + w$ and half equal to $-A y_{0} - w$ in more than one way.

\emph{LET $k > 1$ BE ODD.} Then $k \geq 3$, so $n > 6$ by \eqref{E:n.nvar.k.sizes}, and $n - 2k + 2 < n$. First, 
    \begin{equation*}
       \text{Suppose that } a \in [n-2k+2, n). 
    \end{equation*}
We show 
	\begin{equation}  \label{E:|a-(n-k+1)|}
		\bigl| a - (n-k+1) \bigr| \leq k-1,
	\end{equation} 
so $1 - \left[ \tfrac{a - (n-k+1)}{k-1} \right]^{2} \geq 0$. To prove \eqref{E:|a-(n-k+1)|}, 
first suppose $a - (n-k+1) \geq 0$, then $a < n$ implies 
$\bigl| a - (n-k+1) \bigr| = a - (n-k+1) < k-1$. Next, suppose $a - (n-k+1) < 0$. 
Then $\bigl| a - (n-k+1) \bigr| = (n-k+1) - a \leq (n-k+1) - (n-2k+2) = k-1$. So \eqref{E:|a-(n-k+1)|} holds. 

We now proceed in a fashion similar to that we used for the $k$ even case. Let $v \in S^{\nvar}$ be perpendicular to $y_{0}$ and let $w$ be defined as in \eqref{E:w.sqrt.v.k.frac}. Set $y = -y_{0}$. This time let 
    \begin{equation} \label{E:A.k-1.denom.defn}
       A := \tfrac{a - (n-k+1)}{k-1}, \text{ so } |A| \leq 1.
    \end{equation}
Then $|-A y_{0} \pm w| = 1$. Since $k \geq 3$, we have $k-1\geq 2 > 0$ is even. Set half of $y_{n-k+1}, \ldots, y_{n-1} \in S^{\nvar}$ equal to $-A y_{0} + w$ and half equal to $-A y_{0} - w$. Thus, $y_{n-k+1}, \ldots, y_{n-1} \in S^{\nvar}$. Set $y_{n} := y := -y_{0}$. Once again \eqref{E:Pk.singularity} holds.

Now suppose 
    \begin{equation*}
        a \in [n-2k, n-2k+2) .
    \end{equation*}
First, we prove
	\begin{equation*}  \label{E:|a-(n-k-1)|}
		\bigl| a - (n-k-1) \bigr| \leq k-1,
	\end{equation*} 
so $1 - \left[ \tfrac{a - (n-k-1)}{k-1} \right]^{2} \geq 0$. Since $k \geq 3$, 
we have $a - (n-k-1) < (n-2k+2) - (n-k-1) = -k+3 \leq 0 < k-1$. 
Similarly, $(n-k-1) - a \leq (n-k-1) - (n-2k) = k-1$. 

This time let $A := \tfrac{a - (n-k-1)}{k-1}$. Now proceed in the usual way: 
Let $w := \sqrt{1 - A^{2}} \, v$. 
Set half of $y_{n-k+1}, \ldots, y_{n-1}$ equal 
to $-A y_{0} + w$ and half equal to $-A y_{0} - w$. Set $y := -y_{0}$ and $y_{n} := +y_{0}$. Once again \eqref{E:Pk.singularity} holds.

Finally, WE CONSIDER THE CASE 
    \begin{equation*}
       k=1 \text{ and } a \in [n-2k, n) = [n-2, n).
    \end{equation*} 
Let $m := (n-1)^{2}$. Recall we still assume \eqref{E:n>2,nvar>0}: $n > 2$. 
So $m \geq 4$ and $1 - m - a^{2} < 0$. 
Let 
   \begin{equation*}
      c := \frac{1 - m - a^{2}}{2(n-1) a} < 0.
   \end{equation*}
Since $a \in [n-2, n)$ we have $1 \geq \bigl[ (n-1)-a \bigr]^{2}$. 
Thus, $1 - m - a^{2} \geq -2(n-1)a$. Dividing both sides by $2(n-1)a > 0$, we get that $c \geq -1$. Since $c < 0$ this means 
	\begin{equation}  \label{E:|c|.leq.1}
		|c| \leq 1.
	\end{equation}
 In particular, $m (1 - c^{2}) \geq 0$.

Now,
	\begin{equation*}
		\frac{4 a^{2} m - (1 - 2m - 2 a^{2} + m^{2} + 2 a^{2} m + a^{4})}{4 a^{2}} 
		  = m \left( 1 - \frac{[ 1 - m - a^{2} ]^{2}}{4m a^{2}} \right) = m (1-c^{2}) .
	\end{equation*}
Therefore,
	\begin{align}  \label{E:4a.sqrd.(n-1).sqrd.etc}
		4 a^{2} \bigl[ 1 - m (1-c^{2}) \bigr] &= 
		          4 a^{2} - 4 a^{2} m \notag \\
		               &\qquad \qquad 
		                 + ( 1 - 2m - 2 a^{2} + m^{2} + 2 a^{2} m + a^{4} ) \notag \\
		     &= 1 - 2m + 2 a^{2} + m^{2} - 2 a^{2} m + a^{4} \\
		     &= (1 - m)^{2} 
		          + 2 a^{2} (1 - m) + a^{4} \notag \\
		     &= (1 - m + a^{2})^{2} \geq 0. \notag 
	\end{align}
Thus,
	\begin{equation}  \label{E:(n-1)c+ a}
		      \bigl[ (n-1)c+ a \bigr]^{2} 
		      = \left( \frac{1 - m + a^{2}}{2a} \right)^{2} = 1 - m (1-c^{2}).
	\end{equation}
\eqref{E:4a.sqrd.(n-1).sqrd.etc} also implies
	\begin{equation} \label{E:(n-1)sqrd.(1 - c.sqrd).leq.1}
		m (1 - c^{2}) \leq 1.
	\end{equation}
	
Now let $v \in S^{\nvar}$ be perpendicular to $y_{0}$. Recall that, by \eqref{E:|c|.leq.1}, $|c| \leq 1$. Let
	\begin{equation*}
		y := c y_{0} + \sqrt{1 - c^{2}} \; v \text{ and }
			y_{n} := \pm \sqrt{1 - m (1-c^{2})} \; y_{0} - (n-1) \sqrt{1 - c^{2}} \; v .
	\end{equation*}
Since $m := (n-1)^{2}$, we have $y, y_{n} \in S^{\nvar}$.  Consider the point 
$(y, \ldots, y, y_{n}) \in \Pf_{k}$. (So again $y$ is repeated $n-k = n-1$ times.) 
Now,
$a y_{0} + (n-1) y + y_{n} = a y_{0} + (n-1) c y_{0} \pm \sqrt{1 - m (1-c^{2})} \; y_{0}$. Therefore, by choosing the sign in $y_{n}$ appropriately, \eqref{E:(n-1)c+ a} tells us that
	\begin{equation*}
		a y_{0} + (n-1) y + y_{n} =  0.
	\end{equation*}
I.e., $(y, \ldots, y, y_{n}) \in \Pf_{k}$ is a singularity of $\mu_{a}$ when $k=1$ 
and $a \in [n-2, n)$.
  \end{proof}

  \begin{proof}[proof of lemma \ref{L:not.both.antipodals.in.Ma}]
Suppose $z_{\beta} = - z_{\alpha}$  and $z_{\alpha} \in M_{a}$. 
We show $z_{\beta} \notin M_{a}(x)$. WLOG $\alpha = 1$, $\beta = 2$. 
Suppose $z_{2} \in M_{a}(x)$. We will get a contradiction.

By \eqref{E:Ga.defn}, 
    \begin{equation}  \label{E:Ga.1,2}
      G_{a}(v; x) = G_{a}(v) = a \angle(v, y_{0}) + \ell_{1} \angle(z_{1},v) 
        + \ell_{2} \angle(z_{2},v) 
          + \sum_{\omega=3}^{t} \ell_{\omega} \angle (z_{\omega}, v) ,
            \quad (v \in S^{1}) .
    \end{equation}
(The summation at the end is 0 if $t = 2$.) We will differentiate this w.r.t.\ $v$ at $v = z_{1}$ and $v = z_{2}$. So the $z_{i}$'s remain fixed while $v$ varies. Let $w \in S^{1}$ be perpendicular to $z_{1}$, which means $w \perp z_{2}$ as well. Let $\gamma = 1, 2$. 
Let $\phi : (-\pi, \pi] \to S^{1}$ be a parametrization of $S^{1}$ by arc length 
with $\phi(0) = z_{\gamma}$ and $\phi(\pi/2) = w$. 
Thus, $\phi(\pi) = z_{3-\gamma}$. Let $z \in M_{a}(x)$. 
If $z \neq \pm z_{\gamma}$, then, by \eqref{E:angle.deriv.sign}, 
$(d/ds) \angle \bigl( z,\phi(s) \bigr) \restriction_{s=0}$ exists and equals $-sign(z \cdot w)$. 

Since $z_{\gamma} \in M_{a}(x)$, as $v = \phi(s) \in S^{1}$ approaches $z_{\gamma}$ from either direction, $G_{a}(v ;x)$ must eventually be non-increasing. In fact, it must strictly decrease as $s \uparrow 0$ (this corresponds 
to $\tfrac{d-}{dv} G(v;x) \restriction_{v=z} < 0$). Because, if $G_{a}(v ;x)$ were constant on one side of $z$ or the other then $M_{a}(x)$ would contain points not in $Y(x)$, contradicting \eqref{E:Ma(x).in.Y}. 
Similarly, $G_{a}\bigl ( \phi(s); x \bigr)$ must strictly increase as $s > 0$ 
pulls away from $0$ ($\tfrac{d+}{dv} G(v;x) \restriction_{v=z} > 0$). 
To sum up, if $\gamma = 1$ or 2, so $z_{\gamma} \in M_{a}(x)$, we have
    \begin{equation}  \label{E:one.sided.derivs.of.Ga}
      \frac{d_{+}}{ds} G_{a} \bigl ( \phi(s); x \bigr) \restriction_{s=0} \, > 0
        \text{ and } \frac{d_{-}}{ds} G_{a} \bigl ( \phi(s); x \bigr) \restriction_{s=0} \, < 0 .
    \end{equation}

First, assume neither $z_{1}$ nor $z_{2}$ equals $y_{0}$. Since $z_{2} = - z_{1}$, if $\gamma = 1$ or 2, by \eqref{E:angle.deriv.sign} the only terms in \eqref{E:Ga.1,2} which are not differentiable in $v$ 
at $v = z_{\gamma} \in M_{a}$ are $\ell_{\gamma} \angle(v, z_{\gamma})$ and 
$\ell_{3-\gamma} \angle(v, z_{3-\gamma})$. Take $\gamma = 1$. Then, by \eqref{E:Ga.1,2}, \eqref{E:one.sided.derivs.of.Ga}, \eqref{E:angle.deriv.sign}, and \eqref{E:one-sided.angle.derivs}, 
    \begin{align*}
      0 &<  \frac{d_{+}}{ds} G_{a} \bigl ( \phi(s); x \bigr) \restriction_{s=0} \, 
        = - a \, sign(y_{0} \cdot w) 
          + \ell_{1} \frac{d_{+}}{ds} \angle \bigl( z_{1}, \phi(s) \bigr) \restriction_{s=0} \\
         & \qquad + \ell_{2} \frac{d_{+}}{ds} \angle \bigl( z_{2}, \phi(s) \bigr) 
           \restriction_{s=0} \, 
              - \sum_{\omega=3}^{n} \ell_{\omega} \, sign(z_{\omega} \cdot w) \\
         &= - a \, sign(y_{0} \cdot w) + (\ell_{1} - \ell_{2}) 
              - \sum_{\omega=3}^{n} \ell_{\omega} \, sign(z_{\omega} \cdot w) .
    \end{align*}
Similarly,
    \begin{equation*}
      0 >  \frac{d_{-}}{ds} G_{a} \bigl ( \phi(s); x \bigr) \restriction_{s=0} \, =
        - a \, sign(y_{0} \cdot w) - (\ell_{1} - \ell_{2}) 
            - \sum_{\omega=3}^{n} \ell_{\omega} \, sign(z_{\omega} \cdot w) .
    \end{equation*}
It follows that $\ell_{1} > \ell_{2}$. 

Now take $\gamma = 2$ so now $\phi(0) = z_{\gamma} = z_{2}$. 
Then we conclude $\ell_{2} > \ell_{1}$. Contradiction. That proves lemma for the case when neither $z_{1}$ nor $z_{2}$ equal $y_{0}$.

The case $z_{\gamma} = y_{0}$ is similar. If $\gamma = 1$ then      \begin{align*}
      0 &<  \frac{d_{+}}{ds} G_{a} \bigl ( \phi(s); x \bigr) \restriction_{s=0} \, 
        = (a + \ell_{1} )\frac{d_{+}}{ds} \angle(z_{1},v) \restriction_{v=z_{1}} \\
         & \qquad - \ell_{2} \frac{d_{+}}{ds} \angle(z_{2},v) \restriction_{v=z_{1}} 
              - \sum_{\omega=3}^{n} \ell_{\omega} \, sign(z_{\omega} \cdot w) \\
         &= (a + \ell_{1} - \ell_{2}) 
              - \sum_{\omega=3}^{n} \ell_{\omega} \, sign(z_{\omega} \cdot w)  
              \intertext{ and } 
       0 &>  \frac{d_{-}}{ds} G_{a} \bigl ( \phi(s); x \bigr) \restriction_{s=0} \, 
          = -(a + \ell_{1} - \ell_{2}) 
            - \sum_{\omega=3}^{n} \ell_{\omega} \, sign(z_{\omega} \cdot w) .
        \end{align*} 
We conclude $a + \ell_{1} - \ell_{2} > 0$. With $\gamma = z_{2}$ we get the opposite. This contradiction concludes the proof.
 \end{proof}

  \begin{proof}[Proof of lemma \ref{L:D'.is.dense.for.aug.median}]
Let $z_{1}, \ldots, z_{t} \in S^{1}$ be the distinct locations of the points 
$y_{1}, \ldots, y_{n}$. We only count observations, i.e.\ $y_{1}, \ldots, y_{n}$, not the augmentation point $y_{0}$, but it is possible that 
$y_{0} \in \{ z_{1}, \ldots, z_{t} \}$. 
Let $\ell_{\alpha} \in [1,n]$ be the multiplicity 
of $z_{\alpha}$ ($\alpha=1, \ldots, t$) 
so $\ell_{1} + \cdots + \ell_{t} = n$. Call $\{ z_{1}, \ldots, z_{t} \}$ the ``support'' of $x$. 
If for every $j \in \NN_{n}$ we have $y_{j} \neq y_{0}$, define $z_{0} = y_{0}$. 

Suppose $x = ( y_{1}, \ldots, y_{n} ) \in \D \setminus \D'$, 
then there exist $z_{i}, z_{j} \in M_{a}(x) \subset Y(x)  := \{ y_{0}, y_{1}, \ldots, y_{n} \}$,
but $z_{i} \neq z_{j}$. Let 
    \begin{equation*}
      g := g(x) := \min_{v \in S^{1}} G_{a}(v;x) . 
    \end{equation*}
Choose $z_{i}, z_{j} \in M_{a}(x)$ to have minimal separation in terms of arc length of any pair of points in $M_{a}(x)$. Thus, $t > 1$ and $G_{a}(z_{j};x) = G_{a}(z_{i};x) = g$. Either $i$ or $j$ may be 0. WLOG $j = 1$. $z_{i} \neq z_{1}$ and, by lemma \ref{L:not.both.antipodals.in.Ma}, $z_{i} \neq -z_{1}$. We may always take 
$z_{1} \neq y_{0}$ because if $z_{1} = y_{0}$ we may simply swap the indices $i$ and 1.

Let $u, v \in S^{1}$ with $v \notin \{u,-u\}$. The two arcs in $S^{1}$ joining $u$ and $v$ have positive length and, since $v \neq u$ and $v \neq -u$, one arc has length $< \pi$, the other $> \pi$. Say that a point of $S^{1}$ lies ``between $u$ and $v$'' if it lies in the shorter arc. Thus, $u$ and $v$ lie between $u$ and $v$. A point lying in the interior of the shorter arc lies ``strictly between'' $u$ and $v$ so neither $u$ nor $v$ lie strictly between $u$ and $v$

Let 
$B := \bigl\{ z \in Y(x) \cup (-Y(x)) : z \text{ lies strictly between } z_{i} \text{ and } z_{1} \bigr\}$. (See \eqref{E:Y(x).defn}.) \emph{Claim:} $B \neq \varnothing$. 
For suppose $B = \varnothing$. Then obviously no point of $Y(x)$ or $-Y(x)$ lies strictly between $z_{i}$ and $z_{1}$. If $z \in Y(x) \cup (-Y(x))$ lay strictly between $-z_{i}$ and $-z_{1}$ then $-z \in Y(x) \cup (-Y(x))$ and lies between $z_{i}$ and $z_{1}$ so that is also ruled out. We conclude that 
$B = \varnothing$ implies any $z \in Y(x) \cup (-Y(x))$ lies between $z_{1}$ and $-z_{i}$ or between $z_{i}$ and $-z_{1}$. (Draw a picture.)

Continue to suppose $B = \varnothing$. WLOG $z_{1} = (1,0)$. Measure angles counterclockwise from $z_{1}$. Then, since $z_{i} \neq - z_{1}$, we may assume 
$\theta := \angle(z_{i}, z_{1}) \in (0, \pi)$. Let $v = (\cos \phi, \sin \phi) \in S^{1}$ lie strictly between $z_{i}$ and $z_{1}$. Thus, we may take $\phi \in (0, \theta)$. 
Take $w := w(v) := \bigl( \cos (\phi+\pi/2), \sin (\phi+\pi/2) \bigr) \in S^{1}$, 
so $w \perp v$. 
If $z = (\cos \zeta, \sin \zeta)  \in S^{1}$, then 
$z \cdot w = \cos (\zeta - \phi - \pi/2) = -\sin(\zeta - \phi)$. Suppose $z$ lies between $z_{1}$ and $-z_{i}$. We may take $\zeta \in [-\theta, 0]$. I.e., both $\zeta$ and $\phi$ lie in the interval $[-\theta, \theta]$. Hence, $\zeta - \phi \in [-\pi, 0]$ and 
$z \cdot w = -\sin(\zeta - \phi) \in [0,1]$. I.e., for $z$ between $z_{1}$ and $-z_{i}$, the sign of $z \cdot w(v)$ is constant in $v$ strictly between $z_{i}$ and $z_{1}$. 
Similarly, for $z$ between $-z_{1}$ and $z_{i}$ the sign of $z \cdot w(v)$ is constant in  in $v$ strictly between $z_{i}$ and $z_{1}$. To sum up, if $B = \varnothing$ then for every $z \in Y(x) \cup (-Y(x))$, the sign of $z \cdot w(v)$ is constant in  in $v$ strictly between $z_{i}$ and $z_{1}$. 

It follows from \eqref{E:angle.deriv.sign} that if $B = \varnothing$ then the derivative 
of $G_{a}(v; x)$ is constant in $v$ between $z_{i}$ and $z_{1}$. But $v=z_{i}$ and $v=z_{1}$ are both minimizers of $G_{a}(v; x)$. Therefore, the derivative of $G_{a}(v; x)$ between $z_{i}$ and $z_{1}$ is 0, i.e., every point between between $z_{i}$ and $z_{1}$ minimizes $G_{a}(\cdot; x)$. This contradicts \eqref{E:Ma(x).in.Y} and the claim that $B \neq \varnothing$ is proved. 
 
Let $z \in B$. So $z \notin \{ z_{1}, z_{i} \}$. Since $z_{i}, z_{j}$ have minimal separation in terms of arc length of any pair of points in $M_{a}(x)$, we have 
$z \notin M_{a}(x)$. There exists $\alpha = 1, \ldots, t$ s.t.\ $z = \pm z_{\alpha}$.  
Let $\phi : (-\pi, \pi] \to S^{1}$ parametrize $S^{1}$ by arc length 
with $\phi(0) = z_{\alpha}$ and $\phi^{-1}(z_{1}) < 0$. So $\phi^{-1}(z_{i}) > 0$. 
Let $\beta = 0, 1, \ldots, t$. If $z_{\beta} = \phi(s)$ with $s \in (-\pi, 0)$ say that $\beta$ and $z_{\beta}$ are ``early''. For example, $\beta = 1$ is early. If $z_{\beta} = \phi(s)$ with $s \in (0,\pi)$ say that $\beta$ is ``late''. For example, $\beta = i$ is late. Let $E(\beta) := 1$ or 0 according as $\beta$ is early or late. 
Thus, if $\beta = 0, \ldots, t$ is neither early nor late, then we must have $z_{\beta} = \pm z_{\alpha}$. If $-z_{\alpha} \in \{ z_{1}, \ldots, z_{t} \}$ 
let $\ell_{-\alpha}$ be the corresponding weight. 
If $-z_{\alpha} \notin \{ z_{1}, \ldots, z_{t} \}$, let $\ell_{-\alpha} := 0$. 
  
For $s \in \RR$ close to 0, define $x(s) \subset S^{1}$ to be the data set with support 
$\{ z_{1}(s), \ldots, z_{t}(s) \}$, where $z_{\beta}(s) = z_{\beta}$
if $\beta \neq \alpha$, and $z_{\alpha}(s) = \phi(s)$. We assume $|s|$ is small enough that 
$\{ z_{1}(s), \ldots, z_{t}(s) \}$ are distinct. Assign to $z_{\beta}(s)$ the same multiplicity 
$\ell_{\beta}$ as before. 

We analyze $M_{a} \bigl( x(s) \bigr)$ for small $|s|$. 
By \eqref{E:Ga.defn}, $G_{a}(v; x)$ is continuous in $x$. Therefore, 
if $z_{\beta} \notin M_{a}(x)$ then for $|s|$ sufficiently small 
$z_{\beta}(s) \notin M_{a} \bigl( x(s) \bigr)$. 

Let $z_{\beta} \in M_{a}(x) \setminus \{ \pm z_{\alpha} \}$. ($z \notin M_{a}(x)$ but $\pm z_{\alpha}$ might equal $-z$ and it is possible that $-z \in M_{a}(x)$.)
Then $z_{\beta}(s) = z_{\beta}$. Notice that 
$\angle \bigl(  z_{\beta} , z_{\alpha}(s) \bigr) = \angle \bigl(  z_{\beta} , z_{\alpha} \bigr) + \bigl( 2 E(\beta) - 1 \bigr) s$. Then
    \begin{align}   \label{E:Ga(z.beta(s),x(s)}
      G_{a} \bigl( z_{\beta}(s); x(s) \bigr) 
        &= G_{a} \bigl( z_{\beta}; x(s) \bigr) \notag \\
        &= a \angle(z_{\beta}, y_{0}) 
          + \ell_{\alpha} \angle( z_{\beta}, z_{\alpha}(s) )  \notag \\
        & \qquad + \ell_{-\alpha} \angle(z_{\beta}, -z_{\alpha}) 
          + \sum_{\gamma \text{ early}} 
            \ell_{\gamma} \angle( z_{\gamma}, z_{\beta} ) 
                + \sum_{\gamma \text{ late}} \ell_{\gamma} 
                  \angle( z_{\gamma}, z_{\beta} ) \notag \\
        &= a \angle(z_{\beta}, y_{0}) + \ell_{\alpha} \angle( z_{\beta}, z_{\alpha} )
           +  \ell_{\alpha} \bigl( 2 E(\beta) - 1 \bigr) s  \\
                   & \qquad + \ell_{-\alpha} \angle(z_{\beta}, -z_{\alpha}) 
          + \sum_{\gamma \text{ early}} 
            \ell_{\gamma} \angle( z_{\gamma}, z_{\beta} ) 
                + \sum_{\gamma \text{ late}} \ell_{\gamma} 
                  \angle( z_{\gamma}, z_{\beta} ) \notag \\
        &= G_{a}(z_{\beta}; x) + \ell_{\alpha} \bigl( 2 E(\beta) - 1 \bigr) s \notag \\
        &= g + \ell_{\alpha} \bigl( 2 E(\beta) - 1 \bigr) s , \notag
    \end{align}
since $z_{\beta} \in M_{a}(x)$. This implies the following. Suppose \emph{$s$ is negative} but close to 0. Then, first, if $\beta$ is early then 
$z_{\beta} \in M_{a} \bigl( x(s) \bigr)$ and 
$G_{a} \bigl( z_{\beta}(s); x(s) \bigr) < g$. And second, if $\beta$ is late then 
$z_{\beta} \notin M_{a} \bigl( x(s) \bigr)$ and 
$G_{a} \bigl( z_{\beta}(s); x(s) \bigr) > g$. 

If \emph{$s$ is positive} but close to 0 we have the opposite: If $\beta$ is late then $z_{\beta} \in M_{a} \bigl( x(s) \bigr)$ and 
$G_{a} \bigl( z_{\beta}(s); x(s) \bigr) < g$. If $\beta$ is early then 
$z_{\beta} \notin M_{a} \bigl( x(s) \bigr)$ and 
$G_{a} \bigl( z_{\beta}(s); x(s) \bigr) > g$. 

Recall that $z = \pm z_{\alpha}$ is a point in $B$. First assume 
    \begin{equation*}
      \text{Neither $z$ nor $-z$ is in } M_{a}(x). 
    \end{equation*}
Then every point of $M_{a}(x)$ is either early or late and we have
    \begin{align} \label{E:precisely.in.Ma(x(s))}
      &\text{If $s$ is negative but close to 0 then the points of  } \notag
        M_{a} \bigl( x(s) \bigr) \text{ are precisely } \\ 
      & \qquad \text{ the early points of $M_{a}(x)$ and } \\
      &\text{If $s$ is positive but close to 0 then the points of  } \notag
        M_{a} \bigl( x(s) \bigr) \text{ are precisely } \\ \notag
      & \qquad \text{ the late points of $M_{a}(x)$. } \notag
    \end{align}

Now assume 
    \begin{equation*}
      z_{\alpha} \in M_{a}(x).
    \end{equation*}
Since $z \notin M_{a}(x)$ we must have $z_{\alpha} = -z$. We show that 
$z_{\alpha}(s) \notin M_{a} \bigl( x(s) \bigr)$ so 
\eqref{E:precisely.in.Ma(x(s))} \emph{continues to hold}.

Consider $G_{a} \bigl( z_{\alpha}(s); x(s) \bigr)$. 
Since $z \notin M_{a}(x)$, we must have $z_{\alpha} = -z \notin B$, 
so $z \neq z_{\alpha}$ and $z_{\alpha} = \phi(\pi)$. 
To compute $G_{a} \bigl( z_{\alpha}(s); x(s) \bigr)$ we need to compute 
$\angle \bigl( y_{0}, z_{\alpha}(s) \bigr)$ for $|s|$ small:
    \begin{equation}  \label{E:angle(y0,z.alpha(s))} 
      \angle \bigl( y_{0}, z_{\alpha}(s) \bigr) =
        \begin{cases}
             \angle ( y_{0}, z_{\alpha} ) + \bigl( 2 E(0) - 1 \bigr) s, 
               &\text{ if } z_{\alpha} \notin \{ \pm y_{0} \}, \\
             |s| = \angle ( y_{0}, z_{\alpha} ) + |s|, &\text{ if } y_{0} = z_{\alpha}, \\
             \pi - |s| = \angle ( y_{0}, z_{\alpha} ) - |s|, &\text{ if } y_{0} = -z_{\alpha} .
        \end{cases}
    \end{equation}
We summarize this by writing 
$\angle \bigl( y_{0}, z_{\alpha}(s) \bigr) = \angle ( y_{0}, z_{\alpha} ) + \Delta_{0} s$. 
So $\Delta_{0} = \Delta_{0}(s) = \Delta_{0}(s;y_{0}) = \pm 1$. Note also that 
$\angle \bigl( z_{\alpha}(s), -z_{\alpha} \bigr) = \angle(z_{\alpha}, -z_{\alpha}) - |s|$. Similarly, 
$\angle \bigl( z_{\alpha}(s), z_{\alpha}(s) ) =  \angle(z_{\alpha}, z_{\alpha}) = 0$. We have then,
    \begin{align}   \label{E:Ga(z.alpha(s),x(s)}
      G_{a} \bigl( z_{\alpha}(s); x(s) \bigr) 
        &= a \angle \bigl( z_{\alpha}(s), y_{0} \bigr) 
          + \ell_{\alpha} \angle \bigl( z_{\alpha}(s), z_{\alpha}(s) \bigr)  
        + \ell_{-\alpha} \angle \bigl( z_{\alpha}(s), -z_{\alpha} \bigr) \notag \\
          & \qquad + \sum_{\gamma \text{ early}} \ell_{\gamma} 
            \angle \bigl(  z_{\gamma}, z_{\alpha}(s) \bigr) 
                + \sum_{\gamma \text{ late}} \ell_{\gamma} 
                  \angle \bigl( z_{\gamma}, z_{\alpha}(s) \bigr) \notag \\
        &= a \angle(z_{\alpha}, y_{0}) + a \Delta_{0}(s) \, s 
          + \ell_{\alpha} \angle( z_{\alpha}, z_{\alpha} )
        + \ell_{-\alpha} \angle(z_{\alpha}, -z_{\alpha}) - \ell_{-\alpha} |s|  \\
          & \qquad + \sum_{\gamma \text{ early}} \ell_{\gamma} 
            \bigl[ \angle( z_{\gamma}, z_{\alpha} ) + s \bigr]
                + \sum_{\gamma \text{ late}} \ell_{\gamma} 
                  \bigl[ \angle( z_{\gamma}, z_{\alpha} ) - s \bigr] \notag \\
        &= G_{a}(z_{\alpha}; x) 
          + \left( a \Delta_{0}(s) - sign(s) \, \ell_{-\alpha} 
            + \sum_{\gamma \text{ early}} \ell_{\gamma} 
              - \sum_{\gamma \text{ late}} \ell_{\gamma} \right) \notag s \\
        &= g + \left( a \Delta_{0}(s) - sign(s) \, \ell_{-\alpha} 
            + \sum_{\gamma \text{ early}} \ell_{\gamma} 
              - \sum_{\gamma \text{ late}} \ell_{\gamma} \right) s \notag .
    \end{align}
since $z_{\alpha} \in M_{a}(x)$, by assumption.

Suppose
     \begin{equation*}
       z_{\alpha} = - y_{0} \in M_{a}(x) .
    \end{equation*}
Choose $w \in S^{1}$ s.t.\ $w \perp z_{\alpha}$ and the sign of $z_{\gamma} \cdot w$ is positive for late $\gamma$ and negative for early. Since $z_{\alpha} \in M_{a}(x)$, we must have 
$\tfrac{d_{-}}{dv} G_{a}(v;x) \restriction_{v=z_{\alpha}} < 0$ and 
$\tfrac{d_{+}}{dv} G_{a}(v;x)  \restriction_{v=z_{\alpha}} > 0$. Thus, applying \eqref{E:angle.deriv.sign} and \eqref{E:one-sided.angle.derivs} then moving the one-sided derivative at $z_{\alpha}$ over to the RHS we get:
    \begin{align}  \label{E:LEFT.RIGHT.ineqs.z.alpha.=.-y0}
       \emph{LEFT: } &a + \ell_{-\alpha} 
         + \sum_{\gamma \text{ early}} \ell_{\gamma} 
                  - \sum_{\gamma \text{ late}} \ell_{\gamma} < \ell_{\alpha} , \\
       \emph{RIGHT: }  -&a - \ell_{-\alpha} 
         + \sum_{\gamma \text{ early}} \ell_{\gamma} 
                  - \sum_{\gamma \text{ late}} \ell_{\gamma} > - \ell_{\alpha} \notag .
    \end{align}

By \eqref{E:angle(y0,z.alpha(s))}, $\Delta_{0}(s)= - sign(s)$. Therefore, the LHS of the first of these inequalities is the coefficient 
of $s$ in \eqref{E:Ga(z.alpha(s),x(s)} when $s < 0$. The RHS the first inequality is the coefficient of $s$ in \eqref{E:Ga(z.beta(s),x(s)} corresponding to early $\beta$. Multiplying both sides 
by $s < 0$ we get, by \eqref{E:Ga(z.alpha(s),x(s)} and \eqref{E:Ga(z.beta(s),x(s)},
    \begin{equation*}
      G_{a} \bigl( z_{\alpha}(s); x(s) \bigr) 
        = g + \left( a + \ell_{-\alpha} + \sum_{\gamma \text{ early}} \ell_{\gamma} 
          - \sum_{\gamma \text{ late}} \ell_{\gamma} \right) s
            > g + \ell_{\alpha} s = G_{a} \bigl( z_{\beta}(s); x(s) \bigr),
    \end{equation*}
providing $\beta$ is early and $s < 0$ is close to 0. 
Thus, $z_{\alpha}(s) \notin M_{a} \bigl( x(s) \bigr)$

The LHS of the second inequality in \eqref{E:LEFT.RIGHT.ineqs.z.alpha.=.-y0}, is the coefficient of $s$ in \eqref{E:Ga(z.alpha(s),x(s)} when $s > 0$. The RHS the second inequality is the coefficient of $s$ in \eqref{E:Ga(z.beta(s),x(s)} corresponding to late $\beta$. Proceeding as in the last paragraph we again find $z_{\alpha}(s) \notin M_{a} \bigl( x(s) \bigr)$.

Similarly, suppose 
    \begin{equation*}
       z_{\alpha} = y_{0} \in M_{a}(x) .
    \end{equation*} 
Let $w \in S^{1}$ be as before. Since $z_{\alpha} \in M_{a}(x)$, we must have $\tfrac{d_{-}}{dv} \restriction_{v=z_{\alpha}} < 0$ and 
$\tfrac{d_{+}}{dv} \restriction_{v=z_{\alpha}} > 0$. Thus, applying \eqref{E:angle.deriv.sign} and \eqref{E:one-sided.angle.derivs} we get:
    \begin{align*}
       \emph{LEFT: } -&a + \ell_{-\alpha} 
         + \sum_{\gamma \text{ early}} \ell_{\gamma} 
                  - \sum_{\gamma \text{ late}} \ell_{\gamma} < \ell_{\alpha} , \\
       \emph{RIGHT: }  &a - \ell_{-\alpha} 
         + \sum_{\gamma \text{ early}} \ell_{\gamma} 
                  - \sum_{\gamma \text{ late}} \ell_{\gamma} > - \ell_{\alpha} .
    \end{align*}

By \eqref{E:angle(y0,z.alpha(s))}, $\Delta_{0}(s)= sign(s)$. Therefore, the LHS of the first of these inequalities is the coefficient of $s$ in \eqref{E:Ga(z.alpha(s),x(s)} when $s < 0$. The RHS of the first inequality is the coefficient of $s$ in \eqref{E:Ga(z.beta(s),x(s)} corresponding to early $\beta$. 
The LHS of the second inequality is the coefficient of $s$ in \eqref{E:Ga(z.alpha(s),x(s)} when $s > 0$. The RHS the second inequality is the coefficient of $s$ in \eqref{E:Ga(z.beta(s),x(s)} corresponding to late $\beta$. Arguing as before we again conclude that 
$z_{\alpha}(s) \notin M_{a} \bigl( x(s) \bigr)$.

Finally, consider the case 
    \begin{equation*}
      z_{\alpha} \in M_{a}(x) \setminus \{ \pm y_{0} \} .
    \end{equation*} 
Then, from \eqref{E:angle(y0,z.alpha(s))}, we have 
$\Delta_{0}(s) = \bigl( 2 E(0) - 1 \bigr)$. Hence, the usual argument leads to the inequalities
    \begin{align*}
       \emph{LEFT: } &a \bigl( 2 E(0) - 1 \bigr) + \ell_{-\alpha} 
         + \sum_{\gamma \text{ early}} \ell_{\gamma} 
           - \sum_{\gamma \text{ late}} \ell_{\gamma} < \ell_{\alpha} , \\
       \emph{RIGHT: }  &a \bigl( 2 E(0) - 1 \bigr) - \ell_{-\alpha} 
          + \sum_{\gamma \text{ early}} \ell_{\gamma} 
            - \sum_{\gamma \text{ late}} \ell_{\gamma} > - \ell_{\alpha} .
    \end{align*}
By the usual argument and the results in the previous two cases, we conclude that when $z_{\alpha} \notin \{ \pm y_{0} \}$ and $s \neq 0$ but close to 0 then $z_{\alpha}(s) \notin M_{a} \bigl( x(s) \bigr)$. 
 
We conclude that by perturbing $z_{\alpha}$ by $s$ close to zero we get $z_{\alpha}(s) \notin M_{a} \bigl( x(s) \bigr)$. Hence, \eqref{E:precisely.in.Ma(x(s))} holds in general.

We observed above that $\beta = 1$ is early and $\beta = i$ is late. Hence, after the perturbation, exactly one of $z_{1}$ and $z_{i}$ is in $M_{a} \bigl( x(s) \bigr)$. In particular, $0 < \bigl| M_{a} \bigl( x(s) \bigr) \bigr| < \bigl| M_{a}(x) \bigr|$, 
where $|S|$ is the cardinality of a set $S$. But the important point is that with $|s|$ small, we get data sets $x_{-}(s)$ and $x_{+}(s)$ corresponding to $s<0$ and $s>0$, resp., s.t.\ $M_{a} \bigl( x_{+}(s) \bigr)$ and $M_{a} \bigl( x_{-}(s) \bigr)$ 
are nonempty disjoint subsets of, resp., the late and early sets of points in $x$. 
Importantly, $x_{\pm}(s) \to x$ as $s \to 0$. 

Now, apply this procedure recursively to each of $x_{-}(s)$ and $x_{+}(s)$. We eventually end up with at least two data sets $x_{minus}(\mbf{s})$ and 
$x_{plus}(\mbf{s}')$ depending on vectors $\mbf{s}$ and $\mbf{s}'$ close to 0 and having the following properties. First, $M_{a}\bigl( x_{minus}(\mbf{s}) \bigr)$ contains exactly one point in $S^{1}$, one of the early points in $M_{a}(x)$ and $M_{a}\bigl( x_{plus}(\mbf{s}) \bigr)$ contains exactly one point, one of the late points in $M_{a}(x)$. This means 
$x_{minus}(\mbf{s}), x_{plus}(\mbf{s}') \in \D'$. Second, 
as $\mbf{s}, \mbf{s}', s \to 0$ we have 
$x_{minus}(\mbf{s}), x_{plus}(\mbf{s}') \to x$, our original data set 
in $\D \setminus \D'$. This means $x$ is in the closure of $\D'$. 
Since $x \in \D \setminus \D'$ is arbitrary this shows $\D'$ is dense in $\D$. 

The singleton $M_{a}\bigl( x_{minus}(\mbf{s}) \bigr)$ is among the early points of $M_{a}(x)$ and $M_{a}\bigl( x_{plus}(\mbf{s}) \bigr)$ is among the late. The early and late sets are a positive distance from each other. It follows that $x$ is a singularity (w.r.t.\ $\D'$). 
  \end{proof}

  \begin{proof}[Proof of proposition \ref{P:severe.sings.of.ma.dist}] 
Let $k=1$. Let $c \in [n-3, n-2)$ be a fixed. Its value will be chosen later. First, suppose $a \in (0, c]$ not be an integer. Then
    \begin{equation*}
      \frac{dist_{n-2} \bigl(\Ss_{a}^{\msf{V}_{\pi/2}}, \Pf_{k} \bigr)}{n-2-a}
        \leq \frac{diam(\D)}{n-2-c} < \infty .
    \end{equation*}
Thus, 
    \begin{equation}  \label{E:a.in.(0, c]}
      \text{\eqref{E:dist.to.Pk.O(n-2-a)} holds for $a \in (0, c]$, non-integer.}
    \end{equation}

Recall \eqref{E:ma.n.k.a.sizes}. Suppose
    \begin{equation} \label{E:a.tween.c.and.n-2}
      n-3 \leq c < a < n-2 .
    \end{equation}
Let  
    \begin{equation*}
      y_{0} := (-1, 0).
    \end{equation*}
Specify positions on $S^{1}$ by signed angles from $-y_{0} = (1,0)$, with counterclockwise the positive direction, as usual. Let $\ell$ be the integer satisfying 
    \begin{equation}   \label{E:0.leq.n-2ell.leq.1}
      0 \leq n-2\ell \leq 1 .
    \end{equation}
Since $n \geq 4$ by assumption, $\ell \geq 2$. 

Let 
    \begin{equation}  \label{E:theta.order}
      -\pi/2 < \theta_{1} < \cdots < \theta_{\ell} < 0 < \theta_{\ell+1} 
        < \cdots < \theta_{n-1} < \pi/2 .
    \end{equation}
Later we will let all of them go to 0. Let 
    \begin{equation}  \label{E:alpha.<.-theta.ell}
       \alpha \in (0, -\theta_{\ell}) .
    \end{equation}  
It is convenient to use complex numbers to represent points on the plane. We use $\sqrt{-1}$ instead of $i$ because we use $i$ as an index.
Let $y_{i} := \exp( \theta_{i} \sqrt{-1}) \in S^{1}$ ($i \in \NN_{n-1}$) and let 
    \begin{equation*}
       y_{n} := \exp\bigl[ (\pi-\alpha) \sqrt{-1} \, \bigr] .
    \end{equation*}
Since the $\theta_{i}$'s are within $\pi/2$ of 0, we have, e.g., $\angle(y_{1}, y_{n-1}) = \theta_{n-1} - \theta_{1}$. We also have 
$\angle(y_{n}, y_{i}) = \pi + \alpha + \theta_{i}$ if $i \leq \ell$. 
Let $x := (y_{1}, \ldots, y_{n}) \in \D$. We derive conditions on the $\theta_{i}$'s 
and $\alpha$ s.t.\ $M_{a}(x) = \{ y_{0}, y_{1}, y_{n-1} \}$ and, for $a$ and a $c$ satisfying \eqref{E:a.tween.c.and.n-2}, exhibit $x$ satisfying these conditions. In fact, we will see that there is a subset of of $\D$ of positive $\Hm^{n-2} = \Hm^{d-\nvar-1}$-measure (see \eqref{E:ma:nvar=1}) for which those conditions are satisfied. 
Recall that $y_{0} := (-1, 0) = \exp( \pi \sqrt{-1} )$. 
Thus, $\angle(y_{0}, y_{n}) = \alpha$. We have
    \begin{align} \label{E:3Ga(yi)s}
      G_{a}(y_{0}) &= \sum_{j=1}^{\ell-1} (\pi+\theta_{j}) + (\pi + \theta_{\ell}) 
        + \sum_{j=\ell+1}^{n-1} (\pi-\theta_{j}) + \alpha, \notag \\
        &= (n-1) \pi + \sum_{j=1}^{\ell} \theta_{j} - \sum_{j=\ell+1}^{n-1} \theta_{j} 
          + \alpha, \notag \\
      G_{a}(y_{1}) &= a(\pi+\theta_{1}) + \sum_{j=1}^{\ell-1} (\theta_{j}-\theta_{1})
        + (\theta_{\ell} - \theta_{1}) + \sum_{j=\ell+1}^{n-1} (\theta_{j}-\theta_{1})
          + (\pi + \theta_{1} + \alpha) \notag \\
                          &= (a+1) \pi + (a-n+2) \theta_{1} + \sum_{j=1}^{\ell} \theta_{j} 
                            + \sum_{j=\ell+1}^{n-1} \theta_{j} + \alpha , \\
      G_{a}(y_{n-1}) &= a(\pi-\theta_{n-1}) + \sum_{j=1}^{\ell-1} (\theta_{n-1}-\theta_{j})
        + (\theta_{n-1}-\theta_{\ell}) + \sum_{j=\ell+1}^{n-1} (\theta_{n-1}-\theta_{j})  \notag \\
                              & \qquad + (\pi - \theta_{n-1} - \alpha)   \notag \\
                              &= (a+1) \pi + (-a + n - 2) \theta_{n-1}
                                - \sum_{j=1}^{\ell} \theta_{j} 
                                  - \sum_{j=\ell+1}^{n-1} \theta_{j} - \alpha  \notag .
    \end{align}

In order that $M_{a}(x) = \{ y_{0}, y_{1}, y_{n-1} \}$, we need $G_{a}(y_{1}) = G_{a}(y_{0})$. By \eqref{E:3Ga(yi)s}, we have, 
    \begin{align} \label{E:Ga.y1.-.Ga.y0}
      G_{a}(y_{1}) - G_{a}(y_{0}) &= \bigl( a+1 - (n-1) \bigr) \pi + (a-n+2) \theta_{1} \notag \\
        & \qquad + \sum_{j=1}^{\ell} (\theta_{j} - \theta_{j}) 
          + 2 \sum_{j=\ell+1}^{n-1} \theta_{j} + (\alpha - \alpha) \\
        &= - (n-2-a) \pi - (n-2-a) \theta_{1}  + 2 \theta_{n-1} + 2 \sum_{j=\ell+1}^{n-2} \theta_{j} 
          \notag .
    \end{align}
Thus, we arrive at the equation
    \begin{equation} \label{E:Ga.y1.-.Ga.y0.eqn}
      - (n-2-a) \theta_{1} + 2 \theta_{n-1} = - 2 \sum_{j=\ell+1}^{n-2} \theta_{j} + (n-2-a) \pi .
    \end{equation}

But we also need $G_{a}(y_{n-1}) = G_{a}(y_{0})$. By \eqref{E:3Ga(yi)s},
    \begin{align} \label{E:Ga.y.n-1.-.Ga.y0}
      G_{a}(y_{n-1}) - G_{a}(y_{0}) &= \bigl( a+1 - (n-1) \bigr) \pi + (- a + n-2) \theta_{n-1} \notag \\
        & \qquad \qquad - 2 \sum_{j=1}^{\ell} \theta_{j} - 2 \alpha \\
          &= - (n-2-a) \pi + (n-2-a) \theta_{n-1} - 2 \theta_{1} 
            - 2 \sum_{j=2}^{\ell} \theta_{j} - 2 \alpha \notag .
    \end{align}
Thus, we arrive at the equation.
    \begin{equation} \label{E:Ga.y.n-1.-.Ga.y0.eqn}
      - 2 \theta_{1} + (n-2-a) \theta_{n-1} = 2 \sum_{j=2}^{\ell} \theta_{j} + 2 \alpha 
        + (n-2-a) \pi .
    \end{equation}

Denote the desired common value of $G_{a}(y_{0})$, $G_{a}(y_{1})$, and $G_{a}(y_{n-1})$ by
    \begin{equation*}
       g := G_{a}(y_{0}) = G_{a}(y_{1}) = G_{a}(y_{n-1}) .
    \end{equation*}
In order that $M_{a}(x) = \{ y_{0}, y_{1}, y_{n-1} \}$, 
we also need $G_{a}(y_{i}) > g$ for $i = 2, \ldots, n-2, n$. 
First, suppose $i = 2, \ldots, \ell$. 
$n \geq 4$, so $n-2 \geq 2$. By \eqref{E:theta.order}, this means $\theta_{i} < 0$. As observed above, $\angle(y_{n}, y_{i}) = \pi + \alpha + \theta_{i}$. Thus, we have
    \begin{equation*}
          G_{a}(y_{i}) = a(\pi+\theta_{i}) + \sum_{j=1}^{i-1} (\theta_{i}-\theta_{j})
             + \sum_{j=i}^{\ell} (\theta_{j}-\theta_{i}) + \sum_{j=\ell+1}^{n-1} (\theta_{j}-\theta_{i})   
                 + \pi + \theta_{i} + \alpha . 
    \end{equation*}
Then from \eqref{E:3Ga(yi)s}, we have 
    \begin{align} \label{E:Ga.yi.-.Ga.y1.i.leq.ell}
      G_{a}(y_{i}) - g &= G_{a}(y_{i}) - G_{a}(y_{1})  \notag \\
        &= \left[ (a+1) \pi + (a+2i-n) \theta_{i} - \sum_{j=1}^{i-1} \theta_{j} 
          + \sum_{j=i}^{\ell} \theta_{j} + \sum_{j=\ell+1}^{n-1} \theta_{j} + \alpha \right] \notag \\
        & \qquad - \left[ (a+1) \pi + (a-n+2) \theta_{1} + \sum_{j=1}^{i-1} \theta_{j}
               + \sum_{j=i}^{\ell} \theta_{j} + \sum_{j=\ell+1}^{n-1} \theta_{j} + \alpha \right] \\
        &= (a+2i-n) \theta_{i} - (a-n+2) \theta_{1} - 2 \sum_{j=1}^{i-1} \theta_{j}   \notag \\
        &= \bigl[ 2(i-1) - (n-2-a) \bigr]  \theta_{i} + (n-2-a) \theta_{1} 
           - 2 \sum_{j=1}^{i-1} \theta_{j} .  \notag
    \end{align}
Thus, we have the inequalities,
    \begin{equation} \label{E:theta.a.ineq.small.alpha.i.leq.ell}
      2 \sum_{j=1}^{i-1} (\theta_{i}-\theta_{j}) > (n-2-a) ( \theta_{i} - \theta_{1} ) , 
        \quad i = 2, \ldots, \ell, 
    \end{equation}
Recall $0 < n-2-a < n-2-c \leq 1$, by \eqref{E:a.tween.c.and.n-2}. So \eqref{E:theta.a.ineq.small.alpha.i.leq.ell} holds for $i=2$. Inductively, suppose \eqref{E:theta.a.ineq.small.alpha.i.leq.ell} holds for $i = h$. Then
    \begin{equation*}
      2 \sum_{j=1}^{(h+1)-1} (\theta_{i}-\theta_{j}) 
        > (n-2-a) \bigl[ ( \theta_{h} - \theta_{1} ) + ( \theta_{h+1} - \theta_{h} ) \bigr]
          = (n-2-a) ( \theta_{h+1} - \theta_{1} ) .
    \end{equation*}
Thus, $G_{a}(y_{i}) > g$ \emph{always holds} for $i = 2, \ldots, \ell$. 

Now take $i = \ell+1, \ldots, n-2$ (possible because $n \geq 4$). By \eqref{E:theta.order}, this means $\theta_{i} > 0$ so $\angle(y_{n}, \theta_{i}) = \pi - \alpha - \theta_{i}$. We have 
    \begin{align*}
      G_{a}(y_{i}) &= a(\pi-\theta_{i}) + \sum_{j=1}^{\ell} (\theta_{i} - \theta_{j}) 
        + \sum_{j=\ell+1}^{i-1} (\theta_{i} - \theta_{j}) + \sum_{j=i}^{n-1} (\theta_{j} - \theta_{i})
          + \pi - \alpha - \theta_{i} \\
             &= (a+1) \pi + (-a + 2i - n - 2) \theta_{i} 
               - \sum_{j=1}^{\ell} \theta_{j} - \sum_{j=\ell+1}^{i-1} \theta_{j} 
                 + \sum_{j=i}^{n-1} \theta_{j} - \alpha .
    \end{align*}
Hence, by \eqref{E:3Ga(yi)s},
    \begin{align*}
      G_{a}(y_{i}) - G_{a}(y_{n-1}) 
        &= \Biggl[ (a+1) \pi + (-a + 2i - n - 2) \theta_{i} 
               - \sum_{j=1}^{\ell} \theta_{j} - \sum_{j=\ell+1}^{i-1} \theta_{j} 
                 + \sum_{j=i}^{n-1} \theta_{j} - \alpha \Biggr] \\
         & \qquad - \Biggl[ (a+1) \pi + (n - 2 - a) \theta_{n-1}
                - \sum_{j=1}^{\ell} \theta_{j} - \sum_{j=\ell+1}^{i-1} \theta_{j} 
                 - \sum_{j=i}^{n-1} \theta_{j} - \alpha \Biggr] \\
         &= (-a + 2i - n - 2) \theta_{i} - (n - 2 - a) \theta_{n-1} 
           + 2 \sum_{j=i}^{n-1} \theta_{j} 
         &\intertext{  \qquad \qquad \qquad Moving the $\theta_{i}$ 
           and $\theta_{n-1}$ out from behind the $\sum$: } 
         &= (-a + 2i - n) \theta_{i} + 2 \theta_{n-1} - (n - 2 - a) \theta_{n-1} + 2 \sum_{j=i+1}^{n-2} \theta_{j} 
    \end{align*}
This yields a third set of inequalities,
    \begin{equation} \label{E:theta.a.ineq.i>ell}
      (-a + 2i - n) \theta_{i} + \theta_{n-1} + \bigl[ 1 - (n - 2 - a) \bigr] \theta_{n-1} 
        + 2 \sum_{j=i+1}^{n-2} \theta_{j} > 0,
        \quad i = \ell+1, \ldots, n-2 .
    \end{equation}
The LHS of the preceding can be rewritten
    \begin{equation*} 
      (2i - n) \theta_{i} + \theta_{n-1} - (n-3) \theta_{n-1} + (\theta_{n-1} - \theta_{i}) a
        + 2 \sum_{j=i+1}^{n-2} \theta_{j} .
    \end{equation*}
By \eqref{E:a.tween.c.and.n-2}, $a > c \geq n-3$. By \eqref{E:theta.order}, 
$\theta_{n-1} - \theta_{i} > 0$. Therefore, a lower bound to the preceding is obtained by replacing $a$ by $n-3$: 
    \begin{multline*}  
      (2i - n) \theta_{i} + \theta_{n-1} - (n-3) \theta_{n-1} + (\theta_{n-1} - \theta_{i})(n-3) 
        + 2 \sum_{j=i+1}^{n-2} \theta_{j} \\
          > (2i - n) \theta_{i} + \theta_{n-1} - (n-3) \theta_{i} 
            + 2 \sum_{j=i+1}^{n-2} \theta_{j} .
    \end{multline*}
But by \eqref{E:theta.order} $\theta_{j} > \theta_{i}$ for $j = i+1, \ldots, n-2$. Therefore, the followng is a lower bound to the LHS of \eqref{E:theta.a.ineq.i>ell}. 
    \begin{multline*} 
      (2i - n) \theta_{i} + \theta_{n-1} - (n-3) \theta_{i} 
        + 2 (n-2-i) \theta_{i} \\
          > \theta_{n-1} + \bigl[ (2i - n) - (n-3) + 2 (n-2-i) \bigr] \theta_{i} 
            = \theta_{n-1} - \theta_{i} > 0 .
    \end{multline*}
Thus, $G_{a}(y_{i}) > g$  
\emph{always holds} for $i = \ell+1, \ldots, n-2$.

Finally, we prove $G_{a}(y_{n}) > g$. 
    \begin{align*}
      G_{a}(y_{n}) &= a \alpha + \sum_{i=1}^{\ell} (\pi + \theta_{j} + \alpha)
        + \sum_{i=\ell+1}^{n-1} (\pi - \theta_{j} - \alpha) \\
          &= (n-1) \pi + \sum_{i=1}^{\ell} \theta_{j} 
            - \sum_{i=\ell+1}^{n-1} \theta_{j} + (a + 2 \ell - n + 1) \alpha .
    \end{align*}
Then, by \eqref{E:3Ga(yi)s},
    \begin{align*}
      G_{a}(y_{n}) - &G_{a}(y_{0 }) \\
        &=  \left[ (n-1) \pi + \sum_{i=1}^{\ell} \theta_{j} 
                - \sum_{i=\ell+1}^{n-1} \theta_{j} + (a + 2 \ell - n + 1) \alpha \right] \\
        & \qquad - \left[ (n-1) \pi + \sum_{j=1}^{\ell} \theta_{j} - \sum_{j=\ell+1}^{n-1} \theta_{j} 
              + \alpha \right] \\
        &= (a + 2 \ell - n) \alpha
    \end{align*}
Thus, by \eqref{E:a.tween.c.and.n-2} and \eqref{E:0.leq.n-2ell.leq.1}, 
$G_{a}(y_{n}) - G_{a}(y_{0 }) > (n-3)-1$. But $n \geq 4$ by hypothesis. Therefore, 
$G_{a}(y_{n}) > g$ always holds. \emph{This completes the proof that}
\begin{equation}  \label{E:Ga(yi)>g}
  G_{a}(y_{i}) > g \text{ for } i = 2, \ldots, n-2, n .
\end{equation}

Of course, we also have the inequalities \eqref{E:theta.order} and \eqref{E:alpha.<.-theta.ell}. In these and the equations \eqref{E:Ga.y1.-.Ga.y0.eqn} and \eqref{E:Ga.y.n-1.-.Ga.y0.eqn}, $a$ is given, satisfying \eqref{E:a.tween.c.and.n-2}.

By hypothesis, $n > 3$ so $\ell \geq 2$. To show there are solutions to equations \eqref{E:Ga.y1.-.Ga.y0.eqn} and \eqref{E:Ga.y.n-1.-.Ga.y0.eqn} we parametrize 
the $\theta_{i}$'s. 
Define and choose
    \begin{equation}  \label{E:thetas.zeta.omega}
        \theta_{i} := 
             \begin{cases}
               (i-\ell) \zeta,                   &\text{if } i = 1, \ldots, \ell-1, \\
               (i-\ell) \omega,               &\text{if } i = \ell+1, \ldots, n-1 .
             \end{cases}
    \end{equation}
We will arrange things so that $\zeta \in (0, \tfrac{\pi}{2(\ell-1)})$ 
and $\omega \in (0, \tfrac{\pi}{2(n-\ell-1})$. We will then choose
    \begin{equation} \label{E:theta.ell.alpha}
      \theta_{\ell} \in (-\zeta, 0) \text{ and } \alpha \in (0, -\theta_{\ell}) .
    \end{equation}
These inequalities together with $\zeta \in (0, \tfrac{\pi}{2(\ell-1)})$ 
and $\omega \in (0, \tfrac{\pi}{2(n-\ell-1})$ replace \eqref{E:theta.order} and \eqref{E:alpha.<.-theta.ell}.

We have already observed that \eqref{E:Ga(yi)>g} is automatically satisfied. 
Thus, we need only find $\zeta, \omega > 0$ and appropriate $\theta_{\ell}$ and $\alpha$ so that \eqref{E:Ga.y1.-.Ga.y0.eqn} and \eqref{E:Ga.y.n-1.-.Ga.y0.eqn} hold. 
Let $A := n-2-a$, $B := n-\ell-1$, $C := \ell-1$, and $D := n - \ell$. Then, by \eqref{E:a.tween.c.and.n-2}, \eqref{E:0.leq.n-2ell.leq.1}, and the fact that $n \geq 4$, we have 
    \begin{equation*}
      0 < A < n-2-c \leq 1, \; B \geq 1, \; C \geq 1, \text{ and } D \geq 2.
    \end{equation*}  
Let $E := A \pi$ and $F := E + 2 \alpha + 2 \theta_{\ell} < E$, if \eqref{E:theta.ell.alpha} holds. Then based on \eqref{E:thetas.zeta.omega}, the equations \eqref{E:Ga.y1.-.Ga.y0.eqn} and \eqref{E:Ga.y.n-1.-.Ga.y0.eqn} become
    \begin{align*} \label{E:BIG.LETTER.eqn.system}
          AC \zeta +B D \omega &= E, \\
      C \ell \zeta + A B \omega &= F .
    \end{align*}
It is easy to see that the unique solutions to this system are
     \begin{subequations} \label{E:zeta.omega.solns}
         \begin{align*}  
                 \zeta &= \frac{1}{A C} \left( E - \frac{D}{D \ell - A^{2}}(E \ell - A F) \right)
                = \frac{D F - AE}{C(D \ell - A^{2})} , \tag{\ref{E:zeta.omega.solns}a} \\
                         &= \frac{(D - A)E - 2D(- \theta_{\ell} - \alpha)}
                           {C(D \ell - A^{2})} , \notag \\
                 \omega &= \frac{E \ell - AF}{B (D \ell - A^{2})}
                    = \frac{E(\ell - A) - 2 A(\alpha+\theta_{\ell})}{B (D \ell - A^{2}) }
                      = A \frac{\pi (\ell - A) + 2 (- \theta_{\ell} - \alpha)}{B (D \ell - A^{2})} 
                        \tag{\ref{E:zeta.omega.solns}b}.
        \end{align*}
    \end{subequations}
Observe that $\ell - A$, $D - A$, $C(D \ell - A^{2})$, and $B (D \ell - A^{2})$ 
are all $\geq 1$. Hence, $\omega > 0$, providing \eqref{E:theta.ell.alpha} holds. We will show that things can be arranged so that $\zeta > 0$, as well.  
It is immediate that $\omega = O(A)$ as $A = n-2-a \downarrow 0$, 
i.e. as $a \uparrow n-2$. We will see presently that the same is true of $\zeta$.

 In accordance with \eqref{E:thetas.zeta.omega}, we want $\theta_{\ell} \in (-\zeta, 0)$. Recalling that $A \in (0,1)$ and $\alpha > 0$, by \eqref{E:theta.ell.alpha},  
and $E := A \pi$, we see that it suffices to take $\theta_{\ell} < 0$ s.t.\ 
    \begin{multline*}
      C(D \ell - 1) \theta_{\ell} > C(D \ell - A^{2}) \theta_{\ell} 
        > - (D - 1) A \pi - 2 D  \theta_{\ell} \\
          > - (D - A) A \pi - 2 D  \theta_{\ell} - 2 D \alpha
            = - C(D \ell - A^{2}) \zeta .
    \end{multline*}
Note that, since $\ell \geq 2$, $C(D \ell - 1) + 2 D \geq 7 > 0$. Therefore, it suffices to choose $\theta_{\ell}$ satisfying 
    \begin{equation}  \label{E:theta.ell.ACD.lwr.bound}
      0 > \theta_{\ell} > - \frac{(D - 1) A \pi}{C(D \ell - 1) + 2 D} .
    \end{equation}
This requires $\theta_{\ell} = O(n-2-a)$ as $a \uparrow n-2$. 

Choose $\alpha \in - \theta_{\ell} (1/2, 1)$, so $\alpha_{\ell} = O(n-2-a)$. Then $\theta_{\ell} + \alpha > \theta_{\ell}/2$. Then, by the preceding, 
    \begin{multline*}
      (D - A)E - 2D(- \theta_{\ell} - \alpha) > (D - A)E + D \theta_{\ell}
        > (D - A)E - \frac{D(D - 1) A \pi}{C(D \ell - 1) + 2 D} \\
          > (D - A) A \pi - \frac{D(D - A) A \pi}{C(D \ell - 1) + 2 D} .
    \end{multline*}
The final expression is strictly positive if and only if 
    \begin{equation*}
      \bigl[ C(D \ell - 1) + 2 D \bigr] - D = C(D \ell - 1) + D > 0 .
    \end{equation*}
But $C \geq 1$ and $D$ and $\ell$ are both at least 2, so the preceding is true. Thus, if \eqref{E:theta.ell.ACD.lwr.bound} and 
$\alpha \in - \theta_{\ell} (1/2, 1)$ hold then \eqref{E:theta.ell.alpha} holds; $\zeta$, defined in (\ref{E:zeta.omega.solns}a), is positive; and $\zeta = O(n-2-a)$ as $a \uparrow n-2$. In particular, for $c$ in \eqref{E:a.tween.c.and.n-2}, and hence $a$, sufficiently close to $n-2$ and the $\theta_{i}$'s defined by \eqref{E:thetas.zeta.omega}, we have that \eqref{E:theta.order} holds.

Let $\theta_{\ell} = \theta_{0 \ell}$ satisfy \eqref{E:theta.ell.ACD.lwr.bound} 
and $\alpha = \alpha_{0} \in - \theta_{0 \ell} (1/2, 1)$. ($\theta_{0 \ell}$ 
and $\alpha_{0}$ depend on $\alpha$.) Define $\zeta$ and $\omega$ by \eqref{E:zeta.omega.solns}, and $\theta_{1}, \ldots, \theta_{n-1}$ by \eqref{E:thetas.zeta.omega}. Write 
$x_{a} := x_{a}(\theta_{0 \ell}, \alpha_{0}) := \bigl( y_{1}(\theta_{0 \ell}, \alpha_{0}), 
\ldots, y_{n}(\theta_{0 \ell}, \alpha_{0}) \bigr) \in \D$ 
with $y_{i}(\theta_{\ell}, \alpha) := \exp( \theta_{i} \sqrt{-1} \, ) \in S^{1}$ 
($i \in \NN_{n-1}$) and $y_{n} := \exp \bigl[ (\pi-\alpha) \sqrt{-1} \, \bigr]$. Then 
$M_{a} \bigl( x_{a}(\theta_{\ell}, \alpha) \bigr) = \bigl\{ y_{0}(\theta_{\ell}, \alpha), 
y_{1}(\theta_{\ell}, \alpha), y_{n-1}(\theta_{\ell}, \alpha) \bigr\}$.
\emph{Claim:} For each $j \in \{ 0, 1, n-1 \}$, there is a sequence 
$\{ x_{\gamma}, \gamma = 1, 2, \ldots \} \subset \D'$ s.t.\ 
$x_{\gamma} \to x_{a}(\theta_{0 \ell}, \alpha_{0})$, and $m_{a}(x_{\gamma})$ 
(defined since $x_{\gamma} \in \D'$) converges to $y_{j}(\theta_{0 \ell}, \alpha_{0})$.

To prove the claim we show how to construct a $x_{\gamma}$. 
Let $\theta_{1}, \ldots, \theta_{n}$ satisfying \eqref{E:theta.order} and $\alpha$ satisfying \eqref{E:alpha.<.-theta.ell}, but we drop the requirements \eqref{E:thetas.zeta.omega} and \eqref{E:theta.ell.alpha}. Let $y_{i} := \exp( \theta_{i} \sqrt{-1} \, ) \in S^{1}$ 
($i \in \NN_{n-1}$) and $y_{n} := \exp \bigl[ (\pi-\alpha) \sqrt{-1} \, \bigr]$. 
Let $x := (y_{1}, \ldots, y_{n})$. Note that by \eqref{E:Ga.y1.-.Ga.y0} and \eqref{E:Ga.y.n-1.-.Ga.y0},
    \begin{equation}  \label{E:matrix.version.of.eqns}
             \begin{pmatrix}
               G_{a}(y_{1};x) - G_{a}(y_{0};x) \\
               G_{a}(y_{n-1};x) - G_{a}(y_{0};x)
             \end{pmatrix}
         =
             \begin{pmatrix}
               -A & 2 \\
               -2 & A
             \end{pmatrix} 
             \begin{pmatrix}
               \theta_{1} \\
               \theta_{n-1}
             \end{pmatrix}
         + K^{2 \times 1},
    \end{equation}
where as before $A = n-2-a \in (0, 1)$, by \eqref{E:a.tween.c.and.n-2},
and $K^{2 \times 1}$ is an affine function of $\theta_{2}, \ldots, \theta_{n-2}$, and $\alpha$. 
At $x = x_{a}(\theta_{0 \ell}, \alpha_{0})$ we have $M_{a}(x) = \{ y_{0}, y_{1}, y_{n-1} \}$ and the LHS of \eqref{E:matrix.version.of.eqns} is 0. But the matrix $\begin{pmatrix}
           -A & 2 \\
           -2 & A
         \end{pmatrix}$ 
 has full rank. (The determinant is $4-A^{2} > 3$.) 
 
 Let $j \in \{ 0, 1, n-1 \}$ be arbitrary. Then, by making tiny changes in $\theta_{1}, \theta_{n-1}$, we can make $G_{a}(y_{j};x)$ the smallest of $G_{a}(y_{0};x), G_{a}(y_{1};x), G_{a}(y_{n-1};x)$ without creating any more minima. The point, $x'_{j} \in \D$, corresponding to the changed $\theta_{i}$'s will belong 
 to $\D'$ and we can make $m_{a}(x'_{j})$ arbitrarily close 
 to $y_{j}(\theta_{0 \ell}, \alpha_{0})$. Hence, for every neighborhood $\clU$  of $x_{a}(\theta_{0 \ell}, \alpha_{0})$ we will have 
 $\bigl\{ y_{0}(\theta_{0 \ell}, \alpha_{0}), y_{1}(\theta_{0 \ell}, \alpha_{0}), 
 y_{n-1}(\theta_{0 \ell}, \alpha_{0}) \bigr\} \subset \overline{ m_{a}( \clU \cap \D' ) }$. This proves the claim that for any $v = y_{0}, y_{1}$, or $y_{n-1}$,  there is 
 $\{ x_{\gamma} \} \subset \D'$ 
 s.t.\ $x_{\gamma} \to x_{a}(\theta_{\ell}, \alpha)$ 
 and $m_{a}(x_{\gamma}) \to y_{j}(\theta_{0 \ell}, \alpha_{0})$. Thus, if $\clU$ is a neighborhood of $x_{a}$ then 
 $M_{a}(x_{a}) \subset \overline{m_{a}(\clU \cap \D')}$. 
But $M_{a} \bigl( x_{a}(\theta_{\ell}, \alpha) \bigr)$ does not lie in any open semicircle. 
Therefore $x_{a}(\theta_{\ell}, \alpha) \in \Ss^{\msf{V}_{\pi/2}}$.

The set of $(\theta_{1}, \ldots, \theta_{n-1}, \alpha) \in \RR^{n}$ satisfying \eqref{E:matrix.version.of.eqns} has codimension 2. That is because arbitrary tiny perturbations to $\theta_{2}, \ldots, \theta_{n-2}$, 
and $\alpha$ can be compensated for by manipulations of $\theta_{1}$ 
and $\theta_{n-1}$ that preserve \eqref{E:matrix.version.of.eqns}, without creating any new minima. 
Thus, there exists a submanifold, $\tilde{\Ss}_{a}$, of $\D$ of codimension 2 s.t.\ 
$x_{a}(\theta_{\ell}, \alpha) \in \tilde{\Ss}_{a} \subset \Ss^{\msf{V}_{\pi/2}}$.
By theorem \ref{T:Hausdorff.measure.gives.right.value.on.manifs}, $\tilde{\Ss}_{a}$ has positive $\Hm^{n \nvar-\nvar-1}$-measure, where now $\nvar = 1$. 

Write $x_{a} = x_{a}(\theta_{0 \ell}, \alpha_{0})= (y_{1}, \ldots, y_{n})$. We have seen 
that $\zeta, \omega = O(n-2-a)$. Therefore, by    \eqref{E:thetas.zeta.omega}, $\theta_{i} = O(n-2-a)$ for $i  \in \NN_{n-1}$. And we have also seen $\theta_{n-1}, \alpha = O(n-2-a)$. 
But $y_{i} := \exp( \theta_{i} \sqrt{-1})$ ($i \in \NN_{n}-1$) and
$y_{n} := \exp\bigl[ (\pi-\alpha) \sqrt{-1} \, \bigr]$. We conclude 
    \begin{multline}  \label{E:yi.angles.O(n-2-a)}
      \angle \bigr( y_{i}, (0,1) \bigr) = O(n-2-a) \quad (i \in \NN_{n}-1) \text{ and } \\
        \angle \bigr( y_{n}, (-1,0) \bigr) = \angle(y_{n}, y_{0}) = O(n-2-a) 
          \text{ as } a \uparrow n-2.
    \end{multline}
Let $x_{0} := \bigl( (1,0), \ldots, (1,0), y_{0} \bigr)$. So $n-1$ observations in $x_{0}$ are $(1,0)$ and one is $y_{0} = (-1,0)$. Then $x_{0} \in \Pf_{k} = \Pf_{1}$. This means
\begin{equation*}
  \text{distance } (x_{a}, \Pf_{1}) \leq \rho(x, x_{0}) = O(n-2-a)
    \text{ as } a \uparrow n-2.
\end{equation*}

Let $\epsilon > 0$ and let $\mcl{B}_{\epsilon}(x_{0})$ be the open $\rho$-ball (see \eqref{E:rho.is.top.metric}) about $x_{0}$ with radius $\epsilon$. Choose $a < n-2$ so large that $x_{a} \in \mcl{B}_{\epsilon}(x_{0})$. Then $\tilde{\Ss}_{a} \cap \mcl{B}_{\epsilon}(x_{0})$ is a non-empty open subset of $\tilde{\Ss}_{a}$. 
But $\tilde{\Ss}_{a} \cap \mcl{B}_{\epsilon}(x_{0})$ still has positive $\Hm^{n \nvar-2}$-measure. We conclude that \eqref{E:dist.to.Pk.O(n-2-a)} holds.
  \end{proof}  
  
  \begin{proof}[Proof of proposition \ref{P:aug.direct.mean.beats.robst.loc}] 
By \eqref{E:ma.satisfies.hyps.of.loose.lemma}, $m_{a}$ satisfies the hypotheses of lemma \ref{L:loose.exactness.of.location.fit}. By proposition \ref{P:severe.sings.of.ma.dist} (here is where we use $n > 3$) and  \eqref{E:ma.has.no.svr.sings.near.Pk}, given $R > 0$, by making 
$a \in (0, n-2)$ sufficiently close to $n - 2$, we get $m_{a} \in F_{R, 1}$. Thus, $F_{R, 1}$ is non-empty. This proves part \ref{I:F.R.1.nonempty} of the proposition.

Let $\Phi \in F_{R, 1}$ and let $r = dist_{n-2} (\Ss^{\msf{V}_{\pi/2}}, \Pf_{1})$. By 
lemma \ref{L:solve.rho.t=r}, for $R/\delta$, and hence $r/\delta$, sufficiently small, there exists $t_{r} \in (0, n)$ s.t.\ 
    \begin{equation}  \label{E:dist(Ss.mu.t.R,T)=R/delta}
      \rho_{t_{r}} := dist_{n - 2}(\Ss_{\mu_{t_{r}}}, \T) = r/\delta.
    \end{equation}
By lemma \ref{L:solve.rho.t=r} again, we have
        \begin{equation}  \label{E:tR.approx}
          t_{r} = n - \tfrac{1}{2} ( r/\delta )^{2} + O(r^{4}/\delta^{4}),
        \end{equation} 
as asserted in part \ref{I:mu.a.beats.the.shit.out.of.Phi} of the proposition. 

By definition of $r$,
	\begin{equation*}
	  dist_{n-2} \bigl(\Ss_{\mu_{t_{r}}}, \T \bigr) = \delta^{-1} 
            dist_{n-2} (\Ss^{\msf{V}_{\pi/2}}, \Pf_{1} ) .
	\end{equation*} 
This is the first part of \eqref{E:robust.dist.vol.ineqs}.

By definition of $F_{R, 1}$, $\Phi$ satisfies the hypotheses of lemma \ref{L:loose.exactness.of.location.fit}. It follows that:
    \begin{enumerate}
        \item \eqref{E:D'.=.D.less.S} holds for $\Phi$, which means that $\Ss^{\msf{V}_{\pi/2}}$ has empty interior, 
        \item $\Phi$ is continuous on $\Pf_{1}$, which means $\Phi \restriction_{\T}$ is continuous,
        \item \eqref{E:no.severe.sings.in.Pk} holds for $\Phi$:
        $\Ss^{\msf{V}_{\pi/2}} \cap \Pf_{k} = \varnothing$. 
        \emph{A fortiori,} \eqref{E:loc.meas.has.no.severe.sings.near.T} holds:
        $\Ss^{\msf{V}_{\pi/2}} \cap \T = \varnothing$.
        \item \eqref{E:exactness.of.fit.loose} holds with $k=1$, which means \eqref{E:location.Phi.on.diagnl.loose} holds.
        \item \eqref{E:assume.Phi.S'.sym} holds.
    \end{enumerate}
 
By \eqref{E:SV.is.closed}, $\Ss^{\msf{V}_{\pi/2}}$ is closed. By remark \ref{R:symmetry}, 
$\Ss^{\msf{V}_{\pi/2}}$ is $S_{n}$-invariant. ($S_{n}$ is the symmetric group on $\{ 1, \ldots, n \}$; \eqref{E:symbol.for.symmetric.group}.) 
Therefore, by lemma \ref{L:loose.exactness.of.location.fit}, there is a measure of location, $\Omega$, on $S^{1}$ symmetric in its arguments, having exact fit order 1, and continuous off $\Ss^{\msf{V}_{\pi/2}}$. Let $\Ss_{\Omega}$ denote the singular set 
of $\Omega$. We introduce $\Omega$ because:
    \begin{enumerate}
        \item $\Ss_{\Omega} \subset \Ss^{\msf{V}_{\pi/2}}$. This means 
          \label{I:Omega.sing.set.in.S.pi/2}
        \item $\Omega$ is continuous on $\T$. \label{I:Omega.is.cont.on.T}
        \item $(\Omega, \Ss^{\msf{V}_{\pi/2}}, S_{n}, \T, n-2 = n \nvar - \nvar - 1)$ satisfies part \ref{I:Phi.is.nice} of property \ref{Pty:agree.near.T}.
          \label{I:Omega.satisfies.agree.near.T}
    \end{enumerate}

Let $\Psi : \D \setminus \tilde{\Ss} \to S^{n}$ be continuous and $S_{n}$-invariant, where $\tilde{\Ss} \subset \D$ is $S_{n}$-invariant and closed with empty interior, s.t.\ $\tilde{\Ss} \cap \T = \Ss_{\Omega} \cap \T \subset 
\Ss^{\msf{V}_{\pi/2}} \cap \T \subset \Ss^{\msf{V}_{\pi/2}} \cap \Pf_{1} = \varnothing$ and the restrictions 
$\Psi \restriction_{\T}$ and $\Omega \restriction_{\T}$ are equal. I.e., \eqref{E:assume.Phi.S'.sym}, \eqref{E:loc.meas.has.no.severe.sings.near.T}, and \eqref{E:location.Phi.on.diagnl.loose} hold for $\tilde{\Ss}$ and $\Psi$. Moreover, by property \ref{I:Omega.is.cont.on.T} of $\Omega$, we have that $\Psi$ is continuous 
on $\T$. Then, $\Psi$ satisfies the hypotheses of corollary \ref{C:Haus.measure.of.severe.sings.on.spheres} and remark \ref{R:loose.exactness.of.fit.and.homotopy} with $\nvar = 1$. Therefore, $\Hm^{n-2}(\tilde{\Ss}) > 0$. Putting this together with $\Omega$ property \ref{I:Omega.satisfies.agree.near.T} above, we see that $(\Omega, \Ss^{\msf{V}_{\pi/2}}, S_{n}, \T, n-2)$ has property \ref{Pty:agree.near.T}.

We wish to apply theorem \ref{T:lwr.bnd.on.Haus.meas} to $\Omega$ with $\Pf_{1}$ playing the role of $\Pf$. From \eqref{E:dim.Pk.=.k+1} we know that 
$p := \dim \Pf_{1} = 2$. \emph{But $\Pf_{1}$ is not a manifold.} It is the union of tori whose intersection is $\T$. (See \eqref{E:T.=.intersection.of.all.lobes} and \eqref{E:Pf1=union.of.lobes}.) Hence, we cannot rely on example \ref{Ex:TubeNbhdSpecialCaseOfCones} to tell us that $\Pf_{1}$ has a neighborhood in $T \D \restriction_{\Pf}$ fibered over $\Pf_{1}$ by cones as in definition \ref{D:fibering.by.cones}. However, from appendix \ref{Chptr:rob.loc.circle.cones.appendix}, we see that nonetheless $\Pf_{1}$ has a neighborhood in $T \D \restriction_{\Pf_{1}}$ fibered over $\Pf_{1}$ by cones. Recalling \eqref{E:prod.of.spheres.has.perm.invar.triangulation} and \eqref{E:Riem.metric.is.G.invar}, we see that theorem \ref{T:lwr.bnd.on.Haus.meas}, with $a = n-2$ and $\Pf = \Pf_{1}$, applies to $\Omega$. (``$a$'' means something different in chapters \ref{Chptr:aug.direct.mean} and \ref{Chptr:robst.loc.on.circle} than it does in chapter \ref{Chptr:Haus.meas.of.sing.set}. Sorry. It's the chapter \ref{Chptr:Haus.meas.of.sing.set} meaning that I'm using here.)
 
By the above property \ref{I:Omega.sing.set.in.S.pi/2} of $\Omega$, we have 
$\Hm^{n - 2}(\Ss_{\Omega}) \leq \Hm^{n - 2}(\Ss^{\msf{V}_{\pi/2}})$ and 
\linebreak 
$R_{\Omega} := dist_{n-2}(\Ss_{\Omega}, \Pf_{1}) 
\geq r := dist_{n-2}(\Ss^{\msf{V}_{\pi/2}}, \Pf_{1})$. 
Hence, as in \eqref{E:Hm.a.S.geq.R.d-p-1}, there exists 
$\gamma > 0$, not depending on $R$, $\Omega$ (or $\Phi$) s.t.\
    \begin{equation}  \label{EHm.n-2.S.pi/2.lwr.bnd}
      \Hm^{n-2}(\Ss^{\msf{V}_{\pi/2}}) \geq \Hm^{n - 2}(\Ss_{\Omega}) 
        \geq \gamma R_{\Omega}^{\dim \D - \dim \Pf_{1} - 1} 
          \geq \gamma r^{n-2-1} .
    \end{equation}

As $R \to 0$, we have $r \to 0$. By \eqref{E:tR.approx}, $t_{r} \to n$ as $R \to 0$. Therefore, 
by \eqref{E:R.n.nvar-nvar-1.H.bdd.above}, \eqref{E:dist(Ss.mu.t.R,T)=R/delta}, and the fact that $\nvar = 1$, for $R$ sufficiently small there exists $L > 0$ s.t.\ 
	  \begin{equation}  \label{E:R.Hm.1/L.ineq}
	    (r/\delta)^{-(n-2)} \Hm^{n -2}(\Ss_{\mu_{t_{r}}})  < 1/L 
	      \text{ so } \Hm^{n -2}(\Ss_{\mu_{t_{r}}}) < (r/\delta)^{n-2}/L .
	  \end{equation}
Taking the ratio of \eqref{EHm.n-2.S.pi/2.lwr.bnd}  to \eqref{E:R.Hm.1/L.ineq}, 
	\begin{equation} \label{E:mua.better.than.Phi}
		\frac{ \Hm^{n - 2}(\Ss^{\msf{V}_{\pi/2}}) }
		  { \Hm^{n - 2}(\Ss_{\mu_{t_{r}}}) }
			   >  \gamma \, r^{n-2-1} \times L (\delta/r)^{n-2} 
			    =  \frac{\gamma L}{r} \delta^{n-2} .
	\end{equation} 
With $r$ sufficiently small, $\gamma L/r \geq 1$.
Part \ref{I:mu.a.beats.the.shit.out.of.Phi} of the proposition follows.
  \end{proof} 

\chapter{Data and calculations for figure \ref{F:CircleSing}.} \label{Chptr:F:CircleSing.data.and.calcs}

\section{Data}
Here are the data plotted in figure \ref{F:CircleSing}. (Those data are not real. They're simulated.) ``a'' means panel ``(a)''. ``b'' means panel ``(b)''. ``x'' means $x$ coordinate. ``y'' means $y$ coordinate.
\begin{small}
\begin{verbatim}

xdata.a 
#  [1]  0.9879305 -0.5545916 -0.4624037  0.4093560 -0.6412064 -0.2475330
#  [7] -0.9788903  0.6857391 -0.1204257 -0.6383888  0.6654187  0.5204014
# [13] -0.1913034 -0.5398914 -0.6976074  0.8034177  0.9999742

ydata.a
#  [1] -0.154897545  0.832122670 -0.886669495  0.912374726  0.767368455
#  [6]  0.968879451  0.204386542 -0.727847475 -0.992722342  0.769714085
# [11]  0.746470349  0.853921796  0.981530948 -0.841734683  0.716480273
# [16]  0.595415869 -0.007184685


xdata.b 
#  [1] -0.009423075  0.273729100 -0.476790126  0.417194423  0.048214267
#  [6]  0.260129155  0.078096753 -0.246307028 -0.331049593  0.044100261
# [11]  0.663819872 -0.389348937 -0.153286581 -0.223438230  0.357478941
# [16] -0.036839568 -0.276285174

ydata.b
#  [1] 0.9999556 0.9618068 0.8790172 0.9088173 0.9988370 0.9655738 0.9969458
#  [8] 0.9691919 0.9436134 0.9990271 0.7478925 0.9210903 0.9881818 0.9747181
# [15] 0.9339212 0.9993212 0.9610757

# For convenience in working in R with these values:

xdata.a <- c(0.9879305, -0.5545916, -0.4624037, 0.4093560, -0.6412064, -0.2475330,
-0.9788903,  0.6857391, -0.1204257, -0.6383888,  0.6654187,  0.5204014,
-0.1913034, -0.5398914, -0.6976074,  0.8034177,  0.9999742)

ydata.a <- c(-0.154897545, 0.832122670, -0.886669495, 0.912374726, 0.767368455,
0.968879451, 0.204386542, -0.727847475, -0.992722342, 0.769714085,
0.746470349, 0.853921796, 0.981530948, -0.841734683, 0.716480273,
0.595415869, -0.007184685)

xdata.b <- c(-0.009423077, 0.273729103, -0.476790129, 0.417194428, 0.048214265, 
0.260129159, 0.078096750, -0.246307023, -0.331049589, 0.044100259, 0.663819868, 
-0.389348940, -0.153286579, -0.223438230, 0.357478936, -0.036839573, -0.276285169)

ydata.b <- c(0.9999556, 0.9618069, 0.8790172, 0.9088173, 0.9988370, 0.9655739,
0.9969458, 0.9691918, 0.9436134, 0.9990271, 0.7478925, 0.9210903, 0.9881817, 
0.9747181, 0.9339212, 0.9993211, 0.9610757)

\end{verbatim}
\end{small}

\section{Singular sets for figure 1.7}  \label{SS:17.point.aug.mean.exml.sings}
$\Phi_{1}$ and $\Phi_{2}$ in figure \ref{F:CircleSing} are augmented directional means: 
$\Phi_{i} = \mu_{y_{0}, a_{i}, 17}$, ($i=1,2$). (See \eqref{E:aug.direct.mean.defn}.) Here,
    \begin{equation}   \label{E:y0.a1.a2}
      n = 17, \nvar = 1, y_{0} = (0, -1), a_{1} := 4.737609, \text{ and }  a_{2} := 16.14899 .
    \end{equation}
($a_{1}$ and $a_{2}$ are exactly the values of $a$ that make the data in the figures singular.)

Therefore, by \eqref{E:bunch.of.neg.y0s.is.closest.to.T}, the Euclidean distance from the singular set $\Ss_{a_{1}}$ to $\T$ is 4.95225 and the Euclidean distance from the singular set $\Ss_{a_{2}}$ to $\T$ is 1.304615.

  \begin{remark}[Remoteness from $\Pf$] \label{R:remoteness.from.Pf}
The singular set shown in panel (b) of figure \ref{F:CircleSing} is close to the set of perfect fits for the location problem on a circle. I.e., it is close in \emph{space}. However, the augmented directional mean has a Bayesian interpretation (Nu\~{n}ez-Antonio and Gutierrz-Pena \cite{gN-AeG-P05.BayesDirectData}), and from that point of view the large size of $a_{2}$ indicates strong \emph{a priori} belief that the location of the population the data are hypothetically sampled from is near $y_{0}$. Since this data set is far from $y_{0}$, those data are far from $\Pf$ in \emph{a priori} \emph{probability}.
  \end{remark}
 
By \eqref{E:aug.mean.sing.vol.asymp}, we have 
    \begin{equation}  \label{E:dim.Sa=15}
       \dim \Ss_{a} = n \nvar - \nvar-1 = 15 .
    \end{equation} 
Computing the $(n \nvar - \nvar-1)$-dimensional Hausdorff measure of the singular set of an augmented directional mean amounts to computing a rather unpleasant integral.  
Below, specifically at \eqref{E:measure.ratio.is.6.6.billion}, we use Monte Carlo to estimate the volumes of the singular sets of $\Phi_{1}$ and $\Phi_{2}$ and show that  
the singular set of $\Phi_{1}$ has $15$-dimensional Hausdorff measure over 6 billion 
times that of $\Phi_{2}$! 

Temporarily let $n = 3, 4, \ldots$ and let $\D_{n} = (S^{1})^{n}$ be the data space consisting of data sets of $n$ observations on the unit circle. By 
\eqref{E:Sa=Sa'} and \eqref{E:S'a.D'a.defns}, a data set  
	\begin{equation} \label{E:when.aug.mean.sing}
	  (y_{1}, \ldots, y_{n}) \in \D_{n} 
	    \text{ is a singularity of } \mu_{y_{0}, a,n} \text{ if and only if }
		y_{1} + y_{2} = - \left( \sum_{i=3}^{n} y_{i} \right) -a y_{0} .
	\end{equation}
	
Let $z := y_{1} + y_{2} = - \left( \sum_{i=3}^{n} y_{i} \right) -a y_{0} \in \RR^{2}$. Then 
    \begin{equation} \label{E:|z|<2}  
        \text{If $(y_{1}, \ldots, y_{n}) \in \Ss_{a}$ then } |z| \leq 2 .
    \end{equation}
Conversely, we \emph{claim:}
	\begin{multline}  \label{E:exactly.2.pairs}
		\text{If } 0 < |z| < 2 \text{ then there exist at most two distinct pairs, } 
		   (y_{1}, y_{2}) \text{ and } (y_{2}, y_{1}) \in (S^{1})^{2} \\
		      \text{ s.t.\ } y_{1} + y_{2} = z. \text{ Moreover, } y_{1} \cdot z, > 0, \; 
		         y_{2} \cdot z > 0. 
		           \text{ If } |z| = 2 \text{ then } y_{1} = y_{2} = \tfrac{1}{2} z ,
	\end{multline} 
where as usual ``$\cdot$'' denotes the usual inner product on, in this case, $\RR^{2}$. Suppose $0 < |z| < 2$ and $y_{1} + y_{2} = z$. Write $y_{1} = (y_{11}, y_{12})$, 
$y_{2} = (y_{21}, y_{22})$, and $z = (z_{1}, z_{2})$. Observe that to prove \eqref{E:exactly.2.pairs} it suffices to consider the case $z_{2} = 0$. For suppose \eqref{E:exactly.2.pairs} holds with $z_{2} = 0$ and let $z \neq 0$ satisfying \eqref{E:|z|<2} be arbitrary. Suppose $y_{1}, y_{2}, y_{1}', y_{2}' \in S^{1}$, $y_{1} + y_{2} = z$, and $y_{1}' + y_{2}' = z$. Let $C^{2 \times 2}$ be an orthonormal matrix s.t.\ the second coordinate of $z C$ is 0. We have $y_{1} C + y_{2} C = z C$, and 
$y_{1}' C + y_{2}' C = z C$. By hypothesis, $y_{1}' C = y_{2} C$ and $y_{2}' C = y_{1} C$. Thus, $y_{1}' = y_{2}$ and $y_{2}' = y_{1}$. Also by hypothesis, 
$0 < (y_{i} C) \cdot (z C) = y_{i} C C^{T} z^{T} = y_{i} \cdot z$. 

Let $z = (z_{1},0)$ satisfy \eqref{E:|z|<2} and $0 < |z_{1}| < 2$. Suppose 
$y_{1} + y_{2} = z$. Then $y_{22} = - y_{12}$. Since $|y_{1}| = 1 = |y_{2}|$, we therefore have $y_{21} = \pm y_{11}$. Since, $|z_{1}| > 0$, we have $y_{21} = y_{11}$. 
Therefore, $y_{11} = y_{21} = z_{1}/2$. In particular, 
$y_{1} \cdot z, \, y_{2} \cdot z > 0$. We must also have $y_{12} = \pm \sqrt{1 - z_{1}^{2}/4} \neq 0$ (note that $z_{1}^{2}/4 < 1$) and $y_{22} = - y_{12} \neq 0$. Therefore, 
$y_{1} = \left( \tfrac{1}{2} z_{1}, y_{12} \right) \neq \left( \tfrac{1}{2} z_{1}, -y_{12} \right) = y_{2}$. If $|z| = 2$ then, obviously, $y_{1} = y_{2} = \tfrac{1}{2} z$. This completes the proof of the claim \eqref{E:exactly.2.pairs}. 

We can be more explicit. Suppose $z = (z_{1}, z_{2}) \in \RR^{2}$, $0 < |z| < 2$, 
$y_{1}, y_{2} \in S^{1}$, and $y_{1} + y_{2} = z$. 
Let $v = (z_{2}, -z_{1}) \in \RR^{2}$, so $v \perp z$ and $|v| = |z|$. Let $i=1,2$. Since $z \neq 0$, there exist unique $a_{i}, b_{i} \in \RR$ s.t.\ 
$y_{i} = a_{i} z + b_{i} v = (a_{i} z_{1} + b_{i} z_{2}, \, a_{i} z_{2} -  b_{i} z_{1})$. Hence, 
$z = y_{1} + y_{2} = (a_{1} + a_{2}) z + (b_{1} + b_{2}) v$ so $a_{1}+a_{2} = 1$ and 
$b_{2} = -b_{1}$. Moreover, we have 
$1 = |y_{i}|^{2} = (a_{i}^{2} + b_{i}^{2}) |z|^{2} = (a_{i}^{2} + b_{1}^{2}) |z|^{2}$. 
Hence, $|a_{1}| = \sqrt{1 - b_{1}^{2}}/|z| = |a_{2}$ and $a_{1}+a_{2} = 1$. The only solution is $a_{1} = a_{2} = 1/2$. We then have $1 = |y_{1}|^{2} = (1/4 + b_{1}^{2}) |z|^{2}$ ($i=1,2$). It follows that $-2 b_{2} = 2 b_{1} = \pm \sqrt{4/|z|^{2} - 1}$. We are free to choose $b_{1} = - (1/2) \sqrt{4/|z|^{2} - 1}$. This leads to 
    \begin{align}  \label{E:formla.for.y1.y2}
      y_{1} &= \frac{1}{2} \bigl( z - \sqrt{4/|z|^{2} - 1} \, v \bigr)
        = \frac{1}{2} \Bigl( z_{1} - \sqrt{4/|z|^{2} - 1} \, z_{2} , 
          z_{2} + \sqrt{4/|z|^{2} - 1} \, z_{1} \Bigr) , \notag \\
      y_{2} &= \frac{1}{2} \bigl( z + \sqrt{4/|z|^{2} - 1} \, v \bigr)
        = \frac{1}{2} \Bigl( z_{1} + \sqrt{4/|z|^{2} - 1} \, z_{2} , 
          z_{2} - \sqrt{4/|z|^{2} - 1} \, z_{1} \Bigr) ,  \\
               & \qquad \qquad \text{if } 0 < |z| < 2 . \notag
    \end{align}
Exchanging $y_{1}$ and $y_{2}$ leads to the other solution.

Now let $n = 17$ as in figure \ref{F:CircleSing}, suppose $3 < a < 17$, and   
	\begin{equation} \label{E:a.ain't.integer}
		a \text{  is not an integer.}
	\end{equation}
Write 
    \begin{multline}  \label{E:z(w).defn}
      z := z_{a}(w) := z_{a}[x] := - \left( \sum_{i=3}^{n} y_{i} \right) -a y_{0}, \\
          \qquad w = (y_{3}, \ldots, y_{n}) \in \D_{15} , \;
            x = (y_{1}, y_{2}, w) \in \D_{17} .
    \end{multline} 
We need to dispose of the cases $\bigl| z_{a}(w) \bigr| = 0$ or 2. We show that the collection of data sets for which $z = 0$ or $|z| = 2$ 
has $\Hm^{n \nvar - \nvar - 1 = 15}$-measure 0. For $t \in [0,2]$, let 
    \begin{equation}.  \label{E:Sa.t.defn}
      \Ss_{a}^{t} := \Bigl\{ x \in \Ss_{a} : \bigl| z_{a}[x] \bigr| = t \Bigr\}. 
    \end{equation}

\emph{First, we show } 
	\begin{equation} \label{E:Hm.15.S.2.=.0}
		\Hm^{15}(\Ss_{a}^{2}) = 0. 
	\end{equation}

For $w \in \D_{15}$ let 
    \begin{equation*}
      \zeta(w) = \bigl| z_{a}(w) \bigr|^{2} .
    \end{equation*} 
We show that $\nabla \zeta$ does not vanish on $\zeta^{-1}(4)$. 
We may parametrize $y_{i}$ 
by $( \cos \theta_{i}, \sin \theta_{i} )$, ($\theta_{i} \in \RR$, $i = 3, \ldots, 17$). 
Write $y_{0} = ( \cos \theta_{0}, \sin \theta_{0} )$. We have 
	\begin{multline*}
	  \frac{\partial}{\partial \theta_{j}} \zeta(w) 
	  = \frac{\partial}{\partial \theta_{j}} \bigl( z_{a}(w) \cdot z_{a}(w) \bigr) \\
          = \left( \frac{\partial}{\partial \theta_{j}} z_{a}(w) \right) \cdot z_{a}(w) 
           + z_{a}(w) \cdot \left( \frac{\partial}{\partial \theta_{j}} z_{a}(w) \right)
	   = 2 z_{a}(w) \cdot ( \sin \theta_{j}, - \cos \theta_{j} ) \\
		         \quad ( j = 3, \ldots, 17 ) .
	\end{multline*}
But $( \sin \theta_{j}, - \cos \theta_{j} )$ spans the subspace of $\RR^{2}$ that is orthogonal to $y_{j}$. Hence, since $|z_{a}(w)| = 2 \neq 0$ by assumption, 
we have $\tfrac{\partial}{\partial \theta_{j}} \zeta(w)  = 0$ if and only if $y_{j}$ is proportional to $z_{a}(w)$. I.e., $\nabla \zeta(w)  = 0$ if and only if 
	\[
		y_{j} = \pm | z_{a}(w) |^{-1} z_{a}(w) \qquad (j = 3, \ldots, 17). 
	\]
Let $k = 0, 1, \ldots, 15$ be the number of plus signs  (in which case let $\epsilon := -1$) or minus signs ($\epsilon := +1$) in the preceding, whichever is smaller. Then, if $\nabla \zeta(w) = 0$, 
	\begin{align}  \label{E:z.in.y3.y0.k}
		z_{a}(w) &= -\sum_{i=3}^{17} y_{i} -a y_{0} \notag \\
		  &= -\sum_{i=3}^{17} \bigl[ \pm | z_{a}(w) |^{-1} z_{a}(w) \bigr] 
		    -a y_{0} \\
		  &= - | z_{a}(w) |^{-1} \left[ \sum_{i=3}^{17} \pm 1  \right] z_{a}(w) 
		    -a y_{0} \notag \\
		  &= - \epsilon | z_{a}(w) |^{-1} \bigl[ (15 - k) - k  \bigr] z_{a}(w) 
		    -a y_{0} \notag \\
		  &= - \epsilon (15 - 2k) | z_{a}(w) |^{-1} z_{a}(w) - a y_{0}. \notag 
	\end{align}
Let 
	\begin{equation*}  \label{E:m.17.k} 
		m := - \epsilon (15 - 2k)
	\end{equation*}
as in \eqref{E:z.in.y3.y0.k}. So $m$ is an integer. Then, from \eqref{E:z.in.y3.y0.k}, 
	\begin{equation*}
		| z_{a}(w) |^{-1} \Bigl( | z_{a}(w) | - m \Bigr) z_{a}(w) = - a y_{0}.
	\end{equation*}
Therefore, 
	\begin{equation*}
		\Bigl( | z_{a}(w) | - m \Bigr)^{2} = a^{2}.
	\end{equation*}
I.e., $| z_{a}(w) | - m = \pm a$. Hence, by \eqref{E:a.ain't.integer}, $| z_{a}(w) |$ cannot be an integer. 
In particular, if $\nabla \zeta = 0$ then $| z_{a}(w) | \neq 2$. Therefore, $\nabla \zeta$ does not vanish 
on $\zeta^{-1}(4)$. Now, $$\D_{15} = (S^{1})^{15}$$ is a 15-dimensional smooth manifold. Therefore, by Boothby \cite[Theorem (5.8), p.\ 79]{wmB75}, we conclude that
	\[
		Z_{2} := \Bigl\{ w \in \D_{15} : \bigl| z_{a}(w) \bigr| 
		  = 2 \Bigr\} = \zeta^{-1}(4)
	\]
is a 14-dimensional submanifold of $\D_{15}$ and, hence, 
by \eqref{E:Haus.dim.s-manif.=.s}, 
has $\Hm^{15}$ measure 0. But, by \eqref{E:exactly.2.pairs}, 
	\begin{equation*} 
		\Ss_{a}^{2} = \Bigl\{ (\tfrac{1}{2} z_{a}(w), 
		  \tfrac{1}{2} z_{a}(w), w) \in \D_{17} : w \in Z_{2} \Bigr\}
	\end{equation*}
and $w \mapsto (\tfrac{1}{2} z_{a}(w), \tfrac{1}{2} z_{a}(w), w)$ is Lipschitz on $Z_{2}$. Therefore, by \eqref{E:Lip.magnification.of.Hm}, \eqref{E:Hm.15.S.2.=.0} holds, as desired.

\emph{Next, we show that} 
    \begin{equation} \label{E:Hm.15.S.0.=.0}
      \dim \Ss_{a}^{0} < n \nvar - \nvar -1 = 15, \text{ so }
        \Hm^{15}(\Ss_{a}^{0}) = 0 .
    \end{equation} 
Suppose $x = (y_{1}, \ldots, y_{17}) \in \Ss_{a}^{0}$. \emph{Claim:}
	\begin{equation} \label{E:y3.17.not.identical}
	  \text{ There exists } j = 4, \ldots, 17 \text{ s.t. } y_{j} \text{ and } y_{3}
	      \text{ are not linearly dependent.}
	\end{equation}
The argument is similar to that which led to \eqref{E:z.in.y3.y0.k}. Suppose \eqref{E:y3.17.not.identical} is false. Then for every $j = 4, \ldots, 17$ there exists 
$c_{i} \neq 0$ s.t.\ $y_{j} = c_{j} y_{3}$. But $|y_{j}| = 1 = |y_{j}|$. 
Therefore, $c_{i} = \pm 1$. But $x = (y_{1}, \ldots, y_{17}) \in \Ss_{a}^{0}$ so 
$\sum_{i=3}^{17} y_{i} + a y_{0} = - z = 0$ and so for some $m \in \ZZ$, 
$m y_{3} = a y_{0}$. Thus, $|m| = a$. But this is impossible by \eqref{E:a.ain't.integer}. \eqref{E:y3.17.not.identical} is proved. 

We take an approach similar to that used in section \ref{SS:aug.mean.sing.set.size}. Write $y_{i} = (y_{i1}, y_{i2})$ ($i=0, \ldots, 17$). Let $U \subset (\RR^{2})^{15}$ consist of all 15-tuples $w = (y_{3}, \ldots, y_{17}) \in (\RR^{2})^{15}$ s.t.\ \eqref{E:y3.17.not.identical} holds and in addition, no $y_{j} = 0$ ($j = 3, \ldots, 17$). Then $U$ is open in $\RR^{30}$.  
Consider the map $H : U \to \RR^{17}$ given by
	\begin{align}
	      H(w) &= H(y_{3}, \ldots, y_{17}) \notag \\      
	         :&= \bigl( z_{a}(w), \, |y_{3}|^{2}, \ldots, |y_{17}|^{2} \bigr)   \notag \\
	         &= \left( - a y_{01} - \sum_{i=3}^{17} y_{i1}, - a y_{02} 
	              - \sum_{i=3}^{17} y_{i2}, \, |y_{3}|^{2},
	                 \ldots, |y_{17}|^{2} \right) \in \RR^{17} , \\ 
	         &\qquad \qquad (y_{3}, \ldots, y_{n}) \in U'.    \notag
	\end{align}
$H$ is smooth on $U$. Let $u := (0, 0, 1, \ldots, 1) \in \RR^{17}$ and 
$N := H^{-1}(u) \subset U$. 

The map $(y_{1}, w) \mapsto (y_{1}, -y_{1}, w)$ ($y_{1} \in S^{1}, \, w \in N$) is a Lipschitz homeomorphism of $S^{1} \times N$ onto $\Ss_{a}^{0}$. Thus, by \eqref{E:Lip.magnification.of.Hm} again, it suffices to show that 
$\Hm^{15} (S^{1} \times N) = 0$. \emph{Suppose} we show that $H$ has full rank 17 on $U$. It would then follow, by Boothby \cite[Theorem (5.8), p.\ 79]{wmB75} that  $N$ is a smooth manifold of dimension 30 - 17 = 13. Hence, by Boothby \cite[Theorem (1.7), p.\ 57]{wmB75},  
$S^{1} \times N$ is a $(1+13)$-dimensional differentiable manifold. It follows that 
$\dim \Ss_{a}^{0} < 15$ as desired. In summary, we conclude that we may assume
    \begin{equation}   \label{E:0<|z|<2}
      0 < |z| < 2 . 
    \end{equation}

Now we confirm that $H$ has full rank 17 on $N'$. Regarding each $y_{i}$ as a $1 \times 2$ row matrix, the Jacobian matrix (Boothby \cite[p.\ 26]{wmB75}) of $H$ is given by
        \begin{equation*}
	      DH(y_{3}, \ldots, y_{n})^{17 \times 30} =
	         \begin{pmatrix}
	            -I_{2} & -I_{2} & \cdots & -I_{2} \\
	            2 y_{3} & 0^{1 \times (2)} & \cdots & 0^{1 \times (2)} \\
	            0^{1 \times (2)} & 2 y_{4} & \cdots & 0^{1 \times (2)} \\
	            \vdots &  \vdots & \ddots & \vdots \\
	            0^{1 \times (2)} & 0^{1 \times (2)} & \cdots & 2 y_{17}
	         \end{pmatrix}, \qquad
	            (y_{3}, \ldots, y_{n}) \in U,
	\end{equation*}
where $I_{2}$ is the $2 \times 2$ identity matrix. Let $w = (y_{3}, \ldots, y_{17}) \in U$ so 
no $y_{j} = 0$ and no $y_{4}, \ldots, y_{17}$ is proportional to $y_{3}$. 

Let $v^{1 \times 17} = (v_{1}, \ldots, v_{17}) \in \RR^{17}$. Then 
	\begin{multline*}
		v DH(w) = v DH(y_{3}, \ldots, y_{n}) \\
		  = (-v_{1}, -v_{2}, -v_{1}, -v_{2}, \ldots, -v_{1}, -v_{2})^{1 \times 30}
		    + 2(v_{3} y_{3}, v_{4} y_{4}, \ldots, v_{17} y_{17})^{1 \times 30}.
	\end{multline*}
Call the vector $-(v_{1}, v_{2}) + 2 v_{j} y_{j}$ the ``$j^{th}$-component of $v DH(w)$'' 
($j = 3, \ldots, 17$). Suppose $v \neq 0$. We show that $v DH(w) \neq 0$. First, suppose $(v_{1}, v_{2}) = 0$. Then for some $j = 3, \ldots, 17$ we have $v_{j}$ is not 0. 
Then the $j^{th}$-component of $v DH(w)$ is 
$2 v_{j} y_{j} \neq 0$. Next, suppose $(v_{1}, v_{2}) \neq 0$ but for some $j = 3, \ldots, 17$ we have $v_{j} = 0$. In that case the $j^{th}$-component of $v DH(w)$ is $-(v_{1}, v_{2}) \neq 0$. Finally, suppose $(v_{1}, v_{2}) \neq 0$ and for every $j = 3, \ldots, 17$ we have $v_{j} \neq 0$. Then the components of $v DH(w)$ cannot all be 0 because otherwise we would have  $2 v_{j} y_{j} = (v_{1}, v_{2}) = 2 v_{3} y_{3}$ ($j = 4, \ldots, 17$), so $y_{j} = (v_{3}/v_{j}) y_{3}$, i.e., $y_{j}$ and $y_{3}$ are linearly dependent. This contradicts $w \in U$. We conclude that $DH(w)$ has full rank 17 for $w \in U$. From the argument given above it therefore follows that $\dim \Ss_{a}^{0} < 15$ 
so $\Hm^{15}(\Ss_{a}^{0}) = 0$.

Hence, by \eqref{E:when.aug.mean.sing} and \eqref{E:exactly.2.pairs} except for a set 
of $\Hm^{17-2}$-measure 0, there is a two-to-one correspondence between the data sets 
in $\Ss_{a}$ and data sets $w = (y_{3}, y_{4}, \ldots, y_{17}) \in \D_{n-2}$ s.t.\ 
$|z| = \left| \sum_{i=3}^{n} y_{i} + a y_{0} \right| \in (0,2)$. 
 
We construct a map $F : \D_{15} \to \D_{17}$ that takes 
$w \in \D_{15}$ to one of the two points in $\Ss_{a}$ whose last 15 components constitute $w$. We begin by defining a function
$f = (f_{1}, f_{2}) : B_{2}^{2}(0) \setminus \{0\} \to (S^{1})^{2} \subset \RR^{4}$ (see \eqref{E:Euc.ball.defn}) s.t.\ 
if $z \in B_{2}^{2}(0) \setminus \{0\} \subset \RR^{2}$ then $f_{1}(z) + f_{2}(z) = z$. 
($f_{i}(z) \in S^{1}$, $i =1,2$.) 
Write $z = (z_{1}, z_{2}) \in B_{2}^{2}(0) \setminus \{0\}$ and let
    \begin{multline} \label{E:figure.F:CircleSing.f.defn}
         f(z) := 
           \frac{1}{2} \left(  z_{1} - \sqrt{(4/|z|^{2})-1} \; z_{2} , \;
              z_{2} + \sqrt{(4/|z|^{2})-1} \; z_{1}, \right. \\
               z_{1} + \sqrt{(4/|z|^{2})-1} \; z_{2} , \;
                 \left. z_{2} - \sqrt{(4/|z|^{2})-1} \; z_{1} \right) \in \RR^{4} ,
           \qquad 0 < |z| < 2 .
    \end{multline}
It is immediate that the components of $f$ are finite and real.  
Writing $f = (f_{1}, f_{2})$, with $f_{i} = (f_{i1}, f_{i2})$ having codomain $\RR^{2}$, 
\eqref{E:formla.for.y1.y2} implies 
    \begin{equation}  \label{E:f1,f2.behave.well}
      \bigl| f_{1}(z) \bigr| = \bigl| f_{2}(z) \bigr| = 1 \text{ and }
        f_{1}(z) + f_{2}(z) = z .
    \end{equation} 
Moreover, with $0 < |z| < 2$, $f_{1}(z) \neq f_{2}(z)$. Specifically, let $z = (z_{1}, z_{2}) \in B_{2}^{2}(0) \setminus \{0\}$. Then at least one of the following is true: 
$f_{21}(z) > f_{11}(z)$ or $f_{22}(z) < f_{12}(z)$. Thus, exchanging the first two coordinates with the last two, we get the other possible $(y_{1}, y_{2})$ that makes 
$(y_{1}, y_{2}, w) \in \D_{17}$ a singularity. (See \eqref{E:exactly.2.pairs}. $w \in \D_{15}$ is related to $z$ as in \eqref{E:z(w).defn}.) 

Let $g(r) := \sqrt{(4/r^{2})-1}$. Thus, 
    \begin{equation*}
      f(z) =  \frac{1}{2} \Bigl( z_{1} - g \bigl(|z| \bigr) z_{2}, \; 
        z_{2} + g \bigl(|z| \bigr) z_{1} , \; 
          z_{1} + g \bigl(|z| \bigr) z_{2}, \; z_{2} - g \bigl(|z| \bigr) z_{1} \Bigr) .
    \end{equation*}  
Let $h(r) := -4/\bigl( r^{3} \sqrt{4-r^{2}} \bigr)$ ($r \in [0, 2)$), 
so $r h(r) = g'(r)$ and 
$(\partial/\partial z_{i}) g \bigl( |(z_{1}, z_{2}| \bigr) = h \bigl( |(z_{1}, z_{2}| \bigr) z_{i}$. Writing $r := |z|$, it is easy to see that 
    \begin{equation*}
        Df(z_{1}, z_{2}) = \frac{1}{2}
            \begin{pmatrix}
               1 - h(r) z_{1} z_{2} & - h(r) z_{2}^{2} - g(r) \\
               h(r) z_{1}^{2} + g(r) & 1 + h(r) z_{1} z_{2}  \\
               1 + h(r) z_{1} z_{2} & h(r) z_{2}^{2} + g(r) \\
               -h(r) z_{1}^{2} - g(r) & 1 - h(r) z_{1} z_{2}  
            \end{pmatrix}^{4 \times 2} .
    \end{equation*}
So, the $j^{th}$ column consists of derivatives w.r.t.\ $z_{j}$. The rows consist of derivatives of $f_{11}$, $f_{12}$, $f_{21}$, $f_{22}$, resp. 

Let $w = (y_{3}, \ldots, y_{n}) \in \RR^{30}$, where 
$y_{i} = (y_{i1}, y_{i2}) \in \RR^{2}$. Thus, 
$w = (y_{3,1}, y_{3,2}, y_{4,1}, y_{4,2}, \ldots,$ \linebreak 
$y_{17,1}, y_{17,2}) \in \RR^{30}$. (Thus, we temporarily drop the requirement that 
$w \in \D_{15} = (S^{1})^{15}$.) By \eqref{E:z(w).defn}, we have 
    \begin{equation}  \label{E:za(w).defn}
      z_{a}(w) = -(y_{3,1} + y_{4,1} + \cdots + y_{17,1}, \;
        y_{3,2} + y_{4,2} + \cdots + y_{17,2}) - a y_{0} \in \RR^{2} .
    \end{equation} 
Hence,
    \begin{equation*}
       Dz_{a}(w) = 
        \begin{pmatrix}
          -1 & 0 & \ldots & -1 & 0 \\
          0 & -1 & \ldots & 0 & -1
        \end{pmatrix}^{2 \times 30} .
    \end{equation*}

Recall \eqref{E:Euc.ball.defn}. Define
    \begin{equation*}
       B'_{2}(0) := B_{2}^{2}(0) \setminus \{0\} .
    \end{equation*}
Recall $\D_{15} := (S^{1})^{15}$.  
Notice that $z_{a}^{-1} \bigl[ B'_{2}(0) \bigr] \subset \RR^{30}$ is open. Define 
    \begin{equation}  \label{E:F.maps.R30.toR34}
      F(w) = \Bigl( f \big[ z_{a}(w) \bigr], w \Bigr) \in \RR^{34} ,
        \qquad w \in z_{a}^{-1} \bigl[ B'_{2}(0) \bigr] .
    \end{equation} 
$F$ is obviously injective.

We have
    \begin{equation*}
      DF(w) =
        \begin{pmatrix}
           Df \big[ z_{a}(w) \bigr] Dz_{a}(w) \\
           I_{30}
        \end{pmatrix}^{34 \times 30} ,
          \qquad w \in z_{a}^{-1} \bigl[ B'_{2}(0) \bigr] .
    \end{equation*}
Here $I_{30}$ is the $30 \times 30$ identity matrix.

Parametrize $\D_{15}$ by 
    \begin{multline}  \label{E:psi.15}
      \psi(\theta_{3}, \ldots, \theta_{17}) := (\cos \theta_{3}, \sin \theta_{3}, 
        \ldots, \cos \theta_{i}, \sin \theta_{i}, \ldots, \cos \theta_{17}, 
          \sin \theta_{17})^{1 \times 30} , \\
            \qquad \theta_{3}, \ldots, \theta_{17} \in (-\pi, \pi] .
    \end{multline}
Since the set 
$\bigl\{ (\theta_{3}, \ldots, \theta_{17}) \in (-\pi, \pi]^{15} : 
  \theta_{i} = \pi  \text{ for some } i\bigr\}$ 
is a $\Hm^{15}$ null set, we may restrict $\psi$ to $(-\pi, \pi)^{15}$. We have
    \begin{multline*}
      D\psi(\theta_{3}, \ldots, \theta_{17}) = 
        \begin{pmatrix}
          -\sin \theta_{3} & 0 & 0 & \ldots& 0 & 0 \\
          \cos \theta_{3} & 0 & 0 & \ldots& 0 & 0 \\
          0 & -\sin \theta_{4} & 0 & \ldots& 0 & 0 \\
          0 &  \cos \theta_{4} & 0 & \ldots& 0 & 0 \\
          \vdots & \vdots & \vdots & \ldots & \vdots & \vdots  \\
          0 & 0 & 0 & \ldots & 0 &  -\sin \theta_{17}  \\
          0 & 0 & 0 & \ldots & 0 &  \cos \theta_{17}
        \end{pmatrix}^{30 \times 15} , \\
            \qquad \theta_{3}, \ldots, \theta_{17} \in (-\pi, \pi) .
    \end{multline*}

Let 
    \begin{equation}  \label{E:za.psi.and.F.psi}
      z_{a}^{\psi} := z_{a} \circ \psi \text{ and } 
        F^{\psi} := F \circ \psi .
    \end{equation}
$F$ defined in \eqref{E:F.maps.R30.toR34} is injective. Therefore, $F^{\psi}$ is injective. We get the Jacobian matrices 
    \begin{gather*}
          Dz^{\psi}(\theta) =  
            \begin{pmatrix}
                  \sin \theta_{3} & \dots & \sin \theta_{17} \\
                  -\cos \theta_{3} & \dots & -\cos \theta_{17}
            \end{pmatrix}^{2 \times 15} \\
                                  \\  
                          \text{and} \\
                                  \\  
          DF^{\psi}(\theta) =  
            \begin{pmatrix}
                  & &  Df \big[ z^{\psi}(\theta) \bigr] \; Dz^{\psi}(\theta) &  & &   \\
          -\sin \theta_{3} & 0 & 0 & \ldots& 0 & 0 \\
          \cos \theta_{3} & 0 & 0 & \ldots& 0 & 0 \\
          0 & -\sin \theta_{4} & 0 & \ldots& 0 & 0 \\
          0 &  \cos \theta_{4} & 0 & \ldots& 0 & 0 \\
          \vdots & \vdots & \vdots & \ldots & \vdots & \vdots  \\
          0 & 0 & 0 & \ldots & 0 &  -\sin \theta_{17}  \\
          0 & 0 & 0 & \ldots & 0 &  \cos \theta_{17}
            \end{pmatrix}^{34 \times 15} , \\ \\
              \qquad \theta = (\theta_{3}, \ldots, \theta_{17}) \in (-\pi, \pi)^{15} .
    \end{gather*}  
So the first four rows of $DF^{\psi}(\theta)$ are given by 
$Df \big[ z^{\psi}(\theta) \bigr] \; Dz^{\psi}(\theta)$.

By Boothby \cite[Theorem (1.6), p.\ 109]{wmB75}, $DF^{\psi}$ is the matrix of $F_{\ast} : T_{\theta} \to \RR^{34}$. What is the matrix, 
$DF^{\psi\ast}$ of the adjoint $\RR^{34} \to T_{\theta}$? 
Let $e_{1}, \ldots, e_{15}$ and $v_{1}, \ldots, v_{34}$ be the usual bases on $\RR^{15}$ and $\RR^{34}$, resp., written as row vectors. 
Let ``$\cdot$'' be the usual dot product on Euclidean space. (The Riemannian metric on $\D^{15}$ is just the restriction of the dot product. See \eqref{E:directional.location.Riem.metric}.) Suppose $c$ and $d$ are 34-dimensional column vectors, Then $c \cdot d = c^{T} d$. Let $i = 1, \ldots, 15$ and $j = 1, \ldots, 34$. Let $DF^{\psi} e_{i}^{T}$ play the role of $c$ and $v_{j}^{T}$ play the role of $d$. Then
    \begin{multline*}
      (DF^{\psi} e_{i}^{T}) \cdot v_{j}^{T} 
        = [ DF^{\psi} e_{i}^{T} ]^{T} v_{j}^{T}  
        = \bigl[ e_{i} (DF^{\psi})^{T} \bigr] v_{j}^{T} 
          = e_{i} \bigl[ (DF^{\psi})^{T} v_{j}^{T} \bigr] \\
           = \bigl[ (DF^{\psi})^{T} v_{j}^{T} \bigr]^{T} \, e_{i}^{T}
            = \bigl[ (DF^{\psi})^{T} v_{j}^{T} \bigr] \cdot e_{i}^{T} ,
    \end{multline*}
the dot product of the column vectors $(DF^{\psi})^{T} v_{j}^{T}$ and $e_{i}^{T}$. So $DF^{\psi\ast} = (DF^{\psi})^{T}$, the transpose of $DF^{\psi}$.

Define 
    \begin{equation}   \label{E:Wa.defn}
      \mcl{W}_{a} := z_{a}^{-1} \bigl[ B'_{2}(0) \bigr] \cap \D_{15} .
    \end{equation}
Let 
    \begin{equation}  \label{E:F12.defn}
      \mcl{F}_{12} := F(\mcl{W}_{a}) \subset \D_{17} .
    \end{equation}
\emph{Claim:} 
$\mcl{F}_{12} \subset \Ss_{a}$. Let $w \in \D_{15}$ and suppose 
$z_{a}(w) \in B'_{2}(0)$. I.e., $w \in \mcl{W}_{a}$. 
Write $w = (y_{3}, \ldots, y_{15})$, 
where $y_{j} = (y_{j,1}, y_{j,2}) \in S^{1}$ ($j = 3, \ldots, 17$). 
Then, by \eqref{E:f1,f2.behave.well} and \eqref{E:za(w).defn},
$f_{i} \big[ z_{a}(w) \bigr] \in S^{1}$ ($i=1,2$) and 
    \begin{multline*}
      f_{1} \big[ z_{a}(w) \bigr] + f_{2} \big[ z_{a}(w) \bigr] = 
        z_{a}(w) \\
          = -(y_{3,1} + y_{4,1} + \cdots + y_{17,1}, 
            y_{3,2} + y_{4,2} + \cdots + y_{17,2}) - a y_{0} \in \RR^{2}. 
    \end{multline*}
Hence, by \eqref{E:F.maps.R30.toR34} and \eqref{E:when.aug.mean.sing}, we have 
$F(w) \in \Ss_{a}$. The claim $\mcl{F}_{12} \subset \Ss_{a}$ is proved. 

Define $swap : \RR^{4} \to \RR^{4}$ by 
    \begin{equation*}
      swap(x_{1}, x_{2}, x_{3}, x_{4}) := (x_{3}, x_{4}, x_{1}, x_{2}) , 
        \text{ where } x_{1}, x_{2}, x_{3}, x_{4} \in \RR . 
    \end{equation*}
By the observation made after \eqref{E:figure.F:CircleSing.f.defn}, if $z, z' \in B'_{2}(0)$ (i.e.\  $|z|, |z'| \in (0,2)$), 
then $swap \circ f(z) \neq f(z')$. I.e., 
    \begin{equation}  \label{E:swap.is.disjoint}
      swap \circ f \bigl[ B'_{2}(0) \bigr] 
        \text{ and } f \bigl[ B'_{2}(0) \bigr] \text{ are disjoint. }
    \end{equation}
Let 
    \begin{multline*}
      \mcl{F}_{21} := \Bigl\{ \bigl( swap(v), w \bigr) \in \RR^{34} : 
        (v, w) = F(w), w \in \mcl{W}_{a} \Bigr\} \\
          = \Bigl\{ \bigl( swap(v), w \bigr) \in \RR^{34} : (v, w) \in \mcl{F}_{12} \Bigr\} . 
    \end{multline*}
By \eqref{E:when.aug.mean.sing}, $\mcl{F}_{21} \subset \Ss_{a}$, 
since $\mcl{F}_{12} \subset \Ss_{a}$. By \eqref{E:swap.is.disjoint},   
    \begin{equation*}
      \mcl{F}_{12} \text{ and } \mcl{F}_{21} \text{ are disjoint. }
    \end{equation*}

Recall \eqref{E:Sa.t.defn}. \emph{Claim:} 
    \begin{equation}  \label{E:F12.cup.F21.only.misses.null.set}
      \Ss_{a} \setminus (\mcl{F}_{12} \cup \mcl{F}_{21}) 
        = \Ss_{a}^{0} \cup \Ss_{a}^{2}. 
    \end{equation}
Let $x = (y_{1}, y_{2}, w) = (y_{1}, \ldots, y_{n}) 
\in \Ss_{a} \setminus (\Ss_{a}^{0} \cup \Ss_{a}^{2})$. Since $x \in \Ss_{a}$, \eqref{E:when.aug.mean.sing} holds so,  
by \eqref{E:z(w).defn}, $z := y_{1} + y_{2} = z_{a}(w)$. Therefore, by \eqref{E:formla.for.y1.y2} and \eqref{E:figure.F:CircleSing.f.defn}, we have  
$(y_{1}, y_{2}) = f(z)$ or $(y_{2}, y_{1}) = f(z)$. Since 
$x \in \Ss_{a} \setminus (\Ss_{a}^{0} \cup \Ss_{a}^{2})$, we have
$|y_{1} + y_{2}| \in (0,2)$. Hence, if $(y_{1}, y_{2}) = f(z)$ then $x \in \mcl{F}_{12}$. If $(y_{2}, y_{1}) = f(z)$ then $x \in \mcl{F}_{21}$. Therefore, if 
$x \in \Ss_{a} \setminus (\mcl{F}_{12} \cup \mcl{F}_{21})$ then we must have $x \in \Ss_{a}^{0} \cup \Ss_{a}^{2}$. I.e., 
$\Ss_{a} \setminus (\mcl{F}_{12} \cup \mcl{F}_{21}) \subset \Ss_{a}^{0} \cup \Ss_{a}^{2}$.

Conversely, let $x \in \Ss_{a}^{0} \cup \Ss_{a}^{2}$ and suppose 
$x \in \mcl{F}_{12}$. Then there exists $w \in \mcl{W}_{a}$ s.t.\ 
$f \bigl[ z_{a}(w) \bigr] = (y_{1}, y_{2})$. But by definition, \eqref{E:Wa.defn}, 
of $\mcl{W}_{a}$, 
$z_{a}(w) \in B'_{2}(0)$. I.e., $y_{1} + y_{2} \in (0,2)$. This contradicts 
$x \in \Ss_{a}^{0} \cup \Ss_{a}^{2}$. Similarly, if $x \in \mcl{F}_{21}$. This completes the proof of the claim \eqref{E:F12.cup.F21.only.misses.null.set}. 
 
Recall \eqref{E:dim.Sa=15}. Obviously, for $v \in \D_{2}$ and $w \in \D_{15}$ and 
$(v,w) \in \Ss_{a}$, we have $(v, w \bigr) \mapsto \bigl( swap(v), w \bigr)$ is an isometry and so preserves $\Hm^{15}$-measure. By \eqref{E:when.aug.mean.sing}, \eqref{E:formla.for.y1.y2}, \eqref{E:Sa.t.defn}, \eqref{E:Hm.15.S.2.=.0},  \eqref{E:Hm.15.S.0.=.0}, and \eqref{E:F12.cup.F21.only.misses.null.set}, 
    \begin{multline}  \label{E:F+,F-.are.disjoint}
      \mcl{F}_{12} \text{ and } \mcl{F}_{21} \text{ are disjoint subsets of } \Ss_{a} , \;
        \Hm^{15}(\mcl{F}_{21}) = \Hm^{15}(\mcl{F}_{12}), \text{ and } \\
          \Hm^{15} \bigl[ \Ss_{a} \setminus (\mcl{F}_{12} \cup \mcl{F}_{21}) \bigr] = 0 .
    \end{multline}

Recall \eqref{E:za.psi.and.F.psi}. As in \eqref{E:integral.over.S3}  
we can compute $\Hm^{15}(\Ss_{a})$ using the ``area formula'' (Hardt and Simon \cite[1.8 p.\ 13]{rHlS86.GMT}, Federer \cite[3.2.3, p.\ 243 and 3.2.46, p.\ 282]{hF69}): Let 
    \begin{equation*}
      J^{\psi}(w) = \sqrt{\det \bigl( (DF^{\psi\ast}) (DF^{\psi}) \bigr) } .
    \end{equation*}

Let
    \begin{equation}  \label{E:Theta.a.defn}
      \Theta_{a} := \psi^{-1}(\mcl{W}_{a}) .
    \end{equation}
Since $\psi$ is a bijection, we have $\mcl{F}_{12} = F^{\psi}(\Theta_{a})$. 
By \eqref{E:R.n.nvar-p-1.H.bdd.away.frm.0}, $\Hm^{15}(\Ss_{a}) > 0$. By \eqref{E:F+,F-.are.disjoint}, \eqref{E:F12.defn}, and \eqref{E:indicator.fn.defn}, 
	\begin{align}  \label{E:integral.over.Wa} 
		\Hm^{15}(\Ss_{a}) &= 2 \Hm^{15}(\mcl{F}_{12}) \notag \\ 
		  &= 2 \Hm^{15} \bigl[ F^{\psi}(\Theta_{a}) \bigr] \\
		  &= 2 \int_{\Theta_{a}} J^{\psi}(\theta) \; \Hm^{15} (d\theta) \notag \\
		  &= 2 \int_{(-\pi, \pi]^{15}} 1^{\Theta_{a}}(\theta) J^{\psi}(\theta) 
		       \; \Hm^{15} (d\theta) . \notag
	\end{align}

By theorem \ref{T:Hausdorff.measure.gives.right.value.on.manifs}, $\Hm^{15}$ on $\RR^{15}$ is just Lebesgue measure $\mcl{L}^{15}$. Hence,  
    \begin{equation} \label{E:Hm.15.of.D.15}
      \Hm^{15} \bigr( (-\pi, \pi]^{15} \bigl) = \mcl{L}^{15} \bigr( (-\pi, \pi]^{15} \big)
        = (2 \pi)^{15} \approx 939 \text{ billion} . 
    \end{equation}
or nearly 1 trillion. Let $V := \mcl{L}^{15} \bigr( (-\pi, \pi]^{15} \big)$. Thus, 
    \begin{equation*}
      2 V \approx 1.88 \times 10^{12} , 
    \end{equation*}
about 1.88 trillion. Let $P := V^{-1} \mcl{L}^{15} \, \measRestrict \, (-\pi, \pi]^{15}$, where 
$\mcl{L}^{15} \, \measRestrict \, (-\pi, \pi]^{15}$ is the restriction of $\mcl{L}^{15}$ 
to $(-\pi, \pi]^{15}$. $P$ is a probability measure. Let $X : (-\pi, \pi]^{15} \to \RR$ be the function 
    \begin{equation}  \label{E:Xa(theta).defn}
      X(\theta) := X_{a}(\theta) := 1^{\Theta_{a}}(\theta) \, J^{\psi}(\theta) .
    \end{equation} 
By \eqref{E:integral.over.Wa}, we then have
	\begin{equation}  \label{E:volume.is.expectation} 
		\Hm^{15}(\Ss_{a}) = 2 V \int_{\RR^{15}} X(\theta) \; P(d\theta)
		    = 2 V EX_{a} , 
	\end{equation}
where $EX_{a}$ is the expectation (population mean) of $X_{a}$ w.r.t.\ $P$. I.e., we regard $X_{a} = X_{a}(\theta)$ as a random variable with $\theta$ a random element 
of $(-\pi, \pi]^{15}$ with distribution $P$ on $(-\pi, \pi]^{15}$. I.e., $\theta$ is uniformly distributed 
on $(-\pi, \pi]^{15}$. 

We used this idea to compute the volume, $\Hm^{15}(\Ss_{a_{1}})$, by Monte Carlo. We do the computations using the R language (R Core Team \cite{RCT22.RManual}). Define $a_{1}$ as in \eqref{E:y0.a1.a2}. We simulated the random variable 
$X_{a_{1}}$ fifty million times and estimated $EX_{a_{1}}$ by the sample mean, 
$\overline{X}_{a_{1}}$ of the fifty million realized values. Denote the estimate 
by $\overline{X}_{a_{1}}$. We have
    \begin{equation}
      \overline{X}_{a_{1}} \approx 0.833 .  
    \end{equation}

Hence, we estimate 
    \begin{equation} \label{E:SS.a1.approx.vol}
      \Hm^{15}(\Ss_{a_{1}}) 
        \approx 2V \overline{X}_{a_{1}}
          \approx 1.56 \times 10^{12} , 
    \end{equation}
nearly 1.6 trillion. 
Of course, if $Y$ is a random data set uniformly distributed over $\D_{17}$, the probability that $Y \in \Ss_{a_{1}}$ is 0, because, by \eqref{E:aug.mean.sing.vol.asymp}, 
$\dim \Ss_{a_{1}} = 15 < 17 = \dim \D = \dim \D_{17}$.


Define $a_{2}$ as in \eqref{E:y0.a1.a2}. We simulated the random variable $X_{a_{2}}$ 1.5 billion times. This was not overkill because \emph{in none of the 1.5 billion simulations was $1^{\Theta_{a}}(\theta) \neq 0$!} This does not mean 
$\Ss_{a_{2}}$ is empty. After all, panel (b) of figure \ref{F:CircleSing} exhibits a data set in $\Ss_{a_{2}}$.
%
%
%

So an accelerated method for estimating $E X_{a_{2}}$ is needed. By \eqref{E:Xa(theta).defn} and \eqref{E:Theta.a.defn}, in order that $X(\theta) \neq 0$ we need $\psi(\theta) \in \mcl{W}_{a}$. Write $\psi(\theta) = (y_{3}, \ldots, y_{17}) \in \D_{15}$, $z = z_{a}\bigl[ \psi(\theta) \bigr]$, and let $v := -\sum_{i=3}^{17} y_{i} \in \RR^{2}$. 
By \eqref{E:|z|<2}, \eqref{E:z(w).defn}, and \eqref{E:Wa.defn} $a_{2} - |v| \leq |v - a_{2} y_{0}| = |z| < 2$. I.e., we need $|v| > a_{2} - 2 > 0$. Now, 
    \begin{align}
      4 > |z|^{2} &= |v - a_{2} y_{0}|^{2} \notag \\
        &= |v|^{2} - 2 a_{2} v \cdot y_{0} + a_{2}^{2} \\
        &> (a_{2}^{2} - 4 a_{2} + 4) - 2 a_{2} v \cdot y_{0} + a_{2}^{2}  \notag \\
        &= 2 a_{2}^{2} - 4 a_{2} + 4 - 2 a_{2} v \cdot y_{0} \notag .
    \end{align}
Thus, 
    \begin{equation*}
      - \sum_{i=3}^{17} y_{i} \cdot y_{0} = v \cdot y_{0} > a_{2} - 2 .
    \end{equation*} 
Since for every $i$, $y_{i} \cdot y_{0} \geq -1$, we then have
    \begin{equation*}
      -y_{3} \cdot y_{0} + 14 
        \geq - y_{3} \cdot y_{0} - \sum_{i=4}^{17} y_{i} \cdot y_{0} 
          = - \sum_{i=3}^{17} y_{i} \cdot y_{0} > a_{2} - 2 .
    \end{equation*}
I.e., $y_{3} \cdot y_{0} < 16 - a_{2}$. By \eqref{E:y0.a1.a2}, $16 - a_{2} = - 0.14899$. In general, a necessary condition that $X \neq 0$ is:
    \begin{equation*}
      \cos \angle(y_{i} , y_{0}) = y_{i} \cdot y_{0} < 16 - a_{2}, 
        \quad i = 3, \ldots, 17 .
    \end{equation*}
(See \eqref{E:angle.between.vectors}.)
 
By \eqref{E:y0.a1.a2}, $y_{0} = (0, -1)$. 
Write $y_{i} = (\cos \theta_{i}, \sin \theta_{i})$ ($i=3, \ldots, 17$). Thus, in order that $X \neq 0$ it is at a minimum necessary that $\sin \theta_{i} > a_{2} - 16$. Let $\beta := \arcsin (a_{2} - 16) \approx 0.150$. 
Then we require $\theta \in (\beta, \pi - \beta) \approx (0.150, 2.99)$
Let $\mcl{A}$ be the event:
    \begin{equation*}
      \mcl{A} := \bigl\{ (\theta_{3}, \ldots, \theta_{17}) \in (\beta, \pi - \beta)^{15} \bigr\} .
    \end{equation*}
The length of the interval $(\beta, \pi - \beta)$ is $\pi - 2 \beta \approx 2.84$. 
Interval length as a proportion of $2 \pi$ is $1/2 - \beta/\pi \approx 0.452$.
Let $\theta$ be a random element of $\D_{15}$, uniformly distributed on $\D_{15}$. Since $X(\theta) = 0$ when $\theta \notin \mcl{A}$, we have
    \begin{multline} \label{E:EX.a2}
      E X_{a_{2}} = E (X | \theta \in \mcl{A} ) P \{ \theta \in \mcl{A} \} 
        + E (X | \theta \notin \mcl{A} ) P \{ \theta \notin \mcl{A} \} \\
          = E (X | \theta \in \mcl{A} ) P \{ \theta \in \mcl{A} \} ,
    \end{multline}
where ``$|$'' indicates conditional expectation or probability. Now, 
    \begin{equation}  \label{E:Prob.theta.in.A}
      P \{ \theta \in \mcl{A} \} = \left( \tfrac{\pi - 2 \beta}{2 \pi} \right) ^{15} 
        \approx 6.80 \times 10^{-6}.
    \end{equation} 
Thus, to calculate $E X_{a_{2}}$, we need only estimate the conditional expectation 
$E (X_{a_{2}} | \theta \in \mcl{A} )$. We do that by simulation. It is easy to simulate the event $\theta \in \mcl{A}$: Just choose $\theta_{3}, \ldots, \theta_{17}$ randomly independently and uniformly in the interval $(\beta, \pi - \beta)$.

We simulated the event $\theta \in \mcl{A}$ two billion times. Each time we calculated $X_{a_{2}}(\theta)$. 
%
Out of the two billion simulations, $ \Theta_{a_{2}}$ occurred 2749 times. The rest of the two billion times $X_{a_{2}}$ was 0. We estimated $E (X | \theta \in \mcl{A} )$ by the sample mean, $\overline{X}_{a_{2}}$, (with 2 billion in the denominator),
of those values, which turned out to be about $1.86 \times 10^{-05}$. 
By \eqref{E:volume.is.expectation}, \eqref{E:EX.a2}, and \eqref{E:Prob.theta.in.A}
    \begin{align*}
      \Hm^{15}(\Ss_{a}) &= 2 V EX_{a} \\
        &= 2 V E (X | \theta \in \mcl{A} ) 
          P \{ \theta \in \mcl{A} \}\\
        &= 2 (2 \pi)^{15} \,  E (X | \theta \in \mcl{A} ) 
            \left( \tfrac{\pi - 2 \beta}{2 \pi} \right) ^{15} \\
        &= 2 \,  E (X | \theta \in \mcl{A} ) (\pi - 2 \beta)^{15} \\
        &\approx 237.5 . 
    \end{align*}
    
Using a less approximate value of $\Hm^{15}(\Ss_{a_{1}})$ than that given in \eqref{E:SS.a1.approx.vol} this leads to the ratio estimate  
    \begin{equation}  \label{E:measure.ratio.is.6.6.billion}
      \frac{\Hm^{15}(\Ss_{a_{1}})}{\Hm^{15}(\Ss_{a_{2}})} 
          \approx 6.6 \text{ billion} .  
    \end{equation}      
But this ratio was computed using Monte Carlo. Therefore, there is a sampling error associated with it. The standard error, the standard deviation of the sampling errors, turns out to be less than 3\% of that ratio estimate.

Suppose $\Hm^{15}(\Ss_{t})$ achieved the lower bound in 
\eqref{E:Hm.a.S.geq.R.d-p-1} for $t \in (0,17)$. If we had sharp estimates of the values of $R_{a_{1}}$ and $R_{a_{2}}$ (see proposition \ref{P:rate.of.decrease.of.Hm.St.aug.direct.mean}\eqref{I:aug.mean.essential.dist.approx}) then we could an upper bound for $\gamma$ in \eqref{E:Hm.a.S.geq.R.d-p-1} in the case of measures of location on the circle (for $n = 17$).. However, I only know $R_{t}$ asymptotically as $t \uparrow n = 17$.

\chapter{Lipschitz maps and Hausdorff Measure and Dimension}   \label{Chptr:Lip.Haus.meas.dim} 
Hausdorff dimension (Giaquinta \emph{et al} \cite[p.\ 14, Volume I]{mGgMjS98.cart.currents} and Falconer \cite[p.\ 28]{kF90}) is defined as follows. First, we define Hausdorff measure (Giaquinta \emph{et al} \cite[p.\ 13, Volume I]{mGgMjS98.cart.currents}, 
Hardt and Simon \cite[p.\ 9]{rHlS86.GMT}, and Federer \cite[2.10.2. p.\ 171]{hF69}). Let $s \geq 0$. If $s$ is an integer, let $\omega_{s}$ denote the volume of the unit ball in $\RR^{s}$:
	\begin{equation}  \label{E:vol.of.unit.ball}
		\omega_{s} = \frac{\Gamma(1/2)^{s}}{\Gamma \left( \tfrac{s}{2} + 1 \right)},
	\end{equation}
where $\Gamma$ is Euler's gamma function (Federer \cite[pp.\ 135, 251]{hF69}). If $s$ is not an integer, then $\omega_{s}$ could still be defined by \eqref{E:vol.of.unit.ball} or could be any convenient finite positive constant. Federer uses \eqref{E:vol.of.unit.ball} for any 
$s \geq 0$. Let $X$ be a metric space with metric $d_{X}$. For any subset $A$ of $X$ and $\delta > 0$ first define	
	\begin{equation}  \label{E:Hs.delta.defn}
		\Hm^{s}_{\delta}( A ) 
		      = \omega_{s} \inf \left\{ \sum_{j} 
		        \left( \frac{diam(C_{j})}{2} \right)^{s} \right\}.
	\end{equation}
Here, ``$diam$'' is diameter (w.r.t.\ $d_{X}$; see Munroe \cite[p.\ 12]{meM71.meas.thy} for definition of diameter) and the infimum is taken over all (at most) countable collections 
$\{ C_{j} \}$ of subsets of $X$ with $A \subset \bigcup_{j} C_{j}$ and 
$diam(C_{j})< \delta$. Observe that $\Hm^{s}_{\delta}(\varnothing) = 0$ since an empty cover covers $\varnothing$ and an empty sum is 0. If $X$ is second countable, it follows from Lindel\"of's theorem, Simmons \cite[Theorem A, p.\ 100]{gfS63}, 
that for any $\delta > 0$, such a countable cover exists. 
Otherwise, $\Hm^{s}_{\delta}( A ) = + \infty$.) We may assume that the covering sets 
$C_{j}$ are all open or that they are closed (Federer \cite[2.10.2, p.\ 171]{hF69}). The $s$-dimensional Hausdorff measure of $A$ is then
	\begin{equation}  \label{E:Hs.defn}
		\Hm^{s}( A ) = \lim_{\delta \downarrow 0} \Hm^{s}_{\delta}( A ) 
			= \sup_{\delta > 0} \Hm^{s}_{\delta}( A ).
	\end{equation}

$\Hm^{s}$ is ``monotonic'' and ``countably subadditive'':
    \begin{multline} \label{E:Hm.is.mono.and.count.subadditive}
    \text{If } A, B \subset X \text{ with } A \subset B \text{ then } 
      \Hm^{s}(A) \leq \Hm^{s}(B) \text{ and } \\
     \text{if } A_{1}, A_{2}, \ldots \subset X, \text{ then }
       \Hm^{s}\left( \bigcup_{i} A_{i} \right) \leq \sum_{i} \Hm^{s}(A_{i}),
         \quad s \geq 0 .
    \end{multline}

Since $\Hm^{s}_{\delta}(\varnothing) = 0$ for every $s \geq 0$ and $\delta > 0$, we have
    \begin{equation}  \label{E:Haus.measure.of.empty.set.is.0}
      \Hm^{s}( \varnothing ) = 0 \text{ for ever } s \geq 0.
    \end{equation}. 

Note that,
	\begin{multline}  \label{E:0.dim.Haus.measure}
		\Hm^{0}( A ) \text{ is the cardinality of } A \text{ if it is finite. Otherwise, } \\
		  \Hm^{0}( A ) = +\infty. \text{ In particular, } 
		    A = \varnothing \text{ if and only if } \Hm^{0}( A ) = 0.
	\end{multline}
(\emph{Pf:} Suppose $\Hm^{0}( A ) < \infty$. Then there exists $n < \infty$ s.t.\ for every $\delta > 0$ there exists a cover  $C_{1}, C_{2}, \ldots, C_{n}$ s.t.\ $diam(C_{j} < \delta$ for every $j$. Since $s$ in \eqref{E:Hs.delta.defn} is 0, we may in fact assume that each $C_{j}$ is a closed ball of diameter $\delta/2$. Then from Pollard \cite[p.\ 10]{dP90.EmpPro} we see that for every $\epsilon$ the number of pairs of points in $A$ that are more than $\epsilon$ units apart is bounded as $\epsilon \downarrow 0$. That means $A$ is finite. See Federer \cite[p.\ 171]{hF69}.)  

Suppose $Y$ is a metric space with metric $d_{Y}$, let $s \geq 0$, and let $\Hm_{Y}^{s}$ be Hausdorff measure on $Y$. Let $X \subset Y$ be non-empty. The restriction of $d_{Y}$ to $X \times X$ is a metric on $X$. Let $\Hm_{X}^{s}$ be the Hausdorff measure on $X$ computed using this metric. We \emph{claim}
    \begin{equation}  \label{E:restriction.of.Haus.meas}
      \Hm_{X}^{s}(A) = \Hm_{Y}^{s}(A) \text{ for every } A \subset X .
    \end{equation}
To see this let $A \subset X$. Let $\delta > 0$ and let $\{ C_{j} \}$ be an open cover of $A$ in $Y$. Then $\{ C_{j} \cap A \}$ is an open cover of $A$ in $X$ and
    \begin{equation*}
        \omega_{s} \sum_{j} \left( \frac{diam(C_{j})}{2} \right)^{s} 
          \geq \omega_{s} \sum_{j} \left( \frac{diam(C_{j} \cap A)}{2} \right)^{s}
            \geq \Hm^{s}_{X, \delta}( A ) ,
    \end{equation*}
where $\Hm^{s}_{X, \delta}( A )$ means just what you think it does. Taking the infimum over all covers $\{ C_{j} \}$ of $A$ in $Y$ we get $\Hm^{s}_{Y, \delta}( A ) \geq \Hm^{s}_{X, \delta}( A )$. Therefore $\Hm^{s}_{Y}( A ) \geq \Hm^{s}_{X}( A )$. Now let $\{ C_{j} \}$ be an open cover of $A$ in $X$, so $C_{j} \subset X$ for every $j$. Now, $\{ C_{j} \}$ is also an open cover of $A$ in $Y$ so
    \begin{equation*}
        \omega_{s} \sum_{j} \left( \frac{diam(C_{j})}{2} \right)^{s}  
          \geq \Hm^{s}_{Y, \delta}( A ) ,
    \end{equation*}
Taking the infimum over all covers $\{ C_{j} \}$ of $A$ in $X$ we get 
$\Hm^{s}_{X, \delta}( A ) \geq \Hm^{s}_{Y, \delta}( A )$. Therefore $\Hm^{s}_{Y}( A ) \leq \Hm^{s}_{X}( A )$ and the claim \eqref{E:restriction.of.Haus.meas} is proved.

For every $s \geq 0$, $\Hm^{s}$ is an outer measure on $X$ and, by Federer \cite[2.2.3, p.\ 61 and 2.10.2 p.\ 171]{hF69} and Hardt and Simon \cite[pp.\ 9--10]{rHlS86.GMT},
	\begin{equation}  \label{E:Hm.s.is.Borel.regular}
		\Hm^{s} \text{ is Borel regular.}
	\end{equation}
I.e., the Borel subsets of $X$ are $\Hm^{s}$-measurable and if $A \subset X$ then 
there exists a Borel $B \subset X$ s.t.\ $A \subset B$ 
and $\Hm^{s}(B) = \Hm^{s}(A)$. Note that if $X$ is a subset of a Euclidean space (and inherits the Euclidean metric) and we rescale $X$ by multiplying each vector in $X$ by $\lambda > 0$, then for every $A \subset X$ 
the measure $\Hm^{s}( A )$ will be replaced by $\lambda^{s} \, \Hm^{s}( A )$ (Falconer \cite[p.\ 27]{kF90}).

If $s$ is a positive integer, denote Lebesgue measure by $\mcl{L}^{s}$. Then by Hardt and Simon \cite[p.\ 11]{rHlS86.GMT} or theorem \ref{T:Hausdorff.measure.gives.right.value.on.manifs} below, we have 
	\begin{equation}  \label{E:when.Haus.meas.=.Leb.meas}
	     \text{If } s = 1,2,3, \ldots \text{ then } \Hm^{s} = \mcl{L}^{s} 
		  \text{ on } \RR^{s}.
	\end{equation}
(This is a special case of theorem \ref{T:Hausdorff.measure.gives.right.value.on.manifs}.)

For $A \subset X$ nonempty there will be a number $s_{0} \in [0, + \infty]$ s.t.\ 
$0 \leq s < s_{0}$ implies $\Hm^{s}( A ) = + \infty$ and $s > s_{0}$ 
implies $\Hm^{s}( A ) = 0$. That number $s_{0}$ is the ``Hausdorff dimension'', 
$\dim A$, of $A$ (Falconer \cite[p.\ 28]{kF90}). Thus, 
    \begin{equation}  \label{E:Haus.dim.defn}
      0 \leq s < \dim A \text{ implies } \Hm^{s}( A ) = + \infty 
        \text{ and } s > \dim A \text{ implies } \Hm^{s}( A ) = 0 .
    \end{equation} 
In particular, $\dim \varnothing = 0$. It is also easy to see that the dimension of a single point is 0.

In appendix \ref{Chptr:basics.of.simp.comps} we will define $\dim \sigma$, where $\sigma$ is a simplex, and $\dim P$, where $P$ is a simplicial complex. It follows from \eqref{E:when.Haus.meas.=.Leb.meas} that these dimensions are the same as the respective Hausdorff dimensions, at least if $P$ is finite. See \eqref{E:Haus.dim.of.simplex}. 

But $\Hm^{s}( A ) = 0$ is a stronger statement than $\dim A \leq s$ (Falconer \cite[p.\ 29]{kF90}). Using \eqref{E:Hm.is.mono.and.count.subadditive}, it is easy to see that 
	\begin{equation}  \label{E:dim.of.whole.=.max.dim.of.parts}
	  \text{If } A_{1}, A_{2}, \ldots \subset X,
	    \text{ then } \dim \left( \bigcup_{i} A_{i} \right)
	        = \sup \{ \dim A_{j} : j = 1, 2, \ldots \}
	\end{equation}
(Falconer \cite[p.\ 29]{kF90}). It follows that
    \begin{equation}  \label{E:dim.of.countable.set.is.0}
      \text{The dimension of a countable set is 0.}
    \end{equation}

Another way to combine spaces is by Cartesian product:
  \begin{lemma}  \label{L:dim.of.product}
Let $X$ be a non-empty metric space and let $A$ be a Lebesgue measurable subset 
of $\RR^{m}$ with $\mcl{L}^{m}(A) > 0$. ($\mcl{L}^{m} $ denotes $m$-dimensional Lebesgue measure.) If $\delta$ is a metric on $X$, define a metric $\delta$ 
on $A \times X$ as follows:
	\[
		\sigma \bigl[ (a_{1}, x_{1}), (a_{2}, x_{2}) \bigr] = \sqrt{|a_{1} - a_{2}|^{2} 
		     + \delta(x_{1},x_{2})^{2}},
		        \quad a_{1}, a_{2} \in A, \; x_{1}, x_{2} \in X.
	\]
Then w.r.t.\ $\sigma$ we have 
	\begin{equation} \label{E:dim.AxX.=.m+dimX}
		\dim (A \times X) = m + \dim X.
	\end{equation}
  \end{lemma}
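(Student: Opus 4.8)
The plan is to establish the two inequalities $\dim(A \times X) \le m + \dim X$ and $\dim(A \times X) \ge m + \dim X$ separately and combine them to get \eqref{E:dim.AxX.=.m+dimX}. Only the second inequality uses the hypothesis $\mcl{L}^{m}(A) > 0$; the first needs nothing beyond $A \subset \RR^{m}$.

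For the upper bound, since $A \times X \subset \RR^{m} \times X$ and $\RR^{m} \times X = \bigcup_{n}\bigl([-n,n]^{m} \times X\bigr)$ is a countable union of closed (hence Borel) sets, \eqref{E:dim.of.whole.=.max.dim.of.parts} lets us replace $A$ by a fixed bounded cube $Q \subset \RR^{m}$. Fix $s > \dim X$, so $\Hm^{s}(X) = 0$; then for every $\delta, \epsilon > 0$ there is a countable cover $\{ W_{j} \}$ of $X$ with $diam(W_{j}) < \delta$ and $\sum_{j} diam(W_{j})^{s} < \epsilon$. For each $j$, partition $Q$ into $O\bigl(diam(W_{j})^{-m}\bigr)$ subcubes of Euclidean diameter at most $diam(W_{j})$; the products of these subcubes with $W_{j}$ cover $Q \times X$, each has $\sigma$-diameter at most $\sqrt{2}\,diam(W_{j}) < \sqrt{2}\,\delta$, and the associated sum of $(m+s)$-th powers of diameters is $O\bigl(\sum_{j} diam(W_{j})^{s}\bigr) = O(\epsilon)$. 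Letting $\epsilon \downarrow 0$ gives $\Hm^{m+s}_{\sqrt{2}\,\delta}(Q \times X) = 0$ for every $\delta > 0$, hence $\Hm^{m+s}(Q \times X) = 0$ and $\dim(Q \times X) \le m + s$; letting $s \downarrow \dim X$ completes the upper bound.

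For the lower bound, first dispose of the case $\dim X = 0$: for any $x_{0} \in X$ the map $a \mapsto (a, x_{0})$ is an isometry of $A$ (Euclidean metric) onto a subset of $(A \times X, \sigma)$, so $\dim(A \times X) \ge \dim A = m$, where $\dim A = m$ because $A \subset \RR^{m}$ and $\mcl{L}^{m}(A) > 0$ forces $\Hm^{m}(A) > 0$ (isodiametric inequality, Federer \cite{hF69}; cf.\ \eqref{E:when.Haus.meas.=.Leb.meas}). Now assume $\dim X > 0$ and fix $s$ with $0 < s < \dim X$, so $\Hm^{s}(X) > 0$. By inner regularity of Lebesgue measure choose a compact $K \subset A$ with $\mcl{L}^{m}(K) > 0$, and by Frostman's lemma (Falconer \cite{kF90}) choose a finite nonzero Borel measure $\nu$ on $X$ with $\nu\bigl(B_{r}(x)\bigr) \le r^{s}$ for all $x \in X$, $r > 0$. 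Put $\mu := \bigl(\mcl{L}^{m} \measRestrict K\bigr) \times \nu$, a finite nonzero Borel measure on $(A \times X, \sigma)$ carried by $K \times X$. Since the $\sigma$-ball of radius $r$ about $(v,x)$ is contained in the product $B_{r}(v) \times B_{r}(x)$, we get $\mu\bigl(B_{r}(v,x)\bigr) \le \mcl{L}^{m}\bigl(B_{r}(v)\bigr)\,\nu\bigl(B_{r}(x)\bigr) \le \omega_{m}\, r^{m+s}$. The mass distribution principle (Falconer \cite{kF90}) then gives $\Hm^{m+s}(K \times X) \ge \mu(K \times X)/\omega_{m} > 0$, so $\dim(A \times X) \ge m + s$; letting $s \uparrow \dim X$ yields $\dim(A \times X) \ge m + \dim X$, and \eqref{E:dim.AxX.=.m+dimX} follows.

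The one point requiring care is the invocation of Frostman's lemma on the abstract metric space $X$: the classical statement is for compact (or analytic) spaces, so in full generality one must first extract from $X$ a compact subset of positive $\Hm^{s}$-measure and support $\nu$ there. In all applications in this book $X$ is separable (in fact a compact manifold or a subset of a separable data space), so this issue does not arise; it is in any event standard.
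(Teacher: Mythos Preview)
Your argument is correct and takes a genuinely different route from the paper's. For the upper bound the two are essentially the same: you build explicit product covers, which is exactly what underlies Federer \cite[Theorem 2.10.45, p.\ 202]{hF69}, the result the paper cites as a black box. The real divergence is in the lower bound. The paper argues by contraposition using Federer's slicing inequality \cite[2.10.27, p.\ 190]{hF69}: if $\Hm^{m+t}(A \times X) = 0$ for some $t < \dim X$, the inequality forces $\mcl{L}^{m}(A)\,\Hm^{t}(X) = 0$, hence $\Hm^{t}(X) = 0$, contradicting $t < \dim X$. This needs no hypothesis on $X$ beyond being a metric space. Your Frostman\,/\,mass-distribution argument is more constructive and arguably more transparent, but the step you flag is a real limitation at the stated generality: Frostman's lemma is standard for compact metric spaces (or analytic subsets of Polish spaces), and ``first extract a compact subset of positive $\Hm^{s}$-measure'' is not automatic for an arbitrary metric space~$X$. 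So your proof is fine for every $X$ that actually arises in the book, while the paper's route via Federer matches the lemma exactly as stated.
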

  \begin{proof}
 Let $s := \dim X$. First, suppose $s = +\infty$ and let $s' \in [0, \infty)$. Since $\mcl{L}^{m}(A) > 0$, we may pick $a \in A$. Then $\{ a \} \times X \subset A \times X$ and 
	\begin{equation*}
		+ \infty \geq \Hm^{s'} \bigl( A \times X \bigr) \geq \Hm^{s'} \bigl( \{ a \} 
		  \times X \bigr) = \Hm^{s'}(X) = + \infty = m + \dim X.
	\end{equation*}
I.e., \eqref{E:dim.AxX.=.m+dimX} holds if $s = \infty$.

Next, suppose $s < \infty$. Then if $s' > s$, we have $\Hm^{s'}(X) = 0$. Therefore by Federer \cite[Theorem 2.10.45, p.\ 202]{hF69}, we have $\Hm^{m+s'}(A \times X) = 0$. Hence, 
	\begin{equation}  \label{E:dim.AxX.leq.sum}
		0 \leq \dim(A \times X) \leq m+s = m + \dim X.
	\end{equation}

Suppose $s := \dim X = 0$. And let $x \in X \neq \varnothing$. The space 
$A \times \{ x \} \subset A \times X$ is isometric to $A$, so 
$\dim(A \times X) \geq \dim (A \times \{ x \}) = \dim A =  m + \dim X$. Hence, by \eqref{E:dim.AxX.leq.sum}, we have $\dim (A \times X) = m + \dim X$. 

Now suppose $s \in (0, \infty)$ and suppose $t \in [0, s)$ and $\dim(A \times X) < m+t < m + \dim X$. Then $\Hm^{m+t}(A \times X) = 0$. But by Federer \cite[2.10.27, p.\ 190; also see statement just before 2.10.28, p.\ 191]{hF69}, this means $\mcl{L}^{m}(A) \Hm^{t}(X) = 0$. Thus, $\Hm^{t}(X) = 0$. But $t < s := \dim X$ implies $\Hm^{t}(X) = + \infty$,  contradiction.
  \end{proof}

Let $Y$ be a metric space with metric $d_{Y}$ and let $f : X \to Y$. Recall that $f$ is ``Lipschitz(ian)'' w.r.t.\ $d_{X}$ and $d_{Y}$ (Giaquinta \emph{et al} \cite[p.\ 202, Volume I]{mGgMjS98.cart.currents}, Falconer \cite[p.\ 8]{kF90}, Federer \cite[pp.\ 63 -- 64]{hF69}) if there exists $K < \infty$ (called a ``Lipschitz constant'' for $f$) s.t.\
	\[
		d_{Y} \bigl[ f(x), f(y) \bigr] \leq K \, d_{X}(x, y), 
		     \quad \text{ for every } x, y \in X.
	\]
Thus, Lipschitz maps are continuous. 
    \begin{multline}  \label{E:bi-Lipschitz.defn}
      \text{If } f \text{ is a bijection and both } f \text{ and } f^{-1} \\
        \text{ are Lipschitz, we say that } f \text{ is ``bi-Lipschitz''.}
    \end{multline}

\begin{example}   \label{Ex:dist.is.Lip}
If $S \subset X$ is nonempty then the function $y \mapsto dist(y, S) \in \RR$ is Lipschitz with Lipschitz constant 1. To see this, let $y_{1}, y_{2} \in X$. Let $\epsilon > 0$ and pick $x \in S$ s.t.\ $dist(y_{2}, S) > d_{X}(y_{2}, x) - \epsilon$. Then
	\[
		d_{X}(y_{1}, y_{2}) \geq d_{X}(y_{1}, x) - d_{X}(y_{2}, x) 
			\geq dist(y_{1}, S) - dist(y_{2}, S) - \epsilon.
	\]
Since $\epsilon > 0$ is arbitrary, we get
	\[
		d_{X}(y_{1}, y_{2}) \geq dist(y_{1}, S) - dist(y_{2}, S).
	\]
Now reverse the roles of $y_{1}$ and $y_{2}$. 
\end{example}    

Further recall the following. Let $k = 1, 2, \ldots $ and let $\mcl{L}^{k}$ denote $k$-dimensional Lebesgue measure. Suppose $T$ is a linear operator on $\RR^{k}$ and $v \in \RR^{k}$. Then by Rudin \cite[Theorems 8.26(a) and 8.28, pp.\ 173--174]{wR66.realcmplx} if $A \subset \RR^{k}$ is Borel measurable then $T(A) + v$ is Lebesgue measurable and
	\begin{equation}  \label{E:Leb.meas.of.affine.trans}
		\mcl{L}^{k} \bigl[ T(A) + v \bigr] = |\det T| \, \mcl{L}^{k}(A).
	\end{equation}
This motivates the following basic fact about Hausdorff measure and dimension
(Falconer \cite[p.\ 28]{kF90}, Hardt and Simon \cite[1.3, p.\ 11]{rHlS86.GMT}). Let $f : X \to Y$ be Lipschitz with Lipschitz constant $K$. Then for $s \geq 0$,
   \begin{equation}  \label{E:Lip.magnification.of.Hm}
      \Hm^{s} \bigl[ f(X) \bigr] \leq K^{s} \, \Hm^{s}(X). 
          \text{ Therefore, } \dim f(X) \leq \dim X,  
   \end{equation}
where in each case $\Hm^{s}$ is computed using Hausdorff measure based on the appropriate metric.

$f : X \to Y$ is ``locally Lipschitz'' (Federer \cite[p.\ 64]{hF69}) if each $x \in X$ has a neighborhood, $V$, s.t.\ the restriction $f \restriction_{V}$ is Lipschitz. So any Lipschitz map is locally Lipschitz and, conversely, 
	\begin{equation}  \label{E:local.Lip.is.Lip.on.compacts}
		\text{Any locally Lipschitz function on } X 
		  \text{ is Lipschitz on any compact subset of } X.
	\end{equation}  
Moreover,
	\begin{multline}  \label{E:comp.of.Lips.is.Lip}
	  \text{The composition of (locally) Lipschitz maps is (resp., locally) Lipschitz} \\ 
	    \text{ and the product of Lipschitz constants for the constituent functions} \\
	      \text{ is a Lipschitz constant for the composition.}
	\end{multline}

  \begin{example}[Rational functions are locally Lipschitz] \label{Ex:ratnl.fns.loc.Lip}
Here are some more examples. Let $n$ be a positive integer.

Let $x,y \in \RR \setminus \{ 0 \}$. Then $| 1/x - 1/y | = |x-y|/|xy|$. Thus, if $\delta > 0$, the function $x \mapsto 1/x$ is Lipschitz 
on $\bigl\{ x \in \RR : |x| > \delta \bigr\}$. 

If $x, y \in \RR^{n}$, then, by \eqref{E:n.c.sqrd.sum.ineq}, 
$|x + y| \leq |x| + |y| \leq \sqrt{2} \sqrt{|x|^{2} + |y|^{2}}$. It follows that addition of vectors is Lipschitz.

Let $t \in (0, \infty)$ and suppose $x_{1}, x_{2}, y_{1}, y_{2} \in [ -t, t]$. Then, by the (Cauchy-)Schwarz inequality (Stoll and Wong \cite[Theorem 3.1, p.\ 79]{rrSetW68.LinearAlgebra}),
	\begin{multline*}
		| x_{1} x_{2} - y_{1} y_{2} | = | x_{1} x_{2} - x_{2} y_{1} + x_{2} y_{1} 
		  - y_{1} y_{2} | 
		  = \bigl| (x_{2}, y_{1}) \cdot (x_{1} - y_{1}, x_{2} - y_{2}) \bigr| \\
		    \leq \bigl| (x_{2}, y_{1}) \bigr| \bigl| (x_{1} - y_{1}, x_{2} - y_{2}) \bigr| 
		      \leq \sqrt{2} \, t \bigl| (x_{1} - y_{1}, x_{2} - y_{2}) \bigr|.
	\end{multline*}
Hence, multiplication of coordinates is Lipschitz on the square $[ -t, t]^{2}$. It follows that the scalar multiplication map 
$\RR \times \RR^{n} \to \RR^{n}$ is locally Lipschitz.

Thus, by \eqref{E:comp.of.Lips.is.Lip}, polynomials, in fact, rational functions, are locally Lipschitz. 
  \end{example}

Trivally, if $x, y$ belong to a normed vector space, $|x| - |y| \leq |x-y|$. Therefore,
    \begin{equation} \label{E:norm.is.Lip}
       \text{So $x \mapsto |x|$ is Lipschitz with Lipschitz constant 1.}
    \end{equation}

  \begin{example}[Lattice operations are Lipschitz]   \label{Ex:lattice.operations.are.Lip}
Let $f(s,t) := \min(s,t)$ ($s,t \in \RR$). We show that $f$ is Lipschitz. 
Let $s,t,s',t' \in \RR$
    \begin{align*}
        \bigl| f(s,t) - f(s',t') \bigr| 
            &\leq \bigl| (s-s') 1^{\{s \leq t, s' \leq t' \}} \bigr|  
               + \bigl| (s-t') 1^{\{s \leq t, s' > t' \}} \bigr| \\
            & + \bigl| (t-s') 1^{\{s > t, s' \leq t' \}} \bigr|  
              + \bigl| (t-t') 1^{\{ s > t, s' > t' \}} \bigr| .
    \end{align*}

Suppose $s \leq t, s' > t'$. If $t' \leq s$, then $|s-t'| \leq t - t'$. If $s < t'$ then $|s-t'| \leq s' - s$. In either case,
$|s-t'| \leq |s-s'| + |t - t'|$. Similarly, if $s > t, s' \leq t'$, we have $|t-s'| \leq |s-s'| + |t - t'|$. 
Therefore, by \eqref{E:n.c.sqrd.sum.ineq}, 
    \begin{multline*}
        \bigl| f(s,t) - f(s',t') \bigr| \leq |s-s'| + \bigl( |s-s'| + |t - t'| \bigr) \\
          + \bigl( |s-s'| + |t - t'| \bigr) + |t - t'| \leq 3 \sqrt{2} \, \bigl| (s,t) - (s',t') \bigr| .
    \end{multline*}

$\max$ is similar. (Or use $\max(s,t) = - \min(-s, -t)$.)
  \end{example}

  \begin{example}[Infimum and supremum are Lipschitz]   \label{Ex:inf.sup.are.Lip}
Let $S$ be a non-empty set and let $X$ be a set of bounded real-valued functions on $S$. Metrize $X$ by the sup norm $\| \cdot \|$. Let $f : X \to \RR$ be the supremum functional. Let $\alpha, \beta \in X$ and let $\delta := \| \alpha - \beta| \|$. Pick $x \in S$ s.t.\ 
$\alpha(x) + \delta \geq f(\alpha)$. Then $\beta(x) + 2 \delta \geq f(\alpha)$. Thus, 
$f(\beta) + 2 \delta \geq f(\alpha)$. I.e., $f(\alpha) - f(\beta) \leq 2 \| \alpha - \beta \|$. Reversing the roles of $\alpha$ and $\beta$ we get $\bigl| f(\alpha) - f(\beta) \bigr| \leq 2 \| \alpha - \beta \|$. Thus, the supremum functional is Lipschitz w.r.t.\ the sup norm and the Euclidean norm on $\RR$. Since $\inf \alpha = - \sup (- \alpha)$, it follows that the infimum functional is Lipschitz as well.
  \end{example}
  
  \begin{example}[Projection is Lipschitz]   \label{Ex:projection.is.Lip}
   Let $X$ and $Y$ be metric spaces with metrics $d_{X}$ and $d_{Y}$, resp. Let $d_{XY}$ be a metric on $X \times Y$ s.t.\ 
    \begin{equation*}
      d_{XY} \bigl[ (x_{1}, y_{1}), (x_{2}, y_{2}) \bigr] 
       \geq \max\bigl[ d_{X}(x_{1}, x_{2}), d_{Y}(y_{1}, y_{2}) \bigr] ,
         \quad x_{1}, x_{2} \in X \text{ and } y_{1}, y_{2} \in Y .
    \end{equation*}
E.g., $d_{XY} \bigl[ (x_{1}, y_{1}), (x_{2}, y_{2}) \bigr] := 
   \sqrt{ d_{X}(x_{1}, x_{2})^{2} + d_{Y}(y_{1}, y_{2})^{2} }$ or
$d_{XY} \bigl[ (x_{1}, y_{1}), (x_{2}, y_{2}) \bigr] 
:= d_{X}(x_{1}, x_{2}) + d_{Y}(y_{1}, y_{2})$. Then trivially, projection $(x,y) \mapsto x$ is Lipschitz with Lipschitz constant 1.
  \end{example}
	
Note that, since $\Hm^{0}$ is just cardinality for finite sets and $+ \infty$ for infinite sets, whether $f : X \to Y$ is locally Lipschitz or not, we have
	\begin{equation}  \label{E:maps.reduce.H0}
		\Hm^{0} \bigl[ f(X) \bigr] \leq \Hm^{0}(X).
	\end{equation}
An easy consequence of \eqref{E:Lip.magnification.of.Hm} is the following.

  \begin{lemma}  \label{L:loc.Lip.image.of.null.set.is.null}
Let $X$ and $Y$ be metric spaces with $X$ separable. Suppose $f : X \to Y$ is locally Lipschitz. If $s \geq 0$ and $\Hm^{s}(X) = 0$, then $\Hm^{s} \bigl[ f(X) \bigr] = 0$. In particular, $\dim f(X) \leq \dim X$.
  \end{lemma}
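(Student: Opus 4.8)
The plan is to reduce the local statement to the global one by using separability of $X$ together with the fact that Hausdorff measure and Hausdorff dimension of a countable union are controlled by the pieces. First I would invoke the definition of locally Lipschitz: each $x \in X$ has an open neighborhood $V_x$ on which $f\vert_{V_x}$ is Lipschitz, say with Lipschitz constant $K_x < \infty$. The collection $\{ V_x : x \in X \}$ is an open cover of $X$, and since $X$ is separable (hence second countable, being metric), by Lindel\"of's theorem there is a countable subcover $\{ V_{x_1}, V_{x_2}, \ldots \}$. Write $V_n := V_{x_n}$ and $K_n := K_{x_n}$, so $X = \bigcup_n V_n$ and $f\vert_{V_n}$ is Lipschitz with constant $K_n$.

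Next, suppose $s \geq 0$ and $\Hm^s(X) = 0$. Since $V_n \subset X$ and $\Hm^s$ is an outer measure (in particular monotone), $\Hm^s(V_n) = 0$ for every $n$. Applying \eqref{E:Lip.magnification.of.Hm} to the Lipschitz map $f\vert_{V_n} : V_n \to Y$ gives $\Hm^s \bigl[ f(V_n) \bigr] \leq K_n^s \, \Hm^s(V_n) = 0$, so $\Hm^s \bigl[ f(V_n) \bigr] = 0$ for each $n$. (When $s = 0$ one uses \eqref{E:maps.reduce.H0} instead, or simply notes $\Hm^0(X) = 0$ forces $X = \varnothing$ by \eqref{E:A.empty.iff.H0.A.=.0}, whence $f(X) = \varnothing$.) Now $f(X) = f\bigl( \bigcup_n V_n \bigr) = \bigcup_n f(V_n)$ is a countable union, and countable subadditivity of the outer measure $\Hm^s$ yields $\Hm^s \bigl[ f(X) \bigr] \leq \sum_n \Hm^s \bigl[ f(V_n) \bigr] = 0$, hence $\Hm^s \bigl[ f(X) \bigr] = 0$.

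Finally, for the dimension assertion: let $s > \dim X$. By definition of Hausdorff dimension, $\Hm^s(X) = 0$, so by what was just shown $\Hm^s \bigl[ f(X) \bigr] = 0$, which means $\dim f(X) \leq s$. Since this holds for every $s > \dim X$, we conclude $\dim f(X) \leq \dim X$ (interpreting the inequality correctly when $\dim X = +\infty$, where it is trivial, and when $\dim X = 0$, where $s$ ranges over all positive reals). This completes the proof. I do not anticipate any real obstacle here; the only point requiring a little care is the bookkeeping when $s = 0$ or $\dim X \in \{0, +\infty\}$, and the invocation of Lindel\"of to pass from the possibly uncountable cover by Lipschitz neighborhoods to a countable one — everything else is monotonicity and countable subadditivity of $\Hm^s$ combined with \eqref{E:Lip.magnification.of.Hm}.
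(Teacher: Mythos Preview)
Your proof is correct and follows essentially the same approach as the paper: both use Lindel\"of's theorem to extract a countable family of sets on which $f$ is Lipschitz, then apply \eqref{E:Lip.magnification.of.Hm} on each piece and sum using countable subadditivity of $\Hm^{s}$. The paper's version is terser (it partitions $X$ into disjoint Borel pieces rather than using the open cover directly, and omits the separate handling of $s=0$ and the explicit dimension argument), but the core idea is identical.
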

  \begin{proof}
By Lindel\"of's theorem (Simmons \cite[Theorem A, p.\ 100]{gfS63}) $X$ can be partitioned into a countable number of disjoint Borel sets $A_{1}, A_{2}, \ldots$ on each of which $f$ is Lipschitz with respective Lipschitz constant $K_{i}$. By  \eqref{E:Lip.magnification.of.Hm}, we have
	\begin{equation*}
		\Hm^{s} \bigl[ f(X) \bigr] \leq \sum_{i} \Hm^{s} \bigl[ f(A_{i}) \bigr] 
			\leq \sum_{i} K_{i}^{s} \Hm^{s}(A_{i}).
	\end{equation*}
  \end{proof}
  
Another generalization of \eqref{E:Lip.magnification.of.Hm} is the following. (See Boothby \cite[Theorem (2.2), p.\ 26]{wmB75}.)

	\begin{lemma}   \label{L:loc.Lip,avg.bound.on.Haus.meas}
	Let $k$ and $m$ be positive integers. Let $U \subset \RR^{k}$ be open and let $M$ be an $m$-dimensional Riemannian manifold with Riemannian metric tensor $x \mapsto \langle \cdot, \cdot \rangle_{x}$. Suppose 
$h = (h_{1}, \ldots, h_{m}) : U \to M$ is continuously differentiable. 
Then $h$ is locally Lipschitz on $U$ w.r.t.\  Euclidean distance and the topological metric corresponding to $\langle \cdot, \cdot \rangle$. For $y \in U$, define the matrix
   \begin{equation*}
      \Omega(y)^{k \times k} 
       := \left( \left\langle h_{\ast} \left( \frac{\partial}{\partial z_{i}} \restriction_{z = y} \right), 
       h_{\ast} \left( \frac{\partial}
         {\partial z_{j}} \restriction_{z = y} \right) \right\rangle_{h(y)} \right).
   \end{equation*}
At each $x \in U$, let $\lambda(x)^{2}$ be the largest eigenvalue of $\Omega(x)$
(with $\lambda(x) \geq 0$). Then $\lambda$ is continuous. 

Furthermore, let $a \geq 0$ and let $A \subset U$ be Borel with $\Hm^{a}(A) < \infty$.  Then 
	   \begin{equation}  \label{E:lwr.bnd.on.meas.of.image}
	      \Hm^{a} \bigl[ h(A) \bigr]  \leq \int_{A} \lambda(x)^{a} \, \Hm^{a}(dx).
	   \end{equation}
	\end{lemma}

\begin{proof} 
(See Boothby \cite[Theorem (2.2), pp.\ 26--27]{wmB75}.) By lemma \ref{L:Eigen.cont.} and continuity of $Dh$, $\lambda$ is continuous. Let $\epsilon > 0$. 
Since $\lambda$ is continuous, by Lindel\"of's theorem 
(Simmons \cite[Theorem A, p.\ 100]{gfS63}), 
there exists an at most countable cover, $C_{1}, C_{2}, \ldots $, of $U$ by open convex sets with the property
   \begin{equation}   \label{E:narrow.rnge.lambda.on.Ci}
         x, x' \in C_{i} \Rightarrow \bigl| \lambda(x)^{a} - \lambda(x')^{a} \bigr| < \epsilon, 
            \quad (i = 1, 2, \ldots).
   \end{equation}

For each $i = 1, 2, \ldots$ let $\Lambda_{i} = \sup_{x \in C_{i}} \lambda(x)$. 
We prove the \emph{claim:} on each $C_{i}$, the function $h$ is Lipschitz with Lipschitz constant 
$\Lambda_{i}$. In particular, $h$ is locally Lipschitz on $U$.
(See Giaquinta \emph{et al} \cite[Theorem 2, p.\ 202, Vol. I]{mGgMjS98.cart.currents}.)  
Let $x,y \in C_{i}$. Think of $x,y$ as row vectors. Since $C_{i}$ is open and convex there is an open interval $I \supset [0,1]$ s.t.\ for every $u \in I$ we have $\ell(u) := (1-u)x + u y \in C_{i}$.
The function $f := h \circ \ell : I \to M$ is defined and differentiable. It defines an arc in $M$. Let $\rho$ be the topological metric corresponding to the Riemannian metric 
$\langle \cdot, \cdot \rangle$. Let $\| X \| := \sqrt{ \langle X, X \rangle_{x} }$ for $X \in T_{x} (M)$, $x \in M$. Then, by \eqref{E:alpha.length.integral},
   \begin{align}   \label{E:bound.incr.by.arc.lngth}
      \rho \bigl[ h(y), h(x) \bigr] 
        &\leq \text{length of arc } f \notag \\
        &= \int_{I} \, \bigl\| f_{\ast} (d/du) \bigr\|_{f(u)} \, du \notag  \\
        &= \int_{I} \, \bigl\| h_{\ast} \circ \ell_{\ast} (d/du) \bigr\|_{f(u)} \, du  \\
        &= \int_{I} \, \left\| h_{\ast} \left[ \sum_{i=1}^{k} (y_{i} - x_{i}) 
           \frac{\partial}{\partial z_{i}} \restriction_{z = \ell(u)} \right] 
             \right\|_{f(u)} \, du  \notag \\
        &= \int_{I} \, \sqrt{ (y-x) \Omega \bigl[ \ell(u) \bigr] (y-x)^{T} } \, du. \notag
   \end{align}
   
Let $u \in [0,1]$ and let $w = \ell(u) \in C_{i} \subset U \subset \RR^{k}$. Let 
$\lambda^{2}_{1} \geq \lambda^{2}_{2} \geq \cdots \geq \lambda^{2}_{k} \geq 0$ be the eigenvalues of $\Omega(w)$, so $\lambda^{2}(w) = \lambda^{2}_{1}$. 
Let $z_{1}, \ldots, z_{k} \in \RR^{k}$ be corresponding orthonormal eigenvectors, thought of as row vectors. Write
$y - x = \sum_{j=1}^{k} \alpha_{j} z_{j}$. Then
	\begin{align*}
		(y - x)  \Omega(w) (y - x)^{T}  
		  &= \left( \sum_{j=1}^{k} \alpha_{j} z_{j} \right) \Omega(w) 
		            \left( \sum_{j=1}^{k} \alpha_{j} z_{j}^{T} \right)   \\
		  &= \left( \sum_{j=1}^{k} \alpha_{j} z_{j} \right) 
		            \left( \sum_{j=1}^{k} \alpha_{j} \lambda^{2}_{j} z_{j}^{T} \right)   \\
		  &= \sum_{j=1}^{k} \lambda^{2}_{j} \alpha_{j}^{2}  \\
		  &\leq \lambda^{2}(w) \sum_{j=1}^{k} \alpha_{j}^{2} \\
		  &\leq \Lambda_{i}^{2} | y - x |^{2}.
	\end{align*}
I.e., 
	\[
		\sqrt{ (y-x) \Omega (w) (y-x)^{T} } \leq \Lambda_{i} | y - x |.
	\]
Substituting this into \eqref{E:bound.incr.by.arc.lngth} and noting that $I$ has measure less than 1 proves the claim that on each $C_{i}$, the function $h$ is Lipschitz with Lipschitz constant $\Lambda_{i}$.

Let $A_{1} = A \cap C_{1}$. Having defined $A_{1}, \ldots, A_{n}$, let
   \[
      A_{n+1} = (A \cap C_{n+1}) \setminus \left( \bigcup_{i=1}^{n} A_{i} \right).
   \]
Then $A_{1}, A_{2}, \ldots$ is a Borel partition of $A$. By \eqref{E:Lip.magnification.of.Hm} and \eqref{E:narrow.rnge.lambda.on.Ci},
   \[
      \Hm^{a} \bigl[ h(A) \bigr] \leq \sum_{i} \Hm^{a} \bigl[ h(A_{i}) \bigr] 
         \leq \sum_{i} \Lambda_{i}^{a} \Hm^{a} (A_{i}) 
         \leq \int_{A} \lambda(x)^{a} \Hm^{a}(dx) + \Hm(A) \epsilon.
   \]
Since $\epsilon > 0$ is arbitrary and $\Hm^{a}(A) < \infty$, the lemma follows.
\end{proof}

\begin{remark}  \label{R:when.M.=.R.m}
 Suppose in the preceding that $M = \RR^{m}$. Then 
	\[
		\Omega(x) = Dh(x)^{T} Dh(x), \text{ where }
			Dh(x) := \left( \frac{\partial h_{i}(y)}{\partial y_{j}} \right)_{y = x}.
	\]
\end{remark}

Regarding the following, see Boothby \cite[Theorem (2.2), p.\ 26]{wmB75}.

  \begin{lemma}   \label{L:coord.maps.are.Lip}
Let $M$ be a Riemannian manifold, of dimension $m < \infty$. 
Let, $\| \cdot \|_{M,x'}$ is the norm on $T_{x'} M$ corresponding to the Riemannian metric at $x' \in M$. Let $\delta$ be the topological metric on $M$ corresponding 
to $\| \cdot \|_{M,\cdot}$. Let $x \in M$ and let $(\clU_{0}, \varphi)$ be a coordinate neighborhood on $M$ with $x \in \clU_{0}$. 
We may assume $\varphi(x) = 0 \in \RR^{m}$.
Let $G_{0} := \varphi(\clU_{0}) \subset \RR^{m}$ and let $\psi : G_{0} \to \clU_{0}$ be the inverse of $\varphi$. For $x' \in \clU_{0}$, let 
$E_{ix'} :=  \psi_{\ast} \left( \tfrac{\partial}{\partial z^{i}} 
\restriction_{z = \varphi(x')}  \right)$ ($i = 1, \ldots, m$) be the coordinate frame field on $\clU_{0}$ at $x'$ and let 
$\Gamma_{M,x'}^{m \times m}$ be the matrix of the Riemannian tensor at $x'$ 
w.r.t.\ $E_{1x'}, \ldots, E_{mx'}$. Suppose $\clU$ is an open neighborhood of $x$ 
with $\clU \subset \overline{\clU} \subset \clU_{0}$ and $\overline{\clU}$ is compact. 
   \begin{enumerate}
	\item For $x' \in \clU$, let $\mu_{1}(x') \geq \cdots \geq \mu_{m}(x') > 0$ be the eigenvalues of $\Gamma_{M,x'}$. 
	Then there exists $\blds{\mu} \in [1, \infty)$ s.t.\
	\begin{equation}  \label{E:Gam.eigvals.bdd.away.from.0.infty}
		\mu_{1}(x') \leq \blds{\mu}^{2} \text{ and } \, 1/ \mu_{m}(x') 
		        \leq \blds{\mu}^{2} \text{ for every } x' \in \clU.
	\end{equation}
		\label{I:bold.mu.bounds.eigvals}
	\item Let $\blds{\mu}$ be any number as in part \ref{I:bold.mu.bounds.eigvals}. There exists $r_{0} > 0$ s.t.\ $\overline{B_{r_{0}}(0)} \subset G := \varphi(\clU)$. There exists an open neighborhood $H \subset G$ s.t. $\overline{H} \subset B_{r_{0}}(0)$ and having the following property. Let $y_{1}, y_{2} \in H$, so the line segment joining $y_{1}$ and $y_{2}$ lies entirely in $B_{r_{0}}(0) \subset G$. Let $\Delta = |y_{2} - y_{1}|$. Define the linear arc $\xi : \bigl[ 0, \Delta \bigr] \to H \subset B_{r_{0}}(0)$ joining $y_{1}$ and $y_{2}$ defined by 
	\[
		\xi(s) = y_{1} + \frac{s}{ \Delta } ( y_{2} - y_{1} ), \quad 0 \leq s \leq \Delta.
	\]
Let $\alpha = \psi \circ \xi$, so $\alpha$ is a curve in $\clU$ joining $\psi(y_{1})$ and $\psi(y_{2})$. Then 
	\begin{equation*}
		\blds{\mu}^{-1} \leq \| \alpha'(s) \|_{M,\alpha(s)} \leq \blds{\mu}
	\end{equation*}
for every $s \in (0, \Delta)$. 
		\label{I:alpha'.shorter.longer.mu}
	\item Let $\blds{\mu}$, etc., be as in part \ref{I:alpha'.shorter.longer.mu}. 
	Then $\blds{\mu}$ is a Lipschitz constant for the restrictions $\psi \restriction_{H}$ 
	and $\varphi \restriction_{\psi(H)}$ w.r.t.\ the Euclidean metric on $G$ and $\delta$. 
	   \label{I:MU.is.Lip.cnst.for.phi.psi}
	\item  Let $\bigl\{ (\clU_{0\gamma}, \varphi_{0\gamma}) : \gamma \in C \bigr\}$ be a covering of $M$ by coordinate neighborhoods. Then $M$ has a covering 
	$(\clU_{1}, \varphi_{1}), (\clU_{2}, \varphi_{2}), \ldots$ by coordinate neighborhoods s.t.\ each $\overline{\clU}_{i}$ is a subset of some $\clU_{0\gamma}$ and for 
the same $\gamma$ the coordinate map $\varphi_{i}$ is the restriction of $\varphi_{0\gamma}$ to $\clU_{i}$. We may also assume that each $\clU_{i}$ has compact closure. Each coordinate map $\varphi_{i}$ is Lipschitz and has a Lipschitz inverse (in both cases w.r.t.\ the Euclidean metric on $\RR^{m}$ and the topological metric corresponding to the Riemannian tensor on $M$). For each $i = 1, 2, \ldots$, let $\blds{\mu}_{i} \in (1, \infty)$ be any number s.t.\ part \ref{I:bold.mu.bounds.eigvals} above holds with $\clU = \clU_{i}$ and $\blds{\mu} = \blds{\mu}_{i}$. Then $\blds{\mu}_{i}$ is a Lipschitz constant for both $\varphi_{i}$ and its inverse. In particular, if $M$ is compact, then $\Hm^{m}(M) < \infty$. In particular, $M$ is a ``Lipschitz manifold'' (section \ref{SS:asymp.prob}).   \label{I:phi.psi.Lip}
   \end{enumerate}
  \end{lemma}

Call the coordinate neighborhoods $(\clU_{1}, \varphi_{1}), (\clU_{2}, \varphi_{2}), \ldots$ as in part \ref{I:phi.psi.Lip} ``bi-Lipschitz'' coordinate neighborhoods. 

\begin{proof} 
  
If $x' \in M$, let $\langle \cdot, \cdot \rangle_{M,x'}$ be the Riemannian 2-form (``metric'') on $M$ at $x'$. Let $\| \cdot \|_{M,x'}$ be the corresponding norm. Let $\delta$ be the topological metric on $\clU_{1}$ determined by the Riemannian metric, $\langle \cdot, \cdot \rangle_{M}$ (Boothby \cite[Theorem (3.1), p.\ 187]{wmB75}).  

The 2-tensor $\langle \cdot, \cdot \rangle_{M,x'}$ has a symmetric positive definite matrix, $\Gamma_{M,x'}^{m \times m}$, w.r.t.\ the local coordinate frame field $E_{ix'} :=  \psi_{\ast} \left( \tfrac{\partial}{\partial z^{i}} \restriction_{z = \varphi(x')}  \right)$ ($x' \in \clU_{0}$; $i = 1, \ldots, m$). 
Thus, the $i,j^{th}$ entry in $\Gamma_{M,x'}$ is $\langle E_{ix'}, E_{jx'} \rangle_{M, x'}$. By Boothby \cite[Definition (2.1), p.\ 182]{wmB75}, the entries of the $r \times r$ matrix $\bigl( [ X_{y'i}, X_{y'j} ]_{y'} \bigr)$ are continuous in $y' \in \mbf{A}$, 
the entries in the matrix $\Gamma_{M,x'}$ are continuous in $x' \in \clU_{0}$. Let $\mu_{1}(x') \geq \ldots \geq \mu_{m}(x') > 0$ be the eigenvalues of $\Gamma_{M,x'}$. By lemma \ref{L:Eigen.cont.}, the eigenvalues $\mu_{1}(x'), \ldots, \mu_{m}(x')$ are continuous in $x' \in \clU_{0}$. Therefore, by compactness of $\overline{\clU}$, there exists a number $\blds{\mu} \in [1,  \infty)$ s.t.\ \eqref{E:Gam.eigvals.bdd.away.from.0.infty} holds.

Let $m := \dim M$. Let $x \in M$ and let $\varphi : \clU_{0} \to \RR^{m}$ be a coordinate neighborhood of $x$. 
By proposition \ref{P:geod.cnvx.nbhds.exist} there are neighborhoods $\clU_{1}$ and $\clU$ of $x$ s.t.\  
$\clU \subset \overline{\clU_{1}} \subset \clU_{0}$, $\overline{\clU_{1}}$ is compact, and $\clU$ is geodesically convex. 
Let $G_{0} = \varphi(\clU_{0}) \subset \RR^{m}$ and let $\psi : G_{0} \to \clU_{0}$ be the inverse of $\varphi$. We may assume $\varphi(x) = 0$. 

If $x' \in \clU_{0}$, the differential, $\varphi_{\ast}$, of $\varphi$ maps the tangent space $T_{x'} M$ onto $T_{\varphi(x')} \RR^{m}$. For $y \in G_{0}$, a basis for $T_{\varphi(x')}$ is $\tfrac{\partial}{\partial z_{i}} \restriction_{z=y}$ ($i=1, \ldots, m$) (Boothby \cite[Corollary (1.5), p.\ 109]{wmB75}). 
Let $\psi_{\ast} : T \RR^{m} \to T M$ be the differential of $\psi$ (Boothby \cite[Remark (1.3), p.\ 108]{wmB75}).

For $\eta > 0$, let 
	\begin{multline*}
		\mcl{B}_{\eta}(x) := \{ x' \in \clU_{0} : 
		   \text{There is a geodesic arc connecting } x' \text{ and } x  \\
		      \text{ and at least one such arc has length } < \eta  \}.
	\end{multline*}
By Boothby \cite[Theorem (3.1), p.\ 187]{wmB75}, $\mcl{B}_{\eta}(x)$ is an open neighborhood of $x$. Pick $\eta_{0} = \eta_{0}(x) > 0$ so small that 
$\mcl{B}_{\eta_{0}}(x) \subset \clU$. In particular, $\overline{ \mcl{B}_{\eta_{0}}(x) }$ is compact. By proposition \ref{P:geod.cnvx.nbhds.exist}, we may assume 
$\mcl{B}_{\eta_{0}}(x)$ is geodesically convex. Let $G = \varphi(\clU)$ and 
$H = \varphi \bigl( \mcl{B}_{\eta_{0}}(x) \bigr)$. Choose $r_{0} \in (0, \infty)$ so small that the closure of the ball
	\[
		B_{r_{0}}(0) := \bigl\{ y \in \RR^{m} : | y | < r_{0} \bigr\}
	\]
lies in $G$. By making $\eta_{0}$ smaller if necessary, we may assume 
$\overline{H} \subset B_{r_{0}}(0)$.

Let $x_{1}, x_{2} \in \mcl{B}_{\eta_{0}}(x)$ 
and let $y_{i} = \varphi(x_{i}) \in H \subset B_{r_{0}}(0) \subset G$ ($i=1,2$). Let $\gamma : [0, \lambda] \to M$ be the unique shortest geodesic 
in $M$ joining $x_{1}$ and $x_{2}$, which exists by geodesic convexity of $\mcl{B}_{\eta_{0}}(x)$ and its image lies in $\mcl{B}_{\eta_{0}}(x)$. We may assume that 
$\gamma$ is parametrized by arclength. In particular, by proposition \ref{P:geod.cnvx.nbhds.exist},
	\begin{equation}   \label{E:lambda.=.delta.x1.x2}
		\lambda = \delta(x_{1}, x_{2}).
	\end{equation}

Let $\omega := \varphi \circ \gamma$, so $\omega : [0, \lambda] \to G$ joins $y_{1}$ and $y_{2}$. We can extend $\omega$ to a slightly larger, open interval $J \supset [0, \lambda]$ 
s.t.\ $\omega : J \to G$ is differentiable.\footnote{$\gamma$ is a geodesic. That means that for some vector $X \in T_{x_{1}}$ with $\| X \|_{M, x_{1}} = 1$ we have $\gamma(t) = Exp_{x_{1}}(t X)$ ($0 \leq t \leq \lambda$). For some $\epsilon > 0$, we may extend this to $\gamma(t) = Exp_{x_{1}}(t X)$ ($-\epsilon < t < \lambda + \epsilon$). $\omega := \varphi \circ \gamma$ is automatically extended.}
Write $\omega(t) = \bigl( \omega^{1}(t), \ldots, \omega^{m}(t) \bigr) \in \RR^{m}$ ($t \in J$). Thus, 
	\[
		\omega'(t) := \bigl( (\omega^{1})'(t), \ldots, (\omega^{m})'(t) \bigr) \in \RR^{m}.
	\]
Here, $(\omega^{i})'(t)$ are just numbers and we regard 
$\omega'(t) = \bigl( (\omega^{1})'(t), \ldots, (\omega^{d})'(t) \bigr)$ as a row vector. By contrast, write 
$\gamma'(t) = \gamma_{\ast} \bigl[ (d/du)_{u=t} \bigr] \in T_{\gamma(t)} M$  (Boothby \cite[Theorem (1.2), p.\ 107]{wmB75}). Hence, by Boothby \cite[Theorem (1.6), p.\ 109]{wmB75},
	\[
		\omega_{\ast}\bigl[ (d/du)_{u=t} \bigr] 
		  = \sum_{i=1}^{m} (\omega^{i})'(t) \, 
		    \left( \tfrac{\partial}{\partial z_{i}} \restriction_{z=\omega(t)} \right).
	\]

Now, $\gamma = \psi \circ \omega$ and, by definition of $E_{ix'}$ ($i = 1, \ldots, m$), we have 	   
	\begin{equation}   \label{E:gamma'.in.trms.omega.E}
		\gamma'(t) 
		 = \psi_{\ast} \circ \omega_{\ast}\bigl[ (d/du)_{u=t} \bigr] 
		  = \psi_{\ast} \left[ \sum_{i=1}^{m} (\omega^{i})'(t) 
		          \left( \tfrac{\partial}{\partial z_{i}} \restriction_{z=\omega(t)} \right) \right]
		   = \sum_{i=1}^{m} (\omega^{i})'(t) \, E_{i \gamma(t) }.
         \end{equation}
Hence, since $\gamma$ is parametrized by arclength,
	\begin{equation*}
		1 = \| \gamma'(t) \|_{M,\gamma(t)}^{2} 
		         = \bigl\langle \gamma'(t), \gamma'(t) \bigr\rangle_{M,\gamma(t)}(x') 
		        = \omega'(t) \, \Gamma_{M,\gamma(t)} \, \omega'(t)^{T} 
		     \geq \mu_{m} \bigl[ \gamma(t) \bigr] \bigl| \omega'(t) \bigr|^{2}.
	\end{equation*}  
Therefore, by \eqref{E:Gam.eigvals.bdd.away.from.0.infty} we have 
	\begin{equation}  \label{E:omega.lngth.bdd}
		| \omega'(t) | \leq \blds{\mu} < \infty \text{ for every } t \in [0, \lambda].
	\end{equation}

Now, $|y_{2} - y_{1}|$ is no greater than the length of the curve $\omega$ in $G$. But by \eqref{E:omega.lngth.bdd}, Boothby \cite[p.\ 185]{wmB75}, and \eqref{E:lambda.=.delta.x1.x2} that length is 
	\[
		\int_{0}^{\lambda} \bigl| \omega'(t) \bigr| \, dt \leq \lambda \blds{\mu} 
		         = \blds{\mu} \, \delta(x_{1}, x_{2}).
	\] 
Thus, $|y_{2} - y_{1}| \leq \blds{\mu} \, \delta(x_{1}, x_{2})$. This proves that $\varphi$ is Lipschitz with Lipschitz constant $\leq$ $\blds{\mu}$.

Let $\Delta = |y_{2} - y_{1}|$. Define the linear arc $\xi : \bigl[ 0, \Delta \bigr] \to B_{r_{0}}(0)$ joining $y_{1}$ and $y_{2}$ defined by 
	\[
		\xi(s) = y_{1} + \frac{s}{ \Delta } ( y_{2} - y_{1} ), \quad 0 \leq s \leq \Delta.
	\]
Thus, by Boothby \cite[Theorem (1.6), p.\ 109]{wmB75} again,
	\begin{equation}  \label{E:xi.ast.Delta.yi}
		\xi^{\ast} \bigl[ (d/du)_{u=t} \bigr]  
		     = \Delta^{-1} \sum_{i=1}^{m} ( y_{2}^{i}-y_{1}^{i} )
		          \left( \tfrac{\partial}{\partial z_{i}} \restriction_{z=\xi(t)} \right),
	\end{equation}
where $y_{j}^{i}$ is the $i^{th}$ coordinate of $y_{j} \in H \subset \RR^{m}$. The length of $\xi$ is $\Delta$, of course. Let $\alpha = \psi \circ \xi$, so $\alpha$ is a curve in $M$ joining $x_{1}$ and $x_{2}$. Let $\alpha'(t) := \alpha^{\ast} \bigl[ (d/du)_{u=t} \bigr] \in T_{\alpha(t)} M$, ($0 \leq t \leq \Delta$). Thus, by \eqref{E:xi.ast.Delta.yi}, we have as in \eqref{E:gamma'.in.trms.omega.E}
	\begin{equation}   \label{E:alpha'.in.trms.Delta.y.E}
		\alpha'(t) = \Delta^{-1} \sum_{i=1}^{m} ( y_{2}^{i}-y_{1}^{i} ) \, E_{i \alpha(t)}.
	\end{equation}

The distance $\lambda = \delta(x_{1}, x_{2})$ between $x_{1}$ and $x_{2}$ is no greater than the length of $\alpha$. The length of $\alpha$ is
	\begin{equation}  \label{E:length.of.alpha}
		\ell(\alpha) = \int_{0}^{\Delta} \| \alpha'(s) \|_{M,\alpha(s)} \, ds.
	  \end{equation}
But, by definition of $\Gamma_{M,\cdot}$,  
	\begin{equation*}  
              \| \alpha'(s) \|_{M,\alpha(s)}^{2} 
		    = \Delta^{-2} (y_{2} - y_{1}) \, \Gamma_{M,\alpha(s)}  
		             \, (y_{2} - y_{1})^{T}
	\end{equation*}
(regarding $y_{2} - y_{1} \in \RR^{m}$ as a row vector). Therefore, by \eqref{E:Gam.eigvals.bdd.away.from.0.infty},
	\begin{equation}   \label{E:bound.on.alpha'}
	        \blds{\mu}^{-1} \leq \sqrt{\mu_{m}} 
	          = \Delta^{-1} |y_{2} - y_{1}| \, \sqrt{\mu_{m}} \leq
		\| \alpha'(s) \|_{M,\alpha(s)} 
		  \leq \Delta^{-1} |y_{2} - y_{1}| \, \sqrt{\mu_{1}} = \sqrt{\mu_{1}}
		      \leq \blds{\mu},
	\end{equation}
since $|y_{2} - y_{1}| = \Delta$. This establishes statement \ref{I:alpha'.shorter.longer.mu}. Continuing, we see, by \eqref{E:length.of.alpha} and \eqref{E:bound.on.alpha'}, 
	\[
		\delta(x_{1}, x_{2}) \leq \ell(\alpha) \leq \int_{0}^{\Delta} \blds{\mu} \, ds 
			= \blds{\mu} \Delta = \blds{\mu} |y_{2} - y_{1}|.
	\]
Since $x_{i} = \psi(y_{i})$, this proves that $\psi$ is also Lipschitz with Lipschitz constant no greater than $\blds{\mu}$. This completes the proof of statement \ref{I:MU.is.Lip.cnst.for.phi.psi}.

Since $x \in M$ we started with is arbitrary and $M$ is second countable (Munkres \cite[Definition 1.1, p.\ 3]{jrM66}), by Lindel\"of's theorem, Simmons \cite[Theorem A, p.\ 100]{gfS63}, there is a countable collection $( \clU_{i}, \varphi_{i} )$ with the properties described in statement \ref{I:phi.psi.Lip} of the lemma.

If $M$ is compact, then it is covered by finitely many $\clU_{i}$. Since $\clU_{i}$ is relatively compact, $\varphi_{i}( \clU_{i} )$ is bounded. Therefore, $\mcl{L}^{m} \bigl[ \varphi_{i}( \clU_{i} ) \bigr] < \infty$. Hence, by \eqref{E:when.Haus.meas.=.Leb.meas} and \eqref{E:Lip.magnification.of.Hm}, we have $\Hm^{m}( \clU_{i} ) < \infty$. Thus, $\Hm^{m}(M) < \infty$. This completes the proof of statement \ref{I:phi.psi.Lip} of the lemma.
\end{proof}

The following useful fact follows from lemma \ref{L:coord.maps.are.Lip}, lemma \ref{L:loc.Lip,avg.bound.on.Haus.meas}, and \eqref{E:comp.of.Lips.is.Lip}.

  \begin{corly}  \label{C:cont.diff.=.loc.Lip}
Let $M$ and $N$ be a Riemannian manifolds of dimension $m$ and $n$, resp.
Let $h : M \to N$ be continuously differentiable. Then $h$ is locally Lipschitz with respect to the topological metrics induced by the Riemannian metrics on $M$ and $N$. In particular, if $A \subset M$ is compact then $h$ is Lipschitz on $A$. In particular, if $M$ has two Riemannian metrics, then the identity map on $M$ is locally Lipschitz w.r.t.\ the topological metrics induced by the two Riemannian metrics.
  \end{corly}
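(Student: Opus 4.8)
The plan is to obtain Corollary~\ref{C:cont.diff.=.loc.Lip} by concatenating lemma~\ref{L:coord.maps.are.Lip} and lemma~\ref{L:bound.on.Haus.meas.h.A} via the composition rule \eqref{E:comp.of.Lips.is.Lip} for (locally) Lipschitz maps. Since being locally Lipschitz is a local property, it suffices to produce, for each $x \in M$, a neighborhood of $x$ on which $h$ is Lipschitz with respect to the topological metrics induced by the Riemannian metrics on $M$ and $N$.

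First I would fix $x \in M$ and apply lemma~\ref{L:coord.maps.are.Lip}, part~\ref{I:phi.psi.Lip}, to get a coordinate neighborhood $(\mcl{U}, \varphi)$ of $x$ with $\overline{\mcl{U}}$ compact, with $\varphi : \mcl{U} \to \RR^{m}$ Lipschitz, and with Lipschitz inverse $\psi : G := \varphi(\mcl{U}) \to \mcl{U}$. Then I would consider the map $h \circ \psi : G \to N$: because $h$ is continuously differentiable and $\psi$ is smooth, $h \circ \psi$ is a continuously differentiable map from the open set $G \subset \RR^{m}$ into the Riemannian manifold $N$. Hence lemma~\ref{L:bound.on.Haus.meas.h.A} applies with $k = m$, with $N$ in place of the manifold ``$M$'' there, and with $h \circ \psi$ in place of ``$h$''; in particular the claim established in the course of that lemma's proof shows $h \circ \psi$ is locally Lipschitz on $G$ with respect to the topological metric on $N$. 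On $\mcl{U}$ we have $h = (h \circ \psi) \circ \varphi$, and since $\varphi$ is Lipschitz (hence locally Lipschitz), \eqref{E:comp.of.Lips.is.Lip} gives that $h$ is locally Lipschitz on $\mcl{U}$. As $x$ was arbitrary and such $\mcl{U}$ cover $M$, $h$ is locally Lipschitz on $M$. The first ``in particular'' then follows because any locally Lipschitz map is Lipschitz on each compact subset of its domain (recorded just after the definition of locally Lipschitz). For the second ``in particular'', if $\langle\cdot,\cdot\rangle_{1}$ and $\langle\cdot,\cdot\rangle_{2}$ are two Riemannian metrics on the same manifold $M$, then the identity map $(M,\langle\cdot,\cdot\rangle_{1}) \to (M,\langle\cdot,\cdot\rangle_{2})$ is $C^{\infty}$, hence continuously differentiable, so the first part applies.

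I expect no real obstacle: the statement is essentially bookkeeping on top of the two preceding lemmas. The one point that needs a moment's care is that lemma~\ref{L:bound.on.Haus.meas.h.A} is stated only for a $C^{1}$ map whose \emph{domain} is open in a Euclidean space; routing $h$ through the chart $\psi$ so that the domain becomes $G \subset \RR^{m}$ is precisely what makes that lemma usable, and it is there that the hypothesis that $h$ is continuously differentiable — rather than merely continuous — is actually consumed.
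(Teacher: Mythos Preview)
Your proposal is correct and follows exactly the route the paper indicates: the paper gives no detailed proof but simply states that the corollary ``follows from lemma~\ref{L:coord.maps.are.Lip} and lemma~\ref{L:bound.on.Haus.meas.h.A},'' and you have supplied precisely the expected bookkeeping --- pull back to a Euclidean chart via $\psi$ so that lemma~\ref{L:bound.on.Haus.meas.h.A} applies, then compose with the Lipschitz chart map $\varphi$ using \eqref{E:comp.of.Lips.is.Lip}. One small note: the local Lipschitz conclusion for $h\circ\psi$ is actually part of the \emph{statement} of lemma~\ref{L:bound.on.Haus.meas.h.A}, not merely buried in its proof, so you can cite it directly.
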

  
Here is a more precise version in a special case. (See lemma \ref{L:imbedding.is.loc.Lip}.)
  
It is easy to deduce the following from what we have proved so far
(especially \eqref{E:when.Haus.meas.=.Leb.meas}, \eqref{E:dim.of.whole.=.max.dim.of.parts}, and lemma \ref{L:coord.maps.are.Lip}; use Boothby \cite[Theorem (4.5), p.\ 193]{wmB75}) or theorem \ref{T:Hausdorff.measure.gives.right.value.on.manifs}. (Or see Falconer \cite[p.\ 29]{kF90}.)
   \begin{equation}  \label{E:Haus.dim.s-manif.=.s} 
    \text{The Hausdorff dimension of an $s$-dimensional differentiable manifold is } s .
   \end{equation} 

The following gives more specific information. It generalizes \eqref{E:when.Haus.meas.=.Leb.meas}. Use ``$\mres$'' to denote restriction of measures to a set. See Boothby \cite[Section V.7, pp.\ 213--217]{wmB75} for discussion of volume on a Riemannian manifold. 

  \begin{theorem} \label{T:Hausdorff.measure.gives.right.value.on.manifs}
[Giaquinta \emph{et al} \cite[Theorem 1, p.\ 15, Volume 1]{mGgMjS98.cart.currents}] 
 The Hausdorff measure $\Hm^{n}$ in $\RR^{n}$ coincides with the Lebesgue measure 
 $\mcl{L}^{n}$ on $\RR^{n}$. Moreover, if $M$ is an $s$-dimensional smooth submanifold 
 of $\RR^{n}$, $0 < s < n$, then $\Hm^{s} \mres M$ is the standard volume measure in $M$ induced by the Euclidean metric in $M$. 
  \end{theorem}
(See \eqref{E:when.Haus.meas.=.Leb.meas}.)

  \begin{lemma}  \label{L:metric.dominance.means.Haus.meas.dominance}
Let $X$ be a second countable, locally compact topological space and let $\rho_{1}$ and $\rho_{2}$ be two metrics on $X$ generating the topology. Suppose inclusion $i : (X, \rho_{1}) \to (X, \rho_{2})$ is locally Lipschitz. I.e., $X$ is covered by open sets $U$ with the following property. There exists $K(U) < \infty$ (depending on $U$) s.t.\ for every $x,y \in U$ we have
	\begin{equation}  \label{E:rho1.rho2.locally.equivalent}
		\rho_{2}(x,y) \leq K(U) \, \rho_{1}(x,y).
	\end{equation}  
Let $s \geq 0$ and let $\Hm^{i,s}$ be $s$-dimensional Hausdorff measure w.r.t.\ $\rho_{i}$ ($i=1,2$). Then there exists a locally bounded Borel measurable function $M : X \to [0, +\infty)$ (i.e., every $x \in X$ has an open neighborhood on which $M$ is bounded) s.t.\ for every $\Hm^{s}$-measurable set $A \subset X$ we have
	\begin{equation}  \label{E:Hm2.leq.int.M.Hm1}
		\Hm^{2,s}(A) \leq \int_{A} M(x) \, \Hm^{1,s}(dx).
	\end{equation}
If $A \subset U$, where $U \subset X$ is one of the sets in the open cover, then $\Hm^{2,s}(A) \leq K(U)^{s} \Hm^{1,s}(A)$.
  \end{lemma}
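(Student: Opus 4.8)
The plan is to build $M$ from a countable Borel partition of $X$ on each piece of which $M$ is the $s$th power of a Lipschitz constant of the inclusion on some cover element containing that piece, and then to bound $\Hm_2^s$ piece by piece using \eqref{E:Lip.magnification.of.Hm}. The only genuine difficulty is arranging that $M$ be \emph{locally} bounded: this forces the partition to be locally finite (each point has a neighbourhood meeting only finitely many pieces), and that is exactly where local compactness is used.

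First, by Lindel\"of's theorem ($X$ is second countable) replace the given cover by a countable subcover $U_1, U_2, \ldots$ with constants $K_1, K_2, \ldots$ (WLOG each $K_n \ge 1$), so that $\rho_2(y,z) \le K_n \rho_1(y,z)$ for $y,z \in U_n$. Since $X$ is metrizable, locally compact and second countable, it is $\sigma$-compact and admits an exhaustion $\varnothing = G_0 \subset G_1 \subset G_2 \subset \cdots$ by open sets with $\overline{G_j}$ compact, $\overline{G_j} \subset G_{j+1}$, and $\bigcup_j G_j = X$. Put $S_j := \overline{G_j} \setminus G_{j-1}$; these are compact, cover $X$, and $S_j \cap S_k = \varnothing$ whenever $|j-k| \ge 2$, so $T_j := S_j \setminus S_{j-1}$ (with $S_0 := \varnothing$) form a countable Borel partition of $X$. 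Cover each compact $S_j$ by finitely many of the $U_n$ and split $T_j$ accordingly into finitely many Borel pieces $T_{j,1}, \ldots, T_{j,m_j}$, each contained in a single cover element with associated constant $K_{j,i}$. Define $M(x) := (K_{j,i})^s$ for $x \in T_{j,i}$; this is Borel. For local boundedness, any $x_0$ lies in some $G_{j_0}$, and $T_{j,i} \cap G_{j_0} \ne \varnothing$ forces $j \le j_0$ (since $S_j \subset X \setminus G_{j-1}$ while $G_{j_0} \subset G_{j-1}$ when $j-1 \ge j_0$); hence only the finitely many pieces with $j \le j_0$ meet the neighbourhood $G_{j_0}$ of $x_0$, so $M$ takes finitely many values there and is bounded on $G_{j_0}$.

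Finally, for an $\Hm^s$-measurable set $A$ (measurability w.r.t.\ $\Hm_1^s$; $T_{j,i}$ is Borel, so $A \cap T_{j,i}$ is $\Hm_1^s$-measurable), countable subadditivity of the outer measure $\Hm_2^s$ gives $\Hm_2^s(A) \le \sum_{j,i} \Hm_2^s(A \cap T_{j,i})$. Each $A \cap T_{j,i}$ lies inside one cover element, so the inclusion $(A \cap T_{j,i}, \rho_1) \hookrightarrow (X, \rho_2)$ is Lipschitz with constant $K_{j,i}$, and \eqref{E:Lip.magnification.of.Hm} yields $\Hm_2^s(A \cap T_{j,i}) \le (K_{j,i})^s \Hm_1^s(A \cap T_{j,i}) = \int_{A \cap T_{j,i}} M \, d\Hm_1^s$ (here one uses that the Hausdorff measure of a subset of a cover element is unchanged whether computed in $X$ or in the subspace, which is immediate from the definition via intersecting covering sets with that subset). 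Summing over the partition and invoking countable additivity of the integral of the nonnegative function $M$ gives $\Hm_2^s(A) \le \int_A M \, d\Hm_1^s$. The main obstacle is precisely the local-boundedness clause: a naive disjointification of a countable subcover produces a Borel $M$ that need not be locally bounded, which is why the exhaustion-and-shells refinement (hence the local compactness hypothesis) is needed.
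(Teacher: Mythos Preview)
Your proof is correct and follows the same overall strategy as the paper: build a countable Borel partition subordinate to an exhaustion by relatively compact sets, define $M$ piecewise as the $s$th power of a Lipschitz constant, and sum the bounds from \eqref{E:Lip.magnification.of.Hm}. The paper's version is more streamlined: instead of refining each shell by the original cover $\{U_n\}$, it simply observes that a locally Lipschitz map is Lipschitz on each $\overline{A_n}$ (compact), so one Lipschitz constant $M_n$ works for all of $A_n$; then $M' := M_n$ on $A_n \setminus A_{n-1}$ is already locally bounded (on the open set $A_n$ it takes only the values $M_1,\ldots,M_n$), and no further subdivision into $T_{j,i}$ is needed. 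Your shell-and-refine construction achieves the same end with a bit more bookkeeping.
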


\begin{proof} 
By Lindel\"of's theorem (Simmons \cite[Theorem A, p.\ 100]{gfS63}) and Ash \cite[Theorem A5.15, p.\ 387]{rbA72}, there exist open sets $A_{1}, A_{2}, \ldots$ s.t.\ $A_{n} \uparrow X$ and for each $n$, the closure $\overline{A_{n}}$ is compact. Hence, for $n = 1, 2, \ldots$, the restriction $i \restriction_{A_{n}} : \bigl( A_{n}, \rho_{1} \restriction_{A_{n} \times A_{n}} \bigr) \to (X, \rho_{2})$ is Lipschitz. If $x \in A_{1}$, define $M'(x)$ to be the Lipschitz constant, $M_{1} \in [0, \infty)$, corresponding to $A_{1}$. If $n > 1$ and $x \in A_{n} \setminus A_{n-1}$, let $M'(x)$ be the Lipschitz constant, $M_{n} \in [0, \infty)$, corresponding to $A_{n}$. The function $M$ defined in this way is clearly Borel and locally bounded. 

Now let $A \subset X$ be $\Hm^{s}$-measurable set $A \subset X$. By \eqref{E:Lip.magnification.of.Hm},
we have
	\begin{align*}
		\Hm^{2,s}(A) &= \Hm^{2,s}(A \cap A_{1}) 
		      + \sum_{n \geq 2} \Hm^{2,s}(A \cap A_{n} \setminus A_{n-1})    \\
		                           &\leq M_{1,s} \, \Hm^{1,s}(A \cap A_{1}) 
		      + \sum_{n \geq 2} M_{n,s} \, \Hm^{1,s}(A \cap A_{n} \setminus A_{n-1})    \\
		                           &= \int_{A} \bigl[ M'(x) \bigr]^{s} \, \Hm^{1,s}(dx).
	\end{align*}
Let $M := (M')^{s}$.

The last sentence of the corollary is immediate from \eqref{E:Lip.magnification.of.Hm}.
\end{proof}

A circumstance in which \eqref{E:rho1.rho2.locally.equivalent} holds is described in the following. 

  \begin{lemma} \label{L:imbedding.is.loc.Lip}
Let $M$ and $N$ be Riemannian manifolds and let $f : N \to M$ be an imbedding. 
Let $\mu$ and $\nu$ be the topological metrics on $M$ and $N$, resp., determined by the given Riemannian metrics on the two manifolds. Define a metric $f^{\ast} \mu$ 
on $N$ by $f^{\ast} \mu(x,y) := \mu \bigl[ f(x), f(y) \bigr]$ ($x,y \in N$).
Then the identity map $N \to N$ is locally Lipschitz w.r.t.\ to $f^{\ast} \mu$ and $\nu$ and also w.r.t.\ $\nu$ and $f^{\ast} \mu$.
 \end{lemma}

Note that $f^{\ast} \mu$ is \emph{not} necessarily the same as the metric on $N$ determined by the pullback under $f^{\ast}$ of the Riemannian metric on $M$. \emph{Example:} Consider a round sphere, $S^{k} \subset \RR^{k+1}$. Then the straight line distance between two points in $S^{k}$ is not the same as the great circle distance, which is the geodesic distance between them w.r.t.\ the Riemannian metric induced on $S^{k}$ by the inclusion $S^{k} \hookrightarrow \RR^{k+1}$.

  \begin{proof}  
Let $x \in N$. Since $f$ is an imbedding, $f^{-1}$ is $C^{\infty}$ 
(Boothby \cite[Theorem (5.5), p.\ 78]{wmB75}). 
Hence, by corollary \ref{C:cont.diff.=.loc.Lip}, both $f$ and $f^{-1}$ are locally Lipschitz. Thus, if $V \subset N$ is a relatively compact neighborhood of $x$ then $f$ is Lipschitz on $V$ and $f^{-1}$ is Lipschitz on $U := f(V)$. Let $w, z \in U$. 
Then there exists $K < \infty$ s.t.\  
	\begin{equation*}
		\nu(w,z) = \nu \bigl[  f^{-1} \circ f(w),  f^{-1} \circ f(z) \bigr] 
		  \leq K \mu \bigl[  f(w),  f(z) \bigr] = K f^{\ast} \mu(w,z).
	\end{equation*}
Similarly, there exists $K < \infty$ s.t.\  
	\begin{equation*}
		f^{\ast} \mu(w,z) = \mu \bigl[  f(w),  f(z) \bigr] \leq K \nu(w,z).
	\end{equation*}
The lemma is proved.
  \end{proof}

\chapter{Simplicial Complexes}  \label{Chptr:basics.of.simp.comps}
This appendix presents some of the material in Munkres \cite{jrM84}, mostly from pages 2 -- 11, 83, and 371 plus a general theorem from \cite{spE.PolyhedralApprox} and a strengthening of the latter. (See also Rourke and Sanderson \cite{cpRbjS72.PiecewiseLinearTopol}.)   Let $N$ be a positive integer and let $n \in \{0, \ldots, N \}$. Points $v(0), \ldots, v(n)$ in $\RR^{N}$ are ``geometrically independent'' (or are in ``general position'') if $v(1) - v(0), \ldots, v(n) - v(0)$ are linearly independent. Equivalently, $v(0), \ldots, v(n)$ are geometrically independent if and only if
	\[
		\sum_{i=1}^{n} t_{i} = 0 \text{ and } \sum_{i=1}^{n} t_{i} v(i) = 0
	\]  
together imply $t_{0} = \cdots t_{n} = 0$. If $v(0), \ldots, v(n) \in \RR^{N}$ are geometrically independent then they are the vertices of the ``simplex''
	\[
	     \sigma = \{ \beta_{0} v(0) + \cdots + \beta_{n} v(n) : 
		  \beta_{0}, \ldots \beta_{n} \geq 0 
	         \text{ and } \beta_{0} + \cdots + \beta_{n} = 1 \}.
	\]
We say that $\sigma$ is ``spanned'' by $v(0), \ldots, v(n)$ and $n$ is the ``dimension'' of $\sigma$. We write $\sigma = \langle v(0), \ldots, v(n) \rangle$. (Sometimes we call $\sigma$ a ``$n$-simplex'' and write $\dim \sigma = n$. 
We adopt the convention that the statement ``$\sigma$ is spanned by $v(0), \ldots, v(n)$'' or a reference to $\langle v(0), \ldots, v(n) \rangle$ implies that $v(0), \ldots, v(n)$ are geometrically independent. Note that $\sigma$ is convex and compact. Indeed, it is the convex hull of $\{ v(0), \ldots, v(n) \}$. Thus, every $y \in \sigma$ can be expressed uniquely (and continuously) in ``barycentric coordinates'':
   \[
      y = \sum_{v \text{ is a vertex in } \sigma} \beta_{v}(y) \, v,
   \]
where the $\beta_{v}(y)$'s are nonnegative and sum to 1. We have
	\begin{multline}  \label{E:simplex.determines.verts}
		\text{Given a simplex, } \sigma, \text{ there exists one and only 
		    one} \\ 
		       \text{geometrically independent set of points spanning } \sigma.
	\end{multline}
We have
    \begin{equation*}
      y = \sum_{i=0}^{n} \beta_{v(i)} v(i) 
        = \sum_{i=1}^{n} \beta_{v(i)} v(i) 
          + \left( 1 -  \sum_{i=1}^{n} \beta_{v(i)} \right) v(0)
            = v(0) + \sum_{i=1}^{n} \beta_{v(i)} \bigl( v(i) - v(0) \bigr) .
    \end{equation*}
Recall \eqref{E:bi-Lipschitz.defn}. Since $v(0), v(1), \ldots, v(n)$ are geometrically independent it follows that 
    \begin{equation}  \label{E:barycen.coords.on.simp.are.bi-Lip}
      \text{The map } y \mapsto (\beta_{v(0)}, \ldots, \beta_{v(n))} \text{ is bi-Lipschitz.}
    \end{equation}
(See proposition \ref{P:bary.coords.are.Lip} below.) 

The simplex $\sigma$ lies on the plane
	\begin{equation}  \label{E:formla.for.affine.plane}
	   \Pi = \{ \beta_{0} v(0) + \cdots + \beta_{n} v(n) : \beta_{0} 
		  + \cdots + \beta_{n} = 1 \}.
	\end{equation}
I.e., the definition of $\Pi$ is like that of $\sigma$ except the non-negativity requirement is dropped. Note that $\Pi$ need not include the origin of $\RR^{N}$. $\Pi$ is the smallest plane containing 
$\sigma$. The dimension of $\Pi$ is $n$. Any nonempty subset of $\{ v(0), \ldots, v(n) \}$ is geometrically independent and the simplex spanned by that subset is a ``face'' 
of $\sigma$. So $\sigma$ is a face of itself and a vertex of $\sigma$ is also a face of $\sigma$. A ``proper'' face of $\sigma$ is a face of $\sigma$ different from $\sigma$. If $\tau$ is a proper face of $\sigma$, write $\sigma \succ \tau$. 

Let $\sigma$ be a simplex spanned by geometrically independent points 
$v(0), \ldots, v(n)$. If $J \subsetneqq \{ 0, \ldots, n \}$ is nonempty, let $\tau$ be  the proper face of $\sigma$ spanned by $\{ v(j), \, j \in J \}$. E.g., $\tau$ might be a vertex 
of $\sigma$. The face ``opposite'' $\tau$ is the span, $\omega$, 
of $\{ v(j), \, j \notin J \}$ (Munkres \cite[p.\ 5 and Exercise 4, p.\ 7]{jrM84}). Thus, 
$\tau$ consists of those $y \in \sigma$ s.t.\ $\beta_{v}(y) = 0$ for all vertices 
$v \notin \tau$ and $\omega$ consists of those $y \in \sigma$ s.t.\ $\beta_{v}(y) = 0$ for all vertices $v \in \tau$. 

The union of all proper faces of $\sigma$ is the ``boundary'' of $\sigma$, denoted 
$\text{Bd} \, \sigma$. The ``(simplicial) interior'' of $\sigma$ (as a simplex) is the set 
$\text{Int} \, \sigma := \sigma \setminus (\text{Bd} \, \sigma)$, where ``$\setminus$'' indicates set-theoretic subtraction. 
	\begin{equation}  \label{E:criterion.for.int.simp}
		\text{If  } y = \sum_{j=0}^{n} \beta_{j}(y) \, v(j), 
		      \text{ then } y \in \text{Int} \, \sigma \text{ if and only if } \beta_{j}(y) > 0 
		          \text{ for all }  j = 0, \ldots, n.
	\end{equation}
Thus, the interior of $\sigma$ as a simplex is in general different from its (usually empty) interior as a subspace of $\RR^{N}$. In fact, the interior (as a simplex) of a 0-dimensional simplex (a single point) is the point itself. But $\sigma$ is the topological closure of 
$\text{Int} \, \sigma$ and $\text{Int} \, \sigma$ is the relative interior of $\sigma$ as a subset of $\Pi$ defined by \eqref{E:formla.for.affine.plane}.

By \eqref{E:dim.of.countable.set.is.0}, the Hausdorff dimension (see \eqref{E:Haus.dim.defn}) of a point is 0 so if $\sigma$ is a 0-dimensional simplex then 
$\dim \sigma = 0$. Inductively, let $n = 0, 1, \ldots$ and suppose that if $m \leq n$ then any $m$-dimensional simplex has Hausdorff dimension $m$. 
Let $\sigma$ be an $(n+1)$-dimensional simplex with $n \geq 0$. By \eqref{E:criterion.for.int.simp} and \eqref{E:barycen.coords.on.simp.are.bi-Lip}, barycentric coordinates provide a bi-Lipschitz homeomorphism of $\text{Int} \, \sigma$ onto an open subset of the $(n+1)$-dimensional affine plane $\beta_{0} + \cdots + \beta_{n+1} = 1$, i.e., onto a $(n+1)$-dimensional smooth manifold. Therefore, by lemma \ref{L:loc.Lip.image.of.null.set.is.null} and \eqref{E:Haus.dim.s-manif.=.s}, 
$\dim (\text{Int} \, \sigma) = n+1$. By the induction hypothesis and \eqref{E:dim.of.whole.=.max.dim.of.parts}, 
$\dim \sigma = \max \left( \{\dim (\text{Int} \, \sigma) \} 
\cup \{ \dim \tau : \tau \prec \sigma \} \right) = n+1$, where ``$\prec$'' means ``is a proper face of''. This proves
    \begin{equation}  \label{E:Haus.dim.of.simplex}
      \text{The Hausdorff dimension of an $n$-dimensional simplex is } n. 
    \end{equation}
 
Recall $I_{n}$ is the $n$-dimensional identity matrix;  $1^{n}$ is $n$-dimensional column vector of 1's.
	\begin{lemma}  \label{L:simp.lin.ineqs}
Let $N > n > 0$ be integers and let $\sigma \subset \RR^{N}$ be an $n$-dimensional simplex.
Then there exist matrices $A^{N \times n}$ and $B^{N \times (N-n)}$  of rank $n$ and $N-n$, resp., satisfying $B^{T} A = 0$, and there exist row vectors $y \in \RR^{n}$ and $z \in \RR^{N-n}$ s.t.\  $x^{1 \times N} \in \sigma$ if and only if 
	\begin{equation}  \label{E:cellular.exprssn.for.simplex}
		x A \, (I_{n}, - 1^{n}) \geq (y, -y 1^{n} - 1)^{1 \times (n+1)} \text{ and } x B = z, 
	\end{equation}
where inequalities of vectors are defined coordinate-wise. Conversely, given $(A,B,y,z)$ as above, the set of $x \in \RR^{N}$ satisfying \eqref{E:cellular.exprssn.for.simplex} is an $n$-simplex. We have $x \in \text{Int} \, \sigma$ if and only 
the inequalities in \eqref{E:cellular.exprssn.for.simplex} are strict.

If $n = 0$ then we have a similar result except we do not need $A$: We ignore the inequalities in \eqref{E:cellular.exprssn.for.simplex} involving $A$. Similarly, if $n = N$ we  ignore the equalities involving $B$. If $n < N$ then the lowest dimensional affine plane on which $\sigma$ lies is $\{ x \in \RR^{N} : x B = z \}$.

In particular, simplices are ``cells'' in the sense of Munkres \cite[Definition 7.2, p.\ 71]{jrM66}. 	
	\end{lemma}
	\begin{proof} 
 First, let $n=0$ so $N-n=N$. Let $\sigma$ be a 0-simplex with single vertex 
$v_{0} \in \RR^{N}$. Then $x \in \sigma$ if and only if
$x B = z$, with $B = I_{N}$ ($N$-dimensional identity matrix) and $z = v_{0}$.

Now let $n > 0$ and suppose $x$ lies in an $n$-simplex, $\sigma \subset \RR^{N}$, spanned by $v_{0}, \ldots, v_{n}$. 
Let $V^{N \times n}$ be the matrix whose $j^{th}$ column is $(v_{j} - v_{0})^{T}$ ($j \in \NN_{n}$). Then $x^{1 \times N} \in \sigma$ if and only if  
	\begin{equation}  \label{E:simplex.condns.on.beta}
		x = v_{0} + \beta V^{T},  \text{ where }
		   \beta \text{ is any row } n\text{-vector s.t.\ } \beta \geq 0 
		      \text{ and } \beta 1^{n} \leq 1.
	\end{equation}
Note that, by \eqref{E:criterion.for.int.simp}, $x \in \text{Int} \, \sigma$ if and only if the inequalities in the preceding are strict. Now, $v_{0}, \ldots, v_{n}$ are implicitly geometrically independent. (See \eqref{E:simplex.determines.verts}. In particular, $n \leq N$.) Hence, $V$ has rank $n$. Therefore, $A^{N \times n} := V (V^{T} V)^{-1}$ has rank $n$. 

Suppose $n = N$. Then $V$ is a square matrix and is invertible. Hence, by \eqref{E:simplex.condns.on.beta}, $x \in \sigma$ if and only if $x A = v_{0} A + \beta^{1 \times n}$ (``if'' because $A$ is invertible) with $\beta \geq 0$ and $\beta 1^{n} \leq 1$. Let $y := v_{0} A$. Then, by \eqref{E:simplex.condns.on.beta}, $x \in \sigma$ if and only if 
	\begin{equation}  \label{E:xA.ineq}
	     x A \, (I_{n}, - 1^{n}) = (xA, - xA 1^{n}) = (y + \beta, -y 1^{n} - \beta 1^{n}) 
		  \geq (y, -y 1^{n} - 1).
	\end{equation}
Thus, $x \in \sigma$ if and only if \eqref{E:cellular.exprssn.for.simplex} holds, ignoring the part about ``$B$''.

Now suppose, $0 < n < N$. First, assume $x$ lies in an $n$-simplex, $\sigma$ and let $V$ and $A$ be as above. Let $B^{N \times (N-n)}$  be any matrix of rank $N-n$ s.t.\ $B^{T} V = 0$. Then $B^{T} A = 0$ and, by \eqref{E:simplex.condns.on.beta} again,  
	\begin{multline*}  
                xA = v_{0}A + \beta \text{ and } x B = (v_{0} + \beta V^{T}) B  = v_{0} B, \\
                \text{ where } \beta \text{ is a row } n\text{-vector s.t.\ } \beta \geq 0
		      \text{ and } \beta 1^{n} \leq 1.
	\end{multline*}
Letting $y := v_{0}A$ and $z := v_{0} B$, \eqref{E:cellular.exprssn.for.simplex} follows as in \eqref{E:xA.ineq}. 
Moreover, if $x \in \text{Int} \, \sigma$ then the inequalities in \eqref{E:cellular.exprssn.for.simplex} are clearly strict. 

Conversely, let $S \subset \RR^{N}$ be the set of $x^{1 \times N}$ s.t.\ \eqref{E:cellular.exprssn.for.simplex} holds. 
Let $x^{1 \times N} \in \RR^{N}$ and $\beta^{n \times 1} := xA - y$. Let $Z^{N \times N} := (A, B)$. Since $A$ and $B$ are each of full rank and $B^{T} A = 0$, 
we have $rank \, Z = N$ so $Z$ is invertible. We have 
	\begin{equation}   \label{E:eks.zee}
		x \in S \text{ if and only if } xZ = (y + \beta, z), \beta \geq 0, \text{ and } \beta 1^{n} \leq 1. 
	\end{equation}
Write  
	\begin{equation*}
		Z^{-1} =
		  \begin{pmatrix}
			V^{n \times N} \\
			W^{(N-n) \times N}
		  \end{pmatrix} 
		  .
	\end{equation*}
So $V$ and $W$ have rank $n$ and $N-n$, resp. Then \eqref{E:eks.zee} holds if and only if $x = (y+\beta, z) Z^{-1} = (yV +zW) + \beta V$, 
$\beta \geq 0$, and $\beta 1^{n} \leq 1$. I.e., if and only if $x$ lies in the $n$-simplex, $\sigma$, with vertices $v_{0}, v_{1}, \ldots, v_{n}$, where $v_{0}^{1 \times N} = yV +zW$ and $(v_{i} - v_{0})^{T}$ is the $i^{th}$ row of $V$ ($i \in \NN_{n}$). Thus, $S = \sigma$.
Moreover, $x \in \text{Int} \, \sigma$ if and only if the $n+1$ inequalities are strict \eqref{E:criterion.for.int.simp}. 

Let $P := \{ x \in \RR^{N} : x B = z \}$. It is immediate, by \eqref{E:cellular.exprssn.for.simplex}, that $\sigma \subset P$. 
Since $B$ has rank $N-n$, $\dim P = n$. But since $\dim \sigma = n$ the lowest dimensional affine plane containing $\sigma$ has dimension $n$. So that plane must be $P$.
	\end{proof}

The following lemma (Munkres \cite[lemma 1.1, p.\ 6]{jrM84}) about convex sets is handy. 

\begin{lemma}  \label{L:rays.intersect.bndry.in.one.pt}
Let $U$ be a bounded, convex, open set in some affine space (e.g., a Euclidean space). Let $w \in U$. Then each ray emanating from $w$ intersects the boundary of $U$ in precisely one point.
\end{lemma}

Let $v(0), \ldots, v(n) \in \RR^{n}$ be the vertices of $\sigma$. Then
	\begin{equation}  \label{E:barycenter.of.sigma}
      		\hat{\sigma} := \frac{1}{n+1} \sum_{j=0}^{n} v(j) \in \text{Int} \, \sigma
	\end{equation}
is the ``barycenter'' of $\sigma$ (Munkres \cite[p.\ 85]{jrM84}). By \eqref{E:criterion.for.int.simp}, 
	\begin{equation}  \label{E:bary.in.Int}
		\hat{\sigma} \in \text{Int} \, \sigma.
	\end{equation} 
Munkres \cite[p.\ 90]{jrM66} defines the 
``radius'', $r(\sigma)$, of $\sigma$ to be the minimum distance from $\hat{\sigma}$ to $\text{Bd} \, \sigma$. He defines the ``thickness'' of the simplex $\sigma$ to be
$t(\sigma) := r(\sigma)/diam(\sigma)$. Here, ``$diam(\sigma)$'' is the diameter of $\sigma$, i.e., the length of the longest edge of $\sigma$. Thus, e.g., the thickness of any 1-simplex is 1/2. 

A ``simplicial complex'', $P$, in $\RR^{N}$ is a collection of simplices in $\RR^{N}$ s.t.\
	\begin{equation}   \label{E:complex.contains.all.faces}
		\text{Every face of a simplex in $P$ is in $P$.}
	\end{equation}
and		     
	\begin{equation}   \label{E:intersection.of.simps}
		\text{The intersection of any two simplices in $P$ is a face of each of them.}  
	\end{equation}
It turns out that an equivalent definition of simplicial complex is obtained by replacing condition \eqref{E:intersection.of.simps} by the following. 
	\begin{equation}  \tag{\ref{E:intersection.of.simps}'}
		\text{Every pair of distinct simplices in $P$ have disjoint interiors.}
	\end{equation}  
It follows that
	\begin{multline}   \label{E:Int.rho.cuts.sigma.then.rho.in.sigma}
		\text{If } \rho, \sigma \text{ are elements of a simplicial complex and } 
		       (\text{Int} \, \sigma) \cap \rho \ne \varnothing  \\
		       \text{ then } \sigma \text{ is a face of } \rho.
	\end{multline}
(\emph{Proof:} $(\text{Int} \, \sigma) \cap \rho$ lies in some face of $\rho$. Let $\tau$ be the smallest face of $\rho$ (in terms of inclusion) containing 
$(\text{Int} \, \sigma) \cap \rho$. ($\tau = \rho$ is possible.)  Suppose $\sigma \ne \tau$. Then by (\ref{E:intersection.of.simps}') $(\text{Int} \, \sigma) \cap \tau$ lies in some proper face of $\tau$. But $(\text{Int} \, \sigma) \cap \tau  = \bigl[ (\text{Int} \, \sigma) \cap \rho \bigr] \cap \tau = (\text{Int} \, \sigma) \cap \rho$, since $(\text{Int} \, \sigma) \cap \rho \subset \tau \subset \rho$. I.e., $(\text{Int} \, \sigma) \cap \rho$ lies in a proper face of $\tau$. That contradicts the minimality of $\tau$. Therefore, $\sigma =  \tau$.)

A simplicial complex, $P$, is ``finite'' if it is finite as a set (of simplices). The 
    \begin{multline}  \label{E:simplicial.dim.of.cmplx}
      \text{``dimension'' of a simplicial complex is the supremum } \\
        \text{ of the set of dimensions of the constituent simplices.}
    \end{multline}
(See Munkres \cite[p.\ 14]{jrM84}. Thus, infinite dimensional simplicial complexes are possible.) In the following assume $P$ is a non-empty simplicial complex. We will use 
``$\dim P$'' to denote the dimension of a simplicial complex $P$. As we shall see presently (see \eqref{E:Haus.dim.of.cmplx}), this usage does not conflict with that defined in \eqref{E:Haus.dim.defn}. 

A subset, $L$, of $P$ is a ``subcomplex'' of $P$ if $L$ is a simplicial complex in its own right. The collection, $P^{(q)}$, of all simplices in $P$ of dimension at most $q \geq 0$ is a subcomplex, called the ``$q$-skeleton'' of $P$. In particular, $P^{(0)}$ is the set of all vertices of simplices in $P$. The ``polytope'' or ``underlying space'' of $P$, denoted by $|P|$, is just the union of the simplices in $P$. If $P$ is finite, i.e., consists of finitely many simplices, then $|P|$ is assigned the relative topology it inherits from $\RR^{N}$. In general, a subset, $X$, of $|P|$ is closed (open) if and only if 
$X \cap \sigma$ is closed (resp.\ open) in $\sigma$ for every $\sigma \in P$. Call this topology the ``polytope topology'' on $|P|$. A space that equals $|P|$ for some simplicial complex, $P$, is called a ``polyhedron''. By Munkres \cite[Lemma 2.5, p.\ 10]{jrM84}, 
	\begin{equation}  \label{E:|P|.compact}
		|P| \text{ is compact if and only if } P \text{ is finite.}
	\end{equation}  
If $X$ is a topological space, then a 
    \begin{multline}  \label{E:triangulation.defn}
      \text{``triangulation'' of $X$ is a simplicial complex, } P, \\
        \text{ and a homeomorphism } f : |P| \to X .
    \end{multline}
 (See Munkres \cite[p.\ 118]{jrM84}).
 
 Recall \eqref{E:simplicial.dim.of.cmplx}. By \eqref{E:dim.of.whole.=.max.dim.of.parts} and \eqref{E:Haus.dim.of.simplex}, 
    \begin{equation}  \label{E:Haus.dim.of.cmplx}
       \dim P = \dim |P| . 
    \end{equation}

Let $P$ be a finite simplicial complex of positive dimension. As in Munkres \cite[p.\ 10]{jrM84}, define ``barycentric coordinates'' on $|P|$ as follows. First, note that
	\begin{equation} \label{E:x.in.exctly.1.simplex.intrr}
		\text{If $x \in |P|$ then there is exactly one simplex 
		    $\tau \in P$ s.t.\ $x \in \text{Int} \, \tau$.}
	\end{equation}
(To see this, note that since $P$ is finite, there is a smallest simplex (w.r.t.\ inclusion order), $\tau$, in $P$ containing $x$. Clearly, $x \in \text{Int} \, \tau$. By (\ref{E:intersection.of.simps}') this implies $\tau$ is unique.)  Let $\tau^{(0)}$ be the set of vertices of $\tau$. Then, by \eqref{E:criterion.for.int.simp}, there exist strictly positive numbers 
$\beta_{v}(x)$ ($v \in \tau^{(0)}$) that sum to 1 and satisfy
	\[
		x = \sum_{v \in \tau^{(0)}} \beta_{v}(x) v.
	\]
Since $v \in \tau ^{(0)}$ are geometrically independent, the coefficients $\beta_{v}(x)$, $v \in \tau ^{(0)}$, are unique. 
If $v \in P^{(0)}$ is not a vertex of $\tau$ define $\beta_{v}(x) = 0$. Thus, 
	\[
		x = \sum_{v \in P^{(0)}} \beta_{v}(x) v, \quad \quad x \in |P|.
	\]
The entries in $\bigl\{ \beta_{v}(x), \; v \in P^{(0)} \bigr\}$ are the ``barycentric coordinates'' of $x$. The barycentric coordinates of $x \in |P|$ are unique. For if not, then $x$ lies in the interiors of each of two distinct simplices in $P$. This contradicts (\ref{E:intersection.of.simps}'). For each $v \in P^{(0)}$ the function $\beta_{v}$ is continuous on $|P|$ (Munkres \cite[p.\ 10]{jrM84}). Recall \eqref{E:barycen.coords.on.simp.are.bi-Lip}. If $P$ is finite, we have the following. 

	\begin{prop}  \label{P:bary.coords.are.Lip}
Let $P$ be a finite simplicial complex. Then the vector-valued function 
$\blds{\beta} : x \mapsto \bigl\{ \beta_{v}(x), \; v \in P^{(0)} \bigr\}$ is Lipschitz in $x \in |P|$ (w.r.t.\ the obvious Euclidean metrics; see appendix \ref{Chptr:Lip.Haus.meas.dim}).
	\end{prop}
In the course of proving this lemma, the following useful fact emerges.

\begin{corly}   \label{C:reverse.triangle.ineq.in.simp.cmplxs}
Let $P$ be a finite simplicial complex. There exists $K < \infty$, depending only on $P$, s.t.\ the following holds. Let $\rho, \tau \in P$ satisfy $\rho \cap \tau \ne \varnothing$, but suppose neither simplex is a subset of the other. If $x \in \text{Int} \, \rho$ and $y \in \text{Int} \, \tau$ then there exist 
$\tilde{x}, \tilde{y} \in  \text{Int} \, (\rho \cap \tau)$ s.t.\ 
	\[
		|x - \tilde{x}| + |\tilde{x} - \tilde{y}| + |\tilde{y} - y| \leq K |x-y|.
	\]
\end{corly}

\begin{proof}[Proof of proposition \ref{P:bary.coords.are.Lip}]
Let $x, y \in |P|$.   Since $P$ is a finite complex there exists $\delta_{1} > 0$ s.t.\ if $\rho, \tau \in P$ are disjoint then $dist(\rho, \tau) > 2 \delta_{1}$. 
Let $\rho$ ($\tau$) be the unique simplex in $P$ s.t.\ $x \in \text{Int} \, \rho$ (respectively [resp.], $y \in \text{Int} \, \tau$; see \eqref{E:x.in.exctly.1.simplex.intrr}). Therefore, if $\rho$ and $\tau$ are disjoint then the Euclidean length $|x-y|$ is bounded below by $2 \delta_{1}$. Moreover, $\bigl| \blds{\beta}(z) \bigr| \leq 1$ for every $z \in |P|$ since the components 
of $\blds{\beta}(x)$ are nonnegative and sum to 1. Thus,
	\begin{equation}  \label{E:x.y.more.than.delta1.apart}
		\bigl| \blds{\beta}(x) - \blds{\beta}(y) \bigr| \leq (1/\delta_{1}) |x - y| 
		    \text{ if $x$ and $y$ lie in disjoint simplicies.}
	\end{equation}

So assume $\rho \cap \tau \ne \varnothing$. In fact, first consider the behavior of $\beta$ on a single simplex, $\rho$ in $P$. (This covers the case where $\tau\subset \rho$ or \emph{vice versa}.) Suppose $\rho$ is an $n$-simplex, so $\rho$ has $n+1$ vertices $v(0), \ldots, v(n)$. If $n=0$, i.e., $\rho$ is a single point, then $\beta$ is trivially Lipschitz on $\rho$. So suppose $n > 0$. We show that $\beta$ is Lipschitz on $\rho$.  
We can assume $|P| \subset \RR^{N}$ for some $N \geq n$. 
Let $V^{(n+1) \times N}$ be the matrix whose $i^{th}$ row is $v(i-1)$ ($i \in \NN_{n}+1$). (Use superscripts to indicate matrix dimension.)  Let $V_{0}^{n \times N}$ be the matrix whose $i^{th}$ row is $v(i) - v(0)$ ($i \in \NN_{n}$). Recall 
$(1^{n})^{n \times 1}$ is the column vector $(1, \ldots, 1)^{T}$, \eqref{E:1n.col.vec.defn}. Thus,
	\begin{equation}  \label{E:V0.from.V}
		(-1^{n} \; \;  I_{n} ) V = V_{0},
	\end{equation}	
where $I_{n}$ is the $n \times n$ identity matrix.

The vertices of $\rho$ are geometrically independent so $V_{0}$ has full rank $n$. This means $V_{0} V_{0}^{T}$ is invertible. But by \eqref{E:V0.from.V} 
$(-1^{n} \; \;  I_{n} ) V V_{0}^{T} = V_{0} V_{0}^{T}$. Therefore, 
$W^{(n+1) \times n} := V V_{0}^{T}$ has rank $n$. This implies 
that the vector $(1^{n+1})^{(n+1) \times 1} = (1, \ldots, 1)^{T}$ is not in the column space 
of $W^{(n+1) \times n}$. For suppose for some column vector $\alpha$ we have 
$W \alpha = 1^{n+1}$. Then $\alpha \ne 0$ and from \eqref{E:V0.from.V} and the fact that $V_{0} V_{0}^{T}$ is nonsingular we have
	\[
		0 \ne V_{0} V_{0}^{T} \alpha = (-1^{n} \; I_{n} ) W \alpha 
		      = (-1^{n} \; I_{n} ) 1^{n+1} = 0.
	\]
Therefore, $(W, \; 1^{n+1})$ is invertible.

For $x \in \rho$, let $\bigl( \blds{\beta}^{\rho}(x) \bigr)^{1 \times (n+1)}$ be the row vector 
$\bigl( \beta_{v(0)}(x), \ldots, \beta_{v(n)}(x) \bigr)$. Think of $x \in \RR^{N}$ as a row vector. Then we have $x = \blds{\beta}^{\rho}(x) V$ and 
$1 = \blds{\beta}^{\rho}(x) 1^{n+1}$. Therefore,
	\[
		(x V_{0}^{T}, \; 1) = \blds{\beta}^{\rho}(x) (W, \; 1^{n+1})^{(n+1) \times (n+1)}.
	\]
But we have just observed that $U^{(n+1) \times (n+1)} := (W, \; 1^{n+1})$ is invertible. Therefore,
	\[
		\blds{\beta}^{\rho}(x) = (x V_{0}^{T}, \; 1) U^{-1}.
	\]
Hence, $\blds{\beta}^{\rho}$ is affine on $\rho$. Therefore, $\blds{\beta}^{\rho}$ and, hence, $\blds{\beta}$ is Lipschitz on $\rho$. Since $P$ is a finite complex there is $K < \infty$ that works as a Lipschitz constant for every simplex in $P$. I.e.,
	\begin{equation}  \label{E:beta.unif.Lip.on.each.simplex}
		\bigl| \blds{\beta}(x) - \blds{\beta}(x')  \bigr| \leq K |x - x'| 
		\text{ for every } x, x' \in \rho \text{ for every } \rho \in P.
	\end{equation}

It remains to tackle the case 
	\begin{multline}  \label{E:rho.intersect.but.aren't.subsets}
		x \in \text{Int} \, \rho \text{ and } y \in \text{Int} \, \tau ; \; \rho, \tau \in P; \;  \\
		     \rho \cap \tau \ne \varnothing \text{ but } \rho \text{ is not a subset of } 
		       \tau \text{ and } \tau \text{ is not a subset of } \rho.
	\end{multline}
$\rho \cap \tau \ne \varnothing$ but $\rho$ is not a subset of $\tau$ and $\tau$ is not a subset of $\rho$. In this case, by \eqref{E:Int.rho.cuts.sigma.then.rho.in.sigma},  
$(\text{Int} \, \rho) \cap (\text{Int} \, \tau) = \varnothing$. We handle this case by reducing it to the last case. By \eqref{E:intersection.of.simps}, $\rho \cap \tau$ is a simplex, a proper face of both $\rho$ and $\tau$. Let $\xi$ be the face of $\rho$ opposite $\rho \cap \tau$ and let $\omega$ be the face of $\tau$ opposite $\rho \cap \tau$. Let $x \in \text{Int} \, \rho$ and $y \in \text{Int} \, \tau$. 

\emph{Claim:} There is a unique $z_{0} = z_{0}(x) \in \xi$ s.t.\ the line passing through $x$ and $z_{0}$ intersects $ \text{Int} \, (\rho \cap \tau)$. Given $z \in \xi$, the line, $L(z) = L(z,x)$, passing through $z$ and $x$ is unique since $x \in \text{Int} \, \rho$ implies $x \notin \xi$. Let $v(0), \ldots, v(n)$ be the vertices of $\rho$ and, renumbering if necessary, we may assume $v(0), \ldots, v(m)$ are the vertices of $\rho \cap \tau$ for some $m = 0, \ldots, n-1$. Then $v(m+1), \ldots, v(n)$ are the vertices of $\xi$. Let $z \in \xi$ and write
	\[
		z = \sum_{i=m+1}^{n} \mu_{i} v(i),
	\]
where the $\mu_{i}$'s are nonnegative and sum to 1.

First, we prove there is at most one $z \in \xi$ s.t.\ 
$L(z) \cap \rho \cap \tau \ne \varnothing$. Suppose 
$L(z)$ intersects $\rho \cap \tau$ at $\tilde{x} = \sum_{i=0}^{m} \mu_{i} v(i)$. Then for some $t \in \RR$ with $t \ne 1$ we have
	\begin{multline}  \label{E:xtilde.is.convx.combo.of.x.z}
		\tilde{x} = \sum_{i=0}^{m} \mu_{i} v(i) 
			= t  \sum_{i=0}^{n} \beta_{v(i)}(x) v(i) + (1-t) \sum_{i=m+1}^{n} \mu_{i} v(i)  \\
			= \sum_{i=0}^{m} t \, \beta_{v(i)}(x) v(i) 
			+ \sum_{i=m+1}^{n} \bigl[ t \, \beta_{v(i)}(x) - (t-1) \mu_{i} \bigr] v(i).
	\end{multline}
Then by geometric independence of $v(0), \ldots, v(n)$ we have
	\begin{equation}  \label{E:mu.t.beta}
		\mu_{i} = t  \beta_{v(i)}(x), \quad i = 0, \ldots, m  \quad \text{ and } \quad
			\mu_{i} = \frac{t}{t-1} \beta_{v(i)}(x), \quad i = m+1, \ldots, n.
	\end{equation}
Let $b = \sum_{i=0}^{m} \beta_{v(i)}(x)$. Since $x \in \text{Int} \, \rho$, we have 
$b \in (0,1)$. From \eqref{E:mu.t.beta} and the fact that $\sum_{i=0}^{m} \mu_{i} = 1$ we see $t = 1/b > 1$. In particular, $z$ and $\tilde{x}$ are unique if they exist. If it exists, denote that $z$ by $z_{0}$.

Next, we prove existence of $z_{0}$. Let $t = 1/b$. Then it is easy to see that if $\mu_{0}, \ldots, \mu_{n}$ are defined by \eqref{E:mu.t.beta} then 
	\[
	\sum_{i=0}^{m} \mu_{i} = 1 = \sum_{i=m+1}^{n} \mu_{i}.
	\]
Hence, $z_{0} := \sum_{i=m+1}^{n} \mu_{i} v(i) \in \xi$ and 
$\tilde{x} := \sum_{i=0}^{m} \mu_{i} v(i) \in \rho \cap \tau$ and \eqref{E:xtilde.is.convx.combo.of.x.z} holds. Since $x \in  \text{Int} \, \rho$, we have $\beta_{v(i)}(x) > 0$ for $i \in \NN_{n}$. Therefore, $\mu_{i} > 0$ for $i = 1, \ldots, m$. Thus, $\tilde{x} \in \text{Int} \, (\rho \cap \tau)$. I.e., $z_{0} \in \xi$, $x$, and $\tilde{x} \in \text{Int} \, (\rho \cap \tau)$ lie on the same line. This proves the claim. Define $\tilde{y} \in \rho \cap \tau$ similarly. It has similar  properties.

The idea behind the rest of the proof is to first show that 
	\begin{equation}  \label{E:x.y.tilde.bckwrds.tringle.ineq}
		| x-\tilde{x} |+ |\tilde{x} - \tilde{y}| + | \tilde{y} - y | \leq K' \, |x - y|, 
	\end{equation}
where $K' = K'(\rho, \tau) < \infty$ depends only on $\rho$ and $\tau$, not on $x$ or $y$. Notice that $x$ and $\tilde{x}$ lie in the same simplex in $P$, \emph{viz.} $\rho$. Similarly, $\tilde{x}$ and $\tilde{y}$ both lie in $\rho \cap \tau \in P$. The points $\tilde{y}$ and $y$ also lie in the same simplex in $P$. So we may apply \eqref{E:beta.unif.Lip.on.each.simplex} to each term 
in $| x-\tilde{x} |+ |\tilde{x} - \tilde{y}| + | \tilde{y} - y |$ and then maximize $K'(\rho, \tau)$ over appropriate pairs $\rho, \tau \in P$.

The simplex $\rho \cap \tau$ lies on a unique plane, $\Pi_{\rho \cap \tau}$,  of minimum dimension. (See \eqref{E:formla.for.affine.plane}.)  ($\Pi_{\rho \cap \tau}$ might not pass through the origin.) So, e.g., if $\rho \cap \tau$ is a single point $v$ (i.e., $\rho \cap \tau$ 0-dimensional) 
then $\Pi_{\rho \cap \tau} = \{ v \}$. Now, $x \in \text{Int} \, \rho$ so $x \notin \Pi_{\rho \cap \tau}$.
Let $\hat{x} \in \Pi_{\rho \cap \tau}$ be the orthogonal projection of $x$ onto $\Pi_{\rho \cap \tau}$, i.e., 
$\hat{x}$ is the closest point of $\Pi_{\rho \cap \tau}$ to $x$. Note that $\hat{x}$ may not lie 
in $\rho \cap \tau$. Define $\hat{y}$ similarly. Let $x_{0}$ be an arbitrary point 
in $\text{Int} \, (\rho \cap \tau)$. E.g., $x_{0}$ might be the barycenter of $\rho \cap \tau$. (See \eqref{E:barycenter.of.sigma}.)  In any case, $x_{0}$ need only depend on $\rho \cap \tau$, not on $x$ or $y$.Let 
	\begin{equation}  \label{E:y0.=.x0}
		y_{0} := x_{0}.
	\end{equation}   
Then by \eqref{E:Int.rho.cuts.sigma.then.rho.in.sigma}, there exists $r > 0$ s.t.\ the distance 
from $x_{0} = y_{0}$ to any face of $\rho$ or $\tau$ that does not itself have $\rho \cap \tau$ as a face is at least $2r$. We may assume $r$ only depends on $\rho \cap \tau$, not on $x$ or $y$.

\emph{Claim:} 
	\begin{equation}  \label{E:xdot.in.rho.ydot.in.tau}
		\dot{x} := x_{0} + |x - \hat{x}|^{-1} r (x - \hat{x}) \in \rho \text{ and }
			\dot{y} := y_{0} + |y - \hat{y}|^{-1} r (y - \hat{y}) \in \tau. 
	\end{equation}
First, note that  
	\begin{equation}  \label{E:x0.+.t.x.-.xhat.in.rho}
		\text{for $t > 0$ sufficiently small, } x_{0} + t (x - \hat{x}) \in \text{Int} \, \rho.
	\end{equation}
To see this, observe that by \eqref{E:formla.for.affine.plane} we can write
	\[
		\hat{x} = \sum_{i=0}^{m} \zeta_{i} v(i),
	\]
where $v(0), \ldots, v(m)$ are the vertices of $\rho \cap \tau$; $\zeta_{0}, \ldots, \zeta_{m} \in \RR$; 
and $\zeta_{0} + \cdots + \zeta_{m} = 1$. (But the $\zeta_{i}$'s do not have to be nonnegative.)  Moreover, since $x_{0}$ is an interior point of $\rho \cap \tau$ we have
	\[
		\beta_{v(i)}(x_{0}) > 0, \text{ for } i = 0, \ldots, m, 
			\text{ but } \beta_{v(i)}(x_{0}) = 0 \text{ for } i = m+1, \ldots, n.
	\]
Let $t > 0$. Then
	\begin{equation}  \label{E:x0.+.t.x.-.xhat.in.terms.of.vs}
		x_{0} + t (x - \hat{x}) 
			= \sum_{i=0}^{m} \bigl( \beta_{v(i)}(x_{0}) - t \zeta_{i} + t \beta_{v(i)}(x) \bigr) v(i) 
			+ t \sum_{i=m+1}^{n} \beta_{v(i)}(x) v(i).
	\end{equation}
Since $\beta_{v(i)}(x_{0}) > 0$ for $i = 0, \ldots, m$, for $t > 0$ sufficiently small 
$\beta_{v(i)}(x_{0}) - t \zeta_{i} > 0$ for $i = 0, \ldots, m$. So certainly 
$\beta_{v(i)}(x_{0}) - t \zeta_{i} + t \beta_{v(i)}(x) > 0$ for $i = 0, \ldots, m$. I.e., the coefficients in \eqref{E:x0.+.t.x.-.xhat.in.terms.of.vs} are all strictly positive. Finally, the sum of the coefficients satisfies
	\begin{align*}
		\sum_{i=0}^{m} \bigl( \beta_{v(i)}(x_{0}) 
		  - t \zeta_{i} + t \beta_{v(i)}(x) \bigr) 
				+ t \sum_{i=m+1}^{n} \beta_{v(i)}(x)
			&= \sum_{i=0}^{m} \beta_{v(i)}(x_{0}) - t \sum_{i=0}^{m} \zeta_{i} 
				+ t \sum_{i=0}^{n} \beta_{v(i)}(x) \\
			&= 1 - t + t \\
			&= 1.
	\end{align*}
That completes the proof of \eqref{E:x0.+.t.x.-.xhat.in.rho}.

Now suppose $\dot{x}$ defined by \eqref{E:xdot.in.rho.ydot.in.tau} does \emph{not} lie 
in $\rho$. Let $\Pi_{\rho}$ be the smallest plane in $\RR^{N}$ containing $\rho$. 
So $\Pi_{\rho \cap \tau} \subset \Pi_{\rho}$. By \eqref{E:formla.for.affine.plane}, we have
	\[
		\Pi_{\rho} = \left\{ \sum_{i=0}^{n} \gamma_{i} v(i) : \sum_{i=0}^{n} \gamma_{i} = 1 \right\}
		         = \left\{ v(0) + \sum_{i=1}^{n} \gamma_{i} \bigl( v(i) - v(0) \bigr) : 
		            \gamma_{1}, \ldots, \gamma_{n} \in \RR \right\},
	\]
where $v(0), \ldots, v(n)$ are the vertices of $\rho$. Since $v(1) - v(0), \ldots, v(n) - v(0)$ are linearly independent, the map that takes a point 
$\sum_{i=0}^{n} \gamma_{i} v(i) \in \Pi_{\rho}$ to the vector $\gamma_{0}, \ldots, \gamma_{n}$ is well-defined and continuous. Now $x_{0} \in \rho \cap \tau \subset \Pi_{\rho}$, $x \in \rho \subset \Pi_{\rho}$, and $\hat{x} \in \Pi_{\rho \cap \tau} \subset \Pi_{\rho}$. Moreover, the coefficients of $x_{0}$, $x$, and $\hat{x}$ in the expression for $\dot{x}$ in \eqref{E:xdot.in.rho.ydot.in.tau}, \emph{viz.}, 1, $r/|x - \hat{x}|$, and $-r/|x-\hat{x}|$ sum to 1. It follows that  $\dot{x} \in \Pi_{\rho}$. 
Hence, we can write $\dot{x} = \sum_{i=0}^{n} \zeta_{i} v(i)$ with $\zeta_{0} + \cdots + \zeta_{n} = 1$. 

Let $S$ be the line segment joining $x_{0}$ and $\dot{x}$. I.e., 
	\begin{equation}  \label{E:line.segment.S.defn}
		S = \bigl\{ x_{0} + t(x - \hat{x}) : 0 \leq t \leq r/|x-\hat{x}| \bigr\}.
	\end{equation}
By \eqref{E:x0.+.t.x.-.xhat.in.rho} for some $t \in (0, r/|x-\hat{x}|)$ we have
	\begin{equation}   \label{E:x.prime.defn}
		x' := x_{0} + t(x-\hat{x}) \in (\text{Int} \, \rho) \cap S.
	\end{equation}
Since $x' \in \text{Int} \, \rho$, the coefficients in the representation of $x'$ as a linear combination of 
\linebreak $v(0), \ldots, v(n)$ must all be strictly positive. 
Since by assumption $\dot{x} \notin \rho$, one or more of the coefficients, $\zeta_{0}, \ldots, \zeta_{n}$, of $v(0), \ldots, v(n)$ for $\dot{x}$ must be strictly negative. Therefore, somewhere between $x'$ and $\dot{x}$ the segment $S$ must cross the boundary 
$\text{Bd} \, \rho$. 
Let $w \in \text{Bd} \, \rho$ be the point of intersection. Thus, for some $s \in (t, r/|x-\hat{x}|)$ we have 
	\begin{equation}  \label{E:w.in.terms.of.x0.x.xhat}
		w = x_{0} + s (x - \hat{x}).
	\end{equation}
	
Let $\omega$ be the, necessarily proper, face of $\rho$ s.t.\ $w \in \text{Int} \, \omega$. (See \eqref{E:x.in.exctly.1.simplex.intrr}.) Now, $\rho \cap \tau$ cannot be a face 
of $\omega$. For suppose  $\rho \cap \tau \subset \omega$. Note that $w \ne x_{0}$, because otherwise $s (x - \hat{x}) = 0$ in \eqref{E:w.in.terms.of.x0.x.xhat}, an impossibility since $x \ne \hat{x}$ and $s > 0$. Hence, under the assumption that $\rho \cap \tau \subset \omega$ the segment $S$ contains two distinct points of $\omega$, \emph{viz.}, $x_{0} \in \rho \cap \tau$ and $w$. As a proper face of $\rho$, the simplex 
$\omega$ is defined by the vanishing of some set of barycentric coordinates. Thus, there exists a nonempty proper subset 
$J$ of $\{0, \ldots, n  \}$ s.t.\
	\[
		\omega = \left\{ \sum_{j = 0}^{n} \beta_{j} v(j) : \beta_{j} \geq 0 \; (j = 0, \ldots, n), 
			\beta_{j} = 0 \text{ if } j \in J, \text{ and } \sum_{j=0}^{n} \beta_{j} = 1 \right\}.
	\]
Since $x, \hat{x} \in \Pi_{\rho}$, for some $\gamma_{0}, \ldots, \gamma_{n} \in \RR$ we have
	\[
		x - \hat{x} =  \sum_{j=0}^{n} \gamma_{j} v(j), 
			\text{ where } \sum_{j=0}^{n} \gamma_{j} = 0.
	\]
Under the hypothesis that $\rho \cap \tau \subset \omega$, we have $w, x_{0} \in \omega$. In particular, we have $\beta_{v(j)}(x_{0}) = 0$ for $j \in J$. It follows from \eqref{E:w.in.terms.of.x0.x.xhat} that $\gamma_{j} = 0$ if $j \in J$. Hence, by \eqref{E:line.segment.S.defn} for every $x'' \in S \subset \Pi_{\rho}$ we can write (uniquely)
	\[
	    x'' = \sum_{j \in J^{c}} \alpha_{j} v(j), \text{ where } 
		  \sum_{j \in J^{c}} \alpha_{j} = 1.
	\]
(Here, $J^{c} = \{ j = 0, \ldots, n : j \notin J \}$.)  In particular, 
$S \cap (\text{Int} \, \rho) = \varnothing$. 
But by \eqref{E:x.prime.defn}, $x' \in S \cap (\text{Int} \, \rho)$. Contradiction. This proves $\rho \cap \tau$ cannot be a face of $\omega$. 

Since $\rho \cap \tau$ is not a face of $\omega$, by choice of $r > 0$ the distance 
from $x_{0}$ to $\omega$ is at least $2r$. Since $\omega$ lies between $x_{0}$ and $\dot{x}$ 
along $S$ we have by \eqref{E:xdot.in.rho.ydot.in.tau} 
	\[
		r = | \dot{x} + x_{0} | \geq 2r > 0.
	\]
This contradiction proves the claim \eqref{E:xdot.in.rho.ydot.in.tau}.

\emph{Claim:  The angle between $x-\hat{x}$ and $y-\hat{y}$ is bounded away from 0.}  I.e., there exists $\gamma \in (0,1)$ independent of $x \in \text{Int} \, \rho$ and $y \in \text{Int} \, \tau$ (i.e., $\gamma$ only depends on $\rho$ and $\tau$) s.t.\
	\begin{equation}  \label{E:angle.tween.x-xhat.y-yhat.ain't.0}
		(x-\hat{x}) \cdot (y-\hat{y}) \leq \gamma |x-\hat{x}||y-\hat{y}|,
	\end{equation}
where, as usual, ``$\cdot$'' indicates the usual Euclidean inner product.
Suppose \eqref{E:angle.tween.x-xhat.y-yhat.ain't.0} is false. Then there exist sequences 
$\{ x_{n} \} \subset \text{Int} \, \rho$, $\{ y_{n} \} \subset \text{Int} \, \tau$ s.t.\ 
	\[
		\frac{(x_{n}-\hat{x}_{n}) \cdot (y_{n}-\hat{y}_{n})}{|x_{n}-\hat{x}_{n}||y_{n}-\hat{y}_{n}|} 
		           \to 1,
	\]
where $\hat{x}_{n}$ ($\hat{y}_{n}$) is the orthogonal projection of $x_{n}$ (resp. $y_{n}$) onto $\Pi_{\rho \cap \tau}$. Define $\dot{x}_{n}$ as in \eqref{E:xdot.in.rho.ydot.in.tau} with $x$ and $\hat{x}$ replaced by $x_{n}$ and $\hat{x}_{n}$, resp. Define $\dot{y}_{n}$ similarly.  By definition of $\dot{x}_{n}$ and $\hat{x}_{n}$ the vector $\dot{x}_{n} - x_{0}$ has length $r > 0$ and is orthogonal 
to $\Pi_{\rho \cap \tau}$. Ditto for $\dot{y}_{n} - y_{0}$. But $x_{0} \in \rho \cap \tau  \subset \Pi_{\rho \cap \tau}$. Hence, $dist(\dot{x}_{n}, \rho \cap \tau) \geq r$. Moreover, by \eqref{E:xdot.in.rho.ydot.in.tau}, $\dot{x}_{n} \in \rho$. 
Similarly, $dist(\dot{y}_{n}, \rho \cap \tau) \geq r$ and $\dot{y}_{n} \in \tau$.
	 
Therefore, by compactness of $\rho$ and $\tau$, we may assume $\dot{x}_{n} \to \dot{x}_{\infty} \in \rho$ and $\dot{y}_{n} \to \dot{y}_{\infty} \in \tau$. We must have $| \dot{x}_{\infty} - x_{0} | = r$, $| \dot{y}_{\infty} - y_{0} | = r$, $dist(\dot{x}_{\infty}, \rho \cap \tau) \geq r$, and $dist(\dot{y}_{\infty}, \rho \cap \tau) \geq r$. 
In particular, 
	\begin{equation}  \label{E:x.dot.infty.in.rho.less.rho.cap.tau}
		\dot{x}_{\infty} \in \rho \setminus (\rho \cap \tau) \text{ and }
			\dot{y}_{\infty} \in \tau \setminus (\rho \cap \tau)
	\end{equation}

Now, by definition of $\{ x_{n} \}$, $\{ y_{n} \}$, $\{ \dot{x}_{n} \}$, and $\{ \dot{y}_{n} \}$, we have
	\begin{equation*}
		(\dot{x}_{n} - x_{0}) \cdot (\dot{y}_{n} - y_{0})  
			= r^{2} \frac{(x_{n}-\hat{x}_{n}) \cdot (y_{n}-\hat{y}_{n})}
					{|x_{n}-\hat{x}_{n}||y_{n}-\hat{y}_{n}|}
			         \to r^{2} = | \dot{x}_{\infty} - x_{0} | | \dot{y}_{\infty} - y_{0} | \text{ as } n \to \infty.
	\end{equation*} 
But, 
	\[
		 (\dot{x}_{n} - x_{0}) \cdot (\dot{y}_{n} - y_{0})
			  \to (\dot{x}_{\infty} - x_{0}) \cdot (\dot{y}_{\infty} - y_{0}) \text{ as } n \to \infty.
	\]
This means $\dot{x}_{\infty} - x_{0}$ and $\dot{y}_{\infty} - y_{0}$ are positive multiples of each other. But $\dot{x}_{\infty} - x_{0}$ and $\dot{y}_{\infty} - y_{0}$ have the same length $r$. 
Hence, $\dot{x}_{\infty} - x_{0} = \dot{y}_{\infty} - y_{0}$. However, by \eqref{E:y0.=.x0}, $y_{0} = x_{0}$. Therefore, $\dot{x}_{\infty} = \dot{y}_{\infty}$. In particular, 
$\dot{x}_{\infty}, \dot{y}_{\infty} \in \rho \cap \tau$. This contradicts \eqref{E:x.dot.infty.in.rho.less.rho.cap.tau}. The claim \eqref{E:angle.tween.x-xhat.y-yhat.ain't.0} follows.

By definition of $\hat{x}$ and $\hat{y}$ and \eqref{E:angle.tween.x-xhat.y-yhat.ain't.0}, we have 
	\begin{align}  \label{E:expand.length.of.x-y.using.hats}
		|x - y|^{2} &= \bigl| (x-\hat{x}) + (\hat{x} - \hat{y}) + (\hat{y} - y) \bigr|^{2} \notag \\
		  &= | x-\hat{x} |^{2} + | \hat{x} - \hat{y} |^{2} - 2 (x-\hat{x})  \cdot (y-\hat{y}) 
		    + | \hat{y} - y |^{2}            
		             \notag \\
		  &\geq | x-\hat{x} |^{2} + | \hat{x} - \hat{y} |^{2} - 2 \gamma |x-\hat{x}||y-\hat{y}| 
		        + | y  - \hat{y} |^{2} \\
		  &= \lambda \bigl( | x-\hat{x} |^{2} + | y  - \hat{y} |^{2} \bigr) + | \hat{x} - \hat{y} |^{2} + 
		         \gamma \bigl( | x-\hat{x} | - | y  - \hat{y} | \bigr)^{2} \notag \\
		  &\geq \lambda \bigl( | x-\hat{x} |^{2} + | y  - \hat{y} |^{2}
		    + | \hat{x} - \hat{y} |^{2}  \bigr). 
		             \notag
	\end{align}
Applying \eqref{E:n.c.sqrd.sum.ineq} twice to \eqref{E:expand.length.of.x-y.using.hats} we get
	\begin{align*}
		|x - y|^{2} &\geq \frac{1-\gamma}{4} 
	                      \Bigl(2  \bigl[ | x-\hat{x} | + | y  - \hat{y} | \bigr]^{2}
	                        + 4| \hat{x} - \hat{y} |^{2}  \Bigr) \\
	                 &\geq \frac{1-\gamma}{4} 
	                      \Bigl(2  \bigl[ | x-\hat{x} | + | y  - \hat{y} | \bigr]^{2}
	                        + 2| \hat{x} - \hat{y} |^{2}  \Bigr) \\
	                  &\geq \frac{1-\gamma}{4} 
	                      \bigl[ | x-\hat{x} | + | y  - \hat{y} | + | \hat{x} - \hat{y} | \bigr]^{2}. \\
	\end{align*}
We conclude
	\begin{equation}  \label{E:|x-y|.dominates.mult.of.sum.of.tilde.pieces}
		\frac{2}{\sqrt{1-\gamma}} |x - y| 
		\geq  | x-\hat{x} |+ | \hat{x} - \hat{y} | + | y  - \hat{y} |  ,
			\text{ for } x \in \text{Int} \, \rho, \; y \in \text{Int} \, \tau.
	\end{equation}

\emph{Claim: The angle, $\theta$, between $x - \tilde{x}$ and $\Pi_{\rho \cap \tau}$ is bounded away from 0.}   Since $\hat{x}$ is the orthogonal projection of $x$ onto $\Pi_{\rho \cap \tau}$, we have 
that $\theta$ is the angle between $x - \tilde{x}$ and $\hat{x} - \tilde{x}$ 
and $\sin \theta =  |x - \hat{x}|/|x - \tilde{x}|$. By definition of $\hat{x}$, $|x - \tilde{x}|/|x - \hat{x}| \ge 1$. Therefore, $\theta$ being bounded away from 0 is equivalent to
	\begin{multline}  \label{E:x.minus.xtilde.over.x.minus.xhat.bdd}
		1/\sin \theta = |x - \tilde{x}|/|x - \hat{x}| \text{ is bounded above by some }  \\
		    \alpha \in (1, \infty)  \text{ independent of } x \in \text{Int} \, \rho.
	\end{multline}
And similarly for $y$, $\tilde{y}$, and $\hat{y}$.

If $z \in \xi$ (the face of $\rho$ opposite $\rho \cap \tau$), let $\hat{z}$ denote the orthogonal projection of $z$ onto $\Pi_{\rho \cap \tau}$. Recall that $\tilde{x}$, $x$, and $z_{0}$ lie on the same line. Taking orthogonal projections, we see that $\tilde{x}$, $\hat{x}$, and $\hat{z}_{0}$ lie on the same line in $\Pi_{\rho \cap \tau}$. Therefore, by similarity of triangles\footnote{To see all this analytically, 
let $c = |x-\tilde{x}|/|z_{0} - \tilde{x}|$. ($|z_{0} - \tilde{x}| > 0$, since $\tilde{x} \in \rho \cap \tau$ 
and $z_{0} \in \xi$, the face opposite $\rho \cap \tau$.)  Then 
	\begin{equation} \label{E:x.in.terms.of.c.z0.xtilde}
		x = c(z_{0} - \tilde{x}) + \tilde{x},
	\end{equation}
since $x$ lies on the line segment joining $z_{0}$ and $\tilde{x}$. 
Let $\ddot{x} = c(\hat{z}_{0} - \tilde{x}) + \tilde{x}$. Then $\ddot{x}$ lies on the line joining $\tilde{x}$ and $\hat{z}_{0}$. (In particular, $\ddot{x} \in \Pi_{\rho \cap \tau}$.)  But it is easy to see 
from \eqref{E:x.in.terms.of.c.z0.xtilde} that $x - \ddot{x} = c(z_{0} - \hat{z}_{0}) \perp \Pi_{\rho \cap \tau}$. I.e., 
	\begin{equation}  \label{E:xhat.=.xdoubledot}
		\hat{x} = \ddot{x} = c(\hat{z}_{0} - \tilde{x}) + \tilde{x}.
	\end{equation}
Thus, $z_{0} - \tilde{x}$, $x - \tilde{x}$, $\hat{x} - \tilde{x}$, and $\hat{z}_{0} - \tilde{x}$ lie in the subspace spanned by $z_{0} - \tilde{x}$ and $\hat{z}_{0} - \tilde{x}$ and
		\[
			\frac{|x - \tilde{x}|}{|x - \hat{x}|} 
			  = \frac{\Bigl| \bigl[ c(z_{0} - \tilde{x}) + \tilde{x} \bigr] - \tilde{x} \Bigr|}
			            {\Bigl| \bigl[ c(z_{0} - \tilde{x}) + \tilde{x} \bigr] 
			                     - \bigl[ c(\hat{z}_{0} - \tilde{x}) + \tilde{x} \bigr]  \Bigr|} 
			  = \frac{|z_{0} - \tilde{x}|}{|z_{0} - \hat{z}_{0}|}
		\]
by \eqref{E:x.in.terms.of.c.z0.xtilde} and \eqref{E:xhat.=.xdoubledot}.},  
		\[
			\frac{|x - \tilde{x}|}{|x - \hat{x}|} 
			  = \frac{|z_{0} - \tilde{x}|}{|z_{0} - \hat{z}_{0}|}.
		\]
But since $\xi$ and 
$\rho \cap \tau$ are disjoint and compact, $|z - \hat{z}|$ is bounded below and $|z - w|$ is bounded above in $(z,w) \in \xi \times (\rho \cap \tau)$. The claim
\eqref{E:x.minus.xtilde.over.x.minus.xhat.bdd} follows. Of course, the same thing goes for $y$ and we may assume the same $\alpha$ works for both $\rho$ and $\tau$. 

It follows from \eqref{E:x.minus.xtilde.over.x.minus.xhat.bdd} and the Pythagorean theorem that 
	\begin{equation}  \label{E:xtilde.minus.xhat.bound}
		|\tilde{x} - \hat{x}| \leq \sqrt{\alpha^{2} -1} \;   |x - \hat{x}| < \alpha |x - \hat{x}|. 
		   \text{ Similarly for } y, \tilde{y}, \text{ and } \hat{y}.
	\end{equation}
Consequently,  
	\begin{multline*}
		|\tilde{x} - \tilde{y}| \leq |\tilde{x} - \hat{x}| + |\hat{x} - \hat{y}| + |\hat{y} - \tilde{y}|
			\leq \alpha |x - \hat{x}| + |\hat{x} - \hat{y}| + \alpha |y - \hat{y}| \\
			\leq \alpha |x - \hat{x}| + 2 \alpha |\hat{x} - \hat{y}| + \alpha |y - \hat{y}|,
	\end{multline*}
since $\alpha > 1$. Hence,
	\[
		|\hat{x} - \hat{y}| \geq \frac{1}{2 \alpha} |\tilde{x} - \tilde{y}| - \frac{1}{2} |x - \hat{x}| 
			- \frac{1}{2} |y - \hat{y}|.
	\]
Substituting this into \eqref{E:|x-y|.dominates.mult.of.sum.of.tilde.pieces} we get
	\begin{equation*}
		\frac{2}{\sqrt{1-\gamma}} |x - y| 
			\geq  \frac{1}{2} | x-\hat{x} |+ \frac{1}{2 \alpha} |\tilde{x} - \tilde{y}| 
				+ \frac{1}{2}| y  - \hat{y} |.
	\end{equation*}
Therefore, by \eqref{E:x.minus.xtilde.over.x.minus.xhat.bdd} again,
	\begin{equation}
		\frac{2}{\sqrt{1-\gamma}} |x - y| 
		  \geq  \frac{1}{2 \alpha}  \bigl( | x-\tilde{x} |
			  + |\tilde{x} - \tilde{y}| + | y - \tilde{y} | \bigr).
	\end{equation}
I.e., if \eqref{E:rho.intersect.but.aren't.subsets} holds
	\begin{equation}  \label{E:mult.of.x.minus.y.dominates.x.minus.xtilde.etc}
		\frac{4 \alpha}{\sqrt{1-\gamma}} |x - y| 
			\geq  | x-\tilde{x} |+ |\tilde{x} - \tilde{y}| + | y - \tilde{y} |.
	\end{equation}

Let $K' = K'(\rho, \tau) := \tfrac{4 \alpha}{\sqrt{1-\gamma}}$. 
Then \eqref{E:x.y.tilde.bckwrds.tringle.ineq} holds. (Maximizing over all appropriate $\rho, \tau \in P$ yields corollary \ref{C:reverse.triangle.ineq.in.simp.cmplxs}.) \eqref{E:mult.of.x.minus.y.dominates.x.minus.xtilde.etc} 
and \eqref{E:beta.unif.Lip.on.each.simplex} together imply 
	\begin{align*}
		\bigl| \blds{\beta}(x) - \blds{\beta}(y)  \bigr| 
			& \leq \bigl| \blds{\beta}(x) - \blds{\beta}(\tilde{x})  \bigr|
				+ \bigl| \blds{\beta}(\tilde{x}) - \blds{\beta}(\tilde{y})  \bigr|
				+ \bigl| \blds{\beta}(\tilde{y}) - \blds{\beta}(y)  \bigr| \\
			&\leq K \bigl( | x-\tilde{x} |+ | \tilde{x} - \tilde{y} | + | y - \tilde{y} |  \bigr) \\
			&\leq K K'(\rho, \tau) |x - y|.
	\end{align*}
Now maximize over all $\rho, \tau \in P$. This completes the proof.
\end{proof}

Let $\sigma \in P$ and let
   \[
      \overline{\text{St}} \, \sigma 
         = \bigcup_{\sigma \subset \omega \in P} \omega.
   \]
$\overline{\text{St}} \, \sigma$ is the ``closed star'' of $\sigma$ (Munkres \cite[p.\ 371]{jrM84}). By \eqref{E:intersection.of.simps} $\overline{\text{St}} \, \sigma$ is the union of all simplices in $P$ having 
$\sigma$ as a face. In particular, $\sigma \subset \overline{\text{St}} \, \sigma$. 
Let $\text{Lk} \, \sigma$ be the union of all simplices lying in 
$\overline{\text{St}} \, \sigma$ that do \emph{not} intersect 
$\sigma$. $\text{Lk} \, \sigma$ is the ``link'' of $\sigma$. 
We may have $\overline{\text{St}} \, \sigma = \sigma$, which implies 
$\text{Lk} \, \sigma = \varnothing$. This can happen, e.g., if $\dim \sigma = \dim P$. 
If $\rho \in P$, 
$\rho \subset  \overline{\text{St}} \, \sigma$, $\sigma \neq \rho$, and $\omega$ is the face of $\rho$ opposite $\sigma$, 
then $\omega \subset \text{Lk} \, \sigma$. 
Thus, $\overline{\text{St}} \, \sigma = \sigma$ if and only if $\text{Lk} \, \sigma = \varnothing$.

The ``star'', $\text{St} \, \sigma$ of $\sigma$ is the union of the interiors of all simplices of $P$ having $\sigma$ as a face (Munkres \cite[p.\ 371]{jrM84}). (If $ \overline{\text{St}} \, \sigma = \sigma$, then $\text{St} \, \sigma = \text{Int} \, \sigma$.) We have
	\begin{multline}  \label{E:St.sigma.is.open}
	        \text{St} \, \sigma 
	             = \bigl\{ y \in |P| : \beta_{v}(y) > 0 \text{ for every } v \in \sigma^{(0)} \bigr\}
	               \text{ so  } \text{St} \, \sigma \text{ is open in } |P|. \\
	                \text{Moreover, } \text{Int} \, \sigma \subset \text{St} \, \sigma, 
	                       (\text{St} \, \sigma) \cap (\text{Lk} \, \sigma) = \varnothing, \text{ and } 
	                          (\text{St} \, \sigma) \cap (\text{Bd} \, \sigma) = \varnothing.
	\end{multline}
(\emph{Proof:}  $\rho \in P$ has $\sigma$ as a face if and only if $\sigma^{(0)} \subset \rho^{(0)}$. But, by \eqref{E:criterion.for.int.simp}, $x = \sum_{v \in \rho^{(0)}} \beta_{v}(x) \in \text{Int} \, \rho$ if and only if $\beta_{v}(x) > 0$ for every $v \in \rho^{(0)}$. Hence, if $x \in \text{Int} \, \rho$ and $\rho$ has $\sigma$ as a face then $\beta_{v}(x) > 0$ for every $v \in \sigma^{(0)}$. Conversely, suppose $x \in |P|$ and 
$\beta_{v}(x) > 0$ for every $v \in \sigma^{(0)}$. Then obviously, if $\rho$ is the simplex in $P$ with 
$x \in \text{Int} \, \rho$, we have $\sigma^{(0)} \subset \rho^{(0)}$ so $\rho \in P$ has $\sigma$ as a face. Thus, $x \in \text{Int} \, \rho \subset \text{St} \, \sigma$. 
In particular, $\text{Int} \, \sigma \subset \text{St} \, \sigma$. Since $\beta_{v}$ ($v \in \sigma^{(0)}$) are continuous, it follows that $\text{St} \, \sigma$ is open. Moreover, if $x \in (\text{Lk} \, \sigma) \cup (\text{Bd} \, \sigma)$ then $\beta_{v}(x) = 0$ 
for some $v \in \sigma^{(0)}$. Hence, neither $\text{Lk} \, \sigma$ nor $\text{Bd} \, \sigma$ intersects $\text{St} \, \sigma$.)

Let $\sigma \in P$. Observe that, true to their names, both $\overline{\text{St}} \, \sigma$ and 
$\text{St} \, \sigma$ are ``starlike'' w.r.t.\ any $x \in \text{Int} \, \sigma$. 
I.e., if $y \in \overline{\text{St}} \, \sigma$ then the line segment joining $x$ and $y$ lies entirely 
in $\overline{\text{St}} \, \sigma$. The same goes for $y \in \text{St} \, \sigma$. \emph{Claim:} $|P|$ is locally arcwise connected (Massey \cite[p.\ 56]{wsM67.Massey}). To see this, let $x \in |P|$ and let $\sigma$ be the unique simplex in $P$ s.t.\ $x \in \text{Int} \, \sigma$. (See \eqref{E:x.in.exctly.1.simplex.intrr}.)  $\text{St} \, \sigma$ is an open neighborhood of $x$. Let $r > 0$ be so small that the open ball $B_{r}(x)$, of radius $r$ centered at $x$ satisfies $B_{r}(x) \cap |P| \subset \text{St} \, \sigma$. If $y, z \in B_{r}(x)  \cap |P|$, then the line segments joining $y$ to $x$ and $x$ to $z$ also lie in $B_{r}(x)  \cap |P|$. I.e., $B_{r}(x)  \cap |P|$ is path connected. This proves the claim. Thus, if $|P|$ is connected it is also arcwise connected.

	  \begin{lemma}  \label{L:local.finiteness.and.compactness}
 Let $P$ be a simplicial complex lying in a finite dimensional Euclidean space, $\RR^{N}$. Suppose every $x \in |P|$ has a neighborhood, open in $\RR^{N}$, intersecting only finitely many simplices in $P$. Then the following hold.
		\begin{enumerate}
		\renewcommand{\theenumi}{\roman{enumi}}
		\item $P$ is ``locally finite'':  
		      Each $v \in P^{(0)}$ belongs to only finitely many simplices 
		     in $P$. \label{I:P.locally.finite}
		\item $|P|$ is locally compact.  \label{I:local.compactness}
		\item $|P|$ is a subspace of $\RR^{N}$. I.e., the polytope topology of $|P|$ coincides with the topology that $|P|$ inherits from $\RR^{N}$.  
		         \label{I:|P|.is.subspace}
		\end{enumerate}
	  \end{lemma}
  \begin{proof} 
Suppose $|P| \subset \RR^{N}$ and every $x \in |P|$ has a neighborhood open in $\RR^{N}$ and intersecting only finitely many simplices in $P$. Let $v \in P^{(0)}$. Then $v$ has a neighborhood $U$ that intersects only finitely many simplices in $P$. If $\sigma \in P$ and $v \in \sigma^{(0)}$, then $v \in \sigma \cap U$. I.e., $U$ intersects 
$\sigma$. Therefore, $v$ is a vertex of only finitely many simplices in $P$. This proves (\ref{I:P.locally.finite}). 
(See Munkres \cite[p.\ 11]{jrM84}.)

By item (\ref{I:P.locally.finite}) and Munkres \cite[Lemma 2.6, p.\ 11]{jrM84} we have that $|P|$ is locally compact. And by Munkres \cite[Exercise 9, p.\ 14]{jrM84}, the space $|P|$ is a subspace of $\RR^{N}$.
  \end{proof}  

  \begin{definition}  \label{D:subdivision}
A simplicial complex $P'$ in $\RR^{N}$ is a ``subdivision'' of $P$ (
Munkres \cite[p.\ 83]{jrM84}) if:
	\begin{enumerate}
		\item Each simplex in $P'$ is contained in a simplex of $P$.
		\item Each simplex in $P$ equals the union of finitely many simplices in $P'$.
	\end{enumerate}
  \end{definition}
In particular, a subdivision of a finite complex is finite. Suppose $P'$ is a subdivision of $P$. Then
	\begin{multline} \label{E:inclusion.of.subdiv.interiors}
		\text{If } \tau \in P' \text{ and } \sigma \in P \text{ is the smallest simplex (w.r.t.\ inclusion)  
		     in $P$ containing } \tau, \\
		             \text{ then } \text{Int} \, \tau \subset \text{Int} \, \sigma.
	\end{multline}
 For let $\tau \in P'$ have vertices $w_{0}, \ldots, w_{q} \in |P|$. Since $P'$ is a subdivision of $P$ there exists $\zeta \in P$ s.t.\ $\tau \subset \zeta$. Let $\zeta = \sigma$ be the smallest such simplex in $P$. Write $\sigma = \langle v_{0}, \ldots, v_{p} \rangle$. Then for some $\beta_{ij} \geq 0$ with $i = 0, \ldots, q$ and $j = 0, \ldots, p$ we have
	\begin{equation*}
		\sum_{j=0}^{p} \beta_{ij} = 1 \text{ and } w_{i} = \sum_{j=0}^{p} \beta_{ij} v_{j}, 
		        \quad i = 0, \ldots, q.
	\end{equation*}
Since $\sigma$ is minimal, for every $j = 0, \ldots, p$ there exists 
$i_{j} = 0, \ldots, q$ s.t.\ $\beta_{i_{j}j} > 0$. Let $x \in \text{Int} \, \tau$ then by \eqref{E:criterion.for.int.simp} there exist $\gamma_{0}, \ldots, \gamma_{q} >  0$ s.t.\
	\begin{equation*}
		x = \sum_{i=0}^{q} \gamma_{i} w_{j} 
		   = \sum_{j=0}^{p} \left( \sum_{i=0}^{q} \beta_{ij}  \gamma_{i} \right) v_{j}.
	\end{equation*}
But for $j = 0, \ldots, p$, we have
	\[
		\sum_{i=0}^{q} \beta_{ij}  \gamma_{i} \geq \beta_{i_{j}j}  \gamma_{i_{j}} > 0.
	\]
Hence, $x \in \text{Int} \, \sigma$ and \eqref{E:inclusion.of.subdiv.interiors} is proved.

The following notion is basic.
  \begin{definition}  \label{D:simplicial.map}
Let $P$ and $Q$ be simplicial complexes. A function $f : |P| \to |Q|$ is a ``simplicial map'' (Munkres \cite[p.\ 12]{jrM84}) from $P$ to $Q$ if whenever $v \in P^{(0)}$, then (1) $f(v) \in Q^{(0)}$, (2) if $v_{0}, \ldots, v_{p}$ span a simplex, $\sigma$, in $P$ then $f(v_{0}), \ldots, f(v_{p})$ are vertices of a simplex, $\tau$,  in $Q$, and (3)
if $x = \sum_{i=0}^{p} \beta_{i} v_{i} \in \sigma \;$ ($\beta_{0}, \ldots, \beta_{p} \geq 0$, $\sum_{i=0}^{p} \beta_{i} = 1$) then
	\begin{equation}   \label{E:simp.map.commutes.with.bary}
	     f \left( \sum_{i=0}^{p} \beta_{i} v_{i} \right) = \sum_{i=0}^{p} 
		  \beta_{i} \, f(v_{i}) \in \tau.
	\end{equation}
  \end{definition}

(If $v_{0}, \ldots, v_{p}$ span a simplex, $\sigma$, in $P$ then $f(v_{0}), \ldots, f(v_{p})$ are vertices of some $\tau$,  in $Q$. $f(v_{0}), \ldots, f(v_{p})$ may not span a simplex, because they might not be distinct and therefore not be geometrically independent.) 

Simplicial maps are continuous on the underlying spaces of their complexes. In fact, if $x \in |P|$ write $x$ in barycentric coordinates:
	\[
		x = \sum_{v \in P^{(0)}} \beta_{v}(x) v.
	\]
Clearly, 
	\[
		f(x) = \sum_{v \in P^{(0)}} \beta_{v}(x) f(v).
	\]
Thus, by proposition \ref{P:bary.coords.are.Lip}, we have 
	\begin{equation}   \label{E:simp.maps.are.Lip}
		\text{A simplicial map on a finite simplicial complex is Lipschitz.}
	\end{equation}
It is easy to see that the linearity of a simplicial map does not just apply to vertices:
	\begin{multline}  \label{E:simp.map.is.affine.on.simplx}
		\text{If } x_{1}, \ldots, x_{m} \in \sigma \in P \text{ and } \alpha_{1}, \ldots, \alpha_{m}  \\
			\text{ are non-negative and sum to 1, then} \\
				f \left( \sum_{i=1}^{m} \alpha_{i} x_{i} \right) 
					= \sum_{i=1}^{m} \alpha_{i} f(x_{i}).
	\end{multline}

We define a special kind of simplicial map.
  \begin{definition}  \label{D:simplicial.homeom}
If $g$ is a simplicial map of $|P|$ onto itself s.t.\ whenever $v_{0}, \ldots, v_{p} \in P^{(0)}$ then $v_{0}, \ldots, v_{p}$ span a simplex in $P$ if and only if $g(v_{0}), \ldots, g(v_{p})$ do, then we say that $g$ is a ``simplicial homeomorphism'' of $P$ onto itself (Munkres \cite[p.\ 13]{jrM84}).
  \end{definition}
Thus, if $g$ is a simplicial homeomorphism of $P$ onto itself we get to replace $|P|$ by $P$ in the description of $g$. Note that 
	\begin{multline} \label{E:simp.homeo.is.homeom}
		\text{If } g : P \to P\text{ is a simplicial homeomorphism then } \\
		         g : |P| \to|P| 
		       \text{ is a homeomorphism and,} \\
		       	\text{ if } \sigma \in P, \text{ then } 
		            g( \text{Int} \, \sigma) =  \text{Int} \, g(\sigma). 
	\end{multline}
(See \eqref{E:simp.map.commutes.with.bary} and \eqref{E:criterion.for.int.simp}.)

An important example of a simplicial homeomorphism is provided by the following.

  \begin{lemma}  \label{L:Cart.pwrs.of.simp.cmplx.can.be.perm.invar.cmplx}
Let $K$ be a finite simplicial complex and let $n = 2, 3, \ldots$. Suppose $|K| \subset \RR^{N}$, where $N = 0, 1, \ldots$. Let $S$ be the group of permutations of $1, \ldots, n$ and if $s \in S$, let $g_{s} :  \RR^{nN} \to \RR^{nN}$ apply $s$ to coordinates. I.e.,  
$g_{s}(x_{1}, x_{2}, \ldots, x_{n}) = (x_{i_{s(1)}}, x_{i_{s(2)}}, \ldots, x_{i_{s(n)}})$, for every $x_{1}, x_{2}, \ldots, x_{n} \in \RR^{N}$. Then there is a finite simplicial complex $P$ and triangulation $f : |P| \to |K|^{n}$ s.t.\ for every $s \in S$, we have that $f^{-1} \circ g_{s} \circ f$ is a simplicial homeomorphism from $P$ to itself. In fact, we may assume that $|P| = |K|^{n}$ and $f$ is the identity.
  \end{lemma}
  \begin{proof}
 Our proof is similar to that of Munkres \cite[Lemma 7.8, p.\ 75]{jrM66}. The result is trivial if $N = 0$ so suppose $N > 0$. We construct $P$. 
Suppose $|K|$ lies in $\RR^{N}$. So $|K|^{n} \subset \RR^{nN}$. Write points of $\RR^{nN}$ as $(x_{1}, \ldots, x_{n})$ 
with $x_{i}^{1 \times N} \in \RR^{N}$  ($i=1, \ldots n$). $|K|^{n}$ is the union of all products of the form 
    \begin{equation}  \label{E:c.=.simplex.prod}
      c := \sigma_{1} \times \cdots \times \sigma_{n} \subset \RR^{nN},
    \end{equation}
where $\sigma_{i} \in K$. Let $m_{i} := \dim \sigma_{i}$ ($i=1, \ldots n$). 

Define inequalities coordinate-wise. Let $i \in \NN_{n}$. By lemma \ref{L:simp.lin.ineqs}, if $0 < m_{i} < N$, we have 
	\begin{equation} \label{E:sigma.i.ineqs.eqs}
		\sigma_{i} = \bigl\{ x_{i}^{1 \times N} \in \RR^{N} : 
		  x_{i} A_{i} \, (I_{m_{i}}, \, - 1^{m_{i}}) \geq (y_{i}, -y_{i} 1^{m_{i}} - 1) 
		    \text{ and } x_{i} B_{i} = z_{i} \bigr\}, 
	\end{equation}
where $A_{i}^{N \times m_{i}}$ and $B_{i}^{N \times (N-m_{i})}$ are matrices of rank $m_{i}$ and $N-m_{i}$ resp.\ s.t.\ 
$B_{i}^{T} A_{i} = 0$, 
and $y_{i} \in \RR^{m_{i}}$ and $z_{i} \in \RR^{N-m_{i}}$ are row vectors. If $m_{i} = 0$ omit the inequality involving $A_{i}$. If $m_{i} = N$, omit the inequality involving $B_{i}$. But in that case we will still say ``$rank \, B_{i} = 0$''. By lemma \ref{L:simp.lin.ineqs} again, if $m_{i} < N$ the lowest dimensional affine plane, $\Pi_{i}$, 
on which $\sigma_{i}$ lies is given 
by $\Pi_{i} = \bigl\{ x_{i} \in \RR^{N} : x_{i} B_{i} = z_{i} \bigl\}$, where $B_{i}$ and $z_{i}$ are as in \eqref{E:sigma.i.ineqs.eqs}. (If $m_{i} = N$ then obviously $\Pi_{i} = \RR^{N}$.)

Clearly, $c$ is bounded. Hence, the set $c$ is a ``cell'' in the sense 
of Munkres \cite[Definition 7.2, p.\ 71]{jrM66}. There are finitely many such cells in $|K|^{n}$. 
$|K|^{n}$ is the union of them. The lowest dimensional plane containing $c$ 
is $\Pi := \Pi_{1} \times \cdots \times \Pi_{n}$ consisting of all points $(x_{1}, \ldots, x_{n}) \in \RR^{nN}$ with $x_{i} \in \RR^{N}$ 
s.t.\ $x_{i} B_{i} = z_{i}$ ($i \in \NN_{n}$). Therefore, as defined on Munkres \cite[p.\ 71]{jrM66}, 
    \begin{multline} \label{E:dim.of.prod.cell}
        \text{The dimension of } c \text{ is } \dim \Pi = \sum_{i=1}^{n} \dim \Pi_{i} 
          =  \sum_{i=1}^{n} (N - rank \, B_{i}) \\
            =  nN - \sum_{i=1}^{n} (N - m_{i}) = m_{1} + \cdots m_{n} = \dim \sigma_{1} + \cdots \dim \sigma_{n}. 
    \end{multline} 

Let $\dim c > 0$. Then it has faces (Munkres \cite[Definition 7.4, p.\ 73]{jrM66}). \emph{Claim:} 
    \begin{equation}  \label{E:faces.are.simp.prods}
      \text{Each face of } c \text{ is also a product of simplices in } K .
    \end{equation}
To see this, let $d$ be a face of $c$. By Munkres \cite[Lemma 7.5, p.\ 73]{jrM66}, 
$d$ is obtained by replacing some components of the vector inequalities $x_{i} A_{i} \, (I_{m_{i}}, \, - 1^{m_{i}}) \geq (y_{i}, -y_{i} 1^{m_{i}} - 1)$ ($i \in \NN_{n}$) in \eqref{E:sigma.i.ineqs.eqs} by equalities. (This only makes sense if $m_{i} > 0$.) 
First, an informal argument. For each relevant $i \in \NN_{n}$, drop the corresponding columns of $A_{i}$, 
replace $m_{i}$ in $(I_{m_{i}}, \, - 1^{m_{i}})$ by a smaller integer, drop components 
of $(y_{i}, -y_{i} 1^{m_{i}} - 1)$, and \emph{compensate for this} by appropriately adding columns to $B_{i}$ and entries to $z_{i}$. (If $m_{i} = N$ one has to create appropriate $B_{i}$ and $z_{i}$.) But by lemma \ref{L:simp.lin.ineqs} again, the resulting system of equalities and inequalities defines a new simplex. 

To make this precise, temporarily drop the subscript $i$, let $k = 1, \ldots, m$, and suppose WLOG it is the first $k$ components of the inequality system $x A \, (I_{m}, \, - 1^{m}) \geq (y_{i}, -y_{i} 1^{m} - 1)$ that become equalities in this boundary cell. 
Write $A = ( C_{1}^{N \times k}, C_{2}^{N \times (m-k)} )$ and $y = (y_{1}^{1 \times k}, y_{2}^{1 \times (m-k)})$. Then, by assumption, for $x$ in this boundary cell, 
    \begin{equation*}
      x C_{1} = y_{1} \text{ and } x C_{2} \geq y_{2} .
    \end{equation*}
Thus, $C_{1}$ has rank $k \leq m \leq N$ and therefore 
$(C_{1}^{T} C_{1})^{k \times k}$ has full rank $k$. 
Let $(C_{2}')^{N \times (m-k)} := C_{2} - C_{1} (C_{1}^{T} C_{1})^{-1} C_{1}^{T} C_{2}$. Note that $C_{2}'$ has full rank $m-k$. For suppose not. Then there is a nonzero vector $y^{(m-k) \times 1}$ s.t.\ $C_{2}' y = 0$. That means that there is a linear combination of columns of $C_{1}$ that equals a nontrivial linear combination of columns of $C_{2}$. This contradicts the fact that $A$ has rank $m$. Note that, in fact, $C_{1}^{T} C_{2}' = 0$. 

If $m < N$, let $(B')^{N \times (N-m+k)} := (B, C_{1})$. If $k = m$, drop $A$. Suppose $k < m$. Let $(A' )^{N \times (m-k)}= C_{2}'$. Then 
    \begin{equation*}
        (B')^{T} A' = 
            \begin{pmatrix}
                B^{T} C_{2}' \\
                C_{1}^{T} C_{2}'
            \end{pmatrix}
        .
    \end{equation*}
We have already observed that $C_{1}^{T} C_{2}' = 0$ and we know that $B^{T} (C_{1}, C_{2}) = B^{T} A = 0$, 
i.e., $B^{T} C_{1} = 0$ and $B^{T} C_{2} = 0$, so
$B^{T} A' = B^{T} C_{2} - B^{T} C_{1} (C_{1}^{T} C_{1})^{-1} C_{1}^{T} C_{2} = 0$. Therefore, $(B')^{T} A' = 0$. 
If $m = N$ define $B' = C_{1}$. In that case we still get $(B')^{T} A' = 0$.

Then the relations $x A \, (I_{m}, \, - 1^{m}) \geq (y, -y 1^{m} - 1)$, with $x C_{1} = y_{1}$, and $x B = z$ are equivalent to 
    \begin{multline*}  \label{E:xA.equiv.x(C1,C2)}
      x B = z, \; x C_{1} = y_{1}, \;  xC_{2} \geq y_{2}, \text{ and } \\
        -y_{1} 1^{k} - x C_{2} 1^{m-k}  = -x C_{1} 1^{k} - x C_{2} 1^{m-k} 
          = - x A 1^{m} \geq - y_{1} 1^{k} - y_{2} 1^{m-k} -1 . \\
            \text{ So } - x C_{2} 1^{m-k} \geq - y_{2} 1^{m-k} -1
    \end{multline*} 
This is equivalent to 
    \begin{multline*}
      x B' = z' := (z, y_{1}) \text{ and } x C_{2}' (I_{m-k}, -1^{m-k}) = x C_{2} 
        - x C_{1} (C_{1}^{T} C_{1})^{-1} C_{1}^{T} C_{2} (I_{m-k}, -1^{m-k}) \\
           \geq (y_{2}, -y_{2} 1^{m-k} - 1) - y_{1} (C_{1}^{T} C_{1})^{-1} C_{1}^{T} C_{2} (I_{m-k}, -1^{m-k}) .
    \end{multline*}
Let $y_{2}' := y_{2} - y_{1} (C_{1}^{T} C_{1})^{-1} C_{1}^{T} C_{2}$. Then the preceding becomes
    \begin{equation*}
      x B' = z' \text{ and } A' (I_{m-k}, -1^{m-k}) \geq (y_{2}', -y_{2}' 1^{m-k} - 1) .
    \end{equation*}
Thus, by lemma \ref{L:simp.lin.ineqs}, the factor of the boundary cell corresponding to a given $i$ is a simplex.
This completes the proof of the claim \eqref{E:faces.are.simp.prods}. Since the simplex factor, call it $\zeta_{i}$, 
just constructed depends only on $A_{i}$, $B_{i}$, etc., it is clear that if $s \in S$ then 
    \begin{equation} \label{E:permuting.zetas}
      g_{s}(\zeta_{1} \times \cdots \times \zeta_{n}) = (\zeta_{i_{s(1)}} \times \cdots \times \zeta_{i_{s(n)}})
    \end{equation}

Suppose $\dim c  = 0$. Then $c$ is a single point and already a simplex. Let $P^{0}$ be the collection of all these 0 simplices. 
With $m=0$, $P^{m}$ has the following properties.
    \begin{enumerate}
        \item $P^{m}$ is a finite simplicial complex. \label{I:Pm.is.simp.cmplx}
        \item $\dim P^{m} \leq m$. \label{I:dim.Pm.=.m}
        \item If $s \in S$ and $\tau \in P^{m}$ then $g_{s}(\tau)$ is a simplex of $P^{m}$ 
          and $\dim g_{s}(\tau) = \dim \tau$.
          \label{I:g.tau.has.same.dim}
        \item If $s \in S$ and $\tau \in P^{m}$ the images of the vertices of $\tau$ under $g_{s}$ 
          span $g_{s}(\tau)$.
          \label{I:images.of.verts.span.g(tau)}
        \item Each cell $c := \sigma_{1} \times \cdots \times \sigma_{n} 
          \subset \RR^{nN}$ in $|K|^{n}$ 
        of dimension $\leq m$ is the union of finitely many simplices in $P^{m}$.
          \label{I:K.n.union.of.simps}
    \end{enumerate}

Let $m = 0, 1, \ldots$ and suppose that we have constructed a finite simplicial complex $P^{m}$ of dimension $m$ having properties \ref{I:dim.Pm.=.m}, \ref{I:g.tau.has.same.dim}, \ref{I:images.of.verts.span.g(tau)}, and \ref{I:K.n.union.of.simps}. 

If the union of these simplices is $|K|^{n}$, we are done: Let $P = P^{m}$. 
Otherwise, by property \ref{I:K.n.union.of.simps}, $|K|^{n}$ contains a cell of the form \eqref{E:c.=.simplex.prod} 
of dimension $> m$. Let $c = \sigma_{1} \times \cdots \times \sigma_{n}$ be an arbitrary cell in $|K|^{n}$ of dimension $> m \geq 0$.
Since $\dim c > 0$, there exists $i$ s.t.\ $m_{i} := \dim \sigma_{i} > 0$. WLOG $i = 1$. Thus, by \eqref{E:sigma.i.ineqs.eqs} again, 
if $x_{1} \in \sigma_{1}$, the inequalities $x_{1} A_{1} \, (I_{m_{1}}, \, - 1^{m_{1}}) \geq (y_{1}, -y_{1} 1^{m_{1}} - 1) $ are satisfied. Replace $\sigma_{1}$ with one of its $(m_{1}-1)$-faces, $\tau$. Since, by lemma \ref{L:simp.lin.ineqs},
 the interior of $\sigma_{1}$ consists of precisely those $x_{1} \in \sigma_{1}$ s.t.\ the inequalities just mentioned are strict, for points of $\tau$, at least one of the inequalities in the system must be an equality. Therefore, as above, we can drop that equality provided with augment (or bring into existence) $B_{1}$. This increases the rank of $B_{1}$ by exactly 1 (because $\dim \tau = m_{1} - 1$). By \eqref{E:dim.of.prod.cell}, the dimension of the cell 
$c' := \tau \times \sigma_{2} \times \cdots \times \sigma_{n}$ is one less than that of $c$. Replace $c$ by $c'$. Continue until one obtains a cell of dimension $m+1$. Continue to use ``$c$'' to denote that cell. 

By Munkres \cite[Lemma 7.3, p.\ 72]{jrM66}, the boundary of $c$, call it $\text{Bd} \, c$, is the union of finitely many cells (faces) of dimension $m= \dim c - 1$. We have already proved that each such face is the product of simplices in $K$. Therefore, by property \ref{I:K.n.union.of.simps} above, $\text{Bd} \, c$ is the union of simplices in $P^{m}$. Let $L(c)$ be the collection of all simplices in $P^{m}$ lying in $\text{Bd} \, c$. $L(c)$ is a subcomplex of $P^{m}$ of dimension $m$ and $|L(c)| = \text{Bd} \, c$. 

Write $c := \sigma_{1} \times \cdots \times \sigma_{n}$, where $\sigma_{1}, \cdots, \sigma_{n} \in K$. If $\hat{\sigma}_{i}$ is the barycenter of $\sigma_{i}$ ($i \in \NN_{n}$; see \eqref{E:barycenter.of.sigma}), then, by \eqref{E:criterion.for.int.simp} and \eqref{E:barycenter.of.sigma}, 
$\hat{\sigma}_{i}$ is an interior point of $\sigma_{i}$. As observed just after \eqref{E:criterion.for.int.simp}, $\hat{\sigma}_{i}$ lies in the topological interior of $\sigma$ as a subspace of the plane, call it $\Pi_{i}$. 

Let $\Pi$ be the lowest dimensional plane in $\RR^{N}$ containing $c$. 
Then, as above, $\Pi = \Pi_{1} \times \cdots \times \Pi_{n}$. Since $\hat{\sigma}_{i}$ lies in the topological interior of $\sigma_{i}$ as a subset 
of $\Pi_{i}$ ($i \in \NN_{n}$), it must be the case that $z := z(c): = ( \hat{\sigma}_{1}, \ldots, \hat{\sigma}_{n} ) \in \RR^{nN}$ lies in the topological interior of $c$ as a subset of $\Pi$. 
	  
Since $c$ is convex, we may apply Munkres \cite[Lemma (8.1), p.\ 44]{jrM84} to conclude that $z \ast L(c)$ is a finite complex 
and $| z \ast L(c) | = c$. Since $\dim L(c) \leq m$, we have that $\dim \bigl[ z \ast L(c) \bigr] \leq m+1$. 
 Let $P^{m+1} := \bigcup_{c} z(c) \ast L(c)$, where the union is taken over all cells $c = \sigma_{1} \times \cdots \times \sigma_{n}$ of dimension $m+1$. Then $P^{m+1}$ has property \ref{I:dim.Pm.=.m} (with ``$m$'' replaced by ``$m+1$'', of course). 
 
Since $c$ was initially chosen to be an arbitrary cell in $|K|^{n}$ of dimension $> m$, $P^{m+1}$ has property \ref{I:K.n.union.of.simps}. 

Let $s \in S$ and $g := g_{s}$. Now, 
    \begin{equation*} \label{E:effect.of.g.on.cell}
        g_{s}(\sigma_{1} \times \sigma_{2} \times \cdots \times \sigma_{n}) 
          = \sigma_{i_{s(1)}} \times \sigma_{i_{s(2)}} \times \cdots \times \sigma_{i_{s(n)}}.
    \end{equation*}
Thus, $g(c) \subset |K|^{n}$ is another cell of the same dimension. Therefore, 
$z \bigl[ g(c) \bigr] = g \bigl[ z(c) \bigr]$ and, by \eqref{E:permuting.zetas}, $g \bigl[ L(c) \bigr] = L \bigl[ g(c) \bigr]$. Therefore, by the induction hypothesis, $P^{m+1}$ has properties \ref{I:g.tau.has.same.dim} and \ref{I:images.of.verts.span.g(tau)} as well. Finally, if $s \in S$, $v_{1}, \ldots, v_{p} \in \RR^{nN}$, and $\beta_{1}, \ldots, \beta_{p} \in \RR$ then obviously
	\begin{equation*}   
		g_{s} \left( \sum_{i=0}^{p} \beta_{i} v_{i} \right) 
		  = \sum_{i=0}^{p} \beta_{i} \, g_{s}(v_{i}).
	\end{equation*}
Thus, by property \ref{I:images.of.verts.span.g(tau)}, we have that \eqref{E:simp.map.commutes.with.bary} holds. It follows that $g$ is a simplicial homeomorphism of $|P|$ onto itself. Take $f : |P| \to |K|^{n}$ to be the identity. This completes the proof of the lemma.
  \end{proof}   

	\begin{lemma}   \label{L:invrs.is.subcmplx}
Let $f$ be a simplicial map from a complex $P$ to a complex $L$. If $\rho \in L$, then $f^{-1}(\rho) = |K_{1}|$, where $K_{1}$ is a, possibly empty, subcomplex of $P$. 
	\end{lemma}
	\begin{proof} 
Let $\rho \in L$. If $f^{-1}(\rho) = \varnothing$, we are done. So suppose $f^{-1}(\rho) \neq \varnothing$. By \eqref{E:x.in.exctly.1.simplex.intrr}, it suffices to show the following:  
	\begin{equation}  \label{E:in.for.dime.in.for.dollar}
		\text{If } \sigma \in P, x \in \text{Int} \, \sigma, \text{ and } f(x) \in \rho, 
		   \text{ then } \sigma \subset f^{-1}(\rho).
	\end{equation}  
For then we can just take
	\[
		K_{1} := \bigl\{ \sigma \in P : (\text{Int} \, \sigma) \cap f^{-1}(\rho) \neq \varnothing \bigr\}.
	\]
First, obviously $K_{1}$ has property \eqref{E:intersection.of.simps} because $P$ does. Second, if $\sigma \in K_{1}$ and $\tau$ is a face of $\sigma$, then, by \eqref{E:in.for.dime.in.for.dollar}, $f(\tau) \subset \rho$, so $\tau \in K_{1}$ and \eqref{E:complex.contains.all.faces} holds for $K_{1}$. Thus, $K_{1}$ is a subcomplex of $P$. If \eqref{E:in.for.dime.in.for.dollar} holds then obviously $|K_{1}| = f^{-1}(\rho)$.

We prove \eqref{E:in.for.dime.in.for.dollar}. Suppose $\sigma \in P$, $x \in \text{Int} \, \sigma$, and $f(x) \in \rho$. Thus, $f(x) \in \text{Int} \, \rho'$ for some face $\rho'$ of $\rho$. Let $v_{0}, \ldots, v_{n}$ be the vertices of $\sigma$. Then there exist $\beta_{0}, \ldots, \beta_{n}$, nonegative and summing to 1, s.t.\ $x = \sum_{i=0}^{n} \beta_{i} v_{i}$. Since $x \in \text{Int} \, \sigma$, by \eqref{E:criterion.for.int.simp}, all the $\beta$'s are strictly positive. By \eqref{E:simp.map.commutes.with.bary}, since $f$ is simplicial,
	\begin{equation}   \label{E:f(x).in.bary.coords}
		f(x) = \sum_{i=0}^{n} \beta_{i} f(v_{i}).
	  \end{equation}
At the same time, since $f$ is simplicial, $f(v_{i})$ ($i=0, \ldots, n$) are vertices of some $\tau \in L$. Since all the $\beta$'s are strictly positive, by \eqref{E:f(x).in.bary.coords} and \eqref{E:criterion.for.int.simp} again, $f(x)$ lies in the simplicial interior of $\tau$. Thus, $f(x) \in (\text{Int} \, \rho') \cap (\text{Int} \, \tau)$. Hence, by (\ref{E:intersection.of.simps}'), $\tau = \rho'$. In particular, $f(v_{i})$ ($i=0, \ldots, n$) lie in $\rho$. \eqref{E:in.for.dime.in.for.dollar} follows.
	\end{proof}

	\begin{lemma}  \label{L:vertices.in.sd}
Let $P$ be a simplicial complex and let $P' := \sd P$ be its first barycentric subdivision (Munkres, \cite[pp.\ 85--86]{jrM84}). We have the following.
 \begin{enumerate}
    \item Let $\rho \in P'$. Then the vertices of $\rho$ have the form $\hat{\sigma}_{i}$ ($i = 0, \ldots, n$; see \eqref{E:barycenter.of.sigma}), where $\sigma_{0}, \ldots, \sigma_{n} \in P$ and $\sigma_{0} \succ \cdots \succ \sigma_{n}$. (I.e., for $i \in \NN_{n}$, the simplex $\sigma_{i}$ is a proper face of $\sigma_{i-1}$.) Conversely, if $\sigma_{0}, \ldots, \sigma_{n} \in P$ and $\sigma_{0} \succ \cdots \succ \sigma_{n}$ then $\hat{\sigma}_{i}$ ($i = 0, \ldots, n$) span a simplex in $P'$. In particular, $\hat{\sigma}_{i}$ ($i = 0, \ldots, n)$ are geometrically independent. Moreover, $\rho \subset \sigma_{0}$. \label{I:sdP.and.sigma.hats}
   \item Let $\rho \in P'$ and suppose $\zeta_{0}, \ldots, \zeta_{n} \in P$ and $\rho = \langle \hat{\zeta}_{0}, \ldots, \hat{\zeta}_{n} \rangle$. Nothing is assumed concerning which, if any, of the $\zeta_{i}$'s are faces of other $\zeta_{i}$'s. Pick $\sigma_{0}, \ldots, \sigma_{k} \in P$ and $\sigma_{0} \succ \cdots \succ \sigma_{k}$ s.t.\ $\rho = \langle \hat{\sigma}_{0}, \ldots, \hat{\sigma}_{k} \rangle$. Then $k = n$ and, reordering if necessary, we must have $\zeta_{i} = \sigma_{i}$ ($i \in \NN_{n}$). 
\label{I:sigma.hat.verts.are.unique}
  \item If $P$ is a finite complex then given a metric on $|P|$ and given $\epsilon > 0$, there exists $N = 0, 1, 2, \ldots$ s.t.\ every simplex in $\text{sd}^{N} \, P$ (the complex that results from recursively applying the barycentric subdivision operator $N$ times) has diameter less than $\epsilon$. 
      \label{I:sd.makes.small}
 \end{enumerate}
	\end{lemma}
	\begin{proof} 
 Statement \ref{I:sdP.and.sigma.hats} is mostly Munkres \cite[Lemma 15.3, p.\ 86]{jrM84}. That $\rho \subset \sigma_{0}$ is trivial: $\sigma_{0} \succ \cdots \succ \sigma_{n}$ implies that all vertices of $\rho = \hat{\sigma}_{0} \succ \cdots \succ \hat{\sigma}_{n}$ lie in $\sigma_{0}$.

We prove statement \ref{I:sigma.hat.verts.are.unique}. Let $\rho \in P'$ and suppose $\zeta_{0}, \ldots, \zeta_{n} \in P$ and $\rho = \langle \hat{\zeta}_{0}, \ldots, \hat{\zeta}_{n} \rangle$. In particular, $\hat{\zeta}_{0}, \ldots, \hat{\zeta}_{n}$ are geometrically independent. In particular, $\zeta_{0}, \ldots, \zeta_{n}$ are distinct. By the first part of the lemma, we have $\rho = \langle \hat{\sigma}_{0}, \ldots, \hat{\sigma}_{k} \rangle$, where $\sigma_{0}, \ldots, \sigma_{k} \in P$ and $\sigma_{0} \succ \cdots \succ \sigma_{k}$. 

By \eqref{E:simplex.determines.verts}, $n=k$ and $\{ \hat{\zeta}_{0}, \ldots, \hat{\zeta}_{n} \} = \{ \hat{\sigma}_{0}, \ldots, \hat{\sigma}_{k} \}$. WLOG $\hat{\zeta}_{j} = \hat{\sigma}_{j}$ ($j = 0, \ldots, k$). By (\ref{E:intersection.of.simps}'), $\zeta_{j} = \sigma_{j}$. ($j = 0, \ldots, k$).

Statement \ref{I:sd.makes.small} is just Munkres \cite[Theorem 15.4, p.\ 86]{jrM84}.
	\end{proof}

We have the following. Recall, from the beginning of this appendix, what it means for vertices to ``span'' a simplex.
  \begin{lemma}  \label{L:geom.indep.f(v).means.geom.indep.f(x)}
Let $P$ and $Q$ be finite simplicial complexes and suppose $f : |P| \to |Q|$ is simplicial. Suppose further that if $v_{0}, \ldots, v_{p}$ span a simplex in $P$ then $f(v_{0}), \ldots, f(v_{p})$ span a simplex in $Q$. (E.g., $Q = P$ and $f$ is a simplicial homeomorphism of $P$ onto itself.) In particular, $f(v_{0}), \ldots, f(v_{p})$ are geometrically independent. Let $\sigma \in P$. Then	\begin{multline}  \label{E:f.sigma.preserves.geom.indep}
		\text{If } x_{0}, \ldots, x_{k} \in \sigma 
		         \text{ are geometrically independent if and only if } \\
			f(x_{0}), \ldots, f(x_{k}) \in |Q| \text{ are geometrically independent.}
	\end{multline}
  \end{lemma}
  \begin{proof}
 Suppose $x_{0}, \ldots, x_{k} \in \sigma$. For $i = 0, \ldots, k$ there exist $\beta_{i0}, \ldots, \beta_{ip} \geq 0$ s.t.\ 
	\begin{equation*}
		\sum_{j=0}^{p} \beta_{ij} = 1 \text{ and } x_{i} = \sum_{j=0}^{p} \beta_{ij} v_{j}.
	\end{equation*}

First, suppose $x_{0}, \ldots, x_{k}$ are geometrically independent but $f(x_{0}), \ldots, f(x_{k})$ are not. Then there exist $t_{0}, \ldots, t_{k} \in \RR$, not all 0, s.t.\ 
	\begin{equation*} 
		\sum_{i=0}^{k} t_{i} = 0,\text{ and } \sum_{i=0}^{k} t_{i} f(x_{i}) = 0.
	\end{equation*}
By \eqref{E:simp.map.commutes.with.bary}, we also have 
$f(x_{i}) = \sum_{j=0}^{p} \beta_{ij} f(v_{j})$. Thus,  
	\begin{equation*}
		0 = \sum_{i=0}^{k} t_{i} \; f \left( \sum_{j=0}^{p} \beta_{ij} v_{j} \right) 
		  =  \sum_{i=0}^{k} t_{i} \sum_{j=0}^{p} \beta_{ij} f(v_{j})
		    = \sum_{j=0}^{p} \left( \sum_{i=0}^{k} t_{i} \beta_{ij} \right) f(v_{j}).
	\end{equation*}
But
	\begin{equation*}
		\sum_{j=0}^{p} \left( \sum_{i=0}^{k} t_{i} \beta_{ij} \right) 
		    = \sum_{i=0}^{k} t_{i} \left( \sum_{j=0}^{p} \beta_{ij} \right) 
		      = \sum_{i=0}^{k} t_{i} = 0.
	\end{equation*}
By assumption, $f(v_{0}), \ldots, f(v_{p})$ are geometrically independent. Thus, 
	\begin{equation*}
		\sum_{i=0}^{k} t_{i} \beta_{ij} = 0, \quad j = 0, \ldots, p.
	\end{equation*}
Hence,
	\begin{equation*}
		0 = \sum_{j=0}^{p} \sum_{i=0}^{k} t_{i} \beta_{ij} v_{j} 
			= \sum_{i=0}^{k}  t_{i} \left( \sum_{j=0}^{p} \beta_{ij} v_{j} \right) 
			     = \sum_{i=0}^{k}  t_{i} x_{i}.
	\end{equation*}
Thus, $x_{0}, \ldots, x_{k}$ are geometrically dependent. Contradiction. Therefore, $f(x_{0}), \ldots, f(x_{k}) \in \sigma$ are geometrically independent.

Conversely, suppose $f(x_{0}), \ldots, f(x_{k})$ are geometrically independent but $x_{0}, \ldots, x_{k}$ are not. Then there exist $t_{0}, \ldots, t_{k} \in \RR$, not all 0, s.t.\ 
	\begin{equation*} 
		\sum_{i=0}^{k} t_{i} = 0,\text{ and } \sum_{i=0}^{k} t_{i} x_{i} = 0.
	\end{equation*}
Thus, 
	\begin{equation*}
		0 = \sum_{i=0}^{k} t_{i} \sum_{j=0}^{p} \beta_{ij} v_{j}
		    = \sum_{j=0}^{p} \left( \sum_{i=0}^{k} t_{i} \beta_{ij} \right) v_{j}.
	\end{equation*}
Since $v_{0}, \ldots, v_{p}$ are geometrically independent. We must have
	\begin{equation*}
		\sum_{i=0}^{k} t_{i} \beta_{ij} = 0, \quad j = 0, \ldots, p.
	\end{equation*}
Hence,
	\begin{equation*}
		0 = \sum_{j=0}^{p} \sum_{i=0}^{k} t_{i} \beta_{ij} f(v_{j}) 
			= \sum_{i=0}^{k}  t_{i} \left( \sum_{j=0}^{p} \beta_{ij} f(v_{j}) \right) 
			     = \sum_{i=0}^{k}  t_{i} f(x_{i}).
	\end{equation*}
Thus, $f(x_{0}), \ldots, f(x_{k})$ are geometrically dependent. Contradiction. Therefore, $x_{0}, \ldots, x_{k}$ are geometrically independent. This proves the claim \eqref{E:f.sigma.preserves.geom.indep}.
  \end{proof}
  
The following is probably already known. 

Recall the definition, \ref{D:simplicial.map}, of simplicial map.

    \begin{prop}  \label{P:P/G}
Let $P$ be a finite simplicial complex and let $G$ be a, necessarily finite, group of simplicial homeomorphisms of $P$ onto itself (definition \ref{D:simplicial.homeom}). Then there is a subdivision, $P''$, of $P$, a finite simplicial complex, $L$, and a simplicial map $f$ from $|P''|$ to $|L|$ with the following properties. 
	\begin{enumerate}
	\item $G$ is a group of simplicial homeomorphisms of $P''$ onto itself. 
	             \label{I:G.simplicial.on.sdP}
	\item If $\rho \in P''$, 
$x \in \rho$, and $g \in G$, then either $g(x) = x$ or $g(x) \notin \rho$.
  	\label{I:g.on.rho.equal.or.out}
	\item If $w_{0}, \ldots, w_{p} \in (P'')^{(0)}$ span a simplex in $P''$, $g_{0}, \ldots, g_{p} \in G$, and $g_{0}(w_{0}), \ldots, g_{p}(w_{p})$ span a simplex in $P''$, then there exists $h \in G$ s.t.\ $g_{i}(w_{i}) = h(w_{i})$ for $i = 0, \ldots, p$. \label{I:one.h.many.gs}
	\item The orbit space $|P''|/G$ is homeomorphic to $|L|$, we have $f \bigl( |P''| \bigr) = |L|$, and if $x,y \in |P''|$ then $f(x) = f(y)$ if and only if $Gy = Gx$, where $Gx := \bigl\{ g(x) : g \in G \bigr\}$. 
		\label{I:P/G.is.a.complex}
	\item If $v_{0}, \ldots, v_{p}$ span a simplex in $P''$ then $f(v_{0}), \ldots, f(v_{p})$ span a simplex in $L$. In particular, $f(v_{0}), \ldots, f(v_{p})$ are geometrically independent. If $\sigma \in P''$, then $f( \text{Int} \, \sigma) =  \text{Int} \, f(\sigma)$. \label{I:f.vs.geom.indep}   
	\item Let $\zeta \in L$, $\sigma \in P''$ and suppose $f^{-1}(\zeta) \cap (\text{Int} \, \sigma) \neq \varnothing$. Then $f(\sigma)$ is a face of $\zeta$. If $f^{-1}(\text{Int} \, \zeta) \cap (\text{Int} \, \sigma) \neq \varnothing$ then $f(\sigma) = \zeta$. \label{I:faces.of.zeta.in.L}
	\item Let  $\rho, \tau \in P''$. Then $f(\rho) = f(\tau)$ if and only if  there exists  
	  $g \in G$ s.t.\  $\rho = g(\tau)$. We have $f(\rho) \cap f(\tau) \neq \varnothing$ if and only if  there exists $g \in G$ s.t.\  $\rho \cap g(\tau) \neq \varnothing.$  \label{I:f(rho).f(tau).dsjnt.iff.rho.Gtau.dsjnt}
	\item If $\omega \in L$ then there exists $\rho \in P''$ s.t.\ $f(\rho) = \omega$. For any such $\rho$ we have $\dim \rho = \dim \omega$.  
	      \label{I:f(rho)=omega}
	\end{enumerate}
    \end{prop}
\begin{proof} 
 The subdivision $P''$ is just the ``second barycentric subdivision'', $P'' := \text{sd}^{2} \, P$, of $P$ (Munkres \cite[p.\ 86]{jrM84}). But to start with consider the first barycentric subdivision $P' := \sd P$. 

By assumption, $G$ is a group of simplicial homeomorphisms of $P$ onto itself. \emph{Claim:}  
	\begin{equation}  \label{E:group.of.simp.homeoms.on.P'}
		G \text{ is also a group of simplicial homeomorphisms of } P' \text{  onto itself.}
	\end{equation} 
Let $\sigma \in P$ and let $g \in G$. Since $G$ is a group, $g$ is one-to-one. Since $g$ is simplicial, this means
	\begin{multline}  \label{E:g.image.spans}
		\text{If } v_{0}, \ldots, v_{p} \in P^{(0)} \text{ span a simplex in } P, \text{ then }  \\
		    g(v_{0}), \ldots, g(v_{p}) \in P^{(0)} \text{ span a simplex in $P$ as well.}
	\end{multline}
Thus, if $\sigma_{0}, \ldots, \sigma_{n} \in P$ and $g \in G$, then $\sigma_{0} \succ \cdots \succ \sigma_{n}$ if and only if $g(\sigma_{0}) \succ \cdots \succ g(\sigma_{n})$. In addition (see \eqref{E:barycenter.of.sigma} and \eqref{E:simp.map.commutes.with.bary}),  
	\begin{equation} \label{E:g.hat.=.hat.g}
		g(\hat{\sigma}) = \widehat{g(\sigma)} \text{ and } \dim g(\sigma) = \dim \sigma,
		         \quad \sigma \in P.
	\end{equation}
Therefore, by lemma \ref{L:vertices.in.sd}\eqref{I:sdP.and.sigma.hats}, $\hat{\sigma}_{i}$ ($i = 0, \ldots, n$) span a simplex in $P'$ if and only if $g(\hat{\sigma}_{i})$ ($i = 0, \ldots, n$) do. To complete the proof that $G$ is a group of simplicial homeomorphisms on $P'$, we must prove the analogue of \eqref{E:simp.map.commutes.with.bary}. Suppose $\sigma_{0}, \ldots, \sigma_{n} \in P$ and $\sigma_{0} \succ \cdots \succ \sigma_{n}$. Then $\hat{\sigma}_{0}, \hat{\sigma}_{1} \ldots \hat{\sigma}_{n} \in\sigma_{0}$. The claim, \eqref{E:group.of.simp.homeoms.on.P'}, now easily follows from \eqref{E:barycenter.of.sigma}, \eqref{E:g.hat.=.hat.g}, and the fact that the elements of $G$ are simplicial homeomorphisms of $P$ onto itself. Applying this fact with $P'$ in place of $P$, \textbf{point (\ref{I:G.simplicial.on.sdP})} of the  proposition follows.

 Let $\sigma_{0} \succ \cdots \succ \sigma_{n}$ be simplices in $P$ and let $\rho \in P'$ be spanned by $\hat{\sigma}_{i}$ ($i = 0, \ldots, n)$, so, by lemma \ref{L:vertices.in.sd}\eqref{I:sdP.and.sigma.hats}, $\rho \subset \sigma_{0}$. Let $g \in G$, 
 and let $i = 0, \ldots, n$. \emph{Claim:} 
	\begin{equation}  \label{E:g.sigma.hat.in.rho?}
		\text{Either } g(\hat{\sigma}_{i}) = \hat{\sigma}_{i} 
		  \text{ or } g(\hat{\sigma}_{i}) \notin \rho. 
	\end{equation}
By \eqref{E:g.hat.=.hat.g},
$g(\hat{\sigma}_{i}) = \hat{\tau}$ with $\tau := g(\sigma_{i}) \in P$. Suppose 
$\hat{\tau} \in \rho \subset \sigma_{0}$. Thus, $(\text{Int} \, \tau) \cap \sigma_{0} \neq \varnothing$ so, by \eqref{E:Int.rho.cuts.sigma.then.rho.in.sigma}, $\tau$ is a face of $\sigma_{0}$. Therefore, $\{ \hat{\tau} \} \subset \rho$ is a 0-simplex in $P'$. Since $P'$ is a simplicial complex and $\hat{\tau} \in \rho $, it follows from \eqref{E:intersection.of.simps} that $\hat{\tau}$ must be one of the vertices $\hat{\sigma}_{0}, \ldots, \hat{\sigma}_{n}$ of $\rho$. Say $\hat{\tau} = \hat{\sigma}_{j}$. Thus, by lemma \ref{L:vertices.in.sd} statement \eqref{I:sigma.hat.verts.are.unique} (with $n=0$), for some $j = 0, \ldots, n$, we have $g(\sigma_{i}) = \tau = \sigma_{j}$. Since, by \eqref{E:g.hat.=.hat.g}, $\dim g(\sigma_{i}) = \dim \sigma_{i}$, we must have $j = i$. In particular, by \eqref{E:g.hat.=.hat.g} again, we have $g(\hat{\sigma}_{i}) = \widehat{g(\sigma_{i})} = \hat{\tau} = \hat{\sigma}_{i}$. This proves the claim. 

We prove \textbf{point (\ref{I:g.on.rho.equal.or.out})} of the proposition. Hypothetically, \emph{suppose} the following were true. Let $\rho \in P$ be spanned by $v_{0}, \ldots, v_{p} \in P^{(0)}$, let $g \in G$, and let $i = 0, \ldots, p$. Then
\begin{equation}  \label{E:g.v.in.rho?}
		\text{Either } g(v_{i}) =v_{i} \text{ or } g(v_{i}) \notin \rho.
	\end{equation}
(By \eqref{E:g.sigma.hat.in.rho?} and \eqref{E:group.of.simp.homeoms.on.P'}, we have that \eqref{E:g.v.in.rho?}  does hold if $P$ is a barycentric subdivision of a complex on which $G$ is a group of simplicial homeomorphisms.)

Now let $\rho \in P$ be spanned by $v_{0}, \ldots, v_{p} \in P^{(0)}$, let $x \in \rho$, let $g \in G$, and suppose 
$g(x) \neq x$. Write
	\begin{equation*}
		x = \sum_{i=0}^{p} \beta_{i} \, v_{i} \in \rho,
	\end{equation*}
where $\beta_{0}, \ldots,\beta_{p}$ are nonnegative and sum to 1. 
Let $0 \leq i_{0} < \cdots < i_{\ell} \leq \nvar$ be the indices, $i$, for which $\beta_{i} > 0$. 
Let 
	\[
		\tau \text{ be the simplex spanned by } g(v_{i_{k}}),
		\quad k = 0, \ldots, \ell.
	\] 
Then, by \eqref{E:g.image.spans}, we have $\tau \in P$. By \eqref{E:simp.map.commutes.with.bary} and \eqref{E:criterion.for.int.simp},
	\begin{equation} \label{E:g.x.in.tau}
		g(x) = \sum_{i=0}^{p} \beta_{i} \, g(v_{i})  \in \text{Int} \, \tau.
	\end{equation}  
Suppose $g(x) \in \rho$. Then, by \eqref{E:g.x.in.tau} and \eqref{E:Int.rho.cuts.sigma.then.rho.in.sigma}, $\tau$ is a face of $\rho$. That means that $g(v_{i_{k}}) \in \rho$ for $k = 0, \ldots, \ell$. But, since $g(x) \neq x$, for some $j = 0, \ldots, n$, we have $\beta_{j} > 0$ and $g(v_{j}) \neq v_{j}$.  Therefore, by \eqref{E:g.v.in.rho?}, we have $g(v_{j}) \notin \rho$. But for some $j = 0, \ldots, \ell$, we have $j = i_{k}$ since $\beta_{j} > 0$. Therefore, $g(v_{j}) \in \tau \subset \rho$ and $g(v_{j}) \notin \rho$.
Contradiction. It follows that $g(x) \notin \rho$, \emph{providing} \eqref{E:g.v.in.rho?} holds. But, by \eqref{E:group.of.simp.homeoms.on.P'}, we have that \eqref{E:g.sigma.hat.in.rho?} holds with $P'$ is place of $P$. Therefore, \eqref{E:g.v.in.rho?} holds with $P = P''$. \textbf{Point (\ref{I:g.on.rho.equal.or.out})} of the lemma follows.

We prove \textbf{point (\ref{I:one.h.many.gs})} of the lemma. Again, to start with assume \eqref{E:g.v.in.rho?} holds for simplices in $P$. Suppose $w_{0}, \ldots, w_{p} \in (P')^{(0)}$ span a simplex in $P'$, $g_{0}, \ldots, g_{p} \in G$, and $g_{0}(w_{0}), \ldots, g_{p}(w_{p})$ also span a simplex in $P'$. Permuting if necessary, we may assume, by lemma \ref{L:vertices.in.sd}\eqref{I:sdP.and.sigma.hats}, that $w_{i} = \hat{\sigma}_{i}$ ($i = 0, \ldots, p$), where $\sigma_{0}, \ldots, \sigma_{p} \in P$ and $\sigma_{0} \succ \cdots \succ \sigma_{p}$. Let $h_{i} = g_{0}^{-1} g_{i} \in G$ ($i = 0, \ldots, p$). Thus, $h_{0}$ is the identity and, by \eqref{E:g.image.spans} (with $P'$ in place 
of $P$ and $g = g_{0}^{-1}$), we have $h_{0}(w_{0}), \ldots, h_{p}(w_{p})$ span a simplex, $\omega$, in $P'$. I.e., by \eqref{E:g.hat.=.hat.g}, $\widehat{h(\sigma_{i})}$ ($i=1, \ldots, p$) span $\omega$. 

There exist $\tau_{0}  \succ \cdots \succ \tau_{\ell}$ in $P$, s.t.\
$\omega = \langle \hat{\tau}_{0}, \ldots, \hat{\tau}_{\ell} \rangle$. Hence, by lemma \ref{L:vertices.in.sd}\eqref{I:sigma.hat.verts.are.unique}, we have $\ell = p$ and the two (unordered) sets $\bigl\{ h(\sigma_{i}), \, i=1, \ldots, p \bigr\}$ and $\{ \tau_{i}, \, i=1, \ldots, p \}$ are the same. But, by \eqref{E:g.hat.=.hat.g}, $\dim h_{0}(\sigma_{0}) = \dim \sigma_{0} > \cdots > \dim \sigma_{p} = h_{p} (\sigma_{p})$. Thus, 
we have $h_{i}(\sigma_{i}) = \tau_{i}$ ($i = 0, \ldots, p$). Therefore, $h_{0}(\sigma_{0}) \succ \cdots \succ h_{p}(\sigma_{p}) $. 
In particular, since $h_{0}(\sigma_{0}) = \sigma_{0}$, we have 
	\begin{equation}  \label{E:hi.sigma.i.hat.in.rho}
		h_{i}(\hat{\sigma}_{i}) \in \sigma_{0} \quad (i = 0, \ldots, p).
	\end{equation}  

Let $i \in \NN_{n}$ and suppose 
	\begin{equation*}
		h_{i}(\hat{\sigma}_{i}) \ne \hat{\sigma}_{i}. 
	\end{equation*}
Let $v_{0}, \ldots,v_{p}$ be the vertices of $\sigma_{0}$ and let $v_{\ell_{0}}, \ldots, v_{\ell_{k}}$ be the vertices of $\sigma_{i}$. Then, by 
\eqref{E:g.hat.=.hat.g}, 
	\begin{equation*}
		h_{i}(\hat{\sigma}_{i}) = \tfrac{1}{k+1} \sum_{j=0}^{k} h_{i} (v_{\ell_{j}}) 
			\ne \tfrac{1}{k+1} \sum_{j=0}^{k} v_{\ell_{j}} = \hat{\sigma}_{i}.
	\end{equation*}
Thus, for some $t = 0, \ldots, k$, we must have $h_{i} (v_{\ell_{t}}) \ne v_{\ell_{t}}$. Then, by \eqref{E:g.v.in.rho?}, we have 
	\begin{equation}  \label{E:hi.v.lt.notin.sigma0}
		h_{i} (v_{\ell_{t}}) \notin \sigma_{0}.
	\end{equation} 
But, $h_{i} (v_{\ell_{j}})$ ($j=0, \ldots, k$) span $h_{i}(\sigma_{i}) \in P$. Moreover, by \eqref{E:g.hat.=.hat.g}, we have $h_{i}(\hat{\sigma}_{i}) = \widehat{h_{i}(\sigma_{i})} \in \text{Int} \, h_{i}(\sigma_{i})$. But, by \eqref{E:hi.sigma.i.hat.in.rho}, we have $h_{i}(\hat{\sigma}_{i}) \in \sigma_{0}$. Therefore, by \eqref{E:Int.rho.cuts.sigma.then.rho.in.sigma}, this means $h_{i}(\sigma_{i})$ is a face of $\sigma_{0}$. In particular,  $h_{i} (v_{\ell_{t}}) \in \sigma_{0}$. This contradicts \eqref{E:hi.v.lt.notin.sigma0}. Thus, 
$g_{0}^{-1} g_{i}(w_{i}) = h_{i}(w_{i}) = h_{i}( \hat{\sigma}_{i}) = \hat{\sigma}_{i} = w_{i} = g_{0}^{-1} g_{0}(w_{i})$. Hence, $g_{i}(w_{i}) = h(w_{i})$, where $h := g_{0}$, providing \eqref{E:g.v.in.rho?} holds. But by \eqref{E:g.sigma.hat.in.rho?}, we have that \eqref{E:g.v.in.rho?} holds with $P'$ in place of $P$. \textbf{Point (\ref{I:one.h.many.gs})} of the lemma follows.

Suppose $\omega_{0}, \ldots, \omega_{n} \in P'$ with $\omega_{0} \succ \cdots \succ \omega_{n}$, so $\hat{\omega}_{0}, \ldots, \hat{\omega}_{n}$ span a simplex in $P''$. 
\emph{Claim:} 
	\begin{equation} \label{E:G.hat.omegas.disjoint}
		\text{The sets } G \hat{\omega}_{i} := \{ g(\hat{\omega}_{i}) : g \in G \}
		           \; (i = 0, \ldots, n) \text{ are disjoint.}
	\end{equation}
Since $G$ is a group it suffices to show that for any $0 \leq i <  j \leq n$ we have $G \hat{\omega}_{j} \neq G \hat{\omega}_{i}$, i.e., to prove disjointedness it suffices to show inequality. Suppose that, on the contrary, there exist $0 \leq i <  j \leq n$ s.t.\ $G \hat{\omega}_{j} = G \hat{\omega}_{i}$. Since $G$ is a group it contains an identity element. Thus, $\hat{\omega}_{i} \in G \hat{\omega}_{j}$. Hence, there exists $g \in G$ s.t., by \eqref{E:g.hat.=.hat.g} and \eqref{E:bary.in.Int}, 
	\[
		\text{Int} \, g(\omega_{j} \ni \widehat{g(\omega_{j})} = g(\hat{\omega}_{j}) 
		             = \hat{\omega}_{i} \in \text{Int} \, \omega_{i}.
	\]
I.e., $\text{Int} \, \omega_{i} \cap \text{Int} \, g(\omega_{j}) \neq \varnothing$. Therefore, by 
(\ref{E:intersection.of.simps}'), $\omega_{i} = g(\omega_{j})$. But by \eqref{E:g.hat.=.hat.g} again, $\dim g(\omega_{j}) = \dim \omega_{j} < \dim \omega_{i}$. This contradiction proves the claim.

We just showed that if $\omega_{0}, \ldots, \omega_{n} \in P'$ with $\omega_{0} \succ \cdots \succ \omega_{n}$ then $G \hat{\omega}_{i}$ ($i = 0, \ldots, n$) are disjoint. Form the abstract simplicial complex (Munkres \cite[p.\ 15]{jrM84}), 
	\[
		P''_{G} := \bigl\{ \{ G \hat{\omega}_{0}, \ldots, G \hat{\omega}_{n} \} : 
		   \omega_{0}, \ldots, \omega_{n} \in P'
		      \text{ and } \omega_{0} \succ \cdots \succ \omega_{n}, 
		        \; n=0,1, \ldots,  \bigr\}.
	\]
Thus, $(P''_{G})^{(0)}$ consists of orbits of $G$. By Munkres \cite[Theorem 3.1, p.\ 15]{jrM84}, there is a simplicial complex, $L$, and a one-to-one  correspondence, 
$s : \bigl\{ Gv \in (P''_{G})^{(0)} : v \in (P'')^{(0)} \bigr\} \to L^{(0)}$ s.t.\ $\{ Gv_{0}, \ldots, Gv_{q} \} \in P''_{G}$ if and only if $s(Gv_{0}), \ldots, s(Gv_{q})$ span a simplex in $L$. But if $v_{0}, \ldots, v_{q} \in (P'')^{(0)}$ span a simplex in $P''$, then, by lemma \ref{L:vertices.in.sd}, there exist $\omega_{0}, \ldots, \omega_{q} \in P'$ with $\omega_{0} \succ \cdots \succ \omega_{q}$ s.t.\ $v_{i} =  \hat{\omega}_{i}$ ($i=0, \ldots, q$) up to permutation. Thus, by definition of $P''_{G}$, we have that $Gv_{0}, \ldots, Gv_{1}$ spans a simplex in $P''_{G}$. But that means $s(Gv_{0}), \ldots, s(Gv_{q})$ span a simplex in $L$. In short, $v_{0}, \ldots, v_{q} \in (P'')^{(0)}$ span a simplex in $P''$ means $s(Gv_{0}), \ldots, s(Gv_{q})$ span a simplex in $L$. Hence, by Munkres \cite[Lemma 2.7, p.\ 12]{jrM84}, there is unique simplicial map $f : |P''| = |P| \to |L|$ that takes each $v \in (P'')^{(0)}$ to $s(G v) \in L$. (Note that $f$ is surjective because $s$ is.)

Let $\sigma \in P''$ have vertices $v_{0}, \ldots, v_{p} \in (P'')^{(0)}$ so $f(v_{j}) = s(Gv_{j})$ ($j=0, \ldots, p$) span the simplex $f(\sigma) \in L$. (In particular, $f(v_{j})$, $j=0, \ldots, p$, are geometrically independent.) Let $x \in \text{Int} \, \sigma$. Then, by \eqref{E:criterion.for.int.simp}, there exist $\beta_{0}, \ldots, \beta_{p}$ all strictly positive, 
s.t.\  $x = \sum_{i=0}^{p} \beta_{i} v_{i}$. Since $f$ is simplicial, by \eqref{E:simp.map.commutes.with.bary}, 
$f(x) = \sum_{i=0}^{p} \beta_{i} s(Gv_{i}) \in \text{Int} \, f(\sigma)$. 
Conversely, suppose $z \in \text{Int} \, f(\sigma)$. Then, by \eqref{E:criterion.for.int.simp} again, there exist $\gamma_{0}, \ldots, \gamma_{p}$ all strictly positive, s.t.\  $z = \sum_{i=0}^{p} \gamma_{i} s(Gv_{i}) = f(x)$, 
where $x = \sum_{i=0}^{p} \gamma_{i} v_{i} \in \text{Int} \, \sigma$. \textbf{Point (\ref{I:f.vs.geom.indep})} follows.

Let $\zeta \in L$, $\sigma \in P''$ and let $x \in f^{-1}(\zeta) \cap (\text{Int} \, \sigma)$. Then by point (\ref{I:f.vs.geom.indep}), $f(\sigma) \in L$ and $f(x) \in \zeta \cap \bigl( \text{Int} \, f(\sigma) \bigr)$. It follows from \eqref{E:Int.rho.cuts.sigma.then.rho.in.sigma} that $f(\sigma)$ is a face of $\zeta$. Now suppose $f^{-1}(\text{Int} \, \zeta) \cap (\text{Int} \, \sigma) \neq \varnothing$. Then, by point (\ref{I:f.vs.geom.indep}) again, we have $(\text{Int} \, \zeta) \cap \bigl( \text{Int} \, f(\sigma) \bigr)  \neq \varnothing$. Hence, by (\ref{E:intersection.of.simps}'), $ \zeta= f(\sigma)$. This completes the proof of \textbf{point (\ref{I:faces.of.zeta.in.L})}.

Let $\pi : |P''| \to |P''|/G$ be the quotient projection. Since $|P''| = |P|$ is compact, $|P''|/G = \pi \bigl( |P''| \bigr)$ is compact. We show that  
	\begin{equation}  \label{E:f.factors.thru.P/G}
		\text{There is a continuous map } \phi : |P|/G \to |L| \text{ s.t.\ } \phi \circ \pi = f.
	\end{equation}  
Let $\eta \in |P''|/G$. There exists $x \in |P''|$ s.t.\ 
$\pi^{-1}(\eta) = Gx := \bigl\{ g(x) : g \in G \bigr\}$. We have 	\begin{equation*}
		x = \sum_{i=0}^{p} \beta_{\hat{\omega}_{i}} \, \hat{\omega}_{i},
	\end{equation*}
where $\beta_{\hat{\omega}_{0}}, \ldots,\beta_{\hat{\omega}_{p}}$ are nonnegative and sum to 1, $\omega_{0}, \ldots, \omega_{p} \in P'$, and $\omega_{0} \succ \cdots \succ \omega_{p}$. Thus,
 	\begin{equation*}
		\eta = Gx = \{ g(x) : g \in G \}
		    = \left\{ \sum_{i=0}^{p} \beta_{\hat{\omega}_{i}} \, g(\hat{\omega}_{i}) : 
		      g \in G \right\}.
	\end{equation*}
But, if $g \in G$, 
	\begin{equation*}
		f \bigl[ g(x) \bigr] = \sum_{i=0}^{p} \beta_{\hat{\omega}_{i}} \, 
		  f \bigl[ g(\hat{\omega}_{i}) \bigr]
			= \sum_{i=0}^{p} \beta_{\hat{\omega}_{i}} \, s (G \hat{\omega}_{i}).
	\end{equation*}
I.e., $f$ is constant on $\pi^{-1}( \eta )$. Therefore, from Munkres \cite[p.\ 112]{jrM84}, we see that \eqref{E:f.factors.thru.P/G} holds.

We show that $\phi : |P''|/G \to |L|$ is a homeomorphism. Now, $\phi : |P''|/G \to |L|$ is onto since $f$ is. Since, $|P''|/G$ is compact and $|L|$ is Hausdorff, by Simmons \cite[Theorem E, p.\ 131]{gfS63}, it suffices to show that $\phi$ is one-to-one. Let $x, y \in |P|$ be s.t.\ $z := \phi(Gx) = \phi(Gy)$. by \eqref{E:criterion.for.int.simp}, we may write 
	\[
		x = \sum_{i=0}^{p} \beta_{i} \, \hat{\omega}_{i},  \quad 
		   y = \sum_{i=0}^{q} \gamma_{i} \, \hat{\psi}_{i}, \quad 
		      z = \sum_{i=0}^{r} \, \delta_{i} w_{i}, 
	\]
where $\beta_{0}, \ldots,\beta_{p}$ are strictly positive and sum to 1, etc.; $\omega_{0}, \ldots, \omega_{p} \in P'$ and $\omega_{0} \succ \cdots \succ \omega_{p}$, $\psi_{0}, \ldots, \psi_{p} \in P'$ and $\psi_{0} \succ \cdots \succ \psi_{q}$; and $w_{0}, \ldots, w_{r} \in L^{(0)}$ span a simplex $\tau \in L$. Thus,
	\begin{equation}  \label{E:from.sigma.tau.to.z}
		\sum_{i=0}^{p} \beta_{i} \, s( G \hat{\omega}_{i}) = f(x) = \phi \circ \pi(x)
			= \phi(Gx) = z = \phi(Gy) = f(y)
		    = \sum_{i=0}^{q} \gamma_{i} \, s( G \hat{\psi}_{i} ).
	\end{equation}

By \eqref{E:G.hat.omegas.disjoint}, $G \hat{\omega}_{i}$ ($i=0, \ldots, p$) are distinct so they constitute a simplex in $P''_{G}$. Similarly for $G \hat{\psi}_{i}$ ($i=0, \ldots, q$). Therefore, since $s$ is one-to-one, we must have that the points $s( G \hat{\omega}_{i})$ ($i=0, \ldots, p$) span a simplex $\rho \in L$ and  $s( G \hat{\psi}_{j})$ ($j=0, \ldots, q$) span a simplex $\zeta \in L$. But by \eqref{E:from.sigma.tau.to.z},
	\[
		z \in (\text{Int} \, \rho) \cap (\text{Int} \, \zeta) \cap (\text{Int} \, \tau).
	\]
Hence, by (\ref{E:intersection.of.simps}'), $\rho = \zeta = \omega$. In particular, $p = r = q$. In addition, by uniqueness of barycentric coordinates, we may assume $s(G \hat{\omega}_{i}) =  s(G \hat{\psi}_{i})$ and $\beta_{i} = \delta_{i} = \gamma_{i}$ ($i=0, \ldots, r$). Since $s$ is injective, we have $G \hat{\omega}_{i} =  G \hat{\psi}_{i}$ ($i=0, \ldots, r$).

Therefore, there exist $g_{i} \in G$ ($i = 1, \ldots, r$) s.t.\ $\hat{\omega}_{i} = g_{i}(\hat{\psi}_{i})$ ($i=0, \ldots, r$). In particular, $g_{i}(\hat{\psi}_{i})$, ($i=0, \ldots, r$) spans a simplex in $P''$. By point (\ref{I:one.h.many.gs}) of the proposition, there exists $h \in G$ s.t.\ $g_{i}(\hat{\psi}_{i}) = h(\hat{\psi}_{i})$ for $i = 0, \ldots, r$. I.e., $\hat{\omega}_{i} = h(\hat{\psi}_{i})$. But then,
	\[
		x = \sum_{i=0}^{r} \delta_{i} \, \hat{\omega}_{i} 
		 	= \sum_{i=0}^{q} \delta_{i} \, h(\hat{\psi}_{i}) = h(y).
	\]
I.e., $Gx = Gy$ so $\phi$ is one-to-one. This proves \textbf{point (\ref{I:P/G.is.a.complex})}. 

We prove \textbf{point (\ref{I:f(rho).f(tau).dsjnt.iff.rho.Gtau.dsjnt})}. Suppose $\rho \in P''$, $\tau \in P''$, $g \in G$, and $\rho = g(\tau)$. Then $G \rho = G \tau$ so, by point (\ref{I:P/G.is.a.complex}) of the proposition,  
$f(\rho) = f(\tau)$. 
Conversely, suppose $f(\tau) = f(\rho) \neq \varnothing$. 
Let $z \in \text{Int} \, f(\rho) = \text{Int} \, f(\tau)$, $x \in f^{-1} \bigl( \{z\} \bigr) \cap \rho$, and $y \in f^{-1} \bigl( \{z\} \bigr) \cap \tau$. Thus, $f(x) = f(y)$. By point (\ref{I:f.vs.geom.indep}) of the proposition, we may assume $x \in \text{Int} \, \rho$ and $y \in \text{Int} \, \tau$. By point (\ref{I:P/G.is.a.complex}) of the proposition, 
there is $g' \in G$, s.t.\ $y = g'(x) \in \text{Int} \, g'(\rho)$. 
Thus, $\bigl( \text{Int} \, g'(\rho) \bigr) \cap (\text{Int} \, \tau) \neq \varnothing$. By 
(\ref{E:intersection.of.simps}') we must have $\tau = g'(\rho)$. 

Suppose we merely have $f(\rho) \cap f(\tau) \neq \varnothing$. Then by \eqref{E:intersection.of.simps},
the intersection $f(\rho) \cap f(\tau) \neq \varnothing$ is a simplex, $\zeta \in L$. 
Since $f$ is simplicial, there are faces $\rho'$ and $\tau'$ of $\rho$ and $\tau$, resp., s.t.\ $f(\rho') = \zeta = f(\tau')$. Hence, by the preceding paragraph, there exists $g \in G$ s.t.\ $\rho \supset \rho' = g(\tau') \subset g(\tau)$. I.e., $\rho \cap g(\tau) \neq \varnothing$. Conversely, suppose
$\rho \cap g(\tau) \neq \varnothing$. Now, $\rho \cap g(\tau) \neq \varnothing$ is a face, $\rho'$, of $\rho$ and of $g(\tau)$. Hence, $g^{-1}(\rho)$ contains a face, $\tau'$ of $\tau$ s.t.\ $\rho' = g(\tau')$. By the last paragraph, this means that $f(\rho') = f(\tau')$. I.e., $f(\rho) \cap f(\tau) \neq \varnothing$. This concludes the proof of \textbf{point (\ref{I:f(rho).f(tau).dsjnt.iff.rho.Gtau.dsjnt})}.

We prove \textbf{point (\ref{I:f(rho)=omega})}. Let $\omega \in L$ and let $w_{0}, \ldots, w_{q}$ be the vertices of $\omega$. By lemma \ref{L:vertices.in.sd}\eqref{I:sdP.and.sigma.hats} and definition of $L$, we have $w_{i} = s(G v_{i})$ ($i=0, \ldots, q$), where $v_{0}, \ldots, v_{q}$ span a simplex, $\sigma \in P''$. But $s(G v_{i}) = f(v_{i})$ ($i=0, \ldots, q$). Thus, $f(\sigma) = \omega$. Suppose $\rho \in P''$ and $f(\rho) = \omega$. (We just showed that such a $\rho$ exists.) By point (\ref{I:f.vs.geom.indep}), we have $\dim \rho = \dim \omega$.
\end{proof}

  \begin{corly}  \label{C:invar.subdivisions}
Let $P$ be a finite simplicial complex and let $g : |P| \to |P|$ be a simplicial homeomorphism from $P$ onto itself. Then there is an arbitrarily fine subdivision, $P'$ of $P$ s.t.\ $g : |P'| \to |P'|$ is a simplicial map $P'$ to itself. (``Arbitrarily fine'' means $P'$ can be chosen so that the maximum diameter of any simplex in $P'$ is arbitrarily small.) If $G$ is a group of simplicial homeomorphisms on $P$ onto itself then there is an arbitrarily fine subdivision, $P'$ of $P$ s.t.\ $g : |P'| \to |P'|$ is a simplicial map $P'$ to itself simultaneously for all $g \in G$.
  \end{corly}
  \begin{proof}  
From the proof of point (\ref{I:G.simplicial.on.sdP}) of the proposition we know that $G$ is a group of simplicial homeomorphisms from the first barycentric subdivision of $P$ onto itself. Now iterate and apply lemma \ref{L:vertices.in.sd}, statement \ref{I:sd.makes.small}. If $g : |P| \to |P|$ is a simplicial homeomorphism from $P$ onto itself, then just consider the group $G := \{ g, g^{-1}, identity \}$.
  \end{proof}

\chapter{Polyhedral Approximation}  \label{Chptr:polyhed.approx}
The following is proved in \cite{spE09.PolyhedralApproxLong} (original version 2011) and \cite{spE.PolyhedralApprox}. It is vaguely reminiscent of the ``push-out lemma'' of Fleming and Federer (Guth \cite[Lemma 0.5, p.\ 2]{lG2015.VolumesOfBalls}) from the 1950's. See also Guth \cite[Proposition 7.1, p.\ 26]{lG2013.ContractionOfAreas}. Let $X$ be a metric space 
and let $A, B \subset X$. Recall that the Hausdorff distance 
between $A$ and $B$ is 
    \begin{equation*}
      \max \left\{ \sup_{x \in A} dist(x,B), \sup_{x \in B} dist(x,A) \right\}.
    \end{equation*} 
Also recall \eqref{E:integer.part.floor}. Then we have 
   \begin{theorem}   \label{T:polyApproxThm}
     Let $P$ be a finite    
     simplicial complex lying in a Euclidean space.  
  	Let $|P|$ be the polytope or underlying space of $P$.  Use the metric on $|P|$ that it inherits 
	from the ambient Euclidean space.  Let $\Ss \subset |P|$ be closed.
     Let $\F$  be a topological space and suppose $\Phi : |P| \setminus \Ss \to \F$ is continuous.
        Let $Q$ be a subcomplex of $P$ (e.g., $Q = P$), let $a \geq 0$, and suppose 
          $\dim \bigl( \Ss \cap |Q| \bigr) \leq a$.  Then there is a closed set, 
          $\tilde{\Ss} \subset |P|$ 
          and a continuous map $\tilde{\Phi} : |P| \setminus \tilde{\Ss} \to \F$ such that:
            \begin{enumerate} 
              \item If $\F$ is a metric space and $\Phi$ is locally Lipschitz off $\Ss$ 
              then $\tilde{\Phi}$ 
              is locally Lipschitz off $\tilde{\Ss}$. \label{I:Phi.tilde.locally.Lip.if.Phi.is}   
              \item $\dim (\tilde{\Ss} \cap |Q|)  \leq \dim (\Ss \cap |Q|)$ 
              and $\dim \tilde{\Ss} \leq \dim \Ss$.
              	            \label{I:dim.Stilde.no.bggr.thn.dim.S}
              \item $\tilde{\Ss} \cap |Q|$ is either empty or the underlying space 
              of a subcomplex of the $\lfloor a \rfloor$-skeleton of $Q$.      
                        \label{I:S.tilde.subcomp}
              \item Suppose $\tau \in P$ has the following property.  If $\rho \in Q$ and 
               $(\text{Int} \, \rho) \cap \Ss \ne \varnothing$ then $\tau \cap \rho = \varnothing$.  
              Then $\tilde{\Ss} \cap \tau = \Ss \cap \tau$ and $\tilde{\Phi}$ and $\Phi$ agree 
              on $\tau \setminus \Ss$.   
                              \label{I:no.change.off.nbhd.of.S.in.Q}  
              \item Let $\rho \in P \setminus Q$. 
              (But $\rho \cap |Q| \neq \varnothing$ is possible.)  
              Then for every $s \geq 0$, if 
              $\Hm^{s} \bigl( \Ss \cap (\text{Int} \, \rho) \bigr) = 0$ then 
              $\Hm^{s} \bigl( \tilde{\Ss} \cap (\text{Int} \, \rho) \bigr)  = 0$.  
              In particular, $\dim \bigl( \tilde{\Ss} \cap (\text{Int} \, \rho) \bigr) 
                       \leq \dim \bigl( \Ss \cap (\text{Int} \, \rho) \bigr)$.   
                               \label{I:no.change.off.Q}
              \item If $\tau \in Q$ and 
              $\Hm^{\lfloor a \rfloor} \bigl( \tilde{\Ss} \cap (\text{Int} \, \tau) \bigr) > 0$, 
              then $\tau$ is an $\lfloor a \rfloor$-simplex and 
                 $\Hm^{\lfloor a \rfloor} \bigl( \Ss \cap (\text{Int} \, \sigma) \bigr) > 0$ 
                 for some simplex $\sigma$ of $Q$ having $\tau$ as a face.  
                 ($\sigma = \tau$ is possible.)  
                    \label{I:Ha.tau.=0.if.Ha.all.nbrs.0}
              \item If $y \in \tilde{\Ss}$ then there exists $\rho \in P$ such that $y \in \rho$ 
              and $\rho \cap \Ss \ne \varnothing$ and \emph{vice versa:} 
              If $y \in \Ss$ then there exists $\rho \in P$ such that $y \in \rho$ 
              and $\rho \cap \tilde{\Ss} \ne \varnothing$.
              Thus, the Hausdorff distance between $\Ss$ 
              and $\tilde{\Ss}$ does not exceed the largest of the diameters 
              of the simplices in $P$. 
	              \label{I:dist.tween.S.tilde.and.S}
	    \item If $\sigma \in P$ then 
	          $\tilde{\Phi}(\sigma \setminus \tilde{\Ss}) \subset \Phi(\sigma \setminus \Ss)$.
	              \label{I:Phi.tilde.sigma.subset.Phi.sigma}
              \item There is a constant $K < \infty$ depending only on $a$ and $Q$ such that
	                 \begin{equation}  \label{E:polyhedral.volume.magnification.factor}
	                      \Hm^{a} \bigl( \tilde{\Ss} \cap |Q| \bigr) 
	                        \leq K \Hm^{a} \bigl( \Ss \cap |Q| \bigr).
	                 \end{equation}    \label{I:bound.on.S.tilde.vol}
             \item There is a constant $K < \infty$ depending only on $a$ and $P$ 
             with the following property. For every $\epsilon > 0$ there is a subdivision, 
             $P'$, of $P$ such that $diam(\zeta) < \epsilon$ 
             for every $\zeta \in P'$ and parts 
      (\ref{I:Phi.tilde.locally.Lip.if.Phi.is}) through (\ref{I:Phi.tilde.sigma.subset.Phi.sigma})
      above and \eqref{E:polyhedral.volume.magnification.factor} 
      hold when $P$ is replaced by $P'$ and $Q$ is replaced by the corresponding 
      subcomplex of $P'$ (subdivision of $Q$). Moreover, given $q = 1, \ldots, p$, the same $K < \infty$ works for \emph{any} subcomplex $Q$ of dimension $q$. The 
$\epsilon = \infty$ case: There exists $K < \infty$ s.t., without subdividing $P$, \eqref{E:polyhedral.volume.magnification.factor} holds uniformly in subcomplexes $Q$ of dimension $q$. \label{I:arb.fine.subdivision}
            \end{enumerate}             
      \end{theorem}
 
  \begin{remark}[(co)homology of $\tilde{\Ss}$]  \label{E:S.tilde.(co)homology}
In \cite[remark 1.11]{spE09.PolyhedralApproxLong} it is speculated that $\tilde{\Ss}$ might inherit (co)homology from $\Ss$.   
  \end{remark}
  
Recall the definition of a simplicial homeomorphism (definition \ref{D:simplicial.homeom}).
      
  \begin{prop}  \label{P:Phi.invar.Phi.tilde.invar}
Let  $P$, $\F$, $\Phi$, $\Ss'$, $Q$, $\tilde{\Ss}$, $\tilde{\Phi}$, and $a \geq 0$ be as in theorem \ref{T:polyApproxThm}. (In particular $\tilde{\Ss}$ is closed.)  Suppose $G$ is a, necessarily finite, group of simplicial homeomorphisms of $P$ onto itself. Suppose for every $g \in G$ the restriction of $g$ to $|Q|$ is a simplicial homeomorphism of $Q$ onto itself. Furthermore, suppose $g(\Ss') = \Ss'$ for every $g \in G$ and 
$\Phi \bigl[ g(x) \bigr] = \Phi(x)$ for every $g \in G$ and $x \in |P| \setminus \Ss'$. Then, replacing $P$ (and correspondingly, $Q$) by a subdivision if necessary, we may assume $g(\tilde{\Ss}) = \tilde{\Ss}$ for every $g \in G$ 
and $\tilde{\Phi} \bigl[ g(x) \bigr] = \tilde{\Phi}(x)$ for every $g \in G$ and 
$x \in |P| \setminus \tilde{\Ss}$. Moreover, we may choose the subdivision $P'$ in part \ref{I:arb.fine.subdivision} of theorem \ref{T:polyApproxThm} so that $G$ is a group of simplicial is homeomorphisms of $P'$ onto itself.
  \end{prop}
Since $g(\Ss') = \Ss'$ and $\Phi \circ g = \Phi$ for every $g \in G$ we say that $\Ss'$ and $\Phi$ are ``$G$-invariant''.
      \begin{proof}
 Subdividing $P$ if necessary, we may assume $P'' = P$ has properties \eqref{I:G.simplicial.on.sdP} -- \eqref{I:f(rho)=omega} in proposition \ref{P:P/G}. Let $f$ and $L$ be as in proposition \ref{P:P/G}. It is convenient to consider a subdivision $L'$ of $L$. ($L' = L$ is possible.) The idea of the proof is to apply theorem \ref{T:polyApproxThm} to $L'$ and the obvious surrogate for $\Phi$ and then to lift to $P$, but there are a \emph{lot} of boring details.

Let $\sigma \in P$. Then by proposition \ref{P:P/G}(\ref{I:f.vs.geom.indep}), 
	\begin{equation*}
		\tau := f(\sigma) \in L. 
	\end{equation*}
Now, $\tau$ is the union of finitely many simplices in $L'$. Let $v_{0}, \ldots, v_{p} \in P^{(0)}$ be the vertices of $\sigma$. Suppose $\beta_{i}, \gamma_{i} \geq 0$ ($i = 0, \ldots, p$) and suppose $\sum_{i=0}^{p} \beta_{i} = 1 = \sum_{i=0}^{p} \gamma_{i}$. 
Let $x = \sum_{i=0}^{p} \beta_{i} v_{i} \in \sigma$ and $y = \sum_{i=0}^{p} \gamma_{i} v_{i} \in \sigma$. Suppose $f(x) = f(y)$. Now, by \eqref{E:simp.map.commutes.with.bary},
	 \[
		 0 = f(x) - f(y) = \sum_{i=0}^{p} (\beta_{i} - \gamma_{i}) f(v_{i}).
	 \]
Moreover, $ \sum_{i=0}^{p} (\beta_{i} - \gamma_{i}) = 0$. By proposition \ref{P:P/G}(\ref{I:f.vs.geom.indep}), $f(v_{0}), \ldots, f(v_{p})$ are geometrically independent since $v_{0}, \ldots, v_{p}$ are. Therefore, if $f(x) = f(y)$, then $\beta_{i} =\gamma_{i}$  ($i = 0, \ldots, p$), i.e., $x = y$. Therefore, 
	\begin{equation}   \label{E:f.sigma.is.1-1}
		\text{The restriction, } f_{\sigma} := f \restriction_{\sigma}, \text{ of $f$ to $\sigma$ is one-to-one.}
	\end{equation} 

By lemma \ref{L:geom.indep.f(v).means.geom.indep.f(x)} and proposition \ref{P:P/G}, point (\ref{I:f.vs.geom.indep}),
	\begin{multline}  \label{E:f.sigma.preserves.geom.indep}
		\text{If } x_{0}, \ldots, x_{k} \in \sigma 
		         \text{ are geometrically independent if and only if } \\
			f(x_{0}), \ldots, f(x_{k}) \text{ are geometrically independent.}
	\end{multline}

Let $\zeta \in L'$ lie in $\tau := f(\sigma)$ and let $z_{0}, \ldots, z_{q}$ be the vertices of $\zeta$. Let $u_{j} := f_{\sigma}^{-1}(z_{j})$ ($j=0, \ldots, q$). By \eqref{E:f.sigma.preserves.geom.indep}, $u_{0}, \ldots, u_{q}$ are geometrically independent. Hence, by \eqref{E:simp.map.commutes.with.bary}, $f_{\sigma}^{-1}(\zeta) = \langle u_{1}, \ldots, u_{q} \rangle$ is a $q$-dimensional simplex lying in $\sigma$. Obviously, the faces of $f_{\sigma}^{-1}(\zeta)$ have the form $\langle u_{i_{1}}, \ldots, u_{i_{k}} \rangle$ and $f_{\sigma} \bigl( \langle u_{i_{1}}, \ldots, u_{i_{k}} \rangle \bigr)$ is a face of $\zeta$.

Moreover, by \eqref{E:criterion.for.int.simp} and \eqref{E:simp.map.commutes.with.bary}, it is clear that 
	\begin{equation}   \label{E:for.L'.f.invrs.Int.iff.f.Int}   
		\text{For } \zeta \in L',\text{ we have } x \in \text{Int} \, f_{\sigma}^{-1}(\zeta)  
			\text{ if and only if } f_{\sigma}(x) \in \text{Int} \, \zeta 
	\end{equation} 
and if $\omega$ is a face of $\zeta \in L'$, then $f_{\sigma}^{-1}(\omega)$ is a face of $f_{\sigma}^{-1}(\zeta)$. Let $\zeta_{1}, \zeta_{2} \subset \tau$ be distinct simplices in $L'$. Then, by (\ref{E:intersection.of.simps}'), $\zeta_{1}$ and $\zeta_{2}$ have disjoint interiors. It follows from \eqref{E:f.sigma.is.1-1} that $f_{\sigma}^{-1}(\zeta_{1})$ and $f_{\sigma}^{-1}(\zeta_{2})$ have disjoint interiors. So by (\ref{E:intersection.of.simps}') again, the collection 
	 \[
		 \hat{P}_{\sigma} := \bigl\{ f_{\sigma}^{-1}(\zeta) : \zeta \in L', \; \zeta 
		   \subset f(\sigma) \bigr\}
	 \]
is a finite simplicial complex, a subdivision of the complex consisting of $\sigma$ and all its faces. Since $\tau$ is the union of simplices in $L'$, we must have that $\sigma$ is the union of the simplices in $\hat{P}_{\sigma}$, since $\tau = f(\sigma)$. 

Let $\psi$ be a face of $\sigma$. Then $\omega := f(\psi)$ is a face of $\tau$. If $\zeta \in L'$ and $\zeta \subset \omega$ then obviously $f_{\sigma}^{-1}(\zeta) \in \hat{P}_{\psi}$, because $f_{\psi}$ is just the restriction of $f_{\sigma}$ to $\psi$. If $\zeta \in L'$ lies 
in $\tau = f(\sigma)$, but $\zeta \nsubseteq \omega$ then $f_{\sigma}^{-1}(\zeta) \notin \hat{P}_{\psi}$. This proves
	\begin{equation}  \label{E:upward.compatibility.of.P.hat}
		\text{If } \sigma \in P \text{ and } \psi \text{ is a face of } \sigma, \text{ then } 
			\hat{P}_{\psi} = \{ \rho \in \hat{P}_{\sigma} : \rho \subset \psi \}. 
	\end{equation}

Let  
	\begin{equation*}
		\hat{P} := \bigcup_{\sigma \in P} \hat{P}_{\sigma}.
	\end{equation*}

\emph{Claim:} If $L' = L$, then $\hat{P} = P$. Suppose $L' = L$. Let $\sigma \in P$. Then, since $f$ is simplicial, 
$\zeta := f(\sigma) \in L = L'$. Therefore $\sigma = f_{\sigma}^{-1}(\zeta) \in \hat{P}$. Conversely, with $L' = L$ let $\xi \in \hat{P}$. Then there exists $\zeta \in L$ and $\sigma \in P$ s.t.\ $\zeta \subset f(\sigma)$ and $\xi = f_{\sigma}^{-1}(\zeta)$. It is obvious that $\xi$ is a face of $\sigma$, but let us continue. By proposition \ref{P:P/G}\eqref{I:f(rho)=omega}, there exists $\rho \in P$ s.t.\ $\zeta = f(\rho)$. Thus, $f(\rho) \cap f(\sigma) \neq \varnothing$. Hence, by proposition \ref{P:P/G}\eqref{I:f(rho).f(tau).dsjnt.iff.rho.Gtau.dsjnt}, there exists $g \in G$ s.t.\ $g(\rho) \cap \sigma \neq \varnothing$. Hence, by proposition \ref{P:P/G}\eqref{I:f(rho).f(tau).dsjnt.iff.rho.Gtau.dsjnt} again, $f \bigl[ g(\rho) \cap \sigma \bigr] = f(\rho) \cap f(\sigma) = \zeta \cap f(\sigma)$. Therefore, $\xi = f_{\sigma}^{-1}(\zeta) = g(\rho) \cap \sigma \in P$. This proves the claim.

\emph{Claim:} $\hat{P}$ is a simplicial complex. Since each $\hat{P}_{\sigma}$ ($\sigma \in P$) is a simplicial complex, \eqref{E:complex.contains.all.faces} holds with $P = \hat{P}$. It suffices then to show that (\ref{E:intersection.of.simps}') holds with $P = \hat{P}$. Let $\rho_{i} \in \hat{P}$ ($i=1,2$) and suppose $(\text{Int} \, \rho_{1}) \cap (\text{Int} \, \rho_{2}) \ne \varnothing$. Let $i=1,2$. There exists $\sigma_{i} \in P$ s.t.\ $\rho_{i} \subset \sigma_{i}$. We may assume that $\sigma_{i}$ is the smallest simplex in $P$ containing $\rho_{i}$. By \eqref{E:upward.compatibility.of.P.hat}, we have $\rho_{i} \in \hat{P}_{\sigma_{i}}$. But $\hat{P}_{\sigma_{i}}$ is a subdivision of $\sigma_{i}$. Therefore, by \eqref{E:inclusion.of.subdiv.interiors}, $\text{Int} \, \rho_{i} \subset \text{Int} \, \sigma_{i}$. 
Hence, $(\text{Int} \, \sigma_{1}) \cap (\text{Int} \, \sigma_{2}) \ne \varnothing$. By (\ref{E:intersection.of.simps}') applied to $P$ we have $\sigma_{1} = \sigma_{2}$. Thus, $\rho_{1}, \rho_{2} \in \hat{P}_{\sigma_{1}}$. But $\hat{P}_{\sigma_{1}}$ is a simplicial complex. Thus, by (\ref{E:intersection.of.simps}') applied to $P = \hat{P}_{\sigma_{1}}$, we have $\rho_{1} = \rho_{2}$. This completes the proof that $\hat{P}$ is a simplicial complex. It is immediate that $\hat{P}$ is a subdivision of $P$.

We have the following facts from proposition \ref{P:P/G}. (Recall that properties \eqref{I:G.simplicial.on.sdP} -- \eqref{I:f(rho)=omega} in proposition \ref{P:P/G} hold with $P$ in place of $P''$.)
	\begin{equation}  \label{E:f.surjective}
		f \bigl( |P| \bigr) = |L|. \quad 
		  \text{ (part \ref{I:P/G.is.a.complex} of proposition \ref{P:P/G}) }
	\end{equation}
	\begin{equation}  \label{E:fx.=.fy.iff.Gx.=.Gy}
		\text{If } x,y \in |P| \text{ then } f(x) = f(y) \text{ if and only if } Gy = Gx . 
		   \quad \text{ (part \ref{I:P/G.is.a.complex}) }
	\end{equation}
	\begin{multline}  \label{E:frho.=.ftau.iff.rho.=.gtau}
	\text{Let } \rho, \tau \in P. \text{ Then  } f(\rho) = f(\tau) \\
	   \text{ if and only if  there exists }
	  g \in G \text{ s.t.\  } \rho = g(\tau). 
	                   \quad \text{ (part \ref{I:f(rho).f(tau).dsjnt.iff.rho.Gtau.dsjnt}) }
	\end{multline}
	\begin{multline}  \label{E:ftau.cuts.frho.iff.Grho.cuts.tau}
	\text{If } \rho, \tau \in P, \text{ and } f(\tau) \cap f(\rho) \neq \varnothing \\
	 \text{ then for some $h \in G$, we have $h(\rho) \cap \tau \neq \varnothing$.} 
	           \quad \text{ (part \ref{I:f(rho).f(tau).dsjnt.iff.rho.Gtau.dsjnt}) }
	\end{multline}
We show that \eqref{E:f.surjective}, \eqref{E:fx.=.fy.iff.Gx.=.Gy}, \eqref{E:frho.=.ftau.iff.rho.=.gtau}, and \eqref{E:ftau.cuts.frho.iff.Grho.cuts.tau} all hold with $(\hat{P}, L')$ in place of $(P,L)$. We also show that 
	\begin{equation}  \label{E:part.of.prop.hold.for.P.hat}
		\text{Parts \eqref{I:G.simplicial.on.sdP}, \eqref{I:f.vs.geom.indep}, and \eqref{I:f(rho)=omega} 
		        of proposition \ref{P:P/G} hold with $(\hat{P}, L')$ in place of $(P'',L)$.}
	\end{equation}
That \eqref{E:f.surjective} and \eqref{E:fx.=.fy.iff.Gx.=.Gy} hold for $(\hat{P}, L')$ is immediate from the fact that they hold for $P$.

First we show that part \ref{I:f(rho)=omega} of proposition \ref{P:P/G} holds for $(\hat{P}, L')$. Let $\omega \in L'$. Then there exists $\zeta \in L$ s.t.\ $\omega \subset \zeta$. By part \ref{I:f(rho)=omega} of proposition \ref{P:P/G}, there exists $\sigma \in P$ s.t.\ $f(\sigma) = \zeta$. Hence, $\xi := f_{\sigma}^{-1}(\omega) \in \hat{P}$ and $f(\xi) = \omega$ as desired.

We prove that proposition \ref{P:P/G}(\ref{I:G.simplicial.on.sdP}) holds for $P'' = \hat{P}$. Proposition \ref{P:P/G}(\ref{I:G.simplicial.on.sdP}) applies to $P''=P$ by assumption. In particular, $G$ is a group of homeomorphisms of $|P| = |\hat{P}|$ onto itself. Suppose $v_{0}, \ldots, v_{p}$ span a simplex, $\rho \in \hat{P}$. Then $v_{0}, \ldots, v_{p}$ all lie in a simplex, $\sigma \in P$ and, by \eqref{E:f.sigma.preserves.geom.indep}, there exists $\psi \in L'$ s.t.\ $f(v_{0}), \ldots, f(v_{p})$ span $\psi$. Let $g \in G$. Then $f \bigl[ g(v_{i}) \bigr] = f(v_{i})$ ($i = 0, \ldots, p$) since \eqref{E:fx.=.fy.iff.Gx.=.Gy} applies to $P$. Thus, $f \bigl[ g(v_{0}) \bigr], \ldots, f \bigl[ g(v_{p}) \bigr]$ span $\psi$. Moreover, $g(v_{0}), \ldots, g(v_{p}) \in g(\sigma) \in P$. Hence, since $g$ is a simplicial homeomorphism on $P$, by lemma \ref{L:geom.indep.f(v).means.geom.indep.f(x)} we have that, $g(v_{0}), \ldots, g(v_{p})$ span a simplex $\xi$ lying in $g(\sigma) \in P$. Moreover, $f \bigl[ g(v_{0}) \bigr], \ldots, f \bigl[ g(v_{p}) \bigr]$ spans $\psi$. Thus, $\xi \in \hat{P}$ in $\hat{P}$. Similarly for $g^{-1}$. Thus, by definition \ref{D:simplicial.homeom}, $g$ is a simplicial homeomorphism of $\hat{P}$ onto itself. Therefore, proposition \ref{P:P/G}(\ref{I:G.simplicial.on.sdP}) holds for $\hat{P}$. I.e.,
	\begin{equation}  \label{E:G.simp.homeom.Phat}
		G \text{ is a group of simplicial homeomorphisms of } \hat{P} \text{ onto itself.}
	\end{equation}
(This will help prove the last sentence of the proposition.)

By definition of $\hat{P}$, As shown in the last paragraph, if $v_{0}, \ldots, v_{p}$ span a simplex in $\hat{P}$ then $f(v_{0}), \ldots, f(v_{p})$ span a simplex in $L'$. In particular, $f(v_{0}), \ldots, f(v_{p})$ are geometrically independent. Similarly, by \eqref{E:for.L'.f.invrs.Int.iff.f.Int}, if $\sigma \in \hat{P}$,
 then $f( \text{Int} \, \sigma) =  \text{Int} \, f(\sigma)$. So proposition \ref{P:P/G}(\ref{I:f.vs.geom.indep}) holds for $\hat{P}$.

We \emph{claim:} 
	\begin{multline}  \label{E:x.in.sigma.f.sigma.=.tau}
		\text{If } x \in |\hat{P}|, \tau \in L', \text{ and } f(x) \in \tau, \text{ then there exists } 
		      \sigma \in \hat{P} \text{ s.t.\ } f(\sigma) = \tau \text{ and } x \in \sigma. \\
		      \text{If } f(x) \in \text{Int} \, \tau \text{ then we may assume } 
		        x \in \text{Int} \, \sigma.
	\end{multline}
We prove this as follows. There exists $\omega \in L$ s.t.\ $\tau \subset \omega$. By proposition \ref{P:P/G}(\ref{I:f(rho)=omega}), there exists $\xi \in P$ s.t.\  $f(\xi) = \omega$. Hence, there exists $\rho \in \hat{P}$ s.t.\ $f(\rho) = \tau$. By \eqref{E:x.in.exctly.1.simplex.intrr}, $\tau$ has a face $\tau'$ s.t.\ $f(x) \in \text{Int} \, \tau'$. ($\tau' = \tau$ is possible.) Let $\rho'$ be the face of $\rho$ s.t.\ $f(\rho') = \tau'$. ($\rho'$ exists by definition of $\hat{P}$.) Then, by proposition \ref{P:P/G}(\ref{I:f.vs.geom.indep}) (which we now know applies to $(\hat{P}, L')$), $f(\text{Int} \, \rho') = \text{Int} \, \tau'$. Thus, there exists $y \in \text{Int} \, \rho'$ s.t.\ $f(y) = f(x)$. Therefore, by \eqref{E:fx.=.fy.iff.Gx.=.Gy}, $x = g(y)$, for some $g \in G$. Hence, by \eqref{E:G.simp.homeom.Phat} and  \eqref{E:simp.homeo.is.homeom}, $x \in  \text{Int} \, \sigma'$, where $\sigma' := g(\rho')$. We have $f(\sigma') = f \bigl[ g(\rho') \bigr] = f(\rho') = \tau'$ by \eqref{E:fx.=.fy.iff.Gx.=.Gy}.  
Similarly, if $\sigma := g(\rho)$, then $f(\sigma) = \tau$ and $x \in \sigma' \subset \sigma$. This proves the first sentence in \eqref{E:x.in.sigma.f.sigma.=.tau}. If $f(x) \in \text{Int} \, \tau$, then $\tau' = \tau$, $\rho' = \rho$, and $\sigma' = \sigma$ so $x \in \text{Int} \, \sigma$. This proves claim \eqref{E:x.in.sigma.f.sigma.=.tau}. The same proof shows that \eqref{E:x.in.sigma.f.sigma.=.tau} holds with $P$ in place of $\hat{P}$ and $L$ in place of $L'$.

We prove that \eqref{E:frho.=.ftau.iff.rho.=.gtau} holds for $P =  \hat{P}$. By \eqref{E:fx.=.fy.iff.Gx.=.Gy}, if $\rho \in \hat{P}$ and $g \in G$, then $f[g(\rho)] = f(\rho)$. Conversely, let $\rho, \tau \in \hat{P}$ and suppose $f(\rho) = f(\tau)$. Write $\psi := f(\rho) = f(\tau)$. By definition, $\psi \in L'$ and, since $L'$ is a subdivision of $L$, there exists $\omega \in L$ s.t.\ $\psi \subset \omega$. We may assume $\omega$ is the smallest simplex in $L$ that contains $\psi$. Then, by \eqref{E:inclusion.of.subdiv.interiors}, $\text{Int} \, \psi \subset \text{Int} \, \omega$. Let $x \in \text{Int} \, \rho$, $y \in \text{Int} \, \tau$. 
Then, by \eqref{E:for.L'.f.invrs.Int.iff.f.Int}, $f(x), f(y) \in \text{Int} \, \psi \subset \text{Int} \, \omega$. Now \eqref{E:x.in.sigma.f.sigma.=.tau} holds 
for $(P,L)$, so we may pick $\sigma, \zeta \in P$ s.t.\ $x \in \text{Int} \, \sigma$, $y \in \text{Int} \, \zeta$, and 
	\begin{equation}   \label{E:f.sigma.=.omega.=.f.zeta}
		f(\sigma) = \omega = f(\zeta).
	\end{equation}

Since $\hat{P}$ is a subdivision of $P$, by \eqref{E:inclusion.of.subdiv.interiors}, there exists $\xi \in P$ 
s.t.\  $\text{Int} \, \rho \subset \text{Int} \,  \xi$. So $x \in \text{Int} \, \xi$. Therefore, by (\ref{E:intersection.of.simps}'), 
we have $\sigma = \xi$ so $\rho \subset \sigma$. Similarly, $\tau \subset \zeta$. By \eqref{E:frho.=.ftau.iff.rho.=.gtau}
(applied to $P$) and \eqref{E:f.sigma.=.omega.=.f.zeta}, there exists $g \in G$ s.t.\ $\sigma = g(\zeta)$. Thus, $g(\tau) \subset \sigma$. Now, by \eqref{E:fx.=.fy.iff.Gx.=.Gy} and \eqref{E:f.sigma.=.omega.=.f.zeta}, we have 
	\[
		f_{\sigma} \bigl[ g(\tau) \bigr] = f \bigl[ g(\tau) \bigr] = f(\tau) = f(\rho) 
		  = f_{\sigma}(\rho). 
	\]
But, by \eqref{E:f.sigma.is.1-1}, $f_{\sigma}$ is one-to-one. Therefore, we must have $g(\tau) = \rho$. Thus, \eqref{E:frho.=.ftau.iff.rho.=.gtau} holds for $\hat{P}$. 

Next, we prove that \eqref{E:ftau.cuts.frho.iff.Grho.cuts.tau} holds with $\hat{P}$ in place of $P$. Suppose $\rho, \tau \in \hat{P}$ and $f(\rho) \cap f(\tau) \neq \varnothing$. Then there exist $x \in \rho$, $y \in \tau$ s.t.\ $f(x) = f(y)$. Hence, by \eqref{E:fx.=.fy.iff.Gx.=.Gy}, there exists $h \in G$ s.t.\ $y = h(x)$. I.e., $\tau \cap h(\rho) \neq \varnothing$ and \eqref{E:ftau.cuts.frho.iff.Grho.cuts.tau} is proved for $\hat{P}$.

Having gotten these few preliminaries out of the way, we now prove the proposition. Define 
	\begin{equation}   \label{E:S'L.=.f.S'}
		\Ss'_{L'} := f( \Ss' ). 
	\end{equation}
Then, since $\Ss'$ is compact and nonempty, $\Ss'_{L'}$ is closed and nonempty and, by \eqref{E:fx.=.fy.iff.Gx.=.Gy}, for $g \in G$ and 
$x \in |P|$, $f \big[ g(x) \bigr] \in \Ss'_{L'}$ if and only if $f(x) \in \Ss'_{L'}$. Suppose $x \in |P|$ and $f(x) \in \Ss'_{L'}$. 
Then there exists $y \in \Ss'$ s.t.\ $f(x) = f(y)$. Therefore, by \eqref{E:fx.=.fy.iff.Gx.=.Gy}, 
$x \in Gy$. Since $\Ss'$ is $G$-invariant, 
we have $x \in \Ss'$. I.e.,
	\begin{equation}   \label{E:f.invrs.S'.L.=S'}
		f^{-1}(\Ss'_{L'}) = \Ss' .
	\end{equation}
Define 
	\begin{equation} \label{E:Phi.L'.defn}
		\Phi_{L'} : |L'| \setminus \Ss'_{L'} \to \F \text{ by } \Phi_{L'} \bigl[ f(x) \bigr] 
		               = \Phi(x) \; (x \in |\hat{P}| \setminus \Ss').
	\end{equation}  
\emph{Claim:} $\Phi_{L'}$ is well-defined and continuous on $|L'| \setminus \Ss'_{L'}$. To see this, let $z \in |L'| \setminus \Ss'_{L'}$ Then, by \eqref{E:f.surjective} there exists $x \in |P|$ s.t.\ $z = f(x)$. Moreover, by \eqref{E:f.invrs.S'.L.=S'}, $x \notin \Ss'$ so $\Phi(x)$ is defined. Suppose $x, y \in |P|$ and $f(x) = f(y) = z \notin \Ss'_{L'}$. Then, as before, we have $x, y \in |P| \setminus \Ss'$. By \eqref{E:fx.=.fy.iff.Gx.=.Gy}, there exists $g \in G$ s.t.\ $y = g(x)$. But $\Phi$ is $G$-invariant. Therefore, $\Phi(y) = \Phi(x)$. This proves that $\Phi_{L'}$ is well-defined. 

Suppose $\{ z_{n} \} \subset |L'| \setminus \Ss'_{L'}$ and $z_{n} \to z \in |L'| \setminus \Ss'_{L'}$. There exists $\{ x_{n} \} \subset |P| \setminus \Ss'$ s.t.\ $f(x_{n}) = z_{n}$ for every $n$. Now, by \eqref{E:|P|.compact}, $|P|$ is compact. 
Therefore, WLOG, $x_{n} \to x \in |P|$. Now, by \eqref{E:simp.maps.are.Lip}, $f$ is continuous. Hence, $f(x) = z$. 
Therefore, $x \notin \Ss'$, so $\Phi(x)$ is defined, and $\Phi_{L'}(z_{n}) = \Phi(x_{n}) \to \Phi(x) = \Phi_{L'}(z)$. I.e, $\Phi_{L'}$ is continuous on $|L'| \setminus \Ss'_{L'}$. This completes the proof of the claim. 
 
 Let 
	 \begin{equation}   \label{E:hat.Q.QL'.defns}
		\hat{Q} := \bigl\{ \sigma \in \hat{P} : \sigma \subset |Q| \bigr\} 
		  \text{ and } Q_{L'} := f(\hat{Q}).
	\end{equation}
Thus, $|\hat{Q}| = |Q|$ and $f(\Ss' \cap \hat{Q}) = \Ss'_{L'} \cap Q_{L'}$. By definition of $\hat{P}$, we have that $Q_{L'}$ is a subcomplex of $L'$. Moreover, by proposition \ref{P:P/G}\eqref{I:G.simplicial.on.sdP} applied to $\hat{P}$ (see \eqref{E:part.of.prop.hold.for.P.hat}) and the fact that $g (Q) = Q$ for every $g \in G$, we have that 
	\begin{equation}  \label{E:Q.hat.G.invar}
		G \hat{Q} := \bigl\{ g(\sigma) : g \in G, \, \sigma \in \hat{Q} \bigr\} = \hat{Q}. 
	\end{equation}
If $\omega \in L'$ define 
$f^{-1}\bigl( \{\omega\} \bigr) := \bigl\{ \sigma \in \hat{P} : f(\sigma) = \omega \bigr\} \subset \hat{P}$. Therefore, 
by \eqref{E:frho.=.ftau.iff.rho.=.gtau} (with $(\hat{P}, L')$ in place of $(P,L)$), and proposition \ref{P:P/G}(\ref{I:G.simplicial.on.sdP}), applied to $(\hat{P}, L')$ (see \eqref{E:part.of.prop.hold.for.P.hat}), we see that  
	\begin{equation}   \label{E:f.invrs.omega.in.hat.Q.iff.omega.in.QL'}
		\text{For } \omega \in L', \; f^{-1}\bigl( \{\omega\} \bigr) \subset \hat{Q} 
		          \text{ if and only if } f^{-1}\bigl( \{\omega\} \bigr) \cap \hat{Q} 
		            \neq \varnothing \text{ if and only if } \omega \in Q_{L'}. 
	\end{equation}

By \eqref{E:S'L.=.f.S'}, \eqref{E:simp.maps.are.Lip}, \eqref{E:Lip.magnification.of.Hm}, and the fact that, by assumption, $\dim \bigl( \Ss' \cap |Q| \bigr) \leq a$, we have $\dim \bigl( \Ss'_{L'} \cap |Q_{L'}| \bigr) \leq a$. Apply parts \ref{I:Phi.tilde.locally.Lip.if.Phi.is} through \ref{I:arb.fine.subdivision} of theorem \ref{T:polyApproxThm} to 
$P = L'$, $\Phi = \Phi_{L'}$, $Q = Q_{L'}$, $a$, and $\Ss' = \Ss'_{L'}$. Denote the resulting $\tilde{\Phi}$ and $\tilde{\Ss}$ by $\tilde{\Phi}_{L'}$ and $\tilde{\Ss}_{L'}$, resp. Now let 
	\begin{equation}   \label{E:S.tilde.=.f.invrs.S'.L}
		\tilde{\Ss} := f^{-1}(\tilde{\Ss}_{L'}).
	\end{equation}  
Thus, $\tilde{\Ss}$ is closed. Since $f \bigl( |\hat{P}| \bigr) = |L'|$ (by \eqref{E:f.surjective}) we have 
	\begin{equation}  \label{E:S.tilde.L'.=.f.S.tilde}
		\tilde{\Ss}_{L'} = f( \tilde{\Ss} ).
	\end{equation}
Note that, by \eqref{E:fx.=.fy.iff.Gx.=.Gy}, 
	\begin{equation}  \label{E:S.tilde.G.invar}
		g(\tilde{\Ss}) = \tilde{\Ss}, \quad \text{ for every } g \in G.
	\end{equation}
(This proves an assertion in the proposition.) Define  
	\begin{equation} \label{E:tilde.Phi.defn}
		\tilde{\Phi} := \tilde{\Phi}_{L'} \circ f
	\end{equation}
so $\tilde{\Phi}$ is defined and continuous on $|\hat{P}| \setminus \tilde{\Ss}$ and for every $x \in |\hat{P}| \setminus \tilde{\Ss}$, we have, by \eqref{E:S.tilde.G.invar} and \eqref{E:fx.=.fy.iff.Gx.=.Gy}, that $\tilde{\Phi} \bigl[ g(x) \bigr]$ is defined and constant in $g \in G$, proving another assertion of the proposition. 

By part \ref{I:S.tilde.subcomp} of theorem \ref{T:polyApproxThm} as applied to $(\Ss'_{L'}, \Phi_{L'})$, lemma \ref{L:invrs.is.subcmplx}, and \eqref{E:f.invrs.omega.in.hat.Q.iff.omega.in.QL'}, we see that $\tilde{\Ss}_{L'} \cap |Q_{L'}|$ is either empty or the underlying space of a subcomplex of $Q_{L'}$. By lemma \ref{L:invrs.is.subcmplx} and proposition \ref{P:P/G}(\ref{I:f.vs.geom.indep},\ref{I:f(rho)=omega}) (as applied to $(\hat{P}, L')$) if $\tau \in L'$ then $f^{-1}(\tau)$ is  the total space of a subcomplex of $\hat{P}$ whose dimension, i.e., dimension of its highest dimensional simplex, is $\dim \tau$. Hence, $\tilde{\Ss} \cap |\hat{Q}|$ is either empty or the underlying space of a subcomplex of the $\lfloor a \rfloor$-skeleton of $\hat{Q}$. That proves that \textbf{part \ref{I:S.tilde.subcomp}} of theorem \ref{T:polyApproxThm} holds 
for $\tilde{\Ss}$.

Suppose $\F$ is a metric space and $\Phi$ is locally Lipschitz off $\Ss'$. See below for the proof of the following.

\begin{lemma}  \label{L:Phi.Lip.Phi.L'.Lip}
Suppose $\Phi : |P| \setminus \Ss' \to \F$ is locally Lipschitz. 
Then $\Phi_{L'} : |L'| \setminus \Ss'_{L'} \to \F$ is also locally Lipschitz.
\end{lemma}

Thus, $\tilde{\Phi}_{L'}$ is locally Lipschitz off $\tilde{\Ss}_{L'}$ by theorem \ref{T:polyApproxThm}, part \ref{I:Phi.tilde.locally.Lip.if.Phi.is}. Hence, $\tilde{\Phi}$ is locally Lipschitz off $\tilde{\Ss}$, by \eqref{E:tilde.Phi.defn}, because, by \eqref{E:simp.maps.are.Lip}, $f$ is Lipschitz. Apply \eqref{E:comp.of.Lips.is.Lip}. Therefore, \textbf{part \ref{I:Phi.tilde.locally.Lip.if.Phi.is}} of theorem \ref{T:polyApproxThm} holds 
for $(\Phi, \Ss')$.

It follows from \eqref{E:simp.maps.are.Lip} and \eqref{E:f.sigma.is.1-1} that there is a constant $C \in (0, \infty)$ s.t.\ if $\sigma \in P$, then the restriction $f_{\sigma} := f \restriction_{\sigma}$ and -- see \eqref{E:f.sigma.is.1-1} -- $f_{\sigma}^{-1}$ are Lipschitz with Lipschitz constant $C$. Clearly, the same holds for the restriction $f \restriction_{\rho}$ with $\rho \in \hat{P}$ with the same $C$, no matter which subdivision $\hat{P}$ is. 

Let $\rho \in \hat{P}$. \emph{Claim:} 
	\begin{equation}  \label{E:G.Int.=.f.invrs.f.}
		G(\text{Int} \, \rho) = f^{-1} \bigl[ \text{Int} \, f(\rho) \bigr].
	\end{equation}
To see this suppose $\rho \subset \sigma \in P$ and let $\zeta = f(\rho) = f_{\sigma}(\rho) \in L'$. Then, by \eqref{E:for.L'.f.invrs.Int.iff.f.Int} and \eqref{E:fx.=.fy.iff.Gx.=.Gy},
	\begin{align*}
	x \in G(\text{Int} \, \rho) &\text{ if and only if there exists } g \in G 
	  \text{ s.t.\ } g^{-1}(x) \in \text{Int} \, \rho 
	             = \text{Int} \, f_{\sigma} ^{-1} (\zeta) \\
	   &\text{ if and only if there exists } g \in G \text{ s.t.\ } f_{\sigma} \circ g^{-1}(x) 
	     \in \text{Int} \, \zeta \\
	   &\text{ if and only if there exists } g \in G \text{ s.t.\ } g^{-1}(x) 
	     \in f_{\sigma}^{-1} (\text{Int} \, \zeta) \\
	   &\text{ if and only if } x \in f^{-1} (\text{Int} \, \zeta) =  f^{-1} 
	     \bigl[ \text{Int} \, f(\rho) \bigr]. \\
	\end{align*}
This proves the claim.

Let $\mcl{A} \subset |P|$ be $G$-invariant and let $\rho \in \hat{P}$. \emph{Claim:}
	\begin{multline}  \label{E:f.A.cap.Int.=.f(A).cap.Int.f}
		f \bigl( \mcl{A} \cap (\text{Int} \, \rho) \bigr) = f \bigl( \mcl{A} 
		  \cap G(\text{Int} \, \rho) \bigr)
			       = f(\mcl{A}) \cap \bigl(\text{Int} \, f(\rho) \bigr) \\
			          \text{ and } f^{-1} \Bigl[ f(\mcl{A}) \cap \bigl(\text{Int} 
			            \, f(\rho) \bigr) \Bigr]
			            = \mcl{A} \cap G(\text{Int} \, \rho).
	\end{multline}
By \eqref{E:fx.=.fy.iff.Gx.=.Gy}, \eqref{E:g.commutes.w/.set.ops}, and \eqref{E:G.Int.=.f.invrs.f.}, 
	\begin{multline*}
		f \bigl( \mcl{A} \cap (\text{Int} \, \rho) \bigr) = f \Bigl( G \bigl[ \mcl{A} 
		  \cap (\text{Int} \, \rho)\bigr] \Bigr)
		  = f \bigl[ (G \mcl{A}) \cap G(\text{Int} \, \rho) \bigr] \\
		    = f \bigl( \mcl{A} \cap G(\text{Int} \, \rho) \bigr) 
	              \subset f(\mcl{A}) \cap \bigl(\text{Int} \, f(\rho) \bigr).
	\end{multline*} 
Conversely, let $y \in f(\mcl{A}) \cap \bigl(\text{Int} \, f(\rho) \bigr)$. Then, by \eqref{E:for.L'.f.invrs.Int.iff.f.Int}, there exists $x \in \mcl{A}$ and $w \in \text{Int} \, \rho$ s.t.\ $f(x) = y = f(w)$. Therefore, by \eqref{E:fx.=.fy.iff.Gx.=.Gy} there exists $g \in G$ s.t.\ $g(x) = w$. But $\mcl{A}$ is $G$-invariant. So we may assume $x = w$. I.e., $x \in \mcl{A} \cap (\text{Int} \, \rho)$. The second part follows from \eqref{E:G.Int.=.f.invrs.f.}, \eqref{E:fx.=.fy.iff.Gx.=.Gy}, and the $G$-invariance of $\mcl{A}$. This concludes the proof of the claim \eqref{E:f.A.cap.Int.=.f(A).cap.Int.f}.

Let $\tau \in L'$. Then, by \eqref{E:part.of.prop.hold.for.P.hat} (see part \eqref{I:f.vs.geom.indep} of proposition \ref{P:P/G}) there exists $\rho \in \hat{P}$ s.t.\ $f(\text{Int} \, \rho) = \text{Int} \, \tau$. By \eqref{E:G.Int.=.f.invrs.f.}, $
G (\text{Int} \, \rho) = \text{Int} \, \tau$. By \eqref{E:simp.homeo.is.homeom}, we have 
$G (\text{Int} \, \rho) = \bigcup_{g \in G} \text{Int} \, g(\rho)$. 

Let $\tau' \in L'$ with $\tau' \neq \tau$ and let $\rho' \in \hat{P}$ satisfy 
$f(\text{Int} \, \rho') = \text{Int} \, \tau'$. 
Suppose $G (\text{Int} \, \rho) \cap G (\text{Int} \, \rho') \neq \varnothing$. Then there exist $g, h \in G$ s.t.\ $\text{Int} \, g(\rho) \cap \text{Int} \, h(\rho') \neq \varnothing$. Therefore, by (\ref{E:intersection.of.simps}'), we have $g(\rho) = h(\rho')$. Hence, $\tau' = f \bigl[ h(\rho') \bigr] = f \bigl[ h(\rho') \bigr] = \tau$, contradiction. 
Thus, $G (\text{Int} \, \rho) \cap G (\text{Int} \, \rho') = \varnothing$.

We conclude that there exist $\rho_{1}, \ldots, \rho_{\ell} \in \hat{P}$ s.t.\ if $\tau \in L'$ there exists exactly one $i = 1, \ldots, \ell$ s.t.\ 
$\text{Int} \, \tau = \text{Int} \, f(\rho_{i}) = f \bigl[ G (\text{Int} \, \rho_{i}) \bigr]$ and 
$|P|$ is the disjoint union of $G (\text{Int} \, \rho_{i})$ ($i=1, \ldots, \ell$).

By \eqref{E:S.tilde.=.f.invrs.S'.L}, \eqref{E:G.Int.=.f.invrs.f.}, and \eqref{E:x.in.sigma.f.sigma.=.tau},
	\begin{align}  \label{E:S.tilde.f.invrs}
		\tilde{\Ss} \cap (\text{Int} \, \rho) \subset \tilde{\Ss} \cap G(\text{Int} \, \rho) 
			&= f^{-1} \Bigl( \tilde{\Ss}_{L'} 
			  \cap \bigl(\text{Int} \, f(\rho) \bigr) \Bigr) \notag \\ 
			&= \bigcup_{\tau \in L'} f^{-1} \left( \tilde{\Ss}_{L'} 
			  \cap \bigl(\text{Int} \, f(\rho) \bigr)  \cap (\text{Int} \, \tau) \right) \\
			&= \bigcup_{i=1}^{\ell} \bigcup_{g \in G} 
			  \bigl( f \restriction_{g(\text{Int} \, \rho_{i})} \bigr)^{-1}
			   \left( \tilde{\Ss}_{L'} \cap \bigl(\text{Int} \, f(\rho) \bigr) 
			     \cap \bigl( \text{Int} \, f(\rho_{i}) \bigr) \right). \notag 
	\end{align}
Similarly, using \eqref{E:f.invrs.S'.L.=S'},
	\begin{equation}  \label{E:S.prime.f.invrs}
		 \Ss' \cap (\text{Int} \, \rho) \subset \Ss' \cap G(\text{Int} \, \rho) 
		  = \bigcup_{i=1}^{\ell} \bigcup_{g \in G} 
		    \bigl( f \restriction_{\text{Int} \, g (\rho_{i})} \bigr)^{-1}
			   \left( \Ss'_{L'}\cap \bigl(\text{Int} \, f(\rho) \bigr)  
			     \cap \bigl( \text{Int} \, f(\rho_{i}) \bigr) \right).
	\end{equation}

By \eqref{E:f.A.cap.Int.=.f(A).cap.Int.f}, \eqref{E:S.tilde.G.invar}, \eqref{E:S.tilde.L'.=.f.S.tilde}, the fact that $\Ss'$ is $G$-invariant by assumption, and \eqref{E:S'L.=.f.S'} we have
	\begin{align} \label{E:S.tilde.prime.f}
		f \bigl( \tilde{\Ss} \cap (\text{Int} \, \rho) \bigr)
			       &= \tilde{\Ss}_{L'} \cap \bigl(\text{Int} \, f(\rho) \bigr), \\
		f \bigl( \Ss' \cap (\text{Int} \, \rho) \bigr) 
		               &= \Ss'_{L'} \cap \bigl(\text{Int} \, f(\rho) \bigr). \notag
	\end{align}

The sets $\bigl( f \restriction_{g (\text{Int} \, \rho_{i}} \bigr)^{-1}
\left( \tilde{\Ss}_{L'} \cap \bigl(\text{Int} \, f(\rho) \bigr)  \cap \bigl( \text{Int} \, f(\rho_{i}) \bigr) \right)$ ($i=1,\ldots, \ell$, $g \in G$) are disjoint.
Recall that for every $\rho \in \hat{P}$ we have that the restriction $f \restriction_{\rho_{i}}$ and $\bigl( f \restriction_{\rho_{i}} \bigr)^{-1}$ are Lipschitz with Lipschitz constant $C < \infty$. Then, by \eqref{E:S.tilde.f.invrs} and \eqref{E:S.prime.f.invrs},  \eqref{E:S.tilde.prime.f}, and \eqref{E:Lip.magnification.of.Hm}, if $s \geq 0$ we have 
	\begin{align}   \label{E:Hm.SL'.tilde.S.bounds}
		\Hm^{s} \bigl( \tilde{\Ss} \cap G(\text{Int} \, \rho) \bigr) 
		&= \sum_{i=1}^{\ell} \sum_{g \in G} \Hm^{s}  
		  \Bigl[ \bigl( f \restriction_{g (\rho_{i})} \bigr)^{-1}
			   \left( \tilde{\Ss}_{L'} \cap \bigl(\text{Int} \, f(\rho) \bigr)  
			     \cap \bigl( \text{Int} \, f(\rho_{i}) \bigr) \right) \Bigr] \notag \\
		&\leq C^{s} \sum_{i=1}^{\ell} \sum_{g \in G} \Hm^{s}  \Bigl[ 
			   \tilde{\Ss}_{L'} \cap \bigl(\text{Int} \, f(\rho) \bigr)  
			     \cap \bigl( \text{Int} \, f(\rho_{i}) \bigr) \Bigr] \\
		&= C^{s} |G| \sum_{\tau \in L'} \Hm^{s}  \Bigl[ \tilde{\Ss}_{L'} 
		  \cap \bigl(\text{Int} \, f(\rho) \bigr) \cap ( \text{Int} \, \tau ) \Bigr] 
		     \notag \\
		&= C^{s} |G| \Hm^{s}  \Bigl[ \tilde{\Ss}_{L'} 
		  \cap \bigl(\text{Int} \, f(\rho) \bigr)  \Bigr]  \notag
	\end{align}
Similarly,
	\begin{equation}
		\Hm^{s} \bigl( \Ss' \cap G(\text{Int} \, \rho) \bigr) 
		  \leq C^{s} |G| \Hm^{s}  \Bigl[ \Ss'_{L'}\cap \bigl(\text{Int} \, f(\rho) \bigr)  \Bigr].
	\end{equation}
More directly, from \eqref{E:S.tilde.prime.f}, we have
	\begin{multline}  \label{E:S.cap.(f).Int.rho.ineqs}
		\Hm^{s} \Bigl( \tilde{\Ss}_{L'} \cap \bigl(\text{Int} \, f(\rho) \bigr) \Bigr)
		   \leq C^{s} \Hm^{s} \bigl( \tilde{\Ss} \cap (\text{Int} \, \rho) \bigr)  \\
		      \text{ and } \Hm^{s} \Bigl( \Ss'_{L'} \cap \bigl(\text{Int} \, f(\rho) \bigr) \Bigr)
	       	        \leq C^{s} \Hm^{s} \bigl( \Ss' \cap (\text{Int} \, \rho) \bigr). 
	\end{multline} 
	
Let $\rho_{1}, \ldots, \rho_{\ell} \in \hat{P}$ be as above. Then, from \eqref{E:Hm.SL'.tilde.S.bounds}, we get for every $s \geq 0$,
	\begin{align}  \label{E:Hs.S.tilde.G.decomp}
		\Hm^{s} (\tilde{\Ss}) &= \sum_{i=1}^{\ell} \Hm^{s} \bigl( \tilde{\Ss} 
		  \cap G(\text{Int} \, \rho_{i}) \bigr) \notag \\
		  &\leq |G| C^{s} \sum_{i=1}^{\ell} \Hm^{s} \Bigl[ \tilde{\Ss}_{L'} 
		    \cap \bigl(\text{Int} \, f(\rho_{i}) \bigr) \Bigr] \\
		  &= |G| C^{s} \sum_{\tau \in L'} \Hm^{s} \bigl( \tilde{\Ss}_{L'} 
		    \cap (\text{Int} \, \tau ) \bigr) \notag \\
		  &= |G| C^{s} \Hm^{s} (\tilde{\Ss}_{L'}). \notag
	\end{align}
Similarly, for every $s \geq 0$,
	\begin{equation}  \label{E:Hs.S.prime.G.decomp}
		\Hm^{s} (\tilde{\Ss}_{L'}) \leq C^{s} \Hm^{s} (\tilde{\Ss}), \; 
		  \Hm^{s} (\Ss') \leq  |G| C^{s} \Hm^{s} (\Ss'_{L'}), 
		   \text{ and }
		     \Hm^{s} (\Ss'_{L'}) \leq C^{s} \Hm^{s} (\Ss'). 
	\end{equation}
Therefore,
	\begin{equation} \label{E:whole.space.dim.eqs}
		\dim \tilde{\Ss} = \dim \tilde{\Ss}_{L'} \text{ and } \dim \Ss'_{L'} 
		  = \dim \Ss'.
	\end{equation}

But by \eqref{E:f.invrs.omega.in.hat.Q.iff.omega.in.QL'} and \eqref{E:frho.=.ftau.iff.rho.=.gtau}, we have that $|\hat{Q}|$ is a disjoint union of some $G (\text{Int} \, \rho_{i})$'s and for such $i$'s 
$f \bigl( G (\text{Int} \, \rho_{i}) \bigr)$ is a disjoint cover of $Q_{L'}$. Hence, 
	\begin{multline}  \label{E:Hs.S.prime.tilde.Q.G.decomp}
	       \Hm^{s} \bigl( \tilde{\Ss} \cap |\hat{Q}| \bigr) 
	         \leq |G| C^{s} \Hm^{s} \bigl( \tilde{\Ss}_{L'} \cap |Q_{L'}| \bigr), \;
		\Hm^{s} \bigl( \Ss' \cap |\hat{Q}| \bigr) 
		  \leq  |G| C^{s} \Hm^{s} \bigl( \Ss'_{L'} \cap |Q_{L'}| \bigr), \; \\
		  \Hm^{s} \bigl( \tilde{\Ss}_{L'} \cap |Q_{L'}| \bigr) 
		    \leq C^{s} \Hm^{s} \bigl( \tilde{\Ss} \cap |\hat{Q}| \bigr), \text{ and }
		    \Hm^{s} \bigl( \Ss'_{L'} \cap |Q_{L'}| \bigr) 
		      \leq C^{s} \Hm^{s} \bigl( \Ss' \cap |\hat{Q}| \bigr). 
	\end{multline}
Therefore, 
	\begin{equation} \label{E:Q.dim.eqs}
		\dim \bigl( \tilde{\Ss} \cap |\hat{Q}| \bigr) 
		  = \dim \bigl( \tilde{\Ss}_{L'} \cap |Q_{L'}| \bigr) 
		  \text{ and } \dim \bigl( \Ss'_{L'} \cap |Q_{L'}| \bigr) 
		    = \dim \bigl( \Ss' \cap |\hat{Q}| \bigr).
	\end{equation}
By assumption $\Ss'$, etc., satisfy the assumptions of theorem \ref{T:polyApproxThm}. 
In particular, $\dim \bigl( \Ss' \cap |\hat{Q}| \bigr) \leq a$. Hence, by \eqref{E:Q.dim.eqs}, 
we have $\dim \bigl( \Ss'_{L'} \cap |Q_{L'}| \bigr) \leq a$ so $\Ss'_{L'}$, etc. satisfy the assumptions of theorem \ref{T:polyApproxThm}.

Applying part \ref{I:dim.Stilde.no.bggr.thn.dim.S} of theorem \ref{T:polyApproxThm} to $(\Phi_{L'}, \Ss'_{L'}, Q_{L'}, L')$, we see that 
$\dim \tilde{\Ss}_{L'} \leq \dim \Ss'_{L'}$ and $\dim \bigl( \tilde{\Ss}_{L'} \cap |Q_{L'}| \bigr) \leq \dim \bigl( \Ss'_{L'} \cap |Q_{L'}| \bigr)$. Therefore, by \eqref{E:whole.space.dim.eqs} and \eqref{E:Q.dim.eqs}, we see that \textbf{part \ref{I:dim.Stilde.no.bggr.thn.dim.S}}
of theorem \ref{T:polyApproxThm} also holds for $(\Phi, \Ss', \hat{Q}, \hat{P})$.

Suppose $\tau \in \hat{P}$ has the following property. 
	\begin{equation}   \label{E:tau.stays.away.from.S'}
		\text{If } \rho \in \hat{Q} \text{ and } (\text{Int} \, \rho) \cap \Ss' \neq \varnothing 
		   \text{ then } \tau \cap \rho = \varnothing. 
	\end{equation} 
Suppose $\omega \in Q_{L'}$ and $(\text{Int} \, \omega) \cap \Ss'_{L'} \ne \varnothing$. By \eqref{E:hat.Q.QL'.defns} and \eqref{E:S.tilde.prime.f}, there exist $\rho \in \hat{Q}$ s.t.\ 
	\begin{equation}  \label{E:f.rho.hat.=.rho}
		f(\rho) = \omega \text{ and } (\text{Int} \, \rho ) \cap \Ss' \neq \varnothing.
	\end{equation}
Since $\Ss'$ and $\hat{Q}$ are $G$-invariant, by \eqref{E:simp.homeo.is.homeom}, we have
	\begin{equation}  \label{E:h.rho.hat.in.hat.Q}
		\text{For every } h \in G \text{ we have } f \bigl[ h(\rho) \bigr] = \omega, \;
		          h( \rho ) \in \hat{Q}, 
			\text{ and } \bigl(\text{Int} \, h(\rho) \bigr) \cap \Ss' \neq \varnothing.
	\end{equation}
Now, if $f( \rho ) \cap f(\tau) = \omega \cap f(\tau) \neq \varnothing$, then, by \eqref{E:ftau.cuts.frho.iff.Grho.cuts.tau}, there exist $h \in G$ s.t.\ $h(\rho) \cap \tau \neq \varnothing$. But, by \eqref{E:h.rho.hat.in.hat.Q}, this contradicts \eqref{E:tau.stays.away.from.S'}. Therefore, $f(\tau) \cap \omega = \varnothing$.
Hence, by part \ref{I:no.change.off.nbhd.of.S.in.Q} of theorem \ref{T:polyApproxThm}, 
	\begin{equation}  \label{E:L'.tilde.L'.agreement}
		\tilde{\Ss}_{L'} \cap f(\tau) = \Ss'_{L'} \cap f(\tau) \text{ and } 
		       \tilde{\Phi}_{L'}\text{ and } \Phi_{L'} 
		         \text{ agree on } f(\tau) \setminus \Ss'_{L'}.
	\end{equation}   
               
We prove
	\begin{equation}  \label{E:S.tilde.cap.tau.=.S'.cap.tau}
		\tilde{\Ss} \cap \tau = \Ss' \cap \tau.
	\end{equation}
Let $\Ss_{a}$ and $\Ss_{b}$ each be either $\Ss'$ or $\tilde{\Ss}$ but not the same as each other. (We introduce $\Ss_{a}$ and $\Ss_{b}$ in order to get a ``one size fits all'' proof.) Let $\Ss_{a,L'} := f(\Ss_{a})$ and $\Ss_{b,L'} := f(\Ss_{b})$. Then we know, by \eqref{E:S'L.=.f.S'}, \eqref{E:S.tilde.L'.=.f.S.tilde}, and \eqref{E:L'.tilde.L'.agreement}, that
$\Ss_{a,L'} \cap f(\tau) = \Ss_{b,L'} \cap f(\tau)$. We need to show that $\Ss_{a} \cap \tau = \Ss_{b} \cap \tau$. 
Suppose $x \in \Ss_{a} \cap \tau$. We want to show that $x \in \Ss_{b} \cap \tau$. We have that $x \in \Ss_{a} \cap \tau$ implies 
$f(x) \in f(\Ss_{a}) \cap f(\tau) = \Ss_{a,L'} \cap f(\tau) = \Ss_{b,L'} \cap f(\tau)$. Therefore, there exists $y \in \Ss_{b}$ s.t.\ $f(y) = f(x)$. By \eqref{E:fx.=.fy.iff.Gx.=.Gy}, $Gx = Gy$. I.e., there exists $g \in G$ s.t.\ $g(y) = x \in \tau$. Now, by assumption $\Ss'$ is $G$-invariant and, by \eqref{E:S.tilde.G.invar}, $\tilde{\Ss}$ is $G$-invariant. Hence, $x = g(y) \in g(\Ss_{b}) \cap \tau = \Ss_{b} \cap \tau$, as desired. This proves \eqref{E:S.tilde.cap.tau.=.S'.cap.tau}. 

If $w \in \tau \setminus \Ss'$, then, by \eqref{E:f.invrs.S'.L.=S'}, $f(w) \in f(\tau) \setminus \Ss'_{L'} = f(\tau) \setminus \tilde{\Ss}_{L'}$. Thus, by \eqref{E:Phi.L'.defn} and \eqref{E:tilde.Phi.defn}, if $w \in \tau \setminus \Ss'$, then
	\[
		\Phi(w) = \Phi_{L'}\bigl[ f(w) \bigr] = \tilde{\Phi}_{L'} \bigl[ f(w) \bigr] 
		  = \tilde{\Phi}(w).
	\] 
Therefore, \textbf{part \ref{I:no.change.off.nbhd.of.S.in.Q}} of theorem \ref{T:polyApproxThm}, applies 
to $(\tilde{\Ss}, \tilde{\Phi})$ as well.

Let $\rho \in \hat{P} \setminus \hat{Q}$. (But $\rho \cap |\hat{Q}| \neq \varnothing$ is possible.)  To begin with, we show that $f(\rho) \in L' \setminus Q_{L'}$. Suppose $f(\rho) \in Q_{L'}$ and let $x \in \text{Int} \, \rho$. By proposition \ref{P:P/G}\eqref{I:f.vs.geom.indep}, $f(x) \in \text{Int} \, f(\rho)$. 
Then, by \eqref{E:hat.Q.QL'.defns}, for some $\sigma \in \hat{Q}$, we have $f(x) \in f(\sigma)$. Thus, $\bigl( \text{Int} \, f(\rho) \bigr) \cap f(\sigma) \neq \varnothing$. 
Hence, by \eqref{E:Int.rho.cuts.sigma.then.rho.in.sigma}, $f(\rho)$ is a face of $f(\sigma)$. Therefore, by 
\eqref{E:hat.Q.QL'.defns}, there is a face, $\omega \in \hat{Q}$, of $\sigma$ s.t.\ $f(\omega) = f(\rho)$. Thus, by \eqref{E:frho.=.ftau.iff.rho.=.gtau}, $\rho = g(\omega)$ for some $g \in G$. But $\hat{Q}$ is $G$-invariant. It follows that $\rho = g(\omega) \in \hat{Q}$, contradiction. We conclude that $f(\rho) \notin Q_{L'}$. 

Let $s \geq 0$. By part \ref{I:no.change.off.Q} of theorem \ref{T:polyApproxThm}, we have 	
	    \begin{equation}  \label{E:S'.L'.0.f.S.tilde.L'.0}
		\text{If } \Hm^{s} \Bigl( \Ss'_{L'} \cap \bigl[ \text{Int} \, f(\rho) \bigr] \Bigr) = 0
			 \text{ then } \Hm^{s} \Bigl( \tilde{\Ss}_{L'} 
			   \cap \bigl[ \text{Int} \, f(\rho) \bigr] \Bigr) = 0.
	\end{equation}
Suppose $\Hm^{s} \bigl[ \Ss' \cap (\text{Int} \, \rho ) \bigr] = 0$. 
Then, by \eqref{E:S.cap.(f).Int.rho.ineqs},
$\Hm^{s} \Bigl( \Ss'_{L'} \cap \bigl[ \text{Int} \, f(\rho) \bigr] \Bigr) = 0$. Therefore, by \eqref{E:S'.L'.0.f.S.tilde.L'.0}, we have 
$\Hm^{s} \Bigl( \tilde{\Ss}_{L'} \cap \bigl[ \text{Int} \, f(\rho) \bigr] \Bigr) = 0$. Hence, by \eqref{E:Hm.SL'.tilde.S.bounds},
 $\Hm^{s} \bigl[ \tilde{\Ss} \cap (\text{Int} \, \rho ) \bigr] = 0$. Thus, \textbf{part \ref{I:no.change.off.Q}} of the theorem holds 
 for $(\Phi, \tilde{\Ss})$.

If $\tau \in \hat{Q}$ and 
$\Hm^{\lfloor a \rfloor} \bigl( \tilde{\Ss} \cap (\text{Int} \, \tau) \bigr) > 0$,  then, by \eqref{E:hat.Q.QL'.defns}, 
$f(\tau) \in Q_{L'}$ and, by \eqref{E:Hm.SL'.tilde.S.bounds}, 
$\Hm^{\lfloor a \rfloor} \Bigl( \tilde{\Ss}_{L'} \cap \bigl(\text{Int} \, f(\tau) \bigr) \Bigr) > 0$. Hence, by part \ref{I:Ha.tau.=0.if.Ha.all.nbrs.0} of theorem \ref{T:polyApproxThm},  
$f(\tau) \in L'$ is an $\lfloor a \rfloor$-simplex and 
$\Hm^{\lfloor a \rfloor} \bigl[ \Ss'_{L'} \cap ( \text{Int} \, \psi ) \bigr] > 0$ for some simplex 
$\psi$ of $Q_{L'}$ having $f(\tau)$ as a face. 
($\psi = f(\tau)$ is possible.) By proposition \ref{P:P/G}\eqref{I:f(rho)=omega}, $\tau$ is an $\lfloor a \rfloor$-simplex.
Let $x \in \text{Int} \, \tau$. Thus, $f(x) \in \psi$. By \eqref{E:x.in.sigma.f.sigma.=.tau}, there exists $\rho \in \hat{P}$ s.t.\ $x \in \rho$ 
and $f(\rho) = \psi$. Therefore, by \eqref{E:Int.rho.cuts.sigma.then.rho.in.sigma}, 
$\tau$ is a face of $\rho$ and 
$\Hm^{\lfloor a \rfloor} \Bigl[ \Ss'_{L'} \cap \bigl( \text{Int} \, f(\rho) \bigr) \Bigr] > 0$.
Therefore, by \eqref{E:S.cap.(f).Int.rho.ineqs}, 
$\Hm^{\lfloor a \rfloor} \bigl( \Ss' \cap (\text{Int} \, \rho) \bigr) > 0$. 
Thus, \textbf{part \ref{I:Ha.tau.=0.if.Ha.all.nbrs.0}} of the theorem holds 
for $(\Phi, \tilde{\Ss})$.
	
If $y \in \tilde{\Ss}$ then, by \eqref{E:S.tilde.L'.=.f.S.tilde}, $f(y) \in \tilde{\Ss}_{L'}$ so by part \ref{I:dist.tween.S.tilde.and.S} of theorem \ref{T:polyApproxThm}, there exists $\tau \in L'$ s.t.\ $f(y) \in \tau$ and $\tau \cap \Ss'_{L'} \neq \varnothing$. By \eqref{E:x.in.sigma.f.sigma.=.tau}, there exists $\sigma \in \hat{P}$ such that $y \in \sigma$ and $f(\sigma) = \tau$. Thus, by \eqref{E:f.invrs.S'.L.=S'}, $\sigma \cap \Ss' \neq \varnothing$. Similarly, if $y \in \Ss'$, by \eqref{E:S'L.=.f.S'}, $f(y) \in \Ss'_{L'}$. So by part \ref{I:dist.tween.S.tilde.and.S} of theorem \ref{T:polyApproxThm}, there exists $\tau \in L'$ s.t.\ $f(y) \in \tau$ 
and $\tau \cap \tilde{\Ss}_{L'} \neq \varnothing$. By \eqref{E:x.in.sigma.f.sigma.=.tau} again, there exists $\sigma \in \hat{P}$ such that $y \in \sigma$ and $f(\sigma) = \tau$. Thus, by \eqref{E:S.tilde.=.f.invrs.S'.L}, $\sigma \cap \tilde{\Ss} \neq \varnothing$. This proves that \textbf{part \ref{I:dist.tween.S.tilde.and.S}} of theorem \ref{T:polyApproxThm} 
applies to $(\Phi, \Ss')$.

Let $\sigma \in \hat{P}$. Then $f(\sigma) \in L'$ and, by part \ref{I:Phi.tilde.sigma.subset.Phi.sigma} of theorem \ref{T:polyApproxThm} and \eqref{E:S'L.=.f.S'}, we have the following.
	\begin{equation} \label{E:f.sigma.S.tilde.S.prime}
		\tilde{\Phi}_{L'} \bigl[ f(\sigma) \setminus \tilde{\Ss}_{L'} \bigr] 
			\subset \Phi_{L'} \bigl[ f(\sigma) \setminus \Ss'_{L'} \bigr]. 
	  \end{equation}
Let $\Ss = \Ss'$ or $\tilde{\Ss}$. Giving $\Ss_{L'}$ the obvious meaning, we have by \eqref{E:S.tilde.L'.=.f.S.tilde} and \eqref{E:S'L.=.f.S'} that $\Ss_{L'} = f(\Ss)$. Now, $f(\sigma) = f(\sigma \cap \Ss) \cup f(\sigma \setminus \Ss)$. If $y \in f(\sigma \cap \Ss)$ then $y \in \Ss_{L'}$. Thus, if $x \in f(\sigma) \setminus \Ss_{L'}$ then $x \in f(\sigma \setminus \Ss)$. I.e., 
	\begin{equation}   \label{E:f.sigma.SL'.f.S}
		  f(\sigma) \setminus \Ss_{L'} \subset f(\sigma \setminus \Ss), 
		     \quad (\Ss = \Ss' \text{ or } \tilde{\Ss}).
	\end{equation} 

By \eqref{E:S.tilde.L'.=.f.S.tilde}, $f(\sigma) \setminus \tilde{\Ss}_{L'} \subset f(\sigma \setminus \tilde{\Ss})$. By \eqref{E:S.tilde.=.f.invrs.S'.L}, we have $f(\sigma \setminus \tilde{\Ss}) \cap \tilde{\Ss}_{L'} = \varnothing$. Thus,
$f(\sigma) \setminus \tilde{\Ss}_{L'} \subset f(\sigma \setminus \tilde{\Ss}) \subset f(\sigma) \setminus \tilde{\Ss}_{L'}$. I.e.,
	\begin{equation} \label{E:f.S.=.f.sigma.less.S}
		f(\sigma \setminus \tilde{\Ss}) = f(\sigma) \setminus \tilde{\Ss}_{L'}.
	\end{equation} 

By \eqref{E:tilde.Phi.defn}, \eqref{E:f.S.=.f.sigma.less.S}, \eqref{E:f.sigma.SL'.f.S}, \eqref{E:f.sigma.S.tilde.S.prime}, and \eqref{E:Phi.L'.defn} we have,
	\begin{align*}
		\tilde{\Phi}( \sigma \setminus \tilde{\Ss}) 
		    &= \tilde{\Phi}_{L'} \circ f( \sigma \setminus \tilde{\Ss}) \\
		    &=  \tilde{\Phi}_{L'} \bigl[ f(\sigma) \setminus \tilde{\Ss}_{L'} \bigr]   \\
		    &\subset \Phi_{L'} \bigl[ f(\sigma) \setminus  \Ss'_{L'} \bigr]   \\
		    &\subset \Phi_{L'} \circ  f( \sigma \setminus \Ss') \\
		    &=  \Phi ( \sigma \setminus \Ss').                    
	\end{align*} 
Thus, \textbf{part \ref{I:Phi.tilde.sigma.subset.Phi.sigma}} of theorem \ref{T:polyApproxThm} holds 
for $(\Phi, \Ss')$. 

We now prove part \ref{I:bound.on.S.tilde.vol}. \eqref{E:Hs.S.prime.tilde.Q.G.decomp},  
\eqref{E:polyhedral.volume.magnification.factor} applied to $(\Phi_{L'}, \Ss'_{L'}, Q_{L'})$, and \eqref{E:Hs.S.prime.tilde.Q.G.decomp} again,   
imply that there exists $K < \infty$ s.t.\
	\begin{align}  \label{E:Ha.S'.Ha.Stilde.via.SL'}
		\Hm^{a} \bigl( \tilde{\Ss} \cap |\hat{Q}| \bigr)  
		  &\leq |G| \, C^{a} \, \Hm^{a} \bigl( \tilde{\Ss}_{L'} \cap |Q_{L'}| \bigr) 
		    \notag \\
		  &\leq K |G| \, C^{a} \, \Hm^{a} \bigl( \Ss'_{L'} \cap |Q_{L'}| \bigr) \\
		   &\leq K |G| \, C^{2a} \, \Hm^{a} \bigl( \Ss' \cap |\hat{Q}| \bigr). \notag
	\end{align}
Thus, \eqref{E:polyhedral.volume.magnification.factor} applies to $(\Phi,\Ss', \hat{Q})$ and \textbf{part \ref{I:bound.on.S.tilde.vol}} of theorem \ref{T:polyApproxThm} is proved.

We prove that \textbf{part \ref{I:arb.fine.subdivision}} of theorem \ref{T:polyApproxThm} applies to $(\Phi,\Ss', \hat{Q})$. 
Let $(\Phi_{L}, \Ss'_{L}, Q_{L})$ be the triple $(\Phi_{L'}, \Ss'_{L'}, Q_{L'})$ with $L' = L$. (Recall that $L'$ is an arbitrary subdivision of $L$. $L' = L$ is possible.)
Part \ref{I:arb.fine.subdivision} applies to $(L, \Phi_{L}, \Ss'_{L}, Q_{L})$. Thus, there exists $K < \infty$, depending only on $a$ and $L$ (hence, only on $a$ and $P$ and $G$) with the following property. If $\epsilon > 0$, there exists a partition, $L'$, of $L$, which only depends on $L$ and $\epsilon$, s.t.\ for every $\zeta \in L'$ we have 
	\begin{equation}  \label{E:diam.zeta.<.eps/C}
		diam(\zeta) < \epsilon/C, \text{ for every } \zeta \in L',
	\end{equation}
($C$ is the constant in \eqref{E:Hm.SL'.tilde.S.bounds}; it is a Lipschitz constant of $f_{\sigma}$ and 
$f_{\sigma}^{-1} = (f \restriction_{\sigma})^{-1}$ for every $\sigma \in \hat{P}$.) \emph{And} parts \ref{I:Phi.tilde.locally.Lip.if.Phi.is} through \ref{I:Phi.tilde.sigma.subset.Phi.sigma} above and \eqref{E:polyhedral.volume.magnification.factor} hold when $P$ is replaced by $L'$ and $Q$ is replaced by $Q_{L'}$ with a ``new'' $K$ independent of $\epsilon$. By \eqref{E:diam.zeta.<.eps/C} and choice of $C < \infty$, 
we have $diam \bigl[ f_{\sigma}^{-1}(\zeta) \bigr] < \epsilon$. To prove that \eqref{E:polyhedral.volume.magnification.factor} holds for 
$(\hat{P}, \Ss')$, just use \eqref{E:Ha.S'.Ha.Stilde.via.SL'} but with the new $K$. By \eqref{E:G.simp.homeom.Phat}, $G$ is a group of simplicial homeomorphisms on 
$P' := \hat{P}$.
      \end{proof}

  \begin{proof}[Proof of lemma \ref{L:Phi.Lip.Phi.L'.Lip}]
 Let $z \in |L'| \setminus \Ss'_{L'}$. By \eqref{E:x.in.exctly.1.simplex.intrr}, there exists a unique simplex $\tau_{z} \in L'$ s.t.\ 
$z \in \text{Int} \, \tau_{z}$. By \eqref{E:St.sigma.is.open}, the star, $\text{St} \, \tau_{z}$, of $\tau_{z}$ is open. If $\tau \in L'$ and $z \in \tau$, then, by \eqref{E:Int.rho.cuts.sigma.then.rho.in.sigma}, $\tau_{z} \subset \tau$. Thus, 
	\begin{equation} \label{E:Star.bar.for.z}
		\overline{\text{St}} \, \tau_{z} = \{ \tau \in L' : z \in \tau \}.
	\end{equation} 
By lemma \ref{L:invrs.is.subcmplx}, 
$K := \bigcup_{z \in \tau \in L'} f^{-1}(\tau) =  f^{-1}(\overline{\text{St}} \, \tau_{z})$ is a subcomplex of $\hat{P}$. By proposition \ref{P:P/G}(\ref{I:f(rho)=omega}), 
$f \bigl( |K| \bigr) = \overline{\text{St}} \, \tau_{z}$. 

Now, $\Ss'_{L'}$ and the lemma depend on $|L'|$ but not on $L'$ as a complex. Therefore, for purposes of proving this lemma, since $\Ss'_{L'}$ is closed, we may, by Munkres \cite[Theorem 15.4, p.\ 86]{jrM84}, assume that $L'$ is fine enough that 
$\overline{\text{St}} \, \tau_{z} \cap \Ss'_{L'} = \varnothing$.

Let $x \in f^{-1}(z)$. Then $x \in | \hat{P} | \setminus \Ss'$. By assumption, $x$ has a neighborhood $U \subset |P| \setminus \Ss'$ s.t.\ $\Phi$ is Lipschitz on $U$ with Lipschitz constant $C_{1} < \infty$, say. Since $\Phi$ is $G$-invariant, $\Phi$ is Lipschitz on every set $g(U)$ ($g \in G$) with the same Lipschitz constant and, by \eqref{E:fx.=.fy.iff.Gx.=.Gy}, all the sets $f \circ g(U)$ ($g \in G$) are equal. 

By lemma \ref{L:local.finiteness.and.compactness}(\ref{I:|P|.is.subspace}), $U \cap \sigma$ is open in $\sigma$ for every $\sigma \in \hat{P}$. Moreover, by \eqref{E:f.sigma.is.1-1} and Simmons \cite[Theorem E, p.\ 131]{gfS63}, we have $f(U \cap \sigma) = f_{\sigma}(U \cap \sigma)$ is open in $f(\sigma)$ for every $\sigma \in \hat{P}$. Therefore, by lemma \ref{L:local.finiteness.and.compactness}(\ref{I:|P|.is.subspace}) again we have that $f(U)$ is open in $|L'|$.

Let $V = f(U) \cap \text{St} \, \tau_{z}$, so $V \subset |L'| \setminus \Ss'_{L'}$ is an open neighborhood of $z$, and let $w_{1}, w_{2} \in V$. Then, by \eqref{E:Star.bar.for.z}, there exists $\tau_{i} \in L'$ s.t.\ $w_{i} \in \text{Int} \, \tau_{i}$ and $z \in \tau_{i}$ ($i=1,2$). In particular, $\tau_{1} \cap \tau_{2} \neq \varnothing$. 
By proposition \ref{P:P/G}(\ref{I:f(rho)=omega}), there exists $\sigma_{i} \in \hat{P}$ s.t. $f(\sigma_{i}) = \tau_{i}$ ($i=1,2$). 

Let $\zeta = \tau_{1} \cap \tau_{2}$, so $z \in \zeta$. Then $\sigma_{i}$ has a face $\xi_{i}$ s.t.\ $f(\xi_{i}) = \zeta$. Therefore, by \eqref{E:frho.=.ftau.iff.rho.=.gtau}, there exists $g \in G$ s.t.\ $\xi_{2} = g(\xi_{1})$. Replace $\sigma_{1}$ by $g(\sigma_{1})$. Then $f(\sigma_{1}) \cap f(\sigma_{2}) = \zeta$. 

By \eqref{E:f.sigma.is.1-1} and \eqref{E:f.sigma.preserves.geom.indep}, the map $f_{\sigma_{i}}^{-1}$ is simplicial. Hence, by \eqref{E:simp.maps.are.Lip}, $f_{\sigma_{i}}^{-1}$ is is Lipschitz, with Lipschitz constant $C_{2} < \infty$, say. Since $\hat{P}$ is a finite complex, we may assume that $C_{2}$ does not depend on $\sigma_{i}$ ($i=1,2$).

Recall $\tau_{1} \cap \tau_{2} \neq \varnothing$. But first suppose neither $\tau_{1}$ nor $\tau_{2}$ is a face of the other. Therefore, if $d$ is the metric on $\F$ then, by corollary \ref{C:reverse.triangle.ineq.in.simp.cmplxs}, there exist $\tilde{w}_{i} \in \tau_{1} \cap \tau_{2}$ (so $\Phi_{L'}(\tilde{w}_{i})$ is defined) ($i=1,2$) and $C_{3} < \infty$ depending only on $\hat{P}$, s.t.\, 
	\begin{equation*}
		| w_{1} -  \tilde{w}_{1} | + | \tilde{w}_{1} -  \tilde{w}_{2} | +  | \tilde{w}_{2} - w_{2} | 
		  \leq C_{3} | w_{1} -  w_{2} |.
	\end{equation*}
Now, $\tilde{w}_{2} \in \zeta$. Therefore, both $f_{\sigma_{1}}^{-1}(\tilde{w}_{2})$ and $f_{\sigma_{2}}^{-1}(\tilde{w}_{2})$ are defined. Hence, since $\Phi$ is $G$-invariant, by \eqref{E:fx.=.fy.iff.Gx.=.Gy}, we have 
$\Phi \circ f_{\sigma_{1}}^{-1}(\tilde{w}_{2}) =  \Phi \circ f_{\sigma_{2}}^{-1}(\tilde{w}_{2})$. We use that fact in the following.
	\begin{multline*}
	    d \bigl[ \Phi_{L'}(w_{1}),  \Phi_{L'}(w_{2}) \bigr] \\
		\begin{aligned}
			\quad {} &\leq d \bigl[ \Phi_{L'}(w_{1}),  \Phi_{L'}(\tilde{w}_{1}) \bigr] 
			+ d \bigl[ \Phi_{L'}(\tilde{w}_{1}),  \Phi_{L'}(\tilde{w}_{2}) \bigr]  
			+  d \bigl[ \Phi_{L'}(\tilde{w}_{2}),  \Phi_{L'}(w_{2}) \bigr]  \\
			&= d \bigl[ \Phi \circ f_{\sigma_{1}}^{-1}(w_{1}),  
			       \Phi \circ f_{\sigma_{1}}^{-1}(\tilde{w}_{1}) \bigr] 
			+ d \bigl[ \Phi \circ f_{\sigma_{1}}^{-1}(\tilde{w}_{1}),  
			      \Phi \circ f_{\sigma_{1}}^{-1}(\tilde{w}_{2}) \bigr]  \\
			&\qquad 
			    +  d \bigl[ \Phi \circ f_{\sigma_{2}}^{-1}(\tilde{w}_{2}),  
			      \Phi \circ f_{\sigma_{2}}^{-1}(w_{2}) \bigr]  \\
			&\leq C_{1} \Bigl( \bigl| f_{\sigma_{1}}^{-1}(w_{1}) 
			  -  f_{\sigma_{1}}^{-1}(\tilde{w}_{1}) \bigr| 
			+ \bigl| f_{\sigma_{1}}^{-1}(\tilde{w}_{1}) 
			  -  f_{\sigma_{1}}^{-1}(\tilde{w}_{2}) \bigr|  
			+  \bigl| f_{\sigma_{2}}^{-1}(\tilde{w}_{2}) 
			  - f_{\sigma_{2}}^{-1}(w_{2}) \bigr| \Bigr) \\
			&\leq C_{1} C_{2} \bigl( | w_{1} -  \tilde{w}_{1} | 
			+ | \tilde{w}_{1} -  \tilde{w}_{2} | +  | \tilde{w}_{2} - w_{2} | \bigr) \\
			&\leq C_{1} C_{2} C_{3} | w_{1} -  w_{2} |.
		\end{aligned}
	\end{multline*}
If $\tau_{1}$ is a face of $\tau_{2}$ or vice versa, the proof is similar but easier.
  \end{proof}

\chapter{Facts Concerning Least Absolute Deviation Linear Regression}  \label{Chptr:LAD.technicalities} (This appendix is referred to in section \ref{SS:LAD}.) In this appendix, we use ``$Y$'' rather than ``$x$'' to denote data sets. Recall that $k \geq 1$ and $\nvar = k+1$. The $i^{th}$ row of $Y$ is $(x_{i}, y_{i})$, where $x_{i}$ is $1 \times k$ and $y_{i} \in \RR$ ($i \in \NN_{n}$; recall \eqref{E:N.sub.n}.). Alternatively, write $Y = (X^{n \times k}, y^{n \times 1})$. Recall that $Y \in \Y$ is ``collinear'' if $x_{2} - x_{1}, \ldots, x_{n}- x_{1}$ do not span $\RR^{k}$ (by \eqref{E:n>nvar>k>0}, $n > \nvar$; definition \ref{D:collinearity}). Therefore, the set of collinear data sets is closed. Let 
	\begin{equation*}
		X_{1}^{n \times \nvar} := 
			\begin{pmatrix}
					\begin{matrix}
						1 & x_{1}
					\end{matrix} \\
				\vdots \\
					\begin{matrix}
						1 & x_{n}
					\end{matrix} 
			\end{pmatrix}.
	\end{equation*}
 
By \eqref{E:nothing.special.about.x1.in.collin}, 
	\begin{multline}   \label{E:can.sbtrct.any.row.for.collin}
		X \text{ is collinear if and only if for any } i = 1, \ldots n  \\
		     \text{ the vectors } x_{2} - x_{i}, \ldots, x_{n}- x_{i} 
		        \text{ do not span } \RR^{k}.
	\end{multline}

Recall that in $L^{1}$ or Least Absolute Deviation (LAD) regression one fits the plane $y = \beta_{0} + x\beta_{1}$ (where $x$ is a $1 \times k$ row vector), where $\beta = (\beta_{0}, \beta_{1}^{T})^{T}$ with $\beta_{0} \in \RR$ and $\beta_{1}$ is $k \times 1$, and $b = (\beta_{0}, \beta_{1}^{T})^{T}$ minimizes 
	\begin{equation}  \label{E:L1.defn}
		L^{1}(b,Y) := \sum_{i=1}^{n} | y_{i} - b_{0} - x_{i} b_{i} |,
	\end{equation}
where $b = (b_{0}, b_{1}^{T})^{T}$ with $b_{0} \in \RR$ and $b_{1}$ is $k \times 1$. In this case say that the $k$-plane $\{ (x, \beta_{0} + x \beta_{1}): x \in \RR^{k} \}$ is an ``LAD plane'' and $\beta$ an ``LAD solution'' for $Y$. Let  $\hat{B}(Y)$ denote the set of all $\nvar \times 1$ column vectors $\beta$ minimizing $L^{1}(\beta, Y)$. Thus, by lemma \ref{L:Y.collnr.iff.rank.X1.<.k+1}, if $Y$ is collinear, then $\hat{B}(Y)$ is unbounded, because in that case $L^{1}(b,Y)$ is unchanged if we add an arbitrary vector to $b$ that is orthogonal to $X_{1}$.
Let $\hat{B}_{1}(Y) := \bigl\{ \beta_{1}^{k \times 1} : \text{There exists } \beta_{0} \in \RR \text{ s.t.\ } (\beta_{0}, \beta_{1}^{T})^{T} \in \hat{B}(Y) \bigr\}$ be the projection 
of $\hat{B}(Y)$ onto the last $k$ coordinates. $\hat{B}_{1}(Y)$ is convex because 
$\hat{B}(Y)$ is. The following result is reminiscent of  Bloomfield and Steiger \cite[Theorem 1, p.\ 7]{pBwlS83}. 

 \begin{lemma}  \label{L:basic.LAD.soln.facts}
  Suppose $Y \in \Y$ is not collinear. Then:

  a) $\hat{B}(Y)$ is nonempty, compact, and convex. In fact, there exists a neighborhood $\mcl{V} \subset \Y$ of $Y$ and a compact set $C \subset \RR^{k+1}$ s.t.\ if $Y' \in \mcl{V}$ then $Y'$ is not collinear and $\hat{B}(Y') \subset C$.

  b) There exists a nonempty set $U = U(Y)$ of $1 \times k$ unit vectors $u$ with the following properties:  

  \quad i) Given $u \in U$, there exists a unique 
$\bigl( \beta_{0}, (\beta_{1})^{T} \bigr)^{T} \in \hat{B}(Y)$ 
s.t.\ $\gamma_{1} = \beta_{1}$ maximizes $u   \gamma_{1}$ among 
all $\gamma_{1} \in \hat{B}_{1}(Y)$. 

  \quad ii) $U$ contains a basis of $\RR^{k}$ and if $u \in U$, then $-u \in U$.

  \quad iii)  If $u \in U$ and $\beta$ is as in (i) then 
	\begin{multline}  \label{E:exact.fit.basis}
		\text{For some } i_{1} < \cdots < i_{\nvar} \in \NN_{n}, 
		  \text{ we have } y_{i_{j}} 
		          = \beta_{0} + x_{i_{j}} \beta_{1}, \text{ for } j = 1, \ldots, \nvar, \\
		  \text{ and } x_{i_{2}} - x_{i_{1}}, \ldots , x_{i_{\nvar}} - x_{i_{1}} 
		    \text{ is a basis for } \RR^{k}.
	\end{multline}
 \end{lemma}
 
 $x_{i_{2}} - x_{i_{1}}, \ldots , x_{i_{\nvar}} - x_{i_{1}}$ is a basis for $\RR^{k}$ if and only if the $\nvar \times \nvar$ matrix whose rows are 
 $(1, x_{i_{1}}), \ldots, (1,x_{i_{\nvar}})$, has rank $\nvar$. This follows from definition \ref{D:collinearity} and lemma \ref{L:Y.collnr.iff.rank.X1.<.k+1}. Call the LAD solutions 
 $\beta = \bigl( \beta_{0}, (\beta_{1})^{T} \bigr)^{T}$ as 
 in part (b)(i) ``extreme LAD solutions''. 
 
The vector $u$ mentioned in part (b,i) clearly determines a supporting plane to the extreme point $\beta$ 
of $\hat{B}_{1}(Y)$. 

  \begin{corly} \label{C:B1.is.convex.hull.of.E}
Let $E$ be the set of $\beta_{1} \in \hat{B}_{1}(Y)$ for which there is a $u \in U$ for which part (b,i) of the lemma holds for $(u, \beta_{1})$. Then $E$ is finite and $\hat{B}_{1}(Y)$ is the convex hull of $E$
  \end{corly}
  \begin{proof}[Proof of corollary]
Let $\tilde{E}$ be the convex hull of $E$. For each $i_{1} < \cdots < i_{\nvar} \in \NN_{n}$ there can be at most one pair $(\beta_{0}, \beta_{1})$ satisfying \eqref{E:exact.fit.basis}. The number of subsets of $\NN_{n}$ of cardinality $\nvar$ is $\binom{n}{k} < \infty$. Thus, $E$ is finite, so $\tilde{E}$ is compact. 

By part (a) of the lemma, $\tilde{E} \subset \hat{B}(Y)$. 
Let $b_{1} \in \hat{B}_{1}(Y) \setminus \tilde{E}$ 
By hyperplane separation (Rao \cite[(i), p.\ 51]{crR73.LinStatInf}, 
Rudin \cite[Theorem 3.19, p.\ 70]{wR66.funcAnal} or see Wikipedia) there exists a unit vector $v \in \RR^{k}$ 
s.t.\ $b_{1} \cdot v > \gamma_{1} \cdot v$ for every $\gamma_{1} \in \hat{B}_{1}(Y)$.
Since, by part (b,ii) of the lemma, $U$ contains a basis, 
there exist $a_{1}, \ldots, a_{\ell} \in \RR$ 
and pairs $(u^{1}, \beta_{1}^{1}), \ldots, (u^{\ell}, \beta_{1}^{\ell})$ with $u^{i} \in U$ 
and $\beta_{1}^{i} \in E$ ($i = 1, \ldots, \ell$) as in part (b,i) s.t.\ 
$v = a_{1} u^{1} + \cdots + a_{\ell} u^{\ell}$. But by (b,ii) again, $-u^{i} \in U$. Therefore, we may assume $a_{i} > 0$ ($i = 1, \ldots, \ell$). Let $A = a_{1} + \cdots + a_{\ell} > 0$. Let $c_{i} := a_{i}/A > 0$ and $w := v/A \neq 0$,  
Then 
   \begin{subequations} 
	\begin{gather*}  
		c_{1} + \cdots + c_{\ell} = 1,  \\
		w = c_{1} u^{1} + \cdots + c_{\ell} u^{\ell},  \\
		c_{1} \beta_{1}^{1} + \cdots + c_{\ell} \beta_{1}^{\ell} \in \hat{B}_{1}(Y), 
	\end{gather*}
   \end{subequations} 
and
    \begin{equation*}
      c_{1} \beta_{1}^{1} \cdot w + \cdots + c_{\ell} \beta_{1}^{\ell} \cdot w 
        < b_{1} \cdot w = c_{1} b_{1} \cdot u^{1} + \cdots + c_{\ell} b_{1} \cdot u^{\ell}
          \leq c_{1} \beta_{1}^{1} \cdot w + \cdots + c_{\ell} \beta_{1}^{\ell} \cdot w ,
    \end{equation*}
This contradiction proves the corollary. 
  \end{proof}

\begin{proof}[Proof of lemma \ref{L:basic.LAD.soln.facts}]  
(a)  Since $\gamma \mapsto L^{1}(\gamma, Y)$ is convex in $\gamma$, $\hat{B}(Y)$ is convex. Again by continuity of $L^{1}(\gamma, Y)$, $\hat{B}(Y)$ is closed. We show $\hat{B}(Y)$ is bounded and nonempty. 

Let $\gamma = (\gamma_{0}, \gamma_{1}^{T})^{T}$ be a $\nvar \times 1$ column vector, where $\gamma_{0} \in \RR$ and $\gamma_{1}$ is ${k \times 1}$, and let 
$\| X_{1} \gamma \|$ denote the sum of the absolute values of the entries of 
$(X_{1} \gamma)^{n \times 1}$. $| X_{1} \gamma |$, on the other hand, is the ordinary Euclidean norm of $X_{1} \gamma$. Then, by the triangle inequality,
	\begin{equation*}
		\sum_{i=1}^{n} | \gamma_{0} + x_{i} \gamma_{1} | 
		          = \| X_{1} \gamma \| \geq | X_{1} \gamma |.
	\end{equation*}
Now,
	\begin{equation*}
		| X_{1} \gamma | = \sqrt{\gamma^{T} X_{1}^{T} X_{1} \gamma } \geq \sqrt{\lambda_{\nvar}} |  \gamma |,
	\end{equation*}
where $\lambda_{\nvar} \geq 0$ is the smallest eigenvalue of $X_{1}^{T}X_{1}$. Since, by assumption, $Y$ is not collinear, by lemma \ref{L:Y.collnr.iff.rank.X1.<.k+1}, 
$\lambda_{\nvar} > 0$. Therefore,
	\begin{equation}  \label{E:L1.gamma.geq.root.lambda.-.y}
		L^{1}(\gamma, Y) \geq \| X_{1} \gamma \| - \sum_{i=1}^{n} |y_{i}| 
		       \geq \sqrt{ \lambda_{\nvar}} |\gamma| - \sum_{1}^{n} |y_{i} |. 
	\end{equation} 
Let $\alpha$ be some column $\nvar$-vector and choose $r \in (0, \infty)$ satisfying
	\begin{equation}  \label{E:r.lambda.-.y.>.L1}
		r \sqrt{ \lambda_{\nvar} } - \sum_{1}^{n} |y_{i}| > L^{1}( \alpha, Y ).
	\end{equation}
Let $C := \{ \gamma \in \RR^{k+1}: |\gamma| \leq r \}$, so $C$ is compact. Thus, by \eqref{E:L1.gamma.geq.root.lambda.-.y} and \eqref{E:r.lambda.-.y.>.L1}, it suffices 
to minimize $L^{1}(\gamma, Y)$ in $\gamma$ in the compact set $C$. 
Since $L^{1}(\gamma, Y)$ is continuous in $\gamma$, the minimum is achieved. Thus, $\hat{B}(Y)$ is nonempty 
and $\hat{B}(Y) \subset C$. 
In fact, $L^{1}(\alpha, Y')$ is continuous in $Y' = (X', y') \in \Y$ and, by lemma \ref{L:Eigen.cont.}, $\lambda_{\nvar}$ is continuous in $X_{1}$ so \eqref{E:r.lambda.-.y.>.L1}, 
with $Y'$ in place of $Y$, holds in a neighborhood, $\mcl{V}$, of $Y$. Thus, $\hat{B}(Y') \subset C$ holds for every $Y' \in \mcl{V}$. Moreover, since the set of collinear data sets is closed and $Y$ is non-collinear, we may assume that all data sets in $\mcl{V}$ are non-collinear. \emph{(a) is proved.}

Let $\mcl{J}  \subset 2^{\NN_{n}}$ denote the collection of subsets 
$J = \{i_{1}, \ldots, i_{m}\}$ of $\NN_{n}$ s.t.\ $x_{i_{2}} - x_{i_{1}}, \ldots, x_{i_{m}} - x_{i_{1}}$ do \emph{not} span $\RR^{k}$ ($m=2, \ldots, n$). (It is easy to see that the span 
of $x_{i_{2}} - x_{i_{1}}, \ldots , x_{i_{m}} - x_{i_{1}}$ is independent of which element of $J$ is labeled $i_{1}$.) Corresponding to $m = 0$, let $\varnothing \in \mcl{J}$. Every subset of $\NN_{n}$ having $k$ or fewer elements is in $\mcl{J}$, so $\mcl{J}$ is nonempty. Since $Y$ is not collinear, by definition \ref{D:collinearity}, $\{ 1, \ldots, n \} \notin \mcl{J}$. If $J  = \{i_{1}, \ldots, i_{m}\} \in \mcl{J}$, let $S_{J} \subset \RR^{k}$ be the subspace spanned by $x_{i_{2}} - x_{i_{1}}, \ldots , x_{i_{m}} - x_{i_{1}}$. In particular, $S_{\varnothing} = \{ 0 \} \subset \RR^{k}$. Let $S = \bigcup_{J \in \mcl{J}} S_{J}$. Then $S$ is closed, has $k$-dimensional Lebesgue measure 0, is scale invariant, and $-S = S$. Let $U =  S^{k-1} \setminus S$. ($S^{k-1} \subset \RR^{k}$ is the $(k-1)$-dimensional unit sphere.)  Then $U$ has property (b,ii), since $S^{k-1} \setminus S$ is nonempty and open in $S^{k-1}$. 

Let $u \in U$. (Think of $u$ as a $1 \times k$ row vector). Then $|u| =1 $. Let 
	\begin{equation}   \label{E:defn.of.a.sup}
		a := \sup \bigl\{ u \gamma_{1}:  \gamma_{1} \in \hat{B}_{1}(Y) \bigr\} .
	\end{equation}
Let $\beta = (\beta_{0}, \beta_{1}^{T})^{T} \in \hat{B}(Y)$ be a point at which $\gamma_{1} = \beta_{1} \in \hat{B}_{1}(Y)$ is a vector at which the maximum in \eqref{E:defn.of.a.sup} is achieved. (Since $\hat{B}(Y)$ is compact, $a < +\infty$, and $\beta$ exists.)  Whether or not $\beta_{1}$ is the only such vector, we now prove \eqref{E:exact.fit.basis} holds with this $u$ and $\beta$. 

Reordering the rows of $Y$ if necessary, let $\kappa = 0, 1, \ldots, n$ satisfy 
	\begin{equation}  \label{E:yi.i.leq.m}
		y_{i} = \beta_{0} + x_{i} \beta_{1} \text{ if and only if } i = 1, \ldots, \kappa.
	\end{equation}
Let $S \subset \RR^{k}$ be the space spanned 
by $x_{2} - x_{1}, \ldots, x_{\kappa} - x_{1}$ ($S = \{0\}$ if $\kappa = 0$ or 1) and \emph{assume} $S \neq \RR^{k}$. Then by definition of $U$, we have $u \notin S$. 
Let $\hat{u}$ be the orthogonal projection of $u$ onto $S$. 
Let $v^{k \times 1} := (u-\hat{u})^{T}$. Then $v \perp S$ and $u v > 0$. 
($u$ is a row vector so $uv$ makes sense. Similarly, $x_{i}$ is a row vector so 
$x_{i} v \in \RR$.) For $t \in  \RR$, 
let $\gamma_{0}(t)^{1 \times 1} = \beta_{0} - t x_{1} v$ and 
$\gamma_{1}(t)^{k \times 1} = \beta_{1} + t v$. 
Write, $\gamma(t) = (\gamma_{0}(t), \gamma_{1}(t)^{T})^{T}$ and let 
	\begin{equation*}
		f(t) := \sum_{i > \kappa} \bigl| y_{i} - \gamma_{0}(t) 
		  - x_{i} \gamma_{1}(t) \bigr|.
	\end{equation*}
(Notice that the sum is over $i > \kappa$.)

By \eqref{E:yi.i.leq.m}, 
	\begin{equation*}
		\gamma_{0}(t) + x_{i} \gamma_{1}(t) 
		      = \beta_{0} + x_{i} \beta_{1} + t(x_{i} - x_{1})v = y_{i},
		        \quad i \leq \kappa .
	\end{equation*}
since $v \perp S$ and $x_{i} - x_{1} \in S$ if $i \leq \kappa$. 
Therefore, $L^{1} \bigl( \gamma(t), Y \bigr) = f(t)$, $t \in \RR$. (See \eqref{E:L1.defn}.) Thus, $f(0) = L^{1}(\beta, Y)$. By definition of $\kappa$ again, if $i > \kappa$,
$y_{i} - \gamma_{0}(0) - x_{i} \gamma_{1}(0) = y_{i} - \beta_{0} - x_{i} \beta_{1} \neq 0$. Therefore, there exist $\epsilon_{\kappa+1}, \ldots, \epsilon_{n} = \pm 1$ 
s.t.\ for some $\delta > 0$,
	\begin{equation*}
		f(t) = \sum_{i>\kappa} \epsilon_{i} \bigl[ y_{i} - \gamma_{0}(t) 
		  - x_{i} \gamma_{1}(t) \bigr], \quad |t| < \delta.
	\end{equation*}
Therefore, the derivative, $\tfrac{d}{ds} L \bigl(\gamma(s), Y  \bigr) \restriction_{s=t} = f'(t)$, exists. Now, $\gamma(0) = \beta \in \hat{B}(Y)$ so $L \bigl(\eta), Y  \bigr)$ is minimized by $\eta = \beta$. Therefore, we have $f '(0) = 0$. But $\gamma_{0}$ and $\gamma_{1}$ are linear functions. Thus, in $(-\delta, \delta)$ we have $f' = 0$. I.e., $\gamma(t) \in \hat{B}(Y)$ for every $t \in (-\delta, \delta)$. However, if $t \in (0, \delta)$,
	\begin{equation*}
		u \gamma_{1}(t) = u \beta_{1} + t u v >  u \beta_{i} = a. 
	\end{equation*}
But this contradicts the choice of $\beta$. Therefore, our assumption that 
$S \neq \RR^{k}$ must be false. I.e., $S = \RR^{k}$. This means $\kappa-1 \geq k$, 
i.e., $\kappa \geq \nvar$. Hence, \eqref{E:exact.fit.basis} holds.

Let $\beta$ be as before, \emph{viz.}, a point of $\hat{B}(Y)$ at which the maximum in \eqref{E:defn.of.a.sup} is achieved. We show that $\beta$ is unique. Suppose 
$\beta' = (\beta_{0}', \beta_{1}'^{T})^{T} \in \hat{B}(Y) \setminus \{ \beta \}$ and 
$u \beta_{1}' = u \beta_{1} = a$. For $r \in \RR$ let
$\beta(r) := \beta(r) = \bigl( \beta_{0}(r), \beta_{1}(r)^{T} \bigr)^{T} := r \beta + (1 - r) \beta'$. For every $r \in \RR$, $u [r \beta_{1} + (1 - r) \beta_{1}'] = a$. 
Since $\hat{B}(Y)$ is convex, as $r$ ranges from 0 to 1, $\beta(r) \in \hat{B}(Y)$ and takes on infinitely many values. 

By \eqref{E:exact.fit.basis}, for every $r \in [0, 1]$, there exist distinct
$i_{1} = i_{1}(r), \ldots, i_{\nvar} = i_{\nvar}(r) \in \NN_{n}$, s.t.\ 
$y_{i_{j}} = \beta_{0}(r) + x_{i_{j}} \beta_{1}(r)$, for $j = 1, \ldots, \nvar$, 
and $x_{i_{2}} - x_{i_{1}}, \ldots , x_{i_{\nvar}} - x_{i_{1}}$ is a basis for $\RR^{k}$. 
Let $\mbf{i} := \{ i_{1}, \ldots, i_{\nvar} \}$ and let
	 \begin{equation*}
	       y_{\mbf{i}} := (y_{i_{1}}, \ldots, y_{i_{\nvar}})^{T} \text{ and } X_{1,\mbf{i}} :=
		\begin{pmatrix}
				\begin{matrix}
					1 & x_{i_{1}} \\
					1 & x_{i_{2}}
				\end{matrix} \\
			\vdots \\
				\begin{matrix}
					1 & x_{i_{\nvar}}
				\end{matrix}
		\end{pmatrix}
		       .
	\end{equation*}
Then $y_{\mbf{i}} = X_{1,\mbf{i}} \, \beta(r)$. But, 
$x_{i_{2}} - x_{i_{1}}, \ldots , x_{i_{\nvar}} - x_{i_{1}}$ spans $\RR^{k}$. Hence, by lemma \ref{L:Y.collnr.iff.rank.X1.<.k+1} (with $n = \nvar$), the matrix $X_{1,\mbf{i}}$ is of full rank. That means, for any choice of $\{ i_{1}, \ldots, i_{\nvar} \}$ there is only one 
$\nvar \times 1$ matrix $\gamma = (\gamma_{0}, \gamma_{1}^{T})^{T}$ s.t.\ 
$y_{\mbf{i}} = X_{1,\mbf{i}} \, \gamma$, i.e.\ s.t.\ 
$y_{i_{j}} = \gamma_{0} + x_{i_{j}} \gamma_{1}$ for $j = 1, \ldots, \nvar$. Since there are only finitely many sequences $1 \leq i_{1} < \ldots < i_{\nvar} \leq n$, it follows that there can be at most finitely many $r \in [0, 1]$ s.t.\ $u \beta_{1}(r) = a$. This contradiction proves (i), uniqueness of $\beta$.
\end{proof}

Recall that $\Y_{LAD}'$ denotes the set of all $Y \in \Y$ s.t.\ $Y$ is not collinear and $\hat{B}(Y)$ contains exactly one point. 

\begin{remark}  \label{R:basis.for.Phi.LAD}
Let $Y \in \Y_{LAD}'$, so $Y$ is not collinear and $\hat{B}(Y)$ contains just one point, $(\beta_{0}, \beta_{1})$ (subsection \ref{SS:LAD}). By lemma \ref{L:basic.LAD.soln.facts}(b,(iii)), there exists $i_{1}, \ldots, i_{\nvar} \in \NN_{n}$ s.t.\ \eqref{E:exact.fit.basis} holds. \emph{Claim:} $( x_{i_{j}} - x_{i_{1}}, y_{i_{j}} - y_{i_{1}} )$ ($j = 2, \ldots, \nvar$) is a basis for the element, $\xi$, of $G(k,\nvar)$ parallel to the LAD plane of $Y$. By \eqref{E:exact.fit.basis} and definition of ``LAD plane'', $(x_{i_{j}}, y_{i_{j}} )$ lies on the LAD plane of $Y$ ($j = 2, \ldots, \nvar$). Therefore, $( x_{i_{j}} - x_{i_{1}}, y_{i_{j}} - y_{i_{1}} ) \in \xi$. But, again by \eqref{E:exact.fit.basis}, $x_{i_{2}} - x_{i_{1}}, \ldots, x_{i_{\nvar}} - x_{i_{1}}$ are linearly independent. Hence, $( x_{i_{j}} - x_{i_{1}}, y_{i_{j}} - y_{i_{1}} )$ is a basis of $\xi$.
\end{remark}

As remarked above, if $Y$ is collinear then the set $\hat{B}(Y)$ is unbounded. Therefore, if $\hat{B}(Y)$ contains only one point then $Y$ is automatically not collinear. We have the following. 
 
	\begin{prop}   \label{P:Y'.LAD.is.dense}
		$\Y_{LAD}'$ is dense in $\Y$.
	\end{prop}
Recall \eqref{E:1n.col.vec.defn}. Notice that if $Y \in \Y_{LAD}'$ then $a Y + c (1^{n}, 0^{n \times k}) \in \Y_{LAD}'$ for every $a, c \in \RR$ with $a \neq 0$. Thus, by proposition \ref{P:Y'.LAD.is.dense}, $\Y_{LAD}' \cap \D$ is dense in $\D$ defined by \eqref{E:plane.fitting.D.mu.defn}. (See \eqref{E:shft.invar.of.regrssn}.)

\begin{proof} By \eqref{E:noncollinear.dense.inY}, it suffices to show 
that $\Y_{LAD}'$ is dense in the set of all non-collinear data sets in $\Y$. Let $Y \in \Y$ be a non-collinear data set. If $Y \in \Pf^{k}$, then clearly $\Phi_{LAD}(Y) = \Delta(Y)$ is unique. I.e., $Y \in \Y'_{LAD}$. 

Suppose $Y$ is not collinear and not in $\Pf^{k}$ either and let $U$ be as in lemma \ref{L:basic.LAD.soln.facts}. Let $u \in U$ be arbitrary but fixed and 
let $\beta = \beta(u)\in \hat{B}(Y)$ be as in lemma \ref{L:basic.LAD.soln.facts}(b,i). Write $\beta = (\beta_{0}, \beta_{1}^{T})^{T}$. For a column 
$\nvar$-vector $\gamma = (\gamma_{0}, \gamma_{1}^{T})^{T}$, let 
	\begin{equation}  \label{E:eps.gamma.defn}
		\epsilon_{i}(\gamma) := sign(y_{i} - \gamma_{0} - x_{i} \gamma_{1}), 
		         \quad i \in \NN_{n}. 
	\end{equation} 
(See \eqref{E:sign.function}.) Suppose $\gamma = (\gamma_{0}, \gamma_{1}^{T})^{T} \neq \beta$ and define 
	\begin{equation}  \label{E:gamma.t.defn}
	   \gamma(t) = \bigl( \gamma_{0}(t), \gamma_{1}(t)^{T} \bigr)^{T} 
		  = \beta + t(\gamma - \beta), \quad t \in \RR. 
	\end{equation}
Thus, $\gamma(0) = \beta$ and so does not depend on $\gamma$. Suppose 
	\begin{equation*}
		u \gamma_{1} \geq a 
		   :=  \sup \bigl\{u \alpha_{1}: 
		       \alpha = (\alpha_{0}, \alpha_{1}^{T})^{T} \in \hat{B}(Y) \bigr\} 
		         = u\beta_{1}.
	\end{equation*}  
(Such a $\gamma_{1}$ exists, e.g., $\gamma_{1} = 2 \beta_{1}$.) If $t > 0$ then 
$\gamma(t) \ne \beta$, but $u \gamma_{1}(t) = u \beta_{1} + t (u \gamma_{1} -u \beta_{1}) \geq a$, so by lemma \ref{L:basic.LAD.soln.facts}(b,i) $\gamma(t) \notin \hat{B}(Y)$ and therefore 
	\begin{equation}  \label{E:L1.gamma.>.L1.beta}
		L^{1} \bigl( \gamma(t), Y \bigr) > L^{1}(\beta, Y), \text{ if } t > 0.
	\end{equation}

\emph{Claim:}  For some $t_{0} = t_{0}(\gamma) > 0$, $\epsilon_{i}(\gamma,t) :=  \epsilon_{i}(\gamma(t))$ ($i \in \NN_{n}$) are all constant in $t \in [0, t_{0}]$. Notice that 
	\begin{equation}  \label{E:resid.at.t}
		r_{i}(t) :=  y_{i} - \gamma_{0}(t) - x_{i} \gamma_{1}(t) 
		   =  y_{i} - \beta_{0} - x_{i} \beta_{1} 
		         - t \bigl[ (\gamma_{0} - \beta_{0}) + x_{i}(\gamma_{1} - \beta_{1}) \bigr].
	\end{equation}
If $(\gamma_{0} - \beta_{0}) + x_{i}(\gamma_{1} - \beta_{1}) \neq 0$,  $r_{i}(t)$ has exactly one 0. Call it $s$. Thus $y_{i} - \beta_{0} - x_{i} \beta_{1}$ and 
$s \bigl[ (\gamma_{0} - \beta_{0}) + x_{i}(\gamma_{1} - \beta_{1}) \bigr]$ have the same sign.
If $s \leq 0$, let $t_{0i}(\gamma) = +\infty$. Otherwise, let $t_{0i}(\gamma) = s  > 0$. In this case   
	\begin{equation}  \label{E:t0i.expression}
		0 < t_{0i}(\gamma) = \frac{y_{1} - \beta_{0} - x_{i} \beta_{1}}
	                  {(\gamma_{0} - \beta_{0}) + x_{i}(\gamma_{1} - \beta_{1}) }
	           = \frac{|y_{1} - \beta_{0} - x_{i} \beta_{1}|}
	                  {\bigl| (\gamma_{0} - \beta_{0}) + x_{i}(\gamma_{1} - \beta_{1}) \bigr|}
	              < \infty.
	\end{equation}
If $(\gamma_{0} - \beta_{0}) + x_{i}(\gamma_{1} - \beta_{1}) = 0$, and $y_{i} - \beta_{0} - x_{i} \beta_{1} = 0$, then, by \eqref{E:resid.at.t}, $r_{i}(t)$ is identically 0. In this case, again let $t_{0i}(\gamma) = +\infty$. If $(\gamma_{0} - \beta_{0}) + x_{i}(\gamma_{1} - \beta_{1}) = 0$, but $y_{i} - \beta_{0} - x_{i} \beta_{1} \neq 0$, then $r_{i}(t)$ has no 0's. Again, let $t_{0i}(\gamma) = +\infty$. Notice that for each $i \in \NN_{n}$, we have $t_{0i} > 0$ and 
$sign \bigl[ y_{i} - \gamma_{0}(t) - x_{i} \gamma_{1}(t) \bigr] = sign \, r_{i}(t)$ is constant 
in $t \in \bigl( 0, t_{0i}(\gamma) \bigr]$.

Finally, let 
	\begin{equation}   \label{E:t0(gamma).defn}
		t_{0}(\gamma) = 1 - \exp \bigl\{ -\min_{i}t_{0i}(\gamma) \bigr\}.
	\end{equation}  
Then $0 < t_{0}(\gamma) <  \min_{i}t_{0i}(\gamma)$ and $t_{0}(\gamma)$ is finite. Thus, there exists $t_{0} = t_{0}(\gamma) > 0$ s.t.\ 
$\epsilon_{i} :=  \epsilon_{i} \bigl( \gamma(t) \bigr)$ ($i \in \NN_{n}$) are all constant 
in $t \in (0, t_{0}]$ (i.e., excluding $t = 0$). But since $\gamma(t)$ is continuous in $t$, if $0 \leq t \leq t_{0}$ (i.e., including $t = 0$) 
$\epsilon_{i} \bigl( \gamma(t) \bigr) [y_{i} - \gamma_{0}(0) - x_{i} \gamma_{1}(0)] \geq 0$ 
($i \in \NN_{n}$). Define $\epsilon_{i}( \gamma, 0 ) := \epsilon \bigl[ \gamma(0) \bigr] :=  \epsilon_{i}$. This proves the claim.

Thus, by \eqref{E:gamma.t.defn}, 
	 \begin{multline} \label{E:f.gamma.defn}
	  f_{\gamma}(t) :=  L^{1} \bigl( \gamma(t), Y \bigr)
	    = \sum_{i=1}^{n} \epsilon_{i} 
	       \bigl( y_{i} - \gamma_{0}(t) - x_{i} \gamma_{1}(t) \bigr) \\
	         = \sum_{i=1}^{n} \epsilon_{i} 
	           \bigl( y_{i} - \beta_{0} - t(\gamma_{0}-\beta_{0}) 
	             - x_{i} \beta_{i} - t x_{i} (\gamma_{i}-\beta_{i}) \bigr)  .
		     \quad 0 \leq t \leq t_{0} .
	\end{multline}
Now, given $\gamma$, we have that $\epsilon_{1} \bigl[ \gamma(t) \bigr], \ldots, \epsilon_{n} \bigl[ \gamma(t) \bigr]$ are all constant for $0 \leq t < t_{0}(\gamma)$. Therefore, if $\gamma \in C$, then, by \eqref{E:ft.>.f0.f'.gamma.poz} and \eqref{E:f.gamma.defn},
	\begin{equation}  \label{E:exprssn.for.f'.gamma}
		0 < f'_{\gamma}(t) = f'_{\gamma}(0) 
			= - \sum_{i=1}^{n} \epsilon_{i} (\gamma, 0) 
				\bigl[ (\gamma_{0} - \beta_{0}) + x_{i} (\gamma_{1} - \beta_{1}) \bigr],  
					\quad 0 < t < t_{0}(\gamma).
	\end{equation}
Since $\gamma(t)$ is linear in $t$, the derivative $f_{\gamma}'$ is constant on $[0, t_{0})$. ($f_{\gamma}'(0)$ is the right hand derivative.)  For $t \in (0, t_{0})$ we have, by \eqref{E:L1.gamma.>.L1.beta},
	\begin{equation}  \label{E:ft.>.f0.f'.gamma.poz}
		f_{\gamma}(t) > f_{\gamma}(0), \text{ for } t \in (0, t_{0}). \text{ So }
			f_{\gamma}'(t) > 0, \text{ for }   t \in \bigl[ 0, t_{0}(\gamma) \bigr).
	\end{equation}

Define
	\begin{equation}  \label{E:rho.gamma.quo.defn}
		\rho_{\gamma} := \rho(\gamma) :=  \frac{f_{\gamma}'(0)}{u \gamma_{1} - a}.
	\end{equation}
(Define $\rho(\gamma) = +\infty$ if $u\gamma_{1} = a$.) Thus, 
	\begin{equation}   \label{E:rho.gamma.positive}
		\text{If } \gamma 
		   = (\gamma_{0}, \gamma_{1}^{T})^{T} \in \RR^{\nvar}, 
	              \gamma \neq \beta, \text{ and } u \gamma_{1} \geq a, 
	                  \text{ then } \rho(\gamma) > 0.
	\end{equation}

Let 
	\begin{equation}  \label{E:rho.inf.defn}
		\rho :=  \inf \bigl\{ \rho(\gamma): \gamma 
		         = (\gamma_{0}, \gamma_{1}^{T})^{T} \in \RR^{\nvar}, 
		             \gamma \neq \beta, \text{ and } u \gamma_{1} \geq a \bigr\}.
	\end{equation}  

\emph{Claim:}  
    \begin{equation}  \label{E:rho.>.0}
      \rho > 0 .
    \end{equation}
If  $\gamma = (\gamma_{0}, \gamma_{1}^{T})^{T} \ne \beta$ and $u \gamma_{1} \geq a$, let  $\tilde{\gamma}= \beta+|\gamma - \beta|^{-1}(\gamma-\beta)$. 
Then $u \tilde{\gamma}_{1} \geq a$. 
$\tilde{\gamma} \neq \beta$ and, by \eqref{E:f.gamma.defn}, 
	 \begin{align*} 
	  f_{\tilde{\gamma}}(t) &:=  L^{1} \bigl( \tilde{\gamma}(t), Y \bigr) \\
	         &= \sum_{i=1}^{n} \epsilon_{i} 
	           \bigl[ y_{i} - \beta_{0} - t(\tilde{\gamma}_{0}-\beta_{0}) 
	             - x_{i} \beta_{i} - t x_{i} (\tilde{\gamma}_{i}-\beta_{i}) \bigr]  .
		     \quad 0 \leq t \leq t_{0} \\
	        &= \sum_{i=1}^{n} \epsilon_{i} 
	           \bigl[ y_{i} - \beta_{0} - t |\gamma - \beta|^{-1}(\gamma_{0}-\beta_{0}) 
	             - x_{i} \beta_{i} - t x_{i} |\gamma - \beta|^{-1}(\gamma_{i}-\beta_{i}) \bigr] ,  \\
		     & \qquad \qquad 0 \leq t \leq t_{0} .
	\end{align*}

Therefore, by \eqref{E:exprssn.for.f'.gamma}, 
$f_{\tilde{\gamma}}'(0) = |\gamma - \beta|^{-1} f_{\gamma}'(0)$. Moreover, because 
$u \beta_{1} = a$, we have 
$u \tilde{\gamma}_{1} - a = |\gamma - \beta|^{-1} (u \gamma_{1} - a)$. 
Thus, by \eqref{E:rho.gamma.quo.defn}, $\rho(\tilde{\gamma}) = \rho(\gamma)$. Let
	\begin{equation}  \label{E:C.set.defn}
		C = \bigl\{ \gamma = (\gamma_{0}, \gamma_{1}^{T})^{T} \in \RR_{\nvar}: 
		   |\gamma - \beta| = 1 \text{ and } u \gamma_{1} \geq a \bigr\}.
	\end{equation}
So $C$ is compact. Therefore, we have
	\begin{equation*}
		\rho = \inf \bigl\{ \rho(\gamma): \gamma \in C \bigr\}.
	\end{equation*}

By \eqref{E:rho.gamma.positive}, in order to prove the claim \eqref{E:rho.>.0}, it suffices to show that $\rho( \cdot )$ is bounded away from 0 on $C$. While $C$ is compact, the function $\rho(\gamma)$ is not lower semi-continuous in $\gamma$ (Ash \cite[A6.1, p.\ 388]{rbA72}) so some work will be needed to show $\rho > 0$.

Since, by \eqref{E:gamma.t.defn}, $\gamma(t) \to \beta$ as $t \downarrow 0$, if $y_{i} - \beta_{0} - x_{i} \beta_{1} \neq 0$, then $ \epsilon_{i} (\gamma, 0) $ does not depend 
on $\gamma$. However, by lemma \ref{L:basic.LAD.soln.facts}(b, iii), there are at least 
$\nvar$ values of $i$ for which $y_{i} - \beta_{0} - x_{i} \beta_{1} =  0$. For those $i$'s, 
$ \epsilon_{i} (\gamma, 0) $ does depend on $\gamma$. 

Now, for each $i$ we have that $\bigl| (\gamma_{0} - \beta_{0}) + x_{i}(\gamma_{1} - \beta_{1}) \bigr|$ achieves a finite maximum value on $\gamma \in C$. Call that maximum value, $M_{i} < \infty$. Then, by \eqref{E:t0i.expression}, we have
	\begin{multline*}
		\text{For } \gamma \in C \text{ and } i \in \NN_{n}, \text{ we have } \\
		       t_{0i}(\gamma) 
		         \geq M_{i}^{-1} |y_{i} - \beta_{0} - x_{i} \beta_{1}| > 0 \text{ or }
			t_{0i}(\gamma) = + \infty.
	\end{multline*}
I.e., by \eqref{E:t0(gamma).defn}, there exists $\tilde{t}_{0} > 0$, s.t.\ 
$t_{0}(\gamma) \geq \tilde{t}_{0} > 0$ for every $\gamma \in C$. 

By compactness of $C$, continuity of $\gamma \mapsto f_{\gamma}(\tilde{t}_{0})$, and \eqref{E:ft.>.f0.f'.gamma.poz}, there exists $\eta > 0$, s.t.\
	\begin{equation*}
		f_{\gamma}(\tilde{t}_{0}) - f_{\gamma}(0) > \eta > 0, \quad \gamma \in C.
	\end{equation*}  
Since $0 < \tilde{t}_{0} \leq t_{0}$, we have that, for every $\gamma \in C$, $f_{\gamma}'(t)$ is constant in $t \in [0, \tilde{t}_{0}]$. Therefore,
	\begin{equation}   \label{E:lwr.bnd.f.gamma}
		f_{\gamma}'(0) = f_{\gamma}'(\tilde{t}_{0}) 
		      = \bigl[ f_{\gamma}(\tilde{t}_{0}) - f_{\gamma}(0) \bigr] / \tilde{t}_{0}
		      > \eta/\tilde{t}_{0}, \quad \gamma \in C.
	\end{equation}
On the other hand, by \eqref{E:C.set.defn}, 
	\begin{equation*}
		0 \leq u \gamma_{1} - a \leq |u| |\gamma_{1}| + |a| \leq |u| \bigl( |\beta| + 1 \bigr) 
		  + |a| < \infty, \quad \gamma \in C.
	\end{equation*}
Therefore, by \eqref{E:lwr.bnd.f.gamma},
	\begin{equation*}
		\rho(\gamma) \geq \frac{\eta}{\tilde{t}_{0} \Bigl[ \bigl( |\beta| + 1 \bigr) 
		 + |a| \Bigr]} > 0, \gamma \in C.
	\end{equation*}
This proves the claim \eqref{E:rho.>.0} that $\rho > 0$.

We need to find $Y' \in \Y_{LAD}'$ arbitrarily close to $Y$. Let $u \in U(Y)$, suppose 
 $\gamma = \beta \in \hat{B}(Y)$ maximizes $u \gamma_{1}$. Let $a := u \beta_{1}$. By lemma \ref{L:basic.LAD.soln.facts}(b,iii), WLOG (Without Loss Of Generality) for some 
$m = \nvar, \ldots, n$, 
	\begin{equation}   \label{E:exact.fit.for.i.leq.m}
		y_{i} = \beta_{0} + x_{i} \beta_{i} \text{ if and only if } i = 1, \ldots, m. 
	\end{equation}
Moreover, $x_{2} - x_{1}, \ldots, x_{m} - x_{1}$ spans $\RR^{k}$. If $m = n$, then $Y \in \Pf^{k}$, contrary to assumption. So $m < n$. Let $\epsilon_{i} = \epsilon_{i}(\beta) = \pm 1$ satisfy $\epsilon_{i} (y_{i} - \beta_{0} - x_{i} \beta_{i}) \geq 0$, $i \in \NN_{n}$. Let $\delta > 0$ be small. Define $x_{i}' = x_{i}$, $y_{i}' = y_{i}$ for $i \neq m+1$ and let 
	\begin{equation*}
		x_{m+1}' = x_{m+1} + \delta \epsilon_{m+1} u 
		   \text{ and } y_{m+1}' = y_{m+1} + \delta \epsilon_{m+1} a.
	\end{equation*}  
Let $Y' \in \Y$ be the matrix whose $i^{th}$ row is $(x_{i}', y_{i}')$. By making $\delta$ sufficiently small, we can make $Y'$ arbitrarily close to $Y$. We will show that for $\delta > 0$ sufficiently small, $Y' \in \Y_{LAD}'$. Since $Y$ is not collinear, if $\delta$ is sufficiently small, $Y'$ is not collinear. By \eqref{E:Y.LAD'.defn}, it remains to show that $\hat{B}(Y')$ is a singleton. We will show, in fact, that 
	\begin{equation}  \label{E:B.Y'.=beta}
		\text{For } \delta > 0 \text{ sufficiently small } \hat{B}(Y') = \{\beta \}.
	\end{equation}  
Since $\gamma \mapsto L^{1}(\gamma, Y')$ is convex, it suffices to show that $L^{1}(\gamma, Y')$ is uniquely minimized by $\gamma = \beta$ for $\gamma$ in a neighborhood of $\beta$.  

By definition of $m$, $y_{m+1} - \beta_{0} - x_{m+1} \beta_{1} \neq 0$. Hence, if $\delta > 0$ is sufficiently small,  $y_{m+1}' - \beta_{0} - x_{m+1}' \beta_{1}$ is not 0 either and has the same sign as $y_{m+1} - \beta_{0} - x_{m+1} \beta_{1}$. Let $\gamma^{\nvar \times 1} = (\gamma_{0}, \gamma_{1}^{T})^{T}$. Recall the definitions \eqref{E:eps.gamma.defn} and \eqref{E:gamma.t.defn}. By making $\gamma$ closer to $\beta$ if necessary, we may assume $\beta$ that $\epsilon_{i} = \epsilon_{i}[\gamma(t)] = \epsilon_{i} \; (\equiv \epsilon_{i}(\beta))$ and so is constant in $t \in [0,1]$ $(i \in \NN_{n})$. (Use the fact that, by \eqref{E:exact.fit.for.i.leq.m}, $\epsilon_{i}, \ldots, \epsilon_{m} = \pm 1$ can be arbitrary. Redefine them if necessary.)  By making $\gamma$ even closer to $\beta$ if necessary, we have $\epsilon_{m+1}(y_{m+1}' - \gamma_{0} - x_{m+1}' \gamma_{1}) > 0$. Then, 
	\begin{align}  \label{E:L1.Y'}
		L^{1}(\gamma, Y') &= \sum_{i=1}^{n} \epsilon_{i} \,
		                         (y_{i}' - \gamma_{0} - x_{i}' \gamma_{1}) \notag \\
			 &= \sum_{i=1}^{n} \epsilon_{i} \, (y_{i} - \gamma_{0} - x_{i} \gamma_{1}) 
			       + \delta a - \delta u \gamma_{1}  \\
			 &= L^{1}(\gamma, Y) + \delta a - \delta u \gamma_{1} \notag \\
			 &\geq L^{1}(\beta, Y) + \delta a - \delta u \gamma_{1},\notag 
	\end{align}
since $\beta \in \hat{B}(Y)$. In particular, since $u \beta_{1} = a$, we have 
	\begin{equation} \label{E:L.beta.Y'.=L.beta.Y}
		L^{1}(\beta, Y') = L^{1}(\beta, Y).
	\end{equation}

By \eqref{E:L1.Y'} and \eqref{E:L.beta.Y'.=L.beta.Y}, if $u \gamma_{1} < a$, $\gamma \notin \hat{B}(Y')$. Next, suppose $\gamma \neq \beta$, but $u\gamma_{1} = a$. By lemma \ref{L:basic.LAD.soln.facts}(b,i), we have
that $\gamma \notin \hat{B}(Y)$. Thus, by \eqref{E:L1.Y'} and \eqref{E:L.beta.Y'.=L.beta.Y},
	\begin{equation*}
		L^{1}(\gamma, Y') = L^{1}(\gamma,Y) > L^{1}(\beta, Y) = L^{1}(\beta, Y').
	\end{equation*}
I.e., $\gamma \notin \hat{B}(Y')$.

Suppose $u \gamma_{1} > a$. WLOG $\delta \in (0, \rho)$, where $\rho > 0$ is defined in \eqref{E:rho.inf.defn}. We show that $\gamma \notin \hat{B}(Y')$. By \eqref{E:L1.Y'}, \eqref{E:exprssn.for.f'.gamma}, and \eqref{E:rho.gamma.quo.defn},
	\begin{align*}
		L^{1}(\gamma, Y') - L^{1}(\beta, Y') &= 
		      -\sum_{i=1}^{n} \epsilon_{i} \,
		            \bigl( (\gamma_{0} - \beta_{0}) + x_{i}(\gamma_{1} - \beta_{1}) \bigr) 
		               + \delta (a - u \gamma_{1}) \\
		     &= f_{\gamma}'(0) - \delta (u \gamma_{1} -a) \\
		     &= (\rho_{\gamma} - \delta)(u \gamma_{1} -a) \\
		     &\geq (\rho - \delta)(u \gamma_{1} -a) > 0,
	\end{align*}
since $u \gamma_{1} > a$ and $\delta \in (0, \rho)$. 

Now, $\beta \in \hat{B}(Y')$, yet as we just have shown for $\delta$ sufficiently small, 
$\gamma \neq \beta$ implies that $\gamma \notin \hat{B}(Y')$. Therefore, the only option is to conclude that \eqref{E:B.Y'.=beta} holds. In particular $Y' \in \Y_{LAD}'$. The proposition is proved. 
\end{proof} 
  \begin{corly}   \label{C:nonunique.mins.are.sings}
Let $Y \in \Y$ and suppose $Y$ is not collinear. Then $Y$ is a singularity of LAD 
(w.r.t.\ $\Y_{LAD}'$) if and only if $\hat{B}(Y)$ contains more than one point 
(so $Y \notin \Y_{LAD}'$).
  \end{corly}
\begin{proof} Let $Y \in \Y$ be noncollinear. By \eqref{E:unique.LAD.soln.means.not.sing}, if $Y$ has just one LAD solution then it is not a singularity.

Suppose $\hat{B}(Y)$ is not a singleton but there exists $\bar{\beta}_{1}^{1 \times k}$ s.t.\ $\beta= (\beta_{0}, \beta^{T}_{1})^{T} \in \hat{B}(Y)$ implies $\beta_{1} = \bar{\beta}_{1}$. Then we get a contradiction to lemma \ref{L:basic.LAD.soln.facts}(b,i). Hence, there exist $\beta^{i} = (\beta^{i}_{0}, \beta^{iT}_{1})^{T} \in \hat{B}(Y)$ ($i = 1,2$) s.t.\ $\beta^{1}_{1} \neq \beta^{2}_{1}$. Let $U$ be as in lemma \ref{L:basic.LAD.soln.facts}(b). Then $U$ contains a basis for $\RR^{k}$. It follows that there exists $u \in U$ s.t.\ $u \beta^{1}_{1} \neq u \beta^{2}_{1}$. WLOG $u \beta^{1}_{1} < u \beta^{2}_{1}$. By lemma \ref{L:basic.LAD.soln.facts}b(ii), $-u \in U$. Therefore, if $\gamma = \beta = (\beta_{0}, \beta_{1}^{T})^{T}$ maximizes $u \gamma_{1}$ among all $\gamma = (\gamma_{0}, \gamma_{1}^{T})^{T} \in \hat{B}(Y)$ and $\gamma = \beta' = (\beta'_{0}, \beta_{1}^{'T})^{T}$ maximizes $-u \gamma_{1}$ among all $\gamma = (\gamma_{0}, \gamma_{1}^{T})^{T} \in \hat{B}(Y)$, then 
	\[
		u \beta'_{1} \leq u \beta^{1}_{1} < u \beta^{2}_{1} \leq u \beta_{1}.
	\]
Thus, $\beta'_{1} \neq \beta_{1}$. But from \eqref{E:B.Y'.=beta} in the proof of the proposition, we know that $Y$ can approximated arbitrarily well by data sets in $\Y_{LAD}'$ whose estimated coefficients are either $\beta$ or $\beta'$. Since $\beta'_{1} \neq \beta_{1}$, the LAD planes corresponding to these elements of $\Y_{LAD}'$ are different. Thus, as $Y'' \to Y$ through $\Y_{LAD}'$, the LAD plane $\Phi_{LAD}(Y'')$ does not converge in $G(k, \nvar)$. I.e., $Y$ is a singularity of LAD (w.r.t.\ $\Y_{LAD}'$).
\end{proof}

  \begin{lemma}  \label{L:diff.rank.condn.implications}
Suppose $Y = (X^{n \times k}, y^{n \times 1})$ is collinear but satisfies condition \eqref{E:diff.rank.condition}. Then:
	\begin{enumerate}
		\item $y \neq 0^{n \times 1}$ and the rows of $Y$ lie exactly 
		   on a unique $k$-plane. 
		          I.e., $Y \in \Pf^{k}$.
		             \label{I:Y.on.k-plane}
		\item If the rows of $X$ are mean-centered, 
		i.e., $1_{n} X = 0^{1 \times k}$, 
		      then the $k$-plane mentioned in statement \ref{I:Y.on.k-plane} passes 
		      through the origin and the point $(0^{1 \times k}, 1)$, the rank 
		        of $X$ is $k-1$, and the rank of $Y$ is $k$. 
		        \label{I:X.mean.centered.rank}
		\item $Y$ is \emph{not} a singularity of LAD. 
		\label{I:diff.rank.condn.implies.not.LAD.sing}
	\end{enumerate}
  \end{lemma}
  \begin{proof} Let $(x_{i}^{1 \times k}, y_{i})$ be the $i^{th}$ row of $Y$ ($i \in \NN_{n}$), for $i = 2, \ldots, n$, let $z_{i} := x_{i} - x_{1} \in \RR^{k}$, $v_{i} := y_{i} - y_{1} \in \RR$. Since $Y$ satisfies condition \eqref{E:diff.rank.condition}, if $2 \leq i_{1} < \ldots < i_{\nvar} \leq n$, 
then $(z_{i_{2}}, v_{i_{2}}), \ldots, (z_{i_{\nvar}}, v_{i_{\nvar}})$ are linearly independent. It follows that the $(n-1) \times \nvar$ matrix, $A$, whose $i^{th}$ row is $(z_{i+1}, v_{i+1})$ has rank at least $k$. 

On the other hand, since $Y$ is collinear, by \eqref{D:collinearity} the $(n-1) \times k$ matrix $Z$ whose $i^{th}$ row is $z_{i+1}$ has rank no greater than $k-1$. 
Thus, there exist $\alpha_{i_{2}}, \ldots, \alpha_{i_{\nvar}} \in \RR$, not all 0, s.t. 
$\sum_{j=2}^{\nvar} \alpha_{i_{j}} \, z_{i_{j}} = 0$. 
But $(z_{i_{2}}, v_{i_{2}}), \ldots, (z_{i_{\nvar}}, v_{i_{\nvar}})$ are linearly independent. Therefore,
	\begin{equation}    \label{E:sum.alpha.ij.zij.vij}
		\sum_{j=2}^{\nvar} \alpha_{i_{j}} \, (z_{i_{j}}, v_{i_{j}}) = (0, v) \in \RR^{\nvar}, 
		           \text{ where } v \neq 0.
	\end{equation}

WLOG in \eqref{E:sum.alpha.ij.zij.vij} we may assume $\alpha_{i_{q}} \neq 0$. Suppose the rank of $Z$ is less than $k-1$. Then $z_{i_{\nvar}}$ is a linear combination of $z_{i_{2}}, \ldots, z_{i_{\nvar-1}}$. Therefore, there exists $\gamma_{i_{j}} \in \RR$ ($j = 2, \ldots, \nvar-1$) s.t.\ 
$\sum_{j=2}^{\nvar-1} \gamma_{i_{j}} \, z_{i_{j}} = 0$. Hence,
	\begin{equation*}    
		\sum_{j=2}^{\nvar-1} \gamma_{i_{j}} \, (z_{i_{j}}, v_{i_{j}}) = (0, w) \in \RR^{\nvar}, 
		           \text{ where } w \neq 0.
	\end{equation*}
Let  $\beta_{i_{j}} := (v/w) \gamma_{i_{j}}$ ($j = 2, \ldots, \nvar-1$) and let 
$\beta_{i_{\nvar}} := 0$. Then 
	\begin{equation*}
		\sum_{j=2}^{\nvar} \beta_{i_{j}} \, (z_{i_{j}}, v_{i_{j}}) = (0, v) \in \RR^{\nvar} .
	\end{equation*}
where this $v$ is the same $v$ as in \eqref{E:sum.alpha.ij.zij.vij}. 
Since $\alpha_{i_{\nvar}} - \beta_{i_{\nvar}} = \alpha_{i_{\nvar}} \neq 0$ we certainly have that not all
$\alpha_{i_{2}} - \beta_{i_{2}}, \ldots, \alpha_{i_{\nvar}} -\beta_{i_{\nvar}}$ are 0. However,  
	\begin{equation*}
		\sum_{j=2}^{\nvar} (\alpha_{i_{j}} - \beta_{i_{j}}) \, (z_{i_{j}}, v_{i_{j}}) 
		  = (0, 0) \in \RR^{\nvar}.
	\end{equation*}
This contradicts the fact that $\bigl( (z_{i_{2}}, v_{i_{2}}), \ldots, (z_{i_{\nvar}}, v_{i_{\nvar}}) \bigr)$ are linearly independent. We conclude that $Z$ has rank $k-1$. 
Let $\zeta \subset \RR^{k}$ be the the row space, $\rho(Z)$, of $Z$, i.e., the span 
of $z_{2}, \ldots, z_{n}$, so $\zeta$ has dimension $k-1$.

Recall that $A^{(n-1) \times \nvar}$ is the matrix whose $i^{th}$ row is $(z_{i+1}, v_{i+1})$. It follows that the row space of $A$ is the $k$-plane $\xi := \zeta \times \RR$. (See \eqref{E:sum.alpha.ij.zij.vij}.) In particular, $(0,1) \in \xi$ and $y \neq 0^{n \times 1}$. \emph{A fortiori}, by letting $\alpha_{j} = 0$ if $j \notin \{ i_{2}, \ldots, i_{\nvar} \}$, we may assume that there are $\alpha_{2}, \ldots, \alpha_{n}  \in \RR$ s.t.\ 
	\begin{equation}  \label{E:sum.zi.vi.=.0.v}
		\sum_{j=2}^{n} \alpha_{j} \, (z_{j}, v_{j}) = (0, v) \in \RR^{\nvar}, 
		     \text{ where } v \neq 0 \text{ is arbitrary.}
	\end{equation}

Moreover, by \eqref{E:diff.rank.condition}, for \emph{any} 
$1 \leq i_{1} < \ldots < i_{\nvar} \leq n$, we have that 
$(z_{i_{2}}, v_{i_{2}}), \ldots, (z_{i_{\nvar}}, v_{i_{\nvar}})$ is a basis of $\xi$. But $\nvar-1 = k$. Hence, $\xi$ is the only $k$-plane containing $(z_{i_{2}}, v_{i_{2}}), \ldots, (z_{i_{\nvar}}, v_{i_{\nvar}})$. Thus, the rows of $Y$ lie exactly on the $k$-plane $(x_{1}, y_{1}) + \xi$ and that is the only plane with that property. This proves statement \ref{I:Y.on.k-plane} in the lemma.

No longer require that $X$ be mean-centered. Now suppose $X$ is mean-centered. We show that $(x_{1}, y_{1})  \in \xi$ so the rows of $Y$ lie exactly on $\xi$. Since $X$ is mean-centered,
	\begin{equation*}  \label{E:x1.in.terms.zj}
		x_{1} = - n^{-1} \sum_{j=2}^{n} z_{j}.
	\end{equation*}
By \eqref{E:sum.zi.vi.=.0.v}, there exist $\alpha_{2}, \ldots, \alpha_{\nvar} \in \RR$ s.t.\   
	\begin{equation*}
		\sum_{j=2}^{n} \alpha_{j} \, (z_{j}, v_{j}) 
			= \left( 0, \, y_{1} + n^{-1} \sum_{j=2}^{n} v_{j} \right).
	\end{equation*}
Therefore,  
	\begin{equation}  \label{E:x1.y1.in.trms.others}
	   (x_{1}, y_{1}) = \sum_{j=2}^{n} (\alpha_{j} - n^{-1}) \, (z_{j}, v_{j}) \in \xi.
	\end{equation}
In particular, $(x_{1}, y_{1}) + \xi = \xi$. 

\eqref{E:x1.y1.in.trms.others} implies that $x_{1} \in \zeta$. But $x_{j} = z_{j} + x_{1}$ 
so $\zeta$ is the row space of $X$. It follows that $\text{rank} \, X = k-1$. Moreover, \eqref{E:x1.y1.in.trms.others} implies 
that $(x_{1}, y_{1}) - (x_{1}, y_{1}) = 0 \in (x_{1}, y_{1}) + \xi$.

Consider the matrix $Y_{1}$, whose first row is $(x_{1}, y_{1})$ and whose remaining rows 
are $(x_{2}, y_{2}) - (x_{1}, y_{1}), \ldots, (x_{n}, y_{n}) - (x_{1}, y_{1})$. $Y_{1}$ has the same rank as $Y$. But, by \eqref{E:diff.rank.condition}, $rank \, Y_{1} \geq k$. 
Therefore, $rank \, Y \geq k$. But since $rank \, X = k-1$, we must have $rank \, Y = k$. (This elaborates upon statement \ref{I:Y.on.k-plane} in the lemma.)
This proves statement \ref{I:X.mean.centered.rank} in the lemma. 

Suppose $\{ Y_{m} \}  \subset \Y_{LAD}'$ converges to $Y$. (Since $\Y_{LAD}'$ is dense 
in $\Y$ -- proposition \ref{P:Y'.LAD.is.dense} -- , such a sequence exists.) 
Given $m = 1, 2, \ldots$, let $\xi_{m} \in G(k,\nvar)$ be the $k$-dimensional subspace 
of $\RR^{\nvar}$ parallel to the (unique) LAD plane of $Y_{m}$. 

We prove that $\xi_{m} \to \xi$ as $m \to \infty$. Let the rows of $Y_{m}$ be $(x_{m1}, y_{m1}), \ldots, (x_{mn}, y_{mn})$. Since $Y_{m} \in \Y_{LAD}'$, by \eqref{E:Y.LAD'.defn}, we have that $Y_{m}$ is \emph{not} collinear and there is only one LAD plane for $Y_{m}$, so, by lemma \ref{L:basic.LAD.soln.facts}(b,iii), taking a subsequence if necessary, we may assume there exist \emph{fixed} indices $1 \leq i_{1} < \ldots < i_{\nvar} \leq n$ s.t.\ for every $m$, we have 
$z_{mi_{j}} := x_{mi_{j}} - x_{mi_{1}}$ ($j = 2, \ldots , \nvar$) is a basis for $\RR^{k}$. Moreover, if $v_{mi_{j}} := y_{mi_{j}} - y_{mi_{1}}$ ($j = 2, \ldots , \nvar$) then $\xi_{m}$ passes through $(z_{mi_{j}}, v_{mi_{j}})$ ($j = 2, \ldots , \nvar$). But $z_{mi_{j}} \to z_{i_{j}}$ and $v_{mi_{j}} \to v_{i_{j}}$ and $(z_{i_{2}}, v_{i_{2}}), \ldots, (z_{i_{\nvar}}, v_{i_{\nvar}})$ is a basis of $\xi$, by \eqref{E:diff.rank.condition}. Hence, by \eqref{E:convergence.in.Grassmann}, $\xi_{m} \to \xi$. Thus, $Y$ is not a singularity. Hence, statement \ref{I:diff.rank.condn.implies.not.LAD.sing} of the lemma is proved.
  \end{proof}
  
	\begin{prop} \label{P:few.collin.LAD.sings}
Every singularity of LAD w.r.t.\ $\Y_{LAD}'$ in $\Pf^{k}$ is collinear. 
The dimension of the space of all collinear singularities of LAD is $\leq (n+1)k -1$. Hence, by lemma \ref{L:dim.set.of.collin.data.sets}, the codimension of the set of collinear singularities of LAD in the space of all collinear data sets is at least 1. 
	\end{prop}

In the proof of the proposition we make use of the following two lemmas. Recall the definition of angle, \eqref{E:angle.between.vectors}. 

  \begin{lemma}  \label{L:u.v.x.ineq}
Let $u,v,x \in \RR^{k}$ and suppose $|u| = |v| = 1$. Then
	\begin{equation} \label{E:u.v.x.ineq}
		\bigl| (u \cdot x) u - (v \cdot x) v \bigr| \leq |x| \angle (u, v),
	\end{equation}
where $\angle (u, v)$ is the angle between $u$ and $v$. Moreover, \emph{the LHS of \eqref{E:u.v.x.ineq} only depends on $x$ through the length of its projection onto the plane spanned by $u$ and} $v$.
  \end{lemma}
Recall that by \eqref{E:angle.between.vectors}, $\angle (u, v) \leq \pi$. The last sentence is easy to confirm when $u \cdot v = 0$. 
  \begin{proof}
WLOG assume $x$ lies in the plane spanned by $u$ and $v$ and $|x| = 1$. Choose orthogonal coordinates for that span so that $u = (1,0)$, $v = (\cos \theta, \sin \theta)$, and $x = (\cos \alpha, \sin \alpha)$, where $-\pi \leq \alpha, \theta \leq \pi$. 
Then $\angle (u, v) = |\theta|$. The square of the LHS of \eqref{E:u.v.x.ineq} is then
\begin{align*}
  \cos^{2} \alpha - &2 \cos \alpha  
        (\cos \theta \cos \alpha + \sin \theta \sin \alpha) \cos \theta \\
         & \qquad \qquad + (\cos \theta \cos \alpha + \sin \theta \sin \alpha)^{2} \\
         &= \cos^{2} \alpha - 2 \cos \alpha \cos (\theta - \alpha) \cos \theta 
           + \cos^{2} (\theta - \alpha) \\
        &= \cos^{2} \alpha - \cos (\theta - \alpha) \bigl( 2 \cos \alpha \cos \theta
           - \cos (\theta - \alpha) \bigr) \\
        &= \cos^{2} \alpha - \cos (\theta - \alpha) 
          \bigl( 2 \cos \alpha \cos \theta - \cos \alpha \cos \theta - \sin \alpha \sin \theta  \bigr) \\
        &= \cos^{2} \alpha - \cos (\theta - \alpha) 
          \bigl( \cos \alpha \cos \theta  - \sin \alpha \sin \theta  \bigr) \\
        &= \cos^{2} \alpha - \cos (\theta - \alpha) 
          \cos (\theta + \alpha) \\
        &= \cos^{2} \alpha - \bigl[ \cos (\theta - \alpha) \cos (\theta + \alpha) 
          + \sin (\theta - \alpha) \sin (\theta + \alpha)  \bigr] \\
        & \qquad \qquad + \sin (\theta - \alpha) \sin (\theta + \alpha) \\
        &= \cos^{2} \alpha - \cos 2\alpha + \sin (\theta - \alpha) \sin (\theta + \alpha) \\
        &= \cos^{2} \alpha - (\cos^{2} \alpha - \sin^{2} \alpha)
           + \sin (\theta - \alpha) \sin (\theta + \alpha) \\
        &= \sin^{2} \alpha + \sin (\theta - \alpha) \sin (\theta + \alpha) \\
        &= \sin^{2} \alpha + (\sin \theta \cos \alpha - \cos \theta \sin \alpha)
          (\sin \theta \cos \alpha + \cos \theta \sin \alpha) \\
        &= \sin^{2} \alpha + \sin^{2} \theta \cos^{2} \alpha 
          - \cos^{2} \theta \sin^{2} \alpha  \\
        &= \sin^{2} \alpha + \sin^{2} \theta \cos^{2} \alpha 
          - (1 - \sin^{2} \theta) \sin^{2} \alpha  \\
        &= \sin^{2} \alpha + \sin^{2} \theta \cos^{2} \alpha 
          - \sin^{2} \alpha + \sin^{2} \alpha \sin^{2} \theta)  \\
        &= (\sin^{2} \theta) (\cos^{2} \alpha + \sin^{2} \alpha)  \\
        &= \sin^{2} \theta . 
\end{align*}
But $|\sin \theta| \leq |\theta| = \angle (u, v)|$.
  \end{proof}

   \begin{lemma}  \label{L:dim.of.low.rank.mats}
Let $\mu, \nu = 1, 2, 3, \ldots$ and let $\mcl{M}$ be the set of $\mu \times \nu$ real matrices. Let $r < \min \{ \mu, \nu \}$ be a non-negative integer 
and let $\mcl{M}_{r} = \{ M \in \mcl{M} : \text{rank } M = r \}$. Let 
$\mcl{M}_{0:r} = \{ M \in \mcl{M} : \text{rank } M \leq r \} = \bigcup_{s=0}^{r} \mcl{M}_{s}$. 
Then $\mcl{M}_{r}$ and  $\mcl{M}_{0:r}$ are closed subsets of $\mcl{M}$ and 
    \begin{equation}  \label{E:r.mu+r.nu-r.sqrd}
      \dim \mcl{M}_{0:r} = \dim \mcl{M}_{r} = r \mu + r \nu - r^{2} < \mu \nu .
    \end{equation} 
In particular, 
$\mcl{M}_{0:r}$ has empty interior relative to $\mcl{M}$.
   \end{lemma}
Notice that the quantity $r \mu + r \nu - r^{2}$ is an increasing function 
of $r < \min \{ \mu, \nu \}$.
  \begin{proof} (See lemma \ref{L:rank.k.mats.form.manif}.) By lemma \ref{L:rank.lwr.semicont}, $\mcl{M}_{r}$ is closed. 

A matrix of rank $r = 0$, is just the zero matrix. I.e., $\mcl{M}_{0} = \{ 0^{\mu \times \nu} \}$. Thus, $\dim \mcl{M}_{0} = 0$ and \eqref{E:r.mu+r.nu-r.sqrd} is satisfied in that case.

Let $ r \in \bigl[1, \min \{ \mu, \nu \} \bigr)$, let $s = 1, \ldots, r$, 
and let $0 < i_{1} < \cdots < i_{s} \leq \nu$ be integers. 
Let $0 < j_{1} < \cdots < j_{\nu-s} \leq \nu$ be the remaining integers between 1 and $\nu$ inclusive. If $A$ and $B$ are $\mu \times s$ and $s \times (\nu-s)$ matrices, resp., 
let $f(A,B) := f_{i_{1}, \ldots, i_{s}}(A,B)$, be the $\mu \times \nu$ matrix whose $(i_{m})^{th}$ column is the $m^{th}$ column of $A$ ($m = 1, \ldots, s)$ and whose $(j_{m})^{th}$ column is 
the $m^{th}$ column of $AB$ ($m = 1, \ldots, \nu - s$). 

Assume $A$ has full rank $s$. For simplicity, for the moment assume $i_{m} = m$ 
($m=1, \ldots, s$). Then $f_{1, \ldots, s}(A,B) = A (I_{s} , B)$, where $I_{s}$ is the $s \times s$ identity matrix. It follows that the rank of $f(A,B)$ is $s$. Conversely, it is clear that any matrix 
in $\mcl{M}_{s}$ equals $f_{i_{1}, \ldots, i_{s}}(A,B)$ for some $(A,B)$ and some choice 
of $i_{1} < \cdots < i_{s}$. By \eqref{E:dim.Kk=k.nvar}, the dimension of the space of such $A$'s is $\mu s$. The dimension of the space of such $B$'s is obviously $s \times (\nu-s)$. 

Note that $f_{i_{1}, \ldots, i_{s}}$ is invertible: Knowing $i_{1}, \ldots, i_{s}$, we can read off $A$ and $W := AB$ from $f(A,B)$. Since $A$ has full rank, we have $B = (A^{T} A)^{-1} A^{T}  W.$

By example \ref{Ex:ratnl.fns.loc.Lip}, $f(A,B)$ is Lipschitz in $(A, B)$ and $f^{-1}$ is locally Lipschitz. (Relevant to the $f^{-1}$ case is Lang \cite[Proposition 8, p.\ 334]{sL65.Algebra}.)
The domain of $f$ is the set, $\mcl{AB}_{s}$, of all pairs $(A,B)$ just specified. Thus, by \eqref{L:loc.Lip.image.of.null.set.is.null}, $\dim f(\mcl{AB}_{s}) = \dim \mcl{AB}_{s}.$
By lemma \ref{L:dim.of.product} or Boothby \cite[Theorem 1.7, p.\ 57]{wmB75}, the Hausdorff dimension of $\mcl{AB}_{s}$ is $s \mu + s \nu - s^{2}$. 
Hence, $\dim f(\mcl{AB}_{s}) = s \mu + s \nu - s^{2}$.

As $s = 1, \ldots, r$ and $i_{1}, \ldots, i_{s}$ vary, the images of the corresponding $f$'s, together with $\mcl{M}_{0}$, cover $\mcl{M}_{r}$. Therefore, by \eqref{E:dim.of.whole.=.max.dim.of.parts},
$\dim \mcl{M}_{0:r} = \max_{s = 0, \ldots, r} (s \mu + s \nu - s^{2}) = r \mu + r \nu - r^{2}$.
  \end{proof}

\begin{proof}[Proof of proposition \ref{P:few.collin.LAD.sings}] 
Let $Y = (X^{n \times k}, y^{n \times 1}) \in \Pf^{k}$ be a singularity of LAD. Then, by \eqref{E:not.coll.in.Pk.not.LAD.sing}, $Y$ must be collinear. This is just the first sentence in the proposition. 

By lemma \ref{L:diff.rank.condn.implications}\eqref{I:diff.rank.condn.implies.not.LAD.sing}, for such $Y$ condition \eqref{E:diff.rank.condition} must fail. We now prove that the Hausdorff dimension of the collection of all collinear data sets for which condition \eqref{E:diff.rank.condition} fails is no greater than $(n+1)k-1$. The second sentence of the proposition will then follow. Since, by lemma \ref{L:dim.set.of.collin.data.sets}, the dimension of the set of collinear data sets is $(n+1)k$, the last sentence of the proposition will hence follow. 

Let $(x_{i}, y_{i})$ be the $i^{th}$ row of $Y$ ($i \in \NN_{n}$). Since condition \eqref{E:diff.rank.condition} fails, there exist  $1 \leq i_{1} < \ldots < i_{\nvar} \leq n$  s.t.\  
	\begin{equation}  \label{E:(x-x1.y-y1).lin.dep}
		(x_{i_{2}} - x_{i_{1}}, y_{i_{2}} - y_{i_{1}}), 
		  (x_{i_{3}} - x_{i_{1}}, y_{i_{3}} - y_{i_{1}}),\ldots, 
		    (x_{i_{\nvar}} - x_{i_{1}},  y_{i_{\nvar}} - y_{i_{1}}) 
		\text{ are linearly dependent.}	
	\end{equation}
Let $J = \{ i_{1}, \ldots, i_{\nvar} \}$ and let 
    \begin{equation*}
      \mcl{Q}_{J}  \text{ denote the set of all collinear data sets in } \Pf^{k} 
        \text{ satisfying \eqref{E:(x-x1.y-y1).lin.dep}}. 
    \end{equation*}
We show that $\mcl{Q}_{J}$ has Hausdorff dimension $\leq (n+1)k-1$.

Let $\omega \in G(k-1,k)$. Let $e_{1}, \ldots, e_{k}$ be an orthonormal basis 
of $\RR^{k}$. Let $u \in \RR^{k}$ be a unit vector perpendicular to $\omega$. 
For some $i$ we have $u \cdot e_{i} \neq 0$. (``$\cdot$'' indicates the usual inner product in $\RR^{k}$.)  I.e., $e_{i} \notin \omega$. Let $i \in \NN_{k}$ and
    \begin{equation*}
      W_{i} := \bigl\{ \omega \in G(k-1,k) : e_{i} \notin \omega \bigr\} .
    \end{equation*}

\emph{Claim:} 
    \begin{equation}  \label{E:Wi.is.open}
      W_{i} \text{ is open in } G(k-1,k).
    \end{equation} 
If $\omega \in G(k-1,k)$, 
let $\Pi_{\omega} := \Pi(\omega)^{k \times k}$ be the matrix of orthogonal projection 
onto $\omega$. By lemma \ref{L:proj.mat.is.imbedding.of.Grass} (with $(k-1,k)$ in place 
of $(k, \nvar)$) $\Pi$ is smooth. Hence, $g : \omega \mapsto | \Pi_{\omega} e_{i} |$ is a continuous map of $G(k-1,k)$ into $[0, \infty)$. But $W_{i} = g^{-1}[0, 1)$. This proves the claim \eqref{E:Wi.is.open}.

Let $\{ \ell_{1}, \ldots, \ell_{k-1} \} = \{ 1, \ldots, k \} \setminus \{ i \}$. 
For $\omega \in G(k-1,k)$, let $Z(\omega) := Z_{i}(\omega)^{(k-1) \times k}$ be the matrix whose $j^{th}$ row is $\Pi_{\omega} e_{\ell_{j}}$ ($j = 1, \ldots, k-1$). 
Thus, $\rho \bigl[ Z_{i}(\omega)) \bigr] \subset \omega$. 

Let $\omega \in W_{i}$. So $e_{i} \notin \omega$. \emph{Claim:}
    \begin{equation}  \label{E:rank.Z(omega)=k-1}
      rank \, Z(\omega ) = k-1, 
    \end{equation} 
For suppose $rank \, Z(\omega ) < k-1$. Then there are numbers $a_{1}, \ldots, a_{k-1}$, not all 0, s.t.\  
	\begin{equation}  \label{E:pi.omega.sum.=.0}
		\Pi_{\omega} \left( \sum_{j=1}^{k-1} a_{j} e_{\ell_{j}} \right) 
		   = \sum_{j=1}^{k-1} a_{j} \Pi_{\omega} e_{\ell_{j}} = 0.
	  \end{equation}
Let $x := \sum_{j=1}^{k-1} a_{j} e_{\ell_{j}}$. Then $x \neq 0$, since $e_{\ell_{j}}$ ($j = 1, \ldots, k-1$) are linearly dependent. By \eqref{E:pi.omega.sum.=.0}, we have $x  \perp \omega$. Thus, the $(k-1)$ dimensional subspace, $x^{\perp}$, of $\RR^{k}$ that is orthogonal to $x$ contains $\omega$. But $\dim \omega = k-1$. I.e., $x^{\perp} = \omega$. But $e_{1}, \ldots, e_{k}$ are  orthonormal and by definition of $\{ \ell_{1}, \ldots, \ell_{k-1} \}$ we have $x \perp e_{i}$. I.e, $e_{i} \in \omega$, contradicting the definition of $W_{i}$. The  claim \eqref{E:rank.Z(omega)=k-1} follows.

We have already observed that 
$\rho \bigl[ Z_{i}(\omega)) \bigr] \subset \omega$. Since $\dim \omega = k-1$, we have
    \begin{equation} \label{E:row.space.of.Z=omega}
       \rho \bigl[ Z_{i}(\omega)) \bigr] = \omega .
    \end{equation}

Let $\xi, \zeta \in G(k-1,k)$ and let $u, v \in \RR^{k}$ be unit vectors perpendicular 
to $\xi, \zeta$, resp., s.t.\ $u \cdot v \geq 0$. Define the distance between $\xi$ 
and $\zeta$ to be the angle, $\angle(u,v)$, between $u$ and $v$. (See remark \ref{R:metrics.on.P(S).and.G(k,k+1)} and \eqref{E:metric.on.G(k,k+1)}.) 
\emph{Claim:} $Z(\omega)$ is Lipschitz in $\omega \in W_{i}$ (w.r.t.\ $\angle$ and the Frobenius norm, \eqref{E:matrix.norm}; see lemma \ref{L:proj.mat.is.imbedding.of.Grass} and appendix \ref{Chptr:Lip.Haus.meas.dim}; remember, $\omega \in W_{i}$ implies 
$e_{i} \notin \omega$). Let $\omega, \zeta \in W$ 
and let $u, v \in \RR^{k}$ be unit vectors perpendicular to $\omega, \zeta$, resp., 
s.t.\ $u \cdot v \geq 0$. Then $\Pi_{\omega} x = x - (u \cdot x)u$, $x \in \RR^{k}$. Similarly for $\Pi_{\zeta} x$. $\Pi_{\omega} e_{\ell_{j}}$ and $\Pi_{\zeta} e_{\ell_{j}}$ are the $j^{th}$ rows of $Z(\omega)$ and $Z(\zeta)$, resp. Thus, by lemma \ref{L:u.v.x.ineq}, 
	\[
	    \| \Pi_{\omega} e_{\ell_{j}} -\Pi_{\zeta} \| \leq 
		  \sum_{j=1}^{k-1} \bigl| \Pi_{\omega} e_{\ell_{j}} 
		    -\ Pi_{\zeta} e_{\ell_{j}} \bigr| \leq (k-1) \angle (u, v). 
		  \quad j = 1, \ldots, k-1.
	\]
But $\angle (u,v)$, the angle between $u$ and $v$, is the distance between $\omega$ 
and $\zeta$ and This proves the claim that $Z(\omega)$ is Lipschitz in 
$\omega \in W_{i}$. 

Now, $W_{i}$ is an open subset of $G(k-1,k)$ and so, 
by Milnor and Stasheff \cite[Lemma 5.1, p.\ 57]{jwMjdS74} (their notation differs slightly from ours) and \eqref{E:Haus.dim.s-manif.=.s},
    \begin{equation}  \label{E:dim.W.=.k-1}
        \dim W_{i} = k-1 .
    \end{equation}
    
Let $m \in \{ k, k+1, \ldots \}$. 
If $C$ is a $m \times k$ matrix of rank $< k,$ then its rows lie on some, perhaps not unique, $(k-1)$-dimensional subspace $\omega$ of $\RR^{k}$. 
Suppose $\omega \in W_{i}$.
Then, by \eqref{E:row.space.of.Z=omega}, the rows of $Z_{i}(\omega)$ form a basis 
of $\omega$. Thus, 
    \begin{multline}   \label{E:C=AZ.A.unique}
      \text{We can write } C = A Z_{i}(\omega), \text{ for some } i = 1, \ldots, k; \, 
        \omega \in G(k-1,k); \\
          \text{ and a unique } m \times (k-1) \text{ matrix } A.
    \end{multline}

Returning to the matrix $Y$, WLOG suppose \eqref{E:(x-x1.y-y1).lin.dep} holds 
with $i_{j} = j$ ($j =  1, \ldots, \nvar$) so $J = \NN_{\nvar} := \{1, \ldots, \nvar \}$. 
Let $z_{i} = x_{i} - x_{1} \in \RR^{k}$ and $u_{i} = y_{i} - y_{1} \in \RR$ ($i=2, \ldots, q$; see \eqref{E:initially.in.lin.reg.nvar=k+1}). Since $Y$ is collinear by assumption, by definition \ref{D:collinearity}, we have that 
$z_{2}, \ldots, z_{\nvar}$ do not span $\RR^{k}$. There are two ways \eqref{E:(x-x1.y-y1).lin.dep} can be true. 

\emph{The first way \eqref{E:(x-x1.y-y1).lin.dep} can be true} is if 
	\begin{multline} \label{E:z1...znvar.lin.indep}
		z_{2}, \ldots, z_{\nvar} \text{ span a space of dimension } k-1.  \\
		         \text{ It follows there exists } w \in \RR^{k} \text{ s.t.\ } 
		                  u_{i} = w \cdot z_{i} \qquad (i = 2, \ldots, \nvar).
	\end{multline}
We provide a local parametrization of data sets for which that is true. (The parametrization spills over to include data sets for which \eqref{E:z1...znvar.lin.indep} is false.) $Y$ is collinear but also in $\Pf^{k}$. Therefore, by \eqref{E:when.is.Y.in.Pk}, we have that 
$rank \, (Y - 1^{n} w_{1}^{T} Y) = k$. 
Here, 
    \begin{equation*}
      w_{1}^{T} = (1, 0, \ldots, 0)^{1 \times n} .
    \end{equation*} 
Thus, all the rows 
of $1^{n} w_{1}^{T} Y$ equal $(x_{1}, y_{1})^{1 \times \nvar}$.
Write $Y_{1} := Y - 1^{n} w_{1}^{T} Y$. Then the first row of $Y_{1}$ is 0. The submatrix consisting of rows $2, \ldots, \nvar$ of $Y_{1}$ has the form $(M, u)$, where $M^{k \times k}$ has rows $z_{2}, \ldots, z_{\nvar}$ and $u^{k \times 1} = (u_{2}, \ldots, u_{\nvar})$. By \eqref{E:z1...znvar.lin.indep}, $M$ has rank $k-1$. Therefore, by \eqref{E:C=AZ.A.unique}, there exists $i = 1, \ldots, k$; $\omega \in G(k-1,k)$; and unique $A^{k \times (k-1)}$ s.t.\ $M = A Z_{i}(\omega)$. 

Let $N^{(n-\nvar) \times k}$ be the submatrix of $Y_{1}$ consisting of the first $k$ columns of the last $n-\nvar$ rows. Then
    \begin{equation*}
      X - 1^{n} x_{1} =
      \begin{pmatrix}
           0^{1 \times k} \\
           A Z_{i}(\omega) \\
           N
      \end{pmatrix} .
    \end{equation*}
Since $Y$ is collinear, by definition \ref{D:collinearity}, $X - 1^{n} x_{1}$ has rank $< k$. By \eqref{E:z1...znvar.lin.indep}, $A$ must be of full rank $k-1$. Hence, 
$rank \, (X - 1^{n} x_{1}) = k-1$. Thus, the row space $\rho(N)$ must lie in that 
of $Z_{i}(\omega)$. Hence, there exists $B^{(n-k-1) \times (k-1)}$ s.t.\ 
$N = B Z_{i}(\omega)$. (Recall that in section \ref{SS:LAD} $n-k-1 = n-\nvar$.) The last $n - \nvar$ entries in the last column of $Y_{1}$ constitute a column vector 
$b^{(n-k-1) \times 1}$. 

Since the rank of $Y_{1}$ is $k$ but $rank \, (X - 1^{n} x_{1}) = k-1$, the last column of $Y_{1}$, \emph{viz.}\ $y - 1^{n} y_{1}$, does not lie in the column space 
of $X - 1^{n} x_{1}$. But the first element of $y - 1^{n} y_{1}$ is 0 and the $k$ elements after that constitute a vector in the column space of $M = A Z_{i}(\omega)$. It follows that the remaining $n - \nvar$ entries in $y - 1^{n} y_{1}$, forming the vector $b$, cannot belong to the column space of $N = B Z_{i}(\omega)$.

Putting this all together, we get the desired parametrization. Let $v$ be a $1 \times \nvar$ vector (corresponding to $(x_{1}, y_{1})^{1 \times \nvar}$); $A$, a $k \times (k-1)$ matrix \emph{of full rank;} $a$, a $(k-1) \times 1$ vector; $b$, a $(n-k-1) \times 1$ vector; $B$, an $(n-k-1) \times (k-1)$ matrix; $i = 1, \ldots, k$; and $\omega \in W_{i}$. A vector in the column space of $A Z_{i}(\omega)$ can be written $A Z_{i}(\omega) c^{T}$, where $c^{1 \times k} \in \rho(Z_{i}(\omega)$. Such a $c$ can be written 
$c = (a^{(k-1) \times 1})^{T} Z_{i}(\omega)$. Define an $n \times \nvar$ matrix by,
	\begin{equation*}
		f_{i}(x_{1}, A, a, b, B, \omega) =
			\begin{pmatrix}
			            0^{1 \times \nvar} \\
				\begin{matrix}
					AZ_{i}(\omega) & AZ_{i}(\omega) Z_{i}(\omega)^{T} a  \\
					BZ_{i}(\omega) & b
				\end{matrix}
			\end{pmatrix}
		+
			\begin{pmatrix}
				 v \\
				 v \\
				 \vdots \\
				 v
			\end{pmatrix}^{n \times \nvar}
		.
	\end{equation*}
As we have seen, any collinear $Y \in \Pf^{k}$ for which \eqref{E:z1...znvar.lin.indep} holds is in the image of $f_{i}$ for some $i = 1, \ldots, k$. By \eqref{E:comp.of.Lips.is.Lip} and corollary \ref{C:cont.diff.=.loc.Lip}, $f_{i}$ is  locally Lipschitz (appendix \ref{Chptr:Lip.Haus.meas.dim}). 

By lemma \ref{L:rank.lwr.semicont}, $A$ varies over an open subset of a manifold. By \eqref{E:Wi.is.open}, so does $\omega$. The other arguments to $f_{i}$ clearly have the same property. It follows from lemma \ref{L:dim.of.product} or Boothby \cite[Theorem 1.7, p.\ 57]{wmB75}, that the Hausdorff dimension of the domain of $f_{i}$ is the sum of the dimensions of its factors. 

By \eqref{E:dim.Kk=k.nvar}, the space of $A^{k \times (k-1)}$ of full rank has dimension is $k(k-1)$, same as the dimension of the space of all $k \times (k-1)$ matrices. By \eqref{E:dim.W.=.k-1}, $\dim W_{i} = k-1$. Allow $b$ to be any $n-k-1$ vector, not just those not in the column space of $B Z_{i}(\omega)$. I.e., the dimension of the domain of  $f_{i}$ is,
	\begin{multline}   \label{E:fi.domain.dim}
		 \underbrace{(k+1)}_{v} + \underbrace{k(k-1)}_{A} + \underbrace{(k-1)}_{a} + 
		      \underbrace{(n-k-1)}_{b} + \underbrace{(n-k-1)(k-1)}_{B} 
		        + \underbrace{(k-1)}_{\omega}  \\
		     = nk+k-1. 
	\end{multline}
Therefore, by lemma \ref{L:loc.Lip.image.of.null.set.is.null}, the Hausdorff dimension of the image of $f_{i}$ -- \emph{and hence of the set of $Y$'s satisfying} \eqref{E:z1...znvar.lin.indep} -- is no larger than $nk+k-1$. By lemma \ref{L:dim.set.of.collin.data.sets}, this is 1 less than the dimension of the set of all collinear data sets. 

\emph{The second way \eqref{E:(x-x1.y-y1).lin.dep} can be true} is if $k > 1$ and 
	\begin{equation}   \label{E:zs.not.lin.indep}
		\text{the span of }z_{2}, \ldots, z_{\nvar} \text{ in } 
		  \RR^{k} \text{ has dimension } < k-1.
	\end{equation}
We provide a local parametrization of data sets for which that is true (and some for which it is not). Let $v$ be a $1 \times \nvar$ vector; $A$ a $k \times (k-1)$ matrix \emph{of rank $< k-1$}; $c$ a $k \times 1$ vector; $b$ a $(n-k-1) \times 1$ vector; $B$ a $(n-k-1) \times (k-1)$ matrix; $i = 1, \ldots, k$; and $\omega \in W_{i}$. 
Define
	\begin{equation*}
		g_{i}(v, A, c, b, B, \omega) =
			\begin{pmatrix}
			                         0^{1 \times \nvar} \\
				\begin{matrix}
					AZ_{i}(\omega) & c  \\
					BZ_{i}(\omega) & b
				\end{matrix}
			\end{pmatrix}
		+
			\begin{pmatrix}
				 v \\
				 v \\
				 \vdots \\
				 v
			\end{pmatrix}^{n \times \nvar} .
	\end{equation*}
It is easy to see that any collinear $Y \in \Pf^{k}$ for which \eqref{E:zs.not.lin.indep} holds is in the image of $g_{i}$ for some $i = 1, \ldots, k$.  

Then, by lemma \ref{L:dim.of.low.rank.mats} and \eqref{E:dim.W.=.k-1}, the Hausdorff dimension of the image of $g_{i}$ is no larger than 
	\begin{multline*}
		\underbrace{k+1}_{v} + \underbrace{(k-2)k+(k-2)(k-1)-(k-2)^{2}}_{A} 
		   + \underbrace{k}_{c} + \underbrace{(n-k-1)}_{b} \\
		     + \underbrace{(n-k-1)(k-1)}_{B}  + \underbrace{(k-1)}_{\omega} 
		          = nk+k-2.
	\end{multline*}
As before, by lemma \ref{L:loc.Lip.image.of.null.set.is.null}, the Hausdorff dimension of the image of $f_{i}$ is no larger than $nk+k-2$. By lemma \ref{L:dim.set.of.collin.data.sets}, this is 2 less than the dimension of the set of all collinear data sets.

But $\mcl{Q}_{J}$ is covered by a finite union of images of functions like $f_{i}$ 
or $g_{i}$. Allowing $J$ to vary, we get a finite cover of the set of all collinear data sets 
$Y \in \Y$ for which condition \eqref{E:diff.rank.condition} fails. Apply \eqref{E:dim.of.whole.=.max.dim.of.parts} to \eqref{E:fi.domain.dim} and the preceding.
  \end{proof}

  \begin{corly} \label{C:defective.collin.data.sets.dim}
The Hausdorff dimension of the collection of all collinear data sets for which condition \eqref{E:diff.rank.condition} fails is no greater than $(n+1)k-1$. Thus, the codimension, in the set of all collinear data sets, of the collection of all collinear data sets for which condition \eqref{E:diff.rank.condition} fails is at least 1.
  \end{corly}

\numberwithin{equation}{section}

\chapter{Neighborhood of $\Pf_{1}$ Fibered by Cones in Resistant Location Problem on the Circle}  \label{Chptr:rob.loc.circle.cones.appendix}

In this appendix we show how to fiber a neighborhood of $\Pf_{1}$ (see \eqref{E:Pk.defn.robust.loc.on.sphere}) in measuring location on a circle ($\nvar = 1$) by cones as in definition \ref{D:fibering.by.cones}. (This appendix isn't done but I like to think there's not much more to do on it.)

Recall \eqref{E:ma:nvar=1}.

\section{Geodesics in $\D := (S^{1})^{n}$} \label{S:Geods.in.D.2}
In this section we adopt the convention of using boldface to indicate vectors. Unboldened characters with subscripts will be the coordinates, i.e.\ components, of the vector. For example, if $\blds{\phi} \in \RR^{n}$ then 
$\blds{\phi} := (\phi_{1}, \ldots, \phi_{n})$ with $\phi_{1}, \ldots, \phi_{n} \in \RR$. (In truth, sometimes I fail to follow this convention.) 

Here we consider the problem of finding location on a circle. In this case $\D = \bigl( S^{1} \bigr)^{n} \subset \RR^{2n}$, where in accordance with \eqref{E:n>2,nvar>0}, $n > 2$. 

We orient the circle $S^{1}$ so that clockwise and counter clockwise has meaning for it. As usual counter clockwise is the positive direction. 

We adopt the convention of using boldface to indicate vectors. Unboldened characters with subscripts will be the coordinates, i.e.\ components, of the vector. For example, if $\blds{\phi} \in \RR^{n}$ then 
$\blds{\phi} := (\phi_{1}, \ldots, \phi_{n})$ with $\phi_{1}, \ldots, \phi_{n} \in \RR$. (In truth, sometimes I fail to follow this convention.)

Recall \eqref{E:directional.T.defn}: 
$\T := diag \, \D = \bigl\{ (y, \ldots, y) \in \D$, $y \in S^{1} \bigr\}$. 
$\D$ is covered by coordinate neighborhoods parametrized as follows. 
Recall \eqref{E:N.sub.n}. If $x \in \D$, then for some 
$\phi_{i} \in \RR, \, i \in \NN_{n}$ we have  
	\begin{equation} \label{E:wrap.defn.2}
		x = \wrap(\blds{\phi}) := ( \cos \phi_{1}, \sin \phi_{1}, 
		  \cos \phi_{2}, \sin \phi_{2}, 
			\ldots, \cos \phi_{n}, \sin \phi_{n} \bigr) \in (S^{1})^{n} 
	              \subset \RR^{2n} ,
	\end{equation}
where $\blds{\phi} := ( \phi_{1}, \ldots, \phi_{n} )$. Note that if 
$\blds{\phi}' \in \RR^{n}$ and $\wrap(\blds{\phi}') = \wrap(\blds{\phi})$ then there exists $\mbf{k} \in \ZZ^{n}$ 
s.t.\ $\blds{\phi}' = \blds{\phi} + 2 \mbf{k} \pi$. 

Since $\wrap$ is differentiable, by corollary \ref{C:cont.diff.=.loc.Lip},
it is locally Lipschitz. Since it is periodic, it is globally Lipschitz. 

Let 
    \begin{equation}  \label{E:complex.plane}
      \CC = \text{ the complex plane. }
    \end{equation}
In this paragraph $i$ will denote $sqrt{-1}$, the imaginary unit. The function 
$exp : \CC \to \CC$ given by $exp(x + iy) = e^{x} (\cos y + i \sin y)$ ($x,y \in \RR$) is not constant on any open subset of $\CC$. It's analytic. Therefore, by the Open Mapping Theorem (Rudin \cite[Theorem 10.32, p.\ 216]{wR66.realcmplx}), $exp$ is open. In particular, if $\phi \in \RR$ then the image of any (open) neighborhood of $i \phi$ in $\CC$ is open. In particular if $\delta > 0$ then 
$E := exp \bigl[ (-\delta, \delta) + i (-\delta, \delta) \bigr]$, where 
$(-\delta, \delta) + i (-\delta, \delta) = \bigl\{ x + iy : |x| < \delta, |y-\phi| < \delta \bigr\}$, is open in $\CC$. Now, $exp \bigl[ i (-\delta, \delta) \bigr] = E \cap S^{1}$, and hence is open in $S^{1}$. In other words $\bigl\{ (\cos y, \sin y) : |y-\phi| < \delta \bigr\}$ is open in $S^{1}$. We conclude
    \begin{equation}  \label{E:wrap.is.open}
      \text{If } G_{1}, \ldots, G_{n} \text{ are open in $\RR$ then } 
        \wrap( G_{1} \times \cdots \times G_{n} ) \text{ is open in } \D . 
    \end{equation}
(Or one could use a similar argument employing the Inverse Function Theorem, Boothby \cite[Theorem (6.4), pp.\ 42--43]{wmB75}.) 

  \begin{remark}[Covering space]  \label{R:covering.space.2}
We have
    \begin{equation}  \label{E:Rn.is.cov.space.of.D.2}
      (\RR^{n}, \wrap \,) \text{  is a covering space of } \D . 
    \end{equation}
(Massey \cite[pp.\ 145--146]{wsM67.Massey}). To prove this let 
$x = (x_{1}, \ldots, x_{n}) \in \D$ be arbitrary. Thus, $x_{i} \in S^{1}$. Let 
    \begin{equation}  \label{E:elementary.nbhd.for.D.2}
      \clU := \clU_{x} := \prod_{i=1}^{n} \bigl( S^{1} \setminus \{-x_{i}\} \bigr) 
        \subset \D .
    \end{equation}
Clearly, $\clU$ is open and arcwise and simply connected. Moreover, $\clU$ contains the open ball $\mcl{B}_{\pi}(x)$ centered at $x$ with radius $\pi$. (See \eqref{E:geod.dist.on.D.2}.) In fact, 
$\clU$ is just the radius $\pi$ open ball, $\X_{\pi}(x)$, centered at $x$ w.r.t.\ the metric $\chi$ defined in remark \ref{R:another.metric}. 

In this remark $i$ will denote $\sqrt{-1}$. Any point $x' \in \D$ is equivalent to an $n$-tuple \linebreak
$\bigl( \exp(i \phi'_{1}), \ldots, \exp(i \phi'_{n}) \bigr)$ for some 
$\phi'_{1}, \ldots, \phi'_{n} \in \RR$. 
Write $x = \bigl( \exp(i \phi_{1}), \ldots, \exp(i \phi_{n}) \bigr)$ and 
$\blds{\phi} := (\phi_{1}, \ldots, \phi_{n})$. 
Given $j \in \NN_{n}$, consider the set 
$C_{j} := \CC \setminus \bigl\{ - r \exp(i \phi_{j}) : r \in [0, \infty) \bigr\}$, 
where $\CC$ is the complex plane which we think of as $\RR^{2}$. $C_{j}$ is an open, simply connected region in $\CC$ and $\exp(i \phi_{1}) \in C_{j}$. Therefore, by Rudin \cite[Theorem 13.18(g), p.\ 263]{wR66.realcmplx}, there exists a holomorphic function $\log_{j} : C_{j} \to \CC$ s.t.\ $\exp \circ \log_{j}$ is the identity 
on $C_{j}$. Let $\arg_{\phi_{j}} = \Im \circ \log_{j} : C_{j} \to \RR$, 
where $\Im(z) \in \RR$ is the imaginary part of $z \in \CC$. 
So $\arg_{\phi_{j}}$ is smooth. 
Let $S^{1}_{j} := S^{1} \cap C_{j} = \bigl\{ z \in C_{j} : |z| = 1 \bigr\}$. 
Obviously, $\arg_{\phi_{j}}(S^{1}_{j})$ is an open interval $I_{j}$ of length $2 \pi$. We may arrange things so that the midpoint of the interval is $\phi_{j}$, i.e.,  
$I_{j} = \phi_{j} + (-\pi, \pi)$. For $\phi \in I_{j}$, define 
    \begin{equation}  \label{E:arg.defn}
      \arg_{\phi_{j}} \bigl[ (\cos \phi, \sin \phi) \bigr] 
        = \arg_{\phi_{j}} \bigl[ \exp(i \phi) \bigr] = \phi . \; 
          \arg_{\phi_{j}} : C_{\phi_{j}} \to \RR \text{ is an isometry.}
    \end{equation}
(See also \eqref{E:arg!.maties!} and \eqref{E:arg!.maties!2}.) An alternative notation is as follows. Let $z = (\cos \phi, \sqrt{-1} \sin \phi)$. Then there is a smooth, Lipschitz version of $\arg$ defined on $S^{1} \setminus \{-z\}$. Denote that version by 
    \begin{equation}  \label{E:arg.superscript}
      \arg^{z} : S^{1} \setminus \{-z\} \to \RR \text{ with } \arg^{z}(z) \in (-\pi, \pi] .
    \end{equation}
$\arg^{z} \bigl( S^{1} \setminus \{-z\} \bigl)$ is an open interval of length $2 \pi$ 
with $\arg^{z}(z)$ as midpoint. 

Let $\mbf{V} := I_{1} \times \cdots \times I_{n}$. Recall that if $\mbf{w} = (w_{1}, \ldots, w_{k}) \in \RR^{k}$ for some $k = 1, 2, \ldots$, then its $L^{\infty}$-norm is $|\mbf{w}|_{\infty} = \max_{i} |w_{i}|$. Then $\mbf{V}$ is just the radius $\pi$ open $L^{\infty}$ ball, $\mbf{B}^{\infty}_{\pi}(\blds{\phi}) \subset \RR^{n}$ about $\blds{\phi}$. The restriction $\wrap \, \restriction_{\mbf{V}}$ is the inverse of 
    \begin{multline}  \label{E:arg,matey!.2}
      \blds{\arg}_{\blds{\phi}} := \arg_{\phi_{1}} \times \cdots \times \arg_{\phi_{n}} 
      \text{ is defined and smooth on } C_{1} \times \cdot \times C_{n} .  \\
         \text{ We may assume } \blds{\arg}_{\blds{\phi}}(x) = \blds{\phi} .
    \end{multline}
 
The components of $\wrap^{-1}(\clU)$ are precisely the sets 
$(I_{1} \times \cdots \times I_{n}) + 2 \pi \mbf{k}$ for $\mbf{k} \in \ZZ^{n}$. 
The sets $\clU_{x}$, $x \in \D$, are the ``elementary neighborhoods'' of the covering. By \eqref{E:geod.dist.on.D.2}, 
    \begin{equation}  \label{E:arg.is.isom}
      \blds{\arg}_{\blds{\phi}} : \clU_{x} \to I_{1} \times \cdots \times I_{n} \text{ and }
        \wrap \text{ in the other direction are isometries.}
    \end{equation}
  \end{remark}

Here is a rather obvious fact we will use more than once. 
    \begin{multline} \label{E:modulo}
      \text{If } \lambda > 0 \text{ then for every } \omega \in \RR 
        \text{ there exists a unique } \gamma \in (-\lambda, \lambda] \\
         \text{ s.t.\ } \omega - \gamma \text{ is an integral multiple of } 2 \lambda .
    \end{multline}
To see this, let $\lfloor \cdot \rfloor$ be the integer part function that takes 
$s \in \RR$ to the largest integer no bigger than $s$. 
Let $\zeta := \omega - \lfloor \omega/(2 \lambda) \rfloor \times 2 \lambda
 \in [0, 2 \lambda)$. 
Define $f : [0, 2 \lambda) \to (-\lambda, \lambda]$ as follows.
	\begin{equation*}  
		f(\alpha) :=
			\begin{cases}
				\alpha, &\text{ if } 0 \leq \alpha \leq \lambda, \\
				\alpha - 2 \lambda, &\text{ if } \lambda < \alpha < 2 \lambda.
			\end{cases}
	\end{equation*} 
Then $\alpha - f(\alpha) = $ 0 or $2 \lambda$. Finally, let $\gamma = f(\zeta)$. Then, considering the cases $\zeta \in [0, \lambda]$ and $\zeta \in (\lambda, 2 \lambda)$ separately we see that $\gamma \in (-\lambda, \lambda]$ and
    \begin{multline*}
      \omega - \gamma = (\omega - \zeta) + \bigl( \zeta - f(\zeta) \bigr)
        = \bigl( \omega - \omega + \lfloor \omega/(2 \lambda) \rfloor 
          \times 2 \lambda \bigr) + \bigl( \zeta - f(\zeta) \bigr) \\
             \in \bigl( \lfloor \omega/(2 \lambda) \rfloor + \{ 0, 1\} \bigr) 
               \times 2 \lambda .
    \end{multline*}
The set in parentheses at the end consists of integers. This proves \eqref{E:modulo}.
Thus, \eqref{E:wrap.defn.2} with $\lambda = \pi$ implies we may take
$\phi_{i} \in ( -\pi, \pi ], \, i \in \NN_{n}$.

Let $\blds{\theta} = ( \theta_{1}, \ldots, \theta_{n} )$. Define
	\begin{multline} \label{E:psi.defn.2}
		\psi_{\blds{\phi}} : ( \theta_{1}, \ldots, \theta_{n} )  \\
			\mapsto \bigl( \cos (\phi_{1}+\theta_{1}), \sin (\phi_{1}+\theta_{1}), 
			  \cos (\phi_{2}+\theta_{2}), \sin (\phi_{2}+\theta_{2}), \\
			     \ldots, \cos (\phi_{n}+\theta_{n}), \sin (\phi_{n}+\theta_{n}) \bigr)
			       = \wrap(\blds{\phi} + \blds{\theta}) \in \D,  \\ 
				   \theta_{i} \in ( -\pi, \pi], \, i \in \NN_{n}.
	\end{multline}
Then $\psi_{\blds{\phi}}$ parametrizes a coordinate neighborhood of $x$. 
 
Put on $\D$ the Riemannian metric induced by inclusion 
$\D \hookrightarrow \RR^{2n}$. We take another look at the Riemannian metric and geodesics on $\D$. (We already did this. See \eqref{E:geod.on.prod.of.spheres}.) Let $\blds{\phi} \in \RR^{n}$ and let $x = \wrap(\blds{\phi})$. For $i \in \NN_{n}$, let
	\begin{equation}   \label{E:zi.defn}
		\mbf{z}_{i} := \mbf{z}_{i}(\blds{\phi}) := \mbf{z}_{i}[x]
		    := (0, \ldots, 0, - \sin \phi_{i}, \cos \phi_{i}, 0, \ldots, 0) \in \RR^{2n},
	\end{equation}
 where a sine is in position $2i-1$ and a cosine is in position $2i$. Thus, 
 $\mbf{z}_{i}(\blds{\phi}) = \tfrac{\partial}{\partial \phi_{i}} \wrap(\blds{\phi})$, 
 only depends on $\wrap(\blds{\phi})$, and
	 \begin{equation} \label{E:z's.are.orthon}
		\mbf{z}_{1}, \ldots, \mbf{z}_{n} \text{ are orthonormal.}
	\end{equation}
 Note that 
     \begin{multline}  \label{E:x.mapsto.z.Lip}
        (- \sin \phi_{i}, \cos \phi_{i}) = 
          ( \cos \phi_{i}, \sin \phi_{i} )
            \begin{pmatrix}
                 0 & 1 \\
                 -1 & 0
            \end{pmatrix} . \\
            \text{Hence, the map } \wrap(\blds{\phi}) \mapsto \mbf{z}_{i}(\blds{\phi}) \text{ is well-defined and Lipschitz}.
    \end{multline} 
 
Tangent vectors to $\D$ have the form $(x, \blds{\phi})$, where $x \in \D$ and 
$\blds{\phi} \in \RR^{n}$. (Sometimes instead of writing tangent vectors like 
$v_{x} \in T_{x} \D$ or $(x, v)  \in T_{x} \D$ we might just write $v$.)  Alternatively, a tangent vector at $x \in D$ is a differential operator acting on smooth $\RR$-valued functions defined in a neighborhood of $x$ (Boothby \cite[Definition (1.1), p.\ 106]{wmB75}). Using the notation of Boothby \cite[Theorem (1.2), p.\ 107]{wmB75} and recalling \eqref{E:psi.defn.2}, we can interpret $\mbf{z}_{i}$ as 
$\wrap_{\ast}(\partial/\partial \phi_{i})$. So if $G$ is a smooth 
$\RR$-valued functions defined in a neighborhood of $x = \wrap(\blds{\phi})$ we have $\mbf{z}_{i}(G) = (\partial/\partial \phi'_{i}) \restriction_{\blds{\phi}'=\blds{\phi}} 
\bigl[ G \circ \wrap(\blds{\phi}') \bigr]$.

$S^{1}$ can be thought of as a subset of $\RR^{2}$. Therefore, $\D$ can be thought of as a subset of $\RR^{2n}$.
Let $inc : \D \hookrightarrow \RR^{2n}$. Any smooth function, $F$, defined on an open set in $\D$ can be extended, via the Tubular Neighborhood Theorem \ref{P:tubular.nbhd.thm}, to an open subset of $\RR^{2n}$. We will always assume such functions are so extended. We will use $\mbf{z}_{i}$ to denote the vector $inc_{\ast}(\mbf{z}_{i}) \in T_{\wrap(\blds{\phi})} \RR^{2n}$. Thus, by Boothby \cite[Theorem  (1.6), p.\ 109]{wmB75},  
With that understanding we have,
	\begin{equation} \label{E:zi.vec.defn.2}
		\mbf{z}_{i} := \mbf{z}_{i,x} := \mbf{z}_{i}(\blds{\phi}) 
		  = - \sin \phi_{i} \frac{\partial}{\partial y_{2i-1}} 
		    + \cos \phi_{i} \frac{\partial}{\partial y_{2i}} 
		      \in T_{\wrap(\blds{\phi})} \RR^{2n}.
	\end{equation}
It follows from \eqref{E:zi.vec.defn.2}, that $\mbf{z}_{1,x}, \ldots, \mbf{z}_{n,x}$ are orthonormal w.r.t.\ the Riemannian metric in the ambient space $\RR^{2n}$ and, hence, in the $n$-dimensional space $T_{x} \D$. Thus, 
	\begin{multline}  \label{E:z's.span.TxD.2}
		\mbf{z}_{1}(\blds{\phi}), \ldots, \mbf{z}_{n}(\blds{\phi}) 
		\text{ are orthonormal w.r.t.} \\
		  \text{ the Riemannian metric on } \D 
		    \text{ and span } T_{\wrap \, (\blds{\phi})} \D.
	\end{multline}
Since $\mbf{z}_{1}, \ldots, \mbf{z}_{n}$ are orthonormal, it follows that, 
relative to $\mbf{z}_{1}, \ldots, \mbf{z}_{n}$, the matrix of the Riemannian metric on $\D$ is the identity matrix $I_{n}$. Therefore, by Boothby \cite[Corollary (3.8), p.\ 318]{wmB75}, the Christoffel symbols of the second kind are all 0:
    \begin{equation}  \label{E:Christ.awful.symbols}
      \Gamma_{ij}^{k} = 0, \quad i,j,k = 1, \ldots, n . 
    \end{equation}
(Hence, by Boothby \cite[Corollary (4.3), p.\ 323]{wmB75}, it follows that $\D$ has the same curvature tensor as $\RR^{n}$, \emph{viz.} 0.)

For $i \in \NN_{n}$, 
	\begin{multline}   \label{E:ei.in.Rn}
		\text{Let } e_{i} \text{ be the coordinate vector } 
		  (0, \ldots, 0, 1, 0, \ldots, 0) \in \RR^{n} \\
		  \text{ with the ``1'' in the $i^{th}$ position.}
	\end{multline} 
Recall the definition, \eqref{E:1n.col.vec.defn}, of $1^{n}$. Strictly speaking, $1^{n}$ is a column vector, but in this appendix, instead of using the more correct $1_{n}$, we might sometimes treat $1^{n}$ as a row vector. (Sorry.) We have,
$1_{n} = e_{1} + \cdots + e_{n}$. 

Recall the function $\wrap \,$ defined in \eqref{E:wrap.defn.2}. 
Let $\wrap_{\ast} : T_{\blds{\phi}} \RR^{n} \to T_{\wrap(\blds{\phi})}$ denote its differential. We may identify $\RR^{n}$ with $T_{\blds{\phi}} \RR^{n}$, of course, and under that identification $e_{i}$ is identified with $\partial/\partial \phi_{i}$. Currently, 
$\mbf{z}_{i}$ is defined to be 
$inc_{\ast} \circ \wrap_{\ast}(\partial/\partial \phi_{i})$
For convenience we identify $\mbf{z}_{i}$ with 
$\wrap_{\ast}(\partial/\partial \phi_{i})$ so
    \begin{multline}  \label{E:wrap.differential.2}
         \text{At } \blds{\phi}, \;  \wrap_{\ast} : 
           e_{i} \mapsto \mbf{z}_{i}(\blds{\phi}) \; (i \in \NN_{n}). \\
             \text{ Therefore, } \wrap_{\ast} \text{ is a local isometry of } \RR^{n} 
               \text{ onto } T_{\wrap(\blds{\phi})} \D .
    \end{multline}

In the problem we consider here, the directional location problem, the group $G$ is just the symmetric group, $S_{n}$, on $n$ symbols. By definition,  
    \begin{equation*} \label{E:position.of.i.in.g(1,...,n).2}
      \text{In } g(1, 2, \ldots, n) \text{ the symbol $i$ is in position } g(i) .
    \end{equation*}

$G$ acts on $\RR^{n}$ and $\RR^{2n}$ as follows. 
	\begin{equation}  \label{E:perm.action}
	    g(\phi_{1}, \ldots, \phi_{n}) := ( \ldots, \phi_{g(j)}, \ldots ) \text{ and } 
		g(x_{1}, \ldots, x_{2n}) := \bigl( \ldots, x_{2g(j)-1}, x_{2g(j)}, \ldots \bigr) .
	\end{equation}
Here, $\phi_{g(j)}$ is the $j^{th}$ coordinate of $g(\blds{\phi})$ and, 
in $g(x_{1}, \ldots, x_{2n})$, the pair $x_{2g(j)-1}, x_{2g(j)}$ occupy positions 
$2j-1$, and $2j$, resp. Notice that in these actions 
    \begin{equation}  \label{E:g.is.linear}
      g \text{ is linear. }
    \end{equation}
Let $x = (x_{1}, \ldots, x_{2n})$ and let $g(x)_{\ell}$ denote the $\ell^{th}$ entry in $g(x)$, $\ell = 1, \ldots, 2n$ so 
$\{ g(x)_{\ell} : \ell \text{ is odd} \} = \{ x_{\ell} : \ell \text{ is odd} \}$ and the same for even indices. By \eqref{E:wrap.defn.2}, 
    \begin{equation}  \label{E:g.wrap.commute.2}
       g \bigl[ \wrap(\blds{\phi}) \bigr] = \wrap \bigl[ g(\blds{\phi}) \bigr] .
    \end{equation} 

Let $m = 1, \ldots, 2n$ and $g \in G$. We carefully work through Boothby \cite[Theorem (1.2), p.\ 107]{wmB75} to compute $g_{\ast}$. Let $\blds{\phi} = (\phi_{1}, \ldots, \phi_{n}) \in \RR^{n}$. 
Let $x = \wrap(\blds{\phi})$ and let $F$ be a smooth function mapping a neighborhood of $x$ in $\D$ into $\RR$. We may assumed that $F$ is actually defined smoothly in a neighborhood of $x$ in $\RR^{2n}$. Let $m = 1, \ldots, 2n$.
Then 
    \begin{multline}  \label{E:g.star.partial.y}
        g_{\ast} \left( \frac{\partial}{\partial y_{m}} \restriction_{y = x} \right) (F) 
          = \left( \frac{\partial}{\partial y_{m}} \restriction_{y = x} \right) (F \circ g) \\
            = \sum_{\ell=1}^{2n} \frac{\partial F(w)}{\partial w_{\ell}} \restriction_{w = g(x)} \;
              \frac{\partial g(y)_{\ell}}{\partial y_{m}} \restriction_{y=x} 
                = \frac{\partial F(w)}{\partial w_{\ell}} \restriction_{w = g(x)} ,
    \end{multline}
where $g(y)_{\ell} = y_{m}$. 
Which $\ell$ solves $g(y)_{\ell} = y_{m}$ for generic $y \in \RR^{2n}$? Suppose $m$ is odd, say $m = 2k-1$, then the $\ell$ must be odd. So for some $i \in \NN_{n}$ we have $\ell = 2i-1$ and $2 g(i) - 1= m = 2k-1$. 
Thus, $i = g^{-1}(k)$ and 
    \begin{equation*}
        g_{\ast} \left( \frac{\partial}{\partial y_{2k-1}} \restriction_{y = x} \right) (F) 
           = \frac{\partial F (w)}{\partial w_{2 g^{-1}(k) - 1}} \restriction_{w = g(x)} .
    \end{equation*}
Similarly, 
    \begin{equation*}
        g_{\ast} \left( \frac{\partial}{\partial y_{2k}} \restriction_{y = x} \right) (F) 
           = \frac{\partial F(w)}{\partial w_{2 g^{-1}(k)}} \restriction_{w = g(x)} .
    \end{equation*} 

If $k \in \NN_{n}$, then the coordinate of $g(\blds{\phi})$ in which one would find $\phi_{k}$ is $g_{-1}(k)$. Apply the preceding with $m = 2k-1$ or $m = 2k$. Then, from \eqref{E:zi.vec.defn.2},
    \begin{align*}
      g_{\ast} \bigl[ \mbf{z}_{k}(\blds{\phi}) \bigr] (F)
        &= - \sin(\phi_{k}) \left( \frac{\partial F(w)}
          {\partial w_{2 g^{-1}(k) - 1}} \restriction_{w = g(x)} \right)
            + \cos(\phi_{k}) \left( \frac{\partial F(w)}
              {\partial w_{2 g^{-1}(k)}} \restriction_{w = g(x)} \right) \\
        &= - \sin \bigl[ g(\blds{\phi})_{g^{-1}(k)} \bigr] 
           \left( \frac{\partial F(w)}{\partial w_{2 g^{-1}(k) - 1}} 
             \restriction_{w = g(x)} \right)
              + \cos \bigl[ g(\blds{\phi})_{g^{-1}(k)} \bigr] 
              \left( \frac{\partial F(w)}{\partial w_{2 g^{-1}(k)}} 
                \restriction_{w = g(x)} \right) \\
        &= \mbf{z}_{g^{-1}(k)} \bigl[ g(\blds{\phi}) \bigr] (F) 
            \in T_{\wrap [ g(\blds{\phi}) ]} \D .
    \end{align*} 
We conclude
	\begin{equation}  \label{E:perms.act.on.z}
		g_{\ast} \bigl[ \mbf{z}_{i}(\blds{\phi}) \bigr] 
		= \mbf{z}_{g^{-1}(i)} \bigl[ g(\blds{\phi}) \bigr] ,
		    \quad i \in \NN_{n}.
	\end{equation}
Thus, by \eqref{E:z's.span.TxD.2}, $g_{\ast} \mbf{z}_{i}$ 
($i \in \NN_{n}$) are still orthonormal. Hence, 
	\begin{equation}  \label{E:Riem.metric.is.G.invar}
		\text{The Riemannian metric on } \D \text{ is } G-\text{invariant}.
	\end{equation}
 
Next, we work out the form of geodesics in $\D$. (Yes, we examined that issue before: \eqref{E:geod.on.prod.of.spheres}) Let $\blds{\phi} \in \RR^{n}$ 
and let $x = \wrap(\blds{\phi})$. Let $I \subset \RR$ be open with $0 \in I$.
Let $\blds{\theta} = ( \theta_{1}, \ldots, \theta_{n} ) : I \to \RR^{n}$ be differentiable. 
Recall the definition, \eqref{E:psi.defn.2}, of $\psi_{\blds{\phi}}$. Consider the curve
	 \begin{equation} \label{E:varkappa.defn}
		\varkappa(t) := \varkappa_{\blds{\theta}}(t) 
		  := \varkappa_{\blds{\theta}, \blds{\phi}}(t)
		    := \psi_{\blds{\phi}} \bigl( \theta_{1}(t), \ldots, \theta_{n}(t) \bigr) 
		      = \wrap \bigl[ \blds{\phi} + \blds{\theta}(t) \bigr] \in (S^{1})^{n} , 
		            \; t \in I.
	\end{equation}
(``$\varkappa$'' is a variant of ``$\kappa$'', ``kappa''.) Thus, if 
$\blds{\theta}(0) = (0, \ldots, 0)$, we have 
$\varkappa(0) = x := \wrap(\blds{\phi})$. Using a dot to denote differentiation w.r.t.\ $t$, by \eqref{E:wrap.differential.2}, \eqref{E:zi.vec.defn.2}, and 
Boothby \cite[Theorem (1.6), p.\ 109]{wmB75}, we have
  \begin{multline}  \label{E:Gamma.dot.formla}
   \dot{\varkappa}_{\blds{\theta}}(t)
     = \sum_{i=1}^{n} \dot{\theta}_{i}(t) \Bigl( - \sin \bigl[ \phi_{i} 
	+ \theta_{i}(t) \bigr] \tfrac{\partial}
	  {\partial y_{2i-1}} \restriction_{y = \varkappa(t)} 
	  + \cos \bigl[ \phi_{i} + \theta_{i}(t) \bigr] \tfrac{\partial}
	    {\partial y_{2i}} \restriction_{y = \varkappa(t)} \Bigr) \\
	      = \sum_{i=1}^{n} \dot{\theta}_{i}(t) \mbf{z}_{i, \varkappa(t)}
		\in T_{\varkappa_{\blds{\theta}}(t)} \D.
  \end{multline}

By Boothby \cite[Definition (5.1), p.\ 326]{wmB75}, in order for $\varkappa$ to be a geodesic, we must have $(D/dt)(dp/dt)$, where ``$(D/dt)$'' denotes covariant differentiation on $\D$. Now, by \eqref{E:Christ.awful.symbols} and Boothby \cite[3.11, p.\ 319]{wmB75}, we have
    \begin{equation*}
      \frac{D}{dt} \dot{\varkappa}_{\blds{\theta}}(t)
      = \sum_{k=1}^{n} \left(\ddot{\theta}_{i}(t) \mbf{z}_{i,  \varkappa(t)} 
        + \sum_{i,j=1}^{n} \Gamma_{ij}^{k} \dot{\theta}_{i}(t)^{2} \right) 
      = \sum_{k=1}^{n} \ddot{\theta}_{i}(t) \mbf{z}_{i,  \varkappa(t)} .
    \end{equation*} 
Hence, by Boothby \cite[Definition (5.1), p.\ 326]{wmB75},
	\begin{equation*}
		\varkappa_{\blds{\theta}} \text{ is a geodesic in } \D 
		  \text{ if and only if } \ddot{\theta}_{i}(t) = 0, 
			\quad t \in I, \,i \in \NN_{n}
	\end{equation*}
 I.e., for some constants $a_{1}, b_{1}, \ldots, a_{n}, b_{n}$, 
	\begin{equation*} 
		\varkappa_{\blds{\theta}} \text{ is a geodesic in } \D 
		\text{ if and only if } 
		  \theta_{i}(t) = a_{i} + b_{i} t, \quad t \in I, \,i \in \NN_{n}.
	\end{equation*}
$\varkappa_{\blds{\theta}}(0)= x = \wrap(\blds{\phi})$ if and only if $a_{1}, \ldots, a_{n}$ are each integral multiples of $2 \pi$. But in general, let $\mbf{a} = (a_{1}, \ldots, b_{n})$ 
and $\bb = (b_{1}, \ldots, b_{n}) \in \RR^{n}$ and define $\mbf{a} + \bb \times$ to be 
the function mapping $\RR$ to $\RR^{n}$ 
defined by $(\mbf{a} + \bb \times) (t) = \mbf{a} + t \bb$. Thus, 
	\begin{multline}  \label{E:when.Gamma.is.geod.2}
		\text{A curve } \gamma : \RR \to \D 
		  \text{ is a geodesic passing through } 
		  x =  \wrap(\mbf{a})  \\
		    \text{ if and only if } \gamma = 
		      \varkappa_{\mbf{a} + \bb \times} 
		      \text{ for some } \mbf{a}, \bb \in \RR^{n}.
	\end{multline}
This just reproves \eqref{E:geod.on.prod.of.spheres} in the case $\nvar = 1$.

Moreover, with $\mbf{a}, \bb \in \RR^{n}$ as before then, by \eqref{E:Gamma.dot.formla},  
	\begin{equation}  \label{E:geod.deriv.2}
		\dot{\varkappa}_{\mbf{a} + \bb \times}(t)
		  = \sum_{i=1}^{n} b_{i} \mbf{z}_{i, \varkappa(t)}
 	  \end{equation}
	
Let $t_{1}, t_{2} \in I$. Then, by Boothby \cite[pp.\ 185--186]{wmB75},
\eqref{E:z's.are.orthon}, and \eqref{E:geod.deriv.2},
	\begin{multline}  \label{E:geod.length}
		\text{Length of geodesic arc } 
		  \varkappa_{\mbf{a} + \bb \times}(s), \text{ with } 
		  s \in [t_{1}, t_{2}], \\
		    \text{ is } \int_{t_{1}}^{t_{2}} \left| \sum_{i=1}^{n} b_{i} 
		      \mbf{z}_{i, \varkappa(t)} \right| \, dt 
		        = |t_{2} - t_{1}| | \bb | .  
	\end{multline}
Thus, if $\blds{\beta}_{i} = (\beta_{i1}, \ldots, \beta_{in}) \in \RR^{n}$ and 
$x_{i} = \wrap(\blds{\beta}_{i})$ ($i=1,2$) then 
$\varkappa_{\blds{\beta}_{1} + (\blds{\beta}_{2} - \blds{\beta}_{1}) \times}$ restricted to the interval $[0,1]$ is a geodesic arc joining $x_{1}$ and $x_{2}$. By \eqref{E:geod.length}, 
its length is $|\blds{\beta}_{2} - \blds{\beta}_{1}|$.
By \eqref{E:modulo}, by adding integral multiples of $2 \pi$ to $\beta_{1j}$ 
or $\beta_{2j}$ we do not change $x_{1}$ or $x_{2}$, but we can arrange 
$| \beta_{2j} - \beta_{1j} | \leq \pi$ ($j \in \NN_{n}$). Hence, by  \eqref{E:geod.length}, 
    \begin{multline} \label{E:geod.dist.on.D.2}
      \text{The geodesic distance, } \rho(x_{1}, x_{2}), \text{ between } 
      x_{1} = \wrap(\blds{\beta}_{1}) 
      \text{ and } x_{2} = \wrap(\blds{\beta}_{2}) \\
        \text{ is no greater than } |\blds{\beta}_{2} - \blds{\beta}_{1}|, \\
      \text{ with equality if and only if } 
      | \beta_{2j} - \beta_{1j} | \leq \pi \quad (j \in \NN_{n}).   
   \end{multline}   
In particular, $\wrap$ is a local isometry, hence Lipschitz.

Recall \eqref{E:angle.between.vectors}. $\angle$ is clearly a metric on $S^{1}$.
Topologizing $S^{1}$ using $\angle$, we see that 
        \begin{equation}  \label{E:D.has.prod.top}
          \D \text{ has the product topology. }
        \end{equation}

Recall \eqref{E:angle.between.vectors}. 
If $x = (x_{1}, \ldots, x_{n}), y = (y_{1}, \ldots, y_{n}) \in \D$ define 
    \begin{equation}  \label{E:angle.vector}
      \angle(x,y) := \bigl( \angle(x_{1}, y_{1}), \ldots, \angle(x_{1}, y_{1}) \bigr). 
    \end{equation}
Then, by \eqref{E:geod.dist.on.D.2}, $\rho(x,y) = \bigl| \angle(x,y) \bigr|$, the Euclidean norm of $\angle(x,y)$. Recall that if $w = (w_{1}, \ldots, w_{n}) \in \RR^{n}$ for some 
$n = 1, 2, \ldots$, then its $L^{\infty}$-norm is $|w|_{\infty} = \max_{i} |w_{i}|$. We have
    \begin{equation}  \label{E:L.infty.L.2.norms.3}
      n^{-1/2} |w| \leq |w|_{\infty} \leq |w| ,
    \end{equation}

Specifically, let 
$\blds{\xi} = (\xi_{1}, \ldots, \xi_{n}), \blds{\zeta} = (\zeta_{1}, \ldots, \zeta_{n}) 
\in (-\pi, \pi]^{n}$. 
To construct a shortest geodesic, 
$\varkappa_{\mbf{a} + \bb \times} : [0,1] \to \D$, 
from $\wrap(\blds{\xi})$ to $\wrap(\blds{\zeta})$ we must, for each $i = 1, \dots, n$, choose $a_{i}$ and $b_{i}$. 
If $|\zeta_{i} - \xi_{i}| \leq \pi$, let $a_{i} = \xi_{i}$ and $b_{i} = \zeta_{i} - \xi_{i}$. Suppose $\xi_{i} < 0 < \xi_{i}+\pi < \zeta_{i}$. 
Then let $a_{i} = \xi_{i} + 2 \pi$ and $b_{i} = \zeta_{i} - (\xi_{i}+2 \pi)$. Similarly if
$\zeta_{i} < 0 < \zeta_{i}+\pi < \xi_{i}$.

Examine this another way. Suppose $\blds{\xi}, \blds{\zeta} \in (-\pi, \pi]^{n}$ as before. By \eqref{E:when.Gamma.is.geod.2}, any geodesic connecting $\wrap(\blds{\xi})$ to $\wrap(\blds{\zeta})$ 
has the form $\varkappa_{\mbf{a} + \bb \times}$. We restrict ourselves to geodesics on $[0,1]$. 
We must have $\wrap(\mbf{a}) = \wrap(\blds{\xi})$ and
$\wrap(\mbf{a} + \bb) = \wrap(\blds{\zeta})$. 
Therefore, $\mbf{a} = \blds{\xi} + 2 \mbf{k}_{1} \pi$ 
and $\bb = (\blds{\zeta} - 2 \mbf{k}_{2} \pi) - (\blds{\xi} + 2 \mbf{k}_{1} \pi) 
 = \blds{\zeta} - \bigl( \blds{\xi} + 2 (\mbf{k}_{1} + \mbf{k}_{2}) \pi \bigr)$ for some 
 $\mbf{k}_{1}, \mbf{k}_{2} \in \ZZ^{n}$. 
These are the only geodesics on $[0,1]$ joining $\wrap(\blds{\xi})$ 
to $\wrap(\blds{\zeta})$, but not all of them are shortest.

Now, suppose $\blds{\xi}, \blds{\zeta} \in (-\pi, \pi]^{n}$. Therefore, for each $i$, we have $|\zeta_{i} - \xi_{i}| < 2 \pi$. 
Let $\mbf{k} \in \ZZ^{n}$. And suppose for each $i$ we have 
$\bigl| \zeta_{i} - \xi_{i} - 2 k_{i} \pi  \bigr| \leq \pi$. 
Thus, 
    \begin{equation*}
      \pi \geq \bigl| \zeta_{i} - \xi_{i} - 2 k_{i} \pi  \bigr| \geq 2 |k_{i}| \pi  - |\zeta_{i} - \xi_{i}| 
        > 2 \bigl( |k_{i}| - 1 \bigr) \pi .
    \end{equation*}
But by \eqref{E:modulo}, there is \emph{some} $k_{i} \in \ZZ$ s.t.\ 
$\bigl| \zeta_{i} - \xi_{i} - 2 k_{i} \pi  \bigr| \leq \pi$. In summary, 
    \begin{align}  \label{E:optimum.k=-1,0,1}
      &\text{If } \blds{\xi}, \blds{\zeta} \in (-\pi, \pi]^{n} \text{ and }
       \bigl| \zeta_{i} - \xi_{i} - 2 k_{i} \pi  \bigr| \leq \pi \text{ for } i \in \NN_{n} 
         \text{ then } \mbf{k} \in \{ -1, 0, 1 \}^{n}. \notag \\
      &\text{Conversely, if } \blds{\xi}, \blds{\zeta} \in (-\pi, \pi]^{n}
        \text{ then for some } \mbf{k} \in \{ -1, 0, 1 \}^{n} \\
      & \qquad \text{ we have }
          -\pi < \zeta_{i} - \xi_{i} - 2 k_{i} \pi \leq \pi \text{ for } i \in \NN_{n} . \notag
    \end{align} 

Let $x = \wrap(\blds{\phi}) \in \D$ be as in \eqref{E:wrap.defn.2}. 
Let $Exp_{x} : T_{x} \D \to \D$ be the exponential map. 
If $\bb := ( b_{1}, \ldots, b_{n} )$ is constant, then, by \eqref{E:when.Gamma.is.geod.2} with $t = 1$, \eqref{E:geod.deriv.2}, and \eqref{E:varkappa.defn}, 
	\begin{multline}  \label{E:Exp.psi.2}
	     Exp_{x} (b_{1} \mbf{z}_{1,x} + \cdots + b_{n} \mbf{z}_{n,x})  = 
	       Exp_{x} \bigl( b_{1} \mbf{z}_{1}(\blds{\phi}) + 
	          \cdots + b_{n} \mbf{z}_{n}(\blds{\phi}) \bigr) \\
	           = \varkappa_{\mbf{0} + \bb \times, \blds{\phi}} (1)
		     = \wrap(\blds{\phi} + \bb).
	\end{multline}
Let $\blds{\theta} = (\theta_{1} \ldots, \theta_{n}), \blds{\phi} \in \RR^{n}$, 
$\mbf{w} = \sum_{i=1}^{n} \theta \, \mbf{z}_{i} (\blds{\phi}) 
\in T_{\wrap(\blds{\phi})} \D$, 
and $x = \wrap(\blds{\phi})$. We can give a short proof of \eqref{E:gExp=Expg.star} in this case. By \eqref{E:Exp.psi.2}, \eqref{E:g.wrap.commute.2}, \eqref{E:g.is.linear}, and \eqref{E:perms.act.on.z} we have the following. 
    \begin{align} \label{E:g.circ.Exp.2}
        g \circ Exp_{x}(\mbf{w}) 
          &= (g \circ \wrap \, )(\blds{\phi} + \blds{\theta}) 
            = (\wrap \circ g) (\blds{\phi} + \blds{\theta}) \notag \\
          &= \wrap \bigl[ g (\blds{\phi} )+ g(\blds{\theta}) \bigr] 
            = Exp_{(\wrap \, \circ g)(\blds{\phi})} 
                 \left( \sum_{i=1}^{n} \theta_{g(i)} \mbf{z}_{i}
                   \bigl[g(\blds{\phi})\bigr] \right) \notag \\
          &= Exp_{(g \circ \wrap \, )(\blds{\phi})} 
            \left( \sum_{i=1}^{n} \theta_{g(i)} \mbf{z}_{i}\bigl[g(\blds{\phi})\bigr] \right) \\
          &= Exp_{g(x)} \left( \sum_{i=1}^{n} \theta_{g(i)} 
            \mbf{z}_{i}\bigl[g(\blds{\phi})\bigr] \right) 
            \notag \\
          &= Exp_{g(x)} \left( \sum_{j=1}^{n} \theta_{j} 
            \mbf{z}_{g^{-1}(j)}\bigl[g(\blds{\phi})\bigr] \right) 
            \notag \\
          &= Exp_{g(x)} \bigl[ g_{\ast}(\mbf{w}) \bigr] \notag .
    \end{align}

Since, by \eqref{E:rho.is.top.metric}, we are using $\rho$ to denote the metric on $\D$, 
    \begin{equation}  \label{E:rho+.instead.of.xi+}
      \text{We shall write $\rho_{+}$ instead of $\xi_{+}$.} 
    \end{equation}
(See \eqref{E:xi.is.metric.on.D} and \eqref{E:xi+.from.2.metrics}.) Then lemma \ref{L:Exp.loc.Lip} becomes:
    \begin{lemma}[$Exp$ is locally Lipschitz]  \label{L:Exp.is.loc.Lip.in.circle.case}
      $Exp$ is locally Lipschitz on $T \D$ w.r.t.\ the metrics $\rho_{+}$ and $\rho$. Hence, by \eqref{E:local.Lip.is.Lip.on.compacts}, $Exp$ is Lipschitz on compact sets.
    \end{lemma}
This proves part of \textbf{property \ref{I:Exp.alpha.homeom} in definition \ref{D:fibering.by.cones}}. 

\section{Stratification of $\Pf_{1}$}  \label{S:stratify.P1}
Let $x,y \in S^{1}$. Define the \emph{(directed) angle from $x$ to $y$ in the positive direction} to be
    \begin{multline}  \label{E:angle.+}
      \angle_{+}(x,y) = 
        \text{minimum distance around $S^{1}$} \\
          \text{ in the counterclockwise direction from $x$ to } y. 
    \end{multline}
The definition of the \emph{(directed) angle, $\angle_{-}(x,y)$, from $x$ to $y$ in the negative direction} is same except the distance is measured in the clockwise direction. Thus, 
    \begin{multline*}
      \angle_{+}(x,y), \angle_{-}(x,y) \in [0, 2 \pi) , \;
        \angle_{+}(x,y) = 2 \pi - \angle_{-}(x,y), \\
          \text{ and } \angle_{+}(x,y) = \angle_{-}(y,x) ,
            \angle(x,y) = \min \bigl[ \angle_{+}(x,y), \angle_{-}(x,y) \bigr] .
    \end{multline*}
Recall \eqref{E:sign.function}. If $s \neq 0$, define 
    \begin{equation}  \label{E:s.directed.angle}
      \angle_{s}(x,y) := \angle_{sign(s)}(x,y) .
    \end{equation}

Let $s = \pm 1$ and $a \in (0, 2 \pi)$. Let 
$J_{s,a} := \bigl\{ (x,y) : \angle_{(-s)}(x,y) \leq a \bigr\}$.   
    \begin{equation}  \label{E:directed.angle.is.Lip}
      \angle_{s} \text{ is Lipschitz off } J_{s,a} .
    \end{equation} 

Recall, by \eqref{E:N.sub.n}, $\NN_{n} := \{1, \ldots, n \}$. Recall \eqref{E:Pk.defn.robust.loc.on.sphere}. 
For $i = 1, \ldots, \NN_{n}$, let
    \begin{multline}  \label{E:Pf.1i.defn}
      \Pf_{1i} := \bigl\{ \wrap(\beta) \in \D : 
        \beta = (\phi_{1}, \ldots, \phi_{1}, \phi_{2}, \phi_{1}, \ldots, \phi_{1}) 
          \in \RR^{n}, \\
            \text{ where $\phi_{2}$ is the $i^{th}$ coordinate} \bigr\}
    \end{multline}
$\Pf_{1i}$ is the $i^{th}$ \emph{lobe} of $\Pf_{1}$. Alternatively, 
    \begin{equation*}
      \Pf_{1i} = \{x \in \D : \text{ if $j,k \neq i$ then } x_{j} = x_{k} \} .
    \end{equation*}
    
Observe
    \begin{equation} \label{E:T.=.intersection.of.all.lobes}
      \text{If } i \neq j \text{ then } \Pf_{1i} \cap \Pf_{1j} = \T. \text{ Thus, }
        \T = \bigcap_{j=1}^{n} \Pf_{1j} .
    \end{equation}
It is obvious that $\Pf_{1j}$ is diffeomorphic to the torus in $\RR^{4}$. In particular,
    \begin{equation}  \label{E:Pf1=union.of.lobes}
        \Pf_{1j} \text{ is a compact two-dimensional differentiable manifold and }
          \Pf_{1} = \bigcup_{j=1}^{n} \Pf_{1j} .
    \end{equation}
By \eqref{E:T.=.intersection.of.all.lobes}, $\Pf_{1i}$ ($i \in \NN_{n}$) are not strata of $\Pf_{1}$ as defined in subsection \ref{SSS:conical.fibers}. (A stratification 
of $\Pf_{1}$ is given in section \ref{S:stratify.P1}. 
    
For $i \in \NN_{n}$ and $s = \pm 1$, define
    \begin{align}  \label{E:mclX,mclY.defns}
      \mcl{X}_{i,s} &:= \bigl\{ \wrap(\alpha_{1} e_{i} + \alpha_{2} 1_{n} ) \in \Pf_{1i} : 
        s \alpha_{1} \in (0, \pi), \alpha_{2} \in \RR \bigr\} , \\
      \Y_{i} &:= \bigl\{ \wrap(\pi e_{i} + \alpha_{2} 1_{n} ) \in \Pf_{1i} : 
        \alpha_{2} \in \RR \bigr\} \notag .
    \end{align}
Alternatively, we have 
    \begin{align}  \label{E:coord.free.mclX,mclY}
      \mcl{X}_{i,s} &= \bigl\{ y \in \Pf_{1i} : \angle_{s}(y_{i}, y_{j}) \in (0,\pi) \bigr\} , \\
      \Y_{i} &= \bigl\{ y \in \Pf_{1i} : \angle_{s}(y_{i}, y_{j}) = \pi \bigr\} \notag .
    \end{align}

We have
    \begin{equation}
      \Pf_{1} = \left( \bigcup_{i=1}^{n} (\mcl{X}_{i,-1} \cup \mcl{X}_{i,+1}) \right) 
        \cup \left( \bigcup_{i=1}^{n} \Y_{i} \right) \cup \T .
    \end{equation}
The $\mcl{X}_{i,s}$'s, $ \Y_{i}$'s, and $\T$ are (pairwise) disjoint smooth manifolds. This represents $\Pf_{1}$ as the disjoint union of $3n+1$ strata. (See section \ref{SSS:conical.fibers}.). 

\section{Cone bundle over $\mcl{X}_{i,s}$}  \label{S:C[mclX.is]}
Let $x_{1}, \ldots, x_{n} \in S^{1}$, so $(x_{1}, \ldots, x_{n}) \in \D$. 
Define the ``angular mean'' of $x_{1}, \ldots, x_{n} \in S^{1}$ as follows. 
Let $\gamma \in \RR$ be arbitrary. By \eqref{E:modulo}, there exist unique 
$\xi_{1}, \ldots, \xi_{n}$ s.t.\ 
$x_{i} = (\cos \xi_{i}, \sin \xi_{i})$ ($i = 1, \ldots, n$) and 
$\xi_{1} - \gamma, \ldots, \xi_{n} - \gamma \in (-\pi, \pi]$. Thus, 
$\xi_{1}, \ldots, \xi_{n} \in (\gamma - \pi, \gamma + \pi]$. Let $\bar{\xi}$ be the arithmetic mean of $\xi_{1}, \ldots, \xi_{n}$: 
$\bar{\xi} = n^{-1} (\xi_{1} + \cdots + \xi_{n})$. Then the angular mean, $\bar{x}$ 
of $x_{1}, \ldots, x_{n}$ is     
    \begin{equation}  \label{E:bar.x.i.defn}
      \bar{x} = (\cos \bar{\xi}, \sin \bar{\xi}) .
    \end{equation}

Note that $\bar{x}$ is not well-defined: Suppose $n$ is even, 
$x = (x_{1}, \ldots, x_{n}) \in \D$ with half of the $x_{i}$'s at $(1,0)$ and half at 
$(-1,0)$. Then $\xi_{1}, \ldots, \xi_{n} \in \RR$ s.t.\ $x_{i} = (\cos \xi_{i}, \sin \xi_{i})$ 
($i = 1, \ldots, n$) with half of the $\xi_{i}$'s at 0 and half at $\pi$. Then 
$\bar{x} = (0,1)$. Alternatively, we could take half of the $\xi_{i}$'s at 0 and half 
at $-\pi$. Then $\bar{x}= (0,-1)$.

Suppose $x_{1}, \ldots, x_{n}$ lie in an \emph{open} semicircle and let 
$x = (x_{1}, \ldots, x_{n}) \in \D$. In this case say $x$ is ``semicircular''. In fact, $x$ is semicircular if and only if there are $\xi_{1}, \ldots, \xi_{n}$ lying in an open interval of length $\pi$ and $x_{i} = (\cos \xi_{i}, \sin \xi_{i})$ ($i = 1, \ldots, n$).

Recall \eqref{E:directional.T.defn}. If $y \in S^{1}$, let
    \begin{equation}  \label{E:T(y).defn}
      \T(y) = (y, \ldots, y) \in \T .
    \end{equation} 
We have:
  \begin{lemma}  \label{L:mean.of.semicircular.data.sets}
Let $x \in \D$ be semicircular. Then $\bar{x}$ is well-defined and $\T(\bar{x})$ is the unique closest point of $\T$ to $x$. 
  \end{lemma}
``$\bar{x}$ is well-defined'' means that any choice of $\xi_{1}, \ldots, \xi_{n}$ lying in an open interval of length $\pi$ and s.t.\ $x_{i} = (\cos \xi_{i}, \sin \xi_{i})$ ($i = 1, \ldots, n$) leads to the same $\bar{x}$.
  \begin{proof} Suppose $\gamma, \gamma' \in \RR$; $\xi_{1}, \ldots, \xi_{n} \in (\gamma - \pi, \gamma + \pi]$; $\xi'_{1}, \ldots, \xi'_{n} \in (\gamma' - \pi, \gamma' + \pi]$; and
$x_{i} = (\cos \xi_{i}, \sin \xi_{i}) = (\cos \xi'_{i}, \sin \xi'_{i})$ ($i = 1, \ldots, n$). Then for some $k_{1}, \ldots, k_{n} \in \ZZ$ we have $\xi'_{i} = \xi_{i} + 2 k_{i} \pi$. Suppose 
$k_{2} \neq k_{1}$, say. Then 
$2 \pi > |\xi'_{2} - \xi'_{1}| = |\xi'_{2} - \xi'_{1} + 2 (k_{2} - k_{1}) \pi|> 2 \pi - \pi = \pi$. Thus, $x_{1}, \ldots, x_{n}$ do not lie in an open semicircle, even mod $2 \pi$. Contradiction. This proves the claim that $\bar{x}$ is unique. Moreover, $\bar{x}$ is clearly continuous in $x_{1}, \ldots, x_{n}$ that lie in an open semicircle. 

Recall \eqref{E:wrap.defn.2}. 
Let $\xi_{1}, \ldots, \xi_{n}$ lie in an open interval, $I$, of length $\pi$ and suppose 
$x_{i} = (\cos \xi_{i}, \sin \xi_{i})$ ($i = 1, \ldots, n$). 
Let $\omega \in I$ and $w = \wrap( \bar{x} 1_{n})$.  For every $I = 1, \ldots, n$ we have 
$|\omega - \xi_{i}| < \pi$. Therefore, by \eqref{E:geod.dist.on.D.2}, 
$\rho(x, w)^{2} = (\xi_{1} - \omega)^{2} + \cdots + (\xi_{n} - \omega)^{2}$. In particular, this holds with $\omega = \bar{\xi}$ and $w = \bar{x}$. Moreover, a familiar argument shows that, for $\omega \in I$, we have $\rho(x, \bar{x}) \leq \rho(x, w)$ with equality only 
if $w = \bar{x}$. 

But might there be some $\omega \in S^{1} \setminus I$ s.t.\ with 
$w = \wrap(\omega 1_{n})$ s.t.\ $\rho(x, \bar{x}) \geq \rho(x, w)$? Because the distances only depend on angles separating pairs of points, we need only consider we may assume  
$\xi_{1}, \ldots, \xi_{n} \in I = (-\pi/2, \pi/2)$. 
Let $\xi_{min} := \min \{ \xi_{1}, \ldots, \xi_{n} \}$. Define $\xi_{max}$ similarly. If $\xi_{max} = \xi_{min}$ then $\bar{x} = x_{1} = \cdots = x_{n}$ and $\rho(x, \bar{x}) = 0 < \rho(x, w)$. 
Hence, we may assume $\xi_{min} < \xi_{max}$. In fact, WLOG 
$-\pi/2 < \xi_{min} < 0 < \xi_{max} < \pi/2$. 

Notice that if 
$\omega \in [\xi_{min}, \xi_{min}+\pi)$ then $\omega, \xi_{1}, \ldots, \xi_{n}$ lie in an open interval of length $\pi$. Hence, 
$\rho(x, \bar{x} 1_{n}) \leq \rho(x, w)$ with equality only if $w = \bar{x}$. Similarly if 
$\omega \in (\xi_{max} - \pi, \xi_{min}]$. 

That leaves $\omega \in (-\pi, \xi_{max} - \pi) \cup [\pi, \xi_{min}+\pi)$. In fact, by flipping the points over the horizontal axis, we \emph{really} only need consider 
$\omega \in (\xi_{min}+\pi, \pi]$. Now, $\xi_{min} \in (-\pi/2, 0)$, 
so $\omega > \pi/2$. Let $\zeta \in [0, \pi/2)$ be $\omega$ reflected past $\pi/2$ over the vertical axis: 
$\zeta := \pi/2 - (\omega - \pi/2) = \pi - \omega \in (0, \pi/2)$. Thus, 
$\zeta < \pi/2 < \omega$ and 
$\zeta, \xi_{1}, \ldots, \xi_{n}$ lie in the open interval $(-\pi/2, \pi/2)$ of length $\pi$. Let 
$z := \wrap(\zeta 1_{n})$. Then we know that $\rho(x, z) \geq \rho(x, \bar{x})$. 

\emph{Claim:} $\rho(x, z) < \rho(x, w)$. Thus, $ \rho(x, \bar{x}) <  \rho(x, w)$. Suppose 
$\xi_{k} \geq \zeta$ so $\omega - \xi_{k} < \pi$ and 
    \begin{equation*}
      \zeta \leq \xi_{k} < \pi/2 < \omega .
    \end{equation*} 
Therefore,
    \begin{equation*}
      0 \leq \xi_{k} - \zeta < \pi/2 - \zeta = \omega - \pi/2 < \omega - \zeta 
        \text{ if } \zeta \leq \xi_{k} < \pi/2 .
    \end{equation*}

Now suppose $\omega - \pi \leq \xi_{k} < \zeta$. Now $0 < \omega - \xi_{k} \leq \pi$. Trivially, $\omega - \xi_{k} > \zeta - \xi_{k} > 0$. 

Finally, suppose $\xi_{k} \in [\xi_{min}, \omega - \pi)$. Then $\omega - \xi_{k} > \pi$. So instead we consider $\xi_{k} + 2 \pi - \omega < (\omega - \pi) + 2 \pi - \omega = \pi$. 
Now $\xi_{k} \in (-\pi/2, 0]$ so $2 \xi_{k} > -\pi$ so $\xi_{k} + \pi > - \xi_{k}$. Therefore, 
$\xi_{k} + 2 \pi - \omega = (\xi_{k} + \pi) + (\pi - \omega) > \zeta - \xi_{k} \in (0, \pi)$, as desired. 

We have shown that in every case $\angle(w_{k}, x_{k}) > \angle(z_{k}, x_{k})$. Therefore, by \eqref{E:geod.dist.on.D.2}, $\rho(x, z) < \rho(x, w)$, as claimed. 
  \end{proof}

Let $s = \pm 1$ and $i \in \NN_{n}$. (See \eqref{E:N.sub.n}.) But WLOG $s = +1$. 

Recall
    \begin{equation} \label{E:parametrization.of.T}
      \T \text{ is parametrized by } \phi \mapsto \, \wrap(\phi 1_{n}) 
        \quad (\phi \in \RR) .
    \end{equation} 

Let $s = \pm 1$ be fixed.

Suppose $y = (y_{1}, \ldots, y_{n}) \in \Pf_{1i}$ then, for some 
$\alpha_{1}, \alpha_{2} \in \RR$ we have 
$y = \wrap(\alpha_{1} e_{i} + \alpha_{2} 1_{n} )$. If 
$y \in \mcl{X}_{i,s}$. Then, by \eqref{E:mclX,mclY.defns}, $s \alpha_{1} \in (0, \pi)$. Hence,  the components of $y$ all lie in an open semicircle. Let $\beta := n^{-1} \bigl[ \alpha_{1} + (n-1) \alpha_{2} \bigr]$. Recall \eqref{E:T(y).defn}. Then, by lemma \ref{L:mean.of.semicircular.data.sets},   
         \begin{multline}  \label{E:when.y.in.mclX}
           \text{the unique closest point of $\T$ to $y$ is } \T(\cos \beta, \sin \beta).
         \end{multline}
Employing \eqref{E:s.directed.angle}, \eqref{E:directed.angle.is.Lip}, and \eqref{E:arg.defn}, we get,  
    \begin{multline}  \label{E:alphas.from.y}
      \alpha_{1} = s \, \angle_{s}(y_{j},y_{i}) \text{ for any } j \neq i . \;  
        \alpha_{1} \text{ is unique and Lipschitz in } y . \\ 
          \alpha_{2} = \arg_{\alpha_{1}}(y_{j}) \; (j \neq i)
            \text{ is also unique, $\mod 2 \pi$, and Lipschitz in } y . 
    \end{multline} 

Choose
    \begin{equation}  \label{E:r.defn} 
       r \in (0, \pi/4] .
    \end{equation}

Define 
    \begin{equation}  \label{E:Gi.defn} 
      \mcl{G}_{i} := \bigl\{ x \in \D : \text{ if $j \neq i$, $k \neq i$ then } 
        \angle(x_{j}, x_{k}) < 2 r \bigr\} .
    \end{equation}
By \eqref{E:D.has.prod.top}, 
    \begin{equation}  \label{E:Gi.is.open}
      \mcl{G}_{i} \text{ is open.}
    \end{equation} 

If $\xi \in \RR^{n}$, let $\bar{\xi}^{i} := (n-1)^{-1} \sum_{k \neq i} \xi_{k}$. \emph{Claim:} Suppose $x \in \mcl{G}_{i}$ then 
    \begin{multline}  \label{E:xi.parallels.x.on.Gi}
      \text{If $x \in \mcl{G}_{i}$ there exists $\xi \in \RR^{n}$ s.t.\ } 
        x = \wrap(\xi) \text{ and } |\xi_{j} - \xi_{k}| < 2 r \text{ for every } j, k \neq i . \\
          \text{ Moreover, } \xi_{i} \in (\bar{\xi}^{i} - \pi, \bar{\xi}^{i} + \pi] .
    \end{multline}
Let $x \in \mcl{G}_{i}$. Pick $\ell \neq i$ and $\xi_{\ell} \in \RR$ s.t.\ 
$x_{\ell} = (\cos \xi_{\ell}, \sin \xi_{\ell})$. Recall \eqref{E:arg,matey!.2}. Since 
$x \in \mcl{G}_{i}$, no component of $x^{i}$ equals $-x_{\ell}$. Let 
$C := S^{1} \setminus \{-x_{\ell}\}$ so $x^{i} \in C^{n-1}$. 
$\blds{\arg}_{\xi_{\ell} 1_{n-1}}(x)$ consists of $n-1$ points 
$\xi_{j} \in (\xi_{\ell} - \pi, \xi_{\ell} + \pi)$ ($j \neq i$). Also 
$|\xi_{j} - \xi_{k}| = \angle(x_{j}, x_{k})$ for every $j, k \neq i$. (By \eqref{E:modulo}, we may pick $\xi_{i} \in (\xi_{\ell} - \pi, \xi_{\ell} + \pi]$ s.t.\ 
$x_{i} = (\cos \xi_{i}, \sin \xi_{i})$.) This proves the claim. 

Write $\D_{n} = \D = (S^{1})^{n}$ so $\D_{n-1} = (S^{1})^{n-1}$. Define 
$\T_{n-1} := \bigl\{ (w, \ldots, w) \in \D_{n-1} : w \in S^{1} \bigr\}$. 
Define a metric, $\rho_{n-1}$, on $\D_{n-1}$ as in \eqref{E:geod.dist.on.D.2}. 
Let $x \in \mcl{G}_{i}$. Then $x^{i}$ is semicircular in $\D_{n-1}$. Therefore, by lemma \ref{L:mean.of.semicircular.data.sets} applied to $\D_{n-1}$, the angular mean, 
$\bar{x}^{i}$ exists uniquely and $\T_{n-1}(\bar{x}^{i})$ is the unique closest point 
of $\T_{n-1}$ to $x^{i}$. Let $y \in \Pf_{1i}$. 
    \begin{equation*}
      \rho(x,y)^{2} = \angle(x_{i}, y_{i})^{2} + \rho_{n-1}(x^{i}, y^{i})^{2} .
    \end{equation*}
Recall \eqref{E:set.distances}. For simplicity, write $dist = dist_{\rho}$. Now, $y^{i} \in \T_{n-1}$ so  
    \begin{equation*}
      \rho(x,y)^{2} \geq dist(x^{i}, \T_{n-1})^{2} .
    \end{equation*}
Therefore, 
    \begin{multline}  \label{E:x.in.Gi.has.unique.closest.pt.in.Pf.1i}
      \text{Suppose $x \in \mcl{G}_{i}$. Then } y = (y_{1}, \ldots, y_{n}) 
        \text{ with $y_{i} = x_{i}$ and } y^{i} = \T_{n-1}(\bar{x}^{i}) \\
          \text{ is the unique closest point of $\Pf_{1i}$ to } x .
    \end{multline} 

We will often assume or deduce the following.
       \begin{equation}  \label{E:x.near.x.i.bar}
         x \in \mcl{G}_{i}, \; \angle_{s}(x_{i}, \bar{x}^{i}) \in (0, \pi), 
           \text{ and } \angle(x_{j}, \bar{x}^{i}) < r \text{ for } j \neq i.
       \end{equation}
(If $x$ satisfies the preceding then $\angle_{(-s)}(x_{j}, \bar{x}^{i})) > 2 \pi - r > \pi$.) Recall \eqref{E:mclX,mclY.defns}. From what we saw above, we have, 
    \begin{equation}  \label{E:x.near.x.i.bar.is.close.to.Xis}
      \text{If $x$ satisfies \eqref{E:x.near.x.i.bar} then }
        \text{the closest point of $\Pf_{1i}$ to $x$ lies in } \mcl{X}_{i,s} .
    \end{equation}

For $j \in \NN_{n}$, 
	\begin{multline}   \label{E:e.sub.i.defn}
		\text{Let } e_{j}^{1 \times n} \text{ be the coordinate vector } 
		  (0, \ldots, 0, 1, 0, \ldots, 0) \in \RR^{n} \\
		  \text{ with the ``1'' in the $j^{th}$ position.}
	\end{multline}
Recall the definition, \eqref{E:1n.col.vec.defn}, of $1_{n}$. Define: 
     \begin{equation}  \label{E:OneNbSpace.defn.3} 
      \OneNbSpace_{i} \subset \RR^{n} 
        \text{ is the space spanned by $e_{i}$ and $1_{n}$.}
    \end{equation}
Thus, $\beta \in \OneNbSpace_{i}$. Let 
    \begin{equation}  \label{E:fi.defn.2}
      f_{i}^{1 \times n} := 1_{n} - e_{i} , \quad i \in \NN_{n} .
    \end{equation}
Thus, $f_{ii} = 0$ and $f_{ij} = 1$ ($j \neq i$). Thus, $f_{i} \cdot 1^{n} = n-1$, 
$f_{i} \cdot e_{i} = 0$, and $\OneNbSpace_{i}$ is also spanned by $e_{i}$ and $f_{i}$. 

Let $x = (x_{1}, \ldots, x_{n}) \in \mcl{G}_{i} \subset \D$. Then, by \eqref{E:wrap.defn.2}, for some $\xi = (\xi_{1} , \ldots, \xi_{n}) \in \RR^{n}$ we have  
	\begin{equation} \label{E:wrap.defn.3}
		x = \wrap(\xi) := ( \cos \xi_{1}, \sin \xi_{1}, 
		  \cos \xi_{2}, \sin \xi_{2}, 
			\ldots, \cos \xi_{n}, \sin \xi_{n} \bigr) .
	\end{equation}

Let 
    \begin{equation}  \label{E:delta.i.defn.2}
      \bar{\xi}^{i} := (n-1)^{-1} (\xi \cdot f_{i}) \in \RR \text{ and }
        \deli := \deli(\xi) := \xi - \bar{\xi}^{i} 1_{n} .
    \end{equation}
Thus, $\bar{\xi}^{i}$ is the arithmetic mean of the coordinates of $\xi$, excluding 
the $i^{th}$, and 
    \begin{equation}  \label{E:delta.i.from.xi}
      \deli_{k} := \deli(\xi) := \xi_{k} - \bar{\xi}^{i}, \quad k \in \NN_{n} . 
    \end{equation}
Alternatively, 
    \begin{equation}  \label{E:coord.free.delta}
      \deli_{k} = s \angle_{s}(\bar{x}^{i}, x_{k}), \quad k = 1, \ldots, n . 
    \end{equation} 
Since $x = (x_{1}, \ldots, x_{n}) \in \mcl{G}_{i}$, by \eqref{E:Gi.defn}, \eqref{E:modulo}, and lemma \ref{L:mean.of.semicircular.data.sets}, we may assume 
    \begin{multline*}
      \xi_{i} \in (\bar{\xi}^{i} - \pi , \bar{\xi}^{i} + \pi] , \text{ and } |\xi_{j} - \xi_{k}| < 2 r
        \text{ for } j \neq i, k \neq i . \\
          \text{ Moreover, the closest point of $\Pf_{1i}$ to $x$ is }
            \wrap(\xi_{i} e_{i} + \bar{\xi}^{i} f_{i}) .           
    \end{multline*}

Note that
    \begin{equation}  \label{E:non-i.deltas.sum.to.0}
      f_{i} \cdot \deli = f_{i} \cdot (\xi - \bar{\xi}^{i} 1_{n})
        = (n-1) \bar{\xi}^{i} - (n-1) \bar{\xi}^{i} = 0. \; \text{ So }
          \sum_{k \neq i} \deli_{k} = 0 .
    \end{equation}
    
We can re-express \eqref{E:x.near.x.i.bar} as follows.
        \begin{multline}  \label{E:xk.near.xi.i.bar} 
          x = \wrap(\xi), \text{ } 
            \xi_{i} \in ( 0, \bar{\xi}^{i} + \pi ) \text{ and }
              \xi_{k} \in (\bar{\xi}^{i} - r, \bar{\xi}^{i} + r) \text{ if } k \neq i . \\
                \text{ In terms of $\deli$ this becomes: }
                  0 < \deli_{i} < \pi \text{ and } |\deli_{k}| < r  \text{ if } k \neq i .
        \end{multline}
Write $\xi = \xi(x)$. Note that if $x$ satisfies \eqref{E:xk.near.xi.i.bar}, then, by lemma \ref{L:mean.of.semicircular.data.sets}, $\bar{x}^{i}$ is unique. 
 
Let $\phi \in \RR$ and let  
$C = \bigl\{ (\cos \theta, \sin \theta) : |\theta - \phi| < \pi \bigr\}$. 
Then, by \eqref{E:arg.is.isom}, $\blds{\arg}_{\phi 1^{n}}$ is Lipschitz on $C^{n}$. Hence,   
    \begin{multline}  \label{E:xi.is.Lip}
      \text{Given $\phi \in \RR$, there is a well-defined function } \\
        \xi(\cdot) = \xi_{\phi}(\cdot) 
          : C^{\,n} \to (\phi - \pi, \phi + \pi)^{n} 
            \text{ satisfying } \wrap(\xi(x)) = x , \text{ for } x \in C^{\,n} . \\
              \xi(\cdot) \text{ is Lipschitz.}
    \end{multline}  

If $\xi \in \RR^{n}$, let $\xi^{i} \in \RR^{n-1}$ be $\xi$ with the $i^{th}$ coordinate dropped. Interpret $\wrap$ \, in the obvious way as a map from 
$\RR^{n-1} \to (S^{1})^{n-1}$. Then the same argument as above shows   
    \begin{multline}  \label{E:xi.i.is.Lip}
      \text{Given $\phi \in \RR$, there is a well-defined function } \\
        \xi^{i}(\cdot) = \xi^{i}_{\phi}(\cdot) 
          : C^{\,n-1} \to (\phi - \pi, \phi + \pi)^{n-1} 
            \text{ satisfying } \wrap(\xi^{i}(x)) = x^{i} , \text{ for } x \in C^{\,n-1} . \\
              \xi^{i}(\cdot) \text{ is Lipschitz.}
    \end{multline}

For $x \in \D$ let 
$\mcl{E}^{i}(x) := \bigl\{ z \in \D : \angle(z_{j}, x_{j}) < \pi/4 , \; j \neq i \bigr\}$ so 
$\mcl{E}^{i}(x)$ is an open neighborhood of $x$. \emph{Claim:}    
    \begin{multline}  \label{E:locally.Lip.xbari.on.Gi}   
      \text{Given $x \in \mcl{G}_{i}$ there is a well-defined Lipschitz version of the map } \\
        z \to \bar{z}^{i} \in S^{1} \text{ on } \mcl{E}(x) . 
    \end{multline} 
Let $x \in \mcl{G}_{i}$ and let $z \in \mcl{E}^{i}(x)$. Pick 
$\tilde{\xi}, \tilde{\zeta} \in \RR^{n}$ s.t.\ $|\tilde{\zeta}_{j} - \tilde{\xi}_{j}| < \pi/4$ for every 
$j \neq i$. By \eqref{E:xi.parallels.x.on.Gi}, there exists $\ell \in \ZZ^{n}$ s.t.\ if 
$\xi := \tilde{\xi} + 2 \pi \ell$ then 
$|\xi_{j} - \xi_{k}| < 2 r$ for every $j, k \neq i$. Let $\zeta := \tilde{\zeta} + 2 \pi \ell$. 
Then $\wrap(\xi) = x$, $\wrap(\zeta) = z$, and $|\zeta_{j} - \xi_{j}| < \pi/4$ for every 
$j \neq i$. Let $\xi_{min} := \min \{ \xi_{j} : j \neq i \}$ and 
$\xi_{max} := \max \{ \xi_{j} : j \neq i \}$. Since $x \in \mcl{G}_{i}$, by \eqref{E:r.defn}, 
$\xi_{max} - \xi_{min} < \pi/2$. Hence, if $j \neq i$ then 
$\xi_{min} - \pi/4 < \zeta_{i} < \xi_{max} + \pi/4 < \xi_{min} + \pi/2 + \pi/4$. I.e., 
$\zeta_{j} \in (\xi_{min} - \pi/4 , \xi_{min} + 3 \pi/4)$, an open interval of length $\pi$. 

By \eqref{E:coord.free.delta} and \eqref{E:xi.i.is.Lip},    
    \begin{equation}  \label{E:Lip.delta.k.on.Gi}   
      \text{The maps } \deli_{k}(\cdot) \; (k \neq i)
        \text{ are Lipschitz on }  \mcl{G}_{i} .
    \end{equation} 
 
We have
   \begin{lemma}  \label{L:narrow.n-1.means.rho.closest}
Suppose $\xi, \xi' \in \RR^{n}$, $\wrap(\xi') = \, \wrap(\xi) = x \in \D$. Let $r$ satisfy \eqref{E:r.defn}. Assume \eqref{E:xk.near.xi.i.bar} 
(for $x$, not necessarily for $x'$).  
Then $dist(\xi', \OneNbSpace_{i}) \leq dist(\xi, \OneNbSpace_{i})$ if and only if 
        \begin{equation}  \label{E:add.mults.of.2pi.to.xi}
          \xi' = \xi + 2 \pi M f_{i} + 2 \pi N e_{i} , 
        \end{equation}
where $M,N \in \ZZ$. In that case, we have
        \begin{equation*}
          dist(\xi', \OneNbSpace_{i}) = dist(\xi, \OneNbSpace_{i})
            = dist(x, \Pf_{1i}) . 
        \end{equation*}
and $y(x) := \wrap(\xi_{i} e_{i} + \bar{\xi}^{i} f_{i})$ is the unique closest point 
of $\Pf_{1i}$ to $x$.
  \end{lemma}

It follows from the preceding and \eqref{E:xi.is.Lip} that $y(x)$ is continuous in $x$ for which \eqref{E:x.near.x.i.bar} (equivalently \eqref{E:xk.near.xi.i.bar}) holds. 
  \begin{proof}
WLOG $i = 1$. Since $\wrap(\xi') = \, \wrap(\xi) = x \in \D$, by \eqref{E:wrap.defn.2}, 
the coefficient of $e_{1}$ in \eqref{E:add.mults.of.2pi.to.xi} has to have the form $2 \pi N$, with $N \in \ZZ$.

Now we consider the coefficient of $f_{1}$. Write 
    \begin{equation*}
      \xi = (\xi_{1}, \gamma), \text{ where } \gamma \in \RR^{n-1} .
    \end{equation*} 
Let $\bar{\gamma} := (n-1)^{-1} (\gamma_{1} + \cdots + \gamma_{n-1}) 
= (n-1)^{-1} \gamma \cdot 1_{n-1} = \bar{\xi}^{1}$. Recall \eqref{E:defn.of.Pi(xi)}. 
(``$\xi$'' means something different there than it does here.) By \eqref{E:OneNbSpace.defn.3}, 
$e_{1} \in \OneNbSpace_{1}$. Therefore, by \eqref{E:Pf.1j.rho.dist}, the Euclidean distance, 
$dist(\xi, \OneNbSpace_{1})$, from $\xi$ to $\OneNbSpace_{1}$ is
    \begin{equation}  \label{E:dist1.formla}
      dst_{1} := \bigl| \xi - \xi \, \Pi(\OneNbSpace_{1}) \bigr| 
        = \sqrt{\sum_{k=2}^{n} (\xi_{k}^{2} - \bar{\xi}^{1})^{2}}
          = \sqrt{\sum_{\ell =1}^{n-1} (\gamma_{\ell}^{2} - \bar{\gamma})^{2}} .
    \end{equation}
Since $\wrap(\xi') = \, \wrap(\xi) = x$ by assumption, there exists 
$\alpha \in \ZZ^{n}$ s.t.\ $\xi' := \xi + 2 \pi \alpha$. Write $\alpha = (\alpha_{1}, m)$, 
so $m \in \ZZ^{n-1}$. So
    \begin{equation}  \label{E:xi'.alpha.m}
      \xi' := \xi + 2 \pi \alpha = (\xi_{1} + 2 \pi \alpha_{1}, \gamma + 2 \pi m) .
    \end{equation}
Let
    \begin{equation}  \label{E:m.bar.and.zeta}
      \overline{m} := \frac{1}{n-1} m \cdot 1_{n-1} \in \RR 
        \text{ and } \zeta := \gamma - \bar{\gamma} 1_{n-1} 
          = \gamma - \bar{\xi}^{1} 1_{n-1} \in \RR^{n-1} .
    \end{equation}
Thus, by \eqref{E:dist1.formla}, 
    \begin{equation}  \label{E:dist1=|zeta|}
      dst_{1} = |\zeta| .
    \end{equation} 
By \eqref{E:non-i.deltas.sum.to.0}, 
    \begin{equation}  \label{E:resids.sum.to.0}
      \zeta \cdot 1_{n-1} = 0 = (m - \overline{m} 1_{n-1}) \cdot 1_{n-1} .
    \end{equation}
It follows that
    \begin{equation}  \label{E:no.product.term}
      |m - \overline{m} 1_{n-1}|^{2} = |m|^{2} - (n-1) \overline{m}^{2} .
    \end{equation}

By \eqref{E:modulo}, we may write 
    \begin{equation}  \label{E:m,p,eta}
      \overline{m} = p + \eta , 
        \text{ where } p \in \ZZ \text{ and } \eta \in (-1/2, 1/2] . 
    \end{equation} 
Let 
    \begin{equation}  \label{E:L=m-p.1.n-1}
      L := m - p 1_{n-1} \in \ZZ^{n-1} .
    \end{equation}
Then, by \eqref{E:resids.sum.to.0}, 
    \begin{multline}  \label{E:L.dot.1=(n-1)eta}
      L \cdot 1_{n-1} = (m - p 1_{n-1} - \eta 1_{n-1}) \cdot 1_{n-1} + (n-1) \eta \\
        = (m - \overline{m} 1_{n-1}) \cdot 1_{n-1} + (n-1) \eta = (n-1) \eta .
    \end{multline}
 
Recall that, e.g., $| L |_{1} := |L_{1}| + \cdots +  |L_{n-1}|$. \emph{Claim:}
    \begin{equation}  \label{E:L1.norm.at.least.(n-1)eta}
      |L|_{1} \geq (n-1)|\eta| .
    \end{equation}
If $\eta = 0$ and $L \neq 0$, then the inequality is obviously strict. 
So assume $\eta \neq 0$. We have
    \begin{equation*}
      |L|_{1} = \sum_{m_{k} \geq p} (m_{k} - p) - \sum_{m_{k} < p} (m_{k} - p) .
    \end{equation*}
Suppose $\eta > 0$. Then, by the preceding and \eqref{E:L.dot.1=(n-1)eta}, 
    \begin{align*}
      |L|_{1} &= \left[ \sum_{m_{k} \geq p} (m_{k} - p) 
        + \sum_{m_{k} < p} (m_{k} - p) \right]
          - 2 \sum_{m_{k} < p} (m_{k} - p) \\
      &= [L \cdot 1_{n-1}] - 2 \sum_{m_{k} < p} (m_{k} - p) \\
      &= (n-1) \eta - 2 \sum_{m_{k} < p} (m_{k} - p) \\
      &\geq (n-1) |\eta| .
    \end{align*}
Similarly, suppose $\eta < 0$. Then, by \eqref{E:L.dot.1=(n-1)eta}, 
    \begin{align*}
      |L|_{1} &= - \left[ \sum_{m_{k} \geq p} (m_{k} - p) 
        + \sum_{m_{k} < p} (m_{k} - p) \right]
          + 2 \sum_{m_{k} \geq p} (m_{k} - p) \\
      &= -L \cdot 1_{n-1} + 2 \sum_{m_{k} \geq p} (m_{k} - p) \\
      &= -(n-1) \eta + 2 \sum_{m_{k} \geq p} (m_{k} - p) \\
      &\geq (n-1) |\eta| .
    \end{align*}
This proves the claim \eqref{E:L1.norm.at.least.(n-1)eta}.

Let $dst'_{1} := dist(\xi', \OneNbSpace_{1})$. Then, 
by \eqref{E:xi'.alpha.m}, \eqref{E:m.bar.and.zeta}, \eqref{E:m,p,eta}, 
\eqref{E:L=m-p.1.n-1}, \eqref{E:dist1=|zeta|}, \eqref{E:resids.sum.to.0}, and \eqref{E:L.dot.1=(n-1)eta}, we have
    \begin{align}  \label{E:dst.tilde.dst}
      (dst'_{1})^{2} &= | (\gamma + 2 \pi m) - (\bar{\gamma} 1_{n-1} 
          + 2 \pi \overline{m} 1_{n-1}) |^{2} \notag \\
        &= \bigl| \zeta + 2 \pi (m - \overline{m} 1_{n-1}) \bigr|^{2} \notag \\
        &= \bigl| \zeta + 2 \pi (m - p 1_{n-1} - \eta 1_{n-1}) \bigr|^{2} \notag \\
        &= \bigl| \zeta + 2 \pi (L - \eta 1_{n-1}) \bigr|^{2} \notag \\
        &= |\zeta|^{2} + 4 \pi \zeta \cdot (L - \eta 1_{n-1}) 
          + 4 \pi^{2} |L - \eta 1_{n-1}|^{2} \\
        &= dst_{1}^{2} + 4 \pi \zeta \cdot L + 4 \pi^{2} |L - \eta 1_{n-1}|^{2} \notag \\
        &= dst_{1}^{2} + 4 \pi \zeta \cdot L + \Bigl( 4 \pi^{2} |L|^{2} 
          - 8 \pi^{2} \eta (L \cdot 1_{n-1}) + 4 \pi^{2} (n-1) \eta^{2} \Bigr) \notag \\
        &= dst_{1}^{2} + 4 \pi \zeta \cdot L + 4 \pi^{2} |L|^{2} 
          - 8 \pi^{2} (n-1) \eta^{2} + 4 \pi^{2} (n-1) \eta^{2} \notag \\
        &= dst_{1}^{2} + 4 \pi \zeta \cdot L + 4 \pi^{2} |L|^{2} 
          - 4 \pi^{2} (n-1) \eta^{2} \notag .
    \end{align}
By the H\"{o}lder inequality (Rudin \cite[Theorem 3.8, p.\ 65]{wR66.realcmplx}),
we have $\zeta \cdot L \geq - |\zeta |_{\infty} |L|_{1}$. By assumption, 
\eqref{E:xk.near.xi.i.bar} holds for $\xi$. Thus, 
    \begin{equation}  \label{E:|zeta|.infty.<2r}
      |\zeta |_{\infty} < 2 r ,
    \end{equation} 
a strict inequality. Thus, we have
    \begin{multline}  \label{E:hold.her}
      (dst'_{1})^{2} \geq dst_{1}^{2} - 4 \pi |\zeta |_{\infty} |L|_{1} 
        + 4 \pi^{2} |L|^{2} - 4 \pi^{2} (n-1) \eta^{2} \\
          \geq dst_{1}^{2} - 8 \pi r |L|_{1} + 4 \pi^{2} |L|^{2} - 4 \pi^{2} (n-1) \eta^{2} . \\
      \text{ If $L \neq 0$ the inequality is strict.}
    \end{multline}

Since $p$ and the coordinates of $m$ are integers we have, 
by \eqref{E:L1.norm.at.least.(n-1)eta},  
    \begin{equation}  \label{E:L2.L1.eta}
      |L|^{2} = \sum_{k} (m_{k} - p)^{2} \geq \sum_{k} |m_{k} - p| = |L|_{1} 
        \geq (n-1) |\eta| . 
    \end{equation}
Recall that $|\eta| \leq 1/2$ and, by \eqref{E:r.defn}, $r \leq \pi/4$. Substituting \eqref{E:L2.L1.eta} into \eqref{E:hold.her} and applying \eqref{E:|zeta|.infty.<2r}, we get
    \begin{align}  \label{E:|L|1.dist1}
      (dst'_{1})^{2} &\geq dst_{1}^{2} + 4 \pi ( - 2 r + \pi ) |L|_{1}
        - 4 \pi^{2} (n-1) \eta^{2} \notag \\
        &\geq dst_{1}^{2} + 4 \pi ( \pi/2 ) |L|_{1}
        - 4 \pi^{2} (n-1) \eta^{2} \notag \\
      &\geq dst_{1}^{2} + 2 \pi^{2} |L|_{1}
        - 2 \pi^{2} (n-1) |\eta | \\
      &\geq dst_{1}^{2} + 2 \pi^{2} |L|_{1}
        - 2 \pi^{2} |L|_{1} \notag \\
      &= dst_{1}^{2} \notag .
    \end{align} 

If $m = M 1_{n-1}$ (which, by \eqref{E:xi'.alpha.m}, translates to $M f_{i}$ in \eqref{E:add.mults.of.2pi.to.xi}) for some $M \in \ZZ$ then obviously $dst'_{1} = dst_{1}$.  (The first coordinate 
in $\xi'$ has no bearing on $dst'_{1}$.) By \eqref{E:L=m-p.1.n-1} and \eqref{E:m,p,eta}, $m$ is a multiple of $1_{n-1}$ if and only if $L = 0$. And $L = 0$ if and only if 
$|L|_{1} = 0$. Suppose $m$ is not a multiple of $1_{n-1}$ so $|L|_{1} > 0$. Then, by \eqref{E:hold.her}, if $|L|_{1} > 0$ the first inequality in \eqref{E:|L|1.dist1} is strict. Thus, $m$ not a multiple of $1_{n-1}$ means $dst'_{1} > dst_{1}$, contradicting $dist(\xi', \OneNbSpace_{i}) \leq dist(\xi, \OneNbSpace_{i})$. 
\eqref{E:add.mults.of.2pi.to.xi} follows.

Suppose $y' \in \Pf_{1i}$ is a closest point of $\Pf_{1i}$ to $x$ and write $y' = \, \wrap(\beta')$ where $\beta' \in \OneNbSpace_{1}$. There exists $\xi' \in \ \RR^{n}$ s.t.\ $\wrap(\xi') = x$ and $|\xi' - \beta'| = \rho(x,y')$. By \eqref{E:add.mults.of.2pi.to.xi}, which we just proved, 
$\xi' = \xi + 2 \pi M f_{i} + 2 \pi N e_{i}$, where $M,N \in \ZZ$. Therefore,   
$\beta' = \xi'_{i} e_{i} + (\bar{\xi}')^{i} f_{i} = (\xi_{i} + 2 \pi N) e_{i} 
+ (\bar{\xi}^{i} + 2 \pi M ) f_{i}$. Hence, each coordinate of $\beta'$ differs from the corresponding coordinate of $\beta$ by an integral multiple of $2 \pi$. Therefore $\wrap(\beta') = \, \wrap(\beta)$ and the lemma is proved. 
  \end{proof}  

We also have
   \begin{lemma}  \label{L:dist.infty.x.Pf.1j<r.rho.dist.computible.from.xi}
Let $i \in \NN_{n}$. Suppose $\xi \in \RR^{n}$, $x = \wrap(\xi) \in \D$, and $\xi$ satisfies \eqref{E:xk.near.xi.i.bar}. Thus, by lemma \ref{L:narrow.n-1.means.rho.closest}, 
$dist(\xi, \OneNbSpace_{i}) = dist(x, \Pf_{1i})$. Let $j \in \NN_{n}$. Then, whether $j = i$ or not, we have 
        \begin{equation}  \label{E:Pf.1j.rho.dist}
          dist(x, \Pf_{1j})
            = \sqrt{\sum_{k \neq j}^{n} (\xi_{k} - \bar{\xi}^{j})^{2}} 
              = dist(\xi, \OneNbSpace_{j}) .
       \end{equation}
I.e., the same $\xi$ used to calculate the $\rho$ distance to $\Pf_{1i}$ can  also be used to calculate the $\rho$ distance to $\Pf_{1j}$. 
  \end{lemma} 
  \begin{proof} 
The $j = i$ case is trivial. Assume $j \neq i$. Choose $\xi' \in \RR^{n}$ s.t.\ 
$x = \wrap(\xi') \in \D$, and $\xi'$ satisfies \eqref{E:xk.near.xi.i.bar} with $\xi'$ in place 
of $\xi$ and $j$ in place of $j$. 
\emph{A fortiori} \eqref{E:xk.near.xi.i.bar} similarly also holds for $j$ and $\xi'$. Therefore, by lemma \ref{L:narrow.n-1.means.rho.closest}, 
    \begin{equation}  \label{E:dists.come.from.xi'}
      dist(x, \Pf_{1j}) = \sqrt{ \sum_{k \neq j} (\xi'_{k} - \overline{\xi'}^{j})^{2} } .
    \end{equation}

By \eqref{E:xk.near.xi.i.bar},
    \begin{multline}  \label{E:off.i.xi.diff}
      \text{if } k \neq i, \; \ell \neq i \text{ then }
        |\xi_{k} - \xi_{\ell}| \leq |\xi_{k} - \bar{\xi}^{i}| + |\bar{\xi}^{i} - \xi_{\ell}| < 2r . \\
          \text{ Similarly, } \text{if } k \neq j, \; \ell \neq j \text{ then } 
            |\xi'_{k} - \xi'_{\ell}| < 2r .
    \end{multline}
Since $\wrap(\xi') = x = \, \wrap(\xi)$, there exist $m_{1}, \ldots, m_{n} \in \ZZ$ s.t.\ 
$\xi'_{k} = \xi_{k} + 2 m_{k} \pi$ ($k \in \NN_{n}$). Let $N := \NN_{n} \setminus \{i,j\}$. 
By \eqref{E:n.nvar.k.sizes}, $N \neq \varnothing$. Suppose for the moment $n > 3$ so the cardinality of $N$ is at least 2. Let $k, \ell \in N$ be distinct. 
Then, by \eqref{E:off.i.xi.diff}, $|\xi_{k} - \xi_{\ell}| < 2 r$ and $|\xi'_{k} - \xi'_{\ell}| < 2 r$. 
Suppose $m_{k} \neq m_{\ell}$. Then, by \eqref{E:r.defn}, 
    \begin{multline*} 
      \pi > 2 r > |\xi'_{k} - \xi'_{\ell}| 
        = \bigl| (\xi_{k} + 2 \pi m_{k}) - (\xi_{\ell} + 2 \pi m_{\ell}) \bigr| \\
          \geq 2 \pi |m_{k} - m_{\ell}| - |\xi_{k} - \xi_{\ell}| > 2 \pi - 2 r
             > \pi .
    \end{multline*}
Contradiction. Thus, $m_{k} = m_{\ell}$ for $k, \ell \in N$. Let $m$ be the common value of $m_{k}$ for $k \in N$. (It is possible that $m_{i} \neq m$ and/or 
$m_{j} \neq m$.) If $n=3$ and $k$ is the lone element of $N$, 
let $m := m_{k}$. 

Whether $n=3$ or not, now replace $\xi'_{\ell}$ by $\xi'_{\ell} - 2 m \pi$ (for all $\ell \in \NN_{n}$). 
So $\overline{\xi'}^{j}$ is replaced by $\overline{\xi'}^{j} - 2 m \pi$ and we have 
$\xi'_{k} = \xi_{k}$ ($k \in N$). Shifting by $2 m \pi$ does not change the differences among 
$\overline{\xi'}^{j}, \xi'_{1}, \ldots, \xi'_{n}$. Therefore, \eqref{E:xk.near.xi.i.bar} continues to hold for $\xi'$ and, since by \eqref{E:OneNbSpace.defn.3} we have 
$m 1_{n} \in \OneNbSpace_{i} \cap \OneNbSpace_{j}$, \eqref{E:dists.come.from.xi'}  also still holds. 

We have shown $\xi'_{k} = \xi_{k}$ ($k \in N$). But $i \notin N$ so \emph{prima facie} we are only entitled to say $\xi'_{i} - \xi_{i} = 2 m_{i} \pi$ for some $m_{i} \in \ZZ$. Let $k \in N$. By \eqref{E:xk.near.xi.i.bar}, $|\xi_{k} - \bar{\xi}^{i}| < r$. Since neither $i$ nor $k$ equals $j$ and $\xi'_{k} = \xi_{k}$, \eqref{E:off.i.xi.diff} tells us that $2r > |\xi'_{k} - \xi'_{i}| = | \xi_{k} - \xi'_{i}|$. Hence, by \eqref{E:xk.near.xi.i.bar} and  \eqref{E:r.defn} again,  
        \begin{equation*}
          2 |m_{i}| \pi = |\xi_{i} - \xi'_{i}| 
            \leq |\xi_{i} - \bar{\xi}^{i}| + |\bar{\xi}^{i} - \xi_{k}| + |\xi_{k} - \xi'_{i}|
              < \pi + r + 2r < 2 \pi .
        \end{equation*}
Therefore, $m_{i} = 0$, so $\xi'_{i} = \xi_{i}$. Thus, $\xi'_{k} = \xi_{k}$ for \emph{all} 
$k \neq j$. This proves \eqref{E:Pf.1j.rho.dist}. 

Let $k \in N$, so $\xi'_{k} = \xi_{k}$. There exists $m_{j} \in \ZZ$ s.t.\ 
$\xi_{j} - \xi'_{j} = 2 m_{j} \pi$. Since $j \neq i$, by \eqref{E:off.i.xi.diff}, we have 
$|\xi'_{k} - \xi_{j}| = |\xi_{k} - \xi_{j}| < 2r$. Therefore, like before, 
        \begin{multline*}
           2 |m_{j}| \pi = |\xi_{j} - \xi'_{j}| 
            \leq |\xi'_{j} -  \overline{\xi'}^{j}| + | \overline{\xi'}^{j} - \xi'_{k}| + |\xi'_{k} - \xi_{j}| \\
              = |\xi'_{j} -  \overline{\xi'}^{j}| + | \overline{\xi'}^{j} - \xi'_{k}| + |\xi_{k} - \xi_{j}| 
                < \pi + r + 2r < 2 \pi .
        \end{multline*}
 Thus, $\xi'_{j} = \xi_{j}$ so after the shift by $2 m \pi$ we have $\xi' = \xi$. Hence, we may assume $\xi' = \xi$ and the lemma is proved. 
  \end{proof}
  
Recall \eqref{E:delta.i.defn.2} and \eqref{E:sign.function}. We have
  \begin{lemma}  \label{L:tween.-(n-2)/n.and.1}
Let $x \in \D$ and let $\xi = \xi(x) \in \RR^{n}$ satisfy \eqref{E:xk.near.xi.i.bar}. 
Let $j \neq i$. By lemmas \ref{L:narrow.n-1.means.rho.closest} and \ref{L:dist.infty.x.Pf.1j<r.rho.dist.computible.from.xi}, 
    \begin{align}  \label{E:dists.come.from.xi}
      dist(x, \Pf_{1i}) &= \sqrt{ \sum_{k \neq i} (\xi_{k} - \bar{\xi}^{i})^{2} } 
        = \sqrt{ \sum_{k \neq i} (\deli_{k})^{2} }  \text{ and } \\
      dist(x, \Pf_{1j}) &= \sqrt{ \sum_{k \neq j} (\xi_{k} - \bar{\xi}^{j})^{2} }
         = \sqrt{ \sum_{k \neq j} (\delta^{j}_{k})^{2} } \notag .
    \end{align}
Then the $dist(x, \Pf_{1i}) \leq dist(x, \Pf_{1j})$ if and only if 
    \begin{equation}  \label{E:delta.(n-2)/n.ineq}
     -\frac{n-2}{n} s \deli_{i} \leq s \deli_{j} \leq s \deli_{i} . 
    \end{equation}
Here, $s = sign(\deli_{i})$. $dist(x, \Pf_{1i}) < dist(x, \Pf_{1j})$ if and only if the inequalities in the preceding are strict. 
  \end{lemma} 
By \eqref{E:xk.near.xi.i.bar}, $-r < \deli_{j} < r$. Therefore, if $|\deli_{i}| \geq nr/(n-2)$ then \eqref{E:delta.(n-2)/n.ineq} automatically holds. Thus, \eqref{E:delta.(n-2)/n.ineq} is only potentially interesting when $|\deli_{i}|$ is small. 

  \begin{proof}[Proof of lemma \ref{L:tween.-(n-2)/n.and.1}] 
Let $\eta^{i} \in \RR^{n-1}$ be the vector $\deli$ with the $i^{th}$ coordinate deleted. By \eqref{E:dists.come.from.xi}, the geodesic distance from $x$ to $\Pf_{1i}$ is just the Euclidean length $|\eta^{i}|$. WLOG $i = 1$, $j=2$. To simplify the notation slightly, 
write $\epsilon = \delta^{1}$. Let $\gamma \in \RR^{n-2}$ be the vector obtained by dropping the first two coordinates of $\epsilon$. 
Thus, 
    \begin{equation*}
      \delta^{1} = \epsilon = (\epsilon_{1}, \eta^{1}) 
        = (\epsilon_{1}, \epsilon_{2} , \gamma) . 
    \end{equation*}
By \eqref{E:non-i.deltas.sum.to.0}, 
    \begin{equation}  \label{E:delta.i.2=gamma.sum}
      \epsilon_{2} = - \sum_{k > 2} \epsilon_{k} = - \gamma \cdot 1_{n-2} .
    \end{equation}
Thus, by \eqref{E:dists.come.from.xi}, the squared geodesic distance from $x$ to $\Pf_{11}$ is 
    \begin{equation}  \label{E:dist1.squared}
      dst_{1}^{2} := dist(x, \Pf_{11})^{2}
        = |\eta^{1}|^{2} = \epsilon_{2}^{2} + |\gamma |^{2} 
          = (\gamma \cdot 1_{n-2})^{2} + |\gamma |^{2}  . 
    \end{equation}

In order to compute the corresponding squared distance from $x$ to $\Pf_{12}$ we need $\delta^{2}$. $\delta^{2}$ can be computed from $\xi$. But $\delta^{2}$ is based on differences, which means that WLOG $\xi = \epsilon$. So $\delta^{2}$ can be computed from $\epsilon$. First, we compute the arithmetic mean of the entries in $\epsilon$, excluding the second. By \eqref{E:delta.i.2=gamma.sum}, 
    \begin{equation}  \label{E:mu=xi.bar2}
      \mu := \bar{\xi}^{2} = \frac{1}{n-1} ( \epsilon_{1} + \gamma \cdot 1_{n-2} )
        = \frac{1}{n-1} ( \epsilon_{1} - \epsilon_{2} ). 
    \end{equation}
By \eqref{E:delta.i.defn.2}, $\delta^{2} = \epsilon - 1_{n} \mu$. 

Thus, by \eqref{E:dist1.squared} and \eqref{E:delta.i.2=gamma.sum}, the squared distance, $dst_{2}^{2} := dist(x, \Pf_{12})^{2}$, from $\Pf_{12}$ to $x$ is
    \begin{align*}
      dst_{2}^{2} &= (\epsilon_{1} - \mu)^{2} + |\gamma - \mu 1_{n-2} |^{2} \\
        &= \epsilon_{1}^{2} - 2 \epsilon_{1} \mu 
          + \mu^{2} + |\gamma|^{2} - 2 (\gamma \cdot 1_{n-2}) \mu 
            + |1_{n-2}|^{2} \mu^{2} \\
         &= \epsilon_{1}^{2} - 2 \epsilon_{1} \mu + \mu^{2}
          + |\gamma |^{2} + 2 \epsilon_{2} \mu + (n-2) \mu^{2} \\
         &= \epsilon_{1}^{2} - 2 \epsilon_{1} \mu
          + |\gamma |^{2} + 2 \epsilon_{2} \mu + (n-1) \mu^{2} .
    \end{align*}
Hence, by \eqref{E:non-i.deltas.sum.to.0}, \eqref{E:dist1.squared}, and \eqref{E:mu=xi.bar2},
    \begin{align}  \label{E:further.expansion.of.dist2.sqrd}
      dst_{2}^{2} &= \epsilon_{1}^{2} - 2 (\epsilon_{1} - \epsilon_{2}) \mu
        + (dst_{1}^{2} - \epsilon_{2}^{2}) + (n-1) \mu^{2} \notag \\
          &= dst_{1}^{2} - \epsilon_{2}^{2} + \epsilon_{1}^{2}
            - 2 (\epsilon_{1} - \epsilon_{2}) \frac{1}{n-1} (\epsilon_{1} - \epsilon_{2})
              + (n-1) \left( \frac{1}{n-1} (\epsilon_{1} - \epsilon_{2} \right)^{2} \\
         &= dst_{1}^{2} - \epsilon_{2}^{2} + \epsilon_{1}^{2}
            - 2 \frac{1}{n-1} (\epsilon_{1} - \epsilon_{2})^{2}
              + \frac{1}{n-1} (\epsilon_{1} - \epsilon_{2} )^{2} \notag \\
         &= dst_{1}^{2} - \epsilon_{2}^{2} + \epsilon_{1}^{2}
            - \frac{1}{n-1} (\epsilon_{1} - \epsilon_{2})^{2} . \notag 
    \end{align}
Thus, 
    \begin{align*}
      dst_{2}^{2} - dst_{1}^{2} &= \epsilon_{1}^{2} - \epsilon_{2}^{2} 
        - \frac{1}{n-1} (\epsilon_{1} - \epsilon_{2})^{2} \\
            &= (\epsilon_{1} - \epsilon_{2}) \left( \epsilon_{1} + \epsilon_{2}
             -  \frac{1}{n-1} (\epsilon_{1} - \epsilon_{2}) \right) \\
            &= (\epsilon_{1} - \epsilon_{2})
               \left( \frac{n-2}{n-1} \epsilon_{1} + \frac{n}{n-1} \epsilon_{2} \right) \\
            &= (\epsilon_{1} - \epsilon_{2}) \frac{n}{n-1}
               \left( \frac{n-2}{n} \epsilon_{1} + \epsilon_{2} \right) .
    \end{align*}
The $dst_{2}^{2} - dst_{1}^{2}$ vanishes if and only if $\epsilon_{2} = \epsilon_{1}$ \emph{or} $\epsilon_{2} = -\tfrac{n-2}{n} \epsilon_{1}$. Moreover, the second derivative of $dst_{2}^{2} - dst_{1}^{2}$ w.r.t.\ $\epsilon_{2}$ is strictly negative, \emph{viz.}\ $-2n/(n-1)$. That means the function $\epsilon_{2} \mapsto dst_{2}^{2} - dst_{1}^{2}$ is strictly positive in the interval between $-\tfrac{n-2}{n} \epsilon_{1}$ and $\epsilon_{1}$ and strictly negative outside that interval. Thus, when 
$\delta^{1}_{2} = \epsilon_{2}$ is between $\delta^{1}_{1} = \epsilon_{1}$ 
and $-\tfrac{n-2}{n} \delta^{1}_{1}$ we have $dst_{2}^{2} \geq dst_{1}^{2}$ with strict inequality if and only if $\delta^{1}_{2}$ lies strictly between. 
  \end{proof}

Recall \eqref{E:r.defn}. Define
    \begin{equation}  \label{E:possible.g(st)}
      g(u) =
        \begin{cases}
          \frac{n-2}{n}, &\text{ if } u \in [0, r], \\
          1 - 2 \frac{\pi - t}{n(\pi-r)} , &\text{ if } u \in (r, \pi] .
        \end{cases}
    \end{equation}
Thus, we have, 
    \begin{equation*} 
      g : [0, \pi] \to \bigl[ (n-2)/n, 1 \bigr] \text{ but } g(u) < 1 \text{ if } u < \pi 
    \end{equation*}
and $g$ is continuous, \emph{non-decreasing}, and piece-wise differentiable, in fact differentiable except at one point. Moreover,   
    \begin{equation*} 
      g(t) = \frac{n-2}{n} \text{ for } t \in [0, r] \text{ and } g(\pi) = 1 . 
    \end{equation*}

Recall the definition, \eqref{E:fi.defn.2}, of $f_{i}$. For $i \in \NN_{n}$ and 
$|t| \in (0, \pi)$, let $s = sign(t)$. Let $\Theta_{\mcl{X}_{i,sign(t)},t} := \Theta_{i,t}$ be the set of $\theta = (\theta_{1}, \ldots, \theta_{n}) \in [-\pi, \pi]^{n}$ with the properties
    \begin{align}  \label{E:big.Theta.Xit.requirements}
          \theta \cdot f_{i} &= 0, \notag \\ 
          -g(st) \pi \leq s \theta_{j} &\leq \pi \quad (j \neq i), \\
          \theta_{i} &= s \pi , \text{ and }  \notag \\
          \text{ there exists } j \neq i \text{ s.t.\ } \theta_{j} &= -s g(st) \pi
            \text{ \emph{or} } \theta_{j} = s \pi .   \notag 
    \end{align}
(Do not confuse this $\Theta$ with the one in theorem \ref{T:Phi.star.Hr.contains.Theta.star.Hr}.) Notice that for no $k \in \NN_{n}$ is $e_{k}$ is proportional to a point in $\Theta_{i,t}$. $\deli$ as in \eqref{E:delta.i.from.xi} will be proportional to $\theta \in \Theta_{i,t}$.

Let $i \in \NN_{n}$ and $s = \pm1$. Let 
    \begin{equation}  \label{E:Uis.defn}
       U_{i,s} := \bigl\{ t e_{i} + \lambda (\theta - t e_{i}) \in \RR^{n} :  
         st \in (0,\pi), \lambda \in [0, r/\pi), 
           \text{ and } \theta \in \Theta_{i,t} \bigr\} .
    \end{equation}
Let $\phi_{2} \in \RR$. Then $U_{i,s} + \phi_{2} 1_{n}$ is a cone with vertex 
at $t e_{i} + \phi_{2} 1_{n} \in \OneNbSpace_{i}$. 
(Let $y = \wrap(t e_{i} + \phi_{2} 1_{n})$. Then $y \in \Pf_{1i}$ and, in the language of \eqref{E:tw.in.cone}, $\{y\} \times (U_{i,s} + \phi_{2} 1_{n})$ is a cone at $y$.) We have in mind starting with $\xi \in \RR^{n}$ satisfying \eqref{E:xk.near.xi.i.bar} and taking 
$\phi_{2} = \bar{\xi}^{i}$. 

Let $\omega \in U_{i,s}$. Then, by \eqref{E:big.Theta.Xit.requirements}, 
$\omega \cdot f_{i} = 0$. Let $\phi_{2} \in \RR$, and let 
$\xi = \omega + \phi_{2} 1_{n}$. Then, by \eqref{E:delta.i.defn.2}, $\omega = \deli(\xi)$. This justifies denoting points of $U_{i,s}$ by $\deli$. In particular, 
\eqref{E:non-i.deltas.sum.to.0} holds for $\deli = \omega$. 

Let 
    \begin{gather}  \label{E:Uis.params}
      t \in s \, (0,\pi) = 
          \begin{cases}
            (0, \pi) , &\text{ if } s = +1 , \\
            (-\pi,0) , &\text{ if } s = -1 ;
          \end{cases} \\
        \lambda \in [0, r/\pi); \text{ and } \theta \in \Theta_{i,t} . \notag 
    \end{gather} 
These are precisely the requirements laid out in \eqref{E:Uis.defn}. Note that, by \eqref{E:r.defn}, 
    \begin{equation}  \label{E:lambda<1/4}
      \lambda < 1/4.
    \end{equation} 
Let $t$, $\theta$, and $\lambda$ be as in \eqref{E:Uis.params}. Let 
    \begin{equation}  \label{E:delta.i.defined.in.trms.of.t.lambda.theta}
      \deli := t e_{i} + \lambda (\theta - t e_{i}) 
        = (1-\lambda) t e_{i} + \lambda \theta .
    \end{equation} 
Then, by \eqref{E:Uis.params}, $\deli \in U_{i,s}$. We have  
    \begin{align}  \label{E:delta.i.and.delta.k.eqns}
      \deli_{i} &= t + \lambda(\theta_{i} - t) = t + \lambda(s \pi - t)
        = (1-\lambda) t + \lambda s \pi \text{ and } \\
      \deli_{k} &= \lambda \theta_{k} \text{ for } k \neq i \notag .
    \end{align}
By \eqref{E:big.Theta.Xit.requirements}, 
    \begin{equation}  \label{E:delta.dot.fi=0}
      \deli \cdot f_{i} = 0 . 
    \end{equation}

An easy consequence of the preceding together with \eqref{E:possible.g(st)}, \eqref{E:Uis.params}, \eqref{E:big.Theta.Xit.requirements}, and \eqref{E:Uis.defn},  is:
    \begin{align}  \label{E:|delta.k|<r}
      s \deli_{i} &\in (0, \pi) , \notag \\
      -r \leq - r g(st) < - \lambda g(st) \pi \leq \lambda s \theta_{k} &= s \deli_{k} 
        = \lambda s \theta_{k} \leq \lambda \pi < r, \quad \text{ if } k \neq i ,  \\
        | \deli_{k} | &< | \deli_{i} | , \quad \text{ if } k \neq i \notag .
    \end{align}
As we noted above, if $\phi \in \RR$ and $\xi = \deli + \phi 1_{n}$, then 
$\bar{\xi}^{i}= \phi$ and $\deli(\xi)$ as defined in \eqref{E:delta.i.defn.2} equals 
$\deli$ as defined in \eqref{E:delta.i.defined.in.trms.of.t.lambda.theta}. 
By \eqref{E:|delta.k|<r}, 
$-r < \deli_{k} < r$ for $k \neq i$. Therefore, reassuringly, \eqref{E:xk.near.xi.i.bar} holds for $\xi$. 

Here are some useful conditions on $\deli$:
    \begin{multline}  \label{E:condns.on.delta.i}
      \sum_{k \neq i} \deli_{k} = 0 ,  \; 
         s \deli_{i} \in (0, \pi) , \text{ and } \deli_{k} \in ( - r, r)  \text{ if } k \neq i . \\  
          \text{Moreover, for } j \neq i \text{ we have } 
            \;-\frac{n-2}{n} s \deli_{i} < s \deli_{j} < s \deli_{i} . 
    \end{multline}

Let $\deli$ be defined by \eqref{E:delta.i.defined.in.trms.of.t.lambda.theta}, where  \emph{Claim:} 
    \begin{equation}  \label{E:when.delta.is.nice}
      \text{Suppose $t$, $\theta$, and $\lambda$ satisfy \eqref{E:Uis.params}. Then \eqref{E:condns.on.delta.i} holds. } 
    \end{equation}
So \eqref{E:delta.(n-2)/n.ineq} holds strictly for $\deli$. The first two assertions are immediate from \eqref{E:delta.dot.fi=0} and \eqref{E:|delta.k|<r}. As for the last assertion, let $j \neq i$. We already know from \eqref{E:|delta.k|<r} again that
    \begin{equation*}
      s \deli_{k} < s \deli_{i} .
    \end{equation*}
I.e., the right half of the inequality at the end of \eqref{E:condns.on.delta.i} is verified.

It remains to verify the left half. WLOG $s = +1$. First let $t \in (0,r]$. Then, by \eqref{E:possible.g(st)}, $g(st) = (n-2)/n$. Then, by \eqref{E:Uis.params},  \eqref{E:big.Theta.Xit.requirements} and \eqref{E:r.defn}, we have 
$(1-\lambda) t > 0$. Thus, 
    \begin{equation*}
      -\frac{n-2}{n} \deli_{i} 
        = -\frac{n-2}{n} \bigl[ (1-\lambda) t + \lambda \pi \bigr]
          <  -\frac{n-2}{n} \lambda \pi = -g(t) \lambda \pi \leq \lambda \deli_{j} .
    \end{equation*}
This verifies the second half when $t \in (0,r]$. 

Continuing to assume $s = +1$ (so, by \eqref{E:Uis.params}, $t > 0$) now let $t \in (r, \pi)$. By \eqref{E:delta.i.and.delta.k.eqns} and \eqref{E:big.Theta.Xit.requirements}, 
    \begin{equation*}
      -\lambda g(st) \pi \leq s \lambda \theta_{j} = s \deli_{j} .
    \end{equation*}
Thus, it suffices to show $-\frac{n-2}{n} \deli_{i} < -g(t) \lambda \pi$. Suppose not. Then 
    \begin{equation}  \label{E:delta.ineq.fails}
      -\frac{n-2}{n} \deli_{i} 
            = -\frac{n-2}{n} \bigl[ (1-\lambda) t + \lambda \pi \bigr] 
              \geq -g(t) \lambda \pi .
    \end{equation}
Then, by \eqref{E:possible.g(st)}, 
    \begin{align}  \label{E:when.(n-2)/n.etc.geq.-g(t)}
      \frac{n-2}{n} \bigl[ (1-\lambda) t + \lambda \pi \bigr] 
         &\leq \lambda \left[ 1 - 2 \frac{\pi - t}{n(\pi-r)} \right] \pi \notag \\
      \Leftrightarrow (n-2) \bigl[ (1-\lambda) t + \lambda \pi \bigr]
        &\leq \lambda \left[ n - 2 \frac{\pi - t}{\pi-r} \right] \notag \pi \\
      \Leftrightarrow (n-2) (1-\lambda) t + (n-2) \lambda \pi 
        &\leq \lambda n \pi - 2 \lambda \frac{\pi - t}{\pi-r} \pi \\
      \Leftrightarrow (n-2) (1-\lambda) t + 2 \lambda \frac{\pi - t}{\pi-r} \pi  
        &\leq 2 \lambda \pi \notag \\
      \Leftrightarrow (n-2) (1-\lambda) t + 2 \lambda \frac{\pi - t}{\pi-r} \pi 
        - 2 \lambda \pi &\leq 0 \notag .
    \end{align} 
By \eqref{E:n.nvar.k.sizes}, this is false if $\lambda = 0$. So assume $\lambda > 0$. 

For $\lambda$, $r$ in the closures of their appropriate ranges (specified by \eqref{E:Uis.params} and \eqref{E:r.defn}) and $t \in [r, \pi]$, let $f(t,\lambda,r)$ denote the LHS of the last of the preceding inequalities:
    \begin{equation*}
      f(t,\lambda,r) := (n-2) (1-\lambda) t 
        + 2 \lambda \frac{\pi - t}{\pi-r} \pi - 2 \lambda \pi .
    \end{equation*}
\eqref{E:when.(n-2)/n.etc.geq.-g(t)} tells us that 
    \begin{equation*}
      f(t,\lambda,r) \leq 0 . 
    \end{equation*}

Since $t \geq r$, $2 \pi (\pi-t)/(\pi-r) - 2 \pi \leq 0$. It follows that  
$\tfrac{\partial}{\partial \lambda} f(t,\lambda,r) < 0$. Therefore, we make $f(t,\lambda,r)$ smaller by replacing $\lambda$ by the upper limit of its domain, \emph{viz.}, by \eqref{E:Uis.params}, $r/\pi$:
    \begin{align*}
      0 \geq f(t,r/\pi,r) &= (n-2) (1-r/\pi) t 
        + 2 (r/\pi) \frac{\pi - t}{\pi-r} \pi - 2 (r/\pi) \pi \\
      \Leftrightarrow 0 &\geq (n-2) (\pi-r) t + 2 r \frac{\pi - t}{\pi-r} \pi - 2 r \pi \\
      \Leftrightarrow 0 &\geq (n-2) (\pi-r) t 
        - 2 r \pi \left( -\frac{\pi - t}{\pi-r} + 1 \right) \\ 
      \Leftrightarrow 0 &\geq (n-2) (\pi-r)^{2} t - 2 r \pi (t-r) \\
    \end{align*}

The derivative w.r.t.\ $t$ of the final expression in the preceding is
$(n-2) (\pi-r)^{2} - 2 r \pi$. This is minimized by taking $r$ to be its maximum allowed value, which by \eqref{E:r.defn} is $\pi/4$, yielding: 
$(n-2) (9/16) \pi^{2} - \pi^{2}/2$. By \eqref{E:n.nvar.k.sizes} this is strictly positive. Currently we are assuming $t \in (r, \pi)$. 

Therefore, we can make 
$(n-2) (\pi-r)^{2} t - 2 r \pi (t-r)$ even smaller by taking $t = r$. This leaves us with 
    \begin{equation*}
      0 \geq (n-2) (\pi-r)^{2} r > 0 .
    \end{equation*}
Contradiction. We conclude that 
$-\frac{n-2}{n} \deli_{i} <  -g(t) \lambda \pi \leq s \lambda \theta_{j} = s \deli_{j}$. The claim \eqref{E:condns.on.delta.i} is now completely proved:
    \begin{equation}  \label{E:if.Uis.then.delta.condns}
      \text{Given } i \in \NN_{n} \text{ and } s = \pm1, \text{ if } 
        \deli \in U_{i,s} \text{ then \eqref{E:condns.on.delta.i} holds.}
    \end{equation} 

\emph{Now we go in the other direction.} 
Let $\deli \in \RR^{n}$ satisfy \eqref{E:condns.on.delta.i}. We find $s = \pm1$, $t$, 
$\lambda$, and $\theta$ satisfying \eqref{E:Uis.params} so that \eqref{E:delta.i.defined.in.trms.of.t.lambda.theta} holds, i.e.\ so $\deli \in U_{i,s}$, defined in \eqref{E:Uis.defn}. 

Since $\deli$ satisfies \eqref{E:condns.on.delta.i} we must have
    \begin{equation}  \label{E:delta.ii.neq.0}
      \deli_{i} \neq 0 . 
    \end{equation}
 
$\deli_{i} \neq 0$. Let 
    \begin{equation}  \label{E:s=sign(delta.ii)}
      s := sign(\deli_{i}) .
    \end{equation}
($sign$ is defined in \eqref{E:sign.function}.)

Let
    \begin{equation}  \label{E:delta.plus.minus}
      \deli_{-} = \min_{k \neq i} s \deli_{k} \text{ and }
        \deli_{+} = \max_{k \neq i} s \deli_{k} .
    \end{equation} 
We have 
    \begin{equation}  \label{E:|delta+.minus.delta+|}
      | \deli_{-} - \delta'^{i}_{-} | < | \deli - \delta'^{i} | 
        \text{ and } | \deli_{+} - \delta'^{i}_{+} | < | \deli - \delta'^{i} | ,
          \qquad \deli , \delta'^{i} \in \RR^{n} .
    \end{equation}
To see this we may take $s = +1$. Let $j, j' \neq i$ satisfy $\deli_{-} = \deli_{j}$ and 
$\delta'^{i}_{-} = \delta'^{i}_{j'}$. Let $\Delta := | \deli - \delta'^{i} |$. Then 
$\deli_{-} = \deli_{j} \geq \delta'^{i}_{j'} - \Delta \geq \delta'^{i}_{-} - \Delta$. Etc. 

By \eqref{E:condns.on.delta.i}, $\sum_{k \neq i} \deli_{k} = 0$ and 
$\deli_{k} \in ( - r, r)$. Hence,
    \begin{equation} \label{E:delta-.leq.delta+}
      -r < \deli_{-} \leq 0 \leq \deli_{+} < r. 
    \end{equation}
We want \eqref{E:delta.i.and.delta.k.eqns} to hold with $\theta$ satisfying  \eqref{E:big.Theta.Xit.requirements}. So we want $t$ and $\lambda$ to satisfy
    \begin{equation} \label{E:s.delta.pm.and}
      - g(st) r < - g(st) \; \lambda \pi \leq \deli_{-} \text{ and } \; 
          r > \lambda \pi \geq \deli_{+} \geq 0 .  .
    \end{equation}

By \eqref{E:possible.g(st)}, $g > 0$. By definition \eqref{E:big.Theta.Xit.requirements} of $\Theta_{i,t}$, \eqref{E:delta.i.and.delta.k.eqns}, and \eqref{E:Uis.params}, we need $t$ and $\lambda$ to satisfy
    \begin{multline} \label{E:lambda.r.constraints}
      \text{first, }0 \leq \lambda < r/\pi \text{ and second, } \\
      -r < -g(st) r < - g(st) \; \lambda \pi =  \deli_{-} \; \text{ and/or } \; 
         \lambda \pi = \deli_{+} \in [0,r) .
    \end{multline}
\eqref{E:possible.g(st)} tells us that by varying $st$ through 
$(0,\pi)$, $g(st)$ varies through $\bigl[ (n-2)/n,1 \bigr)$. Allowing that and varying 
$\lambda \in [0,r/\pi)$ we have that $- g(st) \; \lambda \pi$ varies through $(-r,0]$. 
So permissible manipulation of $t$ and $\lambda$ makes $- g(st) \; \lambda \pi$ cover the range of $\deli_{-}$ permitted by \eqref{E:condns.on.delta.i}.  
Let $\lambda_{-}$ be the solution to the first of the preceding equations 
$- g(st) \; \lambda \pi =  \deli_{-}$ (in terms of $g(st)$, i.e., in terms of $t$) and 
$\lambda_{+}$ the solution to the second $\lambda \pi = \deli_{+}$. Thus, by  
\eqref{E:r.defn}, 
    \begin{multline}  \label{E:lambda-.and.lambda+}
      \lambda_{-} := \lambda_{-}(\deli) := -\frac{1}{g(st) \pi} \deli_{-} \in [0, r/\pi) 
        \subset [0,1) \; \text{ and } \\ 
          \lambda_{+} := \lambda_{+}(\deli) := \frac{1}{\pi} \deli_{+} \in [0, r/\pi)
            \subset [0,1) .
    \end{multline}
Let 
    \begin{equation}  \label{E:lambda(delta).defn}
      \lambda(\deli) := \max \{ \lambda_{-} , \lambda_{+} \} \in [0, r/\pi) .
    \end{equation}
(This is consistent with the requirement on $\lambda$ in \eqref{E:Uis.params}.)

Now we solve for $t$. From \eqref{E:delta.i.and.delta.k.eqns} we get, 
    \begin{equation}  \label{E:generic.eqn.for.t}
      t = \frac{\deli_{i} - \lambda s \pi}{1-\lambda} .
    \end{equation}
(By \eqref{E:lambda-.and.lambda+}, $\lambda < 1/4$ so this is well-defined.) We can quickly dispose of \emph{claim:}
    \begin{equation}  \label{E:st<s.delta.i}
      \text{If } \lambda = 0 \text{ then } st = s \deli_{i} . 
        \text{ Otherwise } st < s \deli_{i}  . 
    \end{equation} 
The $\lambda = 0$ is trivial. Suppose $\lambda > 0$
$\lambda > 0$ but $st \geq s \deli_{i}$. Then, by \eqref{E:generic.eqn.for.t}, 
    \begin{equation*}
      s \deli_{i} - \lambda \pi = s t - \lambda s t \geq s \deli_{i} - \lambda s t 
      \text{ which implies } - \lambda \pi \geq - \lambda s t .
    \end{equation*} 
I.e., $s t \geq \pi$. This contradicts \eqref{E:Uis.params} and proves the claim.

In \eqref{E:generic.eqn.for.t}, we expressed $t$ in terms of $\deli$, $\lambda$, and $s$. Now we take $\lambda = \lambda(\deli)$ and express $t$ solely in terms 
of $\deli$ and $s$. Now, $s := sign(\deli_{i})$, so we will actually express $t$ in terms 
of $\deli$. The solution must satisfy \eqref{E:delta.i.and.delta.k.eqns} and, by \eqref{E:Uis.params}, we must have $st \in (0,\pi)$. The case 
$\lambda(\deli) = \lambda_{+}$ is trivial (See \eqref{E:t.in.trms.of.delta.i.and.lambda+} below.) The tricky case is when 
$\lambda = \lambda(\deli) = \lambda_{-} = - \tfrac{1}{\pi g(st)} \deli_{-}$. By \eqref{E:delta.i.and.delta.k.eqns} and \eqref{E:lambda-.and.lambda+}, the equation to solve is 
    \begin{equation}  \label{E:delta.i.g(st).eqn.lambda-.case}
      \deli_{i} = t- \frac{1}{\pi g(st)} \deli_{-} \, (s \pi - t) , 
        \quad st \in (0,\pi) .
    \end{equation}
 
\eqref{E:delta.i.g(st).eqn.lambda-.case} is equivalent to
    \begin{equation} \label{E:delta.i.F(t).eqn}
      s \deli_{i} = F(s t, r) := F(s t; r, \deli)  
        := s t - \frac{1}{\pi g(s t)} s \deli_{-} (\pi - s t )  .
    \end{equation}
By \eqref{E:Uis.params}, $0 < st < \pi$. (But, by \eqref{E:possible.g(st)}, \eqref{E:delta.i.F(t).eqn} still makes sense if $st = 0$ or $\pi$.) By \eqref{E:delta-.leq.delta+}, $\deli_{-} \leq 0$. And we know $g(st) > 0$. Therefore, by \eqref{E:st<s.delta.i} and \eqref{E:Uis.params},  
    \begin{equation*}
      0 \leq st \leq s \deli_{i} \leq \pi .
    \end{equation*}

By \eqref{E:delta.i.F(t).eqn}, WLOG we may assume $s = +1$. So \eqref{E:delta.i.F(t).eqn} becomes:
    \begin{equation}  \label{E:delta.i.F(t).eqn.-}  
      \deli_{i} = F(t, r)  
        = t + \frac{\deli_{-} }{\pi g(t)} t - \frac{\deli_{-}}{g(t)} 
             = t - \frac{ \pi - t }{\pi g(t)} \deli_{-} , \quad t \in [0,\pi] . 
    \end{equation} 

The solution to \eqref{E:delta.i.F(t).eqn.-}, if there is one, might not be unique. A necessary and sufficient condition for the solution to be unique is for $F$ to be strictly monotonic 
in $t \in [0,\pi]$. Let $t$ belong to one of the open intervals in which $g$ is differentiable. We have
    \begin{align}  \label{E:partial.F}
      \frac{\partial}{\partial t} F(t,r) 
        &= 1 + \frac{\deli_{-}}{\pi} \; \frac{g(t) - t g'(t)}{g^{2}(t)} 
          + \frac{\deli_{-} g'(t)}{g^{2}(t)} \\
        &= 1 + \frac{\deli_{-}}{\pi g(t)} + \deli_{-} \frac{\pi - t}{\pi g(t)^{2}} 
          g'(t) \notag . 
    \end{align}

By \eqref{E:possible.g(st)}, $g'(t) = 0$ if $0 \leq t < r$. Therefore, by \eqref{E:partial.F}, the assumption that $s = +1$, \eqref{E:condns.on.delta.i}, \eqref{E:n.nvar.k.sizes}, and \eqref{E:r.defn}, we then have
    \begin{equation}  \label{E:partil.F.lwr.bnd.+}
      \frac{\partial}{\partial t} F(t,r) = 1 + \frac{\deli_{-}}{\pi g(t)} 
        = 1 + \frac{s \deli_{-}}{\pi g(t)}
          > 1 - \frac{r g(t)}{\pi g(t)} 
            = 1 - \frac{r}{\pi} \geq \frac{3}{4} , \quad t \in (0,r) .
    \end{equation}

Recall that $s = +1$. Since $g(t) \geq (n-2)/n$ by \eqref{E:possible.g(st)}, if $r < t < \pi$, we have, by \eqref{E:partial.F}, \eqref{E:lambda.r.constraints}, \eqref{E:possible.g(st)}, 
and \eqref{E:n.nvar.k.sizes}, 
    \begin{align*}
      \frac{\partial}{\partial t} F(t,r) 
            &\geq 1 - \frac{r g(t)}{\pi g(t)} 
              - r g(t) \frac{\pi - t}{\pi g(t)^{2}} g'(t) \notag \\
            &= 1 - \frac{r}{\pi} - r \frac{\pi - r}{\pi g(t)} g'(t) \notag \\
            &\geq 1 - \frac{r}{\pi} - r \frac{\pi - r}{\pi} \frac{n}{n-2} g'(t) \notag \\
            &\geq 1 - \frac{r}{\pi} - r \frac{\pi - r}{\pi} \times 3 g'(t), \notag \\ 
            &\qquad r < t < \pi .
    \end{align*}
 
By \eqref{E:possible.g(st)}, $g'(t) = \tfrac{2}{n(\pi-r)}$ on $t \in (r, \pi]$ and, by \eqref{E:n.nvar.k.sizes} again and \eqref{E:r.defn},  
    \begin{multline}  \label{E:partil.F.lwr.bnd.-}
      \frac{\partial}{\partial t} F(t,r) > 1 - \frac{r}{\pi} 
        - 3 r \frac{\pi - r}{\pi} \frac{2}{n(\pi-r)}
          = 1 - \frac{r}{\pi} - 3 r \frac{2}{n \pi} \\
            \geq 1 - \frac{r}{\pi} - 3 r \frac{2}{3 \pi}
              = 1 - \frac{3 r}{\pi} \geq 1 - \frac{3}{4} = \frac{1}{4} > 0  \text{ for } 
                t \in (r, \pi) .
    \end{multline}

Combining this with \eqref{E:partil.F.lwr.bnd.+}, we have $\frac{\partial}{\partial t} F(t,r) > 0$ ($t \neq r$) as desired. Thus,
    \begin{equation}  \label{E:F(t,r).increasing.in.t}
      F(s t,r) \text{ is strictly increasing in } s t \in [0,\pi] . 
    \end{equation}
 
Assume $s = +1$. So \eqref{E:delta.i.F(t).eqn.-} has a unique solution in $t$ providing 
$\deli_{i} \in \bigl[ F(0,r), F(\pi,r) \bigr]$. We compute $F$ on the endpoints of the interval $[0,\pi]$. By \eqref{E:delta.i.F(t).eqn.-}, \eqref{E:possible.g(st)}, and \eqref{E:delta-.leq.delta+}: 
    \begin{equation*}
      F(0,r) =  -\frac{n \pi}{\pi (n-2)} \deli_{-} = -\frac{n}{n-2} \deli_{-} \geq 0 
    \end{equation*}
and $F(\pi,r) =  \pi$. By \eqref{E:condns.on.delta.i}, the image $[0,\pi]$ of $F$ includes the range of $s \deli_{i}$. Therefore, the equation \eqref{E:delta.i.F(t).eqn} has a unique solution $t = t_{-} := t_{-}(\deli)$ in the domain, $[0,\pi]$, of $F$. But by \eqref{E:delta.ii.neq.0}, $0 < |\deli_{i}| < \pi$, so actually
    \begin{equation}  \label{E:st-.in.(0,pi)}
      s \, t_{-}(\deli) \in (0, \pi) . 
    \end{equation}   

Recall \eqref{E:delta.i.F(t).eqn.-}, \eqref{E:partil.F.lwr.bnd.-}, and \eqref{E:partil.F.lwr.bnd.+}.  
Let $\delta'^{i} \in \RR^{n}$ be another vector for which 
$\lambda(\deli) = \lambda_{-}$. Then, 
if $\lambda(\deli) = \lambda_{-}(\deli)$ and 
$\lambda(\delta'^{i}) = \lambda_{-}(\delta'^{i})$,   
    \begin{equation*}  \label{E:t(delta).Lip}
     |\delta'^{i} - \deli | \geq |\delta'^{i}_{i} - \deli_{i} | 
       = \Bigl| F \bigl( t(\delta'^{i}) \bigr) 
         - F \bigl( t(\deli) \bigr) \bigr| \geq \frac{1}{2} \bigl| t(\delta'^{i}) - t(\deli) \bigr| .
    \end{equation*}
Therefore, 
    \begin{equation}  \label{E:t-(deli).Lip}
      t_{-} \text{ is Lipschitz in } \deli \text{ with Lipschitz constant } L_{-} := 2 . 
    \end{equation}

Recall \eqref{E:delta.plus.minus} and \eqref{E:generic.eqn.for.t}. Now suppose 
$\lambda(\deli) = \lambda_{+} = \tfrac{1}{\pi} s \deli_{+}$.  Then, by \eqref{E:generic.eqn.for.t} and \eqref{E:lambda-.and.lambda+}, 
    \begin{equation}  \label{E:t.in.trms.of.delta.i.and.lambda+}
      t_{+}(\deli) =  \frac{\deli_{i} - \tfrac{1}{\pi} \deli_{+} s \pi}
          {1-\tfrac{1}{\pi} s \deli_{+}}
            = \frac{ \deli_{i} - s \deli_{+} }{\pi - \deli_{+}} \pi \\
              = s \frac{ s \deli_{i} - \deli_{+} }{\pi - \deli_{+}} \pi . 
    \end{equation}
By \eqref{E:condns.on.delta.i} and \eqref{E:s=sign(delta.ii)}, 
$\deli_{+} < s \deli_{i} < \pi$. Therefore, 
    \begin{equation}  \label{E:st+.in.(0,pi)}
      s t_{+} \in (0, \pi) .
    \end{equation}
     
By \eqref{E:comp.of.Lips.is.Lip} and examples \ref{Ex:projection.is.Lip}, \ref{Ex:lattice.operations.are.Lip}, $t_{+}(\deli)$ is locally Lipschitz in $\deli$. Since, 
by \eqref{E:condns.on.delta.i} and \eqref{E:r.defn}, $s \deli_{+} < r \leq \pi/4$, 
    \begin{equation}  \label{E:t+(delta).is.Lip}
      t_{+}(\deli) \text{ is Lipschitz.} 
    \end{equation} 
Let $L_{+} \in (0, \infty)$ be a Lipschitz constant. 

By \eqref{E:possible.g(st)}, $g$ is Lipschitz and bounded away from 0. Therefore, by example \ref{Ex:ratnl.fns.loc.Lip}, \eqref{E:comp.of.Lips.is.Lip}, and \ref{Ex:ratnl.fns.loc.Lip}, we have that $1/g(t)$ is Lipschitz in $t \in [0,\pi]$. It then follows from \eqref{E:lambda-.and.lambda+}, \eqref{E:t-(deli).Lip}, and \eqref{E:t+(delta).is.Lip} that 
    \begin{equation}  \label{E:lambda-.and.lambda+.are.each.Lip}
      \lambda_{-}(\deli) \text{ and } \lambda_{+}(\deli) 
        \text{ are each Lipschitz}. 
    \end{equation}

Recall \eqref{E:lambda(delta).defn}. Define: 
    \begin{equation*}
      t(\deli) = t_{-} \text{ or } t_{+} 
        \text{ according as $\lambda(\deli)$ is $\lambda_{-}$ or } \lambda_{+} .
    \end{equation*} 
Then, by \eqref{E:st-.in.(0,pi)} and \eqref{E:st+.in.(0,pi)}, 
    \begin{equation}  \label{E:st.in.(0,pi)}
      s \, t(\deli) \in (0, \pi) . 
    \end{equation}

Recall \eqref{E:lambda(delta).defn}. We now improve upon \eqref{E:lambda-.and.lambda+.are.each.Lip} by showing that in fact $\lambda(\deli)$ is Lipschitz in $\deli$. Let $s = \pm 1$, $\deli , \delta'^{i} \in \RR^{n}$.  
Let $\tau := \tau(\deli) :=  1/\bigl( g \bigl[ s t_{-}(\deli) \bigr] \pi \bigr)$ and 
$\tau' := \tau'(\delta'^{i}) :=  1/\bigl( g s \bigl[ t_{-}(\delta'^{i}) \bigr] \pi \bigr)$. (Thus, we use the same $s$ for $\deli$ and $ \delta'^{i}$. WLOG $s = +1$.) By \eqref{E:possible.g(st)}, $\tau, \tau' \leq n/(n-2) \pi$.  Then, by \eqref{E:t-(deli).Lip},  \eqref{E:t+(delta).is.Lip}, and the fact that $1/g$ is Lipschitz, there exists 
$K$ s.t.\ 
    \begin{equation*}
      \bigl| \tau \deli_{-} - \tau' \delta'^{i}_{-} \bigr| \leq \Delta 
        := K |\deli - \delta'^{i}| .
    \end{equation*}
Consider the following scenario: 
$\lambda(\deli) = \lambda_{-}(\deli)$ and $\deli_{+} = \deli_{j} \geq 0$. 
(But $\delta'^{i}_{j} < 0$ is possible.)
So $\lambda_{+}(\deli) = \deli_{j}/\pi \leq \lambda(\deli)$. 
Suppose also $\lambda(\delta'^{i}) = \lambda_{+}(\delta'^{i})$ and 
$\delta'_{-} = \delta'^{i}_{k} \leq 0$ so 
$\lambda_{-}(\delta'^{i}) = -\delta'^{i}_{k} \tau \leq \lambda(\delta'^{i})$. 
($\deli_{k} > 0$ is possible.) Then
    \begin{equation*}
      - \lambda(\delta'^{i}) \leq - \lambda_{-}(\delta'^{i}) 
        = \delta'^{i}_{k} \tau' \geq \deli_{k} \tau - \Delta \geq 
          - \lambda_{-}(\deli) - \Delta = -\lambda(\deli) - \Delta .
    \end{equation*}
Thus, we have $K |\deli - \delta'^{i}| \geq \lambda(\delta'^{i}) - \lambda(\deli)$. Similarly, 
    \begin{equation*}
      - \lambda(\deli) \leq - \lambda_{+}(\deli) 
        = -\deli_{j}/\pi \geq -\delta'^{i}_{j}/\pi - \Delta \geq 
          - \lambda_{+}(\delta'^{i}) - \Delta = -\lambda(\delta'^{i}) - \Delta .
    \end{equation*}
Thus, we also have 
$\Delta \geq  \lambda(\deli) - \lambda(\delta'^{i})$. 
I.e., $\bigl| \lambda(\delta'^{i}) - \lambda(\deli) \bigr| \leq \Delta$. Combining this with \eqref{E:lambda-.and.lambda+.are.each.Lip}, we get 
    \begin{equation}  \label{E:lambda.is.Lip}
      \lambda(\deli) \text{ is Lipschitz in } \deli . 
    \end{equation}

It follows from \eqref{E:generic.eqn.for.t} that
    \begin{equation}  \label{E:t.is.Lip}
      t(\deli)  \text{ is Lipschitz in } \deli .
    \end{equation}

Now we solve \eqref{E:delta.i.and.delta.k.eqns} for $\theta$ subject to \eqref{E:lambda.r.constraints}. By \eqref{E:delta.i.defined.in.trms.of.t.lambda.theta} we must have
    \begin{equation}  \label{E:theta(delta.i).defn}
      \theta(\deli) := 
        \begin{cases}
              \lambda(\deli)^{-1} \Bigl[ \deli - \bigl( 1-\lambda(\deli) \bigr) t(\deli) e_{i} \Bigr] , 
                &\text{ if } \lambda(\deli) \neq 0 , \\
             \text{arbitrary element of } \Theta_{s, t(\deli)}, &\text{ otherwise}  .
        \end{cases}  
    \end{equation}
By \eqref{E:lambda.is.Lip} and \eqref{E:t.is.Lip}, 
    \begin{equation}  \label{E:lambda.theta.is.Lip}
      \lambda(\deli) \theta(\deli) \text{ is Lipschitz in } \deli .
    \end{equation} 

We prove that $\theta(\deli) \in \Theta_{s, t(\deli)}$. This is necessary only in the case 
$\lambda(\deli) > 0$. So assume $\lambda(\deli) > 0$. By \eqref{E:condns.on.delta.i} and \eqref{E:fi.defn.2}, we have $\theta(\deli) \cdot f_{i} = 0$, consistent with \eqref{E:big.Theta.Xit.requirements}. It follows from \eqref{E:lambda-.and.lambda+} and \eqref{E:delta-.leq.delta+} that $\deli_{-} < 0$ or $\deli_{+} > 0$. 
But $\theta(\deli) \cdot f_{i} = 0$ then implies  
    \begin{equation}  \label{E:delta-<0<delta+}
      \deli_{-} < 0 < \deli_{+} .
    \end{equation}  
It follows from \eqref{E:lambda-.and.lambda+} that 
    \begin{equation}  \label{E:both.lambdas.poz}
      \lambda_{-}, \lambda_{+} > 0 .
    \end{equation}
In particular, $\lambda_{-}^{-1}, \lambda_{+}^{-1} \in (0, \infty)$. 

We are assuming \eqref{E:condns.on.delta.i} and 
$\lambda(\deli) > 0$. Recall \eqref{E:lambda(delta).defn} and 
write $t(\deli) = t$, $\lambda_{-} = \lambda_{-}(\deli)$, and 
$\lambda_{+} = \lambda_{+}(\deli)$. 
First, assume $\lambda(\deli) = \lambda_{-}$, so by \eqref{E:lambda(delta).defn}, 
$\lambda_{-} \geq \lambda_{+}$. Let $j \neq i$. 
By \eqref{E:delta-<0<delta+}, we have $-g(t) \pi/\deli_{-} > 0$.  
By \eqref{E:delta.plus.minus}, $s \deli_{j} \geq \deli_{-}$. Then \eqref{E:theta(delta.i).defn}, and \eqref{E:lambda-.and.lambda+}, 
    \begin{align}  \label{E:s.theta.j.ineqs.when.lambda=lambda-}
      - g(t) \pi = - \frac{g(t) \pi}{\deli_{-}} \deli_{-} \leq -\frac{g(t) \pi}{\deli_{-}} s \deli_{j}
        = \lambda_{-}^{-1} s \deli_{j} \quad & \notag \\
        = s \, \theta_{j}&(\deli) = \\ 
        & \quad \lambda_{-}^{-1} s \deli_{j} 
          \leq \lambda_{-}^{-1} \deli_{+} \leq \lambda_{+}^{-1} \deli_{+} 
            = \frac{\pi}{\deli_{+}} \deli_{+} = \pi \notag .
    \end{align}
This is consistent with \eqref{E:big.Theta.Xit.requirements}. 

Similarly, suppose 
$\lambda(\deli) = \lambda_{+}$. So $\lambda_{-}^{-1} \geq \lambda_{-}^{-1}$. Now, by \eqref{E:delta-<0<delta+}, $\deli_{-} < 0$. Hence, 
$\lambda_{-}^{-1} \deli_{-} \leq \lambda_{+}^{-1} \deli_{-}$. We have,
    \begin{align}  \label{E:s.theta.j.ineqs.when.lambda=lambda+}
      - g(t) \pi = - \frac{g(t) \pi}{\deli_{-}} \deli_{-} 
        = \lambda_{-}^{-1} \deli_{-} \leq \lambda_{+}^{-1} \deli_{-} 
         \leq \lambda_{+}^{-1} s \deli_{j} \quad & \notag \\
          = s \, \theta_{j}&(\deli) = \\
           & \quad \lambda_{+}^{-1} s \deli_{j} \leq \lambda_{+}^{-1} \deli_{+} 
             = \frac{\pi}{\deli_{+}} \deli_{+} = \pi \notag ,
    \end{align}
again consistent with \eqref{E:big.Theta.Xit.requirements}. 

Write $\lambda = \lambda(\deli)$. By \eqref{E:theta(delta.i).defn} and \eqref{E:generic.eqn.for.t},
    \begin{equation*}
      s \theta_{i}(\deli) = \lambda^{-1} s \bigl[ \deli_{i} - (1-\lambda) t \bigr]
        = \lambda^{-1} s \bigl[ \deli_{i} - (\deli_{i} - \lambda s \pi) \bigr]
          = \lambda^{-1} s (\lambda s \pi) = \pi . 
    \end{equation*}
Again in agreement with \eqref{E:big.Theta.Xit.requirements}. 

Suppose again that $\lambda(\deli) = \lambda_{-} > 0$. Let $j \neq i$ satisfy 
$s \deli_{j} = \deli_{-}$. Then the inequality in the top member of \eqref{E:s.theta.j.ineqs.when.lambda=lambda-} becomes an equality and we get 
$- g(st) \pi = s \, \theta_{j}(\deli)$, consistent with \eqref{E:big.Theta.Xit.requirements}. Similarly, suppose again that $\lambda(\deli) = \lambda_{+}$. Let $j \neq i$ satisfy 
$s \deli_{j} = \deli_{+}$. Then the inequality in the bottom member of \eqref{E:s.theta.j.ineqs.when.lambda=lambda+} becomes equality and we get 
$s \, \theta_{j}(\deli) = \pi$, consistent with \eqref{E:big.Theta.Xit.requirements}.

This completes the proof that, given $\deli \in \RR^{n}$ satisfying \eqref{E:condns.on.delta.i}, we can find ``$s = \pm1$, $t$, $\lambda$, and $\theta$ satisfying \eqref{E:Uis.params} so that \eqref{E:delta.i.defined.in.trms.of.t.lambda.theta} holds, i.e.\ so $\deli \in U_{i,s}$, defined in \eqref{E:Uis.defn}.'' To repeat:
    \begin{equation}  \label{E:delta.conditions.define.U}
      \text{Given } i \in \NN_{n} \text{ and } s = \pm1, \; 
        \deli \in U_{i,s} \text{ if and only if \eqref{E:condns.on.delta.i} holds.}
    \end{equation}

Moreover, by lemma \ref{L:tween.-(n-2)/n.and.1}, \eqref{E:lambda.is.Lip}, \eqref{E:t.is.Lip}, and \eqref{E:lambda.theta.is.Lip}, we have,
  \begin{lemma}  \label{L:lambda.t.theta.uniqueness}
Suppose $\deli \in \RR^{n}$ satisfies \eqref{E:condns.on.delta.i}. Then:
    \begin{itemize}
      \item If $\phi \in \RR$ then $\Pf_{1i}$ is the closest lobe to 
    $\wrap(\deli + \phi 1_{n})$.  \label{I:Pf.1i.closest}
      \item $s := sign(\deli_{i})$, $\lambda = \lambda(\deli)$, $t = t(\deli)$, and 
    $\theta = \theta(\deli)$ are the \emph{unique} solutions to 
    \eqref{E:delta.i.defined.in.trms.of.t.lambda.theta} satisfying \eqref{E:Uis.params}.
      \label{I:unique.solns}
      \item $\lambda(\deli), t(\deli)$, and $\lambda(\deli) \theta(\deli)$ are Lipschitz 
    in $\deli$.  \label{I:Lipschitzosity}
    \end{itemize}  
  \end{lemma}

Let $s = \pm 1$ and $y \in \mcl{X}_{i,s}$. We can write 
$y = \, \wrap( \phi_{1} e_{i} + \phi_{2} 1_{n} )$, with $s \phi_{1} \in (0,\pi)$ 
and $\phi_{2} \in \RR$. Recall \eqref{E:big.Theta.Xit.requirements}.  
    \begin{multline}  \label{E:C[y].y.in.Xis}
      \text{If } y = \, \wrap( t e_{i} + \alpha_{2} 1_{n} ) \in \mcl{X}_{i,s} \; 
        (\text{with } s t \in (0, \pi)) \text{ define } \\ 
      C_{\mcl{X}_{i,s}}[y] = C[y] 
        := \Bigl\{ \bigl( y, \lambda (\theta - t e_{i}) \bigr) \in T_{y} \D : 
          \lambda \in [0, r/\pi), \, \theta \in \Theta_{i,t} \Bigr\} . 
    \end{multline} 
Thus, if $t := \phi_{1}$, then $t$, $\lambda$, and $\theta$ satisfy the requirements spelled out in \eqref{E:Uis.defn} and recapitulated in \eqref{E:Uis.params}. 
By \eqref{E:vector.ops.on.TD} and \eqref{E:tw.in.cone}, $C[y]$ is a cone. In conformity with \eqref{E:boldF.[E].defns}, write 
$C[\mcl{X}_{i,s}] := \bigcup_{y \in \mcl{X}_{i,s}} C[y]$. In accordance with \eqref{E:CL.in.Euc.space} we can write $C[y] =  \msf{CL}$. It is easy to see that the link, $\msf{L}$, of the cone is a compact finite cell complex of dimension $n-3 = d-p-1$ and, hence, by \eqref{E:cell.is.tame.strat.space}, 
    \begin{multline}  \label{E:Xis.cone.has.tame.link}
      \msf{L} \text{ is a compact stratified space, has dimension $n-3 = d-p-1$, } \\
        \text{ and satisfies \eqref{E:tameness.of.stratified.space}.}
    \end{multline}
As required by part \ref{I:L.A.tameness} of definition \ref{D:fibering.by.cones}. (By \eqref{E:n.nvar.k.sizes}, $n=3$ is possible. So in that case $L$ consists of two points, as in figure \ref{F:ConeBundle}.)
 
It seems like $C[\mcl{X}_{i,s}]$ should be related to $U_{i,s}$ defined in \eqref{E:Uis.defn}. They are related. Let $s = \pm 1$, $y \in \mcl{X}_{i,s}$, 
$(y,v) \in C[y]$. By \eqref{E:mclX,mclY.defns}, 
 $y = \, \wrap( \alpha_{1} e_{i} + \alpha_{2} 1_{n} ) \in \mcl{X}_{i,s}$ with 
 $s \alpha_{1} \in (0, \pi)$. We extract from $(y,v)$ an $\alpha_{2} \in \RR$ and a 
 $\deli \in U_{i,s}$. By \eqref{E:alphas.from.y}, we can recover $\alpha_{1} \in s(0,\pi)$ 
 and $\alpha_{2}$ from $y$ in a Lipschitz fashion. Let $t := \alpha_{1}$ so we can recover $t \in s \, (0,\pi)$ from $(y,v)$ in a Lipschitz fashion. By \eqref{E:C[y].y.in.Xis}, 
 $v = \lambda (\theta - \alpha_{1} e_{i})$ for some 
$\lambda \in [0, r/\pi)$ and $\theta \in \Theta_{i,\alpha_{1}}$. So if 
$\deli = t e_{i} + v = \alpha_{1} e_{i} + v$ then $\deli$ satisfies \eqref{E:delta.i.defined.in.trms.of.t.lambda.theta} and $t$, 
$\theta$, and $\lambda$ satisfy \eqref{E:Uis.params}. In other words, by \eqref{E:Uis.defn}, $(y,v) \in C[\mcl{X}_{i,s}]$ determines a unique point, 
$\deli = t e_{i} + v$, of $U_{i,s}$. Therefore, by \eqref{E:when.delta.is.nice}, $\deli$ satisfies \eqref{E:condns.on.delta.i}. Hence, by lemma \ref{L:lambda.t.theta.uniqueness}, we can compute $\lambda$, $t$, and $\lambda \theta$, and hence $\deli$, in a Lipschitz fashion. (We have already determined that $t$ can be computed in a Lipschitz fashion.) By \eqref{E:Exp.psi.2}, we have
    \begin{multline}  \label{E:Exp(C[mclX])}
      Exp \bigl( (y,v) \bigr) = Exp_{y} \bigl( \lambda (\theta - \alpha_{1} e_{i}) \bigr) \\
        = \, \wrap \bigl( \alpha_{1} e_{i} + \alpha_{2} 1_{n} 
          + \lambda (\theta - \alpha_{1} e_{i}) \bigr)
            = \, \wrap(\deli + \alpha_{2} 1_{n}) , \\
              \text{ where }
                y = \, \wrap( \alpha_{1} e_{i} + \alpha_{2} 1_{n} ) \in \mcl{X}_{i,s} .
    \end{multline} 
(This formula will require tweaking. See \eqref{E:Exp(C[mclX]).revised} below.)
We also have seen that there is a Lipschitz, $(\alpha_{2}, \deli)$-valued, map from 
$C[\mcl{X}_{i,s}]$ into $\RR \times U_{i,s}$. 

Conversely, let $\deli \in U_{i,s}$ and $\alpha_{2} \in \RR$. We construct from them a point of $C[\mcl{X}_{i,s}]$. By definition of $U_{i,s}$ there exists $t$, $\lambda$, 
$\theta$ satisfying \eqref{E:Uis.params}. By \eqref{E:when.delta.is.nice} it follows that 
$\deli$ satisfies \eqref{E:condns.on.delta.i}. By lemma \ref{L:lambda.t.theta.uniqueness}, this in turn implies we can uniquely, and in a Lipschitz fashion, express $\deli$ in the form 
$t e_{i} + \lambda (\theta - t e_{i})$, with $s := sign(\deli_{i})$, $\lambda$, $t$, 
and $\theta$ as in \eqref{E:Uis.params}. Let $\alpha_{1} := t$ and 
$y := \, \wrap( \alpha_{1} e_{i} + \alpha_{2} 1_{n} )$. 
Then, by \eqref{E:mclX,mclY.defns} and \eqref{E:Uis.params}, and \eqref{E:C[y].y.in.Xis}, $y \in \mcl{X}_{i,s}$ and $\bigl( y, \lambda (\theta - \alpha_{1} e_{i}) \bigr) \in C[y]$. 

We have established that $C[\mcl{X}_{i,s}]$ and $\RR \times U_{i,s}$ are bi-Lipschitz homeomorphic. 

\eqref{E:condns.on.delta.i} can be reified (properties turned into a thing) as follows. 
Let $\mcl{C}_{\mcl{X}_{i,s}} := \mcl{C}_{i,s} \subset \D$ be the set of $x \in \D$ s.t.:
    \begin{align}  \label{E:mclC.Xis.defn}
      x &\in \mcl{G}_{i} , \notag \\
      \deli_{i} &:= \angle_{s}(x_{i}, \bar{x}^{i}) \in (0, \pi) , \\
      \deli_{j} &\in (-r,r) \; \text{ if } j \neq i , \text{ and } \notag \\
      -\frac{n-2}{n} s \deli_{i} &< s \deli_{j}(x) < s \deli_{i} \; \text{ if } j \neq i . \notag
    \end{align}
For $x \in \mcl{C}_{i,s}$ let 
$\mcl{F}(x) := \bigl\{ z \in \mcl{C}_{i,s} : \angle_{s}(\bar{x}^{i}, z_{i}) < \pi/4 \bigr\}$. Then, by \eqref{E:directed.angle.is.Lip} and \eqref{E:locally.Lip.xbari.on.Gi},  
there exists $L < \infty$ s.t.\ $\bigl| \deli(z) - \deli(x) \bigr| < L \, \rho(z,x)$ for every 
$z \in \mcl{E}(x) \cap \mcl{F}(x)$. 
In general, for $x,z \in \mcl{C}_{i,s}$, we have 
$\bigl| \deli(z) - \deli(x) \bigr| \leq \sqrt{n} \pi$. Moreover, if 
$z \in \mcl{C}_{i,s} \setminus \bigl[ \mcl{E}(x) \cap \mcl{F}(x) \bigr]$ then 
$\rho(z,x) \geq \pi/4$. So if 
$z \in \mcl{C}_{i,s} \setminus \bigl[ \mcl{E}(x) \cap \mcl{F}(x) \bigr]$ then
$\bigl| \deli(z) - \deli(x) \bigr| \leq \bigl[ \sqrt{n} \pi/(\pi/4) \bigr] \rho(z,x)$. 
Let $L' := \max(L, 4 \sqrt{n})$. It follows that, if 
$x,z \in \mcl{C}_{i,s}$ then $\bigl| \deli(z) - \deli(x) \bigr| \leq L' \, \rho(z,x)$. This proves,  
    \begin{equation}  \label{E:Lip.delta.on.Gi}   
      \text{The map } \deli(\cdot) : \mcl{C}_{\mcl{X}_{i,s}} \to (-\pi, \pi)^{n} 
        \text{ is Lipschitz on } \mcl{C}_{\mcl{X}_{i,s}} . 
    \end{equation} 
(See also \eqref{E:Lip.delta.k.on.Gi}.) Therefore, by \eqref{E:mclC.Xis.defn} and \eqref{E:Gi.is.open}, 
    \begin{equation}  \label{E:mclC.mclX.is.is.open}
      \mcl{C}_{\mcl{X}_{i,s}} \text{ is open.}
    \end{equation}. 

We have seen that if $(y,v) \in C[\mcl{X}_{i,s}]$, then the corresponding $\deli$ satisfies \eqref{E:condns.on.delta.i}. Hence, by \eqref{E:Exp(C[mclX])}, 
$Exp \bigl( C[\mcl{X}_{i,s}] \bigr) \subset \mcl{C}_{i,s}$. 

Conversely, let $x \in \mcl{C}_{i,s}$. We can write 
$x = \, \wrap \bigl( \deli(x) + \alpha_{2} 1_{n} \bigr)$, where $\alpha_{2} \in \RR$ and  
$\deli(x)$ satisfies \eqref{E:condns.on.delta.i}. Therefore, by lemma \ref{L:lambda.t.theta.uniqueness}, there exist unique solutions to 
\eqref{E:delta.i.defined.in.trms.of.t.lambda.theta} $\lambda = \lambda(\deli)$, 
$t = t(\deli)$, and $\theta = \theta(\deli)$ satisfying \eqref{E:Uis.params}. Hence, 
$x = \, \wrap \bigl( t e_{i} + \lambda (\theta - t e_{i}) + \alpha_{2} 1_{n} \bigr)$.
I.e., $x = Exp_{y}(\lambda (\theta - t e_{i}))$, where 
$y = \, \wrap(t e_{i} + \alpha_{2} 1_{n})$. Thus, $y \in \mcl{X}_{i,s}$ 
and $x \in C[y]$. In other words, 
$\mcl{C}_{i,s} \subset Exp \bigl( C[\mcl{X}_{i,s}] \bigr)$ and, hence, 
    \begin{equation*} 
      \mcl{C}_{i,s} = Exp \bigl( C[\mcl{X}_{i,s}] \bigr) .
    \end{equation*}

Now we develop an alternate description of $\delta \in \RR^{n}$ s.t.\ if $y \in \T$ then 
$(y, \delta) \in C_{\T}[y]$. \emph{Claim:} 
    \begin{multline}  \label{E:exchange.i.and.j.in.ineq}
     \text{If $j \neq i$ and equality holds at either side of } 
       -\frac{n-2}{n} s \deli_{i} \leq s \deli_{j} \leq s \deli_{i} \\
         \text{ then the same equality holds with $i$ and $j$ reversed.}
    \end{multline}
I.e., if either of the inequalities in \eqref{E:delta.(n-2)/n.ineq} is an equality, then the roles of $i$ and $j$ can be reversed. 
To prove this, first suppose 
$-\tfrac{n-2}{n} s \deli_{i} = s \deli_{j}$. We may assume $s = +1$. Then, by \eqref{E:delta.i.defn.2}, we have 
  \begin{align*}
    -\frac{n-2}{n} \left( \xi_{i} - \frac{1}{n-1} \sum_{k \neq i} \xi_{k} \right)
      &= \xi_{j} - \frac{1}{n-1} \sum_{k \neq i} \xi_{k} \\
       \text{ if and only if} -(n-2) \xi_{i} + \frac{n-2}{n-1} \sum_{k \neq i} \xi_{k} 
      &= n \xi_{j} - \frac{n}{n-1} \sum_{k \neq i} \xi_{k} .
  \end{align*}
Adding $\tfrac{n}{n-1} \sum_{k \neq i} \xi_{k}$ to both sides of the second of the preceding equations, with get 
  \begin{multline*}
    -n \xi_{i} + 2 \sum_{k = 1}^{n} \xi_{k} =
      -(n-2) \xi_{i} + 2 \sum_{k \neq i} \xi_{k} = 
        -(n-2) \xi_{i} + 2 \frac{n-1}{n-1} \sum_{k \neq i} \xi_{k} \\
          = n \xi_{j} .
  \end{multline*}
Subtracting $2 \xi_{j}$ from both sides yields
  \begin{multline*}
    -n \xi_{i} + 2 (n-1) \bar{\xi}^{j} = 
      -n \xi_{i} + 2 \sum_{k \neq j} \xi_{k} \\
        = (n-2) \xi_{j} . 
  \end{multline*}
Now subtract $(n-2) \bar{\xi}^{j}$ from both sides:
  \begin{multline*}
    -n \delta^{j}_{i} =
      -n (\xi_{i} - \bar{\xi}^{j}) \\
        = (n-2) \xi_{j} - (n-2) \bar{\xi}^{j}_{i} = (n-2) \delta^{j}_{j} . 
  \end{multline*}
I.e., $- \tfrac{n-2}{n} \delta^{j}_{j} = \delta^{j}_{i}$, as desired. 

Suppose $i \neq j$ but $\deli_{j} = \deli_{i}$. Then 
  \begin{equation*}
    \xi_{j} - \frac{1}{n-1} \sum_{k \neq i} \xi_{k} 
      = \xi_{i} - \frac{1}{n-1} \sum_{k \neq i} \xi_{k} .
  \end{equation*}
I.e., $\xi_{j} = \xi_{i}$. Recall \eqref{E:N.sub.n}. Hence, for any $\ell \in \NN_{n}$, 
  \begin{equation*}
    \delta^{\ell}_{i} = \xi_{i} - \bar{x}^{\ell} = \xi_{j} - \bar{x}^{\ell} = \delta^{\ell}_{j} .
  \end{equation*}
In particular this is true when $\ell = j$. We have proved the claim \eqref{E:exchange.i.and.j.in.ineq} and the general fact: 
  \begin{equation}  \label{E:equality.switcheroo}
    \text{If } \deli_{j} = \deli_{i}, \text{ then for any } \ell \in \NN_{n}, \; 
    \delta^{\ell}_{i} = \delta^{\ell}_{j} .
  \end{equation}
 
Thus, we have the following. Let $i, j \in \NN_{n}$ be distinct. Suppose $x \in \D$ has these properties: $\xi = \xi(x) \in \RR^{n}$ satisfies \eqref{E:xk.near.xi.i.bar} and also satisfies it with $i$ replaced by $j$, \eqref{E:delta.(n-2)/n.ineq} holds with one ``$\leq$'' replaced by ``$=$'', and $\Pf_{1i}$ is a closest lobe to $x$. Then, by lemma \ref{L:tween.-(n-2)/n.and.1}, $\Pf_{1j}$ is also a closest lobe to $x$. 

Given $\xi$, define a relation ``$\sim \, = \, \sim_{\xi}$'' on $\NN_{n}$ as follows. $j \sim i$ if and only if \emph{either} of the inequalities in \eqref{E:delta.(n-2)/n.ineq} is an equality. I.e., either 
$s \deli_{j} = -\frac{n-2}{n} s \deli_{i}$ \emph{or} $s \deli_{j} = s \deli_{i}$, where 
$s = sign(\deli_{i}$. This is obviously reflexive: $i \sim i$. By \eqref{E:exchange.i.and.j.in.ineq}, $\sim$ is symmetric: $i \sim j$ if and only if 
$j \sim i$. 

\emph{Claim:} $\sim$ is transitive. Let $i, j, \ell \in \NN_{n}$. Suppose 
$i \sim j$ and $j \sim \ell$. We show $i \sim \ell$. The only interesting case is $i,j,\ell$ distinct. 

Case 1: $\delta^{j}_{i} = \delta^{j}_{j}$ (so $i \sim j$) and 
$\delta^{\ell}_{j} = \delta^{\ell}_{\ell}$ (so $j \sim \ell$). By symmetry, 
$\delta^{j}_{\ell} = \delta^{j}_{j}$. Hence, $\delta^{j}_{i} = \delta^{j}_{\ell}$. By \eqref{E:equality.switcheroo} this means $\delta^{\ell}_{i} = \delta^{\ell}_{\ell}$. 
I.e., $i \sim \ell$. 

Case 2: $\delta^{j}_{i} = \delta^{j}_{j}$ (so $i \sim j$) and 
$\delta^{\ell}_{j} = -(n-2) \delta^{\ell}_{\ell}/n$ (so $j \sim \ell$). By \eqref{E:equality.switcheroo}, 
$\delta^{\ell}_{i} = \delta^{\ell}_{j}$. Hence, 
$\delta^{\ell}_{i} = -(n-2) \delta^{\ell}_{\ell}/n$. I.e., $i \sim \ell$. 

Case 3: $\delta^{j}_{i} = -(n-2) \delta^{j}_{j}/n$ ($i \sim j$) and 
$\delta^{\ell}_{j} = \delta^{\ell}_{\ell}$ ($j \sim \ell$). By \eqref{E:exchange.i.and.j.in.ineq}, 
$\delta^{j}_{\ell} = \delta^{j}_{j}$. Also by \eqref{E:exchange.i.and.j.in.ineq}, we have 
$\delta^{i}_{j} = -(n-2) \delta^{i}_{i}/n$. This is just Case 2 with $i$ and $\ell$ interchanged. Therefore, $\ell \sim i$. By symmetry, this implies $i \sim \ell$, as desired. 

Case 4: $\delta^{j}_{i} = -(n-2) \delta^{j}_{j}/n$ ($i \sim j$) and 
$\delta^{\ell}_{j} = -(n-2) \delta^{\ell}_{\ell}/n$ ($j \sim \ell$). By \eqref{E:exchange.i.and.j.in.ineq}, 
$\delta^{j}_{\ell} = -(n-2) \delta^{j}_{j}/n = \delta^{j}_{i}$. From \eqref{E:equality.switcheroo}, we then derive $\delta^{i}_{\ell} = \delta^{i}_{i}$ 
($\ell \sim i$). By symmetry $i \sim \ell$. This complete the proof of the claim that 
$\sim$ is transitive. 

Thus, $\sim$ is an equivalence relation. If $\sim$ is an equivalence relation on $\NN_{n}$, 
then $\NN_{n}/\sim$ denotes the set of equivalence classes induced by $\sim$. 

Suppose $\deli_{j} = -(n-2) \deli_{i}/n$. Then 
$\xi_{j} - \bar{\xi}^{i} = \deli_{j} = -(n-2) \deli_{i}/n = -(n-2)(\xi_{i} - \bar{\xi}^{i})/n$. Therefore, 
    \begin{equation*}
      \xi_{j} = - \frac{n-2}{n} \xi_{i} + 2 \frac{n-1}{n} \bar{\xi}^{i} . 
    \end{equation*}
Hence, 
    \begin{align}  \label{E:xij.bar.vs.xii.bar}
      \bar{\xi}^{j} = \frac{1}{n-1} \sum_{k \neq j} \xi_{k}
         &= \frac{1}{n-1} \left( \sum_{k \neq i} \xi_{k} - \xi_{j} + \xi_{i} \right) \notag \\
         &= \frac{1}{n-1} \left( \sum_{k \neq i} \xi_{k} - (\xi_{j} - \bar{\xi}^{i}) 
           + (\xi_{i} - \bar{\xi}^{i}) \right) \notag \\
         &= \frac{1}{n-1} \left( \sum_{k \neq i} \xi_{k} - \deli_{j} + \deli_{i} \right) \\
         &= \frac{1}{n-1} \left( \sum_{k \neq i} \xi_{k} + \frac{n-2}{n} \deli_{i} 
           + \deli_{i} \right) \notag \\
         &= \frac{1}{n-1} \left( \sum_{k \neq i} \xi_{k} + 2 \frac{n-1}{n} \deli_{i} \right) \notag \\
         &= \bar{\xi}^{i} + \frac{2}{n} \deli_{i} \notag .
    \end{align}
It follows that 
    \begin{equation}  \label{E:delta.j.=.-delta.i-2.delta.ii/n}
      \delta^{j} = \deli - \tfrac{2}{n} \deli_{i} 1_{n} .
    \end{equation} 
In particular, 
    \begin{equation}  \label{E:delta.jj.=.-delta.ii}
          \delta^{j}_{j} 
          = \deli_{j} - \frac{2}{n} \deli_{i}
            = -\frac{n-2}{n} \deli_{i} - \frac{2}{n} \deli_{i} = - \deli_{i} .
    \end{equation}
We conclude that 
    \begin{equation} \label{E:i.sim.j.means.deltajj=+-deltaii}
      \text{if $i \sim j$ then } s_{j} \delta^{j}_{j} = s_{i} \deli_{i} , 
        \text{ where } s_{i} = sign(\deli_{i}) \text{ and }
          s_{j} = sign(\delta^{j}_{j}) .
    \end{equation}
By the preceding and \eqref{E:xij.bar.vs.xii.bar},
    \begin{equation}  \label{E:xi.bar.i+delta.ii}
      \bar{\xi}^{j} + \frac{1}{n} \delta^{j}_{j} = \bar{\xi}^{i} + \frac{1}{n} \deli_{i} .
    \end{equation}

The discrepancy \eqref{E:xij.bar.vs.xii.bar} can cause problems as $t \to 0$. To solve that problem, recall that, by \eqref{E:condns.on.delta.i}, 
$\sum_{k \neq i} \deli_{k} = 0$. Therefore, 
    \begin{equation}  \label{E:mean.of.delta.i}
      \text{mean of } \deli = \frac{1}{n} \sum_{k = 1}^{n} \deli_{k} 
        = \frac{1}{n} \deli_{i} + \frac{1}{n} \sum_{k \neq i} \deli_{k} 
          = \frac{1}{n} \deli_{i} .
    \end{equation}

\eqref{E:Exp(C[mclX])} gets revised as follows. Recall \eqref{E:mclC.Xis.defn} and \eqref{E:C[y].y.in.Xis}. Let $(y,v) \in C[\mcl{X}_{i,s}]$. For reasons to be soon made apparent, write $y_{\pi} = y$. We may write 
$y_{\pi} = \, \wrap( \alpha_{1} e_{i} + \alpha_{2} 1_{n} ) \in \mcl{X}_{i,s}$. Then, for  for some $\theta$, and $\lambda$ satisfying \eqref{E:Uis.params}, we have 
$v = \lambda (\theta - \alpha_{1} e_{i})$. Let $t = \alpha_{1}$ so $|t| \in (0,\pi)$. Let 
$\deli = \alpha_{1} e_{i} + v = t e_{i} + \lambda (\theta - \alpha_{1} e_{i})$. Then, as remarked before \eqref{E:mclC.Xis.defn}, $\deli$ satisfies \eqref{E:condns.on.delta.i}. By \eqref{E:when.delta.is.nice}, that means $\deli \in U_{i,s}$. 
Now let $\gamma_{2}^{i} := \alpha_{2}  - (\pi - |\alpha_{1}|) (n \pi)^{-1} \deli_{i}$, 
so $\gamma_{2}^{i} \in \mcl{X}_{i,s}$ and 
$y_{t} := \, \wrap( \alpha_{1} e_{i} + \gamma_{2}^{i} 1_{n} )$.  
Then, 
    \begin{multline}  \label{E:Exp(C[mclX]).revised}
      Exp \bigl( (y_{\pi},v) \bigr)  
        = \, \wrap \Bigl[ \alpha_{1} e_{i} + \alpha_{2} 1_{n} 
          + \lambda (\theta - \alpha_{1} e_{i}) \Bigr] \\
        = \, \wrap \Bigl( \bigl[ \alpha_{1} e_{i} + \gamma_{2}^{i} 1_{n} \bigr]
            + \bigl[ \lambda (\theta - \alpha_{1} e_{i}) 
            - (\pi - |\alpha_{1}|) (n \pi)^{-1} \deli_{i} 1_{n} \bigr] \Bigr) \\
            = Exp \Bigl[ \bigl( y_{t}, \lambda (\theta - \alpha_{1} e_{i}) 
            - \bigl( \pi - |\alpha_{1}| \bigr) (n \pi)^{-1} \deli_{i} 1_{n} \Bigr] \\
              \text{ where }
                y_{t} = \, \wrap( \alpha_{1} e_{i} + \gamma_{2}^{i} 1_{n} ) 
                     \in \mcl{X}_{i,s} .
    \end{multline}

Straying beyond $C[\mcl{X}_{i,s}]$, if $|t| = \alpha_{1} = \pi$ then 
$\gamma_{2}^{i} := \alpha_{2}$. On the other hand, suppose $t = \alpha_{1} = 0$. Then $\gamma_{2}^{i} := \alpha_{2}  - n^{-1} \deli_{i}$. This occurs if and only if for some $j \in \NN_{n}$ we have $j \sim i$. Specifically, first assume $\deli_{j} = \deli_{i}$, 
so $\xi_{j} = \xi_{i}$. Then, obviously $\bar{\xi}^{j} = \bar{\xi}^{j}$ and 
$\delta^{j}_{j} = \deli_{i}$ and $y_{t} = \, \wrap( \alpha_{1} e_{i} + \gamma_{2}^{i} 1_{n} 
+ n^{-1} \deli_{i} 1_{n} ) \in \mcl{X}_{i,s}$ in \eqref{E:Exp(C[mclX]).revised} is unchanged when $i$ is replaced by $j$. 

Now suppose $\deli_{j} = -(n-2) \deli_{i} /n$. Then, by \eqref{E:delta.j.=.-delta.i-2.delta.ii/n} and \eqref{E:delta.jj.=.-delta.ii}, 
    \begin{align*}
      \delta^{j} + \gamma_{2}^{j} 1_{n} 
        &= \deli - \tfrac{2}{n} \deli_{i} 1_{n} 
          + (\alpha_{2}  - n^{-1} \delta^{j}_{j}) 1_{n} \\
        &= \deli - \tfrac{2}{n} \deli_{i} 1_{n} 
            + \left( \alpha_{2}  + \tfrac{1}{n} \deli_{i} \right) 1_{n} \\
        &= \deli + \left( \alpha_{2}  - \tfrac{1}{n} \deli_{i} \right) 1_{n}
            = \deli + \gamma_{2}^{i} 1_{n} .
    \end{align*}
This shows that in \eqref{E:Exp(C[mclX]).revised}, $i$ can be replaced by $j \sim i$. Moreover, $y_{0} = \, \wrap( \alpha_{1} e_{i} + \alpha_{2} 1_{n} )$ is independent of $i$.  

Notice that, by \eqref{E:mean.of.delta.i}, with $t = \alpha_{1} = 0$, we have
    \begin{equation*}
      \text{mean of } (\deli + \gamma_{2}^{i} 1_{n}) 
        = \text{mean of } (\deli + \alpha_{2} 1_{n}  - n^{-1} \deli_{i} 1_{n}) =  
          \text{mean of } (\alpha_{2} 1_{n}) = \alpha_{2} , 
    \end{equation*}
again independent of $i$. 

The intuition behind introducing the bias 
$\gamma_{2}^{i} := \alpha_{2}  - (\pi - |\alpha_{1}|) (n \pi)^{-1} \deli_{i}$ has to do with the location of $\xi$ in $\Pf_{1}$. If some $\deli_{i}$ is big, say nearly $\pi$ (so, by \eqref{E:generic.eqn.for.t}, $|t|$ is nearly $\pi$), that suggests that the other coordinates correspond to a point of $\T$ but $\deli_{i}$ does not, so $\xi$ should be thought of as approximated by some $\alpha_{1} e_{i} + \alpha_{2} 1_{n}$. One the other hand, if $\deli \sim \delta^{j}$ (so $t = 0$) it is hard visualize $\deli_{i}$ as an outlier. Therefore, 
$\xi$ is approximated by some $\alpha_{2} 1_{n}$. For $|t| \in (0,\pi)$, 
$\alpha_{1} e_{i} + \gamma_{2}^{i} 1_{n}+ n^{-1} \deli_{i} 1_{n}$ interpolates between the two cases.

Going in the other direction, suppose we are given 
$\xi \in \mcl{G}_{i}$. (See \eqref{E:Gi.defn}.) Then, $x = \, \wrap(\xi)$, where $\xi$ has the properties ascribed to it in \eqref{E:xi.parallels.x.on.Gi}. Then, by \eqref{E:delta.i.from.xi}, 
$\xi = \deli + \bar{\xi}^{i}$. Write $\deli$ as in \eqref{E:delta.i.defined.in.trms.of.t.lambda.theta}. Then we have 
    \begin{equation*}
      \alpha_{1} = t \text{ and } 
        \alpha_{2} =  \bar{\xi}^{i} - \frac{\pi - |t| }{n \pi} \deli_{i} .
    \end{equation*} 
So, by \eqref{E:xi.bar.i+delta.ii}, $\alpha_{1}$ and $\alpha_{2}$ depend on $i$ 
only if $t \neq 0$. 

\emph{Claim:} 
    \begin{multline}  \label{E:ineqs.with.i,j}
      \text{Let $s = sign(\deli_{i})$ and suppose }  
        -\frac{n-2}{n} s \deli_{i} \leq s \deli_{k} \leq s \deli_{i} 
         \text{ for every } k \neq i . \\
           \text{ Suppose for some $j \neq i$ we have } j \sim i. \\
             \text{ Then the above inequalities also hold if $i$ is replaced by $j$, including }
               s = sign(\delta^{j}_{j}) .
    \end{multline}
(Trivially, the inequalities hold with $k = i$.) Suppose $j \neq i$ and the hypotheses in the preceding hold including $j \sim i$. WLOG $sign(\deli_{i}) = +1$. (By \eqref{E:sign.function}, this includes the case $\deli_{i} = 0$.) Let $s_{j} := sign(\delta^{j}_{j})$. We show
    \begin{equation}  \label{E:ineqs.with.j}
      -\frac{n-2}{n} s_{j} \delta^{j}_{j} \leq  \delta^{j}_{k} \leq s_{j} \delta^{j}_{j} 
             \text{ for every } k \neq j .
    \end{equation}

First, suppose $\deli_{j} = -(n-2) \deli_{i}/n$. Then by symmetry of $\sim$, more specifically \eqref{E:exchange.i.and.j.in.ineq}, we have $\delta^{j}_{i} = -(n-2) \delta^{j}_{j}/n$. Then,
$s_{j} \delta^{j}_{i} = -(n-2) s_{j} \delta^{j}_{j}/n$. By assumption, for every $k \in \NN_{n}$, 
$\xi_{k} - \bar{\xi}^{i} = \deli_{k} \leq \deli_{i} = \xi_{i} - \bar{\xi}^{i}$. Thus, 
$\xi_{k} \leq \xi_{i}$. With $k = j$ we get 
    \begin{equation}  \label{E:delta.jj.leq.0}
      \delta^{j}_{j} = \xi_{j} - \bar{x}^{j} \leq \xi_{i} - \bar{x}^{j} = \delta^{j}_{i} 
        = -\frac{n-2}{n} \delta^{i}_{i} \leq 0. 
    \end{equation} 
Thus, $s_{j} = -1$. Therefore, for any $k \in \NN_{n}$, $s_{j} \xi_{k} \geq s_{j} \xi_{i}$. I.e. 
    \begin{equation*}
      s_{j} \delta^{j}_{k} = s_{j} (\xi_{k} - \bar{\xi}^{j}) \geq s_{j} (\xi_{i} - \bar{\xi}^{j}) 
        = s_{j} \delta^{j}_{i} = -\frac{n-2}{n} s_{j} \delta^{j}_{j} .
    \end{equation*}

Next, still with $\deli_{j} = -(n-2) \deli_{i}/n$, we show 
$s_{j} \delta^{j}_{k} \leq s_{j} \delta^{j}_{j}$. Let $k \in \NN_{n}$. By assumption,
    \begin{equation*}
      \xi_{k} - \bar{\xi}^{i} = \deli_{k} \geq -\frac{n-2}{n} s \deli_{i} = \deli_{j} 
       = \xi_{j} - \bar{\xi}^{i} . 
    \end{equation*}
Thus, $\xi_{k} \geq \xi_{j}$. Recall that, by \eqref{E:delta.jj.leq.0}, $s_{j} = -1$. Then, 
    \begin{equation*}
      s_{j} \delta^{j}_{k} = s_{j} (\xi_{k} - \bar{\xi}^{j}) \leq s_{j} (\xi_{j} - \bar{\xi}^{j})
        = s_{j} \delta^{j}_{j} ,
    \end{equation*}
as desired. 

Next, suppose $\deli_{j} = \deli_{i}$, so $s_{j} := sign(\delta^{j}_{j})$, which we are assuming is $+1$. Then $\xi_{j} - \bar{\xi}^{i} = \xi_{i} - \bar{\xi}^{i}$ so $\xi_{j}= \xi_{i}$. \emph{This means $\bar{\xi}^{j} = \bar{\xi}^{i}$.} Hence, $\delta^{j}_{k} = \deli_{k}$ for every 
$k \in \NN_{n}$. Therefore, by assumption on $i$, 
    \begin{equation*}
       \delta^{j}_{k} = \deli_{k} \leq \deli_{i} = \delta^{j}_{j} \text{ and }
         \delta^{j}_{k} = \deli_{k} \geq -\frac{n-2}{n} \deli_{i} 
           = -\frac{n-2}{n} \delta^{j}_{j} \text{ for every } k \neq j .
    \end{equation*}
as desired. This completes the proof of claim \eqref{E:ineqs.with.i,j}. 

Let $y \in \T$ and $\delta = (\delta_{1}, \ldots, \delta_{n}) \in \RR^{n}$. 
Then $(y,\delta) \in C_{\T}$ if and only if there exists an $\sim \,$-equivalence class 
$I \subset \NN_{n}$ \emph{consisting of more than one element} and s.t.\ if $i \in I$ then 
$-\frac{n-2}{n} s \deli_{i} \leq s \deli_{k} \leq s \deli_{i}$, where $s = sign(\deli_{i})$, for every $k \neq i$. (By \eqref{E:ineqs.with.i,j}, the same thing is true if $i$ is replaced by any $j \in I$.) 

\section{Cone bundles over $\mcl{Y}_{i}$ and $\T$} \label{S:C[mclY.i,T]}
We still have to attach cone fibers to the points of $\Y_{i}$ 
and $\T$.

If $A \subset \RR^{n}$ define $A + \T := \{a + x : a \in A, x \in \T \} \subset T \D$. We have shown that $U_{i,s} + \T$ is an open neighborhood of $\mcl{X}_{i,s}$ in $T \D \restriction_{\mcl{X}_{i,s}}$. ($T \D \restriction_{\mcl{X}_{i,s}}$ is the restriction 
of $T \D$ to $\mcl{X}_{i,s}$.)  

Let $\Theta_{\mcl{Y}_{i}}$ be the set 
of $\theta = (\theta_{1}, \ldots, \theta_{n}) \in [-\pi, \pi]^{n}$ with the properties
    \begin{align}  \label{E:big.Theta.Yi.requirements.4}
          \theta \cdot f_{i} &= 0, \notag \\ 
          - \pi &\leq \theta_{j} \leq \pi \quad (j \neq i), \\
          \theta_{i} &= \pi , \text{ and } \notag \\
          \text{ there exists } j &\neq i \text{ s.t.\ } |\theta_{j}| = \pi \notag .
    \end{align} 
Compare this to \eqref{E:big.Theta.Xit.requirements}. 

Let $y \in \mcl{Y}_{i}$ and define 
     \begin{equation}  \label{E:C[y].y.in.Yis} 
      C_{\mcl{Y}_{i}}[y] := \Bigl\{ \bigl( y, \lambda (\theta - \pi e_{i}) \bigr) \in T_{y} \D : 
        \lambda \in [0, r/\pi), \, \theta \in \Theta_{\mcl{Y}_{i}} \Bigr\} . 
    \end{equation} 
Compare this to \eqref{E:C[y].y.in.Xis}. Notice that if $(y,v) \in C_{\mcl{Y}_{i}}[y]$ 
then $v \perp e_{i}$.

Recall \eqref{E:Lip.delta.k.on.Gi}. Let $\mcl{C}_{\mcl{Y}_{i}} := \mcl{C}_{i,s} \subset \D$ be the set of $x \in \D$ s.t.:
    \begin{align}  \label{E:C.Yi.defn}
      x &\in \mcl{G}_{i} , \notag \\
      \deli_{i} &:= \pi , \\ 
      \deli_{j} &\in (-r,r) . \notag 
    \end{align}
For $x \in \mcl{C}_{i,s}$ let 
$\mcl{F}(x) := \bigl\{ z \in \mcl{C}_{i,s} : \angle_{s}(\bar{x}^{i}, z_{i}) < \pi/4 \bigr\}$. Then, by \eqref{E:directed.angle.is.Lip} and \eqref{E:locally.Lip.xbari.on.Gi},  
there exists $L < \infty$ s.t.\ $\bigl| \deli(z) - \deli(x) \bigr| < L \, \rho(z,x)$ for every 
$z \in \mcl{E}(x) \cap \mcl{F}(x)$. 
In general, for $x,z \in \mcl{C}_{i,s}$, we have 
$\bigl| \deli(z) - \deli(x) \bigr| \leq \sqrt{n} \pi$. Moreover, if 
$z \in \mcl{C}_{i,s} \setminus \bigl[ \mcl{E}(x) \cap \mcl{F}(x) \bigr]$ then 
$\rho(z,x) \geq \pi/4$. So if 
$z \in \mcl{C}_{i,s} \setminus \bigl[ \mcl{E}(x) \cap \mcl{F}(x) \bigr]$ then
$\bigl| \deli(z) - \deli(x) \bigr| \leq \bigl[ \sqrt{n} \pi/(\pi/4) \bigr] \rho(z,x)$. 
Let $L' := \max(L, 4 \sqrt{n})$. It follows that, if 
$x,z \in \mcl{C}_{i,s}$ then $\bigl| \deli(z) - \deli(x) \bigr| \leq L' \, \rho(z,x)$. This proves, 
    \begin{equation}  \label{E:Lip.delta.on.Gi}  
      \text{The map } \deli(\cdot) : \mcl{C}_{\mcl{Y}_{i}} \to (-\pi, \pi)^{n} 
        \text{ is Lipschitz on } \mcl{C}_{\mcl{Y}_{i}} . 
    \end{equation} 

It is easy to see that $\mcl{C}_{\mcl{Y}_{i}}$ is open and the $\mcl{Y}_{i}$-analogue of \eqref{E:Exp(C[mclX])} holds?

Recall \eqref{E:Gi.defn} and \eqref{E:mclC.Xis.defn}. Define $\mcl{C}_{i}$ to be the set of all $x \in \D$ s.t.\ 
    \begin{align}  \label{E:mclCi.defn}
      x &\in \mcl{G}_{i} , \notag \\
      \deli_{i} &:= s\angle_{s}(x_{i}, \bar{x}^{i}) \in (-\pi, 0) \cup (0, \pi] , \\
      \deli_{j} &:= s\angle_{s}(x_{j}, \bar{x}^{i}) \in (-r,r) \; \text{ if } j \neq i , 
        \text{ and } \notag \\
      -\frac{n-2}{n} s \deli_{i} &< s \deli_{j}(x) < s \deli_{i} \; \text{ if } j \neq i . \notag
    \end{align}
By lemma \ref{L:tween.-(n-2)/n.and.1} we have
    \begin{equation}  \label{E:Pf1i.is.closest.lobe.to.mclCi}
      \text{If $x \in \mcl{C}_{i}$ then $\Pf_{1i}$ is the unique closest lobe 
        of $\Pf_{1}$ to $x$.}
    \end{equation}
It follows that
    \begin{equation}  \label{E:mclCi's.are.disjoint}
      \text{The sets $\mcl{C}_{i}$, $i = 1, \ldots, n$, are disjoint.} 
    \end{equation} 

\emph{Claim:} 
    \begin{equation}  \label{E:mclCi.is.open}
      \mcl{C}_{i} \text{ is open.} 
    \end{equation}
By \eqref{E:mclC.mclX.is.is.open}, it suffices to show that if $x \in \mcl{C}_{i}$ with $\deli(x) = \angle(x, \bar{x}^{i})) = \pi$ then $x$ has a neighborhood in $\mcl{C}_{i}$. Let $x \in \mcl{C}_{i}$ with 
$\deli(x) = \angle(x, \bar{x}^{i})) = \pi$. Then $x \in \mcl{G}_{i}$. By \eqref{E:Gi.is.open}, 
$\mcl{G}_{i}$ is open. 
If $x' \in \mcl{G}_{i}$ is sufficiently close to $x$ then, by \eqref{E:directed.angle.is.Lip} and \eqref{E:Lip.delta.k.on.Gi}, $x' \in \mcl{C}_{i}$. 

Next, we construct a cone bundle over $\T$. 
Let $I \subset \NN_{n}$ with $|I| > 1$, where $|I|$ is the cardinality of $I$. Let $\Theta_{\T,I}$ be the set of 
$\theta = (\theta_{1}, \ldots, \theta_{n}) \in [-\pi, \pi]^{n}$ with the following properties. 
     \begin{align}  \label{E:big.Theta.T.requirements}
     \theta \cdot 1_{n} = 0, \; 
     I \text{ is an equivalence class of } \sim_{\theta} , \\
       \text{ and if $j \notin I$ then }  \notag  
          - \frac{n-2}{n} \pi < s \theta_{j} < \pi \quad (j \neq i), \; 
    \end{align} 

This is just $\Theta_{i,t}$ (\eqref{E:big.Theta.Xit.requirements}) with $|t| \leq r$ and 
$s = sign(t)$. In fact, for $\T$ we take 
$t = 0$ (so $s = \pm 1$ is arbitrary) and 
The $\T$-analogue of \eqref{E:delta.i.defined.in.trms.of.t.lambda.theta} is 
    \begin{equation*}  \label{E:T.style.delta}
      \delta := \lambda \theta ,
        \quad \lambda \in [0, r/\pi), \, \theta \in \Theta_{\T} .
    \end{equation*} 
Thus, $\delta$ is independent of $i$.

Recall \eqref{E:big.Theta.T.requirements}. Now, for $y \in T$ define  
    \begin{equation}  \label{E:C[y].y.in.T}
      C_{\T}[y] := \Bigl\{ ( y, \lambda \theta ) \in T_{y} \D : 
        \lambda \in [0, r/\pi), \, \theta \in \Theta_{\T} \Bigr\} .
    \end{equation}

As in the case of $y \in \mcl{X}_{i,s}$, we have that $C_{\mcl{Y}_{i}}[y] = \msf{CL}$ (for $y \in \mcl{Y}_{i}$) and $C_{\T}[y] = \msf{CL}$ (for $y \in \T$) where $\msf{L}$ satisfies \eqref{E:Xis.cone.has.tame.link}. Hence, \textbf{property \ref{I:L.A.tameness} of definition \ref{D:fibering.by.cones}} is satisfied in general. 

Comparing \eqref{E:big.Theta.Xit.requirements}, \eqref{E:big.Theta.Yi.requirements.4} and \eqref{E:big.Theta.T.requirements}. We see they are all are just special cases of the set, call it $\Theta_{i}$, of $\deli \in \RR^{n}$ satisfying \eqref{E:big.Theta.Xit.requirements} but with $t \in (-\pi, \pi]$ and $s = sign(t)$. (Note that 
$\Theta_{\T} = \bigcap_{i=1}^{n} \Theta_{i}$.)

\chapter*{Acknowledgments}  \label{Chptr:acknowledgments}
Permission to use the data in figure \ref{F:HeightFen} was given by John Mann of the New York State Psychiatric Institute at Columbia University. My memory of this is hazy, but I seem to recall that I got the idea of looking at the dimension of singular sets from Doug Ravenel (personal communication). I seem to also recall that Doug conjectured that the directional mean would achieve the bound on the dimension of singular sets (\eqref{E:codim.Sa.=.nvar+1}). 

I discussed some of the material in chapter \ref{Chptr:sings.in.plane.fit} with Martin Bendersky. Fred Cohen told me about Eckmann \emph{et al} \cite{bEtGpjH62.genMeans}, which is relevant to chapter \ref{Chptr:spherical.location}. John Harper recommended the book Hurewicz and Wallman \cite{wHhW48.DimThy}, which is where I got \eqref{E:cohom.and.codim}. I seem to recall briefly communicating with Markus Banagl concerning issues involved in definition \ref{D:fibering.by.cones}. Steven Ferry provided me with methods for showing that \eqref{E:nontriv.r-dim.homol} holds in the context of section \ref{SS:gnrl.lwr.bnd.plane.fit} (personal communication). 

\newcommand{\etalchar}[1]{$^{#1}$}
\providecommand{\bysame}{\leavevmode\hbox to3em{\hrulefill}\thinspace}
\providecommand{\MR}{\relax\ifhmode\unskip\space\fi MR }
\providecommand{\MRhref}[2]{%
  \href{http://www.ams.org/mathscinet-getitem?mr=#1}{#2}
}
\providecommand{\href}[2]{#2}


\begin{thebibliography}{CCSG{\etalchar{+}}09}

\bibitem[ABB{\etalchar{+}}10]{rjAoBmsBeSsW10.PersHomolRndmFldsCmplxs}
Robert~J. Adler, Omer Bobrowski, Matthew~S. Borman, Eliran Subag, and Shmuel
  Weinberger, \emph{Persistent homology for random fields and complexes},
  {Borrowing Strength: Theory Powering Applications: A Festschrift for Lawrence
  D. Brown} (J.O. Berger, T.T. Cai, and I.M. Johnstone, eds.), Institute of
  Mathematical Statistics, Beachwood, Ohio, USA, 2010, pp.~124--143.

\bibitem[Agr90]{aA90.CatDatAnal}
Alan Agresti, \emph{{Categorical Data Analysis}}, Wiley, New York, 1990.

\bibitem[And84]{Anderson}
T.~W. Anderson, \emph{{An Introduction to Multivariate Statistical Analysis}},
  second ed., John Wiley \& Sons, New York, 1984.

\bibitem[Apo57]{tmA57.Apostol}
Tom~M. Apostol, \emph{{Mathematical Analysis: A Modern Approach to Advanced
  Calculus}}, Addison-Wesley, Reading, Massachusetts, 1957.

\bibitem[Ash72]{rbA72}
Robert~B. Ash, \emph{{Real Analysis and Probability}}, Academic, New York,
  1972.

\bibitem[Ban07]{mB07.StratSpaces}
Markus Banagl, \emph{{Topological Invariants of Stratified Spaces}}, {Springer
  Monographs in Mathematics}, Springer, Heidelberg, 2007.

\bibitem[BCKL10]{Bube:Carl:Kim:Luo:stat:2010}
Peter Bubenik, Gunnar Carlsson, Peter~T. Kim, and Zhi-Ming Luo,
  \emph{Statistical topology via {M}orse theory persistence and nonparametric
  estimation}, Algebraic methods in statistics and probability II. Papers from
  the AMS Special Session held at the University of Illinois at
  Urbana-Champaign,Champaign, IL, March 27-29, 2009. (Marios A.~G. Viana and
  Henry~P. Wynn, eds.), American Mathematical Society, 2010, pp.~75--92.

\bibitem[BCR98]{jBmCr-rR98.RealAlgGeom}
J.~Bochnak, M.~Coste, and M.~R. Roy, \emph{{Real Algebraic Geometry}},
  Springer, New York, 1998.

\bibitem[BCSS98]{lBfCmSsS98.realcompute}
L.~Blum, F.~Cucker, M.~Shub, and S.~Smale, \emph{{Complexity and Real
  Computation}}, Springer-Verlag, New York, 1998.

\bibitem[BD77]{pjBkaD77.MathStat}
Peter~J. Bickel and Kjell~A. Doksum, \emph{Mathematical statistics: Basic ideas
  and selected topics}, Holden-Day, San Francisco, 1977.

\bibitem[Bel57]{reB57}
R.~E. Bellman, \emph{{Dynamic Programming}}, Princeton University Press,
  Princeton, 1957.

\bibitem[Bel61]{reB61}
\bysame, \emph{{Adaptive Control Processes: A Guided Tour}}, Princeton
  University Press, Princeton, 1961.

\bibitem[Bel91]{daB91}
D.~A. Belsley, \emph{{Conditioning Diagnostics: Collinearity and Weak Data in
  Regression}}, John Wiley \& Sons, New York, 1991.

\bibitem[Ber85]{joB85.BergerBayes}
James~O. Berger, \emph{Statistical decision theory and {B}ayesian analysis},
  second ed., Springer, New York, 1985.

\bibitem[Bha13]{rB13.NonparamStatManifs}
Rabi Bhattacharya, \emph{A nonparametric theory of statistics on manifolds},
  {Limit Theorems in Probability, Statistics and Number Theory} ({P.
  Eichelsbacher et al}, ed.), Springer, Berlin, 2013, pp.~173 -- 205.

\bibitem[Bil68]{pB68.ConvProbMeas}
Patrick Billingsley, \emph{{Convergence of Probability Measures}}, Wiley, {New
  York}, 1968.

\bibitem[Boo75]{wmB75}
W.~M. Boothby, \emph{{An Introduction to Differentiable Manifolds and
  Riemannian Geometry}}, Academic, New York, 1975.

\bibitem[BP02]{rBvP02.LocationDispersionOnManifs}
Rabi Bhattacharya and Vic Patrangenaru, \emph{Nonparametric estimation of
  location and dispersion on {R}iemannian manifolds}, {Journal of Statistical
  Planning and Inference} \textbf{108} (2002), 23--35.

\bibitem[BP03]{rBvP03.MeansOnManifs}
\bysame, \emph{Large sample theory of intrinsic and extrinsic sample means on
  manifolds -- {I}}, {Annals of Statistics} \textbf{31} (2003), 1--29.

\bibitem[BP05]{rBvP05.MeansOnManifs}
\bysame, \emph{Large sample theory of intrinsic and extrinsic sample means on
  manifolds -- {II}}, {Annals of Statistics} \textbf{33} (2005), 1225--1259.

\bibitem[BP07]{cBlmP07.ConditionNum}
C~Beltr\'{a}n and L.M. Pardo, \emph{Estimates on the distribution of the
  condition number of singular matrices}, {Foundations of Computational
  Mathematics} \textbf{7} (2007), 87--134.

\bibitem[BP14]{rBvP14.StatOnManifs}
Rabi Bhattacharya and Vic Patrangenaru, \emph{{Statistics on manifolds and
  landmarks based image analysis: A nonparametric theory with applications}},
  {Journal of Statistical Planning and Inference} \textbf{145} (2014), 1--22.

\bibitem[Bre96a]{lB96.BagPrdctrs}
Leo Breiman, \emph{Bagging predictors}, Machine Learning \textbf{24} (1996),
  123--140.

\bibitem[Bre96b]{lB96.instablty}
\bysame, \emph{Heuristics of instability and stabilization in model selection},
  The Annals of Statistics \textbf{24} (1996), 2350--2383.

\bibitem[Bre97]{geB97.SheafThy}
Glen~E. Bredon, \emph{{Sheaf Theory}}, {Second Edition} ed., {Graduate Texts in
  Mathematics}, vol.~25, Springer, {New York}, 1997.

\bibitem[Bre03]{hB03.AnlysTopol}
Haim Brezis, \emph{The interplay between analysis and topology in some
  nonlinear {PDE} problems}, Bulletin (New Series) of the American Mathematical
  Society \textbf{40} (2003), 179 -- 201.

\bibitem[Bro93]{rfB93.TopolNonlinAnlys}
Robert~F. Brown, \emph{{Topological Introduction to Nonlinear Analysis}},
  {Birkh{\"a}user}, Boston, 1993.

\bibitem[BS83]{pBwlS83}
P.~Bloomfield and W.~L. Steiger, \emph{{Least Absolute Deviations: Theory,
  Applications, and Algorithms}}, Birkh{\"a}user, Boston, 1983.

\bibitem[BS04]{iBcS04.StabltyNN}
I.~Berhane and C.~Srinivasan, \emph{Stability of neural networks for slightly
  perturbed training data sets}, {COMMUNICATIONS IN STATISTICS: Theory and
  Methods} \textbf{33} (2004), 2259--2270.

\bibitem[BY02]{pBbY02.Bagging}
Peter B{\"{u}}hlmann and Bin Yu, \emph{Analyzing bagging}, The Annals of
  Statistics \textbf{30} (2002), no.~4, 927--961.

\bibitem[Car09]{gC09.TopologyAndData}
Gunnar Carlsson, \emph{Topology and data}, {Bull. Amer. Math. Soc.} \textbf{46}
  (2009), 255--308.

\bibitem[CCSG{\etalchar{+}}09]{fCdC-SljGfMsyO2009.StabilityOfPersistence}
Fr{\'{e}}d{\'{e}}ric Chazal, David Cohen-Steiner, Leonidas~J. Guibas, Facundo
  M{\'{e}}moli, and Steve~Y. Oudot, \emph{{Gromov-Hausdorff Stable Signatures
  for Shapes using Persistence}}, {Eurographics Symposium on Geometry
  Processing} \textbf{28} (2009), no.~5.

\bibitem[Chi79]{gC79.SocialChoiceTopol}
Graciela Chichilnisky, \emph{On fixed point theorems and social choice
  paradoxes}, {Economic Letters} \textbf{3} (1979), 347--351.

\bibitem[Chu74]{klC74.prob}
Kai~Lai Chung, \emph{{A Course in Probability Theory}}, second ed., Academic
  Press, New York, 1974.

\bibitem[Cov65]{tmC65.LinIneqPatRecog}
T.M. Cover, \emph{Geometrical and statistical properties of systems of linear
  inequalities with applications in pattern recognition}, {IEEE Transactions on
  Electronic Computers} \textbf{14} (1965), 326--334.

\bibitem[CS02]{fCsS02.learning}
Felipe Cucker and Steve Smale, \emph{On the mathematical foundations of
  learning}, Bulletin of the American Mathematical Society (New Series)
  \textbf{39} (2002), 1--49.

\bibitem[CST00]{nCjS-T00.svm}
Nello Christianini and John Shawe-Taylor, \emph{{An Introduction to Support
  Vector Machines and other kernel-based learning methods}}, Cambridge
  University Press, Cambridge, 2000.

\bibitem[dB81]{ngdeB81.AsympMthdsAnlys}
N.~G. de~Bruijn, \emph{{Asymptotic Methods in Analysis}}, Dover, New York,
  1981.

\bibitem[Dek97]{bvD97.JungThmOnManifs}
B.V. Dekster, \emph{{The Jung theorem in metric spaces of curvature bounded
  above}}, Proceedings of the AMS \textbf{125} (1997), 2425--2433.

\bibitem[Dem88]{jwD88.numanal}
J.~W. Demmel, \emph{The probability that a numerical analysis problem is
  difficult}, Mathematics of Computation \textbf{50} (1988), 449--480.

\bibitem[dF17]{bdF2017.deFinetti.Thy.of.Prob}
Bruno de~Finetti, \emph{{Theory of Probability: A Critical Introductory
  Treatment}}, Wiley, Chichester, 2017.

\bibitem[DGSVS19]{tDsGaS-VsaS2019.RobustnessOfNets}
T.~Dreossi, S.~Ghosh, A.~Sangiovanni-Vincentelli, and S.~A. Seshia, \emph{{A
  Formalization of Robustness for Deep Neural Networks}}, arXiv:1903.10033
  [cs.LG] (2019).

\bibitem[DH{\etalchar{+}}20]{aD'AkHetAL2020}
A.~D'Amour, K.~Heller, et~al., \emph{{Underspecification Presents Challenges
  for Credibility in Modern Machine Learning}}, arXiv:2011.03395v2[cs.LG],
  2020.

\bibitem[DKJ{\etalchar{+}}10]{mrDrpKbJkOHmjP10.OpticalKnots}
Mark~R. Dennis, Robert~P. King, Barry Jack, Kevin O'Holleran, and Miles~J.
  Padgett, \emph{Isolated optical vortex knots}, Nature Physics \textbf{6}
  (2010), 118--121.

\bibitem[Dol95]{aD95.alg.topol}
Albrecht Dold, \emph{{Lectures on Algebraic Topology}}, second ed.,
  Springer-Verlag, New York, 1995, Reprint of the 1980 Edition.

\bibitem[DR92]{yDnR92.L1.stability}
Yadolah Dodge and Nikolai Roenko, \emph{{Stability of $L_{1}$-norm regression
  under additional obsevervations}}, {Computational Statistics \& Data
  Analysis} \textbf{14} (1992), 385--390.

\bibitem[DS81]{nDhS81.draper.and.smith}
N.~Draper and H.~Smith, \emph{{Applied Regression Analysis}}, second ed.,
  Wiley, New York, 1981.

\bibitem[EGH62]{bEtGpjH62.genMeans}
B.~Eckmann, T.~Ganea, and P.~J. Hilton, \emph{Generalized means}, Studies in
  Mathematical Analysis and Related Topics: Essays in Honor of George Polya
  (G.~Szeg{\"{o}} \emph{et al}, ed.), Stanford, Stanford, CA, 1962, pp.~82--92.

\bibitem[EH10]{hEjlH10.CompTopol}
Herbert Edelsbrunner and John~L. Harer, \emph{{Computational Topology: An
  Introduction}}, American Mathematical Society, Providence, 2010.

\bibitem[EK14]{spEaK14.ConcurTopolfMRI}
Steven~P. Ellis and Arno Klein, \emph{{Describing high-order statistical
  dependence using ``concurrence topology'', with application to functional MRI
  brain data}}, Homology, Homotopy, and Applications \textbf{16} (2014),
  245--264.

\bibitem[Ell]{spE.fact.anal.long}
Steven~P. Ellis, \emph{On the instability of factor analysis}, unpublished
  manuscript.

\bibitem[Ell91a]{spE91.sings.plane.fit}
\bysame, \emph{The singularities of fitting planes to data}, Annals of
  Statistics \textbf{19} (1991), 1661--1666.

\bibitem[Ell91b]{spE91.top.direct.axis}
\bysame, \emph{Topological aspects of the location problem for directional and
  axial data}, International Statistical Review \textbf{59} (1991), 389--394.

\bibitem[Ell93]{Ellis.dim.sing.long.version}
\bysame, \emph{On the dimension of the singular sets of plane-fitters},
  unpublished manuscript, 1993.

\bibitem[Ell95a]{spE95}
\bysame, \emph{Dimension of the singular sets of plane-fitters}, Annals of
  Statistics \textbf{23} (1995), 490--501.

\bibitem[Ell95b]{spE95.smoothL1}
\bysame, \emph{A note on the smoothness of {$L^{1}$}-estimators for the linear
  model}, Sankhya, Series A \textbf{57} (1995), 221--226, Part 2.

\bibitem[Ell96]{spE96}
\bysame, \emph{On the size of singular sets of plane-fitters}, Utilitas
  Mathematica \textbf{49} (1996), 233--242.

\bibitem[Ell98]{spE98}
\bysame, \emph{Instability of least squares, least absolute deviation, and
  least median of squares linear regression}, Statistical Science \textbf{13}
  (1998), 337--350, with discussion.

\bibitem[Ell00]{spE00}
\bysame, \emph{Singularity and outliers in linear regression with application
  to least squares, least absolute deviation, and least median of squares
  linear regression}, Metron \textbf{LVIII} (2000), 121--129.

\bibitem[Ell01]{spE.ambiguity.neuro.compute}
\bysame, \emph{Topology of ambiguity}, Neurocomputing \textbf{38-40} (2001),
  1203--1208, Also in \emph{Computational Neuroscience: Trends in Research
  2001,} James M. Bower, ed., Elsevier, 2001.

\bibitem[Ell02]{spE.3.or.4}
\bysame, \emph{Fitting a line to three or four points on a plane}, SIAM Review
  \textbf{44} (2002), 616--628.

\bibitem[Ell04]{spE.fact.anal}
\bysame, \emph{Instability of statistical factor analysis}, Proceedings of the
  American Mathematical Society \textbf{132} (2004), 1805--1822.

\bibitem[Ell11a]{spE.PolyhedralApprox}
\bysame, \emph{Approximating a function continuous off a closed set by one
  continuous off a polyhedron}, International Journal of Pure and Applied
  Mathematics \textbf{68} (2011), 333--370.

\bibitem[Ell11b]{spE09.PolyhedralApproxLong}
\bysame, \emph{On the approximation of a function continuous off a closed set
  by one continuous off a polyhedron}, arXiv:1101.2184v3 [math.GN], 2011.

\bibitem[Ell11c]{spE.ConstClassifiers}
\bysame, \emph{When a constant classifier is as good as any linear classifier},
  Communications in Statistics Theory and Methods \textbf{40} (2011),
  3800--3811.

\bibitem[Ell22]{spE2022.ReadingKarcher77}
\bysame, \emph{{Readings in Karcher, ``Riemannian center of mass and mollifier
  smoothing'', 1977}}, 2022.

\bibitem[EM92]{spEsM92.distanceScope}
Steven~P. Ellis and Stephan Morgenthaler, \emph{A note on the scope of
  distance-based methods}, Sankhya (Series A) \textbf{54} (1992), 464--471,
  Part 3.

\bibitem[Eng78]{rE78.DimThy}
Ryszard Engelking, \emph{{Dimension Theory}}, {North-Holland mathematical
  library}, vol.~19, {North-Holland}, Amsterdam, 1978.

\bibitem[ET93]{bErjT93.bootstrap}
Bradley Efron and Robert~J. Tibshirani, \emph{{An Introduction to the
  Bootstrap}}, Chapman \& Hall, New York, 1993.

\bibitem[EY36]{cEgY36.eckhart-young}
C.~Eckart and G.~Young, \emph{The approximation of one matix by another of
  lower rank}, Psychometrika \textbf{1} (1936), 211--218.

\bibitem[Fal90]{kF90}
K.~Falconer, \emph{{Fractal Geometry: Mathematical Foundations and
  Applications}}, John Wiley \& Sons, New York, 1990.

\bibitem[Far08]{mF08.TopolRobot}
Michael Farber, \emph{{Invitation to Topological Robotics}}, European
  Mathematical Society, Zurich, 2008.

\bibitem[Fed69]{hF69}
Herbert Federer, \emph{{Geometric Measure Theory}}, Springer-Verlag, New York,
  1969.

\bibitem[Fis85]{niF85.SphericalMedians}
N.I. Fisher, \emph{Spherical medians}, J.R. Statist. Soc. B \textbf{47} (1985),
  342--348.

\bibitem[FL99]{rjFhtL1999.SecondOrderTangents}
Robert~J. Fisher and H.~Turner Laquer, \emph{{Second order tangent vectors in
  Riemannian geometry}}, J. Korean Math. Soc. \textbf{36} (1999), 959--1008.

\bibitem[FLE87]{niFtLbjjE87}
N.~I. Fisher, T.~Lewis, and B.~J.~J Embleton, \emph{{Statistical Analysis of
  Spherical Data}}, Cambridge University Press, New York, 1987.

\bibitem[Flo02]{msF2002.PiecewiseLinearMaps}
Michael~S. Floater, \emph{One-to-one piecewise linear mappings over
  triangulations}, {Mathematics of Computation} \textbf{72} (2002), 685--696.

\bibitem[Fri87]{jHF87.ProjPursuit}
Jerome~H. Friedman, \emph{Exploratory projection pursuit}, Journal of the
  American Statistical Association \textbf{82} (1987), 249--266.

\bibitem[gCAK09]{bg-gCgpArdK09.SingsInSmectics}
Bryan~Gin ge~Chen, Gareth~P. Alexander, and Randall~D. Kamien, \emph{Symmetry
  breaking in smectics and surface models of their singularities}, Proceedings
  of the National Academy of Sciences \textbf{106} (2009), 15577--15582.

\bibitem[Ghr14]{rG14.ElemAppliedTopol}
Robert Ghrist, \emph{{Elementary Applied Topology}}, (self published),
  Philadelphia, 2014.

\bibitem[GMS98]{mGgMjS98.cart.currents}
M.~Giaquinta, G.~Modica, and J.~Sou{\v{c}}ek, \emph{Cartesian currents in the
  calculus of variations}, Springer, Berln, 1998, (Two volumes).

\bibitem[GP74]{vGaP74.DiffTopol}
Victor Guillemin and Alan Pollack, \emph{{Differential Topology}},
  Prentice-Hall, Englewood Cliffs, New Jersey, 1974.

\bibitem[Gra04]{aG04.Tubes}
Alfred Gray, \emph{{Tubes}}, {Second} ed., {Birkh{\"a}user}, Basel, 2004.

\bibitem[Gro82]{mG82.VolumeAndCohom}
Michael Gromov, \emph{{Volume and Bounded Cohomology}}, vol.~{56},
  {Publications math'{e}matiqes de l'I.H.'{E}.S.}, 1982.

\bibitem[Gut13]{lG2013.ContractionOfAreas}
Larry Guth, \emph{{Contraction of areas vs. topology of mappings}},
  arXiv:1211.1057 [math.DG], 2013.

\bibitem[Gut15]{lG2015.VolumesOfBalls}
\bysame, \emph{{Volumes of balls in Riemannian manifolds and Uryson width}},
  arXiv:1504.07886v1 [math.DG], 2015.

\bibitem[Had23]{jH23}
J.~Hadamard, \emph{{Lectures on Cauchy's Problem in Linear Partial Differential
  Equations}}, Yale University Press, New Haven, 1923.

\bibitem[Ham75]{frH75.robust}
F.~R. Hampel, \emph{Beyond location parameters: robust concepts and methods},
  Bulletin of the International Statistical Institute \textbf{46} (1975),
  375--382.

\bibitem[Hel62]{sH62.SymSpaces}
S.~Helgason, \emph{{Differential Geometry and Symmetric Spaces}}, Academic
  Press, New York, 1962.

\bibitem[HH06]{qHj-xH06.IsometricEmbeddings}
Qing Han and Jia-Xing Hong, \emph{{Isometric Embeddings of Riemannian Manifolds
  in Euclidean Space}}, {American Mathematical Society}, Providence, 2006.

\bibitem[HHH10]{kHuHsH2010.MeansOnGrassmannians}
Knut H\"{u}per, Uwe Helmke, and Sven Herzberg, \emph{{On the computation of
  means on Grassmann manifolds}}, Proceedings of the 19th International
  Symposium on Mathematical Theory of Networks and Systems – MTNS 2010, 2010,
  pp.~2439 -- 2441.

\bibitem[Hig02]{njH02.StabNumrAlg}
N.~J. Higham, \emph{{Accuracy and Stability of Numerical Algorithms}}, {Second}
  ed., {SIAM}, Philadelphia, 2002.

\bibitem[HK70]{aeHrwK70.ridge.reg}
A.~E. Hoerl and R.~W. Kennard, \emph{Ridge regression: Biased estimation for
  nonorthogonal problems}, Technometrics \textbf{8} (1970), 27--51.

\bibitem[HMN05]{pHjsMaN05.HiDimLowN}
Peter Hall, J.~S. Marron, and Amnon Neeman, \emph{Geometric representation of
  high dimension, low sample size data}, Journal of the Royal Statistical
  Society, Series B: Statistical Methodology \textbf{67} (2005), no.~3,
  427--444.

\bibitem[HMR21]{fHmMbR2020.TopolMethodsML}
Felix Hensel, Michael Moor, and Bastian Rieck, \emph{{A Survey of Topological
  Machine Learning Methods}}, {Frontiers in Artificial Intelligence} (2021),
  doi: 10.3389/frai.2021.681108,.

\bibitem[HS86]{rHlS86.GMT}
R.~Hardt and L.~Simon, \emph{{Seminar on Geometric Measure Theory}}, DMV
  Seminar, vol.~7, Birkh{\"{a}}user, Basel, 1986.

\bibitem[HTF01]{tHrTjF01.statlearn}
Trevor Hastie, Robert Tibshirani, and Jerome Friedman, \emph{{The Elements of
  Statistical Learning: Data Mining, Inference, and Prediction}}, Springer, New
  York, 2001.

\bibitem[Hus75]{dH75.FibreBundles}
Dale Husemoller, \emph{{Fibre Bundles}}, {Second} ed., Springer-Verlag, New
  York, 1975.

\bibitem[HW48]{wHhW48.DimThy}
Witold Hurewicz and Henry Wallman, \emph{{Dimension Theory}}, Princeton
  University Press, Princeton, 1948.

\bibitem[HY61]{jgHgsY61.Topology}
John~G. Hocking and Gail~S. Young, \emph{Topology}, Dover, New York, 1961.

\bibitem[IK66]{Isaacson.and.Keller}
E.~Isaacson and H.~B. Keller, \emph{{Analysis of Numerical Methods}}, John
  Wiley \& Sons, New York, 1966.

\bibitem[Ip24]{shrI2024.DNNexplainability}
Si~Hua~Robert Ip, \emph{Research on explainability of deep neural networks and
  its applications}, {Highlights in Science, Engineering and Technology}
  \textbf{115} (2024), 441--450.

\bibitem[Jon97]{eaJ97.RobustStability}
Edmond~A. Jonckheere, \emph{{Algebraic and Differential Topology of Robust
  Stability}}, Oxford University Press, New York, 1997.

\bibitem[Jun01]{hJ1901.Jungs.theorem}
Heinrich Jung, \emph{Ueber die kleinste kugel, die eine räumliche figur
  einschliesst.}, Journal für die reine und angewandte Mathematik \textbf{123}
  (1901), 241--257.

\bibitem[Jun10]{hJ1910.Jungs.theorem}
Heinrich~W.E. Jung, \emph{Über den kleinsten kreis, der eine ebene figur
  einschließt.}, Journal für die reine und angewandte Mathematik \textbf{137}
  (1910), 310--313.

\bibitem[JW92]{raJdwW92}
R.~A. Johnson and D.~W. Wichern, \emph{{Applied Multivariate Statistical
  Analysis}}, third ed., Prentice Hall, Englewood Cliffs, NJ, 1992.

\bibitem[JX99]{jJylX1999.Bernstein.theorems}
J.~Jost and Y.L. Xin, \emph{{Bernstein type theorems for higher codimension.}},
  {Calculus of Variations} \textbf{9} (1999), 277--296.

\bibitem[Kar77]{hK77.RiemannianCenterOfMass}
H.~Karcher, \emph{{Riemannian center of mass and mollifier smoothing}},
  {Communications on Pure and Applied Mathematics} \textbf{XXX} (1977),
  509--541.

\bibitem[Kni17]{wK2017.AIdarkSecret}
Will Knight, \emph{{The dark secret at the heart of AI}}, {MIT Technology
  Review} (2017),
  https://www.technologyreview.com/2017/04/11/5113/the-dark-secret-at-the-heart-of-ai/.

\bibitem[Lan65]{sL65.Algebra}
Serge Lang, \emph{{Algebra}}, {Addison-Wesley Series in Mathematics},
  {Addison-Wesley}, Reading, Massachusetts, 1965.

\bibitem[Law74]{hbL74.Foliations}
H.B. Lawson, \emph{Foliations}, {Bulletin of the American Mathematical Society}
  \textbf{80} (1974), no.~3, 369--418.

\bibitem[Leh93]{elL93.StatHyps}
E.~L. Lehmann, \emph{{Testing Statistical Hypotheses}}, second ed., Chapman \&
  Hall, New York, 1993.

\bibitem[Mas67]{wsM67.Massey}
William~S. Massey, \emph{{Algebraic Topology: An Introduction}}, Harcourt,
  Brace, \& World, New York, 1967.

\bibitem[Mat12]{jM2012.MatherStratSpaces}
John Mather, \emph{Notes on topological stability}, {Bull. of the Amer. Math.
  Soc.} \textbf{49} (2012), 475--506.

\bibitem[MM64]{mMhM64.MatrixThyIneq}
Marvin Marcus and Henryk Minc, \emph{{A Survey of Matrix Theory and Matrix
  Inequalities}}, {Dover}, New York, 1964.

\bibitem[MNPR06]{sMpNtPrR03.stability}
Sayan Mukherjee, Partha Niyogi, Tomaso Poggio, and Ryan Rifkin,
  \emph{Statistical learning: Stability is sufficient for generalization and
  necessary and sufficient for consistency of empirical risk minimization},
  Adv. Comput. Math. \textbf{25} (2006), 161--193.

\bibitem[MRP03]{sMrRtP03.Rglrzation}
Sayan Mukherjee, Ryan Rifkin, and Tomaso Poggio, \emph{Regression and
  classification with regularization}, Nonlinear Estimation and Classification
  (D.D. Denison, M.H. Hansen, C.C. Holmes, B.~Mallick, and B.~Yu, eds.),
  Lecture Notes in Statistics, vol. 171, Springer, New York, 2003,
  pp.~111--128.

\bibitem[MS74]{jwMjdS74}
J.~W. Milnor and J.~D. Stasheff, \emph{{Characteristic Classes}}, {Annals of
  Mathematical Studies}, vol.~76, {Princeton University Press}, Princeton,
  1974.

\bibitem[Mun66]{jrM66}
J.~R. Munkres, \emph{{Elementary Differential Topology}}, revised ed., Annals
  of Mathematical Studies, vol.~54, Princeton University Press, Princeton,
  1966.

\bibitem[Mun71]{meM71.meas.thy}
M.~E. Munroe, \emph{{Measure and Integration}}, second ed., Addison-Wesley,
  Reading, Massachusetts, 1971.

\bibitem[Mun84]{jrM84}
J.~R. Munkres, \emph{{Elements of Algebraic Topology}}, Benjamin/Cummings,
  Menlo Park, CA, 1984, {Reprinted by Perseus Publishing, Cambridge}.

\bibitem[NAGP05]{gN-AeG-P05.BayesDirectData}
Gabriel Nu{\~{n}}ez-Antonio and Eduardo Guttierrez-Pe{\~{n}}a, \emph{{A
  Bayesian analysis of directional data using the von Mises-Fisher
  distribution}}, Communications in Statistics--Simulation and Computation
  \textbf{34} (2005), 989--999.

\bibitem[NSW11]{pNsSsW11.MoreManifHomolLearn}
P.~Niyogi, S.~Smale, and S.~Weinberger, \emph{A topological view of
  unsupervised learning from noisy data}, {SIAM J. Comput.} \textbf{40} (2011),
  646--663.

\bibitem[Obe71]{rlO71.LinTransInvar}
R.~L. Obenchain, \emph{Multivariate procedures invariant under linear
  transformations}, Annals of Mathematical Statistics \textbf{42} (1971),
  1569--1578.

\bibitem[Ogd97]{rtO97.wavelets}
R.~Todd Ogden, \emph{{Essential Wavelets for Statistical Applications and Data
  Analysis}}, Birkh{\"{a}}user, Boston, 1997.

\bibitem[Oss78]{rO1978.IsoperimetricInequality}
Robert Osserman, \emph{{The isoperimetric inequality}}, {Bulletin of the
  American Mathematical Society} \textbf{84} (1978), 1182--1238.

\bibitem[Par67]{krP67.ProbOnMetricSpaces}
K.~R. Parthasarathy, \emph{{Probability Measures on Metric Spaces}}, Academic
  Press, New York, 1967.

\bibitem[Pfl01]{mjP.StratSpaces}
Markus~J. Pflaum, \emph{{Analytic and Geometric Study of Stratified Spaces}},
  Springer, Heidelberg, 2001.

\bibitem[Pol90]{dP90.EmpPro}
David Pollard, \emph{{Empirical Processes: Theory and Applications}}, NSF-CBMS
  Regional Conference in Probability and Statistics, vol.~2, Institute of
  Mathematical Statistics, Hayward, CA, 1990.

\bibitem[PS03]{tPsS03.learn}
T.~Poggio and S.~Smale, \emph{The mathematics of learning: Dealing with data},
  Notices of the American Mathematical Society \textbf{50} (2003), 537--544.

\bibitem[PS12]{dPpS12.MeanOnRiemannianManif}
Davide Pigoli and Piercesare Secchi, \emph{{Estimation of the mean for
  spatially dependent data belonging to a Riemannian manifold}}, {Electronic
  Journal of Statistics} \textbf{6} (2012), 1926--1942.

\bibitem[{R C}22]{RCT22.RManual}
{R Core Team}, \emph{R: A language and environment for statistical computing},
  R Foundation for Statistical Computing, Vienna, Austria, 2022.

\bibitem[Rao73]{crR73.LinStatInf}
C.~R. Rao, \emph{{Linear Statistical Inference and Its Applications}}, {Second}
  ed., John Wiley \& Sons, New York, 1973.

\bibitem[RC04]{cpRgC04.MonteCarlo}
Christian~P. Robert and George Casella, \emph{{Monte Carlo Statistical
  Methods}}, second ed., {Springer Texts in Statistics}, Springer-Verlag, New
  York, 2004.

\bibitem[Ric88]{jaR88}
J.~A. Rice, \emph{{Mathematical Statistics and Data Analysis}}, Brooks/Cole,
  Pacific Grove, CA, 1988.

\bibitem[Rie14]{eR2014.CatThy}
Emily Riehl, \emph{{Category Theory in Context}}, Dover Publications, Minneola,
  NY, 2014.

\bibitem[RL03]{pjRamL03.robust}
Peter~J. Rousseeuw and Annick~M. Leroy, \emph{{Robust Regression and Outlier
  Detection}}, Wiley-Interscience, Hoboken, New Jersey, 2003.

\bibitem[Rou84]{pjR84.LMS}
P.~J. Rousseeuw, \emph{Least median of squares regression}, J. Amer. Statist.
  Assn. \textbf{79} (1984), 871--880.

\bibitem[RS72]{cpRbjS72.PiecewiseLinearTopol}
C.P. Rourke and B.J. Sanderson, \emph{{Introduction to Piecewise-Linear
  Topology}}, Springer-Verlag, New York, 1972.

\bibitem[RSNW12]{aRaSrNlW2012.ClusteringStability}
Alessandro Rinaldo, Aarti Singh, Rebecca Nugent, and Larry Wasserman,
  \emph{Stability of density-based clustering}, {Journal of Machine Learning
  Research} \textbf{13} (2012), 905--948.

\bibitem[RT57]{hRoT1957.EnjoymentOfMath}
Hans Rademacher and Otto Toeplitz, \emph{{The Enjoyment of Mathematics}},
  Princeton University Press, Princeton, 1957.

\bibitem[Rud64]{wR64.PMA}
Walter Rudin, \emph{{Principles of Mathematical Analysis}}, second ed.,
  McGraw-Hill, New York, 1964.

\bibitem[Rud66]{wR66.realcmplx}
\bysame, \emph{{Real and Complex Analysis}}, McGraw-Hill Series in Higher
  Mathematics, McGraw-Hill, New York, 1966.

\bibitem[Rud73]{wR66.funcAnal}
\bysame, \emph{Functional analysis}, McGraw-Hill Series in Higher Mathematics,
  McGraw-Hill, New York, 1973.

\bibitem[Sch59]{hS59.ANOVA}
Henry Scheff\'{e}, \emph{{The Analysis of Variance}}, Wiley, New York, 1959.

\bibitem[Sim63]{gfS63}
G.~F. Simmons, \emph{{Introduction to Topology and Modern Analysis}},
  McGraw-Hill, New York, 1963.

\bibitem[Sim83]{lS83.GMT}
Leon Simon, \emph{{Lectures in Geometric Measure Theory}}, Proceedings C.M.A.,
  no.~3, Australian National University, 1983.

\bibitem[SL06]{iiSodL06.LiquidCrystals}
I.I. Smalyukh and O.D. Lavrentovich, \emph{{Defects, surface anchoring, and
  three-dimensional director fields in the lamellar structure of cholesteric
  liquid crystals as studied by fluorescence confocal polarizing microscopy}},
  {Topology in Condensed Matter} (Michail Monastyrsky, ed.), {Solid-State
  Sciences}, vol. 150, Springer, New York, 2006, pp.~205--250.

\bibitem[Spa66]{ehS66}
E.~H. Spanier, \emph{{Algebraic Topology}}, Springer-Verlag, New York, 1966.

\bibitem[Spi79a]{mS79.SpivakVol1}
M.~Spivak, \emph{{A Comprehensive Introduction to Differential Geometry}},
  second ed., vol.~1, Publish or Perish, Wilmington, Delaware, 1979.

\bibitem[Spi79b]{mS79.SpivakVol2}
\bysame, \emph{{A Comprehensive Introduction to Differential Geometry}}, second
  ed., vol.~2, 1979.

\bibitem[Str98]{shS1998.strogatz}
Steven~H. Strogatz, \emph{{Nonlinear Dynamics and Chaos: With Applications to
  Physics, Biology, Chemistry, and Engineering}}, Perseus Books, Reading,
  Massachusetts, 1998.

\bibitem[SW68]{rrSetW68.LinearAlgebra}
R.R. Stoll and E.T. Wong, \emph{{Linear Algebra}}, Academic Press, New York,
  1968.

\bibitem[TA77]{anTvyA77.illposed}
Andrey~N. Tikhonov and Vasiliy~Y. Arsenin, \emph{{Solutions of Ill-Posed
  Problems}}, V.H. Winston \& Sons, Washington, D.C., 1977, Translation editor:
  Fritz John.

\bibitem[Tan91]{maT91.data.aug}
Martin~Abba Tanner, \emph{{Tools for Statistical Inference: Observed Data and
  Data Augmentation Methods}}, Springer-Verlag Inc, 1991.

\bibitem[TCM{\etalchar{+}}24]{aT2024.Interpretability}
Adly Templeton, Tom Conerly, Jonathan Marcus, Jack Lindsey, Trenton Bricken,
  Brian Chen, Adam Pearce, Craig Citro, Emmanuel Ameisen, Andy Jones, Hoagy
  Cunningham, Nicholas~L Turner, Callum McDougall, Monte MacDiarmid, Alex
  Tamkin, Esin Durmus, Tristan Hume, Francesco Mosconi, C.~Daniel Freeman,
  Theodore~R. Sumers, Edward Rees, Joshua Batson, Adam Jermyn, Shan Carter,
  Chris Olah, and Tom Henighan, \emph{{Scaling Monosemanticity: Extracting
  Interpretable Features from Claude 3 Sonnet}}, {Transformer Circuits Thread}
  (2024),
  https://transformer-circuits.pub/2024/scaling-monosemanticity/index.html.

\bibitem[Tib96]{rT96.lasso}
Robert Tibshirani, \emph{Regression shrinkage and selection via the lasso},
  Journal of the Royal Statistical Society, B \textbf{58} (1996), 267--288.

\bibitem[Tuk62]{jwT62.FutureDataAnlys}
John~W. Tukey, \emph{The future of data analysis}, Annals of Mathematical
  Statistics \textbf{33} (1962), 1--67.

\bibitem[Tuz19]{aaT2019.HausdorffGromovHausdorffDistance}
Alexey~A. Tuzhilin, \emph{{Lectures on Hausdorff and Gromov–Hausdorff
  Distance Geometry}}, arXiv:2012.00756v1 (2019).

\bibitem[Wah90]{gW90.spline}
Grace Wahba, \emph{{Spline Models for Observational Data}}, CBMS-NSF Regional
  Conference Series in Applied Mathematics, Society for Industrial and Applied
  Mathematics, Philadelphia, 1990.

\bibitem[Wat82]{gW82}
G.~Watson, \emph{Directional data analysis}, Encyclopedia of Statistical
  Sciences (S.~Kotz and N.L. Johnson, eds.), John Wiley \& Sons, New York,
  1982, pp.~376--381.

\bibitem[Win20]{jmW.2020.TenResearchChallengesInDataScience}
Jeannette~M. Wing, \emph{Ten research challenge areas in data science}, Harvard
  Data Science Review \textbf{2} (2020), 1--13.

\bibitem[Won67]{ycW67.Grassmanif}
Y.~C. Wong, \emph{Differential geometry of {G}rassman manifolds}, Proceedings
  of the National Academy of Sciences of the United States \textbf{57} (1967),
  589--594.

\bibitem[WSL19]{rlWalSnaL2019.pValues}
Ronald~L. Wasserstein, Allen~L. Schirm, and Nicole~A. Lazar, \emph{Moving to a
  world beyond ``p $<$ 0.05''}, {The American Statistician} \textbf{73:sup1}
  (2019), 1--19.

\bibitem[WWW25]{yWjWzW2025.TopolInEcon}
Yicheng Wei, Junzo Watadal, and Zijin Wang, \emph{{Topology Unveiled: A New
  Horizon for Economic and Financial Modeling}}, Mathematics \textbf{13}
  (2025), no.~2.

\bibitem[YK19]{bYkK2019.VeridicalDataScience}
Bin Yu and Karl Kumbier, \emph{Veridical data science}, arXiv:1901.08152v5
  [stat.ML] (2019).

\bibitem[Yu13]{bY13.stability}
Bin Yu, \emph{Stability}, Bernoulli \textbf{19} (2013), 1484--1500.

\bibitem[Zee65]{ecZ65.ZeemanTopolBrain}
E.~C. Zeeman, \emph{Topology of the brain}, Mathematics and Computer Science in
  Biology and Medicine: Proceedings of conference held by Medical Research
  Council, Oxford, July 1964 (London), Her Majesty's Stationery Office, 1965,
  pp.~277--292.

\end{thebibliography}
\end{document}